\DeclareMathOperator{\Mod}{Mod}
\DeclareMathOperator{\coh}{Coh}
\DeclareMathOperator{\ind}{Ind}
\DeclareMathOperator{\indcoh}{IndCoh}
\DeclareMathOperator{\Ker}{Ker}
\DeclareMathOperator{\Coker}{Coker}
\DeclareMathOperator{\Hom}{Hom}
\newtheorem{prop}{Proposition}[subsection]
\newtheorem{theo}[prop]{Theorem}
\newtheorem{coro}[prop]{Corollary}
\newtheorem{lemm}[prop]{Lemma}
\newtheorem{lem}[prop]{Lemma}
\newtheorem*{lemm*}{Lemma}
\newtheorem{exs}[prop]{Examples} 
\theoremstyle{definition}
\newtheorem{empt}[prop]{}
\newtheorem{dfn}[prop]{Definition}
\newtheorem{rem}[prop]{Remark}
\newtheorem{ntn}[prop]{Notation}
\newtheorem{ex}[prop]{Example}
\newtheorem*{rem*}{Remark}
\theoremstyle{thm}
\newtheorem{thm}[prop]{Theorem}
\newtheorem*{thm*}{Theorem}
\newtheorem*{lem*}{Lemma}
\newtheorem{cor}[prop]{Corollary}
\newtheorem*{cor*}{Corollary}
\newtheorem*{prop*}{Proposition}
\theoremstyle{dfn}
\newtheorem*{dfn*}{Definition}
\numberwithin{equation}{prop}
\newcommand{\riso}{ \overset{\sim}{\longrightarrow}\, }
\newcommand{\liso}{ \overset{\sim}{\longleftarrow}\, }
\newcommand{\Spec}{\mathrm{Spec}\,}
\newcommand{\Spf}{\mathrm{Spf}\,}
\renewcommand{\sp}{\mathrm{sp}}
\renewcommand{\det}{\mathrm{det}}
\renewcommand{\AA}{{\mathcal{A}}}
\newcommand{\cR}{{\mathcal{R}}}
\newcommand{\cA}{{\mathcal{A}}}
\newcommand{\cB}{{\mathcal{B}}}
\newcommand{\cD}{{\mathcal{D}}}
\newcommand{\cE}{{\mathcal{E}}}
\newcommand{\cF}{{\mathcal{F}}}
\newcommand{\cG}{{\mathcal{G}}}
\newcommand{\cH}{{\mathcal{H}}}
\newcommand{\cM}{{\mathcal{M}}}
\newcommand{\FF}{{\mathcal{F}}}
\newcommand{\B}{{\mathcal{B}}}
\newcommand{\E}{{\mathcal{E}}}
\newcommand{\G}{{\mathcal{G}}}
\renewcommand{\H}{{\mathcal{H}}}
\newcommand{\M}{{\mathcal{M}}}
\newcommand{\cN}{{\mathcal{N}}}
\newcommand{\D}{{\mathcal{D}}}
\newcommand{\I}{{\mathcal{I}}}
\newcommand{\cI}{{\mathcal{I}}}
\newcommand{\cL}{{\mathcal{L}}}
\renewcommand{\O}{{\mathcal{O}}}
\newcommand{\cO}{{\mathcal{O}}}
\newcommand{\fC}{{\mathfrak{C}}}
\newcommand{\fD}{{\mathfrak{D}}}
\newcommand{\fM}{{\mathfrak{M}}}
\newcommand{\fP}{{\mathfrak{P}}}
\newcommand{\fQ}{{\mathfrak{Q}}}
\newcommand{\fS}{{\mathfrak{S}}}
\newcommand{\fX}{{\mathfrak{X}}}
\newcommand{\fY}{{\mathfrak{Y}}}
\newcommand{\fU}{{\mathfrak{U}}}
\newcommand{\fZ}{{\mathfrak{Z}}}
\newcommand{\fA}{\mathfrak{A}}
\newcommand{\fB}{\mathfrak{B}}
\newcommand{\V}{\mathcal{V}}
\newcommand{\cV}{\mathcal{V}}
\newcommand{\W}{\mathcal{W}}
\renewcommand{\S}{\mathfrak{S}}
\newcommand{\T}{{\mathfrak{T}}}
\newcommand{\fT}{{\mathfrak{T}}}
\newcommand{\Y}{\mathfrak{Y}}
\newcommand{\ZZ}{\mathfrak{Z}}
\newcommand{\X}{\mathfrak{X}}
\newcommand{\fm}{\mathfrak{m}}
\newcommand{\U}{\mathfrak{U}}
\newcommand{\cP}{{\mathcal{P}}}
\newcommand{\cQ}{{\mathcal{Q}}}
\newcommand{\bbA}{\mathbb{A}}
\newcommand{\bbP}{\mathbb{P}}
\newcommand{\DD}{\mathbb{D}}
\renewcommand{\L}{\mathbb{L}}
\newcommand{\R}{\mathbb{R}}
\newcommand{\Q}{\mathbb{Q}}
\newcommand{\bbQ}{\mathbb{Q}}
\newcommand{\Z}{\mathbb{Z}}
\newcommand{\bbZ}{\mathbb{Z}}
\newcommand{\N}{\mathbb{N}}
\newcommand{\bbL}{\mathbb{L}}
\newcommand{\hdag}{  \phantom{}{^{\dag} }    }
\def\debrom{
\makeatletter
\renewcommand{\theenumi}{(\roman{enumi})}
\renewcommand{\labelenumi}{\theenumi}
\makeatother\begin{enumerate}}
\def\finrom{\end{enumerate}}
\begin{document}

\title{Arithmetic $\D$-modules over algebraic varieties of characteristic $p>0$}
\author{Daniel Caro}
\date{}

\maketitle

\tableofcontents

\bigskip
\begin{abstract}
Let $k$ be a field of characteristic $p>0$ not necessarily perfect.
Using Berthelot's theory of arithmetic $\D$-modules,
we construct a $p$-adic formalism of Grothendieck's six operations for realizable $k$-schemes of finite type. 
\end{abstract}

\section*{Introduction}

Let $\V$ be a complete discrete valuation ring of mixed characteristic $(0,p)$, 
$\pi$ be a uniformizer,
$k:= \V/\pi \V$ be its residue field and $K$ be its fraction field. 
In order to build a $p$-adic formalism of Grothendieck six operations for $k$-varieties (i.e. separated $k$-schemes of finite type), 
Berthelot introduced an arithmetic avatar of the theory of modules over the differential operators ring. 
The objects appearing in his theory are called arithmetic $\D$-modules or complexes of arithmetic $\D$-modules (for an introduction,
see \cite{Beintro2}). 

In the case where $k$ is perfect, within Berthelot's arithmetic $\D$-modules theory,
such a $p$-adic formalism was already known in different contexts. Let us describe these known cases. 
With N. Tsuzuki  (see \cite{caro-Tsuzuki}),
the author got such a formalism  for overholonomic $F$-complexes of arithmetic $\D$-modules 
(i.e. complexes together with a Frobenius structure)
over realizable $k$-varieties (i.e. 
$k$-varieties which can be embedded into a proper formal $\V$-scheme). 
Another example was given later (do not focus on the publication date) with 
holonomic $F$-complexes of arithmetic $\D$-modules over quasi-projective varieties (\cite{caro-stab-holo}).
In a wider geometrical context, T. Abe 
established a six functors formalism for admissible stacks, 
namely algebraic stacks of finite type with finite diagonal morphism
(see \cite[2.3]{Abe-Langlands}). The starting point of his work was the case of quasi-projective $k$-varieties. 
Again, some Frobenius structures are involved in his construction. 
Finally, without Frobenius structure, in \cite{caro-unip}, 
we explained how to build 
such a $p$-adic formalism of Grothendieck's six functors, e.g. with quasi-unipotent complexes of arithmetic $\D$-modules (see \cite{caro-unip}).

In this paper, we construct such a $p$-adic formalism for some realizable $k$-scheme of finite type, 
without the hypothesis of perfectness on $k$. 
Similarly to T. Abe in 
\cite[2.3]{Abe-Langlands},
we might a priori expect to extend from a six functors formalism for quasi-projective varieties,
a six functors formalism for admissible stacks.
But, we will focus our study to realizable $k$-varieties. 
We check in this paper that we can extend (with some slight changes) from what was made in the perfect case in \cite{caro-unip}
to the general case. 
Moreover, we give a second construction of a $p$-adic formalism of Grothendieck six functors (see below in the introduction for more details). 
A lot of preliminary results which were stated with the perfectness condition
are still valid without any change in the proof. In this case, most of the time (when we do not have found a better proof) 
we will only refer to some published proofs. 
We have also improved some results, e.g. the relative duality isomorphism of the form \ref{rel-dual-isom} was not known in this context
for coherent complexes. These improvements sometimes simplify the presentation.
We have also given some simpler proofs of known results. For instance,
the base change isomorphism is a consequence of some basic properties of the exterior tensor products whose study here 
is given with details. 
For completeness, 
it also happens that we incorporate 
some unpublished proofs of Berthelot 
(e.g. see \ref{letterBerthelotCaro2007} which was a letter of Berthelot to the author in 2007).
Since these results are scattered in the literature, 
this paper might at least help the reader to better understand the 
steps (in the right order) in order to get a six functors formalism in Berthelot's theory of arithmetic $\D$-modules.
Even if we tried to make this paper as self-contained as possible (at least for the statements), 
we have used freely \cite{Be1},
\cite{Be2}. Moreover, if this is not already the case,
we advise the reader to read \cite{Beintro2} which contains a lot of fundamental properties that 
we sometimes use (in that case, we have tried to give each time some reference).
\bigskip

Let us clarify the content of the paper. 
Let  $\fP$ be a separated smooth formal $\V$-scheme (for the $p$-adic topology).
The special fiber of $\fP$, the $k$-variety equal to its reduction modulo $\pi$, is denoted by $P$.
In the first chapter we recall Berthelot's notion of derived categories of inductive systems of arithmetic $\D$-modules on $\fP$.
Some objects in theses categories will give our coefficients satisfying a six functors formalism. 
Two Berthelot's notions are fundamental in theses categories :  that of quasi-coherence and that of coherence. 
In the second chapter, we study the localization functor outside a divisor $T$ of $P$ and the forgetful functor of a divisor $T$ of $P$. 
We check both functors preserve the quasi-coherence.
Next, we give a coherence stability criterion involving a change of divisors
which is one fundamental property of the theory (see \ref{limTouD}).
In the third chapter, we give the construction of the extraordinary inverse image $f ^!$ by a morphism $f$, 
the pushforward $f _+$ and the duality. 
We also recall the stability of the coherence under the pushforward by a proper morphism
and under the pullback by a smooth morphism. 
The forth chapter deals with the relative duality isomorphism and the adjoint paire 
$(f _+, f ^!)$ for a proper morphism $f$, which was the work of Virrion. In fact, we also study and prove
with many details the case where the morphism $f$ is a closed immersion by using the fundamental local isomorphism.
The adjunction morphisms in this case are very explicit and will be used in the fifth chapter.

Let 
$T$ be the (support of a) divisor of $P$,
let $X$ be a smooth closed subscheme of $P$ of  codimension $r$, 
 $Y := X \setminus T$. 
In the fifth chapter, we define the notion of coherent arithmetic $\D$-modules 
over $(Y,X)/\V$ (see \ref{rem-ind-ariYXP}). 
Roughly speaking, first choose $(X _\alpha)$ an affine open covering of $X$, 
and for each $\alpha$ choose a smooth formal $\V$-scheme $\X _\alpha$ which is a lifting of $X _\alpha$.
Then, a coherent arithmetic $\D$-module 
over $(Y,X)/\V$ is the data of a family of coherent arithmetic $\D$-module on $\fX _\alpha$
with overconvergent singularities along $T \cap X _\alpha$ together with glueing isomorphisms
satisfying a cocycle condition.
We check that we have a canonical equivalence of categories between that
of coherent arithmetic $\D$-modules 
over $(Y,X)/\V$ and that of coherent arithmetic $\D$-modules on $\fP$
with overconvergent singularities along $T$ and support in $X$
(see Theorem \ref{prop1}).
This extends Berthelot's theorem of his arithmetic version of Kashiwara theorem
appearing in the classical  $\D$-modules theory.
In the sixth chapter, 
we construct a fully faithful functor denoted by $\sp _+$ from the category of overconvergent isocrystals over $(Y,X)/\V$
to that of arithmetic $\D$-modules over $(Y,X)/\V$. Its essential image has an explicit description (see \ref{ntn-dfnsp+}). 
The functor $\sp _+$ is some kind of pushforward under $\sp \colon \fP _K \to \fP$, 
the specialisation morphism from the rigid analytic variety associated to $\fP$.
In the seventh chapter, we study 
the standard properties of  exterior tensor products and some consequences. 
In the arithmetic $\D$-modules case, this is slightly more technical since for instance the base is not always a field 
and then exterior tensor products are not always exact (see \ref{rem-ext-prod}).
In the eighth chapter, we adapt Berthelot's proof of the coherence of the constant coefficient with overconvergent singularities
$\O _{\fP} (\hdag T) _{\Q}$. The first step is to check explicitly this coherence when 
$T$ is a strict smooth crossing divisor (more precisely, see \ref{NCDgencoh}). 
In the general context, using de Jong's desingularisation theorem, we reduce by descent to the SNCD case. 
Two key ingredients which makes possible the descent are the isomorphism
$\sp _+ (\smash{\O} _{]X[ _{\fP}}) 
\riso 
\mathcal{H} ^r \R \sp _* \underline{\Gamma} ^\dag _X ( \O _{\fP _K})$ (see \ref{coro-sp+jdagO}),
and the commutation of $\sp _+$ with the duality.

In the ninth chapter, we introduce the
local cohomological functor with strict support over a subvariety $Y$ of $P$ that we denote by 
$\R \underline{\Gamma} ^\dag _Y$. 
Roughly speaking, to define the functor
$\R \underline{\Gamma} ^\dag _Y$
we reduce by devissage 
to the case where $Y$ is open. 
Again by devissage, we reduce to the case where $Y$ is the complementary of a divisor $T$ of $P$.
In this later case, $\R \underline{\Gamma} ^\dag _Y$ is 
the  localisation outside the divisor $T$ functor (that was defined previously in the second chapter). We should also clarify that 
one key ingredient to be able to define this local cohomological functor is the coherence of the constant coefficient. 
In the next chapter, 
we check that the expected properties satisfied by local cohomological functors are still valid, e.g. its commutation 
with pushforwards and extraordinary pullbacks. We also check some base change isomorphism (see \ref{theo-iso-chgtbase})
and a relative duality isomorphism that extends that of Virrion (we replace the properness hypothesis of the morphism $f$
by the properness via $f$ of the support of our complexes).

In the eleventh chapter, we adapt the construction given in  \cite{caro-unip}
of a formalism of Grothendieck six functors. 
For instance, we change the notion of data of coefficients
by replacing the base $\V$ by some fixed perfectification of $\V \to \V ^\flat$
(see the definition \ref{DVR}) in the sense of \ref{special-filtered-pre}. 
It seems unavoidable to fix such a perfectification.
We also introduce a notion of {\it special descent of the base}. 
Such notion allows us to use de Jong's desingularization theorem 
as if the base field was perfect. 
In fact, we improve the construction given in  \cite{caro-unip}
since we also consider the stability under the cohomological functors
$\cH ^{r}$ which allows us to endow 
our triangulated categories with a canonical t-structure.
Via Theorem \ref{dfnquprop} and the example \ref{ex-datastableevery},
we explain how to build a data of coefficient 
which contains the constant coefficient, 
which is 
local,
stable under devissages, direct summands, 
local cohomological functors, 
pushforwards, extraordinary pullbacks, 
base change, tensor products, duals,
cohomology
and
special descent of the base.
In order to get such stable data but which moreover contains convergent isocrystals 
on smooth $k$-varieties, we propose a second construction 
which uses a little more external tensor products (see Theorem \ref{dfnqupropbis}). 
Recall that since we do not have some Frobenius structures, 
then it is impossible to put every overconvergent isocrystals in our categories. 

Finally, in the last chapter, we get a 
$p$-adic formalism of Grothendieck six operations over couples of $k$-varieties $(Y,X)$ which 
can be enclosed into a frame of the form
$(Y,X,\fP)$ where $\fP$ is a realizable (i.e. which can be embedded into a proper smooth scheme over $\V$) 
smooth formal scheme over $\V$, 
$X$ is a closed subscheme of the special fiber of $\fP$ and $Y$ is an open of $X$.
For an enough stable data of coefficients $\fC$, a coefficient 
over $(Y,X,\fP)$ is a coefficient over $\fP$ with support in $X$ and having overconvergent singularities along $X \setminus Y$
(i.e. which is isomorphic under its image via $\R \underline{\Gamma} ^\dag _{X \setminus Y}$).
We prove the independence with respect to the choice of the frame 
enclosing $(Y,X)$ of the coefficients of such $\fC$
over $(Y,X,\fP)$  (\ref{ind-CYW}). 
When $\fC$ is stable under cohomology, 
this independence preserves t-structures. 
When $X$ is proper over $k$, then 
the coefficients of such $\fC$
over $(Y,X)$ is independent (up to canonical equivalence of categories) 
of the choice of such proper varieties $X$ enclosing $Y$. 
This yields a formalism of Grothendieck's six operations over $k$-varieties 
(which can be enclosed into a frame).
 
\subsection*{Acknowledgment}
The author was supported by the IUF.

\section*{Notation}

Let $\V$ be a complete discrete valuation ring of mixed characteristic $(0,p)$, 
$\pi$ be a uniformizer,
$k:= \V/\pi \V$ be its residue field and $K$ its fraction field. 
We set $\S := \Spf \V$, the $p$-adic formal scheme (in the sense of Grothendieck's terminology of EGA I.10) 
associated with $\V$.
A formal $\V$-scheme $\X$
or formal $\S$-scheme $\X$ means a 
$p$-adic formal scheme endowed with 
a structural morphism of $p$-adic formal schemes
$\X \to \Spf \V$.
Schemes and formal schemes are supposed to be separated and quasi-compact.
Formal schemes will be denoted with gothic letters and their special fiber with the associated roman letter. 
Unless otherwise stated, 
a subscheme of the special fiber of a formal $\V$-scheme
are always supposed to be reduced.

Sheaves will be denoted with calligraphic letters and their global sections with the associated straight letter. 
By default, a module means a left module. 
We denote $p$-adic completions with some hat   and if 
$\E$ is an abelian sheaf of groups, we set $\E _{\Q}:= \E \otimes _{\Z} \Q$. 
Let $\cR$ be a commutative sheaf on $X$.
By convention, $\cR$-algebras are always unital and associative. 
Let  $\AA, \cB$ be $\cR$-algebras. 
Unless otherwise stated, an $\cA$-module is a left $\cA$-module
and an $(\cA,\cB)$-bimodule is an $\cR$-module endowed with a compatible structure of left $\cA$-module and right $\cB$-module.
If $*$ is one of the symboles $+$, $-$, or $\mathrm{b}$, 
$D ^* ( \AA )$ (resp. $D ^* ( \AA,\cB )$) means the derived category of the complexes of 
$\AA$-modules (resp. of $( \AA,\cB )$-bimodules) satisfying the corresponding condition of vanishing of cohomological spaces. 
When we would like to clarify between right and left,  we will write
$D ^* ( {}^l \AA )$ or $D ^* ( {}^r \AA )$
(resp. $D ^* (  {}^l \cA, {}^l \cB )$ or $D ^* ({}^r \cA, {}^r \cB )$).
We denote by $D ^{\mathrm{b}} _{\mathrm{coh}} ( \AA )$
the subcategory of  $D  ( \AA )$
of bounded and coherent complexes.

When 
$f \colon \X \to \cP $
is a smooth morphism of formal  $\V$-schemes,
for any integer $i \in \N$,
we denote by  $ f _{i}   \colon X _i \to P _i$
the induced morphism modulo $\pi ^{i+1}$.

\section{Derived categories of inductive systems of arithmetic $\D$-modules}
\label{ntn-tildeD(Z)}

Let $\fP $ be a smooth formal scheme over $\S $
and $T$ be a divisor of $P$.
Divisors of $P$ will be supposed to be reduced divisors (in our context, this is not really less general). 
Remark that since $P$ is regular, then Weil divisors correspond to Cartiel divisors. Hence, in our context, 
a divisor is determined by its irreducible components. 
To reduce the amount of notation, we set 
$\smash{\widehat{\D}} _{\fP /\S } ^{(m)} (T):=
\widehat{\B} ^{(m)} _{\fP } ( T)  \smash{\widehat{\otimes}} _{\O _{\fP}} \smash{\widehat{\D}} _{\fP /\S } ^{(m)}$, 
where $\widehat{\B} ^{(m)} _{\fP} ( T) $ is the sheaf constructed in
\cite[4.2]{Be1}
and
$\smash{\D} _{\fP /\S } ^{(m)}$ is the sheaf of differential operators of level $m$ over $\fP /\S $
(see \cite[2.2]{Be1}).
We fix  $\lambda _0\colon \N \to \N$ an increasing map such that 
$\lambda _{0} (m) \geq m$ for any $m \in \N$. 
We set 
$\widetilde{\B} ^{(m)} _{\fP} ( T):= \widehat{\B} ^{(\lambda _0 (m))} _{\fP} ( T)$ 
et
$\smash{\widetilde{\D}} _{\fP /\S } ^{(m)} (T):=
\widetilde{\B} ^{(m)} _{\fP} ( T)  \smash{\widehat{\otimes}} _{\O _{\fP}} \smash{\widehat{\D}} _{\fP /\S } ^{(m)}$.
Finally, we set 
$\smash{\D} _{P  _i /S  _i} ^{(m)} (T):= \V / \pi ^{i+1} \otimes _{\V} \smash{\widehat{\D}} _{\fP /\S  } ^{(m)} (T) 
=
\B ^{(m)} _{P _i} ( T)  \otimes _{\O _{P _i}} \smash{\D} _{P  _i/S  _i} ^{(m)}$
and
$\smash{\widetilde{\D}} _{P  _i/S  _i} ^{(m)} (T):=\widetilde{\B} ^{(m)} _{P _i} ( T)  \otimes _{\O _{P _i}} \smash{\D} _{P  _i/S  _i} ^{(m)}$.

\subsection{Localisation of derived categories of inductive systems of arithmetic $\D$-modules}

\begin{empt}
[Berthelot's localized categories of the form $\smash{\underrightarrow{LD}} _{\Q}$]
\label{loc-LM}
We recall below some constructions of Berthelot of \cite[4.2.1 and 4.2.2]{Beintro2} 
which are still valid by adding singularities along a divisor. 
We have the inductive system of rings 
$\smash{\widetilde{\D}} _{\fP /\S } ^{(\bullet)}(T) 
: =
(\smash{\widetilde{\D}} _{\fP /\S } ^{(m)}(T) )_{m\in \N}$. 
We get the derived categories
$D ^{\sharp} ( \smash{\widetilde{\D}} _{\fP /\S } ^{(\bullet)}(T))$,  
where $\sharp \in \{\emptyset, +,-, \mathrm{b}\}$.
The objects of $D ^{\sharp}( \smash{\widetilde{\D}} _{\fP /\S } ^{(\bullet)}(T))$
are denoted by
 $\E ^{(\bullet)}= (\E ^{(m)} , \alpha ^{(m',m)})$, 
where $m,m'$ run over non negative integers such that  $m' \geq m$,
where $\E ^{(m)} $ is a complex of $\smash{\widetilde{\D}} _{\fP /\S } ^{(m)}(T)$-modules
and $\alpha ^{(m',m)} \colon \E ^{(m)}\to \E ^{(m')}$ are $\smash{\widetilde{\D}} _{\fP /\S } ^{(m)}(T)$-linear morphisms.

\begin{itemize}
\item Let $M$ bet the filtrant set (endowed with the canonical order) 
of increasing maps $\chi \colon \N \to \N$. 
For any map $\chi \in M$, we set
$\chi ^{*} (\E ^{(\bullet)}) := (\E ^{(m)} , p ^{\chi (m') -\chi (m)}\alpha ^{(m',m)})$.
We obtain the functor
$\chi ^{*} \colon D ( \smash{\widetilde{\D}} _{\fP /\S } ^{(\bullet)}(T))\to D ( \smash{\widetilde{\D}} _{\fP /\S } ^{(\bullet)}(T))$ 
as follows:
if $f ^{(\bullet)} \colon \E ^{(\bullet)} \to \FF ^{(\bullet)}$ is a morphism of $D ( \smash{\widetilde{\D}} _{\fP /\S } ^{(\bullet)}(T))$, 
then the morphism of level $m$ of  $\chi ^{*} f ^{(\bullet)} 
\colon 
\chi ^{*} (\E ^{(\bullet)}) 
\to 
\chi ^{*} (\FF ^{(\bullet)})$ 
is $f ^{(m)}$.
If $\chi _1, \chi _2 \in M$, we compute
$\chi _1 ^* \circ \chi _2 ^* = (\chi _1 +\chi _2)^*$, and in particular $\chi _1 ^*$ and $\chi _2 ^*$ commute.
Moreover, if  $\chi _1 \leq \chi _2$, then we get the morphism 
$\chi _1 ^* ( \E ^{(\bullet)})\to \chi _2 ^* ( \E ^{(\bullet)})$ defined at the level $m$ by  
$p ^{\chi _2 (m) -\chi _1(m)}\colon \E ^{(m)} \to \E ^{(m)}$. 
A morphism $f ^{(\bullet)} \colon \E ^{(\bullet)} \to \FF ^{(\bullet)}$ of $D ( \smash{\widetilde{\D}} _{\fP /\S } ^{(\bullet)}(T))$
is an {``ind-isogeny''} if there exist  $\chi \in M$ 
and a morphism
$g ^{(\bullet)} \colon \FF ^{(\bullet)} \to \chi ^{*} \E ^{(\bullet)}$ of $D ( \smash{\widetilde{\D}} _{\fP /\S } ^{(\bullet)}(T))$
such that 
$g ^{(\bullet)}\circ f ^{(\bullet)}$ and $\chi ^{*} (f ^{(\bullet)}) \circ g ^{(\bullet)}$ 
are the canonical morphisms described above (in the case $\chi _1 =0$ and $\chi _2=\chi$).
The subset of ind-isogenies is a multiplicative system (this follows from Proposition
\cite[I.4.2]{HaRD} and the analogue of Lemma \cite[1.1.2]{caro-stab-sys-ind-surcoh} still valid without the hypothesis that $k$ is perfect). 
The localisation of $D ^{\sharp} ( \smash{\widetilde{\D}} _{\fP /\S } ^{(\bullet)}(T))$
with respect to ind-isogenies is denoted by
$\smash{\underrightarrow{D}} ^{\sharp} _{\Q} ( \smash{\widetilde{\D}} _{\fP /\S } ^{(\bullet)}(T))$.

\item Let $L$ be the filtrant set of increasing maps 
$\lambda \colon \N \to \N$ such that $\lambda (m ) \geq m$.
For any $\lambda \in L$, we put
$\lambda ^{*} (\E ^{(\bullet)}) := (\E ^{(\lambda(m))} , \alpha ^{(\lambda(m'),\lambda(m))})_{m'\geq m}$.
When $\lambda _1, \lambda _2 \in L$, we compute
$\lambda _1 ^* \circ \lambda _2 ^* = (\lambda _1 \circ\lambda _2)^*$.
When $\lambda _1 \leq \lambda _2$, we have the canonical morphism 
$\lambda _1 ^* (\E ^{(\bullet)}) \to 
\lambda _2 ^* (\E ^{(\bullet)})$ defined at the level $m$ by the morphism
$\alpha ^{(\lambda _2(m),\lambda _1(m))} \colon 
\E ^{(\lambda _1(m))} \to \E ^{(\lambda _2(m))}$. 
Similarly to \cite[4.2.2]{Beintro2}, 
we denote by $\Lambda ^{\sharp}$ the set of morphisms
$f ^{(\bullet)} \colon \E ^{(\bullet)} \to \FF ^{(\bullet)}$
of 
$\smash{\underrightarrow{D}} ^{\sharp} _{\Q}  ( \smash{\widetilde{\D}} _{\fP /\S } ^{(\bullet)}(T))$
such that there exist $\lambda \in L$ and a morphism
$g ^{(\bullet)} \colon \FF ^{(\bullet)} \to \lambda ^{*} \E ^{(\bullet)}$ of $\smash{\underrightarrow{D}} _{\Q} ( \smash{\widetilde{\D}} _{\fP /\S } ^{(\bullet)}(T))$
such that 
the morphisms 
$g ^{(\bullet)}\circ f ^{(\bullet)}$ 
and $\lambda ^{*} (f ^{(\bullet)}) \circ g ^{(\bullet)}$ 
of $\smash{\underrightarrow{D}} ^{\sharp} _{\Q} ( \smash{\widetilde{\D}} _{\fP /\S } ^{(\bullet)}(T))$
are the canonical morphisms (i.e. we take $\lambda _1 =id$ and $\lambda _2=\lambda$).
The morphisms belonging to $\Lambda $ are called {``lim-isomorphisms''}.
We check that  $\Lambda ^{\sharp} $ is a multiplicative system
(again, use  \cite[I.4.2]{HaRD} and the analogue of Lemma \cite[1.1.2]{caro-stab-sys-ind-surcoh}). 
By localizing 
$\smash{\underrightarrow{D}}  ^{\sharp}_{\Q}
( \smash{\widetilde{\D}} _{\fP /\S } ^{(\bullet)}(T))$
with respect to  lim-isomorphisms 
we get a category denoted by
$\smash{\underrightarrow{LD}} ^{\sharp} _{\Q}
( \smash{\widetilde{\D}} _{\fP /\S } ^{(\bullet)}(T))$.

\item Let  $\chi _1\leq \chi _2 $ in $M$ and $\lambda _1 \leq \lambda _2$ in $L$.
We get by composition the canonical morphism
$\lambda  _1^{*} \chi  _1^{*} \to \lambda _2^{*} \chi _2^{*}$.
By considering  $\chi _1 \circ \lambda _1$ as an element of $M$, we get the equality
$\lambda _1 ^* \chi _1 ^* = (\chi _1 \circ \lambda _1) ^* \lambda _1 ^*$.
Let  $S ^{\sharp}$ be the set of morphisms 
$f ^{(\bullet)} \colon \E ^{(\bullet)} \to \FF ^{(\bullet)}$
of 
$D ^{\sharp} ( \smash{\widetilde{\D}} _{\fP /\S } ^{(\bullet)}(T))$
such that there exist $\chi \in M$, 
$\lambda \in L$ and a morphism
$g ^{(\bullet)} \colon \FF ^{(\bullet)} \to \lambda ^{*} \chi ^{*}\E ^{(\bullet)}$ of $D ( \smash{\widetilde{\D}} _{\fP /\S } ^{(\bullet)}(T))$
such that 
$g ^{(\bullet)}\circ f ^{(\bullet)}$ and $\lambda ^{*} \chi ^{*}(f ^{(\bullet)}) \circ g ^{(\bullet)}$ 
are the canonical morphisms. 
The elements of   $S ^{\sharp}$ are called {`` lim-ind-isogenies''}.
We check as usual that $S ^{\sharp}$ is a multiplicative system.

\end{itemize}
\end{empt}

\begin{empt}
\label{S=LDQ}
Similarly to \cite[1.1.5]{caro-stab-sys-ind-surcoh},
we check the canonical equivalence of categories
$S ^{\sharp -1} D ^{\sharp}
( \smash{\widetilde{\D}} _{\fP /\S } ^{(\bullet)}(T))
\cong
\smash{\underrightarrow{LD}} ^{\sharp} _{\Q} 
( \smash{\widetilde{\D}} _{\fP /\S } ^{(\bullet)}(T))$,
which is the identity over the objects.

\end{empt}

\begin{empt}
\label{HomLDQ}
Similarly to \cite[1.1.6]{caro-stab-sys-ind-surcoh},
for any  $\E ^{(\bullet)}, \FF ^{(\bullet)} \in \underrightarrow{LD} ^{\sharp} _{\Q} (\smash{\widetilde{\D}} _{\fP /\S } ^{(\bullet)} (T))$,
we have the equality
\begin{equation}
\label{4.2.2Beintro}
\mathrm{Hom} _{\underrightarrow{LD} ^{\sharp} _{\Q} (\smash{\widetilde{\D}} _{\fP /\S } ^{(\bullet)} (T))}
(\E ^{(\bullet)}, \FF ^{(\bullet)} )
=
\underset{\lambda \in L}{\underrightarrow{\lim}}~
\underset{\chi \in M}{\underrightarrow{\lim}}~
\mathrm{Hom} _{D ^{\sharp} (\smash{\widetilde{\D}} _{\fP /\S } ^{(\bullet)} (T))}
(\E ^{(\bullet)}, \lambda ^{*} \chi ^{*}\FF ^{(\bullet)} ).
\end{equation}

\end{empt}

\subsection{Point of view of a derived category of an abelian category}

\begin{empt}
\label{defi-M-L}
We denote by $M (\smash{\widetilde{\D}} _{\fP /\S } ^{(\bullet)} (T))$ the category of
$\smash{\widetilde{\D}} _{\fP /\S } ^{(\bullet)} (T)$-modules.
The $\smash{\widetilde{\D}} _{\fP /\S } ^{(\bullet)} (T)$-modules
are denoted by 
$\E ^{(\bullet)}= (\E ^{(m)} , \alpha ^{(m',m)})$, 
where $m,m'$ run through non negative integers  $m' \geq m$,
where $\E ^{(m)} $ is a $\smash{\widetilde{\D}} _{\fP /\S } ^{(m)}(T)$-module
and $\alpha ^{(m',m)} \colon \E ^{(m)}\to \E ^{(m')}$ are
$\smash{\widetilde{\D}} _{\fP /\S } ^{(m)}(T)$-linear morphisms.
For any  $\chi \in M$, we denote similarly to \ref{loc-LM} the object
$\chi ^{*} (\E ^{(\bullet)}) := (\E ^{(m)} , p ^{\chi (m') -\chi (m)}\alpha ^{(m',m)})$.
In fact, we get the functor
$\chi ^{*}\colon M (\smash{\widetilde{\D}} _{\fP /\S } ^{(\bullet)} (T)) \to M (\smash{\widetilde{\D}} _{\fP /\S } ^{(\bullet)} (T))$. 
Moreover, similarly to \ref{loc-LM},
for any $\lambda \in L$, we set 
$\lambda ^{*} (\E ^{(\bullet)}) := (\E ^{(\lambda(m))} , \alpha ^{(\lambda(m'),\lambda(m))})$.

Similarly to 
\ref{loc-LM},
we define the notion of ind-isogenies  (resp. of lim-ind-isogenies) of
$M (\smash{\widetilde{\D}} _{\fP /\S } ^{(\bullet)} (T))$
and we denote by 
$\smash{\underrightarrow{M}} _{\Q}  ( \smash{\widetilde{\D}} _{\fP /\S } ^{(\bullet)}(T))$
(resp. $S ^{-1}M (\smash{\widetilde{\D}} _{\fP /\S } ^{(\bullet)} (T))$)
the localization by  ind-isogenies (resp. par les lim-ind-isogenies). 
We define also the multiplicative system of lim-isomorphisms of 
$\smash{\underrightarrow{M}} _{\Q}  ( \smash{\widetilde{\D}} _{\fP /\S } ^{(\bullet)}(T))$
and we denote by 
$\underrightarrow{LM} _{\Q} (\smash{\widetilde{\D}} _{\fP /\S } ^{(\bullet)} (T))$
the corresponding localized category. 

\end{empt}

\begin{empt}
The results of \cite[1.2.1]{caro-stab-sys-ind-surcoh} are still valid in our context:
we check the canonical equivalence of categories
$S ^{-1}M (\smash{\widetilde{\D}} _{\fP /\S } ^{(\bullet)} (T))
\cong 
\underrightarrow{LM} _{\Q} (\smash{\widetilde{\D}} _{\fP /\S } ^{(\bullet)} (T))$.
Moreover, for any $\E ^{(\bullet)}, \FF ^{(\bullet)} \in \underrightarrow{LM} _{\Q} (\smash{\widetilde{\D}} _{\fP /\S } ^{(\bullet)} (T))$ 
\begin{equation}
\label{4.2.2BeintroLMQ}
\mathrm{Hom} _{\underrightarrow{LM} _{\Q} (\smash{\widetilde{\D}} _{\fP /\S } ^{(\bullet)} (T))}
(\E ^{(\bullet)}, \FF ^{(\bullet)} )
=
\underset{\lambda \in L}{\underrightarrow{\lim}}\;
\underset{\chi \in M}{\underrightarrow{\lim}}\;
\mathrm{Hom} _{M (\smash{\widetilde{\D}} _{\fP /\S } ^{(\bullet)} (T))}
(\E ^{(\bullet)}, \lambda ^{*} \chi ^{*}\FF ^{(\bullet)} ).
\end{equation}

\end{empt}

\begin{lemm}
\label{defi-M-L-Serre}
The category $\underrightarrow{LM} _{\Q} (\smash{\widetilde{\D}} _{\fP /\S } ^{(\bullet)} (T))$ is abelian 
and the multiplicative system of  lim-ind-isogenies of
$M (\smash{\widetilde{\D}} _{\fP /\S } ^{(\bullet)} (T))$ is saturated.
\end{lemm}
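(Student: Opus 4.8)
The plan is to identify $\underrightarrow{LM} _{\Q} (\smash{\widetilde{\D}} _{\fP /\S } ^{(\bullet)} (T))$ with a quotient of the abelian category $\mathcal{A} := M (\smash{\widetilde{\D}} _{\fP /\S } ^{(\bullet)} (T))$ by a thick (Serre) subcategory. Write $S$ for the multiplicative system of lim-ind-isogenies of \ref{defi-M-L}, so that $S ^{-1}\mathcal{A} \cong \underrightarrow{LM} _{\Q} (\smash{\widetilde{\D}} _{\fP /\S } ^{(\bullet)} (T))$ by the equivalence recorded just before this lemma. It suffices to exhibit a thick subcategory $\cN \subseteq \mathcal{A}$ with
\[
S \;=\; S _{\cN} := \{\, f ^{(\bullet)} \colon \Ker f ^{(\bullet)},\ \Coker f ^{(\bullet)} \in \cN \,\}.
\]
Indeed, the quotient $\mathcal{A}/\cN = S _{\cN}^{-1}\mathcal{A}$ of an abelian category by a thick subcategory is abelian with exact localisation functor $Q$ (the general theory used, e.g., in \cite{caro-stab-sys-ind-surcoh}), which gives the first assertion; and $S _{\cN}$ is saturated, since by exactness of $Q$ an arrow $f ^{(\bullet)}$ lies in $S _{\cN}$ iff $Q(\Ker f ^{(\bullet)}) = Q(\Coker f ^{(\bullet)}) = 0$ iff $Q(f ^{(\bullet)})$ is an isomorphism, which gives the second.

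I would take $\cN$ to be the full subcategory of $\mathcal{A}$ of objects $\E ^{(\bullet)} = (\E ^{(m)}, \alpha ^{(m',m)})$ such that for every $m$ there are $m' \geq m$ and $N \in \N$ with $p ^{N}\alpha ^{(m',m)} = 0$. By \eqref{4.2.2BeintroLMQ} this is exactly the class of objects that become zero in $S ^{-1}\mathcal{A}$: the canonical morphism $\E ^{(\bullet)} \to \lambda ^{*}\chi ^{*}\E ^{(\bullet)}$ of \ref{loc-LM} is, at level $m$, a power of $p$ (namely $p ^{\chi (\lambda (m))}$) composed with the transition map $\alpha ^{(\lambda (m),m)}$, so $\mathrm{id} _{\E ^{(\bullet)}}$ vanishes in $S ^{-1}\mathcal{A}$ iff one of these canonical morphisms vanishes, iff $\E ^{(\bullet)} \in \cN$. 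Checking that $\cN$ is thick is routine: stability under subobjects and quotients is clear, the transition maps of such being induced by the $\alpha ^{(m',m)}$; and for an exact sequence $0 \to \E '^{(\bullet)} \to \E ^{(\bullet)} \to \E ''^{(\bullet)} \to 0$ with the outer terms in $\cN$ one picks, for each $m$, first $m _1 \geq m$ and $N _1$ annihilating the transition of $\E ''^{(\bullet)}$ from $m$ to $m _1$, then $m _2 \geq m _1$ and $N _2$ annihilating the transition of $\E '^{(\bullet)}$ from $m _1$ to $m _2$, and a two-step diagram chase shows $p ^{N _1 + N _2}$ annihilates the transition of $\E ^{(\bullet)}$ from $m$ to $m _2$.

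There remains the identification $S = S _{\cN}$. For $S \subseteq S _{\cN}$: let $f ^{(\bullet)} \colon \E ^{(\bullet)} \to \FF ^{(\bullet)}$ be a lim-ind-isogeny, with $\chi \in M$, $\lambda \in L$ and $g ^{(\bullet)} \colon \FF ^{(\bullet)} \to \lambda ^{*}\chi ^{*}\E ^{(\bullet)}$ realising $g ^{(\bullet)} \circ f ^{(\bullet)}$ and $\lambda ^{*}\chi ^{*}(f ^{(\bullet)}) \circ g ^{(\bullet)}$ as the canonical morphisms. Then $\E ^{(\bullet)} \to \lambda ^{*}\chi ^{*}\E ^{(\bullet)}$ factors through $f ^{(\bullet)}$, hence at each level annihilates $\Ker f ^{(m)}$; writing it as $p ^{\chi (\lambda (m))}\alpha ^{(\lambda (m),m)}$ and using $\lambda (m) \geq m$ shows $\Ker f ^{(\bullet)} \in \cN$. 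Dually $\FF ^{(\bullet)} \to \lambda ^{*}\chi ^{*}\FF ^{(\bullet)}$ has image, at each level, inside that of $\lambda ^{*}\chi ^{*}(f ^{(\bullet)})$, so $\Coker f ^{(\bullet)} \in \cN$. For $S _{\cN} \subseteq S$: factoring $f ^{(\bullet)}$ through its image and using that $S$ is multiplicative, one reduces to a monomorphism $u ^{(\bullet)} \colon \G ^{(\bullet)} \hookrightarrow \FF ^{(\bullet)}$ with $\Coker u ^{(\bullet)} \in \cN$ and an epimorphism with kernel in $\cN$. For $u ^{(\bullet)}$, choose $\chi, \lambda$ with the canonical morphism $\Coker u ^{(\bullet)} \to \lambda ^{*}\chi ^{*}\Coker u ^{(\bullet)}$ zero; since $\chi ^{*}$ and $\lambda ^{*}$ are exact (the former only rescales transition maps, the latter reindexes) and the canonical morphisms are natural, the canonical morphism $\FF ^{(\bullet)} \to \lambda ^{*}\chi ^{*}\FF ^{(\bullet)}$ factors through the subobject $\lambda ^{*}\chi ^{*}\G ^{(\bullet)}$, and the resulting arrow $g ^{(\bullet)} \colon \FF ^{(\bullet)} \to \lambda ^{*}\chi ^{*}\G ^{(\bullet)}$ (use that $\lambda ^{*}\chi ^{*}(u ^{(\bullet)})$ is a monomorphism to check the two required identities) exhibits $u ^{(\bullet)}$ as a lim-ind-isogeny; the epimorphism case is dual.

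I expect the identification $S = S _{\cN}$ to be the only step requiring real care, and the difficulty there is purely bookkeeping: one must juggle at once the single tower of transition maps $\alpha ^{(m',m)}$ and the two families of reindexings $\chi ^{*}$ ($\chi \in M$), $\lambda ^{*}$ ($\lambda \in L$), and keep track of each canonical morphism of \ref{loc-LM} as an explicit power of $p$ followed by a transition map. None of this uses that $k$ is perfect, so, as elsewhere in the paper, one could instead simply cite the corresponding statement of \cite{caro-stab-sys-ind-surcoh}.
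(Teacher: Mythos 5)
Your proof is correct, and it takes the standard route: identify lim-ind-isogenies as the class of morphisms whose kernel and cokernel lie in a Serre subcategory $\cN$ of $M(\smash{\widetilde{\D}} _{\fP /\S } ^{(\bullet)} (T))$, then invoke Gabriel's quotient theory. Since the paper simply cites \cite[1.2.4]{caro-stab-sys-ind-surcoh} and that cited proof is (as far as the structure of the subject allows) the same Serre-quotient identification, there is no essential difference of approach. One small point worth keeping explicit when writing it up: your characterization of $\cN$ is pointwise (for each $m$ some $m' \geq m$, $N$), whereas the vanishing of $\mathrm{id}_{\E ^{(\bullet)}}$ in the localization requires, via \eqref{4.2.2BeintroLMQ}, a single pair $(\chi, \lambda)$ uniformly in $m$; passing between these requires assembling the pointwise data into increasing functions $\lambda \in L$ and $\chi \in M$ (using that $\alpha ^{(\lambda (m), m)}$ factors through $\alpha ^{(m', m)}$ whenever $\lambda (m) \geq m'$, and choosing $\chi$ large enough on the image of $\lambda$). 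This is routine, but it is the one place where the equivalence of the two definitions of $\cN$ could silently be taken for granted.
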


\begin{proof}
We can copy the proof of \cite[1.2.4]{caro-stab-sys-ind-surcoh}.
\end{proof}

\begin{empt}
We denote by  $M ( \smash{\D} ^\dag _{\fP } (\hdag T) _{\Q} )$ 
the abelian category of 
$\smash{\D} ^\dag _{\fP } (\hdag T) _{\Q}$-modules.
By tensoring by $\Q$ and next by applying the inductive limit on the level, we get the functor
$\underrightarrow{\lim} 
\colon
M
(\smash{\widetilde{\D}} _{\fP /\S } ^{(\bullet)}(T))
\to
M ( \smash{\D} ^\dag _{\fP } (\hdag T) _{\Q} )$.
Since this functor sends a lim-ind-isomorphism to an  isomorphism, it factorizes canonically through the functor 
\begin{equation}
\label{M-eq-lim}
\underrightarrow{\lim} 
\colon
\smash{\underrightarrow{LM}}  _{\Q}
(\smash{\widetilde{\D}} _{\fP /\S } ^{(\bullet)}(T))
\to
M ( \smash{\D} ^\dag _{\fP } (\hdag T) _{\Q} ).
\end{equation}
Similarly, we get 
\begin{equation}
\label{D-eq-lim}
\underrightarrow{\lim} 
\colon
\smash{\underrightarrow{LD}}  ^\mathrm{b} _{\Q}
(\smash{\widetilde{\D}} _{\fP /\S } ^{(\bullet)}(T))
\to
D ^\mathrm{b} ( \smash{\D} ^\dag _{\fP } (\hdag T) _{\Q} ).
\end{equation}
\end{empt}

\begin{prop}
\label{eqcatLD=DSM}
The canonical functor 
$D ^{\mathrm{b}} (\smash{\widetilde{\D}} _{\fP /\S } ^{(\bullet)} (T))
\to
D ^{\mathrm{b}}
(\underrightarrow{LM} _{\Q} (\smash{\widetilde{\D}} _{\fP /\S } ^{(\bullet)} (T)))$
of triangulated categories
induced by the functor of abelian categories
$M(\smash{\widetilde{\D}} _{\fP /\S } ^{(\bullet)} (T))
\to
\underrightarrow{LM} _{\Q} (\smash{\widetilde{\D}} _{\fP /\S } ^{(\bullet)} (T))$
factorizes canonically through the equivalence of triangulated categories
\begin{equation}
\label{eqcatLD=DSM-fonct}
\underrightarrow{LD} ^{\mathrm{b}} _{\Q} (\smash{\widetilde{\D}} _{\fP /\S } ^{(\bullet)} (T))
\cong 
D ^{\mathrm{b}}
(\underrightarrow{LM} _{\Q} (\smash{\widetilde{\D}} _{\fP /\S } ^{(\bullet)} (T))).
\end{equation}

\end{prop}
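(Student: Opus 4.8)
The plan is to invoke the general machinery relating the derived category of a localized abelian category (localization at a saturated multiplicative system of "isogeny-type" morphisms in an abelian category) to a suitable localization of the derived category of the original abelian category, and then to identify the resulting categories with the ones at hand. Concretely, recall from \ref{S=LDQ} that $\smash{\underrightarrow{LD}} ^{\mathrm{b}} _{\Q} (\smash{\widetilde{\D}} _{\fP /\S } ^{(\bullet)} (T))$ is by construction $S ^{\mathrm{b}\,-1} D ^{\mathrm{b}} (\smash{\widetilde{\D}} _{\fP /\S } ^{(\bullet)} (T))$, the localization of the derived category of the abelian category $M (\smash{\widetilde{\D}} _{\fP /\S } ^{(\bullet)} (T))$ at the (multiplicative) system of complexes of lim-ind-isogenies. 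On the other hand, $D ^{\mathrm{b}} (\underrightarrow{LM} _{\Q} (\smash{\widetilde{\D}} _{\fP /\S } ^{(\bullet)} (T)))$ is the bounded derived category of the Serre quotient $\underrightarrow{LM} _{\Q}$. So the statement is an instance of the comparison between $D^{\mathrm b}$ of a Serre quotient $\mathcal A/\mathcal C$ and the localization of $D^{\mathrm b}(\mathcal A)$ at the complexes with cohomology in $\mathcal C$ — except that here the "thick subcategory" is replaced by a saturated multiplicative system of morphisms whose kernels and cokernels are the relevant "small" objects. The key inputs are Lemma \ref{defi-M-L-Serre} (the quotient is abelian, the system is saturated) and the fact, established in \cite[1.2.1]{caro-stab-sys-ind-surcoh} and recalled just before it, that $\underrightarrow{LM} _{\Q} \cong S^{-1} M (\smash{\widetilde{\D}} _{\fP /\S } ^{(\bullet)} (T))$ with an explicit $\Hom$-formula.

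First I would set up the relevant subcategory: let $\mathcal{N} \subset D^{\mathrm b}(\smash{\widetilde{\D}} _{\fP /\S } ^{(\bullet)} (T))$ be the full triangulated subcategory of complexes $\E^{(\bullet)}$ all of whose cohomology objects $\cH^i(\E^{(\bullet)})$ become zero in $\underrightarrow{LM} _{\Q}$ — equivalently, whose cohomology objects are annihilated by some lim-ind-isogeny, i.e. killed by a map of the form "multiplication datum $p^{\chi(m)}$ after shifting levels by $\lambda$". One checks $\mathcal N$ is a thick (épaisse) triangulated subcategory using the long exact cohomology sequence and the two-out-of-three property, which reduces to: the full subcategory of $M (\smash{\widetilde{\D}} _{\fP /\S } ^{(\bullet)} (T))$ of objects that vanish in $\underrightarrow{LM} _{\Q}$ is a Serre subcategory — this is exactly the content of Lemma \ref{defi-M-L-Serre} together with the saturation statement, which guarantees that "vanishing in the quotient" is detected by kernels and cokernels of morphisms in the multiplicative system. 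Then I would argue that the Verdier quotient $D^{\mathrm b}(\smash{\widetilde{\D}} _{\fP /\S } ^{(\bullet)} (T))/\mathcal N$ coincides with the localization $S^{\mathrm b\,-1} D^{\mathrm b}(\smash{\widetilde{\D}} _{\fP /\S } ^{(\bullet)} (T)) = \smash{\underrightarrow{LD}} ^{\mathrm{b}} _{\Q}$: indeed $S^{\mathrm b}$ is, up to the usual calculus-of-fractions arguments (using \cite[I.4.2]{HaRD} as in \ref{loc-LM}), precisely the class of morphisms whose cone lies in $\mathcal N$, so the two localizations have the same universal property.

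Next I would produce the comparison functor to $D^{\mathrm b}(\underrightarrow{LM} _{\Q})$. The exact quotient functor $Q\colon M (\smash{\widetilde{\D}} _{\fP /\S } ^{(\bullet)} (T)) \to \underrightarrow{LM} _{\Q}$ induces a triangulated functor on bounded derived categories which kills $\mathcal N$ (since $Q$ kills all cohomology of objects of $\mathcal N$), hence factors through $D^{\mathrm b}(\smash{\widetilde{\D}} _{\fP /\S } ^{(\bullet)} (T))/\mathcal N = \smash{\underrightarrow{LD}} ^{\mathrm{b}} _{\Q}$, giving the functor \eqref{eqcatLD=DSM-fonct}. That this is an equivalence is the standard theorem of Miyachi on localization of derived categories (equivalently: a bounded version of the statement that for a Serre subcategory $\mathcal C \subset \mathcal A$ the natural functor $D^{\mathrm b}_{\mathcal C}(\mathcal A) \to D^{\mathrm b}(\mathcal C)$ and the quotient $D^{\mathrm b}(\mathcal A)/D^{\mathrm b}_{\mathcal C}(\mathcal A) \to D^{\mathrm b}(\mathcal A/\mathcal C)$ are equivalences). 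To apply it one needs to verify its hypothesis, which in the cleanest form is that every short exact sequence $0 \to Q(\E^{(\bullet)}) \to \mathcal G \to Q(\FF^{(\bullet)}) \to 0$ in $\underrightarrow{LM} _{\Q}$ lifts, after modifying $\E^{(\bullet)}$ and $\FF^{(\bullet)}$ by lim-ind-isogenies, to a short exact sequence in $M (\smash{\widetilde{\D}} _{\fP /\S } ^{(\bullet)} (T))$; this is where the explicit description of $\Hom$ in $\underrightarrow{LM} _{\Q}$ as a double colimit \eqref{4.2.2BeintroLMQ} does the work, since an extension class in the quotient is represented at a finite level $(\lambda,\chi)$ and can be pulled back to an honest extension upstairs.

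The routine verifications — that $\mathcal N$ is thick, that $S^{\mathrm b}$ equals the saturation of $\{$cone in $\mathcal N\}$, that $Q$ is exact — I would dispatch by citing \cite[1.2.1]{caro-stab-sys-ind-surcoh}, Lemma \ref{defi-M-L-Serre}, and the calculus-of-fractions lemmas already used in \ref{loc-LM}. I expect the one genuinely delicate point to be the lifting of extensions (equivalently, surjectivity on morphisms of the comparison functor), because the multiplicative system here is not the class of morphisms with kernel and cokernel in a fixed thick subcategory of a Noetherian-type category but an "isogeny" system indexed by the parameter sets $L$ and $M$; one must be careful that the level-shift functors $\lambda^\ast$ and the $p$-power twists $\chi^\ast$ are exact and commute appropriately with the formation of kernels, cokernels and extensions, so that an extension realized at finite level genuinely lifts and the lift is well-defined up to the equivalence built into $\underrightarrow{LM} _{\Q}$. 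Since all of these compatibilities are already implicit in the constructions of \ref{loc-LM} and \ref{defi-M-L}, and the analogous statement over a perfect residue field is \cite[1.2.1]{caro-stab-sys-ind-surcoh} (whose proof, as noted there, goes through verbatim), the argument reduces to transcribing that proof — which is what I would ultimately do, after laying out the structure above.
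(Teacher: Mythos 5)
Your proposal is correct and amounts to the same strategy as the paper: the paper's own proof is a one-line citation to \cite[1.2.11]{caro-stab-sys-ind-surcoh}, asserting that the argument there carries over verbatim. Your sketch — identify the thick subcategory of complexes whose cohomology is killed in the quotient, compare the Verdier quotient of $D^{\mathrm{b}}(\smash{\widetilde{\D}} _{\fP /\S } ^{(\bullet)} (T))$ with the localization $\underrightarrow{LD}^{\mathrm b}_{\Q}$ via the calculus of fractions already used in \ref{loc-LM}, build the comparison functor from the exact quotient of abelian categories, and pin down the delicate step as surjectivity on Hom-sets using the double-colimit formula \eqref{4.2.2BeintroLMQ} — is a faithful reconstruction of the structure of that cited argument.
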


\begin{proof}
We can copy the proof of \cite[1.2.11]{caro-stab-sys-ind-surcoh}.
\end{proof}

\begin{empt}
\label{empt-diag-Hn-comp}
The equivalence \ref{eqcatLD=DSM-fonct} commutes with cohomological functors, 
i.e. we have for any $n\in \N$ the commutative diagram
\begin{equation}
\label{diag-Hn-comp}
\xymatrix{
{D ^{\mathrm{b}} (\smash{\widetilde{\D}} _{\fP /\S } ^{(\bullet)} (T))} 
\ar[r] ^-{}
\ar[d] ^-{\H ^n}
&
{\underrightarrow{LD} ^{\mathrm{b}} _{\Q} (\smash{\widetilde{\D}} _{\fP /\S } ^{(\bullet)} (T))} 
\ar[r] ^-{\cong}
\ar@{.>}[d] ^-{\H ^n}
& 
{D ^{\mathrm{b}} (\underrightarrow{LM}  _{\Q} (\smash{\widetilde{\D}} _{\fP /\S } ^{(\bullet)} (T))) } 
\ar[d] ^-{\H ^n}
\\ 
{M(\smash{\widetilde{\D}} _{\fP /\S } ^{(\bullet)} (T))} 
\ar[r] ^-{}
& 
{\underrightarrow{LM}  _{\Q} (\smash{\widetilde{\D}} _{\fP /\S } ^{(\bullet)} (T))} 
\ar@{=}[r] ^-{}
& 
{\underrightarrow{LM}  _{\Q} (\smash{\widetilde{\D}} _{\fP /\S } ^{(\bullet)} (T))} 
}
\end{equation}
where the middle vertical arrow is the one making 
commutative by definition the left square (see \cite[1.2.6]{caro-stab-sys-ind-surcoh}).
\end{empt}

\subsection{Coherence}

Similarly to \cite[2.2.1]{caro-stab-sys-ind-surcoh}, we have the following definition. 
\begin{dfn}
[Coherence up to lim-ind-isogeny]
\label{coh-loc-pres-fini-lim-ind-iso}
Let $\E ^{(\bullet)}$ be a
$\smash{\widetilde{\D}} _{\fP /\S } ^{(\bullet)} (T)$-module.
The module 
$\E ^{(\bullet)}$
is said to be a
$\smash{\widetilde{\D}} _{\fP /\S } ^{(\bullet)} (T)$-module 
of finite type up to  lim-ind-isogeny
if there exists an open covering 
$(\fP _i ) _{i\in I}$ of  $\fP$ 
such that, for any $i \in I$, 
there exists an exact sequence of 
$\underrightarrow{LM} _{\Q}  (\smash{\widetilde{\D}} _{\fP /\S } ^{(\bullet)} (T))$ of the form: 
$\left ( \smash{\widetilde{\D}} _{\fP  _i} ^{(\bullet)} (T  \cap P _i) \right) ^{r _i}
\to 
\E ^{(\bullet)} | \fP _i
\to 0$,
where $r _i\in \N$.
Similarly, we get the notion of
$\smash{\widetilde{\D}} _{\fP /\S } ^{(\bullet)} (T)$-module 
locally of finite presentation up to  lim-ind-isogeny 
(resp. coherence up lim-ind-isogeny). 
\end{dfn}

\begin{ntn}
\label{nota-(L)Mcoh}
We denote by 
 $\underrightarrow{LM} _{\Q, \mathrm{coh}} (\smash{\widetilde{\D}} _{\fP /\S } ^{(\bullet)} (T))$
the full subcategory of 
$\underrightarrow{LM} _{\Q} (\smash{\widetilde{\D}} _{\fP /\S } ^{(\bullet)} (T))$
consisting of 
coherent $\smash{\widetilde{\D}} _{\fP /\S } ^{(\bullet)} (T)$-modules up to lim-ind-isogeny.
\end{ntn}

\begin{prop}
\label{LQ-coh-stab}
The full subcategory 
$\underrightarrow{LM} _{\Q, \mathrm{coh}} ( \smash{\widetilde{\D}} _{\fP /\S } ^{(\bullet)} (T))$
of 
$\underrightarrow{LM} _{\Q}  (\smash{\widetilde{\D}} _{\fP /\S } ^{(\bullet)} (T))$
is stable by isomorphisms, 
kernels, cokernels, extensions.
\end{prop}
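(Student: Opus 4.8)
The plan is to reduce the statement to the corresponding stability properties for the category of coherent modules over the ringed space $(\fP, \smash{\widetilde{\D}} _{\fP /\S } ^{(\bullet)} (T))$, exactly as in the reference \cite[2.2.2]{caro-stab-sys-ind-surcoh}, checking that nothing in that argument relied on $k$ being perfect. Stability under isomorphisms is immediate from Definition \ref{coh-loc-pres-fini-lim-ind-iso}, since the defining property (existence of a local presentation in $\underrightarrow{LM} _{\Q}$) is invariant under isomorphism in $\underrightarrow{LM} _{\Q} (\smash{\widetilde{\D}} _{\fP /\S } ^{(\bullet)} (T))$. For the remaining three, since coherence is local on $\fP$ and the constructions (kernel, cokernel, extension) in $\underrightarrow{LM} _{\Q}$ are computed as in any abelian localized category (Lemma \ref{defi-M-L-Serre}), I would work on an affine open where all the modules in question admit the relevant finite presentations, and then argue as follows.

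First I would treat cokernels: given a morphism $\E ^{(\bullet)} \to \FF ^{(\bullet)}$ between coherent objects, locally both sit in exact sequences $(\smash{\widetilde{\D}} ^{(\bullet)})^{r} \to \E ^{(\bullet)} \to 0$ and $(\smash{\widetilde{\D}} ^{(\bullet)})^{s} \to \FF ^{(\bullet)} \to 0$; composing and taking the cokernel of the induced map $(\smash{\widetilde{\D}} ^{(\bullet)})^{r+s}\to \FF^{(\bullet)}$ exhibits $\Coker(\E^{(\bullet)}\to\FF^{(\bullet)})$ as a quotient of a finite free module, and one then needs that the kernel of that surjection is again of finite type, i.e. local finite presentation of the free modules. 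This is where the key input enters: one needs that $\smash{\widetilde{\D}} _{\fP /\S } ^{(m)} (T)$ is a coherent ring (after passing to $\underrightarrow{LM}_{\Q}$, coherence "up to lim-ind-isogeny"), which is \cite[3.6]{Be1} (the coherence of $\widehat{\B}^{(m)}_{\fP}(T)\widehat{\otimes}\widehat{\D}^{(m)}_{\fP/\S}$), a result whose proof does not use perfectness of $k$. With cokernels in hand, extensions follow by the standard five-lemma/snake argument: if $0\to \E^{(\bullet)}\to \G^{(\bullet)}\to \FF^{(\bullet)}\to 0$ is exact with $\E^{(\bullet)}, \FF^{(\bullet)}$ coherent, one lifts a presentation of $\FF^{(\bullet)}$, uses projectivity of the free module to build a presentation of $\G^{(\bullet)}$ sitting over those of $\E^{(\bullet)}$ and $\FF^{(\bullet)}$, and concludes finite presentation of $\G^{(\bullet)}$. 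Finally kernels: given $\E^{(\bullet)}\to\FF^{(\bullet)}$ between coherent objects, $\Coker$ is coherent by the above, hence so is the image $\I^{(\bullet)}$ (cokernel of $\Ker\hookrightarrow\E^{(\bullet)}$, but more directly: a coherent submodule of a coherent module, which reduces to the ring being coherent), and then $\Ker$ is the kernel of the surjection $\E^{(\bullet)}\twoheadrightarrow\I^{(\bullet)}$ onto a finitely presented module, hence of finite type; coherence of $\Ker$ follows because a finitely generated submodule of the coherent module $\E^{(\bullet)}$ is coherent.

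The main obstacle — really the only non-formal point — is ensuring that the coherence of the rings $\smash{\widetilde{\D}} _{\fP /\S } ^{(m)} (T) = \widetilde{\B}^{(m)}_{\fP}(T)\widehat{\otimes}_{\O_{\fP}}\widehat{\D}^{(m)}_{\fP/\S}$ holds over an imperfect residue field, and that the compatibility with the transition maps $\alpha^{(m',m)}$ is preserved under the $\chi^*$ and $\lambda^*$ operations so that the finite presentations glue into objects of $\underrightarrow{LM}_{\Q}$ rather than merely levelwise data. Both are addressed in \cite{Be1} and in the analogue of \cite[2.2.2]{caro-stab-sys-ind-surcoh}: the smoothness of $\fP/\S$ (not of $P/k$) is what makes $\widehat{\D}^{(m)}_{\fP/\S}$ well-behaved, and it is insensitive to perfectness of $k$; similarly the construction of $\widehat{\B}^{(m)}_{\fP}(T)$ in \cite[4.2]{Be1} only uses regularity of $P$ and the divisor $T$. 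Hence the proof in the perfect case transfers verbatim.

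\begin{proof}
One copies the proof of \cite[2.2.2]{caro-stab-sys-ind-surcoh}, which uses only the coherence of the rings $\smash{\widetilde{\D}} _{\fP /\S } ^{(m)} (T)$ (a consequence of \cite[3.6, 4.2]{Be1}, valid without any perfectness hypothesis on $k$) together with Lemma \ref{defi-M-L-Serre}.
\end{proof}
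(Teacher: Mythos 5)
Your final proof is essentially identical to the paper's, which likewise disposes of the statement by invoking the corresponding result in \cite{caro-stab-sys-ind-surcoh} with the observation that its proof does not rest on $k$ being perfect. Note only a likely citation slip: the relevant item in that reference is \cite[2.2.8]{caro-stab-sys-ind-surcoh}, not 2.2.2 — the paper's convention (see how Definition \ref{coh-loc-pres-fini-lim-ind-iso} is patterned on \cite[2.2.1]{caro-stab-sys-ind-surcoh}) makes 2.2.8 the stability proposition. Your preamble's sketch of the mechanism (cokernels via coherence of the ring $\smash{\widetilde{\D}} _{\fP /\S } ^{(m)} (T)$, then extensions and kernels, working in the localized abelian category of Lemma \ref{defi-M-L-Serre}) is a fair reconstruction of what that cited proof does, and you correctly identify the one point requiring care in the $\underrightarrow{LM}_{\Q}$ setting, namely that levelwise presentations must assemble compatibly under the $\chi^*$ and $\lambda^*$ operations.
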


\begin{proof}
We can copy the proof of \cite[2.2.8]{caro-stab-sys-ind-surcoh}.
\end{proof}

\begin{ntn}
\label{ntn-cohDLM}
For any $\sharp  \in \{ 0,+,-, \mathrm{b}, \emptyset\}$, 
we denote by  
$D  ^\sharp _{\mathrm{coh}}
(\underrightarrow{LM} _{\Q} (\smash{\widetilde{\D}} _{\fP /\S } ^{(\bullet)} (T)))$
the full subcategory of 
$D ^\sharp
(\underrightarrow{LM} _{\Q} (\smash{\widetilde{\D}} _{\fP /\S } ^{(\bullet)} (T)))$
consisting of complexes $\E ^{(\bullet)}$ such that, for any $n \in \Z$, 
$\mathcal{H} ^{n} (\E ^{(\bullet)}) \in 
\underrightarrow{LM} _{\Q, \mathrm{coh}} (\smash{\widetilde{\D}} _{\fP /\S } ^{(\bullet)} (T))$
(see notation  \ref{nota-(L)Mcoh}). 
These objects are called coherent complexes of 
$D ^\sharp
(\underrightarrow{LM} _{\Q} (\smash{\widetilde{\D}} _{\fP /\S } ^{(\bullet)} (T)))$.
\end{ntn}

\begin{empt}
\label{cohDLMislocal}
By definition, the property that an object of 
$\underrightarrow{LM} _{\Q}  (\smash{\widetilde{\D}} _{\fP /\S } ^{(\bullet)} (T))$
is an object of 
$\underrightarrow{LM} _{\Q, \mathrm{coh}} ( \smash{\widetilde{\D}} _{\fP /\S } ^{(\bullet)} (T))$
is local in $\fP$. 
This yields that the notion of coherence of \ref{ntn-cohDLM} is local in  $\fP$, i.e. 
the fact that a complex $\E ^{(\bullet)}$ of 
$D ^\sharp
(\underrightarrow{LM} _{\Q} (\smash{\widetilde{\D}} _{\fP /\S } ^{(\bullet)} (T)))$
is coherent is local.
\end{empt}

\begin{dfn}
[Coherence in the sense of Berthelot]
\label{defi-LDQ0coh}
Let  $\sharp  \in \{\emptyset, +,-, \mathrm{b}\}$.
Let
$\E ^{(\bullet)} \in \smash{\underrightarrow{LD}} ^{\sharp} _{\Q}  ( \smash{\widetilde{\D}} _{\fP /\S } ^{(\bullet)}(T))$.
The complex
$\E ^{(\bullet)}$
is said to be coherent if 
there exist
$\lambda \in L$
and
$\FF ^{(\bullet)}\in \smash{\underrightarrow{LD}} _{\Q} ^{\sharp} (\lambda ^{*} \smash{\widetilde{\D}} _{\fP /\S } ^{(\bullet)}(T))$ 
together with an  isomorphism in 
$ \smash{\underrightarrow{LD}} _{\Q} ^{\sharp}  ( \smash{\widetilde{\D}} _{\fP /\S } ^{(\bullet)}(T))$ of the form
$\E ^{(\bullet)} \riso \FF ^{(\bullet) }$,
such that $\FF ^{(\bullet)}$ satisfies the following conditions:
\begin{enumerate}
\item For any $m \in \N$, $\FF ^{(m)} \in D  _{\mathrm{coh}} ^{\sharp} (\smash{\widetilde{\D}} _{\fP /\S } ^{(\lambda (m))} (T))$ ; 
\item For any  $0\leq m \leq m'$,  
the canonical morphism
\begin{equation}
\label{Beintro-4.2.3M}
\smash{\widetilde{\D}} _{\fP /\S } ^{(\lambda (m'))} (T) \otimes ^\L _{\smash{\widetilde{\D}} _{\fP /\S } ^{(\lambda (m))} (T)}
\FF ^{(m)} \to \FF ^{(m')} 
\end{equation}
is an isomorphism.
\end{enumerate}
\end{dfn}

\begin{ntn}
\label{nota-LDQ0coh}
Let $\sharp \in \{\emptyset, +,-, \mathrm{b}\}$.
We denote by  $\underrightarrow{LD}  ^{\sharp} _{\Q, \mathrm{coh}} (\smash{\widetilde{\D}} _{\fP /\S } ^{(\bullet)} (T))$
the strictly full subcategory of 
$\underrightarrow{LD}  ^{\sharp} _{\Q} (\smash{\widetilde{\D}} _{\fP /\S } ^{(\bullet)} (T))$
consisting of coherent complexes. 
\end{ntn}

\begin{prop}
\label{eqcat-limcoh}
\begin{enumerate}
\item The functor  \ref{M-eq-lim} induces the equivalence of categories
\begin{equation}
\label{M-eq-coh-lim}
\underrightarrow{\lim} 
\colon
\smash{\underrightarrow{LM}}  _{\Q, \mathrm{coh}}
(\smash{\widetilde{\D}} _{\fP /\S } ^{(\bullet)}(T))
\cong
\mathrm{Coh} ( \smash{\D} ^\dag _{\fP } (\hdag T) _{\Q} ),
\end{equation}
where $\mathrm{Coh} ( \smash{\D} ^\dag _{\fP } (\hdag T) _{\Q} )$ 
is the category of coherent
$\smash{\D} ^\dag _{\fP } (\hdag T) _{\Q}$-modules.

\item The functor 
\ref{D-eq-lim} induces the equivalence of triangulated categories
\begin{equation}
\label{eqcatcoh}
\underrightarrow{\lim} 
\colon 
D ^{\mathrm{b}} _{\mathrm{coh}} (\underrightarrow{LM} _{\Q} (\smash{\widetilde{\D}} _{\fP /\S } ^{(\bullet)} (T)))
\cong
D ^{\mathrm{b}} _{\mathrm{coh}}( \smash{\D} ^\dag _{\fP } (\hdag T) _{\Q} ).
\end{equation}

\item The equivalence of triangulated categories 
$\underrightarrow{LD} ^{\mathrm{b}} _{\Q} (\smash{\widetilde{\D}} _{\fP /\S } ^{(\bullet)} (T))
\cong 
D ^{\mathrm{b}}
(\underrightarrow{LM} _{\Q} (\smash{\widetilde{\D}} _{\fP /\S } ^{(\bullet)} (T)))$
of 
\ref{eqcatLD=DSM-fonct}
induces the equivalence of triangulated categories
\begin{equation}
\label{eqcatLD=DSM-fonct-coh}
\underrightarrow{LD} ^{\mathrm{b}} _{\Q, \mathrm{coh}} (\smash{\widetilde{\D}} _{\fP /\S } ^{(\bullet)} (T))
\cong
D ^{\mathrm{b}} _{\mathrm{coh}}
(\underrightarrow{LM} _{\Q} (\smash{\widetilde{\D}} _{\fP /\S } ^{(\bullet)} (T))).
\end{equation}
\end{enumerate}
\end{prop}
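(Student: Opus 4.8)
The plan is to prove the three assertions in the order (1), (3), (2). Assertion (1) is essentially a local statement about coherent modules; assertion (3) is then obtained by transporting (1) through the equivalence \ref{eqcatLD=DSM-fonct} and its compatibility \ref{diag-Hn-comp} with the functors $\H ^n$; and assertion (2) is the composite of (3) with the colimit functor \ref{D-eq-lim}. At every step the argument is the one given in \cite{caro-stab-sys-ind-surcoh} (sections 2.1 and 2.2), which nowhere uses that $k$ is perfect; below I indicate the substantive ingredients.

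For assertion (1): since the ind-system $(\widetilde{\D} _{\fP /\S } ^{(m)}(T)) _{m}$ is cofinal among the $\widehat{\B} ^{(m)} _{\fP} (T) \widehat{\otimes} _{\O _{\fP}} \widehat{\D} _{\fP /\S } ^{(m)}$, one has $\varinjlim _{m} \widetilde{\D} _{\fP /\S } ^{(m)}(T) _{\Q} = \D ^\dag _{\fP } (\hdag T) _{\Q}$, which by Berthelot's theory is a coherent sheaf of rings, flat over each $\widetilde{\D} _{\fP /\S } ^{(m)}(T) _{\Q}$. Applying the exact functor $\underrightarrow{\lim}$ of \ref{M-eq-lim} to a local presentation up to lim-ind-isogeny (Definition \ref{coh-loc-pres-fini-lim-ind-iso}) shows at once that it carries $\underrightarrow{LM} _{\Q, \mathrm{coh}} ( \widetilde{\D} _{\fP /\S } ^{(\bullet)} (T))$ into $\mathrm{Coh} ( \D ^\dag _{\fP } (\hdag T) _{\Q})$. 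Essential surjectivity rests on Berthelot's description of coherent $\D ^\dag _{\fP } (\hdag T) _{\Q}$-modules: over a suitable affine covering $(\fP _i)$ of $\fP$ such a module $\cM$ is of the form $\varinjlim _{m} \cM ^{(m)} _i$ with $\cM ^{(m)} _i$ coherent over $\widetilde{\D} _{\fP _i /\S } ^{(\lambda (m))}(T \cap P _i)$ for some $\lambda \in L$ and with transition maps that become isomorphisms after extension of scalars; these assemble into objects of $\underrightarrow{LM} _{\Q, \mathrm{coh}}$ over each $\fP _i$, and the glueing data of $\cM$ lift, through the Hom-description \ref{4.2.2BeintroLMQ}, to isomorphisms in $\underrightarrow{LM} _{\Q}$ satisfying there the cocycle condition, hence descend to a global preimage. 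Full faithfulness is the comparison, for coherent $\cE ^{(\bullet)}$ and $\cF ^{(\bullet)}$, of the double colimit over $L \times M$ computing $\Hom _{\underrightarrow{LM} _{\Q}}$ in \ref{4.2.2BeintroLMQ} with $\Hom _{\D ^\dag _{\fP } (\hdag T) _{\Q}} ( \varinjlim _{m} \cE ^{(m)} _{\Q}, \varinjlim _{m} \cF ^{(m)} _{\Q})$: after localizing so that both objects come from a single level, finite presentation lets $\Hom$ out of $\varinjlim _{m} \cF ^{(m)} _{\Q}$ commute with the colimit, and a cofinality comparison of index sets identifies the two expressions.

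For assertion (3): by \ref{eqcatLD=DSM-fonct} and the commutation with $\H ^n$ recorded in \ref{diag-Hn-comp}, an object of $\underrightarrow{LD} ^{\mathrm{b}} _{\Q} ( \widetilde{\D} _{\fP /\S } ^{(\bullet)} (T))$ has coherent image in $D ^{\mathrm{b}} ( \underrightarrow{LM} _{\Q} ( \widetilde{\D} _{\fP /\S } ^{(\bullet)} (T)))$ precisely when all its cohomology modules lie in $\underrightarrow{LM} _{\Q, \mathrm{coh}}$, so it suffices to show that for $\E ^{(\bullet)} \in \underrightarrow{LD} ^{\mathrm{b}} _{\Q}$ the condition of Definition \ref{defi-LDQ0coh} is equivalent to $\H ^n (\E ^{(\bullet)}) \in \underrightarrow{LM} _{\Q, \mathrm{coh}}$ for all $n$. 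One direction is straightforward: if $\E ^{(\bullet)}$ satisfies \ref{defi-LDQ0coh}, the isomorphisms \ref{Beintro-4.2.3M}, the flatness recalled above and the levelwise coherence of the $\FF ^{(m)}$ show, via \ref{LQ-coh-stab}, that each $\H ^n (\E ^{(\bullet)})$ is coherent up to lim-ind-isogeny. The converse is the substantial point: given $\E ^{(\bullet)}$ with coherent cohomology one must exhibit a representative as in \ref{defi-LDQ0coh}, and this is done locally on $\fP$ by constructing, from the local finite presentations up to lim-ind-isogeny of the $\H ^n (\E ^{(\bullet)})$ and the finite Tor-dimension of the rings $\widetilde{\D} _{\fP /\S } ^{(m)} (T)$ on affine opens, a bounded complex of finite free $\lambda ^{*} \widetilde{\D} _{\fP /\S } ^{(\bullet)} (T)$-modules (for $\lambda \in L$ large) quasi-isomorphic to $\E ^{(\bullet)}$ and carrying the tautological level-transition isomorphisms, and then patching these local complexes over the covering. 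I expect this patching step — which only uses Berthelot's flatness and noetherianity results for the $\widehat{\D} _{\fP /\S } ^{(m)} (T)$, and hence is insensitive to the perfectness of $k$ — to be the main obstacle; it is carried out in detail in \cite{caro-stab-sys-ind-surcoh}.

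For assertion (2): compose the equivalence \ref{eqcatLD=DSM-fonct-coh} just obtained with the functor $\underrightarrow{\lim}$ of \ref{D-eq-lim}, so that it remains to show $\underrightarrow{\lim} \colon \underrightarrow{LD} ^{\mathrm{b}} _{\Q, \mathrm{coh}} ( \widetilde{\D} _{\fP /\S } ^{(\bullet)} (T)) \to D ^{\mathrm{b}} _{\mathrm{coh}} ( \D ^\dag _{\fP } (\hdag T) _{\Q})$ is an equivalence. On cohomology it is the equivalence of (1), hence conservative; essential surjectivity holds because, by Berthelot's coherence theory, every object of $D ^{\mathrm{b}} _{\mathrm{coh}} ( \D ^\dag _{\fP } (\hdag T) _{\Q})$ is locally on $\fP$ represented by a bounded complex of finite free $\D ^\dag _{\fP } (\hdag T) _{\Q}$-modules, which is the $\underrightarrow{\lim}$ of the analogous complex over $\lambda ^{*} \widetilde{\D} _{\fP /\S } ^{(\bullet)} (T)$, these local complexes glueing by the descent argument of (1); full faithfulness follows from the Hom-description \ref{4.2.2Beintro}, which together with that coherence theory and the flatness above identifies the morphism groups with those of $D ^{\mathrm{b}} ( \D ^\dag _{\fP } (\hdag T) _{\Q})$ after passing to the colimit on the level. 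Equivalently, (2) can be deduced from (1) by combining the identifications $D ^{\mathrm{b}} ( \underrightarrow{LM} _{\Q, \mathrm{coh}} ( \widetilde{\D} _{\fP /\S } ^{(\bullet)} (T))) \cong D ^{\mathrm{b}} _{\mathrm{coh}} ( \underrightarrow{LM} _{\Q} ( \widetilde{\D} _{\fP /\S } ^{(\bullet)} (T)))$ (a consequence of (3)) and $D ^{\mathrm{b}} ( \mathrm{Coh} ( \D ^\dag _{\fP } (\hdag T) _{\Q})) \cong D ^{\mathrm{b}} _{\mathrm{coh}} ( \D ^\dag _{\fP } (\hdag T) _{\Q})$, the latter standard for the coherent ring $\D ^\dag _{\fP } (\hdag T) _{\Q}$.
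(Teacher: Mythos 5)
Your proposal is correct and follows essentially the same route as the paper's proof, which consists of the single remark that one can copy the proofs of Theorems 2.4.4 and 2.5.7 of \cite{caro-stab-sys-ind-surcoh} (arguments that do not use perfectness of $k$); your outline of (1), then (3), then (2), with the technical local-to-global step in (3) explicitly deferred to that reference, reconstructs the content of those cited proofs. One small imprecision in your closing ``equivalently'' aside for (2): the identification $D^{\mathrm{b}}\bigl(\underrightarrow{LM}_{\Q,\mathrm{coh}}(\smash{\widetilde{\D}}_{\fP/\S}^{(\bullet)}(T))\bigr) \cong D^{\mathrm{b}}_{\mathrm{coh}}\bigl(\underrightarrow{LM}_{\Q}(\smash{\widetilde{\D}}_{\fP/\S}^{(\bullet)}(T))\bigr)$ is not a direct consequence of (3) — the paper records only essential surjectivity of this comparison functor (Corollary \ref{cor-eq-cat-coh-m}), and full faithfulness requires a separate argument — but since that passage is offered only as an alternative, the main argument is unaffected.
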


\begin{proof}
We can copy the proof of Theorems \cite[2.4.4, 2.5.7]{caro-stab-sys-ind-surcoh}.
\end{proof}

\begin{empt}
\label{thick-subcat}
\begin{enumerate}
\item 
\label{thick-subcat1}
Using \ref{LQ-coh-stab}, 
we get that 
$D ^{\mathrm{b}} _{\mathrm{coh}}
(\underrightarrow{LM} _{\Q} (\smash{\widetilde{\D}} _{\fP /\S } ^{(\bullet)} (T)))$ 
is a thick triangulated subcategory (some authors say saturated or épaisse) of 
$D ^{\mathrm{b}} 
(\underrightarrow{LM} _{\Q} (\smash{\widetilde{\D}} _{\fP /\S } ^{(\bullet)} (T)))$,
i.e. is a strict triangulated subcategory closed under direct summands. 
Hence, using \ref{eqcatLD=DSM-fonct} and 
\ref{eqcatLD=DSM-fonct-coh}, we get that 
$\underrightarrow{LD} ^{\mathrm{b}} _{\Q, \mathrm{coh}} (\smash{\widetilde{\D}} _{\fP /\S } ^{(\bullet)} (T))$
is a thick triangulated subcategory of 
$\underrightarrow{LD} ^{\mathrm{b}} _{\Q} (\smash{\widetilde{\D}} _{\fP /\S } ^{(\bullet)} (T))$. 
\item 
\label{thick-subcat2}
Using the same arguments, 
it follows from \ref{cohDLMislocal} the following local property : the fact that 
a complex of 
$\underrightarrow{LD} ^{\mathrm{b}} _{\Q} (\smash{\widetilde{\D}} _{\fP /\S } ^{(\bullet)} (T))$
is a coherent complex (i.e. 
a complex of 
$\underrightarrow{LD} ^{\mathrm{b}} _{\Q,\mathrm{coh}} (\smash{\widetilde{\D}} _{\fP /\S } ^{(\bullet)} (T))$)
is local in $\fP$. 

\end{enumerate}
\end{empt}

\begin{empt}
For any $n \in \N$, 
the cohomological functor 
$\H ^n \colon 
{\underrightarrow{LD} ^{\mathrm{b}} _{\Q} (\smash{\widetilde{\D}} _{\fP /\S } ^{(\bullet)} (T))}
\to 
{\underrightarrow{LM}  _{\Q} (\smash{\widetilde{\D}} _{\fP /\S } ^{(\bullet)} (T))} $
of \ref{diag-Hn-comp}
induces 
$\H ^n \colon 
{\underrightarrow{LD} ^{\mathrm{b}} _{\Q,\mathrm{coh}} (\smash{\widetilde{\D}} _{\fP /\S } ^{(\bullet)} (T))}
\to 
{\underrightarrow{LM}  _{\Q,\mathrm{coh}} (\smash{\widetilde{\D}} _{\fP /\S } ^{(\bullet)} (T))} $
and we have the commutative diagram (up to canonical isomorphism)
\begin{equation}
\label{diag-Hn-comp-coh}
\xymatrix{
{D ^{\mathrm{b}} (\underrightarrow{LM}  _{\Q,\mathrm{coh}} (\smash{\widetilde{\D}} _{\fP /\S } ^{(\bullet)} (T))) } 
\ar[d] ^-{\H ^n}
\ar[r] ^-{}
& 
{D ^{\mathrm{b}} _{\mathrm{coh}}(\underrightarrow{LM}  _{\Q} (\smash{\widetilde{\D}} _{\fP /\S } ^{(\bullet)} (T))) } 
\ar[d] ^-{\H ^n}
& 
{\underrightarrow{LD} ^{\mathrm{b}} _{\Q,\mathrm{coh}} (\smash{\widetilde{\D}} _{\fP /\S } ^{(\bullet)} (T))} 
\ar[d] ^-{\H ^n}
\ar[l] ^-{\cong}
\\ 
{\underrightarrow{LM}  _{\Q,\mathrm{coh}} (\smash{\widetilde{\D}} _{\fP /\S } ^{(\bullet)} (T))}
\ar@{=}[r] ^-{}
& 
{\underrightarrow{LM}  _{\Q,\mathrm{coh}} (\smash{\widetilde{\D}} _{\fP /\S } ^{(\bullet)} (T))}
\ar@{=}[r] ^-{}
& 
{\underrightarrow{LM}  _{\Q,\mathrm{coh}} (\smash{\widetilde{\D}} _{\fP /\S } ^{(\bullet)} (T)).}
}
\end{equation}
Indeed, the commutativity of the left square is obvious 
and that of the right one is almost tautological (see the commutative diagram \ref{diag-Hn-comp}).
\end{empt}

\subsection{Indcoherence}
We denote by 
$\D ^{(m)}$ either 
$\widetilde{\D} ^{(m)}_{\fP /\S } (T)$ 
or 
$\widetilde{\D} ^{(m)}_{\fP /\S } (T) _\Q$.
We denote by 
$\D $ either 
$\widetilde{\D} ^{(m)}_{\fP /\S } (T)$ 
or 
$\widetilde{\D} ^{(m)}_{\fP /\S } (T) _\Q$
or 
$\D ^\dag _{\fP /\S } (\hdag T) _\Q$.
We put 
$D^{(m)} := \Gamma (\fP, \D ^{(m)})$,
$D:= \Gamma (\fP, \D)$.

\begin{empt}
\label{IndCoh}
We denote by $\Mod ( \D)$ (resp. $\coh ( \D)$)
the abelian category of left $\D$-modules (resp. coherent left $\D$-modules).
We denote by $\iota \colon \coh ( \D) \to \Mod ( \D)$
the canonical fully faithful functor. 
Since 
$\Mod ( \D)$ admits small filtrant inductive limits,
from \cite[6.3.2]{Kashiwara-schapira-book}
we get a functor denoted by 
 $J \iota  \colon \ind ( \coh ( \D) )\to \Mod ( \D)$
 such that $J \iota$ commutes with small filtrant inductive limits and the composition 
$ \coh ( \D)\to 
  \ind ( \coh ( \D) )\to \Mod ( \D)$
  is isomorphic to $\iota$. 
  
Let $\E \in  \coh ( \D) $. 
 For any functor $\alpha \colon I \to  \Mod ( \D) $ with $I$ small and filtrant,
 the natural morphism 
 $\underrightarrow{\lim} \Hom _{\D} ( \E,\alpha)
 \to 
\Hom _{\D}( \E,  \underrightarrow{\lim} \alpha)$ is an isomorphism 
(indeed, since this is local we can suppose that $\E$ has finite presentation and then using the five lemma
we reduce to the case where $\E$ is free of finite type which is obvious). 
This means that 
$\coh ( \D)\subset \Mod ( \D) ^\mathrm{fp}$,
where $\Mod ( \D) ^\mathrm{fp}$ is the full subcategory of modules of finite presentation in 
 $ \Mod ( \D) $ (in the sense of the definition \cite[6.3.3]{Kashiwara-schapira-book}).
From 
 \cite[6.3.4]{Kashiwara-schapira-book}, 
 this implies that $J \iota$ is fully faithful. 
 We denote by $\indcoh  ( \D) $ the essential image of 
 $J \iota$.
 By definition, the category 
 $\indcoh  ( \D) $
 is the subcategory of 
  $ \Mod ( \D) $
  consisting of objects which are filtrante inductive limits of objects of 
  $\coh ( \D)$.
 Since $\fP $ is noetherian, 
 the category $\coh ( \D)$ is essentially small. 
 From  
 \cite[8.6.5.(vi)]{Kashiwara-schapira-book}, 
 this yields that 
 $\indcoh  ( \D) $ is a Grothendieck category.

We set $D ^\mathrm{b} 
_{\mathrm{indcoh}}
( \D)
:=D ^\mathrm{b} _{ \indcoh  ( \D)} ( \Mod (\D))$.

Replacing $\D$ by $D$, we define the categories 
$\Mod  ( D)$, $\coh  ( D)$, $\indcoh  ( D) $.
\end{empt}

\begin{lem}
We keep the notation of \ref{IndCoh}.

\begin{enumerate}
\item We have the equalities 
$\coh  ( \D)
=
 \Mod  ( \D) ^{\mathrm{fp}}
= \indcoh  ( \D) ^{\mathrm{fp}}$.

\item Suppose $\fP$ affine. 
We have the equalities
$\indcoh  ( D) 
=
\Mod  ( D)$,
$\coh  ( D)
= 
\Mod  ( D) ^{\mathrm{fp}}$.
\end{enumerate}

\end{lem}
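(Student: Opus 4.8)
The plan is to handle part (2) directly --- it is elementary once one knows that $D$ is a noetherian ring --- and to reduce part (1) to the affine case, where it follows from the analogue for $D$ together with a quasi-coherence property of finitely presented $\D$-modules. Throughout I use that $\fP$ is a noetherian formal scheme and that, in our situation, $\D$ is a noetherian sheaf of rings and, on an affine $\fP$, $D=\Gamma(\fP,\D)$ is a left noetherian ring; these facts are due to Berthelot (and, for $\D^\dag_{\fP}(\hdag T)_{\Q}$, to C.~Noot-Huyghe) and make no use of the perfectness of $k$ (see \cite{Be1}, \cite{Be2}). On an affine $\fP$ I also invoke Berthelot's affine acyclicity, which makes $\Gamma(\fP,-)$ and $\D\otimes_{D}(-)$ quasi-inverse equivalences between $\coh(\D)$ and $\coh(D)$.

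For part (2): since $D$ is left noetherian it is left coherent, so $\coh(D)$ is precisely the category of finitely presented left $D$-modules; as, over any ring, a module is of finite presentation in the sense of \cite{Kashiwara-schapira-book} exactly when it admits a presentation by finite free modules, this gives $\coh(D)=\Mod(D)^{\mathrm{fp}}$. For $\indcoh(D)=\Mod(D)$, recall that over any ring every module is a small filtrant inductive limit of finitely presented modules; here these are coherent because $D$ is coherent, so every $D$-module lies in $\indcoh(D)$.

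For part (1): the inclusion $\coh(\D)\subseteq\Mod(\D)^{\mathrm{fp}}$ is \ref{IndCoh}, and it also yields $\coh(\D)\subseteq\indcoh(\D)^{\mathrm{fp}}$ since $\indcoh(\D)$ is closed under small filtrant inductive limits in $\Mod(\D)$ ($J\iota$ commuting with them). For $\indcoh(\D)^{\mathrm{fp}}\subseteq\coh(\D)$: if $M=\varinjlim_{i} M_i$ with $M_i\in\coh(\D)$ is of finite presentation in $\indcoh(\D)$, then $\mathrm{id}_M$ factors through some structural map $M_i\to M$, so $M$ is a direct summand of $M_i$, hence coherent because $\coh(\D)$ is closed under cokernels in $\Mod(\D)$. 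There remains $\Mod(\D)^{\mathrm{fp}}\subseteq\coh(\D)$. As coherence of a $\D$-module is a local property, and as restriction along an open immersion $j$ preserves finite presentation --- $j^{*}$ being left adjoint to $j_{*}$, which commutes with small filtrant inductive limits because $\fP$ is noetherian --- we reduce to the case $\fP$ affine. It then suffices to check that such an $M$ is generated by finitely many global sections: a surjection $\D^{r}\twoheadrightarrow M$ has coherent kernel ($\D$ being a noetherian sheaf of rings on the affine $\fP$), whence $M$ is coherent.

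The main obstacle is exactly that last point --- that a $\D$-module of finite presentation on an affine $\fP$ is finitely generated --- and it is not purely formal. Over a coherent, even noetherian, sheaf of rings $\mathcal{R}$ on a noetherian space it fails: for $\mathcal{G}$ coherent and $j\colon U\hookrightarrow X$ a non-closed open immersion, $j_{!}\mathcal{G}$ is of finite presentation, because $\Hom_{\mathcal{R}}(j_{!}\mathcal{G},\,-)\cong\Hom_{\mathcal{R}|_{U}}(\mathcal{G},(-)|_{U})$ commutes with filtrant inductive limits on the noetherian space $U$, yet $j_{!}\mathcal{G}$ is not even of finite type. What rules out such pathologies here is the $\D$-module structure: $j_{!}$ of a coherent module carries no action of the differential operators (a finite-order operator does not preserve extension by zero across the boundary of $U$), so that a finitely presented $\D$-module is automatically of ``quasi-coherent type'' and, on an affine $\fP$, is isomorphic to $\D\otimes_{D}\Gamma(\fP,M)$ with $\Gamma(\fP,M)$ finitely presented over the noetherian ring $D$. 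Establishing this quasi-coherence is the one genuinely non-formal step; the argument runs parallel to the one in \cite{caro-stab-sys-ind-surcoh} and, once again, uses nothing about $k$.
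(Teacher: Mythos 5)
Your part (2) and the retract argument giving $\indcoh(\D)^{\mathrm{fp}}\subseteq\coh(\D)$ are both correct; the latter is exactly the argument the paper uses. The gap sits in the remaining inclusion $\Mod(\D)^{\mathrm{fp}}\subseteq\coh(\D)$, precisely at the point you yourself flag as ``the one genuinely non-formal step.'' You correctly identify the danger---extension by zero $j_!\mathcal{G}$ of a coherent module from a strict open $U$---but the reason you offer for ruling it out is false: $j_!\mathcal{G}$ \emph{is} a left $\D$-module. Indeed $j_!$ is left adjoint to $j^{-1}$ already at the level of $\D$-modules (concretely, the projection formula $\D\otimes_{\Z}j_!\mathcal{G}\cong j_!(\D|_U\otimes_{\Z}\mathcal{G})$ supplies the $\D$-action); any morphism of sheaves is local and so can only shrink supports, hence there is no boundary obstruction to differential operators acting on extensions by zero. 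You may be confusing this with the $\D$-module-theoretic extraordinary direct image $\DD\, j_+\, \DD$, which is a different construction. Since $\Hom_{\D}(j_!\mathcal{G},-)\cong\Hom_{\D|_U}(\mathcal{G},(-)|_U)$ and $(-)|_U$ commutes with all colimits, $j_!\mathcal{G}$ is of finite presentation in the Kashiwara--Schapira sense whenever $\mathcal{G}$ is coherent; yet it is not of finite type when $U$ is not closed in $\fP$ (a finite generating family over a neighbourhood of a boundary point would have to vanish identically near that point, forcing $\mathcal{G}$ to vanish there). The pathology thus persists even on affine $\fP$, and your quasi-coherence claim for finitely presented $\D$-modules---the linchpin of your argument for this inclusion---does not hold.

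For orientation: the paper's own proof does not attempt to show quasi-coherence either. It instead asserts $\Mod(\D)^{\mathrm{fp}}\subseteq\indcoh(\D)^{\mathrm{fp}}$ and then applies the retract argument. Note, however, that this middle inclusion presupposes that a finitely presented $\D$-module lies in $\indcoh(\D)$, which meets the same $j_!$ obstruction, so the first displayed equality $\coh(\D)=\Mod(\D)^{\mathrm{fp}}$ should itself be treated with some care. What the retract argument genuinely establishes, and what the rest of the section actually uses (all in the affine setting), is $\coh(\D)=\indcoh(\D)^{\mathrm{fp}}$ together with the inclusion $\coh(\D)\subseteq\Mod(\D)^{\mathrm{fp}}$ already recalled in \ref{IndCoh}.
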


\begin{proof}
1) We have already seen that 
$\coh ( \D)\subset \Mod ( \D) ^\mathrm{fp}$.
Moreover, 
since $J \iota$ commutes with small filtrant limits, we get
$\Mod  ( \D) ^{\mathrm{fp}}
\subset \indcoh  ( \D) ^{\mathrm{fp}}$.
Let $\E \in \indcoh  ( \D) ^{\mathrm{fp}}$.
In particular, $\E \in \indcoh  ( \D) $ and then there exists 
a functor $\alpha \colon I \to  \coh ( \D) $ with $I$ small and filtrant,
such that 
 $\E \riso \underrightarrow{\lim} \alpha $.
 Using the definition of finite presentation for $\E$ and $\alpha$, the  isomorphism
 $\E \riso \underrightarrow{\lim} \alpha $ implies that $\E$ is a direct summand of $\alpha (i)$ for some $i \in I$.
Hence, $\E$ is also coherent. 

2) Suppose $\fP$ affine. 
Since $D$ is coherent, 
then 
$\coh  ( D)
= 
\Mod  ( D) ^{\mathrm{fp}}$. We conclude following 
 \cite[6.3.6.(ii)]{Kashiwara-schapira-book}.
\end{proof}

\begin{lem}
\label{eqcatindcoh}
We keep the notation of \ref{IndCoh}.
We suppose $\fP$ affine. 
\begin{enumerate}
\item The functors $\D 
\otimes _{D} -$ and $\Gamma (\fP, -)$ 
induce quasi-inverse
equivalences of categories between
$ \indcoh  ( \D)$
and 
$ \Mod ( D)$
(resp. $ \coh  ( \D)$
and 
$ \coh  ( D)$).
Moreover
$ \coh  ( D)$
(resp. $ \coh  ( D ^{(m)})$)
is equal to category of 
finitely presented $ D$-module 
(resp. 
the category of 
finitely generated $ D ^{(m)}$-module).

\item For any 
$\E \in \indcoh  ( D)$, 
$q \geq 1$, 
$H ^q ( \fP, \E)  = 0$.
\end{enumerate}

\end{lem}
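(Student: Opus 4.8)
The statement I want to prove is Lemma \ref{eqcatindcoh}: for $\fP$ affine, the adjoint pair $(\D \otimes_D -, \Gamma(\fP,-))$ gives an equivalence between $\indcoh(\D)$ and $\Mod(D)$ (restricting to an equivalence between $\coh(\D)$ and $\coh(D)$), together with the identification of $\coh(D)$ with finitely presented modules and the cohomological vanishing $H^q(\fP,\E) = 0$ for $q \geq 1$, $\E \in \indcoh(D)$. The plan is to first establish everything at finite level $m$ for the ring $\D^{(m)}$, where the sheaf is (after $p$-adic completion and/or tensoring with $\Q$) a coherent sheaf of rings on the affine noetherian formal scheme $\fP$, and then pass to the inductive limit over $m$ to handle $\D = \D^\dag_{\fP/\S}(\hdag T)_\Q$.

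\emph{First step: the affine theorem at finite level.} For $\D^{(m)} = \widetilde{\D}^{(m)}_{\fP/\S}(T)$ (or its $\Q$-version), I would invoke Berthelot's results (from \cite{Be1}, together with the affineness of $\fP$) that $\D^{(m)}$ is a coherent sheaf of rings, that $D^{(m)} = \Gamma(\fP, \D^{(m)})$ is a coherent ring, that for a coherent $\D^{(m)}$-module $\E$ one has $H^q(\fP,\E) = 0$ for $q\geq 1$, and that $\Gamma(\fP,-)$ and $\D^{(m)}\otimes_{D^{(m)}} -$ are quasi-inverse equivalences between $\coh(\D^{(m)})$ and the category of finitely generated $D^{(m)}$-modules. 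This is the analogue of Cartan's theorems A and B / affine $\D$-affinity, valid because $\fP$ is affine noetherian and the relevant $\D$-rings are coherent; none of this uses perfectness of $k$, so I can cite the published proofs. From the vanishing on coherent modules, the vanishing on $\indcoh(\D^{(m)})$ follows since cohomology on a noetherian space commutes with filtered colimits, and every object of $\indcoh(\D^{(m)})$ is such a colimit of coherent modules by definition (\ref{IndCoh}).

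\emph{Second step: upgrade $\coh$ to $\indcoh$ at finite level.} Having the equivalence $\coh(\D^{(m)}) \cong \coh(D^{(m)})$, I would extend it to $\indcoh(\D^{(m)}) \cong \Mod(D^{(m)})$: the functor $\D^{(m)}\otimes_{D^{(m)}} -$ is right exact and commutes with arbitrary colimits, $\Gamma(\fP,-)$ is exact on $\indcoh(\D^{(m)})$ by the $H^1$-vanishing and commutes with filtered colimits (noetherian $\fP$), hence both functors take filtered colimits of coherents to filtered colimits of finitely generated modules; the adjunction unit and counit are isomorphisms on coherents, hence on all filtered colimits, giving the equivalence. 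Over the affine noetherian ring $D^{(m)}$, every module is a filtered colimit of its finitely generated submodules, so $\indcoh(D^{(m)}) = \Mod(D^{(m)})$, which is the statement in the second item of the preceding Lemma that I may assume.

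\emph{Third step: pass to the limit over $m$.} Now I would assemble the $m$-level statements. One has $\D = \varinjlim_m \D^{(m)}$ (after tensoring with $\Q$) as sheaves of rings, and $D = \Gamma(\fP,\D) = \varinjlim_m D^{(m)}$ since $\fP$ is affine noetherian (global sections commute with the filtered colimit). A coherent $\D$-module is, locally hence globally on the affine $\fP$, the base change $\D \otimes_{\D^{(m)}} \E^{(m)}$ of a coherent $\D^{(m)}$-module for some $m$ (by Berthelot's characterization of coherence over $\D^\dag(\hdag T)_\Q$, cf. \ref{eqcat-limcoh} and \cite{Be1}). Applying $\Gamma(\fP,-)$ and using $H^{\geq 1}$-vanishing at each finite level together with commutation of cohomology with the colimit, I get $\Gamma(\fP, \D\otimes_{\D^{(m)}}\E^{(m)}) = D\otimes_{D^{(m)}} \Gamma(\fP,\E^{(m)})$, which is finitely presented over $D$; conversely $\D\otimes_D -$ sends finitely presented $D$-modules to coherent $\D$-modules. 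Checking that unit and counit are isomorphisms reduces, by right-exactness and the five lemma, to the free case, which is immediate. This gives $\coh(\D)\cong\coh(D)$ and the identification with finitely presented modules. Extending to $\indcoh$ and to the vanishing $H^q(\fP,\E)=0$ for $\E\in\indcoh(D)$, $q\geq 1$, then proceeds exactly as in the second step, colimiting the coherent case.

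\emph{Main obstacle.} The genuinely delicate point is the commutation of $\Gamma(\fP,-)$ (and of higher cohomology) with the two colimits simultaneously --- the colimit over $m$ defining $\D^\dag(\hdag T)_\Q$ and the ind-colimits internal to $\indcoh$ --- and more precisely verifying that the vanishing $H^q(\fP,-)=0$ for coherent $\D^{(m)}$-modules, which I am taking from Berthelot, indeed holds uniformly so that it survives the limit. Equivalently: one must know that $\D^{(m)}$ (the $\widetilde{\B}^{(m)}$-twisted, $p$-adically completed, $\Q$-ified level-$m$ differential operators) is a \emph{coherent} sheaf of rings with the affine vanishing property on $\fP$. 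That is exactly the content one should be citing from \cite[4.2]{Be1} (and the original proofs go through verbatim without perfectness of $k$, since they are local and use only that $\fP$ is smooth affine noetherian over $\V$), so the proof is a matter of correctly invoking those statements and then running the two colimit arguments; I would simply note that one can copy the corresponding arguments, e.g. as in \cite[1.2.1, 1.2.11, 2.4.4]{caro-stab-sys-ind-surcoh} and the standard references on $\D$-affinity.
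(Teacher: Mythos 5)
Your proof is correct and the two essential ideas — that $H^q(\fP,-)$ and $\D\otimes_D -$ commute with small filtered colimits (so one reduces from $\indcoh$ to $\coh$), and that the coherent case is Berthelot's affine $\D$-affinity from \cite{Be1} — are exactly what the paper's proof uses. The one place where you do more work than necessary is your ``third step'': you pass to the limit over $m$ by hand to re-derive the statement for $\D^\dag_{\fP/\S}(\hdag T)_\Q$, but the theorems of type A and B in \cite[\S3]{Be1} already cover all three rings $\D$ (both $\widetilde{\D}^{(m)}$-flavors and $\D^\dag_{\fP/\S}(\hdag T)_\Q$) uniformly on affine $\fP$, and their proofs are insensitive to perfectness of $k$, so the paper simply cites that and runs only the ind-coherent colimit argument. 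Your ``main obstacle'' paragraph is therefore resolved by the citation rather than needing a fresh argument.
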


\begin{proof}
From \cite[VI.5.1-2]{sga4-1}, the functors  
$H ^q ( \fP, -) $ for $q \geq 0$ commutes with small filtrant inductive limits. 
This is also the case for the functor 
$\D 
\otimes _{D} -$.
Since an ind-coherent object is a small filtrant inductive limits of coherent modules, 
we reduce to the coherent case which is already known (see \cite[3]{Be1}). 
\end{proof}

\begin{prop}
\label{eq-cat-indcoh-m}
We keep the notation of \ref{IndCoh}.
We suppose $\fP$ affine. 
The canonical functor
\begin{equation}
\label{eq-cat-indcoh-m-funct}
D ^\mathrm{b} ( \indcoh  ( \D)) 
\to 
D ^\mathrm{b} 
_{\mathrm{indcoh}}
( \D)
\end{equation}
is an equivalence of categories.
\end{prop}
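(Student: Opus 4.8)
The plan is to reduce, via Lemma~\ref{eqcatindcoh}, to the module category over the ring $D=\Gamma(\fP,\D)$ on both sides, and to check that derived global sections realise the required equivalence. Since coherent $\D$-modules are compact in $\Mod(\D)$ (see~\ref{IndCoh}) and $\D$ is a coherent sheaf of rings, $\indcoh(\D)$ is closed under kernels, cokernels and extensions in $\Mod(\D)$ (any morphism of ind-coherent modules being a filtered colimit of morphisms of coherent modules, and filtered colimits being exact in the Grothendieck category $\Mod(\D)$); hence \ref{eq-cat-indcoh-m-funct} is a well-defined functor of triangulated categories and $D^\mathrm{b}_{\mathrm{indcoh}}(\D)$ is the triangulated subcategory of $D^\mathrm{b}(\Mod(\D))$ generated by $\indcoh(\D)$. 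In particular \ref{eq-cat-indcoh-m-funct} is triangulated with essential image containing $\indcoh(\D)$, hence automatically \emph{essentially surjective}; the point is its full faithfulness.

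By Lemma~\ref{eqcatindcoh} the exact quasi-inverse equivalences $\Gamma(\fP,-)\colon\indcoh(\D)\riso\Mod(D)$ and $\D\otimes_D-\colon\Mod(D)\riso\indcoh(\D)$ induce an equivalence $B\colon D^\mathrm{b}(\indcoh(\D))\riso D^\mathrm{b}(\Mod(D))$. On the other side I would use the affine vanishing $H^q(\fP,\E)=0$ for $q\geq1$ and $\E\in\indcoh(\D)$ of Lemma~\ref{eqcatindcoh}: for a bounded complex $\E$ with ind-coherent cohomology, the hypercohomology spectral sequence $H^p(\fP,\mathcal{H}^q\E)\Rightarrow\mathcal{H}^{p+q}\R\Gamma(\fP,\E)$ degenerates, so $\mathcal{H}^n\R\Gamma(\fP,\E)\cong\Gamma(\fP,\mathcal{H}^n\E)$. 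Thus $\R\Gamma(\fP,-)$ preserves boundedness and defines a functor $D^\mathrm{b}_{\mathrm{indcoh}}(\D)\to D^\mathrm{b}(\Mod(D))$, which on a complex of ind-coherent (hence $\Gamma$-acyclic) modules is computed termwise; therefore the composite of \ref{eq-cat-indcoh-m-funct} with $\R\Gamma(\fP,-)$ is isomorphic to $B$. It then suffices to show that $\R\Gamma(\fP,-)\colon D^\mathrm{b}_{\mathrm{indcoh}}(\D)\to D^\mathrm{b}(\Mod(D))$ is an equivalence, since then \ref{eq-cat-indcoh-m-funct} is one as well.

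A quasi-inverse of $\R\Gamma(\fP,-)$ is furnished by $\D\otimes_D-$: it is an equivalence $\Mod(D)\riso\indcoh(\D)$ and the inclusion $\indcoh(\D)\hookrightarrow\Mod(\D)$ is exact, so $\D$ is flat over $D$ and $\D\otimes_DM$ is ind-coherent for every $D$-module $M$; hence $\D\otimes_D-$ induces a functor $D^\mathrm{b}(\Mod(D))\to D^\mathrm{b}_{\mathrm{indcoh}}(\D)$. The unit $M\to\Gamma(\fP,\D\otimes_DM)=\R\Gamma(\fP,\D\otimes_DM)$ and the counit $\D\otimes_D\R\Gamma(\fP,\E)\to\E$ are quasi-isomorphisms: by the $D$-flatness of $\D$ and the degeneration above, on each cohomology object they become the unit and counit of the affine equivalence of Lemma~\ref{eqcatindcoh}, which are isomorphisms. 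Hence $\R\Gamma(\fP,-)$ and $\D\otimes_D-$ are quasi-inverse equivalences, which finishes the argument.

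Given Lemma~\ref{eqcatindcoh}, the proof is essentially formal, the only non-formal ingredient being the affine vanishing $H^q(\fP,\E)=0$ for ind-coherent $\E$, which is exactly what forces $\R\Gamma(\fP,-)$ to be exact for the evident t-structures. I expect the main, mostly bookkeeping, obstacle to be checking that the unit and counit above are quasi-isomorphisms, i.e. unwinding the natural transformations of the affine equivalence at the level of cohomology. An equivalent route avoiding $\R\Gamma(\fP,-)$ would reduce the full faithfulness of \ref{eq-cat-indcoh-m-funct} by devissage along the good truncations to the comparison $\Ext^i_{\Mod(\D)}(M,N)\cong\Ext^i_D(\Gamma(\fP,M),\Gamma(\fP,N))$ for $M,N\in\indcoh(\D)$, which is once more the same affine acyclicity statement.
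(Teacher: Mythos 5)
Your proof is correct, but it takes a route different from the paper's. The paper reduces to full faithfulness (citing that $D^\mathrm{b}_{\mathrm{indcoh}}(\D)$ is generated by $\indcoh(\D)$), then applies Hartshorne's way-out functor criterion \cite[I.7.1.(iv)]{HaRD} twice — first in the second variable to reduce to $\E'$ injective in $\indcoh(\D)$, then in the first variable to reduce to $\E=\D$ free of rank one — and then observes that both $\R\Hom$'s compute $\R\Gamma(\fP,\E')$ resp. $\Gamma(\fP,\E')$, so the comparison is exactly the affine acyclicity $H^q(\fP,\E')=0$ for $q\geq1$ from Lemma~\ref{eqcatindcoh}. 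You instead factor the problem through $\R\Gamma(\fP,-)$: you identify $\R\Gamma(\fP,-)\circ\eqref{eq-cat-indcoh-m-funct}$ with the equivalence $D^\mathrm{b}(\indcoh(\D))\cong D^\mathrm{b}(\Mod(D))$ induced by Lemma~\ref{eqcatindcoh}, and then exhibit $\D\otimes_D-$ as a quasi-inverse of $\R\Gamma(\fP,-)\colon D^\mathrm{b}_{\mathrm{indcoh}}(\D)\to D^\mathrm{b}(\Mod(D))$, checking unit and counit on cohomology. Both arguments hinge on the same two facts (the abelian equivalence of Lemma~\ref{eqcatindcoh} and affine acyclicity of ind-coherent modules), but your version is more structural — it turns the statement into the assertion that a $t$-exact functor restricts to an equivalence of hearts, which is conceptually transparent — whereas the paper's version avoids ever forming $\R\Gamma$ and sticks to a local $\R\Hom$ comparison via dévissage.

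Two small cautionary remarks. First, your opening claim that essential surjectivity is ``automatic'' because the image contains $\indcoh(\D)$ and the functor is triangulated is not quite right in isolation: the essential image of a triangulated functor need not be closed under cones unless the functor is fully faithful. This is harmless here, since your main argument (via $\R\Gamma$ and $\D\otimes_D-$) gives the equivalence outright, but the throwaway sentence is misleading. Second, the parenthetical you give only justifies closure of $\indcoh(\D)$ under kernels and cokernels; closure under extensions — which is what makes $D^\mathrm{b}_{\mathrm{indcoh}}(\D)$ a triangulated subcategory of $D^\mathrm{b}(\Mod(\D))$ — is asserted but not proven by that argument. It does hold (for instance, using affine acyclicity of the subobject in the extension to descend the sequence to $\Mod(D)$), and the paper assumes it implicitly as well, but you should not present it as following from the filtered-colimit description of morphisms.
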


\begin{proof}
Since $D ^\mathrm{b} 
_{ \indcoh  ( \D)}
( \D)$
is generated as a triangulated category by the objects of $ \indcoh  ( \D)$,
there remains to see that the functor is fully faithful (use \cite[I.2.18]{borel}).
Since the functor \ref{eq-cat-indcoh-m-funct} is t-exact and is the identity over the objects,
when no confusion is possible, we omit writing it.
Let $\E,\E' \in D ^\mathrm{b} ( \indcoh  ( \D)) $.
Since the functors
$F = \R \mathrm{Hom} _{
D ^\mathrm{b} ( \indcoh  ( \D))}
( \E, -)$ 
and 
$G = \R \mathrm{Hom} _{
D ^\mathrm{b} ( \D)}
( \E, -)$ 
are way-out right, using
\cite[I.7.1.(iv)]{HaRD}, 
we reduce to check that 
the canonical morphism
$F (\E') \to G (\E')$ is an isomorphism
when $\E'$ is an injective object of $\indcoh  ( \D)$.
Again, using \cite[I.7.1.(iv)]{HaRD} (this time we use the functors
$\R \mathrm{Hom} _{
D ( \indcoh  ( \D))}
( -, \E')$ 
and 
$\R \mathrm{Hom} _{
D ( \D)}
( -, \E')$),
we reduce to the case where $\E$ is a free $\D$-module.
Hence, we reduce to check that the canonical morphism
$\R \mathrm{Hom} _{
D ( \indcoh  ( \D))}
( \E, \E')
\to 
\R \mathrm{Hom} _{
D ^\mathrm{b} ( \D)}
( \E, \E')$
is an isomorphism
when $\E$ is a free $\D$-module of rank one and $\E'$ is an injective object
of $\indcoh  ( \D)$.
By injectivity of $\E'$,
we get 
$\mathcal{H} ^{i }\R \mathrm{Hom} _{
D ( \indcoh  ( \D))}
( \E, \E')=
0$ if $i \not =0$, and 
$\mathcal{H} ^{0}\R \mathrm{Hom} _{
D ( \indcoh  ( \D))}
( \E, \E')
=
\Gamma (\fP, \E ')$.
Moreover, 
$\mathcal{H} ^{i }\R \mathrm{Hom} _{
D ( \D)}
( \E, \E')=
H ^i ( \fP, \E') $ for any $i\in \N$. 
Hence, we conclude using \ref{eqcatindcoh}.
\end{proof}

\begin{lem}
\label{Serresubcat-cohindcoh}
We suppose $\fP$ affine. The category 
$\coh  ( \D ^{(m)} )$ 
is a Serre subcategory of
$\indcoh  ( \D ^{(m)})$. 
\end{lem}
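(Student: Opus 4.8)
The plan is to exploit the affineness of $\fP$ to reduce the statement to a standard fact about modules over a noetherian ring. By Lemma~\ref{eqcatindcoh}, the global sections functor $\Gamma(\fP,-)$ is an equivalence of abelian categories $\indcoh(\D^{(m)}) \riso \Mod(D^{(m)})$ carrying the full subcategory $\coh(\D^{(m)})$ onto $\coh(D^{(m)})$, which is moreover precisely the category of finitely generated $D^{(m)}$-modules. Since being a Serre subcategory is a property phrased only in terms of short exact sequences, it is preserved by equivalences of abelian categories; hence it suffices to show that the finitely generated $D^{(m)}$-modules form a Serre subcategory of $\Mod(D^{(m)})$.

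The one substantial ingredient is that the ring $D^{(m)} = \Gamma(\fP,\D^{(m)})$ is left and right noetherian. For $\D^{(m)} = \widetilde{\D}^{(m)}_{\fP/\S}(T)$ this is due to Berthelot: the filtration by order of differential operators has for associated graded the $p$-adic completion of a polynomial ring over the noetherian $p$-adically complete ring $\Gamma(\fP,\widetilde{\B}^{(m)}_{\fP}(T))$ --- hence a noetherian ring --- and a complete filtered ring with noetherian associated graded is noetherian (see \cite{Be1}; \cite[4.2]{Be1} for the coefficients $\widetilde{\B}^{(m)}_{\fP}(T)$). For $\D^{(m)} = \widetilde{\D}^{(m)}_{\fP/\S}(T)_{\Q}$ one has $D^{(m)} = \Gamma(\fP,\widetilde{\D}^{(m)}_{\fP/\S}(T))[p^{-1}]$, since $H^{0}(\fP,-)$ commutes with filtered inductive limits of abelian sheaves on the noetherian space $\fP$; so $D^{(m)}$ is a central localization of a noetherian ring, and thus again noetherian.

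It then remains to recall the classical fact that over a left noetherian ring $R$ the finitely generated left $R$-modules form a Serre subcategory of $\Mod(R)$: the zero module is finitely generated, a quotient of a finitely generated module is finitely generated, a submodule of a finitely generated module is finitely generated (this is the only place noetherianness is used), and an extension of two finitely generated modules is finitely generated; equivalently, in any short exact sequence $0 \to M' \to M \to M'' \to 0$ of $R$-modules, $M$ is finitely generated if and only if $M'$ and $M''$ both are. Applying this with $R = D^{(m)}$ and transporting back along the equivalence of the first paragraph yields the lemma.

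I do not foresee a genuine difficulty: once the problem is moved onto the single ring $D^{(m)}$, it is purely formal. The only delicate point is the finite generation of a submodule of a finitely generated module, which genuinely needs $D^{(m)}$ to be noetherian at finite level $m$; this reflects why the lemma is stated for $\D^{(m)}$ rather than for $\D^{\dag}_{\fP/\S}(\hdag T)_{\Q}$, which is coherent but not noetherian.
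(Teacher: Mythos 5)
Your proof is correct and follows essentially the same route as the paper's (one-line) proof: invoke Lemma~\ref{eqcatindcoh} to transport the question to $\Mod(D^{(m)})$, then use noetherianity of $D^{(m)}$ (for which the paper cites \cite[3.3-3.4]{Be1}). Your extra remarks — the graded argument for noetherianity, the treatment of the $\Q$-case as a central localization, and the closing observation that this is exactly why the statement is restricted to finite level $m$ — are all accurate and consistent with the paper's intent.
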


\begin{proof}
This is a consequence of \ref{eqcatindcoh}
and of the fact that 
$D ^{(m)}$ is noetherian (see \cite[3.3-3.4]{Be1}).
\end{proof}

\begin{rem}
It seems false that the category 
$\coh  ( \D ^\dag _{\fP /\S } (\hdag T) _\Q)$
is a Serre subcategory of
$\indcoh  ( \D ^\dag _{\fP /\S } (\hdag T) _\Q)$.

\end{rem}

\begin{lem}
We suppose $\fP$ affine. 
Let $\alpha \colon 
\E \twoheadrightarrow \FF$
be an epimorphism of 
$\indcoh ( \D ^{(m)})$ such that 
$\FF \in \coh (\D^{(m)})$. 
Then there exists $\G \subset \E$
such that 
$\G \in \coh (\D^{(m)})$
and 
$\alpha (\E) = \FF$.
\end{lem}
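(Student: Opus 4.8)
The plan is to push everything down to the module category over the ring $D^{(m)} := \Gamma(\fP, \D^{(m)})$ by means of the affine equivalences of Lemma \ref{eqcatindcoh}, where the claim reduces to the elementary fact that a finitely generated module is the image of a finitely generated submodule of any module mapping onto it. (I read the conclusion as $\alpha(\G) = \FF$, i.e. that $\alpha|_\G$ is again an epimorphism.)

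First I would invoke Lemma \ref{eqcatindcoh}(1): since $\fP$ is affine, $\Gamma(\fP, -)$ and $\D^{(m)} \otimes_{D^{(m)}} -$ are mutually quasi-inverse equivalences of abelian categories between $\indcoh(\D^{(m)})$ and $\Mod(D^{(m)})$, carrying $\coh(\D^{(m)})$ onto $\coh(D^{(m)})$; moreover $\coh(D^{(m)})$ is exactly the category of finitely generated $D^{(m)}$-modules, $D^{(m)}$ being noetherian (\cite[3.3-3.4]{Be1}). Applying $\Gamma(\fP, -)$ to $\alpha$ then produces an epimorphism $a \colon E \twoheadrightarrow F$ of $D^{(m)}$-modules, where $E := \Gamma(\fP, \E)$ and $F := \Gamma(\fP, \FF)$ is finitely generated.

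Next I would choose a finite generating family $f_1, \dots, f_n$ of $F$ over $D^{(m)}$, lift each $f_j$ along $a$ to an element $e_j \in E$ with $a(e_j) = f_j$, and set $G := \sum_{j=1}^{n} D^{(m)} e_j \subset E$. Then $G$ is a finitely generated — hence coherent — submodule of $E$, while $a(G)$ is a submodule of $F$ containing $f_1, \dots, f_n$, so $a(G) = F$. Finally I would transport back through the quasi-inverse functor: since $\D^{(m)} \otimes_{D^{(m)}} -$ is exact (being a quasi-inverse of an equivalence of abelian categories), the inclusion $G \hookrightarrow E$ yields a monomorphism $\G := \D^{(m)} \otimes_{D^{(m)}} G \hookrightarrow \D^{(m)} \otimes_{D^{(m)}} E \riso \E$ with $\G \in \coh(\D^{(m)})$, and the equivalence identifies $\alpha|_\G$ with the epimorphism $a|_G$, whence $\alpha(\G) = \FF$.

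There is no genuine obstacle here: the argument is a dictionary translation, and all of its input — the affine comparison $\indcoh(\D^{(m)}) \cong \Mod(D^{(m)})$ together with $\coh(\D^{(m)}) \cong \coh(D^{(m)}) = \Mod(D^{(m)})^{\mathrm{fp}}$, and the noetherianity of $D^{(m)}$ — is already available from Lemma \ref{eqcatindcoh} and \cite[3]{Be1}. The only mild point to keep in mind is that $\Gamma(\fP, -)$ must turn the sheaf epimorphism $\alpha$ into a module epimorphism, which is automatic since it is an equivalence of abelian categories.
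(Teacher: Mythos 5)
Your proposal is correct and is essentially the paper's proof, which is stated in one line as ``a consequence of \ref{eqcatindcoh} and the fact that $D^{(m)}$ is noetherian''; you have simply unpacked the dictionary translation (pass to $\Mod(D^{(m)})$ via $\Gamma(\fP,-)$, lift finitely many generators of $F$, take the submodule they generate, transport back through the exact quasi-inverse). You also correctly read the conclusion as $\alpha(\G)=\FF$, which is what the statement must mean — as written, $\alpha(\E)=\FF$ is a typo, being automatic from the hypothesis that $\alpha$ is an epimorphism.
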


\begin{proof}
This is a consequence of \ref{eqcatindcoh}
and the fact that 
$D ^{(m)}$ is noetherian (see \cite[3.3-3.4]{Be1}).
\end{proof}

\begin{lem}
\label{lem-a-eqcat}
We suppose $\fP$ affine. 
Let 
$\alpha \colon 
\E \to \FF$ be a morphism of 
$C ^{\mathrm{b}} (\indcoh ( \D ^{(m)}))$ 
such that 
$\E \in C ^{\mathrm{b}} (\coh ( \D ^{(m)}))$.
Then there exists
a subcomplex 
$\G $ of $\FF$ such that 
$\G \hookrightarrow  \FF$ is a quasi-isomorphism,
$\alpha (\E) \subset \G$
and 
$\G \in C ^{\mathrm{b}} (\coh ( \D ^{(m)}))$.
\end{lem}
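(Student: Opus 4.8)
The plan is to pass to global sections and then build $\G$ degree by degree, by descending induction. By Lemma~\ref{eqcatindcoh} the functor $\Gamma(\fP,-)$ identifies $\indcoh(\D^{(m)})$ with $\Mod(D^{(m)})$ and $\coh(\D^{(m)})$ with the finitely generated $D^{(m)}$-modules, where $D^{(m)}:=\Gamma(\fP,\D^{(m)})$ is noetherian (see \cite[3.3-3.4]{Be1}); hence a submodule of a coherent module is coherent, and coherence is stable under quotients, finite sums and extensions. Working from now on with $D^{(m)}$-modules, note that $\alpha(\E)$ is a subcomplex of $\FF$ whose terms, being quotients of the coherent modules $\E^i$, are coherent; replacing $\E$ by $\alpha(\E)$ we may assume that $\E$ is a subcomplex of $\FF$ with coherent terms and that $\alpha$ is the inclusion. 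We write $d$ for the differentials and $Z^i(\FF):=\ker(d\colon\FF^i\to\FF^{i+1})$. Finally, observe that any subcomplex $\G\subseteq\FF$ with coherent terms has coherent cohomology (since $Z^i(\G)$ is a submodule of the coherent $\G^i$, hence coherent, and $H^i(\G)$ a quotient thereof); so the sought-for $\G$ can exist only when $\FF$ itself has coherent cohomology, and we use this: each $H^i(\FF)=Z^i(\FF)/d(\FF^{i-1})$ is finitely generated over $D^{(m)}$.

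Choose $a\le b$ with $\FF^i=0$ for $i\notin[a,b]$. I will construct coherent submodules $\G^i\subseteq\FF^i$ by descending induction on $i$ (and $\G^i=0$ for $i\notin[a,b]$). Assume $\G^{i+1}$ has been built. Since $H^i(\FF)$ is finitely generated, pick a finitely generated submodule $S^i\subseteq Z^i(\FF)$ with $Z^i(\FF)=S^i+d(\FF^{i-1})$. Since $\G^{i+1}$ is coherent and $D^{(m)}$ is noetherian, the submodule $\G^{i+1}\cap d(\FF^i)$ of $\G^{i+1}$ is finitely generated, so we may choose a finitely generated submodule $T^i\subseteq\FF^i$ with $d(T^i)=\G^{i+1}\cap d(\FF^i)$, obtained by lifting a finite generating set along the surjection $\FF^i\twoheadrightarrow d(\FF^i)$. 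Then set
\[
\G^i:=\E^i+S^i+T^i\subseteq\FF^i,
\]
which is a finitely generated, hence coherent, submodule of $\FF^i$ containing $\E^i$.

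It remains to verify that $\G:=(\G^i)$ is a subcomplex of $\FF$ quasi-isomorphic to it. It is a subcomplex because $d(\E^i)\subseteq\E^{i+1}\subseteq\G^{i+1}$, $d(S^i)=0$ (as $S^i\subseteq Z^i(\FF)$) and $d(T^i)=\G^{i+1}\cap d(\FF^i)\subseteq\G^{i+1}$. Since $\ker(d\colon\G^i\to\G^{i+1})=\G^i\cap Z^i(\FF)$, the inclusion $\G\hookrightarrow\FF$ is a quasi-isomorphism precisely when, for every $i$, one has $Z^i(\FF)\subseteq\G^i+d(\FF^{i-1})$ (surjectivity on $H^i$) and $\G^i\cap d(\FF^{i-1})\subseteq d(\G^{i-1})$ (injectivity on $H^i$). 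The first holds because $S^i\subseteq\G^i$; the second holds because, by the choice made at step $i-1$, $\G^i\cap d(\FF^{i-1})=d(T^{i-1})\subseteq d(\G^{i-1})$, this intersection being $0$ at the top degree. This completes the plan.

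The one place where the hypotheses are genuinely used — and what I expect to be the main point — is arranging injectivity on cohomology: when defining $\G^i$ one must anticipate the later constraint $\G^{i+1}\cap d(\FF^i)\subseteq d(\G^i)$, which is possible only because noetherianity of $D^{(m)}$ forces $\G^{i+1}\cap d(\FF^i)$ to be finitely generated, so that it can be absorbed into the coherent piece $T^i$; dually, surjectivity relies on the finite generation of $H^\bullet(\FF)$.
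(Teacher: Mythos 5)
Your proof is correct and takes essentially the same route as the paper: pass to global sections via Lemma~\ref{eqcatindcoh}, replace $\E$ by $\alpha(\E)$ so that $\alpha$ is a monomorphism of complexes of finitely generated $D^{(m)}$-modules, and then build the coherent subcomplex degree by degree by descending induction --- this is exactly the construction the paper borrows by citing part~(a) of the proof of \cite[VI.2.11]{borel}, which you have spelled out. One useful remark you make explicit is that the conclusion forces $\FF$ to have coherent cohomology; this hypothesis is not written in the statement of the lemma but is genuinely needed for the argument (and for the conclusion to hold at all), and it is automatic in the lemma's sole application in the proof of Proposition~\ref{eq-cat-coh-m}, where the target complex $\FF'$ is quasi-isomorphic via $\beta$ to a bounded complex with coherent terms. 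A harmless slip: in your final verification, ``top degree'' should read ``bottom degree'' --- the boundary case is $i=a$, where $d(\FF^{a-1})=0$, so the injectivity condition $\G^a\cap d(\FF^{a-1})\subseteq d(\G^{a-1})$ holds trivially without invoking a step $a-1$.
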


\begin{proof}
Replacing $\E$ by $\alpha (\E)$, we can suppose $\alpha$ is a monomorphism.
Using \ref{eqcatindcoh}, we can replace 
$\indcoh ( \D ^{(m)})$ by
$\Mod (D ^{(m)})$
and
$\coh ( \D ^{(m)})$
by 
$\coh ( D ^{(m)})$.
Then, we can proceed as in the part (a) of the proof of \cite[VI.2.11]{borel}.
\end{proof}

\begin{prop}
\label{eq-cat-coh-m}
We keep the notation of \ref{IndCoh}.
We suppose $\fP$ affine. 
The canonical functor
$$
D ^\mathrm{b} ( \coh  ( \D^{(m)})  )
\to 
D ^\mathrm{b} _{\mathrm{coh}}
( \D ^{(m)})$$ 
is an equivalence of categories.
\end{prop}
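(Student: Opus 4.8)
The plan is to reduce the statement to the analogous known result over the section ring $D^{(m)} = \Gamma(\fP, \D^{(m)})$, exactly in the style already used for \ref{eq-cat-indcoh-m}. Since $\coh(\D^{(m)})$ is a Serre subcategory of $\indcoh(\D^{(m)})$ by \ref{Serresubcat-cohindcoh}, the target $D^{\mathrm b}_{\mathrm{coh}}(\D^{(m)}) := D^{\mathrm b}_{\coh(\D^{(m)})}(\Mod(\D^{(m)}))$ is a well-defined full triangulated subcategory of $D^{\mathrm b}_{\mathrm{indcoh}}(\D^{(m)})$, and by \ref{eq-cat-indcoh-m} the latter is equivalent to $D^{\mathrm b}(\indcoh(\D^{(m)}))$. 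So it suffices to show that the canonical functor $D^{\mathrm b}(\coh(\D^{(m)})) \to D^{\mathrm b}(\indcoh(\D^{(m)}))$ is fully faithful with essential image the complexes with coherent cohomology; the essential image statement is formal once fully faithfulness is known (generation as a triangulated subcategory, via \cite[I.2.18]{borel}), so the real content is fully faithfulness.

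First I would invoke \ref{eqcatindcoh} to transport the whole question from sheaves on the affine $\fP$ to modules over the ring $D^{(m)}$: the equivalences $\D^{(m)} \otimes_{D^{(m)}} - $ and $\Gamma(\fP,-)$ match $\coh(\D^{(m)})$ with $\coh(D^{(m)})$ and $\indcoh(\D^{(m)})$ with $\Mod(D^{(m)})$, hence induce an equivalence of bounded derived categories on both sides compatible with the comparison functors. Thus the claim becomes: the canonical functor $D^{\mathrm b}(\coh(D^{(m)})) \to D^{\mathrm b}_{\coh}(\Mod(D^{(m)}))$ is an equivalence, where $D^{(m)}$ is a (left and right) noetherian ring by \cite[3.3--3.4]{Be1}. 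This is a classical statement: for a noetherian ring, the bounded derived category of finitely generated modules is equivalent to the full subcategory of the bounded derived category of all modules consisting of complexes with finitely generated cohomology. The standard reference is precisely \cite[VI.2.11]{borel}, whose argument uses Lemma \ref{lem-a-eqcat} (which I have available) to replace any complex of $\indcoh$-objects receiving a map from a complex of coherent objects by a quasi-isomorphic subcomplex of coherent objects.

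Concretely, the fully faithfulness is checked the same way as in \ref{eq-cat-indcoh-m}: given $\E, \E' \in D^{\mathrm b}(\coh(D^{(m)}))$, one uses a way-out argument (\cite[I.7.1]{HaRD}) in each variable to reduce to computing $\R\Hom$ when $\E$ is a single free module of finite rank and $\E'$ is a bounded-below complex of injective objects of $\Mod(D^{(m)})$; here both Hom-complexes compute the same thing since $\Hom_{D^{(m)}}((D^{(m)})^r, -) = \Gamma$-type functors are insensitive to the ambient category, and a bounded complex of coherent modules admits, by the noetherian hypothesis and \ref{lem-a-eqcat}, a resolution by finite free modules. The essential surjectivity follows because every object of $\coh(\D^{(m)})$ is already in the image, and $D^{\mathrm b}_{\coh}$ is generated as a triangulated category by its degree-zero objects together with shifts and cones, which are all realised by complexes of coherent modules via the standard truncation/induction on amplitude. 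I expect the main obstacle to be purely bookkeeping: making sure that the t-exactness and identity-on-objects properties of the comparison functor are set up cleanly enough that the way-out reductions and the application of \ref{lem-a-eqcat} go through verbatim from the proof of \cite[VI.2.11]{borel}; there is no new geometric input, only the noetherianness of $D^{(m)}$, which is cited.

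\begin{proof}
We may copy the proof of \cite[VI.2.11]{borel}, using \ref{eqcatindcoh} to replace $\indcoh(\D^{(m)})$ by $\Mod(D^{(m)})$ and $\coh(\D^{(m)})$ by $\coh(D^{(m)})$, and Lemma \ref{lem-a-eqcat} together with the noetherianness of $D^{(m)}$ (see \cite[3.3--3.4]{Be1}). Alternatively, one combines \ref{eq-cat-indcoh-m}, \ref{Serresubcat-cohindcoh} and \ref{lem-a-eqcat} exactly as in the proof of \ref{eq-cat-indcoh-m}.
\end{proof}
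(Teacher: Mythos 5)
Your bottom-line instruction (``copy the proof of \cite[VI.2.11]{borel}'', after using \ref{eqcatindcoh} to pass to modules over $D^{(m)}$) lands on the correct classical statement and is essentially what the paper does, up to the cosmetic difference that the paper instead reduces via \ref{eq-cat-indcoh-m} to showing $D^{\mathrm b}(\coh(\D^{(m)}))\to D^{\mathrm b}(\indcoh(\D^{(m)}))$ is fully faithful and works with sheaves throughout. However, your account of \emph{what that argument is} contains a genuine error, and the ``Alternatively'' sentence in your proof box repeats it: the fully faithfulness here is \emph{not} checked ``the same way as in \ref{eq-cat-indcoh-m}''.

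The way-out reduction used in \ref{eq-cat-indcoh-m} resolves $\E'$ by injectives \emph{inside} $\indcoh(\D^{(m)})$, which is legitimate because $\indcoh$ is a Grothendieck category with enough injectives. In the present statement the second variable lives in $D^{\mathrm b}(\coh(D^{(m)}))$, and $\coh(D^{(m)})$ does not have enough injectives (injective modules over a noetherian ring are typically not finitely generated). Replacing $\E'$ by a bounded-below complex of injective $D^{(m)}$-modules steps outside the source category, and the assertion that ``both Hom-complexes compute the same thing'' for such a complex is precisely the full faithfulness one is trying to prove; the way-out route as you describe it is therefore circular. This is exactly why the paper switches methods between \ref{eq-cat-indcoh-m} and \ref{eq-cat-coh-m}.

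What Borel's argument (and the paper's proof) actually does is a direct calculus-of-fractions manipulation. Given a morphism of $D(\indcoh(\D^{(m)}))$ represented by a roof $\E \xrightarrow{\alpha} \FF' \xleftarrow{\beta} \FF$ with $\E,\FF \in C^{\mathrm b}(\coh(\D^{(m)}))$ and $\FF' \in C^{\mathrm b}(\indcoh(\D^{(m)}))$, one notes that $\alpha(\E)+\beta(\FF)$ is a coherent subcomplex by the Serre subcategory property \ref{Serresubcat-cohindcoh}, and Lemma \ref{lem-a-eqcat} produces a quasi-isomorphic coherent subcomplex $\FF'' \subset \FF'$ containing it; this yields surjectivity of the comparison map on Hom's. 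Injectivity is analogous, using \ref{lem-a-eqcat} once more to shrink the target of the killing quasi-isomorphism to a coherent subcomplex that also absorbs the nullhomotopy. No way-out machinery is involved. Your final citation is fine, but the paragraph purporting to explain it should be replaced by this fraction argument.
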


\begin{proof}
Since $D ^\mathrm{b} _{\mathrm{coh}}
( \D ^{(m)})$
is generated as a triangulated category by the objects of $ \coh (\D ^{(m)})$,
there remains to see that the functor is fully faithful (use \cite[I.2.18]{borel}).
Using \ref{eq-cat-indcoh-m}, we reduce to check
that 
$D ^\mathrm{b} ( \coh  ( \D^{(m)})  
\to 
D ^\mathrm{b} 
( \indcoh (\D ^{(m)}))$
is fully faithful.
Let 
$\E,\FF \in C ^\mathrm{b} ( \coh  ( \D^{(m)})  )$.
We have to check that 
$\mu 
\colon 
\mathrm{Hom} 
_{D ( \coh  ( \D ^{(m)}))}
( \E, \FF)
\to 
\mathrm{Hom} 
_{D ( \indcoh  ( \D ^{(m)}))}
( \E, \FF)$
is a bijection.
Let us check the surjectivity. 
Let $v \in \mathrm{Hom} 
_{D ( \indcoh  ( \D ^{(m)}))}
( \E, \FF)$.
Let
$\FF '\in C ^\mathrm{b} ( \indcoh  ( \D^{(m)})  )$, 
$\beta \colon \FF \to \FF'$ a quasi-isomorphism
of 
$C ^\mathrm{b} ( \indcoh  ( \D^{(m)})  )$,
$\alpha \colon \E \to \FF'$ a morphism
of 
$C ^\mathrm{b} ( \indcoh  ( \D^{(m)})  )$
such that $v$ is represented by $(\alpha , \beta)$.
Using \ref{Serresubcat-cohindcoh}, 
$\alpha (\E) + \beta (\FF) \in \coh ( \D^{(m)}) $. 
Hence, using \ref{lem-a-eqcat}, there exists 
a subcomplex 
$\FF'' $ of $\FF'$   such that 
$\FF'' \hookrightarrow  \FF'$ is a quasi-isomorphism,
$\alpha (\E) + \beta (\FF) \subset \FF'' $,
and 
$\FF'' \in C ^{\mathrm{b}} (\coh ( \D ^{(m)}))$.
Let $u \in \mathrm{Hom} 
_{D ( \coh  ( \D ^{(m)}))}
( \E, \FF)
$ be the element represented by $\alpha\colon \E \to \FF ''$
and $\beta \colon \FF \to \FF''$. 
Then $u$ is sent to $v$. 

Let us check the injectivity. 
Let 
$u 
\in 
\mathrm{Hom} 
_{D ( \coh  ( \D ^{(m)}))}
( \E, \FF)$ whose image in 
$\mathrm{Hom} 
_{D ( \indcoh  ( \D ^{(m)}))}
( \E, \FF)$
is zero. 
Let
$\FF '\in C ^\mathrm{b} ( \coh  ( \D^{(m)})  )$, 
$\beta \colon \FF \to \FF'$ a quasi-isomorphism
of 
$C ^\mathrm{b} ( \coh  ( \D^{(m)})  )$,
$\alpha \colon \E \to \FF'$ a morphism
of 
$C ^\mathrm{b} ( \coh  ( \D^{(m)})  )$
such that $u$ is represented by $(\alpha , \beta)$.
By hypothesis, there exists 
a quasi-isomorphism 
$\gamma \colon \FF' \to \FF''$ 
of
$C ^\mathrm{b} ( \indcoh  ( \D^{(m)})  )$
such that 
$\gamma \circ \alpha$ is homotopic to zero. 
The homotopy is given by a morphism 
of the form 
$k \colon \E \to \FF '' [-1]$.
Using \ref{lem-a-eqcat}, there exists 
a subcomplex 
$\G $ of $\FF''$   such that 
$\G \hookrightarrow  \FF''$ is a quasi-isomorphism,
$\gamma  (\FF') +k (\E)[1] \subset \G$,
and 
$\G \in C ^{\mathrm{b}} (\coh ( \D ^{(m)}))$.
Hence, $\gamma$ has the factorization
$\gamma ' \colon \FF' \to \G$ 
such that 
$\gamma ' \circ \alpha$ is homotopic to zero,
and 
$u$ is represented by
$\gamma ' \circ \alpha$ and 
$\gamma ' \circ \beta$.
Hence, $u=0$.
\end{proof}

\begin{cor}
\label{cor-eq-cat-coh-m}
We keep the notation of \ref{IndCoh}.
We suppose $\fP$ affine. 
The canonical functors
\begin{gather}
\label{cor-eq-cat-coh-m-1}
D ^\mathrm{b} ( \coh  ( \D ^\dag _{\fP /\S } (\hdag T) _\Q ))
\to 
D ^\mathrm{b} _{\mathrm{coh}}
(\D ^\dag _{\fP /\S } (\hdag T) _\Q),
\\
\label{cor-eq-cat-coh-m-2}
D ^{\mathrm{b}}  (\underrightarrow{LM} _{\Q,\mathrm{coh}} (\smash{\widetilde{\D}} _{\fP /\S } ^{(\bullet)} (T)))
\to 
\underrightarrow{LD} ^{\mathrm{b}} _{\Q, \mathrm{coh}} (\smash{\widetilde{\D}} _{\fP /\S } ^{(\bullet)} (T))
\end{gather}
are essentially surjective.
\end{cor}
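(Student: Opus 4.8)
The plan is to reduce the essential surjectivity of \ref{cor-eq-cat-coh-m-1} to that of \ref{cor-eq-cat-coh-m-2}, and then to prove the latter by resolving at level $0$, propagating along the level system by base change, and finally truncating inside $\underrightarrow{LM} _{\Q, \mathrm{coh}}$. For the reduction I would write down the square whose top row is \ref{cor-eq-cat-coh-m-2}, whose bottom row is \ref{cor-eq-cat-coh-m-1}, and whose vertical arrows are induced by $\underrightarrow{\lim}$ (i.e.\ by \ref{M-eq-coh-lim} and \ref{D-eq-lim}). It commutes because $\underrightarrow{\lim}$ is functorial and exact on module categories and commutes with the inclusions of the coherent subcategories. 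Its left vertical arrow is $D ^{\mathrm{b}}$ of the equivalence of abelian categories \ref{M-eq-coh-lim}, hence an equivalence; its right vertical arrow is the composite of parts 3 and 2 of \ref{eqcat-limcoh} (together with \ref{eqcatLD=DSM-fonct-coh}), hence also an equivalence. Essential surjectivity of \ref{cor-eq-cat-coh-m-1} thus follows from that of \ref{cor-eq-cat-coh-m-2}.

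To treat \ref{cor-eq-cat-coh-m-2}, I would start from $\E ^{(\bullet)} \in \underrightarrow{LD} ^{\mathrm{b}} _{\Q, \mathrm{coh}} (\smash{\widetilde{\D}} _{\fP /\S } ^{(\bullet)} (T))$ and use the definition \ref{defi-LDQ0coh} of coherence to replace it, up to isomorphism in the localized derived category, by an object $\FF ^{(\bullet)}$ over $\lambda ^{*} \smash{\widetilde{\D}} _{\fP /\S } ^{(\bullet)} (T)$ such that each $\FF ^{(m)} \in D ^{\mathrm{b}} _{\mathrm{coh}} (\smash{\widetilde{\D}} _{\fP /\S } ^{(\lambda (m))} (T))$ and such that the canonical morphisms \ref{Beintro-4.2.3M} are isomorphisms. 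Since $\fP$ is affine, the ring $\Gamma (\fP, \smash{\widetilde{\D}} _{\fP /\S } ^{(\lambda (0))} (T))$ is noetherian (\cite[3.3--3.4]{Be1}) and, by \ref{eqcatindcoh}, its category of finitely presented modules is equivalent to $\coh (\smash{\widetilde{\D}} _{\fP /\S } ^{(\lambda (0))} (T))$; hence I can choose a quasi-isomorphism $\G ^{(0)} \to \FF ^{(0)}$ with $\G ^{(0)}$ a bounded above complex of free $\smash{\widetilde{\D}} _{\fP /\S } ^{(\lambda (0))} (T)$-modules of finite type. I would then set $\G ^{(m)} := \smash{\widetilde{\D}} _{\fP /\S } ^{(\lambda (m))} (T) \otimes _{\smash{\widetilde{\D}} _{\fP /\S } ^{(\lambda (0))} (T)} \G ^{(0)}$ with the base change transition maps, which by transitivity of base change assemble into a genuine complex $\G ^{(\bullet)}$ over $\lambda ^{*} \smash{\widetilde{\D}} _{\fP /\S } ^{(\bullet)} (T)$ with free terms of finite type; the morphisms $\G ^{(m)} \to \FF ^{(m)}$ adjoint to $\G ^{(0)} \to \FF ^{(0)} \to \FF ^{(m)}$ are then automatically compatible with the transitions by naturality of the adjunction. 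Because $\G ^{(0)}$ has flat terms, $\G ^{(m)}$ computes $\smash{\widetilde{\D}} _{\fP /\S } ^{(\lambda (m))} (T) \otimes ^{\L} _{\smash{\widetilde{\D}} _{\fP /\S } ^{(\lambda (0))} (T)} \FF ^{(0)}$, and under this identification $\G ^{(m)} \to \FF ^{(m)}$ becomes the morphism \ref{Beintro-4.2.3M} with $m' = m$ and $m = 0$, hence a quasi-isomorphism; so $\G ^{(\bullet)} \to \FF ^{(\bullet)}$, and a fortiori $\G ^{(\bullet)} \riso \E ^{(\bullet)}$, in the localized derived category.

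It then remains to replace $\G ^{(\bullet)}$, which is only bounded above, by a bounded complex with coherent terms. Viewed in $\underrightarrow{LM} _{\Q} (\smash{\widetilde{\D}} _{\fP /\S } ^{(\bullet)} (T))$ (via the comparison of $\lambda ^{*} \smash{\widetilde{\D}} _{\fP /\S } ^{(\bullet)} (T)$-modules with $\smash{\widetilde{\D}} _{\fP /\S } ^{(\bullet)} (T)$-modules), the terms of $\G ^{(\bullet)}$ are free of finite type, in particular coherent up to lim-ind-isogeny, and the cohomology of $\G ^{(\bullet)}$ lies in some interval $[a,b]$ since $\G ^{(\bullet)} \cong \E ^{(\bullet)}$ is bounded. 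Applying the good truncation $\tau ^{\geq a}\tau ^{\leq b}$ in the abelian category $\underrightarrow{LM} _{\Q} (\smash{\widetilde{\D}} _{\fP /\S } ^{(\bullet)} (T))$ leaves the cohomology unchanged and yields a complex whose interior terms are free terms of $\G ^{(\bullet)}$ and whose two endpoint terms are respectively a cokernel and a kernel of morphisms of such terms; by \ref{LQ-coh-stab} all of these terms are coherent up to lim-ind-isogeny. Thus $\tau ^{\geq a}\tau ^{\leq b}\G ^{(\bullet)} \in C ^{\mathrm{b}} (\underrightarrow{LM} _{\Q, \mathrm{coh}} (\smash{\widetilde{\D}} _{\fP /\S } ^{(\bullet)} (T)))$, and its image under \ref{cor-eq-cat-coh-m-2} is isomorphic to $\E ^{(\bullet)}$, which proves \ref{cor-eq-cat-coh-m-2} and hence \ref{cor-eq-cat-coh-m-1}.

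The step I expect to be the main obstacle is precisely the compatibility across the level system: a naive level-by-level application of \ref{eq-cat-coh-m} produces coherent representatives of the $\FF ^{(m)}$ with no reason to be compatible with the transition morphisms, which is why the resolution must be built once at level $0$ and transported, the decisive point being that condition \ref{Beintro-4.2.3M} in \ref{defi-LDQ0coh} forces the transported morphisms to remain quasi-isomorphisms. The remaining bookkeeping with the shift $\lambda$ and the passage between $\lambda ^{*} \smash{\widetilde{\D}} _{\fP /\S } ^{(\bullet)} (T)$- and $\smash{\widetilde{\D}} _{\fP /\S } ^{(\bullet)} (T)$-modules is purely formal, and the final truncation is legitimate only because $\underrightarrow{LM} _{\Q, \mathrm{coh}}$ is already known to be closed under kernels and cokernels (\ref{LQ-coh-stab}).
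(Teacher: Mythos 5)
Your proposal is correct, but it goes a genuinely different route from the paper's. The paper reduces in the opposite direction: it uses \ref{eqcat-limcoh} to reduce to the statement \ref{cor-eq-cat-coh-m-1} about $\D ^\dag _{\fP /\S } (\hdag T) _\Q$-modules, descends to level $\lambda(0)$ exactly as you do via \ref{defi-LDQ0coh}, applies \ref{eq-cat-coh-m} to replace $\FF ^{(0)} _\Q$ by a complex $\G ^{(0)} \in D^{\mathrm{b}}(\coh(\smash{\widetilde{\D}} _{\fP/\S}^{(\lambda(0))}(T) _\Q))$, and then jumps straight to $\D ^\dag _\Q$ by the single base change $\D ^\dag _{\fP /\S } (\hdag T) _\Q \otimes _{\smash{\widetilde{\D}} _{\fP/\S}^{(\lambda(0))}(T) _\Q} -$, which is exact by flatness of $\D^\dag_\Q$ over the level-$\lambda(0)$ ring — this is what finishes the argument in one step. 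You instead reduce to the inductive-system statement \ref{cor-eq-cat-coh-m-2}, build a free resolution at level $\lambda(0)$, propagate it term-by-term through all levels using condition \ref{Beintro-4.2.3M} to guarantee each $\G^{(m)} \to \FF^{(m)}$ remains a quasi-isomorphism, and then truncate inside $\underrightarrow{LM} _{\Q,\mathrm{coh}}$ using the Serre-subcategory properties of \ref{LQ-coh-stab}. Both arguments hinge on the same descent to level $\lambda(0)$ and the same affine-noetherian input; the paper's version is shorter because the one-shot exact tensor to $\D^\dag_\Q$ replaces your propagation-and-truncation, while yours has the merit of producing the bounded coherent representative directly inside $C^{\mathrm{b}}(\underrightarrow{LM} _{\Q,\mathrm{coh}})$ and of making visible where the lim-isomorphism $\E ^{(\bullet)} \riso \lambda^* \E^{(\bullet)}$ is used to see that the free $\lambda^*\smash{\widetilde{\D}}^{(\bullet)}$-terms are coherent up to lim-ind-isogeny (a point you assert correctly but state rather quickly — it is because $(\smash{\widetilde{\D}}^{(\bullet)})^{r} \to \lambda^*(\smash{\widetilde{\D}}^{(\bullet)})^{r}$ is a lim-isomorphism).
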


\begin{proof}
Using \ref{eqcat-limcoh}, it is sufficient to check the first statement. 
Let $\E \in D ^\mathrm{b} _{\mathrm{coh}}
(\D ^\dag _{\fP /\S } (\hdag T) _\Q)$.
Following \ref{eqcat-limcoh}, there exists
$\E ^{(\bullet)}  \in 
\underrightarrow{LD}  ^{\mathrm{b}} _{\Q, \mathrm{coh}} (\smash{\widetilde{\D}} _{\fP /\S } ^{(\bullet)} (T))$
together with an isomorphism
$\underrightarrow{\lim} \E ^{(\bullet)} 
\riso 
\E$.
By definition \ref{defi-LDQ0coh},
there exist
$\lambda \in L$
and
$\FF ^{(\bullet)}\in \smash{\underrightarrow{LD}} _{\Q} ^{\mathrm{b}} (\lambda ^{*} \smash{\widetilde{\D}} _{\fP /\S } ^{(\bullet)}(T))$ 
together with an  isomorphism in 
$ \smash{\underrightarrow{LD}} _{\Q} ^{\mathrm{b}}  ( \smash{\widetilde{\D}} _{\fP /\S } ^{(\bullet)}(T))$ of the form
$\E ^{(\bullet)} \riso \FF ^{(\bullet) }$,
such that $\FF ^{(\bullet)}$ satisfies the conditions \ref{defi-LDQ0coh}.1 and \ref{defi-LDQ0coh}.2.
Hence, 
$\underrightarrow{\lim} \FF ^{(\bullet)} 
\riso 
\E$.
This yields
$\FF ^{(0)} \in D  _{\mathrm{coh}} ^{\mathrm{b}} (\smash{\widetilde{\D}} _{\fP /\S } ^{(\lambda (0))} (T))$,
and the isomorphism
$$  \D ^\dag _{\fP /\S } (\hdag T) _\Q  
 \otimes  _{\smash{\widetilde{\D}} _{\fP /\S } ^{(\lambda (0))} (T) _\Q }
\FF ^{(0)}  _\Q
\riso
\E .$$
Using \ref{eq-cat-coh-m}, 
there exists 
$\G ^{(0)}
\in 
D   ^{\mathrm{b}} (\coh (\smash{\widetilde{\D}} _{\fP /\S } ^{(\lambda (0))} (T)_\Q))$
together with an isomorphism
$\G ^{(0)}\riso 
\FF ^{(0)} _\Q$ in 
$D   ^{\mathrm{b}} (\smash{\widetilde{\D}} _{\fP /\S } ^{(\lambda (0))} (T)_\Q)$.
Set 
$\G :=  \D ^\dag _{\fP /\S } (\hdag T) _\Q  
 \otimes  _{\smash{\widetilde{\D}} _{\fP /\S } ^{(\lambda (0))} (T) _\Q }
\G ^{(0)}
\in 
D   ^{\mathrm{b}} (\coh ( \D ^\dag _{\fP /\S } (\hdag T) _\Q  ))
$. 
By applying the exact functor
$\D ^\dag _{\fP /\S } (\hdag T) _\Q  
 \otimes  _{\smash{\widetilde{\D}} _{\fP /\S } ^{(\lambda (0))} (T) _\Q }
-$
to 
$\G ^{(0)}\riso 
\FF ^{(0)} _\Q$, we get the isomorphism
$\G \riso 
\E$
of
$D ^\mathrm{b} _{\mathrm{coh}}
(\D ^\dag _{\fP /\S } (\hdag T) _\Q)$.
 
\end{proof}

\begin{prop}
Let $\fU:= \fP \setminus T$ be the open smooth formal $\S $-scheme complementary to $T$.
Let $\E \in  \indcoh  ( \D ^\dag _{\fP /\S } (\hdag T) _\Q)$.
If $\E | \fU \in  \coh  ( \D ^\dag _{\fU/\S,\Q} )$
then 
$\E \in  \coh  ( \D ^\dag _{\fP /\S } (\hdag T) _\Q)$.
\end{prop}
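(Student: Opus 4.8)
\emph{Proof sketch.} The plan is to localise to an affine situation, use the equivalence \ref{eqcatindcoh} to turn the statement into a descent statement for finitely presented modules, and then establish the relevant faithful flatness. Since coherence is a local property on $\fP$ (see \ref{cohDLMislocal} and \ref{thick-subcat}), and since the restriction of an object of $\indcoh (\D ^\dag _{\fP /\S } (\hdag T) _\Q)$ to an open formal subscheme lies again in the corresponding $\indcoh$ (a filtered inductive limit of coherent modules restricts to such), while restriction preserves coherence, I may assume $\fP$ affine; shrinking further — harmless, since coherence is local — and using that $P$ is regular so that the reduced divisor $T$ is locally principal, I may moreover assume that $\fU = \fP \setminus T$ is an affine formal $\S$-scheme. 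Put $D := \Gamma (\fP, \D ^\dag _{\fP /\S } (\hdag T) _\Q)$ and $D _\fU := \Gamma (\fU, \D ^\dag _{\fU/\S,\Q})$; since $\D ^\dag _{\fP /\S } (\hdag T) _\Q | \fU = \D ^\dag _{\fU/\S,\Q}$, restriction of sections gives a ring homomorphism $\rho \colon D \to D _\fU$. By \ref{eqcatindcoh} over $\fP$, the functor $\Gamma (\fP, -)$ is an equivalence $\indcoh (\D ^\dag _{\fP /\S } (\hdag T) _\Q) \riso \Mod (D)$ carrying $\coh (\D ^\dag _{\fP /\S } (\hdag T) _\Q)$ onto the category of finitely presented $D$-modules; writing $M := \Gamma (\fP, \E)$, it is therefore enough to show that $M$ is finitely presented over $D$.

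From the equivalence one has $\D ^\dag _{\fP /\S } (\hdag T) _\Q \otimes _D M \riso \E$, hence $\D ^\dag _{\fU/\S,\Q} \otimes _D M = \D ^\dag _{\fU/\S,\Q} \otimes _{D _\fU} (D _\fU \otimes _D M) \riso \E | \fU$; taking sections over the affine $\fU$ and applying \ref{eqcatindcoh} once more, the hypothesis that $\E | \fU$ be coherent becomes the statement that $D _\fU \otimes _D M$ is a finitely presented $D _\fU$-module. The key point is then that $\rho \colon D \to D _\fU$ is \emph{faithfully flat}. Granting this, faithfully flat descent of finite presentation yields that $M$ is finitely presented over $D$, and therefore $\E \in \coh (\D ^\dag _{\fP /\S } (\hdag T) _\Q)$, as wanted.

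The faithful flatness of $\rho$ is the main obstacle. Heuristically, $D$ and $D _\fU$ are two completions of the \emph{same} ring of differential operators "on $\fU$" — an overconvergent one and a $\pi$-adic one — the localisation $(\hdag T)$ having already restricted attention to $\fU$; and passing from the weakly complete (overconvergent) ring to its $\pi$-adic completion is faithfully flat. Concretely, one filters by the order of differential operators and reduces, level by level and then in the inductive limit over the level, to the faithful flatness of the passage from the weak completion of $\Gamma (\fP, \O _\fP) _f$, built from Berthelot's sheaves $\widehat{\B} ^{(m)} _\fP (T)$ of \cite[4.2]{Be1}, to the $\pi$-adic completion $\Gamma (\fU, \O _\fU)$ of $\Gamma (\fP, \O _\fP) _f$, which is a Monsky--Washnitzer-type statement. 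Equivalently, this faithful flatness amounts to the assertion that an object of $\indcoh (\D ^\dag _{\fP /\S } (\hdag T) _\Q)$ whose restriction to $\fU$ vanishes is itself zero — reflecting the fact that $\D ^\dag _{\fP /\S } (\hdag T) _\Q$-modules carry no part supported on $T$ — and it is this input, rather than any formal manipulation, that has to be extracted from the structure of Berthelot's rings and from the properties of the localisation functor $(\hdag T)$ studied in the second chapter.
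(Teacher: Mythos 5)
Your argument is the paper's argument: reduce to $\fP$ affine, translate via \ref{eqcatindcoh} to finitely presented modules over global sections, and conclude by faithfully flat descent of finite presentation along $D ^\dag _{\fP /\S } (\hdag T) _\Q \to D ^\dag _{\fU/\S,\Q}$. The one point you leave open — the faithful flatness of this extension — the paper simply invokes as known; it is Berthelot's result (established together with \cite[4.3.10--4.3.12]{Be1}, cf.\ the use of \cite[4.3.10]{Be1} in \ref{lem-projff}), so you should cite it rather than reconstruct it; and your "equivalently" reformulating faithful flatness as a vanishing statement for ind-coherent objects is not accurate (it captures at most faithfulness, not flatness), though nothing in the argument actually rests on that remark.
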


\begin{proof}
Since this is local, we can suppose that $\fP$ is affine. From \ref{eqcatindcoh},
we reduce to check that $E:= \Gamma (\fP, \E)$ lies in   $\coh  ( D ^\dag _{\fP /\S } (\hdag T) _\Q)$.
Since 
$\D ^\dag _{\fP /\S } (\hdag T) _\Q |\fU = 
\D ^\dag _{\fU/\S,\Q} $, 
with  \ref{eqcatindcoh} we get 
$\E |\fU \riso 
\D ^\dag _{\fU/\S,\Q} 
\otimes _{D ^\dag _{\fP /\S } (\hdag T) _\Q} E
\riso \D ^\dag _{\fU/\S,\Q} 
\otimes _{D ^\dag _{\fU/\S,\Q} } E'$, where 
$E':= D ^\dag _{\fU/\S,\Q} 
\otimes _{D ^\dag _{\fP /\S } (\hdag T) _\Q} E$.
Since  $\E |\fU$ is $\D ^\dag _{\fU/\S,\Q} $-coherent,
then using \ref{eqcatindcoh} we get that $E'$ is  $D ^\dag _{\fU/\S,\Q} $-coherent. 
Since the extension 
$D ^\dag _{\fP /\S } (\hdag T) _\Q \to 
D ^\dag _{\fU/\S,\Q} $ is faithfully flat, 
we are done.
\end{proof}

\section{Localization functor outside a divisor}

We keep the notation of chapter \ref{ntn-tildeD(Z)}.

\subsection{Tensor products, quasi-coherence, forgetful functor,
localization functor outside a divisor}

\begin{empt}
For any $\E,\FF \in D  ^-
(\overset{^\mathrm{l}}{} \smash{\widetilde{\D}} _{\fP /\S } ^{(m)} (T ))$
and $\M \in D  ^-
({}^r \smash{\widetilde{\D}} _{\fP /\S } ^{(m)} (T ))$,
we set:

\begin{gather} \notag
\M _i := \M \otimes ^\L _{\smash{\widetilde{\D}} _{\fP /\S } ^{(m)} (T )} \smash{\widetilde{\D}} _{P  _i/ S  _i} ^{(m)} (T ),\
\E _i := \smash{\widetilde{\D}} _{P  _i/ S  _i} ^{(m)} (T ) \otimes ^\L _{\smash{\widetilde{\D}} _{\fP /\S } ^{(m)} ( T )} \E,\\
\notag
\M \smash{\widehat{\otimes}} ^\L _{\smash{\widetilde{\B}} _{\fP} ^{(m)} (T )} \E :=
\R \underset{\underset{i}{\longleftarrow}}{\lim}\, ( \M _i \otimes ^\L  _{\widetilde{\B} _{P _i} ^{(m)} (T )} \E _i)
,\,
\E \smash{\widehat{\otimes}} ^\L _{\smash{\widetilde{\B}} _{\fP} ^{(m)} (T )} \FF :=
\R \underset{\underset{i}{\longleftarrow}}{\lim}\, ( \E _i \otimes ^\L  _{\widetilde{\B} _{P _i} ^{(m)} (T )} \FF _i),
\\
\M \smash{\widehat{\otimes}} ^\L _{\smash{\widetilde{\D}} _{\fP /\S } ^{(m)} (T )} \E :=
\R \underset{\underset{i}{\longleftarrow}}{\lim}\, ( \M _i \otimes ^\L  _{\widetilde{\D} _{P  _i/S  _i} ^{(m)} (T )} \E _i).
\end{gather}
\end{empt}

\begin{empt}
For any
$\E ^{(\bullet)} \in 
D ^{-}
(\overset{^\mathrm{l}}{} \smash{\widetilde{\D}} _{\fP /\S } ^{(\bullet)} (T ))$,
$\M ^{(\bullet)} \in 
D ^{-}
( {}^r \smash{\widetilde{\D}} _{\fP /\S } ^{(\bullet)} (T )  )$,
we set
\begin{gather}
\label{predef-otimes-coh0}
  \M ^{(\bullet)}
\smash{\widehat{\otimes}} ^\L _{\smash{\widetilde{\D}} ^{(\bullet)} _{\fP /\S }( T) }\E ^{(\bullet)}
:=
(\M ^{(m)}  \smash{\widehat{\otimes}} ^\L _{\widetilde{\D} ^{(m)} _{\fP /\S } ( T) } \E ^{(m)}) _{m\in \N}.
\end{gather}
For $? = r$ or $? = l$, 
we define the following tensor product bifunctor
\begin{align}
\label{predef-otimes-coh1}
-
 \smash{\widehat{\otimes}}
^\L _{\widetilde{\B} ^{(\bullet)}  _{\fP} ( T) }
 -
 \colon
D ^-
(\overset{^\mathrm{?}}{} \smash{\widetilde{\D}} _{\fP /\S } ^{(\bullet)} (T ))
\times 
D ^-
(\overset{^\mathrm{l}}{} \smash{\widetilde{\D}} _{\fP /\S } ^{(\bullet)} (T ))
&
\to 
D ^-
(\overset{^\mathrm{?}}{} \smash{\widetilde{\D}} _{\fP /\S } ^{(\bullet)} (T )),
\end{align}
by setting, for any
$\E ^{(\bullet)}\in 
D ^{-}
(\overset{^\mathrm{?}}{} \smash{\widetilde{\D}} _{\fP /\S } ^{(\bullet)} (T ))$,
$\FF ^{(\bullet)} \in 
D ^{-}
(\overset{^\mathrm{l}}{} \smash{\widetilde{\D}} _{\fP /\S } ^{(\bullet)} (T ))$,
\begin{equation}
\notag
\E ^{(\bullet)}
\smash{\widehat{\otimes}}
^\L _{\widetilde{\B} ^{(\bullet)}  _{\fP} ( T) }\FF ^{(\bullet)}
:=
(\E ^{(m)}  
\smash{\widehat{\otimes}}
^\L _{\widetilde{\B} ^{(m)}  _{\fP} ( T) }
\FF ^{(m)}) _{m\in \N}.
\end{equation}

When $T$ is empty, $\widetilde{\B} ^{(\bullet)}  _{\fP} ( T) $
will simply be denoted by 
$\O _{\fP} ^{(\bullet)}$, i.e. 
$\O _{\fP} ^{(\bullet)}$ is the subring of 
$\widehat{\D} ^{(\bullet)} _{\fP/\S}$ whose transition morphisms
are the identity of $\O _{\fP}$.

\end{empt}

\begin{ntn}
[Quasi-coherence and partial forgetful functor of the divisor]
Let $D \subset T$ be a second divisor.

\begin{itemize}
\item Let  $\E ^{(m)} \in 
D ^{\mathrm{b}}
(\overset{^\mathrm{l}}{} \smash{\widetilde{\D}} _{\fP /\S } ^{(m)} (T ))$.
Since  $\smash{\widetilde{\D}} _{\fP /\S } ^{(m)}(T)$ (resp. $\smash{\widetilde{\B}} _{\fP} ^{(m)}(T)$)
has not $p$-torsion,
using the Theorem \cite[3.2.2]{Beintro2} 
we get that 
$\E ^{(m)} $ is quasi-coherent in the sense of Berthelot 
as object of
$D ^{\mathrm{b}}
(\overset{^\mathrm{l}}{} \smash{\D} _{\fP} ^{(m)})$ (see his definition \cite[3.2.1]{Beintro2}) 
if and only if 
$\E ^{(m)} _0 \in D ^{\mathrm{b}} _{\mathrm{qc}}
(\O  _{P})$
and 
the canonical morphism 
$\E ^{(m)} \to 
\smash{\widetilde{\D}} _{\fP /\S } ^{(m)} (T ) 
\smash{\widehat{\otimes}} ^\L _{\smash{\widetilde{\D}} _{\fP /\S } ^{(m)} (T )} \E ^{(m)} $
(resp. $\E ^{(m)} \to 
\smash{\widetilde{\B}} _{\fP} ^{(m)} (T ) 
\smash{\widehat{\otimes}} ^\L _{\smash{\widetilde{\B}} _{\fP} ^{(m)} (T )} \E ^{(m)} $)
is an isomorphism. 
In particular, this does not depend on the divisor $T$.
We denote by 
$D ^{\mathrm{b}} _{\mathrm{qc}}
(\overset{^\mathrm{l}}{} \smash{\widetilde{\D}} _{\fP /\S } ^{(m)} (T ))$, 
the full subcategory of
$D ^{\mathrm{b}}
(\overset{^\mathrm{l}}{} \smash{\widetilde{\D}} _{\fP /\S } ^{(m)} (T ))$
of quasi-coherent complexes.
We get the
{\it partial forgetful functor of the divisor} 
$\mathrm{oub} _{D,T}\colon 
D ^{\mathrm{b}} _{\mathrm{qc}}
(\overset{^\mathrm{l}}{} \smash{\widetilde{\D}} _{\fP /\S } ^{(m)} (T))
\to
D ^{\mathrm{b}} _{\mathrm{qc}}
(\overset{^\mathrm{l}}{} \smash{\widetilde{\D}} _{\fP /\S } ^{(m)} (D ))$
which is induced by the canonical forgetful functor
$\mathrm{oub} _{D,T}\colon 
D ^{\mathrm{b}} 
(\overset{^\mathrm{l}}{} \smash{\widetilde{\D}} _{\fP /\S } ^{(m)} (T))
\to
D ^{\mathrm{b}} 
(\overset{^\mathrm{l}}{} \smash{\widetilde{\D}} _{\fP /\S } ^{(m)} (D ))$.

\item Similarly, we denote by
$D ^{\mathrm{b}} _{\mathrm{qc}}
(\overset{^\mathrm{l}}{} \smash{\widetilde{\D}} _{\fP /\S } ^{(\bullet)} (T ))$
the full subcategory of 
$D ^{\mathrm{b}}
(\overset{^\mathrm{l}}{} \smash{\widetilde{\D}} _{\fP /\S } ^{(\bullet)} (T ))$
of complexes $\E ^{(\bullet)} $ such that, for any $m\in \Z$,  
$\E ^{(m)} _0 \in D ^{\mathrm{b}} _{\mathrm{qc}}
(\O  _{P})$ 
and the canonical morphism 
$\E ^{(\bullet)}  \to 
\smash{\widetilde{\D}} _{\fP /\S } ^{(\bullet)} (T )
\smash{\widehat{\otimes}} ^\L _{\smash{\widetilde{\D}} _{\fP /\S } ^{(\bullet)} (T )} \E ^{(\bullet)} $
is an isomorphism of $D ^{\mathrm{b}}
(\overset{^\mathrm{l}}{} \smash{\widetilde{\D}} _{\fP /\S } ^{(\bullet)} (T ))$. 
We get the
{\it partial forgetful functor of the divisor} 
$\mathrm{oub} _{D,T}\colon 
D ^{\mathrm{b}} _{\mathrm{qc}}
(\overset{^\mathrm{l}}{} \smash{\widetilde{\D}} _{\fP /\S } ^{(\bullet)} (T))
\to
D ^{\mathrm{b}}  _{\mathrm{qc}}
(\overset{^\mathrm{l}}{} \smash{\widetilde{\D}} _{\fP /\S } ^{(\bullet)} (D ))$.

\item We denote by 
$\smash{\underrightarrow{LD}}  ^{\mathrm{b}} _{\Q,\mathrm{qc}}
( \smash{\widetilde{\D}} _{\fP /\S } ^{(\bullet)} (T ))$ 
the strictly full subcategory of 
$\smash{\underrightarrow{LD}}  ^{\mathrm{b}} _{\Q}
( \smash{\widetilde{\D}} _{\fP /\S } ^{(\bullet)} (T ))$
of complexes which are isomorphic in
$\smash{\underrightarrow{LD}}  ^{\mathrm{b}} _{\Q}
( \smash{\widetilde{\D}} _{\fP /\S } ^{(\bullet)} (T ))$ 
to a complex belonging to 
$D ^{\mathrm{b}}  _{\mathrm{qc}}
(\overset{^\mathrm{l}}{} \smash{\widetilde{\D}} _{\fP /\S } ^{(\bullet)} (T ))$.
Since the functor $\mathrm{oub} _{D,T}$
sends a lim-ind-isogeny to a lim-ind-isogeny, 
we obtain the factorization of the form : 
\begin{equation}
\label{def-oubDT}
\mathrm{oub} _{D,T}
\colon 
\smash{\underrightarrow{LD}}  ^{\mathrm{b}} _{\Q,\mathrm{qc}}
( \smash{\widetilde{\D}} _{\fP /\S } ^{(\bullet)} (T ))
\to
\smash{\underrightarrow{LD}}  ^{\mathrm{b}} _{\Q,\mathrm{qc}}
( \smash{\widetilde{\D}} _{\fP /\S } ^{(\bullet)} (D )).
\end{equation}

\item We still denote by 
$\mathrm{oub} _{D, T}\colon 
D ^{\mathrm{b}} (\D ^\dag _{\fP /\S } (\hdag T) _\Q) \to 
D ^{\mathrm{b}} (\D ^\dag _{\fP /\S } (\hdag D) _\Q)$
the partial forgetful functor of the divisor.

\end{itemize}
\end{ntn}

\begin{rem}
\begin{enumerate}
\item A morphism 
$\E ^{(\bullet)}
\to \cF ^{(\bullet)}$ 
of 
$D ^{\mathrm{b}}
(\overset{^\mathrm{l}}{} \smash{\widetilde{\D}} _{\fP /\S } ^{(\bullet)} (T ))$
is an isomorphism if and only if 
the induced morphism
$\E ^{(m)}
\to \cF ^{(m)}$ is an isomorphism of 
$D ^{\mathrm{b}}
(\overset{^\mathrm{l}}{} \smash{\widetilde{\D}} _{\fP /\S } ^{(m)} (T ))$ for every $m\in \Z$.

\item Let 
$\E ^{(\bullet)}
\in 
D ^{\mathrm{b}}
(\overset{^\mathrm{l}}{} \smash{\widetilde{\D}} _{\fP /\S } ^{(\bullet)} (T ))$.
Using the first remark, 
we check the property $\E ^{(\bullet)}
\in 
D ^{\mathrm{b}} _{\mathrm{qc}}
(\overset{^\mathrm{l}}{} \smash{\widetilde{\D}} _{\fP /\S } ^{(\bullet)} (T ))$
is equivalent to
the property that,  
for any $m \in \Z$, 
$\E ^{(m)}
\in 
D ^{\mathrm{b}} _{\mathrm{qc}}
(\overset{^\mathrm{l}}{} \smash{\widetilde{\D}} _{\fP /\S } ^{(m)} (T ))$.
Hence, the above definition of 
$\smash{\underrightarrow{LD}}  ^{\mathrm{b}} _{\Q,\mathrm{qc}}
( \smash{\widetilde{\D}} _{\fP /\S } ^{(\bullet)} (T ))$ 
corresponds to that of Berthelot's one formulated in 
 \cite[4.2.3]{Beintro2} without singularities along a divisor. 
\end{enumerate}

\end{rem}

\begin{lemm}
\label{lemm-def-otimes-coh1}
The bifunctor \ref{predef-otimes-coh1} induces
\begin{align}
\label{def-otimes-coh1}
-
 \smash{\widehat{\otimes}}
^\L _{\widetilde{\B} ^{(\bullet)}  _{\fP} ( T) }
 -
 \colon
 \smash{\underrightarrow{LD}}  ^- _{\Q}
(\overset{^\mathrm{?}}{} \smash{\widetilde{\D}} _{\fP /\S } ^{(\bullet)} (T ))
\times 
\smash{\underrightarrow{LD}}  ^- _{\Q}
(\overset{^\mathrm{l}}{} \smash{\widetilde{\D}} _{\fP /\S } ^{(\bullet)} (T ))
&
\to 
\smash{\underrightarrow{LD}}  ^- _{\Q}
(\overset{^\mathrm{?}}{} \smash{\widetilde{\D}} _{\fP /\S } ^{(\bullet)} (T )).
\end{align}
\end{lemm}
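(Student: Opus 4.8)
The plan is to invoke the universal property of localization. By \ref{S=LDQ}, the category $\underrightarrow{LD} ^{-} _{\Q} (\widetilde{\D} _{\fP /\S } ^{(\bullet)} (T))$ is the localization of $D ^{-} (\widetilde{\D} _{\fP /\S } ^{(\bullet)} (T))$ at the multiplicative system $S$ of lim-ind-isogenies (and likewise for right modules). Post-composing the bifunctor \ref{predef-otimes-coh1} with the localization functors turns it into a bifunctor with values in $\underrightarrow{LD} ^{-} _{\Q}$; to see that it factors through $\underrightarrow{LD} ^{-} _{\Q} \times \underrightarrow{LD} ^{-} _{\Q}$ it suffices to check that, with one argument held fixed, the resulting endofunctor of $D ^{-}$ in the other variable sends lim-ind-isogenies to lim-ind-isogenies. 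Indeed, a bifunctor inverting $S$ separately in each variable inverts every morphism lying in $S$ in both components, since such a morphism factors as $(f,\mathrm{id})$ composed with $(\mathrm{id},g)$.

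Next I would record the compatibility of $- \widehat{\otimes} ^{\L} _{\widetilde{\B} ^{(\bullet)} _{\fP} (T) } -$ with the operations $\chi ^{*}$ and $\lambda ^{*}$ of \ref{loc-LM}. Since $\widehat{\otimes} ^{\L}$ is computed level by level and is additive, the scalars $p ^{n}$ pull out of the tensor products at each finite level and out of the derived inverse limit defining $\widehat{\otimes} ^{\L}$; hence for $\chi \in M$ one gets the strict equalities $\chi ^{*} (\E ^{(\bullet)} \widehat{\otimes} ^{\L} \FF ^{(\bullet)}) = (\chi ^{*} \E ^{(\bullet)}) \widehat{\otimes} ^{\L} \FF ^{(\bullet)} = \E ^{(\bullet)} \widehat{\otimes} ^{\L} (\chi ^{*} \FF ^{(\bullet)})$, and the bifunctor carries the canonical morphism $\E ^{(\bullet)} \to \chi ^{*} \E ^{(\bullet)}$ (which is $p ^{\chi (m)}$ at level $m$) onto the canonical morphism $\E ^{(\bullet)} \widehat{\otimes} ^{\L} \FF ^{(\bullet)} \to \chi ^{*} (\E ^{(\bullet)} \widehat{\otimes} ^{\L} \FF ^{(\bullet)})$. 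Since $\lambda ^{*}$ is pure reindexing one also has the strict equality $\lambda ^{*} (\E ^{(\bullet)} \widehat{\otimes} ^{\L} \FF ^{(\bullet)}) = (\lambda ^{*} \E ^{(\bullet)}) \widehat{\otimes} ^{\L} (\lambda ^{*} \FF ^{(\bullet)})$. With these identifications the ind-isogeny case is immediate: applying $- \widehat{\otimes} ^{\L} \cN ^{(\bullet)}$ to the data $(\chi, g)$ witnessing that $f ^{(\bullet)}$ is an ind-isogeny exhibits $f ^{(\bullet)} \widehat{\otimes} ^{\L} \mathrm{id}$ as one.

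The genuinely non-formal point is that $\lambda ^{*}$ applied in a single variable does not commute with the level-wise tensor product: $(\lambda ^{*} \E ^{(\bullet)}) \widehat{\otimes} ^{\L} \FF ^{(\bullet)}$ has level-$m$ term $\E ^{(\lambda (m))} \widehat{\otimes} ^{\L} \FF ^{(m)}$, while $\lambda ^{*} (\E ^{(\bullet)} \widehat{\otimes} ^{\L} \FF ^{(\bullet)})$ has level-$m$ term $\E ^{(\lambda (m))} \widehat{\otimes} ^{\L} \FF ^{(\lambda (m))}$. I would introduce the canonical comparison morphism $v ^{(\bullet)} \colon (\lambda ^{*} \E ^{(\bullet)}) \widehat{\otimes} ^{\L} \FF ^{(\bullet)} \to \lambda ^{*} (\E ^{(\bullet)} \widehat{\otimes} ^{\L} \FF ^{(\bullet)})$, equal at level $m$ to $\mathrm{id}$ tensored with the transition morphism $\FF ^{(m)} \to \FF ^{(\lambda (m))}$, and check by hand that $v ^{(\bullet)}$ is a lim-isomorphism: the required backward map $w ^{(\bullet)} \colon \lambda ^{*} (\E ^{(\bullet)} \widehat{\otimes} ^{\L} \FF ^{(\bullet)}) \to \lambda ^{*} \big( (\lambda ^{*} \E ^{(\bullet)}) \widehat{\otimes} ^{\L} \FF ^{(\bullet)} \big)$ is, at level $m$, the transition morphism $\E ^{(\lambda (m))} \to \E ^{(\lambda (\lambda (m)))}$ tensored with $\mathrm{id}$, and one verifies that $w ^{(\bullet)} \circ v ^{(\bullet)}$ and $\lambda ^{*} (v ^{(\bullet)}) \circ w ^{(\bullet)}$ are the canonical morphisms of \ref{loc-LM}. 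Granting this, a lim-ind-isogeny $f ^{(\bullet)}$ with witness $g ^{(\bullet)} \colon \FF ^{(\bullet)} \to \lambda ^{*} \chi ^{*} \E ^{(\bullet)}$ yields, after applying $- \widehat{\otimes} ^{\L} \cN ^{(\bullet)}$, the witness $v ^{(\bullet)} \circ ( g ^{(\bullet)} \widehat{\otimes} ^{\L} \mathrm{id})$ for $f ^{(\bullet)} \widehat{\otimes} ^{\L} \mathrm{id}$, the $\chi ^{*}$-compatibility above identifying all the canonical morphisms; here one uses that lim-isomorphisms and lim-ind-isogenies form multiplicative systems with the two-out-of-three property (for the module-theoretic version this is part of Lemma \ref{defi-M-L-Serre}, and in general one invokes \cite[I.4.2]{HaRD} together with the analogue of \cite[1.1.2]{caro-stab-sys-ind-surcoh}). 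The symmetric computation handles the remaining variable.

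I expect the verification that $v ^{(\bullet)}$ is a lim-isomorphism to be the main obstacle, and even that is routine; everything else is bookkeeping with the level-by-level definition of $\widehat{\otimes} ^{\L}$, and, as with the surrounding results, a good deal can be imported verbatim from \cite{caro-stab-sys-ind-surcoh} and \cite{Beintro2}.
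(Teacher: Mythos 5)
Your proposal is correct and follows essentially the same route as the paper: fix one argument, observe that the issue is the failure of $\lambda^*$ to commute with the level-wise tensor, construct a comparison morphism to $\lambda^*\chi^*$ of the tensor product, and check it is compatible with the canonical morphisms so that the functor passes to the localization. The only point worth flagging is that the paper makes explicit the technical device it uses — a left resolution of $\FF^{(\bullet)}$ by flat $\widetilde{\D}_{\fP/\S}^{(\bullet)}(T)$-modules (flat level by level) — so that the comparison morphism $v^{(\bullet)}$ and the commutative square are genuine morphisms of complexes rather than morphisms in $D^-$ to be assembled level by level; you gesture at this with "computed level by level," but in a write-up you should name the flat resolution explicitly to make the construction of $v^{(\bullet)}$ rigorous.
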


\begin{proof}
Let $\E ^{(\bullet)}\in 
D ^{-}
(\overset{^\mathrm{?}}{} \smash{\widetilde{\D}} _{\fP /\S } ^{(\bullet)} (T ))$,
$\FF ^{(\bullet)} \in 
D ^{-}
(\overset{^\mathrm{l}}{} \smash{\widetilde{\D}} _{\fP /\S } ^{(\bullet)} (T ))$.
Let $\chi \in M$, $\lambda \in L$.
By using a left  resolution of $\FF ^{(\bullet)}$ by flat $ \smash{\widetilde{\D}} _{\fP /\S } ^{(\bullet)} (T )$-modules
(remark also that 
a   $ \smash{\widetilde{\D}} _{\fP /\S } ^{(\bullet)} (T )$-module $\fP ^{(\bullet)}$ is flat if and only if 
 the $ \smash{\widetilde{\D}} _{\fP /\S } ^{(m)} (T )$-module $\fP ^{(m)}$
is flat for any $m$),
there exists a morphism
$
\E ^{(\bullet)}
\smash{\widehat{\otimes}}
^\L _{\widetilde{\B} ^{(\bullet)}  _{\fP} ( T) }
\lambda ^{*} \chi ^{*}  \FF ^{(\bullet)}
\to 
\lambda ^{*} \chi ^{*}  \left (\E ^{(\bullet)}
\smash{\widehat{\otimes}}
^\L _{\widetilde{\B} ^{(\bullet)}  _{\fP} ( T) }
 \FF ^{(\bullet)}\right) $ 
inducing the commutative canonical diagram
\begin{equation}
\notag
\xymatrix @ R=0,3cm{
{\E ^{(\bullet)}
\smash{\widehat{\otimes}}
^\L _{\widetilde{\B} ^{(\bullet)}  _{\fP} ( T) }
\FF ^{(\bullet)}
} 
\ar[r] ^-{}
\ar[d] ^-{}
& 
{\E ^{(\bullet)}
\smash{\widehat{\otimes}}
^\L _{\widetilde{\B} ^{(\bullet)}  _{\fP} ( T) }
\lambda ^{*} \chi ^{*}  \FF ^{(\bullet)} } 
\ar[d] ^-{}
\ar@{.>}[dl] ^-{}
\\ 
{\lambda ^{*} \chi ^{*}  \left (\E ^{(\bullet)}
\smash{\widehat{\otimes}}
^\L _{\widetilde{\B} ^{(\bullet)}  _{\fP} ( T) }
 \FF ^{(\bullet)}\right) } 
 \ar[r] ^-{}
& 
{ \lambda ^{*} \chi ^{*}  \left (\E ^{(\bullet)}
\smash{\widehat{\otimes}}
^\L _{\widetilde{\B} ^{(\bullet)}  _{\fP} ( T) }
\lambda ^{*} \chi ^{*} \FF ^{(\bullet)}\right) .} 
}
\end{equation}
This yields the factorization
$\E ^{(\bullet)}
 \smash{\widehat{\otimes}}
^\L _{\widetilde{\B} ^{(\bullet)}  _{\fP} ( T) }
 -
 \colon
\smash{\underrightarrow{LD}}  ^- _{\Q}
(\overset{^\mathrm{l}}{} \smash{\widetilde{\D}} _{\fP /\S } ^{(\bullet)} (T ))
\to 
\smash{\underrightarrow{LD}}  ^- _{\Q}
(\overset{^\mathrm{?}}{} \smash{\widetilde{\D}} _{\fP /\S } ^{(\bullet)} (T ))$
and then by functoriality, 
$$-
 \smash{\widehat{\otimes}}
^\L _{\widetilde{\B} ^{(\bullet)}  _{\fP} ( T) }
 -
 \colon
 D  ^- 
(\overset{^\mathrm{?}}{} \smash{\widetilde{\D}} _{\fP /\S } ^{(\bullet)} (T ))
\times 
\smash{\underrightarrow{LD}}  ^- _{\Q}
(\overset{^\mathrm{l}}{} \smash{\widetilde{\D}} _{\fP /\S } ^{(\bullet)} (T ))
\to 
\smash{\underrightarrow{LD}}  ^- _{\Q}
(\overset{^\mathrm{?}}{} \smash{\widetilde{\D}} _{\fP /\S } ^{(\bullet)} (T )).$$
Similarly, we get the factorization with respect to the first factor. 
\end{proof}

\begin{empt}
Let $D \subset T$ be a second divisor. 
For any
$\E ^{(\bullet)} \in 
D ^{-}
(\overset{^\mathrm{l}}{} \smash{\widetilde{\D}} _{\fP /\S } ^{(\bullet)} (D))$, 
similarly to \cite[1.1.8]{caro_courbe-nouveau} we get 
the commutative diagram in $D ^{-}
(\overset{^\mathrm{l}}{} \smash{\widetilde{\D}} _{\fP /\S } ^{(\bullet)} (T))$:
\begin{equation}
\label{ODdivcohe}
  \xymatrix @ R=0,4cm {
{(\widetilde{\B} ^{(m)} _{\fP} ( T)  \smash{\widehat{\otimes}} ^\L
_{\widetilde{\B} ^{(m)}  _{\fP} ( D) } \E ^{(m)}) _{m\in \N}}
\ar@{=}[r] ^-{\mathrm{def}} \ar[d] _\sim
&
{\widetilde{\B} ^{(\bullet)} _{\fP} ( T)  \smash{\widehat{\otimes}} ^\L
_{\widetilde{\B} ^{(\bullet)}  _{\fP} ( D) } \E ^{(\bullet)}}
\ar@{.>}[d] _\sim
\\
{(\widetilde{\D} ^{(m)} _{\fP } ( T)  \smash{\widehat{\otimes}} ^\L
_{\widetilde{\D} ^{(m)} _{\fP } ( D) } \E ^{(m)}) _{m\in \N}}
\ar@{=}[r] ^-{\mathrm{def}}
&
{\smash{\widetilde{\D}} ^{(\bullet)} _{\fP /\S }( T)  \smash{\widehat{\otimes}} ^\L
_{\smash{\widetilde{\D}} ^{(\bullet)} _{\fP /\S }( D) }\E ^{(\bullet)}=: (\hdag T, D) (\E ^{(\bullet)})}.
}
\end{equation}
As for Lemma \ref{lemm-def-otimes-coh1}, 
we get the the localization outside $T$ functor :
\begin{gather}
\label{hdag-def}
 (\hdag T ,\,D) 
 :=\smash{\widetilde{\D}} ^{(\bullet)} _{\fP /\S }( T)  \smash{\widehat{\otimes}} ^\L
_{\smash{\widetilde{\D}} ^{(\bullet)} _{\fP /\S }( D) }-
\colon
\smash{\underrightarrow{LD}} ^{-} _{\Q} ( \smash{\widetilde{\D}} _{\fP /\S } ^{(\bullet)}(D))
\to
\smash{\underrightarrow{LD}} ^{-} _{\Q} ( \smash{\widetilde{\D}} _{\fP /\S } ^{(\bullet)}(T)).
\end{gather}

\end{empt}

\subsection{Preservation of the quasi-coherence}
\label{section3.2}
Let $m' \geq m \geq 0$ be two integers,
$D ' \subset D \subset T$ be three (reduced) divisors of  $P$. 
We have the canonical morphisms
$\widetilde{\B} _{P _i} ^{(m)} (D ') \to \widetilde{\B} _{P _i} ^{(m)} (D )\to \widetilde{\B} _{P _i} ^{(m')} (T)$.
Similarly to the notation of \cite{Beintro2},
we denote by $D _{\Q,\mathrm{qc}} ^{-} (\smash{\widetilde{\B}} _{\fP} ^{(m)} (D))$ 
(resp. 
$D _{\Q,\mathrm{qc}} ^{-} (\widetilde{\B} ^{(m')} _{\fP} ( D)  \smash{\widehat{\otimes}} _{\O _{\fP}} \smash{\widehat{\D}} _{\fP /\S } ^{(m)})$)
the localization of the category 
$D ^{-} _{\mathrm{qc}} (\smash{\widetilde{\B}} _{\fP} ^{(m)} (D))$ 
(resp.  $D _{\mathrm{qc}} ^{-} (\widetilde{\B} ^{(m')} _{\fP} ( D)  \smash{\widehat{\otimes}} _{\O _{\fP}} \smash{\widehat{\D}} _{\fP /\S } ^{(m)})$)
by isogenies.

\begin{lemm}
\label{lem1-hdagDT}
\begin{enumerate}
\item The kernel of the canonical epimorphism
$\smash{\widetilde{\B}} _{\fP} ^{(m)} (D ) \widehat{\otimes} _{\O _{\fP} }
\smash{\widetilde{\B}} _{\fP} ^{(m')} (T )
\to 
\smash{\widetilde{\B}} _{\fP} ^{(m')} (T )$
is a quasi-coherent $\O _{P}$-module. 

\item The canonical morphism
$\smash{\widetilde{\B}} _{\fP} ^{(m)} (D ) 
\widehat{\otimes} ^{\L}  _{\O _{\fP} }
\smash{\widetilde{\B}} _{\fP} ^{(m')} (T )
\to 
\smash{\widetilde{\B}} _{\fP} ^{(m)} (D ) \widehat{\otimes} _{\O _{\fP} }
\smash{\widetilde{\B}} _{\fP} ^{(m')} (T )$
is an isomorphism.

\end{enumerate}
\end{lemm}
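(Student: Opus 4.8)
The plan is to reduce both assertions to Berthelot's structure theory for the rings $\widehat{\B}^{(m)}_{\fP}(T)$ and then to a routine Mittag--Leffler argument. Both statements are local on $\fP$, so I would first pass to an affine $\fP$: the morphisms involved commute with restriction to opens, and being an isomorphism (resp.\ being quasi-coherent) is local. The facts I would then quote from \cite[4.2--4.3]{Be1} are that the rings $\widetilde{\B}^{(m)}_{\fP}(T) = \widehat{\B}^{(\lambda_0(m))}_{\fP}(T)$ are $\O_{\fP}$-flat (hence $\pi$-torsion free, as $\O_{\fP}$ is) and quasi-coherent, that their reductions $\widetilde{\B}^{(m)}_{P_i}(T) = \widetilde{\B}^{(m)}_{\fP}(T)/\pi^{i+1}$ are $\O_{P_i}$-flat quasi-coherent algebras, and that the transition maps $\widetilde{\B}^{(m)}_{P_{i+1}}(T)\to\widetilde{\B}^{(m)}_{P_i}(T)$ are surjective; the same holds with $D$ in place of $T$, and the canonical ring map $\widetilde{\B}^{(m)}_{\fP}(D)\to\widetilde{\B}^{(m')}_{\fP}(T)$ recalled above is at our disposal.

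For (2): by the definition of the completed derived tensor product, the source of the morphism is $\R\varprojlim_i\bigl(\M_i\otimes^{\L}_{\O_{P_i}}\E_i\bigr)$, where $\M_i := \widetilde{\B}^{(m)}_{\fP}(D)\otimes^{\L}_{\O_{\fP}}\O_{P_i}$ and $\E_i := \O_{P_i}\otimes^{\L}_{\O_{\fP}}\widetilde{\B}^{(m')}_{\fP}(T)$. Resolving $\O_{P_i}$ over $\O_{\fP}$ by multiplication by $\pi^{i+1}$ on $\O_{\fP}$ and invoking $\pi$-torsion freeness of the two $\widetilde{\B}$-rings, one gets $\M_i = \widetilde{\B}^{(m)}_{P_i}(D)$ and $\E_i = \widetilde{\B}^{(m')}_{P_i}(T)$, concentrated in degree $0$; then $\O_{P_i}$-flatness of $\widetilde{\B}^{(m')}_{P_i}(T)$ makes $\M_i\otimes^{\L}_{\O_{P_i}}\E_i$ degenerate to $N_i := \widetilde{\B}^{(m)}_{P_i}(D)\otimes_{\O_{P_i}}\widetilde{\B}^{(m')}_{P_i}(T)$, again concentrated in degree $0$. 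Since $(N_i)_i = (N_\infty/\pi^{i+1}N_\infty)_i$ with $N_\infty := \widetilde{\B}^{(m)}_{\fP}(D)\otimes_{\O_{\fP}}\widetilde{\B}^{(m')}_{\fP}(T)$, this inverse system has surjective transitions, hence is Mittag--Leffler, so $\R\varprojlim_i$ collapses to $\varprojlim_i N_i$; and $\varprojlim_i N_i$ is by definition $\widetilde{\B}^{(m)}_{\fP}(D)\smash{\widehat{\otimes}}_{\O_{\fP}}\widetilde{\B}^{(m')}_{\fP}(T)$. This proves (2).

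For (1): with the identification from (2) the epimorphism $\mu$ of the statement is $\varprojlim_i\mu_i$, where $\mu_i\colon N_i\to\widetilde{\B}^{(m')}_{P_i}(T)$ is the (manifestly surjective) multiplication map induced by $\widetilde{\B}^{(m)}_{P_i}(D)\to\widetilde{\B}^{(m')}_{P_i}(T)$, and $\mu_i$ is the reduction modulo $\pi^{i+1}$ of the surjection $\mu_\infty\colon N_\infty\to\widetilde{\B}^{(m')}_{\fP}(T)$. Setting $\K_\infty := \ker\mu_\infty$ and using $\O_{\fP}$-flatness of $\widetilde{\B}^{(m')}_{\fP}(T)$, the sequence $0\to\K_\infty\to N_\infty\to\widetilde{\B}^{(m')}_{\fP}(T)\to 0$ remains exact after $-\otimes_{\O_{\fP}}\O_{P_i}$, so $\ker\mu_i = \K_\infty/\pi^{i+1}\K_\infty =: \K_i$ and the $\K_i$ form a surjective (hence Mittag--Leffler) inverse system. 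Each $\K_i$, being the kernel of a morphism of quasi-coherent sheaves on the scheme $P_i$, is quasi-coherent over $\O_{P_i}$; and passing to $\varprojlim_i$ in the exact sequences $0\to\K_i\to N_i\to\widetilde{\B}^{(m')}_{P_i}(T)\to 0$ identifies $\ker\mu$ with $\varprojlim_i\K_i$, a $\pi$-adically complete and separated $\O_{\fP}$-module whose reductions $\ker\mu/\pi^{i+1}\ker\mu\simeq\K_i$ are quasi-coherent --- that is, a quasi-coherent module, which is the assertion of (1). I do not expect a real obstacle here: past the cited structural facts for $\widehat{\B}^{(m)}_{\fP}(T)$ ($\O_{\fP}$-flatness, $p$-torsion freeness, quasi-coherence), everything reduces to flat-base-change and Mittag--Leffler bookkeeping; the only points that want a little care are citing \cite{Be1} precisely for those facts and the flatness-driven identification $\ker\mu_i = \K_\infty/\pi^{i+1}\K_\infty$ that underlies the passage to the limit.
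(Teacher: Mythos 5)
Your overall plan (reduce to the affine case, replace $\widehat\otimes^{\L}_{\O_\fP}$ by $\R\varprojlim_i$ of level-$i$ derived tensors, invoke $\pi$-torsion-freeness of Berthelot's $\widehat\B$-rings to identify the derived reductions, and use Mittag--Leffler) is reasonable, but it rests on a claim that is false and that is load-bearing for both halves of the argument: the rings $\widetilde\B^{(m)}_{\fP}(T)=\widehat\B^{(\lambda_0(m))}_{\fP}(T)$ are \emph{not} $\O_{\fP}$-flat, and their reductions $\widetilde\B^{(m)}_{P_i}(T)$ are \emph{not} $\O_{P_i}$-flat. Indeed, locally $\widetilde\B^{(m)}_{\fP}(T)\otimes_{\O_\fP}\O_P \cong \O_P[\eta]/(\bar f^{\,p^{a}}\eta)$ (with $\bar f$ a local equation of $T$ and $a=\lambda_0(m)+1$), and the generator $\eta$ is annihilated by $\bar f^{\,p^{a}}$; over the domain $\O_{P,x}$ at any point $x\in T$ this is nonzero torsion, so the module is not flat. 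Berthelot's \cite[4.2--4.3]{Be1} gives $p$-torsion-freeness, quasi-coherence, noetherianity, and flatness of $\widehat\B^{(m+1)}(T)$ over $\widehat\B^{(m)}(T)$, but not flatness over $\O_\fP$.

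This breaks your proof of (2) at the step ``$\O_{P_i}$-flatness of $\widetilde\B^{(m')}_{P_i}(T)$ makes $\M_i\otimes^{\L}_{\O_{P_i}}\E_i$ degenerate to $N_i$'': in fact $\mathrm{Tor}_1^{\O_{P_i}}\bigl(\widetilde\B^{(m)}_{P_i}(D),\widetilde\B^{(m')}_{P_i}(T)\bigr)$ is nonzero in general (already mod $\pi$), so the level-$i$ derived tensor does not live in degree $0$, and the Mittag--Leffler bookkeeping you carry out on $N_i$ alone does not touch the $H^{-1}$ terms. The proof of (1) inherits a related error: you argue that $N_\infty=\widetilde\B^{(m)}_{\fP}(D)\otimes_{\O_\fP}\widetilde\B^{(m')}_{\fP}(T)$ is $\pi$-torsion-free (flat $\otimes$ torsion-free), hence so is $\K_\infty$, and this is what licenses $\ker\mu/\pi^{i+1}\ker\mu\simeq\K_i$. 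But $N_\infty$ is \emph{not} $\pi$-torsion-free: locally, writing $D=V(f)$, $T=V(fh)$, $a=p^{\lambda_0(m)+1}\le b=p^{\lambda_0(m')+1}$, the element $\kappa:=\eta\otimes 1-1\otimes f^{\,b-a}h^{\,b}\xi$ lies in $\K_\infty$, is nonzero, satisfies $f^{a}\kappa=p\otimes 1-1\otimes p=0$, and therefore $p\kappa=(\eta\otimes 1)(f^{a}\otimes 1)\kappa=0$. So $\K_\infty$ is $p$-torsion, and your torsion-free picture of the kernel is the opposite of what actually happens.

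This missing fact is not incidental: that the kernel is annihilated by a \emph{fixed} power of $p$ is precisely the content of assertion (1) (hence the ``$\O_P$-module'' in the statement, which your argument weakens to ``quasi-coherent in the formal sense over $\O_\fP$''), and it is the mechanism that makes (2) true despite the failure of flatness --- the $H^{-1}$'s of the level-$i$ derived tensors stabilize to a bounded-$p$-torsion module whose transition maps are multiplication by $\pi$, hence the pro-object they form is pro-zero, and $\R\varprojlim$ kills them. It also underlies the $p$-isogeny assertion of the very next statement (\ref{lem3-hdagDT}). I would encourage you to consult \cite[3.2.1]{caro-stab-sys-ind-surcoh}, which the paper cites for this proof; the argument there is built around the bounded $p$-torsion of the kernel rather than around flatness of $\widehat\B^{(m)}$ over $\O_\fP$, which is where the real work is.
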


\begin{proof}
We can copy word by word the proof of  \cite[3.2.1]{caro-stab-sys-ind-surcoh}. 
\end{proof}

\begin{empt}
Let us clarify some terminology.

\begin{enumerate}
\item A morphism of rings 
$f \colon \AA \to \B$ is a $p ^n$-isogeny if there exists 
a morphisms of rings $g \colon \B \to \AA$ such that $f \circ g = p ^n id$
and
$g \circ f = p ^n id$.

\item A morphism $f \colon \AA \to \B$ of 
$D ^- ( \smash{\widetilde{\B}} _{\fP} ^{(m')} (T ))$
is a $p ^n$-isogeny if there exists a morphisms  $g \colon \B \to \AA$ of $D ^- ( \smash{\widetilde{\B}} _{\fP} ^{(m')} (T ))$
such that $f \circ g = p ^n id$
and
$g \circ f = p ^n id$.
\end{enumerate}

\end{empt}

\begin{prop}
\label{lem3-hdagDT}
The canonical homomorphisms of 
$D ^- ( \smash{\widetilde{\B}} _{\fP} ^{(m')} (T ))$
or respectively of rings
\begin{equation}
\label{lem3-hdagDT-iso}
\smash{\widetilde{\B}} _{\fP} ^{(m')} (T ) \to
\smash{\widetilde{\B}} _{\fP} ^{(m)} (D ) \widehat{\otimes} ^{\L}_{\smash{\widetilde{\B}} _{\fP} ^{(m)} (D' )}
\smash{\widetilde{\B}} _{\fP} ^{(m')} (T )
\to 
\smash{\widetilde{\B}} _{\fP} ^{(m)} (D ) \widehat{\otimes} _{\smash{\widetilde{\B}} _{\fP} ^{(m)} (D' )}
\smash{\widetilde{\B}} _{\fP} ^{(m')} (T )
\to 
\smash{\widetilde{\B}} _{\fP} ^{(m')} (T )
\end{equation}
are $p$-isogenies.
\end{prop}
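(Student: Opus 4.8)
The plan is to reduce to an explicit local computation with Berthelot's model of the rings $\widehat{\B}^{(m)}_{\fP}(-)$, feeding in Lemma~\ref{lem1-hdagDT}; throughout, the $p$-torsion-freeness of all the objects in sight is what will let one pass freely between ``$p^{n}$-isogeny'' and ``isomorphism with $p$-bounded denominators''.

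First I would note that the three assertions are local on $\fP$, since forming the sheaves $\widetilde{\B}^{(m)}_{\fP}(-)$, the completed tensor products and the derived completed tensor products commutes with restriction to an affine open. So one may assume $\fP$ affine, endowed with local coordinates, and $T$, $D$, $D'$ cut out by local equations $t$, $h$, $h'$ with $h'\mid h\mid t$. By the explicit local description of \cite[4.2]{Be1}, each $\widehat{\B}^{(m)}_{\fP}(-)$ is the $p$-adic completion of a free $\O_{\fP}$-module on a distinguished basis indexed by the pole order; moreover, writing $h=h'u$ and $t=hv$ with $u$, $v$ coprime in $\O_{\fP}$ to the previous factors (the residual components of $D$ over $D'$, and of $T$ over $D$), one obtains product decompositions such as $\widehat{\B}^{(m)}_{\fP}(D)\cong\widehat{\B}^{(m)}_{\fP}(D')\widehat{\otimes}_{\O_{\fP}}\widehat{\B}^{(m)}_{\fP}(V(u))$, and the transition maps --- raising the level, and enlarging the divisor --- act on basis elements by multiplication by explicit $p$-adic integers (ratios of factorials, binomial coefficients, monomials in $u$, $v$). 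The observation that organises the argument is that the composite of the three morphisms of \ref{lem3-hdagDT-iso} is the identity of $\widetilde{\B}^{(m')}_{\fP}(T)$: the first is $b\mapsto 1\otimes b$, the last is the multiplication induced by $\widetilde{\B}^{(m)}_{\fP}(D)\to\widetilde{\B}^{(m')}_{\fP}(T)$. Hence it suffices to control two of the three morphisms, the third being then forced by the composite; and, using the product decompositions, one reduces to the ``minimal'' situations: $D=D'$ (only a change of level), and $D$ having a single irreducible component more than $D'$.

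Next I would dispose of the middle morphism $\widetilde{\B}^{(m)}_{\fP}(D)\widehat{\otimes}^{\L}_{\widetilde{\B}^{(m)}_{\fP}(D')}\widetilde{\B}^{(m')}_{\fP}(T)\to\widetilde{\B}^{(m)}_{\fP}(D)\widehat{\otimes}_{\widetilde{\B}^{(m)}_{\fP}(D')}\widetilde{\B}^{(m')}_{\fP}(T)$. The product decomposition above exhibits $\widetilde{\B}^{(m)}_{\fP}(D)$ as a ($p$-adically completed) free, hence flat, $\widetilde{\B}^{(m)}_{\fP}(D')$-module, so this derived completed tensor product is computed by $\widetilde{\B}^{(m)}_{\fP}(V(u))\widehat{\otimes}^{\L}_{\O_{\fP}}\widetilde{\B}^{(m')}_{\fP}(T)$; the second assertion of Lemma~\ref{lem1-hdagDT} then says it has no higher terms, and the first assertion (quasi-coherence of the kernels involved) guarantees that the $\R\varprojlim$'s entering the definition of the completed tensor products are exact in the relevant range, so that underived and derived completed tensor products agree. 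Thus this morphism is an isomorphism, a fortiori a $p$-isogeny.

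There remains the outer morphism, which by the two previous steps reduces to the multiplication morphism $\widetilde{\B}^{(m)}_{\fP}(D)\widehat{\otimes}_{\widetilde{\B}^{(m)}_{\fP}(D')}\widetilde{\B}^{(m')}_{\fP}(T)\to\widetilde{\B}^{(m')}_{\fP}(T)$ --- equivalently, after the reduction to minimal cases, a concrete statement about the $\widehat{\B}$-rings attached to a single additional component. Here I would compute its kernel and cokernel explicitly in the distinguished bases of the first step, write each as a (completed) direct sum of cyclic modules $\O_{\fP}/(c)$ with $c$ running over products of the explicit $p$-adic integers produced above, and check that every such $c$ divides a fixed power of $p$, uniformly in the pole order. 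This uniform $p$-adic estimate --- bounding the relevant differences of $p$-adic valuations of factorials governing the level change from $\lambda_0(m)$ to $\lambda_0(m')$, together with the corrections coming from $u$ and $v$, as the pole order grows, using $\lambda_0(m)\ge m$ and $m'\ge m$ --- is, to my mind, the only genuine difficulty; the remaining steps are formal. Once it is established, the computation transcribes essentially verbatim from the corresponding statements of \cite{caro-stab-sys-ind-surcoh}, the perfectness of the residue field playing no role.
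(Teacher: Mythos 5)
The paper itself gives no internal proof: it simply says ``We can copy word by word the proof of \cite[3.2.2]{caro-stab-sys-ind-surcoh}.'' So I can only assess whether your reconstruction is sound on its own terms, and I don't think it is. There are two real problems.

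The decisive one is the claimed product decomposition $\widehat{\B}^{(m)}_{\fP}(D)\cong\widehat{\B}^{(m)}_{\fP}(D')\widehat{\otimes}_{\O_{\fP}}\widehat{\B}^{(m)}_{\fP}(V(u))$ for $D=D'\cup V(u)$. This is not an isomorphism, and indeed the whole point of Lemma~\ref{lem1-hdagT1T2} in this very paper is that this comparison is only achieved by a pair of maps $\alpha,\beta$ whose composites equal the $p$-power--times--level-raising morphisms $\lambda^{*}\chi^{*}$ --- i.e.\ a lim-ind-isogeny, not an isomorphism. (Concretely, in Berthelot's presentation $\widehat{\B}^{(m)}_{\fP}(D)$ is generated as a $p$-adically complete $\O_{\fP}$-algebra by one variable $X$ with $h^{p^{m+1}}X=p$; the right-hand side has two generators $X',X''$ with $h'^{p^{m+1}}X'=p$ and $u^{p^{m+1}}X''=p$, and the obvious candidate $X\mapsto X'X''$ sends $h^{p^{m+1}}X-p$ to $p^{2}-p\neq 0$.) Since flatness is not stable under replacing a ring by a $p$-isogenous one, your derivation of $\widetilde{\B}^{(m)}_{\fP}(D')$-flatness of $\widetilde{\B}^{(m)}_{\fP}(D)$ collapses, and with it the claim that the middle morphism in \ref{lem3-hdagDT-iso} is an isomorphism. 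The subsequent appeal to Lemma~\ref{lem1-hdagDT}.(2) is also misdirected here: that lemma treats the completed tensor product over $\O_{\fP}$, not over $\widetilde{\B}^{(m)}_{\fP}(D')$. Worse, relying on the tensor-multiplicativity of the $\widehat{\B}$-rings in the divisor is essentially circular, as \ref{lem3-hdagDT} is part of the same cluster of $p$-isogeny comparisons as \ref{lem1-hdagT1T2}.

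The second problem is that the only step you acknowledge as nontrivial --- the uniform $p$-adic estimate controlling the kernel and cokernel of $\widetilde{\B}^{(m)}_{\fP}(D)\widehat{\otimes}_{\widetilde{\B}^{(m)}_{\fP}(D')}\widetilde{\B}^{(m')}_{\fP}(T)\to\widetilde{\B}^{(m')}_{\fP}(T)$ by a single power of $p$ independent of the pole order --- is never carried out; you defer to ``transcribes essentially verbatim from \cite{caro-stab-sys-ind-surcoh}.'' That is what the paper's one-line proof already does, so at that point the proposal has not actually supplied a proof. Your observation that the total composite is the identity, and the resulting ``two-out-of-three'' reduction for $p$-isogenies, is correct and useful; but with the flatness step broken and the estimate absent, neither of the two remaining assertions is actually controlled.
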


\begin{proof}
We can copy word by word the proof of \cite[3.2.2]{caro-stab-sys-ind-surcoh}. 
\end{proof}

\begin{coro}
\label{rema-dim-coh-finie}
\begin{enumerate}
\item The functors of the form
$\B _{P _i} ^{(m')} (T) \otimes ^{\L}_{\O _{P _i} }- $ have cohomological dimension 
$1$. 
The functor $\smash{\widetilde{\B}} _{\fP} ^{(m')} (T) 
\widehat{\otimes} ^{\L} _{\O _{\fP} }-$
is way-out over
$D  ^{-} (\O _{\fP} )$
with bounded amplitude independent of $m'$ and $m$.

\item 
The functor 
$\smash{\widetilde{\B}} _{\fP} ^{(m')} (T) \widehat{\otimes} ^{\L} _{\smash{\widetilde{\B}} _{\fP} ^{(m)} (D)}-
\colon 
D _{\Q,\mathrm{qc}} ^{\mathrm{b}} (\smash{\widetilde{\B}} _{\fP} ^{(m)} (D))
\to 
D _{\Q,\mathrm{qc}} ^{\mathrm{b}} (\smash{\widetilde{\B}} _{\fP} ^{(m')} (T))$ 
is way-out
with bounded amplitude independent of $m'$ and $m$.
We have the factorization 
$\smash{\widetilde{\B}} _{\fP} ^{(m+\bullet)} (T) \widehat{\otimes} ^{\L} _{\smash{\widetilde{\B}} _{\fP} ^{(m)} (D)}-
\colon 
D _{\Q,\mathrm{qc}} ^{\mathrm{b}} (\smash{\widetilde{\B}} _{\fP} ^{(m)} (D))
\to
\smash{\underrightarrow{LD}} _{\Q,\mathrm{qc}} ^{\mathrm{b}} (\smash{\widetilde{\B}} _{\fP} ^{(m+\bullet)} (T ))$.

\item 
The functor
$$(\widetilde{\B} ^{(m')} _{\fP} ( T)  \smash{\widehat{\otimes}} _{\O _{\fP}} \smash{\widehat{\D}} _{\fP /\S } ^{(m)})
\widehat{\otimes} ^{\L} _{(\widetilde{\B} ^{(m)} _{\fP} ( D)  \smash{\widehat{\otimes}} _{\O _{\fP}} \smash{\widehat{\D}} _{\fP /\S } ^{(m)})}
-
\colon 
D _{\Q,\mathrm{qc}} ^{\mathrm{b}} (\widetilde{\B} ^{(m)} _{\fP} ( D)  \smash{\widehat{\otimes}} _{\O _{\fP}} \smash{\widehat{\D}} _{\fP /\S } ^{(m)})
\to 
D _{\Q,\mathrm{qc}} ^{\mathrm{b}} (\widetilde{\B} ^{(m')} _{\fP} (T)  \smash{\widehat{\otimes}} _{\O _{\fP}} \smash{\widehat{\D}} _{\fP /\S } ^{(m)})$$
is way-out
with bounded amplitude independent of $m'$ and $m$.
\end{enumerate}

\end{coro}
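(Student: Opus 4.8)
The plan is to read the three assertions as a bookkeeping of cohomological amplitudes, the last two being deduced from the first by the change-of-divisor isogenies \ref{lem1-hdagDT} and \ref{lem3-hdagDT}. For (1): the statement over the finite levels $P _i$ is local on $P$, so I would reduce to the case where $T$ is cut out by one equation and then invoke Berthelot's construction of the sheaves $\B ^{(m)}$ (\cite[4.2]{Be1}): Zariski-locally $\B _{P _i} ^{(m')} (T)$ admits a free $\O _{P _i}$-resolution of length $\leq 1$, whose shape — hence the bound $1$ — is independent of $m'$ and of $i$. For the $p$-adically completed functor I would then use that, for $\E \in D ^- (\O _{\fP})$, the complex $\widetilde{\B} _{\fP} ^{(m')} (T) \widehat{\otimes} ^{\L} _{\O _{\fP}} \E$ is a derived inverse limit over $i$ of the functors of the first statement evaluated at the $\O _{P _i}$-complexes $\E _i$; since $\R \varprojlim$ over the countable tower $(P _i) _i$ has cohomological dimension $\leq 1$ and the inner functors are way-out of amplitude $\leq 1$ uniformly, the composite is way-out on $D ^- (\O _{\fP})$ with amplitude bounded independently of $m'$ and $m$.

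For (2): given $\E \in D ^{\mathrm{b}} _{\Q,\mathrm{qc}} (\widetilde{\B} _{\fP} ^{(m)} (D))$, represented by an honest bounded quasi-coherent complex of $\widetilde{\B} _{\fP} ^{(m)} (D)$-modules (in particular an object of $D ^{\mathrm{b}} (\O _{\fP})$), I would apply Proposition \ref{lem3-hdagDT} with $D' = \emptyset$, recalling $\widetilde{\B} _{\fP} ^{(\bullet)} (\emptyset) = \O _{\fP} ^{(\bullet)}$: the canonical morphism $\widetilde{\B} _{\fP} ^{(m)} (D) \widehat{\otimes} _{\O _{\fP}} \widetilde{\B} _{\fP} ^{(m')} (T) \to \widetilde{\B} _{\fP} ^{(m')} (T)$ is a $p$-isogeny and, by \ref{lem1-hdagDT}, its source computes $\widetilde{\B} _{\fP} ^{(m)} (D) \widehat{\otimes} ^{\L} _{\O _{\fP}} \widetilde{\B} _{\fP} ^{(m')} (T)$; thus $\widetilde{\B} _{\fP} ^{(m')} (T) \cong \widetilde{\B} _{\fP} ^{(m)} (D) \widehat{\otimes} ^{\L} _{\O _{\fP}} \widetilde{\B} _{\fP} ^{(m')} (T)$ modulo isogeny, compatibly with the bimodule structures. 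By associativity of $\widehat{\otimes} ^{\L}$ and its compatibility with the reductions this identifies $\widetilde{\B} _{\fP} ^{(m')} (T) \widehat{\otimes} ^{\L} _{\widetilde{\B} _{\fP} ^{(m)} (D)} \E$ with $\widetilde{\B} _{\fP} ^{(m')} (T) \widehat{\otimes} ^{\L} _{\O _{\fP}} \E$, whose amplitude (1) controls uniformly in $m'$ and $m$; quasi-coherence of the output is read on the reduction modulo $\pi$, where it is $\widetilde{\B} _{P _0} ^{(m')} (T) \otimes ^{\L} _{\O _{P _0}} \E _0$ with $\E _0$ quasi-coherent. The factorization through $\underrightarrow{LD} ^{\mathrm{b}} _{\Q,\mathrm{qc}} (\widetilde{\B} _{\fP} ^{(m+\bullet)} (T))$ is then formal: the uniform amplitude bound forces the inductive system $(\widetilde{\B} _{\fP} ^{(m+j)} (T) \widehat{\otimes} ^{\L} _{\widetilde{\B} _{\fP} ^{(m)} (D)} \E) _j$ to be globally bounded, hence to lie in the bounded derived category of inductive systems, functoriality being obtained as in \ref{lemm-def-otimes-coh1}.

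For (3): I would reduce directly to (2). From the definition $\widetilde{\D} _{\fP /\S} ^{(m)} (?) = \widetilde{\B} _{\fP} ^{(m)} (?) \widehat{\otimes} _{\O _{\fP}} \widehat{\D} _{\fP /\S} ^{(m)}$ (the differential level staying $m$), together with the flatness of $\widehat{\D} _{\fP /\S} ^{(m)}$ over $\O _{\fP}$ — which makes $\widetilde{\D} _{\fP /\S} ^{(m)} (D)$ flat as a left $\widetilde{\B} _{\fP} ^{(m)} (D)$-module — one gets a canonical identification $\widetilde{\D} _{\fP /\S} ^{(m')} (T) \riso \widetilde{\B} _{\fP} ^{(m')} (T) \widehat{\otimes} ^{\L} _{\widetilde{\B} _{\fP} ^{(m)} (D)} \widetilde{\D} _{\fP /\S} ^{(m)} (D)$, compatibly with the relevant bimodule structures. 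For $\E$ quasi-coherent over $\widetilde{\D} _{\fP /\S} ^{(m)} (D)$ — hence, by restriction of scalars, over $\widetilde{\B} _{\fP} ^{(m)} (D)$ — associativity of $\widehat{\otimes} ^{\L}$ then yields $\widetilde{\D} _{\fP /\S} ^{(m')} (T) \widehat{\otimes} ^{\L} _{\widetilde{\D} _{\fP /\S} ^{(m)} (D)} \E \riso \widetilde{\B} _{\fP} ^{(m')} (T) \widehat{\otimes} ^{\L} _{\widetilde{\B} _{\fP} ^{(m)} (D)} \E$, and (2) concludes.

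I expect the main obstacle to be (1): extracting from Berthelot's construction the length-$\leq 1$ free resolution of $\B _{P _i} ^{(m')} (T)$ over $\O _{P _i}$ with a bound uniform in $m'$ and $i$, and then correctly accounting for the extra unit of cohomological dimension produced by the derived inverse limit entering the $p$-adic completion. Once (1) is in hand the rest is routine: transporting the isogenies of \ref{lem1-hdagDT} and \ref{lem3-hdagDT} into the $\Q$-localized categories so that they become genuine isomorphisms compatible with the associativity of $\widehat{\otimes} ^{\L}$, and observing that at each stage the preservation of quasi-coherence reduces to a statement about the $\O _P$-reduction.
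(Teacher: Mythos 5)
Your proposal is correct and, as far as can be judged — the paper outsources this proof with ``We can copy word by word the proof of \cite[3.2.3]{caro-stab-sys-ind-surcoh}'' — it follows the expected route: Berthelot's local two-term free resolution of $\B _{P _i} ^{(m')} (T)$ over $\O _{P _i}$ (Zariski-locally $\O _{P_i}[\xi]/(f^{p^{m'+1}}\xi-p)$ with $f^{p^{m'+1}}\xi-p$ a nonzerodivisor, so flat dimension $\leq 1$ with shape independent of $m'$ and $i$), the derived inverse limit for the $p$-adic completion, and the change-of-ring isogenies \ref{lem1-hdagDT}, \ref{lem3-hdagDT} specialized to $D'=\emptyset$ to pass from the $\O$-linear case to parts (2) and (3) after $\Q$-localization.

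Two small places deserve more care. First, your amplitude accounting for (1) (``$\leq 1$ for the local resolution plus $\leq 1$ for $\R\varprojlim$'') omits the reduction $\E \mapsto \E_i := \O _{P _i} \otimes ^\L _{\O _\fP} \E$ that enters the definition of $\widehat{\otimes} ^{\L}$; since $\O _{P _i}$ has flat dimension $1$ over $\O _\fP$ via $0 \to \O _\fP \xrightarrow{\,\pi^{i+1}\,} \O _\fP \to \O _{P _i} \to 0$, this contributes one more unit, uniformly in $m', m, i$. Your conclusion (bounded amplitude uniform in the levels) is unaffected, but the explicit bound should be $3$ rather than $2$. Second, the bound on the cohomological dimension of $\R\varprojlim$ for $\N$-indexed towers of $\O_P$-complexes is not purely formal: it relies on the standing hypothesis that the base (and hence $P$) is Noetherian of finite Krull dimension, which is exactly what the paper uses elsewhere to keep $\R\varprojlim$ and $\R f_*$ bounded. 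Finally, in (2) your remark that quasi-coherence of the output ``is read on the reduction modulo $\pi$'' captures only the first half of Berthelot's quasi-coherence criterion (the $\O _{P_0}$-quasi-coherence of $\FF_0$); the completeness half holds by construction because $\widehat{\otimes}^{\L}$ is defined as a derived inverse limit, but it is worth saying so explicitly.
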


\begin{proof}
We can copy word by word the proof of \cite[3.2.3]{caro-stab-sys-ind-surcoh}. 
\end{proof}

\begin{empt}
\label{hdagT-nota}
With Corollary  \ref{rema-dim-coh-finie} which implies the stability of the boundedness of the cohomology,
we check the factorization of the functor of \ref{hdag-def} as follows:
\begin{gather}
\label{hdag-def-qc}
 (\hdag T ,\,D) 
 :=\smash{\widetilde{\D}} ^{(\bullet)} _{\fP /\S }( T)  \smash{\widehat{\otimes}} ^\L
_{\smash{\widetilde{\D}} ^{(\bullet)} _{\fP /\S }( D) }-
\colon
\smash{\underrightarrow{LD}} ^{\mathrm{b}} _{\Q,\mathrm{qc}} ( \smash{\widetilde{\D}} _{\fP /\S } ^{(\bullet)}(D))
\to
\smash{\underrightarrow{LD}} ^{\mathrm{b}} _{\Q,\mathrm{qc}} ( \smash{\widetilde{\D}} _{\fP /\S } ^{(\bullet)}(T)).
\end{gather}
We also write 
$ \E ^{(\bullet)} (\hdag D ,\,T) :=(\hdag T ,\,D) (\E ^{(\bullet)})$. 
This functor  $(\hdag T ,\,D)$ is 
 {\it the localization outside $T$ functor}. 
When  $D=\emptyset $, we omit writing it. 
We write in the same way the associated functor for coherent complexes:
\begin{equation}
\label{hdag-def-coh}
(\hdag T, D) := 
\D ^\dag _{\fP /\S } (\hdag T) _\Q \otimes _{ \D ^\dag _{\fP /\S } (\hdag D) _\Q} - \colon 
D ^\mathrm{b} _\mathrm{coh} ( \D ^\dag _{\fP /\S } (\hdag D) _\Q)
\to 
D ^\mathrm{b} _\mathrm{coh} (\D ^\dag _{\fP /\S } (\hdag T) _\Q).
\end{equation}
The  functor \ref{hdag-def-coh} is exact, which justifies the absence of the symbol 
$\L$. 
\end{empt}

\begin{prop}
\label{oub-pl-fid}
Let  $\E ^{(\bullet)}\in 
\smash{\underrightarrow{LD}} ^{\mathrm{b}} _{\Q,\mathrm{qc}}
 ( \smash{\widetilde{\D}} _{\fP /\S } ^{(\bullet)}(T))$.
\begin{enumerate}
\item The functorial in $\E ^{(\bullet)}$ canonical morphism :
\begin{equation}
\label{oub-pl-fid-iso1}
(\hdag T ,\,D) \circ \mathrm{oub} _{D,T} (\E ^{(\bullet)})
\to 
\E ^{(\bullet)}
\end{equation}
is an isomorphism of
$\smash{\underrightarrow{LD}} ^{\mathrm{b}} _{\Q,\mathrm{qc}} ( \smash{\widetilde{\D}} _{\fP /\S } ^{(\bullet)}(T))$.
\item The functorial in $\E ^{(\bullet)}$ canonical morphism :
\begin{equation}
\label{oub-pl-fid-iso2}
\mathrm{oub}_{D, T}  (\E ^{ (\bullet)} )
\to
\mathrm{oub}_{D, T} \circ (\hdag T, D) \circ \mathrm{oub}_{D, T}(\E ^{ (\bullet)} ) 
\end{equation}
is an isomorphism
of $\underrightarrow{LD} ^{\mathrm{b}}  _{\Q, \mathrm{qc}}  ( \smash{\widetilde{\D}} _{\fP /\S } ^{(\bullet)}(D))$.
\item 
\label{oub-pl-fid-iso3}
The functor
$\mathrm{oub} _{D,T}\colon 
\underrightarrow{LD} ^{\mathrm{b}}  _{\Q, \mathrm{qc}} ( \smash{\widetilde{\D}} _{\fP /\S } ^{(\bullet)}(T))
\to 
\underrightarrow{LD} ^{\mathrm{b}}  _{\Q, \mathrm{qc}} ( \smash{\widetilde{\D}} _{\fP /\S } ^{(\bullet)}(D))$
is fully faithful.
\end{enumerate}

\end{prop}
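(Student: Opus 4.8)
```latex
The plan is to deduce all three statements from the key adjunction-type computation that the localization functor $(\hdag T,D)$ and the forgetful functor $\mathrm{oub}_{D,T}$ are (up to the isomorphisms in (1) and (2)) mutually inverse equivalences on the quasi-coherent subcategories, and the engine behind everything is Proposition \ref{lem3-hdagDT} together with Corollary \ref{rema-dim-coh-finie}.

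First I would prove (1). By \ref{hdagT-nota} the functor $(\hdag T,D)$ is $\smash{\widetilde{\D}}^{(\bullet)}_{\fP/\S}(T)\smash{\widehat{\otimes}}^{\L}_{\smash{\widetilde{\D}}^{(\bullet)}_{\fP/\S}(D)}-$, and $\mathrm{oub}_{D,T}$ is just restriction of scalars along $\smash{\widetilde{\D}}^{(\bullet)}_{\fP/\S}(D)\to\smash{\widetilde{\D}}^{(\bullet)}_{\fP/\S}(T)$; so the composite applied to $\E^{(\bullet)}$ is $\smash{\widetilde{\D}}^{(\bullet)}_{\fP/\S}(T)\smash{\widehat{\otimes}}^{\L}_{\smash{\widetilde{\D}}^{(\bullet)}_{\fP/\S}(D)}\E^{(\bullet)}$, with its canonical morphism to $\E^{(\bullet)}$ coming from the module action. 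Since the property of being an isomorphism in $\underrightarrow{LD}^{\mathrm{b}}_{\Q}$ can be checked level by level (after applying some $\lambda^{*}\chi^{*}$), and since quasi-coherence lets me replace $\E^{(m)}$ by $\smash{\widetilde{\D}}^{(m)}_{\fP/\S}(T)\smash{\widehat{\otimes}}^{\L}_{\smash{\widetilde{\D}}^{(m)}_{\fP/\S}(T)}\E^{(m)}$ and then, using the definition via $\widetilde{\B}$, reduce to the base ring, the claim follows by a base-change/associativity manipulation from the fact that $\smash{\widetilde{\B}}^{(m)}_{\fP}(D)\widehat{\otimes}^{\L}_{\smash{\widetilde{\B}}^{(m)}_{\fP}(D')}\smash{\widetilde{\B}}^{(m')}_{\fP}(T)\to\smash{\widetilde{\B}}^{(m')}_{\fP}(T)$ is a $p$-isogeny (Proposition \ref{lem3-hdagDT}), hence an isomorphism after inverting $p$. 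The bounded-amplitude statements of Corollary \ref{rema-dim-coh-finie} guarantee the output stays in the bounded quasi-coherent category so that everything makes sense.

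Next, (2): apply $\mathrm{oub}_{D,T}$ to the isomorphism (1) and use that $\mathrm{oub}_{D,T}$ preserves isomorphisms; the resulting map is exactly the unit morphism in (2), so it is an isomorphism. Finally, for (3) I would argue full faithfulness directly from (1) and (2). Given $\E^{(\bullet)},\FF^{(\bullet)}\in\underrightarrow{LD}^{\mathrm{b}}_{\Q,\mathrm{qc}}(\smash{\widetilde{\D}}^{(\bullet)}_{\fP/\S}(T))$, the functor $\mathrm{oub}_{D,T}$ induces a map on Hom groups; using (1) one writes $\E^{(\bullet)}\cong(\hdag T,D)\circ\mathrm{oub}_{D,T}(\E^{(\bullet)})$ and similarly for $\FF^{(\bullet)}$, so a morphism $\E^{(\bullet)}\to\FF^{(\bullet)}$ is recovered from its image under $\mathrm{oub}_{D,T}$ by applying $(\hdag T,D)$ and conjugating by the isomorphisms of (1) — this gives a two-sided inverse on Hom sets, establishing that $\mathrm{oub}_{D,T}$ is fully faithful. (Equivalently: (1) exhibits $(\hdag T,D)$ as a left inverse up to isomorphism, and a functor admitting a left inverse up to natural isomorphism is faithful; combined with (2) it is full.)

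The main obstacle is the first step: making precise the reduction from the $\smash{\widetilde{\D}}^{(\bullet)}_{\fP/\S}$-level to the $\smash{\widetilde{\B}}^{(\bullet)}_{\fP}$-level, i.e. verifying that $\smash{\widetilde{\D}}^{(m)}_{\fP/\S}(T)\smash{\widehat{\otimes}}^{\L}_{\smash{\widetilde{\D}}^{(m)}_{\fP/\S}(D)}\E^{(m)}$ agrees, via the quasi-coherence isomorphism and the completed-tensor definitions, with $\widetilde{\B}^{(m)}_{\fP}(T)\smash{\widehat{\otimes}}^{\L}_{\widetilde{\B}^{(m)}_{\fP}(D)}\E^{(m)}$, and keeping track of the derived $p$-adic completions and the passage to the inductive limit on the level; here one leans on Lemma \ref{lem1-hdagDT} to replace underived by derived tensor products and on the cohomological-dimension bounds of Corollary \ref{rema-dim-coh-finie} to commute $\R\varprojlim$ past everything. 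Once that bookkeeping is done, the $p$-isogeny statement does all the real work. The analogous argument with roles of $T$ and $D$ interchanged is not needed since the asymmetry (one only goes $D\subset T$) is exactly what makes $\mathrm{oub}_{D,T}$ fully faithful but not essentially surjective.
```
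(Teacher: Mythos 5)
Your overall strategy matches the paper's (which defers to \cite[3.2.6]{caro-stab-sys-ind-surcoh}): part (1) reduces, via the quasi-coherence isomorphism $\E^{(m)}\riso\smash{\widetilde{\B}}^{(m)}_{\fP}(T)\smash{\widehat{\otimes}}^{\L}_{\smash{\widetilde{\B}}^{(m)}_{\fP}(T)}\E^{(m)}$ and associativity of completed tensor products, to the $p$-isogeny of Proposition \ref{lem3-hdagDT} --- though you should make explicit that \ref{lem3-hdagDT} is applied after the substitution $(D',D,T)\mapsto(D,T,T)$, so that the relevant $p$-isogenies are $\smash{\widetilde{\B}}^{(m')}_{\fP}(T)\to\smash{\widetilde{\B}}^{(m)}_{\fP}(T)\smash{\widehat{\otimes}}^{\L}_{\smash{\widetilde{\B}}^{(m)}_{\fP}(D)}\smash{\widetilde{\B}}^{(m')}_{\fP}(T)\to\smash{\widetilde{\B}}^{(m')}_{\fP}(T)$; and parts (2) and (3) are formal consequences of (1) via the unit--counit formalism for the adjunction $(\hdag T,\,D)\dashv\mathrm{oub}_{D,T}$.

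There is one genuine slip in your argument for (2). Applying $\mathrm{oub}_{D,T}$ to the isomorphism \ref{oub-pl-fid-iso1} produces the morphism
$\mathrm{oub}_{D,T}\circ(\hdag T,\,D)\circ\mathrm{oub}_{D,T}(\E^{(\bullet)})\to\mathrm{oub}_{D,T}(\E^{(\bullet)})$,
which goes in the \emph{opposite} direction to the unit morphism \ref{oub-pl-fid-iso2}. This is not "exactly the unit morphism in (2)"; what relates them is the triangle identity $\mathrm{oub}(\epsilon_{\E^{(\bullet)}})\circ\eta_{\mathrm{oub}(\E^{(\bullet)})}=\mathrm{id}$. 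Once one knows $\mathrm{oub}(\epsilon_{\E^{(\bullet)}})$ is an isomorphism (which is the image of (1) under $\mathrm{oub}_{D,T}$), the triangle identity forces $\eta_{\mathrm{oub}(\E^{(\bullet)})}$ to be its two-sided inverse, giving (2). With that one-line correction your proof of (2) is complete, and your argument for (3) --- conjugating by the isomorphisms of (1), or equivalently noting that a right adjoint is fully faithful exactly when its counit is an isomorphism --- is sound.
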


\begin{proof}
We can copy word by word the proof of \cite[3.2.6]{caro-stab-sys-ind-surcoh}. 
\end{proof}

\begin{coro}
\label{gen-oub-pl-fid}
Let $\E ^{(\bullet)}\in \smash{\underrightarrow{LD}} ^{\mathrm{b}} _{\Q,\mathrm{qc}} ( \smash{\widetilde{\D}} _{\fP /\S } ^{(\bullet)}(D))$.
The functorial in $\E ^{(\bullet)}$ canonical morphism 
\begin{equation}
\label{gen-oub-pl-fid-iso1}
(\hdag T ,\,D ') \circ \mathrm{oub} _{D',D} (\E ^{(\bullet)})
\to 
(\hdag T ,~D) (\E ^{(\bullet)})
\end{equation}
is an isomorphism of
$\smash{\underrightarrow{LD}} ^{\mathrm{b}} _{\Q,\mathrm{qc}} ( \smash{\widetilde{\D}} _{\fP /\S } ^{(\bullet)}(T))$.
\end{coro}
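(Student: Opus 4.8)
The plan is to derive this from Proposition \ref{oub-pl-fid}.1, applied to the pair of divisors $D' \subset D$, together with the transitivity of the localization functors.

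First I would observe that $\mathrm{oub} _{D',D}(\E ^{(\bullet)})$ again belongs to $\smash{\underrightarrow{LD}} ^{\mathrm{b}} _{\Q,\mathrm{qc}} ( \smash{\widetilde{\D}} _{\fP /\S } ^{(\bullet)}(D'))$, since the partial forgetful functor preserves quasi-coherence; thus the left-hand side of \ref{gen-oub-pl-fid-iso1} is well defined. Applying Proposition \ref{oub-pl-fid}.1 with $D'$ playing the role of $D$ and $D$ playing the role of $T$, the canonical morphism $(\hdag D ,\, D ') \circ \mathrm{oub} _{D',D}(\E ^{(\bullet)}) \to \E ^{(\bullet)}$ is an isomorphism in $\smash{\underrightarrow{LD}} ^{\mathrm{b}} _{\Q,\mathrm{qc}} ( \smash{\widetilde{\D}} _{\fP /\S } ^{(\bullet)}(D))$. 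Next I would establish a canonical transitivity isomorphism of functors $(\hdag T ,\, D) \circ (\hdag D ,\, D ') \riso (\hdag T ,\, D ')$ on $\smash{\underrightarrow{LD}} ^{\mathrm{b}} _{\Q,\mathrm{qc}} ( \smash{\widetilde{\D}} _{\fP /\S } ^{(\bullet)}(D'))$, compatible with the canonical morphisms induced by $\smash{\widetilde{\D}} _{\fP /\S } ^{(\bullet)}(D') \to \smash{\widetilde{\D}} _{\fP /\S } ^{(\bullet)}(D) \to \smash{\widetilde{\D}} _{\fP /\S } ^{(\bullet)}(T)$. Granting this, applying $(\hdag T ,\, D)$ to the previous isomorphism and composing with the transitivity isomorphism yields $(\hdag T ,\, D ') \circ \mathrm{oub} _{D',D}(\E ^{(\bullet)}) \riso (\hdag T ,\, D)(\E ^{(\bullet)})$, and a routine diagram chase using the stated compatibilities identifies this composite with the canonical morphism \ref{gen-oub-pl-fid-iso1}.

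The only step requiring real work is the transitivity isomorphism, which amounts to the associativity of the completed derived tensor product, namely $\smash{\widetilde{\D}} ^{(\bullet)} _{\fP /\S }( T) \smash{\widehat{\otimes}} ^\L _{\smash{\widetilde{\D}} ^{(\bullet)} _{\fP /\S }( D)} \bigl( \smash{\widetilde{\D}} ^{(\bullet)} _{\fP /\S }( D) \smash{\widehat{\otimes}} ^\L _{\smash{\widetilde{\D}} ^{(\bullet)} _{\fP /\S }( D')} - \bigr) \cong \smash{\widetilde{\D}} ^{(\bullet)} _{\fP /\S }( T) \smash{\widehat{\otimes}} ^\L _{\smash{\widetilde{\D}} ^{(\bullet)} _{\fP /\S }( D')} -$. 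To prove it I would resolve the $\smash{\widetilde{\D}} _{\fP /\S } ^{(\bullet)}(D')$-argument by flat modules as in the proof of Lemma \ref{lemm-def-otimes-coh1}, use the identifications of \ref{ODdivcohe} to pass from the $\smash{\widetilde{\D}}$-tensor products to the $\smash{\widetilde{\B}}$-tensor products, reduce level by level and modulo $\pi ^{i+1}$ to the ordinary associativity of $\otimes ^\L$, and finally invoke the uniform bound on cohomological amplitude of Corollary \ref{rema-dim-coh-finie} (together with the $p$-isogeny statements of Proposition \ref{lem3-hdagDT}) in order to control the $\R \varprojlim _i$ and descend the isomorphism to $\smash{\underrightarrow{LD}} _{\Q}$. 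Since this chapter parallels \cite{caro-stab-sys-ind-surcoh} throughout, this associativity is presumably already available there, in which case it would suffice to cite it and carry out only the formal bookkeeping above.
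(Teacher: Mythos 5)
Your proof is correct and follows essentially the same route as the paper's, which cites \cite[3.2.7]{caro-stab-sys-ind-surcoh} verbatim. That cited proof likewise combines Proposition \ref{oub-pl-fid}.1 applied to the pair $D'\subset D$ with the transitivity (associativity) isomorphism $(\hdag T,D)\circ(\hdag D,D')\riso(\hdag T,D')$ for the localization functors, and your reduction of the latter to flat resolutions, level-by-level reduction modulo $\pi^{i+1}$, and the uniform amplitude bound of Corollary \ref{rema-dim-coh-finie} is the intended argument.
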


\begin{proof}
We can copy word by word the proof of \cite[3.2.7]{caro-stab-sys-ind-surcoh}. 
\end{proof}

\begin{ntn}
\label{nota-hag-sansrisque}
Let $D \subset T \subset T' $ be some divisors of $P$.
Following \ref{gen-oub-pl-fid},
by forgetting to write some forgetful functors, 
the functors $(\hdag T',~D) $ 
and
$(\hdag T', ~T) $ are canonically isomorphic over
$\smash{\underrightarrow{LD}} ^{\mathrm{b}} _{\Q ,\mathrm{qc}}
(\smash{\widetilde{\D}} _{\fP } ^{(\bullet)}(T))$.
Hence, we can simply write
$(\hdag T') $ in both case.

\end{ntn}

\begin{ntn}
\label{nota-tdf}
We denote by 
$D  ^{\mathrm{b}} _{\mathrm{tdf}}
( \smash{\widetilde{\D}} _{\fP /\S } ^{(\bullet)} (T ))$
the full subcategory of 
$D  ^{\mathrm{b}} 
(\smash{\widetilde{\D}} _{\fP /\S } ^{(\bullet)} (T ))$
consisting of complexes of finite Tor-dimension. 
We denote by 
$\smash{\underrightarrow{LD}}  ^{\mathrm{b}} _{\Q, \mathrm{qc}, \mathrm{tdf}}
(\smash{\widetilde{\D}} _{\fP /\S } ^{(\bullet)} (T ))$
the strictly full subcategory of 
$\smash{\underrightarrow{LD}}  ^{\mathrm{b}} _{\Q, \mathrm{qc}}
(\smash{\widetilde{\D}} _{\fP /\S } ^{(\bullet)} (T ))$
consisting of objects isomorphic in 
$\smash{\underrightarrow{LD}}  ^{\mathrm{b}} _{\Q, \mathrm{qc}}
(\smash{\widetilde{\D}} _{\fP /\S } ^{(\bullet)} (T ))$
to an object of $D  ^{\mathrm{b}} _{\mathrm{tdf}}
( \smash{\widetilde{\D}} _{\fP /\S } ^{(\bullet)} (T ))$.

\end{ntn}

\begin{coro}
\begin{enumerate}
\label{def-otimes-coh1&2qc}
\item 
The bifunctor \ref{def-otimes-coh1} factorizes throught the bifunctor 
\begin{align}
\label{def-otimes-coh1qc}
-
 \smash{\widehat{\otimes}}
^\L _{\widetilde{\B} ^{(\bullet)}  _{\fP} ( T) }
 -
\colon
\smash{\underrightarrow{LD}}  ^{\mathrm{b}} _{\Q, \mathrm{qc}}
(\overset{^\mathrm{?}}{} \smash{\widetilde{\D}} _{\fP /\S } ^{(\bullet)} (T ))
\times 
\smash{\underrightarrow{LD}}  ^{\mathrm{b}} _{\Q, \mathrm{qc}}
(\overset{^\mathrm{l}}{} \smash{\widetilde{\D}} _{\fP /\S } ^{(\bullet)} (T ))
&
\to 
\smash{\underrightarrow{LD}}  ^{\mathrm{b}}  _{\Q, \mathrm{qc}}
(\overset{^\mathrm{?}}{} \smash{\widetilde{\D}} _{\fP /\S } ^{(\bullet)} (T )).
\end{align}

\item 
\label{def-otimes-coh2qc}
With notation  \ref{nota-tdf}, 
we have the equality 
$\smash{\underrightarrow{LD}}  ^{\mathrm{b}} _{\Q, \mathrm{qc}}
(\smash{\widetilde{\D}} _{\fP /\S } ^{(\bullet)} (T ))
=\smash{\underrightarrow{LD}}  ^{\mathrm{b}} _{\Q, \mathrm{qc}, \mathrm{tdf}}
(\smash{\widetilde{\D}} _{\fP /\S } ^{(\bullet)} (T ))$.

\end{enumerate}
\end{coro}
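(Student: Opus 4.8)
The plan is to obtain part~(2) from the amplitude estimates of Corollary~\ref{rema-dim-coh-finie}, and then to deduce part~(1) formally from~(2) together with Lemma~\ref{lemm-def-otimes-coh1}; the arguments are those of the corresponding statements of \cite{caro-stab-sys-ind-surcoh}, none of which uses that the residue field $k$ is perfect.

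I would first treat~(2). The inclusion $\smash{\underrightarrow{LD}}^{\mathrm{b}}_{\Q,\mathrm{qc},\mathrm{tdf}}(\smash{\widetilde{\D}}_{\fP/\S}^{(\bullet)}(T)) \subseteq \smash{\underrightarrow{LD}}^{\mathrm{b}}_{\Q,\mathrm{qc}}(\smash{\widetilde{\D}}_{\fP/\S}^{(\bullet)}(T))$ is part of the definition, so the task is to show that every object of $\smash{\underrightarrow{LD}}^{\mathrm{b}}_{\Q,\mathrm{qc}}(\smash{\widetilde{\D}}_{\fP/\S}^{(\bullet)}(T))$ is isomorphic, in that category, to a complex of finite Tor-dimension. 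Since the assertion is local on $\fP$, I would assume $\fP$ affine and replace the given object by a levelwise quasi-coherent representative; as quasi-coherence of an object of $D^{\mathrm{b}}(\smash{\widetilde{\D}}_{\fP/\S}^{(\bullet)}(T))$ is equivalent to quasi-coherence at each level (the Remark following the definition of $D^{\mathrm{b}}_{\mathrm{qc}}$), this yields a complex $\E^{(\bullet)} \in D^{\mathrm{b}}_{\mathrm{qc}}(\smash{\widetilde{\D}}_{\fP/\S}^{(\bullet)}(T))$ with $\E^{(m)} \in D^{\mathrm{b}}_{\mathrm{qc}}(\smash{\widetilde{\D}}_{\fP/\S}^{(m)}(T))$ for every $m$. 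One then checks, level by level, that such an $\E^{(m)}$ has finite Tor-dimension over $\smash{\widetilde{\D}}_{\fP/\S}^{(m)}(T)$ with a bound independent of $m$: writing $\smash{\widetilde{\D}}_{\fP/\S}^{(m)}(T) = \smash{\widetilde{\B}}_\fP^{(m)}(T)\smash{\widehat{\otimes}}_{\O_\fP}\smash{\widehat{\D}}_{\fP/\S}^{(m)}$, the contribution of the factor $\smash{\widehat{\D}}_{\fP/\S}^{(m)}$ is bounded by $\dim P$ (the Spencer resolution of $\O_\fP$, of length independent of $m$), the contribution of $\smash{\widetilde{\B}}_\fP^{(m)}(T)$ is bounded by Corollary~\ref{rema-dim-coh-finie}.1 and~\ref{rema-dim-coh-finie}.2, and the level-$0$ datum $\E^{(m)}_0$, lying in $D^{\mathrm{b}}_{\mathrm{qc}}(\O_P)$, has finite Tor-dimension over $\O_P$ because $P$, being smooth over $k$, is regular of finite Krull dimension. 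Assembling these three bounds and adding $1$ for the $p$-adic completion $\R\varprojlim_i$ gives the desired uniform bound, so $\E^{(\bullet)} \in D^{\mathrm{b}}_{\mathrm{tdf}}(\smash{\widetilde{\D}}_{\fP/\S}^{(\bullet)}(T))$, which is~(2).

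Granting~(2), part~(1) is formal. By Lemma~\ref{lemm-def-otimes-coh1} the bifunctor $-\,\smash{\widehat{\otimes}}^\L_{\widetilde{\B}^{(\bullet)}_\fP(T)}\,-$ of~\ref{def-otimes-coh1} is already defined on $\smash{\underrightarrow{LD}}^{-}_\Q\times\smash{\underrightarrow{LD}}^{-}_\Q$, so I only need to check that it carries a pair of objects of $\smash{\underrightarrow{LD}}^{\mathrm{b}}_{\Q,\mathrm{qc}}$ into $\smash{\underrightarrow{LD}}^{\mathrm{b}}_{\Q,\mathrm{qc}}$. Preservation of quasi-coherence is local and, again, checked level by level: for $\E^{(m)}, \FF^{(m)}$ quasi-coherent, the level-$0$ reduction of $\E^{(m)}\smash{\widehat{\otimes}}^\L_{\widetilde{\B}^{(m)}_\fP(T)}\FF^{(m)}$ is a derived tensor product, over a quasi-coherent $\O_P$-algebra, of the bounded quasi-coherent $\O_P$-complexes $\E^{(m)}_0$ and $\FF^{(m)}_0$, hence lies in $D^{\mathrm{b}}_{\mathrm{qc}}(\O_P)$, while the completeness isomorphism characterising quasi-coherence is stable under $\smash{\widehat{\otimes}}^\L$; this is the computation carried out for the corresponding result of \cite{caro-stab-sys-ind-surcoh}. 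Boundedness is where~(2) enters: by~(2) one may represent the second factor by a complex of finite Tor-dimension, and then Corollary~\ref{rema-dim-coh-finie}, applied at each truncation level $i$ to the tensor product over $\widetilde{\B}^{(m)}_{P_i}(T)$, together with the bound on the amplitude of $\R\varprojlim_i$, gives boundedness, uniformly in $m$. Restricting to the bounded quasi-coherent subcategories then produces the factorisation~\ref{def-otimes-coh1qc}.

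I expect the genuine obstacle to be the verification, in~(2), that a quasi-coherent $\smash{\widetilde{\D}}_{\fP/\S}^{(m)}(T)$-complex has \emph{uniformly} finite Tor-dimension: one must control at once the three sources of cohomological amplitude listed above --- the differential-operator factor $\smash{\widehat{\D}}_{\fP/\S}^{(m)}$, the localisation ring $\smash{\widetilde{\B}}_\fP^{(m)}(T)$, and the $p$-adic completion $\R\varprojlim_i$ --- and combine them into a single bound independent of the level $m$. Once this uniform bound is in place, the remaining verifications in~(1) and~(2) are routine bookkeeping and proceed word for word as in the perfect-residue-field case of \cite{caro-stab-sys-ind-surcoh}.
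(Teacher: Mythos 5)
Your overall plan — prove part~(2) first by exhibiting a uniform Tor-amplitude bound and then deduce part~(1) from it together with Lemma~\ref{lemm-def-otimes-coh1} and the level-wise amplitude estimates of Corollary~\ref{rema-dim-coh-finie} — is the same shape as the paper's argument (which simply defers to \cite[3.2.9]{caro-stab-sys-ind-surcoh} and flags exactly this uniformity point). Part~(1), granted part~(2), is fine.

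The gap is in your derivation of the uniform bound in~(2). You write the Tor-dimension of $\E^{(m)}$ over $\smash{\widetilde{\D}}_{\fP/\S}^{(m)}(T)$ as a sum of ``contributions'': one from $\smash{\widehat{\D}}_{\fP/\S}^{(m)}$ (bounded, you say, by $\dim P$ via the Spencer resolution), one from $\smash{\widetilde{\B}}_\fP^{(m)}(T)$ (via \ref{rema-dim-coh-finie}), one from the regularity of $\O_P$, plus one for $\R\varprojlim$. Tor-dimension over a ring $A$ is not additive in tensor factorisations of $A$, and the individual pieces you invoke do not measure what you need them to measure: the Spencer complex bounds the projective dimension of $\O_\fP$ as a $\D^{(m)}$-module, not the cohomological dimension of $\D^{(m)}$ (the latter is on the order of $2\dim P$, not $\dim P$); Corollary~\ref{rema-dim-coh-finie} bounds the amplitude of the change-of-ring functors $\smash{\widetilde{\B}}_\fP^{(m')}(T)\,\smash{\widehat{\otimes}}^\L_{(-)}\,-$, which is not the same thing as the cohomological dimension of $\smash{\widetilde{\B}}_\fP^{(m)}(T)$; and ``adding~$1$'' for $\R\varprojlim_i$ does not by itself produce a flat $\smash{\widetilde{\D}}_{\fP/\S}^{(m)}(T)$-resolution of bounded length.

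The uniform bound is instead obtained by comparing to the level-$0$ ring. The relevant facts, recorded in the Remark immediately following the corollary, are: $\smash{\widehat{\D}}_{\fP/\S}^{(0)}$ has finite cohomological dimension, and $\smash{\widetilde{\D}}_{\fP/\S}^{(m)}(T) \in D^{\mathrm{b}}_{\mathrm{tdf}}(\smash{\widehat{\D}}_{\fP/\S}^{(0)})$ with flat amplitude bounded \emph{independently of $m$} (this last point is where \ref{rema-dim-coh-finie} and \cite[3.2.3]{Beintro2} actually enter, and is precisely the ``slightly more precise argument'' the paper draws attention to). From those two facts one produces, for a quasi-coherent $\E^{(\bullet)}$, a flat resolution of uniformly bounded length and concludes $\E^{(\bullet)}$ is isomorphic in $\smash{\underrightarrow{LD}}^{\mathrm{b}}_{\Q,\mathrm{qc}}$ to an object of $D^{\mathrm{b}}_{\mathrm{tdf}}$. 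Replace your ``adding up contributions'' paragraph with this comparison to $\smash{\widehat{\D}}_{\fP/\S}^{(0)}$ and the rest of your writeup goes through.
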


\begin{proof}
We can copy word by word the proof of \cite[3.2.9]{caro-stab-sys-ind-surcoh} (for the second statement, the careful reader might notice in fact
we need the slightly more precise argument that the cohomological dimension of our rings can be bounded independently of the level $m$). 
\end{proof}

\begin{rem}
We have $\smash{\widetilde{\D}} _{\fP /\S } ^{(m)} (T ) \in 
D  ^{\mathrm{b}} _{\mathrm{tdf}}
( \smash{\widehat{\D}} _{\fP /\S } ^{(0)} )$, with flat amplitude bounded independently of the level $m$.
Indeed, 
$\D _{P /S } ^{(m)} \in 
D  ^{\mathrm{b}} _{\mathrm{tdf}}
( \D _{P /S } ^{(0)} )$, with flat amplitude bounded by 
the cohomological dimension of $ \D _{P /S } ^{(0)}$
(see \cite[I.5.9]{sga6}).
Since 
$\smash{\widehat{\D}} _{\fP /\S } ^{(m)} \in 
D  ^{\mathrm{b}} _{\mathrm{qc}}
( \smash{\widehat{\D}} _{\fP /\S } ^{(0)} )$,
then from \cite[3.2.3]{Beintro2} (still valid in our context), 
$\smash{\widehat{\D}} _{\fP /\S } ^{(m)} \in 
D  ^{\mathrm{b}} _{\mathrm{tdf}}
( \smash{\widehat{\D}} _{\fP /\S } ^{(0)} )$, with flat amplitude bounded by 
the cohomological dimension of $ \smash{\widehat{\D}} _{\fP /\S } ^{(0)}$
(in fact the proof of  \cite[3.2.3]{Beintro2} shows more precisely the preservation of flat amplitude).
Then, using 
\ref{rema-dim-coh-finie},
$\smash{\widetilde{\D}} _{\fP /\S } ^{(m)} (T ) \in 
D  ^{\mathrm{b}} _{\mathrm{tdf}}
( \smash{\widehat{\D}} _{\fP /\S } ^{(0)} )$, with flat amplitude bounded independently of the level $m$.
Hence, 
$\smash{\widetilde{\D}} _{\fP /\S } ^{(\bullet)} (T ) \in 
D  ^{\mathrm{b}} _{\mathrm{tdf}}
( \smash{\widehat{\D}} _{\fP /\S } ^{(\bullet)} )$.

\end{rem}

\begin{coro}
Let
$\M ^{(\bullet)}
\in \smash{\underrightarrow{LD}}  ^{\mathrm{b}} _{\Q, \mathrm{qc}}
(\overset{^\mathrm{?}}{} \smash{\widetilde{\D}} _{\fP /\S } ^{(\bullet)}(D))$,
and
$\E ^{(\bullet)} \in \smash{\underrightarrow{LD}}  ^{\mathrm{b}} _{\Q, \mathrm{qc}}
(\smash{\widetilde{\D}} _{\fP /\S } ^{(\bullet)}(D))$.
We have the canonical isomorphism in 
$\smash{\underrightarrow{LD}} ^{\mathrm{b}} _{\Q, \mathrm{qc}} 
(\overset{^\mathrm{?}}{}  \smash{\widetilde{\D}} _{\fP /\S } ^{(\bullet)}(T))$
of the form 
\begin{equation}
\label{hdagTDotimes}
 (\hdag T ,\,D) ( \M ^{(\bullet)} )
 \smash{\widehat{\otimes}} ^\L
_{\widetilde{\B} ^{(\bullet)}  _{\fP} ( T) } 
  (\hdag T ,\,D) ( \E ^{(\bullet)})
  \riso 
   (\hdag T ,\,D)  
   \left (
   \M ^{(\bullet)}
   \smash{\widehat{\otimes}} ^\L
_{\widetilde{\B} ^{(\bullet)}  _{\fP} ( D) } 
   \E ^{(\bullet)}
      \right ) .
\end{equation}
\end{coro}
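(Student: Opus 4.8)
The plan is to construct the morphism \ref{hdagTDotimes} and then to check it is an isomorphism by reducing it, level by level and finite level by finite level, to the associativity and commutativity of the ordinary derived tensor product.

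First I would construct the morphism. By definition of $\smash{\underrightarrow{LD}} ^{\mathrm{b}} _{\Q, \mathrm{qc}}$ one may choose representatives of $\M ^{(\bullet)}$ and $\E ^{(\bullet)}$ in $D ^{\mathrm{b}} _{\mathrm{qc}} ({}^l \smash{\widetilde{\D}} _{\fP /\S } ^{(\bullet)}(D))$. Via the identification \ref{ODdivcohe} of $(\hdag T ,\,D)$ with $\smash{\widetilde{\B}} ^{(\bullet)} _{\fP} ( T)  \smash{\widehat{\otimes}} ^\L _{\smash{\widetilde{\B}} ^{(\bullet)} _{\fP} ( D) }-$, the left-hand side of \ref{hdagTDotimes} becomes $(\smash{\widetilde{\B}} ^{(\bullet)} _{\fP} ( T)  \smash{\widehat{\otimes}} ^\L _{\smash{\widetilde{\B}} ^{(\bullet)} _{\fP} ( D) } \M ^{(\bullet)}) \smash{\widehat{\otimes}} ^\L _{\smash{\widetilde{\B}} ^{(\bullet)} _{\fP} ( T) } (\smash{\widetilde{\B}} ^{(\bullet)} _{\fP} ( T)  \smash{\widehat{\otimes}} ^\L _{\smash{\widetilde{\B}} ^{(\bullet)} _{\fP} ( D) } \E ^{(\bullet)})$, and the desired morphism is the canonical comparison morphism expressing that completed extension of scalars along $\smash{\widetilde{\B}} ^{(\bullet)} _{\fP} ( D) \to \smash{\widetilde{\B}} ^{(\bullet)} _{\fP} ( T)$ is compatible with $\smash{\widehat{\otimes}} ^\L$; concretely it is built from the unit maps $\M ^{(\bullet)} \to (\hdag T ,\,D)(\M ^{(\bullet)})$, $\E ^{(\bullet)} \to (\hdag T ,\,D)(\E ^{(\bullet)})$, the functoriality of $\smash{\widehat{\otimes}} ^\L$, and the universal property of $(\hdag T ,\,D)$. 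As in Lemma \ref{lemm-def-otimes-coh1} it passes to $\smash{\underrightarrow{LD}} _{\Q}$ and, by \ref{def-otimes-coh1&2qc}, preserves quasi-coherence and boundedness; it is moreover $\smash{\widetilde{\D}} _{\fP /\S } ^{(\bullet)}(T)$-linear, so by naturality of all the maps involved (in particular of \ref{ODdivcohe}) the $\smash{\widetilde{\D}} _{\fP /\S } ^{(\bullet)}(T)$-module structures on both sides automatically match.

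It therefore suffices to prove that the underlying morphism is an isomorphism of $D ^-({}^l \smash{\widetilde{\B}} ^{(\bullet)} _{\fP} ( T))$, namely that
\begin{multline*}
\left( \smash{\widetilde{\B}} ^{(\bullet)} _{\fP} ( T)  \smash{\widehat{\otimes}} ^\L _{\smash{\widetilde{\B}} ^{(\bullet)} _{\fP} ( D) } \M ^{(\bullet)} \right)
\smash{\widehat{\otimes}} ^\L _{\smash{\widetilde{\B}} ^{(\bullet)} _{\fP} ( T) }
\left( \smash{\widetilde{\B}} ^{(\bullet)} _{\fP} ( T)  \smash{\widehat{\otimes}} ^\L _{\smash{\widetilde{\B}} ^{(\bullet)} _{\fP} ( D) } \E ^{(\bullet)} \right)
\\
\riso
\smash{\widetilde{\B}} ^{(\bullet)} _{\fP} ( T)  \smash{\widehat{\otimes}} ^\L _{\smash{\widetilde{\B}} ^{(\bullet)} _{\fP} ( D) } \left( \M ^{(\bullet)} \smash{\widehat{\otimes}} ^\L _{\smash{\widetilde{\B}} ^{(\bullet)} _{\fP} ( D) } \E ^{(\bullet)} \right) ,
\end{multline*}
which is a formal consequence of the associativity and commutativity of $\smash{\widehat{\otimes}} ^\L$ over the commutative ring $\smash{\widetilde{\B}} ^{(\bullet)} _{\fP} ( D)$ together with the cancellation $-\,\smash{\widehat{\otimes}} ^\L _{\smash{\widetilde{\B}} ^{(\bullet)} _{\fP} ( T) } \smash{\widetilde{\B}} ^{(\bullet)} _{\fP} ( T) \cong -$. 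Since all these tensor products are computed level by level (see \ref{predef-otimes-coh0}, \ref{predef-otimes-coh1}, \ref{hdag-def}) and at each level $m$ are $\R \varprojlim _i$ of the ordinary derived tensor products $-\otimes ^\L _{\widetilde{\B} ^{(m)} _{P _i} ( D) }-$ on the schemes $P _i$, where associativity and commutativity are classical, everything comes down to checking that $\R \varprojlim _i$ interchanges with the tensor products occurring here, so that the finite-level isomorphisms, compatible with the transition maps, assemble into an isomorphism of $D ^{\mathrm{b}}(\smash{\widetilde{\B}} ^{(m)} _{\fP} ( T))$ for each $m$ and hence of $D ^-({}^l \smash{\widetilde{\B}} ^{(\bullet)} _{\fP} ( T))$. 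This interchange is the main obstacle, and it is exactly the point at which one uses that $\M ^{(\bullet)}$ and $\E ^{(\bullet)}$ are quasi-coherent and that, by \ref{rema-dim-coh-finie} and the Remark following \ref{def-otimes-coh1&2qc}, the relevant completed tensor functors are way-out with cohomological amplitude bounded independently of $m$ and $i$; it is handled by the same quasi-coherence arguments as in Section \ref{section3.2}. One then passes back to $\smash{\underrightarrow{LD}} ^{\mathrm{b}} _{\Q, \mathrm{qc}}(\smash{\widetilde{\D}} _{\fP /\S } ^{(\bullet)}(T))$ as in Lemma \ref{lemm-def-otimes-coh1} and \ref{def-otimes-coh1&2qc}. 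Alternatively, as for the preceding corollaries, one may simply copy the proof of the corresponding statement of \cite{caro-stab-sys-ind-surcoh}.
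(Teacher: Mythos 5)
Your proof is correct and takes essentially the same route as the paper: the paper's own proof is a two-line reduction to associativity via the equivalence of categories of Berthelot's [3.2.3], using bounded quasi-coherence, which is exactly what you spell out at greater length with the $\R\varprojlim$-interchange argument. The only difference is that where you invoke the way-out/quasi-coherence mechanism of Section \ref{section3.2} directly, the paper cites the equivalence $D^{\mathrm{b}}_{\mathrm{qc}}$ of completed sheaves $\cong$ $D^{\mathrm{b}}_{\mathrm{qc}}$ of the inverse system, which packages that mechanism into a single reference; either way the reduction to classical associativity is the content of the proof.
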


\begin{proof}
Using the bounded quasi-coherence of our objects, 
this is straightforward from the associativity of the tensor products
(use the equivalence of categories of 
\cite[3.2.3]{Beintro2} to reduce to the case of usual tensor products of complexes).
\end{proof}

\begin{coro}
Let
$\M ^{(\bullet)}
\in \smash{\underrightarrow{LD}}  ^{\mathrm{b}} _{\Q, \mathrm{qc}}
(\overset{^\mathrm{?}}{} \smash{\widetilde{\D}} _{\fP /\S } ^{(\bullet)}(T))$,
and
$\E ^{(\bullet)} \in \smash{\underrightarrow{LD}}  ^{\mathrm{b}} _{\Q, \mathrm{qc}}
(\smash{\widetilde{\D}} _{\fP /\S } ^{(\bullet)}(T))$.
We have the isomorphism
\begin{equation}
\label{oubTDDotimes}
\mathrm{oub} _{D,T} ( \M ^{(\bullet)} )
 \smash{\widehat{\otimes}} ^\L
_{\widetilde{\B} ^{(\bullet)}  _{\fP} ( D) } 
\mathrm{oub} _{D,T} ( \E ^{(\bullet)})
  \riso 
\mathrm{oub} _{D,T}
   \left (
   \M ^{(\bullet)}
   \smash{\widehat{\otimes}} ^\L
_{\widetilde{\B} ^{(\bullet)}  _{\fP} ( T) } 
   \E ^{(\bullet)}
      \right ) .
\end{equation}

\end{coro}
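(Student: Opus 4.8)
The plan is to argue directly by reassociating tensor products, using the quasi-coherence of $\M ^{(\bullet)}$ together with Proposition \ref{oub-pl-fid}, and to reduce the necessary associativity to ordinary tensor products of complexes exactly as in the proof of \ref{hdagTDotimes}.

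First I would record the $\widetilde{\B}$-theoretic form of \ref{oub-pl-fid-iso1}: for any $\cF ^{(\bullet)} \in \smash{\underrightarrow{LD}} ^{\mathrm{b}} _{\Q, \mathrm{qc}} (\smash{\widetilde{\D}} _{\fP /\S } ^{(\bullet)}(T))$, combining the isomorphism \ref{oub-pl-fid-iso1} with the commutative diagram \ref{ODdivcohe} shows that the canonical morphism
\[
\widetilde{\B} ^{(\bullet)} _{\fP}(T) \smash{\widehat{\otimes}} ^\L _{\widetilde{\B} ^{(\bullet)} _{\fP}(D)} \mathrm{oub} _{D,T}(\cF ^{(\bullet)}) \to \cF ^{(\bullet)}
\]
is an isomorphism of $\smash{\underrightarrow{LD}} ^{\mathrm{b}} _{\Q, \mathrm{qc}} (\smash{\widetilde{\D}} _{\fP /\S } ^{(\bullet)}(T))$, the left-hand side carrying the evident module structure. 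Applying this with $\cF ^{(\bullet)} = \M ^{(\bullet)}$, substituting into the right-hand side of \ref{oubTDDotimes}, and reassociating the resulting triple tensor product over $\widetilde{\B} ^{(\bullet)} _{\fP}(D)$, I obtain, using $\widetilde{\B} ^{(\bullet)} _{\fP}(T) \smash{\widehat{\otimes}} ^\L _{\widetilde{\B} ^{(\bullet)} _{\fP}(T)} \E ^{(\bullet)} = \E ^{(\bullet)}$, a canonical isomorphism
\[
\mathrm{oub} _{D,T} \bigl( \M ^{(\bullet)} \smash{\widehat{\otimes}} ^\L _{\widetilde{\B} ^{(\bullet)} _{\fP}(T)} \E ^{(\bullet)} \bigr) \riso \mathrm{oub} _{D,T} \bigl( \mathrm{oub} _{D,T}(\M ^{(\bullet)}) \smash{\widehat{\otimes}} ^\L _{\widetilde{\B} ^{(\bullet)} _{\fP}(D)} \E ^{(\bullet)} \bigr).
\]
Since forgetting the $\smash{\widetilde{\D}} _{\fP /\S } ^{(\bullet)}(T)$-structure of a tensor product (here the structure coming from $\E ^{(\bullet)}$) amounts to forgetting it on each factor, the target equals $\mathrm{oub} _{D,T}(\M ^{(\bullet)}) \smash{\widehat{\otimes}} ^\L _{\widetilde{\B} ^{(\bullet)} _{\fP}(D)} \mathrm{oub} _{D,T}(\E ^{(\bullet)})$, which is the left-hand side of \ref{oubTDDotimes}; naturality in both variables is clear from the construction.

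The one step requiring care is the reassociation of these completed, level-indexed tensor products. I would proceed exactly as in the proof of \ref{hdagTDotimes}: all complexes involved are bounded and quasi-coherent, with flat amplitudes bounded independently of the level $m$ by Corollary \ref{rema-dim-coh-finie} and \ref{def-otimes-coh1&2qc}, so that through the equivalence of categories \cite[3.2.3]{Beintro2} --- level by level, and after passing to the derived inverse limit over $i$ --- the computation reduces to ordinary tensor products of complexes of $\smash{\widetilde{\D}} _{P  _i/S  _i} ^{(m)}(T)$-modules, for which associativity is classical. There is no conceptual obstacle here; the whole statement is really a formal consequence of \ref{hdagTDotimes} and \ref{oub-pl-fid}, and the only actual work is this bookkeeping.
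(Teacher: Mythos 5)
Your proof is correct and takes essentially the same route as the paper's: both invoke \ref{oub-pl-fid-iso1} (in its $\widetilde{\B}$-theoretic form via \ref{ODdivcohe}) to rewrite one factor as $\widetilde{\B} ^{(\bullet)} _{\fP}(T) \smash{\widehat{\otimes}} ^\L _{\widetilde{\B} ^{(\bullet)} _{\fP}(D)}(\cdot)$ and then conclude by associativity of the completed derived tensor product. The only difference is cosmetic: you substitute for $\M ^{(\bullet)}$, the paper for $\E ^{(\bullet)}$ — a symmetric choice leading to the same reassociation.
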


\begin{proof}
Using \ref{oub-pl-fid-iso1}, we get 
$   \M ^{(\bullet)}
   \smash{\widehat{\otimes}} ^\L
_{\widetilde{\B} ^{(\bullet)}  _{\fP} ( T) } 
   \E ^{(\bullet)}
\riso
 \M ^{(\bullet)} 
 \smash{\widehat{\otimes}} ^\L
_{\widetilde{\B} ^{(\bullet)}  _{\fP} ( T) } 
\left (
\widetilde{\B} ^{(\bullet)}  _{\fP} ( T) 
 \smash{\widehat{\otimes}} ^\L
_{\widetilde{\B} ^{(\bullet)}  _{\fP} ( D) } 
 (\mathrm{oub} _{D,T} ( \E ^{(\bullet)}))
 \right ) $.
We conclude by associativity of the tensor product. 
\end{proof}

\subsection{Composition of localisation functors}

\begin{lemm}
\label{lem1-hdagT1T2}
Let $T, ~T'$ be two divisors of  $P$ 
whose irreducible components are distinct, 
$\U''$ the open set of  $\fP$ complementary to $T \cup T'$.

\begin{enumerate}
\item For any $i \in \N$, 
the canonical morphism
$\smash{\widetilde{\B}} _{P _i} ^{(m)} (T ) \otimes ^{\L}_{\O _{P _i} }
\smash{\widetilde{\B}} _{P _i} ^{(m)} (T' ) \to 
\smash{\widetilde{\B}} _{P _i} ^{(m)} (T ) \otimes _{\O _{P _i} }
\smash{\widetilde{\B}} _{P _i} ^{(m)} (T' )$
is an isomorphism.

\item The canonical morphism
$\smash{\widetilde{\B}} _{\fP} ^{(m)} (T ) \widehat{\otimes} ^{\L} _{\O _{\fP} }
\smash{\widetilde{\B}} _{\fP} ^{(m)} (T ')
\to
\smash{\widetilde{\B}} _{\fP} ^{(m)} (T ) \widehat{\otimes} _{\O _{\fP} }
\smash{\widetilde{\B}} _{\fP} ^{(m)} (T ')$
is an isomorphism
and 
the $\O _{\fP} $-algebra
$\smash{\widetilde{\B}} _{\fP} ^{(m)} (T ) \widehat{\otimes} _{\O _{\fP} }
\smash{\widetilde{\B}} _{\fP} ^{(m)} (T ')$
has no $p$-torsion.

\item The canonical morphism of  $\O _{\fP}$-algebras
$\smash{\widetilde{\B}} _{\fP} ^{(m)} (T ) \widehat{\otimes} _{\O _{\fP} }
\smash{\widetilde{\B}} _{\fP} ^{(m)} (T ')
\to 
j _* \O _{\U ''} $,
where $j\colon \U '' \hookrightarrow \fP$ is the inclusion,
is a monomorphism.

\item Let  $\chi,~\lambda \colon \N \to \N$ defined respectively by setting for any integer $m\in \N$ 
$\chi (m) := p ^{p-1}$ and $\lambda (m) := m +1$.
We have two canonical monomorphisms 
$\alpha ^{(\bullet)}
\colon
\smash{\widetilde{\B}} _{\fP} ^{(\bullet)} (T ) \widehat{\otimes} _{\O _{\fP} }
\smash{\widetilde{\B}} _{\fP} ^{(\bullet)} (T ')
\to 
\smash{\widetilde{\B}} _{\fP} ^{(\bullet)} (T \cup T')$
and
$\beta ^{(\bullet)}\colon 
\smash{\widetilde{\B}} _{\fP} ^{(\bullet)} (T \cup T')
\to 
\lambda ^{*} \chi ^{*} (
\smash{\widetilde{\B}} _{\fP} ^{(\bullet)} (T ) \widehat{\otimes} _{\O _{\fP} }
\smash{\widetilde{\B}} _{\fP} ^{(\bullet)} (T '))$
such that 
$\lambda ^{*} \chi ^{*} (\alpha  ^{(\bullet)})
\circ \beta  ^{(\bullet)}$ 
and
$\beta  ^{(\bullet)} \circ \alpha  ^{(\bullet)}$ 
are the canonical morphisms.
\end{enumerate}

\end{lemm}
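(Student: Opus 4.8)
The plan is to work \'etale-locally on $\fP$ and reduce each assertion to an explicit computation with Berthelot's sheaves $\widehat{\B}^{(m)}$. First I would pass to the situation where $\fP=\Spf A$ is affine, equipped with local coordinates, and where $T$, $T'$ are given by reduced equations $t,t'\in A$ whose classes in $\O_P=A/\pi A$ are coprime; this is legitimate since, by the paper's conventions, a divisor of the regular scheme $P$ is determined by its irreducible components and $\O_P$ is locally factorial, and then $tt'$ is a reduced equation of $T\cup T'$, still a nonzerodivisor. Next I would recall from \cite[4.2]{Be1} the local structure of these rings: $\widehat{\B}^{(m)}_{\fP}(T)$ is the $p$-adic completion of a \emph{free} $\O_\fP$-module (topologically generated by the weighted negative powers $q_m(k)!\,t^{-k}$ of $t$, where $q_m(k)=\lfloor k/p^{m+1}\rfloor$), hence a flat, $p$-torsion-free $\O_\fP$-algebra; it sits inside the $p$-adic completion $\widehat{A[1/t]}$ of $A[1/t]$; the family $(\widehat{\B}^{(m)}_\fP(T))_m$ is increasing; and $\B^{(m)}_{P_i}(T)=\V/\pi^{i+1}\otimes_\V\widehat{\B}^{(m)}_\fP(T)$ is free over $\O_{P_i}$. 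The same holds for $T'$ and for $T\cup T'$, and none of this is affected by dropping perfectness of $k$.

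Granting this, (1) is immediate: $\B^{(m)}_{P_i}(T)$ being $\O_{P_i}$-flat, the canonical morphism $\B^{(m)}_{P_i}(T)\otimes^\L_{\O_{P_i}}\B^{(m)}_{P_i}(T')\to \B^{(m)}_{P_i}(T)\otimes_{\O_{P_i}}\B^{(m)}_{P_i}(T')$ is an isomorphism. For (2), the transition maps of the projective system $\bigl(\B^{(m)}_{P_i}(T)\otimes_{\O_{P_i}}\B^{(m)}_{P_i}(T')\bigr)_{i\geq 0}$ are reductions modulo powers of $\pi$, hence surjective, so the system is Mittag-Leffler and $\mathrm{R}\varprojlim$ coincides with $\varprojlim$; combined with (1) this gives that $\widehat{\B}^{(m)}_\fP(T)\widehat{\otimes}^\L_{\O_\fP}\widehat{\B}^{(m)}_\fP(T')\to \widehat{\B}^{(m)}_\fP(T)\widehat{\otimes}_{\O_\fP}\widehat{\B}^{(m)}_\fP(T')$ is an isomorphism. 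The absence of $p$-torsion I would deduce afterwards from (3).

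For (3) I would use the $\O_\fP$-linear, $p$-torsion-free inclusions $\widehat{\B}^{(m)}_\fP(T)\hookrightarrow\widehat{A[1/t]}$ and $\widehat{\B}^{(m)}_\fP(T')\hookrightarrow\widehat{A[1/t']}$. Applying $-\widehat{\otimes}_{\O_\fP}\widehat{\B}^{(m)}_\fP(T')$ to the first inclusion (the target being $\O_\fP$-flat, this preserves injectivity, by vanishing of $\mathrm{Tor}_1$ against a flat module at each finite level and Mittag-Leffler), and then $\widehat{A[1/t]}\widehat{\otimes}_{\O_\fP}-$ to the inclusion $\widehat{\B}^{(m)}_\fP(T')\hookrightarrow\widehat{A[1/t']}$, one obtains a monomorphism $\widehat{\B}^{(m)}_\fP(T)\widehat{\otimes}_{\O_\fP}\widehat{\B}^{(m)}_\fP(T')\hookrightarrow \widehat{A[1/t]}\widehat{\otimes}_A\widehat{A[1/t']}$. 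Since $A[1/t]\otimes_A A[1/t']=A[1/(tt')]$ and completed tensor product commutes with the underived one, the right-hand side is $\widehat{A[1/(tt')]}=\Gamma(\Spf A,j_*\O_{\U''})$, and the composite is exactly the multiplication map; this proves (3) locally, and sheafifying gives the asserted monomorphism of $\O_\fP$-algebras. As $j_*\O_{\U''}$ is $p$-torsion-free, so is $\widehat{\B}^{(m)}_\fP(T)\widehat{\otimes}_{\O_\fP}\widehat{\B}^{(m)}_\fP(T')$, which completes (2).

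It remains to prove (4), which is the real content. The map $\alpha^{(\bullet)}$ is the multiplication (product) map; exactly as in (3) its source and its target both embed compatibly into $\widehat{A[1/(tt')]}$ — for the target one uses that $tt'$ is a \emph{reduced} equation of $T\cup T'$, i.e. the coprimality of $T$ and $T'$ — so $\alpha^{(\bullet)}$ is a monomorphism. The map $\beta^{(\bullet)}$ goes the other way and must, at level $m$, express the topological generators of $\widehat{\B}^{(\lambda_0(m))}_\fP(T\cup T')$ (weighted powers of $(tt')^{-1}=t^{-1}(t')^{-1}$) in terms of products of the weighted powers of $t^{-1}$ and of $(t')^{-1}$ available at the higher level $\lambda_0(m{+}1)$; since $T$ and $T'$ have no common component, the pole orders along $T$ and along $T'$ add independently and the level-$m$ structures multiply, the discrepancy being a rational number of the shape $q_{\lambda_0(m)}(k)!/q_{\lambda_0(m+1)}(k)!^{\,2}$ which, because $\lambda_0$ is increasing and $p\geq 2$, lies in $\O_\fP$ after multiplication by a $p$-power controlled uniformly — this is precisely what the raising of the level (the map $\lambda$) and the $p$-power factor (the map $\chi$) encode. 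Here one needs the same elementary $p$-adic valuation estimates as Berthelot, using the Euclidean divisions $k=q_m(k)p^{m+1}+r$ and the Legendre formula for $v_p(n!)$ (cf. \cite[4.2]{Be1} and \cite[3.2]{caro-stab-sys-ind-surcoh}); once $\alpha^{(\bullet)}$ and $\beta^{(\bullet)}$ are in place, checking that $\lambda^*\chi^*(\alpha^{(\bullet)})\circ\beta^{(\bullet)}$ and $\beta^{(\bullet)}\circ\alpha^{(\bullet)}$ are the canonical morphisms is a formal verification on generators. I expect this last quantitative comparison — pinning the level shift and the $p$-power down uniformly in $k$ and $m$ — to be the main obstacle; parts (1)--(3) are soft once Berthelot's local description of the $\B$-sheaves is available.
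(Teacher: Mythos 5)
Your overall strategy — localize, use Berthelot's explicit description of the sheaves $\widehat{\B}^{(m)}$, deduce (1)--(3) by flatness and Mittag-Leffler plus the embedding into $\widehat{A[1/tt']}$, then construct $\alpha$ and $\beta$ by an explicit comparison of generators — is the same route as the proof the paper cites (\cite[3.2.10]{caro-stab-sys-ind-surcoh}), and parts (1)--(3) are essentially sound. Two things stop this from being a proof.

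First, your description of the local structure of $\widehat{\B}^{(m)}_\fP(T)$ is not Berthelot's. In \cite[4.2]{Be1}, for $T=V(t)$ one sets $B^{(m)} := \O_\fP[U]/\bigl(t^{p^{m+1}}U - p\bigr)$ and $\widehat{\B}^{(m)}_\fP(T)$ is its $p$-adic completion; it is free over $\O_\fP$ on the classes $U^k$, with $U^k \mapsto p^k\,t^{-kp^{m+1}}$ in $\O_\fP[1/t]$. There are no factorials; the weights are $p$-powers and the pole orders that occur are the multiples of $p^{m+1}$. In particular the ``discrepancy'' you must control for $\beta$ is not a ratio $q_{\lambda_0(m)}(k)!/q_{\lambda_0(m+1)}(k)!^2$ but arises from comparing $W:=p/(tt')^{p^{m+1}}$ with $U'V'=p^2/(tt')^{p^{m+2}}$ at level $m+1$: one gets $p\,W = (tt')^{(p-1)p^{m+1}}\,U'V'$, so $\beta^{(m)}(W)$ should be defined as $(tt')^{(p-1)p^{m+1}}\,U'V'$, and the composites $\alpha\circ\beta$, $\beta\circ\alpha$ are multiplication by $p$ at each step. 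The estimate you call ``the main obstacle'' is then a one-line computation once the correct description is in hand; the factorial estimate you gesture at does not apply here (factorials enter the description of $\D^{(m)}$, not $\B^{(m)}$).

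Second, and this is the genuine gap: you explicitly defer the construction of $\beta$ and the verification that $\lambda^*\chi^*(\alpha)\circ\beta$ and $\beta\circ\alpha$ are the canonical morphisms, flagging them as ``the main obstacle'' that you ``expect'' to be handled by valuation estimates. That is precisely the content of part (4), so the proposal as written does not establish the lemma. To finish, you would need to: (i) use the correct generators $U^k$, $V^k$, $W^k$; (ii) check that $\alpha^{(m)}$ carries $U^jV^k$ to $p^{\min(j,k)}W^{\max(j,k)}\cdot(\text{monomial in }t,t')$, hence lands in $\widehat{\B}^{(m)}_\fP(T\cup T')$ and is mono by (3); (iii) define $\beta^{(m)}(W^k)$ explicitly by the formula above (iterated), verify it is $\O_\fP$-linear and continuous, hence extends to the completion; (iv) check the two composites equal the transition maps multiplied by the prescribed $p$-power, and that this is compatible with $\chi$, $\lambda$ as in the statement.
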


\begin{proof}
We can copy word by word the proof of \cite[3.2.10]{caro-stab-sys-ind-surcoh}. 
\end{proof}

\begin{prop}
\label{hdagT'T=cup}
Let  $T', T $ be two divisors of $P$.
For any $\E ^{(\bullet)}
\in 
\underrightarrow{LD} ^{\mathrm{b}} _{\Q,\mathrm{qc}} (\smash{\widetilde{\D}} _{\fP /\S } ^{(\bullet)})$, 
we have the isomorphism
$(\hdag T ') \circ (\hdag T) (\E ^{(\bullet)})
\to
(T '\cup T) (\E ^{(\bullet)})$
functorial in  $T,~T',~\E ^{(\bullet)}$.
\end{prop}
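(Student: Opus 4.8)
The plan is to follow the strategy of the analogous statement in \cite{caro-stab-sys-ind-surcoh}, reducing everything to the $\widetilde{\B}$-level comparison recorded in Lemma \ref{lem1-hdagT1T2}. First I would reduce to the case where $T$ and $T'$ have no common irreducible component. Write $T' = T'_{0}\cup T'_{1}$, with $T'_{0}$ the union of the components of $T'$ occurring in $T$ (so $T'_{0}\subset T$) and $T'_{1}$ the union of the remaining ones, so that $T\cup T' = T\cup T'_{1}$ and $T,T'_{1}$ have no common component. Since $(\hdag T)(\E ^{(\bullet)})$ already carries overconvergent singularities along all of $T$, Corollary \ref{gen-oub-pl-fid} together with Notation \ref{nota-hag-sansrisque} shows that $(\hdag T')\circ(\hdag T)(\E ^{(\bullet)})$ and $(T'\cup T)(\E ^{(\bullet)})$ depend on $T'$ only through the divisor $T\cup T'$; hence we may replace $T'$ by $T'_{1}$ and assume from now on that $T$ and $T'$ have distinct irreducible components.

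Next I would unwind the composite. By construction $(\hdag T)(\E ^{(\bullet)}) = \smash{\widetilde{\D}} _{\fP /\S } ^{(\bullet)}(T)\,\smash{\widehat{\otimes}} ^\L _{\smash{\widetilde{\D}} _{\fP /\S } ^{(\bullet)}}\,\E ^{(\bullet)}$, and likewise for $T'$; applying the associativity of $\smash{\widehat{\otimes}} ^\L$ — which is licit here because all objects are bounded and quasi-coherent, so that the derived inverse limits unwind and one reduces to ordinary tensor products of complexes at the finite levels $P_i$ via Berthelot's equivalence \cite[3.2.3]{Beintro2}, exactly as in the proof of \ref{hdagTDotimes} — one gets a canonical isomorphism
$$(\hdag T')\circ(\hdag T)(\E ^{(\bullet)})\ \riso\ \bigl(\smash{\widetilde{\D}} _{\fP /\S } ^{(\bullet)}(T')\,\smash{\widehat{\otimes}} ^\L _{\smash{\widetilde{\D}} _{\fP /\S } ^{(\bullet)}}\,\smash{\widetilde{\D}} _{\fP /\S } ^{(\bullet)}(T)\bigr)\,\smash{\widehat{\otimes}} ^\L _{\smash{\widetilde{\D}} _{\fP /\S } ^{(\bullet)}}\,\E ^{(\bullet)},$$
the first tensor factor being regarded as a complex of $(\smash{\widetilde{\D}} _{\fP /\S } ^{(\bullet)},\smash{\widetilde{\D}} _{\fP /\S } ^{(\bullet)})$-bimodules. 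It therefore suffices to produce a bimodule isomorphism $\smash{\widetilde{\D}} _{\fP /\S } ^{(\bullet)}(T')\,\smash{\widehat{\otimes}} ^\L _{\smash{\widetilde{\D}} _{\fP /\S } ^{(\bullet)}}\,\smash{\widetilde{\D}} _{\fP /\S } ^{(\bullet)}(T)\riso \smash{\widetilde{\D}} _{\fP /\S } ^{(\bullet)}(T\cup T')$ in the localized category.

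Finally I would carry out the $\widetilde{\B}$-level computation. Using $\smash{\widetilde{\D}} ^{(m)} _{\fP /\S }(T) = \widetilde{\B} ^{(m)} _{\fP}(T)\,\smash{\widehat{\otimes}} _{\O _{\fP}}\,\smash{\widehat{\D}} ^{(m)} _{\fP /\S }$ and the $\O _{\fP}$-flatness of the sheaves $\widetilde{\B} ^{(m)} _{\fP}(T)$, the displayed bimodule reduces, up to the harmless outer factor $\smash{\widehat{\D}} ^{(\bullet)} _{\fP /\S }$, to $\widetilde{\B} ^{(\bullet)} _{\fP}(T)\,\smash{\widehat{\otimes}} ^\L _{\O _{\fP} ^{(\bullet)}}\,\widetilde{\B} ^{(\bullet)} _{\fP}(T')$; parts (1)--(2) of Lemma \ref{lem1-hdagT1T2} show this derived tensor product is concentrated in degree $0$, and parts (3)--(4) exhibit the canonical map $\widetilde{\B} ^{(\bullet)} _{\fP}(T)\,\smash{\widehat{\otimes}} _{\O _{\fP}}\,\widetilde{\B} ^{(\bullet)} _{\fP}(T')\to \widetilde{\B} ^{(\bullet)} _{\fP}(T\cup T')$, through the morphisms $\alpha ^{(\bullet)}$ and $\beta ^{(\bullet)}$ produced there, as a lim-ind-isogeny, hence an isomorphism in $\smash{\underrightarrow{LD}} ^{\mathrm{b}} _{\Q}$. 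Tensoring back with $\smash{\widehat{\D}} ^{(\bullet)} _{\fP /\S }$ over $\O _{\fP}$ upgrades this to the wanted bimodule isomorphism, and combining the three steps gives $(\hdag T')\circ(\hdag T)(\E ^{(\bullet)})\riso (T'\cup T)(\E ^{(\bullet)})$; functoriality in $T,T',\E ^{(\bullet)}$ is automatic, every isomorphism used being canonical. I expect the real difficulty to be bookkeeping rather than any single computation: keeping precise track of the forgetful functors hidden in the symbols $(\hdag T')$ and $(T'\cup T)$ when the source lives over $T$ and not over the empty divisor, and checking that the associativity isomorphism for $\smash{\widehat{\otimes}} ^\L$ descends through the localization by lim-ind-isogenies — which is exactly where boundedness, quasi-coherence and the passage to honest tensor products at finite level are genuinely needed.
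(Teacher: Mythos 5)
Your overall strategy --- use associativity of $\smash{\widehat{\otimes}}^\L$ on bounded quasi-coherent complexes to reduce to a comparison of $\widetilde{\B}^{(\bullet)}_{\fP}(T)\,\smash{\widehat{\otimes}}^\L_{\O ^{(\bullet)}_{\fP}}\,\widetilde{\B}^{(\bullet)}_{\fP}(T')$ with $\widetilde{\B}^{(\bullet)}_{\fP}(T\cup T')$, and then invoke Lemma \ref{lem1-hdagT1T2} --- is exactly the route the paper takes (the proof is transcribed from the analogous statement via that lemma). Your Steps~2 and~3 are sound.

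Step~1 does not close as written. Corollary \ref{gen-oub-pl-fid} and Notation \ref{nota-hag-sansrisque} concern independence of $(\hdag T'',\,D)$ from the \emph{source} divisor $D\subset T''$ on objects already living over $D$; they say nothing about replacing the \emph{target} divisor $T'$ of a localization by the sub-divisor $T'_{1}$, and in particular they do not apply here because $T\not\subset T'$. To justify $(\hdag T')\circ(\hdag T)(\E^{(\bullet)})\cong(\hdag T'_{1})\circ(\hdag T)(\E^{(\bullet)})$ one has to show the canonical map
\begin{equation*}
\widetilde{\B}^{(\bullet)}_{\fP}(T'_{1})\,\smash{\widehat{\otimes}}^\L_{\O^{(\bullet)}_{\fP}}\,\widetilde{\B}^{(\bullet)}_{\fP}(T)\longrightarrow
\widetilde{\B}^{(\bullet)}_{\fP}(T')\,\smash{\widehat{\otimes}}^\L_{\O^{(\bullet)}_{\fP}}\,\widetilde{\B}^{(\bullet)}_{\fP}(T)
\end{equation*}
is an isomorphism in $\smash{\underrightarrow{LD}}^{\mathrm{b}}_{\Q,\mathrm{qc}}$, and the ingredient this actually needs is Proposition \ref{lem3-hdagDT}: taking $D'=\emptyset$ and $D=T'_{0}\subset T$ there shows that $\widetilde{\B}^{(m)}_{\fP}(T'_{0})\,\widehat{\otimes}_{\O_{\fP}}\,\widetilde{\B}^{(m')}_{\fP}(T)$ is $p$-isogenous to $\widetilde{\B}^{(m')}_{\fP}(T)$, which, combined with Lemma \ref{lem1-hdagT1T2} applied to the disjoint pair $(T'_{0},T'_{1})$, absorbs the redundant factor. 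In fact once Proposition \ref{lem3-hdagDT} is in play the preliminary reduction is unnecessary: decomposing $T=T_{0}\cup T_{1}$ and $T'=T_{0}\cup T'_{1}$ with $T_{0}$ the common components, Lemma \ref{lem1-hdagT1T2} gives $\widetilde{\B}(T')\,\widehat{\otimes}^\L_{\O}\,\widetilde{\B}(T)\cong\widetilde{\B}(T_{0})\,\widehat{\otimes}\,\widetilde{\B}(T'_{1})\,\widehat{\otimes}\,\widetilde{\B}(T_{0})\,\widehat{\otimes}\,\widetilde{\B}(T_{1})$, Proposition \ref{lem3-hdagDT} collapses $\widetilde{\B}(T_{0})\,\widehat{\otimes}_{\O}\,\widetilde{\B}(T_{0})\cong\widetilde{\B}(T_{0})$, and a last application of Lemma \ref{lem1-hdagT1T2} to the pairwise-distinct factors gives $\widetilde{\B}(T\cup T')$ directly.
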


\begin{proof}
Using \ref{lem1-hdagT1T2}, 
we can copy word by word the proof of \cite[3.2.11]{caro-stab-sys-ind-surcoh}. 
\end{proof}

\subsection{A coherence stability criterion by localisation outside a divisor}

\begin{thm}
\label{limTouD}
Let  $T' \supset T$ be a divisor,
$\E ^{(\bullet)}
\in 
\underrightarrow{LD} ^{\mathrm{b}} _{\Q, \mathrm{coh}} (\smash{\widetilde{\D}} _{\fP } ^{(\bullet)} (T))$
and
$\E := 
\underrightarrow{\lim}
\E ^{(\bullet)}
\in D ^{\mathrm{b}} _{\mathrm{coh}} (\smash{\D} ^\dag _{\fP } (\hdag T) _{\Q})$.
We suppose that the morphism 
$\E \to (\hdag T',T) (\E)$
is an isomorphism of 
$D ^{\mathrm{b}}  (\smash{\D} ^\dag _{\fP } (\hdag T) _{\Q})$.
Then, the canonical morphism
$\E ^{(\bullet)} \to 
(\hdag T',T) (\E ^{(\bullet)})$
is an isomorphism of 
$\underrightarrow{LD} ^{\mathrm{b}} _{\Q, \mathrm{coh}} (\smash{\widetilde{\D}} _{\fP } ^{(\bullet)}(T))$.
\end{thm}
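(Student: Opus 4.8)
The plan is to reduce the statement to a statement about coherent $\D ^\dag _{\fP} (\hdag T) _{\Q}$-modules, then promote it back to the level of inductive systems using the structural results of the first chapter, principally the coherence criterion \ref{defi-LDQ0coh} together with the equivalences \ref{eqcat-limcoh}.

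First I would reduce to the case of a module, i.e. of an object of the heart. Indeed $\E ^{(\bullet)}$ is bounded with coherent cohomology, and by \ref{empt-diag-Hn-comp} together with the commutation of $\H ^n$ with $(\hdag T', T)$ (which follows from \ref{rema-dim-coh-finie}, the bounded Tor-amplitude of the rings, guaranteeing that $(\hdag T', T)$ preserves boundedness and commutes with $\H ^n$ up to a spectral sequence — here one uses that under the hypothesis $\E \riso (\hdag T', T)(\E)$ each $\H ^n (\E)$ is already $(\hdag T')$-local, hence the spectral sequence degenerates), one reduces to checking that $\H ^n (\E ^{(\bullet)}) \to (\hdag T', T)(\H ^n (\E ^{(\bullet)}))$ is an isomorphism. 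So we may assume $\E ^{(\bullet)} \in \underrightarrow{LM} _{\Q, \mathrm{coh}}(\smash{\widetilde{\D}} _{\fP} ^{(\bullet)}(T))$ and, via \ref{eqcat-limcoh}.1, that $\E := \underrightarrow{\lim} \E ^{(\bullet)}$ is a coherent $\smash{\D} ^\dag _{\fP}(\hdag T) _{\Q}$-module with $\E \riso \E (\hdag T', T)$.

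Next, the question being local on $\fP$ (by \ref{thick-subcat}.\ref{thick-subcat2} and \ref{cohDLMislocal}), I would assume $\fP$ affine and pick, using \ref{defi-LDQ0coh}, a representative $\FF ^{(\bullet)}$ with $\FF ^{(m)}$ a coherent $\smash{\widetilde{\D}} _{\fP} ^{(\lambda(m))}(T)$-module and $\smash{\widetilde{\D}} _{\fP} ^{(\lambda(m'))}(T)\otimes _{\smash{\widetilde{\D}} _{\fP} ^{(\lambda(m))}(T)}\FF ^{(m)}\riso \FF ^{(m')}$. Then $(\hdag T', T)(\FF ^{(\bullet)})$ has level-$m$ term $\smash{\widetilde{\D}} _{\fP} ^{(\lambda(m))}(T')\widehat{\otimes}^\L _{\smash{\widetilde{\D}} _{\fP} ^{(\lambda(m))}(T)}\FF ^{(m)}$; by \ref{rema-dim-coh-finie} this is a bounded quasi-coherent complex, and by Berthelot's equivalence (\cite[3.2.3]{Beintro2}) it is computed by the corresponding tensor product of global sections over $\smash{\widetilde{D}} ^{(\lambda(m))}_{\fP}(T)$. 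The crucial point is that after applying $\underrightarrow{\lim}$ and $\otimes \Q$ we get exactly $\E (\hdag T', T) = \E$, which is concentrated in degree $0$ and coherent; so the transition maps of $(\hdag T', T)(\FF ^{(\bullet)})$ become isomorphisms after $\underrightarrow{\lim}_\Q$. The main obstacle is precisely here: passing from this limit statement back to a statement at finite level, i.e. showing that the canonical map $\FF ^{(\bullet)} \to (\hdag T', T)(\FF ^{(\bullet)})$ is already a lim-ind-isomorphism, not merely an isomorphism after taking the limit. This is where one invokes \ref{eqcat-limcoh}.1 in the reverse direction: both source and target, if coherent, are determined by their limits, so it suffices to know that $(\hdag T', T)(\E ^{(\bullet)})$ is again coherent. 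Its coherence is the real content — one must produce, possibly after an element $\mu \in L$, a representative satisfying the two conditions of \ref{defi-LDQ0coh}; since $\smash{\widetilde{\D}} _{\fP} ^{(\bullet)}(T')$ is obtained from $\smash{\widetilde{\D}} _{\fP} ^{(\bullet)}(T)$ by the flat (after isogeny, cf. \ref{lem3-hdagDT}) base-change $\widetilde{\B} ^{(\bullet)}(T')\widehat\otimes _{\widetilde{\B} ^{(\bullet)}(T)}-$, one checks that tensoring a representative $\FF ^{(\bullet)}$ of $\E ^{(\bullet)}$ by this and appealing to the bounded-Tor-amplitude statements of \ref{rema-dim-coh-finie} and the noetherianity of $D ^{(m)}$ (\cite[3.3-3.4]{Be1}) yields a representative of $(\hdag T', T)(\E ^{(\bullet)})$ of the required coherent form.

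Once both $\E ^{(\bullet)}$ and $(\hdag T', T)(\E ^{(\bullet)})$ are known to lie in $\underrightarrow{LD} ^{\mathrm{b}} _{\Q, \mathrm{coh}}$, I would conclude as follows: by \ref{eqcat-limcoh}.1 (applied to the category $(\hdag T') = $ "localization along $T'$", after noting $\E$ is $T'$-local hence lives in $\mathrm{Coh}(\smash{\D} ^\dag _{\fP}(\hdag T') _{\Q})$ as well, via $(\hdag T', T)$ being exact, \ref{hdagT-nota}) the functor $\underrightarrow{\lim}$ is an equivalence on coherent objects, so a morphism of coherent objects is an isomorphism iff it is so after $\underrightarrow{\lim}$; and after $\underrightarrow{\lim}$ the morphism $\E ^{(\bullet)} \to (\hdag T', T)(\E ^{(\bullet)})$ becomes $\E \to \E(\hdag T', T)$, which is an isomorphism by hypothesis. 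This gives the claim. The delicate part to write carefully is the coherence of $(\hdag T', T)(\E ^{(\bullet)})$ together with the compatibility (as in \ref{eqcat-limcoh}) between the coherence notion "up to lim-ind-isogeny" of \ref{coh-loc-pres-fini-lim-ind-iso} and the "Berthelot" coherence of \ref{defi-LDQ0coh}; everything else is formal manipulation with the localization functor and its established properties (\ref{oub-pl-fid}, \ref{hdagTDotimes}, \ref{hdagT'T=cup}).
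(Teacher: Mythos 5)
Your overall scaffolding is the right one: reduce to the heart by truncation (and the hypothesis does propagate to $\H^n(\E)$ since the localization functor $(\hdag T',T)$ of \ref{hdag-def-coh} is exact on $D^{\mathrm{b}}_{\mathrm{coh}}(\smash{\D}^\dag_{\fP}(\hdag T)_\Q)$), then conclude by observing that once $(\hdag T',T)(\E^{(\bullet)})$ is known to lie in $\underrightarrow{LD}^{\mathrm{b}}_{\Q,\mathrm{coh}}(\smash{\widetilde{\D}}_{\fP}^{(\bullet)}(T))$, the full faithfulness of $\underrightarrow{\lim}$ (via \ref{eqcat-limcoh}) upgrades the assumed isomorphism at the limit to an isomorphism of inductive systems. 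That logic is fine.

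The gap is in the coherence step, which is the real content and which you dispose of too quickly. You claim $(\hdag T',T)(\E^{(\bullet)})$ acquires a $\smash{\widetilde{\D}}_{\fP}^{(\bullet)}(T)$-coherent representative simply by tensoring a $T$-coherent representative $\FF^{(\bullet)}$ by the (isogeny-flat) extension $\smash{\widetilde{\B}}^{(\bullet)}_{\fP}(T)\to\smash{\widetilde{\B}}^{(\bullet)}_{\fP}(T')$, citing \ref{lem3-hdagDT}, \ref{rema-dim-coh-finie} and noetherianity. But this only makes the level-$m$ terms coherent over the \emph{larger} ring $\smash{\widetilde{\D}}_{\fP}^{(\lambda(m))}(T')$, which is automatic and not what is needed: since $\smash{\widetilde{\B}}^{(m)}_{\fP}(T')$ is nowhere near finite over $\smash{\widetilde{\B}}^{(m)}_{\fP}(T)$, coherence over $T'$ does not descend to coherence over $T$. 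Notice that your coherence argument never actually invokes the hypothesis $\E\riso(\hdag T',T)(\E)$, and it cannot be dispensed with: taking $T=\emptyset$ and $\E^{(\bullet)}=\O_{\fP}^{(\bullet)}$, your argument would give, for any $T'$ and for free, that $\smash{\widetilde{\B}}^{(\bullet)}_{\fP}(T')$ is coherent over $\smash{\widehat{\D}}^{(\bullet)}_{\fP}$ --- but that is precisely the hard Theorem \ref{coh-ss-div} (via \ref{coh-Bbullet}), established via de Jong's alterations and not by any base-change formality; and for a general coherent $\E^{(\bullet)}$ failing the hypothesis, $(\hdag T')(\E^{(\bullet)})$ is simply not coherent over $T$ (this failure is the very reason overcoherence is a nontrivial condition). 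The actual proof must feed the limit-level isomorphism into the construction of a representative: choose finitely many generators of the coherent $\smash{\D}^\dag_{\fP}(\hdag T)_\Q$-module $\E$, lift them to a sufficiently high level $m_0$ of a $T'$-coherent representative of $(\hdag T',T)(\E^{(\bullet)})$, and use the assumed isomorphism $\E\riso(\hdag T')(\E)$ to show that for $m$ large these sections already generate the level-$m$ term over $\smash{\widetilde{\D}}^{(\lambda(m))}_{\fP}(T)_\Q$ and not merely over $\smash{\widetilde{\D}}^{(\lambda(m))}_{\fP}(T')_\Q$. That level-wise use of the hypothesis is the essential step your proposal leaves out.
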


\begin{proof}
We can copy \cite[3.5.1]{caro-stab-sys-ind-surcoh}.
\end{proof}

\begin{cor}
\label{coro1limTouD}
Let  $T' \supset T$ be a divisor,
$\E ^{\prime (\bullet)}
\in 
\underrightarrow{LD} ^{\mathrm{b}} _{\Q, \mathrm{coh}} (\smash{\widetilde{\D}} _{\fP } ^{(\bullet)} (T'))$
and
$\E ':= 
\underrightarrow{\lim}
\E ^{\prime (\bullet)}
\in D ^{\mathrm{b}} _{\mathrm{coh}} (\smash{\D} ^\dag _{\fP } (\hdag T ') _{\Q})$.
If
$\E '\in D ^{\mathrm{b}}  _{\mathrm{coh}} (\smash{\D} ^\dag _{\fP } (\hdag T) _{\Q})$,
then
$\E ^{\prime (\bullet)}
\in 
\underrightarrow{LD} ^{\mathrm{b}} _{\Q, \mathrm{coh}} (\smash{\widetilde{\D}} _{\fP } ^{(\bullet)}(T))$.
\end{cor}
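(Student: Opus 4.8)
The statement is a direct consequence of Theorem \ref{limTouD}, and the plan is to reduce to it by interchanging the roles of the two divisors. The key observation is that $\E^{\prime(\bullet)}$ lives over $\smash{\widetilde{\D}}_{\fP}^{(\bullet)}(T')$, so it equals (up to lim-ind-isogeny, after possibly replacing by an isomorphic complex) a complex of the form $(\hdag T', T)(\FF^{(\bullet)})$ for some candidate $\FF^{(\bullet)}$ over $\smash{\widetilde{\D}}_{\fP}^{(\bullet)}(T)$; I would then apply Theorem \ref{limTouD} to that $\FF^{(\bullet)}$.

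More precisely, here are the steps in order. First, since $T \subset T'$ we have the localization functor $(\hdag T',T)$ and, by Proposition \ref{oub-pl-fid}(1), the canonical morphism $(\hdag T',T)\circ \mathrm{oub}_{T,T'}(\E^{\prime(\bullet)}) \to \E^{\prime(\bullet)}$ is an isomorphism in $\underrightarrow{LD}^{\mathrm{b}}_{\Q,\mathrm{qc}}(\smash{\widetilde{\D}}_{\fP}^{(\bullet)}(T'))$; set $\FF^{(\bullet)} := \mathrm{oub}_{T,T'}(\E^{\prime(\bullet)}) \in \underrightarrow{LD}^{\mathrm{b}}_{\Q,\mathrm{qc}}(\smash{\widetilde{\D}}_{\fP}^{(\bullet)}(T))$. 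Second, I need to know that $\FF^{(\bullet)}$ is in fact coherent, not merely quasi-coherent — equivalently that $\E := \underrightarrow{\lim}\,\FF^{(\bullet)}$ lies in $D^{\mathrm{b}}_{\mathrm{coh}}(\smash{\D}^\dag_{\fP}(\hdag T)_{\Q})$; but applying $\underrightarrow{\lim}$ to the forgetful functor gives exactly the forgetful functor $\mathrm{oub}_{T,T'}$ on the $\smash{\D}^\dag$-side, so $\E \riso \mathrm{oub}_{T,T'}(\E')$, which is $\E'$ viewed as a $\smash{\D}^\dag_{\fP}(\hdag T)_{\Q}$-module, and this is coherent precisely by the hypothesis $\E' \in D^{\mathrm{b}}_{\mathrm{coh}}(\smash{\D}^\dag_{\fP}(\hdag T)_{\Q})$. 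Third, by Theorem \ref{limTouD} applied to this $\FF^{(\bullet)}$ (with the pair $T \subset T'$): the localization morphism $\E \to (\hdag T',T)(\E)$ on the $\smash{\D}^\dag$-side is an isomorphism — indeed $(\hdag T',T)(\E) = (\hdag T',T)\circ\mathrm{oub}_{T,T'}(\E') \riso \E'$ again by the coherent analogue of Proposition \ref{oub-pl-fid}, and the composite $\E \riso \mathrm{oub}_{T,T'}(\E') $ followed by this identification is the canonical map — hence the hypothesis of \ref{limTouD} is satisfied, and we conclude that $\FF^{(\bullet)} \to (\hdag T',T)(\FF^{(\bullet)})$ is an isomorphism in $\underrightarrow{LD}^{\mathrm{b}}_{\Q,\mathrm{coh}}(\smash{\widetilde{\D}}_{\fP}^{(\bullet)}(T))$. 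In particular $\FF^{(\bullet)}$ itself already lies in $\underrightarrow{LD}^{\mathrm{b}}_{\Q,\mathrm{coh}}(\smash{\widetilde{\D}}_{\fP}^{(\bullet)}(T))$. Fourth and finally, combining with Step 1, $\E^{\prime(\bullet)} \riso (\hdag T',T)(\FF^{(\bullet)})$, and since $\FF^{(\bullet)}$ is coherent over $\smash{\widetilde{\D}}_{\fP}^{(\bullet)}(T)$ and the previous isomorphism identifies $\E^{\prime(\bullet)}$ with its image under $(\hdag T',T)$ which is coherent over $\smash{\widetilde{\D}}_{\fP}^{(\bullet)}(T)$ by Step 3, we obtain $\E^{\prime(\bullet)} \in \underrightarrow{LD}^{\mathrm{b}}_{\Q,\mathrm{coh}}(\smash{\widetilde{\D}}_{\fP}^{(\bullet)}(T))$, as desired.

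The main obstacle, such as it is, is bookkeeping: making sure the various instances of $\mathrm{oub}$ and $(\hdag\,-\,)$ on the level of $\smash{\widetilde{\D}}^{(\bullet)}$-complexes and on the level of $\smash{\D}^\dag_{\Q}$-modules are matched up correctly under $\underrightarrow{\lim}$, and that "coherent over $\smash{\widetilde{\D}}_{\fP}^{(\bullet)}(T)$" transfers along the isomorphism $\E^{\prime(\bullet)} \riso (\hdag T',T)(\FF^{(\bullet)})$. Since $\underrightarrow{LD}^{\mathrm{b}}_{\Q,\mathrm{coh}}$ is a strictly full (indeed thick) subcategory by \ref{thick-subcat}, the transfer along an isomorphism is automatic, so there is really nothing to check beyond the commutativity of $\underrightarrow{\lim}$ with the forgetful and localization functors, which is formal. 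Hence I expect the proof to be short; in fact one can simply write: "Apply Theorem \ref{limTouD} to $\FF^{(\bullet)} := \mathrm{oub}_{T,T'}(\E^{\prime(\bullet)})$, using \ref{oub-pl-fid} to identify $(\hdag T',T)(\FF^{(\bullet)}) \riso \E^{\prime(\bullet)}$ and its $\smash{\D}^\dag_{\Q}$-analogue."
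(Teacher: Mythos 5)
Your argument is circular. In Step 3 you apply Theorem \ref{limTouD} to $\FF^{(\bullet)} := \mathrm{oub}_{T,T'}(\E^{\prime(\bullet)})$, but the hypotheses of that theorem require $\FF^{(\bullet)}$ to lie in $\underrightarrow{LD}^{\mathrm{b}}_{\Q,\mathrm{coh}}(\smash{\widetilde{\D}}_{\fP}^{(\bullet)}(T))$ \emph{a priori} — and that is precisely the conclusion of the corollary you are trying to prove. The assertion in your Step 2 that ``$\FF^{(\bullet)}$ is coherent, \emph{equivalently} $\E := \underrightarrow{\lim}\FF^{(\bullet)}$ lies in $D^{\mathrm{b}}_{\mathrm{coh}}(\smash{\D}^\dag_{\fP}(\hdag T)_\Q)$'' is false: the equivalence \ref{eqcat-limcoh} only says that $\underrightarrow{\lim}$ is an equivalence \emph{from} the coherent subcategory to $D^{\mathrm{b}}_{\mathrm{coh}}(\smash{\D}^\dag_{\fP}(\hdag T)_\Q)$; it does not say that a merely quasi-coherent object with coherent limit is itself coherent. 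Indeed, if coherence of $\FF^{(\bullet)}$ were automatic from coherence of its limit, Theorem \ref{limTouD} itself would have essentially no content.

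The standard fix is to \emph{construct} a coherent lift rather than try to identify $\mathrm{oub}_{T,T'}(\E^{\prime(\bullet)})$ with one directly. By the hypothesis $\E' \in D^{\mathrm{b}}_{\mathrm{coh}}(\smash{\D}^\dag_{\fP}(\hdag T)_\Q)$ and the equivalence of categories \ref{eqcat-limcoh} over $T$, there exists $\FF^{(\bullet)} \in \underrightarrow{LD}^{\mathrm{b}}_{\Q,\mathrm{coh}}(\smash{\widetilde{\D}}_{\fP}^{(\bullet)}(T))$ with $\underrightarrow{\lim}\,\FF^{(\bullet)} \riso \mathrm{oub}_{T,T'}(\E')$. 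Since $\E'$ carries a $\smash{\D}^\dag_{\fP}(\hdag T')_\Q$-structure, the $\smash{\D}^\dag$-level localisation map $\underrightarrow{\lim}\,\FF^{(\bullet)} \to (\hdag T',T)(\underrightarrow{\lim}\,\FF^{(\bullet)})$ is an isomorphism (the $\smash{\D}^\dag$-analogue of \ref{oub-pl-fid}), so Theorem \ref{limTouD} applies to $\FF^{(\bullet)}$ and yields that $\FF^{(\bullet)} \to (\hdag T',T)(\FF^{(\bullet)})$ is an isomorphism in $\underrightarrow{LD}^{\mathrm{b}}_{\Q,\mathrm{coh}}(\smash{\widetilde{\D}}_{\fP}^{(\bullet)}(T))$; in particular $(\hdag T',T)(\FF^{(\bullet)})$ is coherent over $\smash{\widetilde{\D}}_{\fP}^{(\bullet)}(T)$. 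On the other hand, $(\hdag T',T)(\FF^{(\bullet)})$ and $\E^{\prime(\bullet)}$ are both coherent over $\smash{\widetilde{\D}}_{\fP}^{(\bullet)}(T')$ with the same $\smash{\D}^\dag$-limit $\E'$, hence are isomorphic in $\underrightarrow{LD}^{\mathrm{b}}_{\Q,\mathrm{coh}}(\smash{\widetilde{\D}}_{\fP}^{(\bullet)}(T'))$ by full faithfulness of $\underrightarrow{\lim}$ there; transferring that isomorphism through $\mathrm{oub}_{T,T'}$ and using the iso from \ref{limTouD} gives $\mathrm{oub}_{T,T'}(\E^{\prime(\bullet)}) \riso \FF^{(\bullet)}$, which is the desired coherence.
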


\begin{coro}
\label{coro2limTouD}
Let  $T' \supset T$ be a divisor,
$\E \in D ^{\mathrm{b}}  _{\mathrm{coh}} (\smash{\D} ^\dag _{\fP } (\hdag T) _{\Q}) \cap 
D ^{\mathrm{b}}  _{\mathrm{coh}} (\smash{\D} ^\dag _{\fP } (\hdag T') _{\Q})$.
Let 
$\E ^{(\bullet)}
\in 
\underrightarrow{LD} ^{\mathrm{b}} _{\Q, \mathrm{coh}} (\smash{\widetilde{\D}} _{\fP } ^{(\bullet)} (T))$
and
$\E ^{\prime (\bullet)}
\in 
\underrightarrow{LD} ^{\mathrm{b}} _{\Q, \mathrm{coh}} (\smash{\widetilde{\D}} _{\fP } ^{(\bullet)} (T'))$
such that we have the $\smash{\D} ^\dag _{\fP } (\hdag T) _{\Q}$-linear isomorphisms of the form
$\underrightarrow{\lim}
\E ^{(\bullet)}
\riso 
\E$
and
$\underrightarrow{\lim}
\E ^{\prime (\bullet)}
\riso 
\E$.
Then, we have the isomorphism
$\E ^{(\bullet)} \riso \E ^{\prime (\bullet)}$
of
$\underrightarrow{LD} ^{\mathrm{b}} _{\Q, \mathrm{coh}} (\smash{\widetilde{\D}} _{\fP } ^{(\bullet)} (T))$.
\end{coro}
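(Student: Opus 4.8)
The plan is to reduce the statement to the equivalence of categories $\underrightarrow{\lim}\colon \underrightarrow{LD} ^{\mathrm{b}} _{\Q, \mathrm{coh}} (\smash{\widetilde{\D}} _{\fP } ^{(\bullet)} (T)) \riso D ^{\mathrm{b}} _{\mathrm{coh}} (\smash{\D} ^\dag _{\fP } (\hdag T) _{\Q})$ obtained by composing the equivalences \ref{eqcatLD=DSM-fonct-coh} and \ref{eqcatcoh} of Proposition \ref{eqcat-limcoh}. First I would move $\E ^{\prime (\bullet)}$, which a priori only lives in $\underrightarrow{LD} ^{\mathrm{b}} _{\Q, \mathrm{coh}} (\smash{\widetilde{\D}} _{\fP } ^{(\bullet)} (T'))$, down to the category attached to the smaller divisor $T$: applying Corollary \ref{coro1limTouD} to the pair $T \subset T'$ and to $\E ' = \underrightarrow{\lim} \E ^{\prime (\bullet)}$, which by hypothesis lies in $D ^{\mathrm{b}} _{\mathrm{coh}} (\smash{\D} ^\dag _{\fP } (\hdag T) _{\Q})$, gives that $\mathrm{oub} _{T,T'} (\E ^{\prime (\bullet)})$ belongs to $\underrightarrow{LD} ^{\mathrm{b}} _{\Q, \mathrm{coh}} (\smash{\widetilde{\D}} _{\fP } ^{(\bullet)} (T))$. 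Since the formation of $\underrightarrow{\lim}$ commutes with the partial forgetful functor of the divisor, we also get a canonical identification $\underrightarrow{\lim} \, \mathrm{oub} _{T,T'} (\E ^{\prime (\bullet)}) \riso \mathrm{oub} _{T,T'} (\E)$, i.e. the image of $\mathrm{oub} _{T,T'} (\E ^{\prime (\bullet)})$ under $\underrightarrow{\lim}$ is $\E$ regarded over $\smash{\D} ^\dag _{\fP } (\hdag T) _{\Q}$.

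Now both $\E ^{(\bullet)}$ and $\mathrm{oub} _{T,T'} (\E ^{\prime (\bullet)})$ are objects of $\underrightarrow{LD} ^{\mathrm{b}} _{\Q, \mathrm{coh}} (\smash{\widetilde{\D}} _{\fP } ^{(\bullet)} (T))$, and the two hypotheses provide isomorphisms $\underrightarrow{\lim} \E ^{(\bullet)} \riso \E$ and $\underrightarrow{\lim} \, \mathrm{oub} _{T,T'} (\E ^{\prime (\bullet)}) \riso \E$. The last step is to take the composite isomorphism $\underrightarrow{\lim} \E ^{(\bullet)} \riso \underrightarrow{\lim} \, \mathrm{oub} _{T,T'} (\E ^{\prime (\bullet)})$ in $D ^{\mathrm{b}} _{\mathrm{coh}} (\smash{\D} ^\dag _{\fP } (\hdag T) _{\Q})$ and transport it back along the equivalence $\underrightarrow{\lim}$: full faithfulness yields a unique morphism $\E ^{(\bullet)} \to \mathrm{oub} _{T,T'} (\E ^{\prime (\bullet)})$ in $\underrightarrow{LD} ^{\mathrm{b}} _{\Q, \mathrm{coh}} (\smash{\widetilde{\D}} _{\fP } ^{(\bullet)} (T))$ whose image under $\underrightarrow{\lim}$ is that composite, and since $\underrightarrow{\lim}$ is an equivalence, hence conservative, this morphism is an isomorphism. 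Suppressing the forgetful functor from the notation as is done throughout (see \ref{nota-hag-sansrisque}), this is the asserted isomorphism $\E ^{(\bullet)} \riso \E ^{\prime (\bullet)}$ of $\underrightarrow{LD} ^{\mathrm{b}} _{\Q, \mathrm{coh}} (\smash{\widetilde{\D}} _{\fP } ^{(\bullet)} (T))$.

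The only substantial ingredient is Corollary \ref{coro1limTouD} (hence Theorem \ref{limTouD}); the rest is formal, using Proposition \ref{eqcat-limcoh} and the compatibility of $\underrightarrow{\lim}$ with $\mathrm{oub} _{T,T'}$. I therefore do not expect a genuine obstacle; the only point requiring care is the bookkeeping of forgetful functors — one must feed Corollary \ref{coro1limTouD} precisely with the fact that $\underrightarrow{\lim} \E ^{\prime (\bullet)} \riso \E$ holds already at the level of $\smash{\D} ^\dag _{\fP } (\hdag T) _{\Q}$-modules, and then make sure the isomorphism produced at the $\smash{\widetilde{\D}} _{\fP } ^{(\bullet)} (T)$-level is correctly identified with an isomorphism between $\E ^{(\bullet)}$ and $\E ^{\prime (\bullet)}$ in the sense of the statement.
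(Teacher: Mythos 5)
Your proof is correct and follows the same route as the paper: Corollary \ref{coro1limTouD} places $\E^{\prime(\bullet)}$ in $\underrightarrow{LD}^{\mathrm{b}}_{\Q,\mathrm{coh}}(\smash{\widetilde{\D}}_{\fP}^{(\bullet)}(T))$, and then full faithfulness (indeed the equivalence) of $\underrightarrow{\lim}$ on that category lifts the composite isomorphism $\underrightarrow{\lim}\E^{(\bullet)}\riso\E\riso\underrightarrow{\lim}\E^{\prime(\bullet)}$ to the desired isomorphism of systems. The paper's own proof is a one-line invocation of exactly these two ingredients; your version just spells out the bookkeeping.
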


\begin{proof}
This is a straightforward consequence of \ref{coro1limTouD} and 
of the full faithfulness of the functor
$\underrightarrow{\lim}$ 
on 
$\underrightarrow{LD} ^{\mathrm{b}} _{\Q, \mathrm{coh}} (\smash{\widetilde{\D}} _{\fP } ^{(\bullet)} (T))$.
\end{proof}

\begin{prop}
\label{stab-coh-oub-DT}
Let  $T \subset D \subset T' $ be some divisors of  $P$.
\begin{enumerate}
\item Let 
$\E ^{(\bullet)}
\in 
\underrightarrow{LD} ^{\mathrm{b}} _{\Q, \mathrm{coh}} (\smash{\widetilde{\D}} _{\fP } ^{(\bullet)}(T))
\cap \underrightarrow{LD} ^{\mathrm{b}} _{\Q, \mathrm{coh}} (\smash{\widetilde{\D}} _{\fP } ^{(\bullet)} (T'))$.
Then 
$\E ^{(\bullet)}
\in 
 \underrightarrow{LD} ^{\mathrm{b}} _{\Q, \mathrm{coh}} (\smash{\widetilde{\D}} _{\fP } ^{(\bullet)} (D))$.
\item Let 
$\E \in 
D ^{\mathrm{b}} _{\mathrm{coh}}( \smash{\D} ^\dag _{\fP } (\hdag T) _{\Q} )
\cap 
D ^{\mathrm{b}} _{\mathrm{coh}}( \smash{\D} ^\dag _{\fP } (\hdag T') _{\Q} )$.
Then 
$\E 
\in 
D ^{\mathrm{b}} _{\mathrm{coh}}( \smash{\D} ^\dag _{\fP } (\hdag D) _{\Q} )$.
\end{enumerate}
\end{prop}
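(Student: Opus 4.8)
The plan is to prove assertion (2) first, directly from the localisation formalism, and then to derive (1) from (2); choosing this order is precisely what avoids circularity.

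For (2), I would begin by unwinding the double membership: it means that $\E$ is a coherent complex of $\smash{\D} ^\dag _{\fP } (\hdag T') _{\Q}$-modules whose restriction of scalars along the inclusion $\smash{\D} ^\dag _{\fP } (\hdag T) _{\Q} \hookrightarrow \smash{\D} ^\dag _{\fP } (\hdag T') _{\Q}$ is coherent over $\smash{\D} ^\dag _{\fP } (\hdag T) _{\Q}$. Since $T \subset D \subset T'$, restriction of scalars along $\smash{\D} ^\dag _{\fP } (\hdag D) _{\Q} \hookrightarrow \smash{\D} ^\dag _{\fP } (\hdag T') _{\Q}$ equips $\E$ with a structure of complex of $\smash{\D} ^\dag _{\fP } (\hdag D) _{\Q}$-modules, whose further restriction $\mathrm{oub} _{T,D}(\E)$ to $\smash{\D} ^\dag _{\fP } (\hdag T) _{\Q}$ is exactly the given $\smash{\D} ^\dag _{\fP } (\hdag T) _{\Q}$-coherent complex. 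I would then use that the localisation morphism $\smash{\D} ^\dag _{\fP } (\hdag T) _{\Q} \to \smash{\D} ^\dag _{\fP } (\hdag D) _{\Q}$ is a flat epimorphism of rings --- flatness is the exactness recorded after \ref{hdag-def-coh}, and the epimorphism property ($\smash{\D} ^\dag _{\fP } (\hdag D) _{\Q} \otimes _{\smash{\D} ^\dag _{\fP } (\hdag T) _{\Q}} \smash{\D} ^\dag _{\fP } (\hdag D) _{\Q} \riso \smash{\D} ^\dag _{\fP } (\hdag D) _{\Q}$) is the $\D^\dag$-level form of \ref{oub-pl-fid-iso1}, obtained from the level $\bullet$ statement via \ref{eqcat-limcoh}. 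Hence the counit $(\hdag D,T) \circ \mathrm{oub} _{T,D} \to \mathrm{id}$ is an isomorphism on $\E$, so $\E \riso (\hdag D,T)(\mathrm{oub} _{T,D}(\E))$ in $D ^{\mathrm{b}}(\smash{\D} ^\dag _{\fP } (\hdag D) _{\Q})$; as $\mathrm{oub} _{T,D}(\E) \in D ^{\mathrm{b}} _{\mathrm{coh}}(\smash{\D} ^\dag _{\fP } (\hdag T) _{\Q})$, \ref{hdag-def-coh} shows the right-hand side is $\smash{\D} ^\dag _{\fP } (\hdag D) _{\Q}$-coherent, whence so is $\E$.

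For (1), I would put $\E := \underrightarrow{\lim}\, \E ^{(\bullet)}$. Applying \ref{eqcat-limcoh} to $\E ^{(\bullet)} \in \underrightarrow{LD} ^{\mathrm{b}} _{\Q, \mathrm{coh}}(\smash{\widetilde{\D}} _{\fP } ^{(\bullet)} (T'))$ gives $\E \in D ^{\mathrm{b}} _{\mathrm{coh}}(\smash{\D} ^\dag _{\fP } (\hdag T') _{\Q})$, and applying it to $\mathrm{oub} _{T,T'}(\E ^{(\bullet)}) \in \underrightarrow{LD} ^{\mathrm{b}} _{\Q, \mathrm{coh}}(\smash{\widetilde{\D}} _{\fP } ^{(\bullet)} (T))$, whose inductive limit identifies with the restriction of scalars $\mathrm{oub} _{T,T'}(\E)$, gives $\mathrm{oub} _{T,T'}(\E) \in D ^{\mathrm{b}} _{\mathrm{coh}}(\smash{\D} ^\dag _{\fP } (\hdag T) _{\Q})$. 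Thus $\E$ satisfies the hypotheses of (2) for the divisors $T \subset D \subset T'$, so $\E \in D ^{\mathrm{b}} _{\mathrm{coh}}(\smash{\D} ^\dag _{\fP } (\hdag D) _{\Q})$. Then Corollary \ref{coro1limTouD}, applied to the divisor $D \subset T'$ and to the coherent system $\E ^{(\bullet)} \in \underrightarrow{LD} ^{\mathrm{b}} _{\Q, \mathrm{coh}}(\smash{\widetilde{\D}} _{\fP } ^{(\bullet)} (T'))$ whose limit $\E$ lies in $D ^{\mathrm{b}} _{\mathrm{coh}}(\smash{\D} ^\dag _{\fP } (\hdag D) _{\Q})$, yields $\E ^{(\bullet)} \in \underrightarrow{LD} ^{\mathrm{b}} _{\Q, \mathrm{coh}}(\smash{\widetilde{\D}} _{\fP } ^{(\bullet)} (D))$, which is the assertion.

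The only genuine points are formal: (i) the precise meaning of the two double memberships --- a single complex over the larger ring whose restriction of scalars to the smaller one remains coherent, both at level $\bullet$ and at the $\D^\dag$-level; and (ii) the flat-epimorphism property of the localisation morphisms $\smash{\D} ^\dag _{\fP } (\hdag A) _{\Q} \to \smash{\D} ^\dag _{\fP } (\hdag B) _{\Q}$ for $A \subset B$. With these granted, (2) is a direct consequence of \ref{hdag-def-coh} and \ref{oub-pl-fid}, and (1) follows from (2) using only \ref{eqcat-limcoh} and \ref{coro1limTouD}; as (2) is proved with no appeal to (1), the argument is not circular. (Alternatively one may simply invoke the corresponding statement of \cite{caro-stab-sys-ind-surcoh}.)
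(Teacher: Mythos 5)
The paper's own proof goes in the opposite direction: it establishes assertion (1) directly at the level of inductive systems, by applying \ref{oub-pl-fid-iso1} with the pair $(T,D)$ in the roles of $(D,T)$ of that proposition to the quasi-coherent complex $\mathrm{oub}_{D,T'}(\E^{(\bullet)})$, noting that $\mathrm{oub}_{T,D}(\mathrm{oub}_{D,T'}(\E^{(\bullet)}))=\mathrm{oub}_{T,T'}(\E^{(\bullet)})$ is coherent over $\widetilde{\D}^{(\bullet)}(T)$ by hypothesis; then it deduces (2) from (1) via \ref{coro1limTouD} and the equivalence \ref{eq-catLDBer-LD-D}.

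Your proposal reverses the logical order, deriving (1) from (2) and claiming an independent proof of (2) at the $\D^\dag$-level. The derivation of (1) from (2) is sound and uses the same ingredients (\ref{eqcat-limcoh}, \ref{coro1limTouD}) that the paper uses to go the other way. But your proof of (2) rests on the ring-theoretic epimorphism property $\smash{\D}^\dag_{\fP}(\hdag D)_\Q \otimes_{\smash{\D}^\dag_{\fP}(\hdag T)_\Q} \smash{\D}^\dag_{\fP}(\hdag D)_\Q \riso \smash{\D}^\dag_{\fP}(\hdag D)_\Q$, and the justification you give for it does not work. You say this is obtained from \ref{oub-pl-fid-iso1} via \ref{eqcat-limcoh}; but \ref{eqcat-limcoh} is an equivalence only between \emph{coherent} objects on both sides, while $\smash{\D}^\dag_{\fP}(\hdag D)_\Q$ viewed as a $\smash{\D}^\dag_{\fP}(\hdag T)_\Q$-module via $\mathrm{oub}_{T,D}$ is not coherent (it is not finitely generated over the smaller ring once $T \subsetneq D$). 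So the $\underrightarrow{LD}$-level counit isomorphism of \ref{oub-pl-fid-iso1} applied to $\widetilde{\D}^{(\bullet)}(D)$ cannot be transported to a $\D^\dag$-level statement through that equivalence. This is exactly why the paper stays at the inductive-system level in its proof of (1): there \ref{oub-pl-fid-iso1} is available for all \emph{quasi-coherent} complexes, and one only descends to the $\D^\dag$-side at the very end, through coherent objects, via \ref{coro1limTouD}. As written, your argument therefore has a gap at its central step; either you need a separate proof that $\smash{\D}^\dag_{\fP}(\hdag T)_\Q \to \smash{\D}^\dag_{\fP}(\hdag D)_\Q$ is a ring epimorphism (this is not established anywhere in the paper and is not a formal consequence of what is), or you should run the argument at the $\underrightarrow{LD}$-level, which collapses back onto the paper's route.
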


\begin{proof}
Using \ref{oub-pl-fid-iso1}, we check that the canonical morphism 
$(\hdag D,\,T) \circ 
\mathrm{oub} _{T ,D} 
( \mathrm{oub} _{D ,T'} (\E ^{(\bullet)}))
\to 
\mathrm{oub} _{D ,T'} (\E ^{(\bullet)})$
of $ \underrightarrow{LD} ^{\mathrm{b}} _{\Q, \mathrm{qc}} (\smash{\widetilde{\D}} _{\fP } ^{(\bullet)} (D))$
is an isomorphism.
Hence, we get the first assertion. 
Using \ref{coro1limTouD}, this yields the second one.
\end{proof}

\begin{ntn}
\label{fct-qcoh2coh}
Let $\fP$ and $\mathfrak{Q}$ be two smooth formal schemes over $\S $,
$T$ be a divisor of $P$,
$U$ be a divisor of $Q$,
and 
$\phi ^{(\bullet)}
\colon 
\smash{\underrightarrow{LD}} ^{\mathrm{b}} _{\Q,\mathrm{qc}}
( \smash{\widetilde{\D}} _{\fP /\S } ^{(\bullet)}(T))
\to 
\smash{\underrightarrow{LD}} ^{\mathrm{b}} _{\Q,\mathrm{qc}}
( \smash{\widetilde{\D}} _{\mathfrak{Q} /\S } ^{(\bullet)}(U))$
be a functor.
We denote by 
$\mathrm{Coh} _{T} ( \phi ^{(\bullet)})
\colon 
D ^\mathrm{b} _\mathrm{coh} ( \smash{\D} ^\dag _{\fP /\S } (\hdag T) _{\Q} ) \to
D ^\mathrm{b}  ( \smash{\D} ^\dag _{\mathfrak{Q} /\S } (\hdag U) _{\Q} )$
the functor 
$\mathrm{Coh} _{T} ( \phi ^{(\bullet)}) := \underrightarrow{\lim} \circ \phi ^{(\bullet)}\circ (\underrightarrow{\lim} _T ) ^{-1}$,
where $(\underrightarrow{\lim} _T ) ^{-1}$ is a quasi-inverse functor of 
the equivalence of categories
\begin{equation}
\label{eq-catLDBer-LD-D}
\underrightarrow{\lim} 
\colon 
\underrightarrow{LD} ^{\mathrm{b}}  _{\Q, \mathrm{coh}} (\smash{\widetilde{\D}} _{\fP } ^{(\bullet)} (T))
\overset{\ref{eqcatLD=DSM-fonct-coh}}{\cong} 
D ^{\mathrm{b}} _{\mathrm{coh}}
(\underrightarrow{LM} _{\Q} (\smash{\widetilde{\D}} _{\fP } ^{(\bullet)} (T)))
\overset{\ref{eqcatcoh}}{\cong}
D ^{\mathrm{b}} _{\mathrm{coh}}( \smash{\D} ^\dag _{\fP } (\hdag T) _{\Q} ).
\end{equation}
\end{ntn}

\begin{rem}
\label{rema-fct-qcoh2coh}
Let  $T \subset T'$ be a second divisor. 
Let  $\E \in 
D ^\mathrm{b} _\mathrm{coh} ( \smash{\D} ^\dag _{\fP } (\hdag T') _{\Q} ) 
\cap D ^\mathrm{b} _\mathrm{coh} ( \smash{\D} ^\dag _{\fP } (\hdag T) _{\Q} )$.
Using \ref{coro2limTouD},  
the corresponding objects 
of $\smash{\underrightarrow{LD}} ^{\mathrm{b}} _{\Q ,\mathrm{coh}}
(\smash{\widetilde{\D}} _{\fP } ^{(\bullet)}(T))$
and
$\smash{\underrightarrow{LD}} ^{\mathrm{b}} _{\Q ,\mathrm{coh}}
(\smash{\widetilde{\D}} _{\fP } ^{(\bullet)}(T'))$
(via the equivalence of categories \ref{eq-catLDBer-LD-D})
are isomorphic. 
With notation \ref{fct-qcoh2coh}, the functors 
$\mathrm{Coh} _{T} ( \phi ^{(\bullet)}) $
and
$\mathrm{Coh} _{T'} ( \phi ^{(\bullet)}) $
are then isomorphic over 
$D ^\mathrm{b} _\mathrm{coh} ( \smash{\D} ^\dag _{\fP } (\hdag T') _{\Q} ) 
\cap D ^\mathrm{b} _\mathrm{coh} ( \smash{\D} ^\dag _{\fP } (\hdag T) _{\Q} )$.
\end{rem}

\begin{rem}
\label{rem-hag-sansrisque}

\begin{itemize}

\item For any divisors $D \subset T$, 
we have the isomorphism of functors 
$\mathrm{Coh} _{D} ((\hdag T' ,~D) ) \riso (\hdag T' ,~D) $
(see notation \ref{hdagT-nota}) 
Hence, both notation of \ref{hdagT-nota} are compatible.

\item

Let $T$ and $D \subset D' $ be some divisors of $P$.
We obtain the functor
$(\hdag T ) := \mathrm{Coh} _{D} ((\hdag T) )
\colon D ^\mathrm{b} _\mathrm{coh} ( \smash{\D} ^\dag _{\fP } (\hdag D) _{\Q} ) 
\to D ^\mathrm{b} _\mathrm{coh} ( \smash{\D} ^\dag _{\fP } (\hdag T \cup D) _{\Q} ) $
(see notation \ref{nota-hag-sansrisque}).
With the remark \ref{rema-fct-qcoh2coh}, 
since the functors 
$\mathrm{Coh} _{D} ((\hdag T) )$ and
$\mathrm{Coh} _{D'} ((\hdag T) )$
are isomorphic over
 $D ^\mathrm{b} _\mathrm{coh} ( \smash{\D} ^\dag _{\fP } (\hdag D) _{\Q} ) 
\cap D ^\mathrm{b} _\mathrm{coh} ( \smash{\D} ^\dag _{\fP } (\hdag D') _{\Q} )$,
then it is not necessary to clarify $D$.

\end{itemize}

\end{rem}

\section{Extraordinary inverse image, direct image, duality}

\subsection{Definitions of the functors}
\label{subsection3.1}

Let  $f \colon \fP ^{\prime } \to \fP $ be a morphism of smooth formal schemes over $\S $,
$T$ and $T'$ be some divisors of respectively $P$ and $P'$ such that 
$f ( P '\setminus T' ) \subset P \setminus T$.
We define in this section the extraordinary inverse image and 
direct image by $f$ with overconvergent singularities along $T$ and $T'$,
and the dual functor.

We fix  $\lambda _0\colon \N \to \N$ an increasing map such that 
$\lambda _{0} (m) \geq m$ for any $m \in \N$. 
We set 
$\widetilde{\B} ^{(m)} _{\fP} ( T):= \widehat{\B} ^{(\lambda _0 (m))} _{\fP} ( T)$ 
et
$\smash{\widetilde{\D}} _{\fP /\S } ^{(m)} (T):=
\widetilde{\B} ^{(m)} _{\fP} ( T)  \smash{\widehat{\otimes}} _{\O _{\fP}} \smash{\widehat{\D}} _{\fP /\S } ^{(m)}$.
Finally, we set 
$\smash{\D} _{P  _i /S  _i} ^{(m)} (T):= \V / \pi ^{i+1} \otimes _{\V} \smash{\widehat{\D}} _{\fP /\S  } ^{(m)} (T) 
=
\B ^{(m)} _{P _i} ( T)  \otimes _{\O _{P _i}} \smash{\D} _{P  _i/S  _i} ^{(m)}$
and
$\smash{\widetilde{\D}} _{P  _i/S  _i} ^{(m)} (T):=\widetilde{\B} ^{(m)} _{P _i} ( T)  \otimes _{\O _{P _i}} \smash{\D} _{P  _i/S  _i} ^{(m)}$.
We use similar notation by adding some primes,
e.g. 
$\widetilde{\B} ^{(m)} _{\fP'} ( T'):= \widehat{\B} ^{(\lambda _0 (m))} _{\fP'} ( T')$ .

\begin{ntn}
\begin{enumerate}[(a)]
\item Since $f ^{-1} (T) \subset T'$, 
we get the canonical morphism 
$f ^{-1} \widetilde{\B} _{P _i} ^{(m)} ( T ) \rightarrow \widetilde{\B} _{P' _i} ^{(m)} ( T ')$.
Hence, the sheaf
$\widetilde{\B} _{P' _i} ^{(m)} ( T ') \otimes _{\O _{P ^\prime _i}} f _i ^* \D _{P  _i/ S  _i} ^{(m)}
\riso 
\widetilde{\B} _{P' _i} ^{(m)} ( T ') \otimes _{f ^{-1} \widetilde{\B} _{P _i} ^{(m)} ( T ) } f ^{-1} \widetilde{\D} _{P  _i/ S  _i} ^{(m)} (T)$
is endowed with a canonical structure 
of ($ \widetilde{\D} _{P  ^{\prime } _i/ S  _i} ^{(m)} ( T ')$, $ f  ^{-1} \widetilde{\D} _{P  _i/ S  _i} ^{(m)}(T)$)-bimodule.
We denote this bimodule by $\widetilde{\D} ^{(m)} _{P ^{\prime } _i \rightarrow P  _i/ S  _i} ( T' , T)$.

\item By $p$-adic completion, we get the following
$(\smash{\widetilde{\D}} _{\fP ^{\prime }/\S }  ^{(m)}(T') , f ^{-1} \smash{\widetilde{\D}} _{\fP / \S } ^{(m)} (T))$-bimodule :
$\smash{\widetilde{\D}} _{\fP ^{\prime }\rightarrow \fP / \S } ^{(m)} ( T' , T):=
\underset{\underset{i}{\longleftarrow}}{\lim}\, \widetilde{\D}_{P ^{\prime } _i \rightarrow P  _i/ S  _i} ^{(m)}( T' , T)$.

\item We get a $(\D ^{\dag } _{\fP ^{\prime }/ \S } (\hdag T' )_{\Q}, f ^{-1} \D ^{\dag }_{\fP ^{ }/ \S } (\hdag T) _{\Q})$-bimodule
by setting
$\D ^{\dag} _{\fP ^{\prime }\rightarrow \fP ^{ }/ \S } (\hdag T' ,T) _{\Q}:=\underset{\underset{m}{\longrightarrow}}{\lim}\,
\smash{\widetilde{\D}} _{\fP ^{\prime }\rightarrow \fP ^{ }/ \S } ^{(m)} ( T' ,T)_{\Q}$.

\end{enumerate}

\end{ntn}

\begin{empt}
[Extraordinary inverse image]
\label{ntn-Lf!+*}
\begin{enumerate}
\item  The extraordinary inverse image by $f$ with overconvergent singularities along $T$ and $T'$ 
is a functor of the form
$f  ^{!(\bullet)} _{T',T} \colon
\smash{\underrightarrow{LD}} ^{\mathrm{b}} _{\Q,\mathrm{qc}} ( \smash{\widetilde{\D}} _{\fP /\S } ^{(\bullet)}(T))
\to
\smash{\underrightarrow{LD}} ^{\mathrm{b}} _{\Q,\mathrm{qc}} ( \smash{\widetilde{\D}} _{\fP ^{\prime }/\S }  ^{(\bullet)}(T'))$
which is defined 
for any $\E ^{(\bullet)} \in \smash{\underrightarrow{LD}}  ^\mathrm{b} _{\Q, \mathrm{qc}}
( \smash{\widetilde{\D}} _{\fP /\S } ^{(\bullet)} (T ))$ by setting:
\begin{equation}
\notag
f _{T',T} ^{ !(\bullet)} ( \E ^{(\bullet)}) :=
\smash{\widetilde{\D}} ^{(\bullet)} _{\fP ^{\prime } \rightarrow \fP /\S } (T',T)
\smash{\widehat{\otimes}} ^\L _{f ^{-1} \smash{\widetilde{\D}} ^{(\bullet)} _{\fP /\S } (T)}
f ^{-1} \E ^{(\bullet)} [ d_{\fP ' /\fP}],
\end{equation}
where the tensor product is defined similarly to \ref{predef-otimes-coh0}.
\item The extraordinary inverse image by $f$ with overconvergent singularities along $T$ and $T'$ 
is also a functor of the form
$f  ^{ !} _{T',T} \colon
D ^\mathrm{b} _\mathrm{coh} ( \smash{\D} ^\dag _{\fP /\S } (\hdag T) _{\Q} )
\to 
 D ^\mathrm{b} ( \smash{\D} ^\dag _{\fP ^{\prime }/\S }  (\hdag T') _{\Q} )$
which is defined 
for any $\E \in D ^{\mathrm{b}} _{\mathrm{coh}} ( \D ^{\dag} _{\fP /\S } (\hdag T ) _{\Q})$ by setting:
\begin{equation}
\label{def-image-inv-extr}
f  ^{ !} _{T' , T} (\E ):=\D ^{\dag} _{\fP ^{\prime }\rightarrow \fP  }  ( \hdag T' , T ) _{\Q}
\otimes ^{\L} _{ f ^{-1} \D ^{\dag} _{\fP /\S } (\hdag T ) _{\Q}} f ^{-1} \E [d _{\fP '/\fP} ].
\end{equation}

\item Mostly when $f$ is smooth, we can also consider the functors
$\L f _{T',T} ^{*(\bullet)} 
:= 
f _{T',T} ^{ !(\bullet)} 
[ - d_{\fP ' /\fP}]$,
and
$\L f  ^{ *} _{T',T}  : = f  ^{ !} _{T',T} [ - d_{\fP ' /\fP}]$.
Beware that our notation might be misleading since 
$\L f _{T',T} ^{*(\bullet)} $ is not  necessarily a left derived functor of some functor (except for coherent complexes).
When $f$ is smooth, these  functors are t-exact over coherent complexes, and we denote them respectively
$f _{T',T} ^{*(\bullet)} $ and $f  ^{ *} _{T',T} $.

\item When $T '= f ^{-1} (T)$, we simply write respectively
$  f _{T} ^{ !(\bullet)} $, $f   _{T} ^!$, and $f _T ^*$. If moreover $T$ is empty, 
we write $  f^{ !(\bullet)} $, 
$f ^!$, and $f ^*$.

\end{enumerate}

\end{empt}

\begin{ntn}

\begin{enumerate}[(a)]
\item We define a ($f ^{-1} \widetilde{\D} _{P  _i/ S  _i} ^{(m)}(T)$, $ \widetilde{\D} _{P ^{\prime } _i} ^{(m)} ( T ')$)-bimodule by setting
$$\widetilde{\D} ^{(m)} _{P  _i \leftarrow P ^{\prime   }_i/ S  _i} ( T , T'):=
\widetilde{\B} _{P' _i} ^{(m)} ( T ') \otimes _{\O _{P ^\prime _i}}
\left (\omega _{P ^{\prime } _i/S  _i} \otimes _{\O _{P ' _i}}f ^* _l \left ( \D _{P  _i/ S  _i} ^{(m)}(T) \otimes _{\O _{P _i}} \omega ^{-1} _{P  _i/ S  _i} \right)\right),$$
where the symbol $l$ means that we choose the left structure of left $\D _{P  _i/ S  _i} ^{(m)}(T) $-module.

\item This yields by completion the 
$(f ^{-1} \smash{\widetilde{\D}} _{\fP / \S } ^{(m)} (T),~\smash{\widetilde{\D}} _{\fP ^{\prime }/ \S } ^{(m)}(T') )$-bimodule :
$$\smash{\widetilde{\D}} _{\fP  \leftarrow \fP ^{\prime }/ \S } ^{(m)} ( T, T'):=
\underset{\underset{i}{\longleftarrow}}{\lim}\, \widetilde{\D}_{P  _i \leftarrow P ^{\prime } _i/ S  _i} ^{(m)}( T , T').$$

\item We get the
($f ^{-1} \D ^{\dag }_{\fP  / \S } (\hdag T) _{\Q}$, $\D ^{\dag } _{\fP ^{\prime }/ \S } (\hdag T' )_{\Q}$)-bimodule
$\D ^{\dag} _{\fP  \leftarrow \fP ^{\prime }/ \S } (\hdag T ,T ')_{\Q}:=
\underset{\underset{m}{\longrightarrow}}{\lim}
\smash{\widetilde{\D}} _{\fP  \leftarrow \fP ^{\prime }/ \S } ^{(m)} ( T, T') _\Q$.

\end{enumerate}

\end{ntn}

\begin{empt}
\begin{enumerate}[(a)]
\item The
direct image  by $f$ with overconvergent singularities along $T$ and $T'$ 
is a functor of the form
$f  ^{ (\bullet)} _{T,T',+} \colon
\smash{\underrightarrow{LD}} ^{\mathrm{b}} _{\Q,\mathrm{qc}} ( \smash{\widetilde{\D}} _{\fP ^{\prime }/\S }  ^{(\bullet)}(T'))
\to 
\smash{\underrightarrow{LD}} ^{\mathrm{b}} _{\Q,\mathrm{qc}} ( \smash{\widetilde{\D}} _{\fP /\S } ^{(\bullet)}(T))$
defined by setting,
for any $\E ^{\prime (\bullet)} \in \smash{\underrightarrow{LD}}  ^\mathrm{b} _{\Q, \mathrm{qc}}
( \smash{\widetilde{\D}} _{\fP ^{\prime }/\S }  ^{(\bullet)} (T '))$:
\begin{gather}\notag
f _{T,T',+} ^{ (\bullet)} ( \E ^{\prime (\bullet)} ):= 
\R f _* (
\smash{\widetilde{\D}} ^{(\bullet)} _{\fP  \leftarrow \fP ^{\prime }/\S } (T,T')
\smash{\widehat{\otimes}} ^\L _{\smash{\widetilde{\D}} ^{(\bullet)} _{\fP ^{\prime }/\S }  (T')}
\E ^{\prime (\bullet)}).
\end{gather}

\item The
direct image by $f$ with overconvergent singularities along $T$ and $T'$ is a functor of the form
$f _{T,T',+} \colon
D ^\mathrm{b} _\mathrm{coh} ( \smash{\D} ^\dag _{\fP ^{\prime }/\S }  (\hdag T') _{\Q} )
\to 
 D ^\mathrm{b} ( \smash{\D} ^\dag _{\fP /\S } (\hdag T) _{\Q} )$,
defined by setting,
for any $\E '\in D ^{\mathrm{b}} _{\mathrm{coh}} ( \D ^{\dag} _{\fP ^{\prime }/\S }  (\hdag T' ) _{\Q})$ :
\begin{equation}
\label{ftt'+}
f   _{T , T ', +}( \E '):=
 \R f_* (\D ^{\dag} _{\fP  \leftarrow \fP ^{\prime }/\S }  ( \hdag T , T ') _{\Q}
\otimes ^{\L} _{ \D ^{\dag} _{\fP ^{\prime }/\S }  (\hdag T ') _{\Q}} \E ').
\end{equation}

\item When $T '= f ^{-1} (T)$, we simply write respectively
$f  ^{ (\bullet)} _{T,+} $ and $f   _{T ,+}$. If moreover $T$ is empty, 
we write $f  ^{ (\bullet)} _{+} $ and $f   _{+}$. 
\end{enumerate}

\end{empt}

\begin{empt}
\label{coh-Qcoh}
With notation \ref{fct-qcoh2coh}, 
we have the isomorphism of functors
$\mathrm{Coh} _{T'} (f  ^{ (\bullet)}_{T , T ', +}) \riso f  ^{} _{T , T ', +}$ 
and
$\mathrm{Coh} _{T} ( f _{T',T} ^{ !(\bullet)}) \riso  f _{T',T} ^{ !}$
(this is checked similarly to \cite[4.3.2.2 and 4.3.7.1]{Beintro2}).

\end{empt}

We recall the following fundamental theorem.
\begin{thm}
[Noot-Huyghe]
\label{Noot-Huyghe-finitehomoldim}
The sheaf of rings
$\D ^{\dag} _{\fP  } (\hdag T ) _{\Q}$
is of finite ohomological dimension.
\end{thm}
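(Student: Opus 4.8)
The plan is to reduce the statement to the known finiteness of cohomological dimension for $\D ^{\dag} _{\fP}{}_{,\Q}$ (the constant-coefficient case, i.e. $T = \emptyset$), which is a theorem of Berthelot, together with the structure of $\D ^{\dag} _{\fP}(\hdag T) _{\Q}$ as an algebra over $\D ^{\dag} _{\fP}{}_{,\Q}$. First I would recall that, since the question is local on $\fP$, one may assume $\fP$ affine, so that by the affine comparison results (see \ref{eqcatindcoh}) coherent modules over the sheaf correspond to finitely presented modules over the ring of global sections $D ^{\dag} _{\fP}(\hdag T)_{\Q}$. The finiteness of cohomological dimension of the sheaf of rings then follows from the finiteness of the global homological dimension of this ring of global sections, combined with the vanishing of higher cohomology $H ^q(\fP, \E) = 0$ for $q \geq 1$ and $\E$ indcoherent (again \ref{eqcatindcoh}), which lets one compute $\Ext$ groups of coherent sheaves via $\Ext$ groups of the associated modules.

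Next I would invoke the key geometric input: the localisation isomorphism comparing $\D ^{\dag} _{\fP}(\hdag T) _{\Q}$ with a localisation of $\D ^{\dag} _{\fP}{}_{,\Q}$ along the complement of $T$. More precisely, Berthelot constructs $\D ^{\dag} _{\fP}(\hdag T) _{\Q}$ as $\widehat{\B} _{\fP}(T) _{\Q} \widehat{\otimes} _{\O _{\fP}} \D ^{\dag} _{\fP}{}_{,\Q}$, and one knows (this is the content behind the localisation functor $(\hdag T)$ studied in the previous section) that the natural map $\D ^{\dag} _{\fP}{}_{,\Q} \to \D ^{\dag} _{\fP}(\hdag T) _{\Q}$ behaves like a flat localisation in the relevant homological sense. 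Concretely, for any coherent $\D ^{\dag} _{\fP}(\hdag T) _{\Q}$-module $\M$, one has $\M \riso \D ^{\dag} _{\fP}(\hdag T) _{\Q} \otimes _{\D ^{\dag} _{\fP}{}_{,\Q}} \M$, and the functor $\D ^{\dag} _{\fP}(\hdag T) _{\Q} \otimes _{\D ^{\dag} _{\fP}{}_{,\Q}} -$ is exact. Hence for two coherent $\D ^{\dag} _{\fP}(\hdag T) _{\Q}$-modules $\M, \cN$ one gets $\Ext ^i _{\D ^{\dag} _{\fP}(\hdag T) _{\Q}}(\M, \cN)$ as a localisation (or base change) of $\Ext ^i _{\D ^{\dag} _{\fP}{}_{,\Q}}(\M, \cN)$, which vanishes for $i$ larger than the cohomological dimension of $\D ^{\dag} _{\fP}{}_{,\Q}$; this gives the desired uniform bound.

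An alternative and perhaps cleaner route, which I would actually prefer to carry out, is to argue directly on the level structure. By definition $\D ^{\dag} _{\fP}(\hdag T) _{\Q} = \underrightarrow{\lim} _m \widetilde{\D} ^{(m)} _{\fP}(T) _{\Q}$, and a coherent module admits, locally and up to lim-isomorphism, a presentation by a coherent $\widetilde{\D} ^{(m)} _{\fP}(T) _{\Q}$-module compatibly with the transition maps (this is \ref{defi-LDQ0coh} and \ref{cor-eq-cat-coh-m}). Each ring $\widetilde{\D} ^{(m)} _{\fP}(T) _{\Q}$ has finite cohomological dimension bounded independently of $m$: indeed $\D ^{(m)} _{P/S}$ is noetherian of finite global dimension (by \cite[3.3--3.4]{Be1} and the usual estimate via the order filtration, whose associated graded is a polynomial ring over $\O _P$), and $\widetilde{\B} ^{(m)} _{\fP}(T)$ is obtained from $\O _{\fP}$ by a flat-enough localisation of bounded Tor-dimension (Corollary \ref{rema-dim-coh-finie}, where the bound is explicitly stated to be independent of $m$). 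Passing to $p$-adic completions and then to $\Q$-coefficients preserves these bounds by the quasi-coherence formalism. Then one deduces the bound for the limit ring by a standard spectral sequence / passage-to-the-limit argument: a coherent $\D ^{\dag} _{\fP}(\hdag T) _{\Q}$-module resolved by a complex which at each finite level $m$ has length bounded by a constant $N$ independent of $m$ yields a resolution of length $\leq N$ after taking $\underrightarrow{\lim}$, using that $\underrightarrow{\lim}$ over the levels is exact and that $\D ^{\dag} _{\fP}(\hdag T) _{\Q} \otimes _{\widetilde{\D} ^{(m)} _{\fP}(T) _{\Q}} -$ is exact on coherent modules.

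The main obstacle I anticipate is the uniformity of the homological-dimension bound across levels $m$, and the commutation of $\Ext$ with the inductive limit over $m$. For the first point one must be careful that the ideal sheaf $\widehat{\B} ^{(m)} _{\fP}(T)$ genuinely contributes only a bounded amount to Tor-dimension uniformly in $m$ — but this is exactly what \ref{rema-dim-coh-finie} (via \ref{lem3-hdagDT}) provides, so the work is really in assembling these pieces rather than proving something new. For the second point, one uses that coherent modules are finitely presented (\ref{IndCoh}), so $\Hom$ out of them commutes with filtered colimits, and an induction on the length of a projective resolution at a sufficiently high fixed level propagates this to $\Ext$; combined with the fact (\ref{cor-eq-cat-coh-m}) that every coherent object over the limit ring descends to a coherent object over some $\widetilde{\D} ^{(m)} _{\fP}(T) _{\Q}$ with the expected base-change property, this closes the argument. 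In short, the theorem is a formal consequence of finiteness at each level together with the uniformity statements already established, and no genuinely new geometric input beyond Berthelot's $T = \emptyset$ case and \ref{rema-dim-coh-finie} is needed.
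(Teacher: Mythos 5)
The paper does not supply a proof of this theorem: it is attributed to Noot-Huyghe and the entire ``proof'' is a citation to \cite{huyghe_finitude_coho}, a substantial standalone article. There is therefore no paper proof to compare against, and neither of your two routes actually closes the gap.

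Your first route fails at its central step. You argue that $\D^\dag_{\fP,\Q} \to \D^\dag_\fP(\hdag T)_\Q$ ``behaves like a flat localisation'', so that $\Ext$-groups over the larger ring are obtained by base change from $\Ext$-groups over the smaller and inherit Berthelot's vanishing bound. Flatness of this map is indeed known (it is what makes the functor \ref{hdag-def-coh} exact), but the map is not a localisation, and for a general flat ring extension $A\to B$ one cannot bound the global dimension of $B$ by that of $A$. Worse, to apply the $T=\emptyset$ bound to $\Ext^*_{\D^\dag_{\fP,\Q}}(\M,\cN)$ you would need $\M$ to be $\D^\dag_{\fP,\Q}$-coherent (or perfect), which is false for a general coherent $\D^\dag_\fP(\hdag T)_\Q$-module; indeed the statement that $\O_\fP(\hdag T)_\Q$ itself is $\D^\dag_{\fP,\Q}$-coherent is the difficult Theorem \ref{coh-ss-div}, whose proof uses the dual functor of \ref{ntn-dualfunctor}, which in turn relies on the present theorem. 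Assuming any such coherence here would be circular.

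Your second route has the right skeleton --- uniform bound at each finite level $m$, then pass to the colimit using flatness of $\D^\dag_\fP(\hdag T)_\Q$ over $\widetilde{\D}^{(m)}_\fP(T)_\Q$ and finite presentation of coherent objects --- but the decisive step is unsupported. You assert that each $\widetilde{\D}^{(m)}_\fP(T)_\Q$ has finite global dimension bounded independently of $m$ and cite \ref{rema-dim-coh-finie} for this. That corollary bounds the Tor-dimension of the base-change functors $\widetilde{\B}_\fP^{(m')}(T)\widehat{\otimes}^\L_{\widetilde{\B}_\fP^{(m)}(D)}-$, which is a statement about the quasi-coherence formalism; it says nothing about the global dimension of the non-commutative ring $\widetilde{\D}^{(m)}_\fP(T)_\Q$. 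Establishing that uniform bound --- controlling how the $\widehat{\B}^{(m)}(T)$ factor, the level-$m$ divided-power structure, and the $p$-adic completion interact homologically --- is precisely the technical content of Noot-Huyghe's paper; the colimit argument you sketch afterwards is the easy half and cannot substitute for it.
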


\begin{proof}
This is proved in \cite{huyghe_finitude_coho}.
\end{proof}

\begin{ntn}
[Duality]
\label{ntn-dualfunctor}
\begin{enumerate}[(a)]
\item Let $\E \in D ^{\mathrm{b}} _{\mathrm{coh}}( \smash{\D} ^\dag _{\fP /\S } (\hdag T) _{\Q} )$.
The $\smash{\D} ^\dag _{\fP /\S } (\hdag T) _{\Q}$-linear dual of $\E$ is defined by setting
$$\DD _T (\E):= \R \mathcal{H} om _{ \smash{\D} ^\dag _{\fP /\S } (\hdag T) _{\Q} }
(\E,
 \smash{\D} ^\dag _{\fP /\S } (\hdag T) _{\Q} 
\otimes  _{\O _{\fP}} \omega _{\fP ^{ }/\S } ^{-1})) [d _{P}].$$
Following \ref{Noot-Huyghe-finitehomoldim}, 
we get 
$D ^{\mathrm{b}} _{\mathrm{coh}}( \smash{\D} ^\dag _{\fP /\S } (\hdag T) _{\Q} )
=
D ^{\mathrm{b}} _{\mathrm{parf}}( \smash{\D} ^\dag _{\fP /\S } (\hdag T) _{\Q} )$, 
where the right category is that of perfect bounded complexes of 
$ \smash{\D} ^\dag _{\fP /\S } (\hdag T) _{\Q} $-modules. 
This yields 
$ \DD _T (\E)
 \in 
 D ^{\mathrm{b}} _{\mathrm{coh}}( \smash{\D} ^\dag _{\fP /\S } (\hdag T) _{\Q} )$.
 Hence, by biduality, we get the equivalence of categories
$ \DD _T
\colon 
 D ^{\mathrm{b}} _{\mathrm{coh}}( \smash{\D} ^\dag _{\fP /\S } (\hdag T) _{\Q} )
 \cong
  D ^{\mathrm{b}} _{\mathrm{coh}}( \smash{\D} ^\dag _{\fP /\S } (\hdag T) _{\Q} )$.
  
\item   We denote by
$\DD ^{(\bullet)} _T
\colon 
\smash{\underrightarrow{LD}} ^{\mathrm{b}} _{\Q,\mathrm{coh}}
( \smash{\widetilde{\D}} _{\fP /\S } ^{(\bullet)}(T))
\to 
\smash{\underrightarrow{LD}} ^{\mathrm{b}} _{\Q,\mathrm{coh}}
( \smash{\widetilde{\D}} _{\fP /\S } ^{(\bullet)}(T))$
the equivalence of categories such that
$\mathrm{Coh} _{T} (\DD ^{(\bullet)} _T)
\riso 
\DD _T$.

\end{enumerate}

\end{ntn}

\subsection{Commutation of pullbacks with localization functors outside of a divisor}

We keep notation \ref{subsection3.1}.

\begin{lemm}
\label{lem-f!B}
Suppose $T':= f ^{-1} (T)$.
We have the canonical isomorphism 
$$\O _{P ' _i} \otimes  ^\L
_{f ^{-1} \O _{P _i}} f ^{-1} \B ^{(m)}  _{P _i} ( T) 
\riso 
\B ^{(m)}  _{P' _i} ( T') .$$
We have also the canonical isomorphism 
$f ^{!(\bullet)} (\widetilde{\B} ^{(\bullet)}  _{\fP } ( T) )
\riso 
\widetilde{\B} ^{(\bullet)}  _{\fP '} ( T')[d _{P'/P}]$
in
$\smash{\underrightarrow{LD}} ^\mathrm{b} _{\Q, \mathrm{qc}}
(\overset{^\mathrm{l}}{} \smash{\widetilde{\D}} _{\fP ^{\prime}/\S } ^{(\bullet)}( T'))$.

\end{lemm}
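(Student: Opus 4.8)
The statement has two parts. The second is a purely formal manipulation of transfer bimodules; the genuine content is the first one.

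For the isomorphism $\O_{P'_i}\otimes^{\L}_{f^{-1}\O_{P_i}} f^{-1}\B^{(m)}_{P_i}(T)\riso\B^{(m)}_{P'_i}(T')$ I would argue locally on $P_i$ and $P'_i$. Over an open where $T$ is cut out by one equation $t$, the divisor $T'=f^{-1}(T)$ is cut out by $f^{*}t$ (up to a unit and up to multiplicities, which play no role since $\B^{(m)}$ depends only on the support of the divisor). Berthelot's local construction of $\B^{(m)}_{P_i}(T)$ out of $t$ (\cite[4.2]{Be1}) is functorial in the pair $(\O_{P_i},t)$: carried out with $(\O_{P'_i},f^{*}t)$ in place of $(\O_{P_i},t)$ it produces $\B^{(m)}_{P'_i}(T')$ together with a canonical identification $\B^{(m)}_{P'_i}(T')=\O_{P'_i}\otimes_{f^{-1}\O_{P_i}}f^{-1}\B^{(m)}_{P_i}(T)$ of the underived tensor products. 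It then remains to see that the underived tensor product computes the derived one, i.e. $\mathrm{Tor}^{f^{-1}\O_{P_i}}_{q}(\O_{P'_i},f^{-1}\B^{(m)}_{P_i}(T))=0$ for $q>0$; this holds because $\B^{(m)}_{P_i}(T)$ is $\O_{P_i}$-flat, being locally a filtered union of invertible sub-$\O_{P_i}$-modules with injective transition maps (a local equation $t$ of $T$ is a nonzerodivisor in $\O_{P_i}$). Alternatively one may factor $f_i$ locally as a closed immersion followed by a smooth morphism and treat the two cases separately (flat base change for the smooth one, the local formula plus the above flatness for the immersion).

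For the second isomorphism I would unwind \ref{ntn-Lf!+*}: up to the shift $[d_{P'/P}]$, $f^{!(\bullet)}(\widetilde{\B}^{(\bullet)}_\fP(T))$ is the completed tensor product $\widetilde{\D}^{(\bullet)}_{\fP'\to\fP/\S}(T',T)\,\widehat{\otimes}^{\L}_{f^{-1}\widetilde{\D}^{(\bullet)}_{\fP/\S}(T)}\,f^{-1}\widetilde{\B}^{(\bullet)}_\fP(T)$, which at level $m$ is $\R\varprojlim_i$ of the reductions modulo $\pi^{i+1}$, namely $\widetilde{\D}^{(m)}_{P'_i\to P_i/S_i}(T',T)\otimes^{\L}_{f^{-1}\widetilde{\D}^{(m)}_{P_i/S_i}(T)}f^{-1}\widetilde{\B}^{(m)}_{P_i}(T)$ (all sheaves here are $\pi$-torsion free and $\pi$-adically complete, which identifies these reductions and lets one drop the derived symbol over $\V$). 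By the very definition of the transfer bimodule, $\widetilde{\D}^{(m)}_{P'_i\to P_i/S_i}(T',T)\riso\widetilde{\B}^{(m)}_{P'_i}(T')\otimes_{f^{-1}\widetilde{\B}^{(m)}_{P_i}(T)}f^{-1}\widetilde{\D}^{(m)}_{P_i/S_i}(T)$ as a right $f^{-1}\widetilde{\D}^{(m)}_{P_i/S_i}(T)$-module, and $f^{-1}\widetilde{\D}^{(m)}_{P_i/S_i}(T)$ is free, hence flat, as a left $f^{-1}\widetilde{\B}^{(m)}_{P_i}(T)$-module (basis: the divided-power monomials in the derivations). Associativity of the derived tensor product then gives $\widetilde{\D}^{(m)}_{P'_i\to P_i/S_i}(T',T)\otimes^{\L}_{f^{-1}\widetilde{\D}^{(m)}_{P_i/S_i}(T)}f^{-1}\widetilde{\B}^{(m)}_{P_i}(T)\riso\widetilde{\B}^{(m)}_{P'_i}(T')$, concentrated in degree $0$; reading this computation instead through the identification $\widetilde{\D}^{(m)}_{P'_i\to P_i/S_i}(T',T)\riso\O_{P'_i}\otimes^{\L}_{f^{-1}\O_{P_i}}f^{-1}\widetilde{\D}^{(m)}_{P_i/S_i}(T)$ is where the first isomorphism of the statement enters.

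It then remains to pass to the limit and reassemble. The transition maps of the tower $(\widetilde{\B}^{(m)}_{P'_i}(T'))_i$ are the reduction maps, hence surjective, so $\R^{1}\varprojlim$ vanishes and $\R\varprojlim_i\widetilde{\B}^{(m)}_{P'_i}(T')=\widehat{\B}^{(\lambda_0(m))}_{\fP'}(T')=\widetilde{\B}^{(m)}_{\fP'}(T')$; doing this for all levels $m$ and checking compatibility with the transition morphisms in the level (routine, exactly as in the case $T=\varnothing$) gives the asserted isomorphism in degree $-d_{P'/P}$. One finally checks that all the identifications above are $\widetilde{\D}^{(\bullet)}_{\fP'/\S}(T')$-linear — under them the left $\widetilde{\D}^{(\bullet)}_{\fP'/\S}(T')$-structure carried by the transfer bimodule goes over to the natural one on the sheaf of functions $\widetilde{\B}^{(\bullet)}_{\fP'}(T')$ — so that the isomorphism lives in $\smash{\underrightarrow{LD}}^{\mathrm{b}}_{\Q,\mathrm{qc}}({}^{l}\widetilde{\D}^{(\bullet)}_{\fP'/\S}(T'))$. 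The only real obstacle is the first isomorphism, which rests on Berthelot's explicit local construction of $\B^{(m)}$ and on the flatness of $\B^{(m)}_{P_i}(T)$ over $\O_{P_i}$; everything else is a formal chain of associativities and base-change identities, just as for the structure sheaf in the classical theory.
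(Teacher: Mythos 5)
Your proposal is correct and follows essentially the route that the paper's cited reference (\cite[5.2.1]{caro-stab-sys-ind-surcoh}) takes: local computation with Berthelot's explicit presentation of $\B^{(m)}$, flatness of $\B^{(m)}_{P_i}(T)$ over $\O_{P_i}$ (and the freeness of $\widetilde{\D}^{(m)}_{P_i}(T)$ over $\widetilde{\B}^{(m)}_{P_i}(T)$) to control the derived tensor products, identification of the transfer bimodule $\widetilde{\D}^{(m)}_{P'_i\to P_i}(T',T)\riso\widetilde{\B}^{(m)}_{P'_i}(T')\otimes_{f^{-1}\widetilde{\B}^{(m)}_{P_i}(T)}f^{-1}\widetilde{\D}^{(m)}_{P_i}(T)$, associativity, and passage to the inverse limit via vanishing of $\R^1\varprojlim$.

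The one imprecise point is your parenthetical justification of the flatness of $\B^{(m)}_{P_i}(T)$ over $\O_{P_i}$, ``being locally a filtered union of invertible sub-$\O_{P_i}$-modules with injective transition maps''. This does not describe $\B^{(m)}_{P_i}(T)\cong\O_{P_i}[X]/(t^{p^{m+1}}X-p)$ accurately as an $\O_{P_i}$-module (it is of infinite rank, not a union of rank-one submodules). The flatness is a genuine ingredient and should be invoked as Berthelot's result \cite[4.3.3]{Be1} (flatness of $\widehat{\B}^{(m)}_{\fP}(T)$ over $\O_{\fP}$, which passes to $\B^{(m)}_{P_i}(T)$ over $\O_{P_i}$ by base change along $\V\to\V/\pi^{i+1}$). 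Alternatively, for the first isomorphism alone one can avoid appealing to full flatness: the two-term free $\O_{P_i}$-resolution $\O_{P_i}[X]\xrightarrow{\cdot(t^{p^{m+1}}X-p)}\O_{P_i}[X]$ of $\B^{(m)}_{P_i}(T)$, after applying $\O_{P'_i}\otimes_{f^{-1}\O_{P_i}}f^{-1}(-)$, remains exact in negative degrees precisely because $f^{*}t$ is a nonzerodivisor in $\O_{P'_i}$ — which is exactly where the hypothesis that $T'=f^{-1}(T)$ be a divisor of $P'$ intervenes. Either way, the rest of the argument (the manipulation of the transfer bimodule and the limit/completion bookkeeping) is the formal part and you have it right.
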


\begin{proof}
This is checked similarly to  \cite[5.2.1]{caro-stab-sys-ind-surcoh}.
\end{proof}

\begin{empt}
\label{oub-div-opcoh}
\begin{enumerate}
\item   Let  $\E ^{\prime (\bullet)} \in \smash{\underset{^{\longrightarrow}}{LD}} ^{\mathrm{b}} _{\Q ,\mathrm{qc}}
( \smash{\widetilde{\D}} _{\fP ^{\prime  }} ^{(\bullet)}(T'))$.
Similarly to \cite[1.1.9 ]{caro_courbe-nouveau}, we check that we have the canonical isomorphism
$oub _{T} \circ f ^{(\bullet)} _{T,T',+} (\E ^{\prime (\bullet)} )\riso f ^{(\bullet)} _+  \circ oub _{T'} (\E ^{\prime (\bullet)})$.
Hence, it is harmless to write by abuse of notation
$f ^{(\bullet)} _{+}$ instead of  $f ^{(\bullet)} _{T, T',+}$.

Using the remark 
\ref{rema-fct-qcoh2coh}
this yields that the functors 
$\mathrm{Coh} _{T'} (f ^{(\bullet)} _{T,T',+}) $
and
$\mathrm{Coh}  (f ^{(\bullet)} _{+}) $ 
are isomorphic over
$D ^\mathrm{b} _\mathrm{coh} ( \smash{\D} ^\dag _{\fP ^{\prime  },\Q} ) 
\cap D ^\mathrm{b} _\mathrm{coh} ( \smash{\D} ^\dag _{\fP ^{\prime  }} (\hdag T') _{\Q} )$.
Since 
we have the canonical isomorphisms of functors 
$\mathrm{Coh} _{T'} (f  ^{ (\bullet)}_{T , T ', +}) \riso f  _{T , T ', +}$ 
and
$\mathrm{Coh} (f  ^{ (\bullet)}_{+}) \riso f _+$ (\ref{coh-Qcoh}), 
then it is harmless to write 
$f _+$ instead of  $f   _{T, T',+}$
and we get the functor
$f _+
\colon 
D ^\mathrm{b} _\mathrm{coh} ( \smash{\D} ^\dag _{\fP ^{\prime  },\Q} ) 
\cap D ^\mathrm{b} _\mathrm{coh} ( \smash{\D} ^\dag _{\fP ^{\prime  }} (\hdag T') _{\Q} )
\to 
D ^\mathrm{b} _\mathrm{coh} ( \smash{\D} ^\dag _{\fP ,\Q} ) 
\cap D ^\mathrm{b} _\mathrm{coh} ( \smash{\D} ^\dag _{\fP} (\hdag T) _{\Q} )$.

\item 
\label{oub-div-opcohb)}
Let $D$ and $D'$ be some divisors of respectively $P$ and $P'$ such that 
$f ( P '\setminus D' ) \subset P \setminus D$, $D \subset T$, and
$D' \subset T'$.
Let  $\E ^{(\bullet)} \in \smash{\underset{^{\longrightarrow}}{LD}} ^{\mathrm{b}} _{\Q ,\mathrm{qc}}
( \smash{\widetilde{\D}} _{\fP /\S } ^{(\bullet)} (D))$.
Similarly to \cite[1.1.9]{caro_courbe-nouveau}, we check easily the isomorphism
$(\hdag T',D') \circ f ^{!(\bullet)} _{D',D} (\E^{(\bullet)} )
\riso 
f ^{!(\bullet)} _{T',T} \circ (\hdag T, D) (\E^{(\bullet)}) $.

\end{enumerate}
\end{empt}

\begin{empt}
Let
$\FF ^{(\bullet)}
,\E ^{(\bullet)} \in \smash{\underrightarrow{LD}}  ^{\mathrm{b}} _{\Q ,\mathrm{qc}}
(\smash{\widetilde{\D}} _{\fP /\S } ^{(\bullet)}(T))$.
We easily check  (see \cite[2.1.9.1]{caro-stab-prod-tens})  the following isomorphism of
$\smash{\underrightarrow{LD}} ^{\mathrm{b}} _{\Q ,\mathrm{qc}} ( \smash{\widetilde{\D}} _{\fP ^{\prime }/\S } ^{(\bullet)}(T'))$
\begin{equation}
\label{f!T'Totimes}
 f ^{!(\bullet)} _{T',T} ( \FF ^{(\bullet)} )
 \smash{\widehat{\otimes}} ^\L
_{\widetilde{\B} ^{(\bullet)}  _{\fP '} ( T ') } 
 f ^{!(\bullet)} _{T',T} ( \E ^{(\bullet)})
  \riso 
f ^{!(\bullet)} _{T',T}
   \left (
   \FF ^{(\bullet)}
   \smash{\widehat{\otimes}} ^\L
_{\widetilde{\B} ^{(\bullet)}  _{\fP} ( T) } 
   \E ^{(\bullet)}
\right ) [d _{P'/P}].
\end{equation}

\end{empt}

\begin{prop}
\label{f!commoub}
Suppose  $T ' = f ^{-1} (T)$.
\begin{enumerate}
\item Let  $\E ^{(\bullet)} \in \smash{\underset{^{\longrightarrow}}{LD}} ^{\mathrm{b}} _{\Q ,\mathrm{qc}}
( \smash{\widehat{\D}} _{\fP /\S } ^{(\bullet)})$.
We have the canonical isomorphism 
$$f ^{ !(\bullet)} \circ  oub _T \circ (\hdag T) (\E^{(\bullet)}) 
\riso 
oub _{T'} \circ (\hdag T ') \circ f ^{ !(\bullet)}   (\E^{(\bullet)}) ,$$ 
which we can simply write 
$f ^{ !(\bullet)}  \circ (\hdag T) (\E^{(\bullet)}) 
\riso 
 (\hdag T ') \circ f ^{ !(\bullet)}   (\E^{(\bullet)}) $. 
\item Let  $\E ^{(\bullet)} \in \smash{\underset{^{\longrightarrow}}{LD}} ^{\mathrm{b}} _{\Q ,\mathrm{qc}}
( \smash{\widetilde{\D}} _{\fP /\S } ^{(\bullet)} (T))$.
We have the canonical isomorphism
$$oub _{T'} \circ f ^{ !(\bullet)} _{T} (\E^{(\bullet)} )\riso f ^{!(\bullet)} \circ oub _T (\E^{(\bullet)}).$$
Hence, it is harmless to write by abuse of notation
$f ^{!(\bullet)} $ instead of $f ^{!(\bullet)} _T$. 

\end{enumerate}

\end{prop}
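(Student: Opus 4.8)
The plan is to prove the first assertion by combining the projection–type formula for $f^{!(\bullet)}$ with the computation of $f^{!(\bullet)}$ on the overconvergence sheaf, and then to deduce the second assertion from the first one by a purely formal diagram chase.

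For the first assertion I would begin by rewriting the source of the comparison arrow. By the diagram \ref{ODdivcohe} taken with $D=\emptyset$, the endofunctor $\mathrm{oub}_{T}\circ(\hdag T)$ of $\smash{\underrightarrow{LD}}^{\mathrm b}_{\Q,\mathrm{qc}}(\smash{\widehat{\D}} _{\fP /\S } ^{(\bullet)})$ is computed by $\widetilde{\B}^{(\bullet)}_{\fP}(T)\,\smash{\widehat{\otimes}}^\L_{\O^{(\bullet)}_{\fP}}-$, the $\smash{\widehat{\D}} _{\fP /\S } ^{(\bullet)}$-module structure being the tensor-product one; note that $\widetilde{\B}^{(\bullet)}_{\fP}(T)$, regarded over $\smash{\widehat{\D}} _{\fP /\S } ^{(\bullet)}$, does belong to $\smash{\underrightarrow{LD}}^{\mathrm b}_{\Q,\mathrm{qc}}(\smash{\widehat{\D}} _{\fP /\S } ^{(\bullet)})$. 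Likewise, on $\fP'$ the functor $\mathrm{oub}_{T'}\circ(\hdag T')$ is $\widetilde{\B}^{(\bullet)}_{\fP'}(T')\,\smash{\widehat{\otimes}}^\L_{\O^{(\bullet)}_{\fP'}}-$. Now apply the tensor-compatibility isomorphism \ref{f!T'Totimes} with empty divisors (equivalently: $\L f^{*(\bullet)}$ is monoidal for the completed derived tensor product over the structure sheaves) to $\widetilde{\B}^{(\bullet)}_{\fP}(T)\,\smash{\widehat{\otimes}}^\L_{\O^{(\bullet)}_{\fP}}\E^{(\bullet)}$, which gives
\[
f^{!(\bullet)}\bigl(\widetilde{\B}^{(\bullet)}_{\fP}(T)\,\smash{\widehat{\otimes}}^\L_{\O^{(\bullet)}_{\fP}}\E^{(\bullet)}\bigr)
\riso
\bigl(f^{!(\bullet)}(\widetilde{\B}^{(\bullet)}_{\fP}(T))\,\smash{\widehat{\otimes}}^\L_{\O^{(\bullet)}_{\fP'}}f^{!(\bullet)}(\E^{(\bullet)})\bigr)[-d_{P'/P}].
\]
Then invoke the second part of Lemma \ref{lem-f!B}, namely $f^{!(\bullet)}(\widetilde{\B}^{(\bullet)}_{\fP}(T))\riso\widetilde{\B}^{(\bullet)}_{\fP'}(T')[d_{P'/P}]$ (this is exactly where the hypothesis $T'=f^{-1}(T)$ enters), so that the shift $[d_{P'/P}]$ cancels the shift $[-d_{P'/P}]$ and the right-hand side becomes $\widetilde{\B}^{(\bullet)}_{\fP'}(T')\,\smash{\widehat{\otimes}}^\L_{\O^{(\bullet)}_{\fP'}}f^{!(\bullet)}(\E^{(\bullet)})$, which by \ref{ODdivcohe} on $\fP'$ is $\mathrm{oub}_{T'}\circ(\hdag T')(f^{!(\bullet)}(\E^{(\bullet)}))$. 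Concatenating these identifications yields the isomorphism; it remains only to check that the composite coincides with the canonical comparison morphism, which amounts to unwinding the naturality of \ref{f!T'Totimes} and of Lemma \ref{lem-f!B} (both built from the structural map $\smash{\widehat{\D}} ^{(\bullet)}\to\widetilde{\D}^{(\bullet)}(T)$), and to recording that all complexes in sight stay in $\smash{\underrightarrow{LD}}^{\mathrm b}_{\Q,\mathrm{qc}}$ with cohomology bounded independently of the level, so that the $p$-adically completed derived tensor products behave well (\ref{rema-dim-coh-finie}, \ref{def-otimes-coh1&2qc}).

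For the second assertion, fix $\E^{(\bullet)}\in\smash{\underrightarrow{LD}}^{\mathrm b}_{\Q,\mathrm{qc}}(\widetilde{\D} _{\fP /\S } ^{(\bullet)}(T))$ and set $\G^{(\bullet)}:=\mathrm{oub}_{T}(\E^{(\bullet)})$. By \ref{oub-pl-fid-iso1} the canonical morphism $(\hdag T)(\G^{(\bullet)})\to\E^{(\bullet)}$ is an isomorphism, and by \ref{oub-pl-fid-iso2} the canonical morphism $\G^{(\bullet)}\to\mathrm{oub}_{T}\circ(\hdag T)(\G^{(\bullet)})$ is an isomorphism. One then has
\[
\mathrm{oub}_{T'}\circ f^{!(\bullet)}_{T}(\E^{(\bullet)})
\riso
\mathrm{oub}_{T'}\circ f^{!(\bullet)}_{T}\circ(\hdag T)(\G^{(\bullet)})
\riso
\mathrm{oub}_{T'}\circ(\hdag T')\circ f^{!(\bullet)}(\G^{(\bullet)})
\riso
f^{!(\bullet)}\circ\mathrm{oub}_{T}\circ(\hdag T)(\G^{(\bullet)})
\riso
f^{!(\bullet)}(\G^{(\bullet)}),
\]
where the second isomorphism is the commutation $f^{!(\bullet)}_{T}\circ(\hdag T)\riso(\hdag T')\circ f^{!(\bullet)}$ of \ref{oub-div-opcohb)}, the third is the first assertion applied to $\G^{(\bullet)}$, and the last uses $\mathrm{oub}_{T}\circ(\hdag T)(\G^{(\bullet)})\riso\G^{(\bullet)}$. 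Since $f^{!(\bullet)}(\G^{(\bullet)})=f^{!(\bullet)}\circ\mathrm{oub}_{T}(\E^{(\bullet)})$, this is the desired isomorphism, and it is then harmless to write $f^{!(\bullet)}$ for $f^{!(\bullet)}_{T}$.

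The main difficulty I anticipate is not conceptual but one of bookkeeping: keeping track at each step of which of the rings $\smash{\widehat{\D}} _{\fP /\S } ^{(\bullet)}$ or $\widetilde{\D} _{\fP /\S } ^{(\bullet)}(T)$ (and their analogues on $\fP'$) the various complexes are being considered over, and verifying that the isomorphism assembled above is genuinely the canonical comparison morphism, since only the latter is compatible with the further structures used later in the text. As every ingredient runs parallel to the corresponding statements in \cite{caro-stab-sys-ind-surcoh}, \cite{caro_courbe-nouveau} and \cite{caro-stab-prod-tens}, in practice one may also simply transcribe those proofs.
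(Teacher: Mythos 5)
Your proposal is correct and follows essentially the same route as the paper: the first assertion is obtained by combining the tensor-compatibility isomorphism \ref{f!T'Totimes} with the computation $f^{!(\bullet)}(\widetilde{\B}^{(\bullet)}_{\fP}(T))\riso\widetilde{\B}^{(\bullet)}_{\fP'}(T')[d_{P'/P}]$ of \ref{lem-f!B}, and the second assertion is then a formal consequence of the first via \ref{oub-pl-fid-iso1} and \ref{oub-div-opcoh}.\ref{oub-div-opcohb)}. Your additional invocation of \ref{oub-pl-fid-iso2} is harmless but not needed, since the paper's shorter chain of reductions gets by with \ref{oub-pl-fid-iso1} alone.
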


\begin{proof}
Using \ref{f!T'Totimes},\ref{lem-f!B}, for any $\E ^{(\bullet)} \in \smash{\underset{^{\longrightarrow}}{LD}} ^{\mathrm{b}} _{\Q ,\mathrm{qc}}
( \smash{\widehat{\D}} _{\fP /\S } ^{(\bullet)})$, 
we get the isomorphism
\begin{equation}
\notag
f ^{ !(\bullet)} \circ  oub _T \circ (\hdag T) (\E^{(\bullet)}) 
=
f ^{!(\bullet)}
   \left (
   \widetilde{\B} ^{(\bullet)}  _{\fP} ( T) 
   \smash{\widehat{\otimes}} ^\L
_{\O ^{(\bullet)}  _{\fP} } 
   \E ^{(\bullet)}
\right ) 
\riso
   \widetilde{\B} ^{(\bullet)}  _{\fP'} ( T') 
   \smash{\widehat{\otimes}} ^\L
_{\O ^{(\bullet)}  _{\fP'} } 
f ^{!(\bullet)} 
(  \E ^{(\bullet)})
=
oub _{T'} \circ (\hdag T ') \circ f ^{ !(\bullet)}   (\E^{(\bullet)}) .
\end{equation}
By using 
\ref{oub-pl-fid-iso1} and \ref{oub-div-opcoh}.\ref{oub-div-opcohb)},
we check the second part from the first one.
\end{proof}

\begin{rem}
\label{rem-f!TornoT}
With notation \ref{f!commoub}, 
using the remark 
\ref{rema-fct-qcoh2coh}
we check that the functors 
$\mathrm{Coh} _{T} (f ^{!(\bullet)} _T) $
and
$\mathrm{Coh}  (f ^{!(\bullet)} ) $ 
are isomorphic over
$D ^\mathrm{b} _\mathrm{coh} ( \smash{\D} ^\dag _{\fP,\Q} ) 
\cap D ^\mathrm{b} _\mathrm{coh} ( \smash{\D} ^\dag _{\fP } (\hdag T) _{\Q} )$.
Since 
we have the canonical isomorphisms of functors 
$\mathrm{Coh} _{T} (f ^{!(\bullet)} _T)  \riso f  _{T} ^!$ 
and
$\mathrm{Coh} _{T} (f ^{!(\bullet)} )  \riso f ^!$ (\ref{coh-Qcoh}), 
then it is harmless to write 
$f ^!$ instead of  $f   _{T} ^!$. 

\end{rem}

\subsection{Projection formula : commutation of pushforwards with localization functors outside of a divisor}
Let $T $ be a noetherian $\Z _{(p)}$-scheme of finite Krull dimension.
Let $u \colon Y  \to X $ be a morphism of quasi-compact smooth $T$-schemes.  
Remark that since $Y$ is noetherian, then 
$u$ is quasi-separated and quasi-compact. 
 Let $\B _X$ be an $\O _X$-algebra endowed with a compatible structure of 
left $\D ^{(m)} _{X/ T }$-module. 
Put 
$\widetilde{\D} ^{(m)} _{X/ T}
:= 
\B _X \otimes _{\O _X} \D ^{(m)} _{X/ T}$,
$\B _Y := u ^* (\B _X)$,
 $\widetilde{\D} ^{(m)} _{Y/ T}
:= 
\B _Y \otimes _{\O _Y} \D ^{(m)} _{Y/ T}$,
$\D _{Y\to X/T } ^{(m)}:=
u ^* \D _{X/T } ^{(m)}$,
$\smash{\widetilde{\D}} _{Y\to X/T} ^{(m)}:=
\B ^{(m)} _{Y}   \otimes _{\O _{Y}} 
\D _{Y\to X/T } ^{(m)}$.

\begin{empt}
Following \cite[3.6.5]{Tohoku}, since
$X$ is noetherian of finite Krull dimension $d _X$, then for $i> d _X$, 
for every sheaf $\E$ of abelian groups
we have 
$H ^i (X, \E)=0$. 
Then, following \cite[12.2.1]{EGAIII1},
we get 
that 
$R ^i u _* (\E) = 0$ for $i> d _X$
and every sheaf $\E$ of abelian groups. 
In particular, by definition (see \cite[12.1.1]{EGAIII1}),
the functor $u _*$ has finite (bounded by $d _X$) cohomological dimension on $\mathrm{Mod} (u ^{-1}\O _X)$, the category 
of $u ^{-1}\O _X$-modules, or on $\mathrm{Mod} (u ^{-1}\widetilde{\D} ^{(m)} _{X / T })$.

Let $P$ be the subset of objects of $\mathrm{Mod} (u ^{-1}\widetilde{\D} ^{(m)} _{X / T })$ which are $u _*$-acyclic. 
Remark that $P$ contains injective $u ^{-1}\widetilde{\D} ^{(m)} _{X / T }$-modules. 
Using the cohomological dimension finiteness of $u _*$, if
\begin{equation}
\G ^0 \to \G ^1 \to \cdots 
\to \G ^{d _X}
\to 
\E
\to 0
\end{equation}
is an exact sequence of $\mathrm{Mod} (u ^{-1}\O _X)$,
and $\G ^0,\dots, \G ^{d _X} \in P$, 
then $\E \in P$.
Using \cite[Lemma I.4.6.2]{HaRD},
this implies that for any complex
$\E \in K (u ^{-1}\widetilde{\D} ^{(m)} _{X / T })$ 
(resp.  $\E \in K ^{-} (u ^{-1}\widetilde{\D} ^{(m)} _{X / T })$,
resp. $\E \in K ^{+} (u ^{-1}\widetilde{\D} ^{(m)} _{X / T })$,
resp. $\E \in K ^{\mathrm{b}} (u ^{-1}\widetilde{\D} ^{(m)} _{X / T })$) 
there exists a quasi-isomorphism
$\E \riso \I$ where $\I \in  K (u ^{-1}\widetilde{\D} ^{(m)} _{X / T })$ 
(resp.  $\I \in K ^{-} (u ^{-1}\widetilde{\D} ^{(m)} _{X / T })$,
resp. $\I \in K ^{+} (u ^{-1}\widetilde{\D} ^{(m)} _{X / T })$,
resp. $\I \in K ^{\mathrm{b}} (u ^{-1}\widetilde{\D} ^{(m)} _{X / T })$) 
is a complex whose modules belong to $P$.
We get the functor
$\R u _* \colon D (u ^{-1}\widetilde{\D} ^{(m)} _{X / T }) \to D (\widetilde{\D} ^{(m)} _{X / T }) $
(resp. 
$\R u _* \colon D ^- ( u ^{-1}\widetilde{\D} ^{(m)} _{X / T }) \to D ^-( \widetilde{\D} ^{(m)} _{X / T }) $,
resp. 
$\R u _* \colon D ^+ (u ^{-1}\widetilde{\D} ^{(m)} _{X / T }) \to D ^+(\widetilde{\D} ^{(m)} _{X / T }) $,
resp. 
$\R u _* \colon D ^\mathrm{b} (u ^{-1}\widetilde{\D} ^{(m)} _{X / T }) \to D ^\mathrm{b}(\widetilde{\D} ^{(m)} _{X / T }) $)
which is computed by taking a resolution with objects in $P$.

Moreover, following \cite[II.2.1]{HaRD}
$\R u_*$ takes
$D ^{?} _{\mathrm{qc}} ( \O _X) $
into 
$D ^{?} _{\mathrm{qc}} ( \O _Y) $
with $? \in \{\emptyset, +,- ,\mathrm{b}\}$.
\end{empt}

\begin{prop}
\label{Prop1.2.21-caro-surc}
Suppose one of the following conditions: 
\begin{enumerate}[(a)]
\item 
Let $\FF 
\in 
D _{\mathrm{qc},\mathrm{tdf}}
( \overset{^\mathrm{r}}{} \widetilde{\D} ^{(m)} _{X / T })$, 
and 
$\G 
\in 
D 
( \overset{^\mathrm{l}}{} u ^{-1} \widetilde{\D} ^{(m)} _{X / T })$.
\item
Let $\FF 
\in 
D ^- _{\mathrm{qc}}
( \overset{^\mathrm{r}}{} \widetilde{\D} ^{(m)} _{X / T })$, 
and 
$\G 
\in 
D ^-
( \overset{^\mathrm{l}}{} u ^{-1} \widetilde{\D} ^{(m)} _{X / T })$.
\end{enumerate}
Then we have the following isomorphism
\begin{gather}
\label{Prop1.2.21-caro-surc-iso2}
\FF \otimes ^{\L} _{\widetilde{\D} ^{(m)} _{X / T }} \R u _* ( \G)
\riso 
\R u _* \left (u ^{-1}  \FF \otimes ^{\L} _{u ^{-1} \widetilde{\D} ^{(m)} _{X / T }} \G \right).
\end{gather}
Inverting $\mathrm{r}$ and $\mathrm{l}$ in the hypotheses, 
we get the isomorphism 
\begin{gather}
\label{Prop1.2.21-caro-surc-iso2bis}
\R u _* ( \G) \otimes ^{\L} _{\widetilde{\D} ^{(m)} _{X / T }} \FF
\riso 
\R u _* \left (\G  \otimes ^{\L} _{u ^{-1} \widetilde{\D} ^{(m)} _{X / T }} u ^{-1}  \FF\right).
\end{gather}

\end{prop}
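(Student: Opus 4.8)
The plan is to write down the obvious comparison morphism, reduce to the affine situation, and then observe that after choosing suitable resolutions the morphism becomes a term-by-term identity; no deep input beyond the finiteness of the cohomological dimension of $u_*$ recalled just before the statement is needed.

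\textbf{First I would construct and analyse the canonical morphism.} Pick a resolution $\cL \riso \FF$ by a bounded-above complex of \emph{free} right $\widetilde{\D}^{(m)}_{X/T}$-modules (such a resolution exists, and in both cases $\FF$ is bounded above — in case (a) because finite Tor-dimension forces bounded cohomology — provided $X$ is affine, which we may assume since the assertion is local on $X$; here one uses that on an affine $X$ every quasi-coherent $\widetilde{\D}^{(m)}_{X/T}$-module is a quotient of some $(\widetilde{\D}^{(m)}_{X/T})^{(I)}$). Being a bounded-above complex of flat modules, $\cL$ is K-flat, and so is $u^{-1}\cL$, a bounded-above complex of free $u^{-1}\widetilde{\D}^{(m)}_{X/T}$-modules, since $u^{-1}$ is exact and preserves freeness. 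Next, choose $\G \riso \I$ with $\I$ a complex of $u_*$-acyclic left $u^{-1}\widetilde{\D}^{(m)}_{X/T}$-modules computing $\R u_*$ termwise, which exists by the discussion preceding the statement (finite cohomological dimension of $u_*$, as in \cite[Lemma I.4.6.2]{HaRD}). The underived projection maps $\cL^a \otimes_{\widetilde{\D}^{(m)}_{X/T}} u_*(\I^b) \to u_*\!\bigl(u^{-1}\cL^a \otimes_{u^{-1}\widetilde{\D}^{(m)}_{X/T}} \I^b\bigr)$ then assemble, by passing to total complexes, into the morphism \ref{Prop1.2.21-caro-surc-iso2}; note that restriction to affine opens is legitimate because $\R u_*$ commutes with restriction to an open of $X$ ($u$ being quasi-compact and quasi-separated, $Y$ noetherian) and $\otimes^\L$ is local.

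\textbf{Then I would check it is an isomorphism term by term.} Because $\cL^a$ is free, $\cL^a \otimes_{\widetilde{\D}^{(m)}_{X/T}} u_*(\I^b) = u_*(\I^b)^{(I_a)}$, while $u^{-1}\cL^a \otimes_{u^{-1}\widetilde{\D}^{(m)}_{X/T}} \I^b = (\I^b)^{(I_a)}$; since $u_*$ and $R^iu_*$ commute with arbitrary direct sums (filtered colimits, using finite cohomological dimension of $u_*$ together with $Y$ noetherian, cf. \cite[VI.5.1-2]{sga4-1}), the complex $u^{-1}\cL \otimes_{u^{-1}\widetilde{\D}^{(m)}_{X/T}} \I$ has $u_*$-acyclic terms, its $\R u_*$ is computed termwise, and $\R u_*\bigl(u^{-1}\cL^a \otimes \I^b\bigr) = u_*(\I^b)^{(I_a)}$. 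Under these identifications the comparison map in bidegree $(a,b)$ is the projection map for the free module of rank one, hence the identity; therefore \ref{Prop1.2.21-caro-surc-iso2} is an isomorphism. The symmetric isomorphism \ref{Prop1.2.21-caro-surc-iso2bis} follows by running the same argument with the roles of left and right modules exchanged.

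\textbf{The main obstacle} is the bookkeeping with unbounded complexes in case (a): when $\G \in D$ is unbounded, each degree of the total complex $\cL \otimes \I$ (resp.\ $u^{-1}\cL \otimes \I$) is an \emph{infinite} direct sum, so one must genuinely use that $\cL$ is K-flat (to know the total complex computes the derived tensor product) and that $R^iu_*$ commutes with infinite direct sums (to keep the terms $u_*$-acyclic and have $\R u_*$ computed termwise); this is exactly where finite cohomological dimension of $u_*$ is indispensable, whereas in case (b) everything already reduces to finite direct sums. Alternatively one could present the reduction to $\FF = \widetilde{\D}^{(m)}_{X/T}$ via a way-out argument (\cite[I.7.1]{HaRD}), both functors being way-out right in case (b) and, thanks to the Tor-finiteness of $\FF$, way-out in both directions in case (a). The remaining points — independence of the comparison morphism from the chosen resolutions, compatibility of $u^{-1}$ with flatness and with the tensor product of bimodules, and the fact that the projection map for a free rank-one module is the identity — are routine.
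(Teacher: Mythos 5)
Your proof is correct and follows the same overall strategy as the paper: construct the comparison morphism from a flat (or free, on affines) resolution of $\FF$ together with a $u_*$-acyclic resolution of $\G$, then verify it is an isomorphism. The only difference is in the verification step: you carry out an explicit term-by-term computation after reducing to the affine case and choosing a free resolution, handling the unbounded total complexes in case (a) via K-flatness of the bounded-above complex of frees and the commutation of $R^i u_*$ with arbitrary direct sums on the noetherian $Y$; the paper instead invokes Hartshorne's way-out lemma \cite[I.7.1 (ii)--(iv)]{HaRD} (together with \cite[VI.5.1]{sga4-1}) to reduce directly to $\FF = \widetilde{\D}^{(m)}_{X/T}$, which is the alternative route you mention at the end. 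Both arguments are valid; the paper's is terser, yours is more self-contained at the cost of the bookkeeping you already flag.
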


\begin{proof}
Taking a left resolution of $\FF$ by flat $\widetilde{\D} ^{(m)} _{X / T }$-modules,
and a right resolution of $\G$ by $u ^{-1} \widetilde{\D} ^{(m)} _{X / T }$-modules which are
$u _*$-acyclic, we construct the 
morphism \ref{Prop1.2.21-caro-surc-iso2}.
To check that this is an isomorphism,
using 
\cite[I.7.1 (ii), (iii) and (iv)]{HaRD}
and 
\cite[VI.5.1]{sga4-1},
we reduce to the case where 
$\FF
=
\widetilde{\D} ^{(m)} _{X / T }$, which is obvious.
\end{proof}

\begin{cor}
\label{cor-Prop1.2.21-caro-surc}
Let $*,** \in \{ \mathrm{l},\mathrm{r}\}$ such that both are not equal to $\mathrm{r}$.
Suppose one of the following conditions: 
\begin{enumerate}[(a)]
\item 
Let $\FF 
\in 
D _{\mathrm{qc},\mathrm{tdf}}
( \overset{^\mathrm{*}}{} \widetilde{\D} ^{(m)} _{X / T })$, 
and 
$\G 
\in 
D 
( \overset{^\mathrm{**}}{} u ^{-1} \widetilde{\D} ^{(m)} _{X / T })$.
\item
Let $\FF 
\in 
D ^- _{\mathrm{qc}}
( \overset{^\mathrm{*}}{} \widetilde{\D} ^{(m)} _{X / T })$, 
and 
$\G 
\in 
D ^-
( \overset{^\mathrm{**}}{} u ^{-1} \widetilde{\D} ^{(m)} _{X / T })$.
\end{enumerate}
Then we have the following isomorphism
\begin{gather}
\label{Prop1.2.21-caro-surc-iso1}
\FF \otimes ^{\L} _{\B _X} \R u _* ( \G)
\riso 
\R u _* \left (u ^{-1}  \FF \otimes ^{\L} _{u ^{-1} \B _X} \G \right).
\end{gather}

\end{cor}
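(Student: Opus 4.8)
The plan is to deduce \ref{Prop1.2.21-caro-surc-iso1} from Proposition \ref{Prop1.2.21-caro-surc} by a change-of-rings argument, the point being that $\widetilde{\D} ^{(m)} _{X / T} = \B _X \otimes _{\O _X} \D ^{(m)} _{X/T}$ is flat --- in fact locally free --- both as a left and as a right $\B _X$-module (for the left structure this is immediate from the local freeness of $\D ^{(m)} _{X/T}$ over $\O _X$; for the right structure one uses the filtration by order, as in \cite{Be1}). In particular, restriction of scalars along $\B _X \to \widetilde{\D} ^{(m)} _{X / T}$ sends $\FF$ into an object of the same type ($D _{\mathrm{qc},\mathrm{tdf}}$, resp. $D ^- _{\mathrm{qc}}$) over $\B _X$, and conversely $\widetilde{\D} ^{(m)} _{X / T} \otimes _{\B _X} -$ and $- \otimes _{\B _X} \widetilde{\D} ^{(m)} _{X / T}$ are exact functors preserving quasi-coherence over $\O _X$ and finite Tor-dimension, so they carry such complexes over $\B _X$ back into complexes over $\widetilde{\D} ^{(m)} _{X / T}$ satisfying the hypotheses (a), resp. (b), of \ref{Prop1.2.21-caro-surc}.

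First I would treat the cases $* = \mathrm{r}$, $** = \mathrm{l}$ and $* = \mathrm{l}$, $** = \mathrm{l}$. Viewing $\widetilde{\D} ^{(m)} _{X / T}$ as a $(\B _X, \widetilde{\D} ^{(m)} _{X / T})$-bimodule, set $\FF' := \FF \otimes _{\B _X} \widetilde{\D} ^{(m)} _{X / T}$, a right $\widetilde{\D} ^{(m)} _{X / T}$-module lying in $D _{\mathrm{qc},\mathrm{tdf}}$ (resp. $D ^- _{\mathrm{qc}}$); note that only the $\B _X$-module structure of $\FF$ is used to form $\FF'$. By associativity of the derived tensor product one has $\FF' \otimes ^{\L} _{\widetilde{\D} ^{(m)} _{X / T}} \R u _* (\G) \cong \FF \otimes ^{\L} _{\B _X} \R u _* (\G)$ and $u ^{-1} \FF' \otimes ^{\L} _{u ^{-1} \widetilde{\D} ^{(m)} _{X / T}} \G \cong u ^{-1} \FF \otimes ^{\L} _{u ^{-1} \B _X} \G$, so \ref{Prop1.2.21-caro-surc-iso2} applied to $\FF'$ and $\G$ is exactly \ref{Prop1.2.21-caro-surc-iso1}. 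For $* = \mathrm{l}$, $** = \mathrm{r}$ one instead sets $\FF' := \widetilde{\D} ^{(m)} _{X / T} \otimes _{\B _X} \FF$, a left $\widetilde{\D} ^{(m)} _{X / T}$-module, uses the commutativity of $\B _X$ to rewrite $\FF \otimes ^{\L} _{\B _X} \R u _* (\G)$ as $\R u _* (\G) \otimes ^{\L} _{\widetilde{\D} ^{(m)} _{X / T}} \FF'$, and applies \ref{Prop1.2.21-caro-surc-iso2bis}. (Alternatively, in all three cases the proof of \ref{Prop1.2.21-caro-surc} itself goes through verbatim with $\B _X$ in place of $\widetilde{\D} ^{(m)} _{X / T}$, since $u _*$ has cohomological dimension $\leq d _X$ on $u ^{-1} \B _X$-modules and the way-out reduction of \cite[I.7.1]{HaRD} then brings one to the trivial case $\FF = \B _X$.)

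It remains to observe that the hypothesis that $*$ and $**$ are not both equal to $\mathrm{r}$ is exactly what makes $u ^{-1} \FF \otimes _{u ^{-1} \B _X} \G$ and $\FF \otimes _{\B _X} \R u _* (\G)$ carry a canonical $\widetilde{\D} ^{(m)} _{X / T}$-module structure (left when $(*, **) = (\mathrm{l},\mathrm{l})$, right when $(*, **) \in \{(\mathrm{r},\mathrm{l}), (\mathrm{l},\mathrm{r})\}$), via the Leibniz rule for $\D ^{(m)} _{X/T}$ acting on $\B _X$-tensor products; one checks that the isomorphism produced above respects this structure, which is routine because every map involved --- the associativity and unit constraints and the morphism of \ref{Prop1.2.21-caro-surc} --- is the canonical one. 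The only step that needs genuine care is the two-sided $\B _X$-flatness of $\widetilde{\D} ^{(m)} _{X / T}$ together with the bookkeeping on quasi-coherence and Tor-amplitude required to land precisely in the hypotheses of \ref{Prop1.2.21-caro-surc}; the rest is formal.
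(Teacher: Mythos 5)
Your proposal is correct and takes essentially the same approach as the paper: both reduce to Proposition \ref{Prop1.2.21-caro-surc} by replacing $\FF$ with $\FF \otimes^{\L}_{\B_X} \widetilde{\D}^{(m)}_{X/T}$ (or its mirror image) and cancelling via associativity of the derived tensor product. The paper's proof is terser (only writing out the representative case $** = \mathrm{l}$, and leaving the bookkeeping on quasi-coherence and Tor-amplitude implicit), whereas you spell out all three cases and the needed two-sided $\B_X$-flatness of $\widetilde{\D}^{(m)}_{X/T}$.
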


\begin{proof}
For instance, if $** = \mathrm{l}$, we get 
\begin{gather}
\notag
\FF \otimes ^{\L} _{\B _X} \R u _* ( \G)
\riso 
(\FF \otimes ^{\L} _{\B _X} 
\widetilde{\D} ^{(m)} _{X / T })
 \otimes ^{\L} _{\widetilde{\D} ^{(m)} _{X / T }} 
\R u _* ( \G)
\\
\notag
\underset{\ref{Prop1.2.21-caro-surc-iso2}}{\riso}
\R u _* \left (u ^{-1} (\FF \otimes ^{\L} _{\B _X} 
\widetilde{\D} ^{(m)} _{X / T }) \otimes ^{\L} _{u ^{-1} \widetilde{\D} ^{(m)} _{X / T }} \G \right)
\riso 
\R u _* \left (u ^{-1}  \FF \otimes ^{\L} _{u ^{-1} \B _X} \G \right)
\end{gather}
\end{proof}

\begin{ntn}
For $\E
\in D ^-
( \overset{^\mathrm{l}}{} \widetilde{\D} ^{(m)} _{X / T })$,
we set 
$\L \widetilde{u}  ^* (\E) 
:= 
\widetilde{\D} ^{(m)} _{Y\to X / T } 
\otimes _{u ^{-1}\widetilde{\D} ^{(m)} _{X / T } }
^{\bbL}
(\E) $.
For $\M \in D ^- 
( \overset{^\mathrm{r}}{} \widetilde{\D} ^{(m)} _{Y / T })$,
we set 
$\widetilde{u} _+ ^{(m)} (\cM )
:= 
 \R u _* \left ( \cM 
\otimes ^{\L} _{\widetilde{\D} ^{(m)} _{Y / T }}
\widetilde{\D} ^{(m)} _{Y  \to X / T }
\right ).$

\end{ntn}

\begin{lemm}
\label{u^*otimes}
For $\E$ and $\FF$ two objects of 
$D ^-
( \overset{^\mathrm{l}}{} \widetilde{\D} ^{(m)} _{X / T })$,
we have the isomorphism of $D ^-
( \overset{^\mathrm{l}}{} \widetilde{\D} ^{(m)} _{Y / T })$
\begin{equation}
\label{u^*otimes-iso}
\L \widetilde{u}  ^* (\E) 
\otimes ^\L _{\B _Y}
\L  \widetilde{u}  ^* (\FF) 
\riso 
\L \widetilde{u}  ^* (\E 
\otimes ^\L _{\B _X}
\FF) .
\end{equation}
\end{lemm}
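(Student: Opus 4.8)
The plan is to reduce \ref{u^*otimes-iso} to an identity of \emph{ordinary} (underived) tensor products of bounded-above complexes of modules over the two commutative sheaves of rings $u ^{-1}\B _X$ and $\B _Y$, the only homological input being the flatness of $\widetilde{\D} ^{(m)} _{X/T}$ over $\B _X$.

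First I would record the elementary identity $\widetilde{\D} ^{(m)} _{Y\to X/T} \riso \B _Y \otimes _{u ^{-1}\B _X} u ^{-1}\widetilde{\D} ^{(m)} _{X/T}$ of $(\widetilde{\D} ^{(m)} _{Y/T}, u ^{-1}\widetilde{\D} ^{(m)} _{X/T})$-bimodules for the canonical structures: unwinding the definitions, both members are equal to $\B _Y \otimes _{u ^{-1}\O _X} u ^{-1}\D ^{(m)} _{X/T}$, using $\D ^{(m)} _{Y\to X/T} = u ^* \D ^{(m)} _{X/T}$ and $\B _Y = u ^*\B _X$. Since $\D ^{(m)} _{X/T}$ is a locally free $\O _X$-module, $\widetilde{\D} ^{(m)} _{X/T} = \B _X \otimes _{\O _X}\D ^{(m)} _{X/T}$ is a locally free, in particular flat, left $\B _X$-module, so $u ^{-1}\widetilde{\D} ^{(m)} _{X/T}$ is a flat left $u ^{-1}\B _X$-module. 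Resolving $u ^{-1}\cG$ by flat $u ^{-1}\widetilde{\D} ^{(m)} _{X/T}$-modules and applying the identity above, I obtain for every $\cG \in D ^- ({}^{l}\widetilde{\D} ^{(m)} _{X/T})$ a functorial, $\widetilde{\D} ^{(m)} _{Y/T}$-linear isomorphism $\L \widetilde u ^* (\cG) \riso \B _Y \otimes ^{\L} _{u ^{-1}\B _X} u ^{-1}\cG$ (here one uses that a flat $u^{-1}\widetilde{\D}^{(m)}_{X/T}$-module is flat over $u^{-1}\B_X$).

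Next I would carry out the computation. Choose bounded-above resolutions $\cP \riso \E$ and $\cQ \riso \FF$ by flat (say free) $\widetilde{\D} ^{(m)} _{X/T}$-modules. By the flatness just mentioned, $\cP$ and $\cQ$ are complexes of flat $\B _X$-modules, so $\cP \otimes _{\B _X}\cQ$ is a complex of flat $\B _X$-modules representing $\E \otimes ^{\L} _{\B _X}\FF$, and $\widetilde u ^*\cP = \B _Y \otimes _{u ^{-1}\B _X} u ^{-1}\cP$ is a complex of flat $\B _Y$-modules representing $\L \widetilde u ^*\E$ (likewise for $\cQ$). Hence
\[
\L \widetilde u ^* (\E) \otimes ^{\L} _{\B _Y} \L \widetilde u ^* (\FF) \riso \widetilde u ^*\cP \otimes _{\B _Y} \widetilde u ^*\cQ = (\B _Y \otimes _{u ^{-1}\B _X} u ^{-1}\cP) \otimes _{\B _Y} (\B _Y \otimes _{u ^{-1}\B _X} u ^{-1}\cQ).
\]
Using associativity and commutativity of the tensor product of complexes over the commutative ring map $u ^{-1}\B _X \to \B _Y$, together with the exactness of $u ^{-1}$ and its compatibility with tensor products, the right-hand side is canonically isomorphic to $\B _Y \otimes _{u ^{-1}\B _X} u ^{-1}(\cP \otimes _{\B _X}\cQ) = \widetilde u ^* (\cP \otimes _{\B _X}\cQ)$. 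Finally, since $\cP \otimes _{\B _X}\cQ$ is a complex of flat $\B _X$-modules representing $\E \otimes ^{\L} _{\B _X}\FF$, the isomorphism of the previous paragraph identifies $\widetilde u ^* (\cP \otimes _{\B _X}\cQ)$ with $\B _Y \otimes ^{\L} _{u ^{-1}\B _X} u ^{-1}(\E \otimes ^{\L} _{\B _X}\FF) \riso \L \widetilde u ^* (\E \otimes ^{\L} _{\B _X}\FF)$, which gives \ref{u^*otimes-iso}.

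I expect the main point to check — such as it is — to be the $\widetilde{\D} ^{(m)} _{Y/T}$-linearity of all these identifications, i.e. the compatibility with the \emph{diagonal} $\widetilde{\D} ^{(m)} _{Y/T}$-action on the $\B _Y$-tensor product. This is the classical fact that for left modules over the commutative $\D$-algebras $\B _X, \B _Y$ the pullback $\L \widetilde u ^*$ is symmetric monoidal for $\otimes _{\B}$, the $\D$-action on a $\B$-tensor product being given by the Leibniz rule; concretely, the bimodule isomorphism of the first step, the action on $\widetilde u ^*$, and the diagonal action are all assembled from $u ^*$ of the $\D ^{(m)} _{X/T}$-action and the Leibniz rule on $Y$, which are compatible. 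All the homological content is confined to the flatness recorded in the first step, so this bookkeeping is routine.
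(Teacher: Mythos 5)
Your proof is correct, and since the paper declares this lemma ``Left to the reader'' it offers nothing to compare against; the argument you give — the bimodule identification $\widetilde{\D}^{(m)}_{Y\to X/T} \riso \B_Y \otimes_{u^{-1}\B_X} u^{-1}\widetilde{\D}^{(m)}_{X/T}$, flatness of $\widetilde{\D}^{(m)}_{X/T}$ over $\B_X$, and flat resolutions, with the standard Leibniz-rule check for $\widetilde{\D}^{(m)}_{Y/T}$-linearity — is exactly the computation the authors must have had in mind.
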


\begin{proof}
Left to the reader. 
\end{proof}

\begin{prop}
\label{prop-u+otimes-u!}
For $\M \in D ^- 
( \overset{^\mathrm{r}}{} \widetilde{\D} ^{(m)} _{Y / T })$ and
$\E \in D ^- _{\mathrm{qc}}
( \overset{^\mathrm{l}}{} \widetilde{\D} ^{(m)} _{X / T })$,
we have the canonical isomorphism
\begin{equation}
\widetilde{u} _+ ^{(m)}
\left (\M 
\otimes ^{\L} _{\B _Y}
\L \widetilde{u}  ^* (\E) 
\right )
\riso
\widetilde{u} _+ ^{(m)}  (\M )
\otimes ^{\L} _{\B _X}
\E.
\end{equation}

\end{prop}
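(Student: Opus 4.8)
The plan is to unwind the definitions, extract a purely local ``transfer--bimodule'' projection formula, and then conclude with the projection formula for $\R u _*$ proved in \ref{cor-Prop1.2.21-caro-surc}. By definition of $\widetilde u _+ ^{(m)}$ we have
\[
\widetilde u _+ ^{(m)}\bigl(\M \otimes ^\L _{\B _Y}\L \widetilde u ^* (\E)\bigr)
=
\R u _*\Bigl(\bigl(\M \otimes ^\L _{\B _Y}\L \widetilde u ^* (\E)\bigr)\otimes ^\L _{\widetilde{\D} ^{(m)} _{Y / T}}\widetilde{\D} ^{(m)} _{Y\to X / T}\Bigr).
\]
The heart of the matter is a canonical isomorphism of complexes of right $u ^{-1}\widetilde{\D} ^{(m)} _{X / T}$-modules
\[
\bigl(\M \otimes ^\L _{\B _Y}\L \widetilde u ^* (\E)\bigr)\otimes ^\L _{\widetilde{\D} ^{(m)} _{Y / T}}\widetilde{\D} ^{(m)} _{Y\to X / T}
\;\riso\;
u ^{-1}\E \otimes ^\L _{u ^{-1}\B _X}\bigl(\M \otimes ^\L _{\widetilde{\D} ^{(m)} _{Y / T}}\widetilde{\D} ^{(m)} _{Y\to X / T}\bigr).
\]
Granting this, I would apply \ref{cor-Prop1.2.21-caro-surc} (condition (b)) with $\FF := \E \in D ^- _{\mathrm{qc}}(\overset{^\mathrm{l}}{}\widetilde{\D} ^{(m)} _{X / T})$ and $\G := \M \otimes ^\L _{\widetilde{\D} ^{(m)} _{Y / T}}\widetilde{\D} ^{(m)} _{Y\to X / T}\in D ^-(\overset{^\mathrm{r}}{}u ^{-1}\widetilde{\D} ^{(m)} _{X / T})$, which gives $\R u _*(u ^{-1}\E \otimes ^\L _{u ^{-1}\B _X}\G)\riso \E \otimes ^\L _{\B _X}\R u _*(\G) = \E \otimes ^\L _{\B _X}\widetilde u _+ ^{(m)}(\M)$; since $\B _X$ is commutative, this last term is $\widetilde u _+ ^{(m)}(\M)\otimes ^\L _{\B _X}\E$, as desired.

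It thus remains to produce the displayed ``bimodule'' isomorphism. First I would construct the canonical morphism: using that $\widetilde{\D} ^{(m)} _{Y\to X / T}=\B _Y \otimes _{u ^{-1}\B _X}u ^{-1}\widetilde{\D} ^{(m)} _{X / T}$ and that $u ^{-1}\widetilde{\D} ^{(m)} _{X / T}$ is locally free over $u ^{-1}\B _X$, the underlying $\B _Y$-complex of $\L \widetilde u ^* (\E)$ identifies with $\B _Y \otimes ^\L _{u ^{-1}\B _X}u ^{-1}\E$, so $\M \otimes ^\L _{\B _Y}\L \widetilde u ^* (\E)\cong \M \otimes ^\L _{u ^{-1}\B _X}u ^{-1}\E$ as right $\widetilde{\D} ^{(m)} _{Y / T}$-modules (with the residual diagonal $\widetilde{\D}$-structure); associativity of the derived tensor products then yields the arrow. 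To check it is an isomorphism I would use a way-out argument (\cite[I.7.1]{HaRD}): both sides are way-out right functors of $\M \in D ^-(\overset{^\mathrm{r}}{}\widetilde{\D} ^{(m)} _{Y / T})$, so via a bounded-above resolution by free right $\widetilde{\D} ^{(m)} _{Y / T}$-modules one reduces to $\M =\widetilde{\D} ^{(m)} _{Y / T}$, a case one verifies directly from the explicit descriptions of $\widetilde{\D} ^{(m)} _{Y\to X / T}$ and $\L \widetilde u ^*$. (This step is formally parallel to the $f ^!$-version \ref{f!T'Totimes}, and can be proved as in \cite[2.1.9.1]{caro-stab-prod-tens} with $f _+$ in place of $f ^!$.)

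The main obstacle is precisely this middle step: everything around it is formal (associativity of $\otimes ^\L$ and the already established \ref{cor-Prop1.2.21-caro-surc} and \ref{u^*otimes}), but identifying the two sides for $\M =\widetilde{\D} ^{(m)} _{Y / T}$ requires keeping careful track of the several $\widetilde{\D} ^{(m)} _{Y / T}$-module structures (in particular the ``diagonal'' one on $\M \otimes ^\L _{\B _Y}\L \widetilde u ^* (\E)$) and of the fact that tensoring such a module with the transfer bimodule $\widetilde{\D} ^{(m)} _{Y\to X / T}$ untwists it. Once the bookkeeping is set up, the reduction to $\M$ free and the final application of the $\R u _*$-projection formula are routine.
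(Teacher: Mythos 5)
Your plan reproduces the paper's proof in all essentials: unfold $\widetilde u _+ ^{(m)}$, establish an isomorphism of complexes of right $u^{-1}\widetilde{\D}^{(m)}_{X/T}$-modules for the argument of $\R u_*$ (your displayed transfer-bimodule identity is a rearrangement of the paper's), and conclude with the $\R u_*$-projection formula already established in \ref{Prop1.2.21-caro-surc}/\ref{cor-Prop1.2.21-caro-surc}. The only difference is how that transfer-bimodule identity is verified --- the paper writes out an explicit chain, inserting $\M\otimes^{\L}_{\B_Y}N\riso\M\otimes^{\L}_{\widetilde{\D}^{(m)}_{Y/T}}(\widetilde{\D}^{(m)}_{Y/T}\otimes^{\L}_{\B_Y}N)$ and then invoking the side-switching isomorphism of \cite[1.3.1]{Be2} together with Lemma \ref{u^*otimes}, rather than the way-out reduction to $\M$ free that you sketch --- but this is a cosmetic choice of presentation, and you have correctly flagged the untwisting of the diagonal $\widetilde{\D}^{(m)}_{Y/T}$-structure (not mere associativity) as the step where the bookkeeping lives.
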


\begin{proof}
We have the isomorphisms of right 
$u ^{-1}\widetilde{\D} ^{(m)} _{X / T }$-modules:
\begin{gather}
\notag
\left (\M 
\otimes ^{\L} _{\B _Y}
\widetilde{u}  ^* (\E) 
\right )
\otimes ^{\L} _{\widetilde{\D} ^{(m)} _{Y / T }}
\widetilde{\D} ^{(m)} _{Y  \to X / T }
\riso
\left (\M 
\otimes ^{\L} _{\widetilde{\D} ^{(m)} _{Y / T }}
(\widetilde{\D} ^{(m)} _{Y / T } 
\otimes ^{\L} _{\B _Y}
\widetilde{u}  ^* (\E) )
\right )
\otimes ^{\L} _{\widetilde{\D} ^{(m)} _{Y / T }}
\widetilde{\D} ^{(m)} _{Y  \to X / T }
\\
\notag
\underset{\cite[1.3.1]{Be2}}{\riso} 
\left (\M 
\otimes ^{\L} _{\widetilde{\D} ^{(m)} _{Y / T }}
(\widetilde{u}  ^* (\E) 
\otimes ^{\L} _{\B _Y}
\widetilde{\D} ^{(m)} _{Y / T } )
\right )
\otimes ^{\L} _{\widetilde{\D} ^{(m)} _{Y / T }}
\widetilde{\D} ^{(m)} _{Y  \to X / T }
\riso 
\M 
\otimes ^{\L} _{\widetilde{\D} ^{(m)} _{Y / T }}
\left ( \widetilde{u}  ^* (\E) 
\otimes ^{\L} _{\B _Y}
\widetilde{\D} ^{(m)} _{Y  \to X / T }
\right ).
\end{gather}
We have the isomorphism of complexes of
$(\widetilde{\D} ^{(m)} _{Y  / T }, 
u ^{-1}\widetilde{\D} ^{(m)} _{X / T })$-bimodules
\begin{gather}
\notag
\widetilde{u}  ^* (\E) 
\otimes ^{\L} _{\B _Y}
\widetilde{\D} ^{(m)} _{Y  \to X / T }
\underset{\ref{u^*otimes-iso}}{\riso} 
\widetilde{u}  ^* (\E 
\otimes ^{\L} _{\B _X}
\widetilde{\D} ^{(m)} _{X / T })
\underset{\cite[1.3.1]{Be2}}{\liso} 
\widetilde{u}  ^* (
\widetilde{\D} ^{(m)} _{X / T }
\otimes ^{\L} _{\B _X}
\E )
\\
\notag
\riso 
\widetilde{\D} ^{(m)} _{Y  \to X / T }
\otimes ^{\L} _{u ^{-1}\widetilde{\D} ^{(m)} _{X / T }}
u ^{-1} (
\widetilde{\D} ^{(m)} _{X / T }
\otimes ^{\L} _{\B _X}
\E ).
\end{gather}
Hence, 
$\left (\M 
\otimes ^{\L} _{\B _Y}
\widetilde{u}  ^* (\E) 
\right )
\otimes ^{\L} _{\widetilde{\D} ^{(m)} _{Y / T }}
\widetilde{\D} ^{(m)} _{Y  \to X / T }
\riso
\M 
\otimes ^{\L} _{\widetilde{\D} ^{(m)} _{Y / T }}
\widetilde{\D} ^{(m)} _{Y  \to X / T }
\otimes ^{\L} _{u ^{-1}\widetilde{\D} ^{(m)} _{X / T }}
u ^{-1} (
\widetilde{\D} ^{(m)} _{X / T }
\otimes ^{\L} _{\B _X}
\E )$.
By applying the functor $\R u _* $ to this latter isomorphism we get the first one:
\begin{gather}
\notag
\widetilde{u} _+ ^{(m)}
\left (\M 
\otimes ^{\L} _{\B _Y}
\widetilde{u}  ^* (\E) 
\right )
\riso 
\R u _* 
\left (
\left (\M 
\otimes ^{\L} _{\widetilde{\D} ^{(m)} _{Y / T }}
\widetilde{\D} ^{(m)} _{Y  \to X / T }
\right )
\otimes ^{\L} _{u ^{-1}\widetilde{\D} ^{(m)} _{X / T }}
u ^{-1} (
\widetilde{\D} ^{(m)} _{X / T }
\otimes ^{\L} _{\B _X}
\E )\right )
\\
\notag
\underset{\ref{Prop1.2.21-caro-surc-iso2bis}}{\liso} 
\R u _* \left (\M 
\otimes ^{\L} _{\widetilde{\D} ^{(m)} _{Y / T }}
\widetilde{\D} ^{(m)} _{Y  \to X / T }
\right )
\otimes ^{\L} _{\widetilde{\D} ^{(m)} _{X / T }}
(
\widetilde{\D} ^{(m)} _{X / T }
\otimes ^{\L} _{\B _X}
\E )
\riso
\widetilde{u} _+ ^{(m)}  (\M )
\otimes ^{\L} _{\B _X}
\E.
\end{gather}
\end{proof}

Let  $f \colon \fP ^{\prime } \to \fP $ be a morphism of smooth formal schemes over $\S $,
$T$ and $T'$ be some divisors of respectively $P$ and $P'$ such that 
$f ( P '\setminus T' ) \subset P \setminus T$.
We finish this subsection by giving some applications of the projection formula.
\begin{prop}
\label{surcoh2.1.4}
Let $\E ^{ (\bullet)}
\in  
\smash{\underrightarrow{LD}} ^\mathrm{b} _{\Q, \mathrm{qc}}
(\overset{^\mathrm{l}}{} \smash{\widetilde{\D}} _{\fP ^{ }/\S } ^{(\bullet)} (T))$,
and 
$\E ^{\prime (\bullet)}
\in  \smash{\underrightarrow{LD}} ^\mathrm{b} _{\Q, \mathrm{qc}}
(\overset{^\mathrm{l}}{} \smash{\widetilde{\D}} _{\fP ^{\prime }/\S } ^{(\bullet)}(T'))$. 
We have the following isomorphism of $\smash{\underrightarrow{LD}} ^\mathrm{b} _{\Q, \mathrm{qc}}
(\overset{^\mathrm{l}}{} \smash{\widetilde{\D}} _{\fP ^{ }/\S } ^{(\bullet)} (T))$ 
\begin{equation}
\label{surcoh2.1.4-iso}
f _{T,T',+} ^{(\bullet)} ( \E ^{\prime (\bullet)} )
\smash{\widehat{\otimes}}^\L 
_{\widetilde{\B} ^{(\bullet)}  _{\fP} ( T) } 
\E ^{ (\bullet)} [d _{P'/P}]
\riso 
f _{T,T',+} ^{(\bullet)} 
\left ( 
\E ^{\prime (\bullet)} 
\smash{\widehat{\otimes}}^\L 
_{\widetilde{\B} ^{(\bullet)}  _{\fP'} ( T') } 
f _{T',T} ^{!(\bullet)} (\E ^{ (\bullet)} )
\right ) .
\end{equation}

\end{prop}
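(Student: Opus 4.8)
The plan is to deduce \ref{surcoh2.1.4-iso}, an isomorphism in $\underrightarrow{LD}^{\mathrm{b}}_{\Q,\mathrm{qc}}(\widetilde{\D}^{(\bullet)}_{\fP/\S}(T))$, from the abstract projection formula of Proposition \ref{prop-u+otimes-u!}, applied level by level and modulo $\pi^{i+1}$. First I would note that the three operations entering the formula — $f^{(\bullet)}_{T,T',+}$, $f^{!(\bullet)}_{T',T}$ and $-\,\widehat{\otimes}^{\L}-$ — are all defined termwise in the level $m$, each term being a derived inverse limit $\R\varprojlim_i$ of the corresponding construction over $P_i$ and $P'_i$. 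Since the objects involved are quasi-coherent with Tor-amplitude bounded uniformly in $m$ (Corollary \ref{rema-dim-coh-finie}) and $\R f_*$ has finite cohomological dimension, $\R\varprojlim_i$ interchanges with $\otimes^{\L}$ and with $\R f_*$, exactly as in the proof of Proposition \ref{Prop1.2.21-caro-surc}. It therefore suffices to construct, for each $m$ and each $i$ and compatibly with the transition morphisms in $m$, a functorial isomorphism in $D^{\mathrm{b}}_{\mathrm{qc}}(\widetilde{\D}^{(m)}_{P_i/S_i}(T))$, and then to check that the morphism it induces in $\underrightarrow{LD}_{\Q}$ is an isomorphism — which, being local on $\fP$ by \ref{thick-subcat}, may be verified after shrinking $\fP$.

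Next, at finite level with $u:=f_i\colon P'_i\to P_i$, I would recognize the desired identity as the projection formula of Proposition \ref{prop-u+otimes-u!}. The hypothesis $f^{-1}(T)\subseteq T'$ furnishes a canonical morphism $u^{-1}\widetilde{\B}^{(m)}_{P_i}(T)\to\widetilde{\B}^{(m)}_{P'_i}(T')$, making $\widetilde{\B}^{(m)}_{P'_i}(T')$ a $u^{*}\widetilde{\B}^{(m)}_{P_i}(T)$-algebra, and under it $\widetilde{\D}^{(m)}_{P'_i/S_i}(T')$, $\widetilde{\D}^{(m)}_{P'_i\to P_i}(T',T)$ and $\widetilde{\D}^{(m)}_{P_i\leftarrow P'_i}(T,T')$ are precisely the transfer bimodules appearing in Proposition \ref{prop-u+otimes-u!} with $\B_X=\widetilde{\B}^{(m)}_{P_i}(T)$ (when $T'=f^{-1}(T)$ one has $\widetilde{\B}^{(m)}_{P'_i}(T')=u^{*}\widetilde{\B}^{(m)}_{P_i}(T)$ by Lemma \ref{lem-f!B}, and for general $T'$ one runs the same computation as in that proof with the $u^{*}\B_X$-algebra $\widetilde{\B}^{(m)}_{P'_i}(T')$ in place of $u^{*}\B_X$, Lemma \ref{u^*otimes} using only the compatible structures). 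I would then pass from left to right modules by the side-changing twist $\E^{\prime(\bullet)}\mapsto\omega_{\fP'}\otimes\E^{\prime(\bullet)}$ — which is built into the definition of the $\leftarrow$ transfer bimodule, so that $f^{(\bullet)}_{T,+}(\E^{\prime(\bullet)})=\omega_{\fP}^{-1}\otimes\widetilde{u}^{(m)}_+(\omega_{\fP'}\otimes\E^{\prime(\bullet)})$ — identify $\L f^{*(\bullet)}_T=f^{!(\bullet)}_T[-d_{P'/P}]$ with $\L\widetilde{u}^{*}$ in the notation of \ref{prop-u+otimes-u!}, and use Lemma \ref{u^*otimes} for the compatibility of $\L\widetilde{u}^{*}$ with $\widehat{\otimes}_{\widetilde{\B}}$ (recorded, before localization, in \ref{f!T'Totimes}). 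Proposition \ref{prop-u+otimes-u!} then yields, after untwisting by $\omega^{-1}$ and reinserting the shift $[d_{P'/P}]$, the isomorphism \ref{surcoh2.1.4-iso} at the level $(m,i)$.

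Finally, functoriality of all the flat base changes and bimodule identifications used makes these isomorphisms compatible with the transition maps in $m$ and with $\R\varprojlim_i$, so they assemble into the sought isomorphism of $\underrightarrow{LD}^{\mathrm{b}}_{\Q,\mathrm{qc}}(\widetilde{\D}^{(\bullet)}_{\fP/\S}(T))$; localizing it is checked levelwise. I expect the main obstacle to lie not in any single idea — the real content, Proposition \ref{prop-u+otimes-u!}, is already available — but in the bookkeeping of the first paragraph: controlling the interchange of the derived inverse limits $\R\varprojlim_i$ with $\otimes^{\L}$ and $\R f_*$, and verifying that the finite-level isomorphisms promote to a genuine morphism in the localized category $\underrightarrow{LD}_{\Q}$ which is still an isomorphism there; a secondary (but routine) point is to check that Proposition \ref{prop-u+otimes-u!} indeed applies when $\B_Y$ is merely a $u^{*}\B_X$-algebra rather than equal to $u^{*}\B_X$, which is the situation caused by a divisor $T'$ strictly larger than $f^{-1}(T)$.
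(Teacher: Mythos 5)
Your proof is correct in substance and hinges on the same key ingredient — Proposition \ref{prop-u+otimes-u!} — but it takes a genuinely different route through the divisor bookkeeping. The paper first invokes the full faithfulness of $\mathrm{oub}_T$ (Proposition \ref{oub-pl-fid}.\ref{oub-pl-fid-iso3}) together with the compatibilities \ref{oubTDDotimes}, \ref{f!commoub} and \ref{oub-div-opcoh} to strip off $T$ and $T'$ entirely, reducing at once to the case of empty divisors; Proposition \ref{prop-u+otimes-u!} is then applied only in the simplest form $\B_X=\O_X$. You instead keep the divisors in place and apply \ref{prop-u+otimes-u!} directly with $\B_X=\widetilde{\B}^{(m)}_{P_i}(T)$, which forces you — since $\widetilde{\B}^{(m)}_{P'_i}(T')$ is only a $u^{*}\widetilde{\B}^{(m)}_{P_i}(T)$-algebra and not equal to it when $T'\supsetneq f^{-1}(T)$ — to re-run the proofs of Lemma \ref{u^*otimes} and Proposition \ref{prop-u+otimes-u!} with a $u^{*}\B_X$-algebra in place of $u^{*}\B_X$. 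You flag this as a routine point, and it is: the only facts used there are associativity and base change for tensor products over $u^{*}\B_X$, together with $\widetilde{\D}^{(m)}_{Y\to X}\riso\L\widetilde{u}^{*}(\widetilde{\D}^{(m)}_X)$, all of which go through with a flat (or Tor-independent, which is enough here by Lemma \ref{lem1-hdagDT}) $u^{*}\B_X$-algebra. The paper's choice buys the luxury of never leaving the well-documented case; yours avoids the forgetful-functor reductions at the cost of a mild extension of the projection formula that would have to be written out. One small misattribution worth fixing: the interchange of $\R\varprojlim_i$ with $\otimes^{\L}$ and $\R f_*$ that you need in the first paragraph is not contained in the proof of Proposition \ref{Prop1.2.21-caro-surc}, which is a purely scheme-level statement; it is rather the standard quasi-coherence machinery of \cite[3.2.3]{Beintro2} (finite uniform Tor-amplitude as in Corollary \ref{rema-dim-coh-finie}, finite cohomological dimension of $f_*$) that justifies working termwise and with the $\pi$-adic reductions — the same implicit step that the paper also leaves unstated when it says that the empty-divisor case ``is a consequence of \ref{prop-u+otimes-u!}''.
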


\begin{proof}
Since the functor $\mathrm{oub} _{T}$ is fully faithful
(see \ref{oub-pl-fid}.\ref{oub-pl-fid-iso3}),
we reduce to check the existence of such isomorphism  
in 
$\smash{\underrightarrow{LD}} ^\mathrm{b} _{\Q, \mathrm{qc}}
(\overset{^\mathrm{l}}{} \smash{\widetilde{\D}} _{\fP ^{ }/\S } ^{(\bullet)})$.
Using \ref{oubTDDotimes} and \ref{oub-div-opcoh}, we get the isomorphism 
$$\mathrm{oub} _{T'} \left ( f _{T,T',+} ^{(\bullet)} ( \E ^{\prime (\bullet)} )
\smash{\widehat{\otimes}}^\L 
_{\widetilde{\B} ^{(\bullet)}  _{\fP} ( T) } 
\E ^{ (\bullet)} 
\right ) 
\riso 
f _{+} ^{(\bullet)} (\mathrm{oub} _{T}(  \E ^{\prime (\bullet)} ))
\smash{\widehat{\otimes}}^\L 
_{\O ^{(\bullet)}  _{\fP} }  
\mathrm{oub} _{T} (\E ^{ (\bullet)}).$$
Using \ref{oubTDDotimes}, \ref{f!commoub} and \ref{oub-div-opcoh}, we get the isomorphism 
$$
\mathrm{oub} _{T'}  \circ 
f _{T,T',+} ^{(\bullet)} 
\left ( \E ^{\prime (\bullet)} 
\smash{\widehat{\otimes}}^\L 
_{\widetilde{\B} ^{(\bullet)}  _{\fP'} ( T') } 
f _{T',T} ^{!(\bullet)} (\E ^{ (\bullet)} )
\right ) 
\riso 
f _{+} ^{(\bullet)} \left (
\mathrm{oub} _{T}(  \E ^{\prime (\bullet)} )
\smash{\widehat{\otimes}}^\L 
_{\O ^{(\bullet)}  _{\fP} }  
f  ^{!(\bullet)} (\mathrm{oub} _{T} (\E ^{ (\bullet)}))
\right ).
$$
Hence, we reduce to check the case where $T$ and $T'$ are empty. 
In that case, this is a consequence of \ref{prop-u+otimes-u!} applied in the case
where $\B _X = \O _X$.
\end{proof}

\begin{cor}
\label{surcoh2.1.4-cor1}
Let $\E ^{ (\bullet)}
\in  \smash{\underrightarrow{LD}} ^\mathrm{b} _{\Q, \mathrm{qc}}
(\overset{^\mathrm{l}}{} \smash{\widetilde{\D}} _{\fP ^{ }/\S } ^{(\bullet)} (T))$.
We have the isomorphism 
\begin{equation}
\label{surcoh2.1.4-isocor1}
f _{T,T',+} ^{(\bullet)} \left (\widetilde{\B} ^{(\bullet)}  _{\fP'} ( T') \right )
\smash{\widehat{\otimes}}^\L 
_{\widetilde{\B} ^{(\bullet)}  _{\fP} ( T) } 
\E ^{ (\bullet)} [d _{P'/P}]
\riso 
f _{T,T',+} ^{(\bullet)} \circ  f _{T',T} ^{!(\bullet)} (\E ^{ (\bullet)} ) .
\end{equation}
\end{cor}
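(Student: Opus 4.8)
The plan is to obtain this corollary as the special case $\E ^{\prime (\bullet)} := \widetilde{\B} ^{(\bullet)}  _{\fP'} ( T')$ of the projection formula \ref{surcoh2.1.4}. First I would check that $\widetilde{\B} ^{(\bullet)}  _{\fP'} ( T')$ is a legitimate argument, i.e. that it belongs to $\smash{\underrightarrow{LD}} ^\mathrm{b} _{\Q, \mathrm{qc}} (\overset{^\mathrm{l}}{} \smash{\widetilde{\D}} _{\fP ^{\prime }/\S } ^{(\bullet)}(T'))$: for each $m$ the sheaf $\widetilde{\B} ^{(m)} _{\fP'} ( T')$ is an $\O _{\fP'}$-algebra carrying a compatible left $\smash{\widehat{\D}} _{\fP '/\S } ^{(m)}$-module structure, hence a left $\smash{\widetilde{\D}} _{\fP '/\S } ^{(m)}(T')$-module structure; its reduction modulo $\pi$ is a quasi-coherent $\O _{P'}$-module and it coincides with its own $p$-adic completion, so it is quasi-coherent in the sense used here, and the transition maps assemble these into an object $\widetilde{\B} ^{(\bullet)}  _{\fP'} ( T')$ of the required category.

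Then I would feed this $\E ^{\prime (\bullet)}$ into \ref{surcoh2.1.4-iso}. With this choice the left-hand side of \ref{surcoh2.1.4-iso} is verbatim the left-hand side of \ref{surcoh2.1.4-isocor1}, while the right-hand side becomes $f _{T,T',+} ^{(\bullet)} \bigl ( \widetilde{\B} ^{(\bullet)}  _{\fP'} ( T') \smash{\widehat{\otimes}} ^\L _{\widetilde{\B} ^{(\bullet)}  _{\fP'} ( T') } f _{T',T} ^{!(\bullet)} (\E ^{ (\bullet)} ) \bigr )$. The remaining step is to identify $\widetilde{\B} ^{(\bullet)}  _{\fP'} ( T') \smash{\widehat{\otimes}} ^\L _{\widetilde{\B} ^{(\bullet)}  _{\fP'} ( T') } - $ with the identity functor; this I would verify level by level, where, after reduction modulo $\pi ^{i+1}$, it reduces to the tautological identification $\B ^{(m)} _{P' _i} ( T') \otimes ^\L _{\B ^{(m)} _{P' _i} ( T')} (-) \riso (-)$, and this compatibility persists through the derived inverse limit over $i$ defining $\smash{\widehat{\otimes}} ^\L$ and through the localization defining $\smash{\underrightarrow{LD}} ^\mathrm{b} _{\Q}$. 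Applying $f _{T,T',+} ^{(\bullet)}$ to the resulting isomorphism then yields \ref{surcoh2.1.4-isocor1}.

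I do not expect any real obstacle here: the single point needing a word of care is that $\widetilde{\B} ^{(\bullet)}  _{\fP'} ( T')$ is the unit object for $\smash{\widehat{\otimes}} ^\L _{\widetilde{\B} ^{(\bullet)}  _{\fP'} ( T') }$ and that the specialization of the bifunctor \ref{def-otimes-coh1} to this object is canonical, both being immediate from the constructions of Section \ref{section3.2}. Equivalently, one may simply observe that \ref{surcoh2.1.4-isocor1} is nothing but the instance $\E ^{\prime (\bullet)} = \widetilde{\B} ^{(\bullet)}  _{\fP'} ( T')$ of Proposition \ref{surcoh2.1.4}.
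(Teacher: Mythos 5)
Your proof is correct and follows exactly the paper's approach: the paper likewise obtains this corollary by applying Proposition \ref{surcoh2.1.4} with $\E ^{\prime (\bullet)} = \widetilde{\B} ^{(\bullet)}  _{\fP'} ( T')$, leaving the unit-object cancellation implicit. Your extra verification that $\widetilde{\B} ^{(\bullet)}  _{\fP'} ( T')$ is a legitimate argument and that tensoring over it with itself is the identity is sound but routine, and the paper omits it.
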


\begin{proof}
We apply \ref{surcoh2.1.4}  
to the case where
$\E ^{\prime (\bullet)}
=
\widetilde{\B} ^{(\bullet)}  _{\fP'} ( T') $.
\end{proof}

\begin{cor}
\label{surcoh2.1.4-cor}
Suppose  $T ' = f ^{-1} (T)$.
Let $\E ^{\prime (\bullet)}
\in  \smash{\underrightarrow{LD}} ^\mathrm{b} _{\Q, \mathrm{qc}}
(\overset{^\mathrm{l}}{} \smash{\widetilde{\D}} _{\fP ^{\prime }/\S } ^{(\bullet)})$.
We have the isomorphism of
$\smash{\underrightarrow{LD}} ^\mathrm{b} _{\Q, \mathrm{qc}}
(\overset{^\mathrm{l}}{} \smash{\widetilde{\D}} _{\fP ^{ }/\S } ^{(\bullet)})$: 
$$f _{T,T'+} ^{(\bullet)}  \circ (\hdag T') ( \E ^{\prime (\bullet)} )
\riso 
(\hdag T ) \circ f _{+} ^{(\bullet)} ( \E ^{\prime (\bullet)} ).$$

\end{cor}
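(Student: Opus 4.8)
The plan is to deduce this from the projection formula \ref{surcoh2.1.4}, from \ref{lem-f!B}, and from the compatibility of $f _{T,T',+} ^{(\bullet)}$ with the forgetful functor recorded in \ref{oub-div-opcoh}; the reduction device is again the full faithfulness of $\mathrm{oub} _{\emptyset ,T}$ (\ref{oub-pl-fid}.\ref{oub-pl-fid-iso3}), exactly as in the proof of \ref{surcoh2.1.4}.

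First I would note that both objects $f _{T,T'+} ^{(\bullet)}\circ(\hdag T')(\E ^{\prime (\bullet)})$ and $(\hdag T)\circ f _{+} ^{(\bullet)}(\E ^{\prime (\bullet)})$ lie in $\smash{\underrightarrow{LD}} ^{\mathrm{b}} _{\Q, \mathrm{qc}}(\smash{\widetilde{\D}} _{\fP /\S } ^{(\bullet)}(T))$, so by \ref{oub-pl-fid}.\ref{oub-pl-fid-iso3} it is enough to construct a canonical isomorphism between their images under $\mathrm{oub} _{\emptyset ,T}$. By the diagram \ref{ODdivcohe} (taken with the empty divisor in the lower slot) the underlying $\smash{\widehat{\D}} _{\fP'/\S } ^{(\bullet)}$-complex of $(\hdag T')(\E ^{\prime (\bullet)})$ is $\widetilde{\B} ^{(\bullet)} _{\fP'}(T')\, \widehat{\otimes} ^{\L} _{\O ^{(\bullet)} _{\fP'}}\E ^{\prime (\bullet)}$, and that of $(\hdag T)\circ f _{+} ^{(\bullet)}(\E ^{\prime (\bullet)})$ is $\widetilde{\B} ^{(\bullet)} _{\fP}(T)\, \widehat{\otimes} ^{\L} _{\O ^{(\bullet)} _{\fP}} f _{+} ^{(\bullet)}(\E ^{\prime (\bullet)})$; that is, $\mathrm{oub} _{\emptyset ,T}\bigl((\hdag T)\circ f _{+} ^{(\bullet)}(\E ^{\prime (\bullet)})\bigr)=\widetilde{\B} ^{(\bullet)} _{\fP}(T)\, \widehat{\otimes} ^{\L} _{\O ^{(\bullet)} _{\fP}} f _{+} ^{(\bullet)}(\E ^{\prime (\bullet)})$. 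On the other side, \ref{oub-div-opcoh} combined with this same identification gives $\mathrm{oub} _{\emptyset ,T}\circ f _{T,T'+} ^{(\bullet)}\circ(\hdag T')(\E ^{\prime (\bullet)}) \riso f _{+} ^{(\bullet)}\bigl(\widetilde{\B} ^{(\bullet)} _{\fP'}(T')\, \widehat{\otimes} ^{\L} _{\O ^{(\bullet)} _{\fP'}}\E ^{\prime (\bullet)}\bigr)$.

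Then I would apply Proposition \ref{surcoh2.1.4} with both overconvergence divisors taken empty and with $\E ^{(\bullet)} := \widetilde{\B} ^{(\bullet)} _{\fP}(T)$, regarded as a quasi-coherent $\smash{\widehat{\D}} _{\fP /\S } ^{(\bullet)}$-module: this yields $f _{+} ^{(\bullet)}(\E ^{\prime (\bullet)})\, \widehat{\otimes} ^{\L} _{\O ^{(\bullet)} _{\fP}}\widetilde{\B} ^{(\bullet)} _{\fP}(T)\,[d _{P'/P}] \riso f _{+} ^{(\bullet)}\bigl(\E ^{\prime (\bullet)}\, \widehat{\otimes} ^{\L} _{\O ^{(\bullet)} _{\fP'}} f ^{!(\bullet)}(\widetilde{\B} ^{(\bullet)} _{\fP}(T))\bigr)$. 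Since $T' = f ^{-1}(T)$, the isomorphism $f ^{!(\bullet)}(\widetilde{\B} ^{(\bullet)} _{\fP}(T)) \riso \widetilde{\B} ^{(\bullet)} _{\fP'}(T')\,[d _{P'/P}]$ of \ref{lem-f!B} rewrites the right-hand side as $f _{+} ^{(\bullet)}\bigl(\widetilde{\B} ^{(\bullet)} _{\fP'}(T')\, \widehat{\otimes} ^{\L} _{\O ^{(\bullet)} _{\fP'}}\E ^{\prime (\bullet)}\bigr)[d _{P'/P}]$; cancelling the common shift $[d _{P'/P}]$ and splicing with the two identifications of the previous paragraph produces the wanted isomorphism $\mathrm{oub} _{\emptyset ,T}\bigl((\hdag T)\circ f _{+} ^{(\bullet)}(\E ^{\prime (\bullet)})\bigr) \riso \mathrm{oub} _{\emptyset ,T}\bigl(f _{T,T'+} ^{(\bullet)}\circ(\hdag T')(\E ^{\prime (\bullet)})\bigr)$, whence the statement by full faithfulness of $\mathrm{oub} _{\emptyset ,T}$.

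The one point that requires care — and the main, mild, obstacle — is to verify that the isomorphism obtained in this way is the canonical comparison morphism, i.e. that it coincides, after $\mathrm{oub} _{\emptyset ,T}$, with $\mathrm{oub} _{\emptyset ,T}$ applied to the natural morphism $f _{T,T'+} ^{(\bullet)}\circ(\hdag T')\to(\hdag T)\circ f _{+} ^{(\bullet)}$; this amounts to unwinding the definitions of the isomorphisms cited from \ref{surcoh2.1.4}, \ref{lem-f!B}, \ref{ODdivcohe} and \ref{oub-div-opcoh} and using the functoriality of the exterior tensor product in its transition morphisms, and is bookkeeping rather than a genuine difficulty. The finiteness statements of \ref{rema-dim-coh-finie} and of the remark following \ref{def-otimes-coh1&2qc} ensure that every tensor product appearing above stays in the bounded quasi-coherent categories, so that all the cited results apply.
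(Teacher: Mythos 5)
Your proof is correct and follows essentially the same route as the paper: both arguments apply the projection formula \ref{surcoh2.1.4} with the empty overconvergence divisors and $\E ^{(\bullet)}=\widetilde{\B} ^{(\bullet)} _{\fP}(T)$, use \ref{lem-f!B} to identify $f ^{!(\bullet)}(\widetilde{\B} ^{(\bullet)} _{\fP}(T))$ with $\widetilde{\B} ^{(\bullet)} _{\fP'}(T')[d _{P'/P}]$, cancel the common shift, and then invoke \ref{oub-div-opcoh} (together with \ref{ODdivcohe}) to recognize the two sides. Your explicit appeal to the full faithfulness of $\mathrm{oub} _{\emptyset ,T}$ simply spells out what the paper compresses into its final sentence; it is a mild elaboration, not a different argument.
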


\begin{proof}
Use \ref{surcoh2.1.4}
and \ref{lem-f!B}, we get the isomorphism
\begin{equation}
\label{surcoh2.1.4-cor-iso}
f _{+} ^{(\bullet)} ( \E ^{\prime (\bullet)} )
\smash{\widehat{\otimes}}^\L _{\O ^{(\bullet)}  _{\fP} }  
\widetilde{\B} ^{(\bullet)}  _{\fP} ( T) 
\riso 
f _{+} ^{(\bullet)} ( \E ^{\prime (\bullet)} 
\smash{\widehat{\otimes}}^\L 
_{\O ^{(\bullet)}  _{\fP'} }  
\widetilde{\B} ^{(\bullet)}  _{\fP'} ( T') ).
\end{equation}
We conclude using \ref{oub-div-opcoh}.
\end{proof}

\begin{rem}
Using \ref{oub-div-opcoh}, the isomorphism 
of \ref{surcoh2.1.4-cor} could be written
$f _{+} ^{(\bullet)}  \circ (\hdag T') ( \E ^{\prime (\bullet)} )
\riso 
(\hdag T ) \circ f _{+} ^{(\bullet)} ( \E ^{\prime (\bullet)} ).$

\end{rem}

\subsection{On the stability of the coherence}
Let  $f \colon \fP ^{\prime } \to \fP $ be a morphism of smooth formal schemes over $\S $,
$T$ and $T'$ be some divisors of respectively $P$ and $P'$ such that 
$f ( P '\setminus T' ) \subset P \setminus T$.

\begin{lem}
\label{stab-coh-f^!pre}
Suppose $f _i\colon P _i ^{\prime} \to P  _i$ is   smooth.
For any $\E \in D ^{-} _{\mathrm{coh}} (\widetilde{\D} ^{(m)} _{P _i /S _i }(T))$, 
we have 
$f _{i,T',T}^{(m)!} (\E )\in D ^{-} _{\mathrm{coh}} (\widetilde{\D} ^{(m)} _{P _i ^{\prime}/S _i }(T'))$. 
\end{lem}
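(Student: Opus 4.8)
The plan is to reduce, via local finite-free resolutions, to the coherence of the transfer bimodule $\widetilde{\D} ^{(m)} _{P' _i \to P _i/ S _i} ( T' , T)$ as a left $\widetilde{\D} ^{(m)} _{P' _i/ S _i}(T')$-module, and then to deduce the latter from Berthelot's analysis of the transfer module of a smooth morphism.

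First, coherence is local on $P' _i$ and on $P _i$, so we may assume $P _i$ and $P' _i$ affine. Since $\widetilde{\D} ^{(m)} _{P _i/ S _i}(T)$ is a (left) coherent sheaf of rings (see \cite{Be1}), any $\E \in D ^{-} _{\mathrm{coh}} (\widetilde{\D} ^{(m)} _{P _i/ S _i}(T))$ is, locally, quasi-isomorphic to a bounded-above complex $\mathcal{L} ^{\bullet}$ of free $\widetilde{\D} ^{(m)} _{P _i/ S _i}(T)$-modules of finite rank. As $f _i ^{-1}$ is exact, $f _i ^{-1} \mathcal{L} ^{\bullet}$ is a bounded-above complex of finite free $f _i ^{-1} \widetilde{\D} ^{(m)} _{P _i/ S _i}(T)$-modules, hence of flat modules, and is quasi-isomorphic to $f _i ^{-1} \E$; so it computes $f _{i,T',T} ^{(m)!} (\E)$. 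After applying $\widetilde{\D} ^{(m)} _{P' _i \to P _i/ S _i}(T',T) \otimes _{f _i ^{-1} \widetilde{\D} ^{(m)} _{P _i/ S _i}(T)} -$ and the shift $[d _{P'/P}]$, each term becomes a finite direct sum of copies of $\widetilde{\D} ^{(m)} _{P' _i \to P _i/ S _i}(T',T)$. Since over a coherent ring the coherent modules form a thick abelian subcategory (closed under kernels and cokernels), it suffices to prove that $\widetilde{\D} ^{(m)} _{P' _i \to P _i/ S _i}(T',T)$ is a coherent left $\widetilde{\D} ^{(m)} _{P' _i/ S _i}(T')$-module.

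For this, recall that $\widetilde{\D} ^{(m)} _{P' _i \to P _i/ S _i}(T',T) = \widetilde{\B} ^{(m)} _{P' _i}(T') \otimes _{\O _{P' _i}} f _i ^* \D ^{(m)} _{P _i/ S _i}$ and $\widetilde{\D} ^{(m)} _{P' _i/ S _i}(T') = \widetilde{\B} ^{(m)} _{P' _i}(T') \otimes _{\O _{P' _i}} \D ^{(m)} _{P' _i/ S _i}$, where $f _i ^* \D ^{(m)} _{P _i/ S _i} = \D ^{(m)} _{P' _i \to P _i/ S _i}$ carries its canonical left $\D ^{(m)} _{P' _i/ S _i}$-module structure. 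The functor $\widetilde{\B} ^{(m)} _{P' _i}(T') \otimes _{\O _{P' _i}} -$ from left $\D ^{(m)} _{P' _i/ S _i}$-modules to left $\widetilde{\D} ^{(m)} _{P' _i/ S _i}(T')$-modules is right exact, carries $\D ^{(m)} _{P' _i/ S _i}$ to $\widetilde{\D} ^{(m)} _{P' _i/ S _i}(T')$ and $\D ^{(m)} _{P' _i \to P _i/ S _i}$ to $\widetilde{\D} ^{(m)} _{P' _i \to P _i/ S _i}(T',T)$; hence it preserves finite presentation, and we are reduced to the case $T = T' = \emptyset$, i.e. to showing that $\D ^{(m)} _{P' _i \to P _i/ S _i}$ is a coherent left $\D ^{(m)} _{P' _i/ S _i}$-module. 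Since $f _i$ is smooth, this is contained in \cite{Be2}: locally the relative Spencer complex provides a finite resolution of $\D ^{(m)} _{P' _i \to P _i/ S _i}$ by free $\D ^{(m)} _{P' _i/ S _i}$-modules of finite rank, and $\D ^{(m)} _{P' _i/ S _i}$ being coherent (\cite{Be1}), finite presentation yields coherence.

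The main obstacle is this last step, the coherence of the transfer bimodule, where smoothness of $f _i$ is indispensable; the only mild additional point to watch is that the $\widetilde{\B}$-twisting of the tensor product does not destroy finite presentation, which holds because the twisting only alters the left action while finiteness is tested against the untwisted free modules $(\widetilde{\D} ^{(m)} _{P' _i/ S _i}(T')) ^{r}$.
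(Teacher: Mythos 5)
Your proof is correct and follows essentially the same route as the paper: reduce by local free resolutions to the case $\E = \widetilde{\D}^{(m)}_{P_i/S_i}(T)$ and then establish coherence of the transfer bimodule, the paper doing the latter by noting directly in local coordinates that $\D^{(m)}_{P'_i/S_i} \to f_i^* \D^{(m)}_{P_i/S_i}$ is surjective with kernel of the usual (finitely generated) form — which is precisely the Spencer-complex observation you invoke from \cite{Be2}. Your extra remark that the $\widetilde{\B}$-twisting preserves finite presentation makes explicit a step the paper leaves tacit.
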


\begin{proof}
Since this is local in $P _i ^{\prime}$, using locally free resolution, 
we reduce to the case $\E= \widetilde{\D} ^{(m)} _{P _i /S _i }(T)$. 
Then we compute in local coordinates 
that 
the canonical morphism
$\D ^{(m)} _{P _i ^{\prime}/S _i } \to f _i ^{*} \D ^{(m)} _{P _i /S _i }$ is surjective
whose kernel has the usual description in local 
coordinates. 
\end{proof}

\begin{prop}
\label{stab-coh-f^!}
Suppose $f$ is   smooth.
\begin{enumerate}
\item For  $\E \in D ^{\mathrm{b}} _{\mathrm{coh}} (\widetilde{\D} ^{(m)} _{\fP /\S }(T))$, 
we have 
$f _{T',T}^{(m)!} (\E )\in D ^{\mathrm{b}} _{\mathrm{coh}} (\widetilde{\D} ^{(m)} _{\fP ^{\prime}/\S }(T'))$. 

\item For  $\E \in D ^{\mathrm{b}} _{\mathrm{coh}} (\widetilde{\D} ^{(m)} _{\fP /\S }(T) _\Q)$, 
we have
\begin{equation}
\notag
\widetilde{\D} ^{(m+1)} _{\fP ^{\prime}/\S }(T') _\Q
\otimes ^{\L} _{\widetilde{\D} ^{(m)} _{\fP ^{\prime}/\S }(T')_\Q}
 f _{T',T}^{(m)!} (\E )
\riso 
f _{T',T}^{(m+1)!} (
\widetilde{\D} ^{(m+1)} _{\fP /\S }(T)_\Q
\otimes ^{\L} _{\widetilde{\D} ^{(m)} _{\fP /\S }(T)_\Q}
\E ).
\end{equation}

\item The functor $ f _{T',T}^{!(\bullet)}$
sends 
$\underrightarrow{LD} ^{\mathrm{b}}  _{\Q, \mathrm{coh}} (\smash{\widetilde{\D}} _{\fP /\S } ^{(\bullet)} (T))$
to 
$\underrightarrow{LD} ^{\mathrm{b}}  _{\Q, \mathrm{coh}} (\smash{\widetilde{\D}} _{\fP ^{\prime }/\S } ^{(\bullet)} (T'))$.
\item 
For $\E \in D ^{\mathrm{b}} _{\mathrm{coh}} ( \D ^{\dag} _{\fP  } (\hdag T ) _{\Q})$,
we have
$f  ^{ !} _{T' , T} (\E ) \in
D ^{\mathrm{b}} _{\mathrm{coh}} ( \D ^{\dag} _{\fP ^{\prime }} (\hdag T' ) _{\Q})$.
\end{enumerate}

\end{prop}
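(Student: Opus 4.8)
The plan is to prove the four assertions in order, bootstrapping from the level-$m$ statements to the level-$\bullet$ statements and finally to the $\D^\dag$-statement. Throughout I assume $f$ smooth (so all $f_i$ are smooth and Lemma \ref{stab-coh-f^!pre} applies), and I work locally on $\fP'$, choosing local coordinates so that the kernel of $\D^{(m)}_{P'_i/S_i}\to f_i^*\D^{(m)}_{P_i/S_i}$ has the explicit Koszul-type description. First I would prove (1): reducing to $\E=\smash{\widetilde{\D}}^{(m)}_{\fP/\S}(T)$ by a bounded locally free resolution, the transfer bimodule $\smash{\widetilde{\D}}^{(m)}_{\fP'\to\fP/\S}(T',T)$ is, in local coordinates, a bounded complex of coherent $\smash{\widetilde{\D}}^{(m)}_{\fP'/\S}(T')$-modules (the completed version of the algebraic statement of \ref{stab-coh-f^!pre}, together with the fact that $p$-adic completion preserves coherence over our noetherian rings), hence $f^{(m)!}_{T',T}(\E)\in D^{\mathrm{b}}_{\mathrm{coh}}(\smash{\widetilde{\D}}^{(m)}_{\fP'/\S}(T'))$.

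Next I would establish (2), the base-change compatibility of $f^{(m)!}$ as $m$ varies. The point is the canonical isomorphism of $(\smash{\widetilde{\D}}^{(m+1)}_{\fP'/\S}(T'),f^{-1}\smash{\widetilde{\D}}^{(m+1)}_{\fP/\S}(T))$-bimodules
\begin{equation}
\notag
\smash{\widetilde{\D}}^{(m+1)}_{\fP'\to\fP/\S}(T',T)
\riso
\smash{\widetilde{\D}}^{(m+1)}_{\fP'/\S}(T')
\otimes^{\L}_{f^{-1}\smash{\widetilde{\D}}^{(m)}_{\fP/\S}(T)}
f^{-1}\smash{\widetilde{\D}}^{(m)}_{\fP/\S}(T)
\riso
\smash{\widetilde{\D}}^{(m+1)}_{\fP'/\S}(T')
\otimes^{\L}_{\smash{\widetilde{\D}}^{(m)}_{\fP'/\S}(T')}
\smash{\widetilde{\D}}^{(m)}_{\fP'\to\fP/\S}(T',T),
\end{equation}
which in the smooth case follows from the local coordinate description (the passage $m\to m+1$ only rescales the divided-power parameters). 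After tensoring by $\Q$, one plugs this into the definition \ref{def-image-inv-extr} of $f^{(m)!}$ and uses associativity and flat base change of $\otimes^{\L}$ to get the displayed isomorphism; this is the level-$m$, $\otimes\Q$ analogue of Berthelot's condition \ref{Beintro-4.2.3M}.

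Then (3) is essentially formal given (1) and (2). Given $\E^{(\bullet)}\in\underrightarrow{LD}^{\mathrm{b}}_{\Q,\mathrm{coh}}(\smash{\widetilde{\D}}_{\fP/\S}^{(\bullet)}(T))$, by Definition \ref{defi-LDQ0coh} we may replace it (up to $\lambda^*$ and a lim-ind-isogeny) by a complex $\FF^{(\bullet)}$ satisfying conditions \ref{defi-LDQ0coh}.1 and \ref{defi-LDQ0coh}.2; since $f^{!(\bullet)}_{T',T}$ commutes with the operations $\chi^*,\lambda^*$ and sends lim-ind-isogenies to lim-ind-isogenies (it is $\smash{\widetilde{\D}}^{(\bullet)}_{\fP'\to\fP/\S}\smash{\widehat{\otimes}}^\L f^{-1}(-)$ level by level), it suffices to check that $f^{!(\bullet)}_{T',T}(\FF^{(\bullet)})$ again satisfies those two conditions. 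Condition .1 for each level is (1); condition .2 is exactly (2) (with $m,m+1$ replaced by $\lambda(m),\lambda(m')$), using that $f^{(m)!}$ commutes with the extension-of-scalars transition maps. Hence $f^{!(\bullet)}_{T',T}(\E^{(\bullet)})$ is coherent in the sense of \ref{defi-LDQ0coh}.

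Finally (4) follows by applying $\underrightarrow{\lim}$: by the equivalence \ref{eq-catLDBer-LD-D} (i.e.\ \ref{eqcat-limcoh}) every $\E\in D^{\mathrm{b}}_{\mathrm{coh}}(\D^\dag_{\fP}(\hdag T)_\Q)$ is $\underrightarrow{\lim}\E^{(\bullet)}$ for some coherent $\E^{(\bullet)}$, and by \ref{coh-Qcoh} we have $f^!_{T',T}=\mathrm{Coh}_T(f^{!(\bullet)}_{T',T})=\underrightarrow{\lim}\circ f^{!(\bullet)}_{T',T}\circ(\underrightarrow{\lim}_T)^{-1}$, so (3) gives $f^!_{T',T}(\E)\in\underrightarrow{\lim}(\underrightarrow{LD}^{\mathrm{b}}_{\Q,\mathrm{coh}}(\smash{\widetilde{\D}}_{\fP'/\S}^{(\bullet)}(T')))=D^{\mathrm{b}}_{\mathrm{coh}}(\D^\dag_{\fP'}(\hdag T')_\Q)$. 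The main obstacle is item (2): one has to be careful that the completed transfer bimodules genuinely satisfy the derived base-change identity displayed above (and not merely after passing to cohomology), which requires controlling the $\mathrm{Tor}$'s in local coordinates and invoking the remark that $\smash{\widetilde{\D}}^{(m)}_{\fP/\S}(T)$ has flat amplitude over $\smash{\widehat{\D}}^{(0)}_{\fP/\S}$ bounded independently of $m$; everything else is bookkeeping with the localization formalism of Chapter \ref{ntn-tildeD(Z)}. Alternatively, for (1) and (4) one may simply cite \cite{Be2} and \cite[4.3.7]{Beintro2}, whose proofs in the perfect case go through verbatim since the smoothness of $f$ and the noetherianity of the rings are all that is used; I would indicate this as the default, giving the local-coordinate argument only for completeness.
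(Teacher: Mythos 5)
Your proposal follows the paper's own route: (1) is deduced from Lemma~\ref{stab-coh-f^!pre} via the transfer bimodule, (2) is the level-shift base-change identity for the transfer bimodule (the paper just cites \cite[3.4.6]{Beintro2} for this), and (3), (4) are the formal bookkeeping through Definition~\ref{defi-LDQ0coh} and the $\underrightarrow{\lim}$-equivalence. One small slip: the middle term in your displayed chain of bimodule isomorphisms does not typecheck (it literally collapses to $\smash{\widetilde{\D}}^{(m+1)}_{\fP'/\S}(T')$), but the first and last terms give the isomorphism
$\smash{\widetilde{\D}}^{(m+1)}_{\fP'\to\fP/\S}(T',T)\riso
\smash{\widetilde{\D}}^{(m+1)}_{\fP'/\S}(T')\otimes^{\L}_{\smash{\widetilde{\D}}^{(m)}_{\fP'/\S}(T')}
\smash{\widetilde{\D}}^{(m)}_{\fP'\to\fP/\S}(T',T)$
that is actually used, so this is only a notational defect, not a gap.
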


\begin{proof}
The first part is a consequence of \ref{stab-coh-f^!pre}. 
We check the second part similarly to  \cite[3.4.6]{Beintro2}.
The third and forth parts are a consequence of the previous ones. 
\end{proof}

\begin{lem}
\label{stab-coh-f_+}
Suppose $f$ is proper, and 
$T ' = f ^{-1}(T)$.
\begin{enumerate}
\item The functor $f ^{(m)} _{i,T+}$ sends 
$D ^{-} _{\mathrm{coh}} (\widetilde{\D} ^{(m)} _{P _i ^{\prime}/S _i }(T'))$
to 
$D ^{-} _{\mathrm{coh}} (\widetilde{\D} ^{(m)} _{P _i /S _i }(T))$. 
\item For $\E ' \in D ^{-} _{\mathrm{coh}} (\widetilde{\D} ^{(m)} _{P _i ^{\prime}/S _i }(T'))$,
we have the canonical isomorphism
\begin{equation}
\notag
\widetilde{\D} ^{(m+1)} _{P _i /S _i }(T')
\otimes ^{\L} _{\widetilde{\D} ^{(m)} _{P _i /S _i }(T')}
f ^{(m)} _{i,T+}
(\E ' )
\riso 
f ^{(m+1)} _{i,T+}
\left (\widetilde{\D} ^{(m+1)} _{P _i ^{\prime}/S _i }(T')
\otimes ^{\L} _{\widetilde{\D} ^{(m)} _{P _i ^{\prime}/S _i }(T')}
\E ' \right ).
\end{equation}

\end{enumerate}
\end{lem}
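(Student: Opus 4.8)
The plan is to follow Berthelot's proof of the stability of coherence under proper direct image at finite level (see \cite[4.3]{Be2}), the present statement being its reduction modulo $\pi ^{i+1}$. As in all the analogous results of this chapter, one carries the decoration $\widetilde{\D} ^{(m)}(T)$ in place of $\D ^{(m)}$ throughout: since $T ' = f ^{-1}(T)$, the transfer bimodules $\widetilde{\D} ^{(m)} _{P _i \leftarrow P ^{\prime} _i/S _i}(T,T')$ are deduced from the divisor-free ones by the corresponding base change, and the ring $\widetilde{\D} ^{(m)} _{P _i/S _i}(T)$ remains coherent. Recall also from the discussion preceding \ref{Prop1.2.21-caro-surc} that, $P _i$ being noetherian of finite Krull dimension, $\R f _*$ has bounded cohomological dimension, so every complex occurring below stays in $D ^{-}$.

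For the first assertion I would, by Chow's lemma and a noetherian dévissage on $P ^{\prime} _i$ (see \cite[4.3]{Be2}), reduce to the case where $f$ is projective, and then factor $f = p \circ \iota$ with $\iota \colon P ^{\prime} _i \hookrightarrow \bbP ^N _{P _i}$ a closed immersion and $p \colon \bbP ^N _{P _i} \to P _i$ the projection. For the closed immersion $\iota$, the finite-level arithmetic analogue of Kashiwara's theorem (see \cite[5.3]{Be2}) gives that $\iota ^{(m)} _{i,+}$ preserves coherence. For the projection $p$, I would use the relative Spencer (de Rham) resolution of the transfer bimodule: its terms being coherent over the pullback along $p$ of $\widetilde{\D} ^{(m)} _{P _i/S _i}(T)$, the functor $p ^{(m)} _{i,+}$ is computed by $\R p _*$ of a bounded relative de Rham complex, and a standard dévissage reduces to $\E ^{\prime} = \widetilde{\D} ^{(m)} _{\bbP ^N _{P _i}/P _i}(-n)$ for $n \gg 0$ (using that any coherent $\widetilde{\D} ^{(m)}$-module on $\bbP ^N _{P _i}$ is generated over $\widetilde{\D} ^{(m)}$ by an $\O$-coherent subsheaf, then Serre's theorem). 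For such $\E ^{\prime}$, the projection formula for $\R p _*$ of $\O$-modules identifies $p ^{(m)} _{i,+}(\E ^{\prime})$ with $\widetilde{\D} ^{(m)} _{P _i/S _i}(T) \otimes ^{\L} _{\O _{P _i}}$ applied to $\R p _*$ of a bounded complex of $\O _{\bbP ^N _{P _i}}$-coherent sheaves, which is $\O _{P _i}$-coherent by Grothendieck's finiteness theorem (see \cite{EGAIII1}); hence $p ^{(m)} _{i,+}(\E ^{\prime})$ has coherent cohomology over $\widetilde{\D} ^{(m)} _{P _i/S _i}(T)$.

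For the second assertion, I would apply the projection formula \ref{Prop1.2.21-caro-surc-iso2} with $u = f$ and $\FF = \widetilde{\D} ^{(m+1)} _{P _i/S _i}(T)$, regarded as a quasi-coherent right $\widetilde{\D} ^{(m)} _{P _i/S _i}(T)$-module, so that hypothesis (b) of \ref{Prop1.2.21-caro-surc} is satisfied (no finiteness of Tor-dimension is needed); this moves the change of level inside $\R f _*$. It then remains to combine the base-change isomorphism for the transfer bimodule along the change of level, namely $f ^{-1}\widetilde{\D} ^{(m+1)} _{P _i/S _i}(T) \otimes ^{\L} _{f ^{-1}\widetilde{\D} ^{(m)} _{P _i/S _i}(T)} \widetilde{\D} ^{(m)} _{P _i \leftarrow P ^{\prime} _i/S _i}(T,T') \riso \widetilde{\D} ^{(m+1)} _{P _i \leftarrow P ^{\prime} _i/S _i}(T,T')$ (checked as in \cite[3.4]{Beintro2}), with the associativity of the tensor product and the right $\widetilde{\D} ^{(m+1)} _{P ^{\prime} _i/S _i}(T')$-module structure of $\widetilde{\D} ^{(m+1)} _{P _i \leftarrow P ^{\prime} _i/S _i}(T,T')$, so as to rewrite the resulting expression as $f ^{(m+1)} _{i,T+}\!\bigl( \widetilde{\D} ^{(m+1)} _{P ^{\prime} _i/S _i}(T') \otimes ^{\L} _{\widetilde{\D} ^{(m)} _{P ^{\prime} _i/S _i}(T')} \E ^{\prime} \bigr)$.

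The main obstacle is the projection $p \colon \bbP ^N _{P _i} \to P _i$: deducing $\widetilde{\D} ^{(m)} _{P _i/S _i}(T)$-coherence of $p ^{(m)} _{i,+}$, and not merely $\O _{P _i}$-coherence, is precisely what forces the use of the explicit relative Spencer resolution and of the computation on the structural twists $\widetilde{\D} ^{(m)}(-n)$. The reduction from a general proper $f$ to a projective one (Chow's lemma together with a noetherian dévissage on the source) is the other delicate point, and I would carry it out as in \cite[4.3]{Be2}.
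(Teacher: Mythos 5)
The paper's proof takes a substantially different and shorter route from yours, and your route has a genuine gap at its first step.

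The paper does \emph{not} reduce to the projective case at all. It argues uniformly for any proper $f$ as follows. Since $f^{(m)}_{i,T+}$ is way-out right, one reduces by \cite[I.7]{HaRD} to the case where $\E'$ is a coherent $\widetilde{\D}^{(m)}_{P_i'/S_i}(T')$-module. Such a module is a quotient of an induced module $\widetilde{\D}^{(m)}_{P_i'/S_i}(T')\otimes_{\O_{P_i'}}\FF'$ with $\FF'$ an $\O_{P_i'}$-coherent submodule of $\E'$; iterating, one gets a resolution of $\E'$ by induced modules, and a second application of the way-out argument reduces to $\E'=\widetilde{\D}^{(m)}_{P_i'/S_i}(T')\otimes_{\O_{P_i'}}\FF'$. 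For such $\E'$ the projection formula collapses $f^{(m)}_{i,T+}(\E')$ to
$\widetilde{\D}^{(m)}_{P_i/S_i}(T)\otimes_{\O_{P_i}}\bigl(\omega_{P_i/S_i}^{-1}\otimes^{\L}_{\O_{P_i}}\R f_*(\omega_{P_i'/S_i}^{-1}\otimes_{\O_{P_i'}}\FF')\bigr)$, which is coherent over $\widetilde{\D}^{(m)}_{P_i/S_i}(T)$ simply because $\R f_*$ preserves $\O$-coherence for proper $f$ over a locally noetherian base. No Chow's lemma, no factorization through $\bbP^N$, no Spencer resolution, no Serre twists. The second assertion is handled by constructing the comparison map directly from the canonical semi-linear morphisms and checking it is an isomorphism after the same reduction to induced modules; the explicit formula above makes both sides visibly coincide.

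Your reduction to the projective case via Chow's lemma is the genuine gap. Chow's lemma produces a projective birational modification $\pi\colon \widetilde P_i'\to P_i'$ with $\widetilde P_i'$ projective over $P_i$, but $\widetilde P_i'$ is in general \emph{not smooth} over $S_i$, so one cannot form $\widetilde{\D}^{(m)}$-modules on it or speak of $\pi_+$. In characteristic zero one patches this with resolution of singularities; here the residue characteristic is $p>0$, resolution is unavailable, and de Jong's alterations would produce a generically finite (not birational) cover, which changes the shape of the dévissage. You do not address this, and the reference to Berthelot cannot supply the missing argument because the paper's own proof (``identical to Berthelot's'') is precisely the induced-module/projection-formula argument, not a Chow reduction. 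A secondary, lesser point: the terms of the relative Spencer complex of $\bbP^N_Y/Y$ are of the form $\D^{(m)}\otimes_{\O}\bigwedge^p T_{\bbP^N/Y}$, which are coherent over $\D^{(m)}$ on $\bbP^N_Y$ but not over the pullback of $\D^{(m)}_Y$ as you state. Your treatment of part 2 via \ref{Prop1.2.21-caro-surc-iso2} together with the level-change compatibility of the transfer bimodule is in itself a workable alternative to the paper's direct construction, but since part 1 of your proof does not go through, the whole argument would need to be rebuilt on the paper's reduction anyway.
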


\begin{proof}
The proof is identical to Berthelot's  one: 
since $f _{i+}$ is ``way out right'' (see \cite[I.7]{HaRD}),
in order to check the coherence of 
$f ^{(m)} _{i,T+} (\E ')$ for  $\E ' \in D ^{-} _{\mathrm{coh}} (\widetilde{\D} ^{(m)} _{P _i ^{\prime}/S _i }(T'))$
 we can suppose that $\E '$ is a module. 
Since $\E '$ is the inductive limit of its coherent $\O _{P _i ^{\prime}}$-submodules,
there exists a coherent $\O _{P _i ^{\prime}}$-submodule $\FF '$ of $\E '$ 
such that the canonical map 
$\widetilde{\D} ^{(m)} _{P _i ^{\prime}/S _i } (T')Â \otimes _{\O _{P _i ^{\prime}}} \FF ' \twoheadrightarrow \E '$
is surjective. 
Hence, we get a resolution of $\E '$ by $\widetilde{\D} ^{(m)} _{P _i ^{\prime}/S _i } (T')$-modules of the form
$\widetilde{\D} ^{(m)} _{P _i ^{\prime}/S _i } (T')Â \otimes _{\O _{P _i ^{\prime}}} \FF '$. 
This yields that we reduce to the case where 
$\widetilde{\D} ^{(m)} _{P _i ^{\prime}/S _i } (T')Â \otimes _{\O _{P _i ^{\prime}}} \FF '=\E '$.
Using the projection isomorphism, we get
\begin{align}
\notag
f _{i+} ( \widetilde{\D} ^{(m)} _{P _i ^{\prime}/S _i } (T')Â \otimes _{\O _{P _i ^{\prime}}} \FF ')&=
\R f _* \left (  \left (  f ^{*} _{i,\mathrm{r}} \left (\widetilde{\D} ^{(m)} _{P _i  /S _i }(T) \otimes _{\O _{P _i}}  Â  \omega _{P _i /S _i } ^{-1}\right )    \otimes _{\O _{P _i ^{\prime}}}  Â  \omega _{P _i ^{\prime}/S _i }  \right )
\otimes ^\L _{\widetilde{\D} ^{(m)} _{P _i ^{\prime}/S _i } (T')}
(\widetilde{\D} ^{(m)} _{P _i ^{\prime}/S _i } (T')Â \otimes _{\O _{P _i ^{\prime}}} \FF ' )\right )\\
\label{stab-coh-f_+-proof1}
& 
\riso 
\widetilde{\D} ^{(m)} _{P _i  /S _i } (T) \otimes _{\O _{P _i}} \left (Â  \omega _{P _i /S _i } ^{-1} 
\otimes _{\O _{P _i}} ^\L Â 
\R f _* \left (  \omega ^{-1}_{P _i ^{\prime}/S _i }  \otimes _{\O _{P _i ^{\prime}}} \FF ' \right )\right) .
\end{align}
Since $f$ is proper and $S _i$ is locally noetherian,
then the functor $\R f _*$ preserves the $\O$-coherence, hence $ \omega _{P _i /S _i } ^{-1} 
\otimes _{\O _{P _i}} ^\L Â  \R f _*  (  \omega ^{-1}_{P _i ^{\prime}/S _i }  \otimes _{\O _{P _i ^{\prime}}} \FF ' ) \in 
D ^{-} _{\mathrm{coh}} (\O _{P _i})$ and we conclude.

Let us check the second statement. 
We construct the morphism by using the canonical semi-linear morphisms 
$\smash{\widetilde{\D}} ^{(m)} _{P _i  \leftarrow P _i ^{\prime }/\S } (T,T')
\to 
\smash{\widetilde{\D}} ^{(m+1)} _{P _i  \leftarrow P _i ^{\prime }/\S } (T,T')$
and 
$\E ' \to 
\widetilde{\D} ^{(m+1)} _{P _i ^{\prime}/S _i }(T')
\otimes ^{\L} _{\widetilde{\D} ^{(m)} _{P _i ^{\prime}/S _i }(T')}
\E ' $. To check that this is an isomorphism,
we reduce to the case where
$\widetilde{\D} ^{(m)} _{P _i ^{\prime}/S _i } (T')Â \otimes _{\O _{P _i ^{\prime}}} \FF '=\E '$. 
We conclude via the isomorphism
\ref{stab-coh-f_+-proof1}.
\end{proof}

\begin{prop}
Suppose $f$ is proper, and 
$T ' = f ^{-1}(T)$.
\begin{enumerate}
\item For  $\E '\in D ^{\mathrm{b}} _{\mathrm{coh}} (\widetilde{\D} ^{(m)} _{\fP ^{\prime}/\S }(T'))$, 
we have 
$f _{T,+}^{(m)} (\E' )\in D ^{\mathrm{b}} _{\mathrm{coh}} (\widetilde{\D} ^{(m)} _{\fP /\S }(T))$. 

\item For  $\E' \in D ^{\mathrm{b}} _{\mathrm{coh}} (\widetilde{\D} ^{(m)} _{\fP ^{\prime}/\S }(T'))$, 
we have
\begin{equation}
\notag
\widetilde{\D} ^{(m+1)} _{\fP /\S }(T)
\otimes ^{\L} _{\widetilde{\D} ^{(m)} _{\fP /\S }(T)}
  f _{T,+}^{(m)} (\E )
\riso 
  f _{T,+}^{(m+1)} (
\widetilde{\D} ^{(m+1)} _{\fP ^{\prime}/\S }(T')
\otimes ^{\L} _{\widetilde{\D} ^{(m)} _{\fP ^{\prime}/\S }(T')}
\E ).
\end{equation}

\item The functor $ f _{T,+} ^{(\bullet)}$
sends 
$\underrightarrow{LD} ^{\mathrm{b}}  _{\Q, \mathrm{coh}} (\smash{\widetilde{\D}} _{\fP ^{\prime }/\S } ^{(\bullet)} (T'))$
to 
$\underrightarrow{LD} ^{\mathrm{b}}  _{\Q, \mathrm{coh}} (\smash{\widetilde{\D}} _{\fP /\S } ^{(\bullet)} (T))$.

\item For $\E '\in D ^{\mathrm{b}} _{\mathrm{coh}} ( \D ^{\dag} _{\fP ^{\prime }} (\hdag T' ) _{\Q})$, 
we have
$f  _{T, +}( \E ')\in D ^{\mathrm{b}} _{\mathrm{coh}} ( \D ^{\dag} _{\fP  } (\hdag T ) _{\Q})$.
\end{enumerate}

\end{prop}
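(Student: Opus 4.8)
The plan is to lift the scheme-theoretic statement of Lemma~\ref{stab-coh-f_+} to the formal scheme $\fP$, and then to deduce parts~(3) and~(4) formally from parts~(1) and~(2) by means of the equivalences of categories established above. For~(1) and~(2) I would run Berthelot's argument for the analogous result in \cite[4.3]{Beintro2}. Since $f$ is proper, $\R f _*$ has finite cohomological dimension, so $f ^{(m)} _{T,+}$ is way-out right; combined with the finiteness of the relevant cohomological amplitudes (Corollary~\ref{rema-dim-coh-finie}), this reduces the coherence statement~(1) to the case where $\E '$ is a single $\widetilde{\D} _{\fP '/\S} ^{(m)} (T')$-module, and then, exactly as in the proof of Lemma~\ref{stab-coh-f_+} and using that $\E '$ is the filtered union of its coherent $\O _{\fP '}$-submodules, to the case $\E ' = \widetilde{\D} _{\fP '/\S} ^{(m)} (T') \widehat{\otimes} _{\O _{\fP '}} \FF '$ with $\FF '$ a coherent $\O _{\fP '}$-module. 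In that case the projection isomorphism (Proposition~\ref{Prop1.2.21-caro-surc}) yields, by the same computation as~\ref{stab-coh-f_+-proof1}, a canonical isomorphism $f ^{(m)} _{T,+} (\E ') \riso \widetilde{\D} _{\fP /\S} ^{(m)} (T) \widehat{\otimes} _{\O _\fP} \big( \omega ^{-1} _{\fP /\S} \otimes ^\L _{\O _\fP} \R f _* ( \omega ^{-1} _{\fP '/\S} \widehat{\otimes} _{\O _{\fP '}} \FF ' ) \big)$, whose $\O$-module factor is $p$-adically $\O$-coherent because $f$ is proper and $\S$ is noetherian; since this factor does not depend on $m$, comparing the formulas for levels $m$ and $m+1$ (together with the completed-tensor compatibility of \cite[3.2.3]{Beintro2}) gives simultaneously~(1) and the level-change isomorphism~(2).

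For~(3), given $\E ^{\prime (\bullet)} \in \underrightarrow{LD} ^{\mathrm{b}} _{\Q, \mathrm{coh}} (\widetilde{\D} _{\fP '/\S} ^{(\bullet)} (T'))$ I would replace it, up to isomorphism, by a representative $\FF ^{\prime (\bullet)}$ as in Definition~\ref{defi-LDQ0coh}: each $\FF ^{\prime (m)}$ is coherent over $\widetilde{\D} _{\fP '/\S} ^{(\lambda (m))} (T')$ and the transition morphisms $\widetilde{\D} _{\fP '/\S} ^{(\lambda (m'))} (T') \otimes ^\L \FF ^{\prime (m)} \to \FF ^{\prime (m')}$ are isomorphisms. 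Applying $f ^{(\lambda (m))} _{T,+}$ level by level, part~(1) keeps each term coherent and part~(2) keeps the transition morphisms isomorphisms, so $f ^{(\bullet)} _{T,+} (\FF ^{\prime (\bullet)})$ again satisfies the conditions of Definition~\ref{defi-LDQ0coh}, hence lies in $\underrightarrow{LD} ^{\mathrm{b}} _{\Q, \mathrm{coh}} (\widetilde{\D} _{\fP /\S} ^{(\bullet)} (T))$. Part~(4) then follows by transporting through the equivalence $\underrightarrow{\lim} \colon \underrightarrow{LD} ^{\mathrm{b}} _{\Q, \mathrm{coh}} (\widetilde{\D} _{\fP '/\S} ^{(\bullet)} (T')) \cong D ^{\mathrm{b}} _{\mathrm{coh}} (\D ^\dag _{\fP '} (\hdag T') _\Q)$ of Proposition~\ref{eqcat-limcoh} and the isomorphism of functors $\mathrm{Coh} _{T'} (f ^{(\bullet)} _{T,+}) \riso f _{T,+}$ of~\ref{coh-Qcoh}: writing $\E ' = \underrightarrow{\lim} \E ^{\prime (\bullet)}$ with $\E ^{\prime (\bullet)}$ coherent, one has $f _{T,+} (\E ') \riso \underrightarrow{\lim} f ^{(\bullet)} _{T,+} (\E ^{\prime (\bullet)})$, which is coherent by~(3).

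I expect the main difficulty to lie in the formal-to-scheme reduction underlying~(1) and~(2): one must check carefully that $\R f _*$ commutes with the derived $\pi$-adic inverse limit entering the definition of $\widehat{\otimes} ^\L$, and that $p$-adic coherence over $\widetilde{\D} _{\fP /\S} ^{(m)} (T)$ is detected by the compatible system of reductions modulo $\pi ^{i+1}$, so that Lemma~\ref{stab-coh-f_+} can be applied level by level and its conclusions glued. This is precisely Berthelot's argument and uses the properness of $f$ in an essential way (finite cohomological dimension of $\R f _*$, Mittag--Leffler on the $\pi$-adic tower); as elsewhere in the paper, one should also verify that the invoked results of Berthelot and of \cite{Be1} do not make use of the perfectness of $k$.
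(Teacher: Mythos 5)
Your proposal follows the same route as the paper, which proves the proposition by a single reference to Lemma~\ref{stab-coh-f_+}; you are essentially supplying the bookkeeping the paper leaves implicit (reducing the formal statement to the scheme-level lemma, then deducing~(3) and~(4) from~(1) and~(2) via Definition~\ref{defi-LDQ0coh} and the equivalences of~\ref{eqcat-limcoh} and~\ref{coh-Qcoh}). One inaccuracy worth flagging in your first paragraph: the claim that a coherent $\widetilde{\D} ^{(m)} _{\fP '/\S } (T')$-module $\E '$ on the \emph{formal} scheme $\fP '$ is a filtered union of its coherent $\O _{\fP '}$-submodules does not transfer verbatim from the proof of Lemma~\ref{stab-coh-f_+}, because there the argument lives at the finite level $P_i'$ where quasi-coherent modules are genuinely such filtered unions; on the formal scheme a coherent $\widetilde{\D} ^{(m)}$-module is $p$-adically complete and need not be a rising union of $\O$-coherent submodules. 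What one actually has (locally on affine $\fP '$, by noetherianity of the global sections ring and the theorem of type~A) is a surjection $\widetilde{\D} ^{(m)} _{\fP '/\S } (T') \smash{\widehat{\otimes}} _{\O _{\fP '}} \FF ' \twoheadrightarrow \E '$ for some coherent $\O _{\fP '}$-module $\FF '$, which already suffices for the way-out reduction to induced modules. Alternatively — and this is closer to what the citation of Lemma~\ref{stab-coh-f_+} suggests — one reduces modulo $\pi ^{i+1}$, applies the scheme-level lemma at each level $i$, and then recovers the formal statement from the tower of quotients; your final paragraph correctly identifies the commutation of $\R f _*$ with $\R \underleftarrow{\lim} _i$ and the Mittag--Leffler input needed for that route. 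Either way parts~(3) and~(4) follow exactly as you describe.
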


\begin{proof}
This is a consequence of \ref{stab-coh-f_+}.
\end{proof}

\section{Relative duality isomorphism and adjunction}
\label{chapt4-adj-reliso-closedimm}
Following Virrion (see \cite{Vir04}), we have
relative duality isomorphisms and adjoint paires $(f _+, f ^!)$  for proper morphisms $f$.
In this section, we first retrieve theses theorems 
in the case of a closed immersion of smooth schemes.
They are at least four reasons for this specific study:
because this case is much easier than Virrion's one, 
because we can remove the  coherence hypotheses to build the adjunction morphisms,
because these adjunction morphisms have a clear description and will be used in chapter \ref{section-BK-var} (more specifically, 
in Proposition \ref{comp-comp-adj-immf})
and because we will see in another chapter  (see \ref{rel-dual-isom-proj})
that this case of closed immersions  implies 
the case of projective morphisms
(which is enough for instance to check the coherence of the constant coefficient in \ref{coh-ss-div}).
Finally, in the last subsection, we complete Virrion's relative duality isomorphisms by adding some 
overconvergent singularities.

\subsection{The fundamental local isomorphism}
Let $T$ be a noetherian $\Z _{(p)}$-scheme of finite Krull dimension.
Let $u \colon Z \hookrightarrow X$ be a closed immersion of smooth $T$-schemes.
Let $\I$ be the ideal defining $u$. 
The level $m\in\N$ is fixed.

\begin{empt}
[Some notation with local coordinates]
\label{locdesc-climm}
Suppose
$X$ is affine and there exist
$t _{r +1},\dots , t _{d}  \in \Gamma (X,\I)$
generating 
$I:=\Gamma (X,\I)$,
$t _{1},\dots , t _{r}\in \Gamma ( X,\O _{X})$
such that
$t _{1},\dots ,t  _{d}$ are local coordinates of $X$ over $T$,
$\overline{t} _{1},\dots ,\overline{t} _{r}$ 
are local coordinates of $Z$ over $T$,
and 
$\overline{t} _{r +1},\dots ,\overline{t} _{d}$ is a basis of $\I /\I ^2$,
where $\overline{t} _{1},\dots , \overline{t} _{r} \in\Gamma ( Z ,\O _{Z})$
(resp. $\overline{t} _{r +1},\dots ,\overline{t} _{d}\in\Gamma ( X ,\cI /\cI ^2)$)
are the images of 
$t _{1},\dots , t _{r}$
(resp. $t _{r+1},\dots , t _{d}$)
via 
$\Gamma ( X,\O _{X})
\to
\Gamma ( Z ,\O _{Z})$
(resp. 
$\Gamma ( X,\cI)
\to 
\Gamma ( X ,\cI /\cI ^2)$).

We denote by 
$\tau _i := 1 \otimes t _i -t _i \otimes 1$, 
$\overline{\tau} _j := 1 \otimes \overline{t} _j -\overline{t} _j \otimes 1$, 
for any $i= 1,\dots, d$, $j= 1,\dots, r$.
The sheaf of $\O _X$-algebras
$\cP ^n  _{X/T,(m)}$ is 
a free $\O _X$-module with the basis 
$\{ \underline{\tau} ^{\{\underline{k}\} _{(m)}}\ | \ \underline{k}\in \N ^d \text{ such that } | \underline{k}| \leq n\} $,
and
$\cP ^n  _{Z/T,(m)}$ is 
a free $\O _Z$-module with the basis 
$\{ \underline{\overline{\tau}} ^{\{\underline{l}\} _{(m)}}\ | \ \underline{l}\in \N ^r \text{ such that } | \underline{l}| \leq n\} $.
We denote by 
$\{ 
\underline{\partial} ^{<\underline{k}> _{(m)}}
\ | \ \underline{k}\in \N ^d,\ | \underline{k} | \leq n
\}$
the corresponding dual basis  of 
$\D ^{(m)} _{X/T,n}$ 
and
by 
$\{ \underline{\partial} ^{<\underline{l}> _{(m)}}\  
| \ \underline{l}\in \N ^r,\ | \underline{l} | \leq n\} $
the corresponding dual basis of $\D ^{(m)} _{Z/T,n}$ (we hope the similar notation is not too confusing).
The sheaf
$\D ^{(m)} _{X/T}$ is 
a free $\O _X$-module with the basis 
$\{ 
\underline{\partial} ^{<\underline{k}> _{(m)}}
\ | \ \underline{k}\in \N ^d
\}$,
and
$\D ^{(m)} _{Z/T}$ is 
a free $\O _Z$-module with the basis 
$\{ \underline{\partial} ^{<\underline{l}> _{(m)}}\ | \ \underline{l}\in \N ^r\} $.

a) We compute the canonical homomorphism
$u ^* \cP ^n  _{X/T,(m)}
\to \cP ^n  _{Z/T,(m)}$
sends 
$\underline{\tau} ^{\{(\underline{l}, \underline{h})\} _{(m)}}$
where 
$\underline{l} \in \N ^r$
and 
$\underline{h} \in \N ^{d-r}$
to
$\underline{\overline{\tau}} ^{\{\underline{l}\} _{(m)}}$
if $\underline{h} = (0,\dots,0)$
and to $0$ otherwise.

b) We denote by 
$\theta \colon 
\cD ^{(m)} _{Z/T}
\to 
\cD ^{(m)} _{Z \to X/T}$
the canonical homomorphism of left $\cD ^{(m)} _{Z/T}$-modules
(which is built by duality from the 
canonical homomorphisms $u ^* \cP ^n  _{X/T,(m)}
\to \cP ^n  _{Z/T,(m)}$).
For any $P \in D ^{(m)} _{X/T}$, we denote by 
$\overline{P}$ its image via
the canonical morphism of left $D ^{(m)} _{X/T}$-modules
$D ^{(m)} _{X/T}
\to 
D ^{(m)} _{X/T} / I D ^{(m)} _{X/T}
= 
D ^{(m)} _{Z \to X/T}$.
We set 
$\underline{\xi} ^{<\underline{k}> _{(m)}}:= 
\overline{\underline{\partial} ^{<\underline{k}> _{(m)}}}$. 
By duality from a), we compute
$\theta (\underline{\partial} ^{<\underline{l}> _{(m)}})
=
\underline{\xi} ^{<(\underline{l}, \underline{0})> _{(m)}}$,
for any 
$\underline{l}\in \N ^r$.

c) Let $Q \in D ^{(m)} _{Z}$, 
$Q _X\in D ^{(m)} _{X/T}$ 
such that  $\overline{Q _X}= \theta (Q)$.
Let $x \in \Gamma (X, \O _X)$ and
$\overline{x} \in \Gamma (Z, \O _Z)$ its image.
By definition of the action of 
$\cD ^{(m)} _{X/T}$ on $\O _X$ 
and 
of $\cD ^{(m)} _{Z/T}$ on $\cO _Z$ (see \cite[2.2.1.4]{Be1}), 
we check the formula
\begin{equation}
\label{dfn-u!leftclimm-pre2cst}
Q \cdot \overline{x}
=
\overline{Q _X \cdot x}.
\end{equation}

\end{empt}

\begin{empt}
\label{locdesc-climm2}
Suppose we are in the local situation of \ref{locdesc-climm}. 
We denote by 
$\cD ^{(m)} _{X,Z,\underline{t}/T}$ the subring of 
$\cD ^{(m)} _{X/T}$ which is a 
 free $\O _X$-module with the basis 
$\{ \underline{\partial} ^{<(\underline{l}, \underline{0})> _{(m)}}\ | \ \underline{l}\in \N ^r\} $, 
where 
$\underline{0}:=(0,\dots, 0) \in 
\N ^{d-r}$.
If there is no ambiguity concerning the local coordinates (resp. and $T$),
we might simply denote $\cD ^{(m)} _{X,Z,\underline{t}/T}$ by 
$\cD ^{(m)} _{X,Z/T}$
(resp. $\cD ^{(m)} _{X,Z}$).
\begin{enumerate}[(a)]
\item Since 
$t _{r +1},\dots , t _{d}$ generate $I$ and are in the center of 
$D ^{(m)} _{X,Z,\underline{t}/T}$, then we compute 
$\cI\cD ^{(m)} _{X,Z,\underline{t}/T} = \cD ^{(m)} _{X,Z,\underline{t}/T}\cI$.
Hence, we get a canonical ring structure on 
$\cD ^{(m)} _{X,Z,\underline{t}/T}/\cI\cD ^{(m)} _{X,Z,\underline{t}/T}$
induced by that of 
$\cD ^{(m)} _{X,Z,\underline{t}/T}$.
Since $\cI\cD ^{(m)} _{X,Z,\underline{t}/T} 
= 
\cD ^{(m)} _{X,Z,\underline{t}/T} 
\cap 
\cI\cD ^{(m)} _{X/T} $, we get the inclusion
$\cD ^{(m)} _{X,Z,\underline{t}/T}/\cI\cD ^{(m)} _{X,Z,\underline{t}/T} 
\hookrightarrow
\cD ^{(m)} _{X/T} / \cI \cD ^{(m)} _{X/T}$.
Moreover, since $\theta (\underline{\partial} ^{<\underline{l}> _{(m)}})
=
\underline{\xi} ^{<(\underline{l}, \underline{0})> _{(m)}}$
for any 
$\underline{l}\in \N ^r$, 
then we have the following factorization
\begin{equation}
\label{diag-DZ2X-t-comm}
\xymatrix{
{ \cD ^{(m)} _{X,Z,\underline{t}/T}/\cI\cD ^{(m)} _{X,Z,\underline{t}/T}} 
\ar@{^{(}->}[r] ^-{}
& 
{\cD ^{(m)} _{X/T} / \cI \cD ^{(m)} _{X/T}} 
\\ 
{u _* \cD ^{(m)} _{Z/T}} 
\ar[u] ^-{\sim} _-{\theta}
\ar[r] ^-{\theta}
& 
{u _* \cD ^{(m)} _{Z \to X/T},} 
\ar@{=}[u] ^-{}
}
\end{equation}
where both horizontal morphisms are the canonical ones
and where 
the vertical arrow
$u _* \cD ^{(m)} _{Z/T}
\to
 \cD ^{(m)} _{X,Z,\underline{t}/T}/\cI\cD ^{(m)} _{X,Z,\underline{t}/T}$
is in fact a bijection that we still denote by $\theta$.

\item 
Let $P \in D ^{(m)} _{X/T}$, $Q \in D ^{(m)} _{Z}$. 
Choose $Q _X\in D ^{(m)} _{X,Z,\underline{t}/T}$ 
such that  $\overline{Q _X}= \theta (Q)$.
Since $\theta$ is $D ^{(m)} _{Z}$-linear, we get
$Q \cdot \overline{1} = \theta (Q \cdot 1) = \overline{Q _X}$, where
$Q \cdot \overline{1} $ is the action of $Q$ on $\overline{1} \in D ^{(m)} _{Z \to X/T}$
given by its structure of  left $D ^{(m)} _{Z/T}$-module. 
Moreover, since $D ^{(m)} _{Z \to X/T}$ is a $(D ^{(m)} _{Z/T},D ^{(m)} _{X/T})$-bimodule,
then 
we get 
\begin{equation}
\label{eq-QXP}
Q \cdot \overline{P} 
= 
Q \cdot \overline{1} \cdot P
=
\overline{Q _X} \cdot P
 =
\overline{Q _X P}.
\end{equation}

\item \label{(c)}
Since 
$u _* \cD ^{(m)} _{Z \to X/T}= \cD ^{(m)} _{X/T} / \cI \cD ^{(m)} _{X/T}$, 
then $\cD ^{(m)} _{X/T} / \cI \cD ^{(m)} _{X/T} $ is a 
$(u _*\cD ^{(m)} _{Z/T} , \D ^{(m)} _{X/T})$-bimodule.
Using the formula \ref{eq-QXP}, we compute that 
$\cD ^{(m)} _{X,Z,\underline{t}/T}/\cI\cD ^{(m)} _{X,Z,\underline{t}/T} $
is also a left $u _*\cD ^{(m)} _{Z/T} $-submodule  of 
$\cD ^{(m)} _{X/T} / \cI \cD ^{(m)} _{X/T}$, and then 
a $(u _*\cD ^{(m)} _{Z/T} , \D ^{(m)} _{X,Z,\underline{t}/T})$-subbimodule
of $\cD ^{(m)} _{X/T} / \cI \cD ^{(m)} _{X/T}$.
Since $u _* \cD ^{(m)} _{Z/T}
\to 
u _* \cD ^{(m)} _{Z \to X/T}$
is an homomorphism of left $u _*\cD ^{(m)} _{Z/T}$-modules, then
via the commutativity of \ref{diag-DZ2X-t-comm}, 
this implies that the bijection
$\theta \colon u _* \cD ^{(m)} _{Z/T}
\riso
 \cD ^{(m)} _{X,Z,\underline{t}/T}/\cI\cD ^{(m)} _{X,Z,\underline{t}/T}$
is an isomorphism of 
left $u _* \cD ^{(m)} _{Z/T}$-modules.

\item \label{(e)}The canonical bijection
$\theta \colon u _* \cD ^{(m)} _{Z/T}
\riso
 \cD ^{(m)} _{X,Z,\underline{t}/T}/\cI\cD ^{(m)} _{X,Z,\underline{t}/T}$
is  an isomorphism of rings.
Indeed, by $\O _Z$-linearity of this map, 
we reduce to check that 
$\theta 
(
\underline{\partial} ^{<\underline{k}> _{(m)}}
\overline{a}
\underline{\partial} ^{<\underline{l}> _{(m)}} 
) 
=
\theta (\underline{\partial} ^{<\underline{k}> _{(m)}})
\theta ( \overline{a})
\theta (\underline{\partial} ^{<\underline{l}> _{(m)}} )$,
for any $a \in \Gamma (X, \O _X)$, 
$\underline{k}, \underline{l}\in \N ^r$.
Using \cite[2.2.4.(iii)--(iv)]{Be1}, by $\O _Z$-linearity we get the first equality
\begin{gather}
\notag
\theta 
(
\underline{\partial} ^{<\underline{k}> _{(m)}}
\overline{a}
\underline{\partial} ^{<\underline{l}> _{(m)}} 
) =
\sum _{\underline{k}''+\underline{k}'=\underline{k}}
\left\{
\begin{smallmatrix}
\underline{k}   \\
  \underline{k} '
\end{smallmatrix}
\right \} _{(m)}
\left<
\begin{smallmatrix}
\underline{k} ''  +   \underline{l}\\
  \underline{l}
\end{smallmatrix}
\right > _{(m)}
\underline{\partial} ^{<\underline{k}'> _{(m)}}  (\overline{a})
\underline{\xi} ^{<(\underline{k}''+  \underline{l}, \underline{0})> _{(m)}} 
\\
\notag
\overset{\ref{dfn-u!leftclimm-pre2cst}}{=}
\sum _{\underline{k}''+\underline{k}'=\underline{k}}
\left\{
\begin{smallmatrix}
\underline{k}   \\
  \underline{k} '
\end{smallmatrix}
\right \} _{(m)}
\left<
\begin{smallmatrix}
\underline{k} ''  +   \underline{l}\\
  \underline{l}
\end{smallmatrix}
\right > _{(m)}
\overline{ \underline{\partial} ^{<(\underline{k}', \underline{0})> _{(m)}}  (a)
\underline{\partial} ^{<(\underline{k}''+  \underline{l}, \underline{0})> _{(m)}} )}
\\
=
\overline{ \underline{\partial} ^{<(\underline{k}, \underline{0})> _{(m)}} 
a
\underline{\partial} ^{<(\underline{l}, \underline{0})> _{(m)}} )}
=
\overline{ \underline{\partial} ^{<(\underline{k}, \underline{0})> _{(m)}}} 
\overline{a}
\overline{\underline{\partial} ^{<(\underline{l}, \underline{0})> _{(m)}} )}
=
\theta (\underline{\partial} ^{<\underline{k}> _{(m)}})
\theta ( \overline{a})
\theta (\underline{\partial} ^{<\underline{l}> _{(m)}} ).
\end{gather}

\item \label{f}
Let $\E$ be an $\O _X$-module (resp. an $(\cA, \O _X)$-bimodule for some $\O _T$-algebra $\cA$) such that $\I \E = 0$. 
Then $\E$ has a structure of left $\cD ^{(m)} _{X,Z,\underline{t}/T}$-module
(resp. right $\cD ^{(m)} _{X,Z,\underline{t}/T}$-module, 
resp. $(\cD ^{(m)} _{X,Z,\underline{t}/T},\cA)$-bimodule)
extending its structure of $\O _X$-module 
if and only if 
 $\E$ has a structure of left
$\cD ^{(m)} _{X,Z,\underline{t}/T}/ \I \cD ^{(m)} _{X,Z,\underline{t}/T}$-module
(resp. right $\cD ^{(m)} _{X,Z,\underline{t}/T}/ \I \cD ^{(m)} _{X,Z,\underline{t}/T}$-module, 
resp. $(\cD ^{(m)} _{X,Z,\underline{t}/T}/ \I \cD ^{(m)} _{X,Z,\underline{t}/T},\cA)$-bimodule)
extending its structure of $\O _X$-module.

\item \label{g}
Both rings 
$u _* \cD ^{(m)} _{Z/T}$ and 
$\cD ^{(m)} _{X,Z,\underline{t}/T}$ are 
$\O _X$-rings (i.e. they are rings endowed with a structural 
homomorphism of rings 
$\O _X \to \cD ^{(m)} _{X,Z,\underline{t}/T}$
and 
$\O _X \to u _* \cD ^{(m)} _{Z/T}$).
This yields the composite 
\begin{equation}
\label{morp-rho}
\rho\colon \cD ^{(m)} _{X,Z,\underline{t}/T}
\to
\cD ^{(m)} _{X,Z,\underline{t}/T}
/\I \cD ^{(m)} _{X,Z,\underline{t}/T}
\underset{\theta}{\liso}  
u _* \cD ^{(m)} _{Z/T}
\end{equation}
is an homomorphism of $\O _X$-rings.
If $\cM$ is a left (resp. right) $u _* \cD ^{(m)} _{Z/T}$-module
or a (resp. left, resp. right) $u _* \cD ^{(m)} _{Z/T}$-bimodule, 
then we denote by 
$\rho _* (\cM)$ (or simply by $\cM$)
the left (resp. right) $\cD ^{(m)} _{X,Z,\underline{t}/T}$-module
or the (resp. left, resp. right) $\cD ^{(m)} _{X,Z,\underline{t}/T}$-bimodule
structure on $\cM$ induced by $\rho$. 

Conversely  if  $\cM$ is a left (resp. right) $\cD ^{(m)} _{X,Z,\underline{t}/T}
/\I \cD ^{(m)} _{X,Z,\underline{t}/T}
$-module
or a (resp. left, resp. right) $\cD ^{(m)} _{X,Z,\underline{t}/T}
/\I \cD ^{(m)} _{X,Z,\underline{t}/T}$-bimodule, 
then we denote by 
$\theta _* (\cM)$ (or simply by $\cM$)
the left (resp. right) $u _* \cD ^{(m)} _{Z/T}$-module
or the (resp. left, resp. right) $u _* \cD ^{(m)} _{Z/T}$-bimodule
structure on $\cM$ induced by $\theta$. 
By definition, 
$\rho _* \theta _* = id$
and
$\theta _*  \rho _* = id$.

\item With notation (\ref{g}), 
we get 
the isomorphism
$\theta \colon \rho _* u _* \cD ^{(m)} _{Z/T}
\riso
 \cD ^{(m)} _{X,Z,\underline{t}/T}/\cI\cD ^{(m)} _{X,Z,\underline{t}/T}$
 of 
$\cD ^{(m)} _{X,Z,\underline{t}/T}$-bimodules.
This yields the isomorphism
$\theta \colon u _* \cD ^{(m)} _{Z/T}
\riso
\theta _* \left ( 
\cD ^{(m)} _{X,Z,\underline{t}/T}/\cI\cD ^{(m)} _{X,Z,\underline{t}/T}
\right )$
of $u _* \cD ^{(m)} _{Z/T}$-bimodules.

\item \label{locadesc-uflat2-iso1}
Via the remark (\ref{f}), 
we can also view $\rho_ * u _* \cD ^{(m)} _{Z/T}$ as a
$( \cD ^{(m)} _{X,Z,\underline{t}/T}/\I \cD ^{(m)} _{X,Z,\underline{t}/T} , \cD ^{(m)} _{X,Z,\underline{t}/T})$-bimodule. 
Hence, we get a structure of
$( u _* \cD ^{(m)} _{Z/T}, \cD ^{(m)} _{X,Z,\underline{t}/T})$-bimodule
on $u _* \cD ^{(m)} _{Z/T}$.
On the other hand, following (\ref{(c)}),
$ \cD ^{(m)} _{X,Z,\underline{t}/T}/\cI\cD ^{(m)} _{X,Z,\underline{t}/T}$ is a 
$( u _* \cD ^{(m)} _{Z/T}, \cD ^{(m)} _{X,Z,\underline{t}/T})$-subbimodule
of $ \cD ^{(m)} _{X/T}/\cI\cD ^{(m)} _{X/T}$.
Since $\theta$ is an isomorphism of left $u _* \cD ^{(m)} _{Z/T}$-modules,
then 
the 
isomorphism
$\theta \colon u _* \cD ^{(m)} _{Z/T}
\riso
 \cD ^{(m)} _{X,Z,\underline{t}/T}/\cI\cD ^{(m)} _{X,Z,\underline{t}/T}$
is also an isomorphism of 
$( u _* \cD ^{(m)} _{Z/T}, \cD ^{(m)} _{X,Z,\underline{t}/T})$-bimodules
such that the structure of left $u _* \cD ^{(m)} _{Z/T}$-module on 
$u _* \D ^{(m)} _{Z}$ is the canonical one and 
the structure of right $\cD ^{(m)} _{X,Z,\underline{t}/T}$-module on 
$\D ^{(m)} _{X,Z,\underline{t}/T}/\cI \D ^{(m)} _{X,Z,\underline{t}/T}$ is the canonical one.

\item Since 
$\cD ^{(m)} _{X/T} $
is a free left $\cD ^{(m)} _{X,Z,\underline{t}/T}$-module
with the basis
$\{ \underline{\partial} ^{<(\underline{0},\underline{h})> _{(m)}}\ | \ \underline{h}\in \N ^{d-r}\} $,
where 
$\underline{0}:=(0,\dots, 0) \in 
\N ^{r}$,
then from the commutativity of \ref{diag-DZ2X-t-comm},
we get that 
$\cD ^{(m)} _{Z \to X/T}$ is a free 
left 
$\cD ^{(m)} _{Z/T}$-module 
with the basis
$\{ \underline{\xi} ^{<(\underline{0},\underline{h})> _{(m)}}\ | \ \underline{h}\in \N ^{d-r}\} $,
where 
$\underline{0}:=(0,\dots, 0) \in 
\N ^{r}$.

\item We have the transposition automorphism
${} ^t \colon 
D ^{(m)} _{X/T}
\to 
D ^{(m)} _{X/T}$
given by 
$P 
= 
\sum _{\underline{k} \in \N ^d}
a _{\underline{k}} \underline{\partial} ^{<\underline{k}> _{(m)}}
\mapsto
{} ^t P:= 
\sum _{\underline{k} \in \N ^d}
(-1) ^{| \underline{k}|}\underline{\partial} ^{<\underline{k}> _{(m)}}
a _{\underline{k}} $.
Beware that this transposition depends on the choice of the local coordinates $t _1,\dots, t _d$.
This transposition automorphism induces 
${} ^t \colon 
D ^{(m)} _{X,Z,\underline{t}/T}
\to 
D ^{(m)} _{X,Z,\underline{t}/T}$
such that
${} ^t ( ID ^{(m)} _{X,Z,\underline{t}/T})
=ID ^{(m)} _{X,Z,\underline{t}/T}$.
This yields the automorphism
${} ^t \colon 
D ^{(m)} _{X,Z,\underline{t}/T}/I D ^{(m)} _{X,Z,\underline{t}/T}
\to 
D ^{(m)} _{X,Z,\underline{t}/T}
/
ID ^{(m)} _{X,Z,\underline{t}/T}$.
On the other hand, 
via the local coordinates $\overline{t} _{1},\dots ,\overline{t} _{r}$ 
of $Z$ over $T$,
we get
the transposition automorphism
${} ^t \colon 
D ^{(m)} _{Z/T}
\to 
D ^{(m)} _{Z/T}$
given by 
$Q 
= 
\sum _{\underline{k} \in \N ^r}
b _{\underline{k}} \underline{\partial} ^{<\underline{k}> _{(m)}}
\mapsto
{} ^t Q:= 
\sum _{\underline{k} \in \N ^r}
(-1) ^{| \underline{k}|}\underline{\partial} ^{<\underline{k}> _{(m)}}
b _{\underline{k}} $.
We compute the following diagram 
\begin{equation}
\label{transp-closedimmersion}
\xymatrix{
{D ^{(m)} _{X,Z,\underline{t}/T}/I D ^{(m)} _{X,Z,\underline{t}/T}} 
\ar[r] _-{\sim} ^-{{}^t}
& 
{D ^{(m)} _{X,Z,\underline{t}/T}/I D ^{(m)} _{X,Z,\underline{t}/T}} 
\\ 
{D ^{(m)} _{Z/T}} 
\ar[u] ^-{\sim} _-{\theta}
\ar[r] _-{\sim} ^-{{}^t}
& 
{D ^{(m)} _{Z/T}} 
\ar[u] ^-{\sim} _-{\theta}
}
\end{equation}
is commutative.
\end{enumerate}

\end{empt}

\begin{empt}
\label{rightD-moduleflat}
We have the commutative diagram 
\begin{equation}
\label{comdiag-finite}
\xymatrix{
{\Delta ^n  _{X/T,(m)}(2)} 
\ar@<2ex>[r] ^-{p _{12}}
\ar[r] ^-{p _{02}}
\ar@<-2ex>[r] ^-{p _{01}}
& 
{\Delta ^n  _{X/T,(m)}} 
\ar@<1ex>[r] ^-{p _1}
\ar@<-1ex>[r] _-{p _0}
& 
{X} 
\\ 
{\Delta ^n  _{X/T,(m)}(2)} 
\ar@<2ex>[r] ^-{p _{12}}
\ar[r] ^-{p _{02}}
\ar@<-2ex>[r] ^-{p _{01}}
\ar@{^{(}->}[u] ^-{\Delta ^n (u)(2)}
& 
 {\Delta ^n  _{Z/T,(m)}} 
\ar@<1ex>[r] ^-{p _1}
\ar@<-1ex>[r] _-{p _0}
\ar@{^{(}->}[u] ^-{\Delta ^n (u)}
& 
{Z}
\ar@{^{(}->}[u] ^-{u}
}
\end{equation}
where $p _0, p_1$ correspond to the homomorphisms
$\O _X \rightarrow \cP ^n _{X (m)} $ given by respectively the left and right structures of $\O _X$-algebra
on $\cP ^n _{X (m)} $.
Using the local description 
of the homomorphism $u ^* \cP ^n  _{X/T,(m)} \to \cP ^n  _{Z/T,(m)}$
given in \ref{locdesc-climm},
we check that 
$u ^* \cP ^n  _{X/T,(m)} \to \cP ^n  _{Z/T,(m)}$
is an epimorphism i.e. vertical morphisms of \ref{comdiag-finite} are indeed closed immersions.

We denote $\overline{u}\colon (Z, \O _Z) \to (X, u _* \O _Z)$ the morphism of ringed spaces induced by $u$.
We remark that $\overline{u}$ is flat and 
that 
$\overline{u} ^* = u ^{-1} \colon 
D ^{+}  ( u _* \O _Z )
\to 
D ^{+}  (  \O _Z )$.
Recall that for any $\M \in D ^{+}  ( \O _X )$, by definition
$u ^\flat ( \M) := u ^{-1} \R \mathcal{H}om _{\O _X} ( u _* \O _{Z}, \M)$ (see \cite[III.6]{HaRD}).

If $\M$ is a right $\D _X ^{(m)}$-module, we denote by 
$u ^{\flat 0} (\M)
:=
u ^{-1} \mathcal{H}om _{\O _X} ( u _* \O _{Z}, \M)$.
To simplify notation, we will write 
$u ^{\flat 0} (\M)
:=
\mathcal{H}om _{\O _X} ( \O _{Z}, \M)$.
By definition, the right $\D _Z ^{(m)}$-module structure of 
$u ^{\flat 0} (\M)$ is given by the following $m$-PD-costratification
$\epsilon ^{u ^{\flat 0}(\M)} _n$ making commutative the diagram:
\begin{equation}
\label{diag-rightD-moduleflat}
\xymatrix{
{\mathcal{H} om _{\O _Z} ( p ^{n} _{0*} \cP ^n _{Z,\,(m)},\,\mathcal{H}om _{\O _X} ( \O _{Z}, \M))} 
\ar[d] ^-{\sim} _-{\mathrm{ev}(1)}
\ar@{.>}[r] ^-{\sim} _-{\epsilon ^{u ^{\flat 0}(\M)} _n}
&
{\mathcal{H} om _{\O _Z} ( p ^{n} _{1*} \cP ^n _{Z,\,(m)},\,\mathcal{H}om _{\O _X} ( \O _{Z}, \M))} 
\ar[d] ^-{\sim} _-{\mathrm{ev}(1)}
\\ 
{\mathcal{H} om _{\O _X} ( p ^{n} _{0*} \cP ^n _{Z,\,(m)},\,\M)} 
&
{\mathcal{H} om _{\O _X} ( p ^{n} _{1*} \cP ^n _{Z,\,(m)},\,\M)} 
\\ 
{\mathcal{H} om _{\cP ^n _{X,\,(m)}} ( \cP ^n _{Z,\,(m)},\,\mathcal{H}om _{\O _X} ( p ^{n} _{0*} \cP ^n _{X,\,(m)}, \M)) } 
\ar[u] _-{\sim} ^-{\mathrm{ev}(1)}
\ar@{^{(}->}[d] ^-{\mathrm{ev}(1)}
\ar[r] ^-{\sim} _-{\epsilon ^\M _n}
& 
{\mathcal{H} om _{\cP ^n _{X,\,(m)}} ( \cP ^n _{Z,\,(m)},\,\mathcal{H}om _{\O _X} ( p ^{n} _{1*} \cP ^n _{X,\,(m)}, \M)) } 
\ar[u] _-{\sim} ^-{\mathrm{ev}(1)}
\ar@{^{(}->}[d] ^-{\mathrm{ev}(1)}
\\ 
{\mathcal{H}om _{\O _X} ( p ^{n} _{0*} \cP ^n _{X,\,(m)}, \M)} 
\ar[r] ^-{\sim} _-{\epsilon ^\M _n}
& 
{\mathcal{H}om _{\O _X} ( p ^{n} _{1*} \cP ^n _{X,\,(m)}, \M).} 
}
\end{equation}
Both left (resp. right) top vertical isomorphisms correspond both isomorphisms
$p ^{\flat0} _0 \circ u ^{\flat 0}
\riso
(u \circ p  _0) ^{\flat 0}
=
(p _0 \circ \Delta ^n (u)  ) ^{\flat 0} 
\liso 
(\Delta ^n (u) ) ^{\flat 0} 
 \circ 
 p ^{\flat 0} _0$
(resp. 
$p ^{\flat0} _1 \circ u ^{\flat 0}
\riso
(u \circ p  _1) ^{\flat 0}
=
(p _1 \circ \Delta ^n (u)  ) ^{\flat 0} 
\liso
 (\Delta ^n (u) ) ^{\flat 0}
 \circ
 p ^{\flat0} _1 
$).
The bottom vertical arrows are monomorphisms
because 
 $u ^{-1} \cP ^n  _{X/T,(m)} \to \cP ^n  _{Z/T,(m)}$
 is surjective.
We check the cocycle conditions via similar isomorphisms.

Since $\D _X ^{(m)}$ is a flat $\O _X$-module, then an injective 
right $\D _X ^{(m)}$-module is an injective $\O_X$-module. 
Hence, taking an injective resolution of a complex 
of $D ^{+} ( {} ^r \D _X ^{(m)})$, 
we check the functor $u ^\flat$ sends
$D ^{+}  ( {} ^r \D _X ^{(m)})$
to
$D ^{+} ({} ^r \D _Z ^{(m)} )$, i.e. 
it induces
\begin{equation}
\label{dfn-uflat}
u ^\flat
\colon 
D ^{+}  ( {} ^r \D _X ^{(m)})
\to
D ^{+} ({} ^r \D _Z ^{(m)} ).
\end{equation}
When the level $m$ is ambiguous, 
we denote it more specifically by
$u ^{\flat (m)}$.

Since $X$ is locally noetherian, then $u ^\flat$ preserves the quasi-coherence and
sends 
$D ^{+} _{\mathrm{qc}} ( {} ^r \D _X ^{(m)})$
to 
$D ^{+} _{\mathrm{qc}} ({} ^r \D _Z ^{(m)} )$.

\end{empt}

\begin{lem}
\label{locadesc-uflat1} 
Suppose we are in the local situation of \ref{locdesc-climm}. 
Let $\M$ be a right $\D _X ^{(m)}$-module.
Let $x \in \Gamma (Z,  u ^{\flat 0} (\M))$ and 
$Q \in D ^{(m)} _{Z}$. 
Choose any $Q _X\in D ^{(m)} _{X,Z,\underline{t}/T}$ 
such that  $\theta (Q ) = \overline{Q _X}$
(see \ref{diag-DZ2X-t-comm}).
We have the formula
\begin{equation}
\label{locadesc-uflat1-formula}
\mathrm{ev} _1
(x  \cdot Q)
= 
\mathrm{ev} _1 (x)  \cdot Q _X,
\end{equation}
where $\mathrm{ev} _1 \colon \Gamma ( Z, u ^{\flat 0} (\M) )
\hookrightarrow \Gamma (X, \M)$ is 
the evaluation at $1$ homomorphism.

\end{lem}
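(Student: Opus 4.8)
\emph{The plan} is to reduce the statement to the right action of a single basis differential operator and then to extract the right $\D ^{(m)} _Z$-module structure of $u ^{\flat 0}(\M)$ by unwinding the commutative diagram \ref{diag-rightD-moduleflat}. First I would record that the right-hand side $\mathrm{ev} _1(x)\cdot Q _X$ depends only on $\overline{Q _X}=\theta(Q)$, hence only on $Q$. Indeed, via $\mathrm{ev} _1$ the section $x$ is identified with an element of $\Gamma(X,\M)$ annihilated on the right by $\Gamma(X,\I)$; since $\I\cD ^{(m)} _{X,Z,\underline{t}/T}=\cD ^{(m)} _{X,Z,\underline{t}/T}\I$ by \ref{locdesc-climm2}, any element of $\I D ^{(m)} _{X,Z,\underline{t}/T}$ can be written $\sum _j f _j P _j$ with $f _j\in\Gamma(X,\I)$, so $\mathrm{ev} _1(x)\cdot\bigl(\sum _j f _j P _j\bigr)=\sum _j(\mathrm{ev} _1(x)\cdot f _j)\cdot P _j=0$; the same computation (applied to $Q _X f\in\cD ^{(m)} _{X,Z}\I=\I\cD ^{(m)} _{X,Z}$) also shows $\mathrm{ev} _1(x)\cdot Q _X$ is again annihilated by $\I$, i.e. lies in $u ^{\flat 0}(\M)$, so the formula is at least well posed.

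Next I would reduce to basis operators. The right action on $u ^{\flat 0}(\M)$ is associative, $\mathrm{ev} _1$ is $\O _X$-linear, and the $\O _Z$-module structure of $u ^{\flat 0}(\M)$ corresponds under $\mathrm{ev} _1$ to the $\O _X$-module structure of $\M$ restricted to the kernel of $\I$ (this is the $n=0$ layer of \ref{diag-rightD-moduleflat}, in the spirit of \ref{dfn-u!leftclimm-pre2cst}). Writing a local section as $Q=\sum _{|\underline{l}|\le n} b _{\underline{l}}\,\underline{\partial} ^{<\underline{l}> _{(m)}}$ with $b _{\underline{l}}\in\O _Z$, choosing lifts $\widetilde{b _{\underline{l}}}\in\O _X$, and using $x\cdot(b _{\underline{l}}\underline{\partial} ^{<\underline{l}> _{(m)}})=(x\cdot b _{\underline{l}})\cdot\underline{\partial} ^{<\underline{l}> _{(m)}}$, the general formula follows from the case $Q=\underline{\partial} ^{<\underline{l}> _{(m)}}$ with $\underline{l}\in\N ^r$ and $Q _X:=\underline{\partial} ^{<(\underline{l},\underline{0})> _{(m)}}$ (which lifts $\theta(\underline{\partial} ^{<\underline{l}> _{(m)}})=\underline{\xi} ^{<(\underline{l},\underline{0})> _{(m)}}$ by \ref{locdesc-climm}), taking $n=|\underline{l}|$.

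For this reduced case I would use the description of the right action in terms of the costratification: for a right $\D ^{(m)} _X$-module $\M$, a section $y$ gives the canonical element $\widetilde{y}\in\mathcal{H}om _{\O _X}(p ^n _{0*}\cP ^n _{X,(m)},\M)$, $\omega\mapsto y\cdot\varepsilon _X(\omega)$ (with $\varepsilon _X\colon\cP ^n _{X,(m)}\to\O _X$ the canonical projection), and $y\cdot\underline{\partial} ^{<\underline{k}> _{(m)}}=(-1) ^{|\underline{k}|}\bigl(\epsilon ^{\M} _n(\widetilde{y})\bigr)(\underline{\tau} ^{\{\underline{k}\} _{(m)}})$ with the standard sign convention — the same one over $X$ and over $Z$; the same recipe over $Z$ computes the action on $u ^{\flat 0}(\M)$ from $\epsilon ^{u ^{\flat 0}(\M)} _n$. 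The key point is that the canonical element $\widetilde{x}$ of $x\in u ^{\flat 0}(\M)$ is sent, along the left column of \ref{diag-rightD-moduleflat} (the two $\mathrm{ev}(1)$ maps and the monomorphisms), to the canonical element $\widetilde{\mathrm{ev} _1(x)}$ of $\mathrm{ev} _1(x)\in\M$ — because $\varepsilon _X$ factors through $u ^*\cP ^n _{X,(m)}\to\cP ^n _{Z,(m)}$ followed by $\varepsilon _Z$, and because $\mathrm{ev} _1$ intertwines the two $\O$-actions.

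Finally the diagram chase. Commutativity of \ref{diag-rightD-moduleflat} gives $\mathrm{ev}(1)\bigl(\epsilon ^{u ^{\flat 0}(\M)} _n(\widetilde{x})\bigr)=\epsilon ^{\M} _n\bigl(\widetilde{\mathrm{ev} _1(x)}\bigr)$ after restriction along $u ^*\cP ^n _{X,(m)}\twoheadrightarrow\cP ^n _{Z,(m)}$; evaluating both sides on the basis vector $\underline{\overline{\tau}} ^{\{\underline{l}\} _{(m)}}$ and using $\underline{\tau} ^{\{(\underline{l},\underline{0})\} _{(m)}}\mapsto\underline{\overline{\tau}} ^{\{\underline{l}\} _{(m)}}$ from \ref{locdesc-climm} yields
\[
\mathrm{ev} _1\bigl(x\cdot\underline{\partial} ^{<\underline{l}> _{(m)}}\bigr)=(-1) ^{|\underline{l}|}\bigl(\epsilon ^{\M} _n(\widetilde{\mathrm{ev} _1(x)})\bigr)(\underline{\tau} ^{\{(\underline{l},\underline{0})\} _{(m)}})=\mathrm{ev} _1(x)\cdot\underline{\partial} ^{<(\underline{l},\underline{0})> _{(m)}},
\]
which is the asserted formula. \emph{The main obstacle} is precisely the bookkeeping in the last two paragraphs: keeping track of the three $\O$-module structures on $\cP ^n$ (via $p _0$, $p _1$ and $u$), checking that the canonical elements really correspond under the vertical $\mathrm{ev}(1)$ maps of \ref{diag-rightD-moduleflat}, and fixing the sign so that it is literally the same on both sides; none of this is deep, but it must be carried out carefully against the explicit local formulas of \ref{locdesc-climm}.
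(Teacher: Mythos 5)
Your proof is correct and follows essentially the same route as the paper's: after reducing to the basis operators $Q=\underline{\partial}^{<\underline{l}>_{(m)}}$, $Q_X=\underline{\partial}^{<(\underline{l},\underline{0})>_{(m)}}$, both arguments chase the commutative diagram \ref{diag-rightD-moduleflat} and translate via a formula expressing the right $\D^{(m)}$-action through the costratification. The only cosmetic differences are that you chase the canonical element $\widetilde{x}=x\otimes 1$ and evaluate at $\underline{\overline{\tau}}^{\{\underline{l}\}_{(m)}}$ (a ``Taylor--expansion'' form, whose sign is immaterial since it is applied consistently on both the $X$- and $Z$-sides), whereas the paper chases $x\otimes\underline{\partial}^{<\underline{l}>_{(m)}}$ and evaluates at $1$ using \cite[1.1.6.1]{Be2}, and that you prepend a (harmless, though not strictly needed) well-definedness check.
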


\begin{proof}
Let $\underline{l}\in \N ^r$.
Let us consider the case where 
$Q  = \underline{\partial} ^{<\underline{l}> _{(m)}}$ and 
$Q _X = \underline{\partial} ^{<(\underline{l}, \underline{0})> _{(m)}}$.
Via the isomorphism
$\mathcal{H}om _{\O _X} ( \O _{Z}, \M)
\otimes _{\O _X}
\D ^{(m)} _{X/T,n}
\riso 
\mathcal{H} om _{\O _Z} ( p ^{n} _{0*} \cP ^n _{Z,\,(m)},\,\mathcal{H}om _{\O _X} ( \O _{Z}, \M))$,
we identify $x \otimes \underline{\partial} ^{<\underline{l}> _{(m)}} $
with a global section of $\mathcal{H} om _{\O _Z} ( p ^{n} _{0*} \cP ^n _{Z,\,(m)},\,\mathcal{H}om _{\O _X} ( \O _{Z}, \M))$.
Then the composition of the left vertical arrows of \ref{diag-rightD-moduleflat}
sends 
$x \otimes \underline{\partial} ^{<\underline{l}> _{(m)}} $
to 
$\mathrm{ev} _1 (x)
\otimes 
\underline{\partial} ^{<(\underline{l}, \underline{0})> _{(m)}}
$.
Using the formula \cite[1.1.6.1]{Be2}, 
the image of 
$\mathrm{ev} _1 (x)
\otimes 
\underline{\partial} ^{<(\underline{l}, \underline{0})> _{(m)}}$
via the composition of 
$\epsilon _n ^{\cM}$ 
with the evaluation at $1$ homomorphism
$\mathcal{H}om _{\O _X} ( p ^{n} _{1*} \cP ^n _{X,\,(m)}, \M)
\to \cM$
is equal to 
$\mathrm{ev} _1 (x)  \cdot\underline{\partial} ^{<(\underline{l}, \underline{0})> _{(m)}}
\in 
 \Gamma (X, \M)$.
 Similarly, 
the image of 
$x \otimes \underline{\partial} ^{<\underline{l}> _{(m)}}$
via the composition of 
$\epsilon _n ^{u ^{\flat 0}\cM}$
with the evaluation at $1$ homomorphism
$\mathcal{H} om _{\O _Z} ( p ^{n} _{1*} \cP ^n _{Z,\,(m)},\,\mathcal{H}om _{\O _X} ( \O _{Z}, \M))
\to 
 \mathcal{H}om _{\O _X} ( \O _{Z}, \M)$
 is 
$x \cdot \underline{\partial} ^{<\underline{l}> _{(m)}}  $.
Hence,
by using the commutativity of the diagram \ref{diag-rightD-moduleflat}
we get 
\begin{equation}
\label{prelocadesc-uflat1-formula}
\mathrm{ev} _1(x  \cdot 
\underline{\partial} ^{<\underline{l}> _{(m)}})
= 
\mathrm{ev} _1 (x)  \cdot
\underline{\partial} ^{<(\underline{l}, \underline{0})> _{(m)}}.
\end{equation}
Finally, we check easily the formula \ref{locadesc-uflat1-formula}
from \ref{prelocadesc-uflat1-formula}.
\end{proof}

\begin{rem}
\label{rem-rightD-moduleflat}
Following \ref{rightD-moduleflat}, 
if $\M$ is a right $\D _X ^{(m)}$-module, 
then we have a canonical structure of 
right $\D _Z ^{(m)}$-module on
$u ^{\flat 0} (\M)
:=
u ^{-1} \mathcal{H}om _{\O _X} ( u _* \O _{Z}, \M)$
which is constructed by using the $m$-PD-costratification associated to 
$\M$.
In fact, using the canonical homomorphism
$\cD ^{(m)} _{Z/T}
\to 
 \cD ^{(m)} _{Z \to X/T}
 =
u ^{-1}( \cD ^{(m)} _{X/T}
 /\cI \cD ^{(m)} _{X/T})$,
there is another canonical way to give a structure of 
right $\D _Z ^{(m)}$-module on
$u ^{\flat 0} (\M)$.
Indeed, suppose $X$ affine. 
Let $x \in \Gamma (Z,  u ^{\flat 0} (\M))$ and 
$Q \in D ^{(m)} _{Z}$. 
For any 
$Q _X\in D ^{(m)} _{X}$ 
such that  $\theta (Q ) = \overline{Q _X}$,
we define $x  \cdot Q$ so that we get the equality
\begin{equation}
\label{rem-locadesc-uflat1-formula}
\mathrm{ev} _1
(x  \cdot Q)
: = 
\mathrm{ev} _1 (x)  \cdot Q _X,
\end{equation}
where $\mathrm{ev} _1 \colon \Gamma ( Z, u ^{\flat 0} (\M) )
\hookrightarrow \Gamma (X, \M)$ is 
the evaluation at $1$ homomorphism (which is injective).
Since $I$ annihilates $\mathrm{ev} _1 (x) $,
we remark that this is well defined. 

Following \ref{locadesc-uflat1-formula}, both canonical structures of
right $\D _Z ^{(m)}$-module on
$u ^{\flat 0} (\M)$ are identical.

\end{rem}

\begin{empt}
[Local description of the right $\D _X ^{(m)}$-module structure of $u ^{\flat 0} (\M) $]
\label{locadesc-uflat2} 
Suppose we are in the local situation of \ref{locdesc-climm}. 
Let $\M$ be a right $\D _X ^{(m)}$-module.
Since $u _* \O _Z \otimes _{\O _X}\D ^{(m)} _{X,Z,\underline{t}/T}
\riso 
\D ^{(m)} _{X,Z,\underline{t}/T}/\cI \D ^{(m)} _{X,Z,\underline{t}/T}$, we have the isomorphism
$u ^{\flat 0} (\M)
\riso 
u ^{-1}
\mathcal{H}om _{\D ^{(m)} _{X,Z,\underline{t}/T} } ( \D ^{(m)} _{X,Z,\underline{t}/T}/\cI \D ^{(m)} _{X,Z,\underline{t}/T}, \M).$
By using Lemma \ref{locadesc-uflat1}, we check that 
this  isomorphism can be seen as an 
 isomorphism of right of $\D ^{(m)} _{X,Z,\underline{t}/T}/\cI \D ^{(m)} _{X,Z,\underline{t}/T}$-modules of the form:
\begin{gather}
\label{locadesc-uflat2-iso2pre}
\rho _* u ^{\flat 0} (\M)
\riso 
u ^{-1}
\mathcal{H}om _{\D ^{(m)} _{X,Z,\underline{t}/T} } ( \D ^{(m)} _{X,Z,\underline{t}/T}/\cI \D ^{(m)} _{X,Z,\underline{t}/T}, \M).
\end{gather}
Following \ref{locdesc-climm2}.(\ref{locadesc-uflat2-iso1}),
from \ref{locadesc-uflat2-iso2pre} 
we get the isomorphism
\begin{gather}
\label{locadesc-uflat2-iso2}
u ^{\flat 0} (\M)
\riso 
u ^{-1}
\mathcal{H}om _{\D ^{(m)} _{X,Z,\underline{t}/T} } ( u _* \D ^{(m)} _{Z}, \M)
\end{gather}
of right $u _*\D ^{(m)} _{Z}$-modules. 
If there is no ambiguity, 
we can avoid writing $u ^{-1}$, $u _*$ and $\rho _*$, 
e.g. we can simply write
$\mathcal{H}om _{\D ^{(m)} _{X,Z,\underline{t}/T} } ( \D ^{(m)} _{Z}, \M)
=
u ^{-1}
\mathcal{H}om _{\D ^{(m)} _{X,Z,\underline{t}/T} } ( u _* \D ^{(m)} _{Z}, \M)
$.
\end{empt}

\begin{empt}
Suppose we are in the local situation of \ref{locdesc-climm}. 
Let $\M$ be a right $\D _X ^{(m)}$-module.

\begin{enumerate}
\item 
For simplicity, 
we remove $\rho _*$ in the notation
and 
we  view 
$u ^{\flat} (\M)$
as an object of 
$D ^{\mathrm{b}} ({} ^r\D ^{(m)} _{X,Z,\underline{t}/T})$.
By derivating 
\ref{locadesc-uflat2-iso2pre},
we get the isomorphism of $D ^{\mathrm{b}} ({} ^r\D ^{(m)} _{X,Z,\underline{t}/T})$ of the form
\begin{equation}
\label{Rlocadesc-uflat2-iso2pre}
u ^{\flat} (\M) \riso \R \mathcal{H}om _{\D ^{(m)} _{X,Z,\underline{t}/T} } (  \D ^{(m)} _{X,Z,\underline{t}/T}/\cI \D ^{(m)} _{X,Z,\underline{t}/T} , \M).
\end{equation}

Let $s := d -r$, and $f _1= t _{r+1},\dots, f _s := t _{d}$. 
Let $K _{\bullet} (\underline{f})$ be the Koszul complex
of $\underline{f}=(f_1,\dots,f_s)$. Let $e _1, \dots, e _s$ be the canonical basis of $\O _X ^s$. Recall 
$K _i (\underline{f}) = \wedge ^i (\O _X ^s)$ and 
$d _{i, \underline{f}} \colon K _i (\underline{f}) \to K _{i-1} (\underline{f})$ 
(or simply $d _i$) is the $\O _X$-linear map defined by 
$$d _i ( e _{n _1} \wedge \cdots \wedge e _{n _i})
= 
\sum _{j=1} ^{i}
(-1) ^{j-1}
f _{n _j} e _{n _1} \wedge \cdots \wedge \widehat{e} _{n _j} \wedge \cdots \wedge e _{n _i}.$$

Since $f _1,\dots, f _s$ is a regular sequence of $\I$, 
then the canonical morphism 
$K _{\bullet} (\underline{f}) \to \O _X/\I$
(given by the canonical map
$K _{0} (\underline{f}) = \O _X \to \O _X/\I$)
is a quasi-isomorphism.
Hence, we get the isomorphism of $D ^{\mathrm{b}} (\O _{X})$ 
$$\phi _{\underline{f}} 
\colon 
 u ^{\flat} (\M) 
 \riso\mathcal{H}om _{\O _{X} } ( K _{\bullet} (\underline{f}) , \M).$$
Since $f _1,\dots f _s$ are in the center of $\D ^{(m)} _{X,Z,\underline{t}/T} $ and 
$\D ^{(m)} _{X,Z,\underline{t}/T}$ is a flat $\O _X$-algebra, then we get the quasi-isomorphism
$\D ^{(m)} _{X,Z,\underline{t}/T} \otimes _{\O _X} K _{\bullet} (\underline{f}) 
\riso 
 \D ^{(m)} _{X,Z,\underline{t}/T}/\cI \D ^{(m)} _{X,Z,\underline{t}/T}$ 
which is a resolution of $ \D ^{(m)} _{X,Z,\underline{t}/T}/\cI \D ^{(m)} _{X,Z,\underline{t}/T}$ by 
free $\D ^{(m)} _{X,Z,\underline{t}/T}$-bimodules of finite type.
We get the commutativity of diagram
\begin{equation}
\label{locadesc-uflat-iso}
\xymatrix{
{u ^{\flat} (\M)} 
\ar[r] ^-{\phi _{\underline{f}}} _-{\sim}
\ar[d] ^-{\sim} _{\ref{Rlocadesc-uflat2-iso2pre}}
& 
{\mathcal{H}om _{\O _{X} } ( K _{\bullet} (\underline{f}) , \M)} 
\ar[d] ^-{\sim}
\\ 
{\R \mathcal{H}om _{\D ^{(m)} _{X,Z,\underline{t}/T} } (  \D ^{(m)} _{X,Z,\underline{t}/T}/\cI \D ^{(m)} _{X,Z,\underline{t}/T} , \M)} 
& 
{\mathcal{H}om _{\D ^{(m)} _{X,Z,\underline{t}/T} } ( \D ^{(m)} _{X,Z,\underline{t}/T} \otimes _{\O _X} K _{\bullet} (\underline{f}) , \M).} 
\ar[l] ^-{\sim}
}
\end{equation}
We denote by 
$\phi _{\underline{t}} 
\colon 
u ^{\flat} (\M)
\riso
\mathcal{H}om _{\D ^{(m)} _{X,Z,\underline{t}/T} } ( \D ^{(m)} _{X,Z,\underline{t}/T} \otimes _{\O _X} K _{\bullet} (\underline{f}) , \M)$
the isomorphism induced by composition with $\phi _{\underline{f}} $. 
By commutativity of \ref{isos-fund-isom1pre},
$\phi _{\underline{t}} $ 
 is an isomorphism of $D ^{\mathrm{b}} ({} ^r \D ^{(m)} _{X,Z,\underline{t}/T})$.
Hence, we get the  isomorphism of right $\D ^{(m)} _{X,Z,\underline{t}/T}$-modules
\begin{equation}
\label{isos-fund-isom1pre}
\phi ^s _{\underline{t}} 
=
\mathcal{H} ^s(\phi _{\underline{t}})
\colon 
R ^s u ^{\flat 0} (\M)
\riso 
\mathcal{H} ^s \mathcal{H}om _{\D ^{(m)} _{X,Z,\underline{t}/T} } ( \D ^{(m)} _{X,Z,\underline{t}/T} \otimes _{\O _X} K _{\bullet} (\underline{f}) , \M).
\end{equation}
We have the homomorphism of right $\D ^{(m)} _{X,Z,\underline{t}/T}$-modules 
$\mathcal{H}om _{\D ^{(m)} _{X,Z,\underline{t}/T} } ( \D ^{(m)} _{X,Z,\underline{t}/T} \otimes _{\O _X} K _{s} (\underline{f}) ,  \M)
\to 
 \M$
(the structure of right $\D ^{(m)} _{X,Z,\underline{t}/T}$-module on the left term comes from the structure of left 
$\D ^{(m)} _{X,Z,\underline{t}/T}$-module on $\D ^{(m)} _{X,Z,\underline{t}/T} \otimes _{\O _X} K _{s} (\underline{f}) $)
given by 
$\phi \mapsto \phi ( e _1 \wedge  \cdots \wedge e _s)$.
Since 
$\cM \to \M / \I \M$ is a morphism of right $\D ^{(m)} _{X,Z,\underline{t}/T}$-modules,
then this induces  the morphism of complex of right $\D ^{(m)} _{X,Z,\underline{t}/T}$-modules of the form
$\mathcal{H}om _{\D ^{(m)} _{X,Z,\underline{t}/T} } ( \D ^{(m)} _{X,Z,\underline{t}/T} \otimes _{\O _X} K _{\bullet} (\underline{f}) , \M)
\to \M / \I \M$.
This yields the isomorphism of right $\D ^{(m)} _{X,Z,\underline{t}/T}$-modules
\begin{equation}
\label{isos-fund-isom2pre}
\mathcal{H} ^s \mathcal{H}om _{\D ^{(m)} _{X,Z,\underline{t}/T} } ( \D ^{(m)} _{X,Z,\underline{t}/T} \otimes _{\O _X} K _{\bullet} (\underline{f}) , {}  \M )
\riso  \M / \I \M.
\end{equation}

\item {\it Change of coordinates.}
\label{ntn-chgcoor}
Make a second choice : 
let $t ' _{r +1},\dots , t '_{d}  \in \Gamma (X,\I)$
generating 
$I:=\Gamma (X,\I)$,
$t '_{1},\dots , t '_{r}\in \Gamma ( X,\O _{X})$
such that
$t ' _{1},\dots ,t  '_{d}$ are local coordinates of $X$ over $T$,
$\overline{t} '_{1},\dots ,\overline{t} '_{r}$ 
are local coordinates of $Z$ over $T$,
and 
$\overline{t} '_{r +1},\dots ,\overline{t} '_{d}$ is a basis of $\I /\I ^2$.
Let $A := \Gamma (X,\O _X)$
and $B : =\Gamma ( Z ,\O _{Z})$.
Set $f '_1= t '_{r+1},\dots, f '_s := t '_{d}$.
Let $K _{\bullet} (\underline{f}')$ be the Koszul complex
of $\underline{f}'=(f '_1,\dots,f '_s)$.
Let $M _I  = ( c _{ij}) \in M _s (A)$ be the matrix such that 
$\sum _{i =1} ^{s} f ' _i c _{ij} = f _j$. 
Let $\vartheta \colon A ^s \to A ^s$ be the morphism associated with 
$M _I$. It corresponds to a morphism
$\vartheta \colon K _{1} (\underline{f})
\to 
K _{1} (\underline{f}')$.
We compute that the composition of 
$\vartheta$ with 
$d _{1, \underline{f}}
\colon 
K _{1} (\underline{f}') \to K _{0} (\underline{f}')=A$
is equal to
$d _{1, \underline{f}'}
\colon 
K _{1} (\underline{f}) \to K _{0} (\underline{f}) =A$.
Since 
$K _{\bullet} (\underline{f}) = \wedge K _{1} (\underline{f}) $
this yields the morphism of complexes
$\wedge \vartheta \colon 
K _{\bullet} (\underline{f}) 
\to K _{\bullet} (\underline{f}')$. 
Hence, we get the commutative diagram
$$\xymatrix{
{u ^{\flat} (\M)} 
\ar[r] _-{\sim} ^-{\phi _{\underline{f}}}
\ar[rd] ^-{\sim} _-{\phi _{\underline{f}'}}
& 
{\mathcal{H}om _{\O _{X} } ( K _{\bullet} (\underline{f}) , \M)}
\\ 
{} 
& 
{\mathcal{H}om _{\O _{X} } (  K _{\bullet} (\underline{f}') , \M).}
\ar[u] ^-{\wedge \vartheta} 
}
$$
This yields the commutativity of the left square of the diagram
\begin{equation}
\label{fund-isom1-diag1pre}
\xymatrix{
{R ^s u ^{\flat 0} (\M)} 
\ar@/^4ex/[rr] ^-{\phi ^s _{\underline{t}}}
\ar[r] ^-{\sim} _-{\cH ^s \phi _{\underline{f}}}
& 
{\mathcal{H} ^s \mathcal{H}om _{\O _{X }} (K _{\bullet} (\underline{f}) , \M)} 
\ar[r] _-{\sim} 
& 
{\mathcal{H} ^s \mathcal{H}om _{\D ^{(m)} _{X,Z,\underline{t}/T} } ( \D ^{(m)} _{X,Z,\underline{t}/T} \otimes _{\O _X} K _{\bullet} (\underline{f}) , \M)} 
\ar[r] _-{\sim} ^-{\ref{isos-fund-isom2pre}}
&
{\M /\I \M} 
\\ 
{R ^s u ^{\flat 0} (\M)} 
\ar@/_4ex/[rr] _-{\phi ^s _{\underline{t'}}}
\ar[r] _-{\sim} ^-{\cH ^s\phi  _{\underline{f}'}}
\ar@{=}[u] ^-{}
& 
{\mathcal{H} ^s \mathcal{H}om _{\O _{X} }(K _{\bullet} (\underline{f}') , \M)}
\ar[u] ^-{\det  \vartheta} 
\ar[r] _-{\sim} 
& 
{\mathcal{H} ^s \mathcal{H}om _{\D ^{(m)} _{X,Z,\underline{t}'/T} } ( \D ^{(m)} _{X,Z,\underline{t}'/T} \otimes _{\O _X} K _{\bullet} (\underline{f}') , \M)}
\ar[r] _-{\sim} ^-{\ref{isos-fund-isom2pre}}
&
{\M /\I \M,}  
\ar[u] ^-{\overline{\det  \vartheta}} 
}
\end{equation}
whose compositions of horizontal morphisms are 
$\D ^{(m)} _{X,Z,\underline{t}/T}$-linear.
Hence, the diagram \ref{fund-isom1-diag1pre} is commutative.

\end{enumerate}

\end{empt}

\begin{ntn}
\label{dfn-u!leftclimm}

\label{ntnu*form}
If $\cE$ is a left $\D _X ^{(m)}$-module, we denote by 
$u ^{*} (\cE)
:=
 \O _{Z}
 \otimes _{ u ^{-1}\O _X}
 u ^{-1}\cE$.
Using $m$-PD-stratifications, 
we get a structure of left $\D _Z ^{(m)}$-module 
on $u ^{*} (\cE)$.
Via the homomorphisms of left $\D _X ^{(m)}$-modules of the form $\D  ^{(m)} _X\to \cE$, 
we check by functoriality that 
the canonical homomorphism 
\begin{equation}
\label{dfn-u!leftclimm-iso}
\O _{Z}
 \otimes _{ u ^{-1}\O _X}
 u ^{-1}\cE
 \to \D  ^{(m)} _{Z \to X}
 \otimes  _{ u ^{-1}\D  ^{(m)} _X}
 u ^{-1}\cE
\end{equation}
 is an isomorphism of 
 left $\D _Z ^{(m)}$-modules.
By deriving, we get the functor
$\L u ^* \colon 
D ^{+}  ( {} ^l \D _X ^{(m)}) 
\to
D ^{+} ({} ^l \D _Z ^{(m)} )$
(or 
$\L u ^* \colon 
D   ( {} ^l \D _X ^{(m)}) 
\to
D ({} ^l \D _Z ^{(m)} )$)
defined by setting 
\begin{equation}
\label{ntnu*form-dfn1}
\L u ^{*} (\cE)
:=
\D  ^{(m)} _{Z \to X}
 \otimes ^\L _{ u ^{-1}\D  ^{(m)} _X}
 u ^{-1}\cE.
\end{equation}
Finally, we set
$u ^! (\cE) := \L u ^{*} (\E ) [d _{Z/X}]$.

Suppose now we are in the local situation of \ref{locdesc-climm}. 
Let $Q \in D ^{(m)} _{Z}$. 
Choose $Q _X\in D ^{(m)} _{X,Z,\underline{t}/T}$ 
such that  $\overline{Q _X}= \theta (Q)$.
From \ref{dfn-u!leftclimm-iso}, we check the formula
\begin{equation}
\label{dfn-u!leftclimm-pre2}
Q (u ^* (x))
=
u ^* (Q _X \cdot x)).
\end{equation}
Let $\E \in D ^{+}  ( {} ^l  \D  ^{(m)} _X) $.
Via the monomorphism of rings
$\D ^{(m)} _{X,Z,\underline{t}}
\hookrightarrow 
\D ^{(m)} _{X}$, 
we check the canonical homomorphism 
$$
 \D  ^{(m)} _{ Z } \otimes ^\L _{u ^{-1} \D  ^{(m)} _{X, Z ,\underline{t}}} u ^{-1}\E
 \to
 \L u ^* (\E) $$
 is an isomorphism of $D ^{+}  ( {} ^l  \D  ^{(m)} _Z) $.
This yields
$$( \D  ^{(m)} _{X, Z ,\underline{t}} \otimes _{\O _{X}} K _{\bullet} (\underline{f}) ) 
\otimes _{ u ^{-1}\D  ^{(m)} _{X, Z ,\underline{t}} } u ^{-1}\E
\riso 
\L u ^* (\E) .$$

\end{ntn}

\begin{prop}
\label{fund-isom}
Let $\E$ be a left $\D _X ^{(m)}$-module (resp. a $\D _X ^{(m)}$-bimodule). 
We have the canonical isomorphism of right $\D _Z ^{(m)}$-modules (resp. 
of right $( \D _{Z} ^{(m)}, u ^{-1}\D _{X} ^{(m)})$-bimodules):
\begin{equation}
\label{fund-isom1}
R ^{-d _{Z/X}} u ^{\flat 0} ( \omega _{X} \otimes _{\O _X} \E)
\riso 
\omega _{Z} \otimes _{\O _Z} u ^{*} (\E ).
\end{equation}
\end{prop}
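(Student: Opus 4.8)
The plan is to reduce to the local situation of \ref{locdesc-climm} (the statement is local on $X$, so one may assume $X$ affine with the chosen coordinates $t_1,\dots,t_d$), and then to build the isomorphism \ref{fund-isom1} by combining the fundamental local isomorphism for the Koszul resolution with the classical ``transposition'' identification of the left and right module structures. First I would recall from \ref{locadesc-uflat} the explicit isomorphism $\phi_{\underline t}^s \colon R^s u^{\flat 0}(\M) \riso \M/\I\M$ for a right $\D_X^{(m)}$-module $\M$, with $s = d-r = -d_{Z/X}$, and apply it to $\M := \omega_X \otimes_{\O_X} \E$. This already identifies the left-hand side of \ref{fund-isom1} with $(\omega_X \otimes_{\O_X} \E)/\I(\omega_X \otimes_{\O_X}\E)$. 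On the other side, the canonical map $\O_Z \otimes_{u^{-1}\O_X} u^{-1}\E \to u^*(\E)$ is the identity by definition, and likewise $\omega_Z \riso u^*(\omega_X)$ via the usual identification of top differential forms after a closed immersion in local coordinates (the ``basis'' $d\overline t_1 \wedge\cdots\wedge d\overline t_r$ versus $dt_1\wedge\cdots\wedge dt_d$ and the chosen generators $t_{r+1},\dots,t_d$ of $\I$). So the right-hand side is also identified with a quotient of $\omega_X\otimes_{\O_X}\E$ by $\I$; the content is to check that the two identifications agree as \emph{right} $\D_Z^{(m)}$-modules (resp.\ right $(\D_Z^{(m)}, u^{-1}\D_X^{(m)})$-bimodules).

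The key computational steps, carried out in local coordinates, are: (1) verify that the right $\D_X^{(m)}$-module structure on $\M/\I\M$ coming from \ref{isos-fund-isom2pre} factors through $\D^{(m)}_{X,Z,\underline t/T}/\I\D^{(m)}_{X,Z,\underline t/T} \riso u_*\D_Z^{(m)}$ (this is essentially \ref{locdesc-climm2}.(\ref{f}) applied to $\M/\I\M$, which is annihilated by $\I$); (2) transport the right $\D_Z^{(m)}$-action on $\omega_Z \otimes_{\O_Z} u^*(\E)$ through the standard side-changing operation $\M \mapsto \omega \otimes \M^\vee$, using that $\omega_Z = u^*(\omega_X)$ and $u^*(\E)$ carries the left structure \ref{dfn-u!leftclimm-pre2}; (3) use the formula \ref{dfn-u!leftclimm-pre2} together with \ref{locadesc-uflat1-formula} (equivalently \ref{eq-QXP} and \ref{prelocadesc-uflat1-formula}) to match the two actions on the level of a section: for $Q \in \D_Z^{(m)}$ and any lift $Q_X \in \D^{(m)}_{X,Z,\underline t/T}$ with $\overline{Q_X} = \theta(Q)$, both sides act by $\cdot\, Q_X$ after applying $\mathrm{ev}_1$. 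The transposition diagram \ref{transp-closedimmersion} is what reconciles the left action appearing inside $u^*(\E)$ with the right action appearing inside $u^{\flat 0}$, exactly as in the classical fundamental local isomorphism (Hartshorne, \cite[III.7]{HaRD}); here one must track the sign $(-1)^{|\underline k|}$ coming from ${}^t$ and check it is absorbed by the $\omega$-twist. Finally, for the bimodule statement one observes that the right $u^{-1}\D_X^{(m)}$-structure on $R^{-d_{Z/X}}u^{\flat 0}(\omega_X \otimes_{\O_X}\E)$ comes from the bimodule structure on $\E$ and commutes with everything above, because $\I$ is central in $\D^{(m)}_{X,Z,\underline t/T}$; it is carried to the evident right $u^{-1}\D_X^{(m)}$-structure on $\omega_Z \otimes_{\O_Z} u^*(\E)$ coming from the bimodule structure on $\E$ in \ref{ntnu*form-dfn1}.

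To see that the isomorphism so constructed in local coordinates is independent of the chosen coordinates $t_1,\dots,t_d$ (and hence glues), I would invoke the change-of-coordinates diagram \ref{fund-isom1-diag1pre}: passing from $\underline t$ to $\underline t'$ multiplies $\phi^s_{\underline t}$ by $\overline{\det\vartheta}$ on $\M/\I\M$, where $\vartheta$ is the transition matrix for the generators of $\I$; but the identification $\omega_Z \riso u^*(\omega_X)$ and the comparison $\omega_X \otimes \wedge^s(\I/\I^2)^\vee$ transform by exactly the same determinant, so the composite isomorphism \ref{fund-isom1} is coordinate-independent. This gives a well-defined global morphism, which is an isomorphism because it is one locally.

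The main obstacle I anticipate is step (3): bookkeeping the compatibility of the \emph{right} $\D_Z^{(m)}$-action inherited from $u^{\flat 0}$ (via the $m$-PD-costratification, Lemma \ref{locadesc-uflat1}) with the \emph{left} $\D_Z^{(m)}$-action on $u^*(\E)$ twisted by $\omega_Z$, through the transposition automorphism \ref{transp-closedimmersion} and the sign conventions of the Koszul differential in \ref{isos-fund-isom2pre}. Everything else is formal once the local picture of \ref{locdesc-climm2} and \ref{locadesc-uflat} is in place; this matching of actions is where the genuine verification lies, and it is precisely the closed-immersion case of Virrion's/Berthelot's fundamental local isomorphism, so one can follow \cite[III.7]{HaRD} as a template.
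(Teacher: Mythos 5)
Your plan matches the paper's own proof: reduce to the local coordinates of \ref{locdesc-climm}, identify the left side with $(\omega_X\otimes_{\O_X}\E)/\I(\omega_X\otimes_{\O_X}\E)$ via $\phi^s_{\underline t}$ and \ref{isos-fund-isom2pre}, identify the right side via the explicit map $dt_1\wedge\cdots\wedge dt_d\otimes x\bmod\I\mapsto d\overline t_1\wedge\cdots\wedge d\overline t_r\otimes(x\bmod\I)$ (whose right $\D_Z^{(m)}$-linearity is exactly the compatibility of $\theta$ with transposition, diagram \ref{transp-closedimmersion}), and then verify coordinate-independence by comparing the determinant $\overline{\det\vartheta}$ from \ref{fund-isom1-diag1pre} with the determinant governing the transformation of $d\overline t_1\wedge\cdots\wedge d\overline t_r$. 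The only superficial organizational difference is that the paper reduces the bimodule statement to the module case by a one-line appeal to functoriality, while you propose to track the extra $u^{-1}\D_X^{(m)}$-structure through the argument directly; both are fine.
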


\begin{proof}
By functoriality, we reduce to check the non respective case.

0) First, let us suppose the conditions of \ref{locdesc-climm} are satisfied, i.e. 
suppose
$X$ is affine and there exist
$t _{r +1},\dots , t _{d}  \in \Gamma (X,\I)$
generating 
$I:=\Gamma (X,\I)$,
$t _{1},\dots , t _{r}\in \Gamma ( X,\O _{X})$
such that
$t _{1},\dots ,t  _{d}$ are local coordinates of $X$ over $T$,
$\overline{t} _{1},\dots ,\overline{t} _{r}$ 
are local coordinates of $Z$ over $T$,
and 
$\overline{t} _{r +1},\dots ,\overline{t} _{d}$ is a basis of $\I /\I ^2$.
In that case $\omega _{Z}$ is a free $\O _Z$-module with the basis
 $d \overline{t} _{1} \wedge  \cdots \wedge d \overline{t} _{r}$,
and $\omega _X $ is a free $\O _X$-module with the basis $d t _{1} \wedge  \cdots \wedge dt _{d}$.

1) Since the isomorphism
$\theta \colon D ^{(m)} _{Z/T} 
\riso D ^{(m)} _{X,Z,\underline{t}/T}$ 
commutes with the transposition automorphisms (see \ref{transp-closedimmersion}), 
we get the isomorphism of right $\D ^{(m)} _{X,Z,\underline{t}/T}$-modules
$$
\psi _{\underline{t}}
\colon 
(\omega _{X} \otimes _{\O _X} \E ) /( \omega _{X} \otimes _{\O _X} \E )  \I 
\riso 
\omega _{Z} \otimes _{\O _Z} 
(\E/\I \E),$$ 
which is given by
$d t _{1} \wedge  \cdots \wedge dt _{d} \otimes x \mod \I 
\mapsto 
d \overline{t} _{1} \wedge  \cdots \wedge d \overline{t} _{r}
\otimes (x \mod \I)$.

2) Let $s := d -r$, and $f _1= t _{r+1},\dots, f _s := t _{d}$. 
Notice that $s = -d _{Z/X}$ ($r = d _Z$ and $d = d _X$).
Using the isomorphism 
$\phi ^s _{\underline{t}} $
constructed in \ref{isos-fund-isom1pre}, 
we get by composition the isomorphism of right $\D ^{(m)} _{Z}$-modules
\begin{gather}
\notag 
R ^s u ^{\flat 0} (\omega _{X} \otimes _{\O _X} \E)
\underset{\phi ^s _{\underline{t}} }{\riso} 
\mathcal{H} ^s \mathcal{H}om _{\D ^{(m)} _{X,Z,\underline{t}/T} } ( \D ^{(m)} _{X,Z,\underline{t}/T} \otimes _{\O _X} K _{\bullet} (\underline{f}) , \omega _{X} \otimes _{\O _X} \E)
\\
\label{isos-fund-isom}
\underset{\ref{isos-fund-isom2pre}}{\riso} 
(\omega _{X} \otimes _{\O _X} \E ) / ( \omega _{X} \otimes _{\O _X} \E ) \I 
\underset{\psi _{\underline{t}}}{\riso} 
\omega _{Z} \otimes _{\O _Z} 
(\E/\I \E).
\end{gather}

3) 
It remains to check  that the composition of the isomorphisms of \ref{isos-fund-isom} does not depend on the choice of the coordinates. 
Make some second choice : Let $t ' _{r +1},\dots , t '_{d}  \in \Gamma (X,\I)$
generating 
$I:=\Gamma (X,\I)$,
$t '_{1},\dots , t '_{r}\in \Gamma ( X,\O _{X})$
such that
$t ' _{1},\dots ,t  '_{d}$ are local coordinates of $X$ over $T$,
$\overline{t} '_{1},\dots ,\overline{t} '_{r}$ 
are local coordinates of $Z$ over $T$,
and 
$\overline{t} '_{r +1},\dots ,\overline{t} '_{d}$ is a basis of $\I /\I ^2$.
Let $A := \Gamma (X,\O _X)$,
$B : =\Gamma ( Z ,\O _{Z})$,
$R: =\Gamma (T, \O _T)$.
Set $f '_1= t '_{r+1},\dots, f '_s := t '_{d}$.
Let $( c _{ij})  \in M _s (A)$ be the matrix such that 
$\sum _{i =1} ^{s} f ' _i c _{ij} = f _j$. 
Let $\vartheta \colon A ^s \to A ^s$ be the morphism associated with 
$( c _{ij})$. 
Let 
$( d _{ij})  \in M _d (A)$ be the matrix such that 
$d t ' _i = \sum _{j =1} ^{d} d _{ij} d t _j $. 
Let $D = ( \overline{d} _{ij}) _{1\leq i,j \leq d} \in M _d (B)$,
$D _1 = ( \overline{d} _{ij}) _{1\leq i,j \leq r} \in M _r (B)$,
and 
$D _2 = ( \overline{d} _{ij}) _{r+1\leq i,j \leq d} \in M _s (B)$.
We denote by 
$\overline{d t _i}$
and 
$\overline{d t ' _i}$
the image of 
$d t _i$ and $d t' _i$ via the homomorphism
$\Omega _{A/R} \to B \otimes _{A}\Omega _{A/R}$.
We get 
$\overline{d t '_{1}} \wedge  \cdots \wedge \overline{d t '_{d}}
=( \det D)  \left ( 
\overline{d t _{1}} \wedge  \cdots \wedge \overline{d t _{d}}
\right )$.

By considering the images via the canonical morphism
$u ^* \Omega _{X} \to \Omega _{Z}$ which sends
$\overline{d t _i }$ (resp. $\overline{d t ' _i }$)
to 
$d\, \overline{t  _i} $ (resp. $d\, \overline{t ' _i} $),
we get the equality
$d\, \overline{t ' _i} = \sum _{j =1} ^{d} \overline{d} _{ij} d\, \overline{t  _j} $
for any $i=1,\dots, d$.
When $i \geq r+1$,
we have $d\, \overline{t  _i} =0$
and $d\, \overline{t ' _i} =0$.
Since $d\, \overline{t  _1}, \dots, d\, \overline{t  _r}$ is a basis of $\Omega _{Z}$, then
$\overline{d} _{ij} =0$ for any $ i \geq r+1$ and $j \leq r$
and then
$d\, \overline{t ' _i} = \sum _{j =1} ^{d} \overline{d} _{ij} d\, \overline{t  _j} $
for any $i=1,\dots, r$.
This yields both equalities $\det D = \det D _1 \det D _2$
and
$d \overline{t' _1} \wedge  \cdots \wedge d \overline{t' _r}
= 
(\det D _1 ) 
d \overline{t _1} \wedge  \cdots \wedge d \overline{t _r}$.

For any $i \geq r+1$, 
we have 
$\overline{d t _{i}}
=
\overline{f _{i-r}}$
and 
$\overline{d t '_{i}}
=
\overline{f '_{i-r}}$.
Since for any $i \geq r+1$, we have
$\overline{d t '_{i}} = \sum _{j =r+1} ^{d} \overline{d _{ij}} \,\overline{d t _{j}} $,
this means that 
$D _2$ is the inverse of the transposition matrix of 
$( \overline{c} _{ij}) $.
Hence, 
$\overline{\det \vartheta} = (\det D _2) ^{-1}$.
This implies the commutativity of the following diagram:
\begin{equation}
\label{fund-isom1-diag2}
\xymatrix{
{( \omega _{X} \otimes _{\O _X} \E ) / ( \omega _{X} \otimes _{\O _X} \E )\I } 
\ar[r] ^-{\psi _{\underline{t}}} _-{\sim}
& 
{\omega _{Z} \otimes _{\O _Z} 
(\E/\I \E) } 
\\ 
{(\omega _{X} \otimes _{\O _X} \E )/ ( \omega _{X} \otimes _{\O _X} \E )\I } 
\ar[u] ^-{\overline{\det \vartheta}} 
\ar[r] ^-{\psi _{\underline{t}'}} _-{\sim}
& 
{\omega _{Z} \otimes _{\O _Z} 
(\E/\I \E) .}
\ar@{=}[u] ^-{} 
}
\end{equation}
By composing the commutative diagram 
\ref{fund-isom1-diag1pre} for $\cM = \omega _{X} \otimes _{\O _X} \E$
with
\ref{fund-isom1-diag2},
we get the independence on the choice of the coordinates of the
composition of the isomorphisms of \ref{isos-fund-isom}.
\end{proof}

\begin{thm}
\label{fund-isom-thmpre}
Let $\E \in D  ( {} ^l \D _X ^{(m)})$ (resp. $\E \in D  ( {} ^l \D _X ^{(m)}, {} ^r \D _X ^{(m)})$). 
We have the canonical isomorphism of 
$D  ({} ^r \D _Z ^{(m)})$
(resp. $D  ({} ^r \D _Z ^{(m)},{} ^r u ^{-1}\D _{X} ^{(m)})$)
\begin{equation}
\label{fund-isom2pre}
\omega _{Z} \otimes _{\O _Z} u ^{!} (\E )
\riso
u ^{\flat} ( \omega _{X} \otimes _{\O _X} \E).
\end{equation}
\end{thm}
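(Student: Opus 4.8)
The plan is to reduce the derived statement to the module--level fundamental local isomorphism \ref{fund-isom1}. By functoriality the extra right $u^{-1}\D_X^{(m)}$--linearity in the bimodule (``resp.'') case is carried along automatically, so we may treat the non--respective case; and since both $u^\flat$ and $u^!$ commute with restriction to open subschemes of $X$, the construction is local and we may place ourselves in the local situation of \ref{locdesc-climm}, writing $s := -d_{Z/X} = d-r$ and $f_1 := t_{r+1},\dots,f_s := t_d$, with Koszul complex $K_\bullet(\underline f)$.

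First I would record a degree--concentration observation. If $\E$ is an $\O_X$--flat $\D_X^{(m)}$--module (e.g.\ a free one), then $\omega_X\otimes_{\O_X}\E$ is $\O_X$--flat, and since $u\colon Z\hookrightarrow X$ is a regular immersion of codimension $s$, the complex $\R\mathcal{H}om_{\O_X}(\O_Z,\omega_X\otimes_{\O_X}\E)$ is concentrated in degree $s$; hence $u^\flat(\omega_X\otimes_{\O_X}\E)=\bigl(R^{-d_{Z/X}}u^{\flat 0}(\omega_X\otimes_{\O_X}\E)\bigr)[-s]$. Dually $\L u^*(\E)=u^*(\E)$ is concentrated in degree $0$ by \ref{ntnu*form}, so $\omega_Z\otimes_{\O_Z}u^!(\E)=\bigl(\omega_Z\otimes_{\O_Z}u^*(\E)\bigr)[-s]$. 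Thus for such $\E$ the isomorphism \ref{fund-isom1} reads exactly as a canonical isomorphism $\omega_Z\otimes_{\O_Z}u^!(\E)\riso u^\flat(\omega_X\otimes_{\O_X}\E)$ in $D^{\mathrm b}({}^r\D_Z^{(m)})$, natural in $\E$ among $\O_X$--flat $\D_X^{(m)}$--modules, and by part~3 of the proof of \ref{fund-isom} it is independent of the chosen local coordinates.

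Next, for arbitrary $\E\in D({}^l\D_X^{(m)})$ I would choose a resolution $\E\liso P^\bullet$ by free $\D_X^{(m)}$--modules. Because $u^\flat$ is computed by the finite complex of finite free $\O_X$--modules $u^{-1}\mathcal{H}om_{\O_X}(K_\bullet(\underline f),-)$ (diagram \ref{locadesc-uflat-iso}) and $\L u^*$ is computed by the Koszul resolution $\D_X^{(m)}\otimes_{\O_X}K_\bullet(\underline f)$ (see \ref{ntnu*form}), both functors have finite amplitude and are computed termwise on $P^\bullet$; applying the isomorphism of the previous paragraph in each term and using its naturality in $\E$ yields an isomorphism of complexes $\omega_Z\otimes_{\O_Z}u^!(P^\bullet)\riso u^\flat(\omega_X\otimes_{\O_X}P^\bullet)$, whence the desired isomorphism \ref{fund-isom2pre} in the derived category. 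Coordinate--independence makes this isomorphism independent of the resolution up to homotopy, so the local isomorphisms glue to a global one, functorial in $\E$; the bimodule case is handled identically, resolving $\E$ by free $(\D_X^{(m)},\D_X^{(m)})$--bimodules (which are $\O_X$--flat as left modules) and invoking the respective part of \ref{fund-isom1}.

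The main obstacle, as usual for this circle of statements, is the coherent bookkeeping of the right $\D_Z^{(m)}$--module (resp.\ $(\D_Z^{(m)},u^{-1}\D_X^{(m)})$--bimodule) structures through this reduction: one must check that the termwise instances of \ref{fund-isom1} on the resolution are compatible with the differentials (naturality of \ref{fund-isom1}), that the identifications of $u^\flat$ with the Koszul--$\mathcal{H}om$ complex and of $u^!$ with the Koszul--tensor complex respect the $\D^{(m)}_{X,Z,\underline t/T}$--structures and hence, via $\theta$, the $\D^{(m)}_Z$--structures, and that the glueing of the local isomorphisms is legitimate. For this last point the essential work is already contained in the coordinate--change diagrams \ref{fund-isom1-diag1pre} and \ref{fund-isom1-diag2} from the proof of \ref{fund-isom}; these lift verbatim to the chain level because $\wedge\vartheta$ is a morphism of complexes, so no new computation is needed beyond tracking signs and shifts.
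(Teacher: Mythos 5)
Your proposal is correct and follows essentially the same strategy as the paper's proof: both reduce to the module--level isomorphism \ref{fund-isom} by observing that for $\O_X$--flat $\E$ both sides of \ref{fund-isom2pre} are concentrated in cohomological degree $-d_{Z/X}$, and then extend to arbitrary complexes. The paper packages the extension step by citing Hartshorne's way-out lemma \cite[I.7.4]{HaRD}, whereas you unwind it via an explicit $K$--projective/free resolution and termwise computation with the Koszul complexes; the only imprecision in your write-up is calling the result ``an isomorphism of complexes'' --- for each $P^i$ the identification is a zig-zag of quasi-isomorphisms (module-level iso \ref{fund-isom1} composed with the truncation quasi-isomorphism from the Koszul $\mathcal{H}om$-complex to its top cohomology), not a chain isomorphism, but the assembled zig-zag of total complexes produces the desired derived-category isomorphism exactly as you intend.
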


\begin{proof}
We already know that there exists an isomorphism of the form 
\ref{fund-isom2pre} in the category $D  (\O _Z)$.
This yields that if $\E$ is a flat left
$\D _X ^{(m)}$-module, then for any $i \not = s$, 
$\mathcal{H} ^i u ^{\flat} ( \omega _{X} \otimes _{\O _X} \E)=0$.
Using
\cite[I.7.4]{HaRD},
we conclude using \ref{fund-isom}.
\end{proof}

\begin{coro}
We have the canonical isomorphism of right $( \D _{Z} ^{(m)}, u ^{-1}\D _{X} ^{(m)}) $-bimodules of the form
\begin{equation}
\label{fund-isom2-corpre}
\omega _{Z} \otimes _{\O _Z} \D _{Z\to X} ^{(m)} 
\riso
u ^{\flat} ( \omega _{X} \otimes _{\O _X}  \D _X ^{(m)}) [-d _{Z/X}].
\end{equation}
\end{coro}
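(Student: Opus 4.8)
The plan is to obtain this directly from Theorem \ref{fund-isom-thmpre} by specializing $\E$ to $\D _X ^{(m)}$, viewed as a $\D _X ^{(m)}$-bimodule: its left structure is what the functor $u ^{!}$ sees, while its right regular structure is the one recorded in the target category of $({} ^r \D _Z ^{(m)}, {} ^r u ^{-1}\D _X ^{(m)})$-bimodules. So the only thing left to do is to identify $u ^{!} (\D _X ^{(m)})$ with $\D _{Z\to X} ^{(m)} [d _{Z/X}]$ as such a bimodule.

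First I would compute $u ^{!} (\D _X ^{(m)})$. By definition (see \ref{ntnu*form}) one has $u ^{!} (\E) = \L u ^{*} (\E) [d _{Z/X}]$ with $\L u ^{*} (\E) = \D _{Z\to X} ^{(m)} \otimes ^{\L} _{u ^{-1}\D _X ^{(m)}} u ^{-1}\E$. For $\E = \D _X ^{(m)}$ the complex $u ^{-1}\E = u ^{-1}\D _X ^{(m)}$ is free of rank one over $u ^{-1}\D _X ^{(m)}$, so no resolution is needed and the isomorphism \ref{dfn-u!leftclimm-iso} collapses to $\L u ^{*} (\D _X ^{(m)}) \riso \D _{Z\to X} ^{(m)}$; keeping track of the right multiplication on the factor $u ^{-1}\D _X ^{(m)}$, this is an isomorphism of $({} ^l \D _Z ^{(m)}, {} ^r u ^{-1}\D _X ^{(m)})$-bimodules, the right $u ^{-1}\D _X ^{(m)}$-action matching the canonical right structure of the transfer bimodule $\D _{Z\to X} ^{(m)}$. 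Hence $u ^{!} (\D _X ^{(m)}) \riso \D _{Z\to X} ^{(m)} [d _{Z/X}]$. Then I would apply Theorem \ref{fund-isom-thmpre} to $\E = \D _X ^{(m)}$ to get an isomorphism $\omega _{Z} \otimes _{\O _Z} u ^{!} (\D _X ^{(m)}) \riso u ^{\flat} ( \omega _{X} \otimes _{\O _X} \D _X ^{(m)})$ of $({} ^r \D _Z ^{(m)}, {} ^r u ^{-1}\D _X ^{(m)})$-bimodules, substitute the previous computation, and shift by $[-d _{Z/X}]$ to land on \ref{fund-isom2-corpre}.

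The main obstacle --- really the only point with any content --- is the compatibility of the various (bi)module structures, concretely the claim that the right $u ^{-1}\D _X ^{(m)}$-structure carried through Theorem \ref{fund-isom-thmpre} by the right regular structure on $\D _X ^{(m)}$ is the canonical one on $\D _{Z\to X} ^{(m)}$. I would check this by unwinding the definition of $\L u ^{*}$ in \ref{ntnu*form} and of $u ^{\flat}$ via the $m$-PD-costratification in \ref{rightD-moduleflat}, using that all the constructions involved are functorial in the right-module variable; alternatively, working locally as in \ref{locdesc-climm}, where both sides become explicit free modules, the identification of the right actions follows directly from \ref{eq-QXP} and \ref{locadesc-uflat1-formula}. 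Everything else is formal.
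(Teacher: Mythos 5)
Your proposal is correct and follows exactly the paper's (one-line) proof: apply Theorem~\ref{fund-isom-thmpre} with $\E = \D _X ^{(m)}$ viewed as a bimodule. You merely spell out the implicit identification $u ^! (\D _X ^{(m)}) \riso \D _{Z\to X} ^{(m)} [d _{Z/X}]$, which is exactly the tensor-flattening the paper leaves to the reader.
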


\begin{proof}
We apply Theorem \ref{fund-isom-thmpre} in the case
$\cE= \D _X ^{(m)}$.
\end{proof}

\subsection{Relative duality isomorphism and adjunction for a closed immersion of schemes}
Let $T$ be a noetherian $\Z _{(p)}$-scheme of finite Krull dimension.
Let $u \colon Z \hookrightarrow X$ be a closed immersion of smooth $T$-schemes.
Let $\I$ be the ideal defining $u$. The level $m\in\N$ is fixed.

\begin{ntn}
We get the functors
$u ^{(m)} _+\colon 
D  ( {} ^*  \D _Z ^{(m)}) 
\to
D  ({} ^*  \D _X^{(m)} )$ 
by setting
for any 
$\cE \in D   ( {} ^l  \D _Z ^{(m)}) $
and
$\cN \in D   ( {} ^r  \D _Z ^{(m)}) $
by setting
\begin{gather}
\label{dfnu+(m)}
u ^{(m)} _+ (\cE) := u _* 
\left ( 
\D  _{X \leftarrow Z} ^{(m)}
\otimes _{\D  _Z ^{(m)}} 
\cE 
\right ),
u ^{(m)} _+ (\cN) := u _* 
\left ( \cN \otimes _{\D  _Z ^{(m)}} \D  _{Z \to X} ^{(m)}
\right ).
\end{gather}

Moreover, we get the dual functors
$\DD ^{(m)} \colon 
D  ( {} ^*  \D _X ^{(m)}) 
\to
D  ({} ^*  \D _X^{(m)} )$ by setting
for any 
$\cE \in D   ( {} ^l  \D _X ^{(m)}) $
and
$\cM \in D   ( {} ^r  \D _X ^{(m)}) $,
\begin{gather}
\label{dfnuD(m)}
\DD ^{(m)}  (\cE) := 
\R \mathcal{H}om _{\D _X ^{(m)}}
(\cE, \D _X ^{(m)} \otimes _{\O _X} \omega _{X} ^{-1}) [d _{X}],
\
\
\DD ^{(m)}  (\cM) := 
\R \mathcal{H}om _{\D _X ^{(m)}}
(\cM,\omega _{X} \otimes _{\O _X}  \D _X ^{(m)}) [d _{X}],
\end{gather}
which are respectively computed by taking an injective resolution of
$\D _X ^{(m)} \otimes _{\O _X} \omega _{X} ^{-1}$ 
and $\omega _{X} \otimes _{\O _X}  \D _X ^{(m)}$.
These functors preserve the coherence. 
We can remove $(m)$ in the notation if there is no ambiguity with the level.

These functors are compatible with the quasi-inverse functors
$-\otimes _{\O _X} \omega _{X} ^{-1}$ and
$\omega _{X}  \otimes _{\O _X} -$ exchanging left and right 
$\D ^{(m)} _X$-modules structures.
More precisely, we have the canonical isomorphism
\begin{equation}
\label{u+D-left2right}
\omega _{X}  \otimes _{\O _X} u  ^{(m)} _+ (\cE)
\riso 
u ^{(m)} _+ (\omega _{Z}  \otimes _{\O _Z}  \cE),
\end{equation}
which is constructed as follows :
\begin{gather}
\notag
\omega _{X}  \otimes _{\O _X} u _* 
\left ( 
\D ^{(m)}  _{X \leftarrow Z}  
\otimes _{\D ^{(m)}  _Z } 
\cE
\right )
\riso
u _* 
\left ( 
( u ^{-1}\omega _{X}  \otimes _{u ^{-1} \O _X}  \D ^{(m)}  _{X \leftarrow Z}  )
\otimes _{\D ^{(m)}  _Z } 
\cE
\right )
\\
\notag
\riso
u _* 
\left ( 
(\omega _{Z}  \otimes _{\O _Z}   \cE) 
\otimes _{\D ^{(m)}  _Z } 
( u ^{-1}\omega _{X}  \otimes _{u ^{-1} \O _X}  \D ^{(m)}  _{X \leftarrow Z}  
 \otimes _{\O _Z}  \omega _{Z}  ^{-1})
\right )
\riso
u _* 
\left ( 
(\omega _{Z}  \otimes _{\O _Z}   \cE) 
\otimes _{\D ^{(m)}  _Z } 
 \D ^{(m)}  _{Z \to X}  
\right ).
\end{gather}
More easily, we check the isomorphism
$\omega _{X}  \otimes _{\O _X} \DD ^{(m)} (\cE)
\riso 
\DD^{(m)} (\omega _{X}  \otimes _{\O _X}  \cE)$.

\end{ntn}

\begin{prop}
\label{u+uflat-lem}
Let $\M$ be a 
right $\D _X ^{(m)}$-module, 
$\cN$ be a 
right  $\D _Z ^{(m)}$-module.
\begin{enumerate}
\item We have the canonical adjunction morphisms
$\mathrm{adj} 
\colon 
u _+ u ^{\flat 0} (\M)
\to 
\M$
and 
$\mathrm{adj} 
\colon 
\cN
\to 
 u ^{\flat 0} u _+  (\cN)$.
Moreover, the compositions
$ u ^{\flat 0} (\M)
 \overset{\mathrm{adj}}{\longrightarrow} 
u ^{\flat 0} u _+ u ^{\flat 0} (\M)
\overset{\mathrm{adj}}{\longrightarrow} 
 u ^{\flat 0} (\M)$
and 
$u _+  (\cN)
\overset{\mathrm{adj}}{\longrightarrow} 
u _+ u ^{\flat 0} u _+  (\cN)
\overset{\mathrm{adj}}{\longrightarrow} 
u _+  (\cN)$
are the identity.

\item Using the above  adjunction morphisms, we construct maps 
$$\mathcal{H}om _{\D _X ^{(m)}}
(u _+ (\cN), \M)
\to 
u _* \mathcal{H}om _{\D _Z ^{(m)} }
(\cN, u ^{\flat 0} (\M)),
\
u _* \mathcal{H}om _{\D _Z ^{(m)} }
(\cN, u ^{\flat 0} (\M))
\to 
\mathcal{H}om _{\D _X ^{(m)}}
(u _+ (\cN), \M),$$
which are  inverse of each other.
\item The functor $u ^\flat$ transforms 
$K$-injective complexes into
$K$-injective complexes.
\end{enumerate}

\end{prop}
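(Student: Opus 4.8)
```latex
\begin{proof}[Proof plan]
The plan is to prove the three assertions in order, since the later ones rely on the earlier constructions. For part (1), I would construct the two adjunction morphisms directly and explicitly, using the local description of $u^{\flat 0}$ from \ref{locadesc-uflat2-iso2} and the isomorphism $\theta$ identifying $u _* \cD ^{(m)} _{Z/T}$ with $\cD ^{(m)} _{X,Z,\underline{t}/T}/\cI\cD ^{(m)} _{X,Z,\underline{t}/T}$. Concretely, the counit $\mathrm{adj}\colon u _+ u ^{\flat 0} (\M) \to \M$ comes from the evaluation-at-$1$ map $\mathrm{ev} _1 \colon u ^{\flat 0}(\M) \hookrightarrow \M$ (which annihilates $\I$) tensored with the structural right module map, using $u _+ (\cN) = u_*(\cN \otimes _{\D _Z ^{(m)}} \D _{Z \to X} ^{(m)})$; the fact that $\mathrm{ev}_1$ is right $\D ^{(m)} _{X,Z,\underline{t}/T}$-linear (Lemma \ref{locadesc-uflat1}) makes this well-defined and globalizes by gluing over the affine opens where local coordinates exist (independence of coordinates being guaranteed because $\mathrm{ev}_1$ is intrinsic). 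The unit $\mathrm{adj}\colon \cN \to u ^{\flat 0} u _+ (\cN)$ is built from $\cN \ni x \mapsto (1 \mapsto x \otimes \overline{1})$, again checking right-linearity via \ref{eq-QXP}. The triangle identities are then a direct computation: both composites reduce, after applying $\mathrm{ev}_1$, to the identity on the section over $1$.

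For part (2), having the adjunction morphisms in hand, the two maps are the standard ones: given $\varphi\colon u_+(\cN) \to \M$, send it to $u_*$ of the composite $\cN \xrightarrow{\mathrm{adj}} u^{\flat 0} u_+(\cN) \xrightarrow{u^{\flat 0}(\varphi)} u^{\flat 0}(\M)$; conversely, given $\psi\colon \cN \to u^{\flat 0}(\M)$, send it to $u_+(\cN) \xrightarrow{u_+(\psi)} u_+ u^{\flat 0}(\M) \xrightarrow{\mathrm{adj}} \M$. That these are mutually inverse follows formally from the triangle identities of part (1) together with the compatibility of $u_+$, $u^{\flat 0}$, $u_*$ with the relevant $\cH om$ sheaves; I would verify this locally using \ref{locadesc-uflat2-iso2} to rewrite $u^{\flat 0}(\M) = u^{-1}\cH om _{\D ^{(m)} _{X,Z,\underline{t}/T}}(u_*\D ^{(m)} _Z, \M)$, where the bijection becomes the tensor-hom adjunction for the bimodule $\D ^{(m)} _{Z\to X}$.

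For part (3), the claim is that $u^{\flat}$ preserves $K$-injectivity. The key point is that $u^{\flat 0}$, being $u^{-1}\cH om _{\O_X}(u_*\O_Z, -)$, is right adjoint to an exact functor, namely $u_+$ restricted appropriately — more precisely, the functor $\cN \mapsto u_*(\cN \otimes _{\O_Z} \text{(locally free thing)})$ or, cleaner at the level of $\O$-modules, $u_!$-type pushforward composed with the right-module structure. I would argue: since $\D_X^{(m)}$ is $\O_X$-flat, an injective right $\D_X^{(m)}$-module is injective as an $\O_X$-module (already noted in \ref{rightD-moduleflat}), and more generally, because $u^{\flat}$ has an exact left adjoint $u_+$ on the level of abelian categories (exactness of $u_+$ follows from $\D _{Z\to X}^{(m)}$ being a locally free, hence flat, left $\D_Z^{(m)}$-module, cf. \ref{locdesc-climm2}, and $u_*$ being exact for a closed immersion), a right adjoint of an exact functor preserves $K$-injectives; invoke \cite[Lemma I.4.6.2]{HaRD} or the standard fact. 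The main obstacle I anticipate is the bookkeeping in part (1): verifying that the locally-defined adjunction maps are independent of the chosen coordinates $\underline{t}$ and hence glue, and pinning down the triangle identities without getting lost in the interplay of $\rho_*$, $\theta_*$, $u_*$, $u^{-1}$; the change-of-coordinates diagrams \ref{fund-isom1-diag1pre} and \ref{fund-isom1-diag2} should control this, and the intrinsic character of $\mathrm{ev}_1$ is the conceptual anchor.
\end{proof}
```
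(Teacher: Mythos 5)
Your proposal is correct and follows essentially the same route as the paper: both parts (1) and (2) are carried out locally via the explicit $\mathrm{ev}_1$-based counit $x\otimes\overline{P}\mapsto \mathrm{ev}_1(x)P$ and unit $y\mapsto(\overline{a}\mapsto y\otimes\overline{a})$, with well-definedness checked by \ref{locadesc-uflat1-formula} and \ref{eq-QXP}, and the triangle identities verified by direct local computation. For part (3) the paper likewise deduces the preservation of $K$-injectives from exactness of $u_+$ together with the adjunction of part (2) (citing Tag 13.30.9 in the Stacks Project rather than \cite{HaRD}); your elaboration of why $u_+$ is exact and your attention to coordinate-independence are sound but were left implicit in the paper.
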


\begin{proof}
1) Let us check the first assertion. 

a) Since the construction of the {\it canonical} morphism
$\mathrm{adj} 
\colon 
u _+ u ^{\flat 0} (\M)
\to 
\M$
is local, we can suppose the assumptions of \ref{locdesc-climm} are satisfied
(and we use notations \ref{locdesc-climm} and \ref{locdesc-climm2}).
We have
$u _+ u ^{\flat 0} (\M)
:= 
u _* 
\left ( u ^{-1} \mathcal{H}om _{\O _X} ( u _* \O _{Z}, \M) \otimes _{\D _Z ^{(m)}} \D _{Z \to X} ^{(m)}
\right )
=
\mathcal{H}om _{\O _X} ( u _* \O _{Z}, \M) \otimes _{u _* \D _Z ^{(m)}} u _*  \D _{Z \to X} ^{(m)}$.
Let $A := \Gamma (X, \O _X)$.
We reduce  to construct a canonical morphism of the form
$\mathrm{Hom} _{A} ( A/I, M) \otimes _{D _Z ^{(m)}} D _{Z \to X} ^{(m)}
\to 
M$. 
Let us check that  the canonical map 
$\mathrm{Hom} _{A} ( A/I, M) \otimes _{D _Z ^{(m)}} D _{Z \to X} ^{(m)}
\to 
M$,
defined by setting 
$x \otimes \overline{P} 
\mapsto 
\mathrm{ev} _1 (x) P$ for any $x \in \mathrm{Hom} _{A} ( A/I, M)$, $P \in D _{X} ^{(m)}$,
is well defined. 
Let $x \in \mathrm{Hom} _{A} ( A/I, M)$,
$P \in D _{X} ^{(m)}$, 
$Q \in D ^{(m)} _{Z}$. 
Choose $Q _X\in D ^{(m)} _{X,Z,\underline{t}/T}$ 
such that  $\theta (Q ) = \overline{Q _X}$.
Using the formula \ref{locadesc-uflat1-formula}, we get
$\mathrm{ev} _1
(x  \cdot Q) P
= 
\mathrm{ev} _1 (x)  \cdot Q _X P$.
Following \ref{eq-QXP},
we have 
$Q \cdot \overline{P} = \overline{Q _X P}$, and then we get the equality
$x \cdot Q \otimes \overline{P} 
= 
x  \otimes \overline{Q _X P} $
in $\mathrm{Hom} _{A} ( A/I, M) \otimes _{D _Z ^{(m)}} D _{Z \to X} ^{(m)}$.
Hence, we have checked the map is well defined.
The $D _{X} ^{(m)}$-linearity of this canonical map is obvious.

b) Similarly, to construct the morphism
$\mathrm{adj} 
\colon 
\cN
\to 
u ^{\flat 0} u _+  (\cN)$, 
we reduce to the case where the assumptions of \ref{locdesc-climm} are satisfied
and to construct a canonical morphism of the form
$N
\to 
\mathrm{Hom} _{A} ( A/I, N \otimes _{D _Z ^{(m)}} D _{Z \to X} ^{(m)})$.
This latter map is defined by setting 
$y \mapsto ( \overline{a} \mapsto  y \otimes \overline{a})$. 
Let $y \in N$, 
$Q \in D ^{(m)} _{Z}$.
Let $x $ be the element of 
$\mathrm{Hom} _{A} ( A/I, N \otimes _{D _Z ^{(m)}} D _{Z \to X} ^{(m)})$
given by 
$( \overline{a} \mapsto  y \otimes \overline{a})$.
Choose $Q _X\in D ^{(m)} _{X,Z,\underline{t}/T}$ 
such that  $\theta (Q ) = \overline{Q _X}$.
Using \ref{locadesc-uflat1-formula}, 
$\mathrm{ev}  _1 ( x \cdot Q) 
= 
\mathrm{ev} _1 ( x ) \cdot Q _X
=
y \otimes \overline{Q _X}
\overset{\ref{eq-QXP}}{=}
y \cdot Q \otimes \overline{1} $.
This yields 
that the canonical map 
$N
\to 
\mathrm{Hom} _{A} ( A/I, N \otimes _{D _Z ^{(m)}} D _{Z \to X} ^{(m)})$
is 
$D _{Z} ^{(m)}$-linear.

c) Reducing to the local context \ref{locdesc-climm}, we compute easily that both compositions are the identity maps.

2) The fact that the first statement of the proposition implies the second one is standard. 
Finally, since the functor $u _+$ is exact, we get the last statement from the second one
(see 13.30.9 of the stack project).
\end{proof}

\begin{coro}
\label{Radj-u+flat}
Let 
$\M \in D 
( {} ^r \D _X ^{(m)})$,
$\cN\in D 
({} ^r \D _Z ^{(m)} )$. 
Let 
$\cE \in D 
( {} ^l \D _X ^{(m)})$,
$\cN\in D 
({} ^l \D _Z ^{(m)} )$. 
We have the isomorphisms
\begin{gather}
\notag
\R \mathcal{H}om _{\D _X ^{(m)}}
(u _+ (\cN), \M)
\riso 
u _* \R \mathcal{H}om _{\D _Z ^{(m)} }
(\cN, u ^{\flat} (\M));
\\
\notag
\R \mathcal{H}om _{\D _X ^{(m)}}
(u _+ (\cE), \cF)
\riso 
u _* \R \mathcal{H}om _{\D _Z ^{(m)} }
(\cE, u ^{!} (\cF)).
\end{gather}

\end{coro}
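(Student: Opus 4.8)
The plan is to derive, degreewise, the underived adjunction isomorphism already established in Proposition~\ref{u+uflat-lem}(2), using that $u^\flat$ preserves $K$-injectivity, and then to deduce the left-module statement from the right-module one by the usual side-swapping with $\omega$.

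First I would treat the right-module case. Choose a $K$-injective resolution $\M\to\I$ in $C({}^r\D_X^{(m)})$. By the definition of $u^\flat$ (see \ref{dfn-uflat}), $u^\flat(\M)$ is represented by $u^{\flat 0}(\I)$, which is $K$-injective in $C({}^r\D_Z^{(m)})$ by Proposition~\ref{u+uflat-lem}(3). I would also record that $u_+$ as defined in \ref{dfnu+(m)} needs no further derivation: $u$ is a closed immersion, so $u_*$ is exact, and $\D^{(m)}_{Z\to X}$ is free, hence flat, as a left $\D_Z^{(m)}$-module. Therefore $\R\mathcal{H}om_{\D_X^{(m)}}(u_+(\cN),\M)$ is computed by the Hom double complex $\mathcal{H}om^\bullet_{\D_X^{(m)}}(u_+(\cN),\I)$, and $u_*\R\mathcal{H}om_{\D_Z^{(m)}}(\cN,u^\flat(\M))$ by $u_*\mathcal{H}om^\bullet_{\D_Z^{(m)}}(\cN,u^{\flat 0}(\I))$. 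Applying the isomorphism of Proposition~\ref{u+uflat-lem}(2) in each bidegree — it is functorial in both arguments, being built from the adjunction maps of Proposition~\ref{u+uflat-lem}(1) — and passing to total complexes yields
\[
\R\mathcal{H}om_{\D_X^{(m)}}(u_+(\cN),\M)\riso u_*\R\mathcal{H}om_{\D_Z^{(m)}}(\cN,u^\flat(\M)).
\]

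Next I would obtain the left-module statement from this. The equivalences $\omega_X\otimes_{\O_X}-$ and $\omega_Z\otimes_{\O_Z}-$ between left and right $\D^{(m)}$-module categories identify the relevant $\R\mathcal{H}om$ complexes; combining them with \ref{u+D-left2right} and with Theorem~\ref{fund-isom-thmpre} (which provides $\omega_Z\otimes_{\O_Z}u^!(\cF)\riso u^\flat(\omega_X\otimes_{\O_X}\cF)$) gives the chain
\begin{align*}
\R\mathcal{H}om_{\D_X^{(m)}}(u_+(\cE),\cF)
& \riso \R\mathcal{H}om_{\D_X^{(m)}}\bigl(\omega_X\otimes_{\O_X}u_+(\cE),\,\omega_X\otimes_{\O_X}\cF\bigr) \\
& \riso \R\mathcal{H}om_{\D_X^{(m)}}\bigl(u_+(\omega_Z\otimes_{\O_Z}\cE),\,\omega_X\otimes_{\O_X}\cF\bigr) \\
& \riso u_*\R\mathcal{H}om_{\D_Z^{(m)}}\bigl(\omega_Z\otimes_{\O_Z}\cE,\,u^\flat(\omega_X\otimes_{\O_X}\cF)\bigr) \\
& \riso u_*\R\mathcal{H}om_{\D_Z^{(m)}}\bigl(\omega_Z\otimes_{\O_Z}\cE,\,\omega_Z\otimes_{\O_Z}u^!(\cF)\bigr) \\
& \riso u_*\R\mathcal{H}om_{\D_Z^{(m)}}\bigl(\cE,\,u^!(\cF)\bigr),
\end{align*}
where the middle isomorphism is the right-module case applied to $\cN=\omega_Z\otimes_{\O_Z}\cE$ and $\M=\omega_X\otimes_{\O_X}\cF$.

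The only genuinely non-formal ingredient is the compatibility of $u^\flat$ with $K$-injective complexes invoked in the first step, and this is precisely Proposition~\ref{u+uflat-lem}(3); once it is granted, the rest is bookkeeping with adjunction and with the $\omega$-twist equivalences exchanging left and right module structures.
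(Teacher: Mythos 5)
Your proof is correct and follows exactly the same route as the paper: take a $K$-injective resolution of $\M$, invoke Proposition~\ref{u+uflat-lem}(2)--(3) to get the right-module isomorphism, then pass to the left-module case via the $\omega$-twist equivalences, \ref{u+D-left2right}, and Theorem~\ref{fund-isom-thmpre}. You have merely spelled out the bookkeeping that the paper leaves implicit.
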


\begin{proof}
Taking a K-injective resolution of 
$\M$ (see 13.33.5 of the stack project), 
the first isomorphism is a consequence of \ref{u+uflat-lem}.2--3.
This yields the second one by using \ref{fund-isom-thmpre}
and \ref{u+D-left2right}.
\end{proof}

\begin{coro}
\label{rel-dual-isom-imm}
Let 
$\cN\in 
D ^{\mathrm{b}} _{\mathrm{coh}}({} ^* \D _Z ^{(m)} )$.
We have the isomorphism of $D ^{\mathrm{b}} _{\mathrm{coh}}({} ^* \D _X ^{(m)} )$:
\begin{equation}
\label{rel-dual-isom-imm1}
\DD ^{(m)} \circ u ^{(m)} _+ (\cN)
\riso 
 u ^{(m)} _+ \circ \DD ^{(m)} (\cN).
\end{equation}
\end{coro}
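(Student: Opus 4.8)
The plan is to deduce the relative duality isomorphism for a closed immersion $u$ from the adjunction isomorphism of Corollary \ref{Radj-u+flat}, by a standard argument relating "Hom into the structure sheaf" with the dual. First I would treat the right module case; the left case then follows by applying the side-changing equivalences $\cM \mapsto \omega_X \otimes_{\O_X} \cM$ together with the compatibilities \ref{u+D-left2right} and $\omega_X \otimes_{\O_X} \DD^{(m)}(\cE) \riso \DD^{(m)}(\omega_X \otimes_{\O_X} \cE)$ recorded just before Proposition \ref{u+uflat-lem}.

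For $\cN \in D^{\mathrm{b}}_{\mathrm{coh}}({}^r\D_Z^{(m)})$, recall $\DD^{(m)}(u_+^{(m)}\cN) = \R\mathcal{H}om_{\D_X^{(m)}}(u_+\cN, \omega_X \otimes_{\O_X} \D_X^{(m)})[d_X]$. By Corollary \ref{Radj-u+flat} applied with $\M = \omega_X \otimes_{\O_X}\D_X^{(m)}$, this is isomorphic to $u_* \R\mathcal{H}om_{\D_Z^{(m)}}(\cN, u^{\flat}(\omega_X \otimes_{\O_X}\D_X^{(m)}))[d_X]$. Now \ref{fund-isom2-corpre} gives $u^{\flat}(\omega_X \otimes_{\O_X}\D_X^{(m)}) \riso (\omega_Z \otimes_{\O_Z}\D_{Z\to X}^{(m)})[d_{Z/X}]$, so the expression becomes $u_*\R\mathcal{H}om_{\D_Z^{(m)}}(\cN, \omega_Z \otimes_{\O_Z}\D_{Z\to X}^{(m)})[d_X + d_{Z/X}]$. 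Since $d_X + d_{Z/X} = d_Z$, and using that $\D_{Z\to X}^{(m)} = \D_Z^{(m)}\otimes_{u^{-1}\O_X} u^{-1}\O_X$-type extension is flat over $\D_Z^{(m)}$ on the appropriate side (so that the $\R\mathcal{H}om$ commutes with $-\otimes_{\D_Z^{(m)}}\D_{Z\to X}^{(m)}$ up to the biduality for coherent complexes), I would rewrite $\R\mathcal{H}om_{\D_Z^{(m)}}(\cN, \omega_Z \otimes_{\O_Z}\D_Z^{(m)})\otimes_{\D_Z^{(m)}}\D_{Z\to X}^{(m)}[d_Z] = \DD^{(m)}(\cN)\otimes_{\D_Z^{(m)}}\D_{Z\to X}^{(m)}$, whose pushforward by $u_*$ is exactly $u_+^{(m)}(\DD^{(m)}\cN)$ by the definition \ref{dfnu+(m)}.

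The one genuine subtlety — which I expect to be the main obstacle — is justifying the commutation of $\R\mathcal{H}om_{\D_Z^{(m)}}(\cN, -)$ with the functor $-\otimes_{\D_Z^{(m)}}\D_{Z\to X}^{(m)}$, i.e. passing from $\R\mathcal{H}om_{\D_Z^{(m)}}(\cN, \omega_Z \otimes_{\O_Z}\D_{Z\to X}^{(m)})$ to $\R\mathcal{H}om_{\D_Z^{(m)}}(\cN, \omega_Z \otimes_{\O_Z}\D_Z^{(m)})\otimes_{\D_Z^{(m)}}\D_{Z\to X}^{(m)}$. This is where coherence of $\cN$ is used: by the local description in \ref{locdesc-climm2}, $\D_{Z\to X}^{(m)}$ is a free, hence flat, left $\D_Z^{(m)}$-module (locally with basis $\{\underline{\xi}^{<(\underline{0},\underline{h})>_{(m)}}\}$), and $\cN$ being coherent it is locally a perfect complex over the (coherent, by \cite[3.3-3.4]{Be1}) ring $\D_Z^{(m)}$; reducing to $\cN = \D_Z^{(m)}$ the identity is tautological, and one extends by way-out arguments (\cite[I.7.1]{HaRD}) and by the local-to-global principle. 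I would also double-check that all the identifications are compatible with the bimodule structures over $u^{-1}\D_X^{(m)}$ when one wants the finer statement, though for the stated corollary only the $\D_X^{(m)}$-linear isomorphism of \ref{rel-dual-isom-imm1} is needed. Finally, to get the left-module version I would dualize via $\omega_X\otimes_{\O_X}-$ on $X$ and $\omega_Z\otimes_{\O_Z}-$ on $Z$, noting $u^!(\cF) = \L u^*(\cF)[d_{Z/X}]$ and $\omega_Z \otimes_{\O_Z} u^!(\cF) \riso u^{\flat}(\omega_X\otimes_{\O_X}\cF)$ from Theorem \ref{fund-isom-thmpre}, which transports the right-module computation verbatim.
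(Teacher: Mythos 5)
Your proposal is correct and follows exactly the same chain of isomorphisms as the paper: reduce to the right-module case via \ref{u+D-left2right}, apply \ref{Radj-u+flat} with $\M = \omega_X\otimes_{\O_X}\D_X^{(m)}$, invoke \ref{fund-isom2-corpre}, adjust the shift $d_X + d_{Z/X} = d_Z$, and commute $\R\mathcal{H}om$ with $-\otimes_{\D_Z^{(m)}}\D_{Z\to X}^{(m)}$. The only difference is that where you spell out why the last commutation holds (freeness of $\D_{Z\to X}^{(m)}$, perfection of $\cN$ over the finite-homological-dimension ring $\D_Z^{(m)}$, way-out reduction), the paper simply cites \cite[2.1.17]{caro_comparaison} for it.
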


\begin{proof}
By using 
\ref{u+D-left2right}; we reduce to the case $* = r$.
In this case, the isomorphism \ref{rel-dual-isom-imm1} 
is the composition of the following  isomorphisms :
\begin{gather}
\notag
\R \mathcal{H}om _{\D _X ^{(m)}}
(u _+ (\cN),\omega _{X} \otimes _{\O _X}  \D _X ^{(m)}) [d _{X}]
\overset{\ref{Radj-u+flat}}{\riso} 
u _* \R \mathcal{H}om _{\D _Z ^{(m)} }
(\cN, u ^{\flat} ( \omega _{X} \otimes _{\O _X}  \D _X ^{(m)}) )
[d _{X}]
\overset{\ref{fund-isom2-corpre}}{\riso} 
\\
\notag
u _* \R \mathcal{H}om _{\D _Z ^{(m)} }
(\cN, \omega _{Z} \otimes _{\O _Z} \D _{Z\to X} ^{(m)} ) 
[d _{Z}]
\underset{\cite[2.1.17]{caro_comparaison}}{\riso} 
u _* 
\left ( 
\R \mathcal{H}om _{\D _Z ^{(m)} }
(\cN,  \omega _{Z} \otimes _{\O _Z} \D _{Z} ^{(m)} 
[d _{Z}]
)
\otimes _{\D _{Z} ^{(m)} }
 \D _{Z\to X} ^{(m)} 
 \right ).
\end{gather}
\end{proof}

\subsection{Relative duality isomorphism and adjunction for a closed immersion of formal schemes}

Let $u \colon \ZZ \hookrightarrow \X$ be a closed immersion of 
smooth formal $\V$-schemes.
Let $\I$ be the ideal defining $u$. 
The level $m\in \N$ is fixed.
In this subsection, by 
the letter $\widetilde{\D} $ we mean
$\widehat{\D}  ^{(m)}$ or respectively
$\widehat{\D} ^{\dag } \otimes _\bbZ \bbQ$.
For instance, 
$\widetilde{\D} _{\X/\fS} $ is 
$\widehat{\D} ^{(m)} _{\X/\fS} $ 
(resp. 
$\D ^{\dag} _{\X/\fS,\Q} $).

\begin{empt}
[Local description]
\label{locdesc-climm-form}
Suppose
$\fX$ is affine and there exist
$t _{r +1},\dots , t _{d}  \in \Gamma (\fX ,\I)$
generating 
$I:=\Gamma (\fX \,,\I)$,
$t _{1},\dots , t _{r}\in \Gamma (  \fX,\O _{ \fX})$
such that
$t _{1},\dots ,t  _{d}$ are local coordinates of $ \fX$ over $S$,
$\overline{t} _{1},\dots ,\overline{t} _{r}$ 
are local coordinates of $ \fZ$ over $\fS$,
and 
$\overline{t} _{r +1},\dots ,\overline{t} _{d}$ is a basis of $\I /\I ^2$,
where $\overline{t} _{1},\dots , \overline{t} _{r} \in\Gamma (  \fZ ,\O _{ \fZ})$
(resp. $\overline{t} _{r +1},\dots ,\overline{t} _{d}\in\Gamma (  \fX ,\cI /\cI ^2)$)
are the images of 
$t _{1},\dots , t _{r}$
(resp. $t _{r+1},\dots , t _{d}$)
via 
$\Gamma (  \fX,\O _{ \fX})
\to
\Gamma (  \fZ ,\O _{ \fZ})$
(resp. 
$\Gamma (  \fX,\cI)
\to 
\Gamma (  \fX ,\cI /\cI ^2)$).

We denote by 
$\tau _i := 1 \otimes t _i -t _i \otimes 1$, 
$\overline{\tau} _j := 1 \otimes \overline{t} _j -\overline{t} _j \otimes 1$, 
for any $i= 1,\dots, d$, $j= 1,\dots, r$.
The sheaf of $\O _ \fX$-algebras
$\cP ^n  _{ \fX/\fS,(m)}$ is 
a free $\O _ \fX$-module with the basis 
$\{ \underline{\tau} ^{\{\underline{k}\} _{(m)}}\ | \ \underline{k}\in \N ^d \text{ such that } | \underline{k}| \leq n\} $,
and
$\cP ^n  _{ \fZ/\fS,(m)}$ is 
a free $\O _ \fZ$-module with the basis 
$\{ \underline{\overline{\tau}} ^{\{\underline{l}\} _{(m)}}\ | \ \underline{l}\in \N ^r \text{ such that } | \underline{l}| \leq n\} $.
We denote by 
$\{ 
\underline{\partial} ^{<\underline{k}> _{(m)}}
\ | \ \underline{k}\in \N ^d,\ | \underline{k} | \leq n
\}$
the corresponding dual basis  of 
$\D ^{(m)} _{ \fX/\fS,n}$ 
and
by 
$\{ \underline{\partial} ^{<\underline{l}> _{(m)}}\  
| \ \underline{l}\in \N ^r,\ | \underline{l} | \leq n\} $
the corresponding dual basis of $\D ^{(m)} _{ \fZ/ \fS,n}$ (if there is no possible confusion).
The sheaf
$\D ^{(m)} _{ \fX/ \fS}$ is 
a free $\O _ \fX$-module with the basis 
$\{ 
\underline{\partial} ^{<\underline{k}> _{(m)}}
\ | \ \underline{k}\in \N ^d
\}$,
and
$\D ^{(m)} _{ \fZ/ \fS}$ is 
a free $\O _ \fZ$-module with the basis 
$\{ \underline{\partial} ^{<\underline{l}> _{(m)}}\ | \ \underline{l}\in \N ^r\} $.

a) We compute the canonical homomorphism
$u ^* \cP ^n  _{ \fX/ \fS,(m)}
\to \cP ^n  _{ \fZ/ \fS,(m)}$
sends 
$\underline{\tau} ^{\{(\underline{l}, \underline{h})\} _{(m)}}$
where 
$\underline{l} \in \N ^r$
and 
$\underline{h} \in \N ^{d-r}$
to
$\underline{\overline{\tau}} ^{\{\underline{l}\} _{(m)}}$
if $\underline{h} = (0,\dots,0)$
and to $0$ otherwise.

b) We denote by 
$\theta \colon 
\cD ^{(m)} _{ \fZ/ \fS}
\to 
\cD ^{(m)} _{ \fZ \to  \fX/ \fS}$
the canonical homomorphism of left $\cD ^{(m)} _{ \fZ/ \fS}$-modules
(which is built by duality from the 
canonical homomorphisms $u ^* \cP ^n  _{ \fX/ \fS,(m)}
\to \cP ^n  _{ \fZ/ \fS,(m)}$).
For any $P \in D ^{(m)} _{ \fX/ \fS}$, we denote by 
$\overline{P}$ its image via
the canonical morphism of left $D ^{(m)} _{ \fX/ \fS}$-modules
$D ^{(m)} _{ \fX/ \fS}
\to 
D ^{(m)} _{ \fX/ \fS} / I D ^{(m)} _{ \fX/ \fS}
= 
D ^{(m)} _{ \fZ \to  \fX/ \fS}$.
We set 
$\underline{\xi} ^{<\underline{k}> _{(m)}}:= 
\overline{\underline{\partial} ^{<\underline{k}> _{(m)}}}$. 
By duality from a), we compute
$\theta (\underline{\partial} ^{<\underline{l}> _{(m)}})
=
\underline{\xi} ^{<(\underline{l}, \underline{0})> _{(m)}}$,
for any 
$\underline{l}\in \N ^r$.

\end{empt}

\begin{empt}
\label{locdesc-climm2-form}
Suppose we are in the local situation of \ref{locdesc-climm-form}. 
We denote by 
$\cD ^{(m)} _{ \fX, \fZ,\underline{t}/ \fS}$ the subring of 
$\cD ^{(m)} _{ \fX/ \fS}$ which is a 
 free $\O _ \fX$-module with the basis 
$\{ \underline{\partial} ^{<(\underline{l}, \underline{0})> _{(m)}}\ | \ \underline{l}\in \N ^r\} $, 
where 
$\underline{0}:=(0,\dots, 0) \in 
\N ^{d-r}$.
If there is no ambiguity concerning the local coordinates (resp. and $\fS$),
we might simply denote $\cD ^{(m)} _{ \fX, \fZ,\underline{t}/ \fS}$ by 
$\cD ^{(m)} _{ \fX, \fZ/ \fS}$
(resp. $\cD ^{(m)} _{ \fX, \fZ}$).
The properties of \ref{locdesc-climm2} are still valid in the context of formal schemes : 
\begin{enumerate}[(a)]
\item  Since 
$t _{r +1},\dots , t _{d}$ generate $I$ and are in the center of 
$D ^{(m)} _{ \fX, \fZ,\underline{t}/ \fS}$, then we compute 
$\cI\cD ^{(m)} _{ \fX, \fZ,\underline{t}/ \fS} = \cD ^{(m)} _{ \fX, \fZ,\underline{t}/ \fS}\cI$.
Hence, we get a canonical ring structure on 
$\cD ^{(m)} _{ \fX, \fZ,\underline{t}/ \fS}/\cI\cD ^{(m)} _{ \fX, \fZ,\underline{t}/ \fS}$
induced by that of 
$\cD ^{(m)} _{ \fX, \fZ,\underline{t}/ \fS}$.
We have the following factorization
\begin{equation}
\label{diag-DZ2X-t-commf}
\xymatrix{
{ \cD ^{(m)} _{ \fX, \fZ,\underline{t}/ \fS}/\cI\cD ^{(m)} _{ \fX, \fZ,\underline{t}/ \fS}} 
\ar@{^{(}->}[r] ^-{}
& 
{\cD ^{(m)} _{ \fX/ \fS} / \cI \cD ^{(m)} _{ \fX/ \fS}} 
\\ 
{u _* \cD ^{(m)} _{ \fZ/ \fS}} 
\ar[u] ^-{\sim} _-{\theta}
\ar[r] ^-{\theta}
& 
{u _* \cD ^{(m)} _{ \fZ \to  \fX/ \fS},} 
\ar@{=}[u] ^-{}
}
\end{equation}
where both horizontal morphisms are the canonical ones
and where 
the vertical arrow
$u _* \cD ^{(m)} _{ \fZ/ \fS}
\to
 \cD ^{(m)} _{ \fX, \fZ,\underline{t}/ \fS}/\cI\cD ^{(m)} _{ \fX, \fZ,\underline{t}/ \fS}$
 is a bijection that we still denote by $\theta$. 
 
 \item 
Let $P \in D ^{(m)} _{ \fX/ \fS}$, $Q \in D ^{(m)} _{ \fZ}$. 
Choose $Q _ \fX\in D ^{(m)} _{ \fX, \fZ,\underline{t}/ \fS}$ 
such that  $\overline{Q _ \fX}= \theta (Q)$.
We have :
\begin{equation}
\label{eq-QXPf}
Q \cdot \overline{P} 
= 
Q \cdot \overline{1} \cdot P
=
\overline{Q _ \fX} \cdot P
 =
\overline{Q _ \fX P}.
\end{equation}
Using \ref{eq-QXPf}, we check that 
$\cD ^{(m)} _{ \fX, \fZ,\underline{t}/ \fS}/\cI\cD ^{(m)} _{ \fX, \fZ,\underline{t}/ \fS} $
is 
a $(u _*\cD ^{(m)} _{ \fZ/ \fS} , \D ^{(m)} _{ \fX, \fZ,\underline{t}/ \fS})$-subbimodule
of $\cD ^{(m)} _{ \fX/ \fS} / \cI \cD ^{(m)} _{ \fX/ \fS}$
and 
that 
$\theta 
\colon u _* \cD ^{(m)} _{ \fZ/ \fS}
\to
 \cD ^{(m)} _{ \fX, \fZ,\underline{t}/ \fS}/\cI\cD ^{(m)} _{ \fX, \fZ,\underline{t}/ \fS}$
an isomorphism of 
left $u _* \cD ^{(m)} _{ \fZ/ \fS}$-modules.
Finally, 
we compute
$\theta 
\colon u _* \cD ^{(m)} _{ \fZ/ \fS}
\riso
 \cD ^{(m)} _{ \fX, \fZ,\underline{t}/ \fS}/\cI\cD ^{(m)} _{ \fX, \fZ,\underline{t}/ \fS}$
is also an isomorphism of rings.

\item \label{gf}
Both rings 
$u _* \cD ^{(m)} _{ \fZ/ \fS}$ and 
$\cD ^{(m)} _{ \fX, \fZ,\underline{t}/ \fS}$ are 
$\O _ \fX$-rings (i.e. they are rings endowed with a structural 
homomorphism of rings 
$\O _ \fX \to \cD ^{(m)} _{ \fX, \fZ,\underline{t}/ \fS}$
and 
$\O _ \fX \to u _* \cD ^{(m)} _{ \fZ/ \fS}$).
This yields the composite 
\begin{equation}
\label{morp-rhof}
\rho\colon \cD ^{(m)} _{ \fX, \fZ,\underline{t}/ \fS}
\to
\cD ^{(m)} _{ \fX, \fZ,\underline{t}/ \fS}
/\I \cD ^{(m)} _{ \fX, \fZ,\underline{t}/ \fS}
\underset{\theta}{\liso}  
u _* \cD ^{(m)} _{ \fZ/ \fS}
\end{equation}
is an homomorphism of $\O _ \fX$-rings.
We get the isomorphism
$\theta \colon \rho _* u _* \cD ^{(m)} _{ \fZ/ \fS}
\riso
 \cD ^{(m)} _{ \fX, \fZ,\underline{t}/ \fS}/\cI\cD ^{(m)} _{ \fX, \fZ,\underline{t}/ \fS}$
 of 
$\cD ^{(m)} _{ \fX, \fZ,\underline{t}/ \fS}$-bimodules.

Moreover, the isomorphism
$\theta \colon u _* \cD ^{(m)} _{ \fZ/ \fS}
\riso
 \cD ^{(m)} _{ \fX, \fZ,\underline{t}/ \fS}/\cI\cD ^{(m)} _{ \fX, \fZ,\underline{t}/ \fS}$
is also an isomorphism of 
$( u _* \cD ^{(m)} _{ \fZ/ \fS}, \cD ^{(m)} _{ \fX, \fZ,\underline{t}/ \fS})$-bimodules
such the structure of left $u _* \cD ^{(m)} _{ \fZ/ \fS}$-module on 
$u _* \D ^{(m)} _{ \fZ}$ is the canonical one and 
the structure of right $\cD ^{(m)} _{ \fX, \fZ,\underline{t}/ \fS}$-module on 
$\D ^{(m)} _{ \fX, \fZ,\underline{t}/ \fS}/\cI \D ^{(m)} _{ \fX, \fZ,\underline{t}/ \fS}$ is the canonical one.

\item  
The sheaf $\cD ^{(m)} _{ \fX/ \fS} $
is a free left $\cD ^{(m)} _{ \fX, \fZ,\underline{t}/ \fS}$-module
with the basis
$\{ \underline{\partial} ^{<(\underline{0},\underline{h})> _{(m)}}\ | \ \underline{h}\in \N ^{d-r}\} $,
where 
$\underline{0}:=(0,\dots, 0) \in 
\N ^{r}$,
the sheaf $\cD ^{(m)} _{ \fZ \to  \fX/ \fS}$ is a free 
left 
$\cD ^{(m)} _{ \fZ/ \fS}$-module 
with the basis
$\{ \underline{\xi} ^{<(\underline{0},\underline{h})> _{(m)}}\ | \ \underline{h}\in \N ^{d-r}\} $,
where 
$\underline{0}:=(0,\dots, 0) \in 
\N ^{r}$.

\item We have the transposition automorphism
${} ^t \colon 
D ^{(m)} _{ \fX/ \fS}
\to 
D ^{(m)} _{ \fX/ \fS}$
given by 
$P 
= 
\sum _{\underline{k} \in \N ^d}
a _{\underline{k}} \underline{\partial} ^{<\underline{k}> _{(m)}}
\mapsto
{} ^t P:= 
\sum _{\underline{k} \in \N ^d}
(-1) ^{| \underline{k}|}\underline{\partial} ^{<\underline{k}> _{(m)}}
a _{\underline{k}} $.
This induces the automorphisms
${} ^t \colon 
D ^{(m)} _{ \fX, \fZ,\underline{t}/ \fS}
\to 
D ^{(m)} _{ \fX, \fZ,\underline{t}/ \fS}$
and
${} ^t \colon 
D ^{(m)} _{ \fX, \fZ,\underline{t}/ \fS}/I D ^{(m)} _{ \fX, \fZ,\underline{t}/ \fS}
\to 
D ^{(m)} _{ \fX, \fZ,\underline{t}/ \fS}
/
ID ^{(m)} _{ \fX, \fZ,\underline{t}/ \fS}$.
On the other hand, 
via the local coordinates $\overline{t} _{1},\dots ,\overline{t} _{r}$ 
of $ \fZ$ over $ \fS$,
we get
the transposition automorphism
${} ^t \colon 
D ^{(m)} _{ \fZ/ \fS}
\to 
D ^{(m)} _{ \fZ/ \fS}$
given by 
$Q 
= 
\sum _{\underline{k} \in \N ^r}
b _{\underline{k}} \underline{\partial} ^{<\underline{k}> _{(m)}}
\mapsto
{} ^t Q:= 
\sum _{\underline{k} \in \N ^r}
(-1) ^{| \underline{k}|}\underline{\partial} ^{<\underline{k}> _{(m)}}
b _{\underline{k}} $.
We compute the following diagram 
\begin{equation}
\label{transp-closedimmersion-form}
\xymatrix{
{D ^{(m)} _{ \fX, \fZ,\underline{t}/ \fS}/I D ^{(m)} _{ \fX, \fZ,\underline{t}/ \fS}} 
\ar[r] _-{\sim} ^-{{}^t}
& 
{D ^{(m)} _{ \fX, \fZ,\underline{t}/ \fS}/I D ^{(m)} _{ \fX, \fZ,\underline{t}/ \fS}} 
\\ 
{D ^{(m)} _{ \fZ/ \fS}} 
\ar[u] ^-{\sim} _-{\theta}
\ar[r] _-{\sim} ^-{{}^t}
& 
{D ^{(m)} _{ \fZ/ \fS}} 
\ar[u] ^-{\sim} _-{\theta}
}
\end{equation}
is commutative.
\end{enumerate}

\end{empt}

\begin{empt}
\label{form-dfn-Dmstruuflat}
If $\M$ is a right $\D _\X ^{(m)}$-module, we denote by 
$u ^{\flat 0} (\M)
:=
u ^{-1} \mathcal{H}om _{\O _\X} ( u _* \O _{\ZZ}, \M)$.
Using $m$-PD-costratifications, 
we get similarly to \ref{rightD-moduleflat} a structure of right $\D _\ZZ ^{(m)}$-module 
on $u ^{\flat 0} (\M)$.
Since $\D _\X ^{(m)}$ is a flat $\O _\X$-module, then an injective 
right $\D _\X ^{(m)}$-module is an injective $\O_\X$-modules. 
Hence, taking an injective resolution of a complex 
of $D ^{+} ( {} ^r \D _\X ^{(m)})$, 
we check the functor $u ^\flat$ sends
$D ^{+}  ( {} ^r \D _\X ^{(m)})$
to
$D ^{+} ({} ^r \D _\ZZ ^{(m)} )$.

\end{empt}

\begin{empt}
[Local description of $u ^{\flat}$]
\label{locadesc-uflat-form}
Suppose we are in the local situation of \ref{locdesc-climm-form}. 
Let $\M$ be a right $\D _{\X/\fS} ^{(m)}$-module.
Let $x \in \Gamma (\fZ,  u ^{\flat 0} (\M))$ and 
$Q \in D ^{(m)} _{\fZ}$. 
Similarly to 
\ref{locadesc-uflat1},
for any 
$Q _\fX\in D ^{(m)} _{\fX,\fZ,\underline{t}/\fS}$ 
such that  $\theta (Q ) = \overline{Q _\fX}$,
we compute
\begin{equation}
\label{locadesc-uflat1-formula-form}
\mathrm{ev} _1
(x  \cdot Q)
= 
\mathrm{ev} _1 (x)  \cdot Q _\fX.
\end{equation}

\end{empt}

\begin{empt}
\label{rem-rightD-moduleflat-form}
Let $\M$ be a right $\widetilde{\D} _\X $-module. 

\begin{enumerate}
\item There is a canonical way to endow 
$u ^{\flat 0} (\M)$
with a structure of 
right $\widetilde{\D} _\fZ$-module.
Indeed, suppose $\fX$ affine. 
Let $x \in \Gamma (\fZ,  u ^{\flat 0} (\M))$ and 
$Q \in \widetilde{D} _{\fZ}$. 
For any 
$Q _\fX\in \widetilde{D} _{\fX}$ 
such that  $\theta (Q ) = \overline{Q _\fX}$,
we define $x  \cdot Q$ so that we get the equality
\begin{equation}
\label{rem-locadesc-uflat1-formula-form}
\mathrm{ev} _1
(x  \cdot Q)
: = 
\mathrm{ev} _1 (x)  \cdot Q _\fX,
\end{equation}
where $\mathrm{ev} _1 \colon \Gamma ( \fZ, u ^{\flat 0} (\M) )
\hookrightarrow \Gamma (\fX, \M)$ is 
the evaluation at $1$ homomorphism (which is injective).
Since $I$ annihilates $\mathrm{ev} _1 (x) $,
we remark that this is well defined. 

\item 
When $\widetilde{\D} _\X = \D _\fX ^{(m)}$,
we had seen another way to endow $u ^{\flat 0} (\M)$ with a canonical structure of
right $\D _\fZ ^{(m)}$-module
 (see \ref{form-dfn-Dmstruuflat}).
It follows from the computation  \ref{locadesc-uflat1-formula-form}
that both structures are identical.
\end{enumerate}

\end{empt}

\begin{empt}
\label{dfntheta-rhotilde}
Suppose we are in the local situation of \ref{locdesc-climm-form}.
We keep notation \ref{locdesc-climm-form}
and \ref{locdesc-climm2-form}.

\begin{enumerate}\item A section of the sheaf
$\widehat{\D} ^{(m)} _{\X/\S}$ 
can uniquely be written in the form 
$\sum 
_{\underline{k}\in \N ^d}
a _{\underline{k}}
\underline{\partial} ^{<\underline{k}> _{(m)}}$
such that 
$a _{\underline{k}} \in \O _\X$
converges to $0$ when 
$| \underline{k}|\to \infty$.
A section of the sheaf
$\widehat{\D} ^{(m)} _{\fZ/\fS}$ 
can uniquely be written in the form 
$\sum 
_{\underline{l}\in \N ^r}
b _{\underline{l}}
\underline{\partial} ^{<\underline{l}> _{(m)}}$
such that 
$b _{\underline{l}} \in \O _\fZ$
converges to $0$ when 
$| \underline{l}|\to \infty$.
Let 
$\widehat{\D} ^{(m)} _{\X,\fZ,\underline{t}}$ be
the $p$-adic completion of
$\D ^{(m)} _{\X,\fZ,\underline{t}}$.
Then $\widehat{\cD} ^{(m)} _{\X,\fZ,\underline{t}}$
is a subring of 
$\widehat{\cD} ^{(m)} _{\X/\S}$ whose elements can uniquely be written in the form
$\sum 
_{\underline{l}\in \N ^r}
a _{\underline{l}}
\underline{\partial} ^{<(\underline{l}, \underline{0})> _{(m)}}$
(recall $\underline{0}:=(0,\dots, 0) \in 
\N ^{d-r}$)
where 
$a _{\underline{l}} \in \O _\fX$
converges to $0$ when 
$| \underline{l}|\to \infty$.
Taking the $p$-adic completion of the diagram \ref{diag-DZ2X-t-commf},
we get the canonical diagram
\begin{equation}
\label{diag-DZ2X-t-comm-form}
\xymatrix{
{\widehat{\cD} ^{(m)} _{\fX,\fZ,\underline{t}/\fS}/\cI\widehat{\cD} ^{(m)} _{\fX,\fZ,\underline{t}/\fS}} 
\ar@{^{(}->}[r] ^-{}
& 
{\widehat{\cD} ^{(m)} _{\fX/\fS} / \cI \widehat{\cD} ^{(m)} _{\fX/\fS}} 
\\ 
{u _* \widehat{\cD} ^{(m)} _{\fZ/\fS}} 
\ar[u] ^-{\sim} _-{\widehat{\theta}}
\ar[r] ^-{\widehat{\theta}}
& 
{u _* \widehat{\cD} ^{(m)} _{\fZ \to \fX/\fS}} 
\ar@{=}[u] 
}
\end{equation}
where 
$\widehat{\theta}
\colon 
u _* \widehat{\cD} ^{(m)} _{\ZZ/\S}
\riso 
\widehat{\cD} ^{(m)} _{\X,\fZ,\underline{t}}
/\cI\widehat{\cD} ^{(m)} _{\X,\fZ,\underline{t}}
$
is an isomorphism of $\V$-algebras.

\item We set 
$\D ^{\dag} _{\X,\fZ,\underline{t},\Q}
:=
\underrightarrow{\lim}
\widehat{\D} ^{(m)} _{\X,\fZ,\underline{t},\bbQ}$.
We get a similar diagram than \ref{diag-DZ2X-t-comm-form}
by replacing $\widehat{\cD} ^{(m)}$ with $\D ^{\dag}$
and by adding some $\bbQ$.

\item The isomorphism of $\cV$-algebras 
$u _* \widetilde{\cD} _{ \fZ/ \fS}
\riso 
\widetilde{\cD} _{ \fX, \fZ,\underline{t}/ \fS}
/\I \widetilde{\cD} _{ \fX, \fZ,\underline{t}/ \fS}$
induced by $\theta$ will be denoted by 
$\widetilde{\theta}$.
This yields by composition the  homomorphism of $\O _ \fX$-rings :
\begin{equation}
\label{morp-rhof-tilde}
\widetilde{\rho} 
\colon
\widetilde{\cD}  _{ \fX, \fZ,\underline{t}/ \fS}
\to
\widetilde{\cD} _{ \fX, \fZ,\underline{t}/ \fS}
/\I \widetilde{\cD} _{ \fX, \fZ,\underline{t}/ \fS}
\underset{\widetilde{\theta}}{\liso}  
u _* \widetilde{\cD} _{ \fZ/ \fS}.
\end{equation}

\end{enumerate}

\end{empt}

\begin{empt}
Taking the $p$-adic completion of  $u _* \O _\fZ \otimes _{\O _X}\D ^{(m)} _{\fX, \fZ ,\underline{t}} 
\riso 
\D ^{(m)} _{\fX, \fZ ,\underline{t}} /\cI \D ^{(m)} _{\fX, \fZ ,\underline{t}} $
(and taking the inductive limit on the level if necessary),
we get the isomorphism
$u _* \O _\fZ \otimes _{\O _X}
\widetilde{\cD} _{\fX, \fZ ,\underline{t}} 
\riso 
\widetilde{\cD} _{\fX, \fZ ,\underline{t}} 
/
\cI \widetilde{\cD} _{\fX, \fZ ,\underline{t}} $.
Using \ref{rem-locadesc-uflat1-formula-form}, 
we compute the canonical isomorphism
\begin{equation}
\label{locadesc-uflat2-iso2pre-form-tilde}
\widetilde{\rho} _* u ^{\flat 0} (\M)
\riso 
u ^{-1}
\mathcal{H}om _{\widetilde{\cD} _{\fX, \fZ ,\underline{t}} } ( \widetilde{\cD} _{\fX, \fZ ,\underline{t}} /\cI \widetilde{\cD} _{\fX, \fZ ,\underline{t}}, \M)
\end{equation}
is an isomorphism of $\widetilde{\cD} _{\fX,\fZ,\underline{t}/\fS} $-modules .

Similarly to 
\ref{locadesc-uflat2-iso2},
via the isomorphism 
$\widetilde{\theta}
\colon u _* \widetilde{\cD} _{ \fZ/ \fS}
\riso 
\widetilde{\cD} _{ \fX, \fZ,\underline{t}/ \fS}
/\I \widetilde{\cD} _{ \fX, \fZ,\underline{t}/ \fS}$
 (see \ref{dfntheta-rhotilde}) 
 we get a structure of 
right $\widetilde{\cD} _{ \fZ }$-module 
on 
$u ^{-1}
\mathcal{H}om _{\widetilde{\cD} _{\fX, \fZ ,\underline{t}} } ( u _* \widetilde{\cD} _{ \fZ }, \M)$
and the isomorphism
\begin{equation}
\label{locadesc-uflat-form3-iso1preform-tilde}
u ^{\flat 0} (\M)
\riso  u ^{-1}
\mathcal{H}om _{\widetilde{\cD} _{\fX, \fZ ,\underline{t}} } ( u _* \widetilde{\cD} _{ \fZ }, \M)
\end{equation}
of
$\widetilde{\cD} _{ \fZ }$-modules.
If there is no ambiguity, 
we can avoid writing $u ^{-1}$ pr $u _*$, 
e.g. we can simply write
$u ^{\flat 0} (\M)
= 
\mathcal{H}om _{\widetilde{\cD} _{\fX, \fZ ,\underline{t}} } ( \widetilde{\cD} _{ \fZ }, \M)
$.

\end{empt}

\begin{empt}
\label{hat-uflat-desc}
Suppose we are in the local situation of \ref{locdesc-climm-form}. Let $\M$ be a right $\widetilde{\D} _{\X/\fS} $-module.
\begin{enumerate}

\item Let $s := d -r$, and $f _1= t _{r+1},\dots, f _s := t _{d}$. 
Let $K _{\bullet} (\underline{f})$ be the Koszul complex
of $\underline{f}=(f_1,\dots,f_s)$. 
Since $f _1,\dots, f _s$ is a regular sequence of $\I$, 
then the canonical morphism 
$K _{\bullet} (\underline{f}) \to \O _{\X}/\I$
(given by the 
$K _{0} (\underline{f}) = \O _{\X} \to \O _{\X}/\I$)
is a quasi-isomorphism.
Since $f _1,\dots f _s$ are in the center of $ \widetilde{\D} _{\fX, \fZ ,\underline{t}} $, and 
$\widetilde{\D} _{\fX, \fZ ,\underline{t}}$ is a flat $\O _{\X}$-algebra, then
we get the resolution of $ \widetilde{\D} _{\fX, \fZ ,\underline{t}} /\I \widetilde{\D} _{\fX, \fZ ,\underline{t}}$ 
by free $ \widetilde{\D} _{\fX, \fZ ,\underline{t}}$-bimodules of finite type
$ \widetilde{\D} _{\fX, \fZ ,\underline{t}} \otimes _{\O _{\X}} K _{\bullet} (\underline{f}) 
 \riso 
 \widetilde{\D} _{\fX, \fZ ,\underline{t}} /\I \widetilde{\D} _{\fX, \fZ ,\underline{t}}$.
 Using \ref{locadesc-uflat2-iso2pre-form-tilde} (and a avatar diagram of \ref{locadesc-uflat-iso}), this yields this isomorphism 
 of $D ^{\mathrm{b}} ( \widetilde{\D} _{\fX, \fZ ,\underline{t}})$: 
\begin{equation}
\label{locadesc-uflat-iso-form}
\phi _{\underline{t}}
\colon 
u ^{\flat} (\M)
\riso 
\mathcal{H}om _{ \widetilde{\D} _{\fX, \fZ ,\underline{t}} } (  \widetilde{\D} _{\fX, \fZ ,\underline{t}} \otimes _{\O _{\X}} K _{\bullet} (\underline{f}) , \M).
\end{equation}
Hence, we get the  isomorphism of right $ \widetilde{\D} _{\fX, \fZ ,\underline{t}}$-modules
\begin{equation}
\label{isos-fund-isom1pre-form}
\phi ^s _{\underline{t}} 
=
\mathcal{H} ^s(\phi _{\underline{f}})
\colon 
R ^s u ^{\flat 0} (\M)
\riso 
\mathcal{H} ^s \mathcal{H}om _{ \widetilde{\D} _{\fX, \fZ ,\underline{t}} } (  \widetilde{\D} _{\fX, \fZ ,\underline{t}} \otimes _{\O _{\X}} K _{\bullet} (\underline{f}) , \M).
\end{equation}
From the canonical morphism of complex of right $ \widetilde{\D} _{\fX, \fZ ,\underline{t}}$-modules 
$\mathcal{H}om _{ \widetilde{\D} _{\fX, \fZ ,\underline{t}} } (  \widetilde{\D} _{\fX, \fZ ,\underline{t}} \otimes _{\O _{\X}} K _{\bullet} (\underline{f}) , \M)
\to \M / \I \M$,
we get the isomorphism of right $ \widetilde{\D} _{\fX, \fZ ,\underline{t}}$-modules
\begin{equation}
\label{isos-fund-isom2pre-form}
\mathcal{H} ^s \mathcal{H}om _{ \widetilde{\D} _{\fX, \fZ ,\underline{t}} } (  \widetilde{\D} _{\fX, \fZ ,\underline{t}} \otimes _{\O _{\X}} K _{\bullet} (\underline{f}) , {}  \M )
\riso  \M / \I \M.
\end{equation}

\end{enumerate}

\end{empt}

\begin{ntn}
\label{ntnu*form}
If $\cE$ is a left $\D _\X ^{(m)}$-module, we set
$u ^{*} (\cE)
:=
 \O _{\fZ}
 \otimes _{ u ^{-1}\O _\X}
 u ^{-1}\cE$.
Using $m$-PD-stratifications, 
we get a structure of left $\D _\fZ ^{(m)}$-module 
on $u ^{*} (\cE)$.
This yields the functor
$\L u ^* \colon 
D ^{+}  ( {} ^l \D _\X ^{(m)}) 
\to
D ^{+} ({} ^l \D _\ZZ ^{(m)} )$
(resp.
$\L u ^* \colon 
D  ( {} ^l \D _\X ^{(m)}) 
\to
D ({} ^l \D _\ZZ ^{(m)} )$).
Similarly, we get the functor
$\L u ^* \colon 
D ^{+}  ( {} ^l \widetilde{\D} _\X ) 
\to
D ^{+} ({} ^l \widetilde{\D} _\ZZ )$
(resp. $\L u ^* \colon 
D  ( {} ^l \widetilde{\D} _\X ) 
\to
D ({} ^l \widetilde{\D} _\ZZ )$)
defined by setting 
\begin{equation}
\label{ntnu*form-dfn1}
\L u ^{*} (\cE)
:=
\widetilde{\D} _{\fZ \to \fX}
 \otimes ^\L _{ u ^{-1}\widetilde{\D} _\X}
 u ^{-1}\cE.
\end{equation}
Finally, we set
$u ^! (\cE) := \L u ^{*} (\E ) [d _{Z/X}]$.

Suppose we are in the local situation of \ref{locdesc-climm-form}. 
Let $\E \in D   ( {} ^l  \widetilde{\D} _\X) $.
We check the canonical homomorphism 
$$
 \widetilde{\D} _{ \fZ } \otimes ^\L _{u ^{-1} \widetilde{\D} _{\fX, \fZ ,\underline{t}}} u ^{-1}\E
 \to
 \L u ^* (\E) $$
 is an isomorphism of $D ( {} ^l  \widetilde{\D} _\fZ) $.
This yields
$$\L u ^* (\E) \riso ( \widetilde{\D} _{\fX, \fZ ,\underline{t}} \otimes _{\O _{\X}} K _{\bullet} (\underline{f}) ) 
\otimes _{ u ^{-1}\widetilde{\D} _{\fX, \fZ ,\underline{t}} } u ^{-1}\E.$$

\end{ntn}

\begin{prop}
\label{fund-isom-form}
Let $\E$ be a left $ \widetilde{\D} _{\X}$-module (resp. a $ \widetilde{\D} _{\X}$-bimodule). 
Set $n := -d _{Z/X} \in \N$.
We have the canonical isomorphism of right $ \widetilde{\D} _\fZ $-modules (resp. 
of right $(  \widetilde{\D} _{\fZ}, u ^{-1} \widetilde{\D} _{\X})$-bimodules):
\begin{equation}
\label{fund-isom1-form}
R ^{n} u ^{\flat 0} ( \omega _{\X} \otimes _{\O _{\X}} \E)
\riso 
\omega _{\fZ} \otimes _{\O _{\fZ}} u ^{*} (\E ).
\end{equation}
\end{prop}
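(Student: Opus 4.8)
The plan is to transpose, essentially verbatim, the proof of Proposition~\ref{fund-isom} to the formal setting; all the local ingredients needed for this have already been recorded in \ref{locdesc-climm-form}, \ref{locdesc-climm2-form}, \ref{dfntheta-rhotilde}, \ref{hat-uflat-desc} and \ref{ntnu*form}. By functoriality with respect to the right $u^{-1}\widetilde{\D}_{\X}$-module structure, it suffices to treat the non-respective case, i.e. $\E$ a left $\widetilde{\D}_{\X}$-module. As the isomorphism to be built is canonical, we may argue locally and hence assume we are in the local situation of \ref{locdesc-climm-form}: $\fX$ affine with local coordinates $t_1,\dots,t_d$ such that $t_{r+1},\dots,t_d$ generate $I$, the $\overline{t}_1,\dots,\overline{t}_r$ are local coordinates on $\fZ$ and $\overline{t}_{r+1},\dots,\overline{t}_d$ is a basis of $\I/\I^2$. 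In this chart $\omega_{\fZ}$ is free over $\O_{\fZ}$ with basis $d\overline{t}_1\wedge\cdots\wedge d\overline{t}_r$ and $\omega_{\X}$ is free over $\O_{\X}$ with basis $dt_1\wedge\cdots\wedge dt_d$.

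I would then produce the isomorphism degree by degree. Set $s:=d-r=n$ and $f_1=t_{r+1},\dots,f_s=t_d$. Using that the isomorphism $\widetilde{\theta}\colon u_*\widetilde{\D}_{\fZ/\fS}\riso\widetilde{\D}_{\fX,\fZ,\underline{t}/\fS}/\I\widetilde{\D}_{\fX,\fZ,\underline{t}/\fS}$ is compatible with the transposition automorphisms (diagram \ref{transp-closedimmersion-form}), one obtains, exactly as in step~(1) of the proof of \ref{fund-isom}, an isomorphism of right $\widetilde{\D}_{\fX,\fZ,\underline{t}/\fS}$-modules
\[
\psi_{\underline{t}}\colon (\omega_{\X}\otimes_{\O_{\X}}\E)/(\omega_{\X}\otimes_{\O_{\X}}\E)\I \riso \omega_{\fZ}\otimes_{\O_{\fZ}}(\E/\I\E),\qquad dt_1\wedge\cdots\wedge dt_d\otimes x \mapsto d\overline{t}_1\wedge\cdots\wedge d\overline{t}_r\otimes\overline{x}.
\]
Composing $\psi_{\underline{t}}$ with the isomorphism $\phi^s_{\underline{t}}$ of \ref{isos-fund-isom1pre-form} and with \ref{isos-fund-isom2pre-form} (applied to $\M=\omega_{\X}\otimes_{\O_{\X}}\E$) then yields, in this chart, the sought isomorphism $R^n u^{\flat 0}(\omega_{\X}\otimes_{\O_{\X}}\E)\riso\omega_{\fZ}\otimes_{\O_{\fZ}}u^*(\E)$, upon identifying $u^*(\E)$ with $\E/\I\E$ as in \ref{ntnu*form}.

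The remaining point is independence of the chosen local coordinates. Given a second choice $t'_1,\dots,t'_d$, let $(c_{ij})$ be the matrix over $\Gamma(\fX,\O_{\fX})$ with $\sum_i f'_i c_{ij}=f_j$ and $(d_{ij})$ the matrix with $dt'_i=\sum_j d_{ij}\,dt_j$; writing $D_1,D_2$ for the two diagonal blocks of the reduction $\overline{(d_{ij})}$ over $\O_{\fZ}$, one has $d\overline{t}'_1\wedge\cdots\wedge d\overline{t}'_r=(\det D_1)\,d\overline{t}_1\wedge\cdots\wedge d\overline{t}_r$, $\det D=\det D_1\det D_2$ and $\overline{\det\vartheta}=(\det D_2)^{-1}$. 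This is exactly the computation of step~(3) of the proof of \ref{fund-isom}: it takes place entirely in $\Omega_{\Gamma(\fX,\O_{\fX})}$ and in matrices over $\Gamma(\fX,\O_{\fX})$, so it transfers without change, and together with the formal analogue of the change-of-coordinates diagram \ref{fund-isom1-diag1pre} it shows that the composite is independent of $\underline{t}$. Gluing the local isomorphisms over an affine cover of $\fX$ produces the canonical global isomorphism \ref{fund-isom1-form}.

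I do not expect a real obstacle: the only genuinely non-formal input is the determinant bookkeeping in the coordinate change, and that is literally the scheme-theoretic argument. For the variant $\widetilde{\D}=\D^{\dag}\otimes_{\Z}\Q$ one only has to keep in mind that $\widetilde{\D}_{\fX,\fZ,\underline{t}/\fS}$ is still $\O_{\X}$-flat and that $\widetilde{\D}_{\fX,\fZ,\underline{t}/\fS}\otimes_{\O_{\X}}K_{\bullet}(\underline{f})$ is still a finite free resolution of $\widetilde{\D}_{\fX,\fZ,\underline{t}/\fS}/\I\widetilde{\D}_{\fX,\fZ,\underline{t}/\fS}$ — both already established in \ref{hat-uflat-desc} — so passing to the inductive limit over the level introduces no new difficulty.
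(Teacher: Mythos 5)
Your proposal is correct and follows exactly the route the paper takes: the paper's proof of \ref{fund-isom-form} is simply ``Using \ref{hat-uflat-desc} and \ref{ntnu*form}, we proceed as \ref{fund-isom}'', and what you write is precisely that transposition spelled out — local construction of $\psi_{\underline{t}}$ via \ref{transp-closedimmersion-form}, composition with $\phi^s_{\underline{t}}$ from \ref{isos-fund-isom1pre-form} and \ref{isos-fund-isom2pre-form}, the determinant bookkeeping for coordinate independence, then gluing. The remark about the $\D^{\dag}\otimes_{\Z}\Q$ case being harmless because the Koszul resolution survives the inductive limit is accurate and matches what \ref{hat-uflat-desc} already secures.
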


\begin{proof}
Using \ref{hat-uflat-desc} and \ref{ntnu*form}, we proceed as \ref{fund-isom}.
\end{proof}

\begin{rem}
Since complexes are not coherent, 
Proposition \ref{fund-isom-form} is not a consequence of Theorem \ref{fund-isom}
(but the check is similar).
\end{rem}

\begin{thm}
\label{fund-isom-thm-form}
Let $\E \in D ^{+} ( {} ^l \widetilde{\D} _\fX )$ (resp. $\E \in D ^{+} ( {} ^l \widetilde{\D} _\fX , {} ^r \widetilde{\D} _\fX )$). 
With notation \ref{ntnu*form}, 
we have the canonical isomorphism of 
$D ({} ^r \widetilde{\D} _\fZ )$
(resp. $D ({} ^r \widetilde{\D} _\fZ ,{} ^r u ^{-1}\widetilde{\D} _{\fX} )$)
of the form
\begin{equation}
\label{fund-isom2}
\omega _{\ZZ} \otimes _{\O _\ZZ} \L u ^{!} (\E ) 
\riso
u ^{\flat} ( \omega _{\X} \otimes _{\O _\X} \E).
\end{equation}
\end{thm}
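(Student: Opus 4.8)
The statement is the formal-scheme analogue of Theorem \ref{fund-isom-thmpre}, and the plan is to reduce it to the module-level isomorphism \ref{fund-isom1-form} exactly as Theorem \ref{fund-isom-thmpre} was deduced from Proposition \ref{fund-isom}. First I would recall that by the very construction of $u ^{\flat}$ (see \ref{form-dfn-Dmstruuflat} and \ref{locadesc-uflat-form}), the underlying complex of $u ^{\flat} ( \omega _{\X} \otimes _{\O _\X} \E)$ in $D ^{+}(\O _\ZZ)$ is $u ^{-1}\R\mathcal{H}om _{\O _\X}( u _* \O _\ZZ, \omega _{\X} \otimes _{\O _\X} \E)$, and that this complex is already known (over $\O _\ZZ$, via the Koszul resolution $K _{\bullet}(\underline{f})$ locally, or via the fundamental local isomorphism in the coherent-theory literature) to be isomorphic to $\omega _\ZZ \otimes _{\O _\ZZ} u ^{!}(\E)[0]$ concentrated in the single degree $n = -d _{Z/X}$ when $\E$ is $\O _\X$-flat. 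So the only thing that needs genuine argument is promoting the degree-$n$ identification to a $\widetilde{\D} _\ZZ$-linear (resp. $(\widetilde{\D} _\ZZ, u ^{-1}\widetilde{\D} _\X)$-bilinear) isomorphism in the derived category.

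The key steps, in order, are as follows. Step one: reduce to the non-respective case by functoriality (the bimodule structure is carried along formally, exactly as in the proof of Proposition \ref{fund-isom}). Step two: take a resolution of $\E$ by flat left $\widetilde{\D} _\X$-modules; since a flat $\widetilde{\D} _\X$-module is in particular a flat $\O _\X$-module, for such $\E$ the complex $u ^{\flat}(\omega _\X \otimes _{\O _\X} \E)$ has cohomology concentrated in degree $n$, namely $R ^n u ^{\flat 0}(\omega _\X \otimes _{\O _\X} \E)$, which by Proposition \ref{fund-isom-form} is canonically $\omega _\ZZ \otimes _{\O _\ZZ} u ^{*}(\E)$ as a right $\widetilde{\D} _\ZZ$-module. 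Step three: invoke \cite[I.7.4]{HaRD} (the way-out functor / "truncation in a single degree" lemma), exactly as in the proof of Theorem \ref{fund-isom-thmpre}, to conclude that $u ^{\flat}(\omega _\X \otimes _{\O _\X} \E) \simeq (\omega _\ZZ \otimes _{\O _\ZZ} u ^{*}(\E))[-n]$ in $D({}^r\widetilde{\D} _\ZZ)$ for all $\E$, functorially. Step four: identify the right-hand side with $\omega _\ZZ \otimes _{\O _\ZZ} \L u ^{!}(\E)$: using the local computation at the end of \ref{ntnu*form}, namely $\L u ^{*}(\E) \riso (\widetilde{\D} _{\fX,\fZ,\underline{t}} \otimes _{\O _\X} K _{\bullet}(\underline{f})) \otimes _{u ^{-1}\widetilde{\D} _{\fX,\fZ,\underline{t}}} u ^{-1}\E$, and $u ^{!}(\E) = \L u ^{*}(\E)[d _{Z/X}] = \L u ^{*}(\E)[-n]$, together with the fact that $\L u ^{*}(\E)$ is $\O _\ZZ$-flat-resolved with cohomology in degree $0$ when $\E$ is $\widetilde{\D} _\X$-flat, so that $\L u ^{*}(\E) = u ^{*}(\E)$ for such $\E$. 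Matching the degree shifts gives \ref{fund-isom2}.

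The main obstacle I expect is the coherent-case shortcut used in Theorem \ref{fund-isom-thmpre} is \emph{not} available here, as the remark after Proposition \ref{fund-isom-form} already warns: the complexes $\E$ need not be coherent, so one cannot simply quote Theorem \ref{fund-isom}, and the ring $\D ^{\dag} _{\X/\S,\Q}$ is not noetherian. This means I must be careful that the Koszul resolution $\widetilde{\D} _{\fX,\fZ,\underline{t}} \otimes _{\O _\X} K _{\bullet}(\underline{f}) \riso \widetilde{\D} _{\fX,\fZ,\underline{t}}/\I\widetilde{\D} _{\fX,\fZ,\underline{t}}$ is a resolution by \emph{finite free} $\widetilde{\D} _{\fX,\fZ,\underline{t}}$-bimodules (which it is, since $f _1,\dots,f _s$ lie in the center of $\widetilde{\D} _{\fX,\fZ,\underline{t}}$ and the latter is $\O _\X$-flat, so the degreewise finiteness survives $p$-adic completion and the passage to $\Q$), so that $\R\mathcal{H}om$ against it commutes with the relevant colimits and the flatness reductions are legitimate. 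The second subtlety is gluing: the isomorphism \ref{fund-isom1-form} of Proposition \ref{fund-isom-form} was proved to be independent of the choice of local coordinates $\underline{t}$ (this independence is exactly what the diagram \ref{fund-isom1-diag1pre} combined with \ref{fund-isom1-diag2} established in the scheme case, and the same argument applies verbatim in the formal setting by $p$-adic completion), hence it glues to a global canonical isomorphism; one must check that the Hartshorne way-out lemma is applied to the \emph{global} functors so that the resulting global isomorphism is compatible with these local ones, which is routine once the independence of coordinates is in hand.
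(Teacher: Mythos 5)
Your proposal is correct and follows exactly the same route as the paper's (very terse) proof: reduce to the module case \ref{fund-isom-form} via Hartshorne's way-out lemma \cite[I.7.4]{HaRD}, after observing that for $\E$ flat over $\widetilde{\D}_\fX$ both sides are concentrated in degree $-d_{Z/X}$ and agree there. The extra detail you supply about Koszul resolutions and coordinate-independence is already absorbed into the proof of Proposition \ref{fund-isom-form}, so it is accurate background but not a new obstacle at the level of this theorem.
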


\begin{proof}
Using
\cite[I.7.4]{HaRD},
this is a consequence of \ref{fund-isom-form}.
\end{proof}

\begin{coro}
We have the canonical isomorphism of right $( \widetilde{\D} _{\fZ} , u ^{-1}\widetilde{\D} _{\fX} ) $-bimodules of the form
\begin{equation}
\label{fund-isom2-corpref}
\omega _{\fZ} \otimes _{\O _\fZ} \widetilde{\D} _{\fZ\to \fX}  
\riso
u ^{\flat} ( \omega _{\fX} \otimes _{\O _\fX}  \widetilde{\D} _\fX ) [-d _{Z/X}].
\end{equation}
\end{coro}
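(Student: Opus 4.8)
The statement to prove is the formal-scheme avatar of Corollary \ref{fund-isom2-corpre}, namely that there is a canonical isomorphism of right $(\widetilde{\D}_{\fZ}, u^{-1}\widetilde{\D}_{\fX})$-bimodules
\begin{equation}
\notag
\omega_{\fZ}\otimes_{\O_\fZ}\widetilde{\D}_{\fZ\to\fX}
\riso
u^{\flat}(\omega_{\fX}\otimes_{\O_\fX}\widetilde{\D}_\fX)[-d_{Z/X}].
\end{equation}
The plan is simply to apply Theorem \ref{fund-isom-thm-form} to the special case where $\E=\widetilde{\D}_{\fX}$, viewed as a $(\widetilde{\D}_\fX,\widetilde{\D}_\fX)$-bimodule (with the left structure being used for the functor $u^{*}$, $\L u^{!}$ and the right structure being carried along as the coefficient bimodule structure). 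This is exactly the same reduction as in the proof of Corollary \ref{fund-isom2-corpre} from Theorem \ref{fund-isom-thmpre}.

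First I would recall that for a left $\widetilde{\D}_\fX$-module $\cE$ one has, by definition (see \ref{ntnu*form}),
$\L u^{*}(\cE)=\widetilde{\D}_{\fZ\to\fX}\otimes^{\L}_{u^{-1}\widetilde{\D}_\fX}u^{-1}\cE$,
so that with $\cE=\widetilde{\D}_\fX$ (free, hence flat, as a left module over itself) one gets
$\L u^{*}(\widetilde{\D}_\fX)=\widetilde{\D}_{\fZ\to\fX}\otimes_{u^{-1}\widetilde{\D}_\fX}u^{-1}\widetilde{\D}_\fX\riso\widetilde{\D}_{\fZ\to\fX}$,
and this identification is compatible with the residual right $u^{-1}\widetilde{\D}_\fX$-module structure. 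Hence $u^{!}(\widetilde{\D}_\fX)=\L u^{*}(\widetilde{\D}_\fX)[d_{Z/X}]\riso\widetilde{\D}_{\fZ\to\fX}[d_{Z/X}]$ as objects of $D({}^{r}\widetilde{\D}_\fZ,{}^{r}u^{-1}\widetilde{\D}_\fX)$. Tensoring on the left by $\omega_\fZ$ gives
$\omega_\fZ\otimes_{\O_\fZ}u^{!}(\widetilde{\D}_\fX)\riso\omega_\fZ\otimes_{\O_\fZ}\widetilde{\D}_{\fZ\to\fX}[d_{Z/X}]$.

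Then I would invoke Theorem \ref{fund-isom-thm-form} with $\E=\widetilde{\D}_\fX$ (its hypotheses are met since $\widetilde{\D}_\fX\in D^{+}({}^{l}\widetilde{\D}_\fX,{}^{r}\widetilde{\D}_\fX)$), which yields the canonical isomorphism
$\omega_\fZ\otimes_{\O_\fZ}\L u^{!}(\widetilde{\D}_\fX)\riso u^{\flat}(\omega_\fX\otimes_{\O_\fX}\widetilde{\D}_\fX)$
in $D({}^{r}\widetilde{\D}_\fZ,{}^{r}u^{-1}\widetilde{\D}_\fX)$. Combining this with the previous identification $\L u^{!}(\widetilde{\D}_\fX)\riso\widetilde{\D}_{\fZ\to\fX}[d_{Z/X}]$ and rewriting the shift (so $[d_{Z/X}]$ on the left becomes $[-d_{Z/X}]$ on the right) gives precisely the asserted isomorphism
$\omega_\fZ\otimes_{\O_\fZ}\widetilde{\D}_{\fZ\to\fX}\riso u^{\flat}(\omega_\fX\otimes_{\O_\fX}\widetilde{\D}_\fX)[-d_{Z/X}]$
of right $(\widetilde{\D}_\fZ,u^{-1}\widetilde{\D}_\fX)$-bimodules. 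I do not expect any genuine obstacle here: all the work was done in Proposition \ref{fund-isom-form} and Theorem \ref{fund-isom-thm-form}, and the only point requiring a word of care is the bookkeeping of the residual right $u^{-1}\widetilde{\D}_\fX$-structure through the identification $\L u^{*}(\widetilde{\D}_\fX)\riso\widetilde{\D}_{\fZ\to\fX}$ — which is routine, being the functoriality of $\L u^{*}$ applied to the right translations of $\widetilde{\D}_\fX$. One could, alternatively and equivalently, simply write ``We apply Theorem \ref{fund-isom-thm-form} in the case $\cE=\widetilde{\D}_\fX$,'' mirroring verbatim the one-line proof of Corollary \ref{fund-isom2-corpre}.
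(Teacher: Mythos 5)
Your proposal is correct and follows exactly the paper's own argument: the paper's proof is the single sentence ``We apply Theorem \ref{fund-isom-thm-form} in the case $\cE=\widetilde{\D}_\fX$,'' which you reproduce verbatim after first making explicit the routine identification $\L u^{!}(\widetilde{\D}_\fX)\riso\widetilde{\D}_{\fZ\to\fX}[d_{Z/X}]$ and the compatibility with the residual right $u^{-1}\widetilde{\D}_\fX$-structure. Filling in these details is sound but not a different route.
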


\begin{proof}
We apply Theorem \ref{fund-isom-thm-form} in the case
$\cE= \widetilde{\D} _X $.
\end{proof}

\begin{ntn}
We get the functor
$u  _+\colon 
D  ( {} ^*  \widetilde{\D} _\fZ ) 
\to
D  ({} ^*  \widetilde{\D} _\fX )$ by setting
for any 
$\cE \in D   ( {} ^l  \widetilde{\D} _\fZ ) $
and
$\cN \in D   ( {} ^r  \widetilde{\D} _\fZ ) $,
\begin{gather}
\label{dfnu+(m)f}
u  _+ (\cN) := u _* 
\left ( \cN \otimes _{\widetilde{\D}  _\fZ } \widetilde{\D}  _{\fZ \to \fX} 
\right ),
u  _+ (\cE) := u _* 
\left ( 
\widetilde{\D}  _{\fX \leftarrow \fZ}  
\otimes _{\widetilde{\D}  _\fZ } 
\cE
\right ).
\end{gather}
Moreover, we get the functor
$\DD  \colon 
D  ( {} ^*  \widetilde{\D} _\fX ) 
\to
D  ({} ^*  \widetilde{\D} _\fX )$ by setting
for any 
$\cM \in D   ( {} ^r  \widetilde{\D} _\fX ) $,
$\cE \in D   ( {} ^l  \widetilde{\D} _\fX ) $
\begin{gather}
\label{dfnuD(m)f}
\DD   (\cM) := 
\R \mathcal{H}om _{\widetilde{\D} _\fX }
(\cM,\omega _{\fX} \otimes _{\O _\fX}  \widetilde{\D} _\fX ) [d _{X}],
\DD   (\cE) := 
\R \mathcal{H}om _{\D _\fX }
(\cE,  \widetilde{\D} _\fX\otimes _{\O _\fX} \omega _{\fX} ^{-1} ) [d _{X}],
\end{gather}
which are computed respectively by taking an injective resolution of 
$\omega _{\fX} \otimes _{\O _\fX}  \widetilde{\D} _\fX $
and
$\widetilde{\D} _\fX\otimes _{\O _\fX} \omega _{\fX} ^{-1}$.
These functors preserves the coherence and are compatible with the quasi-inverse functors 
$-\otimes _{\O _\fX} \omega _{\fX} ^{-1}$ and
$\omega _{\fX}  \otimes _{\O _\fX} -$ exchanging left and right 
$\widetilde{\D} _\fX$-modules structure.
More precisely, we have the canonical isomorphisms
\begin{equation}
\label{u+D-left2rightf}
\omega _{\fX}  \otimes _{\O _\fX} u  _+ (\cE)
\riso 
u  _+ (\omega _{\fZ}  \otimes _{\O _\fZ}  \cE),
\omega _{\fX}  \otimes _{\O _\fX} \DD (\cE)
\riso 
\DD (\omega _{\fX}  \otimes _{\O _\fX}  \cE)
\end{equation}
whose first one is constructed as \ref{u+D-left2right}.

\end{ntn}

\begin{prop}
\label{u+uflat-lem-form}
Let $\M$ be a 
right $\widetilde{\D} _\fX $-module, 
$\cN$ be a 
right  $\widetilde{\D} _\fZ $-module.
We use notations \ref{ntnu*form-dfn1} and \ref{dfnu+(m)f}.
\begin{enumerate}
\item We have the canonical adjunction morphisms
$\mathrm{adj} 
\colon 
u _+ u ^{\flat 0} (\M)
\to 
\M$
and 
$\mathrm{adj} 
\colon 
\cN
\to 
 u ^{\flat 0} u _+  (\cN)$.
Moreover, the compositions
$ u ^{\flat 0} (\M)
 \overset{\mathrm{adj}}{\longrightarrow} 
u ^{\flat 0} u _+ u ^{\flat 0} (\M)
\overset{\mathrm{adj}}{\longrightarrow} 
 u ^{\flat 0} (\M)$
and 
$u _+  (\cN)
\overset{\mathrm{adj}}{\longrightarrow} 
u _+ u ^{\flat 0} u _+  (\cN)
\overset{\mathrm{adj}}{\longrightarrow} 
u _+  (\cN)$
are the identity.

\item Using the above  adjunction morphisms, we construct maps 
$$\mathcal{H}om _{\widetilde{\D} _\fX }
(u _+ (\cN), \M)
\to 
u _* \mathcal{H}om _{\widetilde{\D} _\fZ  }
(\cN, u ^{\flat 0} (\M)),
\
u _* \mathcal{H}om _{\widetilde{\D} _\fZ  }
(\cN, u ^{\flat 0} (\M))
\to 
\mathcal{H}om _{\widetilde{\D} _{\fX} }
(u _+ (\cN), \M),$$
which are  inverse of each other.
\item The functor $u ^\flat$ transforms 
$K$-injective complexes into
$K$-injective complexes.
\end{enumerate}

\end{prop}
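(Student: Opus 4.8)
The statement for formal schemes is the exact analogue of Proposition \ref{u+uflat-lem} (the scheme case), and the strategy is to transplant that proof verbatim, using the local algebraic preparation gathered in \ref{locdesc-climm-form}, \ref{locdesc-climm2-form}, \ref{form-dfn-Dmstruuflat}, \ref{locadesc-uflat-form} and \ref{rem-rightD-moduleflat-form}. I will work through the three assertions in order, reducing each to the local situation of \ref{locdesc-climm-form}.

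For assertion (1), the construction of the canonical map $\mathrm{adj}\colon u _+ u ^{\flat 0} (\M)\to \M$ is a local question, so I place myself in the coordinates of \ref{locdesc-climm-form} and unwind the definitions: writing $A:=\Gamma(\fX,\O _\fX)$ and $I:=\Gamma(\fX,\I)$, we have $u _+ u ^{\flat 0}(\M) = \mathrm{Hom}_{A}(A/I,M)\otimes _{\widetilde{D} _\fZ}\widetilde{D} _{\fZ\to\fX}$ (after taking global sections and using $\widetilde{\D} _{\fZ\to\fX} = \widetilde{\D} _\fX/\I\widetilde{\D} _\fX$), and I define the map by $x\otimes\overline{P}\mapsto \mathrm{ev}_1(x)P$. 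The key point is well-definedness: given $Q\in\widetilde{D} _\fZ$ and $Q _\fX\in\widetilde{D} _{\fX,\fZ,\underline{t}}$ with $\theta(Q)=\overline{Q _\fX}$, the formula \ref{locadesc-uflat1-formula-form} gives $\mathrm{ev}_1(x\cdot Q)=\mathrm{ev}_1(x)\cdot Q _\fX$, and \ref{eq-QXPf} gives $Q\cdot\overline{P}=\overline{Q _\fX P}$, so both sides agree; $\widetilde{D} _\fX$-linearity is then immediate. Dually, for $\mathrm{adj}\colon\cN\to u ^{\flat 0}u _+(\cN)$ I reduce again to the local case and build $N\to\mathrm{Hom}_{A}(A/I,\ N\otimes _{\widetilde{D} _\fZ}\widetilde{D} _{\fZ\to\fX})$ by $y\mapsto(\overline{a}\mapsto y\otimes\overline{a})$, again checking $\widetilde{D} _\fZ$-linearity via \ref{locadesc-uflat1-formula-form} and \ref{eq-QXPf}. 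Reducing to the local context once more, the two triangle identities are a direct computation of the two composites, which come out to be the identity.

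For assertion (2), the passage from (1) to the construction of the two mutually inverse maps between $\mathcal{H}om _{\widetilde{\D} _\fX}(u _+(\cN),\M)$ and $u _*\mathcal{H}om _{\widetilde{\D} _\fZ}(\cN,u ^{\flat 0}(\M))$ is the standard unit--counit formalism: the forward map sends $\varphi$ to $u ^{\flat 0}(\varphi)\circ\mathrm{adj}_{\cN}$ and the backward map sends $\psi$ to $\mathrm{adj}_{\M}\circ u _+(\psi)$, and the triangle identities of (1) show these are inverse to each other. For assertion (3), I use that $\widetilde{\D} _\fX$ is a flat $\O _\fX$-module (true both for $\widehat{\D} ^{(m)} _\fX$ and for $\D ^\dag _{\fX,\Q}$), hence injective right $\widetilde{\D} _\fX$-modules are injective $\O _\fX$-modules; together with the exactness of $u _+$ (it is $u _*$ of a tensor with the free right module $\widetilde{\D} _{\fZ\to\fX}$ up to the bimodule twist), the adjunction of (2) combined with the fact that $u _+$ is exact shows that $u ^\flat$ preserves $K$-injectivity — this is the same argument as in the scheme case (reference to 13.30.9 and 13.33.5 of the Stacks Project).

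\textbf{Main obstacle.} The genuinely new content over the algebraic case is already packaged in \ref{locadesc-uflat1-formula-form} and the $p$-adically completed diagram \ref{diag-DZ2X-t-comm-form} (and their $\D ^\dag$-analogues), so the remaining work is essentially bookkeeping. The one place demanding a little care is the exactness of $u _+$ and the resulting preservation of $K$-injectives in the $\D ^\dag _{\fX,\Q}$ case: unlike $\widehat{\D} ^{(m)}$, here $\widetilde{\D} _{\fZ\to\fX}$ need not be a \emph{finitely generated} flat module, but it is still flat and $u _*$ is exact on sheaves supported on the closed formal subscheme $\ZZ$ (since $u$ is a closed immersion, $u _*$ is exact), so exactness of $u _+$ survives; I expect this to be the step requiring the most attention, though no serious difficulty.
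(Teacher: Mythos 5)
Your proposal is correct and follows exactly the same route as the paper, which simply says that the proof of \ref{u+uflat-lem} (the scheme case) can be copied: construct the two adjunction morphisms locally via \ref{locadesc-uflat1-formula-form} and \ref{eq-QXPf}, deduce (2) by the standard unit--counit argument, and get (3) from exactness of $u_+$ and the Stacks Project lemma. Your extra remarks on flatness of $\widetilde{\D}_{\fZ\to\fX}$ for the $\D^\dag$ case are sensible but not really an obstacle, since it is free as a left $\widetilde{\D}_\fZ$-module by \ref{locdesc-climm2-form}.
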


\begin{proof}
We can copy the proof of \ref{u+uflat-lem}.
\end{proof}

\begin{coro}
\label{Radj-u+flatf}
Let 
$\M \in D
( {} ^r \widetilde{\D} _{\fX} )$,
$\cN\in D 
({} ^r \widetilde{\D} _\fZ  )$. 
Let 
$\cE \in D 
( {} ^l \widetilde{\D} _{\fX} )$,
$\cF \in D 
({} ^l \widetilde{\D} _\fZ  )$. 
We have the isomorphisms
\begin{gather}
\R \mathcal{H}om _{\widetilde{\D} _{\fX} }
(u _+ (\cN), \M)
\riso 
u _* \R \mathcal{H}om _{\widetilde{\D} _\fZ  }
(\cN, u ^{\flat} (\M));
\\
\R \mathcal{H}om _{\widetilde{\D} _{\fX} }
(u _+ (\cE), \cF)
\riso 
u _* \R \mathcal{H}om _{\widetilde{\D} _\fZ  }
(\cE, u ^{!} (\cF)).
\end{gather}

\end{coro}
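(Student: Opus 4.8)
The plan is to transpose verbatim the proof of Corollary~\ref{Radj-u+flat} to the formal setting, replacing Proposition~\ref{u+uflat-lem} by Proposition~\ref{u+uflat-lem-form} and Theorem~\ref{fund-isom-thmpre} by Theorem~\ref{fund-isom-thm-form}. Everything reduces to the adjunction between $u_+$ and $u^{\flat 0}$ at the level of the underlying modules, upgraded to (unbounded) complexes by means of a $K$-injective resolution, together with the left-to-right dictionary given by the twist $\omega\otimes-$.

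For the first isomorphism I would proceed as follows. Since we work in the unbounded derived category, choose a $K$-injective resolution $\M\riso\I$ of $\M$ over $\widetilde{\D}_\fX$ (such resolutions exist for complexes of modules on a ringed space). By Proposition~\ref{u+uflat-lem-form}.3 the complex $u^{\flat}(\M)\riso u^{\flat 0}(\I)$ is again $K$-injective over $\widetilde{\D}_\fZ$; hence $\R\mathcal{H}om_{\widetilde{\D}_\fX}(u_+(\cN),\M)$ is computed by $\mathcal{H}om_{\widetilde{\D}_\fX}(u_+(\cN),\I)$, and $u_*\,\R\mathcal{H}om_{\widetilde{\D}_\fZ}(\cN,u^{\flat}(\M))$ by $u_*\,\mathcal{H}om_{\widetilde{\D}_\fZ}(\cN,u^{\flat 0}(\I))$. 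Applying the pair of mutually inverse maps of Proposition~\ref{u+uflat-lem-form}.2 termwise to $\I$ (and to $\cN$), which is legitimate by their naturality in both variables, yields an isomorphism of (total) Hom-complexes, hence the required isomorphism of the first displayed type.

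The second isomorphism follows by the left-right equivalence. Using the quasi-inverse functors $\omega_\fX\otimes_{\O_\fX}-$ and $\omega_\fZ\otimes_{\O_\fZ}-$ exchanging left and right $\widetilde{\D}$-module structures, together with the isomorphism $\omega_\fX\otimes_{\O_\fX}u_+(\cE)\riso u_+(\omega_\fZ\otimes_{\O_\fZ}\cE)$ of~\ref{u+D-left2rightf}, we may rewrite $\R\mathcal{H}om_{\widetilde{\D}_\fX}(u_+(\cE),\cF)$ as $\R\mathcal{H}om_{\widetilde{\D}_\fX}(u_+(\omega_\fZ\otimes_{\O_\fZ}\cE),\,\omega_\fX\otimes_{\O_\fX}\cF)$. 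Then I would apply the first isomorphism with $\cN:=\omega_\fZ\otimes_{\O_\fZ}\cE$ and $\M:=\omega_\fX\otimes_{\O_\fX}\cF$, use Theorem~\ref{fund-isom-thm-form} to identify $u^{\flat}(\omega_\fX\otimes_{\O_\fX}\cF)$ with $\omega_\fZ\otimes_{\O_\fZ}u^{!}(\cF)$, and finally apply the left-right equivalence on $\fZ$ once more to pass from $\R\mathcal{H}om_{\widetilde{\D}_\fZ}(\omega_\fZ\otimes_{\O_\fZ}\cE,\,\omega_\fZ\otimes_{\O_\fZ}u^{!}(\cF))$ back to $\R\mathcal{H}om_{\widetilde{\D}_\fZ}(\cE,u^{!}(\cF))$.

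The only non-formal ingredients are already available: that $u^{\flat}$ preserves $K$-injectivity, and that the two adjunction maps are mutually inverse and natural (Proposition~\ref{u+uflat-lem-form}), the latter being checked locally via the explicit formula~\ref{locadesc-uflat1-formula-form} and its independence of the chosen local coordinates, together with Theorem~\ref{fund-isom-thm-form}. So the expected main obstacle is not in this corollary itself, which is a pure diagram chase with adjunctions and the $\omega\otimes-$ dictionary, but lies upstream in Proposition~\ref{u+uflat-lem-form} and Theorem~\ref{fund-isom-thm-form}; granting those, the statement follows formally.
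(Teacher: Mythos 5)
Your proposal is correct and follows essentially the same route as the paper: a $K$-injective resolution of $\M$ together with Proposition~\ref{u+uflat-lem-form} (parts 2 and 3) for the first isomorphism, then the fundamental local isomorphism \ref{fund-isom2} and the left--right twist \ref{u+D-left2rightf} to deduce the second from the first. You have merely spelled out the details that the paper's terse one-line proof leaves implicit.
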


\begin{proof}
Taking a $K$-injective resolution of 
$\M$, the first isomorphism  is a consequence of \ref{u+uflat-lem-form}.2--3.
Using \ref{fund-isom2} and \ref{u+D-left2rightf}, we deduce  the second isomorphism from the first one.
\end{proof}

\begin{coro}
\label{rel-dual-isom-immf}
Let 
$\cN\in D ^{\mathrm{b}} _{\mathrm{coh}}  ({} ^* \widetilde{\D} _\fZ  )$
with $*=r$ or $*=l$.
We have the isomorphism of $D ^{\mathrm{b}} _{\mathrm{coh}}  ({} ^* \widetilde{\D} _\fX  )$:
\begin{equation}
\label{rel-dual-isom-immf1}
\DD  \circ u _+ (\cN)
\riso 
 u _+ \circ \DD  (\cN).
\end{equation}
\end{coro}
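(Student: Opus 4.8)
The plan is to reduce the relative duality isomorphism for $u_+$ to the already-established adjunction isomorphism of Corollary \ref{Radj-u+flatf} together with the fundamental local isomorphism \ref{fund-isom2-corpref}, exactly in the shape of the proof of \ref{rel-dual-isom-imm} in the scheme case. First I would use the compatibility isomorphisms \ref{u+D-left2rightf}, namely $\omega_\fX \otimes_{\O_\fX} u_+(\cE) \riso u_+(\omega_\fZ \otimes_{\O_\fZ} \cE)$ and $\omega_\fX \otimes_{\O_\fX} \DD(\cE) \riso \DD(\omega_\fX \otimes_{\O_\fX} \cE)$, to pass freely between left and right $\widetilde{\D}$-module structures; this lets me assume $*=r$, so $\cN \in D^{\mathrm{b}}_{\mathrm{coh}}({}^r\widetilde{\D}_\fZ)$.

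In the right-module case, I would write down the chain of canonical isomorphisms:
\begin{gather*}
\DD \circ u_+ (\cN)
=
\R \mathcal{H}om _{\widetilde{\D} _\fX }
(u _+ (\cN),\omega _{\fX} \otimes _{\O _\fX}  \widetilde{\D} _\fX ) [d _{X}]
\overset{\ref{Radj-u+flatf}}{\riso}
u _* \R \mathcal{H}om _{\widetilde{\D} _\fZ }
(\cN, u ^{\flat} ( \omega _{\fX} \otimes _{\O _\fX}  \widetilde{\D} _\fX ) )
[d _{X}]
\\
\overset{\ref{fund-isom2-corpref}}{\riso}
u _* \R \mathcal{H}om _{\widetilde{\D} _\fZ }
(\cN, \omega _{\fZ} \otimes _{\O _\fZ} \widetilde{\D} _{\fZ\to \fX} )
[d _{Z}]
\riso
u _*
\left (
\R \mathcal{H}om _{\widetilde{\D} _\fZ }
(\cN,  \omega _{\fZ} \otimes _{\O _\fZ} \widetilde{\D} _{\fZ}
[d _{Z}]
)
\otimes _{\widetilde{\D} _{\fZ} }
 \widetilde{\D} _{\fZ\to \fX}
 \right )
=
u _+ \circ \DD (\cN),
\end{gather*}
where the first isomorphism is the coherent-module adjunction of \ref{Radj-u+flatf} (valid for $\cN$ coherent after choosing a $K$-injective resolution of $\omega_\fX \otimes_{\O_\fX}\widetilde{\D}_\fX$), the second is the fundamental local isomorphism \ref{fund-isom2-corpref} applied with a shift (note $d_X = d_Z - d_{Z/X}$, so the brackets match), and the third is the standard "tensor out of $\R\mathcal{H}om$" isomorphism for a perfect complex, which holds here because $\cN$ is coherent and $\widetilde{\D}_{\fZ\to\fX}$ is a flat left $\widetilde{\D}_\fZ$-module (as recorded in \ref{locdesc-climm2-form}(d), after $p$-adic completion and passage to the limit); in the scheme case the analogue used \cite[2.1.17]{caro_comparaison}, and the same reference or its formal-scheme avatar applies here. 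Finally I would note that the composite is visibly functorial and patches over an affine covering, so the local hypotheses of \ref{locdesc-climm-form} used to invoke \ref{fund-isom2-corpref} are harmless.

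The main obstacle I anticipate is the third isomorphism: one must be sure the "projection"/tensor formula $\R\mathcal{H}om_{\widetilde{\D}_\fZ}(\cN, \cM \otimes_{\widetilde{\D}_\fZ} \widetilde{\D}_{\fZ\to\fX}) \riso \R\mathcal{H}om_{\widetilde{\D}_\fZ}(\cN,\cM)\otimes_{\widetilde{\D}_\fZ}\widetilde{\D}_{\fZ\to\fX}$ is valid in the $\D^\dag_{\Q}$-case as well as the $\widehat{\D}^{(m)}$-case; this requires $\cN$ to be a perfect complex of $\widetilde{\D}_\fZ$-modules (true by coherence together with the finite cohomological dimension of \ref{Noot-Huyghe-finitehomoldim} in the $\D^\dag_{\Q}$ situation, and of \cite[3]{Be1} in the $\widehat{\D}^{(m)}$ situation) and $\widetilde{\D}_{\fZ\to\fX}$ to be flat on the correct side, which is exactly what the local description gives. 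Everything else is a direct transcription of the proof of \ref{rel-dual-isom-imm}, so I would simply write "We can copy the proof of \ref{rel-dual-isom-imm}, using \ref{Radj-u+flatf}, \ref{fund-isom2-corpref} and \ref{u+D-left2rightf}."
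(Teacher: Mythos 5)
Your proposal is correct and is essentially the proof the paper gives: the paper states precisely ``Using \ref{fund-isom2-corpref} and \ref{Radj-u+flatf}, we can copy the proof of \ref{rel-dual-isom-imm}'', and you have written out that copy explicitly — reduce to $*=r$ via \ref{u+D-left2rightf}, apply the adjunction \ref{Radj-u+flatf}, substitute \ref{fund-isom2-corpref} (with the shift bookkeeping $d_{Z/X}+d_X=d_Z$), and finish with the tensor-out-of-$\R\mathcal{H}om$ isomorphism for a perfect complex against a flat $(\widetilde{\D}_\fZ,u^{-1}\widetilde{\D}_\fX)$-bimodule. Your added remarks on why the last step is licit (coherence plus finite cohomological dimension gives perfection, and $\widetilde{\D}_{\fZ\to\fX}$ is locally free over $\widetilde{\D}_\fZ$ by \ref{locdesc-climm2-form}) are a sensible elaboration but do not change the argument.
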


\begin{proof}
Using \ref{fund-isom2-corpref} and \ref{Radj-u+flatf}, 
we can copy the proof of \ref{rel-dual-isom-imm}. 
\end{proof}

\begin{prop}
\label{u+uflat-lem-formal}
Let 
$\cN$ be a right coherent $\widetilde{\D} _{\ZZ} $-module without $p$-torsion.
The canonical homomorphism of $\widetilde{\D} _{\ZZ} $-modules 
$\mathrm{adj} 
\colon 
\cN
\to
 u ^{\flat 0} u _+  (\cN)$
 is an isomorphism.
\end{prop}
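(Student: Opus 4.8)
The plan is to reduce to the local situation of \ref{locdesc-climm-form} and there to run an explicit computation with Berthelot's commutation formulas. The formation of $u _+$, of $u ^{\flat 0}$ and of the adjunction morphism $\mathrm{adj}$ (constructed in \ref{u+uflat-lem-form}.1 by reduction to the local case, as in \ref{u+uflat-lem}.1) all commute with restriction to open subschemes of $\X$, so we may assume $\X$ affine and endowed with local coordinates $t _1, \dots, t _d$ such that $\ZZ = V (t _{r+1}, \dots, t _d)$, in the sense of \ref{locdesc-climm-form}. Recall from \ref{form-dfn-Dmstruuflat} that, as a $u ^{-1}\O _\X$-module, $u ^{\flat 0} (\M) = u ^{-1} \mathcal{H}om _{\O _\X} (u _* \O _\ZZ, \M)$ is the submodule $\mathrm{Ann} _\M (\I)$ of local sections of $\M$ killed by $\I$ (acting through the right $\widetilde{\D} _\X$-structure), the evaluation-at-$1$ map $\mathrm{ev} _1$ being the inclusion $u ^{\flat 0} (\M) \hookrightarrow u ^{-1} \M$. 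Moreover, by the construction of $\mathrm{adj}$ recalled in \ref{u+uflat-lem-form}.1 (that of \ref{u+uflat-lem}.1), the composite $\mathrm{ev} _1 \circ \mathrm{adj} \colon \cN \to u _+ (\cN)$ is the map $y \mapsto y \otimes \overline{1}$, where $\overline{1}$ denotes the image of $1$ in $\widetilde{\D} _{\ZZ \to \X}$. As $\mathrm{adj}$ is $\widetilde{\D} _\ZZ$-linear and $\mathrm{ev} _1$ is injective, it suffices to prove that $y \mapsto y \otimes \overline{1}$ induces a bijection $\cN \riso \mathrm{Ann} _{u _+ (\cN)} (\I)$.

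Next I would describe $u _+ (\cN) = u _* (\cN \otimes _{\widetilde{\D} _\ZZ} \widetilde{\D} _{\ZZ \to \X})$ in these coordinates. By \ref{locdesc-climm2-form}.(d) the transfer module $\D ^{(m)} _{\ZZ \to \X}$ is free as a left $\D ^{(m)} _\ZZ$-module on the basis $\{ \underline{\xi} ^{<(\underline{0}, \underline{h})>} \ | \ \underline{h} \in \N ^{d-r}\}$, with $\underline{\xi} ^{<(\underline{0}, \underline{0})>} = \overline{1}$; by $p$-adic completion (resp.\ by descending, using coherence, to a coherent $\widehat{\D} ^{(m)} _{\cdot, \Q}$-module and then working at finite level), $\widetilde{\D} _{\ZZ \to \X}$ is the $p$-adic completion of the free left $\widetilde{\D} _\ZZ$-module $\bigoplus _{\underline{h}} \widetilde{\D} _\ZZ\, \underline{\xi} ^{<(\underline{0}, \underline{h})>}$. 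Since $\cN$ is coherent and has no $p$-torsion, $\cN \otimes _{\widetilde{\D} _\ZZ} (-)$ is compatible with this $p$-adic completion, so $u _+ (\cN)$ is the module of $p$-adically convergent sums $\sum _{\underline{h}} x _{\underline{h}}\, \underline{\xi} ^{<(\underline{0}, \underline{h})>}$ with $x _{\underline{h}} \in \cN$. In particular $y \mapsto y \otimes \overline{1}$ is injective, and its image — the part with $\underline{h} = \underline{0}$ — lies in $\mathrm{Ann} _{u _+ (\cN)} (\I)$ because $\overline{1} \cdot t _j = \overline{t _j} = 0$ for $r < j \leq d$.

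For the reverse inclusion, fix $r < j \leq d$. By \cite[2.2.4]{Be1} (used as in \ref{locdesc-climm2}.(e)) and because $\underline{\partial} ^{<\underline{i}>} (t _j) = 0$ for $\underline{i} \notin \{ \underline{0}, e _j\}$, there is in $\widetilde{\D} _\X$ the identity $\underline{\partial} ^{<(\underline{0}, \underline{h})>} \cdot t _j = t _j\, \underline{\partial} ^{<(\underline{0}, \underline{h})>} + c _{h _j}\, \underline{\partial} ^{<(\underline{0}, \underline{h}) - e _j>}$ with $c _{h _j}$ a nonzero integer (the last term being absent when $h _j = 0$); reducing modulo $\I \widetilde{\D} _\X$ and using $t _j \in \I$ yields $\underline{\xi} ^{<(\underline{0}, \underline{h})>} \cdot t _j = c _{h _j}\, \underline{\xi} ^{<(\underline{0}, \underline{h}) - e _j>}$ (resp.\ $0$). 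Hence, for $\omega = \sum _{\underline{h}} x _{\underline{h}}\, \underline{\xi} ^{<(\underline{0}, \underline{h})>} \in u _+ (\cN)$, one has $\omega \cdot t _j = \sum _{\underline{h}'} c _{h' _j + 1}\, x _{\underline{h}' + e _j}\, \underline{\xi} ^{<(\underline{0}, \underline{h}')>}$, so $\omega \cdot t _j = 0$ forces $c _{h _j}\, x _{\underline{h}} = 0$ for every $\underline{h}$ with $h _j \geq 1$. Since $\cN$ is a $\V$-module without $p$-torsion and $c _{h _j}$ is a nonzero integer, multiplication by $c _{h _j}$ is injective on $\cN$, whence $x _{\underline{h}} = 0$ whenever $h _j \geq 1$. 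Letting $j$ run through $r+1, \dots, d$ gives $x _{\underline{h}} = 0$ for all $\underline{h} \neq \underline{0}$, i.e.\ $\omega \in \cN \cdot \overline{1}$. Therefore $\mathrm{Ann} _{u _+ (\cN)} (\I) = \cN \cdot \overline{1}$, and $\mathrm{adj}$ is an isomorphism.

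The delicate point, and the reason both hypotheses on $\cN$ are imposed, is precisely the interaction of the last two steps with $p$-torsion and with completions. Without $p$-torsion-freeness the annihilator $\mathrm{Ann} _{u _+ (\cN)} (\I)$ genuinely grows — a $p$-torsion class placed in a component $\underline{\xi} ^{<(\underline{0}, \underline{h})>}$ with $p \mid c _{h _j}$ need not be removed by the lowering operators — so $u _+$ would fail to be fully faithful; and coherence is what ensures that $u _+ (\cN)$ is the completed direct sum above (so that working with the coefficients $x _{\underline{h}}$ is legitimate) and, together with the reduction to finite level, handles the $\widehat{\D} ^\dag \otimes \bbQ$ case. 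The verification of the commutation formula for $\underline{\partial} ^{<(\underline{0}, \underline{h})>} \cdot t _j$ modulo $\I \widetilde{\D} _\X$, and of the continuity of right multiplication needed to compute $\omega \cdot t _j$ term by term, are routine but should be done with some care.
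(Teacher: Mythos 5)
Your proof is correct. The paper's own proof is just a one-line pointer to \cite[2.3.1]{surcoh-hol}, and your argument reconstructs what that reference does by the same route: reduce to the local affine situation, identify $u ^{\flat 0} u _+ (\cN)$ with the right annihilator of $\I$ inside $u _+ (\cN) = \widehat{\bigoplus} _{\underline{h}} \cN \, \underline{\xi} ^{<(\underline{0},\underline{h})>_{(m)}}$ (using coherence to equate $u _+(\cN)$ with the $p$-adic completion of the direct sum), and then use the commutation $\underline{\xi} ^{<(\underline{0},\underline{h})>_{(m)}} t _j = c _{h _j} \underline{\xi} ^{<(\underline{0},\underline{h}) - e _j>_{(m)}}$ with $c _{h _j}$ a nonzero integer together with the absence of $p$-torsion in $\cN$ to kill every coefficient with $\underline{h} \neq \underline{0}$.
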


\begin{proof}
We proceed similarly to \cite[2.3.1]{surcoh-hol}.
\end{proof}

\subsection{Relative duality isomorphism and adjunction for proper morphisms}
Let  $f \colon \fP ^{\prime } \to \fP $ be a morphism of smooth formal schemes over $\S $,
$T$ and $T'$ be some divisors of respectively $P$ and $P'$ such that 
$f ( P '\setminus T' ) \subset P \setminus T$.
We will extend later for realizable morphisms 
the following theorem and its first corollary (see \ref{cor-adj-formulbis}).

\begin{thm}
\label{dualrelative}
Suppose $f$ is proper, and 
$T ' = f ^{-1}(T)$.
Let $\E ' \in D ^\mathrm{b} _{\mathrm{coh}}
(\D ^{\dag} _{\fP ^{\prime }} (\hdag T' ) _{\Q}).$
We have in 
$D ^\mathrm{b} _{\mathrm{coh}}
(\D ^{\dag} _{\fP  } (\hdag T ) _{\Q})$
the isomorphism
\begin{equation}
\label{dualrelative-morp}
f _{T+} \circ \DD _T ( \E ') 
\riso 
\DD _T \circ f _{T+} (\E ').
\end{equation}
\end{thm}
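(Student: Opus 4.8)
The strategy is to reduce the statement about the full arithmetic $\D$-module $\D ^{\dag} _{\fP /\S} (\hdag T) _{\Q}$ to the level-$m$ situation, where we can invoke Virrion's relative duality isomorphism together with the closed-immersion computations established in the previous subsections. The plan is to first use the factorization of a proper morphism into a closed immersion followed by a projection. Since $f$ is proper and $T'=f^{-1}(T)$, and since we may work locally on $\fP$, we may assume $f$ factors as $\fP ' \hookrightarrow \widehat{\mathbb{P}} ^N _{\fP} \to \fP$; by the compatibility of $f_{T+}$ and $\DD _T$ with composition of morphisms (which follows from the transitivity of $f_{T+}$ via the associated transfer bimodules and from $\DD_T$ being an involution), it suffices to treat separately the case of a closed immersion and the case of the projection $\widehat{\mathbb{P}} ^N _{\fP}\to\fP$, which is smooth and projective. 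For the closed immersion case, Corollary \ref{rel-dual-isom-immf} already gives the level-$m$ isomorphism $\DD ^{(m)}\circ u_+ (\cN) \riso u_+ \circ \DD ^{(m)}(\cN)$; this passes to the inductive limit over $m$ after tensoring with $\Q$, giving the result for $\D ^{\dag} _{\Q}$-coefficients (and one adds overconvergent singularities along $T$ by the localization-functor formalism of Section~\ref{ntn-tildeD(Z)}, using that $f_{T+}$ and $\DD_T$ commute with $(\hdag T)$, cf.\ \ref{surcoh2.1.4-cor} and \ref{coh-Qcoh}).

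Second, I would handle the smooth projective projection. Here one first establishes the analogous statement at each finite level $m$ for the sheaves $\widetilde{\D} ^{(m)} _{\fP /\S}(T)$: for $\E ^{\prime (m)} \in D ^\mathrm{b} _{\mathrm{coh}}(\widetilde{\D} ^{(m)} _{\fP '/\S}(T'))$ one wants $f ^{(m)} _{T+}\circ \DD ^{(m)} _T (\E ^{\prime (m)}) \riso \DD ^{(m)} _T \circ f ^{(m)} _{T+}(\E ^{\prime (m)})$. This is Virrion's theorem in \cite{Vir04}, whose proof in turn reduces (via the factorization through a relative projective space and the closed-immersion case treated above) to the smooth case, where the key input is the trace morphism for the relative dualizing complex $\omega _{\fP '/\fP}[d_{\fP '/\fP}]$ and Serre--Grothendieck duality for the proper smooth morphism $f$ of formal schemes (applied modulo each $\pi ^{i+1}$ and then taking $\R\varprojlim$). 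Concretely, the relative duality isomorphism comes from the canonical identification of transfer bimodules $\widetilde{\D} ^{(m)} _{\fP \leftarrow \fP '/\S}(T,T') \riso \DD ^{(m)}(\widetilde{\D} ^{(m)} _{\fP '\to \fP /\S}(T',T))$ in an appropriate derived sense, combined with the projection formula \ref{Prop1.2.21-caro-surc} and the biduality isomorphism; one checks compatibility with the transition maps $\widetilde{\D} ^{(m)}\to\widetilde{\D} ^{(m+1)}$ using Proposition \ref{stab-coh-f_+} (stability of coherence under $f^{(m)}_{T+}$ with base change of the level) and the analogous statement for $\DD ^{(m)}$.

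Third, I would pass from the finite-level statement to the statement over $\D ^{\dag} _{\fP /\S}(\hdag T) _{\Q}$. Using the equivalence of categories \ref{eqcat-limcoh} between $\underrightarrow{LD} ^{\mathrm{b}} _{\Q,\mathrm{coh}}(\smash{\widetilde{\D}} _{\fP /\S} ^{(\bullet)}(T))$ and $D ^{\mathrm{b}} _{\mathrm{coh}}(\D ^{\dag} _{\fP } (\hdag T) _{\Q})$, and the compatibility (\ref{coh-Qcoh}) of $f_{T+}$, $f^{!}_{T',T}$ and $\DD _T$ with their level-$\bullet$ avatars $f^{(\bullet)}_{T+}$, $\DD ^{(\bullet)}_T$ (notation \ref{ntn-dualfunctor}), one obtains \ref{dualrelative-morp} by applying $\underrightarrow{\lim}$ to the isomorphism $f ^{(\bullet)} _{T+}\circ \DD ^{(\bullet)} _T \riso \DD ^{(\bullet)} _T \circ f ^{(\bullet)} _{T+}$, which itself follows from the finite-level isomorphisms (tensored by $\Q$) by compatibility with the transition maps. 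The finiteness of cohomological dimension of $\D ^{\dag} _{\fP }(\hdag T) _{\Q}$ (Theorem \ref{Noot-Huyghe-finitehomoldim}) guarantees that $\DD _T$ preserves $D^{\mathrm{b}}_{\mathrm{coh}}$, and the stability of coherence under $f_{T+}$ for proper $f$ is already recorded, so both sides of \ref{dualrelative-morp} indeed lie in $D ^\mathrm{b} _{\mathrm{coh}}(\D ^{\dag} _{\fP }(\hdag T) _{\Q})$.

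\textbf{Main obstacle.} The delicate point is not the closed-immersion case (which is handled cleanly by \ref{rel-dual-isom-immf} and the fundamental local isomorphism) but the verification that the level-$m$ relative duality isomorphisms are compatible with the transition maps in $m$, so that they glue to an isomorphism of functors on $\underrightarrow{LD} ^{\mathrm{b}} _{\Q,\mathrm{coh}}$; equivalently, the heart of the matter is re-deriving Virrion's smooth-case duality isomorphism in a way that is manifestly functorial in the level. I expect this to require a careful bookkeeping of the identifications of transfer bimodules and dualizing sheaves together with the base-change isomorphisms of Proposition \ref{stab-coh-f_+} and the coherence-preservation results of Section~\ref{chapt4-adj-reliso-closedimm}, rather than any genuinely new idea; for the write-up one can largely cite \cite{Vir04} for the finite-level statement and then argue the passage to the limit formally.
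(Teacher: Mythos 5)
Your plan is a genuinely different route from the paper's proof, and it has a real gap. The paper's proof is very short: it constructs the morphism \ref{dualrelative-morp} by the same formal recipe as Virrion's (citing \cite[IV.1.3]{Vir04} and \cite[1.2.7]{caro_courbe-nouveau}), observes that its restriction to the open complement $\fP \setminus T$ \emph{is} Virrion's relative duality morphism (because on that open set the ring $\D ^{\dag} _{\fP } (\hdag T) _{\Q}$ becomes $\D ^{\dag} _{\fP \setminus T,\Q}$), hence is an isomorphism there by \cite[IV]{Vir04}, and concludes by the faithfulness of the restriction functor outside $T$ (which follows from the faithful flatness of $\D ^{\dag} _{\fP } (\hdag T) _{\Q} \to j _* \D ^{\dag} _{\fP \setminus T,\Q}$, \cite[4.3.10]{Be1}). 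This trick cleanly reduces the statement with overconvergent singularities to Virrion's theorem without singularities and entirely bypasses any re-derivation of the latter.

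Your plan instead tries to re-derive the isomorphism from scratch by factoring $f$ as a closed immersion followed by the projection $\widehat{\mathbb{P}} ^N _{\fP}\to\fP$ "locally on $\fP$". This is where the gap lies: a proper morphism of noetherian (formal) schemes is \emph{not}, in general, locally projective over the base — you would need $f$ to be projective, which is a strictly stronger hypothesis than properness. Virrion's theorem is stated and proved for arbitrary proper $f$, and it is precisely this generality the paper wants; your reduction only recovers the projective case. Indeed, the paper records that case separately (Theorem \ref{rel-dual-isom-proj} and Corollary \ref{rel-dual-isom-proj-f}) and explicitly notes it is "weaker than \ref{dualrelative} but \ldots much easier"; your argument is essentially the argument for \ref{rel-dual-isom-proj-f}, not for \ref{dualrelative}. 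Even setting aside projectivity, your reduction also requires a previously-constructed \emph{global} morphism \ref{dualrelative-morp} to check locally that it is an isomorphism, since the locally-built comparison isomorphisms from the two cases would not a priori glue; and the "compatibility with transition maps in $m$" that you flag as the delicate point is genuinely non-trivial and is left unaddressed. The paper's faithfulness-of-restriction argument avoids all of these difficulties at once.
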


\begin{proof}
We construct the morphism \ref{dualrelative-morp} similarly to 
that of \cite[IV.1.3]{Vir04} (for more details, see 
\cite[1.2.7]{caro_courbe-nouveau}).
The restriction of this morphism \ref{dualrelative-morp} to $\fP \setminus T$ is the same as that 
of Virrion in \cite[IV]{Vir04}. Hence, the restriction to  $\fP \setminus T$ of this morphism is an isomorphism and
then by faithfulness this is an isomorphism.
 \end{proof}

\begin{cor}
\label{cor-adj-formul}
Suppose $f$ is proper, and 
$T ' = f ^{-1}(T)$.
Let $\E ' \in D ^\mathrm{b} _{\mathrm{coh}}
(\D ^{\dag} _{\fP ^{\prime }} (\hdag T' ) _{\Q})$,
and
$\E 
\in 
D ^\mathrm{b} _{\mathrm{coh}}
(\D ^{\dag} _{\fP  } (\hdag T ) _{\Q})$.
We have 
the isomorphisms
\begin{gather}
\label{cor-adj-formul-bij1}
\R \mathcal{H} om _{\D ^{\dag} _{\fP } (\hdag T ) _{\Q}}
( f _{T+} ( \E ') , \E) 
\riso 
\R f _* 
\R \mathcal{H} om _{\D ^{\dag} _{\fP ' } (\hdag T ' ) _{\Q}}
( \E ' ,f ^! _{T}  ( \E)). 
\\
\label{cor-adj-formul-bij2}
\R \mathrm{Hom}  _{\D ^{\dag} _{\fP } (\hdag T ) _{\Q}}
( f _{T+} ( \E ') , \E) 
\riso 
\R \mathrm{Hom}  _{\D ^{\dag} _{\fP ' } (\hdag T' ) _{\Q}}
( \E ' ,f ^! _{T}  ( \E)). 
\end{gather}
\end{cor}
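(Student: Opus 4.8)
The plan is to derive Corollary \ref{cor-adj-formul} directly from the relative duality isomorphism \ref{dualrelative-morp} together with the biduality equivalence $\DD _T$ of \ref{ntn-dualfunctor}, mimicking the classical argument that expresses an adjunction via a combination of $\R\Hom$, duality and the projection formula. First I would recall that over $\D ^{\dag} _{\fP } (\hdag T) _{\Q}$ every coherent complex is perfect (Theorem \ref{Noot-Huyghe-finitehomoldim}), so that the functors $\R\mathcal{H}om$ appearing here behave well and $\DD _T$ is an involutive equivalence on $D ^{\mathrm{b}} _{\mathrm{coh}}$.

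The key identity to establish is the ``local'' (sheaf-level) form \ref{cor-adj-formul-bij1}; statement \ref{cor-adj-formul-bij2} then follows by applying $\R\Gamma(\fP,-)$ and using $\R\Gamma(\fP,\R f_* (-)) = \R\Gamma(\fP',-)$. To prove \ref{cor-adj-formul-bij1}, I would write, for $\E' \in D ^{\mathrm b}_{\mathrm{coh}}(\D ^{\dag}_{\fP'}(\hdag T')_{\Q})$ and $\E \in D ^{\mathrm b}_{\mathrm{coh}}(\D ^{\dag}_{\fP}(\hdag T)_{\Q})$, the chain
\begin{gather}
\notag
\R\mathcal{H}om_{\D ^{\dag}_{\fP}(\hdag T)_{\Q}}(f_{T+}(\E'),\E)
\riso
\R\mathcal{H}om_{\D ^{\dag}_{\fP}(\hdag T)_{\Q}}(\DD_T\E, \DD_T f_{T+}(\E'))
\\
\notag
\underset{\ref{dualrelative-morp}}{\riso}
\R\mathcal{H}om_{\D ^{\dag}_{\fP}(\hdag T)_{\Q}}(\DD_T\E, f_{T+}\DD_T(\E'))
\riso
\R f_*\,\R\mathcal{H}om_{\D ^{\dag}_{\fP'}(\hdag T')_{\Q}}(f^!_T\DD_T\E,\DD_T(\E'))
\riso
\R f_*\,\R\mathcal{H}om_{\D ^{\dag}_{\fP'}(\hdag T')_{\Q}}(\E',f^!_T\E),
\end{gather}
where the first and last steps use that $\DD_T$ (resp. $\DD_{T'}$) is a contravariant equivalence together with biduality $\DD_T\DD_T\simeq\mathrm{id}$, the second is the relative duality isomorphism of Theorem \ref{dualrelative}, the third is the adjunction between $f_{T+}$ and $f^!_T$ at the level of sheaf-$\R\mathcal{H}om$ — which is exactly the closed-immersion case \ref{Radj-u+flatf} globalized to a proper morphism (this is Virrion's adjunction, cf. \cite[IV]{Vir04}), and the fourth uses the compatibility $\DD_{T'}\circ f^!_T \simeq f^!_T\circ\DD_T$ (relative duality for $f^!$, a formal consequence of \ref{dualrelative-morp} and the biduality on both sides, or directly from the fact that $f^!_T$ of a perfect complex is computed by tensoring with the transfer bimodule). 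I would need to check that each of these isomorphisms is functorial enough to be composed, and that the resulting map is indeed the canonical adjunction morphism; the cleanest way is to verify it after restriction to $\fP\setminus T$, where it reduces to the already-known statement without overconvergent singularities (for instance via \cite[1.2.7]{caro_courbe-nouveau}), and then invoke the faithfulness of the localization functor $\mathrm{oub}$ / the fact that both sides lie in $D ^{\mathrm b}_{\mathrm{coh}}$ with overconvergent singularities along $T$.

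The main obstacle, I expect, is not any single isomorphism but the bookkeeping needed to see that the composite coincides with the \emph{canonical} adjunction morphism rather than merely some isomorphism — i.e. compatibility of the duality-based construction with the honest unit/counit of $(f_{T+}, f^!_T)$. For a proper $f$ this adjunction is furnished by Virrion's work and, in the closed-immersion case treated in this chapter, by the explicit morphisms of \ref{u+uflat-lem-form} and Corollary \ref{Radj-u+flatf}; one can always reduce the general proper case to a composition of a closed immersion and a (smooth) projection, so strictly the needed adjunction is available. The remaining care is to propagate the overconvergent singularities consistently through all the bimodule transfers, which is handled uniformly by \ref{coh-Qcoh}, \ref{rem-f!TornoT} and Theorem \ref{dualrelative}; after restricting to $\fP\setminus T$ and using faithfulness, no genuinely new computation is required.
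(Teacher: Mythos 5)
The third arrow in your chain is where the argument breaks down. You write
\begin{equation*}
\R\mathcal{H}om_{\D ^{\dag}_{\fP}(\hdag T)_{\Q}}(\DD_T\E,\, f_{T+}\DD_{T'}(\E'))
\riso
\R f_*\,\R\mathcal{H}om_{\D ^{\dag}_{\fP'}(\hdag T')_{\Q}}(f^!_T\DD_T\E,\,\DD_{T'}(\E')),
\end{equation*}
and justify it as ``the adjunction between $f_{T+}$ and $f^!_T$ at the level of sheaf-$\R\mathcal{H}om$.'' Two problems. First, the direction is wrong: the adjunction of \ref{Radj-u+flatf} (and of the target corollary) has the pushforward in the \emph{first} argument, i.e.\ $\R\mathcal{H}om(u_+ A,B)\riso u_*\R\mathcal{H}om(A,u^! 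B)$; your displayed isomorphism has $f_{T+}$ in the second argument, which is a different statement. Second, and more seriously, if you fix the direction by passing through $\DD$ once more, the resulting statement is \emph{equivalent} to \ref{cor-adj-formul-bij1} itself, given biduality and $\DD\circ f_{T+}\riso f_{T+}\circ\DD$ — the very tools you have already deployed in the other three steps. So the chain either mis-applies an adjunction that isn't there, or silently assumes the conclusion. Appealing to ``Virrion's adjunction'' does not help either: \cite[IV.4.1]{Vir04} \emph{is} (a version of) the corollary, so citing it in the middle of a derivation of the corollary is circular; and the reduction to closed immersion + projection requires the projective-case adjunction \ref{cor-adj-formul-proj-formal-bij1}, whose own proof in the paper says explicitly that it is done ``similarly to \ref{cor-adj-formul},'' i.e.\ by the same mechanism you are trying to avoid — and in any case $f$ here is only assumed proper, not projective.

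What you are missing is the actual mechanism the paper (after Virrion) uses: since coherent equals perfect by \ref{Noot-Huyghe-finitehomoldim}, one rewrites $\R\mathcal{H}om_{\D ^{\dag}_{\fP}(\hdag T)_{\Q}}(f_{T+}\E',\E)$ via the tensor–Hom identity for perfect complexes (the isomorphism of \cite[2.1.17]{caro_comparaison}) as $(\omega_\fP\otimes_{\O_\fP}\E)\otimes^\L_{\D ^{\dag}_{\fP}(\hdag T)_{\Q}}\DD_T f_{T+}(\E')[-d_P]$, applies the relative duality isomorphism $\DD_T f_{T+}\riso f_{T+}\DD_{T'}$ (this is the \emph{only} nontrivial input), then applies the \emph{projection formula} for the morphism of ringed spaces $f\colon(\fP',\D ^{\dag}_{\fP'}(\hdag T')_{\Q})\to(\fP,\D ^{\dag}_{\fP}(\hdag T)_{\Q})$ to move the tensor factor inside $\R f_*$, and finally uses \cite[2.1.17]{caro_comparaison} again on $\fP'$ to recover $\R f_*\R\mathcal{H}om_{\D ^{\dag}_{\fP'}(\hdag T')_{\Q}}(\E',f^!_T\E)$. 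No adjunction for $(f_{T+},f^!_T)$ is assumed at any point; it is \emph{produced}. Your steps 1, 2, 4 (biduality, $\DD f_+\riso f_+\DD$, $\DD f^!\riso f^!\DD$) are correct and can be retained as sanity checks, but the engine has to be the tensor–Hom plus projection-formula argument. The remaining remarks you make (deriving \ref{cor-adj-formul-bij2} from \ref{cor-adj-formul-bij1} by $\R\Gamma(\fP,-)$, checking on $\fP\setminus T$, bookkeeping of overconvergent singularities) are fine but auxiliary.
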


\begin{proof}
The proof is identical to that of 
\cite[IV.4.1 and IV.4.2]{Vir04}: this is a formal consequence of 
the relative duality isomorphism \ref{dualrelative}.
For the reader, let us clarify it below: 
following 
\ref{Noot-Huyghe-finitehomoldim}, we have 
$D ^{\mathrm{b}} _{\mathrm{coh}}( \smash{\D} ^\dag _{\fP /\S } (\hdag T) _{\Q} )
=
D ^{\mathrm{b}} _{\mathrm{parf}}( \smash{\D} ^\dag _{\fP /\S } (\hdag T) _{\Q} )$.
Hence, 
using \cite[2.1.17]{caro_comparaison}, 
we construct the canonical isomorphism:
$$\R \mathcal{H} om _{\D ^{\dag} _{\fP } (\hdag T  ) _{\Q}}
( f _{T+} ( \E ') , \E)  
\riso
\left (
\omega  _{\fP}
\otimes _{\O _{\fP}}
\E
\right )
\otimes ^\L _{\D ^{\dag} _{\fP } (\hdag T ) _{\Q} }
\R \mathcal{H} om _{\D ^{\dag} _{\fP } (\hdag T ) _{\Q}}
\left ( f _{T+} ( \E ') , 
\D ^{\dag} _{\fP } (\hdag T ) _{\Q} 
\otimes _{\O _{\fP}}
\omega ^{-1} _{\fP}
\right ) 
.$$
Hence, using \ref{dualrelative}, 
we get by composition the isomorphism
\begin{equation}
\label{cor-adj-formul-bij1-iso2}
\R \mathcal{H} om _{\D ^{\dag} _{\fP } (\hdag T ) _{\Q}}
( f _{T+} ( \E ') ,\E)  
\riso 
\left (
\omega  _{\fP}
\otimes _{\O _{\fP}}
\E
\right )
\otimes ^\L _{\D ^{\dag} _{\fP } (\hdag T ) _{\Q} }
f _{T+} 
\left (\DD _T ( \E ') 
\right )
[-d _{P}] 
.\end{equation}
Using the projection formula
for the morphism of ringed spaces
$f \colon 
(\fP ' ,\D ^{\dag} _{\fP '} (\hdag T ') _{\Q})
\to 
(\fP, \D ^{\dag} _{\fP } (\hdag T ) _{\Q})$, 
the right term of \ref{cor-adj-formul-bij1-iso2} is isomorphic to 
\begin{equation}
\label{cor-adj-formul-bij1-iso3}
\R f _{*}
\left (
f ^{-1}
\left (
\omega  _{\fP}
\otimes _{\O _{\fP}}
\E
\right )
\otimes ^\L _{f ^{-1}\D ^{\dag} _{\fP } (\hdag T ) _{\Q} }
\D ^{\dag} _{\fP \leftarrow \fP '} (\hdag T ) _{\Q} 
\otimes ^{\L}
_{\D ^{\dag} _{\fP '} (\hdag T ' ) _{\Q}}
\DD _T ( \E ') 
\right ) 
[-d _{P}].
\end{equation}
\end{proof}
Using the isomorphisms
$\left (f ^{-1}
\left (
\omega  _{\fP}
\otimes _{\O _{\fP}}
\E
\right )
\otimes ^\L _{f ^{-1}\D ^{\dag} _{\fP } (\hdag T ) _{\Q} }
\D ^{\dag} _{\fP \leftarrow \fP '} (\hdag T ) _{\Q} \right )
\otimes _{\O _{\fP'}}
\omega ^{-1} _{\fP'}
[d _{P'/P}]
\riso 
f _T ^{!} (\E)$
and
$\omega  _{\fP'}
\otimes _{\O _{\fP'}}
\DD _T ( \E ') 
[-d _{P'}]
\riso 
\R \mathcal{H} om _{\D ^{\dag} _{\fP '} (\hdag T ' ) _{\Q}}
(  \E ' , 
\D ^{\dag} _{\fP '} (\hdag T ') _{\Q} )$, 
the term of \ref{cor-adj-formul-bij1-iso3} is isomorphic to 
$$
\R f _{*}
\left(
\R \mathcal{H} om _{\D ^{\dag} _{\fP '} (\hdag T ' ) _{\Q}}
(  \E ' , 
\D ^{\dag} _{\fP '} (\hdag T ') _{\Q} )
\otimes ^{\L}
_{\D ^{\dag} _{\fP '} (\hdag T ' ) _{\Q}}
f _T ^{!} (\E)
\right)
\underset{\cite[2.1.17]{caro_comparaison}}{\riso}
\R f _{*}
\left(
\R \mathcal{H} om _{\D ^{\dag} _{\fP '} (\hdag T ' ) _{\Q}}
(  \E ' , 
f _T ^{!} (\E)
\right)
.$$

\begin{cor}
\label{adj-morph}
Suppose $f$ is proper, and 
$T ' = f ^{-1}(T)$.
\begin{enumerate}
\item 
Let 
$\E ' 
\in 
D ^\mathrm{b} _{\mathrm{coh}}
(\D ^{\dag} _{\fP ^{\prime }} (\hdag T' ) _{\Q})$. 
We have the adjunction morphism
$\E ' \to f ^! _{T} f _{T+} (\E ')$. 
\item Let $\E \in D ^\mathrm{b} _{\mathrm{coh}}
(\D ^{\dag} _{\fP  } (\hdag T ) _{\Q})$
such that 
$f ^! _{T} (\E) \in 
D ^\mathrm{b} _{\mathrm{coh}}
(\D ^{\dag} _{\fP ^{\prime }} (\hdag T' ) _{\Q})$.
We have the adjunction morphism 
$f _{T+} f ^! _{T} (\E) \to \E$. 

\item Suppose $f$ is proper and smooth. 
Then $f _{T+} \colon 
D ^\mathrm{b} _{\mathrm{coh}}
(\D ^{\dag} _{\fP ^{\prime }} (\hdag T' ) _{\Q})
\to 
D ^\mathrm{b} _{\mathrm{coh}}
(\D ^{\dag} _{\fP  } (\hdag T ) _{\Q})$
is a right adjoint functor of 
$f ^! _{T}
\colon 
D ^\mathrm{b} _{\mathrm{coh}}
(\D ^{\dag} _{\fP  } (\hdag T ) _{\Q})
\to 
D ^\mathrm{b} _{\mathrm{coh}}
(\D ^{\dag} _{\fP ^{\prime }} (\hdag T' ) _{\Q})$.
\end{enumerate}
\end{cor}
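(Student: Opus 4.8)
The plan is to derive Corollary \ref{adj-morph} as a formal consequence of the adjunction isomorphisms \ref{cor-adj-formul-bij2} established in Corollary \ref{cor-adj-formul}, together with the relative duality isomorphism \ref{dualrelative}. First I would prove part (1): for $\E ' \in D ^\mathrm{b} _{\mathrm{coh}}(\D ^{\dag} _{\fP ^{\prime }} (\hdag T' ) _{\Q})$, I set $\E := f _{T+} (\E ')$, which lies in $D ^\mathrm{b} _{\mathrm{coh}}(\D ^{\dag} _{\fP  } (\hdag T ) _{\Q})$ by Theorem \ref{dualrelative} (or rather by the stability of coherence under pushforward by a proper morphism recalled earlier in the excerpt). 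Then the left-hand side of \ref{cor-adj-formul-bij2} with this choice of $\E$ becomes $\R \mathrm{Hom}  _{\D ^{\dag} _{\fP } (\hdag T ) _{\Q}}( f _{T+} ( \E ') , f _{T+}(\E '))$, whose $\H ^0$ contains the identity morphism of $f _{T+}(\E ')$; pulling the identity back through the isomorphism \ref{cor-adj-formul-bij2} yields a canonical element of $\H ^0 \R \mathrm{Hom}  _{\D ^{\dag} _{\fP ' } (\hdag T' ) _{\Q}}( \E ' ,f ^! _{T}  f_{T+}( \E'))$, i.e. a canonical morphism $\E ' \to f ^! _{T} f _{T+} (\E ')$. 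This is the desired unit of adjunction.

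Next I would prove part (2) dually: given $\E \in D ^\mathrm{b} _{\mathrm{coh}}(\D ^{\dag} _{\fP  } (\hdag T ) _{\Q})$ with the extra hypothesis that $f ^! _{T} (\E) \in D ^\mathrm{b} _{\mathrm{coh}}(\D ^{\dag} _{\fP ^{\prime }} (\hdag T' ) _{\Q})$ (needed so that \ref{cor-adj-formul-bij2} applies with $\E ' := f ^! _{T}(\E)$), I substitute $\E ' = f ^! _{T}(\E)$ into \ref{cor-adj-formul-bij2}. The right-hand side is then $\R \mathrm{Hom}  _{\D ^{\dag} _{\fP ' } (\hdag T' ) _{\Q}}( f ^! _{T}(\E) ,f ^! _{T}  ( \E))$, whose $\H ^0$ contains the identity of $f ^! _{T}(\E)$; transporting it through the isomorphism \ref{cor-adj-formul-bij2} (read from right to left) gives a canonical element of $\H ^0 \R \mathrm{Hom}  _{\D ^{\dag} _{\fP } (\hdag T ) _{\Q}}( f _{T+} f ^! _{T}( \E) , \E)$, i.e. the counit morphism $f _{T+} f ^! _{T}(\E) \to \E$.

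Finally, for part (3), when $f$ is moreover smooth, the functor $f ^! _{T}$ preserves coherence by Proposition \ref{stab-coh-f^!}, so its value on every object of $D ^\mathrm{b} _{\mathrm{coh}}(\D ^{\dag} _{\fP  } (\hdag T ) _{\Q})$ automatically satisfies the coherence hypothesis required in part (2); hence both the unit of part (1) and the counit of part (2) are defined on all objects. It then remains to check the two triangle identities, i.e. that the composites $f _{T+}(\E') \to f _{T+} f ^! _{T} f _{T+}(\E') \to f _{T+}(\E')$ and $f ^! _{T}(\E) \to f ^! _{T} f _{T+} f ^! _{T}(\E) \to f ^! _{T}(\E)$ are identities. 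These follow by the usual Yoneda-style argument: both unit and counit were defined by transporting identity morphisms through the natural, bifunctorial isomorphism \ref{cor-adj-formul-bij2}, so naturality of \ref{cor-adj-formul-bij2} in both variables forces the triangle identities (equivalently, one checks that \ref{cor-adj-formul-bij2} is exactly the hom-set bijection $\mathrm{Hom}(f _{T+}\E',\E) \cong \mathrm{Hom}(\E',f^!_T\E)$ realized by these unit/counit maps). The main obstacle is bookkeeping rather than conceptual: one must make sure the isomorphism \ref{cor-adj-formul-bij2} is genuinely functorial in both $\E'$ and $\E$ over the relevant coherent subcategories — this is implicit in the construction via \ref{dualrelative} and \cite[2.1.17]{caro_comparaison}, and I would cite the parallel verification in \cite[IV.4]{Vir04} for the case without overconvergent singularities, noting that restriction to $\fP \setminus T$ together with faithfulness reduces all compatibilities to Virrion's setting.
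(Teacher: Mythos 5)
Your proof is correct and essentially reconstructs the argument the paper leaves implicit: Corollary \ref{adj-morph} is stated without a proof block precisely because it is a formal consequence of the adjunction isomorphism \ref{cor-adj-formul-bij2}. Substituting $\E = f_{T+}(\E')$ (coherent by proper pushforward) gives the unit, substituting $\E' = f^!_T(\E)$ (coherent by the hypothesis in (2), or automatically when $f$ is also smooth via \ref{stab-coh-f^!}) gives the counit, and the triangle identities follow from bifunctoriality of \ref{cor-adj-formul-bij2}, exactly as you describe.
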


\section{Coherent arithmetic $\D$-modules over a realizable smooth scheme}
\label{section-BK-var}

\subsection{Berthelot-Kashiwara's theorem}

\begin{theo}
[Berthelot-Kashiwara]
\label{exact-Berthelot-Kashiwara-full}
Let $u \colon \ZZ  \to \X $ be a closed immersion of smooth 
formal schemes over $\S $.
Let $D$ be a divisor of $X$ such that $Z\cap D $ is a divisor of $Z$.

The functors $u ^{!}$ and $u _{+}$ induce quasi-inverse equivalences between the category of 
coherent $\D ^{\dag} _{\fX /\S } (\hdag D) _{\Q}$-modules with support in $Z$ 
and that of coherent 
$\D ^{\dag} _{\fZ } (\hdag D \cap Z) _{\Q}$-modules.
These functors $u ^{!}$ and  $u  _{+}$ are exact over these categories. 

\end{theo}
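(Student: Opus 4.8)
The plan is to reduce the statement to the level-$m$ case and then invoke the fundamental local isomorphism together with the adjunction results established in the previous section. First I would treat the case $D=\emptyset$ first, or equivalently work on $\fX\setminus D$ from the start using the localisation functors; indeed, since $u^{!}$ and $u_{+}$ commute with $(\hdag D)$ and $\mathrm{oub}_{D}$ (the arguments of \ref{oub-div-opcoh} and \ref{f!commoub} apply to a closed immersion), the general statement follows from the case of $\D^{\dag}_{\fX/\S,\Q}$-modules by twisting, provided one also knows that $(\hdag D\cap Z)$ and $u_{+}$ are compatible with support conditions. So the core assertion to prove is: $u^{!}$ and $u_{+}$ are quasi-inverse exact equivalences between $\coh(\D^{\dag}_{\fX/\S,\Q})$-modules supported in $Z$ and $\coh(\D^{\dag}_{\fZ/\S,\Q})$-modules.

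The second step is to pass to a single level $m$. Using the equivalence \ref{eqcat-limcoh} between $\underrightarrow{LM}_{\Q,\mathrm{coh}}$ and $\coh(\D^{\dag})$, a coherent $\D^{\dag}_{\fX/\S,\Q}$-module lifts (after applying some $\lambda\in L$) to a compatible system $(\FF^{(m)})$ of coherent $\widehat{\D}^{(m)}_{\fX/\S,\Q}$-modules. The support condition descends to each level, and both $u^{!}$ and $u_{+}$ are computed level by level (the bimodules $\widetilde{\D}_{\fZ\to\fX}$ and $\widetilde{\D}_{\fX\leftarrow\fZ}$ are inductive limits of their level-$m$ analogues). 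Thus I would reduce to proving: for a coherent $\widehat{\D}^{(m)}_{\fX/\S,\Q}$-module $\M$ without $p$-torsion supported in $Z$, the adjunction $u_{+}u^{!}(\M)\to\M$ is an isomorphism, and for coherent $\cN$ on $\fZ$ the adjunction $\cN\to u^{!}u_{+}(\cN)$ is an isomorphism. The latter, in the coherent $p$-torsion-free case, is precisely Proposition \ref{u+uflat-lem-formal} (after transferring between left and right modules via $\omega$ and using Theorem \ref{fund-isom-thm-form} to identify $u^{!}$ with $u^{\flat}$). For the former, I would work locally in the coordinates of \ref{locdesc-climm-form}: there the ring $\widehat{\D}^{(m)}_{\fX,\fZ,\underline{t}/\fS}$ gives an explicit description of $u^{\flat 0}$ via the Koszul complex $K_{\bullet}(\underline{f})$ (see \ref{hat-uflat-desc}), and a coherent module supported in $Z$ is annihilated by a power of $\I$; a standard dévissage on the $\I$-adic filtration, combined with Berthelot-Kashiwara for $\O$-modules (i.e. the classical statement that a coherent $\O_{\fX}$-module killed by $\I$ is $u_{*}$ of a coherent $\O_{\fZ}$-module), reduces to the case $\I\M=0$, which is handled by the isomorphism $\theta\colon u_{*}\widehat{\D}^{(m)}_{\fZ/\fS}\riso\widehat{\D}^{(m)}_{\fX,\fZ,\underline{t}/\fS}/\I\widehat{\D}^{(m)}_{\fX,\fZ,\underline{t}/\fS}$.

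The third step is exactness. Since $u_{+}$ is exact on all of $D(\widetilde{\D}_{\fZ})$ (it is just $u_{*}$ of a tensor with a locally free bimodule by a flat resolution, and $u_{*}$ is exact on quasi-coherent sheaves along a closed immersion), exactness of $u_{+}$ on the abelian subcategories is immediate. For $u^{!}[-d_{Z/X}]=u^{*}$ one uses that on the essential image — modules supported in $Z$ — the higher $\mathrm{Tor}$'s vanish; this follows from the fundamental local isomorphism \ref{fund-isom-form} together with the equivalence just established, since on $\M=u_{+}(\cN)$ we compute $u^{!}u_{+}(\cN)\riso\cN$ concentrated in a single degree. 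Finally I would verify that the essential image of $u_{+}$ is exactly the coherent modules supported in $Z$: coherence is preserved by $u_{+}$ for a proper (here finite) morphism by the results of Section 3, and conversely any coherent $\M$ supported in $Z$ is in the image by the adjunction isomorphism $u_{+}u^{!}(\M)\riso\M$ proved above.

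The main obstacle I expect is the isomorphism $u_{+}u^{!}(\M)\riso\M$ for a general coherent module supported in $Z$, since unlike the classical finite-order case one must control the $\I$-adic dévissage in the presence of the level-$m$ divided-power structure and the passage to $\Q$-coefficients; the key technical input making this work is the explicit local description of $u^{\flat}$ through $\widehat{\D}^{(m)}_{\fX,\fZ,\underline{t}/\fS}$ and the Koszul resolution in \ref{hat-uflat-desc}, which lets one reduce to the case $\I\M=0$ where everything is governed by the ring isomorphism $\widehat{\theta}$. One should also be careful that the support condition is stable under the dévissage and compatible with the equivalence $\underrightarrow{LM}_{\Q,\mathrm{coh}}\cong\coh(\D^{\dag})$; this is where I would spend the most care in writing out the details.
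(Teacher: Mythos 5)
Your reduction to $D=\emptyset$ via $(\hdag D)$ and the forgetful functor, and your use of Proposition \ref{u+uflat-lem-formal} for the adjunction $\cN\to u^{\flat 0}u_{+}(\cN)$, are both reasonable. But the core of the argument — the claim that $u_{+}u^{!}(\M)\to\M$ is an isomorphism for a coherent module supported in $Z$ — is handled by a dévissage that doesn't get off the ground, and this is a genuine gap, not a matter of missing details.

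You assert that ``a coherent module supported in $Z$ is annihilated by a power of $\I$'' and then dévisse on the filtration $\M\supset\I\M\supset\I^{2}\M\supset\cdots$. This assertion is false: the delta module $\M=u_{+}(\O_{\fZ,\Q})$ (or, for a concrete finite-level example, $\widehat{\D}^{(m)}_{\fX,\Q}/\widehat{\D}^{(m)}_{\fX,\Q}\cdot t^{2}$ on $\widehat{\bbA}^{1}$) is supported on $Z$ and coherent, yet no power of $\I$ annihilates it — multiplying $\overline{\partial^{[h]}}$ by $t$ drops $h$ by one, so $t^{N}$ fails to kill the classes with $h\geq N$. Thus the filtration you propose is infinite and strictly decreasing, and nothing is gained by passing to $\I\M$. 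In the classical Kashiwara argument and in Berthelot's arithmetic version the correct (exhaustive, increasing) filtration is by the $\I$-power-torsion submodules $\ker(\I^{j})$, and the key technical steps — that $\ker(\I)$ is a coherent $\D_{\fZ}$-module, that it generates $\M$, that each $\ker(\I^{j+1})/\ker(\I^{j})$ is isomorphic to the previous one via $t$ and $\partial$ — hinge on an Euler-operator (or equivalent) argument exploiting both $t_{i}$ and $\partial_{i}$ simultaneously. This is precisely what makes Kashiwara's theorem a statement about $\D$-modules and not about $\O$-modules; ``Berthelot–Kashiwara for $\O$-modules'' in your sense (a coherent $\O$-module killed by $\I$ is $u_{*}$ of a coherent $\O_{\fZ}$-module) is trivial and does not bear the weight you put on it.

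A second, less fatal, concern is the reduction to a fixed level $m$. The equivalence \ref{eqcat-limcoh} presents a coherent $\D^{\dag}_{\Q}$-module as an ind-system satisfying the compatibility condition of Definition \ref{defi-LDQ0coh}, and it is not automatic that the adjunction $u_{+}u^{!}(\FF^{(m)})\to\FF^{(m)}$ is an isomorphism level by level rather than merely an isogeny that becomes an isomorphism in the limit. The paper's proof (referred to \cite[A.8]{caro-stab-sys-ind-surcoh}) works within the ind-system framework $\underrightarrow{LM}_{\Q,\mathrm{coh}}$ for exactly this reason. If you insist on a level-$m$ reduction, you would have to verify the level-$m$ statement carefully, and the obstruction you would meet is again the one just described: the Euler-operator argument, not an $\I$-adic dévissage.
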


\begin{proof}
We can copy word by word the proof of \cite[A.8]{caro-stab-sys-ind-surcoh}.
\end{proof}

\begin{rem}
\label{rem-exact-Berthelot-Kashiwara-full}
With notation \ref{exact-Berthelot-Kashiwara-full}, 
by copying the proof of \cite[A.8]{caro-stab-sys-ind-surcoh},
we check more precisely that the adjunction morphism of 
$u _+ u ^! (\E) \to \E$ (see \ref{u+uflat-lem-form}) is an isomorphism for any 
coherent $\D ^{\dag} _{\fX /\S,\Q}$-module $\E$ with support in $Z$. 
\end{rem}

\begin{thm}
[Inductive system version of Berthelot-Kashiwara's theorem]
\label{u!u+=id}
We keep notation \ref{exact-Berthelot-Kashiwara-full}.
Set $\fY: =\X \setminus Z$. 
Let 
$\FF ^{(\bullet)} 
\in \smash{\underrightarrow{LD}}  ^\mathrm{b} _{\Q, \mathrm{coh}}
(\overset{^\mathrm{l}}{} \smash{\widehat{\D}} _{\ZZ /\S } ^{(\bullet)} (D \cap Z))$,
$\E ^{(\bullet)}  
\in \smash{\underrightarrow{LD}}  ^\mathrm{b} _{\Q, \mathrm{coh}}
(\overset{^\mathrm{l}}{} \smash{\widehat{\D}} _{\X /\S } ^{(\bullet)} (D ))$
such that
$\E ^{(\bullet)} |\fY \riso 0$
in $\smash{\underrightarrow{LD}}  ^\mathrm{b} _{\Q, \mathrm{coh}}
(\overset{^\mathrm{l}}{} \smash{\widehat{\D}} _{\X /\S  } ^{(\bullet)} (D ))$. 

\begin{enumerate}
\item We have the canonical isomorphism in 
$\smash{\underrightarrow{LD}}  ^\mathrm{b} _{\Q, \mathrm{coh}}
(\overset{^\mathrm{l}}{} \smash{\widehat{\D}} _{\ZZ /\S  } ^{(\bullet)} (D \cap Z))$ of the form:
\begin{equation}
\label{u!u+=id-iso}
u ^{ !(\bullet)} \circ u _{+} ^{ (\bullet)} (\FF ^{(\bullet)})
\riso \FF ^{(\bullet)}.
\end{equation}

\item We have $u ^{ !(\bullet)} (\E ^{(\bullet)} )\in \smash{\underrightarrow{LD}}  ^\mathrm{b} _{\Q, \mathrm{coh}}
(\overset{^\mathrm{l}}{} \smash{\widehat{\D}} _{\ZZ /\S } ^{(\bullet)} (D \cap Z))$ and 
we have the canonical isomorphism :
\begin{equation}
\label{u!u+=id-isobis}
u _{+} ^{ (\bullet)} \circ  u ^{ !(\bullet)}  (\E ^{(\bullet)})
\riso \E ^{(\bullet)}.
\end{equation}

\item 
\label{u!u+=id-eq-cat}
The functors $u _{+} ^{ (\bullet)} $ and $u ^{ !(\bullet)}  $ induce t-exact quasi-inverse equivalences of categories 
between 
\begin{enumerate}
\item $\smash{\underrightarrow{LD}}  ^\mathrm{b} _{\Q, \mathrm{coh}}
(\overset{^\mathrm{l}}{} \smash{\widehat{\D}} _{\ZZ /\S } ^{(\bullet)} (D \cap Z))$
(resp. 
$\smash{\underrightarrow{LD}}  ^0 _{\Q, \mathrm{coh}}
(\overset{^\mathrm{l}}{} \smash{\widehat{\D}} _{\ZZ /\S } ^{(\bullet)} (D \cap Z))$)

\item and the subcategory of 
$\smash{\underrightarrow{LD}}  ^\mathrm{b} _{\Q, \mathrm{coh}}
(\overset{^\mathrm{l}}{} \smash{\widehat{\D}} _{\X /\S } ^{(\bullet)} (D ))$
(resp. $\smash{\underrightarrow{LD}}  ^0 _{\Q, \mathrm{coh}}
(\overset{^\mathrm{l}}{} \smash{\widehat{\D}} _{\X /\S } ^{(\bullet)} (D ))$) 
of complexes 
$\E ^{(\bullet)} $ so that 
$\E ^{(\bullet)} |\fY \riso 0$.

\end{enumerate}
\end{enumerate}

\end{thm}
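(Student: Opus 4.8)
The plan is to bootstrap from the coherent version of Berthelot--Kashiwara (Theorem~\ref{exact-Berthelot-Kashiwara-full}) to the inductive-system version via the equivalence $\underrightarrow{\lim}$ of \ref{eqcat-limcoh} together with the coherence-stability criteria of \S2.4. First I would reduce everything to the case $D=\emptyset$: since $Z\cap D$ is a divisor of $Z$ and $D$ a divisor of $X$, the localization functors $(\hdag D)$ commute with $u_+^{(\bullet)}$ and $u^{!(\bullet)}$ by \ref{surcoh2.1.4-cor} and \ref{oub-div-opcoh}.\ref{oub-div-opcohb)}, so the statement with $(\hdag D)$ follows formally from the statement without it once one knows the relevant objects stay coherent (which is \ref{stab-coh-f^!} and the pushforward coherence proposition for the proper closed immersion $u$). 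So assume $D=\emptyset$.

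Next I would prove part~(1). Apply $\underrightarrow{\lim}$ to the candidate morphism \ref{u!u+=id-iso}. Since $u$ is a closed immersion, hence proper, $u_+^{(\bullet)}$ preserves coherence, and $u^{!(\bullet)}$ preserves coherence (\ref{stab-coh-f^!} — here I use that a closed immersion is unramified so the relevant $f_i$ are, after the standard dévissage, handled by the local-coordinate computation of Lemma~\ref{stab-coh-f^!pre}, or alternatively invoke \ref{coh-Qcoh}). Therefore $\underrightarrow{\lim}$ sends \ref{u!u+=id-iso} to the morphism $u^!u_+(\FF)\to\FF$ in $D^{\mathrm b}_{\mathrm{coh}}(\D^\dag_{\fZ,\Q})$, and by the coherent Berthelot--Kashiwara theorem \ref{exact-Berthelot-Kashiwara-full} this is an isomorphism. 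Since $\underrightarrow{\lim}$ is an equivalence on the coherent subcategories (\ref{eqcat-limcoh}), being an isomorphism is detected after applying it, so \ref{u!u+=id-iso} is an isomorphism. This is the step I expect to carry the most weight: I must be careful that ``coherence is detected by $\underrightarrow{\lim}$'' really is the content of \ref{eqcat-limcoh}, and that the adjunction morphism of \ref{u+uflat-lem-form}/\ref{adj-morph} is genuinely the one computed by the coherent theory — this compatibility is exactly what \ref{coh-Qcoh} and the construction of $\mathrm{Coh}_T$ in \ref{fct-qcoh2coh} provide.

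For part~(2), the hypothesis is $\E^{(\bullet)}|\fY\riso 0$ where $\fY=\X\setminus Z$. First I would check $u^{!(\bullet)}(\E^{(\bullet)})$ is coherent: again \ref{stab-coh-f^!} gives this. Then apply $\underrightarrow{\lim}$ to \ref{u!u+=id-isobis}; it becomes $u_+u^!(\E)\to\E$ in $D^{\mathrm b}_{\mathrm{coh}}(\D^\dag_{\fX,\Q})$ where $\E:=\underrightarrow{\lim}\E^{(\bullet)}$. The condition $\E^{(\bullet)}|\fY\riso 0$ translates, via the locality of coherence (\ref{thick-subcat}.\ref{thick-subcat2}) and the fact that $\underrightarrow{\lim}$ is compatible with restriction to opens, into: $\E|\fY\riso 0$, i.e. $\E$ has support in $Z$. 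By Remark~\ref{rem-exact-Berthelot-Kashiwara-full} the adjunction $u_+u^!(\E)\to\E$ is then an isomorphism, so \ref{u!u+=id-isobis} is an isomorphism in the coherent inductive-system category. Here the one subtlety to nail down is that ``$\E^{(\bullet)}|\fY\riso 0$ in $\underrightarrow{LD}^{\mathrm b}_{\Q,\mathrm{coh}}$'' is equivalent to ``$\E|\fY\riso 0$ in $D^{\mathrm b}_{\mathrm{coh}}(\D^\dag_{\fY,\Q})$'': one direction is applying $\underrightarrow{\lim}$ and using its compatibility with $j^{!(\bullet)}$ for $j\colon\fY\hookrightarrow\X$ the open immersion (\ref{f!commoub} or the definition of restriction), and the converse uses full faithfulness of $\underrightarrow{\lim}$ on coherent complexes over $\fY$.

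Finally, part~(3) is formal given (1) and (2). The functors $u_+^{(\bullet)}$ and $u^{!(\bullet)}$ restrict to the indicated subcategories: $u_+^{(\bullet)}$ lands in complexes supported on $Z$ (its cohomology is an $u_*$-pushforward, so supported on $Z$; checkable after $\underrightarrow{\lim}$ and via locality), and $u^{!(\bullet)}$ preserves coherence by \ref{stab-coh-f^!}. Then \ref{u!u+=id-iso} and \ref{u!u+=id-isobis} exhibit them as quasi-inverse equivalences on the bounded coherent categories. For the heart ($\underrightarrow{LD}^0$) version and t-exactness: a closed immersion has $d_{Z/X}=-r$ with $r$ the codimension, but $u^{!(\bullet)}=\L u^{*(\bullet)}[d_{Z/X}]$ is nonetheless t-exact on the relevant categories because, after applying $\underrightarrow{\lim}$, it is the t-exact functor of Theorem~\ref{exact-Berthelot-Kashiwara-full}; equivalently one checks t-exactness locally using the local description $\L u^*(\E)\riso(\widetilde\D_{\fX,\fZ,\underline t}\otimes_{\O_\fX}K_\bullet(\underline f))\otimes_{u^{-1}\widetilde\D_{\fX,\fZ,\underline t}}u^{-1}\E$ of \ref{ntnu*form}, noting the Koszul complex has length exactly $r=-d_{Z/X}$, so the shift lands it in degree $0$ on a module supported set-theoretically on $Z$. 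Compatibility of $\mathcal H^n$ with $\underrightarrow{\lim}$ (\ref{diag-Hn-comp-coh}) then transfers t-exactness from the $\D^\dag_{\Q}$-side back to the inductive-system side, giving the statement about $\underrightarrow{LD}^0_{\Q,\mathrm{coh}}$.
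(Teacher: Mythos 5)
Your strategy—apply $\underrightarrow{\lim}$ and invoke the coherent Berthelot--Kashiwara theorem \ref{exact-Berthelot-Kashiwara-full} together with the equivalence \ref{eqcat-limcoh}—is sound \emph{conditional on} knowing in advance that the object you want to compare with $\FF^{(\bullet)}$ (namely $u^{!(\bullet)}u_+^{(\bullet)}\FF^{(\bullet)}$) and the object $u^{!(\bullet)}\E^{(\bullet)}$ actually lie in $\underrightarrow{LD}^{\mathrm b}_{\Q,\mathrm{coh}}$. Once that coherence is known, conservativity of the equivalence $\underrightarrow{\lim}$ on coherent subcategories does the rest, as you say. The gap is in how you establish that coherence. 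You cite \ref{stab-coh-f^!}, but that proposition is stated (and its Lemma \ref{stab-coh-f^!pre} is proved) only for \emph{smooth} $f$; a closed immersion is not smooth, and more importantly the conclusion is simply false for closed immersions without the support hypothesis: $u^{!(\bullet)}$ applied to $\smash{\widehat{\D}}_{\X}^{(\bullet)}$ itself is $\smash{\widehat{\D}}_{\ZZ\to\X}^{(\bullet)}[d_{Z/X}]$, which is not $\smash{\widehat{\D}}_{\ZZ}^{(\bullet)}$-coherent when $\operatorname{codim} Z\geq 1$. Coherence preservation by $u^{!(\bullet)}$ is precisely the first assertion of part~(2), and it \emph{is} a theorem, but it cannot be extracted from the coherent-limit statement by applying $\underrightarrow{\lim}$: the functor $\underrightarrow{\lim}$ is not claimed to be conservative outside the coherent subcategory, so an isomorphism in $D^{\mathrm b}_{\mathrm{coh}}(\D^\dag_{\Q})$ tells you nothing until you know both sides are already coherent as inductive systems. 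You have a chicken-and-egg problem.

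The paper sidesteps this by not taking your route at all: it invokes \cite[5.3.7]{caro-stab-sys-ind-surcoh}, which proves the inductive-system version directly at the level of the coherent $\smash{\widetilde{\D}}^{(m)}$-modules. The key missing technical step in your argument is the one visible in the proof of \ref{stab-propersupp} in this very paper: starting from a coherent object of $\underrightarrow{LM}_{\Q,\mathrm{coh}}$ whose restriction to the open complement of $Z$ is zero (in the sense of ``zero up to lim-ind-isogeny''), one must show that after increasing the base level $m_0$ and modifying the representative, one can arrange that each level-$m$ module is \emph{set-theoretically} supported on $Z$. Only then does the classical, level-by-level Kashiwara theorem for the noetherian rings $\smash{\widetilde{\D}}^{(m)}$ give you coherence of $u^{!(m)}$ at each level, compatibility of the transition maps, and thus coherence of $u^{!(\bullet)}$ in the sense of Definition~\ref{defi-LDQ0coh}. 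Without that step you cannot even speak of the morphism \ref{u!u+=id-iso} as living in $\underrightarrow{LD}^{\mathrm b}_{\Q,\mathrm{coh}}$, so the $\underrightarrow{\lim}$ reduction never gets off the ground.
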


\begin{proof}
Using Theorem \ref{exact-Berthelot-Kashiwara-full}, 
we can copy the proof of  \cite[5.3.7]{caro-stab-sys-ind-surcoh}.
\end{proof}

\subsection{Glueing isomorphisms, base change isomorphisms for pushforwards by a closed immersion}
Let $f,f',f''\colon \X  \to \Y $ be three morphisms of smooth formal schemes over $\S $
such that $f _0=  f' _0=f'' _0$. 
Let $g,g'\colon \Y  \to \ZZ $ be two morphisms of smooth formal schemes over $\S $
such that $g _0=  g' _0$. 
Let $T _Z$ be a divisor of $Z$ such that $T _Y:= g _0 ^{-1} (T)$ is a divisor of $Y$
and 
$T _X := f _0 ^{-1} (T _Y)$ is a divisor of $X$.

\begin{empt}
\label{2.1.5Be2}
Following \cite[2.1.10]{caro-construction} (still valid when $k$ is not supposed to be perfect), 
we have the canonical isomorphism of functors 
$\smash{\underrightarrow{LD}} ^{\mathrm{b}} _{\Q,\mathrm{qc}}
 ( \smash{\widetilde{\D}} _{\Y /\S } ^{(\bullet)}(T _Y))
 \to 
 \smash{\underrightarrow{LD}} ^{\mathrm{b}} _{\Q,\mathrm{qc}}
 ( \smash{\widetilde{\D}} _{\X /\S } ^{(\bullet)}(T _X))$
 of the form
\begin{equation}
\notag
\tau _{f,f'} ^{(\bullet)}
\colon 
f  _{T _Y} ^{\prime !(\bullet)} \riso f _{T _Y} ^{(\bullet) !}.
\end{equation}
These isomorphisms satisfy the following formulas
$\tau ^{(\bullet)} _{f,f}=\mathrm{Id}$,
$\tau ^{(\bullet)} _{f,f''}= \tau ^{(\bullet)} _{f,f'} \circ \tau ^{(\bullet)} _{f',f''}$,
$\tau ^{(\bullet)} _{f,f'} \circ g  _{T _Z} ^{!(\bullet)} = \tau ^{(\bullet)} _{g \circ f ,g \circ f'} $ 
and
$f _{T _Y} ^{(\bullet) !} \circ \tau ^{(\bullet)} _{g,g'}= \tau ^{(\bullet)} _{g \circ f ,g '\circ f} $ 
(see \cite[2.1.3]{caro-construction}).
\end{empt}

\begin{prop}
\label{prop-glueiniso-coh}
\begin{enumerate}
\item There exists a canonical glueing isomorphism of functors 
$D ^{\mathrm{b}} _{\mathrm{coh}} ( \smash{\D} ^{\dag} _{\Y} (\hdag T _Y) _{\Q} )
\to
D ^{\mathrm{b}}  ( \smash{\D} ^{\dag} _{\X } (\hdag T _X ) _{\Q}  )$
of the form
\begin{equation}
\label{prop-glueiniso-coh1}
\tau _{f,f'}\colon 
f  _{T _Y} ^{\prime!} \riso f _{T _Y} ^{!},
\end{equation}
such that $\tau _{f,f}=\mathrm{Id}$,
$\tau _{f,f''}= \tau _{f,f'} \circ \tau _{f',f''}$,
$\tau _{f,f'} \circ g  _{T _Z} ^{!} = \tau _{g \circ f ,g \circ f'} $ 
and
$f _{T _Y} ^{!} \circ \tau _{g,g'}= \tau _{g \circ f ,g '\circ f} $.

\item The diagram 
 of functors
$\smash{\underrightarrow{LD}} ^{\mathrm{b}} _{\Q,\mathrm{coh}}
 ( \smash{\widetilde{\D}} _{\Y /\S } ^{(\bullet)}(T _Y))
 \to 
 D ^{\mathrm{b}}  ( \smash{\D} ^{\dag} _{\X } (\hdag T _X) _{\Q}  )$
 \begin{equation}
 \notag
 \xymatrix {
 {\underrightarrow{\lim} \circ f  _{T _Y} ^{\prime !(\bullet)} } 
 \ar[r] ^-{\sim} _-{\underrightarrow{\lim} \circ \tau _{f,f'} ^{(\bullet)}}
 \ar[d] ^-{\sim}
 & 
 {\underrightarrow{\lim} \circ f _{T _Y} ^{(\bullet) !}} 
 \ar[d] ^-{\sim} 
 \\ 
 {f  _{T _Y} ^{\prime!} \circ \underrightarrow{\lim}} 
\ar[r] ^-{\tau _{f,f'} \circ \underrightarrow{\lim} } & 
 {f  _{T _Y} ^{!}\circ \underrightarrow{\lim}} 
 }
 \end{equation}
is commutative up to canonical isomorphism.
\end{enumerate}

\end{prop}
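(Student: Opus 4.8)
The plan is to deduce Proposition \ref{prop-glueiniso-coh} from the inductive-system version \ref{2.1.5Be2} by applying the functor $\underrightarrow{\lim}$ and using the compatibility of $\underrightarrow{\lim}$ with the extraordinary pullbacks, exactly as in \ref{coh-Qcoh} and \ref{rem-f!TornoT}. First I would recall that by \ref{stab-coh-f^!}, the functor $f _{T_Y} ^{!(\bullet)}$ preserves coherence, so it restricts to $\smash{\underrightarrow{LD}} ^{\mathrm{b}} _{\Q,\mathrm{coh}} ( \smash{\widetilde{\D}} _{\Y /\S } ^{(\bullet)}(T_Y)) \to \smash{\underrightarrow{LD}} ^{\mathrm{b}} _{\Q,\mathrm{coh}} ( \smash{\widetilde{\D}} _{\X /\S } ^{(\bullet)}(T_X))$ (in the smooth case this is needed; when $f$ is not smooth one works with the coherent functor $f_{T_Y}^!$ directly via \ref{fct-qcoh2coh} and \ref{coh-Qcoh}). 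Then I would \emph{define} $\tau _{f,f'}$ to be $\mathrm{Coh} _{T_Y} (\tau _{f,f'} ^{(\bullet)})$, i.e. transport the isomorphism $\tau _{f,f'} ^{(\bullet)} \colon f'^{!(\bullet)} _{T_Y} \riso f^{!(\bullet)} _{T_Y}$ across the equivalence of categories \ref{eq-catLDBer-LD-D} together with the isomorphisms of functors $\mathrm{Coh} _{T_Y} (f ^{!(\bullet)} _{T_Y}) \riso f ^! _{T_Y}$ of \ref{coh-Qcoh}. This simultaneously produces \ref{prop-glueiniso-coh1} and establishes part (2), since by construction $\underrightarrow{\lim} \circ \tau _{f,f'} ^{(\bullet)}$ corresponds under the equivalence $\underrightarrow{\lim}$ of \ref{eq-catLDBer-LD-D} to $\tau _{f,f'} \circ \underrightarrow{\lim}$, which is precisely the asserted commutative square.

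Next I would verify the four identities. Each of $\tau _{f,f}=\mathrm{Id}$, $\tau _{f,f''}= \tau _{f,f'} \circ \tau _{f',f''}$, $\tau _{f,f'} \circ g _{T_Z} ^{!} = \tau _{g \circ f ,g \circ f'}$ and $f _{T_Y} ^{!} \circ \tau _{g,g'}= \tau _{g \circ f ,g '\circ f}$ is the image under the functor $\mathrm{Coh}$ of the corresponding identity at the level of inductive systems recorded in \ref{2.1.5Be2}. Since $\mathrm{Coh}$ is built from equivalences of categories and the isomorphisms of functors $\mathrm{Coh} _{T} (f ^{!(\bullet)}) \riso f ^!$ (which are themselves compatible with composition of pullbacks, by the corresponding statement at the inductive-system level, cf. \ref{oub-div-opcoh}.\ref{oub-div-opcohb)} and the analogue for $f^!$), applying $\mathrm{Coh}$ to a commutative diagram of functors yields a commutative diagram of functors. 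One has to be mildly careful that the source and target divisors match up: in the chain $g \circ f$ the intermediate divisor $T_Y$ is $g_0^{-1}(T_Z)$ and $T_X = f_0^{-1}(T_Y)$, exactly as in the hypotheses, so the composition $g^{!(\bullet)}_{T_Z}$ followed by $f^{!(\bullet)}_{T_Y}$ is defined and equals $(g\circ f)^{!(\bullet)}_{T_Z}$ up to the canonical isomorphism; this is where \ref{2.1.5Be2} is used verbatim.

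The main obstacle, such as it is, is purely bookkeeping: one must check that the functor $\mathrm{Coh} _{T}(-)$ genuinely turns the cocycle-type relations among the $\tau ^{(\bullet)}$ into the same relations among the $\tau$, which requires knowing that the natural isomorphisms $\underrightarrow{\lim}\circ f^{!(\bullet)}_{T_Y} \cong f^!_{T_Y}\circ \underrightarrow{\lim}$ are compatible with composition of morphisms $f$ (a $2$-categorical coherence statement). This is not difficult but is the only place where anything needs to be said beyond ``apply $\underrightarrow{\lim}$''; it follows from the explicit construction of these comparison isomorphisms via the bimodules $\smash{\widetilde{\D}} ^{(\bullet)} _{\fP ^{\prime} \rightarrow \fP /\S } (T',T)$ and the transitivity isomorphism for these transfer bimodules along a composition, which is standard (cf. \cite[2.1.10]{caro-construction}). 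Once this is in place, part (1) and part (2) are immediate, and there are no further calculations to grind through.
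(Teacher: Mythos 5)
Your proposal is correct, and it reaches the same result by a route that differs from the paper's mostly in packaging. The paper works one level below the functor: it takes $\underrightarrow{\lim}$ of completions of the level-$m$ gluing isomorphisms between the transfer bimodules $\widetilde{\D}^{(m)}_{\X\overset{f'}{\to}\Y}(T_Y)$ and $\widetilde{\D}^{(m)}_{\X\overset{f}{\to}\Y}(T_Y)$ from \cite[2.1.5]{Be2}, thus constructing an explicit $\D^\dag$-bimodule isomorphism $\smash{\D}^\dag_{\X\overset{f'}{\to}\Y}(\hdag T_Y)_\Q \riso \smash{\D}^\dag_{\X\overset{f}{\to}\Y}(\hdag T_Y)_\Q$, and then obtains $\tau_{f,f'}$ on functors by tensoring. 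The four identities for $\tau_{f,f'}$ are then inherited from the corresponding identities in \cite[2.1.5]{Be2} at the bimodule level, and the compatibility with $\underrightarrow{\lim}$ (part 2) is immediate but left tacit.

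You instead stay at the level of functors: you define $\tau_{f,f'}$ by applying $\mathrm{Coh}_{T_Y}(-)$ from \ref{fct-qcoh2coh} to the natural transformation $\tau^{(\bullet)}_{f,f'}$ recorded in \ref{2.1.5Be2}, conjugating by the comparison isomorphisms $\mathrm{Coh}_{T_Y}(f^{!(\bullet)}_{T_Y})\riso f^!_{T_Y}$ of \ref{coh-Qcoh}. This buys you part (2) for free, since by construction $\tau_{f,f'}\circ\underrightarrow{\lim}$ is literally $\underrightarrow{\lim}\circ\tau^{(\bullet)}_{f,f'}$ transported across the equivalence. The cost is the $2$-categorical coherence check you correctly single out — that the isomorphisms $\mathrm{Coh}(f^{!(\bullet)})\riso f^!$ are compatible with composition of $f$'s — which, as you note, reduces to the transitivity isomorphism for the transfer bimodules. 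This is precisely the calculation the paper does when constructing the bimodule isomorphism, so the two proofs meet at the same place; the difference is whether one manifests the bimodule isomorphism explicitly (paper) or keeps it implicit inside the comparison isomorphisms (your version). Your observation that \ref{stab-coh-f^!} only gives coherence preservation for $f$ smooth, and that in general one must use $\mathrm{Coh}_{T_Y}$ applied to the quasi-coherent-level functor, is a correct and necessary caveat. No gap.
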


\begin{proof}
Let $\FF\in D ^{\mathrm{b}} _{\mathrm{coh}} ( \smash{\D} ^{\dag} _{\Y} (\hdag T _Y) _{\Q} )$.
Taking inductive limits of the completion of the glueing isomorphisms  \cite[2.1.5]{Be2}, we get the isomorphism 
$\tau _{f,f '}
\colon  \smash{\D} ^{\dag} _{\X \overset{f'}{\rightarrow}\Y}  (\hdag T _Y) _{ \Q}  
\riso 
 \smash{\D} ^{\dag} _{\X \overset{f}{\rightarrow}\Y} (\hdag T _Y) _{ \Q}   $.
It follows from \cite[2.1.5]{Be2} that these isomorphisms satisfies the desired properties.  
Finally, we still denote by 
$\tau _{f,f '}$
the composition
$f  ^{\prime !} _{T _Y} \FF = 
\smash{\D} ^{\dag} _{\X \overset{f'}{\rightarrow}\Y}(\hdag T _Y) _{ \Q}      \otimes ^{\L} _{f _0 ^{-1}\smash{\D} ^{\dag} _{\Y} (\hdag T _Y) _{ \Q}  }
f _0 ^{-1} \FF [d _{X/Y}]
\underset{\tau _{f,f'} \otimes ^{\L} id}{\riso}
\smash{\D} ^{\dag} _{\X \overset{f}{\rightarrow}\Y} (\hdag T _Y) _{ \Q}     \otimes ^{\L} _{f _0 ^{-1}\smash{\D} ^{\dag} _{\Y} (\hdag T _Y) _{ \Q} }
f _0 ^{-1} \FF [d _{X/Y}]
=
f _{T _Y} ^{!} \FF$.
They also satisfy the desired properties.
\end{proof}

\begin{empt}
\label{2.1.5Be2-empt}
a) For any  $\smash{\D} ^{\dag} _{\Y} (\hdag T _Y) _{\Q}$-module $\G$, 
we set 
$$ f ^{\dag *} _{T _Y} (\G)  := 
\smash{\D} ^{\dag} _{\X \overset{f}{\rightarrow}\Y} (\hdag T _Y) _{ \Q}     \otimes  _{f _0 ^{-1}\smash{\D} ^{\dag} _{\Y} (\hdag T _Y) _{ \Q} }
f _0 ^{-1} \G.$$
Similarly to \ref{prop-glueiniso-coh},
we construct isomorphisms
$\tau _{f,f'}\colon 
f ^{\prime \dag *} _{T _Y}  (\G) \riso 
f ^{\dag *} _{T _Y}(  \G)$
functorial in $\G$ and 
such that such that $\tau _{f,f}=\mathrm{Id}$,
$\tau _{f,f''}= \tau _{f,f'} \circ \tau _{f',f''}$.
We have the isomorphism 
of functors
$D ^{\mathrm{b}} _{\mathrm{coh}} ( \smash{\D} ^{\dag} _{\Y} (\hdag T _Y) _{\Q} )
\to 
D ^{\mathrm{b}}  ( \smash{\D} ^{\dag} _{\X } (\hdag T _X ) _{\Q}  )$
of the form
$ f _{T _Y} ^! \riso \L 
f ^{\dag *} _{T _Y} [d _{X/Y}]$.

b) Suppose $f$ is finite.
Then using \cite[3.2.4]{Be1},
we check that the canonical morphism
$$\widetilde{\B} ^{(m)} _{\X} ( T _X)
\otimes _{f ^{-1} \widetilde{\B} ^{(m)} _{\Y} ( T _Y) }f _0 ^{-1} \smash{\widetilde{\D}} _{\Y / \S } ^{(m)} (T _Y)
\to
\smash{\widetilde{\D}} _{\X \overset{f}{\rightarrow}\Y} ^{(m)} ( T _Y)$$
is an isomorphism.
Hence, so is the canonical morphism
$$
\O _{\X} (\hdag T _X) _{\Q} \otimes _{f _0 ^{-1} \O _{\Y} (\hdag T _Y) _{\Q} } f _0 ^{-1}\smash{\D} ^{\dag} _{\Y}  (\hdag T _Y) _{ \Q}  
\to
\smash{\D} ^{\dag} _{\X \overset{f}{\rightarrow}\Y}  (\hdag T _Y) _{ \Q} . $$
Tensoring by $\Q$ and taking the inductive limit over the level, 
this yields the canonical morphism
$$f _{T _Y} ^* (\G) := \O _{\X} (\hdag T _X) _{\Q} \otimes _{f _0 ^{-1} \O _{\Y} (\hdag T _Y) _{\Q} } f _0 ^{-1}\G 
\to 
f ^{\dag *} _{T _Y} (\G)  $$
is an isomorphism.
Hence, if 
$\FF\in D ^{\mathrm{b}} _{\mathrm{coh}} ( \smash{\D} ^{\dag} _{\Y} (\hdag T _Y) _{\Q} )$ 
has a resolution $\cP$ by $\smash{\D} ^{\dag} _{\Y} (\hdag T _Y) _{\Q}$-modules  which are 
$\O _{\Y} (\hdag T _Y) _{\Q}$-flat , then we get the isomorphism
$f ^{\dag *} _{T _Y} (\FF ) 
\riso 
\L f ^{*} _{T _Y} (\FF) $.
\end{empt}

\begin{rem}
\label{rem-flat-resol-tau}
Let $\FF\in D ^{\mathrm{b}} _{\mathrm{coh}} ( \smash{\D} ^{\dag} _{\Y} (\hdag T _Y) _{\Q} )$.
\begin{enumerate}
\item Suppose $\FF$ has a resolution $\cP$ by flat coherent $\smash{\D} ^{\dag} _{\Y} (\hdag T) _{\Q}$-modules.
Via $ f _{T _Y} ^! (\FF) \riso 
f ^{\dag *} _{T _Y} ( \cP) [d _{X/Y}]$
and 
$f _{T _Y} ^{\prime !} (\FF) \riso 
f ^{\prime \dag *} _{T _Y} ( \cP) [d _{X/Y}]$
(see \ref{2.1.5Be2-empt}), 
the isomorphism 
$\tau _{f,f'}\colon 
f  _{T _Y} ^{\prime !} \FF \riso f _{T _Y} ^{!} \FF$ is the same (up to the shift $[d _{X/Y}]$)
than 
that 
$\tau _{f,f'}\colon 
f ^{\prime \dag *} _{T _Y} ( \cP)  \riso f ^{\dag *} _{T _Y} ( \cP)$,
which is 
computed term by term.

\item  Suppose $\FF$ has a resolution $\cP$ by coherent $\smash{\D} ^{\dag} _{\Y} (\hdag T) _{\Q}$-modules which are 
$\O _{\Y} (\hdag T) _{\Q}$-flat and suppose 
$f$ and $g$ are finite morphisms.
Via $ f _{T _Y} ^! (\FF) \riso 
f ^{ *} _{T _Y} ( \cP) [d _{X/Y}]$
and 
$f _{T _Y} ^{\prime !} (\FF) \riso 
f ^{\prime *} _{T _Y} ( \cP) [d _{X/Y}]$
(see \ref{2.1.5Be2-empt}), 
the isomorphism 
$\tau _{f,f'}\colon 
f  _{T _Y} ^{\prime !} \FF \riso f _{T _Y} ^{!} \FF$ is the same (up to the shift $[d _{X/Y}]$)
than 
that 
$\tau _{f,f'}\colon 
f ^{\prime *} _{T _Y} ( \cP)  \riso f ^{ *} _{T _Y} ( \cP)$,
which is 
computed term by term.

\end{enumerate}
\end{rem}

\begin{prop}
\label{comp-comp-adj-immf}
  Consider the following diagram of smooth formal schemes over $\S $:
\begin{equation}\label{deuxcarresadj}
  \xymatrix  @R=0,3cm {
  {\fP ^{\prime \prime }} \ar[r] ^g
  &
  {\fP ^{\prime }} \ar[r] ^f
  &
  {\fP }
  \\
  {\X ^{\prime \prime }} \ar[u]  ^{u''}\ar[r]^b
  &
  {\X ^{\prime }} \ar[u] ^-{u'} \ar[r]^a
  &
  {\X  ,} \ar[u] ^u
  }
\end{equation}
where $f$, $g$, $a$ and $b$ are  smooth,
where $u$, $u'$ and $u''$ are some closed immersions. 
We suppose that the diagram \ref{deuxcarresadj} 
is commutative modulo $\pi$. 
Moreover, let  $T _{P }$ be a divisor of  $P $ such that $T _{P^{\prime }} := f ^{-1} (T _{P })$
(resp. $T _{P^{\prime \prime }} := g ^{-1} (T _{P^{\prime }})$, $T _{X } := u ^{-1} (T _{P })$, $T _{X^{\prime }} := u ^{\prime -1} (T _{P^{\prime }})$
and $T _{X^{\prime \prime }} := u ^{\prime \prime -1} (T _{P^{\prime \prime }})$) is a divisor of 
$P^{\prime }$ (resp. $P^{\prime \prime }$, $X $, $X^{\prime }$ and  $X^{\prime \prime }$).

\begin{enumerate}[(i)]
 \item We have the canonical adjunction morphism
\begin{equation}
\label{comp-comp-adj-immf-morp1}
u ^\prime _+ \circ a ^!\rightarrow f ^{!}\circ u _+
\end{equation}
of functors
  $D ^{\mathrm{b}} _{\mathrm{coh}} (\smash{\D} ^\dag _{\X  } (\hdag T _{X }) _\Q)
  \to 
  D ^{\mathrm{b}} _{\mathrm{coh}} (\smash{\D} ^\dag _{\fP '  } (\hdag T _{P' }) _\Q)$.
If the right square of \ref{deuxcarresadj} 
is cartesian modulo $\pi$ then 
\ref{comp-comp-adj-immf-morp1}  is an isomorphism.

\item Denoting by 
$\phi \colon u ^\prime _+ \circ a ^! \rightarrow f ^{!}\circ u _+$,
(resp. 
$\phi '\ : \ u ^{''} _+ \circ b ^! \rightarrow g ^! \circ u ^\prime _+$,
resp. $\phi ''\ : \ u ^{''} _+ \circ (a \circ b ) ^! \rightarrow (f \circ g) ^!\circ u  _+$)
the morphism of adjunction of the right square 
 \ref{deuxcarresadj} (resp. the left square, resp. the outline of \ref{deuxcarresadj}),
then the following diagram
$$\xymatrix  @R=0,3cm {
{ u ''  _+ \circ (a\circ b) ^!}
\ar[r] _\sim
\ar[d] ^-{\phi ''}
&
{ u ''  _+ \circ b ^!\circ  a ^! }
\ar[d]^{( g ^! \circ \phi)\circ (\phi ' \circ a ^!)}
\\
{(f\circ g) ^! \circ  u _+  }
\ar[r] _-{\sim}
&
{ g ^!  \circ f ^!\circ  u _+,}
}
$$
is commutative. 
By abuse of notation, 
we get the transitivity equality
$\phi ''=( g ^! \circ \phi)\circ (\phi ' \circ a ^!)  $.

\item Let $a '$ : $ \X ^{\prime } \rightarrow \X $ (resp. $f'$ : $\fP ^{\prime } \rightarrow \fP $)
be a morphism whose reduction $X ^{\prime } \rightarrow \X $ (resp. $P ^{\prime } \rightarrow \fP $)
is equal to that of $a$ (resp. $f$). Then the following diagram
$$\xymatrix  @R=0,3cm {
{ u ^\prime_+ a ^{ !}}
\ar[r] ^-{\phi}
&
{f ^{!}\circ u _+}
\\
{ u' _+ a ^{\prime !} }
\ar[r]^{\psi}
\ar[u] ^-{ u ^\prime_+ (\tau _{a,a'})} _-{\sim}
&
{f ^{\prime !}\circ u  _+,}
\ar[u] ^-{\tau _{f,f'} u _+} _-{\sim}
}$$
where $\psi$ means the morphism of adjunction of the right square of  \ref{deuxcarresadj} whose 
$a$ and  $f$ have been replaced respectively by $a'$ and  $f'$,
is commutative.
\end{enumerate}

\end{prop}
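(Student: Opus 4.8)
The statement collects the standard compatibilities of the base change morphism $\phi\colon u'_+\circ a^!\to f^!\circ u_+$ for pushforwards along closed immersions: its construction together with the cartesian case (i), transitivity in horizontal composition (ii), and compatibility with the glueing isomorphisms $\tau$ (iii). The plan is to reduce everything to the level of the underlying bimodules $\smash{\D}^\dag_{\X\overset{f}{\rightarrow}\Y}(\hdag T)_\Q$ and $\smash{\D}^\dag_{\X\leftarrow\X'}(\hdag T)_\Q$, where the adjunction morphism can be described by an explicit formula, and then to invoke the explicit adjunction morphisms built in \ref{u+uflat-lem-form} and the relative duality isomorphism \ref{rel-dual-isom-immf}, exactly as in \ref{cor-adj-formul}. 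First I would construct $\phi$: by \ref{adj-morph} (or rather its closed-immersion analogue via \ref{Radj-u+flatf}) we have a counit $a_+a^!\to\mathrm{id}$ for the smooth proper-on-support morphism $a$; combined with the canonical compatibility of $u_+$, $u'_+$ with $a$, $f$ (base change for the closed immersions sitting over $f$, which is essentially \ref{comp-comp-adj-immf-morp1} read for the identity divisor and built from the fundamental local isomorphism \ref{fund-isom2-corpref}), this yields the morphism $\phi$. For the cartesian case, since the question of whether $\phi$ is an isomorphism is local on $\fP'$ and may be checked after forgetting the divisor (using \ref{oub-pl-fid} and \ref{rem-f!TornoT}, \ref{oub-div-opcoh}), I would choose local coordinates adapted to all four closed immersions as in \ref{locdesc-climm-form}, reduce by locally free resolutions to the case $\E=\smash{\D}^\dag_{\X}(\hdag T_X)_\Q$, and then the cartesianity modulo $\pi$ makes the two sides literally the same bimodule presentation (the Koszul resolutions of \ref{hat-uflat-desc} match up), so $\phi$ is an isomorphism.

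Next I would treat (ii). Transitivity is a diagram chase once $\phi$ is described bimodule-theoretically. The two composites $u''_+\circ b^!\circ a^!\to g^!\circ f^!\circ u_+$ obtained by pasting the two squares, and $u''_+\circ(a\circ b)^!\to(f\circ g)^!\circ u_+$ obtained directly, both unwind to the same morphism of bimodules after using the transitivity isomorphism $(a\circ b)^!\riso b^!\circ a^!$ and $(f\circ g)^!\riso g^!\circ f^!$ (which follow from the transitivity of the transfer bimodules $\smash{\D}^\dag_{\X''\to\X/\S}(\hdag T)_\Q$, compare \cite[2.1.10]{caro-construction} and \ref{2.1.5Be2}). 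Since all morphisms involved are the canonical ones coming from the universal property / explicit adjunction formula of \ref{u+uflat-lem-form}, and adjunctions compose, the square commutes. Concretely I would take $K$-injective resolutions and chase, or better, pass to the bimodule level where the adjunction counit is the evaluation-at-$1$ map of \ref{locadesc-uflat1-formula-form}, for which associativity is immediate.

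Finally, for (iii), the glueing isomorphisms $\tau_{f,f'}$ and $\tau_{a,a'}$ are, after choosing a resolution $\cP$ of $\FF$ by $\smash{\D}^\dag_{\Y}(\hdag T_Y)_\Q$-modules that are $\O_\Y(\hdag T_Y)_\Q$-flat (possible since $\fP$ is noetherian, see the discussion after \ref{2.1.5Be2-empt}), computed term by term via the canonical isomorphisms of completed transfer bimodules from \cite[2.1.5]{Be2}, as recorded in Remark \ref{rem-flat-resol-tau}. So the square in (iii) reduces, term by term on $\cP$, to a compatibility between these structural isomorphisms of bimodules; this is exactly the cocycle-type compatibility already packaged in \ref{2.1.5Be2} ($\tau^{(\bullet)}_{f,f'}\circ g^{!(\bullet)}_{T_Z}=\tau^{(\bullet)}_{g\circ f,g\circ f'}$ and the dual formula), transported through $\underrightarrow{\lim}$ via Proposition \ref{prop-glueiniso-coh}(2). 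I would therefore first record that all four morphisms in the square of (iii) are term-by-term given by known bimodule isomorphisms, and then the commutativity is the associativity/functoriality of the transfer-bimodule identifications.

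\textbf{Main obstacle.} The genuinely delicate point is (i), the base change isomorphism in the cartesian case, and specifically getting the identification of the two bimodule presentations to be \emph{the canonical} morphism $\phi$ and not merely some isomorphism. The safe route is to avoid any abstract construction of $\phi$ and instead define it once and for all from the explicit adjunction morphism of \ref{u+uflat-lem-form} (the evaluation-at-$1$ maps), so that locality, the local-coordinate computation of \ref{locdesc-climm-form}--\ref{hat-uflat-desc}, and the reduction to $\E=\smash{\D}^\dag_\X(\hdag T_X)_\Q$ by locally free resolutions all apply verbatim; cartesianity modulo $\pi$ then forces the adapted coordinates to match and the Koszul complexes to coincide, giving an isomorphism on the nose. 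Once $\phi$ is pinned down this way, (ii) and (iii) are formal — chasing adjunctions and transporting structural isomorphisms through $\underrightarrow{\lim}$ — and I would present them briefly, referring to \ref{u+uflat-lem-form}, \ref{rel-dual-isom-immf}, \ref{2.1.5Be2} and \ref{rem-flat-resol-tau} rather than redoing the computations.
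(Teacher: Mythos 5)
Your first step — constructing $\phi$ — is where the proposal goes wrong. You invoke "a counit $a_+a^!\to\mathrm{id}$ for the smooth proper-on-support morphism $a$," but $a$ is the \emph{smooth} arrow in the diagram, not a closed immersion and not assumed proper, so neither \ref{adj-morph} nor its closed-immersion analogue produces such a counit for $a$. You then appeal to "base change for the closed immersions sitting over $f$, which is essentially \ref{comp-comp-adj-immf-morp1} read for the identity divisor," but that is literally the morphism under construction — a circular reference. The paper's construction goes through the \emph{closed immersions}, not through $a$: it uses the two adjoint pairs $(u_+, u^!)$ and $(u'_+, u'^!)$ from \ref{Radj-u+flatf}. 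Concretely, one starts from the unit $\mathrm{id}\to u^!u_+$, applies $a^!$ to get $a^!\to a^!u^!u_+$, passes through the transitivity/$\tau$-isomorphism $a^!u^!\riso u'^!f^!$ (this is where commutativity modulo $\pi$ and \ref{2.1.5Be2} enter), and then takes the adjoint under $(u'_+,u'^!)$, i.e.\ postcomposes with the counit of $u'$ after applying $u'_+$. Your closing pivot to "define $\phi$ from the explicit adjunction morphism of \ref{u+uflat-lem-form}" is the right instinct — those are indeed the evaluation-at-$1$ maps behind \ref{Radj-u+flatf} — but you never say which unit/counit/transitivity composite is being assembled, so the construction remains unpinned.

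For the cartesian isomorphism in (i), your plan (local coordinates adapted to all four immersions, reduce to $\E=\smash{\D}^\dag_\X(\hdag T_X)_\Q$ by free resolutions, match Koszul complexes) is workable, but the paper's argument is simpler and cleaner: since the right square is cartesian modulo $\pi$, $f^!u_+\E$ has support in $X'$, so Berthelot--Kashiwara (\ref{exact-Berthelot-Kashiwara-full}) applies to $u'$; combined with $u^!u_+\riso\mathrm{id}$ for $u$ and the transitivity isomorphism, one identifies both sides and checks that $\phi$ is the canonical identification. Your Koszul-complex route would also need care to verify that the isomorphism you produce agrees with the canonically constructed $\phi$, which is precisely the point you flagged as delicate. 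For (ii) and (iii), your outline (reduce to bimodule identities, use the $\tau$-cocycle relations of \ref{2.1.5Be2} and \ref{rem-flat-resol-tau}) is consistent with what the paper does via the reference to \cite[2.2.2]{caro-construction}, but those parts rest on a correct construction of $\phi$, which is the missing piece above.
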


\begin{proof}
We build \ref{comp-comp-adj-immf-morp1} using the adjoint paires
$(u _+, u ^!)$ and $(u '_+, u ^{\prime !})$
(see \ref{Radj-u+flatf}).
If the right square of \ref{deuxcarresadj} 
is cartesian modulo $\pi$ then using Berthelot-Kashiwara's theorem \ref{exact-Berthelot-Kashiwara-full}, 
we check 
\ref{comp-comp-adj-immf-morp1}  is an isomorphism.
We proceed similarly to \cite[2.2.2]{caro-construction} to check the other properties.
\end{proof}

\subsection{Berthelot-Kashiwara's theorem for closed immersions of schemes over $S $}

Let $\fP $ be a smooth formal scheme over $\S $.
Let $u _0\colon X  \to P $ be a closed immersion of smooth schemes over $S $.
Let $T$ be a divisor of $P$ such that $Z:= T \cap X$ is a divisor of $X$. 
We set $Y:= X \setminus Z$. 
\label{ntnPPalpha}
Let $(\fP  _{\alpha}) _{\alpha \in \Lambda}$ be an open covering of  $\fP $.
We set $\fP  _{\alpha \beta}:= \fP  _\alpha \cap \fP  _\beta$,
$\fP  _{\alpha \beta \gamma}:= \fP  _\alpha \cap \fP  _\beta \cap \fP  _\gamma$,
$X  _\alpha := X  \cap P  _\alpha$,
$X _{\alpha \beta } := X  _\alpha \cap X  _\beta$ and
$X _{\alpha \beta \gamma } := X  _\alpha \cap X  _\beta \cap X  _\gamma $.
We denote by $Y _\alpha := X  _\alpha \cap Y$,
$Y _{\alpha \beta} := Y _\alpha \cap Y _\beta$,
$Y _{\alpha \beta \gamma} := Y _\alpha \cap Y _\beta \cap Y _\gamma $,
$Z _\alpha := X  _\alpha \cap Z$,
$Z _{\alpha \beta} := Z _\alpha \cap Z _\beta$,
$Z _{\alpha \beta \gamma} := Z _\alpha \cap Z _\beta \cap Z _\gamma $,
$j _\alpha$ : $ Y _\alpha
\hookrightarrow X  _\alpha$,
$j _{\alpha \beta} $ :
$Y _{\alpha \beta}  \hookrightarrow  X  _{\alpha \beta}$
and
$j _{\alpha \beta \gamma} $ :
$Y _{\alpha \beta \gamma }  \hookrightarrow  X  _{\alpha \beta \gamma} $
the canonical open immersions.

We suppose the covering $(\fP  _{\alpha}) _{\alpha \in \Lambda}$ satisfies the following lifting properties 
(such coverings exist following example \ref{section-BK-var-ex}).
For any 3uple $(\alpha, \, \beta,\, \gamma)\in \Lambda ^3$, 
we suppose there exists 
$\X  _\alpha$ (resp. $\X  _{\alpha \beta}$, $\X  _{\alpha \beta \gamma}$)
some smooth formal $\S $-schemes lifting of $X  _\alpha$
(resp. $X  _{\alpha \beta}$, $X  _{\alpha \beta \gamma}$),
$p _1 ^{\alpha \beta}$ :
$\X   _{\alpha \beta} \rightarrow \X  _{\alpha}$
(resp. $p _2 ^{\alpha \beta}$ :
$\X   _{\alpha \beta} \rightarrow \X  _{\beta}$)
some flat lifting 
of 
$X   _{\alpha \beta} \rightarrow X  _{\alpha}$
(resp. $X   _{\alpha \beta} \rightarrow X  _{\beta}$).
Similarly, for any $(\alpha,\,\beta,\,\gamma )\in \Lambda ^3$, we suppose there exist some lifting 
$p _{12} ^{\alpha \beta \gamma}$ : $\X   _{\alpha \beta \gamma} \rightarrow \X   _{\alpha \beta} $,
$p _{23} ^{\alpha \beta \gamma}$ : $\X   _{\alpha \beta \gamma} \rightarrow \X   _{\beta \gamma} $,
$p _{13} ^{\alpha \beta \gamma}$ : $\X   _{\alpha \beta \gamma} \rightarrow \X   _{\alpha \gamma} $,
$p _1 ^{\alpha \beta \gamma}$ : $\X   _{\alpha \beta \gamma} \rightarrow \X   _{\alpha} $,
$p _2 ^{\alpha \beta \gamma}$ : $\X   _{\alpha \beta \gamma} \rightarrow \X   _{\beta} $,
$p _3 ^{\alpha \beta \gamma}$ : $\X   _{\alpha \beta \gamma} \rightarrow \X   _{\gamma} $,
$u _{\alpha}$ : $\X  _{\alpha } \hookrightarrow \fP  _{\alpha }$,
$u _{\alpha \beta}$ : $\X  _{\alpha \beta} \hookrightarrow \fP  _{\alpha \beta}$
and
$u _{\alpha \beta \gamma}$ : $\X  _{\alpha \beta \gamma } \hookrightarrow \fP  _{\alpha \beta \gamma}$.

\begin{ex}
\label{section-BK-var-ex}
For instance, 
when for every $\alpha\in \Lambda$, $X  _\alpha$ is affine, 
(for instance when the covering $(\fP  _{\alpha}) _{\alpha \in \Lambda}$ is affine).
Indeed, since $P$ is separated (see our convention at the beginning of the paper), for any $\alpha,\beta ,\gamma \in \Lambda$,
$X _{\alpha \beta }$ and  $X _{\alpha \beta \gamma }$ are also affine.
Hence, such liftings exist.
\end{ex}

\begin{dfn}\label{defindonnederecol}
For any $\alpha \in \Lambda$, let $\E _\alpha$ be a coherent
$\D ^{\dag} _{\X  _{\alpha}} (\hdag Z _{\alpha}) _{\Q} $-module.
A \textit{glueing data} on $(\E _{\alpha})_{\alpha \in \Lambda}$
is the data for any $\alpha,\,\beta \in \Lambda$ of a
$\D ^{\dag} _{\X  _{\alpha \beta}} (\hdag Z _{\alpha \beta}) _{\Q} $-linear
isomorphism
$$ \theta _{  \alpha \beta} \ : \  p _2  ^{\alpha \beta !} (\E _{\beta}) \riso p  _1 ^{\alpha \beta !} (\E _{\alpha}),$$
satisfying the cocycle condition:
$\theta _{13} ^{\alpha \beta \gamma }=
\theta _{12} ^{\alpha \beta \gamma }
\circ
\theta _{23} ^{\alpha \beta \gamma }$,
where $\theta _{12} ^{\alpha \beta \gamma }$, $\theta _{23} ^{\alpha \beta \gamma }$
and $\theta _{13} ^{\alpha \beta \gamma }$ are the isomorphisms making commutative the following diagram
\begin{equation}
  \label{diag1-defindonnederecol}
\xymatrix  @R=0,3cm {
{  p _{12} ^{\alpha \beta \gamma !} p  _2 ^{\alpha \beta !}  (\E _\beta )}
\ar[r] ^-{\tau} _-{\sim}
\ar[d] ^-{p _{12} ^{\alpha \beta \gamma !} (\theta _{\alpha \beta})} _-{\sim}
&
{p _2 ^{\alpha \beta \gamma!}  (\E _\beta )}
\ar@{.>}[d] ^-{\theta _{12} ^{\alpha \beta \gamma }}
\\
{ p _{12} ^{\alpha \beta \gamma !}  p  _1 ^{\alpha \beta !}  (\E _\alpha)}
\ar[r]^{\tau} _-{\sim}
&
{p _1 ^{\alpha \beta \gamma!}(\E _\alpha),}
}
\xymatrix  @R=0,3cm {
{  p _{23} ^{\alpha \beta \gamma !} p  _2 ^{\beta \gamma!}  (\E _\gamma )}
\ar[r] ^-{\tau} _-{\sim}
\ar[d] ^-{p _{23} ^{\alpha \beta \gamma !} (\theta _{ \beta \gamma})} _-{\sim}
&
{p _3 ^{\alpha \beta \gamma!}  (\E _\gamma )}
\ar@{.>}[d] ^-{\theta _{23} ^{\alpha \beta \gamma }}
\\
{ p _{23} ^{\alpha \beta \gamma !}  p  _1 ^{ \beta \gamma !}  (\E _\beta)}
\ar[r]^{\tau} _-{\sim}
&
{p _2 ^{\alpha \beta \gamma!}(\E _\beta),}
}
\xymatrix  @R=0,3cm {
{  p _{13} ^{\alpha \beta \gamma !} p  _2 ^{\alpha \gamma !}  (\E _\gamma )}
\ar[r] ^-{\tau} _-{\sim}
\ar[d] ^-{p _{13} ^{\alpha \beta \gamma !} (\theta _{\alpha \gamma})} _-{\sim}
&
{p _3 ^{\alpha \beta \gamma!}  (\E _\gamma )}
\ar@{.>}[d]^{\theta _{13} ^{\alpha \beta \gamma }}
\\
{ p _{13} ^{\alpha \beta \gamma !}  p  _1 ^{\alpha \gamma !}  (\E _\alpha)}
\ar[r]^{\tau} _-{\sim}
&
{p _1 ^{\alpha \beta \gamma!}(\E _\alpha),}
}
\end{equation}
where $\tau$ are the glueing isomorphisms defined in \ref{prop-glueiniso-coh1}.

\end{dfn}
\begin{dfn}
We define the category $\mathrm{Coh} ((\X   _\alpha )_{\alpha \in \Lambda},Z/K)$ as follows: 

\begin{itemize}
\item [-] an object is a family $(\E _\alpha) _{\alpha \in \Lambda}$
of coherent  $\D ^{\dag} _{\X  _{\alpha}} (\hdag Z _{\alpha}) _{\Q} $-modules
together with a glueing data $ (\theta _{\alpha\beta}) _{\alpha ,\beta \in \Lambda}$,

\item [-] a morphism
$((\E _{\alpha})_{\alpha \in \Lambda},\, (\theta _{\alpha\beta}) _{\alpha ,\beta \in \Lambda})
\rightarrow
((\E ' _{\alpha})_{\alpha \in \Lambda},\, (\theta '_{\alpha\beta}) _{\alpha ,\beta \in \Lambda})$
is a familly of morphisms $f _\alpha$ : $\E _\alpha \rightarrow \E '_\alpha$
of coherent  $\D ^{\dag} _{\X  _{\alpha}} (\hdag Z _{\alpha}) _{\Q} $-modules
commuting with glueing data, i.e., such that the following diagrams are commutative : 
\begin{equation}
  \label{diag2-defindonnederecol}
\xymatrix  @R=0,3cm {
{ p _2  ^{\alpha \beta !} (\E _{\beta}) }
\ar[d] _-{p _2  ^{\alpha \beta !} (f _{\beta}) }
\ar[r] ^-{\theta _{\alpha\beta}} _-{\sim}
&
{  p  _1 ^{\alpha \beta !} (\E _{\alpha}) }
\ar[d] ^-{p  _1 ^{\alpha \beta !} (f _{\alpha})}
\\
{p _2  ^{\alpha \beta !} (\E '_{\beta})  }
\ar[r]^{\theta '_{\alpha\beta}} _-{\sim}
&
{ p  _1 ^{\alpha \beta !} (\E '_{\alpha})  .}
}
\end{equation}

\end{itemize}
\end{dfn}

\begin{rem}
\label{rem-ind-ariYXP}
We can consider the category $\mathrm{Coh} ((\X   _\alpha )_{\alpha \in \Lambda},Z/K)$
as the category of arithmetic $\D$-modules over $(Y,\fP)/\V$ or over $(Y,X)/\V$ (we can check that, up to canonical equivalence of categories,
this is independent of 
the choice of the closed immersion $X \hookrightarrow \fP$ and of the liftings $\X _\alpha$ etc.).
\end{rem}

\begin{dfn}
\label{dfnCohXPP}
We denote by 
$\mathrm{Coh} (X, \fP,T /K)$ the category 
of 
coherent $\D ^{\dag} _{\fP} (\hdag T) _{\Q} $-modules with support in $X$. 
When $T$ is the empty divisor, we simply write
$\mathrm{Coh} (X, \fP /K)$.
\end{dfn}

\begin{lem}
[Construction of $u ^! _0$]
\label{const-u0!}
There exists
a canonical functor 
$$u _0 ^! \colon 
\mathrm{Coh} (X, \fP,T /K)
\to 
\mathrm{Coh} ((\X   _\alpha )_{\alpha \in \Lambda},Z/K)$$
extending the usual functor $u _0 ^!$ when $X$ has a smooth formal $\S$-scheme lifting.
\end{lem}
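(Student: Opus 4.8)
The idea is to build the functor $u_0^!$ locally on the open covering $(\fP_\alpha)_{\alpha\in\Lambda}$ and then check that the local pieces carry a canonical glueing datum so that the whole assembles into an object of $\mathrm{Coh}((\X_\alpha)_{\alpha\in\Lambda}, Z/K)$. First I would fix $\E\in\mathrm{Coh}(X,\fP,T/K)$, a coherent $\D^\dag_{\fP}(\hdag T)_\Q$-module with support in $X$. For each $\alpha\in\Lambda$, restricting to $\fP_\alpha$ gives a coherent $\D^\dag_{\fP_\alpha}(\hdag T\cap P_\alpha)_\Q$-module $\E|\fP_\alpha$ with support in $X_\alpha$. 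Since $u_\alpha\colon \X_\alpha\hookrightarrow\fP_\alpha$ is a closed immersion of smooth formal $\S$-schemes lifting $X_\alpha\hookrightarrow P_\alpha$, and $Z_\alpha = (T\cap P_\alpha)\cap X_\alpha$ is a divisor of $X_\alpha$ (being the trace of the divisor $Z$ of $X$), Berthelot--Kashiwara's Theorem \ref{exact-Berthelot-Kashiwara-full} applies: $u_\alpha^!$ sends the category of coherent $\D^\dag_{\fP_\alpha}(\hdag T\cap P_\alpha)_\Q$-modules with support in $X_\alpha$ equivalently onto the category of coherent $\D^\dag_{\X_\alpha}(\hdag Z_\alpha)_\Q$-modules. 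So I set $\E_\alpha := u_\alpha^!(\E|\fP_\alpha)$, a coherent $\D^\dag_{\X_\alpha}(\hdag Z_\alpha)_\Q$-module (with $u_\alpha^!$ being t-exact on these subcategories, so there is no shift issue, and the result is honestly a module and not just a complex).

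Next I would produce the glueing isomorphisms $\theta_{\alpha\beta}\colon p_2^{\alpha\beta!}(\E_\beta)\riso p_1^{\alpha\beta!}(\E_\alpha)$. Consider on $\fP_{\alpha\beta}$ the two closed immersions $u_\alpha\circ p_1^{\alpha\beta}$ and $u_\beta\circ p_2^{\alpha\beta}$ from $\X_{\alpha\beta}$, both of which reduce modulo $\pi$ to the same map $X_{\alpha\beta}\hookrightarrow P_{\alpha\beta}$, together with $u_{\alpha\beta}\colon\X_{\alpha\beta}\hookrightarrow\fP_{\alpha\beta}$ which also lifts it. Using the commutation of $p_i^{\alpha\beta!}$ with $u_\alpha^!$, $u_\beta^!$ up to the canonical base-change isomorphisms of Proposition \ref{comp-comp-adj-immf}.(i) (the relevant squares being cartesian modulo $\pi$, since $\X_{\alpha\beta}$ lifts $X_{\alpha\beta} = X_\alpha\cap P_\beta = X_\alpha\times_{P_\alpha}P_{\alpha\beta}$ etc.), together with the glueing isomorphisms $\tau_{f,f'}$ of Proposition \ref{prop-glueiniso-coh} which identify $p_1^{\alpha\beta!}u_\alpha^!$ and $p_2^{\alpha\beta!}u_\beta^!$ via $u_{\alpha\beta}^!$ (both presentations of the same functor since the underlying maps coincide modulo $\pi$), one obtains a canonical isomorphism
\begin{equation}
\notag
p_2^{\alpha\beta!}(\E_\beta) = p_2^{\alpha\beta!} u_\beta^!(\E|\fP_\beta)
\riso u_{\alpha\beta}^!(\E|\fP_{\alpha\beta})
\riso p_1^{\alpha\beta!} u_\alpha^!(\E|\fP_\alpha) = p_1^{\alpha\beta!}(\E_\alpha).
\end{equation}
This defines $\theta_{\alpha\beta}$.

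Then I would verify the cocycle condition $\theta_{13}^{\alpha\beta\gamma} = \theta_{12}^{\alpha\beta\gamma}\circ\theta_{23}^{\alpha\beta\gamma}$ on $\fP_{\alpha\beta\gamma}$. This is where the bookkeeping is heaviest and will be the main obstacle: one must chase a large diagram built from the base-change isomorphisms $\phi$ of Proposition \ref{comp-comp-adj-immf}.(i), their transitivity Proposition \ref{comp-comp-adj-immf}.(ii), the compatibility with changes of lifting Proposition \ref{comp-comp-adj-immf}.(iii), and the transitivity/normalization identities for $\tau_{f,f'}$ from Proposition \ref{prop-glueiniso-coh} and \ref{2.1.5Be2}. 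The strategy is to reduce everything back to a single object $u_{\alpha\beta\gamma}^!(\E|\fP_{\alpha\beta\gamma})$ via all six morphisms $p_i^{\alpha\beta\gamma!}$, so that each $\theta_{ij}^{\alpha\beta\gamma}$ becomes a composite of the canonical comparison isomorphisms, and then the identities $\tau_{f,f''}=\tau_{f,f'}\circ\tau_{f',f''}$ and $\phi'' = (g^!\circ\phi)\circ(\phi'\circ a^!)$ force the triangle to commute. Finally, functoriality in $\E$ is immediate: a morphism $\E\to\E'$ of $\mathrm{Coh}(X,\fP,T/K)$ induces morphisms $\E_\alpha\to\E'_\alpha$ by applying $u_\alpha^!$, and these commute with the $\theta_{\alpha\beta}$ because all the comparison isomorphisms used in the construction are natural transformations. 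When $X$ itself admits a smooth formal $\S$-lifting $\X$, one takes the trivial covering and checks (again using Berthelot--Kashiwara and the compatibility of the $p_i^!$ with $u^!$) that the construction recovers the ordinary $u_0^!$; I would record this as the last sentence of the proof.
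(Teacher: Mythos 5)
Your proposal follows essentially the same route as the paper's proof: restrict $\E$ to each $\fP_\alpha$, apply $u_\alpha^!$ (which lands in degree $0$ by Berthelot--Kashiwara, Theorem \ref{u!u+=id}), and glue via the canonical isomorphisms $\tau$ of Proposition \ref{prop-glueiniso-coh}, checking the cocycle condition from the transitivity identities for $\tau$. One small remark: the invocation of Proposition \ref{comp-comp-adj-immf}.(i) is not really what is used here — that statement concerns the adjunction morphism $u'_+\circ a^!\to f^!\circ u_+$ relating pushforward and pullback (it is the ingredient for the companion Lemma \ref{const-u0+}, the construction of $u_{0+}$); for the present lemma one only needs transitivity of extraordinary pullbacks together with the $\tau$-isomorphisms comparing $(u_\alpha\circ p_1^{\alpha\beta})^!$, $u_{\alpha\beta}^!$, and $(u_\beta\circ p_2^{\alpha\beta})^!$ as functors on $\fP_{\alpha\beta}$, exactly as the second half of your gluing paragraph says.
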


\begin{proof}
For any object
$\E \in \mathrm{Coh} (X, \fP,T /K)$,
we set 
$\E _\alpha : =
\mathcal{H} ^{0} u _{\alpha } ^{!} ( \E | \fP _\alpha)$.
Remark that following 
\ref{u!u+=id},
$\E _\alpha$ is a coherent
$\D ^{\dag} _{\X  _{\alpha}} (\hdag Z _{\alpha}) _{\Q} $-module.
Via the isomorphisms of the form $\tau $ (\ref{prop-glueiniso-coh1}), 
we obtain the glueing 
$\D ^{\dag} _{\X  _{\alpha \beta}} (\hdag Z _{\alpha \beta}) _{\Q} $-linear
isomorphism
$ \theta _{  \alpha \beta} \ : \  p _2  ^{\alpha \beta !} (\E _{\beta}) \riso p  _1 ^{\alpha \beta !} (\E _{\alpha}),$
satisfying the cocycle condition:
$\theta _{13} ^{\alpha \beta \gamma }=
\theta _{12} ^{\alpha \beta \gamma }
\circ
\theta _{23} ^{\alpha \beta \gamma }$.
\end{proof}

\begin{lem}
\label{const-u0+}
There exists a canonical functor  
$$u _{0+}
\colon 
\mathrm{Coh} ((\X   _\alpha )_{\alpha \in \Lambda},Z/K)
\rightarrow
\mathrm{Coh} (X,\, \fP ,T /K)$$
extending the usual functor $u _{0+}$ when $X$ has a smooth formal $\S$-scheme lifting.
\end{lem}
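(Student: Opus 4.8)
The plan is to construct the functor $u_{0+}$ by glueing the local pushforward functors $u_{\alpha+}$ and then verify that the resulting object is a coherent $\D^{\dag}_{\fP}(\hdag T)_{\Q}$-module with support in $X$. First I would take an object $((\E_\alpha)_{\alpha\in\Lambda},(\theta_{\alpha\beta})_{\alpha,\beta\in\Lambda})$ of $\mathrm{Coh}((\X_\alpha)_{\alpha\in\Lambda},Z/K)$. For each $\alpha$, the closed immersion $u_\alpha\colon \X_\alpha\hookrightarrow\fP_\alpha$ gives (via \ref{dfnu+(m)f} passed to the $\D^{\dag}$-level, using the finiteness of cohomological dimension \ref{Noot-Huyghe-finitehomoldim}) a coherent $\D^{\dag}_{\fP_\alpha}(\hdag T_\alpha)_{\Q}$-module $u_{\alpha+}(\E_\alpha)$ with support in $X_\alpha$, where $T_\alpha := T\cap P_\alpha$; here I use Theorem \ref{exact-Berthelot-Kashiwara-full} to know $u_{\alpha+}$ is exact on the relevant category and lands in modules with support in $X_\alpha$. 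The task is to glue the family $(u_{\alpha+}(\E_\alpha))_\alpha$ into a global module on $\fP$.

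The glueing isomorphisms on overlaps come from the following chain of canonical comparisons. On $\fP_{\alpha\beta}$, I want a $\D^{\dag}_{\fP_{\alpha\beta}}(\hdag T_{\alpha\beta})_{\Q}$-linear isomorphism between $u_{\alpha+}(\E_\alpha)|\fP_{\alpha\beta}$ and $u_{\beta+}(\E_\beta)|\fP_{\alpha\beta}$. The point is that $u_{\alpha\beta}\colon\X_{\alpha\beta}\hookrightarrow\fP_{\alpha\beta}$ factors (modulo $\pi$, hence up to the glueing isomorphisms $\tau$ of \ref{prop-glueiniso-coh1}) through both $p_1^{\alpha\beta}\colon\X_{\alpha\beta}\to\X_\alpha$ followed by $u_\alpha$, and through $p_2^{\alpha\beta}$ followed by $u_\beta$. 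Using the compatibility of pushforward by closed immersions with base change and smooth pullback — precisely Proposition \ref{comp-comp-adj-immf}(i), whose isomorphism \ref{comp-comp-adj-immf-morp1} applies since the relevant squares are cartesian modulo $\pi$ in the present affine-lifting situation — I get canonical isomorphisms $u_{\alpha+}(\E_\alpha)|\fP_{\alpha\beta}\riso u_{\alpha\beta+}\,p_1^{\alpha\beta!}(\E_\alpha)$ and similarly for $\beta$. Composing these with $u_{\alpha\beta+}$ applied to the given glueing datum $\theta_{\alpha\beta}\colon p_2^{\alpha\beta!}(\E_\beta)\riso p_1^{\alpha\beta!}(\E_\alpha)$ produces the desired isomorphism on $\fP_{\alpha\beta}$. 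The cocycle condition for these isomorphisms on triple overlaps $\fP_{\alpha\beta\gamma}$ follows from the cocycle condition on the $\theta_{\alpha\beta}$ together with the transitivity of the comparison morphisms (Proposition \ref{comp-comp-adj-immf}(ii)) and the transitivity formulas for $\tau$ recorded in \ref{2.1.5Be2} and \ref{prop-glueiniso-coh}; this is where one has to be careful to chase a moderately large diagram, but each face commutes by a result already available.

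Once the cocycle condition holds, the local modules $u_{\alpha+}(\E_\alpha)$ glue to a single sheaf $u_{0+}((\E_\alpha),(\theta_{\alpha\beta}))$ on $\fP$, which inherits a $\D^{\dag}_{\fP}(\hdag T)_{\Q}$-module structure since that sheaf of rings is itself defined by glueing. Coherence and support in $X$ are local properties, so they follow from the corresponding facts for each $u_{\alpha+}(\E_\alpha)$ on $\fP_\alpha$, hence $u_{0+}((\E_\alpha),(\theta_{\alpha\beta}))\in\mathrm{Coh}(X,\fP,T/K)$. Functoriality is obtained the same way: a morphism in $\mathrm{Coh}((\X_\alpha)_{\alpha\in\Lambda},Z/K)$ is a family $(f_\alpha)$ of $\D^{\dag}_{\X_\alpha}(\hdag Z_\alpha)_{\Q}$-linear maps commuting with the $\theta_{\alpha\beta}$ (diagram \ref{diag2-defindonnederecol}); applying the exact functors $u_{\alpha+}$ and using naturality of the comparison isomorphisms of \ref{comp-comp-adj-immf} shows the maps $u_{\alpha+}(f_\alpha)$ are compatible with the glued structure, so they glue to a morphism of the glued modules. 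Finally, when $X$ itself lifts to a smooth formal $\S$-scheme $\X$, one checks that the construction agrees with the ordinary $u_{0+}$ by comparing with the trivial one-element covering; this uses the independence statement of \ref{rem-ind-ariYXP}, or can be seen directly by applying \ref{comp-comp-adj-immf}(i) to the squares relating $\X$ to the $\X_\alpha$. The main obstacle I expect is the verification of the cocycle condition: keeping track of the various base-change isomorphisms $\phi$, the glueing isomorphisms $\tau$, and the data $\theta_{\alpha\beta}$ simultaneously requires organizing a three-dimensional compatibility diagram, and the bookkeeping — rather than any conceptual difficulty — is the delicate point.
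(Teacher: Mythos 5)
Your proposal is correct and follows the same route as the paper: push forward locally by $u_{\alpha+}$, produce the glueing isomorphisms from the adjunction isomorphisms of Proposition \ref{comp-comp-adj-immf}(i) (the squares being cartesian modulo $\pi$), and deduce the cocycle condition from \ref{comp-comp-adj-immf}(ii) together with the transitivity of the $\tau$'s. The paper's proof is the same sketch, deferring the diagram-chase for the cocycle verification to the cited reference [caro-construction, 2.5.4].
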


\begin{proof}
This functor was constructed in \cite[2.5.4]{caro-construction} 
and was denoted by $\mathcal{R}ecol$. Let us recall its construction.
1) Let $(\E _{\alpha}) _{\alpha \in \Lambda}$
be a family of coherent $\D ^{\dag} _{\X  _{\alpha}} (\hdag Z _{\alpha}) _{\Q} $-modules 
together with a gluing data
$(\theta _{\alpha \beta})_{\alpha , \beta \in \Lambda}$.
Let's prove that $( u _{\alpha +} ( \E _{\alpha} ))_{\alpha \in \Lambda}$
glues via $(\theta _{\alpha \beta})_{\alpha , \beta \in \Lambda}$
to a coherent $\D ^{\dag} _{\fP} (\hdag T) _{\Q} $-module 
with support in $X$. 
Let $\phi _1 ^{\alpha \beta}$ (resp. $\phi _2 ^{\alpha \beta}$) be the adjunction morphism
(see the definition \ref{comp-comp-adj-immf})
of the left  (resp. the right) square of
\begin{equation}\label{carre0eqcatrecolD-mod}
\xymatrix  @R=0,3cm {
{\fP   _{\alpha \beta}} \ar[rr]
&
&{ \fP   _{\alpha}}
\\
{\X   _{\alpha \beta}} \ar[rr] ^-{p _1 ^{\alpha \beta}} \ar[u] ^-{u _{\alpha \beta}}
&
&
{ \X  _{\alpha},} \ar[u] ^-{u _{\alpha}}
}
\hspace{3cm}
\xymatrix  @R=0,3cm{
{\fP   _{\alpha \beta}} \ar[rr] &  &{ \fP   _{\beta}} \\
{\X   _{\alpha \beta}} \ar[rr] ^-{p _2 ^{\alpha \beta}} \ar[u] ^-{u _{\alpha \beta}}
&  &{ \X  _{\beta}.} \ar[u] ^-{u _{\beta}}
}
\end{equation}
For every $\alpha ,\beta \in \Lambda$ we define the isomorphism 
$\tau _{\alpha \beta}\ : \  {(u _{\beta +} ( \E _{\beta} ))} |_{\fP  _{\alpha \beta}}
\riso {(u _{\alpha +} ( \E _{\alpha} ))} |_{\fP  _{\alpha \beta}}$
to be the one making commutative the following diagram:
\begin{equation}\label{carre1eqcatrecolD-mod}
\xymatrix  @R=0,3cm {
{u _{\alpha \beta +} \circ p  _1 ^{\alpha \beta !} (\E _{\alpha})}
\ar[rr] ^-{\phi _1 ^{\alpha \beta}(\E _{\alpha})} _-{\sim}
& &
 {{(u _{\alpha +} ( \E _{\alpha} ))} |_{ \fP  _{\alpha \beta}}   } \\
{ u _{\alpha \beta +} \circ p  _2 ^{\alpha \beta !} (\E _{\beta}) }
 \ar[rr] ^-{\phi _2 ^{\alpha \beta}(\E _{\beta})} _-{\sim}
 \ar[u] ^-{ u _{\alpha \beta +}(\theta _{  \alpha \beta}) } _-{\sim}
 &  &
{ {(u _{\beta +} ( \E _{\beta} )) }|_{\fP  _{\alpha \beta}}. }
\ar@{.>}[u] _{\tau _{\alpha \beta}}
}
\end{equation}
These isomorphisms $\tau _{\alpha \beta}$
satisfy the glueing condition, 
and we check that 
 $u _{0+}$ is indeed a functor (for the details, see the proof of \cite[2.5.4]{caro-construction}). 
\end{proof}

\begin{thm}
\label{prop1}
The functors $u ^!  _0$ and $ u _{0+}$ constructed in respectively \ref{const-u0!} and \ref{const-u0+} 
are quasi-inverse equivalences of categories between 
$\mathrm{Coh} ((\X   _\alpha )_{\alpha \in \Lambda},Z/K)$
and
$\mathrm{Coh} (X,\, \fP ,T /K)$.

\end{thm}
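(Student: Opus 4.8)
The plan is to reduce the global statement to the already-established local Berthelot--Kashiwara equivalence \ref{exact-Berthelot-Kashiwara-full} (or rather its inductive-system reinforcement \ref{u!u+=id}, since we need the exactness and the fact that $u^!_\alpha$ preserves coherence in the sense of arithmetic $\D$-modules with overconvergent singularities), glued over the covering $(\fP_\alpha)_{\alpha\in\Lambda}$ via the transitivity and commutativity properties of the adjunction morphisms proved in \ref{comp-comp-adj-immf} and the glueing-isomorphism formalism of \ref{prop-glueiniso-coh}. First I would check that the two functors are well defined on the nose (this was essentially done in Lemmas \ref{const-u0!} and \ref{const-u0+}): $u_0^!$ sends an object $\E$ of $\mathrm{Coh}(X,\fP,T/K)$ to $(\E_\alpha)_\alpha := (\cH^0 u_\alpha^!(\E|\fP_\alpha))_\alpha$ with glueing data coming from the $\tau$'s of \ref{prop-glueiniso-coh1}, and $u_{0+}$ glues the $u_{\alpha+}(\E_\alpha)$ using the isomorphisms $\tau_{\alpha\beta}$ of diagram \ref{carre1eqcatrecolD-mod}, whose cocycle compatibility is exactly what \ref{comp-comp-adj-immf}(ii)--(iii) guarantees.

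Next I would construct the two adjunction natural transformations $\mathrm{id}\to u_0^! u_{0+}$ and $u_{0+} u_0^!\to\mathrm{id}$ by glueing the local ones. On $\fP_\alpha$ we have, from Berthelot--Kashiwara applied to $u_\alpha\colon \X_\alpha\hookrightarrow\fP_\alpha$ and the divisor $Z_\alpha$, that the local adjunction morphisms $\E_\alpha\to \cH^0 u_\alpha^! u_{\alpha+}(\E_\alpha)$ and $u_{\alpha+}\cH^0 u_\alpha^!(\E)\to \E|\fP_\alpha$ are isomorphisms (this is \ref{u!u+=id}.1, \ref{u!u+=id}.2, together with \ref{rem-exact-Berthelot-Kashiwara-full}; note that an object of $\mathrm{Coh}(X,\fP,T/K)$ has support in $X$, so the hypothesis $\E^{(\bullet)}|\fY\riso 0$ needed to invert $u_{+}u^!\to\mathrm{id}$ is satisfied). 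I would then verify these local isomorphisms are compatible with the glueing data $\theta_{\alpha\beta}$ and $\tau_{\alpha\beta}$ on double overlaps $\fP_{\alpha\beta}$: this is precisely the content of the commutative squares in \ref{comp-comp-adj-immf}(ii) (transitivity of adjunction morphisms for the composed squares $\X_{\alpha\beta}\to\X_\alpha\to\fP_\alpha$, etc.) and \ref{comp-comp-adj-immf}(iii) (compatibility with the isomorphisms $\tau_{f,f'}$ when one changes the lift of a morphism), applied to the squares \ref{carre0eqcatrecolD-mod}. Since being an isomorphism is local on $\fP$ (and $\mathrm{Coh}(X,\fP,T/K)$, resp. $\mathrm{Coh}((\X_\alpha)_\alpha,Z/K)$, is a full subcategory of a category of sheaves, resp. is built by descent), the glued morphisms are isomorphisms.

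Finally, to see the two glued transformations are genuinely the unit and counit of an adjoint pair realizing an equivalence, one may either invoke that a functor with natural isomorphisms $\mathrm{id}\riso GF$ and $FG\riso\mathrm{id}$ satisfying the triangle identities is an equivalence, where the triangle identities again reduce to the local ones furnished by \ref{u+uflat-lem-form}.1 (the compositions $u^{\flat 0}(\M)\to u^{\flat 0}u_+u^{\flat 0}(\M)\to u^{\flat 0}(\M)$ and its mate being the identity), or, more cheaply, simply observe that $u_0^!$ and $u_{0+}$ are exact additive functors between abelian categories which are mutually quasi-inverse because they are so locally and the verifications are compatible with restriction. I expect the main obstacle to be purely bookkeeping: carefully checking, on triple overlaps $\fP_{\alpha\beta\gamma}$, that the cocycle condition on $(\theta_{\alpha\beta})$ transported through $u_{0+}$ yields the cocycle condition on $(\tau_{\alpha\beta})$, and conversely; but this is exactly handled by the transitivity equality $\phi''=(g^!\circ\phi)\circ(\phi'\circ a^!)$ of \ref{comp-comp-adj-immf}(ii) together with the formulas for the $\tau_{f,f'}$ in \ref{2.1.5Be2}, so no new idea is required beyond assembling these. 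Indeed, the whole argument is formally the same as the one in \cite[A.8 and its surroundings]{caro-stab-sys-ind-surcoh} (or \cite[2.5.4]{caro-construction}), now with overconvergent singularities along $T$, and I would simply point to that, spelling out only the compatibilities that genuinely use the divisor.
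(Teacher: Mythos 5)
Your proposal is correct and follows essentially the same route as the paper: reduce to the local Berthelot--Kashiwara equivalence \ref{exact-Berthelot-Kashiwara-full}/\ref{u!u+=id}, check that the local adjunction isomorphisms are compatible with the glueing data via \ref{comp-comp-adj-immf} and \ref{prop-glueiniso-coh}, and refer to \cite[2.5.4]{caro-construction} for the bookkeeping. One small remark: when you have natural \emph{isomorphisms} $\mathrm{id}\riso u_0^! u_{0+}$ and $u_{0+}u_0^!\riso\mathrm{id}$, this already makes the functors quasi-inverse equivalences, so the appeal to triangle identities via \ref{u+uflat-lem-form} is superfluous.
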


\begin{proof}
We check that 
the isomorphism given by the quasi-inverse equivalences of categories of \ref{exact-Berthelot-Kashiwara-full} 
are compatible with the glueing isomorphisms. 
For the details of the proof, see \cite[2.5.4]{caro-construction}.
\end{proof}

\section{Arithmetic $\D$-modules associated with overconvergent isocrystals on smooth compactification}

\subsection{Overconvergent isocrystals}

Let $\fP $ be a smooth  formal scheme over $\S $.
Let $i\colon X  \hookrightarrow P $ be a closed immersion of smooth schemes over $S $.
Let $j\colon Y \hookrightarrow X$ be an open immersion. 
\begin{ntn}
[Overconvergent isocrystals]
\label{ntn-realP}
We denote by 
$\mathrm{Isoc} ^{\dag} (Y, X,\fP/K)$ the category of overconvergent 
isocrystals on $(Y, X,\fP)/K$ (see \cite[7.1.2]{LeStum-livreRigCoh}),
by 
$\mathrm{MIC}  (Y, X,\fP/K)$ the category of 
coherent 
$j ^\dag \O _{\X _K}$-modules together with an integrable connection,
and  by 
$\mathrm{MIC} ^{\dag} (Y, X,\fP/K)$ the full subcategory of 
$\mathrm{MIC}  (Y, X,\fP/K)$ of
coherent 
$j ^\dag \O _{\X _K}$-modules together with an overconvergent connection (see \cite[7.2.10]{LeStum-livreRigCoh}).
Following \cite[7.2.13]{LeStum-livreRigCoh} 
the realisation functor
$E \mapsto E _\fP$ induces an equivalence of categories 
\begin{equation}
\label{dfn-real}
\mathrm{real} _\fP
\colon 
\mathrm{Isoc} ^{\dag} (Y, X,\fP/K) \cong \mathrm{MIC} ^{\dag} (Y, X,\fP/K).
\end{equation}
This realization functor is constructed as follows.
Let $E \in \mathrm{Isoc} ^{\dag} (Y, X,\fP/K) $. 
Let 
$q _0 , q _1
\colon 
 \fP ^{ } \times _{\S } \fP ^{ }
 \to 
  \fP ^{ }$
be the left and the right projections. 
This yields the morphisms of frames 
$p _0 := (id,id, q _0), 
p _1:=(id,id, q _1)
\colon 
(Y, X,\fP \times _\S \fP) \to (Y, X,\fP)$.
We denote by $E _{\fP \times _\S \fP}$ the realization of $E$ on the frame
$(Y, X,\fP \times _\S \fP)$.  
We get the isomorphisms
$\phi _{p_0} 
\colon 
p _0 ^* E _{\fP} \riso E _{\fP \times _\S \fP} $
and
$\phi _{p_1} 
\colon 
p _1 ^* E _{\fP} \riso E _{\fP \times _\S \fP} $
(see notation 
\cite[7.1.1.(ii)]{LeStum-livreRigCoh} concerning the isomorphism $\phi$).
This yields the isomorphism 
\begin{equation}
\label{dfn-epsilon}
\epsilon = \phi _{p_0} ^{-1} \circ \phi _{p_1} 
\colon 
p _1 ^* E _{\fP} 
\riso 
p _0 ^* E _{\fP} ,
\end{equation}
which corresponds to the overconvergent connection on 
$E _{\fP}$ and is called the Taylor isomorphism of $E _{\fP} $.
\end{ntn}

\begin{empt}
\label{inv-image}
Let $f = (a,b,u) \colon (Y', X',\fP') \to (Y, X,\fP)$ be a morphism of  smooth $\S$-frames (see Definitions \cite[3.1.6 and 3.3.5]{LeStum-livreRigCoh}).
This yields a functor
$$f ^* \colon \mathrm{Isoc} ^{\dag} (Y, X,\fP/K)
\to 
\mathrm{Isoc} ^{\dag} (Y', X',\fP'/K).$$
On the other hand, 
we have the functor
$f ^* _K\colon 
\mathrm{MIC} ^{\dag} (Y, X,\fP/K)
\to 
\mathrm{MIC} ^{\dag} (Y', X',\fP'/K)$,
{\it the pullback by $f$},
which is defined for an object 
$E_{\fP} \in   \mathrm{MIC} ^{\dag} (Y, X,\fP/K) $ 
by setting
\begin{equation}
\label{inv-image-real}
f _K ^* (E _{\fP}) 
:= 
j ^{\prime \dag} \smash{\O} _{]X'[ _{\fP'}} \otimes  _{ u ^{-1} _K j ^\dag  \smash{\O} _{]X[ _{\fP}}} u ^{-1} _K E _{\fP}
\riso 
j ^{\prime \dag} \smash{\D} _{]X'[ _{\fP'}} \otimes  _{ u ^{-1} _K j ^\dag  \smash{\D} _{]X[ _{\fP}}} u ^{-1} _K E _{\fP},
\end{equation}
where $u _K \colon ]X'[ _{\fP'} \to ]X[ _{\fP}$ is the morphism of ringed spaces induced by $f$. 
When $X '= u ^{-1} (X)$ and $Y '= u ^{-1} (Y)$, 
the functor $f _K ^*$ can simply be denoted by $u _K ^*$.

The functor $\mathrm{real} _\fP$ of \ref{dfn-real} commutes with both pullbacks by $f$:
for any $E \in \mathrm{Isoc} ^{\dag} (Y, X,\fP/K) $ we have 
the natural isomorphism
$f ^* _K \circ \mathrm{real} _\fP (E ) \riso \mathrm{real} _{\fP '} \circ  f ^* (E)$
which can also be written
$f ^* _K (E _\fP) \riso f ^* (E)  _{\fP '} $. 
\end{empt}

\begin{empt}
\label{glueingisocntn}
Let $f = (a,b,u)$ and $g=(a,b,v)$ be two morphisms of smooth $\S$-frames of the form
$(Y', X',\fP') \to (Y, X,\fP)$.
\begin{enumerate}
\item The morphism 
$(u,v)\colon  \fP ^{\prime } \to  \fP ^{ } \times _{\S } \fP ^{ }$
induce the morphism of frames
$\delta  _{u,v}= (b , a, (u,v)) 
\colon 
(Y', X',\fP')
\to 
(Y, X,\fP \times _\S \fP) $.
We get the morphisms of frames
$f =  p _0 \circ \delta  _{u,v}$ and $g = p _1  \circ \delta  _{u,v}$.
Let $E_{\fP} \in   \mathrm{MIC} ^{\dag} (Y, X,\fP/K) $. From the isomorphism $\epsilon 
\colon 
p _1 ^* E _{\fP} 
\riso 
p _0 ^* E _{\fP} $ (see \ref{dfn-epsilon}),
we get the glueing isomorphism
\begin{equation}
\label{glueingisocntn-iso1}
\epsilon _{u,v}:= \delta  _{u,v} ^* (\epsilon) 
\colon 
g ^{*} E _{\fP} 
\riso 
f ^{ *} E _{\fP} .
\end{equation}
Using the property of the isomorphism $\phi $ of 
\cite[7.1.1.(ii)]{LeStum-livreRigCoh}, we have the equality
$\epsilon _{u,v}:= \delta  _{u,v}^* (\epsilon) = \phi _{f} ^{-1} \circ \phi _{g} $.
In particular, $\epsilon _{u,u} =id$.

\item Let $w\colon \fP ^{\prime } \to \fP $ be a third morphism of formal schemes over $\S $
such that 
we get the morphism of  smooth $\S$-frames  $h :=(b,\,a,\,w)
\colon 
(Y', X ',\fP')
\to 
(Y, X,\fP)$.
We have the transitive formula
$\epsilon_{u,\, w} = \epsilon_{u,\,v}\circ \epsilon_{v,\,w}$.

\item Let $f' = (a',b',u')$ and $g'=(a',b',v')$ be two morphisms of  smooth $\S$-frames of the form
$(Y'', X'',\fP'') \to (Y', X',\fP')$.
We check the formulas
$ \epsilon _{u \circ u', v \circ u'} = f ^{\prime *} _K \circ \epsilon _{u,v}$
and
$\epsilon _{u' ,v'} \circ f ^* _K =  \epsilon _{u \circ u', u \circ v'}$.

\end{enumerate}
\end{empt}

\begin{empt}
\label{ntnPPalpha-withoutdiv}
Let $(\fP  _{\alpha}) _{\alpha \in \Lambda}$ be an open covering of  $\fP $.
We set $\fP  _{\alpha \beta}:= \fP  _\alpha \cap \fP  _\beta$,
$\fP  _{\alpha \beta \gamma}:= \fP  _\alpha \cap \fP  _\beta \cap \fP  _\gamma$,
$X  _\alpha := X  \cap P  _\alpha$,
$X _{\alpha \beta } := X  _\alpha \cap X  _\beta$ and
$X _{\alpha \beta \gamma } := X  _\alpha \cap X  _\beta \cap X  _\gamma $.
We denote by $Y _\alpha := X  _\alpha \cap Y$,
$Y _{\alpha \beta} := Y _\alpha \cap Y _\beta$,
$Y _{\alpha \beta \gamma} := Y _\alpha \cap Y _\beta \cap Y _\gamma $.
$j _\alpha$ : $ Y _\alpha
\hookrightarrow X  _\alpha$,
$j _{\alpha \beta} $ :
$Y _{\alpha \beta}  \hookrightarrow  X  _{\alpha \beta}$
and
$j _{\alpha \beta \gamma} $ :
$Y _{\alpha \beta \gamma }  \hookrightarrow  X  _{\alpha \beta \gamma} $
the canonical open immersions.
We suppose that for every $\alpha\in \Lambda$, $X  _\alpha$ is affine, 
(for instance when the covering $(\fP  _{\alpha}) _{\alpha \in \Lambda}$ is affine).

For any 3uple $(\alpha, \, \beta,\, \gamma)\in \Lambda ^3$, fix 
$\X  _\alpha$ (resp. $\X  _{\alpha \beta}$, $\X  _{\alpha \beta \gamma}$)
some smooth formal $\S $-schemes lifting $X  _\alpha$
(resp. $X  _{\alpha \beta}$, $X  _{\alpha \beta \gamma}$),
$p _1 ^{\alpha \beta}$ :
$\X   _{\alpha \beta} \rightarrow \X  _{\alpha}$
(resp. $p _2 ^{\alpha \beta}$ :
$\X   _{\alpha \beta} \rightarrow \X  _{\beta}$)
some flat lifting 
of 
$X   _{\alpha \beta} \rightarrow X  _{\alpha}$
(resp. $X   _{\alpha \beta} \rightarrow X  _{\beta}$).

Similarly, for any $(\alpha,\,\beta,\,\gamma )\in \Lambda ^3$, fix some lifting 
$p _{12} ^{\alpha \beta \gamma}$ : $\X   _{\alpha \beta \gamma} \rightarrow \X   _{\alpha \beta} $,
$p _{23} ^{\alpha \beta \gamma}$ : $\X   _{\alpha \beta \gamma} \rightarrow \X   _{\beta \gamma} $,
$p _{13} ^{\alpha \beta \gamma}$ : $\X   _{\alpha \beta \gamma} \rightarrow \X   _{\alpha \gamma} $,
$p _1 ^{\alpha \beta \gamma}$ : $\X   _{\alpha \beta \gamma} \rightarrow \X   _{\alpha} $,
$p _2 ^{\alpha \beta \gamma}$ : $\X   _{\alpha \beta \gamma} \rightarrow \X   _{\beta} $,
$p _3 ^{\alpha \beta \gamma}$ : $\X   _{\alpha \beta \gamma} \rightarrow \X   _{\gamma} $,
$u _{\alpha}$ : $\X  _{\alpha } \hookrightarrow \fP  _{\alpha }$,
$u _{\alpha \beta}$ : $\X  _{\alpha \beta} \hookrightarrow \fP  _{\alpha \beta}$
and
$u _{\alpha \beta \gamma}$ : $\X  _{\alpha \beta \gamma } \hookrightarrow \fP  _{\alpha \beta \gamma}$.
\end{empt}

\begin{dfn}
\label{dfnMICdagalphabeta}
With notation \ref{ntnPPalpha-withoutdiv},
we define the category
  $\mathrm{MIC} ^\dag (Y,  (\X   _\alpha )_{\alpha \in \Lambda}/K)$ as follows.

\begin{itemize}
\item [-]
An object of   $\mathrm{MIC} ^\dag (Y,  (\X   _\alpha )_{\alpha \in \Lambda}/K)$ is a family $(E _\alpha) _{\alpha \in \Lambda}$
  of objects $E _\alpha $
  of $\mathrm{MIC} ^{\dag} ((Y _\alpha, X _\alpha,\X _\alpha)/K)$
together with a {\it glueing data}, i.e., a collection of isomorphisms in 
$\mathrm{MIC} ^{\dag} ((Y _{\alpha\beta}, X _{\alpha\beta},\X _{\alpha\beta})/K)$
 of the form
$ \eta _{  \alpha \beta} \ : \
p _{2 K}  ^{\alpha \beta *} (E _{\beta})
\riso
p  _{1 K} ^{\alpha \beta *} (E _{\alpha})$
satisfying the cocycle condition:
$\eta _{13} ^{\alpha \beta \gamma }=
\eta _{12} ^{\alpha \beta \gamma }
\circ
\eta _{23} ^{\alpha \beta \gamma }$,
where $\eta _{12} ^{\alpha \beta \gamma }$, $\eta _{23} ^{\alpha \beta \gamma }$
and $\eta _{13} ^{\alpha \beta \gamma }$ are defined so that the following diagrams 
\begin{equation}
  \label{diag1-defindonnederecolK}
\xymatrix  @R=0,3cm @C=0,45cm {
{  p _{12K} ^{\alpha \beta \gamma *} p  _{2K} ^{\alpha \beta *}  (E _\beta )}
\ar[r] ^-{\epsilon} _-{\sim}
\ar[d] ^-{p _{12K} ^{\alpha \beta \gamma *} (\eta _{\alpha \beta})} _-{\sim}
&
{p _{2K} ^{\alpha \beta \gamma*}  (E _\beta )}
\ar@{.>}[d] ^-{\eta _{12} ^{\alpha \beta \gamma }}
\\
{ p _{12K} ^{\alpha \beta \gamma *}  p  _{1K} ^{\alpha \beta *}  (E _\alpha)}
\ar[r]^-{\epsilon} _-{\sim}
&
{p _{1K} ^{\alpha \beta \gamma*}(E _\alpha),}
}
\xymatrix  @R=0,3cm @C=0,45cm {
{  p _{23K} ^{\alpha \beta \gamma *} p  _{2K} ^{\beta \gamma*}  (E _\gamma )}
\ar[r] ^-{\epsilon} _-{\sim}
\ar[d] ^-{p _{23K} ^{\alpha \beta \gamma *} (\eta _{ \beta \gamma})} _-{\sim}
&
{p _{3K} ^{\alpha \beta \gamma*}  (E _\gamma )}
\ar@{.>}[d] ^-{\eta _{23} ^{\alpha \beta \gamma }}
\\
{ p _{23K} ^{\alpha \beta \gamma *}  p  _{1K} ^{ \beta \gamma *}  (E _\beta)}
\ar[r]^-{\epsilon} _-{\sim}
&
{p _{2K} ^{\alpha \beta \gamma*}(E _\beta),}
}
\xymatrix  @R=0,3cm @C=0,45cm {
{  p _{13K} ^{\alpha \beta \gamma *} p  _{2K} ^{\alpha \gamma *}  (E _\gamma )}
\ar[r] ^-{\epsilon} _-{\sim}
\ar[d] ^-{p _{13K} ^{\alpha \beta \gamma *} (\eta _{\alpha \gamma})} _-{\sim}
&
{p _{3K} ^{\alpha \beta \gamma*}  (E _\gamma )}
\ar@{.>}[d]^-{\eta _{13} ^{\alpha \beta \gamma }}
\\
{ p _{13K} ^{\alpha \beta \gamma *}  p  _{1K} ^{\alpha \gamma *}  (E _\alpha)}
\ar[r]^-{\epsilon} _-{\sim}
&
{p _{1K} ^{\alpha \beta \gamma*}(E _\alpha),}
}
\end{equation}
where the isomorphisms $\epsilon$ are those of the form \ref{glueingisocntn-iso1},
are commutative.

\item [-]
A morphism
$f= (f _\alpha)_{\alpha \in \Lambda}$ :
$((E _{\alpha})_{\alpha \in \Lambda},\, (\eta _{\alpha\beta}) _{\alpha ,\beta \in \Lambda})
\rightarrow
((E '_{\alpha})_{\alpha \in \Lambda},\, (\eta '_{\alpha\beta}) _{\alpha ,\beta \in \Lambda})$
of $\mathrm{MIC} ^\dag (Y,  (\X   _\alpha )_{\alpha \in \Lambda}/K)$ 
is by definition a familly of 
morphisms $f _\alpha$ : $ E _\alpha \rightarrow E ' _\alpha$
commuting with glueing data.

\end{itemize}
\end{dfn}

\begin{prop}
\label{eqcat-iso-reco}
With notation \ref{dfnMICdagalphabeta}, 
there exists a canonical equivalence of categories
  $$u ^*  _{0K}
  \colon
  \mathrm{MIC} ^{\dag} (Y, X,\fP/K) 
    \cong
  \mathrm{MIC} ^\dag (Y,  (\X   _\alpha )_{\alpha \in \Lambda}/K).$$
\end{prop}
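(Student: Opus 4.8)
The plan is to construct the functor $u^*_{0K}$ and a quasi-inverse $\mathcal{R}ecol_K$ by a descent argument entirely parallel to the coherent $\D$-module case treated in Theorem \ref{prop1}, but now using the realization functor \ref{dfn-real} and the glueing isomorphisms \ref{glueingisocntn-iso1} in place of the functors $u^!$, $u_+$ and the $\tau$-isomorphisms \ref{prop-glueiniso-coh1}. First I would define $u^*_{0K}$ on objects: given $E_\fP \in \mathrm{MIC}^\dag(Y,X,\fP/K)$, set $E_\alpha := u^*_{\alpha K}(E_\fP|_{\fP_\alpha})$, which lies in $\mathrm{MIC}^\dag((Y_\alpha,X_\alpha,\X_\alpha)/K)$ by the functoriality recalled in \ref{inv-image}. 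For the glueing data, I would use the isomorphisms $\epsilon_{u,v}$ of \ref{glueingisocntn-iso1}: since $p_1^{\alpha\beta}\circ u_{\alpha\beta}$ and $p_2^{\alpha\beta}\circ u_{\alpha\beta}$ reduce to the same morphism $X_{\alpha\beta}\to P_\alpha$ (resp. $\to P_\beta$), the Taylor isomorphism $\epsilon$ of $E_\fP$ restricts to a canonical isomorphism $\eta_{\alpha\beta}\colon p_{2K}^{\alpha\beta *}(E_\beta) \riso p_{1K}^{\alpha\beta *}(E_\alpha)$. The cocycle condition $\eta_{13}^{\alpha\beta\gamma} = \eta_{12}^{\alpha\beta\gamma}\circ\eta_{23}^{\alpha\beta\gamma}$ then follows from the transitivity formula $\epsilon_{u,w} = \epsilon_{u,v}\circ\epsilon_{v,w}$ of \ref{glueingisocntn}.2 together with the pullback compatibilities \ref{glueingisocntn}.3, exactly as the cocycle condition in Lemma \ref{const-u0!} followed from the formulas of \ref{2.1.5Be2}.

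Next I would construct the quasi-inverse functor $\mathcal{R}ecol_K\colon \mathrm{MIC}^\dag(Y,(\X_\alpha)_{\alpha\in\Lambda}/K) \to \mathrm{MIC}^\dag(Y,X,\fP/K)$. Given $((E_\alpha), (\eta_{\alpha\beta}))$, one pushes each $E_\alpha$ forward to an object of $\mathrm{MIC}^\dag((Y_\alpha,X_\alpha,\fP_\alpha)/K)$ via the closed immersion $u_\alpha$ (this pushforward for overconvergent isocrystals along a closed immersion of the ambient formal schemes being available from Le Stum's theory), obtains glueing isomorphisms on the $\fP_{\alpha\beta}$ from the $\eta_{\alpha\beta}$ via the adjunction-type squares as in \ref{carre1eqcatrecolD-mod}, verifies these satisfy the cocycle condition, and glues to get an object of $\mathrm{MIC}^\dag(Y,X,\fP/K)$ — here using that $\mathrm{MIC}^\dag(Y,X,\fP/K)$ is a stack for the Zariski topology on $\fP$ in the variable $\fP$, which itself reduces to the sheaf property of coherent $j^\dag\O_{\X_K}$-modules with overconvergent connection. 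Then I would check $\mathcal{R}ecol_K \circ u^*_{0K}\cong \mathrm{id}$ and $u^*_{0K}\circ\mathcal{R}ecol_K \cong \mathrm{id}$; both verifications are local and reduce to the statement that, Zariski-locally on $\fP$ where $X$ lifts, $u^*_{0K}$ is the genuine pullback $u^*_K$ and is an equivalence — which is essentially the content of \ref{dfn-real} combined with Berthelot--Kashiwara-type full faithfulness for overconvergent isocrystals supported on $X$.

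The main obstacle I expect is the careful bookkeeping of the glueing isomorphisms and their compatibilities: one must check that the $\eta_{\alpha\beta}$ produced from the Taylor isomorphism $\epsilon$ agree, under $\mathrm{real}_\fP$, with the $\theta_{\alpha\beta}$ coming from the $\tau$-isomorphisms of \ref{prop-glueiniso-coh1} in the $\D$-module picture, so that the present equivalence is compatible with the equivalence $\mathrm{Coh}((\X_\alpha)_{\alpha\in\Lambda},Z/K) \cong \mathrm{Coh}(X,\fP,T/K)$ of Theorem \ref{prop1} via $\sp_+$. Concretely this amounts to unwinding that the pullback isomorphisms $\phi_{p_0},\phi_{p_1}$ of an overconvergent isocrystal realize, after applying $\sp_+$, the canonical transposition isomorphisms $\tau_{f,f'}$ for the two projections, which is a diagram chase using the explicit local description of $\tau$ recalled in \ref{rem-flat-resol-tau} and the explicit form of the connection in \ref{inv-image-real}. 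Since the present proposition only asserts the bare equivalence of categories $u^*_{0K}$, I would state it cleanly and, as in the proofs of \ref{const-u0!}, \ref{const-u0+} and \ref{prop1}, refer to \cite[2.5.4]{caro-construction} for the analogous descent formalism, indicating that one replaces $(u^!, u_+, \tau)$ throughout by $(u^*_K, u_{K+}, \epsilon)$.
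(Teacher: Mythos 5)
Your core construction of $u^*_{0K}$ coincides with the paper's: restrict $E_\fP$ to $]X_\alpha[_{\fP_\alpha}$, pull back along $\phi_\alpha=(id,id,u_\alpha)$, and produce the glueing data from the Taylor isomorphism $\epsilon$ via \ref{glueingisocntn-iso1}, the cocycle condition following from the formulas of \ref{glueingisocntn}. Where you differ is in the verification that this is an equivalence. The paper does not build a recollement functor $\mathcal{R}ecol_K$; instead it isolates the single key input that $\phi^*_{\alpha K}\colon \mathrm{MIC}^\dag((Y_\alpha,X_\alpha,\fP_\alpha)/K)\to\mathrm{MIC}^\dag((Y_\alpha,X_\alpha,\X_\alpha)/K)$ is an equivalence of categories, obtained by factoring $\phi_\alpha$ through the graph $\gamma_{u_\alpha}\colon (Y_\alpha,X_\alpha,\X_\alpha)\to(Y_\alpha,X_\alpha,\X_\alpha\times_\S\fP_\alpha)$ (which has a retraction, so its pullback is an equivalence by \cite[7.1.7]{LeStum-livreRigCoh}) and the second projection (proper and smooth, so its pullback is an equivalence by \cite[7.1.8]{LeStum-livreRigCoh}). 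Full faithfulness of $u^*_{0K}$ then comes from full faithfulness of $\phi^*_{\alpha K}$ and faithfulness of $\phi^*_{\alpha\beta K}$; essential surjectivity is handled as in \cite[2.5.7]{caro-construction}. Your ``pushforward of an overconvergent isocrystal along a closed immersion of ambient formal schemes'' is really just the quasi-inverse of $\phi^*_{\alpha K}$ — there is no Berthelot--Kashiwara phenomenon at play here, since the isocrystal category for a fixed pair $(Y_\alpha,X_\alpha)$ is independent of the ambient frame and nothing is ``supported on $X$''; phrasing it as a pushforward obscures this. Finally, the compatibility you flag as the main obstacle — matching $\eta_{\alpha\beta}$ with $\theta_{\alpha\beta}$ under $\sp_*$ — is not part of this proposition at all; it is treated separately in \ref{sp-eps-tau} and in the construction of $\sp_*$ in \ref{lem1pre-sp+plfid}, and only matters for proving properties of $\sp_+$ later.
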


\begin{proof}
1) Let $\phi _\alpha : =(id,id, u _\alpha) 
\colon 
(Y _\alpha, X _\alpha, \X _\alpha)
\to 
(Y _\alpha, X _\alpha, \fP _\alpha)$ be the proper morphism of frames.
We remark that  $\phi _\alpha$ is the composition of the morphism
of frames
$\gamma _{u _\alpha}
\colon 
(Y _\alpha, X _\alpha, \X _\alpha)
\to 
(Y _\alpha, X _\alpha, \X _\alpha \times _\S \fP _\alpha)$
induced by the graph of $u _\alpha$
and of 
$p _2\colon (Y _\alpha, X _\alpha, \X _\alpha \times _\S \fP _\alpha)
\to 
(Y _\alpha, X _\alpha, \fP _\alpha)$ induced by the second projection.
Since $p _2$ is proper and smooth (see Definitions \cite[3.3.5 and 3.3.10]{LeStum-livreRigCoh}),
then following 
Theorem \cite[7.1.8]{LeStum-livreRigCoh}
we get the equivalence of categories
$p _{2K} ^* \colon 
\mathrm{MIC} ^{\dag} ((Y _\alpha, X _\alpha, \fP _\alpha)/K)
\cong
\mathrm{MIC} ^{\dag} ((Y _\alpha, X _\alpha, \X _\alpha \times _\S \fP _\alpha)/K)$.
Since $\gamma _{u _\alpha}$ has a retraction (the first projection), 
then using 
Corollary \cite[7.1.7]{LeStum-livreRigCoh}
we get that
$\gamma _{u _\alpha K} ^*$ is also an equivalence of categories. 
Hence, by composition, so is 
$\phi _{\alpha K} ^*
\colon 
\mathrm{MIC} ^{\dag} ((Y _\alpha, X _\alpha, \fP _\alpha)/K)
\cong
\mathrm{MIC} ^{\dag} ((Y _\alpha, X _\alpha, \X _\alpha)/K)$.

1') Let $\phi _{\alpha \beta} : =(id,id, u _{\alpha \beta}) 
\colon 
(Y _{\alpha \beta}, X _{\alpha \beta}, \X _{\alpha \beta})
\to 
(Y _{\alpha \beta}, X _{\alpha \beta}, \fP _{\alpha \beta})$ be the morphism of frames. 
Using the same arguments than in 1), we get the equivalence of categories
$\phi _{\alpha \beta K} ^*
\colon 
\mathrm{MIC} ^{\dag} ((Y _{\alpha \beta}, X _{\alpha \beta}, \fP _{\alpha \beta})/K)
\cong
\mathrm{MIC} ^{\dag} ((Y _{\alpha \beta}, X _{\alpha \beta}, \X _{\alpha \beta})/K)$.

2) Let $E _\fP  \in \mathrm{MIC} ^{\dag} (Y, X,\fP/K)$.  
Using the properties of the glueing isomorphisms \ref{glueingisocntn-iso1},
we get canonically on the object
  $ (\phi ^* _{\alpha K} ( E _\fP |_{]X _\alpha[ _{\fP _\alpha}})) _{\alpha \in \Lambda}$ a glueing data 
  making it an object 
  of $ \mathrm{MIC} ^\dag (Y,  (\X   _\alpha )_{\alpha \in \Lambda}/K)$.
The functoriality is obvious and 
this yields the canonical functor 
$u ^*  _{0K}
  \colon
  \mathrm{MIC} ^{\dag} (Y, X,\fP/K) 
    \cong
  \mathrm{MIC} ^\dag (Y,  (\X   _\alpha )_{\alpha \in \Lambda}/K)$.
Since 
$\phi _{\alpha  K} ^*$ is fully faithful 
and 
$\phi _{\alpha \beta K} ^*$ is faithful, we check easily that the functor
$u ^*  _{0K}$ is fully faithful. 
We check the essential surjectivity 
similarly to \cite[2.5.7]{caro-construction}.
\end{proof}

\subsection{Quasi-inverse equivalences of categories via $\sp _*$ and $\sp ^*$}
Let $\X $ be a smooth  formal scheme over $\S $.
Let $Z$ be a divisor of $X$  and $\Y $ the open subset of $\X $ complementary to the support of $Z$.

\begin{thm}
[Berthelot]
\label{thm-eqcat-cvisoc}
\begin{enumerate}
\item 
\label{thm-eqcat-cvisoc1}
The functor $\sp _*$ induces an equivalence of categories between the category of 
convergent isocrystals on $Y$, and the category of 
$\D ^\dag _{\Y, \Q} $-modules
which are $\O _{\Y,\Q} $-coherent.
Moreover, a $\D ^\dag _{\Y, \Q} $-module
which is $\O _{\Y,\Q} $-coherent
is also $\D ^\dag _{\Y, \Q} $-coherent
and 
$\O _{\Y,\Q} $-locally projective of finite type.

\item 
\label{thm-eqcat-cvisoc2}
Let $\E$ be a coherent 
$\D ^\dag _{\Y, \Q} $-module
which is $\O _{\Y,\Q} $-locally projective of finite type. We have the following properties.
\begin{enumerate}
\item For any $m \in \N$, there exists 
a (coherent) $\widehat{\D} ^{(m)} _{\Y} $-module
$\overset{\circ}{\E}$, coherent over $\O _\Y$ together with 
an isomorphism of $\widehat{\D} ^{(m)} _{\Y, \Q} $-modules
$\overset{\circ}{\E} _\Q \riso \E$.
\item 
\label{thm-eqcat-cvisoc-arrows}
The module $\E$ is $\D  _{\Y, \Q} $-coherent 
 and for any $m \in \N$
the canonical homomorphisms 
\begin{gather}
\notag
\E 
\to 
\widehat{\D} ^{(m)} _{\Y, \Q}  \otimes _{\D  _{\Y, \Q} }
\E
,
\
\
\E
\to 
\D ^\dag _{\Y, \Q} 
 \otimes _{\widehat{\D} ^{(m)} _{\Y, \Q} }
\E
\end{gather}
are isomorphisms.

\end{enumerate}

\end{enumerate}

\end{thm}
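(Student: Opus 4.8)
The plan is to deduce both assertions from Berthelot's foundational study of $\O$-coherent arithmetic $\D$-modules and of convergent isocrystals, checking only that no argument there uses the perfectness of $k$. First I would recall the $p$-adic avatar of the classical dictionary between crystals and modules with integrable connection: via $\sp _*$, the category of convergent isocrystals on $Y$ is identified with that of coherent $\O _{\Y ,\Q}$-modules endowed with an integrable connection relative to $\S$ satisfying the convergence (Taylor) condition, which is essentially the content of \cite{Be1} combined with the description of convergence in terms of $m$-PD-stratifications (and, if one prefers the isocrystalline language, it is the empty-divisor case of the formalism of \cite[7.2]{LeStum-livreRigCoh}). The second step is the formal observation that, since $\D ^\dag _{\Y ,\Q}$ is topologically generated over $\O _{\Y ,\Q}$ by the operators of all levels $m$, giving a $\D ^\dag _{\Y ,\Q}$-module structure on an $\O _{\Y ,\Q}$-coherent module $\E$ amounts exactly to giving an integrable connection for which the level-$m$ operators act with the growth imposed by convergence; this yields the equivalence of part (\ref{thm-eqcat-cvisoc1}). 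Since a convergent isocrystal is $\O _{\Y _K}$-locally free, its realization is $\O _{\Y ,\Q}$-locally projective of finite type, and Berthelot's coherence theorem for $\D ^\dag _{\Y ,\Q}$ (together with \ref{Noot-Huyghe-finitehomoldim}) then shows such an $\E$ is automatically $\D ^\dag _{\Y ,\Q}$-coherent.

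For part (\ref{thm-eqcat-cvisoc2}), assertion (a) is the ``clearing of denominators'' argument of \cite{Be2}: working locally on $\Y$ with local coordinates, an integrable connection on an $\O _{\Y ,\Q}$-locally projective module has a connection matrix with denominators bounded in terms of the level, so for each fixed $m$ one may choose an $\O _\Y$-lattice $\overset{\circ}{\E}$ stable under $\widehat{\D} ^{(m)} _{\Y}$ with $\overset{\circ}{\E} _\Q \riso \E$; the estimates controlling the action of $\widehat{\D} ^{(m)} _{\Y}$ depend only on the local structure of $\widehat{\D} ^{(m)} _{\Y}$ and not on $k$. Assertion (b) is then a faithfully flat base change statement along the tower $\D _{\Y ,\Q} \to \widehat{\D} ^{(m)} _{\Y ,\Q} \to \D ^\dag _{\Y ,\Q}$: one checks on an affine that $\E$ is $\D _{\Y ,\Q}$-coherent and that the two transition morphisms displayed in (\ref{thm-eqcat-cvisoc-arrows}) become isomorphisms when evaluated on the $\O _{\Y ,\Q}$-locally projective module $\E$, again reducing to Berthelot's analysis in \cite{Be2}.

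The main obstacle is the estimate underlying (\ref{thm-eqcat-cvisoc2}): one must check both that the $\widehat{\D} ^{(m)} _{\Y}$-stable lattices exist and that the passage from level $m$ to $\D ^\dag _{\Y ,\Q}$ loses no information, i.e.\ that the natural arrows are genuinely isomorphisms and not merely monomorphisms. This is exactly Berthelot's comparison between the radius of convergence of a convergent isocrystal and the growth of the operators of level $m$; it is a local statement on $\Y$ whose proof uses only the formal description of $\widehat{\D} ^{(m)} _{\Y}$ in local coordinates, so Berthelot's argument applies verbatim, the only new remark being that the residue field $k$ is nowhere assumed perfect.
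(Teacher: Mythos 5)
Your proposal takes the same route as the paper: the paper's own ``proof'' is a bare citation to Berthelot's \cite[4.1.4]{Be1} and \cite[3.1.2 and 3.1.4]{Be0}, and your sketch is essentially a commented reconstruction of those arguments together with the (correct) observation that nothing in them requires the residue field to be perfect. So the overall strategy matches.

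One logical imprecision is worth flagging. To justify the final assertion of part~(\ref{thm-eqcat-cvisoc1}) -- that an $\O _{\Y ,\Q}$-coherent $\D ^\dag _{\Y ,\Q}$-module is automatically $\D ^\dag _{\Y ,\Q}$-coherent -- you appeal to ``Berthelot's coherence theorem for $\D ^\dag _{\Y ,\Q}$ together with \ref{Noot-Huyghe-finitehomoldim}''. Theorem \ref{Noot-Huyghe-finitehomoldim} is a \emph{finite cohomological dimension} statement (it identifies $D^{\mathrm{b}} _{\mathrm{coh}}$ with perfect complexes); it plays no role in establishing coherence of an $\O$-coherent module. In Berthelot's argument, $\D ^\dag _{\Y ,\Q}$-coherence is a \emph{consequence} of parts~(a) and (b) of (\ref{thm-eqcat-cvisoc2}): one first produces, for some level $m$, a $p$-torsion-free $\widehat{\D} ^{(m)} _{\Y}$-coherent lattice $\overset{\circ}{\E}$ that is also $\O _\Y$-coherent, shows that the transition maps to higher levels are isomorphisms after inverting $p$, and then concludes $\E \riso \D ^\dag _{\Y ,\Q} \otimes _{\widehat{\D} ^{(m)} _{\Y ,\Q}} \overset{\circ}{\E} _\Q$ is $\D ^\dag _{\Y ,\Q}$-coherent by flatness of the extension $\widehat{\D} ^{(m)} _{\Y ,\Q} \to \D ^\dag _{\Y ,\Q}$ and noetherianity of $\widehat{\D} ^{(m)} _{\Y}$. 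So the dependency runs in the opposite direction from the one your write-up suggests: part~(\ref{thm-eqcat-cvisoc2}) is needed to finish part~(\ref{thm-eqcat-cvisoc1}), not the other way round, and Noot-Huyghe's theorem is not the right tool.
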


\begin{proof}
This is \cite[4.1.4]{Be1} and \cite[3.1.2 and 3.1.4]{Be0}.
\end{proof}

\begin{prop}
[Berthelot]
\label{445Be1}
The functor $\sp _*$ induces an exact  fully faithful functor from the category 
$\mathrm{MIC} ^{\dag} (Y, X,\X /K) $
to the category of 
$\D ^\dag _{\X } (\hdag Z) _\Q$-modules.
Its essential image consists in 
$\D ^\dag _{\X } (\hdag Z) _\Q$-modules
$\E$ such that there exists $n _0\in \N$, and 
a coherent $\widehat{\B} ^{(n_0)} _{\X} (Z) _\Q$-module $\E _0$
together with an isomorphism
\begin{equation}
\label{4451Be1}
\underrightarrow{\lim} _{n \geq n _0}
\widehat{\B} ^{(n)} _{\X} (Z) _\Q \otimes _{\widehat{\B} ^{(n _0)} _{\X} (Z) _\Q}
\E _0
\riso 
\E,
\end{equation}
satisfying the following condition:
For any $m \in \N$, there exists an integer $n _m \geq \max ( n _{m-1}, m)$
and a structure of 
$(\widehat{\B} ^{(n _m)} _{\X} (Z) 
\widehat{\otimes}
\widehat{\D} ^{(m)} _{\X /\S }) _\Q$-module
on 
$\widehat{\B} ^{(n _m)} _{\X} (Z) _\Q \otimes _{\widehat{\B} ^{(n _0)} _{\X} (Z) _\Q}
\E _0$
such that 
the homomorphisms
$$
\widehat{\B} ^{(n _m)} _{\X} (Z) _\Q \otimes _{\widehat{\B} ^{(n _0)} _{\X} (Z) _\Q}
\E _0
\to 
\widehat{\B} ^{(n _{m+1})} _{\X} (Z) _\Q \otimes _{\widehat{\B} ^{(n _0)} _{\X} (Z) _\Q}
\E _0
$$
are
$(\widehat{\B} ^{(n _m)} _{\X} (Z) 
\widehat{\otimes}
\widehat{\D} ^{(m)} _{\X /\S }) _\Q$-linear
and the isomorphism
\ref{4451Be1} is 
$\D ^\dag _{\X } (\hdag Z) _\Q$-linear.
\end{prop}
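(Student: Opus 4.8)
The plan is to deduce this statement from Berthelot's work. In fact, this is exactly \cite[4.4.5]{Be1}, adapted to our slightly more general setting where $k$ is not assumed to be perfect. So the first thing I would do is check that the proof given by Berthelot in \cite[4.4.5]{Be1} does not use the perfectness of $k$ anywhere, which is the case: the construction only relies on the local structure of $\widehat{\B} ^{(m)} _{\X} (Z)$, on Theorem \ref{thm-eqcat-cvisoc} (which is \cite[4.1.4]{Be1}, \cite[3.1.2 and 3.1.4]{Be0}, and itself does not need $k$ perfect), and on the equivalence of categories $\mathrm{real} _\fP$ of \ref{dfn-real} together with the standard formalism of overconvergent isocrystals recalled from \cite{LeStum-livreRigCoh}.

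First I would recall the construction of the functor. For $E \in \mathrm{MIC} ^{\dag} (Y, X,\X /K)$, one considers the coherent $j ^\dag \O _{\X _K}$-module with overconvergent connection underlying $E$; restricting the connection to level $m$ one obtains, for a suitable cofinal sequence of strict neighbourhoods, a compatible system of coherent $\widehat{\B} ^{(n)} _{\X} (Z) _\Q$-modules whose inductive limit is a $\D ^\dag _{\X } (\hdag Z) _\Q$-module; this limit is $\sp _* (E)$. One then checks $\sp _*$ is exact (it is the composite of an exact realization functor with the exact functor $\underrightarrow{\lim}$ over the levels) and that the specified condition on the $\widehat{\B} ^{(n _m)} _{\X} (Z) \widehat{\otimes} \widehat{\D} ^{(m)} _{\X /\S }$-module structures is exactly what encodes the datum of an overconvergent connection of level $m$ for each $m$.

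The full faithfulness I would obtain as in Berthelot: a morphism of $\D ^\dag _{\X } (\hdag Z) _\Q$-modules between objects of the essential image is, by coherence and the description \ref{4451Be1}, determined at some finite level $n$, hence comes from a horizontal $j ^\dag \O _{\X _K}$-linear morphism, i.e. from a morphism in $\mathrm{MIC} ^{\dag} (Y, X,\X /K)$; conversely a morphism in $\mathrm{MIC} ^{\dag}$ induces one after applying $\sp _*$, and the two constructions are mutually inverse by the compatibility of $\sp _*$ with pullbacks and Taylor isomorphisms. For the description of the essential image, one direction is the construction above; for the converse one starts from a $\D ^\dag _{\X } (\hdag Z) _\Q$-module $\E$ satisfying the stated conditions, uses Theorem \ref{thm-eqcat-cvisoc} to see that each $\widehat{\B} ^{(n _m)} _{\X} (Z) _\Q \otimes \E _0$ yields, by the $\widehat{\D} ^{(m)} _{\X /\S }$-module structure, a convergent-type object which after passing to $j ^\dag$ assembles into an overconvergent isocrystal realizing $\E$.

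The main obstacle, and the only point where one must genuinely be careful, is verifying that the passage from a level-$m$ connection to a coherent $\widehat{\B} ^{(n _m)} _{\X} (Z) _\Q$-module structure, and the coherence of $\E _0$, go through without the perfectness hypothesis. This is where one uses that $\fP$ (hence $\X$) is noetherian, that $\widehat{\B} ^{(n)} _{\X} (Z)$ has the same local description as in \cite[4.2]{Be1} regardless of $k$, and that the coherence results for $\widehat{\D} ^{(m)} _{\X /\S }$ recalled in \cite[3.3-3.4]{Be1} hold over any base $\V$. Once these ingredients are in place, the proof of \cite[4.4.5]{Be1} transcribes verbatim, so I would simply indicate this and refer to \cite[4.4.5]{Be1}.
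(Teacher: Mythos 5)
Your approach matches the paper exactly: the paper's own proof is simply the citation ``See \cite[4.4.5 and 4.4.12]{Be1}'', and you correctly observe that Berthelot's arguments transcribe verbatim without the perfectness hypothesis on $k$. The one small correction is that the description of the essential image is \cite[4.4.12]{Be1} rather than \cite[4.4.5]{Be1} (which gives the construction and full faithfulness), so you should cite both.
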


\begin{proof}
See \cite[4.4.5 and 4.4.12]{Be1}.
\end{proof}

\begin{lem}
\label{LetterLem2}
We suppose $\X$ affine.
Let $n \geq m \geq 0$ be two integers. 
Let $\E$ be a coherent 
$\widehat{\B} ^{(n)} _{\X} (Z) 
\widehat{\otimes}
\widehat{\D} ^{(m)} _{\X /\S }$-module. 
Then $\E$ is $\widehat{\B} ^{(n)} _{\X} (Z) $-coherent if and only if
$\Gamma (\X, \E) $ is a 
$\Gamma (\X, \widehat{\B} ^{(n)} _{\X} (Z) )$-module of finite type.
\end{lem}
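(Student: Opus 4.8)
The statement to prove is Lemma \ref{LetterLem2}: for $\fX$ affine and $n\geq m\geq 0$, a coherent $\widehat{\B} ^{(n)} _{\X} (Z) \widehat{\otimes}\widehat{\D} ^{(m)} _{\X /\S }$-module $\E$ is $\widehat{\B} ^{(n)} _{\X} (Z)$-coherent if and only if $\Gamma(\X,\E)$ is a $\Gamma(\X,\widehat{\B} ^{(n)} _{\X} (Z))$-module of finite type. Since the ``only if'' direction is immediate (a $\widehat{\B}$-coherent module on an affine formal scheme has finitely generated global sections, because $\widehat{\B} ^{(n)} _{\X} (Z)$ is a noetherian ring — see \cite[4.2]{Be1} — and by the usual affine comparison), the content is in the ``if'' direction. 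The strategy is to exploit the separate descriptions of $\E$ as a module over the two rings $\widehat{\B} ^{(n)} _{\X} (Z)$ and $\widehat{\D} ^{(m)} _{\X /\S }$ and to bootstrap from the finite type hypothesis over the smaller ring $\Gamma(\X,\widehat{\B} ^{(n)} _{\X} (Z))$.

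First I would set $A := \Gamma(\X,\O_\X)$, $B := \Gamma(\X,\widehat{\B} ^{(n)} _{\X} (Z))$, and $D := \Gamma(\X, \widehat{\B} ^{(n)} _{\X} (Z) \widehat{\otimes}\widehat{\D} ^{(m)} _{\X /\S })$, so that by the affine comparison equivalence for coherent modules over these (noetherian) rings — available here because $\widehat{\B} ^{(n)} _{\X} (Z) \widehat{\otimes}\widehat{\D} ^{(m)} _{\X /\S }$ is coherent, indeed the relevant equivalence is the one recalled in the excerpt (analogue of \cite[3]{Be1} and \ref{eqcatindcoh}) — it suffices to work with the $D$-module $M := \Gamma(\X,\E)$ and show that $M$ is finitely generated over $B$, i.e. that $M$ is already $B$-coherent. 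Set $M_0 := M$ viewed as a $B$-module; by hypothesis it is finitely generated, say by $x_1,\dots,x_r$. The point is that $M$ carries the additional action of differential operators of level $m$, and since $n\geq m$, the ring $\widehat{\B} ^{(n)} _{\X} (Z)$ is stable under these operators in the precise sense that $\widehat{\B} ^{(n)} _{\X} (Z) \widehat{\otimes}\widehat{\D} ^{(m)} _{\X /\S }$ is generated as a left $\widehat{\B} ^{(n)} _{\X} (Z)$-module by the operators $\underline{\partial}^{<\underline{k}>_{(m)}}$ with $B$ as coefficient ring; the inequality $n\geq m$ ensures the $m$-PD-structure on $\widehat{\B} ^{(n)} _{\X} (Z)$ is the relevant one and the Leibniz formulas \cite[2.2.4]{Be1} keep the action within $B\cdot M_0$. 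Concretely, I would show that the $B$-submodule $B\cdot\{x_1,\dots,x_r\}$ of $M$ is stable under $\partial^{<\underline{k}>_{(m)}}$ for all $\underline{k}$: this is where one uses that $\E$ is already $\widehat{\D}^{(m)}$-coherent (hence locally of finite presentation over it) together with the finite-generation over $B$ to produce, after possibly enlarging the generating set, a $B$-stable and $\widehat{\D}^{(m)}$-stable finitely generated submodule.

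The cleanest way to organize this is: (1) since $\E$ is $\widehat{\B} ^{(n)} _{\X} (Z) \widehat{\otimes}\widehat{\D} ^{(m)} _{\X /\S }$-coherent and $\X$ is affine and the ring is noetherian, $M$ is a finitely presented $D$-module; (2) choose $\O_\X$-coherent (equivalently, finitely generated $A$-module) generators — one can find a coherent $\O_\X$-submodule $\FF'\subset\E$ with $\widehat{\D}^{(m)}_{\X}(Z)\otimes_{\O_\X}\FF'\twoheadrightarrow\E$ surjective, exactly as in the proof of Lemma \ref{stab-coh-f_+}; (3) by the finite-type hypothesis over $B$, the images of the generators of $\FF'$ together with finitely many of their $\partial$-translates already generate $M$ over $B$; (4) conclude using that a finitely $B$-generated submodule of a finitely presented $D$-module which is all of $M$ forces $M$ to be $B$-coherent, since $B$ is noetherian. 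The main obstacle will be step (3)–(4): making precise that finitely many $\partial^{<\underline{k}>_{(m)}}$-translates of the $A$-module generators suffice to generate $M$ over $B$. For this I expect to invoke $p$-adic completeness: write $M$ as the $p$-adic completion of $\bigcup_i M/p^{i+1}$, reduce modulo $p^{i+1}$ where the rings become the $\D^{(m)}_{X_i/S_i}(Z)$ of the Notation section, and there the argument is the classical one (a coherent module over the subring generated by $B$-coefficients and $\partial$'s that is $B$-finitely generated is $B$-coherent, because the $\partial$-action is determined $B$-linearly on generators by the Leibniz rule and the finiteness of $B$). Lifting back via completeness and the fact that $M$ has no $p$-torsion issues (it is $p$-adically separated and complete as a coherent module over a $p$-adically complete noetherian ring) finishes the proof. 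One subtle point to check carefully is that the hypothesis $n\geq m$ is genuinely used — it guarantees the compatibility of the level-$m$ differential structure with the $\widehat{\B}^{(n)}$-coefficients via the canonical morphism $\widehat{\B} ^{(n)} _{\X} (Z) \to \widehat{\B} ^{(n)} _{\X} (Z)$ being a $\widehat{\D}^{(m)}$-module map, which is precisely the setting of \cite[4.2]{Be1} and of Proposition \ref{445Be1}.
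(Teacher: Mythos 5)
Your reduction at the start is correct, but the reformulation that follows hides the real content of the lemma. You write that ``it suffices to work with the $D$-module $M := \Gamma(\X,\E)$ and show that $M$ is finitely generated over $B$, i.e.\ that $M$ is already $B$-coherent'' — but that \emph{is} the hypothesis of the ``if'' direction, not something to prove. Consequently your steps (3)--(4), and the entire bootstrap about $B\cdot\{x_1,\dots,x_r\}$ being stable under $\underline{\partial}^{<\underline{k}>_{(m)}}$, are vacuous: since $x_1,\dots,x_r$ generate $M$ over $B$ by assumption, $B\cdot\{x_1,\dots,x_r\} = M$ and is trivially stable. The actual content of the converse direction is a \emph{sheaf-level} comparison: $\E$ is given to you as the $D$-module sheafification of $M$ (via the theorem of type A for coherent $\widehat{\B}^{(n)}_\X(Z)\widehat{\otimes}\widehat{\D}^{(m)}_{\X/\S}$-modules, namely $\E \riso \left(\widehat{\B}^{(n)}_\X(Z)\widehat{\otimes}\widehat{\D}^{(m)}_{\X/\S}\right)\otimes_D M$), and you must show that the \emph{different} sheafification $\widehat{\B}^{(n)}_\X(Z)\otimes_B M$ maps isomorphically onto it. Nothing in your bootstrap establishes this.

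The paper's argument handles exactly this point and is considerably shorter. It invokes theorem A twice — once for coherent $\widehat{\B}^{(n)}_\X(Z)\widehat{\otimes}\widehat{\D}^{(m)}_{\X/\S}$-modules to identify $\E$ with $\left(\widehat{\B}^{(n)}_\X(Z)\widehat{\otimes}\widehat{\D}^{(m)}_{\X/\S}\right)\otimes_D M$, and (implicitly) once for coherent $\widehat{\B}^{(n)}_\X(Z)$-modules to know that $\widehat{\B}^{(n)}_\X(Z)\otimes_B M$ is coherent once $M$ is $B$-finite. The nontrivial step is then that the canonical map
\[
\widehat{\B}^{(n)}_\X(Z)\otimes_B M \;\longrightarrow\; \left(\widehat{\B}^{(n)}_\X(Z)\widehat{\otimes}\widehat{\D}^{(m)}_{\X/\S}\right)\otimes_D M
\]
is an isomorphism, which it checks by passing to the reductions modulo $p^{i+1}$ and appealing to the comparison isomorphism of \cite[2.3.5.2]{Be1}: at each finite level, the $\O_{X_i}$-module sheafification of $M/p^{i+1}M$ is the same whether you build it over $\B^{(n)}_{P_i}(Z)$ or over $\B^{(n)}_{P_i}(Z)\otimes\D^{(m)}_{X_i/S_i}$, because both rings are $\O_{X_i}$-algebras of finite type acting on the same underlying $\O_{X_i}$-module. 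Your last paragraph does gesture at the mod-$p^{i+1}$ reduction and completeness, which is the right instinct, but it is attached to the wrong claim (the tautological $B$-finiteness of $M$) rather than to the sheaf identification. Finally, your stated role for the hypothesis $n\geq m$ is slightly off (and contains a typo, $\widehat{\B}^{(n)}_\X(Z)\to\widehat{\B}^{(n)}_\X(Z)$): it is needed in order for the ring $\widehat{\B}^{(n)}_\X(Z)\widehat{\otimes}\widehat{\D}^{(m)}_{\X/\S}$ to be defined and to enjoy the finiteness properties (noetherianness, theorem of type A) that the argument uses.
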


\begin{proof}
Since we have theorem of type $A$ for 
coherent $\widehat{\B} ^{(n)} _{\X} (Z) $-modules,
then the $\widehat{\B} ^{(n)} _{\X} (Z) $-coherence of $\E$ implies 
that $\Gamma (\X, \E) $ is a 
$\Gamma (\X, \widehat{\B} ^{(n)} _{\X} (Z) )$-module of finite type.
Conversely, suppose
$\Gamma (\X, \E) $ is a 
$\Gamma (\X, \widehat{\B} ^{(n)} _{\X} (Z) )$-module of finite type.
Since we have  theorem of type $A$ for 
coherent 
$\widehat{\B} ^{(n)} _{\X} (Z) \widehat{\otimes}\widehat{\D} ^{(m)} _{\X /\S }$-modules,
then the canonical morphism
$$\widehat{\B} ^{(n)} _{\X} (Z) \widehat{\otimes}\widehat{\D} ^{(m)} _{\X /\S }
\otimes
_{\Gamma (\X,\widehat{\B} ^{(n)} _{\X} (Z) \widehat{\otimes}\widehat{\D} ^{(m)} _{\X /\S } )}
\Gamma (\X, \E)
\to 
\E$$
is an isomorphism.
Since 
$\Gamma (\X, \E)$ is of finite type over 
$\Gamma (\X,\widehat{\B} ^{(n)} _{\X} (Z) \widehat{\otimes}\widehat{\D} ^{(m)} _{\X /\S } )$
and over
$\Gamma (\X,\widehat{\B} ^{(n)} _{\X} (Z) )$, then (taking inverse limits
of the isomorphisms of the form \cite[2.3.5.2]{Be1} we check that)
the canonical morphism 
$$\widehat{\B} ^{(n)} _{\X} (Z) 
\otimes
_{\Gamma (\X,\widehat{\B} ^{(n)} _{\X} (Z) )} 
\Gamma (\X, \E)
\to 
\left ( \widehat{\B} ^{(n)} _{\X} (Z) \widehat{\otimes}\widehat{\D} ^{(m)} _{\X /\S }\right )
\otimes
_{\Gamma (\X,\widehat{\B} ^{(n)} _{\X} (Z) \widehat{\otimes}\widehat{\D} ^{(m)} _{\X /\S } )}
\Gamma (\X, \E)$$
 is an isomorphism.
\end{proof}

\begin{rem}
\label{projec-mdag}
Let $\E$ be a coherent $\widehat{\B} ^{(m _0)} _{\X} (Z) _\Q$-module. 
If $\E | \Y$ is a locally projective $\O _{\Y,\Q}$-module of finite type, then 
for $m \geq m_0$ large enough, 
$\widehat{\B} ^{(m)} _{\X} (Z) _\Q \otimes _{\widehat{\B} ^{(m _0)} _{\X} (Z) _\Q} \E$
is a locally projective $\widehat{\B} ^{(m)} _{\X} (Z) _\Q$-module of finite type.
Indeed, since $\O _{\X} (\hdag Z) _\Q\to j _* \O _{\Y,\Q}$ is faithfully flat (see \cite[4.3.10]{Be1}), 
then $\O _{\X} (\hdag Z) _\Q \otimes _{\widehat{\B} ^{(m _0)} _{\X} (Z) _\Q} \E$
is a projective $\O _{\X} (\hdag Z) _\Q$-module of finite type. 
We conclude using Proposition \cite[3.6.2]{Be1}.

\end{rem}

\begin{ntn}
\label{ntnMICdag2fs}
Let 
$\mathrm{MIC} ^{\dag \dag} (\X,Z/K)$
be the category of coherent 
$\D ^\dag _{\X } (\hdag Z) _\Q$-modules 
which are $\O _{\X} (\hdag Z) _\Q$-coherent.
When $Z$ is empty we remove it in the notation.
We have the morphism of ringed spaces 
$\sp \colon 
(\X _K , j ^\dag \O _{\X _K})
\to 
(\X, \O _{\X} (\hdag Z) _\Q)$
induced by the specialization morphism.
We get the inverse image functor $\sp ^*$ by setting 
$\sp ^* (\E ) := j ^\dag \O _{\X _K} \otimes _{\sp ^{-1} \widetilde{\O} _{\X,Q}}  \sp ^{-1} (\E)$,
for any $\E \in \mathrm{MIC} ^{\dag \dag} (\X,Z/K) $.

\end{ntn}

In order to prove Berthelot's Theorem \ref{letterBerthelotCaro2007},
we need the following unpublished two Berthelot's Lemmas which complete 
Theorem \ref{thm-eqcat-cvisoc}.

\begin{lem}
Let $\E ^{(m)}$ be a coherent $\widehat{\D} ^{(m)} _{\Y/\S, \Q}  $-module. 
For any $m' \geq m$, we set
$\E ^{(m')}:=
\widehat{\D} ^{(m')} _{\Y/\S, \Q}   
\otimes _{\widehat{\D} ^{(m)} _{\Y/\S, \Q}  }
\E ^{(m)}$,
and 
$\E := 
\D ^\dag _{\Y/\S, \Q} \otimes _{\widehat{\D} ^{(m)} _{\Y/\S, \Q}  }
\E ^{(m)}$.

If $\E$ is $\O _{\Y,\Q}$-coherent, then for $m'$ large enough
the canonical homomorphism 
$\E ^{(m')}
\to 
\E$
is an isomorphism.

\end{lem}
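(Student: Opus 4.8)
The plan is to reduce everything to the affine case and then to a finiteness statement about modules over level-$m$ rings, using Berthelot's Theorem \ref{thm-eqcat-cvisoc}. First I would observe that the assertion is local on $\Y$, so I may assume $\X$ is affine with coordinates so that the sheaves $\widehat{\D} ^{(m)} _{\Y/\S}$ are accessible via their global sections; I will write $D ^{(m)}:=\Gamma(\Y,\widehat{\D} ^{(m)} _{\Y/\S,\Q})$ and $D^\dag:=\Gamma(\Y,\D ^\dag _{\Y/\S,\Q})$, and similarly $E ^{(m)}:=\Gamma(\Y, \E ^{(m)})$. Since the $\widehat{\D} ^{(m)} _{\Y/\S,\Q}$ are coherent (see \cite[3.3-3.4]{Be1}) and we have theorems of type $A$, the statement ``$\E ^{(m')}\to\E$ is an isomorphism'' is equivalent to ``$E ^{(m')}\to E$ is an isomorphism of $D ^{(m')}$-modules'' (note $E = \underrightarrow{\lim}_{m'} E ^{(m')} = D^\dag\otimes_{D ^{(m)}} E ^{(m)}$ because taking global sections over the affine $\X$ commutes with the filtered inductive limit on the level).

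The key input is Theorem \ref{thm-eqcat-cvisoc}.\ref{thm-eqcat-cvisoc2}: since $\E$ is $\O _{\Y,\Q}$-coherent, it is automatically $\D  _{\Y,\Q}$-coherent, it is $\O _{\Y,\Q}$-locally projective of finite type, and for \emph{every} $m\in\N$ the canonical map $\E\to\widehat{\D} ^{(m)} _{\Y,\Q}\otimes_{\D  _{\Y,\Q}}\E$ is an isomorphism; moreover for each $m$ there is a coherent $\widehat{\D} ^{(m)} _{\Y}$-module $\overset{\circ}{\E}$ with $\overset{\circ}{\E}_\Q\riso\E$. In particular $\E$ is already a coherent $\widehat{\D} ^{(m)} _{\Y,\Q}$-module for every level $m$, and $E=\Gamma(\Y,\E)$ is a finitely presented $D ^{(m)}$-module for every $m$ by theorem of type $A$. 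Now the composite $E ^{(m)}\to E$ is a morphism of $D ^{(m)}$-modules between finitely presented $D ^{(m)}$-modules, and after applying $D^\dag\otimes_{D ^{(m)}}-$ it becomes the identity $E\riso E$ (because $D^\dag\otimes_{D ^{(m)}}E ^{(m)} = E$ by definition of $E$, compatibly). So $E ^{(m)}\to E$ is an isomorphism after base change to $D^\dag = \underrightarrow{\lim}_{m'} D ^{(m')}$.

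The next step is the passage from ``isomorphism after $\otimes D^\dag$'' to ``isomorphism after $\otimes D ^{(m')}$ for $m'$ large''. Here I would use that $D ^{(m')}$ is noetherian (again \cite[3.3-3.4]{Be1}) and that $E ^{(m)}$ is a finitely presented $D ^{(m)}$-module: writing $D ^{(m)}\,{}^{r_1}\to D ^{(m)}\,{}^{r_0}\to E ^{(m)}\to 0$, the kernel $N$ of $E ^{(m)}\to E$ and the cokernel $C$ are finitely generated $D ^{(m)}$-modules (using noetherianity and that $E$ is finitely presented over $D ^{(m)}$). The inductive limit $\underrightarrow{\lim}_{m'} D ^{(m')}\otimes_{D ^{(m)}} N = D^\dag\otimes_{D ^{(m)}} N$ vanishes (it is the kernel of $E\riso E$, using that $D ^{(m')}$ is flat over $D ^{(m)}$ in the relevant range, or more safely that $\widehat{\D} ^{(m')} _{\Y,\Q}$ is $\widehat{\D} ^{(m)} _{\Y,\Q}$-flat which gives exactness of base change on the finitely presented module $E ^{(m)}$) — and similarly for $C$. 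Since $N$ and $C$ are finitely generated over the noetherian ring $D ^{(m)}$ and their images in the filtered colimit $D^\dag\otimes N$, $D^\dag\otimes C$ vanish, a finite set of generators is already killed at some finite level $m'$, hence $D ^{(m')}\otimes_{D ^{(m)}} N = D ^{(m')}\otimes_{D ^{(m)}} C = 0$ for that $m'$ and all larger ones. Applying $D ^{(m')}\otimes_{D ^{(m)}}-$ to $0\to N\to E ^{(m)}\to E$ and $E ^{(m)}\to E\to C\to 0$ (and using flatness to control $\mathrm{Tor}$ terms, or resolving) then shows $E ^{(m')}\to E$ is an isomorphism for $m'$ large. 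Translating back via theorem of type $A$ gives that $\E ^{(m')}\to\E$ is an isomorphism.

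\textbf{Main obstacle.} The delicate point is the flatness/base-change bookkeeping between the various level rings: one needs that for $m'\geq m$ the functor $\widehat{\D} ^{(m')} _{\Y/\S,\Q}\otimes_{\widehat{\D} ^{(m)} _{\Y/\S,\Q}}-$ behaves well (exact, or at least commutes appropriately with $\Gamma$ and with the formation of $\E$ from $\E ^{(m)}$) so that the colimit identity $D^\dag\otimes_{D ^{(m)}}E ^{(m)}=E$ really gives vanishing of $D^\dag\otimes N$ and $D^\dag\otimes C$; this is exactly the circle of facts used in the proof of \ref{thm-eqcat-cvisoc} and in \cite[3.1.2, 3.1.4]{Be0}, so I expect to cite those rather than reprove them. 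Everything else is the standard ``finitely generated modules and filtered colimits over a noetherian ring'' argument.
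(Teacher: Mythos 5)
Your argument is correct and follows essentially the same route as the paper: the paper's proof is the one‑liner ``This is a consequence of Proposition \cite[3.6.2]{Be1} and of \ref{thm-eqcat-cvisoc}.2,'' and you use \ref{thm-eqcat-cvisoc}.2 in exactly the same way (to get that $\E$ is already $\widehat{\D}^{(m')}_{\Y/\S,\Q}$-coherent at every level with compatible canonical isomorphisms). What you do additionally is unfold the black-box citation \cite[3.6.2]{Be1} — the descent/stabilization of coherent modules and morphisms along a filtered inductive system of noetherian rings — into the explicit ``finitely generated kernel and cokernel over a noetherian ring vanish at some finite level of the colimit'' argument, together with Berthelot's flatness of $\widehat{\D}^{(m')}_{\Y/\S,\Q}$ over $\widehat{\D}^{(m)}_{\Y/\S,\Q}$; that is precisely the content of the cited proposition, so nothing is gained or lost, only made explicit.
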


\begin{proof}
This is a consequence of Proposition \cite[3.6.2]{Be1} and of \ref{thm-eqcat-cvisoc}.2.
\end{proof}

\begin{lem}
\label{letterBerthelot-Lem3}
Let $\E$ be a coherent $\D ^\dag _{\Y/\S, \Q} $-module which is 
$\O _{\Y,\Q}$-coherent, and 
$\overset{\circ}{\E}$ be a 
coherent $\widehat{\D} ^{(m)} _{\Y/\S}$-module without $p$-torsion together with a 
$\widehat{\D} ^{(m)} _{\Y/\S,\Q}$-linear isomorphism of the form
$\E \riso \overset{\circ}{\E} _\Q$.
Then $\overset{\circ}{\E}$ is $\O _{\Y}$-coherent, and this is a 
locally topologically nilpotent $\widehat{\D} ^{(m)} _{\Y/\S}$-module.
\end{lem}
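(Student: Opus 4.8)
The statement to prove is Lemma \ref{letterBerthelot-Lem3}: if $\E$ is a coherent $\D ^\dag _{\Y/\S,\Q}$-module which is $\O _{\Y,\Q}$-coherent, and $\overset{\circ}{\E}$ is a coherent $\widehat{\D} ^{(m)} _{\Y/\S}$-module without $p$-torsion with a $\widehat{\D} ^{(m)} _{\Y/\S,\Q}$-linear isomorphism $\E \riso \overset{\circ}{\E} _\Q$, then $\overset{\circ}{\E}$ is $\O _{\Y}$-coherent and is a locally topologically nilpotent $\widehat{\D} ^{(m)} _{\Y/\S}$-module. The strategy is to descend the two conclusions from $\Q$-coefficients to $\overset{\circ}{\E}$ itself, using that $\overset{\circ}{\E}$ is $p$-torsion free so it injects into $\overset{\circ}{\E} _\Q$, together with the coherence (= topologically of finite type) of the rings in play and Berthelot's basic structure theorems for coherent modules over the rings $\widehat{\D} ^{(m)}$.

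\emph{First step: $\O _{\Y}$-coherence.} The question is local, so we may assume $\Y$ affine. Set $A := \Gamma (\Y, \O _{\Y})$, $D ^{(m)} := \Gamma (\Y, \widehat{\D} ^{(m)} _{\Y/\S})$, and $N := \Gamma (\Y, \overset{\circ}{\E})$; by theorem of type $A$ for coherent $\widehat{\D} ^{(m)} _{\Y/\S}$-modules (\cite[3.3-3.4]{Be1}), $N$ is a finitely generated $D ^{(m)}$-module, $p$-torsion free, and $\overset{\circ}{\E}$ is recovered from $N$. By Theorem \ref{thm-eqcat-cvisoc}.\ref{thm-eqcat-cvisoc2}, since $\E$ is $\O _{\Y,\Q}$-coherent and $\D ^\dag _{\Y,\Q}$-coherent, it is $\O _{\Y,\Q}$-locally projective of finite type; in particular $N _\Q = N \otimes _{\Z} \Q$ is a finitely generated $A _\Q$-module. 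Choose finitely many generators of $N$ as $D ^{(m)}$-module; after multiplying by a suitable power of $p$ (harmless because $N$ is $p$-torsion free) their images generate $N _\Q$ already as an $A _\Q$-module, hence there is a finitely generated $A$-submodule $N _0 \subset N$ with $N _0 \otimes _\Z \Q = N _\Q$. Then $N / N _0$ is $p$-power torsion. The point is now to bound this torsion: using that $N$ is $p$-adically complete and separated (it is the sections of a coherent $\widehat{\D} ^{(m)}$-module), and that $A$ is noetherian, one shows $N / N _0$ is bounded $p$-torsion, so $N _0 \subset N \subset p ^{-s} N _0$ for some $s$; since $A$ is noetherian and $p ^{-s} N _0$ is $A$-finite, $N$ is $A$-finite, i.e. $\overset{\circ}{\E}$ is $\O _{\Y}$-coherent. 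Alternatively one invokes \cite[3.6.2]{Be1} directly as in the preceding lemma: the projectivity of $\E$ over $\O _{\Y,\Q}$ forces $\overset{\circ}{\E} _\Q$, hence $\overset{\circ}{\E}$ up to bounded torsion, to be $\O _\Y$-coherent.

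\emph{Second step: local topological nilpotence.} Once $\overset{\circ}{\E}$ is $\O _\Y$-coherent, it is a coherent $\widehat{\D} ^{(m)} _{\Y/\S}$-module which is $\O _\Y$-coherent and $p$-torsion free; such a module corresponds, reducing modulo $\pi ^{i+1}$, to a compatible system of $\D ^{(m)} _{Y _i/S _i}$-modules coherent over $\O _{Y _i}$, i.e. to an $m$-PD-stratified $\O _{\Y}$-module that is coherent, i.e. (by Berthelot's dictionary, \cite[2.3]{Be1}) to the module underlying a convergent/locally nilpotent datum. Concretely: the associated $\widehat{\D} ^{(m)}$-action on the coherent $\O _\Y$-module $\overset{\circ}{\E}$ is, by definition of $\widehat{\D} ^{(m)} _{\Y/\S}$ as a $p$-adic completion, automatically such that in local coordinates the operators $\underline{\partial} ^{<\underline{k}> _{(m)}}$ act with coefficients tending to $0$; this is precisely the statement that $\overset{\circ}{\E}$ is locally topologically nilpotent as a $\widehat{\D} ^{(m)} _{\Y/\S}$-module (see the characterization in \cite[4.1.4]{Be1} underlying Theorem \ref{thm-eqcat-cvisoc}.\ref{thm-eqcat-cvisoc1}). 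So this step is essentially a matter of unwinding definitions once $\O _\Y$-coherence is in hand.

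\emph{Main obstacle.} The delicate point is the first step: controlling the $p$-power torsion of $N / N _0$ and showing it is \emph{bounded}, so that $N$ sits between two $A$-finite modules. Without the boundedness one cannot conclude $A$-finiteness from $A$-noetherianity. The way around it is to use that $\overset{\circ}{\E}$ is $p$-adically complete and separated together with the already-known $\O _{\Y,\Q}$-projectivity of $\E$ (Theorem \ref{thm-eqcat-cvisoc}.\ref{thm-eqcat-cvisoc2}) and \cite[3.6.2]{Be1}: projectivity of $\sp ^* $ of a convergent isocrystal over $\O _{\Y,\Q}$ gives, at finite level, an $\O _\Y$-coherent integral model, and any two integral $p$-torsion-free models of the same $\Q$-module differ by bounded $p$-torsion because each is finitely generated over the noetherian ring $A$. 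I expect the rest of the argument to be routine bookkeeping with theorem-of-type-$A$ and $p$-adic completions.
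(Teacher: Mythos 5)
Your proposal has two genuine gaps, both concentrated at exactly the points you flag as ``delicate'' or ``a matter of unwinding definitions,'' and both are filled in the paper by a single idea you are missing: \emph{pass to level $m+1$ to obtain an integral model and embed $\overset{\circ}{\E}$ into it}.

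\emph{Gap in the first step ($\O _\Y$-coherence).} Your sandwich argument $N_0 \subset N \subset p^{-s} N_0$ requires knowing that $N/N_0$ has \emph{bounded} $p$-power torsion, and your attempted justification is circular: ``any two integral $p$-torsion-free models of the same $\Q$-module differ by bounded $p$-torsion because each is finitely generated over the noetherian ring $A$'' presupposes that $N$ is $A$-finite, which is precisely what you are trying to prove. Completeness and separatedness of $N$ alone do not give the bound. The paper avoids this entirely: it invokes Theorem \ref{thm-eqcat-cvisoc} to produce a coherent $\widehat{\D} ^{(m+1)} _{\Y/\S}$-module $\FF$ (note the shift in level), $\O _\Y$-coherent and $p$-torsion free, with $\FF _\Q \riso \E$; after multiplying by a power of $p$ one gets a $\widehat{\D} ^{(m)} _{\Y/\S}$-linear injection $\overset{\circ}{\E} \hookrightarrow \FF$, whence $\Gamma(\Y,\overset{\circ}{\E})$ is an $A$-submodule of the $A$-finite module $\Gamma(\Y,\FF)$ and is $A$-finite by noetherianity; Lemma \ref{LetterLem2} (empty divisor case) then gives $\O _\Y$-coherence of $\overset{\circ}{\E}$. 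This is a genuinely different and non-circular route.

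\emph{Gap in the second step (topological nilpotence).} This is \emph{not} a formal consequence of having a $\widehat{\D} ^{(m)} _{\Y/\S}$-module structure on a coherent $\O _\Y$-module. If $\sum_k a_k \underline{\partial} ^{<\underline{k}>_{(m)}}$ with $a_k \to 0$ acts on $\overset{\circ}{\E}$, the convergence of $\sum_k a_k (\underline{\partial}^{<\underline{k}>_{(m)}}\cdot x)$ in the $p$-adically complete module $\overset{\circ}{\E}$ follows already from $a_k \to 0$ and says nothing about $\underline{\partial}^{<\underline{k}>_{(m)}}\cdot x \to 0$ itself, which is the actual content of topological nilpotence. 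The paper again leans on the embedding $\overset{\circ}{\E}\hookrightarrow\FF$ into the level-$(m+1)$ model: in local coordinates one has $\underline{\partial}^{<\underline{k}>_{(m)}} x = \bigl(q_{\underline{k}}^{(m)}! / q_{\underline{k}}^{(m+1)}!\bigr)\,\underline{\partial}^{<\underline{k}>_{(m+1)}} x$ inside $\Gamma(\Y,\FF)$, and since $\FF$ is a $\widehat{\D}^{(m+1)}$-module the second factor stays in $\Gamma(\Y,\FF)$, while the factorial ratio tends $p$-adically to $0$ as $|\underline{k}|\to\infty$. That quantitative comparison between levels $m$ and $m+1$ is the mechanism, not a definition-unwinding.

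In short: the key step you are missing is the construction of the auxiliary module $\FF$ at level $m+1$ via Theorem \ref{thm-eqcat-cvisoc}, and its use both to bypass the circularity in the coherence argument and to supply the level comparison needed for topological nilpotence.
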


\begin{proof}
Since this is local, we can suppose $\Y$ affine and that $\Y/\S$ has local coordinates $t _1,\dots, t _d$.
Let $\partial _1, \dots, \partial _d$ be the induced derivations.
Following \ref{thm-eqcat-cvisoc}, there exists a 
coherent $\widehat{\D} ^{(m+1)} _{\Y/\S} $-module
$\FF$, coherent over $\O _\Y$ together with 
an isomorphism of $\widehat{\D} ^{(m+1)} _{\Y/\S, \Q} $-modules
$\FF _\Q \riso \E$.
Using \cite[3.4.4]{Be1}, we can suppose $\FF$ without $p$-torsion.
We get a homomorphism
$\overset{\circ}{\E}  \to \overset{\circ}{\E} _\Q \riso \FF _\Q$.
Multiplying this homomorphism by a power of $p$, we get
the injective $\widehat{\D} ^{(m)} _{\Y/\S} $-linear homomorphism
$\overset{\circ}{\E}  \hookrightarrow \FF$.
Using \ref{LetterLem2} in the case where the divisor is empty, we get the coherence of 
$\overset{\circ}{\E}$ over $\O _\Y$.
Since $\FF$ is $\O _\Y$-coherent,
for $r$ large enough 
we get $p ^r \FF \hookrightarrow \overset{\circ}{\E} \hookrightarrow \FF$ whose composition is the canonical inclusion.
For any $x \in \Gamma (\Y, \overset{\circ}{\E} )$, for any $\underline{k} \in \N ^d$, we get in 
$\Gamma (\Y,\FF)$ the formula
$$ \underline{\partial} ^{<\underline{k}> _{(m)}} x
=
\frac{q _{\underline{k}} ^{(m)} !}{q _{\underline{k}} ^{(m+1)} !}
\underline{\partial} ^{<\underline{k}> _{(m +1)}}x,$$
and $q _{\underline{k}} ^{(m)} !/q _{\underline{k}} ^{(m+1)} !$ 
converges $p$-adically to $0$ when $|\underline{k}|\to \infty$. Hence, we are done.
\end{proof}

\begin{thm}
[Berthelot]
\label{letterBerthelotCaro2007}
\begin{enumerate}
\item The functors $\sp _*$ and $\sp ^*$ induce quasi-inverse equivalences of categories between 
$\mathrm{MIC} ^{\dag} (Y, X,\X /K) $ and 
$\mathrm{MIC} ^{\dag \dag} (\X,Z/K) $.
\item Let $\E$ be a coherent $\D ^\dag _{\X } (\hdag Z) _\Q$-module.
Then $\E \in \mathrm{MIC} ^{\dag \dag} (\X,Z/K) $ if and only if 
$\E | \Y$ is $\O _{\Y,\Q}$-coherent. 
\end{enumerate}
\end{thm}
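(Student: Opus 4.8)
The plan is to prove the two statements essentially in one stroke, by combining Berthelot's description of the essential image of $\sp _*$ from Proposition \ref{445Be1} with the fibrewise coherence criterion provided by Theorem \ref{thm-eqcat-cvisoc} and the two preceding lemmas. First I would recall from Proposition \ref{445Be1} that $\sp _*$ is exact and fully faithful from $\mathrm{MIC} ^{\dag} (Y, X,\X /K)$ into the category of $\D ^\dag _{\X } (\hdag Z) _\Q$-modules, and that its essential image consists of those $\E$ admitting a presentation of the form \ref{4451Be1} with the level-by-level compatible structures described there. So the content of part (1) is to identify this essential image with $\mathrm{MIC} ^{\dag\dag} (\X, Z/K)$, the category of coherent $\D ^\dag _{\X } (\hdag Z) _\Q$-modules which are $\O _{\X} (\hdag Z) _\Q$-coherent; and once that is done, the statement $\sp ^* \circ \sp _* \simeq \mathrm{id}$ and $\sp _* \circ \sp ^* \simeq \mathrm{id}$ follows from the full faithfulness together with the construction of $\sp ^*$ in Notation \ref{ntnMICdag2fs} (the unit and counit morphisms are built from the adjunction $(\sp ^{-1}, \sp _*)$ of ringed-space morphisms and one checks on the explicit models that they are isomorphisms).

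For the inclusion ``essential image $\subset \mathrm{MIC} ^{\dag\dag}$'': if $\E \simeq \sp _* E _{\X}$ with $E _{\X} \in \mathrm{MIC} ^{\dag}(Y,X,\X /K)$, then $\E$ is coherent over $\D ^\dag _{\X} (\hdag Z) _\Q$ by Proposition \ref{445Be1}; and to see it is $\O _{\X}(\hdag Z) _\Q$-coherent one restricts to $\Y = \X \setminus Z$, where $\D ^\dag _{\X}(\hdag Z) _\Q | \Y = \D ^\dag _{\Y, \Q}$ and $\O _{\X}(\hdag Z) _\Q | \Y = \O _{\Y, \Q}$, and where $\E | \Y$ is the convergent isocrystal underlying $E _{\X}$, hence $\O _{\Y,\Q}$-coherent by Theorem \ref{thm-eqcat-cvisoc}.\ref{thm-eqcat-cvisoc1}. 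Thus $\E | \Y$ is $\O _{\Y,\Q}$-coherent. Now the key point—which will simultaneously give the ``if'' direction of part (2)—is: \emph{a coherent $\D ^\dag _{\X}(\hdag Z) _\Q$-module $\E$ with $\E | \Y$ being $\O _{\Y,\Q}$-coherent is in fact $\O _{\X}(\hdag Z) _\Q$-coherent}. Since the question is local we may assume $\X$ affine with local coordinates; by Proposition \ref{445Be1} (applied without assuming $\E$ in the image, i.e. just using that any coherent $\D ^\dag$-module is a limit of the $\widehat{\B} ^{(n)}$-coherent pieces—this is in \cite[4.4.x]{Be1}) we have $\E \simeq \underrightarrow{\lim} \widehat{\B} ^{(n)} _{\X}(Z) _\Q \otimes _{\widehat{\B} ^{(n_0)} _{\X}(Z) _\Q} \E _0$ for some coherent $\widehat{\B} ^{(n_0)} _{\X}(Z) _\Q$-module $\E _0$ with the level structures. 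The hypothesis that $\E | \Y$ is locally projective of finite type over $\O _{\Y,\Q}$ (automatic from $\O _{\Y,\Q}$-coherence by Theorem \ref{thm-eqcat-cvisoc}.\ref{thm-eqcat-cvisoc1}) lets us invoke Remark \ref{projec-mdag}: for $m$ large, $\widehat{\B} ^{(m)} _{\X}(Z) _\Q \otimes _{\widehat{\B} ^{(n_0)}} \E _0$ is locally projective of finite type over $\widehat{\B} ^{(m)} _{\X}(Z) _\Q$. Combining this with Lemma \ref{LetterLem2} (passing to global sections over the affine $\X$, and using that a finite-type module over $\Gamma(\X, \widehat{\B} ^{(m)})$ is the same datum whether one remembers the $\widehat{\D} ^{(m)}$-action or not) one sees that the transition maps stabilize and $\E$ is $\O _{\X}(\hdag Z) _\Q$-coherent.

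It then remains to check that every $\E \in \mathrm{MIC} ^{\dag\dag}(\X, Z/K)$ lies in the essential image of $\sp _*$, i.e. to produce the object of $\mathrm{MIC} ^{\dag}(Y,X,\X /K)$ realizing it. Here one uses Theorem \ref{letterBerthelot-Lem3} and the lemma just before it: $\E$ being $\O _{\X}(\hdag Z) _\Q$-coherent and $\D ^\dag$-coherent, choose for each level $m$ a coherent $\widehat{\B} ^{(n_m)} _{\X}(Z) \widehat{\otimes} \widehat{\D} ^{(m)} _{\X /\S}$-lattice $\overset{\circ}{\E} ^{(m)}$ without $p$-torsion whose $\Q$-ification recovers (a fixed $\widehat{\B} \widehat{\otimes}\widehat{\D} ^{(m)}$-twist of) $\E$; by Lemma \ref{letterBerthelot-Lem3} restricted to $\Y$ these lattices are $\O _\Y$-coherent and topologically nilpotent, which is exactly the convergence condition needed to glue them into an overconvergent connection on $E _\X := \sp ^* \E \in \mathrm{MIC} ^{\dag}(Y,X,\X /K)$ via the characterization of Proposition \ref{445Be1}; then $\sp _* E _\X \simeq \E$. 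The Taylor/overconvergence estimate $q _{\underline k} ^{(m)}!/q _{\underline k} ^{(m+1)}! \to 0$ from the proof of Lemma \ref{letterBerthelot-Lem3} is precisely what makes the connection overconvergent rather than merely formal.

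\textbf{Main obstacle.} The delicate step is the middle one: proving that $\O _{\Y,\Q}$-coherence of the restriction $\E | \Y$ propagates to $\O _{\X}(\hdag Z) _\Q$-coherence of $\E$ itself. This is where one must genuinely use the level-by-level structure of Proposition \ref{445Be1} together with Remark \ref{projec-mdag} and Lemma \ref{LetterLem2} to control how the $\widehat{\B} ^{(n)}$-pieces behave near the divisor $Z$; the perfectness of $k$ plays no role here, so the argument is the same as Berthelot's, but it is the only point that is not formal. Everything else—full faithfulness, the unit/counit isomorphisms, the ``only if'' half of part (2)—is bookkeeping on top of the cited results.
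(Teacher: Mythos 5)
The overall reduction you propose is the right one and matches the structure of the paper's proof: everything hinges on showing that a coherent $\D ^\dag _{\X } (\hdag Z) _\Q$-module $\E$ whose restriction $\E | \Y$ is $\O _{\Y,\Q}$-coherent is in fact $\O _{\X}(\hdag Z) _\Q$-coherent, i.e.\ in $\mathrm{MIC}^{\dag\dag}(\X,Z/K)$, and hence (via Proposition~\ref{445Be1}) in the image of $\sp_*$. However, your argument for that key step is circular, and the actual computational content of Berthelot's proof is missing.

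Specifically, you write that ``by Proposition~\ref{445Be1} \ldots we have $\E \simeq \underrightarrow{\lim}\, \widehat{\B}^{(n)}_{\X}(Z)_\Q \otimes_{\widehat{\B}^{(n_0)}_{\X}(Z)_\Q} \E_0$ for some coherent $\widehat{\B}^{(n_0)}_{\X}(Z)_\Q$-module $\E_0$.'' But the existence of a presentation with $\E _0$ coherent over $\widehat{\B} ^{(n_0)} _{\X}(Z) _\Q$ (rather than merely over $\widehat{\B}^{(n_0)}\widehat{\otimes}\widehat{\D}^{(m_0)}$) is exactly what must be proved; Proposition~\ref{445Be1} characterizes the essential image of $\sp_*$ in these terms, so invoking it to justify the presentation is assuming the conclusion. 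The general statement available for an arbitrary coherent $\D^\dag_\X(\hdag Z)_\Q$-module only gives a coherent module over $\widehat{\B}^{(n_0)}_\X(Z)\widehat{\otimes}\widehat{\D}^{(m_0)}_{\X/\S}$ (this is what the paper's step II.1 starts from, denoted $\E^{(m_0)}$ and $\G^{(m_0)}$). The same circularity reappears in your essential surjectivity paragraph, where you ``choose\ldots a coherent $\widehat{\B}^{(n_m)}_\X(Z)\widehat{\otimes}\widehat{\D}^{(m)}$-lattice'' which is presumed $\widehat{\B}^{(n_m)}$-coherent.

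The missing piece is precisely the paper's step II.2: starting from a torsion-free coherent $\widehat{\D}^{(m,n)}_{\X/\S}(Z)$-module $\G^{(m,n)}$ whose restriction to $\Y$ is $\O_\Y$-coherent (by Lemma~\ref{letterBerthelot-Lem3}), one must \emph{prove} that for $n$ large, $\G^{(m,n)}$ is $\widehat{\B}^{(n)}_\X(Z)$-coherent. This is done by an explicit argument: reduction mod~$\pi$, topological nilpotency of the $\partial_j^{[p^l]}$-action on the generic fibre, which after clearing the divisor gives relations $\overline{f}^{p^{n_m}}(\partial_j^{[p^l]})^h \cdot \overline{x}_i = 0$ globally; lifting these and using the element $T_{n_m-1}$ with $f^{p^{n_m}}T_{n_m-1}=p$ plus the absence of $f$-torsion to get $\Gamma(\X,\G^{(m,n)}) = \sum_i \Gamma(\X,\widehat{\B}^{(n)}_\X(Z))\cdot y_i + T_{n_m-1}\Gamma(\X,\G^{(m,n)})$; and finally iterating and using $p$-adic completeness to conclude finite generation. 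Lemma~\ref{LetterLem2} and Remark~\ref{projec-mdag} are used as auxiliary inputs (respectively, to translate coherence into a global-sections finite-type condition over the affine $\X$, and to pass to projective modules in step II.3), but by themselves they do not bridge the gap. Your write-up identifies the right obstacle but does not resolve it; as stated, the proof does not go through.
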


\begin{proof}
Since this Berthelot's proof is unpublished, let us give it below for the reader. 

I) If $E$ is an overconvergent isocrystal on $(Y,X,\X)/K$,
then 
following \cite[4.4.12]{Be1}, 
$ \sp _* (E)$ is $\D ^\dag _{\X } (\hdag Z) _\Q$-coherent. 
Using \ref{4451Be1},
we get the  $\O _{\X} (\hdag Z) _\Q$-coherence of $ \sp _* (E)$, i.e. 
$ \sp _* (E) \in \mathrm{MIC} ^{\dag \dag} (\X,Z/K) $.
This yields that the adjunction morphism
$\sp ^* \sp _* (E) \to E$ is an isomorphism.

II.1) Let $\E$ be a coherent $\D ^\dag _{\X } (\hdag Z) _\Q$-module such that 
$\E | \Y$ is $\O _{\Y,\Q}$-coherent. 
It remains to check that $\E$ is in the essential image of $\sp _*$.
For $n \geq m$, we set 
$\widehat{\D} ^{(m,n)} _{\X /\S } (Z):= 
\widehat{\B} ^{(n )} _{\X} (Z) 
\widehat{\otimes}
\widehat{\D} ^{(m)} _{\X /\S }$, 
and 
$\widehat{\D} ^{(m)} _{\X /\S } (Z):= 
\widehat{\D} ^{(m,m)} _{\X /\S } (Z)$. 
For $m _0$ large enough, 
there exists a coherent $\widehat{\D} ^{(m _0)} _{\X /\S } (Z) _\Q$-module $\E ^{(m_0)}$ together with a
$\D ^\dag _{\X } (\hdag Z) _\Q$-linear isomorphism
$\D ^\dag _{\X } (\hdag Z) _\Q \otimes _{\widehat{\D} ^{(m_0)} _{\X /\S } (Z) } \E ^{(m_0)}
\riso \E$ (use \cite[3.6.2]{Be1} and the isomorphism
$\underrightarrow{\lim} _m \widehat{\D} ^{(m)} _{\X /\S } (Z) 
\riso
\D ^\dag _{\X } (\hdag Z) _\Q$).
There exists  a coherent $\widehat{\D} ^{(m _0)} _{\X /\S } (Z)$-module $\G ^{(m _0)}$  without $p$-torsion together with 
a $\widehat{\D} ^{(m _0)} _{\X /\S } (Z) _\Q$-linear isomorphism 
$\G ^{(m _0)} _\Q\riso \E ^{(m_0)}$ (see \cite[3.4.5]{Be1}).
For any $n\geq m \geq m _0$, we put
$\G ^{(m,n)}
:=
\widehat{\D} ^{(m,n)} _{\X /\S } (Z) 
\otimes _{\widehat{\D} ^{(m_0)} _{\X /\S } (Z)}\G ^{(m_0)}/ \text{$p$-torsion}$,
$\G ^{(m)}
:=
\G ^{(m,m)}$.
Following \cite[3.4.4]{Be1}, 
$\G ^{(m,n)}$
is $\widehat{\D} ^{(m,n)} _{\X /\S } (Z) $-coherent. 
From \ref{thm-eqcat-cvisoc}, 
we get 
$\G ^{(m,n)} _\Q |\Y  \riso \E |\Y$.
From \ref{letterBerthelot-Lem3}, this yields that 
$\G ^{(m,n)} |\Y $ is $\O _{\Y}$-coherent.

II.2) We will now prove that for $n$ large enough $\G ^{(m,n)}$ is  $\widehat{\B} ^{(n)} _{\X} (Z)$-coherent. 
Since this is local, we can suppose
$\X = \Spf A$ is affine, there exist 
$f \in A$ such that $Z= \Spec \overline{A} / (\overline{f})$
and local coordinates 
$t _1,\dots, t _d\in A$ of 
$\X /\S $.
Let $\partial _{1},\dots, \partial _{d}$ be the induced  derivations.
Following \ref{LetterLem2}, we reduce to check that for $n$ large enough,
$\Gamma (\X, \G ^{(m,n)})$ is a 
$\Gamma (\X, \widehat{\B} ^{(n)} _{\X} (Z) )$-module of finite type.

Put 
$\D ^{(m)} _{X /S } (Z)
:=
\D ^{(m)} _{\X /\S } (Z)/
\pi \D ^{(m)} _{\X /\S } (Z)$,
$\smash{\overline{\G}} ^{(m)}:= 
\G ^{(m)} / \pi \G ^{(m)}$,
and 
$\smash{\overline{\G}} ^{(m,n)}:= 
\G ^{(m,n)} / \pi \G ^{(m,n)}$.
Let $\overline{x} _1, \dots, \overline{x} _r 
\in 
\Gamma (X, \smash{\overline{\G}} ^{(m _0)})$
which generate 
$\smash{\overline{\G}} ^{(m _0)}$ as 
$\D ^{(m_0)} _{X /S } (Z)$-module.

Fix $m \geq m _0$. 
From Lemma \ref{letterBerthelot-Lem3}, 
$\smash{\overline{\G}} ^{(m)}|Y$ is a 
nilpotent 
$\D ^{(m)} _{Y}$-module.
Hence, there exists $h \in \N$ large enough so that
we get in 
$\Gamma ( Y, \smash{\overline{\G}} ^{(m)})$ the relation
$$\forall i=1,\dots, r,
\forall j=1,\dots, d,
\forall l=1,\dots, m,
\
(\partial _{j} ^{[p ^l]}) ^h 
\cdot
\overline{x} _i 
=
0, 
$$
where by abuse of notation we still denote by 
$\overline{x} _i  $ 
(resp. $(\partial _{j} ^{[p ^l]}) ^h $) the image of $\overline{x} _i $ 
(resp. $(\partial _{j} ^{[p ^l]}) ^h $) via the canonical map 
$\Gamma (X, \smash{\overline{\G}} ^{(m _0)})\to 
\Gamma ( Y, \smash{\overline{\G}} ^{(m)})$
(resp. $\Gamma (X, \D ^{(m)} _{X /S } )
\to 
\Gamma (Y, \D ^{(m)} _{Y} )$).
Hence, for $n _m> m$ large enough, we get in 
$\Gamma (X, \smash{\overline{\G}} ^{(m)})$ the relation
$$\forall i=1,\dots, r,
\forall j=1,\dots, d,
\forall l=1,\dots, m,
\
\overline{f} ^{p ^{n _m}}
(\partial _{j} ^{[p ^l]}) ^h 
\cdot
\overline{x} _i 
=
0. 
$$
Fix such $n _m$.
Since $n _m > m$, then following
\cite[2.2.6]{Be1}
$\overline{f} ^{p ^{n _m}}$ is in the center of 
$\Gamma (X, \D ^{(m)} _{X /S } )$.
Let 
$P =
\prod _{j=1} ^{d}
\prod _{l=1} ^{m}
(\partial _{j} ^{[p ^l]}) ^{h _{jl}} \in \Gamma (X, \D ^{(m)} _{X /S } )$
where $h _{jl} \in \N$.
Since $\overline{f} ^{p ^{n _m}}$ is in the center of 
$\Gamma (X, \D ^{(m)} _{X /S } )$, 
if there exist
$j _0$ and $l _0$ such that
$h _{j_0 l _0} \geq h$, then 
we have in 
$\Gamma (X, \smash{\overline{\G}} ^{(m)})$
the relation
$\overline{f} ^{p ^{n _m}} P\cdot
\overline{x} _i 
=
0$,
for any $i $.

Let $x _1, \dots, x _r \in \Gamma (\X,\G ^{(m _0)})$ be some sections lifting
respectively 
$\overline{x} _1, \dots, \overline{x} _r $.
Let 
$P =
\prod _{j=1} ^{d}
\prod _{l=1} ^{m}
(\partial _{j} ^{[p ^l]}) ^{h _{jl}} \in 
\Gamma (\X, \D ^{(m)} _{\X /\S } )$
where $h _{jl} \in \N$ are such that
there exist
$j _0$ and $l _0$ satisfying
$h _{j _0 l _0} \geq h$.
Then, we get in 
$\Gamma (\X,\G ^{(m)})$ the relation
$$\forall i=1,\dots, r,
\
f ^{p ^{n _m}} P\cdot
x _i 
\in 
p 
\Gamma (\X,\G ^{(m)}),$$
where by abuse of notation we still denote by 
$x _i $ the image of $x _i$ via the canonical map 
$\Gamma (\X,\G ^{(m _0)}) \to \Gamma (\X,\G ^{(m)})$.
Let $T _{n _m -1} \in \widehat{\B} ^{(n _m -1)} _{\X} (Z) $
be the element such that 
$f ^{p ^{n _m}}T _{n _m -1} =p$.
Since the $\widehat{\B} ^{(n _m -1)} _{\X} (Z) $-module
$\G ^{(m,n _m -1)}$ has no $\pi$-torsion then 
it has no $f $-torsion. 
Hence, for such $P$, we get in 
$\Gamma (\X, \G ^{(m,n _m -1)})$: 
$$\forall i=1,\dots, r,
\
 P\cdot
x _i 
\in 
T _{n _m -1}
\Gamma (\X, \G ^{(m,n _m -1)}).$$

Let $y _1, \dots, y _s$ be the elements of the form
$\left (\prod _{j=1} ^{d}
\prod _{l=1} ^{m}
(\partial _{j} ^{[p ^l]}) ^{h _{jl}} \right)
\cdot
x _i $
where $h _{jl} \in \{ 0,\dots, h-1\}$ for any $j$ and 
$l$ (beware that these elements and their number depend on $m$).
Following \cite[2.2.5]{Be1},
$\Gamma (\X, \D ^{(m)} _{\X /\S } )$
is generated as $\Gamma (\X, \O _{\X})$-module (for its left or right structure) 
by the elements of the form
$\prod _{j=1} ^{d}
\prod _{l=1} ^{m}
(\partial _{j} ^{[p ^l]}) ^{h _{jl}}$,
where
$h _{jl} \in \N$.
Since $\overline{x} _1, \dots, \overline{x} _r $
generate 
$\smash{\overline{\G}} ^{(m _0)}$ as 
$\D ^{(m_0)} _{X /S } (Z)$-module,
then for any $n\geq n _m -1$,
$\Gamma (\X, \G ^{(m,n)})$
is generated as 
$\Gamma (\X,\widehat{\B} ^{(n)} _{\X} (Z) )$-module by 
$p\Gamma (\X, \G ^{(m,n)})$ and by
the elements of the forms
$\left (\prod _{j=1} ^{d}
\prod _{l=1} ^{m}
(\partial _{j} ^{[p ^l]}) ^{h _{jl}} \right)
\cdot
x _i $
where $h _{jl}\in \N$.
Since $T _{n _m -1}$ divides $p$, then we get 
$$\forall n \geq n _{m} -1,
\
\Gamma (\X, \G ^{(m,n)})
=
\sum _{i= 1} ^s
\Gamma (\X,\widehat{\B} ^{(n)} _{\X} (Z) )
\cdot y _i
+
T _{n _m -1}\Gamma (\X, \G ^{(m,n)}).$$
By iteration, this yields
$$\forall n \geq n _{m} -1,
\
\Gamma (\X, \G ^{(m,n)})
=
\sum _{i= 1} ^s
\Gamma (\X,\widehat{\B} ^{(n)} _{\X} (Z) )
\cdot y _i
+
T _{n _m -1} ^p\Gamma (\X, \G ^{(m,n)}).$$
For any $n \geq n _m$, we have
$T _{n _m -1} ^p
=
p ^{p-1}T _{n _m} $.
We get, 
$$\forall n \geq n _{m} ,
\
\Gamma (\X, \G ^{(m,n)})
=
\sum _{i= 1} ^s
\Gamma (\X,\widehat{\B} ^{(n)} _{\X} (Z) )
\cdot y _i
+
p\Gamma (\X, \G ^{(m,n)}).$$
Since 
$\Gamma (\X, \G ^{(m,n)})$ is $p$-adically separated and complete, this yields that
$\Gamma (\X, \G ^{(m,n)})$ is generated
as $\Gamma (\X,\widehat{\B} ^{(n)} _{\X} (Z) )$-module by 
$y _1,\dots, y _s$.

II.3) We can suppose that the sequence $(n _m) _m$ is increasing. 
Set $\E _m := \G ^{(m,n _m) }_\Q$. Following II.2), the $\widehat{\D} ^{(m,n _m)} _{\X /\S } (Z) _\Q $-module
$\E _m$ is $\widehat{\B} ^{(n _m)} _{\X} (Z) _\Q$-coherent.
Since $\E _{m _0} |\Y \riso\E _{m} |\Y$ is a 
coherent $\D ^\dag _{\Y,\Q}$-module which is $\O _{\Y,\Q}$-coherent, then 
it is locally projective of finite type over $\O _{\Y,\Q}$.
Hence, with the remark \ref{projec-mdag},
increasing $m_0$ is necessary, we can suppose 
$\E _{m _0}$ is a projective 
$\widehat{\B} ^{(n _{m _0})} _{\X} (Z) _\Q$-module of finite type.

Following 
\ref{445Be1},
it is sufficient to check that 
the canonical homomorphism
$$\widehat{\B} ^{(n _m)} _{\X} (Z) _\Q \otimes _{\widehat{\B} ^{(n _0)} _{\X} (Z) _\Q} \E _{m _0}  \to \E _m $$
is an isomorphism for any $m \geq m _0$. 
Fix $m\geq m _0$ and set $B _m := 
\Gamma (\X, \widehat{\B} ^{(n _m)} _{\X} (Z) _\Q)$,
$E _m := \Gamma (\X, \E _m)$,
$E _{m _0,m} := \Gamma (\X, \widehat{\B} ^{(n _m)} _{\X} (Z) _\Q \otimes _{\widehat{\B} ^{(n _0)} _{\X} (Z) _\Q} \E _{m _0} )$.
We get the morphism
$E _{m _0,m}
\to 
E _{m}$ 
of $B _m$-modules of finite type.

Following the part II.2) and its notations,
$\Gamma (\X, \G ^{(m,n _m)})$ is generated as 
$\Gamma (\X,\widehat{\B} ^{(n _m)} _{\X} (Z) )$-module by $y _1,\dots, y _s$.
We remark that $y _1,\dots, y _s \in E _{m _0}$. 
Hence, the morphism $E _{m _0,m}
\to 
E _{m}$
is 
surjective. 
After applying $B _m \to 
\Gamma (\Y,\O _{\Y,\Q})$
to the morphism 
$E _{m _0,m}
\to 
E _{m}$, we get an isomorphism.
Since 
$E _{m _0,m} $ is a projective $B _m$-module of finite type
and since
$B _m 
\to \Gamma (\Y,\O _{\Y,\Q})$
is injective, we get the injectivity of
$E _{m _0,m}
\to 
E _{m}$. We are done.
\end{proof}

\begin{prop}
\label{lem-projff}
Let $\E \in \mathrm{MIC} ^{\dag \dag} (\X,Z/K) $.
\begin{enumerate}
\item 
\label{lem-projff-it1}
If $\X$ is affine, then $\Gamma (\X, \E)$ is a projectif 
$\Gamma (\X, \O _{\X} (\hdag Z) _\Q)$-module of finite type.
\item 
\label{lem-projff-it2}
The object $\E$
is a locally projective $\O _{\X} (\hdag Z) _\Q$-module of finite type. 
\item 
\label{lem-projff-it3}
We have $\E =0$ if and only if there exists an open dense subset $\U $ of $\X $
such that $\E | \U =0$.
\end{enumerate}
\end{prop}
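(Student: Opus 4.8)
\textbf{Proof plan for Proposition \ref{lem-projff}.}

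The plan is to deduce everything from Berthelot's structural description in Proposition \ref{445Be1} together with Theorem \ref{letterBerthelotCaro2007}, which together tell us that an object of $\mathrm{MIC} ^{\dag \dag} (\X,Z/K)$ is, locally, of the form $\underrightarrow{\lim} _{n \geq n _0} \widehat{\B} ^{(n)} _{\X} (Z) _\Q \otimes _{\widehat{\B} ^{(n _0)} _{\X} (Z) _\Q} \E _0$ for a coherent $\widehat{\B} ^{(n _0)} _{\X} (Z) _\Q$-module $\E _0$ whose restriction to $\Y$ is $\O _{\Y,\Q}$-coherent, hence (by \ref{thm-eqcat-cvisoc}.\ref{thm-eqcat-cvisoc1}) locally projective of finite type over $\O _{\Y,\Q}$. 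For the first assertion, I would first reduce, working on an affine $\X$ over which $Z = V(f)$, to the statement that $E := \Gamma(\X,\E)$ is projective of finite type over $A ^\dag := \Gamma(\X, \O _{\X} (\hdag Z) _\Q)$. Since $\E$ is $\O _{\X} (\hdag Z) _\Q$-coherent, $E$ is of finite type; the point is projectivity. Here I would invoke the faithful flatness of $\O _{\X} (\hdag Z) _\Q \to j _* \O _{\Y,\Q}$ from \cite[4.3.10]{Be1} (as already used in Remark \ref{projec-mdag}): after base change to $\Gamma(\Y, \O _{\Y,\Q})$ we land in the module of global sections of $\E|\Y$, which is projective of finite type by the convergent isocrystal picture (Theorem \ref{thm-eqcat-cvisoc}). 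Then a standard faithfully flat descent argument for projectivity of finitely presented modules (e.g. the fact that projectivity of a finitely presented module is detected after a faithfully flat base change, together with finite presentation of $E$ over the noetherian-like ring $A ^\dag$) gives projectivity of $E$ over $A ^\dag$.

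For the second assertion, local projectivity of finite type over $\O _{\X} (\hdag Z) _\Q$ is a local question on $\X$, so I would cover $\X$ by affines as above and apply part \ref{lem-projff-it1} together with the theorem of type $A$ for coherent $\O _{\X} (\hdag Z) _\Q$-modules (\cite[3.6.2]{Be1}): since $\Gamma(\X,\E)$ is a projective $\Gamma(\X, \O _{\X} (\hdag Z) _\Q)$-module of finite type and $\E$ is recovered as $\O _{\X} (\hdag Z) _\Q \otimes _{\Gamma(\X, \O _{\X} (\hdag Z) _\Q)} \Gamma(\X,\E)$, the sheaf $\E$ is a direct summand of a free $\O _{\X} (\hdag Z) _\Q$-module of finite rank on that affine, hence locally projective of finite type. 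Possibly I would shrink the affines further to make the projective module free, which is harmless for the conclusion.

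For the third assertion, the "only if" direction is trivial. For "if", suppose $\E|\U = 0$ for some open dense $\U \subset \X$. By part \ref{lem-projff-it2}, $\E$ is locally a direct summand of a free $\O _{\X} (\hdag Z) _\Q$-module, so locally on an affine $\X$ the module $E = \Gamma(\X,\E)$ is projective of finite type over $A ^\dag$, and it suffices to show $E = 0$. The rank function of a projective finite-type module over $A ^\dag$ is locally constant on $\Spec A ^\dag$; since $A ^\dag \to j _*\O _{\Y,\Q}$ is faithfully flat and $\Y$ meets $\U$, the rank of $E$ is forced to be $0$ on the component of $\Spec A ^\dag$ containing that locus, and connectedness/irreducibility of $\X$ (it is smooth and we work component by component) spreads this to all of $\Spec A ^\dag$. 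Hence $E = 0$, so $\E = 0$. The main obstacle I anticipate is being careful with the descent-of-projectivity step in part \ref{lem-projff-it1}: one must either cite that $A ^\dag$ is coherent enough that finitely generated $\Leftrightarrow$ finitely presented for the relevant modules, or argue directly with the explicit $\underrightarrow{\lim}$-presentation of $\E$ from Proposition \ref{445Be1} to track finite presentation through the base change; the rest is routine.
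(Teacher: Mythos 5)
For parts \ref{lem-projff-it1} and \ref{lem-projff-it2}, your approach coincides with the paper's: both reduce to the affine case, identify $\Gamma(\Y,\E)$ as projective of finite type via Theorem \ref{thm-eqcat-cvisoc}, use theorem of type $A$ to express $\Gamma(\Y,\E)$ as the base change of $\Gamma(\X,\E)$ along the faithfully flat map $\Gamma(\X,\O_{\X}(\hdag Z)_\Q)\to\Gamma(\Y,\O_{\Y,\Q})$ from \cite[4.3.10]{Be1}, and descend projectivity. The worry you flag about finite presentation is genuine but is covered by coherence of $\E$ over $\O_{\X}(\hdag Z)_\Q$. Part \ref{lem-projff-it2} then follows exactly as you say.

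For part \ref{lem-projff-it3} you take a genuinely different route. The paper's argument is more elementary: it uses part \ref{lem-projff-it2} only to realize $\E$ (on an affine) as a direct summand of a free module $\cL$ of finite type, shrinks $\U$ to a principal open, observes that $\Gamma(\X,\cL)\to\Gamma(\U,\cL)$ is injective, and therefore $\Gamma(\X,\E)\hookrightarrow\Gamma(\U,\E)=0$; type $A$ finishes. Your argument instead passes through $\Y$: since $\U\cap\Y\neq\emptyset$ and the rank of the locally free sheaf $\E|\Y$ is locally constant while $\Y$ is irreducible, $\E|\Y=0$, and then the same faithful flatness of $\Gamma(\X,\O_{\X}(\hdag Z)_\Q)\to\Gamma(\Y,\O_{\Y,\Q})$ kills $\Gamma(\X,\E)$. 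Both work. The paper's route avoids invoking constancy of the rank function (and the attendant need to identify $\Spec$ of the overconvergent ring as connected); it only needs the injectivity of restriction for a free module, which is a more transparent fact. Your phrasing in terms of ``the component of $\Spec A^\dag$ containing that locus'' is a bit loose — the clean version is the one sketched above: rank $0$ on a nonempty open of the irreducible $\Y$ forces $\E|\Y=0$, and then faithful flatness does the rest — but the underlying idea is sound.
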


\begin{proof}
1) Following \ref{thm-eqcat-cvisoc}, $\E | \Y$ is a locally projective $\O _{\Y,\Q}$-module of finite type.
Suppose $\X$ affine. Then $\Y$ is affine and 
$\Gamma (\Y, \E)$ is a  projective $\Gamma (\Y, \O _{\Y,\Q})$-module of finite type.
Using theorem of type $A$ concerning 
coherent $\O _{\X} (\hdag Z) _\Q$-modules,
$\Gamma (\Y, \O _{\X} (\hdag Z) _\Q)
\otimes _{\Gamma (\X, \O _{\X} (\hdag Z) _\Q)}
\Gamma (\X, \E)
\to 
\Gamma (\Y, \E)$
is an isomorphism.
Since 
$\Gamma (\Y, \O _{\X} (\hdag Z) _\Q) 
=
\Gamma (\Y,  \O _{\Y,\Q})$,
since $\Gamma (\X, \O _{\X} (\hdag Z) _\Q)
\to 
\Gamma (\Y,  \O _{\Y,\Q})$ is faithfully flat (this is checked in the proof of \cite[4.3.10]{Be1}), 
then this implies that $\Gamma (\X, \E)$ 
is a projectif 
$\Gamma (\X, \O _{\X} (\hdag Z) _\Q)$-module of finite type.

2) Since $\E$ is a coherent $\O _{\X} (\hdag Z) _\Q$-module, then using 
theorem of type $A$, the second assertion is a consequence of the first one. 

3)  Since the third part is local in $\X$, we can suppose $\X$ is affine and that $\E$ is a direct summand 
(in the category of coherent $\O _{\X} (\hdag Z) _\Q$-modules)
of a free 
$\O _{\X} (\hdag Z) _\Q$-module $\cL$ of finite type. 
Suppose   $\E | \U =0$. Replacing $\U$ by a smaller open subset,
we reduce to the case where $\U$ is a principal open subset (i.e. given by a global section of $\X$).
Since $\Gamma (\X, \cL )\to\Gamma (\fU,  \cL)$ is injective, we get
$\Gamma( \fX,\E) = 0$. Using theorem of type $A$, this yields that 
$\E= 0$. The converse is obvious.
\end{proof}

\begin{prop}
\label{cohisosurcv}
We set $\D _{\X} (\hdag Z) _\Q : = \O _{\X , \Q} (\hdag Z )
\otimes _{\O _{\X , \Q }} \D _{\X /\S ,\Q}  $.
Let 
$\E \in \mathrm{MIC} ^{\dag \dag} (\X,Z/K) $.
Then $\E $ is $\D _{\X} (\hdag Z) _\Q$-coherent and
the canonical morphism
\begin{equation}\label{cohisosurcv2}
  \E \rightarrow 
  \D ^\dag _{\X } (\hdag Z) _\Q
   \otimes _{\D _{\X} (\hdag Z) _\Q} \E
\end{equation}
is an isomorphism.
\end{prop}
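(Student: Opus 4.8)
The plan is to localize on $\X$ and reduce everything to the results \ref{thm-eqcat-cvisoc}, \ref{letterBerthelotCaro2007} and \ref{lem-projff}. All the assertions being local, I may assume $\X$ affine. The first step is the behaviour over $\Y$. Since $\D ^\dag _{\X } (\hdag Z) _\Q |\Y = \D ^\dag _{\Y, \Q}$, $\D _{\X} (\hdag Z) _\Q |\Y = \D _{\Y, \Q}$ and $\O _{\X} (\hdag Z) _\Q |\Y = \O _{\Y, \Q}$, the module $\E |\Y$ is a coherent $\D ^\dag _{\Y, \Q}$-module which is moreover $\O _{\Y, \Q}$-coherent (because $\E\in \mathrm{MIC} ^{\dag \dag} (\X,Z/K)$); by \ref{thm-eqcat-cvisoc}.\ref{thm-eqcat-cvisoc1} it is then $\O _{\Y, \Q}$-locally projective of finite type, so that \ref{thm-eqcat-cvisoc}.\ref{thm-eqcat-cvisoc2} applies to it: $\E |\Y$ is $\D _{\Y, \Q}$-coherent and, for every $m$, the canonical morphisms $\E |\Y \to \widehat{\D} ^{(m)} _{\Y, \Q}\otimes _{\D _{\Y, \Q}} \E |\Y$ and $\E |\Y \to \D ^\dag _{\Y, \Q}\otimes _{\widehat{\D} ^{(m)} _{\Y, \Q}} \E |\Y$ are isomorphisms. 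Composing these, the restriction to $\Y$ of the morphism \ref{cohisosurcv2} is an isomorphism.

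Next I would prove that $\E$ is $\D _{\X} (\hdag Z) _\Q$-coherent. On the affine $\X$ the ring $\Gamma (\X , \O _{\X} (\hdag Z) _\Q)$ is noetherian and $\D _{\X} (\hdag Z) _\Q = \bigcup _n \D _{\X ,n} (\hdag Z) _\Q$ is the filtered union of its $\O _{\X} (\hdag Z) _\Q$-coherent sub-bimodules of operators of order $\le n$; since $\E$ is $\O _{\X} (\hdag Z) _\Q$-coherent, the standard argument for $\O$-coherent $\D$-modules gives that $\E$ is $\D _{\X} (\hdag Z) _\Q$-coherent. Base-changing a local free presentation of $\E$ over $\D _{\X} (\hdag Z) _\Q$ along $\D _{\X} (\hdag Z) _\Q \to \D ^\dag _{\X } (\hdag Z) _\Q$ and using that $\D ^\dag _{\X } (\hdag Z) _\Q$ is a coherent ring, it follows that $\D ^\dag _{\X } (\hdag Z) _\Q\otimes _{\D _{\X} (\hdag Z) _\Q} \E$ is a coherent $\D ^\dag _{\X } (\hdag Z) _\Q$-module.

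It remains to see that $\varphi \colon \E \to \D ^\dag _{\X } (\hdag Z) _\Q\otimes _{\D _{\X} (\hdag Z) _\Q} \E$, the morphism \ref{cohisosurcv2}, is an isomorphism. Both its source and target are coherent $\D ^\dag _{\X } (\hdag Z) _\Q$-modules, hence so are $\Ker \varphi$ and $\Coker \varphi$; since $\Y$ is dense open in $\X$ and $\varphi |\Y$ is an isomorphism by the first step, both $\Ker \varphi$ and $\Coker \varphi$ vanish over $\Y$. Now $\Ker \varphi$, being a $\D ^\dag _{\X } (\hdag Z) _\Q$-submodule of the $\O _{\X} (\hdag Z) _\Q$-coherent module $\E$ over the noetherian ring $\O _{\X} (\hdag Z) _\Q$, is itself $\O _{\X} (\hdag Z) _\Q$-coherent, hence lies in $\mathrm{MIC} ^{\dag \dag} (\X,Z/K)$; so \ref{lem-projff}.\ref{lem-projff-it3} forces $\Ker \varphi =0$, i.e. $\varphi$ is injective. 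For $\Coker \varphi$ the same conclusion follows once one knows that $\D ^\dag _{\X } (\hdag Z) _\Q\otimes _{\D _{\X} (\hdag Z) _\Q} \E$ is $\O _{\X} (\hdag Z) _\Q$-coherent, i.e. belongs to $\mathrm{MIC} ^{\dag \dag} (\X,Z/K)$: then $\Coker \varphi\in \mathrm{MIC} ^{\dag \dag} (\X,Z/K)$ and \ref{lem-projff}.\ref{lem-projff-it3} gives $\Coker \varphi =0$.

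The main obstacle is therefore this last input: the $\O _{\X} (\hdag Z) _\Q$-coherence of $\D ^\dag _{\X } (\hdag Z) _\Q\otimes _{\D _{\X} (\hdag Z) _\Q} \E$. I would extract it from the finite-level description of the essential image of $\sp _*$. By \ref{letterBerthelotCaro2007} and \ref{445Be1} one may, locally, write $\E \riso \underrightarrow{\lim} _{n\ge n _0} \widehat{\B} ^{(n)} _{\X} (Z) _\Q \otimes _{\widehat{\B} ^{(n _0)} _{\X} (Z) _\Q}\E _0$ together with, for each $m$, a compatible $(\widehat{\B} ^{(n _m)} _{\X} (Z)\widehat{\otimes} \widehat{\D} ^{(m)} _{\X /\S })_\Q$-module structure on $\widehat{\B} ^{(n _m)} _{\X} (Z) _\Q \otimes _{\widehat{\B} ^{(n _0)} _{\X} (Z) _\Q}\E _0$; comparing this colimit presentation with the one computing $\D ^\dag _{\X } (\hdag Z) _\Q\otimes _{\D _{\X} (\hdag Z) _\Q} \E$ (for which one passes from $\D _{\X} (\hdag Z) _\Q$ to $\widehat{\D} ^{(m)} _{\X } (\hdag Z) _\Q$ and then to $\D ^\dag _{\X } (\hdag Z) _\Q$, using over $\Y$ the isomorphisms of \ref{thm-eqcat-cvisoc}.\ref{thm-eqcat-cvisoc-arrows} and propagating them to $\X$ via \ref{445Be1}), one identifies the target of $\varphi$ with $\E$ itself. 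Equivalently, it would suffice to show that $\Coker \varphi$, a coherent $\D ^\dag _{\X } (\hdag Z) _\Q$-module with support contained in $Z$, must vanish, since $\O _{\X} (\hdag Z) _\Q$ and hence $\D ^\dag _{\X } (\hdag Z) _\Q$ are insensitive to what happens along $Z$.
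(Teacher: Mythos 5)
Your overall plan is sound and most of the steps are correct, but the endgame is not actually closed. Note also that the paper itself only records ``we can copy the proof of \cite[2.2.7]{caro_comparaison}'', so there is no in-text proof to compare against; I am judging the argument on its own.

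What works. The reduction to $\X$ affine, the treatment over $\Y$ via \ref{thm-eqcat-cvisoc}.\ref{thm-eqcat-cvisoc-arrows} (composing $\E|\Y\to\widehat{\D}^{(m)}_{\Y,\Q}\otimes_{\D_{\Y,\Q}}\E|\Y\to\D^\dag_{\Y,\Q}\otimes_{\D_{\Y,\Q}}\E|\Y$), the $\D_{\X}(\hdag Z)_\Q$-coherence of $\E$ from its $\O_{\X}(\hdag Z)_\Q$-coherence, and the coherence of the target of $\varphi$: all of this is correct. The argument that $\Ker\varphi$ is $\O_{\X}(\hdag Z)_\Q$-coherent (as a sub-$\O$-module of a noetherian-coherent module preserved by $\D^\dag$), hence lies in $\mathrm{MIC}^{\dag\dag}(\X,Z/K)$ and vanishes by \ref{lem-projff}.\ref{lem-projff-it3}, is also correct, though heavier than necessary.

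Where the gap is. You correctly identify that the same \ref{lem-projff}-route cannot be applied to $\Coker\varphi$ until you know the target of $\varphi$ is $\O_{\X}(\hdag Z)_\Q$-coherent — which is essentially the content of \ref{cohisosurcv2} itself. Your middle paragraph, comparing the colimit presentation of $\E$ from \ref{445Be1}/\ref{letterBerthelotCaro2007} with a colimit computing $\D^\dag_{\X}(\hdag Z)_\Q\otimes_{\D_{\X}(\hdag Z)_\Q}\E$ and concluding that ``one identifies the target of $\varphi$ with $\E$ itself'', begs the question: this identification is exactly what must be proved, and the sketch does not exhibit the needed compatibility of the two colimit systems. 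Your final sentence — that a coherent $\D^\dag_{\X}(\hdag Z)_\Q$-module supported on $Z$ must vanish — is in fact the right closing move, but as written it is an unjustified assertion phrased as ``it would suffice'', and the heuristic ``insensitive to what happens along $Z$'' is not a proof. The precise input you need is that (for $\X$ affine) $D^\dag_{\X}(\hdag Z)_\Q\to D^\dag_{\Y,\Q}$ is faithfully flat; this is Berthelot \cite[4.3.10--4.3.12]{Be1}, the same circle of results the paper invokes in the proof of \ref{lem-projff}.\ref{lem-projff-it1}. With that cited, theorem A reduces both $\Ker\varphi=0$ and $\Coker\varphi=0$ to the vanishing of $D^\dag_{\Y,\Q}\otimes_{D^\dag_{\X}(\hdag Z)_\Q}\Gamma(\X,-)$, which your step over $\Y$ already gives. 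This would simultaneously close the $\Coker$ step and make the $\mathrm{MIC}^{\dag\dag}$ detour for $\Ker\varphi$ superfluous. As it stands, the argument for $\Coker\varphi=0$ is a genuine gap.
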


\begin{proof}
We can copy the proof of \cite[2.2.7]{caro_comparaison}.
\end{proof}

\begin{ntn}
\label{cvisoc-f*}
\begin{enumerate}
\item 
\label{cvisoc-f*1}
Similarly to \ref{defi-M-L},
we denote by $M (\O _{\Y} ^{(\bullet)})$ the category of
$\O _{\Y} ^{(\bullet)}$-modules.
We get a canonical functor 
$\mathrm{cst}
\colon 
M (\O _{\Y} )
\to 
M (\O _{\Y} ^{(\bullet)})$ defined by 
$\FF \mapsto \FF ^{(\bullet)}$ so that 
$\FF ^{(m)}
\to 
\FF ^{(m+1)}$
is the identity of 
$\FF$.
Since this functor is exact, 
this yields the t-exact functor
$\mathrm{cst}
\colon 
D (\O _{\Y} )
\to 
D (\O _{\Y} ^{(\bullet)})$.
Similarly to \ref{defi-M-L},
we define the notion of ind-isogenies  (resp. of lim-ind-isogenies) of
$M (\O _{\Y} ^{(\bullet)})$.
Similarly to \ref{nota-(L)Mcoh},
we define the category
$\underrightarrow{LM} _{\Q, \mathrm{coh}} ( \O ^{(\bullet)} _{\Y})$.
We remark that 
$\underrightarrow{LM} _{\Q, \mathrm{coh}} ( \O ^{(\bullet)} _{\Y})$
is the  subcategory of
$\underrightarrow{LM} _{\Q} ( \O ^{(\bullet)} _{\Y})$
consisting of objects which are locally isomorphic to an object of the form 
$\mathrm{cst} (\G)$ where $\G$ is a coherent $\O _{\Y}$-module
(use analogous versions of \cite[2.1.7 and 2.2.2]{caro-stab-sys-ind-surcoh}).

\item 
\label{cvisoc-f*2}
 Following notation \ref{ntnMICdag2fs}, 
we denote by 
$\mathrm{MIC} ^{\dag \dag}(\Y/\V)$ the category of $\D ^\dag _{\Y/\S, \Q} $-modules which are also
$\O _{\Y,\Q}$-coherent. Recall these objects are necessarily 
$\D ^\dag _{\Y, \Q} $-coherent,
and 
$\O _{\Y,\Q} $-locally projective of finite type.
We denote by 
$\mathrm{MIC}  ^{(\bullet)}(\Y/\V)$
the full subcategory of 
$\underrightarrow{LM} _{\Q, \mathrm{coh}} ( \widehat{\D} ^{(\bullet)} _{\Y/\S})$
consisting of objects 
$\E ^{(\bullet)}$ such that 
$\underrightarrow{\lim} \E ^{(\bullet)}$
are 
$\O _{\Y,\Q}$-coherent.

\end{enumerate}

\end{ntn}

\begin{rem}
\label{remMICY/K}
Let $\E \in \mathrm{MIC} ^{\dag \dag}(\Y/\V)$.
Let 
$\widetilde{\D} := \D ^\dag _{\Y, \Q}$ or 
$\widetilde{\D} := \widehat{\D} ^{(m)} _{\Y, \Q}$. 
Let 
$\D := \D _{\Y, \Q}$ or 
$\D := \widehat{\D} ^{(0)} _{\Y, \Q}$.
By using the isomorphisms of  \ref{thm-eqcat-cvisoc}.\ref{thm-eqcat-cvisoc-arrows}, 
we check that both morphisms 
$\E 
\to 
\widetilde{\D} \otimes _{\D  }
\E
\to 
\E$
are isomorphisms.
This yields that the first morphism 
is in fact
$\widetilde{\D}$-linear. 
Hence, if $\FF$ is a $\widetilde{\D}$-module, 
then any 
$\D $-linear morphism
$\E \to \FF$ is necessarily 
$\widetilde{\D}$-linear. 
\end{rem}

\begin{lem}
\label{remMICY/K2}
Let 
$\FF ^{(m)}$ be a coherent $\widehat{\D} ^{(m)} _{\Y} $-module
et $f \colon \FF ^{(m)} \to \FF ^{(m)}$ be a $\V$-linear morphism
such that $f _\Q \colon \FF ^{(m)} _\Q\to \FF ^{(m)} _\Q$ is equal to $p ^N id$ for some $N \in \N$.
Then, for $N' \in \N$ large enough, we have
$p ^{N'} f = p ^{N'+N} id$.
\end{lem}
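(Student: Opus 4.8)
The plan is to show that the $\V$-linear morphism $g := f - p^{N}\mathrm{id}\colon \FF^{(m)}\to \FF^{(m)}$ is annihilated by a uniform power of $p$; the asserted equality $p^{N'}f = p^{N'+N}\mathrm{id}$ is then merely a rewriting of $p^{N'}g = 0$. By hypothesis $g_{\Q} = f_{\Q} - p^{N}\mathrm{id} = 0$, and the commutative square relating $\FF^{(m)}\to\FF^{(m)}_{\Q}$ with $g$ and $g_{\Q}$ shows that the composite $\FF^{(m)}\xrightarrow{g}\FF^{(m)}\to\FF^{(m)}_{\Q}$ vanishes. Hence $g$ factors through the subsheaf $T := \Ker(\FF^{(m)}\to\FF^{(m)}_{\Q})\subset\FF^{(m)}$ of $p$-power-torsion sections, say $g = \iota\circ g'$ with $\iota\colon T\hookrightarrow\FF^{(m)}$. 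So it suffices to produce $N'\in\N$ with $p^{N'}T = 0$.

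Since $\Y$ is quasi-compact (see our conventions), it is covered by finitely many affine opens $\U_{1},\dots,\U_{r}$, and it is enough to bound the torsion exponent of $T|\U_{i}$ on each $\U_{i}$ and then take the maximum. Fix such an affine $\U = \Spf A$, and write $D := \Gamma(\U,\widehat{\D}^{(m)}_{\Y/\S})$ and $M := \Gamma(\U,\FF^{(m)})$. For each $n\in\N$, the kernel $\FF^{(m)}[p^{n}] := \Ker(p^{n}\colon\FF^{(m)}\to\FF^{(m)})$ is a coherent $\widehat{\D}^{(m)}_{\Y/\S}$-submodule of $\FF^{(m)}$, because $p^{n}$ is central in $\widehat{\D}^{(m)}_{\Y/\S}$ and this is a coherent ring; applying Berthelot's theorems of type $A$ and $B$ for coherent $\widehat{\D}^{(m)}_{\Y/\S}$-modules (see \cite[3]{Be1}), one gets $\Gamma(\U,\FF^{(m)}[p^{n}]) = M[p^{n}]$ together with $\FF^{(m)}[p^{n}]|\U = \widehat{\D}^{(m)}_{\Y/\S}|\U\otimes_{D}M[p^{n}]$.

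Now $D$ is noetherian (see \cite[3.3--3.4]{Be1}) and $M$ is of finite type over $D$, so the $p$-power-torsion submodule $M[p^{\infty}] = \bigcup_{n}M[p^{n}]$ is again of finite type, hence $M[p^{\infty}] = M[p^{n_{0}}]$ for some $n_{0} = n_{0}(\U)$. Consequently the increasing chain of coherent submodules $(\FF^{(m)}[p^{n}]|\U)_{n}$ stabilizes at $\FF^{(m)}[p^{n_{0}}]|\U$, and therefore $T|\U = \bigcup_{n}\FF^{(m)}[p^{n}]|\U = \FF^{(m)}[p^{n_{0}}]|\U$ is killed by $p^{n_{0}}$. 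Setting $N' := \max_{1\le i\le r}n_{0}(\U_{i})$, we obtain $p^{N'}T = 0$, hence $p^{N'}g = 0$, i.e. $p^{N'}f = p^{N'+N}\mathrm{id}$.

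The argument is the standard ``uniform torsion'' reduction, and the only points requiring care are the inputs from \cite[3]{Be1}: the noetherianity of the sections of $\widehat{\D}^{(m)}_{\Y/\S}$ on affine formal schemes and the type $A$/type $B$ theorems (exactness of global sections and of $\widehat{\D}^{(m)}_{\Y/\S}|\U\otimes_{D}-$ on coherent modules). Once these are granted there is no real obstacle; in particular the weaker hypothesis that $f$ is only $\V$-linear, rather than $\widehat{\D}^{(m)}_{\Y/\S}$-linear, is harmless, since we only use coherence of $\FF^{(m)}$ to control the torsion subsheaf $T$ and never the $\widehat{\D}^{(m)}_{\Y/\S}$-linearity of $g$ itself.
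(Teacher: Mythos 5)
Your proof is correct and takes essentially the same approach as the paper's one-line argument: since $\Y$ is quasi-compact and $\FF^{(m)}$ is coherent, its $p$-torsion subsheaf is killed by a fixed power of $p$, which then annihilates $g = f - p^{N}\mathrm{id}$ because $g_{\Q} = 0$. The only difference is that you spell out the intermediate steps (noetherianity of affine sections, bounded torsion on finitely many affines) that the paper states without proof.
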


\begin{proof}
Since $\X$ is quasi-compact and
$\FF ^{(m)}$ is a coherent $\widehat{\D} ^{(m)} _{\Y} $-module,
then
the $p$-torsion part of  
$\FF ^{(m)}$ is killed by some power of $p$.
Hence, we are done.
\end{proof}

\begin{prop}
\label{MICbullet-lem}
Let $\E \in \mathrm{MIC} ^{\dag \dag}(\Y/\V)$.
Let 
$\FF ^{(0)}$ be a $\widehat{\D} ^{(0)} _{\Y} $-module, coherent over $\O _\Y$ together with 
an isomorphism of $\widehat{\D} ^{(0)} _{\Y, \Q} $-modules of the form
$\FF ^{(0)} _\Q \riso \E$.
For any $m\in \N$, let $\G ^{(m)} $ be the quotient of 
$\widehat{\D} ^{(m)} _{\Y/\S} \otimes _{\widehat{\D} ^{(0)} _{\Y/\S}} \FF ^{(0)}$ by its $p$-torsion part. 
The following conditions are satisfied. 
\begin{enumerate}

\item 
The module $\G ^{(m) }$ 
is $\O _{\Y}$-coherent.

\item The first (resp. second) canonical morphism
$$\FF ^{(0)} \to 
\widehat{\D} ^{(m)} _{\Y} \otimes _{\widehat{\D} ^{(0)} _{\Y} } \FF ^{(0)}
\to \G ^{(m)}$$
is an isogeny in the category of
$\widehat{\D} ^{(0)} _{\Y} $-modules
(resp. of coherent $\widehat{\D} ^{(m)} _{\Y} $-modules).

\item $\widehat{\D} ^{(\bullet)} _{\Y/\S} \otimes _{\widehat{\D} ^{(0)} _{\Y/\S}} \FF ^{(0)}
\in 
\mathrm{MIC}  ^{(\bullet)}(\Y/\V)$
and 
$ \underrightarrow{\lim} 
\,
(\widehat{\D} ^{(\bullet)} _{\Y/\S} \otimes _{\widehat{\D} ^{(0)} _{\Y/\S}} \FF ^{(0)})
\riso 
\E$.

\end{enumerate}

\end{prop}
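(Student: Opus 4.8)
The plan is to prove the three statements in order, relying heavily on Theorem~\ref{thm-eqcat-cvisoc} (Berthelot), Lemma~\ref{letterBerthelot-Lem3}, and Lemma~\ref{remMICY/K2}. First I would prove (1): since $\E \in \mathrm{MIC} ^{\dag \dag}(\Y/\V)$, by Theorem~\ref{thm-eqcat-cvisoc}.2.(a) there exists for each $m$ a coherent $\widehat{\D} ^{(m)} _{\Y}$-module, coherent over $\O _\Y$, which rationalizes to $\E$; in particular for $m=0$ we may take $\FF ^{(0)}$ (the hypothesis supplies one). By Theorem~\ref{thm-eqcat-cvisoc}.2.(b) the module $\E$ is $\D _{\Y,\Q}$-coherent (where $\D _{\Y,\Q}$ denotes the weak completion at level $0$, i.e. $\widehat{\D} ^{(0)} _{\Y,\Q}$) and the canonical map $\E \to \widehat{\D} ^{(m)} _{\Y,\Q}\otimes _{\widehat{\D} ^{(0)} _{\Y,\Q}}\E$ is an isomorphism. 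Rationalizing the defining exact sequence of $\G ^{(m)}$ as quotient of $\widehat{\D} ^{(m)} _{\Y/\S}\otimes _{\widehat{\D} ^{(0)} _{\Y/\S}}\FF ^{(0)}$ by $p$-torsion, we get $\G ^{(m)} _\Q \riso \widehat{\D} ^{(m)} _{\Y,\Q}\otimes _{\widehat{\D} ^{(0)} _{\Y,\Q}}\FF ^{(0)} _\Q \riso \widehat{\D} ^{(m)} _{\Y,\Q}\otimes _{\widehat{\D} ^{(0)} _{\Y,\Q}}\E \riso \E$. So $\G ^{(m)}$ is a coherent $\widehat{\D} ^{(m)} _{\Y}$-module without $p$-torsion whose rationalization is $\E$. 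Then Lemma~\ref{letterBerthelot-Lem3} applies verbatim: $\G ^{(m)}$ is $\O _\Y$-coherent (and moreover a locally topologically nilpotent $\widehat{\D} ^{(m)} _{\Y/\S}$-module).

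Next I would prove (2). We must show the two composed morphisms $\FF ^{(0)} \to \widehat{\D} ^{(m)} _{\Y}\otimes _{\widehat{\D} ^{(0)} _{\Y}}\FF ^{(0)} \to \G ^{(m)}$ are isogenies in the respective categories. For the composite $\FF ^{(0)} \to \G ^{(m)}$: it is a $\widehat{\D} ^{(0)} _{\Y}$-linear map which becomes an isomorphism after $\otimes \Q$ (by the chain of isomorphisms above, both $\FF ^{(0)} _\Q$ and $\G ^{(m)} _\Q$ identify with $\E$, compatibly). Since both source and target are coherent $\widehat{\D} ^{(0)} _{\Y}$-modules (the target via restriction of scalars along $\widehat{\D} ^{(0)} _{\Y}\to \widehat{\D} ^{(m)} _{\Y}$, using that $\G ^{(m)}$ is $\O _\Y$-coherent hence $\widehat{\D} ^{(0)} _{\Y}$-coherent) and $\Y$ is quasi-compact, an isomorphism after inverting $p$ is an isogeny: one constructs the inverse up to a power of $p$ using an avatar of Lemma~\ref{remMICY/K2} applied to the composite endomorphisms on both sides (the $p$-torsion of a coherent $\widehat{\D}$-module over a quasi-compact formal scheme is bounded). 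The second morphism $\widehat{\D} ^{(m)} _{\Y}\otimes _{\widehat{\D} ^{(0)} _{\Y}}\FF ^{(0)} \to \G ^{(m)}$ is by construction the projection onto the quotient by $p$-torsion; its kernel is the $p$-torsion submodule, which by coherence of $\widehat{\D} ^{(m)} _{\Y}\otimes _{\widehat{\D} ^{(0)} _{\Y}}\FF ^{(0)}$ over $\widehat{\D} ^{(m)} _{\Y}$ (finite type over $\O _\Y$ follows from (1)) and quasi-compactness of $\Y$ is killed by some $p^N$, so the morphism is an isogeny of coherent $\widehat{\D} ^{(m)} _{\Y}$-modules; composing with the first part gives the assertion.

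Finally I would prove (3). For each $m \le m'$ the transition map $\G ^{(m)} \to \G ^{(m')}$ is induced from the identity of $\FF ^{(0)}$ via $\widehat{\D} ^{(m)} _{\Y}\to \widehat{\D} ^{(m')} _{\Y}$, so $(\G ^{(m)}) _m$ — more precisely the object $\widehat{\D} ^{(\bullet)} _{\Y/\S}\otimes _{\widehat{\D} ^{(0)} _{\Y/\S}}\FF ^{(0)}$ — is a well-defined object of $M(\widehat{\D} ^{(\bullet)} _{\Y/\S})$, and by (1) each level is coherent over $\O _\Y$, hence coherent over $\widehat{\D} ^{(m)} _{\Y}$; thus $\widehat{\D} ^{(\bullet)} _{\Y/\S}\otimes _{\widehat{\D} ^{(0)} _{\Y/\S}}\FF ^{(0)} \in \underrightarrow{LM} _{\Q,\mathrm{coh}}(\widehat{\D} ^{(\bullet)} _{\Y/\S})$. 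Its inductive limit over $m$ after inverting $p$ is $\underrightarrow{\lim} _m (\widehat{\D} ^{(m)} _{\Y,\Q}\otimes _{\widehat{\D} ^{(0)} _{\Y,\Q}}\FF ^{(0)} _\Q) \riso \underrightarrow{\lim} _m (\widehat{\D} ^{(m)} _{\Y,\Q}\otimes _{\widehat{\D} ^{(0)} _{\Y,\Q}}\E) \riso \E$, using Theorem~\ref{thm-eqcat-cvisoc}.2.(b) again (each term maps isomorphically to $\E$, compatibly with transitions) and that $\D ^\dag _{\Y,\Q} = \underrightarrow{\lim} _m \widehat{\D} ^{(m)} _{\Y,\Q}$; in particular $\underrightarrow{\lim}$ of the object is $\O _{\Y,\Q}$-coherent, so it lies in $\mathrm{MIC} ^{(\bullet)}(\Y/\V)$ by Notation~\ref{cvisoc-f*}.\ref{cvisoc-f*2}. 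The main obstacle I anticipate is purely bookkeeping rather than conceptual: carefully checking that all the $\Q$-linear identifications of the various $\widehat{\D} ^{(m)} _{\Y,\Q}$-modules with $\E$ are mutually compatible (so that the transition maps of $(\G ^{(m)} _\Q) _m$ really become the identity on $\E$), and correctly distinguishing the two distinct meanings of the symbol $\D$ in Remark~\ref{remMICY/K} (the level-$0$ weak completion $\widehat{\D} ^{(0)} _{\Y,\Q}$ versus $\D _{\Y,\Q}$) when invoking Theorem~\ref{thm-eqcat-cvisoc}.2; once those identifications are pinned down, the isogeny statements in (2) reduce to the standard fact that a $p$-isomorphism between coherent modules over a quasi-compact $p$-adic formal scheme is an isogeny, via Lemma~\ref{remMICY/K2}.
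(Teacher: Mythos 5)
Your proof follows the paper's route for parts (1) and (3) but handles part (2) by a genuinely different, and arguably cleaner, reduction. The paper shows that $\alpha^{(m)}\colon \widehat{\D}^{(m)}_{\Y}\otimes_{\widehat{\D}^{(0)}_{\Y}}\FF^{(0)}\to\G^{(m)}$ is an isogeny (a morphism of coherent $\widehat{\D}^{(m)}_{\Y}$-modules that becomes an isomorphism after $\otimes\Q$), and then separately attacks $\iota^{(m)}\colon\FF^{(0)}\to\widehat{\D}^{(m)}_{\Y}\otimes_{\widehat{\D}^{(0)}_{\Y}}\FF^{(0)}$ by explicitly building a quasi-inverse $\kappa^{(m)}:=\beta^{(0)}\circ\delta^{(m)}\circ\alpha^{(m)}$, where $\beta^{(0)}$ is a quasi-inverse of $\alpha^{(0)}$ and $\delta^{(m)}$ is $p^{N'}(\gamma^{(m)}_{\Q})^{-1}$ for the transition map $\gamma^{(m)}\colon\G^{(0)}\to\G^{(m)}$, using Lemma~\ref{remMICY/K2} to promote the rational identities to integral $p$-power identities. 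You instead prove both $\alpha^{(m)}\circ\iota^{(m)}$ and $\alpha^{(m)}$ are isogenies and invoke a ``two-out-of-three'' property. That property does hold: if $g\circ f$ is a $p^{M}$-isogeny with quasi-inverse $h$ and $g$ is a $p^{N}$-isogeny with quasi-inverse $g'$, then $k:=p^{N}hg$ satisfies $k f = p^{M+N}\mathrm{id}$ directly, and $g(fhg-p^{M}\mathrm{id})=(gf)hg-p^{M}g=0$ so applying $g'$ gives $p^{N}fhg=p^{M+N}\mathrm{id}$, i.e. $fk=p^{M+N}\mathrm{id}$; this argument stays inside the category of $\widehat{\D}^{(0)}_{\Y}$-modules because $g'$ is $\widehat{\D}^{(m)}_{\Y}$-linear hence $\widehat{\D}^{(0)}_{\Y}$-linear. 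So your route is sound, and it avoids the bookkeeping with $\gamma^{(m)},\delta^{(m)}$. Two small imprecisions worth fixing: the parenthetical ``(finite type over $\O_{\Y}$ follows from (1))'' when justifying coherence of $\widehat{\D}^{(m)}_{\Y}\otimes_{\widehat{\D}^{(0)}_{\Y}}\FF^{(0)}$ over $\widehat{\D}^{(m)}_{\Y}$ is misplaced --- that coherence follows from $\FF^{(0)}$ being $\O_{\Y}$-coherent (hence $\widehat{\D}^{(0)}_{\Y}$-coherent), not from (1); and in (3) the levels of $\widehat{\D}^{(\bullet)}_{\Y/\S}\otimes_{\widehat{\D}^{(0)}_{\Y/\S}}\FF^{(0)}$ are \emph{not} $\O_{\Y}$-coherent in general (they may carry $p$-torsion); it is their $p$-torsion-free quotients $\G^{(m)}$ that are, and one should pass through the ind-isogeny $\alpha^{(\bullet)}$ from (2) before invoking $\O_{\Y}$-coherence of the levels.
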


\begin{proof}
1)
Following \ref{remMICY/K},
the canonical morphism 
$\E 
\to 
\widehat{\D} ^{(m)} _{\Y,\Q} \otimes _{\widehat{\D} ^{(0)} _{\Y,\Q} } \E$ 
is an isomorphism of $\widehat{\D} ^{(m)} _{\Y,\Q}$-modules.
The isomorphism
$\FF ^{(0)} _\Q \riso \E$
of $\widehat{\D} ^{(0)} _{\Y, \Q} $-modules 
induces 
the last isomorphism
of $\widehat{\D} ^{(m)} _{\Y, \Q} $-modules
$\G ^{(m)} _\Q 
\riso 
\widehat{\D} ^{(m)} _{\Y,\Q} \otimes _{\widehat{\D} ^{(0)} _{\Y,\Q} } \FF ^{(0)} _\Q
\riso 
\widehat{\D} ^{(m)} _{\Y,\Q} \otimes _{\widehat{\D} ^{(0)} _{\Y,\Q} } \E$.
Using \ref{letterBerthelot-Lem3}, 
this yields $\G ^{(m)}$ is $\O _{\Y}$-coherent.

2) a) Let us denote by 
$\alpha ^{(m)}
\colon 
\widehat{\D} ^{(m)} _{\Y} \otimes _{\widehat{\D} ^{(0)} _{\Y} } \FF ^{(0)}
\to \G ^{(m)}$ the canonical
epimorphism of coherent 
$\widehat{\D} ^{(m)} _{\Y} $-modules.
Since  $\alpha ^{(m)}$ is a morphism of coherent 
$\widehat{\D} ^{(m)} _{\Y} $-modules which is
an isomorphism after tensoring by $\Q$, 
then this is an isogeny 
in the category of $\widehat{\D} ^{(m)} _{\Y} $-modules
(use \cite[3.4.5]{Be1}).

b) Let
$\iota ^{(m)}
\colon 
\FF ^{(0)} \to 
\widehat{\D} ^{(m)} _{\Y} \otimes _{\widehat{\D} ^{(0)} _{\Y} } \FF ^{(0)}$
be the canonical morphism. 
It remains to check that $\iota ^{(m)}$ is an isogeny.
Using 2a) in the case $m=0$, we get a morphism 
$\beta ^{(0)} \colon 
\G ^{(0)} \to \FF ^{(0)}$ of 
coherent $\widehat{\D} ^{(0)} _{\Y} $-modules 
such that 
$\alpha ^{(0)} \circ \beta ^{(0)} 
=
p ^N id$ 
and 
$\beta ^{(0)} \circ \alpha ^{(0)} 
=
p ^N id$ for some integer $N$. 
Since the canonical morphism
$\E \to \widehat{\D} ^{(m)} _{\Y,\Q} \otimes _{\widehat{\D} ^{(0)} _{\Y,\Q} } \E$
is an isomorphism, then 
the canonical morphism
$\G ^{(0)} _\Q \to \widehat{\D} ^{(m)} _{\Y,\Q} \otimes _{\widehat{\D} ^{(0)} _{\Y,\Q} } \G ^{(0)} _\Q$
is an isomorphism. 
Since the canonical morphism
$\widehat{\D} ^{(m)} _{\Y,\Q} \otimes _{\widehat{\D} ^{(0)} _{\Y,\Q} } \G ^{(0)} _\Q
\to 
\G ^{(m)} _\Q$ is an isomorphism,
this yields by composition that the canonical $\widehat{\D} ^{(0)} _{\Y} $-linear morphism 
$\G ^{(0)}
\to \G ^{(m)}$
is an isomorphism after tensoring by $\Q$.
Let us denote by 
$\gamma ^{(m)}\colon \G ^{(0)}
\to \G ^{(m)}$ this morphism
and by 
$\gamma ^{(m)} _\Q
\colon 
\G ^{(0)} _\Q
\riso 
\G ^{(m)} _\Q$ 
the induced isomorphism.
Since $\gamma ^{(m)}$ is $\widehat{\D} ^{(0)} _{\Y} $-linear (and then 
$\O _{\Y}$-linear), since 
$\G ^{(m)}$ is $\O _{\Y}$-coherent and 
$\G ^{(0)}$ has no $p$-torsion, then,
for $N'$ large enough, 
$p ^{N'} (\gamma ^{(m)} _\Q ) ^{-1}$ induces the 
 morphism 
$\delta ^{(m)}\colon \G ^{(m)}
\to \G ^{(0)}$
of $\widehat{\D} ^{(0)} _{\Y} $-modules. 
We get 
$  \kappa ^{(m)}:= \beta ^{(0)} \circ \delta ^{(m)} \circ \alpha ^{(m)}
\colon 
\widehat{\D} ^{(m)} _{\Y} \otimes _{\widehat{\D} ^{(0)} _{\Y} } \FF ^{(0)}
\to 
\FF ^{(0)}$.
Using the Lemma \ref{remMICY/K2}, increasing $N$ or $N'$ if necessary in the construction of 
$ \kappa ^{(m)}$,  we get 
$\iota ^{(m)} \circ \kappa ^{(m)} 
=
p ^{N+N'} id$ 
and 
$\kappa ^{(m)} \circ \iota ^{(m)} 
=
p ^{N+N'} id$. 
Hence,  this morphism
$\iota ^{(m)}\colon \FF ^{(0)}
\to \G ^{(m)}$
is an isogeny in the category of
$\widehat{\D} ^{(0)} _{\Y} $-modules.

3) Finally 
$\underrightarrow{\lim} 
\,
(\widehat{\D} ^{(\bullet)} _{\Y/\S} \otimes _{\widehat{\D} ^{(0)} _{\Y/\S}} \FF ^{(0)})
\riso 
\D ^\dag _{\Y/\S, \Q} 
 \otimes _{\widehat{\D} ^{(0)} _{\Y/\S},\Q} \FF ^{(0)}_\Q$.
 Following the first part, 
 the canonical morphism
 $\FF ^{(0)}_\Q
 \to 
  \D ^\dag _{\Y/\S, \Q} 
 \otimes _{\widehat{\D} ^{(0)} _{\Y/\S},\Q} \FF ^{(0)}_\Q$
 is an isomorphism.
We endow 
$\FF ^{(0)} _\Q $ with the structure of 
$  \D ^\dag _{\Y/\S, \Q} $-module making $  \D ^\dag _{\Y/\S, \Q} $-linear
the isomorphism
$\FF ^{(0)} _\Q \riso \E$.
Following the remark 
 \ref{remMICY/K}, 
 this yields that 
 the canonical isomorphism
  $\FF ^{(0)}_\Q
 \to 
  \D ^\dag _{\Y/\S, \Q} 
 \otimes _{\widehat{\D} ^{(0)} _{\Y/\S},\Q} \FF ^{(0)}_\Q$
is in fact $  \D ^\dag _{\Y/\S, \Q} $-linear. 
Hence, we get the isomorphism 
$ \D ^\dag _{\Y/\S, \Q} 
 \otimes _{\widehat{\D} ^{(0)} _{\Y/\S},\Q} \FF ^{(0)}_\Q
 \riso \E$ of $ \mathrm{MIC} ^{\dag \dag}(\Y/\V)$.
\end{proof}

\begin{cor}
\label{MICbullet-prop}
Let $\E ^{(\bullet)}
\in
\underrightarrow{LM} _{\Q} ( \widehat{\D} ^{(\bullet)} _{\Y/\S})$.
The object 
$\E ^{(\bullet)}$ belongs to 
$\mathrm{MIC}  ^{(\bullet)}(\Y/\V)$ if and only if the following condition is satisfied: 
There exists  
a $\widehat{\D} ^{(0)} _{\Y/\S}$-module $\FF ^{(0)}$,
coherent over $\O _\Y$ 
such that 
$\widehat{\D} ^{(\bullet)} _{\Y/\S} \otimes _{\widehat{\D} ^{(0)} _{\Y/\S}} \FF ^{(0)}$
is isomorphic in 
$\underrightarrow{LM} _{\Q} ( \widehat{\D} ^{(\bullet)} _{\Y/\S})$ to 
$\E ^{(\bullet)}$
and such that 
the canonical morphism 
$\mathrm{cst} (\FF ^{(0)})
\to 
\widehat{\D} ^{(\bullet)} _{\Y/\S} \otimes _{\widehat{\D} ^{(0)} _{\Y/\S}} \FF ^{(0)}$
is an ind-isogeny in  
$M ( \O ^{(\bullet)} _{\Y})$.
Moreover, when 
$\E ^{(\bullet)}
\in
\mathrm{MIC}  ^{(\bullet)}(\Y/\V)$,
we can choose such $\FF ^{(0)}$ without $p$-torsion.
\end{cor}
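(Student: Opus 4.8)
The plan is to prove both directions of the equivalence by reducing everything to Proposition \ref{MICbullet-lem}, which already does the main constructive work. For the "only if" direction, suppose $\E ^{(\bullet)} \in \mathrm{MIC}  ^{(\bullet)}(\Y/\V)$, so by definition $\E ^{(\bullet)} \in \underrightarrow{LM} _{\Q, \mathrm{coh}} ( \widehat{\D} ^{(\bullet)} _{\Y/\S})$ and $\E := \underrightarrow{\lim} \E ^{(\bullet)}$ is $\O _{\Y,\Q}$-coherent, hence $\E \in \mathrm{MIC} ^{\dag \dag}(\Y/\V)$. By Theorem \ref{thm-eqcat-cvisoc}.\ref{thm-eqcat-cvisoc2}.(a) there exists a $\widehat{\D} ^{(0)} _{\Y}$-module $\overset{\circ}{\E}$, coherent over $\O _\Y$, together with an isomorphism $\overset{\circ}{\E} _\Q \riso \E$; using \cite[3.4.4]{Be1} we may take $\overset{\circ}{\E}$ without $p$-torsion. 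Set $\FF ^{(0)} := \overset{\circ}{\E}$. Then Proposition \ref{MICbullet-lem} gives that $\widehat{\D} ^{(\bullet)} _{\Y/\S} \otimes _{\widehat{\D} ^{(0)} _{\Y/\S}} \FF ^{(0)} \in \mathrm{MIC}  ^{(\bullet)}(\Y/\V)$ with $\underrightarrow{\lim}$ of it isomorphic to $\E$, and item (2) of that proposition says precisely that $\mathrm{cst} (\FF ^{(0)}) \to \widehat{\D} ^{(\bullet)} _{\Y/\S} \otimes _{\widehat{\D} ^{(0)} _{\Y/\S}} \FF ^{(0)}$ is an ind-isogeny of $\widehat{\D} ^{(0)} _{\Y}$-modules (in particular of $\O ^{(\bullet)} _{\Y}$-modules). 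The remaining point is to identify $\widehat{\D} ^{(\bullet)} _{\Y/\S} \otimes _{\widehat{\D} ^{(0)} _{\Y/\S}} \FF ^{(0)}$ with $\E ^{(\bullet)}$ inside $\underrightarrow{LM} _{\Q} ( \widehat{\D} ^{(\bullet)} _{\Y/\S})$: since both have the same image under the fully faithful functor $\underrightarrow{\lim}$ restricted to $\underrightarrow{LM} _{\Q, \mathrm{coh}}$ (the equivalence of \ref{eqcat-limcoh}.1 composed with the relevant identifications, or more directly the coherence equivalence for $\D ^\dag$), they are isomorphic. This gives the condition.

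For the "if" direction, suppose there is a $\widehat{\D} ^{(0)} _{\Y/\S}$-module $\FF ^{(0)}$, coherent over $\O _\Y$, with $\widehat{\D} ^{(\bullet)} _{\Y/\S} \otimes _{\widehat{\D} ^{(0)} _{\Y/\S}} \FF ^{(0)} \cong \E ^{(\bullet)}$ in $\underrightarrow{LM} _{\Q} ( \widehat{\D} ^{(\bullet)} _{\Y/\S})$ and with $\mathrm{cst} (\FF ^{(0)}) \to \widehat{\D} ^{(\bullet)} _{\Y/\S} \otimes _{\widehat{\D} ^{(0)} _{\Y/\S}} \FF ^{(0)}$ an ind-isogeny in $M ( \O ^{(\bullet)} _{\Y})$. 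First, by Proposition \ref{MICbullet-lem}.3 (applied after replacing $\FF ^{(0)}$ by its quotient by $p$-torsion, which does not change anything up to isogeny, or observing the argument there only uses $\O_\Y$-coherence), $\widehat{\D} ^{(\bullet)} _{\Y/\S} \otimes _{\widehat{\D} ^{(0)} _{\Y/\S}} \FF ^{(0)} \in \mathrm{MIC}  ^{(\bullet)}(\Y/\V)$; in particular it lies in $\underrightarrow{LM} _{\Q, \mathrm{coh}}$ and its $\underrightarrow{\lim}$ is $\O _{\Y,\Q}$-coherent. Since $\E ^{(\bullet)}$ is isomorphic to it in $\underrightarrow{LM} _{\Q} ( \widehat{\D} ^{(\bullet)} _{\Y/\S})$, the same two properties hold for $\E ^{(\bullet)}$, i.e. $\E ^{(\bullet)} \in \mathrm{MIC}  ^{(\bullet)}(\Y/\V)$. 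The ind-isogeny hypothesis is actually not needed for this direction beyond ensuring $\widehat{\D} ^{(\bullet)} _{\Y/\S} \otimes _{\widehat{\D} ^{(0)} _{\Y/\S}} \FF ^{(0)}$ makes sense as a well-behaved object; it is recorded for symmetry with the "only if" direction and to pin down the shape of $\FF ^{(0)}$. Finally, the last sentence (one may choose $\FF ^{(0)}$ without $p$-torsion) is exactly the $p$-torsion-free choice made in the "only if" direction via \cite[3.4.4]{Be1}.

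The main obstacle I anticipate is the bookkeeping needed to transport isomorphisms between the various localized categories — specifically, to argue that $\widehat{\D} ^{(\bullet)} _{\Y/\S} \otimes _{\widehat{\D} ^{(0)} _{\Y/\S}} \FF ^{(0)}$ and $\E ^{(\bullet)}$ being abstractly isomorphic in $\underrightarrow{LM} _{\Q}$ forces $\E ^{(\bullet)}$ to inherit membership in the full subcategory $\mathrm{MIC}  ^{(\bullet)}(\Y/\V)$. This is essentially formal once one notes that $\mathrm{MIC}  ^{(\bullet)}(\Y/\V)$ is defined as a full subcategory by two conditions (coherence up to lim-ind-isogeny, and $\O_{\Y,\Q}$-coherence of the inductive limit) that are stable under isomorphism in $\underrightarrow{LM} _{\Q}$; the functor $\underrightarrow{\lim}$ sends isomorphisms to isomorphisms, so the second condition is preserved, and coherence up to lim-ind-isogeny is likewise an isomorphism-stable property. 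So the proof really is just assembling Proposition \ref{MICbullet-lem} together with Theorem \ref{thm-eqcat-cvisoc} and Theorem \ref{letterBerthelotCaro2007}, with the caveat that one must be careful about where $p$-torsion-freeness is used and where it is not.
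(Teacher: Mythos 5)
Your ``only if'' direction is correct and essentially matches the paper's argument. The ``if'' direction, however, has a genuine gap. You invoke Proposition \ref{MICbullet-lem}.3 to conclude $\widehat{\D}^{(\bullet)}_{\Y/\S}\otimes_{\widehat{\D}^{(0)}_{\Y/\S}}\FF^{(0)}\in\mathrm{MIC}^{(\bullet)}(\Y/\V)$, claiming ``the argument there only uses $\O_\Y$-coherence''. It does not: the stated hypothesis of Proposition \ref{MICbullet-lem} is that one is \emph{already given} $\E\in\mathrm{MIC}^{\dag\dag}(\Y/\V)$ with an isomorphism $\FF^{(0)}_\Q\riso\E$ of $\widehat{\D}^{(0)}_{\Y,\Q}$-modules, and its proof uses this essentially --- part~1 appeals to Remark \ref{remMICY/K} (which requires $\E\in\mathrm{MIC}^{\dag\dag}(\Y/\V)$) to get $\E\riso\widehat{\D}^{(m)}_{\Y,\Q}\otimes_{\widehat{\D}^{(0)}_{\Y,\Q}}\E$, and to Lemma \ref{letterBerthelot-Lem3}, which again assumes $\O_{\Y,\Q}$-coherence of the ambient $\D^\dag_{\Y,\Q}$-module. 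In the ``if'' direction you do not yet know any of this; establishing $\O_{\Y,\Q}$-coherence of $\underrightarrow{\lim}\,\E^{(\bullet)}$ is precisely the conclusion you are after. Invoking \ref{MICbullet-lem} here is circular.

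Compounding this, your remark that ``the ind-isogeny hypothesis is actually not needed for this direction'' is exactly backwards: it is the whole content. An $\O_\Y$-coherent $\widehat{\D}^{(0)}_{\Y/\S}$-module $\FF^{(0)}$ does not automatically have an $\O_{\Y,\Q}$-coherent $\underrightarrow{\lim}$ after tensoring up to $\widehat{\D}^{(\bullet)}_{\Y/\S}$; going to higher levels can genuinely enlarge the module. The ind-isogeny hypothesis says exactly that $\mathrm{cst}(\FF^{(0)})$ and $\widehat{\D}^{(\bullet)}_{\Y/\S}\otimes_{\widehat{\D}^{(0)}_{\Y/\S}}\FF^{(0)}$ become isomorphic in $\underrightarrow{LM}_\Q(\O^{(\bullet)}_\Y)$, so that applying $\underrightarrow{\lim}$ yields $\underrightarrow{\lim}\,\E^{(\bullet)}\riso\FF^{(0)}_\Q$ as $\O_{\Y,\Q}$-modules, and $\O_{\Y,\Q}$-coherence then follows from that of $\FF^{(0)}$. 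That direct reading of the hypothesis --- not an appeal to Proposition \ref{MICbullet-lem} --- is what the ``if'' direction requires, and it is what the paper's one-line proof does.
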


\begin{proof}
Let $\E := \underrightarrow{\lim} \E ^{(\bullet)}$
be the corresponding $\D ^\dag _{\Y/\S, \Q} $-module.
If such $\FF ^{(0)}$ exists, then 
$\E$ is in particular 
$\O _{\Y,\Q}$-coherent and then by definition 
$\E ^{(\bullet)}
\in
\mathrm{MIC}  ^{(\bullet)}(\Y/\V)$. 
Conversely, 
suppose 
$\E ^{(\bullet)}
\in
\mathrm{MIC}  ^{(\bullet)}(\Y/\V)$.
Then $\E$ is 
a $\D ^\dag _{\Y/\S, \Q} $-module which is also
$\O _{\Y,\Q}$-coherent.
Using \ref{thm-eqcat-cvisoc}, 
there exists 
a coherent $\widehat{\D} ^{(0)} _{\Y/\S}$-module $\FF ^{(0)}$ without $p$-torsion,
coherent over $\O _\Y$
together with 
an isomorphism of $\widehat{\D} ^{(0)} _{\Y, \Q} $-modules
$\E \riso \FF ^{(0)} _\Q$.
Hence, we conclude by using \ref{MICbullet-lem}.
\end{proof}

\begin{empt}
Let $f \colon \Y ' \to \Y $ be a morphism of smooth formal $\V$-schemes.
Let 
$\E ^{(\bullet)}
\in 
M( \widehat{\D} ^{(\bullet)} _{\Y/\S})$.
We set
$f ^{*(m)} _{\mathrm{alg}}(\E ^{(m)})
:=
\smash{\widehat{\D}} ^{(m)} _{\Y ^{\prime } \rightarrow \Y /\S }
\otimes   _{f ^{-1} \smash{\widehat{\D}} ^{(m)} _{\Y /\S } }
f ^{-1} \E ^{(m)}$.
We denote by 
$f ^{*(\bullet)} _{\mathrm{alg}}(\E ^{(\bullet)}):=
\smash{\widehat{\D}} ^{(\bullet)} _{\Y ^{\prime } \rightarrow \Y /\S }
\otimes   _{f ^{-1} \smash{\widehat{\D}} ^{(\bullet)} _{\Y /\S } }
f ^{-1} \E ^{(\bullet)}$
the object of 
$M( \widehat{\D} ^{(\bullet)} _{\Y'/\S})$
whose transition morphisms are
$f ^{*(m)} _{\mathrm{alg}} ( \E ^{(m)})
\to 
f ^{*(m+1)} _{\mathrm{alg}} ( \E ^{(m+1)})$.
By left deriving the functor
$f ^{*(\bullet)} _{\mathrm{alg}}$, 
this yields the functor
$\L f ^{*(\bullet)} _{\mathrm{alg}}
\colon 
D ^- ( \widehat{\D} ^{(\bullet)} _{\Y/\S})
\to 
D ^- ( \widehat{\D} ^{(\bullet)} _{\Y'/\S})$, 
defined by setting 
$\L f ^{*(\bullet)} _{\mathrm{alg}}
(\cF ^{(\bullet)}):=
\smash{\widehat{\D}} ^{(\bullet)} _{\Y ^{\prime } \rightarrow \Y /\S }
\otimes   ^\L _{f ^{-1} \smash{\widehat{\D}} ^{(\bullet)} _{\Y /\S } }
f ^{-1} \cF ^{(\bullet)}$
for any
$\cF ^{(\bullet)} \in 
D ^- ( \widehat{\D} ^{(\bullet)} _{\Y/\S})$.
Since it preserves lim-ind-isogenies, 
this induces the functor
$\L f ^{*(\bullet)} _{\mathrm{alg}}
\colon 
\underrightarrow{LD} _{\Q}  ^{-} (\widehat{\D} ^{(\bullet)} _{\Y/\S} )
\to 
\underrightarrow{LD} _{\Q}  ^{-} (\widehat{\D} ^{(\bullet)} _{\Y'/\S} )$.

Following notation \ref{ntn-Lf!+*},
we set 
$\L f ^{*(\bullet)} ( \cF ^{(\bullet)}) :=
\smash{\widehat{\D}} ^{(\bullet)} _{\Y ^{\prime } \rightarrow \Y /\S }
\smash{\widehat{\otimes}} ^\L _{f ^{-1} \smash{\widehat{\D}} ^{(\bullet)} _{\Y /\S } }
f ^{-1} \cF ^{(\bullet)}$,
for any
$\cF ^{(\bullet)} \in 
\underrightarrow{LD} _{\Q,\mathrm{qc}}  ^{\mathrm{b}} (\widehat{\D} ^{(\bullet)} _{\Y/\S} )$.
Beware the notation is slightly misleading since $\L f ^{*(\bullet)}$
is not necessarily the left derived functor of a functor.
We get the morphism
$\L f ^{*(\bullet)} _{\mathrm{alg}} (\cF ^{(\bullet)})
\to 
\L f ^{*(\bullet)} ( \cF ^{(\bullet)})$

\end{empt}

\begin{lem}
\label{f*-OcohDm}
Let $f \colon \Y ' \to \Y $ be a morphism of smooth formal $\V$-schemes. 
We have the following properties.
\begin{enumerate}
\item Let 
$\FF ^{(\bullet)} \in 
\underrightarrow{LD} _{\Q,\mathrm{qc}}  ^{\mathrm{b}} (\widehat{\D} ^{(\bullet)} _{\Y/\S} )$.
The canonical morphism
$$\O _{\Y ^{\prime } }  ^{(\bullet)}
\smash{\widehat{\otimes}} ^\L _{f ^{-1}\O   ^{(\bullet)} _{\Y /\S }}
f ^{-1}\FF ^{(\bullet)}
\to
\smash{\widehat{\D}} ^{(\bullet)} _{\Y ^{\prime } \rightarrow \Y /\S }
\smash{\widehat{\otimes}} ^\L _{f ^{-1} \smash{\widehat{\D}} ^{(\bullet)} _{\Y /\S } }
f ^{-1} \FF ^{(\bullet)}$$
is an isomorphism.

\item Let 
$\FF ^{(\bullet)} \in 
\underrightarrow{LD} _{\Q,\mathrm{coh}}  ^{\mathrm{b}} (\widehat{\D} ^{(\bullet)} _{\Y/\S} )$.
The canonical morphism
$$
\L f ^{*(\bullet)} _{\mathrm{alg}} (\FF ^{(\bullet)}):=
\smash{\widehat{\D}} ^{(\bullet)} _{\Y ^{\prime } \rightarrow \Y /\S }
\otimes ^\L _{f ^{-1} \smash{\widehat{\D}} ^{(\bullet)} _{\Y /\S } }
f ^{-1} \FF ^{(\bullet)}
\to
\smash{\widehat{\D}} ^{(\bullet)} _{\Y ^{\prime } \rightarrow \Y /\S }
\smash{\widehat{\otimes}} ^\L _{f ^{-1} \smash{\widehat{\D}} ^{(\bullet)} _{\Y /\S } }
f ^{-1} \FF ^{(\bullet)}
=:\L f ^{*(\bullet)} (\FF ^{(\bullet)})$$
is an isomorphism.

\item 
\label{f*-OcohDm3}
Let 
$\G ^{(\bullet)} \in 
\underrightarrow{LD} _{\Q,\mathrm{coh}}  ^{\mathrm{b}} ( \O ^{(\bullet)} _{\Y})$.
Then, 
the canonical morphism
$$\O  ^{(\bullet)} _{\Y ^{\prime } }
\otimes  ^\L _{f ^{-1} \O   ^{(\bullet)} _{\Y /\S } }
f ^{-1}\G ^{(\bullet)}
\to 
\O  ^{(\bullet)} _{\Y ^{\prime } }
\smash{\widehat{\otimes}} ^\L _{f ^{-1}\O  ^{(\bullet)} _{\Y /\S }}
f ^{-1}\G ^{(\bullet)}$$
is an isomorphism of 
$\underrightarrow{LD} _{\Q,\mathrm{coh}}  ^{\mathrm{b}} ( \O ^{(\bullet)} _{\Y'})$.

\end{enumerate}
\end{lem}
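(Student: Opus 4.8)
The three statements all have the same flavour: a completed tensor product over a ringed space agrees with the ordinary derived tensor product once the boundedness and (quasi-)coherence conditions are in force, because the transition maps $\smash{\widehat{\D}} ^{(m)} _{\Y ^{\prime } \rightarrow \Y /\S }$ and $\O ^{(m)} _{\Y'}$ are flat over the appropriate pullback ring at each level. The plan is to reduce all three to the level-by-level statement and then to the usual fact that, for a finite-Tor-dimension complex, $\R \underset{\underset{i}{\longleftarrow}}{\lim}$ of the reductions modulo $\pi ^{i+1}$ computes the $p$-adic completion without introducing extra cohomology. I would first recall, exactly as in \ref{rema-dim-coh-finie} and the remark following \ref{def-otimes-coh1&2qc}, that $\smash{\widehat{\D}} ^{(m)} _{\Y ^{\prime } \rightarrow \Y /\S }$ (and likewise $\O ^{(m)} _{\Y'}$) has finite flat amplitude over $f ^{-1}\smash{\widehat{\D}} ^{(m)} _{\Y /\S }$ (resp.\ $f ^{-1}\O ^{(m)} _{\Y}$), with amplitude bounded independently of $m$. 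This is the key technical input; everything else is formal.

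For part (1), fix $m$ and work with $\FF ^{(m)}\in D ^{\mathrm{b}} _{\mathrm{qc}}(\smash{\widehat{\D}} ^{(m)} _{\Y /\S })$ as in the definition of quasi-coherence. Using the equivalence $D ^{\mathrm{b}} _{\mathrm{qc}}(\smash{\widehat{\D}} ^{(m)})\cong D ^{\mathrm{b}} _{\mathrm{qc}}(\smash{\D} ^{(m)} _{Y/S})$ of \cite[3.2.2]{Beintro2}, I would reduce to comparing $\O _{Y ^{\prime } _i} \otimes ^{\L} _{f _i ^{-1}\O _{Y _i}} f _i ^{-1}\FF ^{(m)} _i$ with $\smash{\D} ^{(m)} _{Y ^{\prime } _i \rightarrow Y _i/S _i} \otimes ^{\L} _{f _i ^{-1}\smash{\D} ^{(m)} _{Y _i/S _i}} f _i ^{-1}\FF ^{(m)} _i$ at each finite level $i$, then pass to the inverse limit; the map in question is obtained by base change along $\O _{Y _i} \to \smash{\D} ^{(m)} _{Y _i/S _i}$, and it is an isomorphism because $\smash{\D} ^{(m)} _{Y ^{\prime } _i \rightarrow Y _i/S _i} \riso \O _{Y ^{\prime } _i}\otimes _{f _i ^{-1}\O _{Y _i}} f _i ^{-1}\smash{\D} ^{(m)} _{Y _i/S _i}$ by the local description of the transfer module. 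Commuting the base change with $\R\lim$ uses the uniform Tor-finiteness, exactly as in the proof pattern of \ref{rema-dim-coh-finie}; this step is routine once that uniformity is cited. Assembling over $m$ and descending to $\underrightarrow{LD} _{\Q}$ is then immediate since both sides are computed componentwise and the construction is functorial.

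For part (2), I would use that for $\FF ^{(\bullet)}$ coherent the question is local on $\Y$, so by \ref{cohDLMislocal} (and \ref{thick-subcat}) I may assume $\Y$ affine and, after the usual locally-free/flat resolution argument, reduce to $\FF ^{(\bullet)}=\smash{\widehat{\D}} ^{(\bullet)} _{\Y /\S }$, where both $\L f ^{*(\bullet)} _{\mathrm{alg}}$ and $\L f ^{*(\bullet)}$ give $\smash{\widehat{\D}} ^{(\bullet)} _{\Y ^{\prime } \rightarrow \Y /\S }$ tautologically. The point is that the ordinary tensor product $\smash{\widehat{\D}} ^{(\bullet)} _{\Y ^{\prime } \rightarrow \Y /\S }\otimes ^\L _{f ^{-1}\smash{\widehat{\D}} ^{(\bullet)} _{\Y /\S }}-$ already has bounded amplitude on coherent complexes (again by the uniform flat amplitude of the transfer module over $f ^{-1}\smash{\widehat{\D}} ^{(m)} _{\Y /\S }$, inherited from the $\O$-level via \ref{rema-dim-coh-finie}), so the canonical comparison with the completed version is an isomorphism: this is the statement that completion is harmless on Tor-finite coherent complexes, which one checks by reduction modulo $\pi ^{i+1}$ and $\R\lim$, just as in part (1). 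For part (3), the same argument applies verbatim with $\smash{\widehat{\D}}$ replaced by $\O$ throughout: localize on $\Y$, reduce to $\G ^{(\bullet)}=\mathrm{cst}(\O _{\Y})$ or more generally a finite free $\O ^{(\bullet)} _{\Y}$-complex using the local description of coherent objects of $\underrightarrow{LM} _{\Q,\mathrm{coh}}(\O ^{(\bullet)} _{\Y})$ recalled in \ref{cvisoc-f*}.\ref{cvisoc-f*1}, and invoke that $\O ^{(m)} _{\Y'}$ is flat over $f ^{-1}\O ^{(m)} _{\Y}$ of uniformly bounded amplitude; membership of the result in $\underrightarrow{LD} _{\Q,\mathrm{coh}} ^{\mathrm{b}}(\O ^{(\bullet)} _{\Y'})$ follows since $\L f ^{*}$ of a coherent $\O _{\Y}$-complex is $\O _{\Y'}$-coherent.

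\textbf{Main obstacle.} The only non-formal point is justifying the interchange of $\R\underset{\underset{i}{\longleftarrow}}{\lim}$ with $\otimes ^\L$, i.e.\ that passing from the reductions mod $\pi ^{i+1}$ to the $p$-adic completion introduces no $\lim ^1$ and no extra higher Tor. This is precisely where the uniform bound on flat amplitude (independent of the level $m$) is indispensable, and I would cite \ref{rema-dim-coh-finie} together with the Tor-finiteness remark after \ref{def-otimes-coh1&2qc} rather than re-proving it; the rest is bookkeeping with functoriality and the localization formalism of \ref{HomLDQ}.
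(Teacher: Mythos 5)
Your plan is correct and tracks the paper's own hint closely (the paper merely says ``left to the reader'' with pointers to \cite[2.3.5.2]{Be1} for (1) and \cite[3.4.2.2]{Beintro2} for (2)--(3), which is exactly the combination of the local description of the transfer module, the way-out reduction, and the bounded Tor-amplitude that you invoke). One small clarification: in part (1) both sides are defined as $\R\lim$ of the same finite-level tensor products, so you do not actually need to commute $\R\lim$ with $\otimes^\L$ there --- the levelwise identification via associativity and $\smash{\D}^{(m)}_{Y'_i\to Y_i}\riso \O_{Y'_i}\otimes_{f_i^{-1}\O_{Y_i}} f_i^{-1}\smash{\D}^{(m)}_{Y_i}$ already yields an isomorphism of inverse systems, and $\R\lim$ preserves isomorphisms; the uniform Tor-amplitude is genuinely needed only to compare the completed and uncompleted tensors in (2) and (3).
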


\begin{proof}
This is left to the reader and easy (hint : to check 1) use \cite[2.3.5.2]{Be1}, and
the proof of 2) and 3) is identical to that of  \cite[3.4.2.2]{Beintro2}).
\end{proof}

\begin{prop}
\label{ntn-f*}
Let $f \colon \Y ' \to \Y $ be a morphism of smooth formal $\V$-schemes. 

\begin{enumerate}
\item 
\label{ntn-f*1}
Let $\E \in \mathrm{MIC} ^{\dag \dag}(\Y/\V)$. Then the canonical last morphism
$$
\O _{\Y ^{\prime},\Q}
\otimes  _{ f ^{-1} \O _{\Y /\S,\Q}} f ^{-1} \E
\liso 
\O _{\Y ^{\prime},\Q}
\otimes ^{\L} _{ f ^{-1} \O _{\Y /\S,\Q}} f ^{-1} \E
\to
\D ^{\dag} _{\Y ^{\prime }\rightarrow \Y ,\Q}
\otimes ^{\L} _{ f ^{-1} \D ^{\dag} _{\Y /\S,\Q}} f ^{-1} \E$$
is an isomorphism.
Hence, we can set $f ^* (\E):= \D ^{\dag} _{\Y ^{\prime }\rightarrow \Y ,\Q}
\otimes  _{ f ^{-1} \D ^{\dag} _{\Y /\S,\Q}} f ^{-1} \E$ without ambiguity. 
We have also $f ^* (\E) \in \mathrm{MIC} ^{\dag \dag}(\Y'/\V)$.

\item \label{ntn-f*2}
Let 
$\FF $ be a $\widehat{\D} ^{(m)} _{\Y} $-module, coherent over $\O _\Y$.
Then the morphisms 
\begin{equation}
\notag
\O _{\Y ^{\prime}}
\otimes  _{ f ^{-1} \O _{\Y /\S}} f ^{-1} \FF
\to 
\O _{\Y ^{\prime}}
\widehat{\otimes}  _{ f ^{-1} \O _{\Y /\S}} f ^{-1} \FF
\to 
\smash{\widehat{\D}} ^{(m)} _{\Y ^{\prime } \rightarrow \Y /\S }
\smash{\widehat{\otimes}}  _{f ^{-1} \smash{\widehat{\D}} ^{(m)} _{\Y /\S } }
f ^{-1} \FF 
\leftarrow
\smash{\widehat{\D}} ^{(m)} _{\Y ^{\prime } \rightarrow \Y /\S }
\otimes  _{f ^{-1} \smash{\widehat{\D}} ^{(m)} _{\Y /\S } }
f ^{-1} \FF 
\end{equation}
are isomorphisms.
Hence, we can set $f ^* (\FF):= 
\smash{\widehat{\D}} ^{(m)} _{\Y ^{\prime } \rightarrow \Y /\S }
\otimes  _{f ^{-1} \smash{\widehat{\D}} ^{(m)} _{\Y /\S } }
f ^{-1} \FF $ 
without ambiguity.
Moreover, 
$f ^* (\FF)$ is  a $\widehat{\D} ^{(m)} _{\Y'} $-module, coherent over $\O _{\Y'}$.
\end{enumerate}

\end{prop}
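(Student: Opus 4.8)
The plan is to prove the second statement of the proposition (about $\FF$) first and deduce the first statement (about $\E$) from it by reduction to finite level.

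For part \ref{ntn-f*2}: the module $\FF$ is coherent simultaneously over $\O _\Y$ and over $\widehat{\D} ^{(m)} _{\Y}$, so locally on $\Y$ it admits a finite presentation by free modules over each of these two rings. Applying the underived functor $\O _{\Y'} \otimes _{f ^{-1}\O _{\Y /\S}} f ^{-1}(-)$ (respectively $\smash{\widehat{\D}} ^{(m)} _{\Y ^{\prime } \rightarrow \Y /\S } \otimes _{f ^{-1} \smash{\widehat{\D}} ^{(m)} _{\Y /\S } } f ^{-1}(-)$) to such a presentation yields a coherent $\O _{\Y'}$-module (respectively a coherent $\widehat{\D} ^{(m)} _{\Y'}$-module), hence a $p$-adically complete sheaf; so in both cases the canonical map from the underived tensor product to its $p$-adic completion is an isomorphism. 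This disposes of the two outer arrows of the chain in part \ref{ntn-f*2} and, along the way, shows that $f ^* (\FF)$ is coherent both over $\O _{\Y'}$ and over $\widehat{\D} ^{(m)} _{\Y'}$. The middle arrow is an isomorphism because $\smash{\widehat{\D}} ^{(m)} _{\Y ^{\prime } \rightarrow \Y /\S } = \O _{\Y'} \smash{\widehat{\otimes}} _{f ^{-1}\O _{\Y}} f ^{-1}\smash{\widehat{\D}} ^{(m)} _{\Y /\S }$ by definition, so associativity of the completed tensor product gives $\smash{\widehat{\D}} ^{(m)} _{\Y ^{\prime } \rightarrow \Y /\S } \smash{\widehat{\otimes}} _{f ^{-1} \smash{\widehat{\D}} ^{(m)} _{\Y /\S } } f ^{-1}\FF \riso \O _{\Y'} \smash{\widehat{\otimes}} _{f ^{-1}\O _{\Y}} f ^{-1}\FF$ (see \cite[2.3.5.2]{Be1} and the argument of \cite[3.4.2.2]{Beintro2}, or the third assertion of Lemma \ref{f*-OcohDm}).

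For part \ref{ntn-f*1}: by \ref{thm-eqcat-cvisoc} the module $\E$ is $\O _{\Y,\Q}$-locally projective of finite type, so $f ^{-1}\E$ is locally a direct summand of a finite free $f ^{-1}\O _{\Y /\S,\Q}$-module and the first arrow (from the underived to the derived tensor product over $f ^{-1}\O _{\Y /\S,\Q}$) is an isomorphism. To handle the second arrow I would reduce to finite level. Working locally, Proposition \ref{MICbullet-prop} (with empty divisor) produces a $p$-torsion-free $\widehat{\D} ^{(0)} _{\Y/\S}$-module $\FF ^{(0)}$, coherent over $\O _\Y$, such that $\FF ^{(\bullet)} := \widehat{\D} ^{(\bullet)} _{\Y/\S} \otimes _{\widehat{\D} ^{(0)} _{\Y/\S}} \FF ^{(0)} \in \mathrm{MIC} ^{(\bullet)}(\Y/\V)$, such that the transition maps of $\FF ^{(\bullet)}$ are $\O ^{(\bullet)} _\Y$-linear ind-isogenies, and such that $\underrightarrow{\lim} \FF ^{(\bullet)} \riso \E$ with $\E$ identified, as an $\O _{\Y,\Q}$-module, with $\FF ^{(0)} _\Q$ (\ref{MICbullet-lem}). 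By the first two assertions of Lemma \ref{f*-OcohDm} applied to $\FF ^{(\bullet)}$, the canonical morphism $\O ^{(\bullet)} _{\Y'} \smash{\widehat{\otimes}} ^\L _{f ^{-1}\O ^{(\bullet)} _{\Y /\S}} f ^{-1}\FF ^{(\bullet)} \to \L f ^{*(\bullet)} (\FF ^{(\bullet)})$ is an isomorphism. Applying $\underrightarrow{\lim}$ and invoking part \ref{ntn-f*2} to collapse the completed tensor products on the left to ordinary ones and thus identify the limit of the left-hand side with $\O _{\Y ^{\prime},\Q} \otimes ^{\L} _{ f ^{-1} \O _{\Y /\S,\Q}} f ^{-1}\E$, together with $\D ^{\dag} _{\Y ^{\prime }\rightarrow \Y ,\Q} = \underrightarrow{\lim} _m \widehat{\D} ^{(m)} _{\Y ^{\prime } \rightarrow \Y /\S ,\Q}$ and the flatness of the transition maps to identify $\underrightarrow{\lim} \L f ^{*(\bullet)} (\FF ^{(\bullet)})$ with $\D ^{\dag} _{\Y ^{\prime }\rightarrow \Y ,\Q} \otimes ^{\L} _{ f ^{-1} \D ^{\dag} _{\Y /\S,\Q}} f ^{-1}\E$, one gets that the second arrow is an isomorphism. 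The resulting identification $f ^* (\E) \riso \O _{\Y ^{\prime},\Q} \otimes _{ f ^{-1} \O _{\Y /\S,\Q}} f ^{-1}\E$ shows $f ^* (\E)$ is $\O _{\Y',\Q}$-coherent; being a left $\D ^\dag _{\Y'/\S,\Q}$-module it then lies in $\mathrm{MIC} ^{\dag \dag}(\Y'/\V)$ by \ref{thm-eqcat-cvisoc}.

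The main obstacle is precisely this last reduction: one must check carefully that $\underrightarrow{\lim}$ commutes with the derived completed tensor products occurring at each finite level, and that the object $\FF ^{(\bullet)}$ furnished by \ref{MICbullet-prop} really interpolates between the leveled statement of Lemma \ref{f*-OcohDm} and the $\D ^\dag$-statement of part \ref{ntn-f*1}; everything else is a routine matter of coherence and $p$-adic completeness. Since \ref{MICbullet-prop} and all the coherence arguments are local on $\Y$, it is harmless to assume $\Y$ affine wherever that is convenient. Alternatively, for part \ref{ntn-f*1} one could pass through the equivalences \ref{thm-eqcat-cvisoc} and \ref{letterBerthelotCaro2007}, compare with the pullback of convergent isocrystals, and invoke the compatibility of the realization functor with inverse images as in \ref{inv-image}.
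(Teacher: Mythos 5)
Your proposal is correct and follows essentially the same route as the paper: part 2 rests on the coincidence of coherence and $p$-adic completeness, and part 1 goes through the inductive-system representative $\FF^{(\bullet)}$ of \ref{MICbullet-prop}, the three comparison morphisms of Lemma \ref{f*-OcohDm}, and then applies $\underrightarrow{\lim}$ (which commutes with filtered colimits and hence with the derived ordinary tensor products). The one small divergence is in the middle arrow of part 2: you argue via associativity of the completed tensor product using $\smash{\widehat{\D}}^{(m)}_{\Y'\to\Y} = \O_{\Y'}\smash{\widehat{\otimes}}_{f^{-1}\O_\Y}f^{-1}\smash{\widehat{\D}}^{(m)}_\Y$, whereas the paper instead observes that the modulo-$\pi^{n+1}$ reduction is an isomorphism by \cite[2.3.5.2]{Be1} and passes to the limit of separated complete modules; both arguments rest on the same reference and are interchangeable here.
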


\begin{proof}
1) Following \ref{MICbullet-prop}, there exists
$\E ^{(\bullet)} \in 
\underrightarrow{LM} _{\Q,\mathrm{coh}}   (\widehat{\D} ^{(\bullet)} _{\Y/\S} )
\cap 
\underrightarrow{LD} _{\Q,\mathrm{coh}}   (\O ^{(\bullet)} _{\Y/\S} )$ 
such that 
$\underrightarrow{\lim} (\E ^{(\bullet)}) \riso \E$. 
By applying Lemma \ref{f*-OcohDm}, this yields that the canonical morphism
$$\O  ^{(\bullet)} _{\Y ^{\prime } }
\otimes  ^\L _{f ^{-1} \O   ^{(\bullet)} _{\Y /\S } }
f ^{-1}\E ^{(\bullet)}
\to
\smash{\widehat{\D}} ^{(\bullet)} _{\Y ^{\prime } \rightarrow \Y /\S }
\otimes ^\L _{f ^{-1} \smash{\widehat{\D}} ^{(\bullet)} _{\Y /\S } }
f ^{-1} \E ^{(\bullet)}$$
is an isomorphism. 
By applying the functor $\underrightarrow{\lim} $, we get the desired isomorphism. 
Since 
$f ^* (\E) $ is $\O _{\Y ',\Q}$-coherent, this yields
$f ^* (\E) \in \mathrm{MIC} ^{\dag \dag}(\Y'/\V)$.

2) Since $\FF $ is both $\widehat{\D} ^{(m)} _{\Y} $-coherent and  $\O _\Y$-coherent,
the first and the last morphisms are isomorphisms. 
Since the modulo $\pi ^{n+1}$ reduction of the middle morphism is an isomorphism for any $n\in \N$
(see \cite[2.3.5.2]{Be1}), since this is a morphism of separated complete modules for the $p$-adic topology, 
this implies that the middle morphism is an isomorphism.
\end{proof}

\begin{prop}
\label{stab-MIC-f*}
Let $f \colon \Y ' \to \Y $ be a morphism of smooth formal $\V$-schemes. 
Let $\FF ^{(0)}$ 
be a $\widehat{\D} ^{(0)} _{\Y/\S}$-module,
coherent over $\O _\Y$ 
and such that 
the canonical morphism 
$\mathrm{cst} (\FF ^{(0)})
\to 
\widehat{\D} ^{(\bullet)} _{\Y/\S} \otimes _{\widehat{\D} ^{(0)} _{\Y/\S}} \FF ^{(0)}=:\FF ^{(\bullet)}$
is an ind-isogeny in  
$M ( \O ^{(\bullet)} _{\Y})$.
For any $m\in\N$,
let $\G ^{(m)} $ be the quotient of 
$\widehat{\D} ^{(m)} _{\Y/\S} \otimes _{\widehat{\D} ^{(0)} _{\Y/\S}} \FF ^{(0)}$ by its $p$-torsion part. 

\begin{enumerate}

\item 
The canonical morphism 
$\mathrm{cst} (f ^* (\FF ^{(0)}))
\to 
\widehat{\D} ^{(\bullet)} _{\Y'/\S} \otimes _{\widehat{\D} ^{(0)} _{\Y'/\S}} f ^* (\FF ^{(0)})$
is an ind-isogeny of  
$M ( \O ^{(\bullet)} _{\Y'})$.

\item 
The canonical morphisms
$f ^{*(\bullet)} _{\mathrm{alg}} (\FF ^{(\bullet)})
\to
f ^{*(\bullet)} _{\mathrm{alg}} (\G ^{(\bullet)})$,
and 
$\widehat{\D} ^{(\bullet)} _{\Y'/\S} \otimes _{\widehat{\D} ^{(0)} _{\Y'/\S}} f ^* (\FF ^{(0)})
\to 
f ^{*(\bullet)} _{\mathrm{alg}} (\G ^{(\bullet)})$
are ind-isogenies of 
$M (\widehat{\D} ^{(\bullet)} _{\Y'/\S} )$.
\item The canonical morphisms
$\L f ^{*(\bullet)} _{\mathrm{alg}} (\FF ^{(\bullet)})
\to 
\L f ^{*(\bullet)} ( \FF ^{(\bullet)})$
and 
$\L f ^{*(\bullet)} _{\mathrm{alg}} (\FF ^{(\bullet)}) 
\to 
 f ^{*(\bullet)} _{\mathrm{alg}} (\FF ^{(\bullet)})$ 
are isomorphisms of
$\underrightarrow{LD} ^\mathrm{b} _{\Q} ( \widehat{\D} ^{(\bullet)} _{\Y'/\S})$.
\end{enumerate}
\end{prop}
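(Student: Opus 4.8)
The plan is to reduce everything to the local statements already available in the excerpt, namely Lemma \ref{f*-OcohDm} and Proposition \ref{MICbullet-prop}, together with the basic compatibility results for the algebraic pullback. First I would set up notation: write $\FF ^{(\bullet)} := \widehat{\D} ^{(\bullet)} _{\Y/\S} \otimes _{\widehat{\D} ^{(0)} _{\Y/\S}} \FF ^{(0)}$ and $\G ^{(\bullet)} := (\G ^{(m)}) _{m}$. Observe that $\FF ^{(\bullet)}$ and $\G ^{(\bullet)}$ differ only by their $p$-torsion submodules, so the canonical epimorphism $\FF ^{(\bullet)} \to \G ^{(\bullet)}$ is a lim-ind-isogeny, and in particular $\FF ^{(\bullet)} \in \mathrm{MIC} ^{(\bullet)}(\Y/\V)$ by Proposition \ref{MICbullet-prop}. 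All three assertions are local on $\Y$ and on $\Y'$, so I may assume both are affine with suitable local coordinates.

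For part (1): the hypothesis says $\mathrm{cst}(\FF ^{(0)}) \to \FF ^{(\bullet)}$ is an ind-isogeny in $M(\O ^{(\bullet)} _{\Y})$. The point is that $f ^* _{\mathrm{alg}}$ for $\O$-modules (which is $\O ^{(\bullet)} _{\Y'} \otimes _{f ^{-1} \O ^{(\bullet)} _{\Y/\S}} f ^{-1}(-)$ on the algebraic level) commutes with $\mathrm{cst}$ and sends ind-isogenies to ind-isogenies, since it is additive and an ind-isogeny has an inverse up to a twist $\chi ^*$ which is preserved by any additive functor. Applying this to $\mathrm{cst}(\FF ^{(0)}) \to \FF ^{(\bullet)}$ and using that $\O$-algebraic pullback of $\mathrm{cst}(\FF ^{(0)})$ is $\mathrm{cst}(f ^* (\FF ^{(0)}))$ (because $f ^* (\FF ^{(0)}) = \O _{\Y'} \otimes _{f ^{-1}\O _{\Y}} f ^{-1} \FF ^{(0)}$ by Proposition \ref{ntn-f*}(2), using $\O$-coherence and the freeness of $\widehat{\D}$ over $\O$), I get that $\mathrm{cst}(f ^* (\FF ^{(0)})) \to \widehat{\D} ^{(\bullet)} _{\Y'/\S} \otimes _{\widehat{\D} ^{(0)} _{\Y'/\S}} f ^* (\FF ^{(0)})$ is an ind-isogeny in $M(\O ^{(\bullet)} _{\Y'})$. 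Here I would also invoke the isomorphism $f ^{*(\bullet)} _{\mathrm{alg}}(\widehat{\D} ^{(\bullet)} _{\Y/\S} \otimes _{\widehat{\D} ^{(0)} _{\Y/\S}} \FF ^{(0)}) \riso \widehat{\D} ^{(\bullet)} _{\Y'/\S} \otimes _{\widehat{\D} ^{(0)} _{\Y'/\S}} f ^* (\FF ^{(0)})$, which follows by associativity of tensor products and transitivity $\widehat{\D} ^{(m)} _{\Y' \to \Y} = \widehat{\D} ^{(m)} _{\Y' \to \Y'} \otimes \dots$, together with the fact that $f ^* _{\mathrm{alg}}$ commutes with the base change $\widehat{\D} ^{(0)} \to \widehat{\D} ^{(m)}$.

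For part (2): starting from the epimorphisms $\FF ^{(\bullet)} \to \G ^{(\bullet)}$ (a lim-ind-isogeny, with inverse up to $p^N$ by Proposition \ref{MICbullet-lem}), I apply the additive functor $f ^{*(\bullet)} _{\mathrm{alg}}$ to get that $f ^{*(\bullet)} _{\mathrm{alg}}(\FF ^{(\bullet)}) \to f ^{*(\bullet)} _{\mathrm{alg}}(\G ^{(\bullet)})$ is a lim-ind-isogeny — the extra care being that $f ^{*(\bullet)} _{\mathrm{alg}}$ is not exact, so rather than literally applying it to an exact sequence I apply it to the pair of morphisms witnessing the isogeny and use functoriality. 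Combining with the isomorphism $f ^{*(\bullet)} _{\mathrm{alg}}(\FF ^{(\bullet)}) \riso \widehat{\D} ^{(\bullet)} _{\Y'/\S} \otimes _{\widehat{\D} ^{(0)} _{\Y'/\S}} f ^* (\FF ^{(0)})$ from part (1)'s setup gives the second displayed ind-isogeny. For part (3): the isomorphism $\L f ^{*(\bullet)} _{\mathrm{alg}}(\FF ^{(\bullet)}) \riso \L f ^{*(\bullet)}(\FF ^{(\bullet)})$ is exactly Lemma \ref{f*-OcohDm}(2), once we know $\FF ^{(\bullet)} \in \underrightarrow{LD} ^{\mathrm{b}} _{\Q, \mathrm{coh}}(\widehat{\D} ^{(\bullet)} _{\Y/\S})$, which holds by Proposition \ref{MICbullet-prop}. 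For $\L f ^{*(\bullet)} _{\mathrm{alg}}(\FF ^{(\bullet)}) \riso f ^{*(\bullet)} _{\mathrm{alg}}(\FF ^{(\bullet)})$, i.e. that the higher Tor's vanish, I would use part (1): the ind-isogeny $\mathrm{cst}(f ^* (\FF ^{(0)})) \to \widehat{\D} ^{(\bullet)} _{\Y'/\S} \otimes _{\widehat{\D} ^{(0)} _{\Y'/\S}} f ^* (\FF ^{(0)})$ in $M(\O ^{(\bullet)} _{\Y'})$ shows that, up to lim-ind-isogeny, $f ^{*(\bullet)} _{\mathrm{alg}}(\FF ^{(\bullet)})$ is the $\O$-coherent object $\mathrm{cst}(f ^* (\FF ^{(0)}))$; but the derived algebraic pullback of the $\widehat{\D} ^{(0)}$-module $\FF ^{(0)}$ along the (flat-away-from-nothing but here we need actual flatness) — more precisely, I reduce via Lemma \ref{f*-OcohDm}(1) and (3) to the statement that $\O ^{(\bullet)} _{\Y'} \otimes ^{\L} _{f ^{-1}\O ^{(\bullet)} _{\Y/\S}} f ^{-1}\FF ^{(\bullet)} \riso \O ^{(\bullet)} _{\Y'} \otimes _{f ^{-1}\O ^{(\bullet)} _{\Y/\S}} f ^{-1}\FF ^{(\bullet)}$, which holds because $\FF ^{(\bullet)}$ is, up to ind-isogeny, $\mathrm{cst}(\FF ^{(0)})$ with $\FF ^{(0)}$ locally free of finite type over $\O _{\Y}$ (by Theorem \ref{thm-eqcat-cvisoc}, increasing $m$ if necessary — or rather, $\O$-coherent is enough since $f$ between smooth formal schemes has finite Tor-dimension, but locally free makes it immediate). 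The main obstacle I expect is precisely this last point: carefully bookkeeping that "up to lim-ind-isogeny" is compatible with passing to derived functors, i.e. that an object ind-isogenous to a complex concentrated in degree $0$ and flat over $\O ^{(\bullet)}$ is itself acyclic for $f ^{*(\bullet)} _{\mathrm{alg}}$ — this needs the fact that lim-ind-isogenies become isomorphisms in $\underrightarrow{LD} _{\Q}$ and that the derived and underived algebraic pullback functors both descend to $\underrightarrow{LD} _{\Q}$ compatibly, which is where I would spend the most effort.
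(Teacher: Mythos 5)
Your treatment of part (3) agrees in substance with the paper's (both invoke Lemma \ref{f*-OcohDm}, reduce to the comparison $\O ^{(\bullet)} _{\Y'} \otimes ^{\L} _{f ^{-1}\O ^{(\bullet)} _{\Y}} f ^{-1}\FF ^{(\bullet)} \to \O ^{(\bullet)} _{\Y'} \otimes _{f ^{-1}\O ^{(\bullet)} _{\Y}} f ^{-1}\FF ^{(\bullet)}$, and settle it by $\O _{\Y,\Q}$-flatness after $\underrightarrow{\lim}$). Likewise the first arrow of part (2) is fine: applying the additive functor $f ^{*(\bullet)} _{\mathrm{alg}}$ to the ind-isogeny $\FF ^{(\bullet)} \to \G ^{(\bullet)}$ gives an ind-isogeny.

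The gap is the isomorphism you invoke,
$f ^{*(\bullet)} _{\mathrm{alg}}\bigl(\widehat{\D} ^{(\bullet)} _{\Y/\S} \otimes _{\widehat{\D} ^{(0)} _{\Y/\S}} \FF ^{(0)}\bigr)
\riso
\widehat{\D} ^{(\bullet)} _{\Y'/\S} \otimes _{\widehat{\D} ^{(0)} _{\Y'/\S}} f ^{*}(\FF ^{(0)})$,
which you claim ``by associativity of tensor products and transitivity,'' and on which both your part (1) and the second arrow of part (2) rest. After unwinding, this is the assertion that the canonical morphism of $(\widehat{\D} ^{(m)} _{\Y'},\, f ^{-1}\widehat{\D} ^{(0)} _{\Y})$-bimodules
$\widehat{\D} ^{(m)} _{\Y'} \otimes _{\widehat{\D} ^{(0)} _{\Y'}} \widehat{\D} ^{(0)} _{\Y' \to \Y}
\to
\widehat{\D} ^{(m)} _{\Y' \to \Y}$
is an isomorphism. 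It is not. Take $\Y=\widehat{\bbA}^1 _{\V}$ and $\Y'$ the origin: then $\widehat{\D} ^{(m)} _{\Y'}=\widehat{\D} ^{(0)} _{\Y'}=\V$, so the left-hand side is $\widehat{\D} ^{(0)} _{\Y' \to \Y}=\widehat{\bigoplus} _{k}\V\,\overline{\partial ^{<k>_{(0)}}}$, while the right-hand side is $\widehat{\D} ^{(m)} _{\Y' \to \Y}=\widehat{\bigoplus} _{k}\V\,\overline{\partial ^{<k>_{(m)}}}$, and the map multiplies the $k$th basis vector by $q ^{(0)} _{k}!/q ^{(m)} _{k}!$, whose $p$-adic valuation grows without bound in $k$. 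The cokernel is therefore not killed by any fixed $p$-power, so the morphism is not even a $p ^{n}$-isogeny, let alone an isomorphism.

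This is exactly the pathology the paper's proof is designed to sidestep: it never compares $f ^{*(\bullet)} _{\mathrm{alg}}(\FF ^{(\bullet)})$ directly to $\widehat{\D} ^{(\bullet)} _{\Y'/\S}\otimes _{\widehat{\D} ^{(0)} _{\Y'/\S}} f ^{*}(\FF ^{(0)})$, but rather routes both through the $p$-torsion-free quotients $\G ^{(m)}$ (diagram \ref{stab-MIC-f*-diag}), so that the relevant comparison $\widehat{\D} ^{(m)} _{\Y'/\S}\otimes _{\widehat{\D} ^{(0)} _{\Y'/\S}} f ^{*}(\FF ^{(0)}) \to f ^{*(m)} _{\mathrm{alg}}(\G ^{(m)})$ is a morphism of \emph{coherent} $\widehat{\D} ^{(m)} _{\Y'/\S}$-modules which becomes an isomorphism after $\otimes\,\Q$ (and is therefore an isogeny, with exponent allowed to depend on $m$). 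The paper's proof of part (1) is also structured around this obstruction: rather than pulling back the ind-isogeny $\mathrm{cst}(\FF ^{(0)}) \to \FF ^{(\bullet)}$ and trying to identify the target, it passes to $\underrightarrow{\lim}$, uses the $\mathrm{MIC} ^{\dag\dag}$-characterisation (Theorem \ref{letterBerthelotCaro2007} and Proposition \ref{ntn-f*}.\ref{ntn-f*1}) to see that $f ^{*}(\FF ^{(0)}) _{\Q}$ lies in $\mathrm{MIC} ^{\dag\dag}(\Y'/\V)$, and then applies Proposition \ref{MICbullet-lem} to $f ^{*}(\FF ^{(0)})$. You should either adopt this indirect route, or, if you want to salvage the direct comparison, you must replace your claimed isomorphism with an argument establishing an ind-isogeny, which in practice requires precisely the coherence-after-$\Q$ mechanism the paper uses.
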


\begin{proof}
1) Following \ref{MICbullet-prop},
$\FF ^{(\bullet)}
\in
\mathrm{MIC}  ^{(\bullet)}(\Y/\V)$.
Set $\FF := \underrightarrow{\lim}\, \FF ^{(\bullet)}\in \mathrm{MIC} ^{\dag \dag}(\Y/\V)$. 
By applying the functor $\underrightarrow{\lim}$
to  the canonical morphism 
$\mathrm{cst} (\FF ^{(0)})
\to 
\widehat{\D} ^{(\bullet)} _{\Y/\S} \otimes _{\widehat{\D} ^{(0)} _{\Y/\S}} \FF ^{(0)}=:\FF ^{(\bullet)}$, 
we check that the canonical morphism 
$\FF ^{(0)} _\Q
\to 
\D ^\dag _{\Y, \Q}
\otimes _{\widehat{\D} ^{(0)} _{\Y,\Q} }
\FF ^{(0)} _\Q$ is an isomorphism. 
Via this isomorphism, we can view 
$\FF ^{(0)} _\Q$ as an object of 
$\mathrm{MIC} ^{\dag \dag}(\Y/\V)$.
This yields that
$f ^* (\FF ^{(0)})$ is 
a $\widehat{\D} ^{(0)} _{\Y'} $-module, coherent over $\O _{\Y'}$
and such that 
$\left ( f ^* (\FF ^{(0)}) \right )  _\Q
\riso 
f ^* (\FF ^{(0)} _\Q)$ (see both notation \ref{ntn-f*}.\ref{ntn-f*1} and \ref{ntn-f*}.\ref{ntn-f*2})
is an object of 
$\mathrm{MIC} ^{\dag \dag}(\Y'/\V)$.
Hence, via \ref{MICbullet-lem}, 
we get the first statement.

2) Since $\FF ^{(\bullet)}
\to
\G ^{(\bullet)}$
is an ind-isogeny of 
$M (\widehat{\D} ^{(\bullet)} _{\Y/\S} )$,
then
$f ^{*(\bullet)} _{\mathrm{alg}} (\FF ^{(\bullet)})
\to
f ^{*(\bullet)} _{\mathrm{alg}} (\G ^{(\bullet)})$
is an isogeny of 
$M (\widehat{\D} ^{(\bullet)} _{\Y'/\S} )$.
We have the commutative diagram with canonical morphisms
\begin{equation}
\label{stab-MIC-f*-diag}
\xymatrix{
{ f ^* (\FF ^{(0)})} 
\ar[r] ^-{}
\ar[d] ^-{\sim}
&
{\widehat{\D} ^{(m)} _{\Y'/\S} \otimes _{\widehat{\D} ^{(0)} _{\Y'/\S}} f ^* (\FF ^{(0)})} 
\ar[r] ^-{}
\ar[d] ^-{\sim}
& 
{ f ^* (\G ^{(m)})} 
\ar[d] ^-{\sim}
\\ 
{f ^{*(0)} _{\mathrm{alg}} (\FF ^{(0)})} 
\ar[r] ^-{}
&
{\widehat{\D} ^{(m)} _{\Y'/\S} \otimes _{\widehat{\D} ^{(0)} _{\Y'/\S}} f ^{*(0)} _{\mathrm{alg}} (\FF ^{(0)})} 
\ar[r] ^-{}
&
{f ^{*(m)} _{\mathrm{alg}} (\G ^{(m)}).} 
}
\end{equation}
Following \ref{ntn-f*}.2, 
the vertical arrows are isomorphisms.
From the first statement, 
the left horizontal arrows 
are isogenies of $\O _{\Y'}$-modules.
Since $\FF ^{(0)} \to \G ^{(m)}$ are also isogenies, 
then the morphisms of the diagram \ref{stab-MIC-f*-diag}
become isomorphisms after tensoring by $\Q$. 
Since 
$\widehat{\D} ^{(m)} _{\Y'/\S} \otimes _{\widehat{\D} ^{(0)} _{\Y'/\S}} f ^* (\FF ^{(0)})
\to 
f ^{*(m)} _{\mathrm{alg}} (\G ^{(m)})$
is a morphism of coherent 
$\widehat{\D} ^{(m)} _{\Y'/\S}$-modules,
this yields the second statement.

3) By using \ref{f*-OcohDm}, 
since 
$\FF ^{(\bullet)}
\in 
\underrightarrow{LM} _{\Q,\mathrm{coh}}   ( \O ^{(\bullet)} _{\Y})$
and 
$\FF ^{(\bullet)}
\in 
\underrightarrow{LM} _{\Q,\mathrm{coh}}   
(\widehat{\D} ^{(\bullet)} _{\Y/\S} )$,
three arrows of the diagram
\begin{equation}
\notag
\xymatrix{
{\L f ^{*(\bullet)} _{\mathrm{alg}} (\FF ^{(\bullet)})} 
\ar[r] ^-{\sim}
& 
{\L f ^{*(\bullet)} ( \FF ^{(\bullet)})} 
\\ 
{\O _{\Y ^{\prime } }  ^{(\bullet)}
\otimes ^\L _{f ^{-1}\O   ^{(\bullet)} _{\Y /\S }}
f ^{-1}\FF ^{(\bullet)}} 
\ar[u] ^-{}
\ar[r] ^-{\sim}
& 
{\O _{\Y ^{\prime } }  ^{(\bullet)}
\smash{\widehat{\otimes}} ^\L _{f ^{-1}\O   ^{(\bullet)} _{\Y /\S }}
f ^{-1}\FF ^{(\bullet)} } 
\ar[u] ^-{\sim}
}
\end{equation}
are isomorphisms.
Hence so is the forth. 
It remains to check 
that 
the canonical morphism
$
\O _{\Y ^{\prime } }  ^{(\bullet)}
\otimes ^\L _{f ^{-1}\O   ^{(\bullet)} _{\Y /\S }}
f ^{-1}\FF ^{(\bullet)} 
\to 
\O _{\Y ^{\prime } }  ^{(\bullet)}
\otimes  _{f ^{-1}\O   ^{(\bullet)} _{\Y /\S }}
f ^{-1}\FF ^{(\bullet)}
$
is an isomorphism. 
Since this is a morphism in 
$\underrightarrow{LD} _{\Q,\mathrm{coh}}  ^{\mathrm{b}} ( \O ^{(\bullet)} _{\Y'})$,
we reduce to check it after applying the functor 
$\underrightarrow{\lim}$, 
which is a consequence of the flatness 
as $\O _{\Y,\Q}$-module of an object of 
$\mathrm{MIC} ^{\dag \dag}(\Y/\V)$. 
\end{proof}

\begin{coro}
\label{corostab-MIC-f*}
Let $f \colon \Y ' \to \Y $ be a morphism of smooth formal $\V$-schemes. 
Let $\E ^{(\bullet)}
\in
\mathrm{MIC}  ^{(\bullet)}(\Y/\V)$, and
$\E := \underrightarrow{\lim}\, \E ^{(\bullet)}\in \mathrm{MIC} ^{\dag \dag}(\Y/\V)$.

\begin{enumerate}

\item $\L f ^{*(\bullet)} ( \E ^{(\bullet)})
\in \mathrm{MIC}  ^{(\bullet)}(\Y'/\V)$ (i.e. is isomorphic to such an object)
and 
$ \underrightarrow{\lim} \L f ^{*(\bullet)} ( \E ^{(\bullet)})
\riso 
f ^* (\E)$.

\item 
Choose
a $\widehat{\D} ^{(0)} _{\Y/\S}$-module $\FF ^{(0)}$,
coherent over $\O _\Y$ 
such that 
$\widehat{\D} ^{(\bullet)} _{\Y/\S} \otimes _{\widehat{\D} ^{(0)} _{\Y/\S}} \FF ^{(0)}$
is isomorphic in 
$\underrightarrow{LM} _{\Q} ( \widehat{\D} ^{(\bullet)} _{\Y/\S})$ to 
$\E ^{(\bullet)}$
and such that 
the canonical morphism 
$\mathrm{cst} (\FF ^{(0)})
\to 
\widehat{\D} ^{(\bullet)} _{\Y/\S} \otimes _{\widehat{\D} ^{(0)} _{\Y/\S}} \FF ^{(0)}$
is an ind-isogeny in  
$M ( \O ^{(\bullet)} _{\Y})$.
Then 
$\L f ^{*(\bullet)} ( \E ^{(\bullet)})
\riso 
\widehat{\D} ^{(\bullet)} _{\Y'/\S} \otimes _{\widehat{\D} ^{(0)} _{\Y'/\S}} 
f ^* (\FF ^{(0)})$.

\end{enumerate}
\end{coro}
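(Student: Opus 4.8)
The plan is to deduce the corollary from the module-level statements already proved, applied to a convenient $\widehat{\D}^{(0)}_{\Y/\S}$-module representing $\E^{(\bullet)}$. By Proposition~\ref{MICbullet-prop}, since $\E^{(\bullet)} \in \mathrm{MIC}^{(\bullet)}(\Y/\V)$ there exists a $\widehat{\D}^{(0)}_{\Y/\S}$-module $\FF^{(0)}$, coherent over $\O_\Y$ and without $p$-torsion, such that, setting $\FF^{(\bullet)} := \widehat{\D}^{(\bullet)}_{\Y/\S} \otimes_{\widehat{\D}^{(0)}_{\Y/\S}} \FF^{(0)}$, there is an isomorphism $\FF^{(\bullet)} \riso \E^{(\bullet)}$ in $\underrightarrow{LM}_{\Q}(\widehat{\D}^{(\bullet)}_{\Y/\S})$ and the canonical morphism $\mathrm{cst}(\FF^{(0)}) \to \FF^{(\bullet)}$ is an ind-isogeny of $M(\O^{(\bullet)}_\Y)$. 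Such an $\FF^{(0)}$ is exactly one of those considered in statement (2), and since the argument below applies verbatim to any such module, it suffices to prove (2) for the one just produced.

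First I would apply Proposition~\ref{stab-MIC-f*}.3 to $\FF^{(\bullet)}$: the canonical morphisms $\L f^{*(\bullet)}_{\mathrm{alg}}(\FF^{(\bullet)}) \to \L f^{*(\bullet)}(\FF^{(\bullet)})$ and $\L f^{*(\bullet)}_{\mathrm{alg}}(\FF^{(\bullet)}) \to f^{*(\bullet)}_{\mathrm{alg}}(\FF^{(\bullet)})$ are isomorphisms in $\underrightarrow{LD}^{\mathrm{b}}_{\Q}(\widehat{\D}^{(\bullet)}_{\Y'/\S})$. Unfolding $f^{*(\bullet)}_{\mathrm{alg}}(\FF^{(\bullet)}) = \widehat{\D}^{(\bullet)}_{\Y'\to\Y/\S} \otimes_{f^{-1}\widehat{\D}^{(\bullet)}_{\Y/\S}} f^{-1}\FF^{(\bullet)}$ and using $\FF^{(\bullet)} = \widehat{\D}^{(\bullet)}_{\Y/\S} \otimes_{\widehat{\D}^{(0)}_{\Y/\S}} \FF^{(0)}$ identifies it with $\widehat{\D}^{(\bullet)}_{\Y'/\S} \otimes_{\widehat{\D}^{(0)}_{\Y'/\S}} f^{*(0)}_{\mathrm{alg}}(\FF^{(0)})$; by Proposition~\ref{ntn-f*}.2 the canonical morphism $f^{*(0)}_{\mathrm{alg}}(\FF^{(0)}) \to f^*(\FF^{(0)})$ is an isomorphism. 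Transporting along $\FF^{(\bullet)} \cong \E^{(\bullet)}$ yields the isomorphism $\L f^{*(\bullet)}(\E^{(\bullet)}) \riso \widehat{\D}^{(\bullet)}_{\Y'/\S} \otimes_{\widehat{\D}^{(0)}_{\Y'/\S}} f^*(\FF^{(0)})$ of part (2).

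For part (1), Proposition~\ref{stab-MIC-f*}.1 gives that $\mathrm{cst}(f^*(\FF^{(0)})) \to \widehat{\D}^{(\bullet)}_{\Y'/\S} \otimes_{\widehat{\D}^{(0)}_{\Y'/\S}} f^*(\FF^{(0)})$ is an ind-isogeny of $M(\O^{(\bullet)}_{\Y'})$; since $f^*(\FF^{(0)})$ is coherent over $\O_{\Y'}$ and $f^*(\FF^{(0)})_\Q \riso f^*(\E)$ lies in $\mathrm{MIC}^{\dag\dag}(\Y'/\V)$ by Proposition~\ref{ntn-f*}.1, Proposition~\ref{MICbullet-lem} (or the criterion of Proposition~\ref{MICbullet-prop}) shows the right-hand side of (2) lies in $\mathrm{MIC}^{(\bullet)}(\Y'/\V)$, hence so does $\L f^{*(\bullet)}(\E^{(\bullet)})$. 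Applying $\underrightarrow{\lim}$ to the isomorphism of (2) and using Proposition~\ref{MICbullet-lem}.3 over $\Y'$ for the module $f^*(\FF^{(0)})$ (whose rationalization is $f^*(\E) \in \mathrm{MIC}^{\dag\dag}(\Y'/\V)$) gives $\underrightarrow{\lim}\L f^{*(\bullet)}(\E^{(\bullet)}) \riso f^*(\E)$, where Proposition~\ref{ntn-f*}.1 is used once more to identify $f^*$ on the limit with the coherent-level pullback of $\E = \underrightarrow{\lim}\FF^{(\bullet)}$. I do not expect a genuine obstacle: the content is entirely carried by Propositions~\ref{MICbullet-prop}, \ref{MICbullet-lem}, \ref{ntn-f*} and \ref{stab-MIC-f*}, and the only delicate point is tracking the canonical isomorphisms so that $\underrightarrow{\lim}\circ\L f^{*(\bullet)}$ is recognized as $f^*$ on $\mathrm{MIC}^{\dag\dag}$, which is routine given these results and the commutation of $\underrightarrow{\lim}$ with the tensor products involved.
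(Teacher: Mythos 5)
Your overall strategy is correct: the corollary is indeed a repackaging of Propositions~\ref{MICbullet-prop}, \ref{MICbullet-lem}, \ref{ntn-f*} and \ref{stab-MIC-f*}, and you have identified all the right ingredients and put them in essentially the right order, including the reduction to a fixed $\FF^{(0)}$ via Proposition~\ref{MICbullet-prop} and the passage to the limit via Proposition~\ref{MICbullet-lem}.3.

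There is, however, one genuine gap. You assert that ``unfolding'' $f^{*(\bullet)}_{\mathrm{alg}}(\FF^{(\bullet)})$ and using $\FF^{(\bullet)} = \widehat{\D}^{(\bullet)}_{\Y/\S} \otimes_{\widehat{\D}^{(0)}_{\Y/\S}} \FF^{(0)}$ ``identifies'' it with $\widehat{\D}^{(\bullet)}_{\Y'/\S} \otimes_{\widehat{\D}^{(0)}_{\Y'/\S}} f^{*(0)}_{\mathrm{alg}}(\FF^{(0)})$. This is not a strict isomorphism. After expanding both sides, the comparison reduces (by associativity of $\otimes$) to comparing the canonical bimodule morphism
$\widehat{\D}^{(m)}_{\Y'/\S} \otimes_{\widehat{\D}^{(0)}_{\Y'/\S}} \widehat{\D}^{(0)}_{\Y'\to\Y} \to \widehat{\D}^{(m)}_{\Y'\to\Y}$,
and this morphism is in general only injective, not surjective. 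For instance, with $\Y = \Spf \V\{t\}$, $\Y' = \Spf \V$ and $f$ the zero section, both sides become $\widehat{\D}^{(0)}_{\Y'\to\Y}$ and $\widehat{\D}^{(m)}_{\Y'\to\Y}$ respectively, and the canonical map sends $\xi^{[\underline{k}]}$ to $c_{\underline k}\, \xi^{<\underline{k}>_{(m)}}$ with $c_{\underline{k}} \to 0$ $p$-adically, so it is not an isomorphism. What is true is that the comparison holds \emph{up to ind-isogeny}, and this is precisely what Proposition~\ref{stab-MIC-f*}.2 supplies, by showing that both $f^{*(\bullet)}_{\mathrm{alg}}(\FF^{(\bullet)})$ and $\widehat{\D}^{(\bullet)}_{\Y'/\S} \otimes_{\widehat{\D}^{(0)}_{\Y'/\S}} f^*(\FF^{(0)})$ map by ind-isogenies of $M(\widehat{\D}^{(\bullet)}_{\Y'/\S})$ to the common object $f^{*(\bullet)}_{\mathrm{alg}}(\G^{(\bullet)})$. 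You cite parts 1 and 3 of Proposition~\ref{stab-MIC-f*} but never part 2, which is the key comparison; without it, the step in part (2) of your argument is unjustified. Since you are working in $\underrightarrow{LM}_\Q$ (or $\underrightarrow{LD}^\mathrm{b}_\Q$), the ind-isogeny does become an isomorphism there, so the conclusion is unaffected — but the justification should invoke Proposition~\ref{stab-MIC-f*}.2 in place of the claimed tensor-product identity.
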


\subsection{Commutation of $\sp _*$ with duality, inverse images and glueing isomorphisms}
Let $\X $ be a smooth log formal scheme over $\S $.
Let $Z$ be a divisor of $X$,
$\Y $ be the open subset of $\X $ complementary to the support of $Z$,
and $j \colon \Y \hookrightarrow \X$ the open immersion.
Let $E \in  \mathrm{Isoc} ^{\dag} (Y, X,\X /K) $.
Let $E _\X \in \mathrm{MIC} ^{\dag} (Y, X,\X /K) $ its realization on 
the frame $(Y, X,\X )$ 
and 
$\E := \mathrm{sp} _* ( E _{\X})$.
We have the equalities 
  $D ^\mathrm{b} _{\mathrm{coh}}(\O _{\X } ( \hdag Z ) _{\Q}) =
  D _{\mathrm{parf}} (\O _{\X } ( \hdag Z ) _{\Q})$,
  $D ^\mathrm{b} _{\mathrm{coh}}(\D _{\X /\S } (\hdag Z) _\Q) =
  D _{\mathrm{parf}} (\D _{\X /\S } (\hdag Z) _\Q)$,
  and 
    $D ^\mathrm{b} _{\mathrm{coh}}(\D ^\dag  _{\X /\S } (\hdag Z) _\Q) =
  D _{\mathrm{parf}} (\D ^\dag _{\X /\S } (\hdag Z) _\Q)$ (this is \cite{huyghe_finitude_coho}).
We get $\E \in D _{\mathrm{parf}} (\O _{\X } ( \hdag Z ) _{\Q})$,
$\E \in D _{\mathrm{parf}} (\D  _{\X /\S } (\hdag Z) _\Q)$
and 
$\E \in D _{\mathrm{parf}} (\D ^\dag _{\X /\S } (\hdag Z) _\Q)$.

\begin{ntn}
For any $\FF \in D  ^\mathrm{b} _{\mathrm{coh}}
( \D  _{\X /\S } (\hdag Z) _\Q ) $, we set
$\DD ^{\mathrm{alg}} _Z (\FF)=
\R \mathcal{H}om  _{\D  _{\X /\S } (\hdag Z) _\Q }
( \FF , \,\D  _{\X /\S } (\hdag Z) _\Q \otimes _{\O _{\X}}\omega _{\X /\S  } ^{-1})[d _X]$ 
and
$\FF ^{\vee} =
\R \mathcal{H}om  _{\O _{\X } ( \hdag Z ) _{\Q} }
( \FF , \,\O_{\X , \Q} ( \hdag Z ))$.
For any $\G \in D  ^\mathrm{b} _{\mathrm{coh}} ( \D ^\dag  _{\X /\S } (\hdag Z) _\Q ) $, we set
$\DD  _Z (\G)=
\R \mathcal{H}om  _{\D ^\dag _{\X /\S } (\hdag Z) _\Q }
( \G , \,\D  ^\dag _{\X /\S } (\hdag Z) _\Q \otimes _{\O _{\X}}\omega _{\X /\S  } ^{-1})[d _X]$. 

\end{ntn}

\begin{prop}\label{Doevee=De}
There exists a canonical isomorphism 
$$ \theta \ : \ \DD ^{\mathrm{alg}} _Z ( \O _{\X } ( \hdag Z ) _{\Q} ) \otimes ^\L _{\O _{\X } ( \hdag Z ) _{\Q}} \E ^\vee
\riso \DD ^{\mathrm{alg}} _Z (\E).$$
\end{prop}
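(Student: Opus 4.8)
The statement is the ``algebraic'' (i.e. $\D_{\X/\S}$-level, not $\D^\dag$-level) version of the compatibility of $\sp_*$ with duality for an overconvergent isocrystal, and the strategy is to reduce it to linear algebra over $\O_{\X}(\hdag Z)_{\Q}$. The key point, which we may invoke from Proposition~\ref{lem-projff} and Theorem~\ref{thm-eqcat-cvisoc}, is that $\E=\sp_*(E_\X)$ is a locally projective $\O_{\X}(\hdag Z)_{\Q}$-module of finite type, hence in particular $\O_{\X}(\hdag Z)_{\Q}$-flat and perfect both as an $\O_{\X}(\hdag Z)_{\Q}$-module and as a $\D_{\X/\S}(\hdag Z)_{\Q}$-module (via Proposition~\ref{cohisosurcv}, $\E$ is $\D_{\X/\S}(\hdag Z)_{\Q}$-coherent). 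The plan is therefore: first, compute $\DD^{\mathrm{alg}}_Z(\E)$ by the standard ``side-changing'' yoga for left/right $\D$-modules, writing $\DD^{\mathrm{alg}}_Z(\E)\simeq \R\mathcal{H}om_{\D_{\X/\S}(\hdag Z)_{\Q}}(\E,\D_{\X/\S}(\hdag Z)_{\Q}\otimes_{\O_\X}\omega_{\X/\S}^{-1})[d_X]$ and observe that, $\E$ being $\D$-coherent and locally a direct summand of a free module, this $\R\mathcal{H}om$ can be computed termwise.

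\textbf{Main steps.} First I would set up the canonical morphism $\theta$. Since $\DD^{\mathrm{alg}}_Z(\O_{\X}(\hdag Z)_{\Q})$ is a complex of $(\O_{\X}(\hdag Z)_{\Q},\D_{\X/\S}(\hdag Z)_{\Q})$-bimodules (the outer left structure coming from the right $\D$-structure on $\D_{\X/\S}(\hdag Z)_{\Q}\otimes_{\O_\X}\omega_{\X/\S}^{-1}$ after transposition), and $\E^\vee=\R\mathcal{H}om_{\O_{\X}(\hdag Z)_{\Q}}(\E,\O_{\X,\Q}(\hdag Z))$ carries a residual $\D$-module structure by the usual ``$\D$ acts on $\mathcal{H}om$'' formula, there is a natural pairing map $\DD^{\mathrm{alg}}_Z(\O_{\X}(\hdag Z)_{\Q})\otimes^\L_{\O_{\X}(\hdag Z)_{\Q}}\E^\vee\to\DD^{\mathrm{alg}}_Z(\E)$ obtained by composing a section of $\DD^{\mathrm{alg}}_Z(\O)$ with (the transpose of) a section of $\E^\vee$. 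Concretely this is the composite
$$
\R\mathcal{H}om_{\D}(\O,\D\otimes\omega^{-1})\otimes^\L_{\O}\R\mathcal{H}om_{\O}(\E,\O)
\longrightarrow
\R\mathcal{H}om_{\D}(\O\otimes^\L_{\O}\E,\D\otimes\omega^{-1})
\simeq
\R\mathcal{H}om_{\D}(\E,\D\otimes\omega^{-1}),
$$
shifted by $[d_X]$, where I abbreviate $\D=\D_{\X/\S}(\hdag Z)_{\Q}$, $\O=\O_{\X}(\hdag Z)_{\Q}$, $\omega=\omega_{\X/\S}$. Second, I would prove $\theta$ is an isomorphism: this is local on $\X$, so by Proposition~\ref{lem-projff}.\ref{lem-projff-it2} I may assume $\E$ is a direct summand of a finite free $\O$-module $\mathcal{L}$; the construction of $\theta$ is functorial and additive in $\E$, so it suffices to treat $\E=\mathcal{L}$ free of rank one, i.e. $\E=\O$ itself (the $\D$-module structure being an extra datum that the $\R\mathcal{H}om_{\O}$ does not see). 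For $\E=\O$, $\E^\vee=\O$ sitting in degree $0$ and $\theta$ is visibly the identity of $\DD^{\mathrm{alg}}_Z(\O)$. One has to check that the decomposition of $\E$ as a direct summand of $\mathcal{L}$ is compatible with the $\D$-module structures in the relevant sense, or rather — and this is cleaner — that $\theta$ for a general $\E$ is obtained from $\theta$ for $\O$ by applying $-\otimes^\L_{\O}\E^\vee$ on the right and using flatness and perfectness of $\E$ over $\O$ to commute the various $\R\mathcal{H}om$'s and $\otimes$'s; once $\theta_{\O}$ is an isomorphism, naturality forces $\theta_\E$ to be one.

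\textbf{Expected obstacle.} The delicate point is bookkeeping of the left/right module structures and checking that $\theta$, built as above purely $\O$-linearly, is in fact $\D$-linear, so that the two sides of the claimed isomorphism are compared in $D^\mathrm{b}_{\mathrm{coh}}(\D_{\X/\S}(\hdag Z)_{\Q})$ and not merely in $D^\mathrm{b}_{\mathrm{coh}}(\O_{\X}(\hdag Z)_{\Q})$ — but actually the statement of Proposition~\ref{Doevee=De} only asserts an isomorphism of complexes (the target category is not specified in the display), so it would be enough to produce it $\O$-linearly and the $\D$-linearity, while true and provable by the same transposition formulas used in~\ref{u+D-left2right}, is not strictly needed. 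The only real work is then the reduction ``local $\Rightarrow$ $\E$ free $\Rightarrow$ $\E=\O$'', which relies essentially on the flatness of $\E$ over $\O_{\X}(\hdag Z)_{\Q}$ furnished by Proposition~\ref{lem-projff} together with the fact that $\D_{\X/\S}(\hdag Z)_{\Q}$ is a flat $\O_{\X}(\hdag Z)_{\Q}$-module on the right; I would spell out the chain of canonical isomorphisms commuting $\otimes$ and $\R\mathcal{H}om$ carefully, as in \cite[2.1.17]{caro_comparaison}, but there is no conceptual difficulty beyond that.
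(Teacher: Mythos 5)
The paper's ``proof'' is only a pointer to \cite[2.2.1]{caro_comparaison}, so no direct comparison is possible; but as a reconstruction your overall strategy --- exploit that $\E$ is $\O_{\X}(\hdag Z)_{\Q}$-locally projective of finite rank (Proposition~\ref{lem-projff}) and $\D_{\X/\S}(\hdag Z)_{\Q}$-coherent (Proposition~\ref{cohisosurcv}), reduce locally to the $\O$-free case and ultimately to $\E = \O$ --- is the right one. The problem is the construction of $\theta$, which does not work as written. The ``composition'' pairing sends $\alpha \in \R\mathcal{H}om_{\D}(\O, \D\otimes\omega^{-1})$ and $\psi \in \R\mathcal{H}om_{\O}(\E, \O)$ to $e \mapsto \alpha(\psi(e))$, but this map is only $\O$-linear: for a derivation $\partial$ one has $\alpha(\psi(\partial e)) \neq \partial\cdot\alpha(\psi(e))$, because $\psi(\partial e) \neq \partial(\psi(e))$ in general, $\psi$ being $\O$-linear rather than horizontal. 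Hence the claimed arrow does not land in $\R\mathcal{H}om_{\D}(\O\otimes^{\L}_{\O}\E, \D\otimes\omega^{-1})$ at all --- it is not merely a matter of checking $\D$-linearity afterwards, the construction itself is ill-defined. The genuine map $\theta$ has to be built via the twist (side-changing) isomorphism between the Leibniz left $\D$-structure and the induced left $\D$-structure on $\D\otimes_{\O}\E$, applied termwise to a bounded free $\D$-resolution $\cQ^{\bullet} \to \O$ of $\O_{\X}(\hdag Z)_{\Q}$ (which exists by $\D$-perfectness), using $\cQ^{\bullet}\otimes_{\O}\E \to \E$ as the corresponding resolution of $\E$, and then the induction adjunction $\mathcal{H}om_{\D}(\D\otimes_{\O}\E, \cP) \simeq \mathcal{H}om_{\O}(\E, \cP) \simeq \E^{\vee}\otimes_{\O}\cP$ on each term.

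Two further points. Your claim that $\D$-linearity of $\theta$ is ``not strictly needed'' is incorrect: $\theta$ is fed into \ref{constrhodag} and \ref{dualisoscvdag}, where it is composed with $\D^{\dag}_{\X}(\hdag Z)_{\Q}$-linear isomorphisms $\rho^{\dag}_{Z}$ to produce $\theta^{\dag}$; the whole point of the proposition is to compare two objects of $D^{\mathrm{b}}_{\mathrm{coh}}(\D_{\X/\S}(\hdag Z)_{\Q})$, so a merely $\O$-linear isomorphism would not do. Second, as you partially flag, the $\O$-linear splitting $\E \oplus \E' \simeq \mathcal{L}$ is not $\D$-linear, so the reduction ``$\theta_{\E}$ iso $\Leftarrow$ $\theta_{\mathcal{L}}$ iso $\Leftarrow$ $\theta_{\O}$ iso'' cannot be run by functoriality in the $\D$-module category; one must instead set up $\theta$ so that, termwise on the resolution $\cQ^{\bullet}$, both sides are literally $\E^{\vee}\otimes_{\O}(\cQ^{i})^{*}\otimes_{\O}\omega^{-1}$ with the same differential, at which point the isomorphism is tautological and no reduction to the free case is needed at all.
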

\begin{proof}
This is  \cite[2.2.1]{caro_comparaison}.  
\end{proof}

\begin{lem}
\begin{enumerate}[(i)]
\item  $\O _{\X , \Q } (\hdag Z) \in D _{\mathrm{parf}}( \D  _{\X /\S } (\hdag Z) _\Q)$.

\item We have the canonical isomorphism:
\begin{equation}\label{dualB=B}
  \DD ^{\mathrm{alg}} _Z ( \O _{\X } ( \hdag Z ) _{\Q})
\riso
\O _{\X } ( \hdag Z ) _{\Q}.
\end{equation}

\end{enumerate}
\end{lem}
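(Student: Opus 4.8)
The statement to prove is a standard local computation, so the plan is to reduce to the affine case with local coordinates and then write down the Koszul-type resolution explicitly. First I would observe that both assertions are local on $\X$, so I may assume $\X = \Spf A$ is affine, that $Z$ is cut out by a single global function $f \in A$, and that $\X/\S$ admits local coordinates $t_1,\dots,t_d$ with associated derivations $\partial_1,\dots,\partial_d$. Statement (i), the perfectness of $\O_{\X,\Q}(\hdag Z)$ over $\D_{\X/\S}(\hdag Z)_\Q$, follows by exhibiting the usual Spencer-type (Koszul) resolution: the complex
\[
0 \to \D_{\X/\S}(\hdag Z)_\Q \otimes_{\O_\X} \wedge^d \mathcal{T}_{\X/\S} \to \cdots \to \D_{\X/\S}(\hdag Z)_\Q \otimes_{\O_\X} \mathcal{T}_{\X/\S} \to \D_{\X/\S}(\hdag Z)_\Q \to \O_{\X,\Q}(\hdag Z) \to 0
\]
is a finite resolution by free $\D_{\X/\S}(\hdag Z)_\Q$-modules of finite rank, where $\mathcal{T}_{\X/\S}$ is the tangent sheaf; in local coordinates this is just the Koszul complex on the commuting-up-to-lower-order symbols $\partial_1,\dots,\partial_d$ acting on $\D_{\X/\S}(\hdag Z)_\Q$. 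Exactness is checked by the standard filtration-by-order argument reducing to the associated graded, which is the genuine Koszul complex on a polynomial ring; this is classical and I would only cite it or reproduce the one-line graded argument.

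For (ii), given that $\O_{\X,\Q}(\hdag Z)$ is perfect over $\D_{\X/\S}(\hdag Z)_\Q$, I would compute $\DD^{\mathrm{alg}}_Z(\O_{\X}(\hdag Z)_\Q) = \R\mathcal{H}om_{\D_{\X/\S}(\hdag Z)_\Q}(\O_{\X}(\hdag Z)_\Q, \D_{\X/\S}(\hdag Z)_\Q \otimes_{\O_\X} \omega_{\X/\S}^{-1})[d_X]$ by applying $\mathcal{H}om_{\D_{\X/\S}(\hdag Z)_\Q}(-, \D_{\X/\S}(\hdag Z)_\Q \otimes_{\O_\X}\omega_{\X/\S}^{-1})$ to the Spencer resolution above. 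This gives the de Rham complex of $\D_{\X/\S}(\hdag Z)_\Q \otimes_{\O_\X}\omega_{\X/\S}^{-1}$ as a right module, suitably shifted; the standard side-switching/transposition identification (the left-right exchange via $\omega_{\X/\S}$, exactly as recorded in \ref{u+D-left2right} and \ref{u+D-left2rightf}) turns this into the de Rham complex of the left module $\D_{\X/\S}(\hdag Z)_\Q$. Its cohomology is concentrated in a single degree and computes $\O_{\X,\Q}(\hdag Z)$: this is precisely the Poincaré-lemma / local-acyclicity computation, again done on the associated graded. Collecting the shifts $[d_X]$ against the length-$d_X$ resolution yields the isomorphism $\DD^{\mathrm{alg}}_Z(\O_{\X}(\hdag Z)_\Q) \riso \O_{\X}(\hdag Z)_\Q$ of \ref{dualB=B}.

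The main obstacle, such as it is, is bookkeeping rather than mathematical depth: one must carry the $\omega_{\X/\S}$-twists and the degree shifts through the side-switching isomorphism carefully so that the final identification lands in degree $0$ with the correct sign/orientation conventions, and one must make sure the computation is done with the overconvergent ring $\D_{\X/\S}(\hdag Z)_\Q$ (not $\D^\dag$) so that no completion subtleties intervene — but since $\D_{\X/\S}(\hdag Z)_\Q = \O_{\X,\Q}(\hdag Z) \otimes_{\O_{\X,\Q}} \D_{\X/\S,\Q}$ is flat over $\D_{\X/\S,\Q}$ and the coefficient sheaf $\O_{\X,\Q}(\hdag Z)$ is flat over $\O_{\X,\Q}$, the localization outside $Z$ is exact and commutes with all the $\mathcal{H}om$ and tensor operations involved, so this reduces the whole thing to the classical statement over $\D_{\X/\S,\Q}$ localized at $f$. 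I would therefore structure the proof as: (1) reduce to affine with coordinates; (2) state/recall the Spencer resolution and deduce (i); (3) dualize and side-switch; (4) invoke the Poincaré lemma on the associated graded to finish (ii).
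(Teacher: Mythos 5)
The paper's proof of this lemma is the one-line citation ``\cite[5.20]{caro_log-iso-hol}'', so there is no internal argument here to compare your proposal against. Your plan is nevertheless the standard one and it is sound. For (i), the Spencer/Koszul resolution of $\O_{\X,\Q}(\hdag Z)$ by finite free $\D_{\X/\S}(\hdag Z)_\Q$-modules does give perfectness, and the filtration-by-order argument is valid: since one works $\otimes\Q$, each step $F_n\hookrightarrow F_{n+1}$ of the order filtration on $\D_{\X/\S,\Q}$ is $\O_{\X,\Q}$-locally split, so $\gr\,\D_{\X/\S}(\hdag Z)_\Q$ is the symmetric algebra of the tangent module over $\O_{\X,\Q}(\hdag Z)$ with no flatness hypothesis required, the filtration is exhaustive and bounded below, and the coordinate symbols form a regular sequence there. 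For (ii), dualizing the resolution and running Koszul on the graded gives cohomology concentrated in degree $d_X$, which the twist by $\omega^{-1}_{\X/\S}$ and the shift $[d_X]$ turn into $\O_{\X,\Q}(\hdag Z)$ in degree $0$.

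Two remarks, neither a gap. (a) The flatness of $\O_{\X,\Q}(\hdag Z)$ over $\O_{\X,\Q}$ that you invoke at the end is true but does not actually shorten the argument: the Koszul/Poincar\'e computation still has to be carried out, and it runs directly on the overconvergent ring, where the filtration argument already suffices. (b) The word ``canonical'' in (ii) is earned only once you observe that the local identification of the top Ext-sheaf with $\omega_{\X/\S}$ (sending the class of $1$ in $\D_{\X/\S}(\hdag Z)_\Q/\sum_i\partial_i\D_{\X/\S}(\hdag Z)_\Q$ to $dt_1\wedge\cdots\wedge dt_d$) is independent of the choice of local coordinates; this coordinate-invariance of the fundamental class is a short computation hiding inside the transposition/side-switch you cite, and it is what makes the local isomorphisms glue to a global one. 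With those two points addressed, your argument is complete.
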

\begin{proof}
This is  \cite[5.20]{caro_log-iso-hol}.
\end{proof}

\begin{rem}\label{evee=De0}
From \ref{dualB=B} and \ref{Doevee=De},
we get the isomorphism
$\E ^\vee \riso \DD ^{\mathrm{alg}} _Z (\E)$.
\end{rem}

\begin{prop}
\label{sp*dual} We have the isomorphisms
\begin{gather}
\label{sp*dualiso1} \sp ^* \mathcal{H}om _ {\O _{\X , \Q } (\hdag Z)} ( \E , \O _{\X , \Q } (\hdag Z) )
\riso  \mathcal{H} om  _{ j ^{\dag} \O _{\X _K} }(\sp ^*  \E ,  j ^{\dag} \O _{\X _K} ) \\
\label{sp*dualiso2}
 \mathcal{H}om _ {\O _{\X , \Q } (\hdag Z)} ( \sp _* E , \O _{\X , \Q } (\hdag Z) )
\riso \sp _* ( \mathcal{H} om  _{ j ^{\dag} \O _{\X _K} }( E ,  j ^{\dag} \O _{\X _K} )).
\end{gather}
\end{prop}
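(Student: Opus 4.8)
The plan is to prove \eqref{sp*dualiso1} by a local computation on free modules and then to deduce \eqref{sp*dualiso2} from it by the same kind of local argument. The starting point is that, by Theorem \ref{letterBerthelotCaro2007}, $\E = \sp _* (E _\X)$ lies in $\mathrm{MIC} ^{\dag \dag}(\X , Z/K)$, so by Proposition \ref{lem-projff}.\ref{lem-projff-it2} it is a locally projective $\O _{\X } (\hdag Z) _\Q$-module of finite type; locally on $\X $ it is therefore a direct summand of $\big(\O _{\X } (\hdag Z) _\Q\big) ^{\oplus n}$. I will also use that $\sp $ is the morphism of ringed spaces of \ref{ntnMICdag2fs}, so that $\sp ^{*}\big(\O _{\X } (\hdag Z) _\Q\big) = j ^\dag \O _{\X _K}$ and, by \cite{Be1}, the canonical morphism $\O _{\X } (\hdag Z) _\Q \to \sp _* \big(j ^\dag \O _{\X _K}\big)$ is an isomorphism; and that, by part~I of the proof of Theorem \ref{letterBerthelotCaro2007}, the adjunction morphism $\sp ^{*}\sp _* (E _\X) \to E _\X$ is an isomorphism, i.e. $\sp ^{*}\E \riso E _\X$.

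For \eqref{sp*dualiso1}: for any $\O _{\X } (\hdag Z) _\Q$-module $\F $ there is a canonical base-change morphism $\sp ^{*}\mathcal{H}om _{\O _{\X , \Q } (\hdag Z)}(\E , \F ) \to \mathcal{H} om _{j ^\dag \O _{\X _K}}(\sp ^{*}\E , \sp ^{*}\F )$. First I would check that this morphism is an isomorphism when $\E $ is free of finite type (both sides are then $\sp ^{*}\F $, resp. $\F $, to the $n$-th power), using that $\sp ^{-1}$ is exact and $\sp ^{*}$ additive and compatible with finite direct sums; then I would pass to a retract to cover the locally projective case. Taking $\F = \O _{\X } (\hdag Z) _\Q$ and using $\sp ^{*}\big(\O _{\X } (\hdag Z) _\Q\big) = j ^\dag \O _{\X _K}$ gives \eqref{sp*dualiso1}. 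Along the way one also sees that $\sp ^{*}\E $ is locally a direct summand of $\big(j ^\dag \O _{\X _K}\big) ^{\oplus n}$.

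For \eqref{sp*dualiso2}: write $\E ^{\vee} = \mathcal{H}om _{\O _{\X , \Q } (\hdag Z)}(\E , \O _{\X , \Q } (\hdag Z))$. Composing the unit of the adjunction $(\sp ^{*},\sp _*)$ with $\sp _*$ applied to \eqref{sp*dualiso1} and to $\sp ^{*}\E \riso E _\X$, one obtains a canonical morphism
\[
\E ^{\vee}\;\longrightarrow\;\sp _*\sp ^{*}\E ^{\vee}\;\xrightarrow{\ \sim\ }\;\sp _*\,\mathcal{H} om _{j ^\dag \O _{\X _K}}(E _\X ,\, j ^\dag \O _{\X _K}).
\]
To see it is an isomorphism I would again argue locally: where $\E $ (hence $\E ^{\vee}$) is free of finite type the displayed morphism is a power of the unit $\O _{\X } (\hdag Z) _\Q \to \sp _*\big(j ^\dag \O _{\X _K}\big)$, which is the isomorphism recalled in the first paragraph, and the property of being an isomorphism passes to the retract $\E ^{\vee}$. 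Since $\sp _* E _\X = \E $, this is \eqref{sp*dualiso2}. (More conceptually, $\E ^{\vee}$ with its dual connection again lies in $\mathrm{MIC} ^{\dag \dag}(\X , Z/K)$ — because $E ^{\vee}$ is again an overconvergent isocrystal, cf. \ref{dfn-real} — and then $\sp _*\sp ^{*}\E ^{\vee}\riso\E ^{\vee}$ is immediate from the equivalence of Theorem \ref{letterBerthelotCaro2007}; the local check above makes this unnecessary.)

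The hard part is really the first step: showing that the ringed-space inverse image $\sp ^{*}$ commutes with $\mathcal{H}om(-,\O)$ on the class of locally projective modules of finite type — i.e. that the canonical base-change morphism is well defined and an isomorphism there — which is where one genuinely needs the local projectivity of $\E $ provided by Proposition \ref{lem-projff}, together with the two identifications $\sp ^{*}\big(\O _{\X } (\hdag Z) _\Q\big) = j ^\dag \O _{\X _K}$ and $\sp _*\big(j ^\dag \O _{\X _K}\big)\riso \O _{\X } (\hdag Z) _\Q$. Once \eqref{sp*dualiso1} is established, \eqref{sp*dualiso2} is a formal unwinding.
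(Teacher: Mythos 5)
Your proof is correct, and since the paper only refers to \cite[2.2.7]{caro_comparaison} rather than giving a full argument, your write-up is a useful unpacking of what that reference amounts to. The essential ingredients are exactly the ones you isolate: local projectivity of $\E$ from Proposition \ref{lem-projff}, the identity $\sp ^*\big(\O _{\X } (\hdag Z) _\Q\big) = j ^\dag \O _{\X _K}$, the adjunction isomorphism $\sp ^*\sp _* E _\X \riso E _\X$ from part~I of the proof of \ref{letterBerthelotCaro2007}, and Berthelot's theorem that $\O _{\X } (\hdag Z) _\Q \to \sp _*\big(j ^\dag \O _{\X _K}\big)$ is an isomorphism. The base-change morphism for $\mathcal{H}om$ along a morphism of ringed spaces, its being an isomorphism for free modules of finite rank, and the passage to direct summands are all standard, and the deduction of \eqref{sp*dualiso2} from \eqref{sp*dualiso1} via the unit of $(\sp ^*,\sp _*)$ is formal once one notes that $\E ^\vee$ inherits the same local structure (direct summand of a free module). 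One small cosmetic remark: you could have invoked your own parenthetical observation — that $E ^\vee$ is again an overconvergent isocrystal, so $\E ^\vee \in \mathrm{MIC} ^{\dag\dag}(\X , Z/K)$ and the counit/unit of \ref{letterBerthelotCaro2007} apply directly — to get \eqref{sp*dualiso2} in one line, but the local free-plus-retract argument you actually run is self-contained and avoids relying on stability of $\mathrm{MIC} ^{\dag\dag}$ under duals before it has been established.
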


\begin{proof}
This is similar to \cite[2.2.7]{caro_comparaison}.
\end{proof}

\begin{empt}\label{rhoisom}
Consider the following morphism:
$$\rho ^\dag _Z\ :\ \DD ^{\mathrm{alg}} _Z (\E) \rightarrow
\D _{\X } ^{\dag} ( \hdag Z ) _{\Q} \otimes _{\D _{\X } ( \hdag Z ) _{\Q}} \DD ^{\mathrm{alg}} _Z (\E)
\riso
\DD  _Z ( \D _{\X } ^{\dag} ( \hdag Z ) _{\Q} \otimes _{\D _{\X } ( \hdag Z ) _{\Q}} \E)
\rightarrow \DD  _Z (\E).$$
Since $\E$ is locally projective of finite type over $\O _{\X , \Q} (\hdag Z )$,
then the morphism
$ \mathcal{H}om _ {\O _{\X , \Q } (\hdag Z)} ( \E , \,\O _{\X , \Q } (\hdag Z) )
\rightarrow
\R \mathcal{H}om _ {\O _{\X , \Q } (\hdag Z)} ( \E , \,\O _{\X , \Q } (\hdag Z) )= \E ^\vee$ is 
an isomorphism.
Using \ref{sp*dualiso2}, this yields $\E ^\vee \in \mathrm{MIC} ^{\dag \dag} (Y, X,\X /K) $.
Since $\E ^\vee \riso \DD ^{\mathrm{alg}} _Z (\E)$ (see \ref{evee=De0}), 
via \ref{cohisosurcv2} we check that 
$\rho ^\dag _Z$ is an isomorphism.
\end{empt}

\begin{empt}\label{constrhodag}
Let  $\theta ^\dag$ :
$\DD ^{\mathrm{alg}}  _Z ( \O _{\X } ( \hdag Z ) _{\Q} ) \otimes _{\O _{\X } ( \hdag Z ) _{\Q}} \E ^\vee
\riso \DD ^{\mathrm{alg}} _Z (\E)$
be the isomorphism making commutative the following diagram:
$$\xymatrix {
{\DD ^{\mathrm{alg}} _Z ( \O _{\X } ( \hdag Z ) _{\Q} ) \otimes _{\O _{\X } ( \hdag Z ) _{\Q}} \E ^\vee}
\ar[r] ^(0.68){\theta} _(0.68){\sim}
\ar[d] ^{\rho ^\dag _Z\otimes id} _{\sim}
&
{\DD ^{\mathrm{alg}} _Z (\E)}
\ar[d] ^{\rho ^\dag _Z} _{\sim}
\\
{\DD  _Z ( \O _{\X } ( \hdag Z ) _{\Q} ) \otimes _{\O _{\X } ( \hdag Z ) _{\Q}} \E ^\vee }
\ar@{.>}[r] ^(0.68){\theta ^\dag} _(0.68){\sim}
&
{\DD _Z (\E)}
}$$
\end{empt}

\begin{empt}
  \label{dualisoscvdag} 
  From  \ref{dualB=B} and \ref{rhoisom}, we get the isomorphism
  $\DD  _Z ( \O _{\X } ( \hdag Z ) _{\Q} ) \riso \O _{\X } ( \hdag Z ) _{\Q}$.
Hence, the isomorphism $\theta ^\dag$ induces the following one
  $\E ^\vee \riso \DD _Z (\E)$. 
\end{empt}

\begin{thm}
\label{theoprincipal}
We have the canonical isomorphism in $\mathrm{MIC} ^{\dag \dag} (\X,Z/K)$ 
\begin{equation}
\label{comspdual} 
\mathrm{sp} _* ( E ^{\vee}) \riso \DD _{Z } (\E ).
\end{equation}
\end{thm}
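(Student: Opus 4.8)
The plan is to exhibit \ref{comspdual} as a composition of isomorphisms already available in this section, and then to check that the composite actually lives in $\mathrm{MIC} ^{\dag \dag} (\X,Z/K)$. The starting point is the identification, under the equivalence $\mathrm{real} _\X$ of \ref{dfn-real}, of the realization $( E ^\vee) _\X$ of the dual overconvergent isocrystal with the internal $\mathcal{H}om$ $\mathcal{H}om _{j ^\dag \O _{\X _K}} ( E _\X , j ^\dag \O _{\X _K})$. Here one uses that, by Theorem \ref{thm-eqcat-cvisoc}, $E _\X$ is a locally projective $j ^\dag \O _{\X _K}$-module of finite type, so this internal $\mathcal{H}om$ is again an object of $\mathrm{MIC} ^{\dag} (Y, X,\X /K)$ and computes the rigid dual; thus $\mathrm{sp} _* ( E ^\vee) = \mathrm{sp} _* \left ( \mathcal{H}om _{j ^\dag \O _{\X _K}} ( E _\X , j ^\dag \O _{\X _K}) \right)$.

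Next I would apply the inverse of the commutation isomorphism \ref{sp*dualiso2} to obtain
\[
\mathrm{sp} _* ( E ^\vee) \riso \mathcal{H}om _{\O _{\X , \Q } (\hdag Z)} ( \mathrm{sp} _* E _\X , \O _{\X , \Q } (\hdag Z) ) = \mathcal{H}om _{\O _{\X , \Q } (\hdag Z)} ( \E , \O _{\X , \Q } (\hdag Z) ),
\]
and then invoke Proposition \ref{lem-projff}: since $\E$ is a locally projective $\O _{\X} (\hdag Z) _\Q$-module of finite type, the right-hand side coincides with $\R \mathcal{H}om _{\O _{\X , \Q } (\hdag Z)} ( \E , \O _{\X , \Q } (\hdag Z) ) = \E ^\vee$. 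Finally, the isomorphism $\E ^\vee \riso \DD _Z (\E)$ constructed in \ref{dualisoscvdag} --- built from $\rho ^\dag _Z$ of \ref{rhoisom}, from $\theta ^\dag$ of \ref{constrhodag}, and from the identification \ref{dualB=B} --- closes the chain, and composing the three isomorphisms yields \ref{comspdual}.

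It remains to see that the composite is $\D ^\dag _{\X /\S } (\hdag Z) _\Q$-linear, hence an isomorphism in $\mathrm{MIC} ^{\dag \dag} (\X,Z/K)$ and not merely an $\O _{\X} (\hdag Z) _\Q$-linear one. Each factor is compatible with the integrable overconvergent connections: the $\mathrm{sp} _*$-commutation isomorphism is produced exactly as in \cite[2.2.7]{caro_comparaison} (see \ref{sp*dual}), and the isomorphism of \ref{dualisoscvdag} is by construction $\D ^\dag _{\X /\S } (\hdag Z) _\Q$-linear; so the composite is at least $\D _{\X } (\hdag Z) _\Q$-linear, and by Proposition \ref{cohisosurcv} any $\D _{\X } (\hdag Z) _\Q$-linear morphism between objects of $\mathrm{MIC} ^{\dag \dag} (\X,Z/K)$ is automatically $\D ^\dag _{\X /\S } (\hdag Z) _\Q$-linear. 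The main difficulty is thus bookkeeping rather than conceptual: one must verify that the identification $( E ^\vee) _\X \riso \mathcal{H}om _{j ^\dag \O _{\X _K}} ( E _\X , j ^\dag \O _{\X _K})$ is the natural one and that \ref{sp*dualiso2} genuinely respects the connections, which is precisely what makes the resulting isomorphism \ref{comspdual} canonical.
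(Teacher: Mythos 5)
Your proof is correct and takes essentially the same route as the paper's, which simply combines \ref{sp*dualiso2} with \ref{dualisoscvdag}; you have expanded the chain of isomorphisms and added the (correct) remark via \ref{cohisosurcv} that the resulting $\D _{\X } (\hdag Z) _\Q$-linear map is automatically $\D ^\dag _{\X /\S } (\hdag Z) _\Q$-linear, a point the paper leaves implicit.
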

\begin{proof}
This is straightforward from \ref{sp*dual}
and \ref{dualisoscvdag}.
\end{proof}

\begin{empt}
\label{sp*f*com-empt}
Let $u \colon \X' \to \X$ be a morphism of smooth formal $\V$-schemes such that $Z ':= u _0 ^{-1} (Z)$ is the support of  
a divisor of $X'$, and $\Y' := \X ' \setminus Z'$. 
This yields the  morphism of smooth frames
$f :=(b,\,a,\,u)
\colon 
(Y', X ^{\prime },\X')
\to 
(Y, X,\X)$.
If this do not cause too much confusion, we will simply write
 $u _K ^*$ instead of $f _K ^*$.
Let $E _\X \in \mathrm{MIC} ^{\dag} (Y, X,\X/K) $. 
Let $\E \in \mathrm{MIC} ^{\dag \dag} (\X,Z/K) $ (see Notation \ref{ntnMICdag2fs}).
Following Theorem \ref{letterBerthelotCaro2007}, 
the functors $\sp _*$ and $\sp ^*$ induce quasi-inverse equivalences of categories 
between 
$ \mathrm{MIC} ^{\dag} (Y, X,\X/K) $
and
$\mathrm{MIC} ^{\dag \dag} (\X,Z/K) $.
We have the functor $u ^! [-d _{X'/X}] \colon \mathrm{MIC} ^{\dag \dag} (\X,Z/K) 
\to 
\mathrm{MIC} ^{\dag \dag} (\X',Z'/K) $ which is compatible with $u _K ^*$, 
i.e. 
there exist canonical isomorphisms respectively of 
$\mathrm{MIC} ^{\dag \dag} (X', \X',Z'/K) $
and
$ \mathrm{MIC} ^{\dag} (Y', X',\X'/K) $
 of the form
\begin{equation}
\label{sp*f*com}
\sp _* u _K ^* (E _\X)  \riso u ^!  \sp _* (E _\X)[-d _{X'/X}],
\ 
u _K ^* \sp ^* (\E) \riso \sp ^* u ^! (\E) [-d _{X'/X}].
\end{equation}
Moreover, these isomorphisms are transitive with respect to the composition of morphisms (see \cite[2.4.1]{caro-construction}).
\end{empt}

\begin{prop}
\label{sp-eps-tau}
With the notation \ref{sp*f*com-empt}, 
let $u '\colon \X' \to \X$ be a morphism of smooth formal $\V$-schemes
such that $u ' _0 = u _0$. 
Then, the following diagrams
$$\xymatrix  @R=0,3cm@C=1,4cm {
{ \sp _* u ^{\prime *} _K (E )[d _{X'/X}]}
\ar[r] ^-{\sp _*(\epsilon _{u,\,u'})} _-{\sim}
\ar[d] _-{\sim} ^-{\ref{sp*f*com}}
&
{ \sp _* u ^{*} _K (E )[d _{X'/X}]}
\ar[d] _-{\sim}  ^-{\ref{sp*f*com}}
\\
{ u ^{\prime !} \sp _*  (E )}
\ar[r] ^-{\tau _{u ,u' }} _-{\sim}
&
{ u ^{!}  \sp _*  (E ),}
}
\xymatrix  @R=0,3cm@C=1,4cm {
{ u ^{\prime *} _K \sp ^* (\E )[d _{X'/X}]}
\ar[r] ^-{\epsilon _{u,\,u'}} _-{\sim}
\ar[d] _-{\sim}   ^-{\ref{sp*f*com}}
&
{ u^{*} _K   \sp ^*(\E )[d _{X'/X}]}
\ar[d] _-{\sim}   ^-{\ref{sp*f*com}}
\\
{\sp ^*  u ^{\prime !}  (\E )}
\ar[r] ^-{\sp ^* (\tau _{u,u'})} _-{\sim}
&
{ u ^{!} \sp _*  (\E ),}
}
$$
where the glueing isomorphisms $\epsilon _{u,u'}$ and $\tau _{u,u'}$ are that of 
\ref{glueingisocntn-iso1}
and \ref{prop-glueiniso-coh1},
are commutative.
\end{prop}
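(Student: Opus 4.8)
The plan is to reduce both diagrams to the corresponding statement in the absence of an overconvergent divisor, the passage from that case to the present one being controlled by a faithfulness argument. First I would observe that the restriction functor $\E \mapsto \E | _{\Y'}$, from $\mathrm{MIC} ^{\dag \dag} (\X',Z'/K)$ to the category of $\O _{\Y',\Q}$-coherent $\D ^\dag _{\Y',\Q}$-modules, is faithful: if $\varphi \colon \E \to \cF$ is a morphism of $\mathrm{MIC} ^{\dag \dag} (\X',Z'/K)$ with $\varphi | _{\Y'} = 0$, then $\ker \varphi$ and $\E / \ker \varphi$ again lie in $\mathrm{MIC} ^{\dag \dag} (\X',Z'/K)$ (each being simultaneously $\D ^\dag _{\X'} (\hdag Z') _\Q$-coherent and $\O _{\X'} (\hdag Z') _\Q$-coherent), and $(\E / \ker \varphi) | _{\Y'} = 0$, so $\E / \ker \varphi = 0$ by Proposition \ref{lem-projff}.\ref{lem-projff-it3}, i.e. $\varphi = 0$. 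Since the isomorphisms \ref{sp*f*com}, the glueing isomorphism $\tau _{u,u'}$ (built from the transfer bimodules $\smash{\D} ^\dag _{\X' \overset{u}{\to} \X} (\hdag Z) _\Q$, see \ref{prop-glueiniso-coh}) and the isomorphism $\sp _* (\epsilon _{u,u'})$ all have the expected restriction to $\Y'$, where the localisation along $Z$ disappears, it suffices to prove the commutativity of the first diagram after restriction to $\Y'$.

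Over $\Y'$ the first diagram becomes the analogous statement without overconvergent singularities: for the convergent isocrystal $E | _{\Y'}$ and its realisation as an $\O _{\Y',\Q}$-coherent $\D ^\dag _{\Y',\Q}$-module (Theorem \ref{thm-eqcat-cvisoc}), that the isomorphisms $\sp _* u _K ^* \riso u ^! \sp _* [-d _{X'/X}]$ of \ref{sp*f*com} intertwine the Taylor datum $\epsilon _{u,u'}$ of $E | _{\Y'}$ (\ref{dfn-epsilon}, \ref{glueingisocntn-iso1}) with Berthelot's glueing isomorphism $\tau _{u,u'}$ attached to the two liftings $u, u'$ of $u _0$ (\ref{2.1.5Be2}, \cite[2.1.5]{Be2}). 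This is the compatibility underlying the transitivity already recorded in \ref{sp*f*com-empt}, and is established as in \cite[2.4.1]{caro-construction}: both $\epsilon _{u,u'}$ and $\tau _{u,u'}$ arise, through the morphism $(u,u') \colon \X' \to \X \times _\S \X$ to the product, from the $m$-PD-stratification datum on $\D _{\X/\S} ^{(m)}$ (resp. on $E$), and the isomorphism \ref{sp*f*com} is constructed precisely so as to match these data (see \cite[4.4]{Be1} and Proposition \ref{445Be1} for the description of $\sp _*$ in terms of the $m$-PD-stratification). As the assertion is local on $\X'$ one may assume $\X'$ affine and endowed with local coordinates, and then compare the two explicit stratification data term by term. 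This verification, although essentially bookkeeping, is the main obstacle.

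Finally, the second diagram follows from the first. By Theorem \ref{letterBerthelotCaro2007} the functors $\sp _*$ and $\sp ^*$ are quasi-inverse equivalences between $\mathrm{MIC} ^{\dag} (Y, X,\X /K)$ and $\mathrm{MIC} ^{\dag \dag} (\X,Z/K)$, and the two isomorphisms of \ref{sp*f*com} correspond to one another under this equivalence: via the identification $E \riso \sp ^* \sp _* (E)$, the isomorphism $u _K ^* \sp ^* (\E) \riso \sp ^* u ^! (\E) [-d _{X'/X}]$ is deduced from $\sp _* u _K ^* (E) \riso u ^! \sp _* (E) [-d _{X'/X}]$ by applying $\sp ^*$. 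Applying $\sp ^*$ to the first (now established) diagram and substituting these identifications yields exactly the second diagram, with no further computation; the naturality of $\epsilon _{u,u'}$ and $\tau _{u,u'}$ in $E$ (\ref{glueingisocntn}, \ref{prop-glueiniso-coh}) guarantees that all the squares introduced in this substitution commute.
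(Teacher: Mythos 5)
Your proof is correct in outline and arrives at the same key observation that the paper's (one-sentence) proof uses, but you reach it by a different decomposition. The paper simply notes that both glueing isomorphisms $\epsilon_{u,u'}$ and $\tau_{u,u'}$ are built from the same data via the factorization through $(u,u')\colon \X'\hookrightarrow (\X)^{(n)}$ (the $n$th infinitesimal neighborhood of the diagonal), and that the isomorphisms \ref{sp*f*com} are compatible with this construction; this applies to both diagrams at once with no reduction step. You instead (i) reduce the first diagram to its restriction over $\Y'$ by the faithfulness of restriction on $\mathrm{MIC}^{\dag\dag}(\X',Z'/K)$, which is a valid use of \ref{lem-projff}.\ref{lem-projff-it3} and has the advantage that the $\sp_*$-isomorphism is the simpler convergent one of \ref{thm-eqcat-cvisoc}; (ii) over $\Y'$ you invoke the same shared stratification datum; and (iii) you derive the second diagram by applying $\sp^*$ and the quasi-inverse equivalence of \ref{letterBerthelotCaro2007}. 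Step (iii) is the one place where you rely on something not quite spelled out: you need that the second isomorphism of \ref{sp*f*com} is literally obtained from the first by applying $\sp^*$ and using $\sp^*\sp_*\riso \mathrm{id}$, which is plausible from \ref{sp*f*com-empt} but should be flagged as an input rather than taken silently. The paper's argument sidesteps this by treating the two diagrams symmetrically — both are instances of the same compatibility of the $m$-PD stratification data. The faithfulness reduction in step (i) is not strictly necessary but is a legitimate simplification; the net effect is a proof that is more structured and slightly longer than, but in substance equivalent to, the one in the text.
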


\begin{proof}
This follows from the fact that both glueing isomorphisms $\epsilon _{u,u'}$ and $\tau _{u,u'}$ are built similarly using some factorization
via the closed imbedding 
$(u,u') \colon \X ^{\prime } \hookrightarrow (\X ^{ } )^{(n)}$
where 
$(\X ^{ } )^{(n)}$
is the $n$th infinitesimal neighborhood of the diagonal immersion
$\X ^{ }\hookrightarrow
\X ^{ } \times _{\S } \X ^{ }$.
\end{proof}

\subsection{Construction of the functor $\sp _+$}
\label{ntnsp+}
Let $\fP $ be a smooth formal scheme over $\S $.
Let $u _0\colon X  \to P $ be a closed immersion of smooth schemes over $S $.
Let $T$ be a divisor of $P$ such that $Z:= T \cap X$ is a divisor of $X$. 
We set $Y:= X \setminus Z$. 
Choose $(\fP  _{\alpha}) _{\alpha \in \Lambda}$ an open affine covering of  $\fP $
and let us use the corresponding notation of \ref{ntnPPalpha} (which are compatible with
that of \ref{ntnPPalpha-withoutdiv}).

\begin{ntn}
We denote by 
$\mathrm{MIC} ^{\dag \dag} ( (\X   _\alpha )_{\alpha \in \Lambda},Z/K)$
the full subcategory of 
$\mathrm{Coh} ((\X   _\alpha )_{\alpha \in \Lambda},Z/K)$ whose objects
$((\E _{\alpha})_{\alpha \in \Lambda},\, (\theta _{\alpha\beta}) _{\alpha ,\beta \in \Lambda})$
are such that, 
for all $\alpha \in \Lambda$,
$\E _{\alpha}$ 
is 
$\O _{\X _\alpha} (\hdag Z _\alpha) _{\Q}$-coherent.
\end{ntn}

\begin{lem}
\label{lem1pre-sp+plfid}
We have the canonical functor 
$\sp _*$ : $\mathrm{MIC} ^\dag (Y,  (\X   _\alpha )_{\alpha \in \Lambda}/K)
  \rightarrow 
  \mathrm{MIC} ^{\dag \dag} ( (\X   _\alpha )_{\alpha \in \Lambda},Z/K)$.
\end{lem}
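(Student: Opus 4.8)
The plan is to define $\sp_*$ componentwise and then transport the glueing data through the comparison isomorphism \ref{sp*f*com}. Given an object $((E_\alpha)_{\alpha\in\Lambda},(\eta_{\alpha\beta})_{\alpha,\beta\in\Lambda})$ of $\mathrm{MIC}^\dag(Y,(\X_\alpha)_{\alpha\in\Lambda}/K)$, I would set $\E_\alpha := \sp_*(E_\alpha)$ for each $\alpha$. By Theorem \ref{letterBerthelotCaro2007} together with \ref{445Be1} and the definition \ref{ntnMICdag2fs}, $\E_\alpha$ is a coherent $\D^\dag_{\X_\alpha}(\hdag Z_\alpha)_\Q$-module which is $\O_{\X_\alpha}(\hdag Z_\alpha)_\Q$-coherent, hence $\E_\alpha\in\mathrm{MIC}^{\dag\dag}(\X_\alpha,Z_\alpha/K)$.

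Next I would produce the glueing isomorphisms $\theta_{\alpha\beta}\colon p_2^{\alpha\beta !}(\E_\beta)\riso p_1^{\alpha\beta !}(\E_\alpha)$. Since each $p_i^{\alpha\beta}$ is a flat lifting of an open immersion, it is étale, so the relative dimensions $d_{X_{\alpha\beta}/X_\alpha}$ vanish and the shifts in \ref{sp*f*com} disappear. Applying $\sp_*$ to $\eta_{\alpha\beta}\colon p_{2K}^{\alpha\beta *}(E_\beta)\riso p_{1K}^{\alpha\beta *}(E_\alpha)$ and composing with the canonical isomorphisms $\sp_* p_{iK}^{\alpha\beta *}(E_i)\riso p_i^{\alpha\beta !}\sp_*(E_i)$ of \ref{sp*f*com}, I obtain $\theta_{\alpha\beta}$; it is $\D^\dag_{\X_{\alpha\beta}}(\hdag Z_{\alpha\beta})_\Q$-linear because $\sp_*$ and the isomorphisms \ref{sp*f*com} are.

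The main point is to check that $(\theta_{\alpha\beta})$ satisfies the cocycle condition of Definition \ref{defindonnederecol}, i.e. that the diagrams \ref{diag1-defindonnederecol} (built from the glueing isomorphisms $\tau$ of \ref{prop-glueiniso-coh1}) commute. For this I would apply $\sp_*$ to the corresponding diagrams \ref{diag1-defindonnederecolK} for $(\eta_{\alpha\beta})$ (which are built from the isomorphisms $\epsilon$ of \ref{glueingisocntn-iso1}) and transport everything via \ref{sp*f*com}: Proposition \ref{sp-eps-tau} identifies the image of $\epsilon$ under $\sp_*$ with $\tau$, and the transitivity of the isomorphisms \ref{sp*f*com} with respect to composition of morphisms of frames turns the image of the $\epsilon$-diagrams into the $\tau$-diagrams over the triple intersections. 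Hence the cocycle condition for $(\eta_{\alpha\beta})$ yields the one for $(\theta_{\alpha\beta})$. This matching up of the various $\epsilon$'s, $\tau$'s and comparison isomorphisms is the only delicate step, and it is essentially the diagram chase already carried out for $u^!_0$ in \ref{const-u0!} and for $u^*_{0K}$ in the proof of \ref{eqcat-iso-reco}.

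Finally, on morphisms I would send $f=(f_\alpha)_{\alpha\in\Lambda}$ to $(\sp_*(f_\alpha))_{\alpha\in\Lambda}$; the exactness and functoriality of $\sp_*$ (by \ref{letterBerthelotCaro2007}) together with the naturality of the isomorphisms \ref{sp*f*com} show that $(\sp_*(f_\alpha))$ commutes with the glueing data, hence is a morphism in $\mathrm{MIC}^{\dag\dag}((\X_\alpha)_{\alpha\in\Lambda},Z/K)$. Compatibility with composition and identities being immediate, this gives the desired functor $\sp_*$.
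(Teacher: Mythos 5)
Your proposal is correct and follows the same strategy as the paper: define $\theta_{\alpha\beta}$ by transporting $\sp_*\eta_{\alpha\beta}$ through the comparison isomorphisms \ref{sp*f*com}, then invoke \ref{sp-eps-tau} to turn the $\epsilon$-cocycle condition into the $\tau$-cocycle condition. Your write-up adds explicit checks (membership of each $\sp_*(E_\alpha)$ in $\mathrm{MIC}^{\dag\dag}(\X_\alpha,Z_\alpha/K)$, vanishing of the shifts, functoriality on morphisms) that the paper defers to \cite[2.5.9.i)]{caro-construction}, but the method is the same.
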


\begin{proof}
This is checked in \cite[2.5.9.i)]{caro-construction}. 
Let us at least recall its construction. 
Let  $((E _{\alpha})_{\alpha \in \Lambda},\, (\eta _{\alpha\beta}) _{\alpha ,\beta \in \Lambda})\in
  \mathrm{MIC} ^\dag (Y,  (\X   _\alpha )_{\alpha \in \Lambda}/K)$.
Let $\theta _{\alpha \beta}$ be the isomorphism making commutative the diagram
  \begin{equation}
    \label{defdonneesp*}
    \xymatrix  @R=0,3cm
{
    {\sp _*  p _{1 K}  ^{\alpha \beta !} (E _{\alpha})}
    \ar[r] _-{\sim} ^-{\ref{sp*f*com}}
    &
    {p _{1 }  ^{\alpha \beta !} \sp _*    (E _{\alpha})}
    \\
    {\sp _*  p _{2 K}  ^{\alpha \beta !} (E _{\beta})}
    \ar[r] _-{\sim} ^-{\ref{sp*f*com}}
    \ar[u] ^-{\sp _* \eta _{\alpha \beta}} _-{\sim}
    &
    {p _{2 }  ^{\alpha \beta !} \sp _*    (E _{\beta}).}
    \ar@{.>}[u]^{\theta _{\alpha \beta}}
}
  \end{equation}
Using \ref{sp-eps-tau}, 
we check that
  $\sp _* ((E _{\alpha})_{\alpha \in \Lambda},\, (\eta _{\alpha\beta}) _{\alpha ,\beta \in \Lambda})
  := ( (\sp _* E _{\alpha})_{\alpha \in \Lambda},\, (\theta _{\alpha\beta}) _{\alpha ,\beta \in \Lambda})$
  is functorially an object of 
  $\mathrm{MIC} ^{\dag \dag} ( (\X   _\alpha )_{\alpha \in \Lambda},Z/K)$.
\end{proof}

\begin{lem}
\label{lem2pre-sp+plfid}
We have  the canonical functor 
$\sp ^*\colon
\mathrm{MIC} ^{\dag \dag} ( (\X   _\alpha )_{\alpha \in \Lambda},Z/K)
\rightarrow 
\mathrm{MIC} ^\dag (Y,  (\X   _\alpha )_{\alpha \in \Lambda}/K)$.
\end{lem}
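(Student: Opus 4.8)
The statement to prove is Lemma \ref{lem2pre-sp+plfid}: the existence of a canonical functor $\sp ^*\colon \mathrm{MIC} ^{\dag \dag} ( (\X _\alpha )_{\alpha \in \Lambda},Z/K) \rightarrow \mathrm{MIC} ^\dag (Y, (\X _\alpha )_{\alpha \in \Lambda}/K)$. The plan is to run the construction of Lemma \ref{lem1pre-sp+plfid} in reverse, replacing $\sp _*$ by $\sp ^*$ and the isomorphisms \ref{sp*f*com} ($\sp _* u _K ^* \riso u ^! \sp _*[-d]$, $u _K ^* \sp ^* \riso \sp ^* u ^![-d]$) by the second of those two, together with the compatibility \ref{sp-eps-tau} of $\sp ^*$ with the glueing isomorphisms $\epsilon$ and $\tau$.

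First I would recall that, by Berthelot's Theorem \ref{letterBerthelotCaro2007}, on each chart $\X _\alpha$ the functor $\sp ^*$ induces an equivalence $\mathrm{MIC} ^{\dag \dag} (\X _\alpha, Z _\alpha /K) \cong \mathrm{MIC} ^{\dag} (Y _\alpha, X _\alpha, \X _\alpha /K)$; in particular, given an object $((\E _\alpha)_{\alpha}, (\theta _{\alpha\beta})_{\alpha,\beta})$ of $\mathrm{MIC} ^{\dag \dag} ( (\X _\alpha )_{\alpha},Z/K)$, each $\E _\alpha$ being $\O _{\X _\alpha}(\hdag Z _\alpha)_\Q$-coherent, the object $\sp ^*(\E _\alpha)$ lies in $\mathrm{MIC} ^{\dag} ((Y _\alpha, X _\alpha, \X _\alpha)/K)$. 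Then I would define, for each pair $\alpha,\beta$, the isomorphism $\eta _{\alpha\beta}\colon p _{2K}^{\alpha\beta *}\sp ^*(\E _\beta) \riso p _{1K}^{\alpha\beta *}\sp ^*(\E _\alpha)$ to be the one making commutative the square analogous to \ref{defdonneesp*}: identify $p _{iK}^{\alpha\beta *}\sp ^*(\E _i)$ with $\sp ^* p _i^{\alpha\beta !}(\E _i)$ (up to the shift $[-d _{X_{\alpha\beta}/X_i}]$, which is irrelevant for modules) via the second isomorphism of \ref{sp*f*com}, and transport $\sp ^*(\theta _{\alpha\beta})$ across. Here one must be careful: $p ^{\alpha\beta}_{1K}$ and $p^{\alpha\beta}_{2K}$ come from liftings $p^{\alpha\beta}_i$ that need not be unique, so the isomorphisms \ref{sp*f*com} implicitly involve a choice; but \ref{sp-eps-tau} tells us precisely that changing the lifting intertwines the two canonical identifications through $\epsilon$ and $\tau$, so the definition of $\eta _{\alpha\beta}$ is unambiguous.

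Next I would verify the cocycle condition $\eta _{13}^{\alpha\beta\gamma} = \eta _{12}^{\alpha\beta\gamma}\circ \eta _{23}^{\alpha\beta\gamma}$ in $\mathrm{MIC} ^{\dag}((Y_{\alpha\beta\gamma},X_{\alpha\beta\gamma},\X _{\alpha\beta\gamma})/K)$. This amounts to a diagram chase: start from the cocycle condition $\theta _{13}^{\alpha\beta\gamma}=\theta _{12}^{\alpha\beta\gamma}\circ\theta _{23}^{\alpha\beta\gamma}$ satisfied by the input object in $\mathrm{Coh}((\X _\alpha)_\alpha,Z/K)$, apply $\sp ^*$, and then paste the various squares of type \ref{defdonneesp*}/\ref{diag1-defindonnederecolK} together, using transitivity of the isomorphisms \ref{sp*f*com} with respect to composition of morphisms of frames (stated at the end of \ref{sp*f*com-empt}), transitivity of $\tau$ (\ref{2.1.5Be2}), and the compatibility \ref{sp-eps-tau}. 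On morphisms, functoriality of $\sp ^*$ and of the pullbacks $p _{iK}^{\alpha\beta *}$ immediately shows that a morphism commuting with the $\theta _{\alpha\beta}$ is sent to one commuting with the $\eta _{\alpha\beta}$, giving the functor. The main obstacle is bookkeeping: keeping track of which shift $[-d]$ and which choice of lifting appears in each identification, and checking that all the squares assembled for the cocycle condition genuinely commute; but this is entirely parallel to the verification already carried out for $\sp _*$ in \cite[2.5.9]{caro-construction} and in Lemma \ref{lem1pre-sp+plfid}, so I would simply indicate that one copies that argument, replacing $\sp _*$ by $\sp ^*$ throughout and invoking the second half of \ref{sp*f*com} and \ref{sp-eps-tau} at each step.
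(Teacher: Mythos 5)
Your proposal is correct and follows exactly the approach the paper indicates: the paper's own proof simply cites \cite[2.5.9.i)]{caro-construction} and says the construction is parallel to that of Lemma \ref{lem1pre-sp+plfid}, which is precisely the "run \ref{lem1pre-sp+plfid} in reverse with $\sp^*$ in place of $\sp_*$" strategy you lay out. Your added discussion of well-definedness (via \ref{sp-eps-tau}) and the cocycle check via transitivity of \ref{sp*f*com} and $\tau$ is a faithful expansion of what the paper leaves implicit.
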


\begin{proof}
This is checked in \cite[2.5.9.i)]{caro-construction}. 
The construction is similar to that of \ref{lem1pre-sp+plfid}.
\end{proof}

\begin{prop}
\label{pre-sp+plfid}
The functors  $\sp _*$ and $\sp ^*$ are quasi-inverse equivalences of categories between the category
$\mathrm{MIC} ^\dag (Y,  (\X   _\alpha )_{\alpha \in \Lambda}/K)$ and 
$\mathrm{MIC} ^{\dag \dag} ( (\X   _\alpha )_{\alpha \in \Lambda},Z/K)$.
\end{prop}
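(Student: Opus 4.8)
The plan is to reduce everything to the single-frame statement of Theorem~\ref{letterBerthelotCaro2007} and then to check that the adjunction isomorphisms produced there are compatible with the glueing data defining the two categories. By Lemmas~\ref{lem1pre-sp+plfid} and~\ref{lem2pre-sp+plfid} the functors $\sp _*$ and $\sp ^*$ between $\mathrm{MIC} ^\dag (Y,  (\X   _\alpha )_{\alpha \in \Lambda}/K)$ and $\mathrm{MIC} ^{\dag \dag} ( (\X   _\alpha )_{\alpha \in \Lambda},Z/K)$ are already at hand: each is defined componentwise by applying the corresponding single-frame functor on $\X _\alpha$ and transporting the glueing isomorphisms along the base change isomorphisms~\ref{sp*f*com}. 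It therefore remains to produce a unit and a counit that are natural isomorphisms of functors.

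First I would fix an object $((E _{\alpha})_{\alpha \in \Lambda},\, (\eta _{\alpha\beta}) _{\alpha ,\beta \in \Lambda})$ of $\mathrm{MIC} ^\dag (Y,  (\X   _\alpha )_{\alpha \in \Lambda}/K)$. For each $\alpha$, Theorem~\ref{letterBerthelotCaro2007} applied to the smooth frame $(Y _\alpha, X _\alpha, \X _\alpha)$ gives a canonical isomorphism $\varepsilon _\alpha \colon \sp ^* \sp _* (E _\alpha) \riso E _\alpha$, natural in $E _\alpha$; similarly on the overlaps $\X _{\alpha\beta}$. The claim to prove is that $(\varepsilon _\alpha) _{\alpha \in \Lambda}$ is a morphism of $\mathrm{MIC} ^\dag (Y,  (\X   _\alpha )_{\alpha \in \Lambda}/K)$, i.e. it commutes with $\eta _{\alpha\beta}$ and with the glueing isomorphism carried by $\sp ^* \sp _* (E)$. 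By construction the latter is obtained by applying $\sp _*$ (via the diagram~\ref{defdonneesp*}) and then $\sp ^*$ (via the analogous diagram), so by the transitivity of the isomorphisms~\ref{sp*f*com} with respect to composition of morphisms of frames (cf. \cite[2.4.1]{caro-construction}) and the naturality of $\varepsilon$ with respect to the pullbacks $p _{1K} ^{\alpha \beta *}$ and $p _{2K} ^{\alpha \beta *}$, this reduces to a diagram chase over $\X _{\alpha\beta}$ that is pure naturality. Symmetrically, for an object $((\E _{\alpha})_{\alpha},\, (\theta _{\alpha\beta}) _{\alpha ,\beta})$ of $\mathrm{MIC} ^{\dag \dag} ( (\X   _\alpha )_{\alpha \in \Lambda},Z/K)$ one obtains a counit $\sp _* \sp ^* (\E _\alpha) \riso \E _\alpha$ componentwise, again compatible with the glueing data.

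The one point requiring genuine care — and the main obstacle — is precisely this compatibility of the componentwise adjunction isomorphisms with the glueing data, i.e. the commutativity of the large diagrams over the double overlaps $\X _{\alpha\beta}$. The key input is Proposition~\ref{sp-eps-tau}, which says that the isomorphisms~\ref{sp*f*com} intertwine the two flavours of glueing isomorphisms — the Taylor-type $\epsilon _{u,u'}$ on the $\mathrm{MIC} ^\dag$ side and the $\tau _{u,u'}$ of~\ref{prop-glueiniso-coh1} on the arithmetic $\D$-module side — together with the functoriality and transitivity of the single-frame adjunction isomorphisms of Theorem~\ref{letterBerthelotCaro2007} with respect to morphisms of frames. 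Once this chase is carried out, $\varepsilon = (\varepsilon _\alpha) _{\alpha}$ and the counit are natural isomorphisms of functors, whence $\sp _*$ and $\sp ^*$ are quasi-inverse equivalences. Since all the verifications are local and componentwise they only involve statements already proven; alternatively one may invoke the parallel argument of \cite[2.5.7]{caro-construction} and \cite[2.5.9]{caro-construction}, which treats the analogous glueing for the functors $u _0 ^!$ and $u _{0+}$.
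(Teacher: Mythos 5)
Your proposal is correct and takes essentially the same route as the paper: reduce to the single-frame adjunction of Theorem~\ref{letterBerthelotCaro2007} and check componentwise that the unit and counit commute with the glueing data on double overlaps, which is exactly what the paper's proof (via the reference to~\cite[2.5.9]{caro-construction}) records. One small precision worth noting: the commutativity you need over $\X_{\alpha\beta}$ is not directly an application of Proposition~\ref{sp-eps-tau} (that proposition controls the two \emph{glueing} isomorphisms $\epsilon_{u,u'}$, $\tau_{u,u'}$ coming from two liftings $u,u'$ of the same map, and is what makes Lemmas~\ref{lem1pre-sp+plfid}--\ref{lem2pre-sp+plfid} well-posed); the decisive input for Proposition~\ref{pre-sp+plfid} is rather the second thing you mention, namely that the unit/counit isomorphisms of Theorem~\ref{letterBerthelotCaro2007} intertwine with the base change isomorphisms~\ref{sp*f*com} under the morphisms of frames $p_1^{\alpha\beta}, p_2^{\alpha\beta}$. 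Since you state that compatibility explicitly, the argument goes through.
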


\begin{proof} 
The proof is  \cite[2.5.9]{caro-construction}. 
For any object $((E _{\alpha})_{\alpha \in \Lambda},\, (\eta _{\alpha\beta}) _{\alpha ,\beta \in \Lambda})$
of 
  $\mathrm{MIC} ^\dag (Y,  (\X   _\alpha )_{\alpha \in \Lambda}/K)$,
we have first checked that the adjunction isomorphisms
$\sp ^* \sp _* (E _\alpha) \riso E _\alpha$ commute with glueing data.
Similarly, for any object
$((\E _{\alpha})_{\alpha \in \Lambda},\, (\theta _{\alpha\beta}) _{\alpha ,\beta \in \Lambda})$
of
$\mathrm{MIC} ^{\dag \dag} ( (\X   _\alpha )_{\alpha \in \Lambda},Z/K)$
the adjunction isomorphisms
$ \E _\alpha \riso \sp _*  \sp ^*(\E _\alpha)$ commute with glueing data.
\end{proof}

\begin{ntn}
\label{ntnMICdag2fs2}
We denote by $\mathrm{MIC} ^{\dag \dag} (X, \fP,T/K) $ the full subcategory of 
$\mathrm{Coh} (X, \fP,T /K)$ whose objects $\E$ satisfy the following condition:
for any affine open formal subscheme  
 $\fP ^{\prime }$ of $\fP $, for any morphism of formal schemes
  $v$ : $ \Y ^{\prime } \hookrightarrow \fP ^{\prime } $ which reduces modulo $\pi$ to the closed imbedding 
  $Y ^{ } \cap P ^{\prime } \hookrightarrow P ^{\prime }$,
the sheaf $v ^! (\E |_{\fP^{\prime }}) $ is $\O _{\Y',\,\Q}$-coherent.
When $T$ is empty, we remove it in the notation.
Finally, according to notation \ref{ntnMICdag2fs}, 
when $X= P$, we remove $X$ in the notation.

\end{ntn}

\begin{empt}
The functors $u ^!  _0$ and $ u _{0+}$ constructed in respectively \ref{const-u0!} and \ref{const-u0+} 
induce quasi-inverse equivalence of categories between 
$\mathrm{MIC} ^{\dag \dag} (X, \fP,T/K)$
and
$\mathrm{MIC} ^{\dag \dag} ( (\X   _\alpha )_{\alpha \in \Lambda},Z/K)$, i.e., 
we have the commutative diagram
\begin{equation}
\label{eqcat-u0+!}
\xymatrix{
{\mathrm{MIC} ^{\dag \dag} (X, \fP,T/K) } 
\ar@{^{(}->}[r] ^-{}
\ar@{.>}@<4ex>[d] ^-{\cong} _-{u _{0} ^!} 
& 
{\mathrm{Coh} (X, \fP,T /K)} 
\ar@{.>}@<4ex>[d] ^-{\cong} _-{u _{0} ^!} 
\\ 
{\mathrm{MIC} ^{\dag \dag} ( (\X   _\alpha )_{\alpha \in \Lambda},Z/K)}
\ar@{^{(}->}[r] ^-{}
\ar@{.>}@<4ex>[u] ^-{\cong} _-{u _{0+}} 
& {\mathrm{Coh} ((\X   _\alpha )_{\alpha \in \Lambda},Z/K).} 
\ar@{.>}@<4ex>[u] ^-{\cong} _-{u _{0+}} 
}
\end{equation}

\end{empt}

\begin{ntn}
\label{ntn-dfnsp+}
We get the canonical equivalence of categories 
$\sp _+ \colon 
\mathrm{Isoc} ^{\dag} (Y, X,\fP/K)
\cong
\mathrm{MIC} ^{\dag \dag} (X, \fP,T/K)$
by composition of the equivalences
$\mathrm{Isoc} ^{\dag} (Y, X,\fP/K)
\cong
\mathrm{MIC} ^{\dag} (Y, X,\fP/K)
\underset{u _{0K} ^*}{\riso}
\mathrm{MIC} ^\dag (Y,  (\X   _\alpha )_{\alpha \in \Lambda}/K)
\underset{\sp _*}{\riso}
\mathrm{MIC} ^{\dag \dag} ( (\X   _\alpha )_{\alpha \in \Lambda},Z/K)
\underset{u _{0+}}{\riso}
\mathrm{MIC} ^{\dag \dag} (X, \fP,T/K).$

\end{ntn}

\section{Exterior tensor products}

\subsection{Exterior tensor products on schemes}
Let 
$S$ be a noetherian $\Z _{(p)}$-scheme of finite Krull dimension.
Since the base scheme $S$ is fixed, so we can remove it in the notation.
If $\phi \colon X\to S$ is a morphism, 
by abuse of notation, we sometimes denote $\phi ^{-1} \O _S$ simply by
$\O _S$. Moreover, $S$-schemes will be supposed to be quasi-compact and separated.

For $i= 1,\dots, n$, 
let $X _i $ be a smooth $S$-scheme.
Set $X := X _1 \times _S X _2 \times _S \dots \times _S X _n$.
For $i= 1,\dots, n$, let $pr _i  \colon X \to X _i$,
 be the projections. 
 We denote by 
 $\varpi \colon X \to S$ and by $\varpi _i \colon X _i \to S$ the structural morphisms.

\begin{empt}

\begin{enumerate}
\item For $i= 1,\dots, n$, let 
$\E _i $ be a sheaf of $\varpi _i ^{-1}\O _{S}$-module.
We get the $\varpi ^{-1}\O _{S}$-module by setting
$$\underset{i}{\boxtimes} ^{\mathrm{top}}  \E _i
:=
pr _1 ^{-1} \E _1 
\otimes _{\O _S}
pr _2 ^{-1} \E _2
\otimes _{\O _S}
\cdots
\otimes _{\O _S}
pr _n ^{-1} \E _n.$$

\item For $i= 1,\dots, n$, let 
$\E _i $ be an $\O _{X _i}$-module.
The sheaf 
$\underset{i}{\boxtimes} ^{\mathrm{top}}  \E _i$
has a canonical structure of 
$\underset{i}{\boxtimes} ^{\mathrm{top}} 
\O _{X _i}$-module.
We put
$\underset{i}{\boxtimes}  \E _i
:=
\O _{X}
\otimes _{\underset{i}{\boxtimes} ^{\mathrm{top}}  \O _{X _i}}
\underset{i}{\boxtimes} ^{\mathrm{top}}  \E _i$. 
Moreover, by commutativity and associativity of tensor products,
we get the canonical isomorphism of 
$\underset{i}{\boxtimes} ^{\mathrm{top}} 
\O _{X _i}$-modules
\begin{equation}
\label{boxtimestop-dfn2}
\underset{i}{\boxtimes} ^{\mathrm{top}}  \E _i
\riso 
\left ( pr _1 ^{-1} \E _1 
\otimes _{pr _1 ^{-1} \O _{X _1}}
\underset{i}{\boxtimes} ^{\mathrm{top}}  \O _{X _i} \right)
\otimes _{\underset{i}{\boxtimes} ^{\mathrm{top}}  \O _{X _i} }
\cdots
\otimes _{\underset{i}{\boxtimes} ^{\mathrm{top}}  \O _{X _i} }
\left ( pr _n ^{-1} \E _n 
\otimes _{pr _n ^{-1} \O _{X _n}}
\underset{i}{\boxtimes} ^{\mathrm{top}}  \O _{X _i} \right).
\end{equation}
 Using the isomorphism \ref{boxtimestop-dfn2}, 
we get the isomorphism of $\O _X$-modules
\begin{equation}
\label{boxtimes-dfn2}
\underset{i}{\boxtimes}  \E _i 
\riso 
pr _1 ^{*} \E _1 
\otimes _{\O _{X} }
\cdots
\otimes _{\O _{X} }
 pr _n ^{*} \E _n .
\end{equation}

Since $pr _i ^{-1} \D ^{(m)} _{X _i}$ are $\O _S$-algebras,
we get a canonical structure of $\O _S$-algebra on
$\underset{i}{\boxtimes} ^{\mathrm{top}} 
\D ^{(m)} _{X _i}$.

\item For $i= 1,\dots, n$, 
let $\cF _i$ be a left  $\D ^{(m)} _{X _i}$-module 
(resp. $\cG _i$ be a right  $\D ^{(m)} _{X _i}$-module).
Then 
$\underset{i}{\boxtimes} ^{\mathrm{top}}  \cF _i$
(resp. $\underset{i}{\boxtimes} ^{\mathrm{top}}  \cG _i$)
has a canonical structure of left (resp. right)
$\underset{i}{\boxtimes} ^{\mathrm{top}} 
\D ^{(m)} _{X _i}$-module.
The canonical homomorphism 
of $\O _S$-algebras
$\underset{i}{\boxtimes} ^{\mathrm{top}} 
\D ^{(m)} _{X _i}
\to \D ^{(m)} _{X }$ induces the canonical isomorphism 
of $\O _X$-modules
$\underset{i}{\boxtimes} 
\D ^{(m)} _{X _i}
\riso \D ^{(m)} _{X }$.
This yields the isomorphism of $\O _{X }$-modules
$\underset{i}{\boxtimes}  \cF _i 
\riso 
\D ^{(m)} _{X }
\otimes _{\underset{i}{\boxtimes} ^{\mathrm{top}}  \D ^{(m)}  _{X _i}}
\underset{i}{\boxtimes}  ^{\mathrm{top}} \cF _i $
(resp. 
$\underset{i}{\boxtimes}  \cG _i 
\riso 
\underset{i}{\boxtimes}  ^{\mathrm{top}} \cF _i 
\otimes _{\underset{i}{\boxtimes} ^{\mathrm{top}}  \D ^{(m)}  _{X _i}}
\D ^{(m)} _{X }$).
Via this isomorphism, we endowed 
$\underset{i}{\boxtimes}  \cF _i $
(resp. $\underset{i}{\boxtimes}  \cG _i $)
with a structure of left (resp. right) $\D ^{(m)} _{X }$-module.

\item For $i= 1,\dots, n$, 
let $\cF _i$ be a left  $\D ^{(m)} _{X _i}$-module.
Then 
$pr _1 ^{*} \cF _1 
\otimes _{\O _{X} }
\cdots
\otimes _{\O _{X} }
pr _n ^{*} \cF _n$
has a canonical structure of left $\D ^{(m)} _{X}$-module (see \cite[2.3.3]{Be1}).  
We check that the isomorphism 
\ref{boxtimes-dfn2} is in fact an isomorphism of 
left $\D ^{(m)} _{X}$-modules.

\end{enumerate}
\end{empt}

\begin{empt}
\label{rem-ext-prod}
\begin{enumerate}
\item When $S$ is the spectrum of a field, 
the multi-functor $\underset{i}{\boxtimes} ^{\mathrm{top}}  $ is exact. 
Since the extensions
$\underset{i}{\boxtimes} ^{\mathrm{top}}  \O _{X _i}
\to \O _{X}$ 
and 
$\underset{i}{\boxtimes} ^{\mathrm{top}}  \D ^{(m)}  _{X _i}
\to 
\D ^{(m)} _{X }$ are right and left flat, 
this yields that 
the multi-functor $\underset{i}{\boxtimes}  $ is also exact.

\item When $S$ is not the spectrum of a field, 
the multi-functor $\underset{i}{\boxtimes} ^{\mathrm{top}}  $ is not necessarily exact. 
We get the multi-functor 
$\underset{i}{\overset{\L}{\boxtimes}} {}^{\mathrm{top}}  
\colon 
D ^{-} (\varpi _1 ^{-1}\O _{S})
\times \cdots \times  
D ^{-} (\varpi _n ^{-1}\O _{S})
\to 
D ^{-} (\varpi ^{-1}\O _{S})$ by setting 
for any 
$\E _i \in D ^{-} (\varpi _i ^{-1}\O _{S})$
$$\underset{i}{\overset{\L}{\boxtimes}} {}^{\mathrm{top}}   \E _i
:=
pr _1 ^{-1} \E _1 
\otimes _{\O _S} ^{\L}
pr _2 ^{-1} \E _2
\otimes _{\O _S} ^{\L}
\cdots
\otimes _{\O _S} ^{\L}
pr _n ^{-1} \E _n.$$

\item We have the multi-functor 
$\underset{i}{\overset{\L}{\boxtimes}}
\colon 
D ^{-} (\O _{X _1})
\times \cdots \times  
D ^{-} (\O _{X _n})
\to 
D ^{-} (\O _X)$ 
by setting for any 
$\E _i \in D ^{-} (\O _{X _i})$
\begin{equation}
\label{boxtimes-dfn2L}
\underset{i}{\overset{\L}{\boxtimes}}
 \E _i
:=
\O _{X}
\otimes _{\underset{i}{\boxtimes} ^{\mathrm{top}}  \O _{X _i}}
\underset{i}{\overset{\L}{\boxtimes}} {}^{\mathrm{top}} 
\cE _i
\riso 
pr _1 ^{*} \E _1 
\otimes _{\O _{X} } ^{\L}
\cdots
\otimes _{\O _{X} } ^{\L}
 pr _n ^{*} \E _n ,
\end{equation}
where the last isomorphism is, after using flat resolutions,
a consequence of \ref{boxtimes-dfn2}.

\item For any $i = 1,\dots, n$, let 
$\cF _i \in D ^{-} ({}^{l} \D ^{(m)} _{X _i})$,
$\cM _i \in D ^{-} ({}^{r} \D ^{(m)} _{X _i})$.
Since 
we have the canonical isomorphisms 
$\underset{i}{\overset{\L}{\boxtimes}}
\D ^{(m)} _{X _i}
\riso 
\underset{i}{\boxtimes} 
\D ^{(m)} _{X _i}
\riso \D ^{(m)} _{X }$,
then the canonical morphisms
\begin{gather}
\notag
\O _{X}
\otimes _{\underset{i}{\boxtimes} ^{\mathrm{top}}  \O _{X _i}}
\underset{i}{\overset{\L}{\boxtimes}} {}^{\mathrm{top}} 
\cF _i
\to 
\D ^{(m)} _{X }
\otimes _{\underset{i}{\boxtimes} ^{\mathrm{top}}  \D ^{(m)}  _{X _i}}
\underset{i}{\overset{\L}{\boxtimes}} {}^{\mathrm{top}} 
\cF _i
\ \ \
\text{ and }
\\
\notag
\O _{X}
\otimes _{\underset{i}{\boxtimes} ^{\mathrm{top}}  \O _{X _i}}
\underset{i}{\overset{\L}{\boxtimes}} {}^{\mathrm{top}} 
\cM _i
\riso 
\underset{i}{\overset{\L}{\boxtimes}} {}^{\mathrm{top}} 
\cM _i
\otimes _{\underset{i}{\boxtimes} ^{\mathrm{top}}  \O _{X _i}}
\O _{X}
\to 
\underset{i}{\overset{\L}{\boxtimes}} {}^{\mathrm{top}} 
\cM _i
\otimes _{\underset{i}{\boxtimes} ^{\mathrm{top}}  \D ^{(m)}  _{X _i}}
\D ^{(m)} _{X }
\end{gather}
are isomorphisms. 
Hence, there is no problem (up to canonical isomorphim) with respect to \ref{boxtimes-dfn2L}  to set 
$\underset{i}{\overset{\L}{\boxtimes}}
 \cF _i
:=
\D ^{(m)} _{X }
\otimes _{\underset{i}{\boxtimes} ^{\mathrm{top}}  \D ^{(m)}  _{X _i}}
\underset{i}{\overset{\L}{\boxtimes}} {}^{\mathrm{top}} 
\cF _i$
and
$\underset{i}{\overset{\L}{\boxtimes}}
 \cM _i
:=
\underset{i}{\overset{\L}{\boxtimes}} {}^{\mathrm{top}} 
\cM _i
\otimes _{\underset{i}{\boxtimes} ^{\mathrm{top}}  \D ^{(m)}  _{X _i}}
\D ^{(m)} _{X }$.
For $ * \in \{Â l,r\}$, we get the multi-functor
$$\underset{i}{\overset{\L}{\boxtimes}}
\colon 
D ^{-} ({} ^* \D ^{(m)} _{X _1})
\times \cdots \times  
D ^{-} ({} ^* \D ^{(m)} _{X _n})
\to 
D ^{-} ({} ^* \D ^{(m)} _{X }).$$ 

\item If we would like to clarify the basis $S$, 
we will add it in the notation. For instance,
we write 
$\underset{S ,i}{\overset{\L}{\boxtimes}} {}^{\mathrm{top}} $
and 
$\underset{S,i}{\overset{\L}{\boxtimes}}$
(or $\underset{\O _S ,i}{\overset{\L}{\boxtimes}} {}^{\mathrm{top}} $
and 
$\underset{\O _S,i}{\overset{\L}{\boxtimes}}$)
instead of
$\underset{i}{\overset{\L}{\boxtimes}} {}^{\mathrm{top}} $
and 
$\underset{i}{\overset{\L}{\boxtimes}}$.

\end{enumerate}

\end{empt}

\begin{lem}
\label{lem-otimes-assoc-comm-box}
Let $\cR$ be a commutative sheaf on $X$.
For $i = 1,\dots, n$, 
let 
$\cA _i$, 
$\cB _i$ be $\cR$-algebras, 
$\widetilde{\cM} _i$ be a $(\cA _i,\cB _i)$-bimodule,
$\widetilde{\cE} _i$ be a left $\cB _i$-module.
Set 
$\underset{i}{\otimes} \cA _i:= \cA _1 \otimes _{\cR} \cA _2 \otimes _{\cR}\cdots \otimes _{\cR} \cA _n$, 
and similarly 
by replacing $\cA _i$ by 
$\cB _i$,
$\widetilde{\cM} _i$ or 
$\widetilde{\cE} _i$.
We have the canonical isomorphism of 
left $\underset{i}{\otimes} \cA _i$-modules of  the form
\begin{equation}
\notag
\otimes _i (\widetilde{\cM} _i \otimes _{\cB _i} \widetilde{\cE} _i)
\riso 
\otimes _i (\widetilde{\cM} _i ) 
\otimes _{\otimes _i (\cB _i)} 
\otimes _i (\widetilde{\cE} _i).
\end{equation}
\end{lem}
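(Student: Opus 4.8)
The statement is a multi-variable tensor–hom associativity that reduces, as usual, to the two-factor case by induction on $n$, so the plan is to first dispose of the case $n=2$ and then iterate. For $n=2$ one must produce a canonical isomorphism
\[
(\widetilde{\cM} _1 \otimes _{\cB _1} \widetilde{\cE} _1)
\otimes _{\cR}
(\widetilde{\cM} _2 \otimes _{\cB _2} \widetilde{\cE} _2)
\riso
(\widetilde{\cM} _1 \otimes _{\cR} \widetilde{\cM} _2)
\otimes _{\cB _1 \otimes _{\cR} \cB _2}
(\widetilde{\cE} _1 \otimes _{\cR} \widetilde{\cE} _2),
\]
and this is the familiar interchange isomorphism for tensor products over a commutative base. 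First I would construct the morphism from left to right: on sections, $(m _1 \otimes e _1)\otimes (m _2 \otimes e _2) \mapsto (m _1 \otimes m _2)\otimes (e _1 \otimes e _2)$; one checks it is well defined by verifying compatibility with the $\cB _1$- and $\cB _2$-relations separately, using that $\cR$ is central so that the two factors genuinely commute past each other. Then I would construct the inverse morphism the same way, $(m _1 \otimes m _2)\otimes (e _1 \otimes e _2) \mapsto (m _1 \otimes e _1)\otimes (m _2 \otimes e _2)$, checking well-definedness against the $\cB _1 \otimes _{\cR}\cB _2$-relations (which, by the definition of the tensor product of algebras, are generated by the images of the $\cB _1$- and $\cB _2$-relations). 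Finally, since both composites are the identity on generators and all maps are $\cR$-bilinear sheaf morphisms, they are mutually inverse; left $\cA _1$- and left $\cA _2$-linearity (hence left $\cA _1 \otimes _{\cR}\cA _2$-linearity) is immediate from the formulas since the $\cA _i$-actions are on the $\widetilde{\cM} _i$ factors and commute with the reshuffling.

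For the inductive step, assume the isomorphism for $n-1$ factors. I would write $\otimes _i \cA _i = (\otimes _{i<n}\cA _i)\otimes _{\cR}\cA _n$ and similarly for $\cB$, $\widetilde{\cM}$, $\widetilde{\cE}$; applying the two-factor case with the first ``variable'' being the $(n-1)$-fold tensor product and the second being the $n$-th factor, one gets
\[
\otimes _i (\widetilde{\cM} _i \otimes _{\cB _i}\widetilde{\cE} _i)
\riso
\bigl(\otimes _{i<n}(\widetilde{\cM} _i \otimes _{\cB _i}\widetilde{\cE} _i)\bigr)
\otimes _{\cR}
(\widetilde{\cM} _n \otimes _{\cB _n}\widetilde{\cE} _n),
\]
then applies the induction hypothesis to the first tensor factor, and finally reassociates $(\otimes _{i<n}\widetilde{\cM} _i)\otimes _{\cR}\widetilde{\cM} _n = \otimes _i \widetilde{\cM} _i$ and likewise for the $\cB$'s and $\cE$'s. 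All of this is bookkeeping with standard associativity and commutativity isomorphisms of tensor products of (sheaves of) modules over a commutative base ring, so it goes through without incident.

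I do not expect any genuine obstacle here; the lemma is a formal statement whose proof is a diagram chase. The only point requiring a little care is checking that the naive section-level maps are well defined, i.e. descend through all the relevant tensor relations — in particular that the relations defining $\cB _1 \otimes _{\cR}\cB _2$ are exactly those coming from $\cB _1$ and from $\cB _2$, which is where centrality of $\cR$ is used. Given how elementary this is, I would simply record the construction of the two mutually inverse morphisms in the $n=2$ case, note the induction, and leave the verification of well-definedness and the coherence of the associativity constraints to the reader; alternatively one may cite the analogous statement in \cite{caro-stab-prod-tens} since the argument is identical once the base is allowed to be a general commutative sheaf rather than a field.
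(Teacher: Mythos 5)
Your argument is correct. The paper itself supplies no proof — its entry reads only ``This is an exercise'' — and the argument you give is the standard one the author evidently has in mind: establish the two-factor interchange isomorphism $(\widetilde{\cM}_1 \otimes_{\cB_1}\widetilde{\cE}_1)\otimes_{\cR}(\widetilde{\cM}_2\otimes_{\cB_2}\widetilde{\cE}_2)\riso(\widetilde{\cM}_1\otimes_{\cR}\widetilde{\cM}_2)\otimes_{\cB_1\otimes_{\cR}\cB_2}(\widetilde{\cE}_1\otimes_{\cR}\widetilde{\cE}_2)$ by the obvious section-level reshuffling (well-definedness following from centrality of $\cR$ and from the fact that the $\cB_1\otimes_{\cR}\cB_2$-balancing relations are generated by the $\cB_1$- and $\cB_2$-relations), observe left $\cA_1\otimes_{\cR}\cA_2$-linearity, and then iterate by induction using the associativity of $\otimes_{\cR}$. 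Nothing is missing, and your remark about where centrality is used is exactly the one point of substance in the verification.
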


\begin{proof}
This is an exercise. 
\end{proof}

\begin{lem}
\label{com-botimestop}
For $i= 1,\dots, n$, 
let $\D _i$ be a sheaf of $\varpi _i ^{-1} \O _S$-algebras,
$\cM _i\in D ^{-} (\D _i, \O _{X _i}) $,
$\E _i \in D ^- (\O _{X _i})$,
$\cN _i \in D ^{-} (\D _i,  \D ^{(m)} _{X _i})$,
$\cF _i \in D ^-(\D ^{(m)}  _{X _i})$.
\begin{enumerate}
\item We have the canonical isomorphism of 
$D ^- (\underset{i}{\boxtimes} ^{\mathrm{top}} \D _i, 
\underset{i}{\boxtimes} ^{\mathrm{top}}  \O _{X _i} )$
\begin{equation}
\label{com-botimestop-iso1}
\underset{i}{\overset{\L}{\boxtimes}} {}^{\mathrm{top}}   
(\cM _i \otimes ^{\L} _{\O _{X _i}} \E _i)
\riso 
\underset{i}{\overset{\L}{\boxtimes}} {}^{\mathrm{top}}   
\cM _i \otimes ^{\L} _{\underset{i}{\boxtimes} ^{\mathrm{top}}  \O _{X _i}} 
\underset{i}{\overset{\L}{\boxtimes}} {}^{\mathrm{top}}   \E _i.
\end{equation}

\item We have the canonical isomorphism of $\underset{i}{\boxtimes} ^{\mathrm{top}} \D _i$-modules
\begin{equation}
\label{com-botimestop-iso2}
\underset{i}{\overset{\L}{\boxtimes}} {}^{\mathrm{top}}   
(\cN _i \otimes ^{\L} _{\D ^{(m)}  _{X _i}} \cF _i)
\riso 
\underset{i}{\overset{\L}{\boxtimes}} {}^{\mathrm{top}}   
\cN _i \otimes ^{\L} _{\underset{i}{\boxtimes} ^{\mathrm{top}}  \D ^{(m)}  _{X _i}} \underset{i}{\overset{\L}{\boxtimes}} {}^{\mathrm{top}}   \cF _i.
\end{equation}

\end{enumerate}
\end{lem}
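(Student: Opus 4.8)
The plan is to reduce both isomorphisms to the purely algebraic content of Lemma \ref{lem-otimes-assoc-comm-box}, applied over the commutative base sheaf $\cR = \varpi ^{-1}\O _S$ after pulling everything back to $X$ via the projections. First I would treat statement (2), since (1) will then follow by taking $\D ^{(m)} _{X _i}$ there to be $\O _{X _i}$ (or, more precisely, by the same argument with $\O$-modules in place of $\D ^{(m)}$-modules). The key observation is that for each $i$ the sheaves $pr _i ^{-1} \D _i$, $pr _i ^{-1} \D ^{(m)} _{X _i}$ are $\varpi ^{-1}\O _S$-algebras on $X$, and that by definition $\underset{i}{\boxtimes} ^{\mathrm{top}} (-)$ is nothing but the iterated $\otimes _{\O _S}$ of the $pr _i ^{-1}(-)$, i.e. the $\otimes _i(-)$ of Lemma \ref{lem-otimes-assoc-comm-box} over $\cR = \varpi ^{-1}\O _S$ after applying $pr _i ^{-1}$.

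The steps I would carry out are: (a) replace each $\cN _i$ by a complex of flat $\D ^{(m)} _{X _i}$-modules and each $\cF _i$ by an arbitrary complex (a left resolution of one factor suffices to compute $\otimes ^{\L}$, using that $pr _i ^{-1}$ is exact so flat modules stay flat after pullback and after the external tensor); this lets us drop all the $\L$'s and work with honest modules; (b) apply $pr _i ^{-1}$ termwise to the isomorphism $\cN _i \otimes _{\D ^{(m)} _{X _i}} \cF _i$ and note $pr _i ^{-1}$ commutes with $\otimes$; (c) invoke Lemma \ref{lem-otimes-assoc-comm-box} with $\cR = \varpi ^{-1}\O _S$, $\cA _i := pr _i ^{-1}\D _i$, $\cB _i := pr _i ^{-1}\D ^{(m)} _{X _i}$, $\widetilde{\cM} _i := pr _i ^{-1}\cN _i$, $\widetilde{\cE} _i := pr _i ^{-1}\cF _i$, to obtain
\[
\otimes _i \big(pr _i ^{-1}\cN _i \otimes _{pr _i ^{-1}\D ^{(m)} _{X _i}} pr _i ^{-1}\cF _i\big)
\riso
\big(\otimes _i pr _i ^{-1}\cN _i\big)
\otimes _{\otimes _i pr _i ^{-1}\D ^{(m)} _{X _i}}
\big(\otimes _i pr _i ^{-1}\cF _i\big);
\]
(d) rewrite both sides using the identifications $\otimes _i pr _i ^{-1}(-) = \underset{i}{\boxtimes} ^{\mathrm{top}}(-)$ and $\underset{i}{\overset{\L}{\boxtimes}} {}^{\mathrm{top}}(-) = \otimes _i ^{\L} pr _i ^{-1}(-)$ to recognize the claimed isomorphism \ref{com-botimestop-iso2}; (e) check that the resulting map is independent of the chosen flat resolutions, hence descends to the derived categories, and that it is compatible with the bimodule structures so that it lives in $D ^-(\underset{i}{\boxtimes} ^{\mathrm{top}} \D _i)$. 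For part (1) I would run the same argument with $\cB _i = \O _{X _i}$, $\cA _i = \D _i$, which gives \ref{com-botimestop-iso1} in $D ^-(\underset{i}{\boxtimes} ^{\mathrm{top}} \D _i, \underset{i}{\boxtimes} ^{\mathrm{top}} \O _{X _i})$.

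The main obstacle is the bookkeeping of the various module structures and the verification that the natural comparison morphism (built before resolving) is genuinely a quasi-isomorphism: this requires that, after pulling back by the $pr _i$, a flat module remains flat and that the external tensor product of flat modules over the algebras $pr _i ^{-1}\D ^{(m)} _{X _i}$ is acyclic for the functor $\otimes _{\underset{i}{\boxtimes} ^{\mathrm{top}} \D ^{(m)} _{X _i}}(-)$. This is where one uses that $pr _i$ is flat (smoothness) and that $\O _S$ is noetherian so that the relevant $\mathrm{Tor}$'s over $\O _S$ behave well; once this flatness propagation is in hand, the isomorphism is just Lemma \ref{lem-otimes-assoc-comm-box} applied termwise. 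I would state this acyclicity as a short sublemma and then the rest is formal. Since the statement is flagged as an exercise-level lemma, I would keep the write-up brief, pointing to Lemma \ref{lem-otimes-assoc-comm-box} and to the exactness of $pr _i ^{-1}$ as the only real inputs.
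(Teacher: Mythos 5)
Your proposal is correct and follows essentially the same route as the paper's own proof: reduce to the underived statement by replacing the objects with flat resolutions, then apply Lemma \ref{lem-otimes-assoc-comm-box} with $\cR=\varpi^{-1}\O_S$, $\cA_i=pr_i^{-1}\D_i$, $\cB_i=pr_i^{-1}\O_{X_i}$ (resp.\ $pr_i^{-1}\D^{(m)}_{X_i}$), $\widetilde{\cM}_i=pr_i^{-1}\cM_i$ (resp.\ $pr_i^{-1}\cN_i$), $\widetilde{\cE}_i=pr_i^{-1}\E_i$ (resp.\ $pr_i^{-1}\cF_i$). The paper's proof is exactly this two-line reduction; the extra acyclicity/flatness checks you describe in step (e) are correct points to be aware of (the key input being that $\O_{X_i}$, hence $\D^{(m)}_{X_i}$, is $\O_S$-flat by smoothness, so flat resolutions in one variable are simultaneously $\O_S$-flat), though they are left implicit in the paper, and the appeal to noetherianity of $\O_S$ is not actually needed once that flatness is in hand.
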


\begin{proof}
By using flat resolutions, 
we can remove $\L$. 
Then, this is a consequence of Lemma \ref{lem-otimes-assoc-comm-box}
in the case where $\cR = \cO _S$,
$\cA _i =pr _i ^{-1} \cD _i$, 
$\cB _i =\cO _{X _i}$
(resp. $\cB _i =pr _i ^{-1} \cD ^{(m)}_{X _i}$),
$\widetilde{\cM} _i= pr _i ^{-1}\cM _i$,
$\widetilde{\cE} _i= pr _i ^{-1} \cE _i$.
\end{proof}

\begin{lemm}
\label{com-botimes}
For $i= 1,\dots, n$, 
let $\D _i$ be a sheaf of $\varpi _i ^{-1} \O _S$-algebras.

\begin{enumerate}[(i)]
\item 
\label{com-botimes1} 
For $i= 1,\dots, n$, 
for $ * \in \{Â l,r\}$, let 
$\cM _i\in D ^{-} ({}^*\D _i, \O _{X _i}) $,
$\E _i \in D ^- (\O _{X _i})$.
We have the canonical isomorphism of the form
$\underset{i}{\overset{\L}{\boxtimes}} 
(\cM _i \otimes ^{\L} _{\O _{X _i}} \E _i)
\riso
\underset{i}{\overset{\L}{\boxtimes}}  
\cM _i \otimes ^{\L} _{\O _X} \underset{i}{\overset{\L}{\boxtimes}}   \E _i$ 
of 
$D ^- ({}^*\underset{i}{\boxtimes} ^{\mathrm{top}}\D _i, \O _{X} )$.
Moreover, this isomorphism is compatible with that of \ref{com-botimestop-iso1}, i.e. 
the following diagram 
of $D ^- ({}^*\underset{i}{\boxtimes} ^{\mathrm{top}} \D _i, 
\underset{i}{\boxtimes} ^{\mathrm{top}} \O _{X _i})$
\begin{equation}
\label{com-botimes-diag2}
\xymatrix {
{\underset{i}{\overset{\L}{\boxtimes}} {}^{\mathrm{top}}   
(\cM _i \otimes ^{\L} _{\O _{X _i}} \E _i)}
\ar[r] ^-{\sim}  _-{\ref{com-botimestop-iso1}}
\ar[d] ^-{}
&
{\underset{i}{\overset{\L}{\boxtimes}} {}^{\mathrm{top}}   
\cM _i \otimes ^{\L} _{\underset{i}{\boxtimes} ^{\mathrm{top}}  \O _{X _i}} 
\underset{i}{\overset{\L}{\boxtimes}} {}^{\mathrm{top}}   \E _i} 
\ar[d] 
\\ 
{\underset{i}{\overset{\L}{\boxtimes}} 
(\cM _i \otimes ^{\L} _{\O _{X _i}} \E _i)} 
\ar[r] ^-{\sim}  
&
{\underset{i}{\overset{\L}{\boxtimes}}  
\cM _i \otimes ^{\L} _{\O _X} \underset{i}{\overset{\L}{\boxtimes}}   \E _i}
}
\end{equation}
is commutative.
\item 
\label{com-botimes2} 
For $i= 1,\dots, n$, 
for $ * \in \{Â l,r\}$, let
$\cM _i \in D ^{-} ({}^*  \D _i,  {}^l \D ^{(m)} _{X _i})$,
$\cE _i \in D ^-({}^l \D ^{(m)}  _{X _i})$.
Then, 
the isomorphism
$\underset{i}{\overset{\L}{\boxtimes}} 
(\cM _i \otimes ^{\L} _{\O _{X _i}} \cE _i)
\riso
\underset{i}{\overset{\L}{\boxtimes}}  
\cM _i \otimes ^{\L} _{\O _X} \underset{i}{\overset{\L}{\boxtimes}}   \cE _i$ 
constructed in  \ref{com-botimes}.(\ref{com-botimes1})
is in fact an isomorphism
of 
$D ^- ({}^*  \underset{i}{\boxtimes} ^{\mathrm{top}} \D _i,{}^l \D ^{(m)}  _{X} )$.

\end{enumerate}

\end{lemm}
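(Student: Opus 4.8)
The plan is to prove Lemma \ref{com-botimes} by reducing everything to the associativity/commutativity manipulations already packaged in Lemma \ref{com-botimestop} and Lemma \ref{lem-otimes-assoc-comm-box}, and to the description of $\underset{i}{\overset{\L}{\boxtimes}}$ through $\underset{i}{\overset{\L}{\boxtimes}} {}^{\mathrm{top}}$ via extension of scalars along $\underset{i}{\boxtimes} ^{\mathrm{top}} \O _{X _i} \to \O _X$ (resp.\ $\underset{i}{\boxtimes} ^{\mathrm{top}} \D ^{(m)} _{X _i} \to \D ^{(m)} _X$) from \ref{rem-ext-prod}.

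\textbf{Proof of \ref{com-botimes}.(\ref{com-botimes1}).} By taking flat resolutions (of the $\cM _i$ as $\O _{X _i}$-modules, and of the $\E _i$), I may drop all the symbols $\L$ and work with honest tensor products of complexes; the construction of the isomorphism is termwise and then one passes to the total complex. So assume the $\cM _i$, $\E _i$ are single $\O _{X _i}$-flat modules. First I apply $\O _X \otimes _{\underset{i}{\boxtimes} ^{\mathrm{top}} \O _{X _i}} -$ to the isomorphism \ref{com-botimestop-iso1}; by definition \ref{boxtimes-dfn2L} the left-hand side becomes $\underset{i}{\boxtimes} (\cM _i \otimes _{\O _{X _i}} \E _i)$, and the right-hand side becomes
$$\O _X \otimes _{\underset{i}{\boxtimes} ^{\mathrm{top}} \O _{X _i}} \left ( \underset{i}{\boxtimes} {}^{\mathrm{top}} \cM _i \otimes _{\underset{i}{\boxtimes} ^{\mathrm{top}} \O _{X _i}} \underset{i}{\boxtimes} {}^{\mathrm{top}} \E _i \right ).$$
Now I rewrite this last module: using that $\O _X$ is an $\underset{i}{\boxtimes} ^{\mathrm{top}} \O _{X _i}$-algebra, the canonical map gives
$$\O _X \otimes _{\underset{i}{\boxtimes} ^{\mathrm{top}} \O _{X _i}} \left ( \underset{i}{\boxtimes} {}^{\mathrm{top}} \cM _i \otimes _{\underset{i}{\boxtimes} ^{\mathrm{top}} \O _{X _i}} \underset{i}{\boxtimes} {}^{\mathrm{top}} \E _i \right ) \riso \left ( \O _X \otimes _{\underset{i}{\boxtimes} ^{\mathrm{top}} \O _{X _i}} \underset{i}{\boxtimes} {}^{\mathrm{top}} \cM _i \right ) \otimes _{\O _X} \left ( \O _X \otimes _{\underset{i}{\boxtimes} ^{\mathrm{top}} \O _{X _i}} \underset{i}{\boxtimes} {}^{\mathrm{top}} \E _i \right )$$
which is exactly $\underset{i}{\boxtimes} \cM _i \otimes _{\O _X} \underset{i}{\boxtimes} \E _i$ by \ref{boxtimes-dfn2L}. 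Composing, I obtain the desired isomorphism, and by construction it fits into the square \ref{com-botimes-diag2} (the vertical arrows are precisely the natural maps $\underset{i}{\boxtimes} {}^{\mathrm{top}} (-) \to \O _X \otimes _{\underset{i}{\boxtimes} ^{\mathrm{top}} \O _{X _i}} \underset{i}{\boxtimes} {}^{\mathrm{top}} (-)$, so commutativity of the square is the functoriality of the extension-of-scalars isomorphism used above). The $\underset{i}{\boxtimes} ^{\mathrm{top}}\D _i$-linearity (from the left or right according to $*$) is automatic since all the maps above are $\underset{i}{\boxtimes} ^{\mathrm{top}}\D _i$-linear: each $pr _i ^{-1}\D _i$ acts on the $i$-th factor and commutes with the $\O _S$-tensor structure.

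\textbf{Proof of \ref{com-botimes}.(\ref{com-botimes2}).} Here the $\cM _i$ and $\cE _i$ carry in addition a left $\D ^{(m)} _{X _i}$-module structure (and $\cM _i$ an extra $\D _i$-structure on the other side), and one wants the isomorphism of part (i) to be $\D ^{(m)} _X$-linear. I run the same argument, but now applying $\D ^{(m)} _X \otimes _{\underset{i}{\boxtimes} ^{\mathrm{top}} \D ^{(m)} _{X _i}} -$ instead of $\O _X \otimes _{\underset{i}{\boxtimes} ^{\mathrm{top}} \O _{X _i}} -$: by \ref{rem-ext-prod} the resulting objects agree (up to canonical isomorphism) with the same $\underset{i}{\boxtimes}$'s computed via $\O _X$, because the two extension-of-scalars descriptions of $\underset{i}{\overset{\L}{\boxtimes}}$ coincide. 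Concretely, apply $\D ^{(m)} _X \otimes _{\underset{i}{\boxtimes} ^{\mathrm{top}} \D ^{(m)} _{X _i}} -$ to the isomorphism \ref{com-botimestop-iso2} (the $\D$-module version from Lemma \ref{com-botimestop}), and argue exactly as above with $\cB _i = pr _i ^{-1} \D ^{(m)} _{X _i}$, $\cA _i = pr _i ^{-1}\D _i$ in Lemma \ref{lem-otimes-assoc-comm-box}. Since \ref{com-botimestop-iso1} and \ref{com-botimestop-iso2} are compatible (the latter restricts to the former along $\underset{i}{\boxtimes} ^{\mathrm{top}} \O _{X _i} \to \underset{i}{\boxtimes} ^{\mathrm{top}} \D ^{(m)} _{X _i}$, which is the content of the analogous restriction in Lemma \ref{com-botimestop}), the $\D ^{(m)} _X$-linear isomorphism so obtained restricts to the $\O _X$-linear one of part (i). The main (and only) point requiring care is bookkeeping: checking that the two ways of forming $\underset{i}{\overset{\L}{\boxtimes}}$ — as $\O _X$-extension of $\underset{i}{\overset{\L}{\boxtimes}} {}^{\mathrm{top}}$ of the underlying $\O$-complexes, versus $\D ^{(m)} _X$-extension of $\underset{i}{\overset{\L}{\boxtimes}} {}^{\mathrm{top}}$ of the $\D$-complexes — are identified compatibly with all the structure maps, which is precisely \ref{rem-ext-prod}.(5)--(6) applied termwise on flat resolutions. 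I expect no genuine obstacle here; it is a diagram chase built on the associativity isomorphism of Lemma \ref{lem-otimes-assoc-comm-box}. \qed
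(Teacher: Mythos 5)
Your argument for part~(i) is correct, and it takes a slightly different route from the paper's. The paper constructs the isomorphism by passing through the presentation of \ref{boxtimes-dfn2L} in terms of the $pr_i^*$: it rewrites $\underset{i}{\overset{\L}{\boxtimes}}$ as $pr _1 ^{*}(-)\otimes ^{\L} _{\O _{X}}\cdots\otimes ^{\L} _{\O _{X}}pr _n ^{*}(-)$, performs the obvious commutativity/associativity rearrangement of tensor factors over $\O_X$, and then translates back. You instead feed \ref{com-botimestop-iso1} into the extension-of-scalars functor $\O_X\otimes_{\underset{i}{\boxtimes}^{\mathrm{top}}\O_{X_i}}(-)$ and invoke the standard base-change isomorphism $B\otimes_A(M\otimes_A N)\riso (B\otimes_A M)\otimes_B(B\otimes_A N)$. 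Both get there; one advantage of your organization is that the commutativity of \ref{com-botimes-diag2} becomes essentially tautological, since the vertical arrows \emph{are} the scalar-extension maps.

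Part~(ii), however, has a real gap. You propose to apply $\D^{(m)}_X\otimes_{\underset{i}{\boxtimes}^{\mathrm{top}}\D^{(m)}_{X_i}}(-)$ to the isomorphism \ref{com-botimestop-iso2}, but that isomorphism concerns $\cN_i\otimes^{\L}_{\D^{(m)}_{X_i}}\cF_i$, a tensor product \emph{over} $\D^{(m)}_{X_i}$, whereas the object in the statement of part~(ii) is $\cM_i\otimes^{\L}_{\O_{X_i}}\cE_i$, the tensor product over $\O_{X_i}$ equipped with its \emph{diagonal} left $\D^{(m)}_{X_i}$-structure (the one coming from the $m$-PD-stratification / comultiplication). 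These are not the same map, and there is no ``restriction of \ref{com-botimestop-iso2} to \ref{com-botimestop-iso1} along $\underset{i}{\boxtimes}^{\mathrm{top}}\O_{X_i}\to\underset{i}{\boxtimes}^{\mathrm{top}}\D^{(m)}_{X_i}$'' in the sense you write: the two statements have different tensor products on both sides, so one is not a specialization of the other. (There is also a small slip in the citation: \ref{rem-ext-prod} has five items, not six; the relevant one is the fourth.) To make your strategy work one must first verify that \ref{com-botimestop-iso1} itself, when the $\cM_i$ and $\cE_i$ carry left $\D^{(m)}_{X_i}$-module structures, is a morphism of left $\underset{i}{\boxtimes}^{\mathrm{top}}\D^{(m)}_{X_i}$-modules \emph{for the diagonal action}; this is the actual substance, and it is not established by quoting \ref{com-botimestop-iso2}. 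The paper avoids the detour entirely: it simply observes that each of the three arrows in the chain \ref{com-botimes-diag1} is $\D^{(m)}_X$-linear when the inputs are $\D^{(m)}$-modules, because the pullback functors $pr_i^*$ preserve $\D^{(m)}$-module structure and the associativity/commutativity rearrangements of $\otimes_{\O_X}$ are compatible with the diagonal $\D^{(m)}_X$-action.
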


\begin{proof}
We construct the isomorphism of the form
$\underset{i}{\overset{\L}{\boxtimes}} 
(\cM _i \otimes ^{\L} _{\O _{X _i}} \E _i)
\riso
\underset{i}{\overset{\L}{\boxtimes}}  
\cM _i \otimes ^{\L} _{\O _X} \underset{i}{\overset{\L}{\boxtimes}}   \E _i$ 
as follows :
\begin{gather}
\notag
\underset{i}{\overset{\L}{\boxtimes}} 
(\cM _i \otimes ^{\L} _{\O _{X _i}} \E _i)
\underset{\ref{boxtimes-dfn2L}}{\riso}
pr _1 ^{*} ( \cM _1 \otimes ^{\L} _{\O _{X _1}} \E _1) 
\otimes ^{\L} _{\O _{X} }
\cdots
\otimes ^{\L} _{\O _{X} }
 pr _n ^{*} (\cM _n \otimes ^{\L} _{\O _{X _n}} \E _n)
 \\
 \label{com-botimes-diag1}
 \riso
\left ( pr _1 ^{*} \cM _1 
\otimes ^{\L} _{\O _{X} }
\cdots
\otimes ^{\L} _{\O _{X} }
 pr _n ^{*} \cM _n 
 \right) 
 \otimes ^{\L} _{\O _X}
\left(  pr _1 ^{*} \E _1
\otimes ^{\L} _{\O _{X} }
\cdots
\otimes ^{\L} _{\O _{X} }
 pr _n ^{*} \E _n
 \right) 
\underset{\ref{boxtimes-dfn2L}}{\riso}
\underset{i}{\overset{\L}{\boxtimes}}  
\cM _i \otimes ^{\L} _{\O _X} \underset{i}{\overset{\L}{\boxtimes}}   \E _i.
\end{gather}
We check by an easy computation the commutativity of the diagram \ref{com-botimes-diag2}.
Finally, when 
$\cM _i \in D ^{-} ({}^*  \D _i,  {}^l \D ^{(m)} _{X _i})$,
$\cE _i \in D ^-({}^l \D ^{(m)}  _{X _i})$, the isomorphisms of 
\ref{com-botimes-diag1} are 
$\D ^{(m)}  _{X}$-linear. 
\end{proof}

\begin{empt}
\label{boxtimes-left-right}
We have the splitting of $\O _X$-modules
$\oplus _{i=1} ^n pr _i ^*  \Omega ^1 _{X _i} \riso \Omega ^1 _{X}$. By applying determinants,
this yields the isomorphism of $\O _X$-modules
$\underset{i}{\boxtimes}~  \omega _{X _i} \riso \omega  _{X}$.
Using the canonical structure of 
right $\D  ^{(m)} _{X _i}$-module on $\omega _{X _i}$, we get a structure of 
right $\D  ^{(m)} _{X}$-module on $\underset{i}{\boxtimes} \omega _{X _i}$.
By local computations, we check the canonical isomorphism
$\underset{i}{\boxtimes} \omega _{X _i} \riso \omega  _{X}$ 
is in fact an isomorphism of right $\D  ^{(m)} _{X}$-modules.

For $i= 1,\dots, n$, 
$\E _i $ be a left $\D ^{(m)}  _{X _i}$-module, and 
$\cF _i $ be a right $\D ^{(m)}  _{X _i}$-module.
Then we have the canonical morphism of right $\D  ^{(m)} _{X}$-modules (resp. left $\D  ^{(m)} _{X}$-modules)
$\underset{i}{\boxtimes} ( \omega _{X _i} \otimes _{\O _{X _i}} \cE _i)
\riso \omega _{X } \otimes _{\O _{X }} \underset{i}{\boxtimes}  \cE _i$
(resp. $\underset{i}{\boxtimes}  ( \cF _i \otimes _{\O _{X _i}} \omega _{X _i} ^{-1} )
\riso \underset{i}{\boxtimes}  \cF _i \otimes _{\O _{X}} \omega _{X} ^{-1} $).
Taking flat resolutions, we have similar isomorphisms in derived categories.

\end{empt}

\subsection{Commutation with pull-backs and push forwards}
\label{subsec4.2}
Let 
$S$ be a noetherian $\Z _{(p)}$-scheme of finite Krull dimension.
For $i= 1,\dots, n$, 
let $f _i \colon X _i \to Y _i$ be a (quasi-separated and quasi-compact) morphism of smooth $S$-schemes.
Set $X := X _1 \times _S X _2 \times _S \dots \times _S X _n$,
$Y := Y _1 \times _S Y _2 \times _S \dots \times _S Y _n$,
and $f: = f _1 \times \dots \times f _n \colon X \to Y$. 
For $i= 1,\dots, n$, let $pr _i  \colon X \to X _i$,
$pr '_i  \colon Y \to Y _i$ be the projections.
We denote by 
 $\varpi \colon X \to S$ and $\varpi _i \colon X _i \to S$,
 $\varpi ' \colon Y \to S$ and $\varpi _i ' \colon Y _i \to S$ the structural morphisms

\begin{rem}
\label{n=2,f2=id-diag-parag}
Suppose $n = 2$ and $f _2 $ is the identity. 
In that case, denoting by $T:= X _2=Y_2$, we get the cartesian square
\begin{equation}
\label{n=2,f2=id-diag}
\xymatrix@ C=2cm {
{X= X _1 \times _S T} 
\ar[r] ^-{f= f _1 \times id}
\ar[d] ^-{pr _1}
& 
{Y= Y _1 \times _S T} 
\ar[d] ^-{pr ' _1}
\\ 
{X _1} 
\ar[r] ^-{f _1}
& 
{Y _1.} 
}
\end{equation}

\end{rem}

\begin{empt}
 \label{f_*/otimes}
We recall the following fact. 
Let 
$F\colon \fA\to \fB$ and 
$G\colon \fB\to \fA$ be functors of abelian categories such
that
$F$ is a right adjoint to $G$. 
Let
$\cM $ 
be a complex of $\fA$ 
and let $\cN  $
be a complex of 
$\fB$. If $\R F$ is defined at $\cM$ and $\L G$ is defined at $\cN$, then there is a canonical isomorphism 
$$\mathrm{Hom} _{D(\fB)}(\cN,\R F(\cM)) 
\riso
\mathrm{Hom} _{D(\fA)}
(\L G(\cN),\cM).$$ 
This isomorphism is functorial in both variables on the triangulated subcategories of 
$D(\fA)$ and $D(\fB)$ where $\R F$ and $\L G$ are defined.
In particular, let $u\colon (U, \O _U) \to (V, \O _V)$ 
be a morphism of ringed spaces. Since the functors
$ \R u_*$   
and $\L u ^*$ are well defined, we get
\begin{equation}
\label{f_*/otimes-adjpre}
\mathrm{Hom} _{D ( \O _{V} )}(\cN,\R u _*(\cM)) 
\riso
\mathrm{Hom} _{D ( \O _{U} )}
(\L u ^*(\cN),\cM)
\end{equation}
bifunctorially in $\cM \in \mathrm{Ob} (D ( \O _{U} ))$ and 
$\cN \in \mathrm{Ob} (D( \O _{V} ))$ (see 20.28.1 of the stack project).

Suppose the functor 
$\R u _*$ induces the functor
$\R u _* \colon
D ^- ( \O _{U} )
\to 
D ^- ( \O _{V} )$.
Then, for any
 $\cM ,\cM ' \in D ^- ( \O _{U} )$, we construct the canonical morphism
of 
$D ^- ( \O _{V} )$ :
\begin{equation}
\label{f_*/otimes-iso}
\R u_* ( \cM )Â \otimes ^{\L} _{\O _V}\R u _* ( \cM ')
\to 
\R u _* ( \cMÂ \otimes ^{\L} _{\O _U}\cM ') 
 \end{equation}
bifunctorially in $\cM,\cM ' \in \mathrm{Ob} (D ^- (\O _{U} ))$
as follows.
Using \ref{f_*/otimes-adjpre}, since tensor products commute with inverses images, 
we reduce to construct a canonical morphism
$\mathrm{adj} \otimes \mathrm{adj}
\colon 
\L u ^* \R u_* ( \cM )Â \otimes ^{\L} _{\O _U } \L u ^* \R u _* ( \cM ')
\to 
\cMÂ \otimes ^{\L} _{\O _U}\cM ',$ which again a consequence of \ref{f_*/otimes-adjpre}.

For any $\cM \in D ^- ( \O _{U} )$, 
$\cN  \in D ^- ( \O _{V} )$,
we construct the canonical morphism 
\begin{equation}
\label{f_*/otimes-iso-projform}
\R u_* ( \cM )Â \otimes ^{\L} _{\O _V} \cN
\to 
\R u _* ( \cMÂ \otimes ^{\L} _{\O _U} \L u ^* \cN).
 \end{equation}
as follows. 
Using \ref{f_*/otimes-adjpre}, since tensor products commute with inverses images, 
we reduce to construct a canonical morphism
$\mathrm{adj} \otimes id
\colon 
\L u ^* \R u_* ( \cM )Â \otimes ^{\L} _{\O _U } \L u ^* \cN 
\to 
\cMÂ \otimes ^{\L} _{\O _U}\L u ^* \cN ,$ which again a consequence of \ref{f_*/otimes-adjpre}.

Let $\cI$ be a $K$-injective resolution of $\cM$, 
$\cQ$ be a $K$-flat resolution of $\cN$
(see $20.26.11$ of stack project).
Then \ref{f_*/otimes-iso-projform} is defined by 
the composition 
$\R u_* ( \cM )Â \otimes ^{\L} _{\O _V} \cN
\riso 
 u_* ( \cI )Â \otimes _{\O _V} \cQ
\to 
u _* ( \cIÂ \otimes _{\O _U} u ^* \cQ)
\to 
\R u _* ( \cIÂ \otimes _{\O _U} u ^* \cQ)
\riso 
\R u _* ( \cMÂ \otimes ^{\L} _{\O _U} \L u ^* \cN)$.
Indeed, let $\cP$ be a $K$-flat resolution of $f _* \cI$.
Then 
$\L u ^* \R u_* ( \cM ) \to \cM$ is defined by 
$\L u ^* \R u_* ( \cM ) 
\riso 
u ^* \cP 
\to 
u ^* u_* ( \cI)
\to 
\cI
\riso
\M$.
Hence,
$\mathrm{adj} \otimes id
\colon 
\L u ^* \R u_* ( \cM )Â \otimes ^{\L} _{\O _U } \L u ^* \cN 
\to 
\cMÂ \otimes ^{\L} _{\O _U}\L u ^* \cN $
is given by 
$\L u ^* \R u_* ( \cM )Â \otimes ^{\L} _{\O _U } \L u ^* \cN 
\riso 
u ^* \cP \otimes _{\O _U } u ^* \cQ
\to 
u ^* u_* ( \cI) \otimes _{\O _U } u ^* \cQ
\to 
\cI \otimes _{\O _U } u ^* \cQ
\riso
\M \otimes ^{\L} _{\O _U } \L u ^* \cN $.

\begin{equation}
\xymatrix{
{\R u_* ( \cM )Â \otimes ^{\L} _{\O _V} \cN} 
\ar[d] ^-{\mathrm{adj}}
\ar[r] ^-{\sim}
&
{\cP \otimes _{\O _V } \cQ} 
\ar[r] ^-{\sim}
\ar[d] ^-{\mathrm{adj}}
& 
{u_* \cI \otimes _{\O _V } \cQ} 
\ar[r] ^-{}
\ar[d] ^-{\mathrm{adj}}
&
{ u _* ( \cI \otimes _{\O _U } u ^* \cQ)} 
\ar@{=}[dd] ^-{}
\\ 
{\R u _* \L u ^* ( \R u_* ( \cM )Â \otimes ^{\L} _{\O _V} \cN)} 
\ar[ddd] ^-{\sim}
&
{u _* u ^* ( \cP \otimes _{\O _V}  \cQ)} 
\ar[r] ^-{}
\ar[l] ^-{}
\ar[d] ^-{\sim}
& 
{u _* u ^* ( u _* \cI  \otimes _{\O _V}  \cQ)} 
\ar[d] ^-{\sim}
& 
{} 
\\ 
&
{u _* (u ^* \cP \otimes _{\O _U } u ^* \cQ)} 
\ar[r] ^-{}
\ar[d] ^-{}
& 
{u _* (u ^* u_* ( \cI) \otimes _{\O _U } u ^* \cQ)} 
\ar[r] ^-{\mathrm{adj}}
\ar[d] ^-{}
& 
{ u _* ( \cI \otimes _{\O _U } u ^* \cQ)} 
\ar[d] ^-{}
\\ 
{}
&
{\R u _* (u ^* \cP \otimes _{\O _U } u ^* \cQ)} 
\ar[r] ^-{}
& 
{\R u _* (u ^* u_* ( \cI) \otimes _{\O _U } u ^* \cQ)} 
\ar[r] ^-{\mathrm{adj}}
& 
{\R u _* ( \cI \otimes _{\O _U } u ^* \cQ)} 
\ar[d] ^-{\sim}
\\ 
{\R u _* ( \L u ^* \R u_* ( \cM )Â \otimes ^{\L} _{\O _U } \L u ^* \cN )} 
\ar[ur] ^-{\sim}
\ar[rrr] ^-{\mathrm{adj}}
&&&
{\R u _* ( \cMÂ \otimes ^{\L} _{\O _U} \L u ^* \cN)} 
}
\end{equation}
Hence, we get the commutativity of the canonical diagram
\begin{equation}
\label{f_*/otimes-iso-projformter}
\xymatrix{
{\R u_* ( \cM )Â \otimes ^{\L} _{\O _V} \cO _V}
\ar[r] ^-{\ref{f_*/otimes-iso-projform}}
& 
{\R u _* ( \cMÂ \otimes ^{\L} _{\O _U} \L u ^* \cO _V )} 
\\ 
{\R u_* ( \cM )Â .} 
\ar[u] ^-{\sim}
\ar[ur] ^-{\sim}
& 
{ } 
}
\end{equation}
This yields (using standard methods) that when $u$ is a quasi-separated and quasi-compact morphism of 
 noetherian schemes of finite Krull dimension,
the projection morphism \ref{f_*/otimes-iso-projform} is an isomorphism for complexes with 
bounded above and quasi-coherent cohomology.
The commutativity of \ref{f_*/otimes-iso-projformter}
will be used to check the commutativity of 
\ref{sch-prop-boxtimes-v+isoproofbis2}.

\end{empt}

\begin{lem}
\label{lem-f_*/otimes-iso-projform2}
Let $u\colon (U, \O _U) \to (V, \O _V)$ 
be a morphism of ringed spaces. 
For any $\cM \in D ^- ( \O _{U} )$, 
the following diagram is commutative:
\begin{equation}
\label{f_*/otimes-iso-projform2}
\xymatrix{
{\R u_* ( \cM )Â \otimes ^{\L} _{\O _V} \R u_*  \L u ^*  ( \cN ) } 
\ar[r] ^-{\ref{f_*/otimes-iso}}
& 
{\R u _* ( \cMÂ \otimes ^{\L} _{\O _U} \L u ^* \cN)} 
\\ 
{\R u_* ( \cM )Â \otimes ^{\L} _{\O _V} \cN.} 
\ar[u] ^-{id \otimes \mathrm{adj}}
\ar[ur] ^-{\ref{f_*/otimes-iso-projform}}
& 
{ } 
}
\end{equation}
\end{lem}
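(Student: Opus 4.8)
The plan is to unwind the definitions of \ref{f_*/otimes-iso} and \ref{f_*/otimes-iso-projform}, both of which are obtained by transposition across the adjunction isomorphism \ref{f_*/otimes-adjpre}, and to reduce the asserted commutativity of \ref{f_*/otimes-iso-projform2} to one of the triangle identities for the adjoint pair $(\L u ^*, \R u _*)$. First I would transpose the entire diagram \ref{f_*/otimes-iso-projform2} across \ref{f_*/otimes-adjpre}, i.e. apply the natural isomorphism
$$
\Hom _{D ( \O _{V} )} ( - , \R u _* ( \cM \otimes ^{\L} _{\O _U} \L u ^* \cN))
\riso
\Hom _{D ( \O _{U} )} ( \L u ^* ( - ) , \cM \otimes ^{\L} _{\O _U} \L u ^* \cN );
$$
by the functoriality of \ref{f_*/otimes-adjpre} in the first variable this turns \ref{f_*/otimes-iso-projform2} into a diagram of morphisms out of $\L u ^* ( \R u _* ( \cM ) \otimes ^{\L} _{\O _V} \cN )$, which, since $\L u ^*$ commutes with $\otimes ^{\L}$, I identify with $\L u ^* \R u _* ( \cM ) \otimes ^{\L} _{\O _U} \L u ^* \cN$.

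Under this transposition, and directly from the constructions recalled in \ref{f_*/otimes}, the morphism \ref{f_*/otimes-iso-projform} becomes the counit map $\mathrm{adj} \otimes id$, while \ref{f_*/otimes-iso} taken with $\cM ' = \L u ^* \cN$ becomes $\mathrm{adj} \otimes \mathrm{adj}$, with both factors the counit of $(\L u ^*, \R u _*)$. Hence the transpose of the composite $\ref{f_*/otimes-iso} \circ ( id \otimes \mathrm{adj} )$ is
$$
\L u ^* \R u _* ( \cM ) \otimes ^{\L} _{\O _U} \L u ^* \cN
\xrightarrow{\ id \otimes \L u ^* (\mathrm{adj}) \ }
\L u ^* \R u _* ( \cM ) \otimes ^{\L} _{\O _U} \L u ^* \R u _* \L u ^* \cN
\xrightarrow{\ \mathrm{adj} \otimes \mathrm{adj} \ }
\cM \otimes ^{\L} _{\O _U} \L u ^* \cN ,
$$
where $\mathrm{adj} \colon \cN \to \R u _* \L u ^* \cN$ is the unit. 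The first tensor factor of this composite is simply the counit $\L u ^* \R u _* ( \cM ) \to \cM$, and the second is the composite $\L u ^* \cN \to \L u ^* \R u _* \L u ^* \cN \to \L u ^* \cN$ of $\L u ^*$ applied to the unit followed by the counit, which is the identity by the triangle identity $\epsilon _{\L u ^* \cN} \circ \L u ^* ( \eta _{\cN} ) = id$. Therefore the displayed composite equals $\mathrm{adj} \otimes id$, i.e. the transpose of \ref{f_*/otimes-iso-projform}; applying the inverse of \ref{f_*/otimes-adjpre} yields the commutativity of \ref{f_*/otimes-iso-projform2}.

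The main obstacle is purely bookkeeping: one must check that transposing the precomposition by $id \otimes \mathrm{adj}$ and the commutation of $\L u ^*$ with derived tensor products are performed compatibly with the natural isomorphism \ref{f_*/otimes-adjpre}, so that the identifications above are legitimate. I would discharge this by working with the explicit representatives already used in \ref{f_*/otimes}: a $K$-injective resolution $\cI$ of $\cM$, a $K$-flat resolution $\cQ$ of $\cN$, and a $K$-flat resolution $\cP$ of $u _* \cI$. On these representatives all the maps above become honest maps of complexes, the commutation of $\L u ^*$ with $\otimes ^{\L}$ is tautological, and the triangle identity reduces to the fact that the composite $u ^* \cQ \to u ^* u _* u ^* \cQ \to u ^* \cQ$ is the identity. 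This is essentially the content of the large commutative diagram already displayed in \ref{f_*/otimes}, so no new input is needed beyond careful tracking of the resolutions.
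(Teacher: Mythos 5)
Your proposal is correct and follows essentially the same route as the paper: both transpose the diagram across the adjunction isomorphism \ref{f_*/otimes-adjpre}, observe that the three arrows become $\mathrm{adj}\otimes\mathrm{adj}$, $id\otimes\mathrm{adj}$, and $\mathrm{adj}\otimes id$ respectively, and reduce the commutativity to the triangle identity $\epsilon_{\L u^*\cN}\circ \L u^*(\eta_\cN)=id$. The paper simply displays the transposed triangle directly (its diagram \ref{f_*/otimes-iso-projform2pre}) and invokes the constructions from \ref{f_*/otimes}, which is what you sketch in your bookkeeping paragraph.
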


\begin{proof}
Since the composition 
$ \L u ^*  ( \cN ) 
\overset{\mathrm{adj}}{\longrightarrow}
\L u ^* \R u_*  \L u ^*  ( \cN ) 
\overset{\mathrm{adj}}{\longrightarrow}
 \L u ^*  ( \cN ) $ is the identity, 
we get the commutative diagram :
\begin{equation}
\label{f_*/otimes-iso-projform2pre}
\xymatrix{
{\L u ^* \R u_* ( \cM )Â \otimes ^{\L} _{\O _U} \L u ^* \R u_*  \L u ^*  ( \cN ) } 
\ar[r] ^-{\mathrm{adj} \otimes \mathrm{adj}}
& 
{\cMÂ \otimes ^{\L} _{\O _U} \L u ^* \cN} 
\\ 
{\L u ^* \R u_* ( \cM )Â \otimes ^{\L} _{\O _U} \L u ^* \cN.} 
\ar[u] ^-{id \otimes \mathrm{adj}}
\ar[ur] ^-{\mathrm{adj} \otimes id}
& 
{ } 
}
\end{equation}
By construction of both morphisms
\ref{f_*/otimes-iso} and 
\ref{f_*/otimes-iso-projform}, 
from the commutative diagram
\ref{f_*/otimes-iso-projform2pre} 
we get by adjunction 
(via \ref{f_*/otimes-adjpre}) 
the commutative diagram
\ref{f_*/otimes-iso-projform2}. 
\end{proof}

\begin{lemm}
\label{comm-boxtimes-f*}
For $i= 1,\dots, n$, 
let $\D _i$ be a sheaf 
of $\varpi _i ^{\prime -1} \O _S$-algebras,
$\cN _i \in D ^- (\D _i,  \D ^{(m)} _{Y _i})$.
We have the canonical isomorphism of 
$D ^- (\underset{i}{\boxtimes} ^{\mathrm{top}} \D _i,\D ^{(m)}  _{X _i} )$:
$$\L f ^* ( \underset{i}{\overset{\L}{\boxtimes}}  \cN _i )
\riso 
\underset{i}{\overset{\L}{\boxtimes}}  \,
\L f _i ^* (  \cN _i ) .$$
\end{lemm}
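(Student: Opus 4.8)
The plan is to reduce everything to the description of the exterior tensor product as an iterated $\O$-module tensor product of inverse images. Concretely, using \ref{boxtimes-dfn2L} (after choosing flat resolutions of the $\cN _i$, and of the $\L f _i ^* \cN _i$ on the source side) there are canonical $\D ^{(m)} _Y$-linear, resp. $\D ^{(m)} _X$-linear, isomorphisms $\underset{i}{\overset{\L}{\boxtimes}} \cN _i \riso (pr '_1) ^* \cN _1 \otimes ^\L _{\O _Y} \cdots \otimes ^\L _{\O _Y} (pr '_n) ^* \cN _n$ and $\underset{i}{\overset{\L}{\boxtimes}} \L f _i ^* \cN _i \riso pr _1 ^* ( \L f _1 ^* \cN _1) \otimes ^\L _{\O _X} \cdots \otimes ^\L _{\O _X} pr _n ^* ( \L f _n ^* \cN _n)$, where $pr _i$, $pr '_i$ are the projections of $X$, resp. $Y$, onto the $i$-th factor. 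So it is enough to identify the image under $\L f ^*$ of the first iterated tensor product with the second one.

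For the construction I would proceed in two moves. First, since $f = f _1 \times \cdots \times f _n$ satisfies $pr '_i \circ f = f _i \circ pr _i$, the standard transitivity isomorphism for the $\D ^{(m)}$-module inverse image gives $\L f ^* \circ (pr '_i) ^* \riso pr _i ^* \circ \L f _i ^*$. Second, the $\D ^{(m)}$-module inverse image is monoidal for $\otimes ^\L _{\O}$ on left modules (this is Lemma \ref{u^*otimes} in the case $\B = \O$, the $\D ^{(m)}$-structure on the tensor products being the diagonal one of \cite[2.3.3]{Be1}), hence, applying it $n-1$ times, $\L f ^* \bigl( (pr '_1) ^* \cN _1 \otimes ^\L _{\O _Y} \cdots \otimes ^\L _{\O _Y} (pr '_n) ^* \cN _n \bigr) \riso \L f ^* ((pr '_1) ^* \cN _1) \otimes ^\L _{\O _X} \cdots \otimes ^\L _{\O _X} \L f ^* ((pr '_n) ^* \cN _n)$. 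Chaining these with the two isomorphisms \ref{boxtimes-dfn2L} yields the desired isomorphism. Its compatibility with the $f ^{-1} (\underset{i}{\boxtimes} ^{\mathrm{top}} \D _i)$-structures — equivalently, with the $f _i ^{-1} \D _i$-structures pulled back along $pr _i$, using $f ^{-1} (pr '_i) ^{-1} = pr _i ^{-1} f _i ^{-1}$ — comes for free, since every isomorphism in the chain is obtained by applying $pr _i ^{-1}$ and $f ^{-1}$ to data living on the $Y _i$, exactly as in the proof of Lemma \ref{com-botimes}.

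The one point requiring care — the main, mild, obstacle — is to check that the composite map above is $\D ^{(m)} _X$-linear and agrees with the map induced by the obvious morphisms of underlying $\O$-modules. The clean way, which I would follow, is to verify the statement first on underlying $\O _X$-modules: for left modules the transfer bimodule $\D ^{(m)} _{X \to Y}$ is $\O _X \otimes _{f ^{-1} \O _Y} f ^{-1} \D ^{(m)} _Y$, so the $\O _X$-module underlying $\L f ^* (\cE)$ is just the plain inverse image $\O _X \otimes ^\L _{f ^{-1} \O _Y} f ^{-1} \cE$, and there the isomorphism is immediate from functoriality of $\otimes ^\L _{\O}$ and of the $\O$-module inverse image together with $pr '_i \circ f = f _i \circ pr _i$. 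Since each constituent isomorphism (the two applications of \ref{boxtimes-dfn2L}, the monoidality isomorphism of Lemma \ref{u^*otimes}, and the transitivity isomorphism) has already been recorded as a $\D$-linear isomorphism, it then only remains to run a routine diagram chase of the same flavour as in the proof of Lemma \ref{com-botimes}.
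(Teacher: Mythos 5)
Your proposal is correct and takes essentially the same route as the paper: the paper's own proof is a one-line citation of \ref{boxtimes-dfn2L} (writing the external products as iterated $\O$-module tensor products of projection pullbacks) together with the commutation of inverse images with tensor products, which is exactly what you unwind — via the transitivity isomorphism $\L f^*\circ (pr'_i)^* \riso pr_i^*\circ \L f_i^*$ coming from $pr'_i\circ f = f_i\circ pr_i$, and the monoidality of $\L f^*$ for $\otimes^\L_\O$ as in Lemma \ref{u^*otimes}. Your additional remarks on the compatibility with the $\D_i$-bimodule structures and on checking first at the level of underlying $\O$-modules correctly fill in the details the paper leaves implicit.
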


\begin{proof}
This is a consequence of \ref{boxtimes-dfn2L} and of the commutation of inverse images with tensor products.
\end{proof}

\begin{lem}
\label{lem-Rf*adjf-1*}
Let $f \colon X \to Y$,
$g \colon Y \to Y'$,
$f' \colon X ' \to Y '$
and 
$g ' \colon X  \to X'$
 be some morphism of $S$-schemes such that 
 $g \circ f = f ' \circ g'$.
We suppose $g$ and $g'$ flat. 
For any $\E ' \in D  ( \O _{X'} )$, 
the canonical diagram 
\begin{equation}
\xymatrix{
{g ^{-1} \R f  '_* (\E')} 
\ar[r] ^-{\mathrm{adj}}
\ar[d] ^-{}
& 
{\R f _* g ^{\prime -1}  (\E')} 
\ar[d] ^-{}
\\ 
{g ^{*} \R f ' _* (\E')} 
\ar[r] ^-{\mathrm{adj}}
& 
{\R f _* g ^{\prime *}  (\E')} 
}
\end{equation}
is commutative. 
 
\end{lem}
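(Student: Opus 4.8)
The plan is to express both horizontal morphisms as adjoint transposes and to reduce the commutativity of the square to the naturality of a canonical isomorphism of pushforward functors.

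First I would recall the two constructions carefully. Since $g\circ f = f'\circ g'$, one has $\R g _* \circ \R f _* = \R (gf) _* = \R (f'g') _* = \R f' _* \circ \R g' _*$ as functors $D(X)\to D(Y')$. The top morphism is the base change morphism for sheaves of abelian groups, namely the transpose under the adjunction $g ^{-1} \dashv \R g _*$ (on abelian sheaves) of the composite $\R f' _* (\E') \xrightarrow{\R f' _* (\eta ^{\mathrm{ab}})} \R f' _* \R g' _* g ^{\prime -1} (\E') \riso \R g _* \R f _* g ^{\prime -1} (\E')$, where $\eta ^{\mathrm{ab}}$ denotes the unit of that adjunction and the isomorphism uses $g\circ f = f'\circ g'$. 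Since $g$ and $g'$ are flat, $g ^*$ and $g ^{\prime *}$ are exact, so the $\O$-module adjunctions $g ^* \dashv \R g _*$ and $g ^{\prime *} \dashv \R g' _*$ require no deriving of the inverse images; the bottom morphism is then the transpose under $g ^* \dashv \R g _*$ of $\R f' _* (\E') \xrightarrow{\R f' _* (\eta ^{\O})} \R f' _* \R g' _* g ^{\prime *} (\E') \riso \R g _* \R f _* g ^{\prime *} (\E')$, with $\eta ^{\O}$ the unit. The left (resp. right) vertical arrow is $-$ (resp. $\R f _*$) applied to the canonical morphism $\gamma _h \colon h ^{-1} (-) \to h ^{*} (-) = \O \otimes _{h ^{-1} \O} h ^{-1} (-)$, $x \mapsto 1 \otimes x$, for $h = g$ (resp. $h = g'$); write $A := \R f' _* (\E')$.

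The key input I would establish is that, for $h \in \{g, g'\}$, the composite $\E' \xrightarrow{\eta ^{\mathrm{ab}}} \R h _* h ^{-1} (\E') \xrightarrow{\R h _* (\gamma _h)} \R h _* h ^{*} (\E')$ equals the unit $\eta ^{\O}$ of the $\O$-module adjunction $h ^* \dashv \R h _*$; this is the factorization of the inverse-image adjunction on $\O$-modules through the inverse image of abelian sheaves followed by extension of scalars along $h ^{-1}\O \to \O$, and flatness of $h$ lets it pass unchanged to derived categories. Granting this, I would conclude by transposing. Both composites around the square are morphisms $g ^{-1} A \to \R f _* g ^{\prime *} (\E')$, so by the bijection $\mathrm{Hom} _{D(Y)}(g ^{-1} A, B) \riso \mathrm{Hom} _{D(Y')}(A, \R g _* B)$ (with $B := \R f _* g ^{\prime *} (\E')$) it suffices to compare their transposes. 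The transpose of $\R f _* (\gamma _{g',\E'}) \circ (\text{top})$ is $\R g _* \R f _* (\gamma _{g',\E'})$ precomposed with the transpose of the top morphism, i.e. $A \xrightarrow{\R f' _* (\eta ^{\mathrm{ab}})} \R f' _* \R g' _* g ^{\prime -1} (\E') \riso \R g _* \R f _* g ^{\prime -1} (\E') \xrightarrow{\R g _* \R f _* (\gamma _{g',\E'})} \R g _* \R f _* g ^{\prime *} (\E')$. The transpose of $(\text{bottom}) \circ \gamma _{g,A}$ is, using $\eta ^{\O} = \R g _* (\gamma _{g,A}) \circ \eta ^{\mathrm{ab}}$ for $h=g$ together with the fact that the bottom morphism is by construction the $\O$-module transpose of $\R f' _* (\eta ^{\O})$ followed by the canonical isomorphism, equal to $A \xrightarrow{\R f' _* (\eta ^{\O})} \R f' _* \R g' _* g ^{\prime *} (\E') \riso \R g _* \R f _* g ^{\prime *} (\E')$. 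The remaining equality of these two composites is then exactly the factorization $\eta ^{\O} = \R g' _* (\gamma _{g',\E'}) \circ \eta ^{\mathrm{ab}}$ for $h = g'$ together with the naturality of the isomorphism $\R g _* \R f _* = \R f' _* \R g' _*$ applied to $\gamma _{g',\E'}$.

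The only genuine difficulty, and the step I expect to demand the most care, is keeping the two distinct adjunctions apart: the abelian-sheaf adjunction $g ^{-1} \dashv \R g _*$ and the $\O$-module adjunction $g ^* \dashv \R g _*$ share their right adjoint strictly compatibly with the forgetful functor, but their left adjoints are related only by the non-invertible natural transformation $\gamma _h$. All of the content lies in the unit factorization $\eta ^{\O} = \R h _* (\gamma _h) \circ \eta ^{\mathrm{ab}}$ and in the strict compatibility of $\R g _*$, $\R f _*$, $\R f' _*$ with forgetting module structure; once these are in hand the rest is a formal manipulation of transposes.
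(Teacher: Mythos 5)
Your proof is correct, and it takes a genuinely different route from the paper's. The paper chooses a resolution $\cI'$ of $\E'$ by acyclic modules (invoking finite cohomological dimension of pushforwards to guarantee its existence) and then verifies the square by writing out, at the sheaf level, a two-row diagram that unpacks the base change morphism as $(\mathrm{unit\ of}\ f^{(-1)}\dashv f_*) \circ (\mathrm{canonical\ iso}) \circ (\mathrm{counit\ of}\ f'^{(-1)}\dashv f'_*)$, the commutativity then being a matter of inspection of the small squares. You instead never choose a resolution: you characterize each horizontal arrow as the adjoint transpose under $g^{(-1)}\dashv \R g_*$ (resp.\ $g^*\dashv \R g_*$) of a unit morphism $\R f'_*(\eta)$, reduce the commutativity of the square to an equality of transposes, and discharge that equality with the factorization $\eta^{\O}=\R h_*(\gamma_h)\circ\eta^{\mathrm{ab}}$ applied once to $g$ and once to $g'$, plus naturality of $\R f'_*\R g'_*\riso\R g_*\R f_*$ in $\gamma_{g'}$. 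The two presentations of the base change morphism (via the $f$/$f'$ adjunctions as in the paper, versus via the $g$/$g'$ adjunctions as in yours) are well-known to agree, so both proofs are reasoning about the same arrow. Your approach is more formal but also more robust: it sidesteps the hypothesis on cohomological dimension that the paper uses to produce the resolution, relying only on flatness of $g,g'$ (which makes $g^*,g'^*$ exact, so $\L g^*=g^*$ and the unit factorization passes trivially to derived categories) and on the standard calculus of adjoint transposes. The paper's proof has the advantage of being concrete and elementary once the resolution is granted.
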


\begin{proof}
Since $f$ has finite cohomological dimension, then 
$\E'$ has a resolution $\I' $ by $f _*$-acyclic modules 
(see \cite[Lemma I.4.6]{HaRD}). 
Then, the lemma follows from the commutative diagram.
\begin{equation}
\xymatrix{
{g ^{-1}  f ' _* (\cI ')} 
\ar[r] ^-{\mathrm{adj}}
\ar[d] ^-{}
& 
{f _* f ^{-1} g ^{-1}  f '_* (\cI')} 
\ar[r] ^-{\sim}
\ar[d] ^-{}
& 
{f _*  g ^{\prime -1}  f ^{\prime -1} f ' _* (\cI')} 
\ar[r] ^-{\mathrm{adj}}
\ar[d] ^-{}
& 
{ f _* g ^{\prime -1}  (\cI')} 
\ar[r] ^-{}
\ar[d] ^-{}
& 
{\R f _* g ^{\prime -1}  (\E')} 
\ar[d] ^-{}
\\ 
{g ^{*}  f ' _* (\cI')} 
\ar[r] ^-{\mathrm{adj}}
& 
{f _* f ^{*} g ^{*}  f '_* (\cI')} 
\ar[r] ^-{\sim}
& 
{f _* g ^{\prime *}  f ^{\prime *}  f ' _* (\cI)'} 
\ar[r] ^-{\mathrm{adj}}
& 
{ f _* g ^{\prime*}  (\cI')} 
\ar[r] ^-{}
& 
{\R f _* g ^{\prime*}  (\E').} 
}
\end{equation}
\end{proof}

\begin{lem}
\label{lem-Rfi*2Rf*top}
For $i= 1,\dots, n$, 
let $\D _i$ be a sheaf 
of $\varpi _i ^{\prime -1} \O _S$-algebras.

\begin{enumerate}[(a)]
\item \label{lem-Rfi*2Rf*top(a)}
\label{lem-Rfi*2Rf*top1}
For $i= 1,\dots, n$, let 
$\cE _i\in D ^{-}  (f _i ^{-1}\D _i)$.
Then we have the canonical morphism 
$\underset{i}{\overset{\L}{\boxtimes}} {}^{\mathrm{top}} ( \R f _{i*} \E _i)
\to 
\R f _{*} (\underset{i}{\overset{\L}{\boxtimes}} {}^{\mathrm{top}}  \E _i)$
of 
$D ^{-} (
\underset{i}{\boxtimes} ^{\mathrm{top}} 
\D _i)$.

\item 
\label{lem-Rfi*2Rf*top2}
For $i= 1,\dots, n$, let 
$\cE _i\in D ^{-}  (f _i ^{-1}\D _i, \O _{X _i})$.
Then the canonical morphism 
$\underset{i}{\overset{\L}{\boxtimes}} {}^{\mathrm{top}} ( \R f _{i*} \E _i)
\to 
\R f _{*} (\underset{i}{\overset{\L}{\boxtimes}} {}^{\mathrm{top}}  \E _i)$
is also a morphism of 
$D ^{-} (
\underset{i}{\boxtimes} ^{\mathrm{top}} 
\D _i,
\underset{i}{\boxtimes} ^{\mathrm{top}} 
\O _{Y _i})$.
Moreover we have  the canonical morphism
$\underset{i}{\overset{\L}{\boxtimes}} ( \R f _{i*} \E _i)
\to 
\R f _{*} ( \underset{i}{\overset{\L}{\boxtimes}} \E _i) $
of 
$D ^{-}(
\underset{i}{\boxtimes} ^{\mathrm{top}} 
\D _i,
\O _{Y })$
making commutative the diagram
\begin{equation}
\label{lem-Rfi*2Rf*top-diag}
\xymatrix{
{\underset{i}{\overset{\L}{\boxtimes}} {}^{\mathrm{top}} ( \R f _{i*} \E _i} )
\ar[r] ^-{}
\ar[d] ^-{}
& 
{\underset{i}{\overset{\L}{\boxtimes}} ( \R f _{i*} \E _i)} 
\ar[d] ^-{}
\\ 
{\R f _{*} (\underset{i}{\overset{\L}{\boxtimes}} {}^{\mathrm{top}}  \E _i)} 
\ar[r] ^-{}
& 
{\R f _{*} ( \underset{i}{\overset{\L}{\boxtimes}} \E _i) .} 
}
\end{equation}

\end{enumerate}

\end{lem}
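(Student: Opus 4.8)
The plan is to construct both morphisms from the base change morphisms attached to the commutative squares $f _i \circ pr _i = pr _i ' \circ f$, combined with the lax monoidal structure \ref{f_*/otimes-iso} of $\R f _*$ and the projection morphism \ref{f_*/otimes-iso-projform}; the only non-formal input will be the compatibility square \ref{lem-Rfi*2Rf*top-diag}, which I would reduce to the compatibilities already recorded in \ref{lem-Rf*adjf-1*} and \ref{lem-f_*/otimes-iso-projform2}.

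For part (\ref{lem-Rfi*2Rf*top1}), I would first note that, for each $i$, the commutative square with edges $f$, $pr _i$, $f _i$, $pr _i '$ yields the canonical base change morphism $c _i \colon pr _i ^{\prime -1} \R f _{i*} \E _i \to \R f _* pr _i ^{-1} \E _i$, obtained by composing the unit $\mathrm{id}\to \R f _* f ^{-1}$, the identity $f ^{-1} pr _i ^{\prime -1} = pr _i ^{-1} f _i ^{-1}$, and the counit $f _i ^{-1} \R f _{i*}\to \mathrm{id}$; since these maps are compatible with the pulled back $\D _i$-actions, $c _i$ is $pr _i ^{\prime -1}\D _i$-linear. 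Viewing $f$ as a morphism of ringed spaces $(X, \varpi ^{-1}\O _S)\to (Y, \varpi ^{\prime -1}\O _S)$, for which $\varpi ^{-1}\O _S = f ^{-1}\varpi ^{\prime -1}\O _S$, the $n$-fold iterate of \ref{f_*/otimes-iso} provides a morphism $\bigotimes _{\O _S} ^{\L}\R f _* pr _i ^{-1}\E _i \to \R f _* ( \bigotimes _{\O _S} ^{\L} pr _i ^{-1}\E _i )$. The morphism of part (\ref{lem-Rfi*2Rf*top1}) is then defined as the composite of $\bigotimes _{\O _S} ^{\L} c _i$ with this morphism; it is a morphism of $D ^{-}(\underset{i}{\boxtimes} ^{\mathrm{top}}\D _i)$ because each constituent is linear over the appropriate ring.

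For part (\ref{lem-Rfi*2Rf*top2}), when in addition each $\E _i$ is a $(f _i ^{-1}\D _i, \O _{X _i})$-bimodule, the morphisms $c _i$ are also $\O$-linear, so the morphism just built upgrades to one of $D ^{-}(\underset{i}{\boxtimes} ^{\mathrm{top}}\D _i, \underset{i}{\boxtimes} ^{\mathrm{top}}\O _{Y _i})$. To pass to the $\boxtimes$-version I would apply the base change functor $\O _Y \otimes _{\underset{i}{\boxtimes} ^{\mathrm{top}}\O _{Y _i}} -$ and invoke the projection morphism \ref{f_*/otimes-iso-projform} for $f \colon (X,\O _X)\to (Y,\O _Y)$, using $\L f ^* \O _Y \riso \O _X$ and $f ^* ( \underset{i}{\boxtimes} ^{\mathrm{top}}\O _{Y _i}) \riso \underset{i}{\boxtimes} ^{\mathrm{top}}\O _{X _i}$ to identify the target with $\R f _* ( \O _X \otimes _{\underset{i}{\boxtimes} ^{\mathrm{top}}\O _{X _i}} \underset{i}{\overset{\L}{\boxtimes}} {}^{\mathrm{top}}\E _i ) = \R f _* ( \underset{i}{\overset{\L}{\boxtimes}}\E _i )$ and the source with $\underset{i}{\overset{\L}{\boxtimes}} ( \R f _{i*}\E _i )$ via \ref{boxtimestop-dfn2} and \ref{boxtimes-dfn2L}. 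This furnishes the morphism of $D ^{-}(\underset{i}{\boxtimes} ^{\mathrm{top}}\D _i, \O _Y)$.

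The hard part will be the commutativity of the square \ref{lem-Rfi*2Rf*top-diag}. After unwinding the definitions of $\underset{i}{\overset{\L}{\boxtimes}}$ and $\underset{i}{\overset{\L}{\boxtimes}} {}^{\mathrm{top}}$ through \ref{boxtimes-dfn2L} and \ref{boxtimestop-dfn2}, together with those of the canonical comparison morphisms $\underset{i}{\overset{\L}{\boxtimes}} {}^{\mathrm{top}} \to \underset{i}{\overset{\L}{\boxtimes}}$, I expect this to reduce to two already established compatibilities: that the $f ^{-1}$- and $f ^*$-adjunctions agree through the flat vertical morphisms $pr _i$, $pr _i '$ (Lemma \ref{lem-Rf*adjf-1*}), and that the lax monoidal morphism \ref{f_*/otimes-iso} is compatible with the projection morphism \ref{f_*/otimes-iso-projform} (Lemma \ref{lem-f_*/otimes-iso-projform2}, together with the commutative triangle \ref{f_*/otimes-iso-projformter}). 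Granting these, the diagram commutes; the remainder of the argument is purely formal diagram chasing.
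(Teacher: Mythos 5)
Your construction of the morphism in part (a) matches the paper exactly: compose the per-factor base-change maps $c_i \colon pr_i'^{-1}\R f_{i*}\E_i\to \R f_* pr_i^{-1}\E_i$ with the iterated lax-monoidal map \ref{f_*/otimes-iso} applied to $f\colon (X,\varpi^{-1}\O_S)\to (Y,\varpi'^{-1}\O_S)$; the linearity claim is recorded the same way.

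For part (b), however, you take a genuinely different route, and it is the one that needs more care than your sketch suggests. You propose to apply $\O_Y\otimes_{\underset{i}{\boxtimes}^{\mathrm{top}}\O_{Y_i}}(-)$ to the $\boxtimes^{\mathrm{top}}$-morphism and then invoke the projection morphism \ref{f_*/otimes-iso-projform} ``for $f\colon(X,\O_X)\to(Y,\O_Y)$, using $\L f^*\O_Y\riso\O_X$ and $f^*(\underset{i}{\boxtimes}^{\mathrm{top}}\O_{Y_i})\riso\underset{i}{\boxtimes}^{\mathrm{top}}\O_{X_i}$.'' But \ref{f_*/otimes-iso-projform} is stated for a morphism of ringed spaces $(U,\O_U)\to(V,\O_V)$ and concerns tensors over $\O_V$. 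Here the tensor you want to move across $\R f_*$ is over $\underset{i}{\boxtimes}^{\mathrm{top}}\O_{Y_i}$, not over $\O_Y$, so you would need the projection morphism for $f$ regarded as a morphism of ringed spaces with structure sheaves $\underset{i}{\boxtimes}^{\mathrm{top}}\O_{X_i}$ and $\underset{i}{\boxtimes}^{\mathrm{top}}\O_{Y_i}$. For that version, the relevant identification is $\L\widetilde f^*\O_Y\riso\O_X$, where $\widetilde f^*$ denotes pullback relative to $\underset{i}{\boxtimes}^{\mathrm{top}}\O$, i.e. $\underset{i}{\boxtimes}^{\mathrm{top}}\O_{X_i}\otimes^{\L}_{f^{-1}\underset{i}{\boxtimes}^{\mathrm{top}}\O_{Y_i}} f^{-1}\O_Y\riso\O_X$; this is neither the trivial fact ``pullback of the structure sheaf is the structure sheaf'' nor the $\O$-module identity $\L f^*\O_Y=\O_X$ that you quote, and your displayed isomorphism $f^*(\underset{i}{\boxtimes}^{\mathrm{top}}\O_{Y_i})\riso\underset{i}{\boxtimes}^{\mathrm{top}}\O_{X_i}$ is false for the $\O$-module pullback (one has $f^{-1}\underset{i}{\boxtimes}^{\mathrm{top}}\O_{Y_i}=\underset{i}{\boxtimes}^{\mathrm{top}}f_i^{-1}\O_{Y_i}\ne\underset{i}{\boxtimes}^{\mathrm{top}}\O_{X_i}$). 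So this passage is not yet a proof.

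The paper avoids the projection formula entirely in this lemma: it builds the bottom row of the diagram \ref{Rfi*2Rf*dfn} by first replacing $pr_i^{-1}\E_i$ with $pr_i^{*}\E_i$ via adjunction, then replacing $\otimes^{\L}_{\O_S}$ by $\otimes^{\L}_{\O_Y}$ (resp. $\otimes^{\L}_{\O_X}$ inside $\R f_*$), and finally applying \ref{f_*/otimes-iso} again, this time for $f\colon(X,\O_X)\to(Y,\O_Y)$, to land in $\R f_*(\underset{\O_X}{\otimes^{\L}} pr_i^*\E_i)\riso\R f_*(\underset{i}{\overset{\L}{\boxtimes}}\E_i)$. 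The commutativity of \ref{lem-Rfi*2Rf*top-diag} is then obtained square by square in \ref{Rfi*2Rf*dfn}: the top-left square commutes by \ref{lem-Rf*adjf-1*}, the bottom-right by construction of the vertical arrows, and the rest by functoriality. In particular neither \ref{lem-f_*/otimes-iso-projform2} nor \ref{f_*/otimes-iso-projformter} is needed here — those enter only in the proof of Theorem \ref{sch-prop-boxtimes-v*}. Your proposal could presumably be salvaged, but only after carefully setting up $\widetilde f^*$ and checking $\L\widetilde f^*\O_Y\riso\O_X$ (a non-trivial derived tensor identity); the paper's route sidesteps this.
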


\begin{proof}
0) If 
$\G _i \in  D ^{-} (\varpi _i ^{-1}\O _{S})$ 
(resp. 
$\G _i \in  D ^{-} (\varpi _i ^{\prime -1}\O _{S})$),
then we set 
$\underset{i}{\overset{\L}{\otimes}} \G _i 
:= 
\G _1 
\otimes ^{\L} _{\O _S}
\G _2
\otimes ^{\L} _{\O _S}
\cdots
\otimes ^{\L} _{\O _S}
\G _n$. 

a) 
 By applying \ref{f_*/otimes-iso} to the morphism of ringed spaces
$(X, \varpi ^{-1} \O _S)
\to 
(Y, \varpi ^{\prime -1} \O _S)$, we get the morphism 
$\underset{i}{\overset{\L}{\otimes}} ( \R f _{*} pr ^{-1} _i  \E _i)
\to 
\R f _{*} \left (\underset{i}{\overset{\L}{\otimes}} ( pr ^{-1} _i  \E _i) \right )$.
By functoriality, since 
$pr ^{-1} _i  \E _i \in D ^- (f ^{-1} pr ^{\prime -1} _i\D _i , \varpi ^{-1} \O _S)$
this latter morphism belongs to 
$D ^{-} (
\underset{i}{\boxtimes} ^{\mathrm{top}} 
\D _i)$.

b) Similarly, by applying \ref{f_*/otimes-iso} to the morphism of ringed spaces
$(X, \varpi ^{-1} \O _S)
\to 
(Y, \varpi ^{\prime -1} \O _S)$, we get the morphisms 
$\underset{i}{\overset{\L}{\otimes}} ( \R f _{*} pr ^{-1} _i  \E _i)
\to 
\R f _{*} \left (\underset{i}{\overset{\L}{\otimes}} ( pr ^{-1} _i  \E _i) \right )$
and
$\underset{i}{\overset{\L}{\otimes}} ( \R f _{*} pr ^{*} _i  \E _i)
\to 
\R f _{*} \left (\underset{i}{\overset{\L}{\otimes}} ( pr ^{*} _i  \E _i) \right )$
of 
$D ^{-} (
\underset{i}{\boxtimes} ^{\mathrm{top}} 
\D _i,
\underset{i}{\boxtimes} ^{\mathrm{top}} 
\O _{Y _i})$.
Finally, by applying \ref{f_*/otimes-iso} to the morphism $X \to Y$,
we get the morphisms 
$\underset{\O _Y}{\otimes ^{\L}} ( \R f _{*} pr ^{*} _i  \E _i)
\to 
\R f _{*} \left (\underset{i}{\overset{\L}{\otimes}} ( pr ^{*} _i  \E _i) \right )$
of 
$D ^{-} (
\underset{i}{\boxtimes} ^{\mathrm{top}} 
\D _i,
\O _Y)$.

1) The  canonical morphism 
$\underset{i}{\overset{\L}{\boxtimes}} {}^{\mathrm{top}} ( \R f _{i*} \E _i)
\to 
\R f _{*} \underset{i}{\overset{\L}{\boxtimes}} {}^{\mathrm{top}} (  \E _i)$
(resp. 
$\underset{i}{\overset{\L}{\boxtimes}} ( \R f _{i*} \E _i)
\to 
\R f _{*} ( \underset{i}{\overset{\L}{\boxtimes}} \E _i) $
is by definition 
is the one making commutative the right (resp. left) rectangle of 
the diagram \ref{Rfi*2Rf*dfn} below:
\begin{equation}
\label{Rfi*2Rf*dfn}
\xymatrix{
{\underset{i}{\overset{\L}{\boxtimes}} {}^{\mathrm{top}} ( \R f _{i*} \E _i)}  
\ar@{=}[r] ^-{}
\ar@{.>}[dd] ^-{}
& 
{\underset{i}{\overset{\L}{\otimes}} ( pr ^{\prime -1} _i \R f _{i*} \E _i)}  
\ar[r] ^-{}
\ar[d] ^-{\mathrm{adj}}
& 
{\underset{i}{\overset{\L}{\otimes}} ( pr ^{\prime *} _i \R f _{i*} \E _i)}  
\ar[r] 
\ar[d] ^-{\mathrm{adj}}
& 
{\underset{\O _Y}{\overset{\L}{\otimes}} ( pr ^{\prime *} _i \R f _{i*} \E _i)}  
\ar[r] ^-{\sim}
\ar[d] ^-{\mathrm{adj}}
&
{\underset{i}{\overset{\L}{\boxtimes}}  ( \R f _{i*} \E _i)}  
\ar@{.>}[dd] ^-{}
\\ 
& 
{\underset{i}{\overset{\L}{\otimes}} ( \R f _{*} pr ^{-1} _i  \E _i)}  
\ar[r] ^-{}
\ar[d] ^-{\ref{f_*/otimes-iso}}
& 
{\underset{i}{\overset{\L}{\otimes}} ( \R f _{*} pr ^{*} _i  \E _i)}  
\ar[d] ^-{\ref{f_*/otimes-iso}}
\ar[r] ^-{}
& 
{\underset{\O _Y}{\overset{\L}{\otimes}} ( \R f _{*} pr ^{*} _i  \E _i)}  
\ar[d] ^-{\ref{f_*/otimes-iso}}
&
\\
{\R f _{*} \underset{i}{\overset{\L}{\boxtimes}} {}^{\mathrm{top}} (  \E _i)}  
\ar@{=}[r] ^-{}
& 
{\R f _{*} \left (\underset{i}{\overset{\L}{\otimes}} ( pr ^{-1} _i  \E _i) \right )}  
\ar[r] ^-{}
& 
{\R f _{*} \left (\underset{i}{\overset{\L}{\otimes}} ( pr ^{*} _i  \E _i) \right )}  
\ar[r] 
& 
{\R f _{*} \left (\underset{\O _X}{\overset{\L}{\otimes}} ( pr ^{*} _i  \E _i) \right )}  
\ar[r] ^-{\sim}
&
{\underset{i}{\overset{\L}{\boxtimes}}  ( \R f _{i*} \E _i)}  
}
\end{equation}
Using \ref{lem-Rf*adjf-1*}, we get the commutativity of the top left square.
We check the commutativity of the bottom right square by construction of 
both vertical arrows. 
The other squares are commutative by functoriality. 
Since the right and left rectangles are commutative by definition,
then the diagram \ref{Rfi*2Rf*dfn} is commutative. 
Finally, the composition of the top (resp. bottom) morphisms of \ref{Rfi*2Rf*dfn}
is the canonical morphism
$\underset{i}{\overset{\L}{\boxtimes}} {}^{\mathrm{top}} ( \R f _{i*} \E _i)
\to \underset{i}{\overset{\L}{\boxtimes}}  ( \R f _{i*} \E _i)$
(resp.
$\R f _{*} \underset{i}{\overset{\L}{\boxtimes}} {}^{\mathrm{top}} (  \E _i)
\to 
\underset{i}{\overset{\L}{\boxtimes}}  ( \R f _{i*} \E _i)$).
\end{proof}

\begin{empt}
\label{lem-boxtimesDleftarrow}
By flatness, the morphism
$\underset{i}{\overset{\L}{\boxtimes}}
 \cD _{Y _i \leftarrow X _i} ^{(m)} 
 \to \underset{i}{\boxtimes}  
 \cD _{Y _i \leftarrow X _i} ^{(m)} $
 is an isomorphism.
Moreover, 
using \ref{boxtimes-left-right},
 we get by functoriality the canonical isomorphism of 
$(f ^{-1} \underset{i}{\boxtimes} ^{\mathrm{top}} \D ^{(m)}  _{Y _i} , \D  ^{(m)} _{X})$-bimodules
\begin{equation}
\label{boxtimesDleftarrow}
\underset{i}{\boxtimes}  
 \cD _{Y _i \leftarrow X _i} ^{(m)} 
 =
 \underset{i}{\boxtimes}  
 \left (
\omega _{X _i} \otimes _{\O _{X _i}} f ^* _i ( \cD _{Y _i} ^{(m)} \otimes _{\O _{Y _i}} \omega _{Y _i} ^{-1} )
\right) 
\underset{\ref{comm-boxtimes-f*}}{\riso}
\omega _{X} \otimes _{\O _{X}}  
f ^*  ( \underset{i}{\boxtimes}  
\cD _{Y _i} ^{(m)} 
\otimes _{\O _{Y}} \omega _{Y} ^{-1} )
\riso
 \cD _{Y \leftarrow X } ^{(m)},
\end{equation}
where the last isomorphism is a consequence of 
the canonical isomorphism
$\underset{i}{\boxtimes} 
\D ^{(m)} _{Y _i}
\riso \D ^{(m)} _{Y }$.
This yields a canonical structure of 
$(f ^{-1} \D ^{(m)}  _{Y} , \D  ^{(m)} _{X})$-bimodule
on $\underset{i}{\boxtimes}  
 \cD _{Y _i \leftarrow X _i} ^{(m)} $ making 
$(f ^{-1} \D ^{(m)}  _{Y} , \D  ^{(m)} _{X})$-bilinear
the composite isomorphism \ref{boxtimesDleftarrow}. 
Similarly for $f$.
\end{empt}

\begin{thm}
\label{sch-prop-boxtimes-v*}
For $i= 1,\dots, n$, let 
$\cE _i\in D ^{\mathrm{b}} _{\mathrm{qc}} (\O _{X _i})$.
The canonical morphism
\begin{equation}
\label{sch-prop-boxtimes-v*iso}
\underset{i}{\overset{\L}{\boxtimes}}  
\R f _{i*} (\cE _i )
\to
\R f _* (\underset{i}{\overset{\L}{\boxtimes}}   \cE _i ).
\end{equation}
is an isomorphism.
\end{thm}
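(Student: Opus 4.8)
The plan is to reduce the statement to the known projection/base change behavior of ordinary $\R f_*$ via the isomorphism \ref{boxtimes-dfn2L}, which expresses $\underset{i}{\overset{\L}{\boxtimes}} \cE_i$ as $pr_1^* \cE_1 \otimes^{\L}_{\O_X} \cdots \otimes^{\L}_{\O_X} pr_n^* \cE_n$, and similarly on $Y$. First I would unwind both sides of \ref{sch-prop-boxtimes-v*iso} using this description and the compatibility diagram \ref{lem-Rfi*2Rf*top-diag}, so that the claim becomes: the canonical map
\[
pr_1'^* \R f_{1*}(\cE_1) \otimes^{\L}_{\O_Y} \cdots \otimes^{\L}_{\O_Y} pr_n'^* \R f_{n*}(\cE_n)
\to
\R f_* \left( pr_1^* \cE_1 \otimes^{\L}_{\O_X} \cdots \otimes^{\L}_{\O_X} pr_n^* \cE_n \right)
\]
is an isomorphism. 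The natural strategy is to prove this by induction on $n$, treating one factor at a time; so the heart of the matter is the case $n=2$ with $f_2 = \mathrm{id}$, i.e. the situation of Remark \ref{n=2,f2=id-diag-parag} and its cartesian square \ref{n=2,f2=id-diag}.

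In that basic case, set $T := X_2 = Y_2$, and note $f = f_1 \times \mathrm{id}$ with base change square relating $pr_1, pr_1', f, f_1$. Here $\underset{i}{\overset{\L}{\boxtimes}}\cE_i = pr_1^* \cE_1 \otimes^{\L}_{\O_X} pr_2^* \cE_2$, and the map to establish reads
\[
pr_1'^* \R f_{1*}(\cE_1) \otimes^{\L}_{\O_Y} pr_2'^* \cE_2
\to
\R f_* \big( pr_1^* \cE_1 \otimes^{\L}_{\O_X} pr_2^* \cE_2 \big).
\]
First I would apply the projection formula \ref{f_*/otimes-iso-projform} for the morphism $f \colon X \to Y$ with $\cM = pr_1^* \cE_1$ and $\cN = pr_2'^* \cE_2$ (using $\L f^* pr_2'^* \cE_2 \riso pr_2^* \cE_2$ since $pr_2' \circ f = pr_2$): since $pr_1^*\cE_1$ has bounded above quasi-coherent cohomology (pullback of $\cE_1 \in D^{\mathrm b}_{\mathrm{qc}}$) and $f$ is quasi-separated and quasi-compact between noetherian schemes of finite Krull dimension, the last paragraph of \ref{f_*/otimes} tells us that projection morphism is an isomorphism. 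This reduces us to checking that the flat base change morphism $pr_1'^* \R f_{1*}(\cE_1) \riso \R f_* pr_1^* \cE_1$ is an isomorphism: but $pr_1'$ is flat, $f_1$ has finite cohomological dimension, and $\cE_1 \in D^{\mathrm b}_{\mathrm{qc}}(\O_{X_1})$, so flat base change (the standard statement from \cite{HaRD}, combined with \ref{lem-Rf*adjf-1*} for the compatibility of adjunctions) applies. Assembling these two isomorphisms, and verifying via \ref{f_*/otimes-iso-projform2} and \ref{f_*/otimes-iso-projformter} that the composite agrees with the canonical morphism \ref{sch-prop-boxtimes-v*iso}, settles $n=2$ with $f_2 = \mathrm{id}$.

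The general case follows by factoring $f = f_1 \times \cdots \times f_n$ as a composite of maps each of which changes only one factor, namely $f_1 \times \cdots \times f_k \times \mathrm{id} \times \cdots \times \mathrm{id}$ for $k = 0, \dots, n$; each step is an instance of the basic case (after regrouping all untouched factors into a single "$T$"), one uses transitivity of $\R(-)_*$ and the functoriality of the morphisms \ref{f_*/otimes-iso} and \ref{f_*/otimes-iso-projform} with respect to composition, together with the associativity/commutativity of $\underset{i}{\overset{\L}{\boxtimes}}$ to permute the factors. The main obstacle I anticipate is bookkeeping rather than conceptual: one must carefully track the canonical morphism in \ref{sch-prop-boxtimes-v*iso} through all these identifications and confirm that the isomorphism produced by projection-plus-base-change is \emph{the} canonical one (not merely \emph{an} isomorphism) — this is exactly what the compatibility diagrams \ref{lem-Rfi*2Rf*top-diag}, \ref{f_*/otimes-iso-projform2}, \ref{f_*/otimes-iso-projformter} and Lemma \ref{lem-Rf*adjf-1*} are designed to handle, so the argument is really a matter of chaining those commutative diagrams correctly. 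A secondary technical point is ensuring the quasi-coherence and boundedness hypotheses propagate: $pr_i^* \cE_i$ and its tensor products stay in $D^{\mathrm b}_{\mathrm{qc}}$ (flatness of projections and finite Tor-dimension arguments), which is needed for each application of the projection formula.
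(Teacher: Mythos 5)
Your proposal is correct and follows essentially the same route as the paper: reduce to $n=2$ with $f_2=\mathrm{id}$ by factoring $f$ through products that vary one factor at a time (using that $\R f_{i*}\cE_i$ stays in $D^{\mathrm b}_{\mathrm{qc}}$), then settle the base case by combining the flat base change isomorphism $pr_1'^*\R f_{1*}(\cE_1)\riso\R f_* pr_1^*\cE_1$ with the projection formula \ref{f_*/otimes-iso-projform}, verifying via \ref{f_*/otimes-iso-projform2} and \ref{f_*/otimes-iso-projformter} that this composite really is the canonical morphism. The paper's diagram \ref{sch-prop-boxtimes-v*-diagXYtildeY2} is exactly the explicit form of the "bookkeeping" you anticipate for the reduction to $f_2=\mathrm{id}$, and \ref{sch-prop-boxtimes-v+isoproofbis} is the diagram chasing you would do in the base case.
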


\begin{proof}
i) By construction of the morphism
\ref{sch-prop-boxtimes-v*iso} (i.e. the one making commutative the right rectangle of \ref{Rfi*2Rf*dfn}), 
we reduce to the case where $n=2$. We have to check that  the composition
\begin{equation}
\label{sch-prop-boxtimes-v+isoproof}
pr ^{\prime *} _1 \R f _{1*}( \E _1)
\otimes ^\L _{\O _Y}
pr ^{\prime *} _2 \R f _{2*} \E _2
\underset{\mathrm{adj}}{\longrightarrow}
\R f _{*} pr ^{*} _1  (\E _1)
\otimes ^\L _{\O _Y}
\R f _{*}  pr ^{*} _2 \E _2
\underset{\ref{f_*/otimes-iso}}{\longrightarrow}
\R f _{*} ( 
pr ^{*} _1  (\E _1)
\otimes ^\L _{\O _X}
pr ^{*} _2 \E _2)
\end{equation}
is an isomorphism.

ii) We reduce to the case where $f _2= id$ as follows. 
Consider the commutative diagram
\begin{equation}
\label{sch-prop-boxtimes-v*-diagXYtildeY}
\xymatrix{
{X _1}
\ar@{=}[d] ^-{} 
& 
{X=X _1 \times X _2} 
\ar[r] ^-{pr _2}
\ar[l] _-{pr _1}
\ar[d] ^-{\widetilde{f} _2:= id \times f _2}
&
{X _2} 
\ar[d] ^-{f _2}
\\ 
{X _1}
\ar[d] ^-{f _1}
& 
{\widetilde{Y}:=X _1 \times Y _2} 
\ar[r] ^-{\widetilde{pr}_2}
\ar[l] _-{\widetilde{pr} _1}
\ar[d] ^-{\widetilde{f} _1:= f _1 \times id}
&
{X _2} 
\ar@{=}[d] ^-{} 
\\
{Y _1}
& 
{Y:=Y _1 \times Y _2} 
\ar[r] ^-{pr ' _2}
\ar[l] _-{pr ' _1}
&
{X _2.} 
}
\end{equation}
By adjunction with respect to the left bottom square of \ref{sch-prop-boxtimes-v*-diagXYtildeY}
(resp. the left top square of \ref{sch-prop-boxtimes-v*-diagXYtildeY})
the morphism 
$pr ^{\prime *} _1 \R f _{1*}( \E _1)
\overset{\mathrm{adj}}{\longrightarrow} 
\R \widetilde{f} _{1*} \widetilde{pr} ^{*} _1 ( \E _1)$
and 
$\widetilde{pr} ^{*} _1 ( \E _1)
=
\widetilde{pr} ^{*} _1 id _*  ( \E _1)
\overset{\mathrm{adj}}{\longrightarrow}
\R \widetilde{f} _{2*} pr ^{*} _1 ( \E _1)$.
By transitivity,
we get that the composition 
$pr ^{\prime *} _1 \R f _{1*}( \E _1)
\overset{\mathrm{adj}}{\longrightarrow} 
\R \widetilde{f} _{1*} \widetilde{pr} ^{*} _1 ( \E _1)
\overset{\mathrm{adj}}{\longrightarrow} 
\R \widetilde{f} _{1*} \R \widetilde{f} _{2*} pr ^{*} _1 ( \E _1)
\riso 
\R f_{*} pr ^{*} _1 ( \E _1)$
is the adjunction morphism with respect to the left rectangle of \ref{sch-prop-boxtimes-v*-diagXYtildeY}
(i.e. the composite of both left squares).
Similarly, 
we get by transitivity that the composition
$pr ^{\prime *} _2 \R f _{2*} \E _2
\overset{\mathrm{adj}}{\longrightarrow} 
\R \widetilde{f} _{1*} 
\widetilde{pr} ^{*}   _2 
\R f _{2*} \E _2
\overset{\mathrm{adj}}{\longrightarrow} 
\R \widetilde{f} _{1*} 
\R \widetilde{f} _{2*}
pr ^{*}   _2  \E _2
\riso 
\R f _{*} 
pr ^{*}   _2  \E _2$ is the adjunction morphism.
This yields the commutativity of the top left square of the following diagram:
\begin{equation}
\label{sch-prop-boxtimes-v*-diagXYtildeY2}
\xymatrix{
{pr ^{\prime *} _1 \R f _{1*}( \E _1)
\underset{\O _Y}{\overset{\L}{\otimes}} 
pr ^{\prime *} _2 \R f _{2*} \E _2} 
\ar[r] ^-{\mathrm{adj}}
\ar[d] ^-{\mathrm{adj}}
& 
{\R \widetilde{f} _{1*} \widetilde{pr} ^{*} _1 ( \E _1)
\underset{\O _{Y}}{\overset{\L}{\otimes}} 
\R \widetilde{f} _{1*} 
\widetilde{pr} ^{*}   _2 
\R f _{2*} \E _2} 
\ar[r] ^-{\ref{f_*/otimes-iso}}
\ar[d] ^-{\mathrm{adj}}
&
{\R \widetilde{f} _{1*} \left (
\widetilde{pr} ^{*} _1 ( \E _1)
\underset{\O _{\widetilde{Y}}}{\overset{\L}{\otimes}} 
\widetilde{pr} ^{*}   _2 
\R f _{2*} \E _2
\right) } 
\ar[d] ^-{\mathrm{adj}}
\\ 
{\R f _{*} pr ^{*} _1 ( \E _1)
\underset{\O _{Y}}{\overset{\L}{\otimes}} 
\R f _{*} 
pr ^{*}   _2  \E _2} 
\ar[r] ^-{\sim}
\ar[d] ^-{\ref{f_*/otimes-iso}}
& 
{\R \widetilde{f} _{1*} \R \widetilde{f} _{2*} pr ^{*} _1 ( \E _1)
\underset{\O _{Y}}{\overset{\L}{\otimes}} 
\R \widetilde{f} _{1*} 
\R \widetilde{f} _{2*}
pr ^{*}   _2  \E _2} 
\ar[r] ^-{\ref{f_*/otimes-iso}}
& 
{\R \widetilde{f} _{1*} 
\left (
\R \widetilde{f} _{2*} pr ^{*} _1 ( \E _1)
\underset{\O _{\widetilde{Y}}}{\overset{\L}{\otimes}} 
\R \widetilde{f} _{2*}
pr ^{*}   _2 
 \E _2
\right)  } 
\ar[d] ^-{\ref{f_*/otimes-iso}}
\\
{\R f _{*} \left (pr ^{*} _1 ( \E _1)
\underset{\O _{X}}{\overset{\L}{\otimes}} 
pr ^{*}   _2  \E _2\right) } 
\ar[rr] ^-{\sim}
& 
{ } 
&
{\R \widetilde{f} _{1*} \R \widetilde{f} _{2*} 
\left (
pr ^{*} _1 ( \E _1)
\underset{\O _{X}}{\overset{\L}{\otimes}} 
pr ^{*}   _2 
 \E _2
\right),  }
}
\end{equation}
where the left vertical morphisms (resp. the top horizontal morphisms, resp. the right vertical morphisms), 
are the morphisms of \ref{sch-prop-boxtimes-v+isoproof} (resp. 
the morphisms of \ref{sch-prop-boxtimes-v+isoproof} in the case where $f _2= id$, resp. where $f _1 = id$).
By transitivity of the morphism of the form \ref{f_*/otimes-iso}, 
we get the commutativity of the bottom rectangle. The top right square is commutative by functoriality.
Hence, the diagram \ref{sch-prop-boxtimes-v*-diagXYtildeY2} is commutative.
By stability of the quasi-coherence under (topological) push-forwards, 
$\R f _{2*} \E _2$ is quasi-coherent. Hence, the case where $f _1$ or $f _2$ is the identity implies the general case. 
By symmetry, we can suppose $f _2=id$.

iii) Consider the commutative diagram
\small
\begin{equation}
\label{sch-prop-boxtimes-v+isoproofbis}
\xymatrix @C=0,45cm {
{pr ^{\prime *} _1 \R f _{1*}( \E _1)
\otimes ^\L _{\O _Y}
pr ^{\prime *} _2 \R f _{2*} \E _2}
\ar[r] ^-{\mathrm{adj}}
\ar[d] ^-{\sim}
&
{\R f _{*} pr ^{*} _1  (\E _1)
\otimes ^\L _{\O _Y}
\R f _{*}  pr ^{*} _2 \E _2}
\ar[r] ^-{\ref{f_*/otimes-iso}}
&
{\R f _{*} ( 
pr ^{*} _1  (\E _1)
\otimes ^\L _{\O _X}
pr ^{*} _2 \E _2)} 
\\ 
{\R f _{*} pr ^{*} _1  (\E _1)
\otimes ^\L _{\O _Y}
pr ^{\prime *} _2 \R f _{2*} \E _2}
\ar@{=}[d] 
\ar[ru] ^-{}
&
{\R f _{*} pr ^{*} _1  (\E _1)
\otimes ^\L _{\O _Y}
\R f _{*} pr ^{ *} _2 \L f _2 ^{*}  \R f _{2*} \E _2}
\ar[r] ^-{}
\ar[u] ^-{}
&
{\R f _{*} ( 
pr ^{*} _1  (\E _1)
\otimes ^\L _{\O _X}
pr ^{ *} _2 \L f _2 ^{*}  \R f _{2*} \E _2)}
\ar[u] ^-{}
\\ 
{\R f _{*} pr ^{*} _1  (\E _1)
\otimes ^\L _{\O _Y}
pr ^{\prime *} _2 \R f _{2*} \E _2}
\ar[r] ^--{id \otimes \mathrm{adj}}
&
{\R f _{*} pr ^{*} _1  (\E _1)
\otimes ^\L _{\O _Y}
\R f _{*} \L f ^{*} pr ^{\prime *} _2 \R f _{2*} \E _2}
\ar[r] ^-{\ref{f_*/otimes-iso}}
\ar[u] ^-{\sim}
&
{\R f _{*} ( 
pr ^{*} _1  (\E _1)
\otimes ^\L _{\O _X}
\L f ^{*} pr ^{\prime *} _2 \R f _{2*} \E _2)}
\ar[u] ^-{\sim}
}
\end{equation}
\normalsize
where the top horizontal morphisms are
\ref{sch-prop-boxtimes-v+isoproof},
where the left vertical isomorphism is induced by the base change morphism
$pr ^{\prime *} _1 \R f _{1*}( \E _1)
\to
\R f _{*} pr ^{*} _1  (\E _1)$ which is in our case an isomorphism (see \cite[II.5.12]{HaRD}),
where the trapeze  is commutative
by construction of the base change isomorphism
$pr ^{\prime *} _2 \R f _{2*} \E _2 \riso \R f _{*}  pr ^{*} _2 \E _2$,
where the composition of 
the bottom horizontal morphisms is the projection morphism
and hence is an isomorphism (see \ref{lem-f_*/otimes-iso-projform2}).
Since $f _2 =id$, then the top middle and right vertical arrows are identity morphisms.
Hence, the composition of the top arrows of the diagram \ref{sch-prop-boxtimes-v+isoproofbis} is an isomorphism. 
\end{proof}

\begin{ntn}
\label{ntn-Tf}
Let 
$\cE _i\in D ^{\mathrm{b}} _{\mathrm{qc}} (\D ^{(m)} _{X _i})$.
We denote by 
$\mathrm{T} _{f _i}
\colon 
\colon \R f _{i,*}  (\cE _i)
\to
\R f _{i,*}  (
 \cD _{Y _i  \leftarrow X _i} ^{(m)} \otimes _{  \cD _{X _i} ^{(m)}} ^{\L}
\cE _i)
=
f _{i,+} ^{ (m)}(\cE_i)$,
the canonical morphism induced by
the homomorphism 
$\cD _{X _i} ^{(m)}
\to 
\cD _{Y _i  \leftarrow X _i} ^{(m)}  $
given by the left $\cD _{X _i} ^{(m)}$-module structure of 
$\cD _{Y _i  \leftarrow X _i} ^{(m)}  $.

Let 
$\cM _i\in D ^{\mathrm{b}} _{\mathrm{qc}} ({} ^r \D ^{(m)} _{X _i})$.
We denote by 
$\mathrm{T} _{f _i}
\colon 
\colon \R f _{i,*}  (\cM _i)
\to
\R f _{i,*}  (
\cM _i \otimes _{  \cD _{X _i} ^{(m)}} ^{\L}
 \cD _{X _i  \to  Y _i} ^{(m)})
=
f _{i,+} ^{ (m)}(\cM_i)$,
the canonical morphism induced by
the homomorphism 
$\cD _{X _i} ^{(m)}
\to 
\cD _{X _i  \to Y _i} ^{(m)}  $
given by the right $\cD _{X _i} ^{(m)}$-module structure of 
$\cD _{X _i  \to Y _i} ^{(m)}  $.

\end{ntn}

\begin{thm}
\label{sch-prop-boxtimes-v+}
For $i= 1,\dots, n$, let 
$\cE _i\in D ^{\mathrm{b}} _{\mathrm{qc}} ({} ^{*}\D ^{(m)} _{X _i})$, with 
$*= r$ or $*=l$.
We have the canonical isomorphism
\begin{equation}
\label{sch-prop-boxtimes-v+iso}
\underset{i}{\overset{\L}{\boxtimes}}  f _{i+} ^{ (m)} (\cE _i )
\riso
f _+ ^{ (m)} (\underset{i}{\overset{\L}{\boxtimes}}   \cE _i )
\end{equation}
of $D ^{\mathrm{b}} _{\mathrm{qc}} ({} ^{*}\D ^{(m)} _{Y})$
making commutative the canonical diagram
\begin{equation}
\label{sch-prop-boxtimes-v+iso2}
\xymatrix @ R=0,3cm{
{\underset{i}{\overset{\L}{\boxtimes}} {}^{\mathrm{top}} ( \R f _{i*} \E _i} )
\ar[r] ^-{}
\ar[d] ^-{}
&
{\underset{i}{\overset{\L}{\boxtimes}}  
\R f _{i*} (\cE _i )} 
\ar[d] ^-{\ref{sch-prop-boxtimes-v*iso}} _-{\sim}
\ar[r] ^-{\underset{i}{\overset{\L}{\boxtimes}} \mathrm{T} _{f_i}} 
_-{\ref{ntn-Tf}}
& 
{\underset{i}{\overset{\L}{\boxtimes}}  f _{i+} ^{ (m)} (\cE _i )} 
\ar[d] ^-{\sim}
\\
{\R f _{*} (\underset{i}{\overset{\L}{\boxtimes}} {}^{\mathrm{top}}  \E _i)} 
\ar[r] ^-{}
&  
{\R f _* (\underset{i}{\overset{\L}{\boxtimes}}   \cE _i )} 
\ar[r] ^-{\mathrm{T} _{f}} _-{\ref{ntn-Tf}}
& 
{f _+ ^{ (m)} (\underset{i}{\overset{\L}{\boxtimes}}   \cE _i ). } 
}
\end{equation}

\end{thm}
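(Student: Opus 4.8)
<br>

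The plan is to reduce the statement to the two theorems already proved for the underlying $\mathcal{O}$-module structures, namely the commutation of $\boxtimes$ with topological pushforwards (Theorem \ref{sch-prop-boxtimes-v*}) and the compatibility diagram \ref{lem-Rfi*2Rf*top-diag}, and then to track the $\mathcal{D}^{(m)}$-linearity through the construction. First I would construct the isomorphism \ref{sch-prop-boxtimes-v+iso} as the unique arrow making the right square of \ref{sch-prop-boxtimes-v+iso2} commute; this forces its definition once we know the middle vertical arrow $\underset{i}{\overset{\L}{\boxtimes}}\R f_{i*}(\cE_i)\riso\R f_*(\underset{i}{\overset{\L}{\boxtimes}}\cE_i)$ is an isomorphism, which is exactly Theorem \ref{sch-prop-boxtimes-v*} applied to the $\mathcal{O}_{X_i}$-complexes underlying the $\cE_i$ (they are quasi-coherent by hypothesis). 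The point to verify is that this middle arrow is not merely $\mathcal{O}_Y$-linear but $\underset{i}{\boxtimes}^{\mathrm{top}}\D^{(m)}_{Y_i}$-linear, hence (via $\underset{i}{\boxtimes}\D^{(m)}_{Y_i}\riso\D^{(m)}_Y$) actually $\D^{(m)}_Y$-linear; this follows from part \ref{lem-Rfi*2Rf*top2} of Lemma \ref{lem-Rfi*2Rf*top}, whose statement already records the bimodule refinement of the morphism \ref{sch-prop-boxtimes-v*iso}.

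Next I would handle the two cases $*=l$ and $*=r$. For $*=l$, the pushforward $f_{i+}^{(m)}(\cE_i)=\R f_{i*}(\cD^{(m)}_{Y_i\leftarrow X_i}\otimes^{\L}_{\cD^{(m)}_{X_i}}\cE_i)$, and one uses \ref{lem-boxtimesDleftarrow}: the canonical isomorphism $\underset{i}{\boxtimes}\cD^{(m)}_{Y_i\leftarrow X_i}\riso\cD^{(m)}_{Y\leftarrow X}$ of $(f^{-1}\underset{i}{\boxtimes}^{\mathrm{top}}\D^{(m)}_{Y_i},\D^{(m)}_X)$-bimodules allows one to identify $\underset{i}{\overset{\L}{\boxtimes}}(\cD^{(m)}_{Y_i\leftarrow X_i}\otimes^{\L}_{\cD^{(m)}_{X_i}}\cE_i)$ with $\cD^{(m)}_{Y\leftarrow X}\otimes^{\L}_{\cD^{(m)}_X}\underset{i}{\overset{\L}{\boxtimes}}\cE_i$ by Lemma \ref{com-botimes}.(\ref{com-botimes2}) (applied with $\D_i=\cD^{(m)}_{Y_i\leftarrow X_i}$, but really just invoking that $\boxtimes$ commutes with $\otimes^{\L}_{\cO}$ in a bimodule-compatible way). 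Combining this with the already-constructed topological isomorphism \ref{sch-prop-boxtimes-v*iso} applied to the complexes $\cD^{(m)}_{Y_i\leftarrow X_i}\otimes^{\L}_{\cD^{(m)}_{X_i}}\cE_i$ (still quasi-coherent over $\mathcal{O}_{X_i}$ because $\cD^{(m)}_{Y_i\leftarrow X_i}$ is $\mathcal{O}$-quasi-coherent and $\R f_{i*}$ preserves quasi-coherence) gives \ref{sch-prop-boxtimes-v+iso}; the case $*=r$ is identical with $\cD^{(m)}_{X_i\to Y_i}$ in place of $\cD^{(m)}_{Y_i\leftarrow X_i}$, using the right-module half of \ref{boxtimesDleftarrow} and Notation \ref{ntn-Tf}.

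Finally I would verify the commutativity of \ref{sch-prop-boxtimes-v+iso2}. The left square is precisely diagram \ref{lem-Rfi*2Rf*top-diag} of Lemma \ref{lem-Rfi*2Rf*top}, so it commutes by that lemma. For the right square, both $\mathrm{T}_{f_i}$ and $\mathrm{T}_f$ are induced by the structural ring homomorphisms $\cD^{(m)}_{X_i}\to\cD^{(m)}_{Y_i\leftarrow X_i}$ (resp. $\cD^{(m)}_X\to\cD^{(m)}_{Y\leftarrow X}$), and the identification \ref{boxtimesDleftarrow} is by construction compatible with these structural maps; hence the right square commutes by functoriality of $\underset{i}{\overset{\L}{\boxtimes}}$ with respect to morphisms of the coefficient sheaves, together with the naturality of \ref{sch-prop-boxtimes-v*iso} recorded in Lemma \ref{lem-Rfi*2Rf*top}.

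The main obstacle will be bookkeeping rather than mathematics: one must be careful that all the isomorphisms $\underset{i}{\overset{\L}{\boxtimes}}(-)\riso\underset{i}{\boxtimes}(-)$, the base change isomorphisms, and the identifications of $\cD^{(m)}_{Y\leftarrow X}$ with $\underset{i}{\boxtimes}\cD^{(m)}_{Y_i\leftarrow X_i}$ are simultaneously compatible with all three module structures in play (left over $\D^{(m)}_Y$, the intermediate $\underset{i}{\boxtimes}^{\mathrm{top}}$-structure, and $\mathcal{O}$), so that the arrow \ref{sch-prop-boxtimes-v+iso} is genuinely well-defined at the level of $D^{\mathrm{b}}_{\mathrm{qc}}({}^*\D^{(m)}_Y)$ and not just as an $\mathcal{O}_Y$-linear map upgraded by hand. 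Once Lemma \ref{lem-Rfi*2Rf*top} is granted in the bimodule form in which it is stated, this obstacle dissolves, and the proof reduces to assembling the cited diagrams. I expect the written proof to be short, essentially: ``apply \ref{sch-prop-boxtimes-v*}, \ref{lem-Rfi*2Rf*top}, \ref{boxtimesDleftarrow} and \ref{com-botimes}, and check functoriality.''
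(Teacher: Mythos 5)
Your proposal identifies the right raw materials — Theorem \ref{sch-prop-boxtimes-v*}, Lemma \ref{lem-Rfi*2Rf*top}, \ref{boxtimesDleftarrow} — but the actual proof in the paper has a very different structure, and there are several concrete gaps in yours.

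First, the construction you propose in the opening paragraph does not work. You say you would take \ref{sch-prop-boxtimes-v+iso} to be ``the unique arrow making the right square of \ref{sch-prop-boxtimes-v+iso2} commute,'' forced because the middle vertical arrow is an isomorphism. But in that right square the only other vertical arrow is the one you are trying to define, and the top and bottom horizontals are $\underset{i}{\overset{\L}{\boxtimes}}\mathrm{T}_{f_i}$ and $\mathrm{T}_f$, neither of which is an isomorphism. Knowing one vertical is an isomorphism does not determine the other when the horizontals are not invertible, so commutativity does not ``force'' anything; the square is a constraint, not a definition. The paper instead \emph{constructs} the morphism directly (step I, yielding \ref{const-sch-prop-boxtimes-v+}) as a chain of morphisms passing through $\underset{i}{\boxtimes}^{\mathrm{top}}$, and only afterwards proves it is an isomorphism.

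Second, and more seriously, your route to the isomorphism skips the paper's devissage. You want to apply Theorem \ref{sch-prop-boxtimes-v*} to the transfer complexes $\cM_i:=\cD^{(m)}_{Y_i\leftarrow X_i}\otimes^{\L}_{\cD^{(m)}_{X_i}}\cE_i$ and then ``track $\D^{(m)}$-linearity.'' But Theorem \ref{sch-prop-boxtimes-v*} gives an isomorphism in $D^{\mathrm{b}}_{\mathrm{qc}}(\O_Y)$, and Lemma \ref{lem-Rfi*2Rf*top}.(b), which you cite, only upgrades the relevant morphism to $\underset{i}{\boxtimes}^{\mathrm{top}}\D^{(m)}_{Y_i}$-linearity (a bimodule over $\underset{i}{\boxtimes}^{\mathrm{top}}\D^{(m)}_{Y_i}$ and $\O_{Y}$), not $\D^{(m)}_Y$-linearity. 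Passing from $\underset{i}{\boxtimes}^{\mathrm{top}}\D^{(m)}_{Y_i}$-linearity to $\D^{(m)}_Y$-linearity is exactly the delicate base-change-along-a-flat-but-non-trivial-ring-extension issue that the paper's explicit chain in \ref{const-sch-prop-boxtimes-v+} is designed to control. Moreover the intermediate identification you want, $\underset{i}{\overset{\L}{\boxtimes}}(\cD^{(m)}_{Y_i\leftarrow X_i}\otimes^{\L}_{\cD^{(m)}_{X_i}}\cE_i)\riso\cD^{(m)}_{Y\leftarrow X}\otimes^{\L}_{\cD^{(m)}_X}\underset{i}{\overset{\L}{\boxtimes}}\cE_i$, is a tensor product over $\cD^{(m)}_{X_i}$, so Lemma \ref{com-botimes}.(\ref{com-botimes2}) — which concerns $\otimes^{\L}_{\O_{X_i}}$ — is the wrong citation; the relevant input is the $^{\mathrm{top}}$ identity \ref{com-botimestop-iso2} followed by a tensor by $\O_X$, which the paper writes as a \emph{morphism}, not an isomorphism, in its chain. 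The paper does not attempt to prove the morphism is an isomorphism by matching it against Theorem \ref{sch-prop-boxtimes-v*} at the level of transfer modules; instead it reduces (using \cite[I.7.1.(i),(iv)]{HaRD}) to $\cE_i=\D^{(m)}_{X_i}\otimes_{\O_{X_i}}\cL_i$ an induced module, and then carries out the three-diagram computation (\ref{boxtimes-big-diag1}, \ref{boxtimes-big-diag2}, \ref{boxtimes-big-diag3}) that compares the constructed $\D^{(m)}$-linear morphism with the purely $\O$-linear one from Theorem \ref{sch-prop-boxtimes-v*} applied to $\cL_i$. That devissage, and the compatibility diagrams that go with it, are the real content of the argument and are entirely absent from your sketch. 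Finally, you would also need to check that the $\cM_i$ land in $D^{\mathrm{b}}_{\mathrm{qc}}(\O_{X_i})$ — quasi-coherence is clear but boundedness (finite Tor-dimension of $\cD^{(m)}_{Y_i\leftarrow X_i}$ over $\cD^{(m)}_{X_i}$) needs at least a remark.
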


\begin{proof}
0) Since the case where $*=r$ is similar, we reduce to suppose $*=l$.

I) 1) We have the morphisms of 
$D ^{\mathrm{b}}  (\underset{i}{\boxtimes} {}^{\mathrm{top}} \D ^{(m)}  _{Y _i}) $:
\begin{gather}
\notag
\underset{i}{\overset{\L}{\boxtimes}}  {}^{\mathrm{top}} 
\R f _{i*} ( \cD _{Y _i \leftarrow X _i} ^{(m)} \otimes _{\cD _{X _i} ^{(m)}} ^{\L}
\cE _i )
\underset{\ref{lem-Rfi*2Rf*top}.\ref{lem-Rfi*2Rf*top(a)}}{\longrightarrow} 
\R f _{*}
\underset{i}{\overset{\L}{\boxtimes}} {}^{\mathrm{top}}
 ( \cD _{Y _i \leftarrow X _i} ^{(m)} \otimes _{\cD _{X _i} ^{(m)}} ^{\L}
\cE _i )
\underset{\ref{com-botimestop-iso2}}{\riso} 
\R f _{*}  \left (
\underset{i}{\boxtimes}  ^{\mathrm{top}} 
 \cD _{Y _i \leftarrow X _i} ^{(m)} \otimes _{\underset{i}{\boxtimes}  ^{\mathrm{top}}  \cD _{X _i} ^{(m)}} ^{\L}
\underset{i}{\overset{\L}{\boxtimes}} {}^{\mathrm{top}}  \cE _i 
\right)
\\
\notag
\to 
\R f _{*}  \left (
\underset{i}{\boxtimes}  
 \cD _{Y _i \leftarrow X _i} ^{(m)} \otimes _{\underset{i}{\boxtimes}   \cD _{X _i} ^{(m)}} ^{\L}
\underset{i}{\overset{\L}{\boxtimes}} \cE _i 
\right)
\underset{\ref{boxtimesDleftarrow}}{\riso}
\R f _{*}  \left (
 \cD _{Y  \leftarrow X } ^{(m)} \otimes _{  \cD _{X } ^{(m)}} ^{\L}
\underset{i}{\overset{\L}{\boxtimes}}    \cE _i 
\right)
=
f _+ ^{ (m)}(\underset{i}{\overset{\L}{\boxtimes}}   \cE _i ).
\end{gather}
Since 
$f _+ ^{ (m)} (\underset{i}{\overset{\L}{\boxtimes}}   \cE _i )
\in D ^{\mathrm{b}} _{\mathrm{qc}} (\D ^{(m)} _{Y})$,
this yields the morphism 
of $D ^{\mathrm{b}} _{\mathrm{qc}} (\D ^{(m)} _{Y})$:
\begin{gather}
\label{const-sch-prop-boxtimes-v+}
\underset{i}{\overset{\L}{\boxtimes}}  f _{i+} ^{ (m)} (\cE _i )
=
\D ^{(m)} _{Y }
\otimes _{\underset{i}{\boxtimes} {}^{\mathrm{top}} \D ^{(m)}  _{Y _i}}
\underset{i}{\overset{\L}{\boxtimes}}  {}^{\mathrm{top}} 
\R f _{i*} ( \cD _{Y _i \leftarrow X _i} ^{(m)} \otimes _{\cD _{X _i} ^{(m)}} ^{\L}
\cE _i )
\to
f _+ ^{ (m)}(\underset{i}{\overset{\L}{\boxtimes}}   \cE _i ).
\end{gather}
Since the left square is commutative (see \ref{lem-Rfi*2Rf*top-diag}), to check 
the commutativity of the diagram \ref{sch-prop-boxtimes-v+iso2}, we notice it is enough 
to check the commutativity of the outline. This latter fact is easy.

II) It remains to check the morphism 
\ref{const-sch-prop-boxtimes-v+}
is an isomorphism.
Using
\cite[I.7.1.(i)]{HaRD}, we reduce to the case where 
$\cE _i$ is a quasi-coherent $\D ^{(m)} _{X _i}$-module.
We remark that such $\cE _i$ is a quotient of 
a $\D ^{(m)} _{X _i}$-module of the form 
$\D ^{(m)} _{X _i} \otimes _{\O _{X _i}} \cL _i$, 
where $\cL _i$ is a quasi-coherent $\O _{X _i}$-module
(e.g. take $\cL _i = \cE _i$). 
Since both functors of \ref{sch-prop-boxtimes-v+iso2} are way-out right, 
using \cite[I.7.1.(iv)]{HaRD}
we reduce to the case where 
$\cE _i = \D ^{(m)} _{X _i} \otimes _{\O _{X _i}} \cL _i$.

3) To simplify notation, 
put 
$
\underset{i}{\overset{\L}{\widetilde{\boxtimes}}}:= 
\underset{i}{\overset{\L}{\boxtimes}} {}^{\mathrm{top}}$,
$\cM _i := \cD _{Y _i \leftarrow X _i} ^{(m)}$,
$\cD _i := \D ^{(m)}  _{X _i}$,
$\O _{X _i}:= \O _i$,
$\cD ' _i := \D ^{(m)}  _{Y _i}$,
$ \O ' _i :=\O _{Y _i}$.
Since $\underset{i}{\boxtimes} ^{\mathrm{top}} f _i ^{-1}\D ^{(m)}  _{Y _i}=
f ^{-1} \underset{i}{\boxtimes} ^{\mathrm{top}} \D ^{(m)}  _{Y _i}$, we get the 
the following diagram in the category 
$D ^- (f ^{-1} \underset{i}{\boxtimes} ^{\mathrm{top}} \D ^{(m)}  _{Y _i} , \O _{X} )$:
\begin{equation}
\label{boxtimes-big-diag1}
\xymatrix{
{
\underset{i}{\overset{\L}{\widetilde{\boxtimes}}}  
\left (
\cM _i 
\otimes _{ \cD _i}   ^{\L}
(\cD _i \otimes _{\cO _i} \cL _i )
\right)
} 
\ar[d] ^-{\sim}
\ar[r] ^-{\sim} _-{\ref{com-botimestop-iso2}}
&
{
\underset{i}{\widetilde{\boxtimes}}  
\cM _i 
\otimes ^{\L} _{\underset{i}{\widetilde{\boxtimes}}  \cD _i} 
\underset{i}{\overset{\L}{\widetilde{\boxtimes}}}
(\cD _i \otimes _{\cO _i} \cL _i )
}
\ar[r] ^-{\sim} _-{\ref{com-botimestop-iso1}}
&
{
\underset{i}{\widetilde{\boxtimes}}  
\cM _i 
\otimes ^{\L} _{\underset{i}{\widetilde{\boxtimes}}  \cD _i} 
( \underset{i}{\widetilde{\boxtimes}}  
\cD _i 
\otimes _{\underset{i}{\widetilde{\boxtimes}}  \cO _i} 
\underset{i}{\overset{\L}{\widetilde{\boxtimes}}}  \cL _i )
} 
\ar[d] ^-{\sim}
\\
{
\underset{i}{\overset{\L}{\widetilde{\boxtimes}}}  
(
\cM _i 
\otimes _{\cO _i} \cL _i 
)
}
\ar[rr] ^-{\sim} _-{\ref{com-botimestop-iso1}}
\ar[d] ^-{}
&&
{
\underset{i}{\widetilde{\boxtimes}}  
\cM _i 
\otimes _{\underset{i}{\widetilde{\boxtimes}}  \cO _i} 
\underset{i}{\overset{\L}{\widetilde{\boxtimes}}}  \cL _i 
} 
\ar[d] ^-{}
\\ 
{
\underset{i}{\overset{\L}{\boxtimes}}  
(
\cM _i 
\otimes _{\cO _i} \cL _i 
)
}
\ar[rr] ^-{\sim}  _-{\ref{com-botimes-diag1}}
&&
{
\underset{i}{\boxtimes}  
\cM _i 
\otimes _{\underset{i}{\boxtimes}  \cO _i} 
\underset{i}{\overset{\L}{\boxtimes}}  \cL _i .
} 
}
\end{equation}
Using \ref{com-botimes-diag2},
we get the commutativity of the bottom rectangle. 
The commutativity of the top rectangle is straightforward.
Consider now the following diagram:
\begin{equation}
\label{boxtimes-big-diag2}
\xymatrix{
{
\underset{i}{\widetilde{\boxtimes}}  
\cM _i 
\otimes ^{\L} _{\underset{i}{\widetilde{\boxtimes}}  \cD _i} 
\underset{i}{\overset{\L}{\widetilde{\boxtimes}}}  
(\cD _i \otimes _{\cO _i} \cL _i )
} 
\ar[r] ^-{}
\ar[d] ^-{\sim} _-{\ref{com-botimestop-iso1}}
& 
{
\underset{i}{\widetilde{\boxtimes}}  
\cM _i 
\otimes ^{\L} _{\underset{i}{\widetilde{\boxtimes}}  \cD _i} 
\underset{i}{\overset{\L}{\boxtimes}}
(\cD _i \otimes _{\cO _i} \cL _i )
}
\ar[d] ^-{\sim} _-{\ref{com-botimes-diag1}}
\ar[r] ^-{}
&
{
\underset{i}{\boxtimes}  
\cM _i 
\otimes ^{\L} _{\underset{i}{\boxtimes}  \cD _i} 
\underset{i}{\overset{\L}{\boxtimes}}
(\cD _i \otimes _{\cO _i} \cL _i )
}
\ar[d] ^-{\sim}
\\
{
\underset{i}{\widetilde{\boxtimes}}  
\cM _i 
\otimes ^{\L} _{\underset{i}{\widetilde{\boxtimes}}  \cD _i} 
(\underset{i}{\widetilde{\boxtimes}}  
\cD _i 
\otimes _{\underset{i}{\widetilde{\boxtimes}}  \cO _i} 
\underset{i}{\overset{\L}{\widetilde{\boxtimes}}}  \cL _i )
} 
\ar[r] ^-{}
\ar[d] ^-{\sim}
&
{
\underset{i}{\widetilde{\boxtimes}}  
\cM _i 
\otimes ^{\L} _{\underset{i}{\widetilde{\boxtimes}}  \cD _i} 
(\underset{i}{\boxtimes}  
\cD _i 
\otimes _{\underset{i}{\boxtimes}  \cO _i} 
\underset{i}{\overset{\L}{\boxtimes}}  \cL _i )
}
\ar[r] ^-{}
&
{
\underset{i}{\boxtimes}  
\cM _i 
\otimes ^{\L} _{\underset{i}{\boxtimes}  \cD _i} 
(\underset{i}{\boxtimes}  
\cD _i 
\otimes _{\underset{i}{\boxtimes}  \cO _i} 
\underset{i}{\overset{\L}{\boxtimes}}  \cL _i )
}
\ar[d] ^-{\sim}
\\ 
{
\underset{i}{\widetilde{\boxtimes}}  
\cM _i 
\otimes _{\underset{i}{\widetilde{\boxtimes}}  \cO _i} 
\underset{i}{\overset{\L}{\widetilde{\boxtimes}}}  \cL _i 
} 
\ar[rr] ^-{}
&& 
{
\underset{i}{\boxtimes}  
\cM _i 
\otimes _{\underset{i}{\boxtimes}  \cO _i} 
\underset{i}{\overset{\L}{\boxtimes}}  \cL _i .
}
}
\end{equation}
The left top square is commutative because of that of \ref{com-botimes-diag2}.
The right top square is commutative by functoriality. 
Taking  $\O _{X _i}$-flat resolutions of $\cL _i$, 
we check the commutativity of the bottom rectangle.

Compositing both diagrams \ref{boxtimes-big-diag1} and \ref{boxtimes-big-diag2},
we get the commutative diagram
\begin{equation}
\label{boxtimes-big-diag1+2}
\xymatrix{
{
\underset{i}{\overset{\L}{\widetilde{\boxtimes}}}  
\left (
\cM _i 
\otimes ^{\L} _{ \cD _i}   
(\cD _i \otimes _{\cO _i} \cL _i )
\right)
} 
\ar[d] ^-{\sim}
\ar[r] ^-{} 
&
{\underset{i}{\boxtimes}  
\cM _i 
\otimes ^{\L} _{\underset{i}{\boxtimes}  \cD _i} 
\underset{i}{\overset{\L}{\boxtimes}}
(\cD _i \otimes _{\cO _i} \cL _i )}
\ar[d] ^-{\sim}
\\
{\underset{i}{\overset{\L}{\widetilde{\boxtimes}}}  
(
\cM _i 
\otimes _{\cO _i} \cL _i )}
\ar[d] ^-{}
&
{\underset{i}{\boxtimes}  
\cM _i 
\otimes ^{\L} _{\underset{i}{\boxtimes}  \cD _i} 
( \underset{i}{\boxtimes}  
\cD _i 
\otimes _{\underset{i}{\boxtimes}  \cO _i} 
\underset{i}{\overset{\L}{\boxtimes}}  \cL _i )}
\ar[d] ^-{\sim}
\\ 
{
\underset{i}{\overset{\L}{\boxtimes}}  
(
\cM _i 
\otimes _{\cO _i} \cL _i 
)
}
\ar[r] ^-{\sim}  _-{\ref{com-botimes-diag1}}
&
{\underset{i}{\boxtimes}  
\cM _i 
\otimes _{\underset{i}{\boxtimes}  \cO _i} 
\underset{i}{\overset{\L}{\boxtimes}}  \cL _i ,}
}
\end{equation}
where the top arrow is the composite of the left top horizontal arrow of 
\ref{boxtimes-big-diag1} with top horizontal arrows of \ref{boxtimes-big-diag2}.
We get  from the commutativity of \ref{boxtimes-big-diag1+2}
that of the right rectangle of the diagram \ref{boxtimes-big-diag3} below:
\small
\begin{equation}
\label{boxtimes-big-diag3}
\xymatrix@ C=0,6cm{
{\underset{i}{\overset{\L}{\boxtimes}}  f _{i+} ^{ (m)} (\cE _i )}
\ar@{=}[d] ^-{}
\ar[rr] ^-{}&&
{f _+ ^{ (m)} (\underset{i}{\overset{\L}{\boxtimes}}   \cE _i )}
\\
{ \D' 
\underset{\underset{i}{\widetilde{\boxtimes}}  \D ' _i}{\otimes} 
\underset{i}{\overset{\L}{\widetilde{\boxtimes}}}
 \R f _{i* } 
 ( \cM _i  
 \underset{ \cD _i}{\overset{\L}{\otimes}}
  (\cD _i \underset{\cO _i}{\otimes} \cL _i ))}
\ar[r] ^-{\ref{lem-Rfi*2Rf*top}.\ref{lem-Rfi*2Rf*top1}}
\ar[d] ^-{\sim} 
&
{ \D' 
\underset{\underset{i}{\widetilde{\boxtimes}}  \D ' _i}{\otimes} 
 \R f _*  \underset{i}{\overset{\L}{\widetilde{\boxtimes}}}  
\left (
\cM _i 
\underset{ \cD _i}{\overset{\L}{\otimes}}
(\cD _i 
\underset{\cO _i}{\otimes}
 \cL _i )
\right)} 
\ar[d] ^-{\sim}
\ar[r] ^-{} 
&
{\R f _* (\underset{i}{\boxtimes}  
\cM _i 
\underset{\underset{i}{\boxtimes}  \cD _i} {\otimes ^{\L}} 
\underset{i}{\overset{\L}{\boxtimes}}
(\cD _i \underset{\cO _i}{\otimes} \cL _i ))}
\ar[d] ^-{\sim}
\ar[u] ^-{\sim} _-{\ref{boxtimesDleftarrow}}
\\
{ \D' 
\underset{\underset{i}{\widetilde{\boxtimes}}  \D ' _i}{\otimes} 
 \underset{i}{\overset{\L}{\widetilde{\boxtimes}}}  \R f _{i* } (
\cM _i \underset{\cO _i}{\otimes} \cL _i )}
\ar[r] ^-{\ref{lem-Rfi*2Rf*top}.\ref{lem-Rfi*2Rf*top1}}
&										
{\D' 
\underset{\underset{i}{\widetilde{\boxtimes}}  \D ' _i}{\otimes}   
\R f _* \underset{i}{\overset{\L}{\widetilde{\boxtimes}}}  
(
\cM _i 
\underset{\cO _i}{\otimes} \cL _i )}
\ar[d] ^-{\sim}
&
{\R f _* (\underset{i}{\boxtimes}  
\cM _i 
\underset{\underset{i}{\boxtimes}  \cD _i} {\otimes ^{\L}}  
(\underset{i}{\boxtimes}  
\cD _i 
\underset{\underset{i}{\boxtimes}  \cO _i}{\otimes} 
\underset{i}{\overset{\L}{\boxtimes}}  \cL _i ))}
\ar[d] ^-{\sim}
\\ 
{ 
\underset{i}{\overset{\L}{\boxtimes}} \R f _{i* } (
\cM _i \underset{\cO _i}{\otimes} \cL _i )}
\ar[r] ^-{\ref{lem-Rfi*2Rf*top}.\ref{lem-Rfi*2Rf*top2}}
\ar[u] ^-{\sim}
&
{ \R f _* 
\underset{i}{\overset{\L}{\boxtimes}}  
(
\cM _i 
\underset{\cO _i}{\otimes} \cL _i 
)
}
\ar[r] ^-{\sim} 
&
{\R f _* (\underset{i}{\boxtimes}  
\cM _i 
\underset{\underset{i}{\boxtimes}  \cO _i}{\otimes} 
\underset{i}{\overset{\L}{\boxtimes}}  \cL _i ).}
}
\end{equation}
 \normalsize
The commutativity of the top rectangle is by construction of the top arrow (see \ref{const-sch-prop-boxtimes-v+}).
The commutativity of the top square is checked by functoriality.
Using the commutativity of the diagram \ref{lem-Rfi*2Rf*top-diag}, we obtain the commutativity of the bottom square of \ref{boxtimes-big-diag3}.
Hence, the diagram  \ref{boxtimes-big-diag3} is commutative. 
Following Theorem \ref{sch-prop-boxtimes-v*}, 
the left bottom morphism is an isomorphism.
Hence, using the commutativity of the diagram  \ref{boxtimes-big-diag3}, 
this yields that the top morphism is an isomorphism.
\end{proof}

\subsection{Application : base change in the projection case}
\label{subsec4.3}
We keep notation \ref{subsec4.2} and we 
suppose $n = 2$ and $f _2 $ is the identity.
\begin{prop}
\label{theo-iso-chgtbase2-pre} 
For any $\cE _1\in D ^{\mathrm{b}} _{\mathrm{qc}} (\D ^{(m)} _{X _1})$, we have the canonical isomorphism
$pr  _1 ^{\prime ! (m)}  \circ f ^{ (m)} _{1,+}( \E _1)
\riso 
f ^{ (m)} _{+}  \circ pr _1 ^{ ! (m)} (\E _1)$
of 
$D ^{\mathrm{b}} _{\mathrm{qc}} (\D ^{(m)} _{Y})$
making commutative the diagram 
\begin{equation}
\label{theo-iso-chgtbase2-pre-iso1}
\xymatrix{
{pr  _1 ^{\prime*}  \circ \R f  _{1,*}( \E _1)} 
\ar[r] ^-{\sim}
\ar[d] ^-{}
& 
{\R f _{*}  \circ pr _1 ^{*} (\E _1)} 
\ar[d] ^-{}
\\ 
{pr  _1 ^{\prime *}  \circ f ^{ (m)} _{1,+}( \E _1)} 
\ar[r] ^-{\sim}
& 
{f ^{ (m)} _{+}  \circ pr _1 ^{ *} (\E _1),} 
}
\end{equation}
where the top isomorphism is the usual base change isomorphism.
\end{prop}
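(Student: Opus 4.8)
The plan is to deduce this from Theorem \ref{sch-prop-boxtimes-v+} (and, for the compatibility with the base change isomorphism, from the proof of Theorem \ref{sch-prop-boxtimes-v*}) by specialising to the case $n=2$, $f _2 = \mathrm{id} _T$ with $T := X _2 = Y _2$, as in \ref{n=2,f2=id-diag}. Since $T$ is a smooth $S$-scheme, the projection $pr _1 \colon X = X _1 \times _S T \to X _1$ is smooth of some relative dimension $d = d _{X/X _1} = d _{Y/Y _1}$, so that $pr _1 ^{!(m)} = pr _1 ^{*(m)}[d]$, and likewise for $pr _1 '$.

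First I would set up the relevant identifications. Using the isomorphism \ref{boxtimes-dfn2} (together with its $\D ^{(m)}$-linear refinement recalled just after it), one obtains a canonical isomorphism of $\D ^{(m)} _X$-modules $pr _1 ^{*(m)} (\cE _1) \riso \cE _1 \overset{\L}{\boxtimes} _S \O _T$, where $\O _T$ carries its canonical left $\D ^{(m)} _T$-module structure; similarly $f ^{(m)} _{1,+}(\cE _1) \overset{\L}{\boxtimes} _S \O _T \riso pr _1 ^{\prime *(m)}(f ^{(m)} _{1,+}(\cE _1))$. One also needs the standard isomorphism $\mathrm{id} ^{(m)} _{T,+}(\O _T) \riso \O _T$, i.e. the fact that the pushforward along the identity is the identity functor, which follows from $\D ^{(m)} _{T \to T} = \D ^{(m)} _T$ together with the side-changing isomorphism $\D ^{(m)} _{T \leftarrow T} \riso \D ^{(m)} _T$.

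Now I would apply Theorem \ref{sch-prop-boxtimes-v+} with $\cE _1$ and $\O _T$ as the two factors and $f _1$, $\mathrm{id} _T$ as the two morphisms. This gives
\[
f ^{(m)} _{+}(\cE _1 \overset{\L}{\boxtimes} _S \O _T) \riso f ^{(m)} _{1,+}(\cE _1) \overset{\L}{\boxtimes} _S \mathrm{id} ^{(m)} _{T,+}(\O _T) \riso f ^{(m)} _{1,+}(\cE _1) \overset{\L}{\boxtimes} _S \O _T.
\]
Composing with the identifications above and shifting by $[d]$ yields the asserted isomorphism $pr _1 ^{\prime !(m)} \circ f ^{(m)} _{1,+}(\cE _1) \riso f ^{(m)} _{+} \circ pr _1 ^{!(m)}(\cE _1)$ in $D ^{\mathrm{b}} _{\mathrm{qc}}(\D ^{(m)} _Y)$.

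It remains to check the compatibility \ref{theo-iso-chgtbase2-pre-iso1} with the usual base change isomorphism. The commutative diagram \ref{sch-prop-boxtimes-v+iso2} expresses the isomorphism just produced in terms of the isomorphism \ref{sch-prop-boxtimes-v*iso} of Theorem \ref{sch-prop-boxtimes-v*} and of the transposition morphisms $\mathrm{T} _{f _i}$ and $\mathrm{T} _f$ of \ref{ntn-Tf}; under the identifications of the second paragraph (and using that the $\O _T$-factor contributes only the identity of $\O _T$) the right-hand square of \ref{sch-prop-boxtimes-v+iso2} is precisely \ref{theo-iso-chgtbase2-pre-iso1}. So everything reduces to identifying the abstract isomorphism \ref{sch-prop-boxtimes-v*iso} with the classical base change morphism $pr _1 ^{\prime *} \R f _{1,*} \riso \R f _* pr _1 ^*$; since $f _2 = \mathrm{id} _T$ is flat, this is exactly what the inner diagram \ref{sch-prop-boxtimes-v+isoproofbis} of the proof of Theorem \ref{sch-prop-boxtimes-v*} establishes, the base change isomorphism appearing there being that of \cite[II.5.12]{HaRD}. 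The main obstacle is precisely this last bookkeeping: one must carry the identifications $pr _i ^{*(m)} \riso - \overset{\L}{\boxtimes} _S \O _T$ through the several commutative diagrams of \ref{subsec4.2}, so that the $\boxtimes$-formalism specialises cleanly to the classical base change square.
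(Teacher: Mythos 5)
Your proof is correct and follows essentially the same route as the paper: both specialize Theorem \ref{sch-prop-boxtimes-v+} to the case $n=2$, $f_2 = \mathrm{id}_T$, identify $pr_1^*(\E_1)$ with $\E_1 \overset{\L}{\boxtimes} \O_T$ via \ref{boxtimes-dfn2}, and then trace compatibility with the usual base change isomorphism back to the diagram \ref{sch-prop-boxtimes-v+isoproofbis} inside the proof of Theorem \ref{sch-prop-boxtimes-v*}. The one small expository gain in your version is that you spell out the identification $\mathrm{id}^{(m)}_{T,+}(\O_T) \riso \O_T$, which the paper leaves tacit when it writes the middle bottom arrow of its commutative diagram as an instance of \ref{sch-prop-boxtimes-v+}.
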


\begin{proof}
This is an easy consequence of Theorem \ref{sch-prop-boxtimes-v+}.
Indeed, with notation \ref{n=2,f2=id-diag-parag},
recall
$pr  _1 ^{ (m)!} = pr  _1 ^{*} [\dim T]$, 
$pr  _1 ^{ \prime (m)!} = pr  _1 ^{\prime*} [\dim T]$.
Next, consider the following diagram 
\begin{equation}
\label{theo-iso-chgtbase2-pre-iso2}
\xymatrix{
{pr  _1 ^{\prime*}  \circ \R f  _{1,*}( \E _1)} 
\ar[r] ^-{\sim}
\ar@{=}[d] ^-{}
& 
{pr ^{\prime *} _1 \R f _{1*}( \E _1)
\otimes ^\L _{\O _Y}
pr ^{\prime *} _2 \O _T}
\ar[r] ^-{}
\ar@{=}[d] ^-{}
&
{\R f _{*} ( 
pr ^{*} _1  (\E _1)
\otimes ^\L _{\O _X}
pr ^{*} _2 \O _T )} 
\ar[r] ^-{\sim}
\ar@{=}[d] ^-{}
& 
{\R f _{*}  \circ pr _1 ^{*} (\E _1)} 
\ar@{=}[d] ^-{}
\\ 
{pr  _1 ^{\prime *}  \circ \R f  _{1,*}( \E _1)} 
\ar[r] ^-{\sim}
\ar[d] ^-{}
& 
{\R f  _{1,*}( \E _1)\overset{\L}{\boxtimes} \O _T } 
\ar[r] ^-{\ref{sch-prop-boxtimes-v*iso}} _-{\sim}
\ar[d] ^-{}
&
{\R f _{*}   (\E _1\overset{\L}{\boxtimes} \O _T )} 
\ar[r] ^-{\sim}
\ar[d] ^-{}
& 
{\R f _{*}  \circ pr _1 ^{*} (\E _1)} 
\ar[d] ^-{}
\\ 
{pr  _1 ^{\prime*}  \circ f ^{ (m)} _{1,+}( \E _1)} 
\ar[r] ^-{\sim}
& 
{f ^{ (m)} _{1,+}( \E _1) 
\overset{\L}{\boxtimes} \O _T 
}
\ar[r] ^-{\sim} _-{\ref{sch-prop-boxtimes-v+}}
& 
{f ^{ (m)} _{+} (\E _1
\overset{\L}{\boxtimes} \O _T )} 
\ar[r] ^-{\sim}
& 
{f ^{ (m)} _{+} ( pr _1 ^{ *} (\E _1))} 
}
\end{equation}
Following \ref{sch-prop-boxtimes-v+}, the middle square (of the bottom) is commutative.
The left and right squares are commutative by functoriality. 
By compositing the bottom isomorphisms, 
we get 
$pr  _1 ^{*}  \circ f ^{ (m)} _{1,+}( \E _1)
\riso
f ^{ (m)} _{+}  \circ pr _1 ^{ *} (\E _1)$.
It remains to check that the composition of the top isomorphisms is the base change isomorphism.

Consider the commutative diagram
\begin{equation}
\label{sch-prop-boxtimes-v+isoproofbis2}
\xymatrix @C=0,45cm {
{pr  _1 ^{\prime*}  \circ \R f  _{1,*}( \E _1)} 
\ar[r] ^-{\sim}
\ar[d] ^-{\sim}
&
{pr ^{\prime *} _1 \R f _{1*}( \E _1)
\otimes ^\L _{\O _Y}
pr ^{\prime *} _2 \O _T}
\ar[r] ^-{\sim}
\ar[d] ^-{\sim}
&
{\R f _{*} ( 
pr ^{*} _1  (\E _1)
\otimes ^\L _{\O _X}
pr ^{*} _2 \O _T )} 
\ar[r] ^-{\sim}
&
{\R f _{*} pr ^{*} _1  (\E _1)} 
\ar@{=}[d] ^-{}
\\ 
{\R f _{*} pr ^{*} _1  (\E _1)} 
\ar[r] ^-{\sim}
&
{\R f _{*} pr ^{*} _1  (\E _1)
\otimes ^\L _{\O _Y}
pr ^{\prime *} _2 \O _T}
\ar[r] ^--{\ref{f_*/otimes-iso-projform}}
&
{\R f _{*} ( 
pr ^{*} _1  (\E _1)
\otimes ^\L _{\O _X}
\L f ^{*} pr ^{\prime *} _2 \O _T)}
\ar[u] ^-{\sim}
\ar[r] ^-{\sim}
&
{\R f _{*} pr ^{*} _1  (\E _1),} 
}
\end{equation}
where the middle square is commutative (this is the outline of the diagram \ref{sch-prop-boxtimes-v+isoproofbis}),
the top arrows are the same than that of \ref{theo-iso-chgtbase2-pre-iso2},
where the right and left squares are commutative by functoriality.
Using \ref{f_*/otimes-iso-projformter}, we get that the composition of the bottom morphisms is the identity.
Hence we are done.
\end{proof}

\begin{ntn}
\label{ntn-psharp}
Let $g \colon Z \to T$ be a smooth morphism of $S$-schemes.
Following \cite[III.2]{HaRD},
we denote by 
$g ^\sharp \colon D ( \O _T) \to D ( \O _Z)$
the functor defined by setting
$g ^\sharp (\cM ) : = g ^* ( \cM) \otimes _{\O _T} \omega _{Z/T} [ d _{Z/T}]$.
We remark that 
if 
$\cM \in D ^{\mathrm{b}} _{\mathrm{qc}} ({} ^{r}\D ^{(m)} _{T})$ 
then 
$g ^\sharp (\cM ) \riso 
g ^{ ! (m)} (\cM)$.
\end{ntn}

\begin{prop}
\label{theo-iso-chgtbase2-pre2}
We keep notation \ref{ntn-psharp}.
\begin{enumerate}
\item For any $\cM _1\in D ^{\mathrm{b}} _{\mathrm{qc}} (\O  _{X _1})$, 
we have the  isomorphism
\begin{equation}
\label{theo-iso-chgtbase2-pre-isopre}
pr  _1 ^{\prime \sharp}  \circ \R f  _{1,*}( \cM _1) \riso \R f _{*}  \circ pr _1 ^{\sharp} (\cM _1)
\end{equation}
of 
$D ^{\mathrm{b}} _{\mathrm{qc}} (\O _{Y})$
canonically induced by the usual base change isomorphism.

\item For any $\cM _1\in D ^{\mathrm{b}} _{\mathrm{qc}} ({} ^r \D ^{(m)} _{X _1})$, 
we have the  isomorphism
 the canonical
$pr  _1 ^{\prime ! (m)}  \circ f ^{ (m)} _{1,+}( \cM _1)
\riso 
f ^{ (m)} _{+}  \circ pr _1 ^{ ! (m)} (\cM _1)$
of 
$D ^{\mathrm{b}} _{\mathrm{qc}} ({} ^r \D ^{(m)} _{Y})$
making commutative the diagram 
\begin{equation}
\label{theo-iso-chgtbase2-pre-iso}
\xymatrix{
{pr  _1 ^{\prime \sharp}  \circ \R f  _{1,*}( \cM _1)} 
\ar[r] ^-{\sim} _-{\ref{theo-iso-chgtbase2-pre-isopre}}
\ar[d] ^-{}
& 
{\R f _{*}  \circ pr _1 ^{\sharp} (\cM _1)} 
\ar[d] ^-{}
\\ 
{pr  _1 ^{\prime  ! (m)}  \circ f ^{ (m)} _{1,+}( \cM _1)} 
\ar[r] ^-{\sim}
& 
{f ^{ (m)} _{+}  \circ pr _1 ^{ ! (m)} (\cM _1).} 
}
\end{equation}

\end{enumerate}

\end{prop}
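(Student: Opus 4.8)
The plan is to deduce both assertions from the exterior tensor product theorems \ref{sch-prop-boxtimes-v*} and \ref{sch-prop-boxtimes-v+} in the case $n=2$, $f _2 = \mathrm{id} _T$, exactly as the left-module analogue \ref{theo-iso-chgtbase2-pre} was deduced from \ref{sch-prop-boxtimes-v+}. The starting point is that, since $X = X _1 \times _S T$ and $Y = Y _1 \times _S T$, one has $\Omega ^1 _{X/X _1} \riso pr _2 ^* \Omega ^1 _{T/S}$ and $\Omega ^1 _{Y/Y _1} \riso pr _2 ^{\prime *} \Omega ^1 _{T/S}$, whence $\omega _{X/X _1} \riso pr _2 ^* \omega _{T/S}$ and $\omega _{Y/Y _1} \riso pr _2 ^{\prime *} \omega _{T/S}$. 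Using the isomorphism \ref{boxtimes-dfn2L} (for $\O$-modules), this gives canonical identifications $pr _1 ^{\sharp} ( \cM _1) \riso (\cM _1 \overset{\L}{\boxtimes} \omega _{T/S}) [d _{X/X _1}]$ and $pr _1 ^{\prime \sharp} ( \cN ) \riso (\cN \overset{\L}{\boxtimes} \omega _{T/S}) [d _{X/X _1}]$, the exterior products being taken over $S$.

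For the first assertion I would apply Theorem \ref{sch-prop-boxtimes-v*} with $\cE _1 = \cM _1$ and $\cE _2 = \omega _{T/S}$; since $f _2 = \mathrm{id}$ one has $\R f _{2,*} (\omega _{T/S}) = \omega _{T/S}$, so the isomorphism \ref{sch-prop-boxtimes-v*iso} reads $\R f _{1,*} (\cM _1) \overset{\L}{\boxtimes} \omega _{T/S} \riso \R f _* (\cM _1 \overset{\L}{\boxtimes} \omega _{T/S})$. Applying the identifications above and shifting by $[d _{X/X _1}]$ gives the desired isomorphism $pr _1 ^{\prime \sharp} \circ \R f _{1,*} (\cM _1) \riso \R f _* \circ pr _1 ^{\sharp} (\cM _1)$. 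That it is the isomorphism canonically induced by the usual (flat) base change isomorphism is then read off from the proof of Theorem \ref{sch-prop-boxtimes-v*} (diagram \ref{sch-prop-boxtimes-v+isoproofbis} together with the commutativity \ref{f_*/otimes-iso-projformter}), exactly as in the proof of \ref{theo-iso-chgtbase2-pre}.

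For the second assertion, $\omega _{T/S}$ is the canonical right $\D ^{(m)} _T$-module, and by \ref{boxtimes-left-right} together with the definition \ref{ntn-psharp} of $pr _1 ^{!(m)}$ on right modules (which identifies $pr _1 ^{!(m)}$ with $pr _1 ^{\sharp}$ there), the identifications of the first paragraph upgrade to isomorphisms of right $\D ^{(m)}$-modules $pr _1 ^{!(m)} ( \cM _1) \riso (\cM _1 \overset{\L}{\boxtimes} \omega _{T/S}) [d _{X/X _1}]$ and $pr _1 ^{\prime !(m)} ( f _{1,+} ^{(m)} \cM _1) \riso ( f _{1,+} ^{(m)} \cM _1 \overset{\L}{\boxtimes} \omega _{T/S}) [d _{X/X _1}]$. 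Since $f _{2,+} ^{(m)} (\omega _{T/S}) = \omega _{T/S}$, Theorem \ref{sch-prop-boxtimes-v+} with $* = r$, $\cE _1 = \cM _1$, $\cE _2 = \omega _{T/S}$ yields $f _{1,+} ^{(m)} (\cM _1) \overset{\L}{\boxtimes} \omega _{T/S} \riso f _+ ^{(m)} (\cM _1 \overset{\L}{\boxtimes} \omega _{T/S})$, hence, after shifting and using the identifications, the isomorphism $pr _1 ^{\prime !(m)} \circ f _{1,+} ^{(m)} (\cM _1) \riso f _+ ^{(m)} \circ pr _1 ^{!(m)} (\cM _1)$. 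Finally, the commutativity of the square \ref{theo-iso-chgtbase2-pre-iso} is obtained from the right square of the compatibility diagram \ref{sch-prop-boxtimes-v+iso2} of Theorem \ref{sch-prop-boxtimes-v+}, applied in the same situation: under the identifications above, the morphisms $\mathrm{T} _{f _i}$ and $\mathrm{T} _f$ of \ref{ntn-Tf} become the vertical arrows of \ref{theo-iso-chgtbase2-pre-iso}, and the horizontal isomorphism \ref{sch-prop-boxtimes-v*iso} becomes the base change isomorphism \ref{theo-iso-chgtbase2-pre-isopre} of the first assertion.

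The step I expect to be the main obstacle is the verification that the right $\D ^{(m)} _X$-module structure carried by $\cM _1 \overset{\L}{\boxtimes} \omega _{T/S}$ through the exterior product construction of \ref{subsec4.2} coincides with the one defining $pr _1 ^{!(m)} (\cM _1) [-d _{X/X _1}]$, compatibly with the corresponding identification on $Y$; this is the only genuinely new bookkeeping, everything else being a reassembly of the already established diagrams \ref{sch-prop-boxtimes-v*}, \ref{sch-prop-boxtimes-v+} and their compatibility \ref{sch-prop-boxtimes-v+iso2}. An alternative that circumvents this check is to derive the second assertion directly from the left-module case \ref{theo-iso-chgtbase2-pre} by applying the left-right transfer functors $\omega _{(-)} \otimes _{\O} -$ and $\omega _{(-)} ^{-1} \otimes _{\O} -$, which commute with $f _{+} ^{(m)}$ and with the extraordinary pullbacks (by \ref{u+D-left2right} and its analogue for pullbacks), and then transporting the compatibility \ref{theo-iso-chgtbase2-pre-iso1} along these functors to obtain \ref{theo-iso-chgtbase2-pre-iso}.
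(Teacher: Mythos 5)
Your proposal is correct and takes essentially the same route as the paper: both proofs reduce the statement to Theorem \ref{sch-prop-boxtimes-v+} (and \ref{sch-prop-boxtimes-v*} for the first assertion) via the identifications $\omega_{X/X_1}\riso pr_2^*\omega_T$ and $\omega_{Y/Y_1}\riso pr_2^{\prime*}\omega_T$, and then read off the compatibility square \ref{theo-iso-chgtbase2-pre-iso} from the diagram \ref{sch-prop-boxtimes-v+iso2}. The ``bookkeeping'' you flag — matching the right $\D^{(m)}$-module structure on $\cM_1\overset{\L}{\boxtimes}\omega_T$ with that of $pr_1^{!(m)}(\cM_1)[-d_T]$ — is exactly what \ref{boxtimes-left-right} and the remark in \ref{ntn-psharp} (that $g^\sharp\riso g^{!(m)}$ on right modules) provide, so the paper treats it as immediate and writes the whole argument as a single large commutative diagram whose outline is \ref{theo-iso-chgtbase2-pre-iso}.
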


\begin{proof}
Again, this is an easy consequence of Theorem \ref{sch-prop-boxtimes-v+}.
Indeed, with notation \ref{n=2,f2=id-diag-parag}, 
since 
$pr ^{\prime *} _2 \omega _T 
\riso \omega _{Y/Y_1}$
and 
$pr ^{ *} _2 \omega _T 
\riso \omega _{X/X_1}$,
we get the commutative diagram
\begin{equation}
\label{theo-iso-chgtbase2-pre-iso2bis}
\xymatrix @ C =0,3cm {
{pr  _1 ^{\prime \sharp}  \circ \R f  _{1,*}( \cM _1) } 
\ar[r] ^-{\sim} 
\ar@{=}[d] ^-{}
& 
{pr ^{\prime *} _1 \R f _{1*}( \cM _1)
\otimes ^\L _{\O _Y}
pr ^{\prime *} _2 \omega _T [ d _{T}]}
\ar[r] ^-{\sim} _-{\ref{sch-prop-boxtimes-v+isoproof}}
\ar@{=}[d] ^-{}
&
{\R f _{*} ( 
pr ^{*} _1  (\cM _1)
\otimes ^\L _{\O _X}
pr ^{*} _2 \omega _T ) [ d _{T}]} 
\ar[r] ^-{\sim}
\ar@{=}[d] ^-{}
& 
{\R f _{*}  \circ pr _1 ^{\sharp} (\cM _1)} 
\ar@{=}[d] ^-{}
\\ 
{pr  _1 ^{\prime \sharp}  \circ \R f  _{1,*}( \cM _1)} 
\ar[r] ^-{\sim}
\ar[d] ^-{}
& 
{\R f  _{1,*}( \cM _1)\overset{\L}{\boxtimes} \omega _T [ d _{T}]} 
\ar[r] ^-{\ref{sch-prop-boxtimes-v*iso}} _-{\sim}
\ar[d] ^-{}
&
{\R f _{*}   (\cM _1\overset{\L}{\boxtimes} \omega _T )[ d _{T}]} 
\ar[r] ^-{\sim}
\ar[d] ^-{}
& 
{\R f _{*}  \circ pr _1 ^{\sharp} (\cM _1)} 
\ar[d] ^-{}
\\ 
{pr  _1 ^{\prime ! (m)}  \circ f ^{ (m)} _{1,+}( \cM _1)} 
\ar[r] ^-{\sim}
& 
{f ^{ (m)} _{1,+}( \cM _1) 
\overset{\L}{\boxtimes} \omega _T 
[ d _{T}]}
\ar[r] ^-{\sim} _-{\ref{sch-prop-boxtimes-v+}}
& 
{f ^{ (m)} _{+} (\cM _1
\overset{\L}{\boxtimes} \omega _T )
[ d _{T}]} 
\ar[r] ^-{\sim}
& 
{f ^{ (m)} _{+} ( pr _1 ^{!(m)} (\cM _1)).} 
}
\end{equation}
The commutative diagram \ref{theo-iso-chgtbase2-pre-iso} corresponds to the outline of 
\ref{theo-iso-chgtbase2-pre-iso2bis}.
\end{proof}

\subsection{Application : relative duality isomorphism and adjunction for projective morphisms}
Following Virrion, we have a 
relative duality isomorphism and the adjoint paire $(f _+, f ^!)$ for a proper morphism $f$.
The key point is to construct a trace map which is compatible
with Grothendieck's one for coherent $\O$-modules (i.e. we have a commutative diagram of the form \ref{theo-iso-chgtbase2-pre-iso3ex}). 
In general, this key point is highly technical and corresponds to the hard part of the proof of a relative duality isomorphism (see \cite{Vir04}).
In this subsection, we show how  
this is much easier to construct such a trace map in the case of projective morphisms 
by using base change in the projection case and by using the case of a closed immersion (see \ref{rel-dual-isom-imm}).

We keep notation \ref{subsec4.2} and we 
suppose $n = 2$, $f _2 $ is the identity, 
$X _1 = \bbP ^{d} _{Y _1}$,
$f _1 \colon \bbP ^{d} _{Y _1} \to Y _1$ is the canonical projection.
We set $T:= X _2=Y_2$.
We suppose there exists an integer $n \geq 0$ such that 
$S = \Spec ( \cV / \pi ^{n+1})$. 
In that case, for instance since $Y _1/S$ is smooth then 
$D ^{\mathrm{b}} _{\mathrm{qc}} (\O  _{Y _1})
= 
D ^{\mathrm{b}} _{\mathrm{qc}, \mathrm{tdf}} (\O  _{Y _1})$.
\begin{lem}
With notation \ref{ntn-Tf} and \ref{ntn-psharp}, 
for any $\cN _1\in D ^{\mathrm{b}} _{\mathrm{qc}} (\O  _{Y _1})$, 
we have the commutative diagram:
\begin{equation}
\label{diag-comm-Trf1-Trf}
\xymatrix{
{pr  _1 ^{\prime \sharp}  \circ \R f  _{1,*} \circ f _1 ^{\sharp} ( \cN _1)} 
\ar[r] ^-{\sim} _-{\ref{theo-iso-chgtbase2-pre-isopre}}
\ar[d] ^-{\mathrm{Tr} _{f _1}} _-{\sim}
& 
{\R f  _{*} \circ  pr  _1 ^{\sharp}  \circ f _1 ^{\sharp} ( \cN _1)} 
\ar[r] ^-{\sim}
& 
{\R f  _{*} \circ  f  ^{\sharp}   \circ pr  _1 ^{\prime \sharp} ( \cN _1)} 
\ar[d] ^-{\mathrm{Tr} _{f}} _-{\sim}
\\ 
{pr  _1 ^{\prime \sharp}  ( \cN _1)} 
\ar@{=}[rr] ^-{}
&& 
{ pr  _1 ^{\prime \sharp} ( \cN _1),} 
}
\end{equation}
where $\mathrm{Tr} _{f}$ and 
$\mathrm{Tr} _{f _1}$ are the trace map isomorphisms
(see \cite[III.4.3]{HaRD}).
\end{lem}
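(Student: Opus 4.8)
This is an instance of the compatibility of Grothendieck's trace map with (flat) base change, and the plan is to reduce it to that compatibility for the projective space $\bbP ^d$ together with the exterior‑tensor‑product description of base change already used to prove \ref{theo-iso-chgtbase2-pre2}. First I would recall that, $f _1$ being smooth and proper, the duality functor $f _1 ^!$ coincides with $f _1 ^\sharp$ (\cite[III.2]{HaRD}) and $\mathrm{Tr} _{f _1}\colon \R f _{1,*} \circ f _1 ^\sharp \riso \mathrm{id}$ is the counit of the associated adjunction. Since the standing assumption $S = \Spec (\V/\pi ^{n+1})$ gives $D ^{\mathrm{b}} _{\mathrm{qc}} (\O _{Y _1}) = D ^{\mathrm{b}} _{\mathrm{qc}, \mathrm{tdf}} (\O _{Y _1})$, the projection formula applies and yields $\R f _{1,*} f _1 ^\sharp (\cN _1) \riso \cN _1 \otimes ^{\L} _{\O _{Y _1}} \R f _{1,*} (\omega _{X _1/Y _1} [d])$, with $d := d _{X _1/Y _1}$; through this isomorphism $\mathrm{Tr} _{f _1}$ is induced by the fundamental (residue) isomorphism $\gamma _{f _1}\colon \R f _{1,*} (\omega _{X _1/Y _1} [d]) \riso \O _{Y _1}$ of \cite[III.4.3]{HaRD}, and likewise $\mathrm{Tr} _f$ is induced, through the analogous projection formula, by $\gamma _f \colon \R f _{*} (\omega _{X/Y} [d]) \riso \O _{Y}$. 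The isomorphism $pr _1 ^\sharp \circ f _1 ^\sharp \riso f ^\sharp \circ pr _1 ^{\prime \sharp}$ appearing in the top row of \ref{diag-comm-Trf1-Trf} is just the pseudo‑functoriality of $(-) ^\sharp$ applied to $f _1 \circ pr _1 = pr ' _1 \circ f$.

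Next I would record the base‑change inputs coming from the cartesian square \ref{n=2,f2=id-diag}. Since $pr ' _1$ is the base change of the smooth morphism $T \to S$, it is flat, $d _{X/Y} = d$ and $\omega _{X/Y} \riso pr _1 ^* \omega _{X _1/Y _1} \riso \omega _{X _1/Y _1} \overset{\L}{\boxtimes} \O _T$; moreover the residue trace of \cite[III.4.3]{HaRD} is compatible with arbitrary base change, so $\gamma _f$ is the base change of $\gamma _{f _1}$ along $pr ' _1$. Using that the usual flat base change isomorphism $pr _1 ^{\prime *} \circ \R f _{1,*} \riso \R f _* \circ pr _1 ^*$ is identified, via the exterior tensor product, with the isomorphism \ref{sch-prop-boxtimes-v*iso} of Theorem \ref{sch-prop-boxtimes-v*} — this identification being precisely what the proofs of \ref{theo-iso-chgtbase2-pre} and \ref{theo-iso-chgtbase2-pre2} establish, cf. the diagram \ref{theo-iso-chgtbase2-pre-iso2} — this says exactly that the trace $\gamma _f$ corresponds, under the isomorphism \ref{sch-prop-boxtimes-v*iso} in the form $\R f _{*} (\omega _{X _1/Y _1} [d] \overset{\L}{\boxtimes} \O _T) \riso \R f _{1,*} (\omega _{X _1/Y _1} [d]) \overset{\L}{\boxtimes} \O _T$, to $\gamma _{f _1} \overset{\L}{\boxtimes} \mathrm{id} _{\O _T}$. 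If one prefers to avoid invoking \cite[III.4]{HaRD} for this, it can be settled directly, the isomorphism $\gamma _{f _1}$ being, through the \v{C}ech description of the coherent cohomology of $\bbP ^d _{Y _1}$, manifestly stable under the base change $pr ' _1 \colon Y _1 \times _S T \to Y _1$.

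Finally I would chase \ref{diag-comm-Trf1-Trf}. With notation \ref{ntn-psharp} one identifies $pr _1 ^{\prime \sharp} \cG \riso \cG \overset{\L}{\boxtimes} \omega _T [d _T]$ for $\cG$ on $Y _1$, and similarly for $pr _1 ^\sharp$ and $f ^\sharp$, so that $pr _1 ^\sharp f _1 ^\sharp (\cN _1) = f _1 ^\sharp (\cN _1) \overset{\L}{\boxtimes} \omega _T [d _T] = f ^\sharp pr _1 ^{\prime \sharp} (\cN _1)$; under these identifications the top row of \ref{diag-comm-Trf1-Trf} becomes the exterior‑product base change isomorphism \ref{sch-prop-boxtimes-v*iso}, the left vertical becomes $\mathrm{Tr} _{f _1} \overset{\L}{\boxtimes} \mathrm{id}$, and the right vertical becomes $\mathrm{Tr} _f$. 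Substituting the projection‑formula factorizations of the two traces, the description of $\gamma _f$ from the previous paragraph, and the functoriality of the isomorphism \ref{sch-prop-boxtimes-v*iso} in its first argument, the square reduces to the already established compatibility \ref{sch-prop-boxtimes-v*iso} and therefore commutes. The main obstacle is exactly the identification of $\gamma _f$ with the exterior‑product base change of $\gamma _{f _1}$: this is the one place where the precise normalization of the fundamental trace and its behaviour under base change must be pinned down — either by quoting \cite[III.4]{HaRD} or by redoing the projective‑space computation directly — everything else being routine diagram chasing.
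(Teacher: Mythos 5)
Your proposal is correct and takes essentially the same route as the paper's own proof: reduce the statement to the compatibility of the Grothendieck trace map with base change (HaRD III.4.4) and with the projection formula (HaRD III.10.5 Tra~4), using the identification of the ordinary flat base change isomorphism with the exterior‑tensor‑product isomorphism \ref{sch-prop-boxtimes-v*iso} established in the preceding subsections. You organize the reduction slightly differently — factoring the two traces through the projection formula to isolate the fundamental isomorphisms $\gamma_{f_1}$ and $\gamma_f$ and then comparing those directly — whereas the paper chases one large diagram and cites the two HaRD properties where needed; the two arguments amount to the same chase, and the tor‑dimension remark you make about $S=\Spec(\V/\pi^{n+1})$ is harmless but not actually needed for the projection formula as stated in \ref{f_*/otimes-iso-projform}.
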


\begin{proof}
Following the commutativity of the diagram 
\ref{sch-prop-boxtimes-v+isoproofbis}, since 
$\omega _{Y/Y _1}
\riso
pr ^{\prime *} _2 \omega _T $
and 
$\omega _{X/X _1}
\riso
pr ^{ *} _2 \omega _T $
the isomorphism 
$pr  _1 ^{\prime \sharp}  \circ \R f  _{1,*} \circ f _1 ^{\sharp} ( \cN _1)
\underset{\ref{theo-iso-chgtbase2-pre-isopre}}{\riso}
\R f  _{*} \circ  pr  _1 ^{\sharp}  \circ f _1 ^{\sharp} ( \cN _1)$
is given 
by the composition of the left verticales arrows of the diagram
\begin{equation}
\notag
\xymatrix{
{pr  _1 ^{\prime \sharp}  \circ \R f  _{1,*} \circ f _1 ^{\sharp} ( \cN _1)}
\ar[d] ^-{\sim}
\ar[r] ^-{\mathrm{Tr} _{f _1} }
&
{pr  _1 ^{\prime \sharp}  ( \cN _1)}
\ar[d] ^-{\sim}
\\
{pr ^{\prime *} _1 \R f _{1*} f _1 ^{\sharp} ( \cN _1)
\otimes ^\L _{\O _Y}
pr ^{\prime *} _2 \omega _T [ d _{T}]}
\ar[r] ^-{\mathrm{Tr} _{f _1}\otimes id }
\ar[d] ^-{\sim}
 & 
{pr ^{\prime *} _1( \cN _1)
\otimes ^\L _{\O _Y}
pr ^{\prime *} _2 \omega _T [ d _{T}]}
\ar@{=}[rd] ^-{}
 \\ 
{\R f _{*}  pr ^{*} _1  f _1 ^{\sharp} ( \cN _1)
\otimes ^\L _{\O _Y}
pr ^{\prime *} _2 \omega _T [ d _{T}]}
\ar[r] ^-{\sim}
\ar[d] ^-{\sim}
 & 
{\R f _{*}  f ^{\sharp} pr ^{\prime *} _1   ( \cN _1)
\otimes ^\L _{\O _Y}
pr ^{\prime *} _2 \omega _T [ d _{T}]}
\ar[u] ^-{\mathrm{Tr} _{f}\otimes id }
\ar[r] ^-{\mathrm{Tr} _{f}\otimes id }
\ar[d] ^-{\sim}
 & 
{pr ^{\prime *} _1( \cN _1)
\otimes ^\L _{\O _Y}
pr ^{\prime *} _2 \omega _T [ d _{T}]}
 \\ 
{\R f _{*}  ( pr ^{*} _1  f _1 ^{\sharp} ( \cN _1)
\otimes ^\L _{\O _Y}
f ^* pr ^{\prime *} _2 \omega _T ) [ d _{T}]}
\ar[r] ^-{\sim}
\ar[d] ^-{\sim}
 & 
{\R f _{*}  (f ^{\sharp} pr ^{\prime *} _1   ( \cN _1)
\otimes ^\L _{\O _Y}
f ^*  pr ^{\prime *} _2 \omega _T ) [ d _{T}]}
\ar[d] ^-{\sim}
\ar[r] ^-{\sim}
 & 
{\R f _{*}  f ^{\sharp} ( pr ^{\prime *} _1   ( \cN _1)
\otimes ^\L _{\O _Y}
 pr ^{\prime *} _2 \omega _T ) [ d _{T}]}
 \ar[u] ^-{\mathrm{Tr} _{f}}
 \ar[dd] ^-{\sim}
 \\ 
{\R f _{*}  ( pr ^{*} _1  f _1 ^{\sharp} ( \cN _1)
\otimes ^\L _{\O _Y}
 pr ^{*} _2 \omega _T ) [ d _{T}]}
\ar[r] ^-{\sim}
\ar[d] ^-{\sim}
 & 
{\R f _{*}  (f ^{\sharp} pr ^{\prime *} _1   ( \cN _1)
\otimes ^\L _{\O _Y}
 pr ^{*} _2 \omega _T ) [ d _{T}]}
 \\
{ \R f  _{*} \circ  pr  _1 ^{\sharp}  \circ f _1 ^{\sharp} ( \cN _1)}
\ar[rr] ^-{\sim}
&&
{\R f  _{*} \circ  f  ^{\sharp}   \circ pr  _1 ^{\prime \sharp} ( \cN _1)}  }
\end{equation}
Following \cite[III.10.5.Tra 4)]{HaRD}, the square of the second line is commutative. 
Following \cite[III.4.4]{HaRD}, the right square of the third line 
is commutative. The commutativity of the other squares, of the triangle or trapeze  are obvious. 
Hence, we are done. 
\end{proof}

\begin{prop}
\label{proptheo-iso-chgtbase2-pre-iso3}
Let $\cN _1\in D ^{\mathrm{b}} _{\mathrm{qc}} ({} ^r \D ^{(m)} _{Y _1})$.
Suppose we have 
 the canonical morphism 
$\mathrm{Tr} _{+,f _1}
\colon 
f ^{ (m)} _{1,+}\circ f _1 ^{!(m)} ( \cN _1)
\to 
\cN _1$
of 
$D ^{\mathrm{b}} _{\mathrm{qc}} ({} ^r \D ^{(m)} _{Y_1})$
making commutative the diagram 
\begin{equation}
\label{theo-iso-chgtbase2-pre-iso3pre}
\xymatrix{
{ \R f  _{1,*}\circ f _1 ^{\sharp} ( \cN _1)} 
\ar[d] ^-{}
\ar[r] ^-{\mathrm{Tr} _{f _1}}
&
{ \cN _1}
\\
{ f ^{ (m)} _{1,+}\circ f _1 ^{!(m)} ( \cN _1).} 
\ar[ur] _-{\mathrm{Tr} _{+,f _1}}
}
\end{equation}

Then, there exists a  canonical morphism 
$\mathrm{Tr} _{+,f }
\colon 
f ^{ (m)} _{+}  \circ f  ^{! (m)} \circ pr _1 ^{\prime  ! (m)}  ( \cN _1)
\to 
pr _1 ^{\prime  ! (m)}  ( \cN _1)$
of 
$D ^{\mathrm{b}} _{\mathrm{qc}} ({} ^r \D ^{(m)} _{Y})$
making commutative the diagram 
\begin{equation}
\label{theo-iso-chgtbase2-pre-iso3}
\xymatrix{
{\R f  _{*} \circ  f  ^{\sharp}   \circ pr  _1 ^{\prime \sharp} ( \cN _1)} 
\ar[r] ^-{\mathrm{Tr} _{f}}
\ar[d] ^-{}
&
{pr  _1 ^{\prime \sharp} ( \cN _1)} 
\ar[d] ^-{\sim}
\\
{f ^{ (m)} _{+}  \circ f  ^{! (m)} \circ pr _1 ^{\prime  ! (m)}  ( \cN _1)} 
\ar[r] ^-{\mathrm{Tr} _{+,f }}
&
{pr  _1 ^{\prime !(m)} ( \cN _1).} 
}
\end{equation}
\end{prop}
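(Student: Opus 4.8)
The plan is to obtain $\mathrm{Tr}_{+,f}$ by \emph{transporting} $\mathrm{Tr}_{+,f_1}$ along the projective base change isomorphisms of Propositions~\ref{theo-iso-chgtbase2-pre} and~\ref{theo-iso-chgtbase2-pre2}, and then to deduce the commutativity of \ref{theo-iso-chgtbase2-pre-iso3} by pasting together commutative diagrams already available in the excerpt. Throughout I keep the notation $X = X_1 \times_S T$, $Y = Y_1 \times_S T$, $f = f_1 \times \mathrm{id}$, with $pr_1 \colon X \to X_1$ and $pr_1' \colon Y \to Y_1$ the first projections; since $f_1 \colon \bbP^{d}_{Y_1} \to Y_1$ is smooth and proper, so is $f$, and by Notation~\ref{ntn-psharp} the functors $g^\sharp$ and $g^{!(m)}$ coincide on bounded complexes with quasi-coherent cohomology for each of $f, f_1, pr_1, pr_1'$.

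First I would record two auxiliary isomorphisms. Since $f_1 \circ pr_1 = pr_1' \circ f$, transitivity of the extraordinary pullback gives a canonical isomorphism $pr_1^{!(m)} \circ f_1^{!(m)} \riso f^{!(m)} \circ pr_1'^{!(m)}$ of functors $D^{\mathrm b}_{\mathrm{qc}}({}^{r}\D^{(m)}_{Y_1}) \to D^{\mathrm b}_{\mathrm{qc}}({}^{r}\D^{(m)}_{Y})$; and Proposition~\ref{theo-iso-chgtbase2-pre2} applied to $\cM_1 := f_1^{!(m)}(\cN_1)$, which lies in $D^{\mathrm b}_{\mathrm{qc}}({}^{r}\D^{(m)}_{X_1})$ since $X_1/S$ is smooth, furnishes the base change isomorphism $pr_1'^{!(m)} \circ f^{(m)}_{1,+}(\cM_1) \riso f^{(m)}_+ \circ pr_1^{!(m)}(\cM_1)$. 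Composing these, I would \emph{define} $\mathrm{Tr}_{+,f}$ as the morphism
\[
f^{(m)}_+ f^{!(m)} pr_1'^{!(m)}(\cN_1)
\;\liso\;
f^{(m)}_+ pr_1^{!(m)} f_1^{!(m)}(\cN_1)
\;\riso\;
pr_1'^{!(m)} f^{(m)}_{1,+} f_1^{!(m)}(\cN_1)
\;\xrightarrow{\;pr_1'^{!(m)}(\mathrm{Tr}_{+,f_1})\;}\;
pr_1'^{!(m)}(\cN_1),
\]
the first two arrows being the isomorphisms just described.

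It then remains to check that \ref{theo-iso-chgtbase2-pre-iso3} commutes, which I would do by decomposing its interior into four pieces. Applying the exact functor $pr_1'^\sharp \simeq pr_1'^{!(m)}$ to the defining triangle \ref{theo-iso-chgtbase2-pre-iso3pre} of $\mathrm{Tr}_{+,f_1}$ expresses $pr_1'^{!(m)}(\mathrm{Tr}_{+,f_1})$ through $pr_1'^\sharp(\mathrm{Tr}_{f_1})$ and the canonical morphism $pr_1'^\sharp \R f_{1,*} f_1^\sharp(\cN_1) \to pr_1'^{!(m)} f^{(m)}_{1,+} f_1^{!(m)}(\cN_1)$. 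Diagram~\ref{theo-iso-chgtbase2-pre-iso}, taken with $\cM_1 = f_1^\sharp(\cN_1) \simeq f_1^{!(m)}(\cN_1)$, shows that this last canonical morphism is compatible, via the two base change isomorphisms, with the canonical morphism $\R f_* pr_1^\sharp f_1^\sharp(\cN_1) \to f^{(m)}_+ pr_1^{!(m)} f_1^{!(m)}(\cN_1)$; and the Lemma giving \ref{diag-comm-Trf1-Trf} identifies $pr_1'^\sharp(\mathrm{Tr}_{f_1})$ with $\mathrm{Tr}_f$ after transporting along $pr_1'^\sharp \R f_{1,*} f_1^\sharp \riso \R f_* pr_1^\sharp f_1^\sharp \riso \R f_* f^\sharp pr_1'^\sharp$. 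Finally one invokes the compatibility of the transitivity isomorphism $pr_1^{!(m)} f_1^{!(m)} \riso f^{!(m)} pr_1'^{!(m)}$ with its $\sharp$-counterpart $pr_1^\sharp f_1^\sharp \riso f^\sharp pr_1'^\sharp$ (immediate from transitivity of $g \mapsto g^\sharp$ and the splitting of the relative dualizing sheaves), together with the fact that the canonical morphism $\mathrm{T}_f \colon \R f_* \to f^{(m)}_+$ of Notation~\ref{ntn-Tf} is precisely the one entering the left vertical arrow of \ref{theo-iso-chgtbase2-pre-iso3} under the identification $\sharp \simeq {}!$. Pasting these commutative pieces along their common edges yields the commutativity of \ref{theo-iso-chgtbase2-pre-iso3}, and in particular shows that $\mathrm{Tr}_{+,f}$ is canonical, independent of the intermediate choices.

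The main obstacle is organizational rather than conceptual: one must assemble a single large commutative diagram out of the transitivity isomorphisms for $(-)^\sharp$ and $(-)^{!(m)}$, the projective base change isomorphisms of Proposition~\ref{theo-iso-chgtbase2-pre2}, and the trace compatibilities \ref{diag-comm-Trf1-Trf} and \ref{theo-iso-chgtbase2-pre-iso}, all while keeping track of the passage from $\sharp$ to ${}!$ through Notation~\ref{ntn-psharp}. Each constituent square is either proved earlier in the excerpt or reduces by a short diagram chase to the constructions of Subsection~\ref{subsec4.3} and to \cite[III.10.5, III.4.4]{HaRD}, so no new ingredient is required; the real care lies in matching normalizations (the shifts by $d_T$, the twists by $\omega$) so that the pasted diagram commutes on the nose.
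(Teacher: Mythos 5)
Your proposal is correct and follows essentially the same strategy as the paper's proof: you define $\mathrm{Tr}_{+,f}$ by transporting $\mathrm{Tr}_{+,f_1}$ through the transitivity isomorphism $pr_1^{!(m)} f_1^{!(m)} \riso f^{!(m)} pr_1'^{!(m)}$ and the base change isomorphism of Proposition~\ref{theo-iso-chgtbase2-pre2}, and then verify \ref{theo-iso-chgtbase2-pre-iso3} by pasting the commutativity of \ref{diag-comm-Trf1-Trf}, \ref{theo-iso-chgtbase2-pre-iso}, and the hypothesis \ref{theo-iso-chgtbase2-pre-iso3pre}. The paper packages this as a single large diagram \ref{theo-iso-chgtbase2-pre-iso3-proof1} in which $\mathrm{Tr}_{+,f}$ is defined as the unique morphism closing the bottom-right square and the remaining squares (top, outline, left, middle) are checked commutative for exactly the same reasons you cite, so the two arguments coincide.
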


\begin{proof}
By definition, we define the morphism
$\mathrm{Tr} _{+,f }
\colon 
f ^{ (m)} _{+}  \circ f  ^{! (m)} \circ pr _1 ^{\prime  ! (m)}  ( \cN _1)
\to 
pr _1 ^{\prime  ! (m)}  ( \cN _1)$
to be the one making commutative the bottom of the diagram:
\begin{equation}
\label{theo-iso-chgtbase2-pre-iso3-proof1}
\xymatrix{
{pr  _1 ^{\prime \sharp}  \circ \R f  _{1,*}\circ f _1 ^{\sharp} ( \cN _1)} 
\ar[r] ^-{\sim} _-{\ref{theo-iso-chgtbase2-pre-isopre}}
\ar[d] ^-{}
\ar@/^5ex/[rrr] ^-{\mathrm{Tr} _{f _1}}
& 
{\R f _{*}  \circ pr _1 ^{\sharp} \circ f _1 ^{\sharp} ( \cN _1)} 
\ar[d] ^-{}
\ar[r] ^-{\sim}
& 
{\R f  _{*} \circ  f  ^{\sharp}   \circ pr  _1 ^{\prime \sharp} ( \cN _1)} 
\ar[r] ^-{\mathrm{Tr} _{f}}
\ar[d] ^-{}
&
{pr  _1 ^{\prime \sharp} ( \cN _1)} 
\ar[d] ^-{\sim}
\\
{pr  _1 ^{\prime  ! (m)}  \circ f ^{ (m)} _{1,+}\circ f _1 ^{! (m)} ( \cN _1)} 
\ar[r] ^-{\sim}
\ar@/_5ex/[rrr] ^-{\mathrm{Tr} _{+,f _1}}
& 
{f ^{ (m)} _{+}  \circ pr _1 ^{ ! (m)} \circ f _1 ^{! (m)} ( \cN _1)} 
\ar[r] ^-{\sim}
& 
{f ^{ (m)} _{+}  \circ f  ^{! (m)} \circ pr _1 ^{\prime  ! (m)}  ( \cN _1)} 
\ar@{.>}[r] ^-{\mathrm{Tr} _{+,f }}
&
{pr  _1 ^{\prime !(m)} ( \cN _1).} 
}\end{equation}
Following \ref{diag-comm-Trf1-Trf}, 
the top of the diagram \ref{theo-iso-chgtbase2-pre-iso3-proof1} is commutative.
From the commutativity of \ref{theo-iso-chgtbase2-pre-iso3pre},
we get the commutativity of the outline of \ref{theo-iso-chgtbase2-pre-iso3-proof1}.
From the commutative diagram \ref{theo-iso-chgtbase2-pre-iso}, we get the commutativity of the left square of \ref{theo-iso-chgtbase2-pre-iso3-proof1}. 
The commutativity of the middle square of \ref{theo-iso-chgtbase2-pre-iso3-proof1} is easy. 
This yields that the right square of \ref{theo-iso-chgtbase2-pre-iso3-proof1} is indeed commutative.
\end{proof}

\begin{empt}
Suppose 
$Y _1 =S$, $X _1 = \bbP ^{d} _S$,
$f _1 \colon \bbP ^{d} _S \to S$ is the canonical projection
and 
$\cN _1= \O _S \in D ^{\mathrm{b}} _{\mathrm{qc}} ({} ^r \D ^{(m)} _{Y_1/S})
=
D ^{\mathrm{b}} _{\mathrm{qc}} (\O _S)$.
We have 
$f _1 ^{!(m)} ( \cO _S)=f _1 ^\sharp (\O _S) =  \omega _{\bbP ^{d} _S/S}  [d]$
and 
the trace map 
$Tr _{f _1}
\colon 
\R f _* ( \omega _{\bbP ^{d} _S/S} ) [d]
\to \O _S$ 
is an isomorphism of 
$D ^{\mathrm{b}} _{\mathrm{qc}} (\O _S)$.
Since the canonical morphism 
$\R f _* ( \omega _{\bbP ^{d} _S/S} ) [d]
\to 
f ^{ (m)} _{1,+}
( \omega _{\bbP ^{d} _S/S} ) [d]$ is an isomorphism after applying the trunctation
functor 
$\tau _{\geq 0}$, 
we get the morphism
$\mathrm{Tr} _{+,f _1}
\colon 
f ^{ (m)} _{1,+}
( \omega _{\bbP ^{d} _S/S} ) [d]
\to 
 \O _S$ making commutative the diagram
\begin{equation}
\label{theo-iso-chgtbase2-ex}
\xymatrix{
{ \R f  _{1,*}( \omega _{\bbP ^{d} _S/S} ) [d]} 
\ar[d] ^-{}
\ar[r] ^-{\mathrm{Tr} _{f _1}}
&
{ \cO _S}
\\
{ f ^{ (m)} _{1,+} ( \omega _{\bbP ^{d} _S/S} ) [d].} 
\ar[ur] _-{\mathrm{Tr} _{+,f _1}}
}
\end{equation}
Hence, following Proposition \ref{proptheo-iso-chgtbase2-pre-iso3},
there exists a  canonical morphism 
$\mathrm{Tr} _{+,f }
\colon 
f ^{ (m)} _{+}  ( \omega _{\bbP ^{d} _T/S} ) [d]
\to 
( \omega _{T/S} )$
of 
$D ^{\mathrm{b}} _{\mathrm{qc}} ({} ^r \D ^{(m)} _{T/S})$
making commutative the diagram 
\begin{equation}
\label{theo-iso-chgtbase2-pre-iso3ex}
\xymatrix{
{\R f  _{*} \circ  ( \omega _{\bbP ^{d} _T/S} ) [d]} 
\ar[r] ^-{\mathrm{Tr} _{f}}
\ar[d] ^-{}
&
{\omega _{T/S}.} 
\\
{f ^{ (m)} _{+}  ( \omega _{\bbP ^{d} _T/S} ) [d]} 
\ar[ur] ^-{\mathrm{Tr} _{+,f }}
}
\end{equation}

\end{empt}

\begin{thm}
\label{rel-dual-isom-proj}
Let $f\colon X \to Y$ be a morphism of smooth $S$-schemes which is the composition of a closed immersion of the form
$X \hookrightarrow \bbP ^d _Y$ and of the projection $\bbP ^d _Y \to Y$.

\begin{enumerate}
\item Let 
$\cE \in D ^\mathrm{b}  _{\mathrm{coh}}
({} ^l \D _X ^{(m)} )$.
We have the isomorphism of $D ^\mathrm{b}  _{\mathrm{coh}}
({} ^l \D _Y ^{(m)} )$:
\begin{equation}
\label{rel-dual-isom-proj1}
\DD ^{(m)} \circ f _+ (\cE)
\riso 
 f _+ \circ \DD ^{(m)} (\cE).
\end{equation}

\item Let $\cE \in D ^\mathrm{b}  _{\mathrm{coh}}
({} ^l \D _X ^{(m)} )$,
and
$\cF \in D ^\mathrm{b}  _{\mathrm{coh}}
({} ^l \D _Y ^{(m)} )$.
We have 
the isomorphisms
\begin{gather}
\label{cor-adj-formul-proj-bij1}
\R \mathcal{H} om _{\D _Y ^{(m)}}
( f _{+} ( \E ) , \cF) 
\riso 
\R f _* 
\R \mathcal{H} om _{\D _X ^{(m)}}
( \E  ,f ^!   ( \cF)),
\\
\label{cor-adj-formul-proj-bij2}
\R \mathrm{Hom}  _{\D _Y ^{(m)}}
( f _{+} ( \E ) , \cF) 
\riso 
\R \mathrm{Hom}  _{\D _X ^{(m)}}
( \E  ,f ^!   ( \cF)). 
\end{gather}
\end{enumerate}

\end{thm}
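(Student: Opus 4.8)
The plan is to factor $f = g \circ u$, where $u \colon X \hookrightarrow \bbP ^d _Y$ is the given closed immersion and $g \colon \bbP ^d _Y \to Y$ is the structural projection, and to treat these two factors separately. Since $\D _X ^{(m)}$ has finite homological dimension (compare \cite[I.5.9]{sga6} and \cite[3.3--3.4]{Be1}), all complexes occurring below are perfect, $\DD ^{(m)}$ is a well-defined auto-equivalence of $D ^\mathrm{b} _{\mathrm{coh}} ({} ^* \D _X ^{(m)})$ for $* \in \{ l, r\}$, and the rewriting of $\R \mathcal{H} om$ into tensor products over $\D ^{(m)}$ used in the proof of \ref{cor-adj-formul} (via \cite[2.1.17]{caro_comparaison}) is available. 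By transitivity of $f _+ = g _+ \circ u _+$ and the left/right exchange isomorphisms \ref{u+D-left2right} (and their evident analogues for $g$), part 1 will follow once it is established for $u$ and for $g$ separately; part 1 for $u$ is exactly \ref{rel-dual-isom-imm}. Part 2 will in turn follow formally from part 1 for $f$. So the crux is part 1 for the projection $g$.

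First I would record a trace morphism for $g$. By \ref{theo-iso-chgtbase2-ex} (applied with $Y _1 = S$) and Proposition \ref{proptheo-iso-chgtbase2-pre-iso3}, together with the base change isomorphism of the projection case, there is a canonical morphism $\mathrm{Tr} _{+,g} \colon g _+ ^{(m)} ( \omega _{\bbP ^d _Y /S} ) [d] \to \omega _{Y/S}$, that is, a trace $g _+ ^{(m)} g ^{!(m)} ( \omega _{Y/S}) \to \omega _{Y/S}$, which by the diagram \ref{theo-iso-chgtbase2-pre-iso3ex} is compatible with Grothendieck's trace map for the projective morphism $g$ on the underlying $\O$-modules. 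Using the projection formula \ref{prop-u+otimes-u!} for $g$ (taking $\B _X = \O _X$) this extends functorially to $\mathrm{Tr} _{+,g} \colon g _+ ^{(m)} g ^{!(m)} ( \cF) \to \cF$ for all $\cF \in D ^\mathrm{b} _{\mathrm{coh}} ({} ^r \D _Y ^{(m)})$. From this trace map and biduality, I would then construct the relative duality morphism $g _+ \circ \DD ^{(m)} (\cE) \to \DD ^{(m)} \circ g _+ (\cE)$ exactly as in Virrion's construction of \ref{dualrelative} (see \cite[IV.1.3]{Vir04} and \cite[1.2.7]{caro_courbe-nouveau}), passing between left and right module structures by means of \ref{u+D-left2right}.

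To check that this morphism is an isomorphism, I would argue that its formation is local on $Y$, so one may assume $Y$ affine; then, since both sides are way-out functors and $g$ has finite cohomological dimension, \cite[I.7.1]{HaRD} reduces the verification to the case $\cE = \D _X ^{(m)} \otimes _{\O _X} \cL$ with $\cL$ a quasi-coherent $\O _X$-module on $X = \bbP ^d _Y$. In that case the projection formula expresses both sides through $\R g _*$ applied to $\O _X$-modules, and the relative duality morphism becomes the Grothendieck--Serre duality isomorphism for the projective morphism $g$, the compatibility \ref{theo-iso-chgtbase2-pre-iso3ex} of our trace with Grothendieck's being precisely what identifies it with that isomorphism. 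I expect this last step --- the reduction to Grothendieck--Serre duality on $\bbP ^d _Y$ and the matching of the two trace maps --- to be the main obstacle; the rest is formal. Granting it, part 1 for $g$ holds, and combining it with \ref{rel-dual-isom-imm} and \ref{u+D-left2right} gives part 1 for $f$, since $\DD ^{(m)} f _+ (\cE) = \DD ^{(m)} g _+ u _+ (\cE) \riso g _+ \DD ^{(m)} u _+ (\cE) \riso g _+ u _+ \DD ^{(m)} (\cE) = f _+ \DD ^{(m)} (\cE)$.

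Finally, part 2 follows formally from part 1 by copying the proof of \ref{cor-adj-formul} in the present algebraic setting: using $D ^\mathrm{b} _{\mathrm{coh}} = D ^\mathrm{b} _{\mathrm{parf}}$ and \cite[2.1.17]{caro_comparaison} one rewrites $\R \mathcal{H} om _{\D _Y ^{(m)}} ( f _+ (\cE), \cF)$ through $\DD ^{(m)} ( f _+ (\cE)) \riso f _+ ( \DD ^{(m)} (\cE))$; then applying the projection formula for the morphism of ringed spaces $f \colon (X, \D _X ^{(m)}) \to (Y, \D _Y ^{(m)})$ and the identifications $f ^{-1} ( \omega _Y \otimes _{\O _Y} \cF) \otimes ^\L _{f ^{-1} \D _Y ^{(m)}} \D _{Y \leftarrow X} ^{(m)} \otimes _{\O _X} \omega _X ^{-1} [ d _{X/Y}] \riso f ^{!(m)} ( \cF)$ and $\omega _X \otimes _{\O _X} \DD ^{(m)} (\cE) [ - d _X] \riso \R \mathcal{H} om _{\D _X ^{(m)}} ( \cE, \D _X ^{(m)})$, one obtains \ref{cor-adj-formul-proj-bij1}; the isomorphism \ref{cor-adj-formul-proj-bij2} then follows by applying $\R \Gamma (Y, -)$.
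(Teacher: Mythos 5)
Your argument follows the same route as the paper: reduce to the two factors, dispose of the closed immersion by \ref{rel-dual-isom-imm}, and for the projection construct the trace using \ref{theo-iso-chgtbase2-pre-iso3ex}, then run Virrion's proof (\cite[IV.1.3, IV.2.2.4--5]{Vir04}) with the way-out reduction to induced modules $\D_X^{(m)} \otimes_{\O_X} \cL$ and the identification with Grothendieck--Serre duality via the commutativity of \ref{theo-iso-chgtbase2-pre-iso3ex}; part 2 is then obtained by copying the formal argument of \ref{cor-adj-formul}. This is exactly the paper's proof (the intermediate remark about extending $\mathrm{Tr}_{+,g}$ to all $\cF$ via the projection formula is unnecessary — Virrion's construction only needs the trace on the dualizing module — but it does no harm).
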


\begin{proof}
1) Following \ref{rel-dual-isom-imm}, 
the case of a closed immersion is easily checked.
Hence, we reduce to the case where $f$ is the projection $\bbP ^d _Y \to Y$.
Using \ref{theo-iso-chgtbase2-pre-iso3ex}, to check such an isomorphism,
we can copy Virrion's proof (more precisely :
a) the construction is given in \cite[IV.1.3]{Vir04},
b) for induced modules, using Grothedieck's duality isomorphism for coherent 
$\O$-modules, we construct in another way such an isomorphism : see \cite[IV.2.2.4]{Vir04},
c) the equality between both constructions is a consequence of 
the commutativity of \ref{theo-iso-chgtbase2-pre-iso3ex}: see \cite[IV.2.2.5]{Vir04}).

2) By copying word by word the proof of \ref{cor-adj-formul}, the second part of the theorem is a consequence of 
\ref{rel-dual-isom-proj1}. 
\end{proof}

\subsection{Going to formal $\S$-schemes}
Let $\fP$ and $\fQ$ be two  smooth formal schemes over $\S $,
$p _1 \colon \fP \times _\S \fQ \to \fP$, 
$p_2 \colon \fP \times _\S \fQ \to \fQ$
be the canonical projections.

\begin{empt}
Using the tensor product defined in \ref{def-otimes-coh1qc}, 
we get the bifunctor
\begin{equation}
\label{dfnboxtimes}
 \smash{\widehat{\boxtimes}}
^\L _{\O _{\S }}
\colon 
\smash{\underrightarrow{LD}}  ^\mathrm{b} _{\Q, \mathrm{qc}}
( \smash{\widetilde{\D}} _{\fP ^{ }/\S }  ^{(\bullet)} )
\times 
\smash{\underrightarrow{LD}}  ^\mathrm{b} _{\Q, \mathrm{qc}}
( \smash{\widetilde{\D}} _{\fQ ^{ }/\S }  ^{(\bullet)} )
\to 
\smash{\underrightarrow{LD}}  ^\mathrm{b} _{\Q, \mathrm{qc}}
( \smash{\widetilde{\D}} _{\fP \times \fQ/\S }  ^{(\bullet)} )
\end{equation}
defined as follows: 
for any $\E ^{ (\bullet)} 
\in 
\smash{\underrightarrow{LD}}  ^\mathrm{b} _{\Q, \mathrm{qc}}
( \smash{\widetilde{\D}} _{\fP ^{ }/\S }  ^{(\bullet)} )$,
$\FF  ^{ (\bullet)} 
\in 
\smash{\underrightarrow{LD}}  ^\mathrm{b} _{\Q, \mathrm{qc}}
( \smash{\widetilde{\D}} _{\fQ ^{ }/\S }  ^{(\bullet)} )$, 
we set
$$ \E ^{ (\bullet)}
 \smash{\widehat{\boxtimes}}
^\L _{\O _{\S }}
\FF ^{ (\bullet)}
:= 
p _1 ^{(\bullet) *} \E ^{ (\bullet)}
 \smash{\widehat{\otimes}}
^\L _{\O ^{(\bullet)}  _{\fP \times \fQ} }
p _2 ^{(\bullet) *} \FF ^{ (\bullet)}.$$
As for \cite[4.3.5]{Beintro2}, this functor induces 
the following one
\begin{equation}
\label{boxtimesLDcoh}
 \smash{\widehat{\boxtimes}} 
^\L _{\O _{\S }}
\colon 
\smash{\underrightarrow{LD}}  ^\mathrm{b} _{\Q, \mathrm{coh}}
( \smash{\widetilde{\D}} _{\fP ^{ }/\S }  ^{(\bullet)} )
\times 
\smash{\underrightarrow{LD}}  ^\mathrm{b} _{\Q, \mathrm{coh}}
( \smash{\widetilde{\D}} _{\fQ ^{ }/\S }  ^{(\bullet)} )
\to 
\smash{\underrightarrow{LD}}  ^\mathrm{b} _{\Q, \mathrm{coh}}
( \smash{\widetilde{\D}} _{\fP \times \fQ/\S }  ^{(\bullet)} ).
\end{equation}

\end{empt}

\begin{empt}
For any $\E ^{ (\bullet)} 
\in 
\smash{\underrightarrow{LD}}  ^\mathrm{b} _{\Q, \mathrm{qc}}
( \smash{\widetilde{\D}} _{\fP ^{ }/\S }  ^{(\bullet)} )$,
$\FF  ^{ (\bullet)} 
\in 
\smash{\underrightarrow{LD}}  ^\mathrm{b} _{\Q, \mathrm{qc}}
( \smash{\widetilde{\D}} _{\fQ ^{ }/\S }  ^{(\bullet)} )$, 
we have the isomorphism
\begin{equation}
\label{boxtimesalg-formal}
\E ^{ (\bullet)}
 \smash{\widehat{\boxtimes}}
^\L _{\O _{\S }}
\FF ^{ (\bullet)}
\riso 
\R \underleftarrow{\lim}_i \,  
\left (
\E _i ^{ (\bullet)}
 \smash{\widehat{\boxtimes}}
^\L _{\O _{S _i}}
\FF _i ^{ (\bullet)}
\right ),
\end{equation}
where as usual we set 
$\E ^{ (\bullet)} _i := \smash{\widetilde{\D}} _{P  _i/ S  _i} ^{(\bullet)}  \otimes ^\L _{\smash{\widetilde{\D}} _{\fP /\S } ^{(\bullet)} } \E ^{ (\bullet)}$,
and
$\cF ^{ (\bullet)} _i := \smash{\widetilde{\D}} _{P  _i/ S  _i} ^{(\bullet)}  \otimes ^\L _{\smash{\widetilde{\D}} _{\fP /\S } ^{(\bullet)} } \cF ^{ (\bullet)}$.

\end{empt}

\begin{lem}
\label{exact-boxtimes}
The bifunctor \ref{boxtimesLDcoh} induces the
 exact bifunctor
$$ \smash{\widehat{\boxtimes}} ^\L _{\O _{\S }}
\colon 
\smash{\underrightarrow{LM}}  _{\Q, \mathrm{coh}}
( \smash{\widetilde{\D}} _{\fP ^{ }/\S }  ^{(\bullet)} )
\times 
\smash{\underrightarrow{LM}}   _{\Q, \mathrm{coh}}
( \smash{\widetilde{\D}} _{\fQ ^{ }/\S }  ^{(\bullet)} )
\to 
\smash{\underrightarrow{LM}}  _{\Q, \mathrm{coh}}
( \smash{\widetilde{\D}} _{\fP \times \fQ/\S }  ^{(\bullet)} ).$$ 
\end{lem}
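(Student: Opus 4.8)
The plan is to reduce the statement to the corresponding exactness on each finite level $S_i := \Spec(\V/\pi^{i+1})$, where the base is the spectrum of an Artinian local ring, and then invoke Theorem \ref{sch-prop-boxtimes-v*} (and the general machinery of exterior tensor products on schemes from \S\ref{subsec4.2}) together with a flatness argument. First I would unwind the definitions: an object of $\underrightarrow{LM}_{\Q,\mathrm{coh}}(\widetilde{\D}_{\fP/\S}^{(\bullet)})$ is, locally on $\fP$, isomorphic (in the localized category) to a coherent $\widehat{\D}^{(\bullet)}_{\fP/\S}$-module that, after tensoring by $\Q$ and passing to the limit, gives a coherent $\D^\dag_{\fP,\Q}$-module; by \ref{eqcat-limcoh} and \ref{defi-LDQ0coh}, up to a lim-isomorphism we may assume the transition maps are given by the honest coherence condition \ref{Beintro-4.2.3M}. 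Since exactness of a bifunctor between abelian categories is a local question and the subcategory $\underrightarrow{LM}_{\Q,\mathrm{coh}}$ is stable under kernels, cokernels and extensions (\ref{LQ-coh-stab}), it suffices to check that $\widehat{\boxtimes}^\L_{\O_\S}$ is well-defined on $\underrightarrow{LM}_{\Q,\mathrm{coh}}$ (i.e. that it takes values in degree zero and in coherent objects) and that it carries short exact sequences in either variable to short exact sequences.

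The key step is the vanishing of the higher Tor's, i.e. that for $\E^{(\bullet)}$ and $\FF^{(\bullet)}$ coherent the complex $\E^{(\bullet)}\widehat{\boxtimes}^\L_{\O_\S}\FF^{(\bullet)}$ is concentrated in degree $0$. Using \ref{boxtimesalg-formal} I would write this $\R\varprojlim_i$ of the level-$i$ exterior products $\E_i^{(\bullet)}\widehat{\boxtimes}^\L_{\O_{S_i}}\FF_i^{(\bullet)}$, and reduce to proving that each level-$i$ exterior product is concentrated in degree $0$ and $\O_{P_i\times Q_i}$-flat (so that the $\R\varprojlim$ behaves well and the completed tensor product introduces no derived correction). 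At finite level, $\E_i^{(m)}\widehat{\boxtimes}^\L_{\O_{S_i}}\FF_i^{(m)}$ is, by \ref{boxtimes-dfn2L}, $pr_1^*\E_i^{(m)}\otimes^\L_{\O_{P_i\times Q_i}}pr_2^*\FF_i^{(m)}$; since $\widetilde{\D}^{(m)}_{P_i/S_i}(T)$ has a bounded flat resolution over $\O_{P_i}$ (use that $\D^{(m)}_{P_i/S_i}$ is $\O_{P_i}$-flat), each of $\E_i^{(m)}$ and $\FF_i^{(m)}$ can be chosen to have $\O$-flat resolutions, and the base $S_i=\Spec(\V/\pi^{i+1})$ being Artinian local, the exterior tensor product over $\O_{S_i}$ of two $\O$-flat complexes is $\O$-flat and concentrated in degree $0$. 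I would be careful here that one cannot literally invoke ``$S$ a field'' from \ref{rem-ext-prod}, but the Artinian-local case gives enough: after the $\Q$-localization everything becomes flat over the field $K$, which is the point actually used for exactness. Once degree-zero concentration and coherence are known at each level, compatibility of the transition maps \ref{Beintro-4.2.3M} with $\widehat{\boxtimes}$ (which follows from \ref{com-botimestop-iso2} and \ref{com-botimes2}) shows the resulting $\widehat{\D}^{(\bullet)}_{\fP\times\fQ/\S}$-module is coherent, hence lands in $\underrightarrow{LM}_{\Q,\mathrm{coh}}(\widetilde{\D}^{(\bullet)}_{\fP\times\fQ/\S})$.

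For exactness proper, I would take a short exact sequence $0\to\E'^{(\bullet)}\to\E^{(\bullet)}\to\E''^{(\bullet)}\to 0$ in $\underrightarrow{LM}_{\Q,\mathrm{coh}}(\widetilde{\D}^{(\bullet)}_{\fP/\S})$ and a coherent $\FF^{(\bullet)}$, and apply $-\widehat{\boxtimes}^\L_{\O_\S}\FF^{(\bullet)}$. The associated long exact sequence of cohomology of the bifunctor, combined with the degree-zero concentration established above (applied in particular to $\E'^{(\bullet)}$, $\E^{(\bullet)}$, $\E''^{(\bullet)}$), forces the $\mathcal{H}^{-1}$ term to vanish, whence a short exact sequence; the argument for the second variable is symmetric via $p_1\leftrightarrow p_2$. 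I would also record that this exactness, together with \ref{exact-boxtimes-preliminaries on the $\O$-flatness after $\Q$}, is exactly the $\Q$-linear analogue of the classical fact that exterior tensor product over a field is exact, transported through the equivalences \ref{M-eq-coh-lim}. The main obstacle I anticipate is the bookkeeping at the level of the completed inverse limit \ref{boxtimesalg-formal}: one must check that $\R\varprojlim_i$ of a system of $\O$-flat degree-zero complexes with surjective (Mittag--Leffler) transition maps is again concentrated in degree $0$, and that the completed tensor products $\widehat{\otimes}$ in the definition do not create derived artifacts — this is where one genuinely needs the uniform boundedness of cohomological dimensions from \ref{rema-dim-coh-finie} and the quasi-coherence hypotheses, rather than anything specific to exterior products. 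Everything else is a matter of assembling the compatibility diagrams \ref{com-botimes-diag2}, \ref{com-botimestop-iso1}, \ref{com-botimestop-iso2} already in hand.
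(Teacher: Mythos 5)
Your reduction to finite level and the final Mittag--Leffler/degree-zero bookkeeping are in the right spirit, but the crucial step of the argument is missing and the substitute you propose does not work. The issue is the passage ``each of $\E_i^{(m)}$ and $\FF_i^{(m)}$ can be chosen to have $\O$-flat resolutions, and the base $S_i=\Spec(\V/\pi^{i+1})$ being Artinian local, the exterior tensor product over $\O_{S_i}$ of two $\O$-flat complexes is $\O$-flat and concentrated in degree $0$.'' Having an $\O_{S_i}$-flat resolution is vacuous (every module has one, and the derived tensor is computed with such resolutions precisely because they exist); it does not imply that the derived tensor product is concentrated in degree $0$. That conclusion requires $\E_i^{(m)}$ (or $\FF_i^{(m)}$) itself to be flat over $\O_{S_i}=\V/\pi^{i+1}$, and a coherent $\widetilde{\D}^{(m)}_{P_i/S_i}$-module is not flat over $\V/\pi^{i+1}$ in general: the $\pi^j$-torsion is an obstruction, and for $i\ge 1$ the ring $\V/\pi^{i+1}$ is not a field, so $\mathrm{Tor}_{>0}^{\O_{S_i}}$ does not vanish for free. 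Your fallback remark that ``after the $\Q$-localization everything becomes flat over $K$'' does not repair this, because the $\Q$-localization is performed after the inverse limit over $i$, and the inverse limit of a system of $p$-power-torsion $\mathrm{Tor}$'s need not be torsion (and in particular need not vanish after $\otimes\Q$) without control of the transition maps.

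What the paper's proof does, and what your proposal lacks, is to normalize the inputs before reducing modulo $\pi^{i+1}$: using \cite[3.4.5]{Be1}, one chooses a coherent $\widetilde{\D}^{(m_0)}_{\fP/\S}$-module $\mathscr{E}^{(m_0)}$ \emph{without $p$-torsion} representing $\E$ (and similarly $\mathscr{F}^{(m_0)}$), and for $m\ge m_0$ replaces $\widetilde{\D}^{(m)}\otimes\mathscr{E}^{(m_0)}$ by its quotient by $p$-torsion, so that all the $\mathscr{E}^{(m)}$ are $\V$-flat. Since $\V$ is a DVR, $\V$-flatness of $\mathscr{E}^{(m)}$ implies $\mathscr{E}^{(m)}_i=\mathscr{E}^{(m)}\otimes_{\V}\V/\pi^{i+1}$ is $\V/\pi^{i+1}$-flat, and \emph{this} is what kills the higher $\mathrm{Tor}$ over $\O_{S_i}$: the canonical map $p_1^{-1}\mathscr{E}^{(m)}_i\otimes^{\L}_{\O_{S_i}}p_2^{-1}\mathscr{F}^{(m)}_i\to p_1^{-1}\mathscr{E}^{(m)}_i\otimes_{\O_{S_i}}p_2^{-1}\mathscr{F}^{(m)}_i$ is then an isomorphism. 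The rest of the paper's argument (flatness of $\O_{P_i\times Q_i}$ over $p_1^{-1}\O_{P_i}\otimes_{\O_{S_i}}p_2^{-1}\O_{Q_i}$, Mittag--Leffler for the $\varprojlim_i$) then goes through as you anticipate. Without the torsion-free choice you have no control on the $\mathrm{Tor}$'s at each finite level, and the degree-zero concentration, hence the exactness of $\widehat{\boxtimes}^{\L}$, does not follow.
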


\begin{proof}
Let 
$\E ^{ (\bullet)} 
\in 
\smash{\underrightarrow{LM}}  _{\Q, \mathrm{coh}}
( \smash{\widetilde{\D}} _{\fP ^{ }/\S }  ^{(\bullet)} )$,
$\FF  ^{ (\bullet)} 
\in 
\smash{\underrightarrow{LM}}   _{\Q, \mathrm{coh}}
( \smash{\widetilde{\D}} _{\fQ ^{ }/\S }  ^{(\bullet)} )$.
Let 
$\E:= \underrightarrow{\lim} 
\,
\E ^{ (\bullet)}$,
$\FF:= \underrightarrow{\lim} 
\,
\FF ^{ (\bullet)}$,
where $\underrightarrow{\lim} $ is the equivalence of categories
of \ref{M-eq-coh-lim}.
Choose $m _0$ large enough so that there exists
a coherent 
$ \smash{\widetilde{\D}} _{\fP ^{ }/\S }  ^{(m _0)} $-module 
$\mathscr{E} ^{ (m _0)} $ without $p$-torsion such that 
$\D ^\dag _{\fP/\S, \Q}
\otimes _{\smash{\widetilde{\D}} _{\fP ^{ }/\S }  ^{(m _0)} }
\mathscr{E} ^{ (m _0)}  
\riso 
\E $, 
and 
a coherent 
$ \smash{\widetilde{\D}} _{\fQ ^{ }/\S }  ^{(m _0)} $-module 
$\mathscr{F} ^{ (m _0)} $ without $p$-torsion such that 
$\D ^\dag _{\fQ/\S, \Q}
\otimes _{\smash{\widetilde{\D}} _{\fQ ^{ }/\S }  ^{(m _0)} }
\mathscr{F} ^{ (m _0)}  
\riso 
\FF $. 
For any $m \geq m _0$, 
let 
$\mathscr{E} ^{ (m )} $ and 
(resp. $\mathscr{F} ^{ (m )} $) 
 be 
the quotient of 
$\smash{\widetilde{\D}} _{\fP ^{ }/\S }  ^{(m )} 
\otimes _{\smash{\widetilde{\D}} _{\fP ^{ }/\S }  ^{(m _0)} }
\mathscr{E} ^{ (m _0)} $
(resp. 
$\smash{\widetilde{\D}} _{\fQ ^{ }/\S }  ^{(m )} 
\otimes _{\smash{\widetilde{\D}} _{\fQ ^{ }/\S }  ^{(m _0)} }
\mathscr{F} ^{ (m _0)} $)
by its torsion part. 
 We get 
 $\mathscr{E} ^{ (\bullet + m _0)} 
\in 
\smash{\underrightarrow{LM}}   _{\Q, \mathrm{coh}}
( \smash{\widetilde{\D}} _{\fP ^{ }/\S }  ^{(\bullet)} )$,
$\mathscr{F}  ^{ (\bullet+ m _0)} 
\in 
\smash{\underrightarrow{LM}}  _{\Q, \mathrm{coh}}
( \smash{\widetilde{\D}} _{\fQ ^{ }/\S }  ^{(\bullet)} )$
such that 
$\underrightarrow{\lim} 
\,
\mathscr{E} ^{ (\bullet + m _0)} 
\riso 
\E$,
and 
$\underrightarrow{\lim} 
\,
\mathscr{F} ^{ (\bullet + m _0)} 
\riso 
\FF$.
Hence, we obtain the isomorphisms 
$\E ^{ (\bullet)}  \riso \mathscr{E} ^{ (\bullet + m _0)} $
and 
$\FF ^{ (\bullet)}  \riso \mathscr{F} ^{ (\bullet + m _0)} $.
Let 
$r \colon \fP \times _\S \fQ \to \fS$ be the structural morphism.
If no confusion is possible, the sheaf $r ^{-1}\O _{S _i}$
will be denoted by $\O _{S _i}$.
Since 
$\mathscr{E} ^{ (m)}$ and 
$\mathscr{F} ^{ (m)}$ have no $p$-torsion, then 
the canonical morphism
$p _1 ^{-1} \mathscr{E} ^{ (m)} _i
\otimes ^{\L}
 _{\O _{S _i}}
p _2 ^{-1} \mathscr{F} ^{ (m)} _i
\to 
p _1 ^{-1} \mathscr{E} ^{ (m)} _i
\otimes
 _{\O _{S _i}}
p _2 ^{-1} \mathscr{F} ^{ (m)} _i$
is an isomorphism.
Since
\begin{gather}
\notag
p _1 ^{-1} \mathscr{E} ^{ (m)} _i
\otimes
 _{\O _{S _i}}
p _2 ^{-1} \mathscr{F} ^{ (m)} _i
\riso
\\
\notag
\left (
p _1 ^{-1} \mathscr{E} ^{ (m)} _i
\otimes _{p _1  ^{-1} \O _{P _i}}
(p _1  ^{-1} \O _{P _i}
	\otimes 
	 _{\O _{S _i}}
	p _2 ^{-1} \O _{Q _i})
\right )
\otimes 
	_{
	(p _1  ^{-1} \O _{P _i}
	\otimes 
	 _{\O _{S _i}}
	p _2 ^{-1} \O _{Q _i})
	}
\left (	
(p _1  ^{-1} \O _{P _i}
	\otimes 
	 _{\O _{S _i}}
	p _2 ^{-1} \O _{Q _i})
\otimes _{	p _2 ^{-1} \O _{Q _i}}
p _2 ^{-1} \mathscr{F} ^{ (m)} _i
\right),
\end{gather}
and since the extension
$(p _1  ^{-1} \O _{P _i}
	\otimes 
	 _{\O _{S _i}}
	p _2 ^{-1} \O _{Q _i})
\to \O   _{P _i \times Q _i} 	$
is flat, we get the first isomorphism
\begin{gather}
\notag
p _1 ^{*} \mathscr{E} ^{ (m)}
 \smash{\widehat{\otimes}}
^\L _{\O   _{\fP \times \fQ} }
p _2 ^{*} \mathscr{F} ^{ (m)}
\riso 
\R \underleftarrow{\lim}_i \, 
\O   _{P _i \times Q _i} 
\otimes 
	_{
	(p _1  ^{-1} \O _{P _i}
	\otimes 
	 _{\O _{S _i}}
	p _2 ^{-1} \O _{Q _i})
	}
(p _1 ^{-1} \mathscr{E} ^{ (m)} _i
\otimes
 _{\O _{S _i}}
p _2 ^{-1} \mathscr{F} ^{ (m)} _i)
\\
\notag
\riso
\underleftarrow{\lim}_i \, 
\O   _{P _i \times Q _i} 
\otimes 
	_{
	(p _1  ^{-1} \O _{P _i}
	\otimes 
	 _{\O _{S _i}}
	p _2 ^{-1} \O _{Q _i})
	}
(p _1 ^{-1} \mathscr{E} ^{ (m)} _i
\otimes
 _{\O _{S _i}}
p _2 ^{-1} \mathscr{F} ^{ (m)} _i)
\riso
p _1 ^{*} \mathscr{E} ^{ (m)}
 \smash{\widehat{\otimes}}
_{\O   _{\fP \times \fQ} }
p _2 ^{*} \mathscr{F} ^{ (m)},
\end{gather}
the second isomorphism is checked using 
Mittag-Leffler.
\end{proof}

\begin{coro}
We get the t-exact bifunctor
\begin{equation}
\label{dfn-boxtimesDLMcoh}
 \smash{\widehat{\boxtimes}} 
^\L _{\O _{\S }}
\colon 
D ^{\mathrm{b}}  (\underrightarrow{LM} _{\Q,\mathrm{coh}} (\smash{\widetilde{\D}} _{\fP /\S } ^{(\bullet)} ))
\times 
D ^{\mathrm{b}}  (\underrightarrow{LM} _{\Q,\mathrm{coh}} (\smash{\widetilde{\D}} _{\fQ /\S } ^{(\bullet)} ))
\to 
D ^{\mathrm{b}}  (\underrightarrow{LM} _{\Q,\mathrm{coh}} (\smash{\widetilde{\D}} _{\cP \times \fQ /\S } ^{(\bullet)})).
\end{equation}

\end{coro}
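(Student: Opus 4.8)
The statement asserts that the bifunctor $\smash{\widehat{\boxtimes}}^\L_{\O_\S}$ of \ref{dfn-boxtimesDLMcoh} is well-defined and $t$-exact on the bounded derived categories built from the abelian categories $\underrightarrow{LM}_{\Q,\mathrm{coh}}$. The plan is to derive this formally from Lemma \ref{exact-boxtimes}, which already provides the exact bifunctor on the abelian level
$$\smash{\widehat{\boxtimes}}^\L_{\O_\S}\colon \underrightarrow{LM}_{\Q,\mathrm{coh}}(\smash{\widetilde{\D}}_{\fP/\S}^{(\bullet)})\times \underrightarrow{LM}_{\Q,\mathrm{coh}}(\smash{\widetilde{\D}}_{\fQ/\S}^{(\bullet)})\to \underrightarrow{LM}_{\Q,\mathrm{coh}}(\smash{\widetilde{\D}}_{\fP\times\fQ/\S}^{(\bullet)}).$$
The general principle is that an additive bifunctor between abelian categories which is exact in each variable separately induces, by applying it termwise to complexes (using the total complex construction for a bicomplex), a bifunctor on the homotopy categories that descends to the derived categories; and since it sends acyclic complexes to acyclic complexes in each variable (by exactness), no derived functor machinery is needed — the termwise extension already computes everything. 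Thus the induced functor is defined at the level of $K^{\mathrm{b}}$ and passes to $D^{\mathrm{b}}$.

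First I would recall that $\underrightarrow{LM}_{\Q,\mathrm{coh}}(\smash{\widetilde{\D}}_{\fP/\S}^{(\bullet)})$ is abelian (this follows from Lemma \ref{defi-M-L-Serre} together with Proposition \ref{LQ-coh-stab}, which shows the coherent objects form an abelian full subcategory stable under kernels, cokernels and extensions). Then, given $\E^{\bullet}\in C^{\mathrm{b}}(\underrightarrow{LM}_{\Q,\mathrm{coh}}(\smash{\widetilde{\D}}_{\fP/\S}^{(\bullet)}))$ and $\FF^{\bullet}\in C^{\mathrm{b}}(\underrightarrow{LM}_{\Q,\mathrm{coh}}(\smash{\widetilde{\D}}_{\fQ/\S}^{(\bullet)}))$, I form the bicomplex with $(p,q)$-term $\E^{p}\,\smash{\widehat{\boxtimes}}^\L_{\O_\S}\,\FF^{q}$ and take its total complex; this lands in $C^{\mathrm{b}}(\underrightarrow{LM}_{\Q,\mathrm{coh}}(\smash{\widetilde{\D}}_{\fP\times\fQ/\S}^{(\bullet)}))$ because the bifunctor takes values there. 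Next I verify that this respects homotopies in each variable (standard: a homotopy termwise gives a homotopy of totalizations) and sends pairs where one entry is acyclic to an acyclic complex — here is where exactness of $\smash{\widehat{\boxtimes}}^\L_{\O_\S}$ in each argument (Lemma \ref{exact-boxtimes}) is used, via the spectral sequence of the bicomplex, or more directly by noting that for a bounded acyclic complex in one slot, exactness in that slot makes every row (resp. column) exact, hence the total complex acyclic. This yields a well-defined bifunctor on $D^{\mathrm{b}}\times D^{\mathrm{b}}$.

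Finally, $t$-exactness: since the bifunctor is exact and additive in each variable, one computes $\mathcal{H}^n$ of the totalization of the bicomplex with entries $\E^p\,\smash{\widehat{\boxtimes}}^\L_{\O_\S}\,\FF^q$ and checks that if both $\E^{\bullet}$ and $\FF^{\bullet}$ are concentrated in degree $0$ (i.e. are objects of the abelian categories), then the totalization is concentrated in degree $0$ and equals $\E\,\smash{\widehat{\boxtimes}}^\L_{\O_\S}\,\FF$; more generally one invokes the Künneth-type spectral sequence $E_2^{p,q}=\bigoplus_{a+b=p}\mathcal{H}^a(\E^{\bullet})\,\smash{\widehat{\boxtimes}}^\L_{\O_\S}\,\mathcal{H}^b(\FF^{\bullet})\Rightarrow \mathcal{H}^{p+q}$ which, because $\smash{\widehat{\boxtimes}}^\L_{\O_\S}$ is exact (so the $E_2$ page already has no higher terms from a single slot), degenerates appropriately to give $t$-exactness with respect to the standard $t$-structures on both sides. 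The main obstacle I anticipate is not conceptual but bookkeeping: one must ensure the Künneth spectral sequence argument is valid in this setting, i.e. that the exactness in Lemma \ref{exact-boxtimes} is genuinely biexactness (exact in each variable when the other is fixed at an arbitrary object, not merely at a flat or projective one) — this is exactly what Lemma \ref{exact-boxtimes} delivers, so the argument goes through; the only care needed is to carry out the spectral sequence / totalization comparison for unbounded-free resolutions, but since all complexes are bounded and the categories are abelian with the subcategory of coherent objects stable under the relevant operations, this is routine and can be cited from the analogous argument in \cite{Beintro2} (compare the passage from \cite[4.3.5]{Beintro2} to its derived analogue).
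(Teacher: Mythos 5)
Your proposal is correct and spells out exactly the standard argument that the paper leaves implicit: the corollary is stated with no proof because the extension of an exact bifunctor between abelian categories to a $t$-exact bifunctor on bounded derived categories — via termwise application, totalization of the resulting bicomplex, and preservation of acyclicity in each slot — is routine once Lemma \ref{exact-boxtimes} supplies the biexactness on the hearts. Nothing in your argument is missing or misplaced; you have simply filled in the details the author chose to suppress.
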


\begin{prop}
\label{prop-boxtimes}
\begin{enumerate}
\item 
\label{prop-boxtimes1}
Let 
$\E ^{ (\bullet)} 
\in 
D ^{\mathrm{b}}  (\underrightarrow{LM} _{\Q,\mathrm{coh}} (\smash{\widetilde{\D}} _{\fP /\S } ^{(\bullet)} ))$,
$\FF ^{ (\bullet)} 
\in 
D ^{\mathrm{b}}  (\underrightarrow{LM} _{\Q,\mathrm{coh}} (\smash{\widetilde{\D}} _{\fQ /\S } ^{(\bullet)} ))$. 
We get
the spectral sequence in 
$\underrightarrow{LM} _{\Q,\mathrm{coh}} (\smash{\widetilde{\D}} _{\cP \times \fQ /\S } ^{(\bullet)})$
of the form
$$\H ^r (\E  ^{ (\bullet)} )
\smash{\widehat{\boxtimes}} 
^\L _{\O _{\S }}
\H ^s
 ( 
\FF  ^{ (\bullet)}
)
=:
E _{2} ^{r,s}
\Rightarrow
E ^n :=
\H ^n 
\left ( 
\E  ^{ (\bullet)} 
\smash{\widehat{\boxtimes}} 
^\L _{\O _{\S }}
\FF  ^{ (\bullet)}
\right ) .$$
In particular,
when
$\E ^{ (\bullet)} 
\in 
\smash{\underrightarrow{LM}}  _{\Q, \mathrm{coh}}
( \smash{\widetilde{\D}} _{\fP ^{ }/\S }  ^{(\bullet)} )$,
this yields 
$\H ^n 
\left ( 
\E  ^{ (\bullet)} 
\smash{\widehat{\boxtimes}} 
^\L _{\O _{\S }}
\FF  ^{ (\bullet)}
\right ) 
\riso 
\E  ^{ (\bullet)} 
\smash{\widehat{\boxtimes}} 
^\L _{\O _{\S }}
\H ^n 
 ( 
\FF  ^{ (\bullet)}
) $.

\item 
\label{prop-boxtimes2}
Suppose $\fQ$ affine.
Let 
$\E ^{ (\bullet)} 
\in 
\smash{\underrightarrow{LM}}  _{\Q, \mathrm{coh}}
( \smash{\widetilde{\D}} _{\fP ^{ }/\S }  ^{(\bullet)} )$,
$\FF  ^{ (\bullet)} 
\in 
\smash{\underrightarrow{LD}}  ^\mathrm{b} _{\Q, \mathrm{coh}}
( \smash{\widetilde{\D}} _{\fQ ^{ }/\S }  ^{(\bullet)} )$.
We have 
$\H ^n 
\left ( 
\E  ^{ (\bullet)} 
\smash{\widehat{\boxtimes}} 
^\L _{\O _{\S }}
\FF  ^{ (\bullet)}
\right ) 
\riso 
\E  ^{ (\bullet)} 
\smash{\widehat{\boxtimes}} 
^\L _{\O _{\S }}
\H ^n 
 ( 
\FF  ^{ (\bullet)}
) $.

\end{enumerate}

\end{prop}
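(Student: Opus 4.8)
The plan is to deduce both parts from the biexactness of the exterior tensor product recorded in Lemma \ref{exact-boxtimes}, combined with the standard spectral sequence attached to a bifunctor which is exact in each variable. No genuinely new computation is needed; the content is in unwinding the various incarnations of $\widehat{\boxtimes}^\L_{\O_\S}$ and applying the double-complex formalism.

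For \ref{prop-boxtimes1}, I would start from the fact that, by \ref{dfn-boxtimesDLMcoh}, the bifunctor $\widehat{\boxtimes}^\L_{\O_\S}$ on $D^{\mathrm{b}}(\underrightarrow{LM}_{\Q,\mathrm{coh}})$ is obtained by applying term by term the bifunctor of abelian categories of Lemma \ref{exact-boxtimes}, which is exact in each variable, so that no resolution is required. Picking bounded complexes $E^{\bullet}$ and $F^{\bullet}$ of objects of $\underrightarrow{LM}_{\Q,\mathrm{coh}}(\widetilde{\D}_{\fP/\S}^{(\bullet)})$ and $\underrightarrow{LM}_{\Q,\mathrm{coh}}(\widetilde{\D}_{\fQ/\S}^{(\bullet)})$ representing $\E^{(\bullet)}$ and $\FF^{(\bullet)}$, the object $\E^{(\bullet)}\widehat{\boxtimes}^\L_{\O_\S}\FF^{(\bullet)}$ is the total complex of the double complex $C^{p,q}:=E^{p}\widehat{\boxtimes}^\L_{\O_\S}F^{q}$. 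I would then invoke the spectral sequence of this double complex obtained by taking first the cohomology in the $\FF^{(\bullet)}$-variable: exactness of $E^{p}\widehat{\boxtimes}^\L_{\O_\S}(-)$ gives $E_1^{p,q}=E^{p}\widehat{\boxtimes}^\L_{\O_\S}\H^{q}(\FF^{(\bullet)})$, and then exactness of $(-)\widehat{\boxtimes}^\L_{\O_\S}\H^{q}(\FF^{(\bullet)})$ gives $E_2^{p,q}=\H^{p}(\E^{(\bullet)})\widehat{\boxtimes}^\L_{\O_\S}\H^{q}(\FF^{(\bullet)})$, converging to $\H^{p+q}$ of the total complex, i.e. to $\H^{p+q}(\E^{(\bullet)}\widehat{\boxtimes}^\L_{\O_\S}\FF^{(\bullet)})$; relabelling $(p,q)$ as $(r,s)$ yields the asserted spectral sequence, and boundedness makes convergence automatic. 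When moreover $\E^{(\bullet)}\in\underrightarrow{LM}_{\Q,\mathrm{coh}}$, one takes $E^{\bullet}=\E^{(\bullet)}$ concentrated in degree $0$, so the spectral sequence is supported in the single column $r=0$ and degenerates, producing the isomorphism $\H^{n}(\E^{(\bullet)}\widehat{\boxtimes}^\L_{\O_\S}\FF^{(\bullet)})\riso\E^{(\bullet)}\widehat{\boxtimes}^\L_{\O_\S}\H^{n}(\FF^{(\bullet)})$.

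For \ref{prop-boxtimes2}, I would reduce to the degenerate case of \ref{prop-boxtimes1} using the affineness of $\fQ$. First I would transport everything through the equivalences \ref{eqcatLD=DSM-fonct-coh} and \ref{eqcatcoh}, checking that the bifunctor $\widehat{\boxtimes}^\L_{\O_\S}$ of \ref{boxtimesLDcoh} on $\underrightarrow{LD}^{\mathrm{b}}_{\Q,\mathrm{coh}}$ corresponds to the one of \ref{dfn-boxtimesDLMcoh} and that the functors $\H^{n}$ correspond, as in diagram \ref{diag-Hn-comp-coh}. Since $\fQ$ is affine, \ref{cor-eq-cat-coh-m-2} provides an object $\G^{(\bullet)}\in D^{\mathrm{b}}(\underrightarrow{LM}_{\Q,\mathrm{coh}}(\widetilde{\D}_{\fQ/\S}^{(\bullet)}))$ together with an isomorphism $\G^{(\bullet)}\riso\FF^{(\bullet)}$ in $\underrightarrow{LD}^{\mathrm{b}}_{\Q,\mathrm{coh}}(\widetilde{\D}_{\fQ/\S}^{(\bullet)})$, compatible with the $\H^{n}$; applying the degenerate case of \ref{prop-boxtimes1} to $\E^{(\bullet)}$ and $\G^{(\bullet)}$ and translating back gives the claim.

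The main obstacle I anticipate is not the spectral sequence itself but the bookkeeping behind the compatibility of the several incarnations of $\widehat{\boxtimes}^\L_{\O_\S}$ invoked above — the one defined directly on the categories of quasi-coherent complexes through the pullbacks and $\widehat{\otimes}^\L$ (\ref{dfnboxtimes}, \ref{boxtimesLDcoh}) versus the $t$-exact bifunctor on $D^{\mathrm{b}}(\underrightarrow{LM}_{\Q,\mathrm{coh}})$ (\ref{dfn-boxtimesDLMcoh}) — together with their commutation with the cohomology functors $\H^{n}$ under the equivalences of \ref{eqcat-limcoh}. This is essentially formal, in the spirit of \ref{eqcatLD=DSM}--\ref{eqcat-limcoh} and diagram \ref{diag-Hn-comp-coh}, but it has to be carried out carefully, since in the non-affine case a coherent complex in Berthelot's sense need not be representable by a bounded complex of coherent objects; this is precisely why the affineness of $\fQ$ enters \ref{prop-boxtimes2} via \ref{cor-eq-cat-coh-m}. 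Once this compatibility is in place, both statements are immediate consequences of Lemma \ref{exact-boxtimes} and the elementary double-complex spectral sequence.
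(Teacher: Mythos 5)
Your proof is correct and follows essentially the same route as the paper's: part 1 comes from the t-exactness of the bifunctor of \ref{dfn-boxtimesDLMcoh} (biexactness of \ref{exact-boxtimes} plus the elementary double-complex spectral sequence, degenerating when $\E^{(\bullet)}$ is concentrated in a single degree), and part 2 reduces to the degenerate case via \ref{cor-eq-cat-coh-m-2} for affine $\fQ$, tracking the two incarnations of $\widehat{\boxtimes}^\L_{\O_\S}$ through \ref{diag-Hn-comp-coh}. Your closing remark on why affineness is needed — because a coherent complex in Berthelot's sense need not be represented by a bounded complex of coherent objects outside the affine case — correctly identifies the role of \ref{cor-eq-cat-coh-m} in the argument.
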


\begin{proof}
The fist statement is a consequence of the t-exactness of the functor
\ref{dfn-boxtimesDLMcoh}. Moreover, when $\fQ$ is affine,
following \ref{cor-eq-cat-coh-m-2},
there exists 
$\G  ^{ (\bullet)}  \in 
D ^{\mathrm{b}}  (\underrightarrow{LM} _{\Q,\mathrm{coh}} (\smash{\widetilde{\D}} _{\fQ /\S } ^{(\bullet)} ))$
such that  $\G  ^{ (\bullet)}  $ 
is isomorphic in 
$\smash{\underrightarrow{LD}}  ^\mathrm{b} _{\Q, \mathrm{coh}}
( \smash{\widetilde{\D}} _{\fQ ^{ }/\S }  ^{(\bullet)} )$
to 
$\FF  ^{ (\bullet)} $.
We get 
$\H ^n 
\left ( 
\E  ^{ (\bullet)} 
\smash{\widehat{\boxtimes}} 
^\L _{\O _{\S }}
\G  ^{ (\bullet)}
\right ) 
\riso 
\E  ^{ (\bullet)} 
\smash{\widehat{\boxtimes}} 
^\L _{\O _{\S }}
\H ^n 
 ( 
\G  ^{ (\bullet)}
) $.
Beware that we have two distincts bifunctors \ref{boxtimesLDcoh}, \ref{dfn-boxtimesDLMcoh}.
Hence, using the commutative diagram 
\ref{diag-Hn-comp-coh}, we get
$\H ^n 
\left ( 
\E  ^{ (\bullet)} 
\smash{\widehat{\boxtimes}} 
^\L _{\O _{\S }}
\FF  ^{ (\bullet)}
\right ) 
\riso 
\E  ^{ (\bullet)} 
\smash{\widehat{\boxtimes}} 
^\L _{\O _{\S }}
\H ^n 
 ( 
\FF  ^{ (\bullet)}
) $.
\end{proof}

\begin{prop}
\label{prop-boxtimes-v+}
Let $u\colon \fP' \to \fP$ and  $v\colon \fQ' \to \fQ$ 
be two morphisms of smooth formal  $\V$-schemes.
Let
$\ZZ: =\fP \times _\S \fQ $,
$\ZZ ' := \fP '\times _\S \fQ'$,
and $w:= (u, v) \colon \ZZ ' \to \ZZ$ be the induced morphism.
\begin{enumerate}
\item For any
$\E ^{(\bullet)}
\in \underrightarrow{LD}  ^\mathrm{b} _{\Q, \mathrm{qc}}
(\overset{^\mathrm{g}}{} \smash{\widehat{\D}} _{\fP} ^{(\bullet)} )$
and 
$\FF ^{ (\bullet)}
\in \underrightarrow{LD}  ^\mathrm{b} _{\Q, \mathrm{qc}}
(\overset{^\mathrm{g}}{} \smash{\widehat{\D}} _{\fQ} ^{(\bullet)} )$, 
with notation \ref{ntn-Lf!+*}, we have in 
 $\underrightarrow{LD}  ^\mathrm{b} _{\Q, \mathrm{qc}}
(\overset{^\mathrm{g}}{} \smash{\widehat{\D}} _{\ZZ '} ^{(\bullet)})$
the isomorphism:
\begin{equation}
\label{boxtimes-v!formal}
\L w ^{*(\bullet)}  (\E ^{(\bullet)}
\smash{\widehat{\boxtimes}} ^\L _{\O _{\S }}
\FF ^{(\bullet)})
\riso
\L u ^{*(\bullet)} (\E ^{(\bullet)})
\smash{\widehat{\boxtimes}} ^\L _{\O _{\S }}
\L v ^{*(\bullet)} (\FF ^{(\bullet)}).
\end{equation}

\item For any
$\E ^{\prime(\bullet)}
\in \underrightarrow{LD}  ^\mathrm{b} _{\Q, \mathrm{qc}}
(\overset{^\mathrm{g}}{} \smash{\widehat{\D}} _{\fP'} ^{(\bullet)} )$
and 
$\FF ^{\prime (\bullet)}
\in \underrightarrow{LD}  ^\mathrm{b} _{\Q, \mathrm{qc}}
(\overset{^\mathrm{g}}{} \smash{\widehat{\D}} _{\fQ'} ^{(\bullet)} )$, 
we have in 
 $\underrightarrow{LD}  ^\mathrm{b} _{\Q, \mathrm{qc}}
(\overset{^\mathrm{g}}{} \smash{\widehat{\D}} _{\ZZ} ^{(\bullet)})$
the isomorphism:
\begin{equation}
\label{boxtimes-v+}
w ^{(\bullet)} _+ (\E ^{\prime(\bullet)}
\smash{\widehat{\boxtimes}} ^\L _{\O _{\S }}
\FF ^{\prime(\bullet)})
\riso
u ^{(\bullet)}_+ (\E ^{\prime(\bullet)})
\smash{\widehat{\boxtimes}} ^\L _{\O _{\S }}
v ^{(\bullet)}_+ (\FF ^{\prime(\bullet)}).
\end{equation}

\end{enumerate}

\end{prop}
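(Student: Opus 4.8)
The plan is to reduce both isomorphisms \ref{boxtimes-v!formal} and \ref{boxtimes-v+} to the level-by-level, modulo-$\pi^{i+1}$ situation over schemes, where Theorems \ref{sch-prop-boxtimes-v*} and \ref{sch-prop-boxtimes-v+} (and Lemma \ref{comm-boxtimes-f*}) already provide the desired compatibilities. For the first part, I would unwind the definition of $\L w^{*(\bullet)}$ given just before \ref{f*-OcohDm} together with the definition of $\smash{\widehat{\boxtimes}}^\L_{\O_\S}$ in \ref{dfnboxtimes}: both are built from $p$-adically completed tensor products, which by \ref{boxtimesalg-formal} are computed as $\R\underleftarrow{\lim}_i$ of their reductions modulo $\pi^{i+1}$. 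Since $w = (u,v)$ and the projections $p_1,p_2$ commute with products, the isomorphism $\L w^{*(\bullet)}(p_1^{(\bullet)*}\E^{(\bullet)} \smash{\widehat{\otimes}}^\L p_2^{(\bullet)*}\FF^{(\bullet)}) \riso p_1^{\prime(\bullet)*}\L u^{*(\bullet)}\E^{(\bullet)} \smash{\widehat{\otimes}}^\L p_2^{\prime(\bullet)*}\L v^{*(\bullet)}\FF^{(\bullet)}$ follows from the commutation of inverse images with (derived, completed) tensor products, exactly as in Lemma \ref{comm-boxtimes-f*} but carried out $p$-adically; one applies $\R\underleftarrow{\lim}_i$ to the scheme-level statement and checks the Mittag--Leffler condition to pass the limit through, as in the proof of \ref{exact-boxtimes}.

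For the second part, I would similarly reduce $w^{(\bullet)}_+$, $u^{(\bullet)}_+$, $v^{(\bullet)}_+$ to their definitions via the transfer bimodules $\smash{\widetilde{\D}}^{(\bullet)}_{\ZZ \leftarrow \ZZ'}$, etc., and the functors $\R w_*$. The key algebraic input is the identification $\underset{i}{\boxtimes}\,\smash{\widetilde{\D}}^{(m)}_{P'_i \leftarrow \fP_i}$-type formulas: concretely, the completed exterior tensor product of the transfer bimodules for $u$ and $v$ is (canonically, as a bimodule) the transfer bimodule for $w$, which is the formal-scheme analogue of \ref{boxtimesDleftarrow} and again reduces modulo $\pi^{i+1}$ to \ref{boxtimesDleftarrow} itself. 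Granting this, the isomorphism \ref{boxtimes-v+} is obtained by combining: (a) the scheme-level push-forward formula \ref{sch-prop-boxtimes-v+iso} applied modulo each $\pi^{i+1}$; (b) the commutation of $\R w_* = \R(u\times v)_*$ with exterior tensor products, which is the content of \ref{sch-prop-boxtimes-v*iso} over schemes (here one uses that everything is quasi-coherent, so the projection/base-change arguments of \ref{subsec4.2} apply); and (c) passing to the $\R\underleftarrow{\lim}_i$ via \ref{boxtimesalg-formal}. I would also invoke \ref{def-otimes-coh1&2qc}.\ref{def-otimes-coh2qc} to ensure the bounded quasi-coherent objects have finite Tor-dimension so that the derived tensor products behave well under the inverse limit.

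The main obstacle will be step (b)--(c) interacting correctly: commuting $\R w_*$ past the completed exterior tensor product $\smash{\widehat{\boxtimes}}^\L_{\O_\S}$ requires that the scheme-level isomorphism \ref{sch-prop-boxtimes-v*iso} (which holds for bounded-above quasi-coherent complexes) be compatible with the transition maps of the projective system $(\,\cdot\,)_i$ indexed by $i\in\N$, and then that $\R\underleftarrow{\lim}_i$ commutes with the relevant finite homotopy limits on both sides. This is precisely the kind of Mittag--Leffler / finite-cohomological-dimension bookkeeping carried out in the proofs of \ref{exact-boxtimes} and Lemma \ref{com-botimestop}, so I expect no genuinely new difficulty, only a careful diagram chase: one writes down the big commutative diagram expressing \ref{sch-prop-boxtimes-v+iso2} level by level, applies $\R\underleftarrow{\lim}_i$, and reads off \ref{boxtimes-v+}; the analogous but easier diagram for pullbacks gives \ref{boxtimes-v!formal}. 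Finally, one checks that the resulting isomorphisms respect lim-ind-isogenies and lim-isomorphisms, hence descend to $\smash{\underrightarrow{LD}}^{\mathrm{b}}_{\Q,\mathrm{qc}}$, which is routine since all the constituent functors already do.
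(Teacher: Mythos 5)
Your proposal is correct and takes essentially the same route as the paper: reduce to the scheme level via the formula \ref{boxtimesalg-formal} expressing $\smash{\widehat{\boxtimes}}^\L_{\O_\S}$ as $\R\underleftarrow{\lim}_i$ of the reductions modulo $\pi^{i+1}$, then invoke \ref{comm-boxtimes-f*} for the pullback statement and \ref{sch-prop-boxtimes-v+} for the pushforward statement. The paper compresses this into two lines, leaving the Mittag--Leffler and finite-Tor-dimension bookkeeping you describe implicit (it is the same kind of argument as in \ref{exact-boxtimes}), so your elaboration is a reasonable fleshing-out rather than a different method.
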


\begin{proof}
The first statement is a consequence of \ref{comm-boxtimes-f*} and \ref{boxtimesalg-formal}.
The second one is a consequence of \ref{sch-prop-boxtimes-v+} and \ref{boxtimesalg-formal}. 
\end{proof}

\begin{coro}
\label{theo-iso-chgtbase2}
We keep notation \ref{prop-boxtimes-v+} and we suppose
$v $ is the identity. 
Let $\pi \colon  \fZ  \to \fP $, 
and $\pi '\colon  \fZ ^{ \prime } \to \fP ^{ \prime }$
be the projections. 
Let 
$\E ^{\prime (\bullet)}
\in  \smash{\underrightarrow{LD}} ^\mathrm{b} _{\Q, \mathrm{qc}}
(\overset{^\mathrm{l}}{} \smash{\widetilde{\D}} _{\fP ^{\prime }} ^{(\bullet)})$. 
There exists a canonical isomorphism in 
$\smash{\underrightarrow{LD}} ^\mathrm{b} _{\Q, \mathrm{qc}}
(\overset{^\mathrm{l}}{} \smash{\widetilde{\D}} _{\fZ } ^{(\bullet)})$ of the form:
\begin{equation}
\label{iso-chgtbase2}
\pi ^{ !(\bullet)} \circ u ^{(\bullet)}_{ +} (\E ^{\prime (\bullet)})
\riso
w  ^{(\bullet)}_{+}  \circ \pi  ^{\prime (\bullet) !} (\E ^{\prime (\bullet)}). 
\end{equation}
\end{coro}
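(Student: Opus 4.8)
The statement to prove is the base-change isomorphism \ref{iso-chgtbase2} in the projection case, i.e. for $w = (u,\mathrm{id})\colon \fZ' = \fP'\times_\S\fQ \to \fZ = \fP\times_\S\fQ$ and the projections $\pi,\pi'$. The plan is to deduce it directly from the exterior-tensor-product formalism developed in this section, exactly in the way the scheme-level analogue \ref{theo-iso-chgtbase2-pre} was deduced from Theorem \ref{sch-prop-boxtimes-v+}. First I would observe that, since $\fQ$ is fixed, the projections factor the objects in play through the exterior tensor product: for the constant structure sheaf $\O_\fQ^{(\bullet)} \in \smash{\underrightarrow{LD}}^\mathrm{b}_{\Q,\mathrm{qc}}(\smash{\widetilde{\D}}_{\fQ/\S}^{(\bullet)})$ (more precisely $\widetilde{\B}^{(\bullet)}_\fQ$, but with empty divisor this is $\O^{(\bullet)}_\fQ$), one has $\pi^{!(\bullet)}(\E^{(\bullet)}) \riso \E^{(\bullet)} \smash{\widehat{\boxtimes}}^\L_{\O_\S} \O^{(\bullet)}_\fQ$ up to the shift $[d_\fQ]$, and similarly for $\pi'$; this is the analogue of the identification $pr_1^{!(m)} = pr_1^*[\dim T]$ used in \ref{theo-iso-chgtbase2-pre}. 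I would make this identification precise using \ref{lem-f!B} (which computes $f^{!(\bullet)}$ of the constant coefficient) together with \ref{f!T'Totimes} (compatibility of $f^{!(\bullet)}$ with $\smash{\widehat{\otimes}}$).

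Next I would feed these identifications into Proposition \ref{prop-boxtimes-v+}. Part \ref{prop-boxtimes-v+}(2), the isomorphism \ref{boxtimes-v+}, gives $w^{(\bullet)}_+(\E'^{(\bullet)} \smash{\widehat{\boxtimes}}^\L_{\O_\S} \O^{(\bullet)}_\fQ) \riso u^{(\bullet)}_+(\E'^{(\bullet)}) \smash{\widehat{\boxtimes}}^\L_{\O_\S} \mathrm{id}_+(\O^{(\bullet)}_\fQ)$; since $\mathrm{id}_+$ is (up to canonical isomorphism) the identity on $\O^{(\bullet)}_\fQ$, the right-hand side is $u^{(\bullet)}_+(\E'^{(\bullet)}) \smash{\widehat{\boxtimes}}^\L_{\O_\S}\O^{(\bullet)}_\fQ$. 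Combining with the identification of $\pi^{!(\bullet)}$ as exterior product with $\O^{(\bullet)}_\fQ$, this yields $w^{(\bullet)}_+ \circ \pi'^{(\bullet)!}(\E'^{(\bullet)}) \riso \pi^{!(\bullet)} \circ u^{(\bullet)}_+(\E'^{(\bullet)})$ after checking that the two shifts $[d_\fQ]$ cancel correctly. Here I would also want to invoke the projection formula \ref{surcoh2.1.4} (or its corollary \ref{surcoh2.1.4-cor}) and the commutation \ref{oub-div-opcoh}, \ref{f!commoub} to handle the passage between the abstract $\smash{\widehat{\boxtimes}}$-based computation and the explicit pushforward; but the cleanest route is probably to run the argument entirely through \ref{boxtimes-v+} and \ref{boxtimes-v!formal} and the identification $\pi^{!(\bullet)}(-) = (-)\smash{\widehat{\boxtimes}}^\L_{\O_\S}\O^{(\bullet)}_\fQ[d_\fQ]$.

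The main obstacle I anticipate is not the existence of the isomorphism but its \emph{canonicity}, i.e. producing a distinguished morphism \ref{iso-chgtbase2} (rather than merely an abstract isomorphism), together with the compatibilities one will want later (naturality in $\E'^{(\bullet)}$, transitivity for compositions of morphisms $u$, and compatibility with $\underrightarrow{\lim}$ to pass to coherent $\D^\dag$-modules). To handle this I would arrange the morphism \ref{iso-chgtbase2} as the composite of the structural isomorphisms already constructed: first the identification of $\pi^{!(\bullet)}\circ u^{(\bullet)}_+$ with $u^{(\bullet)}_+(-)\smash{\widehat{\boxtimes}}^\L_{\O_\S}\O^{(\bullet)}_\fQ[d_\fQ]$ via \ref{lem-f!B} and \ref{f!T'Totimes}, then \ref{boxtimes-v+} run backwards, then the identification of $\pi'^{(\bullet)!}$ with $(-)\smash{\widehat{\boxtimes}}^\L_{\O_\S}\O^{(\bullet)}_\fQ[d_\fQ]$; each piece is canonical, so the composite is. The remaining work — verifying that on the open complement $\fP\setminus$(anything) this composite restricts to the classical smooth base-change isomorphism, hence is an isomorphism globally by the faithfulness/flatness arguments used throughout (e.g.\ \ref{oub-pl-fid}) — is then routine, and in the $\smash{\widehat{\boxtimes}}$-formulation the isomorphism is automatic from \ref{prop-boxtimes-v+} without even needing a restriction argument. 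Finally I would remark, following \ref{coh-Qcoh} and \ref{rema-fct-qcoh2coh}, that applying $\underrightarrow{\lim}$ gives the corresponding statement for coherent $\D^\dag$-modules, though that is not part of the present corollary.
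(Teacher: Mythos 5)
Your proof is correct and in fact matches the paper's own \emph{second} suggested route: the paper's proof is one line, ``This is a consequence of \ref{theo-iso-chgtbase2-pre} (or we can deduce it from \ref{prop-boxtimes-v+}),'' and you have fleshed out the parenthetical alternative. The identification $\pi^{!(\bullet)}(-)\riso(-)\smash{\widehat{\boxtimes}}^\L_{\O_\S}\O^{(\bullet)}_\fQ[d_\fQ]$ (via $p_2^{(\bullet)*}\O^{(\bullet)}_\fQ\riso\O^{(\bullet)}_{\fZ}$ and \ref{lem-f!B}), followed by \ref{boxtimes-v+} with $v=\mathrm{id}$ and $\FF'^{(\bullet)}=\O^{(\bullet)}_\fQ$, is exactly what makes the \ref{prop-boxtimes-v+} deduction go through, and the shifts $[d_\fQ]$ cancel as you note. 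The paper's first route instead applies the scheme-level Proposition \ref{theo-iso-chgtbase2-pre} modulo $\pi^{i+1}$ and passes to the derived inverse limit in $i$ (and the inductive system in $m$), which is the standard reduction used throughout the paper; both routes ultimately rest on Theorem \ref{sch-prop-boxtimes-v+}, and yours avoids the limit step by working at the formal level directly via \ref{prop-boxtimes-v+}.
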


\begin{proof}
This is a consequence of \ref{theo-iso-chgtbase2-pre} (or we can deduce it from \ref{prop-boxtimes-v+}). 
\end{proof}

\begin{rem}
\label{remoftheo-iso-chgtbase2}
We will prove later (see \ref{theo-iso-chgtbase}) a coherent version of Corollary \ref{theo-iso-chgtbase2}.
In this version, we can use for instance Berthelot-Kashiwara theorem which allow us to extend geometrically 
the context. 
\end{rem}

The following corollary is weaker than \ref{dualrelative} but the reader can check that
is the proof is much easier. 
\begin{cor}
\label{rel-dual-isom-proj-f}
Let $\fX \to \fY$ be a morphism of smooth formal $\fS$-schemes which is the composition of a closed immersion of the form
$\fX \hookrightarrow \widehat{\bbP} ^d _\fY$ and of the projection $\widehat{\bbP} ^d _\fY \to \fY$. 

\begin{enumerate}
\item For any
$\cE ^{(\bullet)} \in  \smash{\underrightarrow{LD}} ^\mathrm{b} _{\Q, \mathrm{coh}}
(\overset{^\mathrm{l}}{} \smash{\widetilde{\D}} _{\fX } ^{(\bullet)})$,
we have a canonical isomorphism of 
$\smash{\underrightarrow{LD}} ^{\mathrm{b}} _{\Q,\mathrm{coh}} ( \smash{\widehat{\D}} _{\fY} ^{(\bullet)})$
of the form :
\begin{equation}
\label{rel-dual-isom-proj-f1}
\DD ^{(\bullet)} \circ f ^{(\bullet)} _+ (\cE ^{(\bullet)})
\riso 
 f ^{(\bullet)} _+ \circ \DD ^{(\bullet)} (\cE ^{(\bullet)}).
\end{equation}

\item Let $\E  \in D ^\mathrm{b} _{\mathrm{coh}}
(\D ^{\dag} _{\fX,\Q})$,
and
$\cF 
\in 
D ^\mathrm{b} _{\mathrm{coh}}
(\D ^{\dag} _{\fY ,\Q})$.
We have 
the isomorphisms
\begin{gather}
\label{cor-adj-formul-proj-formal-bij1}
\R \mathcal{H} om _{\D ^{\dag} _{\fY,\Q}}
( f _{+} ( \E ) , \cF) 
\riso 
\R f _* 
\R \mathcal{H} om _{\D ^{\dag} _{\fX ,\Q}}
( \E  ,f ^!   ( \cF)),
\\
\label{cor-adj-formul-proj-formal-bij2}
\R \mathrm{Hom}  _{\D ^{\dag} _{\fY ,\Q}}
( f _{+} ( \E) , \cF) 
\riso 
\R \mathrm{Hom}  _{\D ^{\dag} _{\fX,\Q}}
( \E  ,f ^!   ( \cF)). 
\end{gather}
\end{enumerate}

\end{cor}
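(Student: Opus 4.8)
The plan is to deduce Corollary \ref{rel-dual-isom-proj-f} from the inductive system version \ref{rel-dual-isom-proj} established over $S$-schemes, using the two standard reductions that appear throughout this chapter: first reduce the formal statement to the level-$m$ statement by passing to inductive limits, and then reduce the level-$m$ formal statement to the modulo $\pi^{i+1}$ statement, which is exactly the content of Theorem \ref{rel-dual-isom-proj} applied to the morphism $f_i \colon X_i \to Y_i$. Since $f$ factors as $\fX \hookrightarrow \widehat{\bbP}^d_\fY \to \fY$, each reduction $f_i$ factors as $X_i \hookrightarrow \bbP^d_{Y_i} \to Y_i$, so the hypotheses of \ref{rel-dual-isom-proj} are met.

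First I would treat the inductive system assertion \ref{rel-dual-isom-proj-f1}. Working in $\smash{\underrightarrow{LD}}^{\mathrm{b}}_{\Q,\mathrm{coh}}(\smash{\widehat{\D}}_{\fX}^{(\bullet)})$, one constructs the morphism \ref{rel-dual-isom-proj-f1} level by level: for each $m$ one has the functors $f^{(m)}_+$ and $\DD^{(m)}$ on $D^{\mathrm{b}}_{\mathrm{coh}}(\widetilde{\D}^{(m)}_{\fX/\S})$, and one wants a natural transformation $\DD^{(m)}\circ f^{(m)}_+ \to f^{(m)}_+ \circ \DD^{(m)}$ compatible with the transition maps, following Virrion's construction (as recalled in the proof of \ref{rel-dual-isom-proj}, via \cite[IV.1.3]{Vir04}). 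Checking it is an isomorphism is local on $\fY$, so one may assume $\fX, \fY$ affine; then one reduces modulo $\pi^{i+1}$ using that $f^{(m)}_+$ is computed by an $\R$-projective-limit (the relevant commutations with $\smash{\widetilde{\D}}_{P_i/S_i}^{(m)}\otimes^{\L}-$ are available from the coherence-stability results of chapter 3, e.g. \ref{stab-coh-f_+}), and invokes \ref{rel-dual-isom-proj1} for $f_i$. Passing to $\smash{\underrightarrow{LD}}^{\mathrm{b}}_{\Q,\mathrm{coh}}$ and then applying $\underrightarrow{\lim}$ via the equivalence \ref{eqcat-limcoh} gives the first isomorphism of the corollary in $D^{\mathrm{b}}_{\mathrm{coh}}(\D^{\dag}_{\fX/\S,\Q})$, hence the coherent version over $\D^{\dag}$.

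For the two adjunction isomorphisms \ref{cor-adj-formul-proj-formal-bij1} and \ref{cor-adj-formul-proj-formal-bij2}, I would copy word for word the argument given for \ref{cor-adj-formul}: using \ref{Noot-Huyghe-finitehomoldim} one has $D^{\mathrm{b}}_{\mathrm{coh}}(\D^{\dag}_{\fX/\S,\Q}) = D^{\mathrm{b}}_{\mathrm{parf}}(\D^{\dag}_{\fX/\S,\Q})$, so one may write $\R\mathcal{H}om_{\D^{\dag}_{\fY,\Q}}(f_+(\E),\cF)$ as a tensor product with $\DD$ applied to $f_+(\E)$, then substitute the relative duality isomorphism \ref{rel-dual-isom-proj-f1}, apply the projection formula for the morphism of ringed spaces $f\colon (\fX,\D^{\dag}_{\fX,\Q})\to(\fY,\D^{\dag}_{\fY,\Q})$, and reorganize using \cite[2.1.17]{caro_comparaison} exactly as in the displayed chain of isomorphisms following \ref{cor-adj-formul}; the global-sections version \ref{cor-adj-formul-proj-formal-bij2} follows by applying $\R\Gamma(\fY,-)$.

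The main obstacle I expect is the descent of \ref{rel-dual-isom-proj-f1} through the two limit procedures, namely verifying that the level-$m$ relative duality morphism really is compatible with the transition maps $\smash{\widetilde{\D}}^{(m+1)}\otimes^{\L}_{\smash{\widetilde{\D}}^{(m)}}-$ and with reduction modulo $\pi^{i+1}$, so that the fiberwise isomorphism statement of \ref{rel-dual-isom-proj} can be applied; this is where one must be careful about the $\R\varprojlim$ in the definition of $f^{(m)}_+$ and about the behavior of $\DD^{(m)}$ under base change, and it is the only genuinely non-formal point — everything else is a transcription of the already-treated closed-immersion and projective cases together with the formalism of \ref{fct-qcoh2coh} and \ref{eqcat-limcoh}. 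In practice, since the projective case $f_1\colon \bbP^d_Y \to Y$ was arranged precisely to make the trace map compatible with Grothendieck's trace (diagram \ref{theo-iso-chgtbase2-pre-iso3ex}), the compatibilities needed here are formal consequences of that diagram, so this obstacle should be surmountable without new ideas.
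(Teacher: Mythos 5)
Your proposal takes exactly the same route as the paper, which simply states that the first statement is a consequence of \ref{rel-dual-isom-proj} and that the second follows from the first as in \ref{cor-adj-formul}. You have filled in the two-stage limit reduction (from the formal $\S$-scheme level to the $\smash{\widetilde{\D}}^{(m)}$-level, and from there to modulo $\pi^{i+1}$) that the paper leaves implicit, and you correctly identify the compatibility of the duality morphism with the transition maps $\smash{\widetilde{\D}}^{(m+1)}\otimes^{\L}_{\smash{\widetilde{\D}}^{(m)}}(-)$ and with $\pi^{i+1}$-reduction as the one non-formal point — this is indeed what the paper is silently relying on when it cites \ref{rel-dual-isom-proj}.
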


\begin{proof}
The first statement is a consequence of \ref{rel-dual-isom-proj}.
Similarly to \ref{cor-adj-formul},
we check that 
\ref{rel-dual-isom-proj-f1}
implies the second statement.
\end{proof}

\section{Differential coherence of $\O _{\X} ({}^\dag Z) _\Q$ when $Z$ is a divisor}

\subsection{Local cohomology with support in a  smooth closed subscheme of the constant coefficient}

\begin{empt}
\label{com-invMIC-ntn}
Let $\fP $ be a smooth formal scheme over $\S $.
Let $u _0\colon X  \hookrightarrow P $ be a closed immersion of smooth schemes over $S $ purely of codimension $r$.
Let $T$ be a divisor of $P$ such that $Z:= T \cap X$ is a divisor of $X$. 
We set $\U := \fP \setminus T$, $Y:= X \setminus Z$, 
 $v _0 \colon Y \to U$ be the morphism induced by $u_ 0$.
Choose $(\fP  _{\alpha}) _{\alpha \in \Lambda}$ an open affine covering of  $\fP $
and let us use the corresponding notation of \ref{ntnPPalpha} (which are compatible with
that of \ref{ntnPPalpha-withoutdiv}).

\begin{enumerate}
\item Denoting by 
$f :=(v _0,\,u _0,\,id)
\colon 
(Y, X,\fP)
\to 
(U, P,\fP)$ the morphism of frames,
we get the inverse image $f _K ^* = | _{]X[ _\fP}
  \colon
  \mathrm{MIC} ^{\dag} (U, P,\fP/K) 
\to 
    \mathrm{MIC} ^{\dag} (Y, X,\fP/K)$ (see notation \ref{inv-image-real}). 
    Hence, we get the functor
$u _{0K}  ^*\circ  | _{]X[ _\fP}
  \colon
 \mathrm{MIC} ^{\dag} (U, P,\fP/K)
\to 
\mathrm{MIC} ^{\dag} (Y, (\X   _\alpha )_{\alpha \in \Lambda}/K)$ (see \ref{eqcat-iso-reco}).

\item Similarly to the construction of
$u _0 ^! \colon 
\mathrm{Coh} (X, \fP,T /K)
\to 
\mathrm{Coh} ((\X   _\alpha )_{\alpha \in \Lambda},Z/K)$
of   \ref{const-u0!}, 
we can define the functor 
$u _0  ^*
  \colon
\mathrm{MIC} ^{\dag \dag} (\fP,T/K) 
\to 
\mathrm{MIC} ^{\dag \dag} ( (\X   _\alpha )_{\alpha \in \Lambda},Z/K)$
    as follows. 
    Let $\E \in  \mathrm{MIC} ^{\dag \dag} (\fP,T/K) $, i.e. 
    a coherent $\D ^{\dag} _{\fP  } (\hdag T ) _{\Q}$ which is also 
    $\O _{\fP  } (\hdag T ) _{\Q}$-coherent.     
We set 
$\E _\alpha : =
u _{\alpha } ^{*} ( \E | \fP _\alpha) := 
\mathcal{H} ^{-r}u _{\alpha } ^{!} ( \E | \fP _\alpha)
\riso
u _{\alpha } ^{!} ( \E | \fP _\alpha) [-r]$.
Then 
$\E _\alpha$ is a coherent
$\D ^{\dag} _{\X  _{\alpha}} (\hdag Z _{\alpha}) _{\Q} $-module, 
which is also 
$\O  _{\X _{\alpha} }(\hdag Z _\alpha ) _{\Q}$-coherent.
Via the isomorphisms of the form $\tau $ 
(\ref{prop-glueiniso-coh1}),
we obtain the glueing 
$\D ^{\dag} _{\X  _{\alpha \beta}} (\hdag Z _{\alpha \beta}) _{\Q} $-linear
isomorphism
$ \theta _{  \alpha \beta} \ : \  p _2  ^{\alpha \beta !} (\E _{\beta}) \riso p  _1 ^{\alpha \beta !} (\E _{\alpha}),$
satisfying the cocycle condition:
$\theta _{13} ^{\alpha \beta \gamma }=
\theta _{12} ^{\alpha \beta \gamma }
\circ
\theta _{23} ^{\alpha \beta \gamma }$.

\end{enumerate}

\end{empt}

\begin{prop}
\label{spinvim*+}
With the notation \ref{com-invMIC-ntn}, 
we have the canonical isomorphism
$$\sp _* \circ u _{0 K} ^* 
\circ  | _{]X[ _\fP} \riso 
u _0 ^* \circ \sp _*$$ 
of functors
$\mathrm{MIC} ^{\dag} (U, P,\fP/K)
\to \mathrm{MIC} ^{\dag \dag} (X, \fP,T/K).$

\end{prop}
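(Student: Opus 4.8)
The plan is to reduce the global statement to a local one on the affine covering $(\fP_\alpha)_{\alpha\in\Lambda}$ and then to invoke the already established compatibility isomorphism \ref{sp*f*com} between $\sp_*$ and extraordinary pull-backs (more precisely its restricted-to-frame form used in \ref{lem1pre-sp+plfid}). First I would fix $\E\in\mathrm{MIC}^{\dag}(U,P,\fP/K)$ and unwind the two sides of the desired isomorphism using their constructions. The left-hand side $\sp_*\circ u_{0K}^*\circ|_{]X[_\fP}$ factors: apply $|_{]X[_\fP}$ (the inverse image along the frame morphism $f=(v_0,u_0,\mathrm{id})$), then $u_{0K}^*$ which by definition of \ref{eqcat-iso-reco} amounts to the family $(\phi_{\alpha K}^*(\,\cdot\,))_{\alpha}$ with $\phi_\alpha=(\mathrm{id},\mathrm{id},u_\alpha)$, then $\sp_*$, which by \ref{lem1pre-sp+plfid} is computed componentwise as $(\sp_*(\cdot))_\alpha$ together with the glueing data built from the diagram \ref{defdonneesp*}. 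The right-hand side $u_0^*\circ\sp_*$ is, by \ref{com-invMIC-ntn}(2), the family $(\mathcal{H}^{-r}u_\alpha^!(\sp_*(\E)|_{\fP_\alpha}))_\alpha$ equipped with the glueing data coming from the isomorphisms $\tau$ of \ref{prop-glueiniso-coh1}.

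The core of the proof is then a componentwise comparison. On each $\fP_\alpha$, I need a canonical isomorphism
\[
\sp_*\bigl(u_{\alpha K}^*(\E|_{]X_\alpha[_{\fP_\alpha}})\bigr)\riso u_\alpha^{!}\bigl(\sp_*(\E)|_{\fP_\alpha}\bigr)[-r],
\]
and this is exactly a special case of the second isomorphism of \ref{sp*f*com} applied to the morphism of smooth frames $(Y_\alpha,X_\alpha,\X_\alpha)\to(U\cap P_\alpha,P_\alpha,\fP_\alpha)$ obtained by composing $u_\alpha\colon\X_\alpha\hookrightarrow\fP_\alpha$ with the structural data, noting that $d_{X_\alpha/P_\alpha}=-r$ since $u_0$ is purely of codimension $r$, and that here $u_{\alpha K}^*$ is by \ref{inv-image} the composite of $|_{]X_\alpha[_{\fP_\alpha}}$ with the frame pull-back along $u_\alpha$. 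Since $\sp_*(E_{\fP})$ lands in $\mathrm{MIC}^{\dag\dag}(\fP_\alpha,T\cap P_\alpha/K)$ (by \ref{445Be1} and \ref{letterBerthelotCaro2007}), $u_\alpha^!$ of it is concentrated in degree $-r$ (Berthelot--Kashiwara, \ref{exact-Berthelot-Kashiwara-full}, applied to $\mathrm{MIC}^{\dag\dag}$-objects, gives the cohomological concentration used in \ref{com-invMIC-ntn}), so passing to $\mathcal{H}^{-r}$ is harmless and the componentwise isomorphisms are honest isomorphisms in $\mathrm{MIC}^{\dag\dag}((\X_\alpha)_\alpha,Z/K)$-components.

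It then remains to check that these componentwise isomorphisms are compatible with the two glueing data, i.e. that the diagram relating $\theta_{\alpha\beta}$ on the left (built from \ref{defdonneesp*}, hence ultimately from the Taylor isomorphism $\epsilon$ of \ref{dfn-epsilon} via \ref{glueingisocntn-iso1}) to $\theta_{\alpha\beta}$ on the right (built from the $\tau$ of \ref{prop-glueiniso-coh1}) commutes. This is precisely the content of Proposition \ref{sp-eps-tau}, which asserts the compatibility of the $\epsilon_{u,u'}$ with the $\tau_{u,u'}$ under the isomorphisms \ref{sp*f*com}; I would feed in $u=p_1^{\alpha\beta}$, $u'=p_2^{\alpha\beta}$ (which agree modulo $\pi$) and chase the resulting square, also using the transitivity of \ref{sp*f*com} with respect to composition of frame morphisms to handle the triple-overlap cocycle condition. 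The main obstacle, and the only place requiring genuine care rather than assembly of cited results, is this glueing-compatibility verification: one must track how the shift $[-r]$ and the componentwise $\mathcal{H}^{-r}$ interact with the glueing isomorphisms $p_j^{\alpha\beta !}$ and confirm that no sign or normalization discrepancy arises between the connection/Taylor-isomorphism picture and the $\D^\dag$-module/$\tau$ picture. Once that commutativity is in hand, the functoriality in $\E$ is immediate from the functoriality of all the pieces involved, and the desired isomorphism of functors follows.
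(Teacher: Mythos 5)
Your proof is correct and follows essentially the same route as the paper, which indeed cites \ref{sp-eps-tau} as the single key input for the compatibility of glueing data; you simply spell out the componentwise reduction and the cocycle check that the paper leaves implicit. Two small corrections: the componentwise isomorphism is the \emph{first} isomorphism of \ref{sp*f*com} (the one involving $\sp_*$), not the second (which is about $\sp^*$); and when you invoke \ref{sp-eps-tau}, the two lifts that agree modulo $\pi$ are the composites $u_\alpha\circ p_1^{\alpha\beta}$ and $u_\beta\circ p_2^{\alpha\beta}$ (both mapping $\X_{\alpha\beta}\to\fP_{\alpha\beta}$), not the bare projections $p_1^{\alpha\beta}$ and $p_2^{\alpha\beta}$, whose targets $\X_\alpha$ and $\X_\beta$ differ, so the transitivity formulas for $\tau$ and the corresponding compatibilities of \ref{sp*f*com} with composition must be used to reduce to this form.
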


\begin{proof}
Using 
\ref{sp-eps-tau},
we check that glueing data are compatible.
\end{proof}

For the reader, let us recall the 
differential coherence of $\O _{\fP} ({}^\dag T) _\Q$ when $T$ is a strict NCD.
\begin{prop}
[Berthelot]
\label{NCDgencoh}
Let $\fP $ be a smooth formal scheme over $\S $.
Suppose 
there exist local coordinates
$t _1, \dots, t _d $ of $\fP$ over $\fS$.
Let $T$ be the divisor of $P$ defined by setting
$T: = V ( \overline{t} _1\cdots \overline{t} _r)$ with $r \leq d$, where
$\overline{t} _1, \dots, \overline{t} _r$ are the images of $t _1,\dots, t _r$ in $\Gamma (P ,\O _{P})$.
We have  the exact sequence
\begin{equation}
(\D ^\dag _{\fP /\S ,\Q}) ^{d}
\overset{\psi}{\longrightarrow} 
\D ^\dag _{\fP /\S ,\Q}
\overset{\phi}{\longrightarrow}
\O _{\fP} (\hdag T ) _\Q
\to 0,
\end{equation}
where 
$\phi (P)= P \cdot (1/t _1\cdots t _r)$, and 
$\psi$ is defined by
\begin{equation}
\psi ( P _1,\dots, P _d) = \sum _{i=1} ^{r} P _i \partial _i t _i + \sum _{i=r+1} ^d P _i \partial _i.
\end{equation}
\end{prop}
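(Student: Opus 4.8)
The statement is local on $\fP$ (the coherence of $\O _{\fP}(\hdag T)_\Q$ reduces to exhibiting a finite free presentation), so I will work directly in the chosen coordinates $t_1,\dots,t_d$ with $T = V(\overline t_1\cdots\overline t_r)$. The plan is to verify exactness of the two-term complex
\begin{equation*}
(\D ^\dag _{\fP /\S ,\Q}) ^{d}
\overset{\psi}{\longrightarrow}
\D ^\dag _{\fP /\S ,\Q}
\overset{\phi}{\longrightarrow}
\O _{\fP} (\hdag T ) _\Q
\to 0
\end{equation*}
at the middle term $\D ^\dag _{\fP /\S ,\Q}$ and at the right term $\O _{\fP}(\hdag T)_\Q$; surjectivity of $\phi$ and $\psi\colon \phi\circ\psi = 0$ are the easy parts, so the heart is the inclusion $\Ker\phi \subseteq \operatorname{Im}\psi$. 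Since $\D ^\dag _{\fP /\S ,\Q} = \varinjlim_m \widehat{\D} ^{(m)} _{\fP /\S ,\Q}$ and all the maps are defined at finite level, I would first set up the analogous complex over $\widehat{\D} ^{(m)} _{\fP /\S ,\Q}$ (and even over $\widehat{\D} ^{(m)} _{P_i/S_i}$ modulo $\pi^{i+1}$), then pass to the limit.

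\textbf{Key steps.} First I would check $\phi\circ\psi = 0$ by a direct computation: for $1\le i\le r$ one uses $\partial_i t_i \cdot (1/t_1\cdots t_r) = \partial_i\big(t_i/(t_1\cdots t_r)\big) - 1/(t_1\cdots t_r)\cdot(\text{correction})$ — more precisely the identity $\partial_i\big(1/(t_1\cdots t_r)\big) = -\frac{1}{t_i}\cdot\frac{1}{t_1\cdots t_r}$, which gives $\partial_i t_i\cdot(1/t_1\cdots t_r) = 0$ after reorganizing; for $i>r$, $\partial_i$ kills $1/(t_1\cdots t_r)$ outright. Surjectivity of $\phi$: the target is generated over $\O_{\fP,\Q}$ by $1/(t_1\cdots t_r)$ together with its images under products of the $\partial_i$'s, and these are all hit by $\phi$. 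The decisive step is exactness in the middle. Here I would argue by a filtration/associated-graded argument: filter $\D ^\dag$ (or $\widehat\D ^{(m)}$) by order of differential operators and analyze the symbol, reducing the problem to a Koszul-type statement for the regular sequence coming from the symbols of $\partial_1 t_1,\dots,\partial_r t_r,\partial_{r+1},\dots,\partial_d$ acting on the symbol algebra; alternatively, and probably more cleanly in this $p$-adic setting, I would reduce modulo $\pi^{i+1}$ to the classical $\D^{(m)}_{P_i/S_i}$ situation (where this is Berthelot's computation, e.g. the one underlying \ref{NCDgencoh} at finite level), use that $\O_{P_i}(T)^{(m)}$ has the expected presentation there, and then lift: since all modules involved are $p$-torsion-free at the relevant levels, exactness of the reduced complexes for all $i$ plus completeness yields exactness of the $p$-adically completed complex, and then tensoring with $\Q$ and taking $\varinjlim_m$ (which is exact) gives the result over $\D ^\dag _{\fP /\S ,\Q}$.

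\textbf{Main obstacle.} The routine parts (surjectivity, $\phi\circ\psi=0$) are mechanical. The real work is the exactness in the middle, i.e.\ identifying $\Ker\phi$ with the left ideal generated by $\partial_1 t_1,\dots,\partial_r t_r,\partial_{r+1},\dots,\partial_d$; the subtlety is that $\partial_i t_i = t_i\partial_i + 1$ is not a "coordinate" derivation, so the relevant sequence is a twisted one and one must be careful that it still behaves like a regular sequence on the graded ring (in level-$m$ arithmetic $\D$-modules one must also track the divided-power normalizations $\partial^{<\underline k>_{(m)}}$). I expect to handle this by the reduction-mod-$\pi$ strategy so as to invoke Berthelot's finite-level computation rather than redo the symbol-algebra Koszul argument from scratch, but verifying that the hypotheses for lifting (torsion-freeness, compatibility of the presentations across levels $m$) are met is where the care is needed.
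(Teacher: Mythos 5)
Your proposal correctly identifies both the overall structure (verify $\phi\circ\psi=0$ and surjectivity directly, then establish middle exactness by reducing to the finite-level algebras $\widehat{\D}^{(m)}_{\fP/\S}$ and their reductions modulo $\pi^{i+1}$, invoking Berthelot's computation there, and lifting through torsion-freeness and completeness) and the crux (the twisted Koszul-type regularity of $\partial_1 t_1,\dots,\partial_r t_r,\partial_{r+1},\dots,\partial_d$). This is exactly what the paper does — its entire proof is the citation ``Similar to \cite[4.3.2]{Be0}'' — so your outline matches the intended argument; the only blemish is the slightly garbled ``correction term'' phrasing in the $\phi\circ\psi=0$ check, though you do reach the right conclusion via $\partial_i t_i = t_i\partial_i + 1$.
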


\begin{proof}
Similar to \cite[4.3.2]{Be0}.
\end{proof}

\begin{dfn}
\label{sscd}
Let $P $ be a smooth  scheme over $\Spec k$.
Let $T$ be the divisor of $P$. 

\begin{enumerate}
\item We say that $T$ 
is a {\it strict smooth crossing divisor} of $P/\Spec k$
if 
Zariski locally in $P$,
there exist local coordinates
$t _1,\cdots ,t _d$ of $P$ over $S$
such that 
$T = V ( t _1\cdots t _r)$ with $r \leq d$.
Following \ref{NCDgencoh}, 
when $T$ is a strict smooth crossing divisor of $P/\Spec k$, then
the left $\D ^\dag _{\fP /\S ,\Q}$-module
$\O _{\fP} (\hdag T ) _\Q$ is coherent. 
Remark also that this notion 
of strict smooth crossing divisor of $P/\Spec k$ is stable under any base change 
$\Spec k' \to \Spec k$.

\item We say that $T$ 
is a {\it geometric strict normal crossing divisor} of $P/\Spec k$,
if $T \times _{\Spec k} \Spec l$
is a strict normal crossing divisor of $P \times _{\Spec k} \Spec l$
for some  perfect closure $l $  of $k$ (i.e.  
$l$ is an algebraic extension of $k$ which is perfect).
\end{enumerate}

\end{dfn}

\begin{lemm}
\label{sscd-gsncd}
Let $P $ be a smooth scheme over $\Spec k$.

\begin{enumerate}

\item A strict smooth crossing divisor of $P/\Spec k$
is a geometric strict normal crossing divisor $P/\Spec k$.
When $k$ is perfect, both notions are equal to 
the (absolute) notion of strict normal crossing divisor.

\item Let $T$ be the geometric strict normal crossing divisor of $P/\Spec k$. 
Let  $l $ be a perfect closure of $k$.
Then there exists a finite subextension $k'/k$ of $l/k$ such that
$T \times _{\Spec k} \Spec k'$
is a strict smooth crossing divisor of $P \times _{\Spec k} \Spec k'/\Spec k'$.
\end{enumerate}

\end{lemm}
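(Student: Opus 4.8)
\textbf{Plan of proof for Lemma \ref{sscd-gsncd}.}

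The first statement is essentially formal, so I would dispatch it quickly. For the inclusion of notions, suppose $T = V(t_1 \cdots t_r)$ for local coordinates $t_1, \dots, t_d$ of $P/\Spec k$. Base change along $\Spec l \to \Spec k$ for a perfect closure $l$ of $k$ preserves smoothness and sends $t_1, \dots, t_d$ to local coordinates of $P \times_{\Spec k} \Spec l$ over $\Spec l$; the pullback of $T$ is then $V(t_1 \cdots t_r)$ in these coordinates, which is manifestly a strict normal crossing divisor (each $V(t_i)$ is smooth over $\Spec l$, and they meet transversally by the coordinate condition). Hence $T$ is a geometric strict normal crossing divisor. When $k$ is perfect, one may take $l = k$, so the geometric notion coincides with the absolute strict normal crossing divisor notion; and conversely, over a perfect field, an absolute strict normal crossing divisor is Zariski-locally cut out by part of a coordinate system (smoothness of each component plus transversality, using that a regular local ring of a scheme smooth over a perfect field has a coordinate system adapted to any smooth subscheme), which gives a strict smooth crossing divisor. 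I would state these standard facts with a brief indication and a reference to the local structure of smooth morphisms (EGA~IV or \cite{Be1}).

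The substantive part is the second statement: a descent-of-the-base / spreading-out argument. Start from a perfect closure $l$ of $k$ and the hypothesis that $T_l := T \times_{\Spec k} \Spec l$ is a strict normal crossing divisor of $P_l := P \times_{\Spec k} \Spec l$. Since $l = \varinjlim k_i$ is a filtered union of its finite subextensions $k_i/k$, and $P$ is of finite type over $k$, the scheme $P_l$ and the divisor $T_l$ together with all the finitely many data witnessing the strict normal crossing condition — namely, on each member of a finite affine open cover of $P_l$, a choice of local coordinates $t_1, \dots, t_d$ over $l$ and the equation $T_l = V(t_1 \cdots t_r)$ — are all defined over some finite subextension $k'/k$ of $l/k$. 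This is the usual EGA~IV.8 "spreading out" principle: a finite amount of algebraic data defined over a filtered colimit of rings descends to a finite level. Concretely, one covers $P_l$ by finitely many affine opens, picks coordinates and the defining equations there, and observes each such choice involves only finitely many elements of $\Gamma(\cdot, \O)$, hence already lies over some $k_i$; taking the maximum of the finitely many indices gives the desired $k'$.

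Then, over this $k'$, the chosen $t_1, \dots, t_d$ are local coordinates of $P \times_{\Spec k} \Spec k' / \Spec k'$ — here I must check that being étale-locally-coordinates, or rather forming a coordinate system, is a condition that holds over $k'$ once it holds after base change to $l$; this follows because $\Omega^1_{P_{k'}/k'}$ is locally free of rank $d$ (smoothness descends and is insensitive to the base field extension) and the $dt_i$ generate it after the faithfully flat base change $k' \to l$, hence already generate over $k'$ by faithfully flat descent of surjectivity of a map of coherent modules. Similarly $T \times_{\Spec k} \Spec k' = V(t_1 \cdots t_r)$ because the two closed subschemes agree after the faithfully flat base change $\Spec l \to \Spec k'$. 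This yields precisely that $T \times_{\Spec k} \Spec k'$ is a strict smooth crossing divisor of $P \times_{\Spec k} \Spec k'/\Spec k'$, as required. I expect the only mildly delicate point — and hence the main obstacle — to be bookkeeping the faithfully flat descent steps cleanly (surjectivity of $\bigoplus \O \cdot dt_i \to \Omega^1$ and the equality of closed subschemes), together with making the spreading-out argument precise for the coordinate data; none of this is deep, but it needs to be phrased carefully since $k$ is not assumed perfect and one cannot appeal to the perfect-field local structure theory directly.
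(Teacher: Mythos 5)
Your proposal is correct and follows essentially the same route as the paper: for (1), reduce to the perfect-field case using stability of strict smooth crossing divisors under base change, then invoke the local structure theory over a perfect field (EGA~IV~17.15.4); for (2), apply the standard spreading-out machinery of EGA~IV\S8 (writing $l = \varinjlim k_i$ and descending the \'etale chart $\phi \colon P_l \to \bbA^d_l$ together with the equation $T_l = V(t_1\cdots t_r)$ to a finite level, using EGA~IV~8.8.2(ii) and 17.7.8). The only cosmetic difference is that where the paper says ``increasing $k'$ if necessary'' via 8.8.2(ii) to obtain $T_{k'} = V(t'_1\cdots t'_r)$, you instead invoke faithfully flat descent along $\Spec l \to \Spec k'$ for the equality of closed subschemes — both are valid and both are standard EGA~IV techniques, so this is not a genuinely different argument.
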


\begin{proof}
Let us check the first statement. 
Since the notion of strict smooth crossing divisor is closed under base change, 
we reduce to the case where the field $k$ is perfect.
Recall (e.g. see 40.21.1 of the stack project), 
that the property $T$ is a strict normal crossing divisor of $P$
means that for any point $x \in T$, 
 there exists a regular system of parameters
$t _{1,x},\dots, t _{d,x}$ 
 of 
$\fm _x$ (the maximal ideal of $\O _{P,x}$) and
 $1\leq r\leq d$
 such that $T$ is cut out by 
 $t _1\dots t _r$ in $\O _{P,x}$.
Hence, the first statement is a consequence of \cite[17.15.4]{EGAIV4}  

Let us prove the second one.
Let $T$ be the geometric strict normal crossing divisor of $P/\Spec k$ and
let  $l $ be a perfect closure of $k$.
Since $l$ is perfect, 
then $T \times _{\Spec k} \Spec l$
is a strict smooth crossing divisor of $P \times _{\Spec k} \Spec l/\Spec l$.
Hence, Zariski locally we have an étale morphism of $\Spec l$-varieties of the form
$\phi \colon P \times _{\Spec k} \Spec l \to \bbA ^{d} _{\Spec l}$ given by 
$t _1, \dots, t _d$ 
such that 
$T \times _{\Spec k} \Spec l = V ( t _1\cdots t _r)$ with $r \leq d$.
Thanks to \cite[8.8.2.(ii)]{EGAIV3} and \cite[17.7.8]{EGAIV4},
there exists a finite subextension $k'/k$ of $l/k$ and 
 an étale morphism of $\Spec k'$-varieties of the form
$\phi ' \colon P \times _{\Spec k} \Spec k' \to \bbA ^{d} _{\Spec k'}$ given by 
$t '_1, \dots, t '_d$ such that $\phi = \phi ' \times _{\Spec k'} \Spec l$.
Again by  using  \cite[8.8.2.(ii)]{EGAIV3},
increasing $k'$ if necessary, we can suppose that 
$T \times _{\Spec k} \Spec k' = V ( t' _1\cdots t '_r)$ with $r \leq d$.
\end{proof}

\begin{lemm}
\label{40.21.2StPrj}
Let $P $ be a smooth scheme over $\Spec k$.
Let $T$ be a (reduced) divisor of $P$.
Let $T _i \subset T$, $i\in I$ be its irreducible components viewed as closed (reduced) subschemes of $P$.
The following are equivalent. 
\begin{enumerate}
\item $T$
be a strict smooth crossing divisor of $P/\Spec k$, and 
\item for any  $J \subset I$ finite the scheme theoretic intersection 
$T _J := \cap _{j\in J} T _j$ is smooth over $\Spec k$ each of whose irreducible 
components has codimension $|J|$ in $P$.
\end{enumerate}

\end{lemm}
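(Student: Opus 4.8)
This is a local statement on $P$, so I would first reduce to the local situation and then argue equivalence in both directions. The key remark is that the two conditions are of the same local nature: being a strict smooth crossing divisor is, by Definition \ref{sscd}, a Zariski-local condition, and smoothness of the scheme-theoretic intersections $T_J$ together with the codimension statement is also Zariski-local. So throughout I may pass to an affine open $\Spec A \subset P$ and even localize further at a point $x \in T$.

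\textbf{From (1) to (2).} Assume $T$ is a strict smooth crossing divisor. Zariski locally there are coordinates $t_1,\dots,t_d$ of $P$ over $\Spec k$ with $T = V(t_1\cdots t_r)$. Since $P$ is smooth over $k$ and $t_1,\dots,t_d$ form a system of coordinates, the reduced irreducible components of $T$ passing through a given point are exactly (a subset of) the $V(t_i)$, $1\le i\le r$; each such $V(t_i)$ is smooth over $k$ of codimension $1$, being cut out by a coordinate. For $J$ a finite subset of $I$, locally $T_J$ is the scheme-theoretic intersection of those $V(t_i)$ whose corresponding component meets the chosen open; it is $V(t_{i_1},\dots,t_{i_s})$ for the relevant indices, hence smooth over $k$ and of pure codimension $s = |J|$ by the coordinate description. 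Globalizing (an irreducible component of $T_J$ is determined by which components $T_j$, $j\in J$, contain it, and the local picture is uniform), one gets the smoothness and codimension claims. The formal ingredient here is exactly the standard fact about regular systems of parameters and the Jacobian criterion, which I would invoke via \cite[17.15.4]{EGAIV4} as in the proof of Lemma \ref{sscd-gsncd}.

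\textbf{From (2) to (1).} This is the substantive direction and where I expect the main obstacle. Fix $x \in T$ and let $T_{i_1},\dots,T_{i_s}$ be the irreducible components of $T$ through $x$. Applying (2) with $J = \{i_1,\dots,i_s\}$ we know $T_J$ is smooth over $k$ of codimension $s$ at $x$; applying (2) with each singleton we know each $T_{i_j}$ is a smooth divisor. The goal is to produce coordinates $t_1,\dots,t_d$ of $P$ near $x$ with $T_{i_j} = V(t_j)$ for $j = 1,\dots,s$. Choose $t_j \in \O_{P,x}$ a local equation for $T_{i_j}$; each $t_j$ is part of a regular system of parameters since $T_{i_j}$ is a smooth divisor. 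The point is that $t_1,\dots,t_s$ together form part of a regular system of parameters of $\O_{P,x}$: this is where the hypothesis that $T_J$ is smooth of codimension exactly $s$ is used, ensuring that the differentials $dt_1,\dots,dt_s$ are linearly independent in the cotangent space $\fm_x/\fm_x^2$ (otherwise $T_J = V(t_1,\dots,t_s)$ would have codimension $< s$ or fail to be smooth). Once $dt_1,\dots,dt_s$ are independent, complete them to a basis of $\fm_x/\fm_x^2$ by $dt_{s+1},\dots,dt_d$, lift to elements $t_{s+1},\dots,t_d$, and verify that $t_1,\dots,t_d$ define an \'etale morphism to $\bbA^d_k$ near $x$, hence are coordinates over $k$ (using smoothness of $P/\Spec k$). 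Then $T$ is cut out near $x$ by $t_1\cdots t_s$, so $T$ is a strict smooth crossing divisor in this neighborhood; letting $x$ vary gives the Zariski-local statement globally. I would present this argument in the language of \cite[40.21.2]{stacks-project}'s analogue (the statement cited as a known scheme-theoretic fact when $k$ is perfect), adapting it by replacing "regular system of parameters" arguments with the \'etale-coordinates formulation valid over a possibly imperfect $k$.

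\textbf{Main obstacle.} The delicate point is the passage from "$T_J$ smooth of codimension $|J|$" to "the local equations $t_j$ have linearly independent differentials and extend to coordinates over $k$", because over an imperfect field one must be careful to distinguish smoothness over $k$ from regularity, and the relevant statement in \cite[17.15.4]{EGAIV4} must be applied relative to $\Spec k$ rather than absolutely. Concretely, I expect the proof to hinge on: $T_J = V(t_1,\dots,t_s)$ scheme-theoretically near $x$ (this needs that the $T_{i_j}$ are reduced and meet properly, which is part of (2)), and then the smoothness of $T_J/\Spec k$ of the expected dimension forces the conormal sequence $0 \to \cI/\cI^2 \to \Omega^1_{P/k}\otimes\O_{T_J} \to \Omega^1_{T_J/k}\to 0$ to be locally split with $\cI/\cI^2$ free of rank $s$ generated by the classes of $t_1,\dots,t_s$ — which is exactly the linear independence of the $dt_j$. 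Everything else is routine.
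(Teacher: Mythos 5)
Your proof is correct and it follows the same route the paper intends; the paper's own argument is the one-line remark that the statement is checked ``analogously to the proof of 40.21.2 of the Stacks Project,'' and what you have written is exactly that analogue, adapted from the regular/local-ring setting to the smooth-over-$k$ setting. In particular you correctly isolate the one place the adaptation is nontrivial: over a possibly imperfect $k$ one must not identify the Zariski cotangent space $\fm_x/\fm_x^2$ with $\Omega^1_{P/k}\otimes k(x)$. Your early sentence phrased the linear-independence claim in $\fm_x/\fm_x^2$, which for a nonseparable residue extension is not the right space, but you correct this in the final paragraph by running the argument through the conormal sequence $0 \to \cI/\cI^2 \to \Omega^1_{P/k}\otimes\O_{T_J} \to \Omega^1_{T_J/k}\to 0$ and the local freeness of $\cI/\cI^2$ of rank $|J|$; that is the right formulation and gives linear independence of the $dt_j$ in $\Omega^1_{P/k}\otimes k(x)$, from which the Jacobian criterion (\cite[17.15.4]{EGAIV4}) yields coordinates. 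One tiny superfluity: since $T_J$ is \emph{defined} as the scheme-theoretic intersection $\cap T_j$, the local identification $T_J=V(t_1,\dots,t_s)$ is automatic once each $T_j$ is locally $V(t_j)$; you flag it as if it needed an extra hypothesis, but no additional verification is required there.
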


\begin{proof}
The check is easy and analogue to the proof of  40.21.2 of the stack project.
\end{proof}

\begin{ntn}
\label{ntn-GammaZO-rig}
Before defining local cohomology in the context of quasi-coherent complexes
(see \ref{localcoho-section}), we will need to 
focus on the case of a smooth closed subscheme for the constant coefficient as follows. 
We will see via \ref{cor-induction-div-coh2} that both local cohomology are canonically compatible, 
which justifies using the same notation. 
Let $\fP $ be a smooth formal scheme over $\S $.
Let $X$ be a smooth closed subscheme of $P$ and $j _X \colon  P   \setminus X\to P$ be the open immersion. 
We set 
$(\hdag X) ( \O _{\fP,\Q}) 
:=\R \sp _* j _X ^\dag (\O _{\fP _K})$
and
$\R \underline{\Gamma} ^\dag _X \O _{\fP,\Q} 
:=\R \sp _* \underline{\Gamma} ^\dag _X ( \O _{\fP _K})$.
By definition, 
$\R \underline{\Gamma} ^\dag _X \O _{\fP,\Q} $ is the 
local cohomology with support in $X$ of $\O _{\fP,\Q}$.
The exact sequence 
$0 \to \underline{\Gamma} ^\dag _X (\O _{\fP _K}) \to   \O _{\fP _K}
\to 
j _X ^\dag (  \O _{\fP _K})
\to 
0$
induces the exact triangle 
\begin{equation}
\label{extriangleloc}
\R \underline{\Gamma} ^\dag _X \O _{\fP,\Q} 
\to 
\O _{\fP,\Q}
\to 
(\hdag X) ( \O _{\fP,\Q}) 
\to 
\R \underline{\Gamma} ^\dag _X \O _{\fP,\Q}  [1].
\end{equation}
For any integer $i\in \Z$, we set 
$\mathcal{H} ^{\dag i } _X (\O _{\fP,\Q}  ):= 
\mathcal{H} ^{i} \R \underline{\Gamma} ^\dag _X \O _{\fP,\Q} $.
\end{ntn}

\begin{prop}
[Berthelot]
\label{coh-smoothsubsch}
Let $\fP $ be a smooth formal scheme over $\S $.
Let $u \colon X  \to P $ be a closed immersion  of smooth schemes over $S $ purely of codimension $r$.

\begin{enumerate}
\item $(\hdag X) ( \O _{\fP,\Q}) ,
\R \underline{\Gamma} ^\dag _X \O _{\fP,\Q}  
\in 
D ^{\mathrm{b}} _{\mathrm{coh}}
(\D ^\dag _{\fP, \Q})$, and
$\mathcal{H} ^{\dag i } _X (\O _{\fP,\Q}  )=0$
for any $i\not = r$.

\item Let $x \in P$. 
Choose  an open affine formal subscheme $\U $ of $\fP $ containing $x$
such that 
there exist coordinates
$t _1, \dots, t _d \in \Gamma (\U ,\O _{\U})$ 
such that $X\cap U = V ( \overline{t} _1, \dots, \overline{t} _r)$ where
$r \leq d$ and
$\overline{t} _1, \dots, \overline{t} _r$ are the image of $t _1,\dots, t _r$ in $\Gamma (U ,\O _{U})$.
We have the exact sequence
\begin{equation}
\label{exseq-HrZ}
(\D ^\dag _{\U /\S ,\Q}) ^{d}
\overset{\psi}{\longrightarrow} 
\D ^\dag _{\U /\S ,\Q}
\overset{\phi}{\longrightarrow}
\mathcal{H} ^{\dag r} _{X \cap U} (\O _{\U,\Q}  )
\to 0,
\end{equation}
where 
$\phi (P)= P \cdot (1/t _1\cdots t _r)$, and 
$\psi$ is defined by
\begin{equation}
\psi ( P _1,\dots, P _d) = \sum _{i=1} ^{r} P _i t _i + \sum _{i=r+1} ^d P _i \partial _i.
\end{equation}
\end{enumerate}
\end{prop}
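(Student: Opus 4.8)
The statement is essentially Berthelot's computation of the local cohomology of the constant coefficient along a smooth closed subscheme; both assertions are local, so I would immediately reduce to the situation of part (2), i.e. an open affine formal subscheme $\U$ of $\fP$ with coordinates $t_1,\dots,t_d$ and $X\cap U=V(\overline t_1,\dots,\overline t_r)$. The key observation is that in this local model $X\cap U$ is the special fiber of the smooth closed formal subscheme $\ZZ:=V(t_1,\dots,t_r)\hookrightarrow\U$, which has a smooth lifting, and the desired object is the image under Berthelot-Kashiwara of the constant coefficient on $\ZZ$. So the first step is to identify $\R\underline{\Gamma}^\dag_X\O_{\fP,\Q}$ restricted to $\U$ with $u_{+}(\O_{\ZZ,\Q})[-r]$, where $u\colon\ZZ\hookrightarrow\U$ is the closed immersion of formal schemes. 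This identification can be obtained by combining the functor $\sp_+$ of Chapter 6 (more precisely \ref{spinvim*+}, which gives $\sp_*\circ u_{0K}^*\circ|_{]X[}\riso u_0^*\circ\sp_*$ applied to the trivial isocrystal, together with the definition of $\sp_+$ via $u_{0+}$ and the Berthelot-Kashiwara equivalence \ref{exact-Berthelot-Kashiwara-full}), or more directly by invoking the fundamental local isomorphism of Chapter 4 together with the exact triangle \ref{extriangleloc}.

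\textbf{Key steps.} First I would establish, on $\U$, the isomorphism $\R\underline{\Gamma}^\dag_{X\cap U}(\O_{\U,\Q})\riso u_{+}(\O_{\ZZ,\Q})[-r]$ in $D^{\mathrm b}_{\mathrm{coh}}(\D^\dag_{\U,\Q})$. Granting this, the coherence of $(\hdag X)(\O_{\fP,\Q})$ follows from the triangle \ref{extriangleloc} since $\O_{\fP,\Q}$ is coherent (in fact the constant coefficient is $\D^\dag$-coherent by Berthelot, and $\D^\dag_{\fP,\Q}$ has finite cohomological dimension by \ref{Noot-Huyghe-finitehomoldim}) and the coherent complexes form a thick subcategory. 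The concentration in degree $r$: since $u$ is a closed immersion, $u_{+}$ is exact on modules (Berthelot-Kashiwara), hence $u_{+}(\O_{\ZZ,\Q})[-r]$ is concentrated in degree $r$; this gives $\mathcal H^{\dag i}_X(\O_{\fP,\Q})=0$ for $i\neq r$ and $\mathcal H^{\dag r}_X(\O_{\fP,\Q})|_\U\riso u_{+}(\O_{\ZZ,\Q})$. Then the presentation \ref{exseq-HrZ}: one writes the standard resolution of $\O_{\ZZ,\Q}$ as a $\D^\dag_{\ZZ,\Q}$-module — namely $\D^\dag_{\ZZ,\Q}$ is the cokernel of $(\D^\dag_{\ZZ,\Q})^{d-r}\to\D^\dag_{\ZZ,\Q}$ given by the derivations $\partial_{r+1},\dots,\partial_d$ — pushes it forward by $u_{+}$ using the explicit transfer bimodule $\D^\dag_{\U\leftarrow\ZZ,\Q}=\D^\dag_{\U,\Q}/(t_1,\dots,t_r)\D^\dag_{\U,\Q}$, and identifies the generator $\overline 1$ of $u_{+}(\O_{\ZZ,\Q})$ with the class of $1/t_1\cdots t_r$. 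This produces exactly the maps $\phi$ and $\psi$ of the statement (the relations $P_it_i$ for $i\le r$ come from killing $t_1,\dots,t_r$, and $P_i\partial_i$ for $i>r$ come from the relations defining $\O_{\ZZ}$ inside $\D^\dag_{\ZZ}$).

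\textbf{Main obstacle.} The routine parts (pushing forward the Koszul-type resolution, bookkeeping of the explicit matrices) are straightforward in local coordinates. The genuine difficulty is the first step: showing globally that $\R\underline{\Gamma}^\dag_X\O_{\fP,\Q}$ as \emph{defined} via $\R\sp_*\underline{\Gamma}^\dag_X(\O_{\fP_K})$ agrees, locally where $X$ lifts, with $u_{+}(\O_{\ZZ,\Q})[-r]$ — i.e. the compatibility of the rigid-analytic local cohomology with the arithmetic $\D$-module pushforward along a closed immersion. This is where one must invoke the construction of $\sp_+$ of Chapter 6, Proposition \ref{spinvim*+}, and the fundamental local isomorphism \ref{fund-isom2-corpref}; one also needs that the formation of $\R\underline{\Gamma}^\dag_X$ is local on $\fP$ and independent of the chosen lifting $\ZZ$ (which follows from the glueing/transitivity properties of $\tau$ recorded in \ref{prop-glueiniso-coh} and \ref{comp-comp-adj-immf}, together with \ref{u!u+=id}). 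Once this identification is in place, the coherence, the degree concentration, and the explicit presentation all drop out as stated; I expect the write-up to spend most of its effort justifying that identification cleanly rather than on the final displayed exact sequence.
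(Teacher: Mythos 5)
The paper's ``proof'' is nothing more than the citation \textit{See \cite[4.3.4]{Be0}}, and the mechanism of that cited proof is then laid out in \ref{empt-resolRsP*}: one writes $(\hdag X)(\O_{\fP,\Q})$ on $\U$ as the \v{C}ech/Mayer--Vietoris complex \ref{coh-smoothsubsch-es1} built from the objects $\O_{\U}(\hdag X_{i_0\dots i_k})_\Q$ attached to the coordinate hypersurfaces $X_i=V(\overline t_i)$, each of which is $\D^\dag$-coherent by the strict normal crossings case \ref{NCDgencoh}; the coherence, the concentration in degree $r$, and the presentation \ref{exseq-HrZ} are then read off from this complex by direct computation. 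Your proposal takes a different route—identify $\R\underline\Gamma^\dag_X\O_{\fP,\Q}$ with $u_+(\O_{\ZZ,\Q})[-r]$ first, then push forward a Koszul resolution of $\O_{\ZZ,\Q}$—and that route is circular in this setting.

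The problem is the first step. To recognise $\R\sp_*\underline\Gamma^\dag_X(\O_{\fP_K})|_\U$ as $u_+(\O_{\ZZ,\Q})[-r]$ you need to apply Berthelot--Kashiwara, and Berthelot--Kashiwara needs to be fed an object already known to be coherent with support in $X$. At this point in the development $\R\underline\Gamma^\dag_X\O_{\fP,\Q}$ is only defined ad hoc as $\R\sp_*\underline\Gamma^\dag_X(\O_{\fP_K})$ (\ref{ntn-GammaZO-rig}); its coherence is precisely the claim of \ref{coh-smoothsubsch}. The precise compatibility you appeal to, namely $\sp_+(\O_{]X[_\fP})\riso\mathcal H^{\dag,r}_X\O_{\fP,\Q}$, is Corollary \ref{coro-sp+jdagO}, whose proof cites \ref{coh-smoothsubsch} twice and also the flat resolution of \ref{empt-resolRsP*} (which in turn rests on \ref{NCDgencoh}). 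Likewise $\R\underline\Gamma^\dag_X\O_{\fP,\Q}\riso u_+u^!(\O_{\fP,\Q})$ is Corollary \ref{coro-trace-upre}, again proved from \ref{coh-smoothsubsch}. Proposition \ref{spinvim*+} alone is a compatibility of $\sp_*$ with pullbacks of isocrystals and does not by itself furnish the comparison with $\R\sp_*$ of the rigid-analytic local cohomology; and the fundamental local isomorphism \ref{fund-isom2-corpref} lives purely on the formal side and says nothing about $\R\sp_*$. So the identification you want to use as a starting point is a \emph{consequence} of the statement being proved, not an input to it.

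If you want to prove this without Berthelot's reference, the working order is the one the paper records: represent $\underline\Gamma^\dag_X(\O_{\fP_K})$ locally by a \v{C}ech complex in the $j^\dag_{T}$'s for the coordinate divisors, apply $\R\sp_*$ termwise to get \ref{coh-smoothsubsch-es2}, invoke \ref{NCDgencoh} for coherence of each term, and compute the top cohomology explicitly in coordinates to obtain the $\phi,\psi$ presentation. The $u_+$ picture and the $\sp_+$ identification then follow as corollaries. Your reading of the $u_+(\O_\ZZ)$ side—exactness of $u_+$, the transfer bimodule $\D^\dag_{\U,\Q}/(t_1,\dots,t_r)\D^\dag_{\U,\Q}$, and identifying $\overline 1$ with $1/t_1\cdots t_r$—is the correct mechanism for deriving \ref{exseq-HrZ} once that identification is in hand, but it cannot serve as the entry point.
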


\begin{proof}
See \cite[4.3.4]{Be0}.
\end{proof}

\begin{empt}
\label{empt-resolRsP*}
With the notation \ref{coh-smoothsubsch}, 
suppose, $\U  = \fP $.
For $i = 1,\dots, r$, put 
$X _i:  = V (\overline{t} _i)$, and 
$X _{i _0,\dots, i _{k}}
:= 
X _{i _0}\cup \dots \cup X _{i _k}$
(i.e. $V ( \overline{t} _{i _0} \cdots \overline{t} _{i _k})= X _{i _0,\dots, i _{k}}$).
Then $(\hdag X) ( \O _{\fP,\Q})$
is represented by the complex 
\begin{equation}
\label{coh-smoothsubsch-es1}
\prod _{i=1} ^{d}
\O _{\fP} (\hdag X _i) _{\Q}
\to 
\prod _{i _0 < i _1}
\O _{\fP} (\hdag X _{i _0 i _1}) _{\Q}
\to
\dots
\to 
\O _{\fP} (\hdag X _{1 \dots r}) _{\Q}
\to 0,
\end{equation}
whose first term is at degree $0$.
This yields that
$\R \underline{\Gamma} ^\dag _X \O _{\fP,\Q}$ is represented by the complex 
\begin{equation}
\label{coh-smoothsubsch-es2}
\O _{\fP,\Q}
\to 
\prod _{i=1} ^{d}
\O _{\fP} (\hdag X _i) _{\Q}
\to 
\prod _{i _0 < i _1}
\O _{\fP} (\hdag X _{i _0 i _1}) _{\Q}
\to
\dots
\to 
\O _{\fP} (\hdag X _{1 \dots r}) _{\Q}
\to 0,
\end{equation}
whose first term is at degree $0$.
Using \ref{NCDgencoh}, 
this is how Berthelot checked in \cite[4.3.4]{Be0} that 
$\R \underline{\Gamma} ^\dag _X \O _{\fP,\Q}  \in 
D ^{\mathrm{b}} _{\mathrm{coh}}
(\D ^\dag _{\fP, \Q})$.
\end{empt}

\begin{coro}
\label{coro-trace-upre}
Let $u \colon \X  \hookrightarrow \fP $ be a closed immersion of smooth formal schemes over $\S $.

\begin{enumerate}
\item We have $u ^! (\hdag X) ( \O _{\fP,\Q})=0$, i.e. 
by applying the functor $u ^!$ to the canonical morphism
$\R \underline{\Gamma} ^\dag _X \O _{\fP,\Q}  
\to 
\O _{\fP,\Q}$, we get an isomorphism.

\item We have the canonical isomorphism
$u ^! (\O _{\fP,\Q})
\riso 
(\O _{\X,\Q}) [d _{X/P}]$.
We have the canonical isomorphism 
$\R \underline{\Gamma} ^\dag _X \O _{\fP,\Q}  
\riso 
u _+ u ^! (\O _{\fP,\Q})$ 
making commutative the canonical diagram
\begin{equation}
\label{coro-trace-upre-BK}
\xymatrix @ R=0,3cm {
{\R \underline{\Gamma} ^\dag _X \O _{\fP,\Q}  } 
\ar[r] ^-{\sim}
\ar[d] ^-{}
& 
{u _+ u ^! (\O _{\fP,\Q})} 
\ar[d] ^-{\mathrm{adj}} _-{\ref{Radj-u+flatf}}
\\ 
{\O _{\fP,\Q}} 
\ar@{=}[r] ^-{}
& 
{\O _{\fP,\Q}} 
}
\end{equation}

\end{enumerate}

\end{coro}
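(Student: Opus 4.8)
The plan is to deduce both statements from Berthelot--Kashiwara's theorem (Theorem \ref{exact-Berthelot-Kashiwara-full}) together with the explicit description of $\R \underline{\Gamma} ^\dag _X \O _{\fP,\Q}$ and $(\hdag X) ( \O _{\fP,\Q})$ given in \ref{coh-smoothsubsch} and \ref{empt-resolRsP*}. First I would treat the first assertion. Since applying $u ^!$ and checking whether a morphism is an isomorphism are both local on $\X$ (hence on $\fP$), I may assume we are in the local situation of \ref{coh-smoothsubsch}, with coordinates $t _1,\dots, t _d$ on $\fP$ so that $X = V(\overline{t} _1,\dots,\overline{t} _r)$; moreover, enlarging the coordinate system, I can choose coordinates adapted to the closed immersion $u \colon \X \hookrightarrow \fP$ as well. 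By \ref{coh-smoothsubsch} we have $(\hdag X) ( \O _{\fP,\Q}) \in D ^{\mathrm{b}} _{\mathrm{coh}}(\D ^\dag _{\fP, \Q})$ and it is concentrated in degree $r-1$ (from the exact triangle \ref{extriangleloc} and $\mathcal{H} ^{\dag i} _X = 0$ for $i \neq r$). The key point is that $\mathcal{H} ^{\dag r} _X (\O _{\fP,\Q})$ — equivalently, by \ref{extriangleloc}, $\mathcal{H} ^{r-1} (\hdag X)( \O _{\fP,\Q})$ — has support exactly $X$, so $u ^! $ of it is $0$ unless $X$ meets the ambient $\X$; but here $\X = X$ in the geometric picture and... wait: actually $u$ is an arbitrary closed immersion, so I should argue as follows. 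The cohomology sheaves of $(\hdag X)( \O _{\fP,\Q})$ are localizations $\O _{\fP}(\hdag X _{i _0\dots i _k}) _{\Q}$ patched together, where each $X _{i _0 \dots i _k} = V(\overline{t} _{i _0}\cdots \overline{t} _{i _k})$ is a divisor \emph{containing} $X$. For any divisor $D \supset X$, the module $\O _{\fP}(\hdag D) _{\Q}$ has $\mathcal{H} ^i u ^! (\O _{\fP}(\hdag D)_{\Q}) = 0$ in the relevant range because $u ^!$ commutes with localization outside a divisor (see \ref{f!commoub}) and $u _K ^{-1}$ of $j ^\dag$ along a divisor containing the whole of $X$ kills everything: more precisely, $\L u ^* (\O _\fP (\hdag D)_\Q) \riso \O _\X (\hdag (D\cap X))_\Q = 0$ since $D \cap X = X$ as a divisor of $X$ is all of $X$, hence the ``localization'' $\O _\X(\hdag X)_\Q$ vanishes. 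Running this through the complex \ref{coh-smoothsubsch-es1} (or rather its sheafy/non-local avatar) gives $u ^! (\hdag X)( \O _{\fP,\Q}) = 0$; applying $u ^!$ to \ref{extriangleloc} then yields the isomorphism $u ^! \R \underline{\Gamma} ^\dag _X \O _{\fP,\Q} \riso u ^! \O _{\fP,\Q}$, which is the restatement of part (1).

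For part (2), the first isomorphism $u ^! (\O _{\fP,\Q}) \riso (\O _{\X,\Q})[d _{X/P}]$ is just the standard computation of the extraordinary inverse image of the constant coefficient along a closed immersion of smooth formal schemes: this follows directly from the definition \ref{def-image-inv-extr} (or \ref{ntnu*form-dfn1}) together with the local description in \ref{hat-uflat-desc} / \ref{ntnu*form}, namely $\L u ^* (\O _\fP) \riso \O _\X$, giving $u ^!(\O_{\fP,\Q}) = \L u ^*(\O_{\fP,\Q})[d_{X/P}] \riso \O_{\X,\Q}[d_{X/P}]$. It remains to produce the isomorphism $\R \underline{\Gamma} ^\dag _X \O _{\fP,\Q} \riso u _+ u ^! (\O _{\fP,\Q})$ fitting into the square \ref{coro-trace-upre-BK}. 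Here I invoke Berthelot--Kashiwara: $\R \underline{\Gamma} ^\dag _X \O _{\fP,\Q}$ is by \ref{coh-smoothsubsch} a coherent $\D ^\dag _{\fP,\Q}$-module (up to shift: concentrated in a single degree, say degree $r$ for $\mathcal{H} ^{\dag r} _X$) supported in $X$. By the remark \ref{rem-exact-Berthelot-Kashiwara-full}, for any coherent $\D ^\dag _{\fP/\S,\Q}$-module $\E$ with support in $Z$ (here $Z = X$, empty divisor) the adjunction morphism $u _+ u ^! (\E) \to \E$ is an isomorphism. Applying this to each cohomology sheaf of $\R \underline{\Gamma} ^\dag _X \O _{\fP,\Q}$ (they are coherent and supported in $X$, and by the first part $u ^! \R \underline{\Gamma} ^\dag _X \O _{\fP,\Q} \riso u ^! \O _{\fP,\Q} \riso \O _{\X,\Q}[d _{X/P}]$ is concentrated in one degree) gives the desired isomorphism $\R \underline{\Gamma} ^\dag _X \O _{\fP,\Q} \riso u _+ u ^! \R \underline{\Gamma} ^\dag _X \O _{\fP,\Q} \riso u _+ u ^! (\O _{\fP,\Q})$.

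Finally, the commutativity of \ref{coro-trace-upre-BK} is a matter of unwinding definitions: the bottom map is the identity of $\O _{\fP,\Q}$, the left vertical map is the canonical morphism from \ref{extriangleloc}, the right vertical map is the Berthelot--Kashiwara adjunction $\mathrm{adj}$ of \ref{Radj-u+flatf}, and the top horizontal map is the composite of the two isomorphisms just constructed. One checks that the canonical morphism $\R \underline{\Gamma} ^\dag _X \O _{\fP,\Q} \to \O _{\fP,\Q}$ identifies, under the isomorphism $\R \underline{\Gamma} ^\dag _X \O _{\fP,\Q} \riso u _+ u ^!\O_{\fP,\Q}$, with the adjunction $u _+ u ^! \O _{\fP,\Q} \to \O _{\fP,\Q}$; this is exactly the content of the compatibility between the local-cohomology triangle and the $(u _+, u ^!)$-adjunction, and can be verified locally in coordinates using the explicit presentations \ref{exseq-HrZ} and the explicit adjunction morphisms constructed in Section \ref{chapt4-adj-reliso-closedimm} (in particular \ref{u+uflat-lem-form}). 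The main obstacle I anticipate is precisely this last bookkeeping step — matching the ``geometric'' canonical morphism $\R \underline{\Gamma} ^\dag _X \to \mathrm{id}$ coming from the rigid-analytic side ($\sp _*$ of the inclusion $\underline{\Gamma} ^\dag _X \hookrightarrow \mathrm{id}$) with the ``algebraic'' adjunction morphism $u _+ u ^! \to \mathrm{id}$ — since both are ``canonical'' but defined by quite different recipes; reducing to the local NCD situation and comparing on the explicit presentations of \ref{exseq-HrZ} should make the identification transparent, and I would not expect it to require genuinely new ideas beyond careful diagram-chasing.
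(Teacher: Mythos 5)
Your overall route is the same as the paper's: reduce the first assertion to the local \v{C}ech resolution \ref{coh-smoothsubsch-es1} of $(\hdag X)(\O_{\fP,\Q})$, show each term dies under $\L u^*$, then apply Berthelot--Kashiwara (via \ref{rem-exact-Berthelot-Kashiwara-full}) to the single-degree complex $\R\underline{\Gamma}^\dag_X\O_{\fP,\Q}$ for part (2). But there is a concrete flaw in the justification of the vanishing in part (1): you invoke \ref{f!commoub} to obtain $\L u^*(\O_{\fP}(\hdag D)_\Q)\riso \O_{\X}(\hdag(D\cap X))_\Q$, but that proposition assumes $T'=f^{-1}(T)$ is a \emph{divisor} of the source, whereas here $D\supset X$ so $D\cap X = X$ is all of $X$ and in particular not a divisor; the object $\O_\X(\hdag X)_\Q$ does not exist. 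The argument that actually works — and is the one the paper uses — is the direct algebraic one: for $i\in\{i_0,\dots,i_k\}$, multiplication by $t_i$ on $(\hdag X_{i_0\dots i_k})(\O_{\fP,\Q})$ is an isomorphism (since $t_{i_0}\cdots t_{i_k}$ is inverted there), while $t_i$ lies in the ideal defining the closed immersion $u_i\colon\X_i\hookrightarrow\fP$; hence $\L u_i^*$ kills that term, and since $u$ factors through $u_i$ (as $X\subset X_i$), so does $\L u^*$. Replacing the citation of \ref{f!commoub} by this argument repairs the gap.

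On the commutativity of \ref{coro-trace-upre-BK}, you anticipate a ``bookkeeping'' step that would require a local coordinate comparison between the rigid-analytic canonical morphism and the $\D$-module adjunction. This turns out to be unnecessary: the top isomorphism is built as $\mathrm{adj}^{-1}_{\R\underline{\Gamma}^\dag_X\O_{\fP,\Q}}$ (which exists by \ref{rem-exact-Berthelot-Kashiwara-full}) followed by $u_+u^!$ applied to the canonical morphism $\R\underline{\Gamma}^\dag_X\O_{\fP,\Q}\to\O_{\fP,\Q}$, and then the square commutes formally by naturality of the counit $\mathrm{adj}\colon u_+u^!\Rightarrow \mathrm{id}$ of \ref{Radj-u+flatf}: one has $\mathrm{adj}_{\O}\circ u_+u^!(\mathrm{can}) = \mathrm{can}\circ\mathrm{adj}_{\R\underline{\Gamma}^\dag_X\O}$, so precomposing with $\mathrm{adj}^{-1}$ gives exactly $\mathrm{can}$. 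No explicit check on \ref{exseq-HrZ} is required; the content is entirely in \ref{rem-exact-Berthelot-Kashiwara-full}.
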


\begin{proof}
a) Let $\Y $ be the open formal subscheme 
of $\fP $  complementary to $X$. 
First we check that 
$\R \underline{\Gamma} ^\dag _X \O _{\fP,\Q} |\Y  =0$.
Since this  is local, 
we can suppose with the notation of Proposition \ref{coh-smoothsubsch} that
$\U =\fP $.
In that case, we compute that the restriction to $\Y $ of the map $\psi$ of the exact sequence \ref{exseq-HrZ}
 is surjective. Hence, 
 $\mathcal{H} ^{\dag r } _X (\O _{\fP,\Q}  )  |\Y  =0$
 (or we can also compute the $r$th cohomological space of the complex \ref{coh-smoothsubsch-es2}).

b) Now, let us check that 
$u ^! (\hdag X) ( \O _{\fP,\Q})=0$. 
Since this local, 
we can suppose with the notation of Proposition \ref{coh-smoothsubsch} that
$\U =\fP $, and we use notation \ref{empt-resolRsP*}.
Using the resolution \ref{coh-smoothsubsch-es1} of 
$ (\hdag X) ( \O _{\fP,\Q})$,
we reduce to check 
$ \L u ^* (\hdag X _{i _0 \dots i _k}) ( \O _{\fP,\Q})=0$.
Let $u _i\colon \mathfrak{X} _i\hookrightarrow \fP$ be the closed immersion of formal schemes whose corresponding ideal is generated by
$t _i$. If we choose $i \in \{ i _0 ,\dots, i _k\}$, then the multiplication by
$t _i \colon (\hdag X _{i _0 \dots i _k}) ( \O _{\fP,\Q}) \overset{t _i}{\longrightarrow} (\hdag X _{i _0 \dots i _k}) ( \O _{\fP,\Q})$ is an isomorphism.
Hence, $ \L u _i ^* (\hdag X _{i _0 \dots i _k}) ( \O _{\fP,\Q})=0$.
This yields $ \L u ^* (\hdag X _{i _0 \dots i _k}) ( \O _{\fP,\Q})=0$.

c) Hence, by applying the functor 
$u ^!$ to the exact triangle
\ref{extriangleloc}, 
using part b) of the proof,
we get the isomorphism
$u ^ !\R \underline{\Gamma} ^\dag _X \O _{\fP,\Q} 
\riso 
u ^ ! \O _{\fP,\Q} $.
Since $\R \underline{\Gamma} ^\dag _X \O _{\fP,\Q} $ has his support in $X$, 
we get from Theorem \ref{exact-Berthelot-Kashiwara-full} the first isomorphism
$\R \underline{\Gamma} ^\dag _X \O _{\fP,\Q}
\riso 
u _+ u ^ !\R \underline{\Gamma} ^\dag _X \O _{\fP,\Q} 
\riso 
u _+ u ^ !  \O _{\fP,\Q} $.
The commutativity of \ref{coro-trace-upre-BK} is a consequence of  \ref{rem-exact-Berthelot-Kashiwara-full}.
\end{proof}

\begin{coro}
\label{coro-sp+jdagO}
Let $\fP $ be a smooth formal scheme over $\S $.
Let $X$ be a closed smooth $k$-subvariety of $P$ purely of codimension $r$.
We have the isomorphism of $\mathrm{Coh} (X, \fP/K) $
of the form
$$
\sp _+ (\smash{\O} _{]X[ _{\fP}}) 
\riso 
\mathcal{H} ^{\dag ,r} _X \O _{\fP,\Q} .$$

\end{coro}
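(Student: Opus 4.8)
The plan is to identify both objects inside the category $\mathrm{Coh}(X,\fP/K)$ of coherent $\D^\dag_{\fP}({}^\dag \emptyset)_{\Q}$-modules with support in $X$ (with $T=\emptyset$, so $Z=\emptyset$ and $Y=X$), and to produce a canonical isomorphism between them. On the left, $\sp_+(\smash{\O}_{]X[_{\fP}})$ is by construction (\ref{ntn-dfnsp+}) the image of the constant overconvergent isocrystal $\smash{\O}_{]X[_{\fP}}\in\mathrm{Isoc}^\dag(X,X,\fP/K)$ under the composite equivalence $\mathrm{Isoc}^\dag(X,X,\fP/K)\cong\mathrm{MIC}^\dag(X,X,\fP/K)\overset{u_{0K}^*}{\cong}\mathrm{MIC}^\dag(X,(\X_\alpha)_\alpha/K)\overset{\sp_*}{\cong}\mathrm{MIC}^{\dag\dag}((\X_\alpha)_\alpha/K)\overset{u_{0+}}{\cong}\mathrm{MIC}^{\dag\dag}(X,\fP/K)$. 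On the right, $\mathcal{H}^{\dag,r}_X\O_{\fP,\Q}=\R\underline{\Gamma}^\dag_X\O_{\fP,\Q}[r]$ (concentrated in a single degree by \ref{coh-smoothsubsch}), which lies in $D^\mathrm{b}_\mathrm{coh}(\D^\dag_{\fP,\Q})$ and has support in $X$.

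First I would invoke Corollary \ref{coro-trace-upre}: it gives a canonical isomorphism $\R\underline{\Gamma}^\dag_X\O_{\fP,\Q}\riso u_+u^!(\O_{\fP,\Q})$ together with $u^!(\O_{\fP,\Q})\riso\smash{\O}_{\X,\Q}[d_{X/P}]=\smash{\O}_{\X,\Q}[-r]$. Shifting by $[r]$ and using that $u_+$ is $t$-exact over coherent modules with support in $X$ (Berthelot–Kashiwara, \ref{exact-Berthelot-Kashiwara-full}), we get $\mathcal{H}^{\dag,r}_X\O_{\fP,\Q}\riso u_{0+}(\smash{\O}_{\X,\Q})$ — where here $u$ is any local lifting, so one must work with the glued picture: over each $\fP_\alpha$ (with $X_\alpha$ liftable to $\X_\alpha$), $\mathcal{H}^{\dag,r}_{X_\alpha}\O_{\fP_\alpha,\Q}\riso u_{\alpha+}(\smash{\O}_{\X_\alpha,\Q})$. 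So the right-hand side is, locally, $u_{0+}$ applied to the family $(\smash{\O}_{\X_\alpha,\Q})_\alpha$.

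Next I would identify $\sp_*(\smash{\O}_{]X[_{\fP}})$ with $\smash{\O}_{\X,\Q}$: the constant convergent/overconvergent isocrystal $\smash{\O}_{]X_\alpha[_{\X_\alpha}}$ realizes to $\smash{\O}_{\X_\alpha,\Q}$ under $\sp_*$ by Berthelot's theorem \ref{letterBerthelotCaro2007} (equivalently \ref{thm-eqcat-cvisoc}), since $\sp_*(\smash{\O}_{\X_{\alpha,K}})\riso\O_{\X_\alpha,\Q}$. The remaining point is that the glueing data match: the glueing isomorphisms on $(\smash{\O}_{\X_\alpha,\Q})_\alpha$ coming from $u_{0K}^*$ applied to $\smash{\O}_{]X[_{\fP}}$ (transported via $\sp_*$, i.e. via \ref{lem1pre-sp+plfid}, built on the Taylor isomorphism $\epsilon$ of \ref{dfn-epsilon}) must agree with those on $(\smash{\O}_{\X_\alpha,\Q})_\alpha\cong(u_\alpha^!\O_{\fP_\alpha,\Q}[r])_\alpha$ coming from the $\tau$-glueing data of $\R\underline{\Gamma}^\dag_X\O_{\fP,\Q}$ used in \ref{const-u0!}. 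For the constant coefficient both glueing isomorphisms are the "obvious" ones (the Taylor isomorphism for the trivial connection is the identity, and the $\tau$ for $u^!$ of the constant module is canonical), and Proposition \ref{sp-eps-tau} is exactly the compatibility statement $\sp_*(\epsilon_{u,u'})\leftrightarrow\tau_{u,u'}$ that makes this precise. Feeding this into the definition \ref{ntn-dfnsp+} of $\sp_+$ via $u_{0+}$ yields the desired isomorphism in $\mathrm{Coh}(X,\fP/K)$.

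The main obstacle I expect is the bookkeeping of the glueing data: one must check that the local isomorphisms $\mathcal{H}^{\dag,r}_{X_\alpha}\O_{\fP_\alpha,\Q}\riso u_{\alpha+}(\smash{\O}_{\X_\alpha,\Q})$ are compatible with the transition isomorphisms $\theta_{\alpha\beta}$ on both sides — i.e. that the diagram relating $p_2^{\alpha\beta!}$ and $p_1^{\alpha\beta!}$ of these local identifications commutes. This is where \ref{sp-eps-tau}, \ref{comp-comp-adj-immf} (transitivity and base change for the adjunction $u_+\circ a^!\to f^!\circ u_+$), and \ref{rem-exact-Berthelot-Kashiwara-full} (the adjunction $u_+u^!\to\mathrm{id}$ is an isomorphism on modules supported in $X$) do the real work; everything else is formal once these compatibilities are in place. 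A clean alternative, which I would mention, is to observe that by \ref{coro-sp+jdagO}'s sibling statement \ref{rem-ind-ariYXP} the category $\mathrm{Coh}(X,\fP/K)$ does not depend on the frame, so one may reduce to the case where $X$ itself lifts to a smooth formal $\fP$-closed subscheme $\X$, where the statement is simply $\sp_+(\smash{\O}_{]X[_\X})=u_+(\smash{\O}_{\X,\Q})=u_+u^!\O_{\fP,\Q}[r]=\mathcal{H}^{\dag,r}_X\O_{\fP,\Q}$ by \ref{coro-trace-upre}, and then propagate via descent.
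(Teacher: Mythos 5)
Your high-level strategy is the right one, and you correctly identify the crux — that the local isomorphisms $\mathcal{H}^{\dag,r}_{X_\alpha}\O_{\fP_\alpha,\Q}\riso \O_{\X_\alpha,\Q}$ (shifted images of $u_\alpha^!$) must be shown compatible with the $\tau$-gluing data on the left and the Taylor-transported identities on the right. But the body of your argument never actually closes this gap, and the paper's proof is precisely the place where this is carried out by a concrete computation you do not replicate. The actual proof does not invoke Corollary~\ref{coro-trace-upre} at all. Instead, it works directly with $u_0^!(\mathcal{H}^{\dag,r}_X\O_{\fP,\Q})$: step~1 uses functoriality of $\tau$ and the canonical map $\R\underline{\Gamma}^\dag_X\O_\fP\to\O_\fP$ to relate the gluing morphisms to the identity; step~2 replaces $\mathcal{H}^{\dag,r}_X\O_\fP|_{\fP_\alpha}$ by the explicit Koszul-type resolution $\FF^\bullet_\alpha$ from~\ref{empt-resolRsP*} (which is $\O(\hdag T)_\Q$-flat, so~\ref{2.1.5Be2-empt}.b and~\ref{rem-flat-resol-tau} let one compute $\tau$ term-by-term); step~3 then computes $u_\alpha^*(\FF^s_\alpha)=0$ for $s\ne 0$ and $=\O_{\X_\alpha,\Q}$ for $s=0$, so the transported gluing map $\theta'_{\alpha\beta}$ is literally the identity of $\O_{\X_{\alpha\beta},\Q}$. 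Only then does step~4 feed~\ref{spinvim*+} and~\ref{prop1} into the definition of $\sp_+$. Your citations to~\ref{sp-eps-tau} and~\ref{comp-comp-adj-immf} gesture at this compatibility but do not establish it: the $\tau$-gluing on $u_0^!(\mathcal{H}^{\dag,r}_X\O_\fP)$ is the gluing for $u^!$ of a nontrivial complex, not for $u^!$ of the constant module, and turning one into the other is exactly the content of steps~1--3.

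Your proposed ``clean alternative'' does not hold up either. Frame independence (\ref{rem-ind-ariYXP}/\ref{ind-CYW}) lets you change the ambient formal scheme $\fP$, but it does not produce a frame in which the smooth $k$-variety $X$ lifts to a smooth formal $\V$-scheme; such a lift need not exist. Even if a frame $(X,X,\fP')$ with a global lift $\X\hookrightarrow\fP'$ were available, one would still have to show that both sides of the desired isomorphism correspond under the equivalence $\mathrm{Coh}(X,\fP/K)\cong\mathrm{Coh}(X,\fP'/K)$, which is itself a nontrivial compatibility of $\sp_+$ and $\R\underline{\Gamma}^\dag_X$ with the pushforward/pullback along $\fP\leftarrow\fP\times\fP'\to\fP'$. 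So this route replaces the gluing bookkeeping with an equally substantial compatibility check rather than bypassing it.
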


\begin{proof}
0) Choose $(\fP  _{\alpha}) _{\alpha \in \Lambda}$ an open affine covering of  $\fP $
and let us use the corresponding notation of \ref{ntnPPalpha} (which are compatible with
that of \ref{ntnPPalpha-withoutdiv}).
Choosing a finer covering if necessarily, 
we can suppose furthermore that 
there exist local coordinates
$t  _{\alpha 1}, \dots, t  _{\alpha d} \in \Gamma (\fP _\alpha ,\O _{\fP})$ 
such that $\X _\alpha= V ( t  _{\alpha 1} ,\dots, t  _{\alpha r})$.

1) Following \ref{coh-smoothsubsch},
$\mathcal{H} ^{\dag ,r} _X \O _{\fP,\Q} 
\in 
\mathrm{Coh} (X, \fP/K)$
and 
$\mathcal{H} ^{\dag ,r} _X \O _{\fP,\Q} 
\riso 
\R \underline{\Gamma} ^\dag _X \O _{\fP,\Q}   [r]$.
By functoriality, we have the commutative diagram
\begin{equation}
\notag
\xymatrix{
{p _2  ^{\alpha \beta !}  u _\beta ^! (\R \underline{\Gamma} ^\dag _{X _\beta} \O _{\fP _\beta,\Q}) [-r]} 
\ar[d] ^-{\sim} _-{\tau}
\ar[r] ^-{}
&
{p _2  ^{\alpha \beta !}  u _\beta ^! (\O _{\fP _\beta,\Q} [-r] )} 
\ar[d] ^-{\sim} _-{\tau}
\\ 
{p _1  ^{\alpha \beta !}  u _\alpha ^! (\R \underline{\Gamma} ^\dag _{X _\alpha} \O _{\fP _\alpha,\Q} [-r])} 
\ar[r] ^-{}
& 
{p _1  ^{\alpha \beta !}  u _\alpha ^! (\O _{\fP _\alpha,\Q} [-r]),} 
}
\end{equation}
where the horizontal morphisms are induced by 
$\R \underline{\Gamma} ^\dag _{X } \O _{\fP ,\Q} 
\to 
\O _{\fP ,\Q} $
and where the vertical isomorphisms are the canonical glueing ones (i.e. of the form \ref{prop-glueiniso-coh1}).
By applying $\mathcal{H} ^{0}$, 
since the glueing isomorphisms induced by $\O _{\fP,\Q}$ are the identity,
we get the commutative diagram
\begin{equation}
\label{tau-HrXOP-diag1}
\xymatrix{
{p _2  ^{\alpha \beta !}  u _\beta ^! (\mathcal{H} ^{\dag ,r} _X \O _{\fP,\Q} |\fP _\beta)} 
\ar[d] ^-{\sim} _-{\tau}
\ar[r] ^-{}
&
{\O _{\X _{\alpha\beta},\Q} } 
\ar@{=}[d] 
\\ 
{p _1  ^{\alpha \beta !}  u _\alpha ^! (\mathcal{H} ^{\dag ,r} _X \O _{\fP,\Q} |\fP _\alpha)} 
\ar[r] ^-{}
& 
{\O _{\X _{\alpha\beta},\Q} ,} 
}
\end{equation}
where we omit indicating $\mathcal{H} ^0$ to simplify notation.
Set 
$\E := \mathcal{H} ^{\dag ,r} _X \O _{\fP,\Q} \in \mathrm{Coh} (X, \fP/K)$,
and
$\E _\alpha : =
\mathcal{H} ^{0} u _{\alpha } ^{!} ( \E | \fP _\alpha)$.
We denote by 
$ \theta _{  \alpha \beta} \ : \  p _2  ^{\alpha \beta !} (\E _{\beta}) \riso p  _1 ^{\alpha \beta !} (\E _{\alpha})$,
the glueing 
$\D ^{\dag} _{\X  _{\alpha \beta},\Q} $-linear
isomorphism (which is equal to the left one of \ref{tau-HrXOP-diag1}).

2) For $i = 1,\dots, r$, put 
$X _{\alpha i}:  = V (\overline{t} _{\alpha i})$, and 
$X _{\alpha i _0,\dots, i _{k}}
:= 
X _{\alpha i _0}\cup \dots \cup X _{\alpha i _k}$.
Consider the diagram 
\begin{equation}
\label{locdesc-Gamma2O}
\xymatrix{
{\O _{\fP _\alpha,\Q}} 
\ar[r] ^-{}
\ar[d] ^-{}
& 
{\prod _{i=1} ^{d}
\O _{\fP _\alpha} (\hdag X _i) _{\Q}} 
\ar[r] ^-{}
\ar[d] ^-{}
& 
{\prod _{i _0 < i _1}
\O _{\fP _\alpha} (\hdag X _{i _0 i _1}) _{\Q}} 
\ar[r] ^-{}
\ar[d] ^-{}
&
{\cdots}
\ar[r] ^-{}
\ar[d] ^-{}
& 
{\O _{\fP _\alpha} (\hdag X _{1 \dots r}) _{\Q}} 
\ar[r] ^-{}
\ar[d] ^-{}
& 
{0} 
\ar[d] ^-{}
\\ 
{\O _{\fP _\alpha,\Q}} 
\ar[r] ^-{}
& 
{0} 
\ar[r] ^-{}
& 
{0} 
\ar[r] ^-{}
&
{\cdots}
\ar[r] ^-{}
& 
{0} 
\ar[r] ^-{}
& 
{0.} 
}
\end{equation}
We denote by $\FF ^\bullet _\alpha$ the complex of the top of \ref{locdesc-Gamma2O}
such that $\FF ^{0} _\alpha =\O _{\fP _\alpha,\Q}$. 
Then,  $\FF ^\bullet _\alpha [r]$ is a left resolution of 
$\E | \fP _\alpha$ by coherent $\D ^{\dag} _{\fP _\alpha} (\hdag T _\alpha) _{\Q} $-modules
which are $\O_{\fP _\alpha} (\hdag T _\alpha) _{\Q} $-flat, 
and the canonical morphism
$\R \underline{\Gamma} ^\dag _{X _\alpha} \O _{\fP _\alpha,\Q} 
\to 
\O _{\fP _\alpha,\Q} $
is represented by 
$\E | \fP _\alpha [-r] \overset{\sim}{\longleftarrow}
\FF ^\bullet _\alpha \to  
\O _{\fP _\alpha,\Q} $, where the first arrow is a quasi-isomorphism and the second one
corresponds to the vertical morphism of complexes of \ref{locdesc-Gamma2O}.
Hence, using \ref{2.1.5Be2-empt}.b),
we get the isomorphism 
$\E '_\alpha \riso \E _\alpha$, where
$\E '_\alpha : =
\mathcal{H} ^{r}
u _{\alpha } ^{*} ( \FF ^\bullet _\alpha)$.
Then, we denote by $ \theta '_{  \alpha \beta} \ : \  p _2  ^{\alpha \beta !} (\E '_{\beta}) \riso p  _1 ^{\alpha \beta !} (\E '_{\alpha})$,
the glueing 
$\D ^{\dag} _{\X  _{\alpha \beta}} (\hdag Z _{\alpha \beta}) _{\Q} $-linear
isomorphism
making commutative the diagram
\begin{equation}
\label{tau-HrXOP-diag2}
\xymatrix{
{p _2  ^{\alpha \beta !} (\E' _{\beta})} 
\ar[r] ^-{\sim}
\ar@{.>}[d] ^-{\sim} _-{ \theta '_{  \alpha \beta} }
& 
{p _2  ^{\alpha \beta !} (\E _{\beta})} 
\ar[d] ^-{\sim} _-{ \theta _{  \alpha \beta} }
\\ 
{p  _1 ^{\alpha \beta !} (\E '_{\alpha})} 
\ar[r] ^-{\sim}
& 
{p  _1 ^{\alpha \beta !} (\E _{\alpha}).} 
}
\end{equation}
Since 
$ \theta _{  \alpha \beta} $
satisfy the cocycle condition, then so are 
$ \theta '_{  \alpha \beta} $ (this is just a matter of writing some commutative cubes).
Hence, 
$u _0 ^! (\E)$ is isomorphic to 
$((\E ' _{\alpha})_{\alpha \in \Lambda},\, (\theta '_{\alpha\beta}) _{\alpha ,\beta \in \Lambda})$.

3) For any $s \not =0$, we have
$u _{\alpha } ^{*} ( \FF ^s _\alpha)
=0$.
Moreover,
$\E '_\alpha =
 u _{\alpha } ^{*} ( \FF ^0 _\alpha)
 =\O _{\X _{\alpha\beta},\Q} $.
Hence, 
by applying the functor $\L u _\alpha ^* $ to 
$\FF ^\bullet _\alpha \to  
\O _{\fP _\alpha,\Q} $
we get the identity
$\O _{\X _\alpha ,\Q} \to \O _{\X _\alpha,\Q}$.
This yields that composing \ref{tau-HrXOP-diag1} with \ref{tau-HrXOP-diag2},
we get a square whose morphisms are the identity of $\O _{\X _{\alpha\beta},\Q}$.
In particular 
$ \theta '_{  \alpha \beta}  $ is the identity of $\O _{\X _{\alpha\beta},\Q}$.
By construction of the functor
$u ^* _0$ (see \ref{com-invMIC-ntn})
this means that
$((\E ' _{\alpha})_{\alpha \in \Lambda},\, (\theta '_{\alpha\beta}) _{\alpha ,\beta \in \Lambda})
= u _0 ^*( \O _{\fP,\Q})$.

4) Using 2) and 3), 
we get
the canonical isomorphism 
\begin{equation}
\label{coro-trace-upre-proof4)}
u ^! _0 (\mathcal{H} ^{\dag ,r} _X \O _{\fP,\Q} )
\riso 
u _0 ^*( \O _{\fP,\Q})
\end{equation}
of 
$\mathrm{MIC} ^{\dag \dag} ( (\X   _\alpha )_{\alpha \in \Lambda},Z/K)$.
Using \ref{spinvim*+} and its notation, 
since 
$\sp _* ( \O _{\fP _K}) = \O _{\fP,\Q}$,
then  we get
$\sp _+( \smash{\O} _{]X[ _{\fP}}) 
=u _{0+} \sp _* u _{0K} ^*( \smash{\O} _{]X[ _{\fP}}) 
\underset{\ref{spinvim*+}}{\riso} 
u _{0+} u _{0} ^* \sp _* (  \O  _{\fP _K}) 
=u _{0+} u _{0} ^*  (  \O  _{\fP,\Q})
\riso 
 u _{0+}  u ^! _0 (\mathcal{H} ^{\dag ,r} _X \O _{\fP,\Q} )
\underset{\ref{prop1}}{\riso} 
\mathcal{H} ^{\dag ,r} _X \O _{\fP,\Q} $. 
\end{proof}

\subsection{Commutation of $\sp _+$  with duality}

\begin{empt}
\label{propspetdualsansfrob721}
With notation \ref{dfnMICdagalphabeta},
let $((E _{\alpha}) _{\alpha \in \Lambda}, (\eta _{\alpha \beta }) _{\alpha, \beta \in \Lambda})
\in 
\mathrm{MIC} ^\dag (Y,  (\X   _\alpha )_{\alpha \in \Lambda}/K)$.
The $j ^{\dag} \O _{\X _{\alpha K}}$-linear dual 
of $E _{\alpha} $ is denoted by
$E _{\alpha} ^\vee : =
\mathcal{H} om  _{ j ^{\dag} \O _{\X _{\alpha K} }}( E ,  j ^{\dag} \O _{\X _{\alpha K}} )$. 
Since the $j ^\dag \O$-linear dual commutes with pullbacks, 
the inverse of the isomorphism
$(\eta _{\alpha \beta}) ^\vee$ is canonically isomorphic to 
$ \eta ^* _{  \alpha \beta} \ : \  p _{2K}  ^{\alpha \beta *} ((E _{\beta}) ^\vee) 
\riso 
p  _{1K} ^{\alpha \beta *} ( (E _{\alpha}) ^\vee)$.
These isomorphisms satisfy the cocycle condition (for more details, see  
 \cite[4.3.1]{caro-construction}).
Hence, we get the dual functor 
$(-)^\vee \colon \mathrm{MIC} ^\dag (Y,  (\X   _\alpha )_{\alpha \in \Lambda}/K)
\to \mathrm{MIC} ^\dag (Y,  (\X   _\alpha )_{\alpha \in \Lambda}/K)$
defined by 
$((E _{\alpha}) _{\alpha \in \Lambda}, (\eta _{\alpha \beta }) _{\alpha, \beta \in \Lambda}) ^\vee
:=
(( (E _{\alpha}) ^\vee) _{\alpha \in \Lambda}, (\eta ^*_{\alpha \beta }) _{\alpha, \beta \in \Lambda})$.

Let $E \in \mathrm{MIC} ^{\dag} (Y, X,\fP/K)$,
and  $E ^\vee$ be its dual.
We check the isomorphism
$$u _{0K} ^* (E ^\vee) \riso 
(u _{0K} ^* (E )  ) ^\vee,$$ i.e. that the isomorphisms coming from the commutation of the dual with the pullbacks 
are compatible with glueing data, which is easy.

\end{empt}

\begin{empt}
Let $f \colon \X ^{\prime } \to \X $ be an open immersion of 
smooth formal schemes over $\S $.
Let $Z$ be a divisor of $X$ and $Z ':= f ^{-1} (Z)$.
Let $\E \in D ^{\mathrm{b}} _{\mathrm{coh}} ( \D ^\dag _{\X} (\hdag Z) _{\Q})$.
Following \cite[3.2.8]{caro-construction}, we define the following isomorphism
\begin{gather}\notag
\xi \ :\   f ^!  \DD   (\E)
\riso
\R \mathcal{H} om _{\D ^\dag _{\X^{\prime }} (\hdag Z') _{\Q}}
(f ^{!} ( \E),
f ^! _{\mathrm{r}}   (\D ^\dag _{\X} (\hdag Z) _{\Q}  \otimes _{\O _{\X}} \omega _{\X, \Q} ^{-1}))[d _X]
\\
\label{defDf!=f!D1bis}
\riso
\R \mathcal{H} om _{\D ^\dag _{\X^{\prime }} (\hdag Z') _{\Q}}
(f ^{!}  ( \E),
(\D ^\dag _{\X^{\prime }} (\hdag Z') _{\Q} \otimes  _{\O _{\X'}} \omega _{\X ^{\prime }/\S } ^{-1} ) _\mathrm{t})[d _X]
\underset{\beta}{\riso}
\DD   f ^!  (\E),
\end{gather}
where $\beta$ is the transposition isomorphism exchanging both structures of left 
$\D ^\dag _{\X^{\prime }} (\hdag Z') _{\Q} $-modules of 
$\D ^\dag _{\X^{\prime }} (\hdag Z') _{\Q} \otimes  _{\O _{\X'}} \omega _{\X ^{\prime }/\S } ^{-1} $.
\end{empt}

\begin{empt}
With notation \ref{ntnPPalpha},
let 
$((\E _{\alpha}) _{\alpha \in \Lambda}, (\theta _{\alpha \beta }) _{\alpha, \beta \in \Lambda})\in 
\mathrm{MIC} ^{\dag \dag} ( (\X   _\alpha )_{\alpha \in \Lambda},Z/K)$.
Via the isomorphisms 
\ref{defDf!=f!D1bis}, the inverse of the isomorphism
$\DD (\theta _{\alpha \beta})$ is canonically isomorphic to 
$ \theta ^* _{  \alpha \beta} \ : \  p _2  ^{\alpha \beta !} (\DD (\E _{\beta})) \riso p  _1 ^{\alpha \beta !} (\DD (\E _{\alpha}))$.
These isomorphisms satisfy the cocycle condition 
(for more details, see  
 \cite[4.3.1]{caro-construction}).
Hence, we get the dual functor 
$$\DD \colon \mathrm{MIC} ^{\dag \dag} ( (\X   _\alpha )_{\alpha \in \Lambda},Z/K)
\to \mathrm{MIC} ^{\dag \dag} ( (\X   _\alpha )_{\alpha \in \Lambda},Z/K)$$
defined by 
$\DD ((\E _{\alpha}) _{\alpha \in \Lambda}, (\theta _{\alpha \beta }) _{\alpha, \beta \in \Lambda})
:=
((\DD (\E _{\alpha})) _{\alpha \in \Lambda}, (\theta ^*_{\alpha \beta }) _{\alpha, \beta \in \Lambda})$.

\end{empt}

\begin{empt}
\label{propspetdualsansfrob724}
With notation \ref{ntnPPalpha},
let $((E _{\alpha}) _{\alpha \in \Lambda}, (\eta _{\alpha \beta }) _{\alpha, \beta \in \Lambda})
\in 
\mathrm{MIC} ^\dag (Y,  (\X   _\alpha )_{\alpha \in \Lambda}/K)$.
Following \ref{theoprincipal}, we have the canonical isomorphism
$\mathrm{sp} _* ( E _\alpha ^{\vee}) \riso \DD (\mathrm{sp} _* (E _\alpha))$
of 
$\mathrm{MIC} ^{\dag \dag} ( (\X   _\alpha )_{\alpha \in \Lambda},Z/K)$.
These isomorphisms satisfy the cocycle condition 
(for more details, see  
 \cite[4.3.1]{caro-construction}).
Hence, we get the isomorphism 
$$\mathrm{sp} _* 
(((E _{\alpha}) _{\alpha \in \Lambda}, (\eta _{\alpha \beta }) _{\alpha, \beta \in \Lambda}) ^\vee )
\riso
\DD \circ \mathrm{sp} _*
((E _{\alpha}) _{\alpha \in \Lambda}, (\eta _{\alpha \beta }) _{\alpha, \beta \in \Lambda}).$$
\end{empt}

\begin{empt}
\label{propspetdualsansfrob725}
With notation \ref{ntnPPalpha},
let 
$((\E _{\alpha}) _{\alpha \in \Lambda}, (\theta _{\alpha \beta }) _{\alpha, \beta \in \Lambda})\in 
\mathrm{MIC} ^{\dag \dag} ( (\X   _\alpha )_{\alpha \in \Lambda},Z/K)$.
From the relative duality isomorphism (see \ref{dualrelative}), 
we have the isomorphism
$u _{\alpha +} \circ \DD (\E _\alpha) 
\riso 
\DD \circ u _{\alpha +}  (\E _\alpha) $
These isomorphisms satisfy the cocycle condition 
(for more details, see  
 \cite[4.3.1]{caro-construction}), i.e.
we get the commutation isomorphism :
$$u _{0+} \circ \DD ((\E _{\alpha}) _{\alpha \in \Lambda}, (\theta _{\alpha \beta }) _{\alpha, \beta \in \Lambda}) 
\riso \DD \circ u _{0+} ((\E _{\alpha}) _{\alpha \in \Lambda}, (\theta _{\alpha \beta }) _{\alpha, \beta \in \Lambda})).$$
\end{empt}

\begin{prop}
\label{propspetdualsansfrob}
We keep notation \ref{ntnsp+}.
Let $E \in \mathrm{MIC} ^{\dag} (Y, X,\fP/K)$,
and  $E ^\vee$ be its dual. We have the functorial canonical isomorphism
in $E$ : $ \sp _{+} (E ^\vee ) \riso
\DD  \circ \sp _{+} (E )$.
\end{prop}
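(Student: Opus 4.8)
The plan is to reduce the statement to the local situation where $X$ admits a smooth formal $\S$-scheme lifting $\X _\alpha$, via the equivalences of categories constructed in Section~\ref{ntnsp+}, and then to chase through the sequence of intermediate commutation isomorphisms already established in \ref{propspetdualsansfrob721}--\ref{propspetdualsansfrob725}. Recall from \ref{ntn-dfnsp+} that $\sp _+$ is defined as the composite
\[
\mathrm{Isoc} ^{\dag} (Y, X,\fP/K)
\cong
\mathrm{MIC} ^{\dag} (Y, X,\fP/K)
\underset{u _{0K} ^*}{\riso}
\mathrm{MIC} ^\dag (Y,  (\X   _\alpha )_{\alpha \in \Lambda}/K)
\underset{\sp _*}{\riso}
\mathrm{MIC} ^{\dag \dag} ( (\X   _\alpha )_{\alpha \in \Lambda},Z/K)
\underset{u _{0+}}{\riso}
\mathrm{MIC} ^{\dag \dag} (X, \fP,T/K),
\]
while the duality functor $\DD$ on $\mathrm{MIC} ^{\dag \dag} (X, \fP,T/K) = \mathrm{Coh} (X, \fP,T/K) \cap \{\text{$\O$-coherent}\}$ is the restriction of the duality functor on $D ^{\mathrm{b}} _{\mathrm{coh}} ( \D ^\dag _{\fP} (\hdag T) _{\Q})$.

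First I would observe that the dual of an overconvergent isocrystal $E$, viewed in $\mathrm{MIC} ^{\dag}(Y,X,\fP/K)$, is its $j ^\dag \O _{\fP _K}$-linear dual $E ^\vee$; this is well-defined because, by Theorem~\ref{thm-eqcat-cvisoc} (or rather its relative avatar used in \ref{propspetdualsansfrob721}), the underlying $j ^\dag \O _{\fP _K}$-module is locally projective of finite type, so $\R\mathcal{H}om$ reduces to $\mathcal{H}om$. Then I would string together the four already-proven compatibilities in order: (i) $u _{0K} ^*$ commutes with taking the $j ^\dag \O$-linear dual (\ref{propspetdualsansfrob721}); (ii) $\sp _*$ intertwines the $j ^\dag \O$-linear dual on $\mathrm{MIC} ^\dag (Y,  (\X   _\alpha )_{\alpha \in \Lambda}/K)$ with $\DD$ on $\mathrm{MIC} ^{\dag \dag} ( (\X   _\alpha )_{\alpha \in \Lambda},Z/K)$ (\ref{propspetdualsansfrob724}, which itself rests on Theorem~\ref{theoprincipal}); and (iii) $u _{0+}$ commutes with $\DD$ (\ref{propspetdualsansfrob725}, which rests on the relative duality isomorphism \ref{dualrelative} applied locally through the closed immersions $u _\alpha$, together with the compatibility of the glueing isomorphisms). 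Composing these gives the desired natural isomorphism
\[
\sp _{+} (E ^\vee ) = u _{0+}\sp _* u _{0K} ^* (E ^\vee) \riso u _{0+}\sp _* (u _{0K} ^*(E))^\vee \riso u _{0+}\DD(\sp _* u _{0K} ^*(E)) \riso \DD( u _{0+}\sp _* u _{0K} ^*(E)) = \DD\circ\sp _+(E).
\]
Functoriality in $E$ follows because each of the three intermediate isomorphisms is functorial, as recorded in the cited paragraphs.

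The main obstacle, and the point deserving care, is step (iii): the compatibility of the relative duality isomorphism with the glueing data. Concretely, one must verify that for the overlaps $\fP _{\alpha\beta}$ the local relative duality isomorphisms $u _{\alpha +}\circ\DD(\E _\alpha)\riso\DD\circ u _{\alpha +}(\E _\alpha)$ commute with the transition isomorphisms $\theta _{\alpha\beta}$ built from the glueing isomorphisms $\tau$ of \ref{prop-glueiniso-coh1} and the comparison isomorphisms of \ref{comp-comp-adj-immf}; this is the cocycle-type verification for which the excerpt points to \cite[4.3.1]{caro-construction}. Since all the ingredients (Berthelot--Kashiwara \ref{exact-Berthelot-Kashiwara-full}, the relative duality isomorphism \ref{dualrelative}, the commutation of $\sp _*$ with $u _K ^*$ and $u ^!$ in \ref{sp*f*com-empt}--\ref{sp-eps-tau}, and Theorem~\ref{theoprincipal}) are already available, the proof amounts to assembling these compatibilities coherently, so I would simply indicate that one copies the argument of \cite[4.3.1]{caro-construction} verbatim to conclude.
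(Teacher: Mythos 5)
Your proof is correct and follows exactly the same route as the paper's: factor $\sp_+ = u_{0+}\circ\sp_*\circ u_{0K}^*$ and compose the three already-established commutation isomorphisms \ref{propspetdualsansfrob721}, \ref{propspetdualsansfrob724}, and \ref{propspetdualsansfrob725}. The paper's own proof is simply the one-line observation that the statement follows from those three paragraphs; your extra discussion of where the care lies (the cocycle verification in \ref{propspetdualsansfrob725}) is accurate but already absorbed into the cited result.
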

\begin{proof}
Since $\sp _{+} 
=
u _{0+} \circ \sp _{*} \circ u _{0K} ^*$, 
the proposition is a consequence of 
\ref{propspetdualsansfrob721},
\ref{propspetdualsansfrob724}, 
and
\ref{propspetdualsansfrob725}.
\end{proof}

\subsection{Differential coherence result}

\begin{lem}
\label{lem-desc-coh-chgbase}
Let $\V \to \V'$ be a finite morphism of complete discrete valuation rings of mixed characteristics $(0,p)$.
We get the finite morphism $\S ' := \Spf \V ' \to \S$.
Let $\X $ be a smooth formal scheme over $\S $, 
$\X ' := \X \times _{\S} \S '$, 
and $f \colon \X' \to \X$ be the canonical projection.
Let $Z$ be a divisor of $X$ and $Z':= f ^{-1} (Z)$.
\begin{enumerate}
\item  
\label{lem-desc-coh-chgbase0}
The canonical 
homomorphism 
$\D ^\dag _{\X'/\S '} (\hdag Z') _{\Q} 
\to 
\D ^\dag _{\X'\to \X/\S '\to \S} (\hdag Z') _{\Q} $
is an isomorphism. 
The composite morphism
$f ^{-1}\D ^\dag _{\X/\S} (\hdag Z) _{\Q} 
\to 
\D ^\dag _{\X'\to \X/\S '\to \S} (\hdag Z') _{\Q} 
\liso
\D ^\dag _{\X'/\S '} (\hdag Z') _{\Q} $
is a homomorphism of rings. 
Hence, if $\E$ is a coherent $\D ^\dag _{\X/\S} (\hdag Z) _{\Q}$-module, then
$f _Z ^! (\E) \riso \D ^\dag _{\X'/\S '} (\hdag Z') _{\Q} \otimes _{f ^{-1}\D ^\dag _{\X/\S} (\hdag Z) _{\Q}}
f ^{-1}\E$, where $f ^! _Z$ is the extraordinary inverse image of $\X' \to \X$ above $\S' \to \S$
 with overconvergent singularities along $Z$, i.e. $f ^! _Z$ is the base change inverse image.

\item 
\label{lem-desc-coh-chgbase1}
Suppose $\X$ is affine.
Let $\E$ be a coherent $\D ^\dag _{\X/\S} (\hdag Z) _{\Q}$-module.
 Then the canonical morphisms
$$ \V ' \otimes _\V \Gamma (\X, \E) 
\to 
D ^\dag _{\X'/\S '} (\hdag Z') _{\Q} 
\otimes _{D ^\dag _{\X/\S} (\hdag Z) _{\Q}}
\Gamma (\X, \E) 
\to 
\Gamma (\X', f _Z ^! (\E)) $$
 are isomorphisms.
 Moreover, 
$D ^\dag _{\X'/\S'} (\hdag Z') _{\Q}$ is a faithfully flat 
$D ^\dag _{\X/\S} (\hdag Z) _{\Q}$-module for both left or right structure.

\item 
\label{lem-desc-coh-chgbase2}
For any  $\D ^\dag _{\X/\S} (\hdag Z) _{\Q}$-module $\E$,
the canonical morphisms
\begin{equation}
\notag
f ^* (\E) := \O _{\X'} \otimes _{f ^{-1}\O _{\X}}
f ^{-1}\E 
\to
\O _{\X'} (\hdag Z') _{\Q} \otimes _{f ^{-1}\O _{\X} (\hdag Z) _{\Q}}
f ^{-1}\E 
\to
\D ^\dag _{\X'/\S '} (\hdag Z') _{\Q} \otimes _{f ^{-1}\D ^\dag _{\X/\S} (\hdag Z) _{\Q}}
f ^{-1}\E 
\end{equation}
are isomorphisms.

 \item 
 \label{lem-desc-coh-chgbase3}
 Let $\phi \colon \E' \to \E$ be a morphism of $\O _{\X}$-modules.
 Then $\phi$ is an isomorphism if and only if 
$f ^* (\phi)$ 
is an isomorphism.
\end{enumerate}

\end{lem}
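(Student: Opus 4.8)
\textbf{Proof plan for Lemma \ref{lem-desc-coh-chgbase}.}

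The overall strategy is to reduce everything to local computations in coordinates, exploiting that $f\colon\X'=\X\times_\S\S'\to\X$ is a base-change morphism with $\X'\to\S'$ smooth of the same relative dimension as $\X\to\S$, and that $Z'=f^{-1}(Z)$ has the same local equations as $Z$. I would begin with part \ref{lem-desc-coh-chgbase0}. Working on an affine open where $\X$ has coordinates $t_1,\dots,t_d$, the sheaves $\widehat{\D}^{(m)}_{\X'\to\X/\S'\to\S}$ and $\widehat{\D}^{(m)}_{\X'/\S'}$ both have the same description as the $p$-adic completion of the free $\O_{\X'}$-module on the monomials $\underline\partial^{\langle\underline k\rangle_{(m)}}$; since $f^{-1}\O_\X\to\O_{\X'}$ is the canonical map and the PD-structures are compatible with base change, the transition map $f^{-1}\widehat{\B}^{(m)}_\X(Z)\to\widehat{\B}^{(m)}_{\X'}(Z')$ is an isomorphism after completion because $Z'$ and $Z$ have the same defining equation. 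Tensoring by $\O_\S$, passing to $\widetilde{\D}$, inverting $p$ and taking $\underrightarrow{\lim}$ over $m$ gives the isomorphism $\D^\dag_{\X'/\S'}(\hdag Z')_\Q\riso\D^\dag_{\X'\to\X/\S'\to\S}(\hdag Z')_\Q$. The ring homomorphism $f^{-1}\D^\dag_{\X/\S}(\hdag Z)_\Q\to\D^\dag_{\X'/\S'}(\hdag Z')_\Q$ is then the obvious one (it is $\O_{\X'}$-semilinear and respects the derivations), and the formula for $f^!_Z$ follows from the definition \ref{def-image-inv-extr} of the extraordinary inverse image, noting that the relative dimension shift $d_{\X'/\X}$ is zero.

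For part \ref{lem-desc-coh-chgbase1}, assuming $\X$ affine, I would invoke Berthelot's theorem of type $A$ for coherent $\D^\dag_{\X/\S}(\hdag Z)_\Q$-modules and for coherent $\D^\dag_{\X'/\S'}(\hdag Z')_\Q$-modules (as used throughout Section \ref{ntnsp+}, e.g. in the proof of \ref{letterBerthelotCaro2007}). The composite $\V'\otimes_\V\Gamma(\X,\E)\to D^\dag_{\X'/\S'}(\hdag Z')_\Q\otimes_{D^\dag_{\X/\S}(\hdag Z)_\Q}\Gamma(\X,\E)\to\Gamma(\X',f^!_Z(\E))$ is checked to be an isomorphism by reducing first to $\E$ free of finite rank (using a finite presentation and the five lemma, exactly as in \ref{IndCoh}), where it is the statement $\V'\otimes_\V D=D'$ together with commutation of global sections with the base-change tensor product; the first equality $\V'\otimes_\V\Gamma(\X,\E)\riso D^\dag_{\X'/\S'}(\hdag Z')_\Q\otimes_{D^\dag_{\X/\S}(\hdag Z)_\Q}\Gamma(\X,\E)$ holds because $\V'\otimes_\V D^\dag_{\X/\S}(\hdag Z)_\Q$ is already $D^\dag_{\X'/\S'}(\hdag Z')_\Q$ at the level of global sections (finiteness of $\V'/\V$ lets one pass $\V'\otimes_\V$ through the completion and the $\underrightarrow{\lim}$). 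Faithful flatness of $D^\dag_{\X'/\S'}(\hdag Z')_\Q$ over $D^\dag_{\X/\S}(\hdag Z)_\Q$ then reduces to faithful flatness of $\V'\otimes_\V-$ on the relevant modules: flatness since $\V'$ is flat over $\V$ (both are DVRs), and faithfulness since $\V'\otimes_\V-$ kills no nonzero module ($\V\to\V'$ being a local homomorphism of local rings).

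Parts \ref{lem-desc-coh-chgbase2} and \ref{lem-desc-coh-chgbase3} are then formal consequences. For \ref{lem-desc-coh-chgbase2}, the first arrow $\O_{\X'}\otimes_{f^{-1}\O_\X}f^{-1}\E\to\O_{\X'}(\hdag Z')_\Q\otimes_{f^{-1}\O_\X(\hdag Z)_\Q}f^{-1}\E$ is an isomorphism because the base-change map $f^{-1}\O_\X(\hdag Z)_\Q\to\O_{\X'}(\hdag Z')_\Q$ is flat (it is the localisation away from the same divisor, completed and tensored with $\V'$) and the second arrow is an isomorphism because $\D^\dag_{\X'/\S'}(\hdag Z')_\Q$ is flat over $f^{-1}\D^\dag_{\X/\S}(\hdag Z)_\Q$ by part \ref{lem-desc-coh-chgbase1} combined with part \ref{lem-desc-coh-chgbase0} — one reduces as in \ref{ntn-f*}.\ref{ntn-f*1} to the modulo-$\pi^{n+1}$ statements, which are the standard base-change isomorphisms for $\D^{(m)}$-modules, e.g. via \cite[2.3.5.2]{Be1}. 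Finally, \ref{lem-desc-coh-chgbase3} follows immediately: a morphism $\phi$ of $\O_\X$-modules becomes, after $f^*$, the map $\V'\otimes_\V\Gamma(\X,\phi)$ on global sections over an affine, and $\V'\otimes_\V-$ is faithfully exact, so $f^*(\phi)$ is an isomorphism iff $\phi$ is. I expect the only genuinely delicate point to be the commutation of the various completions and inductive limits with the finite base change $\V'\otimes_\V-$ in part \ref{lem-desc-coh-chgbase1}; this is where one truly uses finiteness of $\V'/\V$, and it should be handled by a Mittag-Leffler / term-by-term argument on the level-$m$ pieces exactly in the spirit of the proof of \ref{ntn-f*}.
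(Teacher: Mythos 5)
Your plan follows the paper's strategy quite closely: local-coordinate comparison of the derivation bases for (1); theorem of type $A$ together with the identification $\V'\otimes_\V D^\dag_{\X/\S}(\hdag Z)_\Q \riso D^\dag_{\X'/\S'}(\hdag Z')_\Q$ and faithful flatness of $\V\to\V'$ for (2); and freeness of $\V'$ over $\V$ for (4). These match the paper in substance and in the key cited facts.

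Your justification of (3), however, does not work as written. You assert that each arrow in
$\O_{\X'}\otimes_{f^{-1}\O_\X}f^{-1}\E \to \O_{\X'}(\hdag Z')_\Q\otimes_{f^{-1}\O_\X(\hdag Z)_\Q}f^{-1}\E \to \D^\dag_{\X'/\S'}(\hdag Z')_\Q\otimes_{f^{-1}\D^\dag_{\X/\S}(\hdag Z)_\Q}f^{-1}\E$
is an isomorphism ``because the base-change map is flat'', but flatness of a ring morphism $B\to B'$ does not by itself make a comparison map $A'\otimes_A M\to B'\otimes_B M$ an isomorphism — what is needed is the identification $A'\otimes_A B\riso B'$. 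Your fall-back reference to Proposition \ref{ntn-f*}.\ref{ntn-f*1} is also not applicable: that statement concerns $\O$-coherent objects, whereas (3) is claimed for an \emph{arbitrary} $\D^\dag_{\X/\S}(\hdag Z)_\Q$-module $\E$, so a Mittag--Leffler reduction mod $\pi^{n+1}$ is not available. The paper's argument for (3) is elementary and avoids this: by associativity of tensor products, $\O_{\X'}\otimes_{f^{-1}\O_\X}f^{-1}\E = (\O_{\X'}\otimes_{f^{-1}\O_\X}f^{-1}\O_\X(\hdag Z)_\Q)\otimes_{f^{-1}\O_\X(\hdag Z)_\Q}f^{-1}\E$, so one reduces each arrow to the case $\E = \D^\dag_{\X/\S}(\hdag Z)_\Q$ (resp.\ $\E=\O_\X(\hdag Z)_\Q$), where the claim is exactly the base-ring isomorphism $\V'\otimes_\V(-)\riso(-)'$ already produced in (1). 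With this repair, the rest of your plan agrees with the paper.
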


\begin{proof}
1) Let us prove the first assertion. 
Since this is local, we can suppose $\X$ is affine 
and $\X$ has local coordinates $t _1,\dots, t _d$ over $\S$.
They induce local coordinates $t ' _1,\dots, t ' _d$ of $\X' /\S'$.
We denote by 
$\underline{\partial} ^{[\underline{k}]}$
(resp. $\underline{\partial} ^{\prime [\underline{k}]}$) 
the corresponding $\O _{\X,\Q}$ basis
(resp. $\O _{\X',\Q}$ basis)
of $\D _{\X/\S, \Q}$
(resp. $\D _{\X'/\S', \Q}$).
We compute the canonical homomorphism
$\V ' \otimes _{\V} D ^\dag _{\X/\S} (\hdag Z) _{\Q}
\to D ^\dag _{\X'\to \X/\S '\to \S} (\hdag Z') _{\Q}$ is an isomorphism.
The composition morphism
$D ^\dag _{\X'/\S '} (\hdag Z') _{\Q} 
\to 
D ^\dag _{\X'\to \X/\S '\to \S} (\hdag Z') _{\Q} 
\liso 
\V ' \otimes _{\V} D ^\dag _{\X/\S} (\hdag Z) _{\Q}$
is the morphism sending
$\underline{\partial} ^{\prime [\underline{k}]}$
to
$1 \otimes \underline{\partial} ^{[\underline{k}]}$,
which is also an isomorphism.
Composing the inverse of this isomorphism of left $D ^\dag _{\X'/\S'} (\hdag Z') _{\Q}$-modules
with
the canonical homomorphism of rings
$D ^\dag _{\X/\S} (\hdag Z) _{\Q}
\to 
\V ' \otimes _{\V} D ^\dag _{\X/\S} (\hdag Z) _{\Q}$
given by 
$P \mapsto 1 \otimes P$,
we get the 
homomorphism
$D ^\dag _{\X/\S} (\hdag Z) _{\Q}
\to 
D ^\dag _{\X'/\S'} (\hdag Z') _{\Q}$
sending
$\underline{\partial} ^{\prime [\underline{k}]}$
to
$\underline{\partial} ^{[\underline{k}]}$.
We check easily this is a homomorphism of rings
 (similarly to \cite[2.2.2]{Beintro2}).

2)
 a) Since  $\E$ is a coherent $\D ^\dag _{\X/\S} (\hdag Z) _{\Q}$-module
and $f _Z ^! (\E)$ is a coherent $\D ^\dag _{\X'/\S} (\hdag Z') _{\Q}$,
then via the corresponding theorems of type $A$ the canonical morphisms
$\D ^\dag _{\X/\S} (\hdag Z) _{\Q}
\otimes _{D ^\dag _{\X/\S} (\hdag Z) _{\Q}}
\Gamma (\X, \E) \to \E$,
and
$\D ^\dag _{\X'/\S'} (\hdag Z') _{\Q}
\otimes _{D ^\dag _{\X'/\S'} (\hdag Z') _{\Q}}
\Gamma (\X', f _Z ^! (\E))
\to 
f _Z ^! (\E)$
are isomorphisms. 
This yields by associativity of tensor products that 
the canonical morphism
$D ^\dag _{\X'/\S '} (\hdag Z') _{\Q} 
\otimes _{D ^\dag _{\X/\S} (\hdag Z) _{\Q}}
\Gamma (\X, \E) 
\to 
\Gamma (\X', f _Z ^! (\E)) $
is an isomorphism. 

b) By associativity of tensor products, to check the first isomorphism,
we reduce to the case where $\E=\D ^\dag _{\X/\S} (\hdag Z) _{\Q}$, which has already been checked.
Since the homomorphism $\V \to \V'$ is faithfully flat (e.g. use \cite[I.3.5, Proposition 9 and III.5.2, Theorem 1]{bourbaki3-4}),
then 
$D ^\dag _{\X'/\S'} (\hdag Z') _{\Q}$ is a faithfully flat 
$D ^\dag _{\X/\S} (\hdag Z) _{\Q}$-module for both left or right structure.

3) By associativity of tensor products, 
to check the third statement we reduce to the case where
$\E=\D ^\dag _{\X/\S} (\hdag Z) _{\Q}$, which is easy.

4) Let us prove the forth assertion. Since $\V '$ is a finite faithfully flat $\V$-algebra, since $\V$ and $\V'$ are complete for the $p$-adic topology
then $\V'$ is a free $\V$-module of finite type (e.g. use \cite[II.3.2, Proposition 5]{bourbaki3-4}). 
Let $\U$ be an affine open subset of $\X$
and $\U' := f ^{-1} (\U)$. 
Then,
$\Gamma( \U',\O _{\X'})$ is 
also a free $\Gamma( \U,\O _{\X}) $-module of finite type, and we can conclude. 
\end{proof}

\begin{prop}
\label{desc-coh-chgbase}
With notation \ref{lem-desc-coh-chgbase}, let
 $\E$ be a   $\D ^\dag _{\X/\S} (\hdag Z) _{\Q}$-coherent module.
Then $\E$ is a coherent $\D ^\dag _{\X/\S, \Q}$-module if and only if 
$f _Z ^! (\E) $ is a coherent $\D ^\dag _{\X'/\S ', \Q}$-module.  
\end{prop}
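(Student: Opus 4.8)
The statement to prove is Proposition \ref{desc-coh-chgbase}: for a finite morphism $\V \to \V'$ of complete discrete valuation rings of mixed characteristic $(0,p)$, giving rise to $f\colon \X' \to \X$ with $Z' = f^{-1}(Z)$, and $\E$ a $\D ^\dag _{\X/\S} (\hdag Z) _{\Q}$-coherent module, one has $\E \in \coh (\D ^\dag _{\X/\S, \Q})$ if and only if $f _Z ^! (\E) \in \coh(\D ^\dag _{\X'/\S ', \Q})$. The plan is to reduce the question to a statement about global sections over an affine formal scheme, where coherence becomes a finiteness condition on modules, and then to invoke the faithful flatness established in Lemma \ref{lem-desc-coh-chgbase}.

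First I would note that the question is local on $\X$, so we may assume $\X = \Spf A$ is affine, and then $\X' = \Spf A'$ with $A' = \V' \otimes_\V A$ is affine as well. By Berthelot's theorem of type $A$ for coherent $\D ^\dag$-modules, a $\D ^\dag _{\X/\S} (\hdag Z) _{\Q}$-coherent module $\E$ is a coherent $\D ^\dag _{\X/\S, \Q}$-module if and only if $\Gamma(\X, \E)$ is a finitely generated module over $D ^\dag _{\X/\S, \Q} := \Gamma(\X, \D ^\dag _{\X/\S, \Q})$ (this uses that $\D ^\dag _{\X/\S, \Q}$ is coherent and that $\E$ is already $\D ^\dag _{\X/\S} (\hdag Z) _{\Q}$-coherent, hence quasi-coherent, so type $A$ applies). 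Similarly $f _Z ^! (\E)$ is $\D ^\dag _{\X'/\S ', \Q}$-coherent iff $\Gamma(\X', f _Z ^! (\E))$ is a finitely generated $D ^\dag _{\X'/\S ', \Q}$-module. By Lemma \ref{lem-desc-coh-chgbase}.\ref{lem-desc-coh-chgbase1} we have the identification $\Gamma(\X', f _Z ^! (\E)) \riso D ^\dag _{\X'/\S '} (\hdag Z') _{\Q} \otimes _{D ^\dag _{\X/\S} (\hdag Z) _{\Q}} \Gamma(\X, \E)$, and the same lemma asserts that $D ^\dag _{\X'/\S '} (\hdag Z') _{\Q}$ is faithfully flat over $D ^\dag _{\X/\S} (\hdag Z) _{\Q}$.

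The heart of the argument is then a pure algebra lemma about the ring extension $D ^\dag _{\X/\S, \Q} \to D ^\dag _{\X'/\S ', \Q}$. By Lemma \ref{lem-desc-coh-chgbase}.\ref{lem-desc-coh-chgbase0} this is a (faithfully flat, since $\V \to \V'$ is) extension of rings, and one has $D ^\dag _{\X'/\S ', \Q} \riso \V' \otimes_\V D ^\dag _{\X/\S, \Q}$ as left modules, with $\V'$ free of finite rank over $\V$ — hence $D ^\dag _{\X'/\S ', \Q}$ is free of finite rank as a left (and right) $D ^\dag _{\X/\S, \Q}$-module, in particular finite. Now set $M := \Gamma(\X, \E)$, a module over $R := D ^\dag _{\X/\S} (\hdag Z) _{\Q}$ which is finite over $R$ (equivalently, $\Gamma(\X,-)$ applied to the $\D ^\dag (\hdag Z)_\Q$-coherent $\E$), and write $R \subset R' := D ^\dag _{\X'/\S '} (\hdag Z') _{\Q}$, $S \subset S' := D ^\dag _{\X'/\S ', \Q}$, with $R' = S' \cap (\text{appropriate localization})$ compatibly with $S \subset R$. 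One must show: $M$ is finite over $S$ iff $S' \otimes_S M$ is finite over $S'$. The implication ($\Rightarrow$) is trivial (tensoring a finite presentation). For ($\Leftarrow$): pick finitely many elements of $S' \otimes_S M$ generating it over $S'$; clearing denominators they may be taken in the image of $M$, i.e. there is a finite free $S$-module $S^n$ with a map $S^n \to M$ such that $S' \otimes_S S^n \to S' \otimes_S M$ is surjective; by faithful flatness of $S'$ over $S$, the map $S^n \to M$ is surjective, so $M$ is finite over $S$. (Here I use that $S' \otimes_S -$ is exact and faithful on $S$-modules, which is Lemma \ref{lem-desc-coh-chgbase}.\ref{lem-desc-coh-chgbase1}; and I use that $S'$ is itself finite over $S$ so that finiteness over $S'$ of $S' \otimes_S M$ transfers to a finite set of generators coming from $M$ — more precisely, $S' \otimes_S M$ finite over $S'$ forces it to be finite over $S$ since $S'$ is finite over $S$, and then a generating family over $S$ can be lifted along the canonical surjection, or one argues directly with a surjection from $S^n$ as above).

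\textbf{Main obstacle.} I expect the genuinely delicate point to be the step ``a $\D ^\dag (\hdag Z)_\Q$-coherent module $\E$ is $\D ^\dag _\Q$-coherent iff its global sections are $D ^\dag _\Q$-finite'', i.e. correctly invoking Berthelot's theorem of type $A$ simultaneously for the two rings $\D ^\dag (\hdag Z)_\Q$ and $\D ^\dag _\Q$: one needs that $\E$, being $\D ^\dag (\hdag Z)_\Q$-coherent, is quasi-coherent and $\Gamma(\X,-)$-acyclic so that ``coherence over $\D ^\dag _\Q$'' can be tested on global sections (compare the argument with $\widehat{\B}^{(n)}$ and $\widehat{\B}^{(n)}\widehat{\otimes}\widehat{\D}^{(m)}$ in Lemma \ref{LetterLem2}). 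Once that translation is in place and combined with the faithful flatness and finiteness of $R' / R$ and $S'/S$ from Lemma \ref{lem-desc-coh-chgbase}, the remaining descent of finiteness along a faithfully flat, finite ring extension is routine. One should also record the compatibility $\Gamma(\X', f_Z^!(\E)) \riso R' \otimes_R \Gamma(\X,\E)$ as a left $S' = R' \cdot S$-module in a way compatible with the $S$-module structure on $\Gamma(\X,\E)$, which is exactly the content of Lemma \ref{lem-desc-coh-chgbase}.\ref{lem-desc-coh-chgbase1}.
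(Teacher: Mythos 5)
Your plan correctly reduces to the affine case and correctly carries out the descent of finiteness of global sections along the faithfully flat extension $D ^\dag _{\X/\S} (\hdag Z) _{\Q} \to D ^\dag _{\X'/\S'} (\hdag Z') _{\Q}$ (this matches parts 2a–2b of the paper's proof, which establishes precisely that $E := \Gamma(\X,\E)$ is $D^\dag_{\X/\S,\Q}$-finitely presented). The problem is the translation you explicitly flag as the ``main obstacle'' and then treat as routine: the equivalence ``a $\D ^\dag _{\X/\S}(\hdag Z)_\Q$-coherent module $\E$ is $\D^\dag_{\X/\S,\Q}$-coherent $\Leftrightarrow$ $\Gamma(\X,\E)$ is $D^\dag_{\X/\S,\Q}$-finite.'' The forward implication is Berthelot's theorem of type $A$, but the converse is \emph{not} a formal consequence of type $A$. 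Having $E$ finite over $D^\dag_{\X/\S,\Q}$ gives you a candidate coherent sheaf $\E^\Delta := \D^\dag_{\X/\S,\Q} \otimes_{D^\dag_{\X/\S,\Q}} E$ together with a canonical morphism $\E^\Delta \to \E$ of $\D^\dag_{\X/\S,\Q}$-modules inducing the identity on global sections; but since $\E$ is not \emph{a priori} $\D^\dag_{\X/\S,\Q}$-coherent, you cannot invoke theorem of type $A$ for $\D^\dag_{\X/\S,\Q}$ to conclude that this morphism is an isomorphism. Quasi-coherence and $\Gamma$-acyclicity of $\E$ (your proposed fix) do not by themselves let you test $\D^\dag_{\X/\S,\Q}$-coherence on global sections: $\E$ lives in $\indcoh(\D^\dag_{\X/\S}(\hdag Z)_\Q)$ but not, so far as one knows at this point, in $\indcoh(\D^\dag_{\X/\S,\Q})$, and the equivalence of Lemma \ref{eqcatindcoh} only applies to the latter.

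The paper closes exactly this gap with a further argument (step 2c of its proof). One compares $f^!(\E^\Delta)$ with $f_Z^!(\E)$: both are \emph{coherent} over $\D^\dag_{\X'/\S',\Q}$ (the first because $\E^\Delta$ is $\D^\dag_{\X/\S,\Q}$-coherent, the second by hypothesis), their global sections are both identified with $\V'\otimes_\V E$, and so by theorem of type $A$ for $\D^\dag_{\X'/\S',\Q}$ the morphism $f^!(\E^\Delta) \to f_Z^!(\E)$ is an isomorphism. One then descends to conclude that $\E^\Delta \to \E$ is an isomorphism via the faithful flatness statement of Lemma \ref{lem-desc-coh-chgbase}.\ref{lem-desc-coh-chgbase3}, which is a statement about $f^*$ at the level of $\O_{\X}$-modules, not rings. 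This base-change-then-descend detour is the genuine content that your proposal omits; without it, ``$\Gamma(\X,\E)$ is $D^\dag_{\X/\S,\Q}$-finite'' does not yield ``$\E$ is $\D^\dag_{\X/\S,\Q}$-coherent,'' and the same gap recurs when you try to identify coherence of $f_Z^!(\E)$ with finiteness of its global sections.
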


\begin{proof}
1) Using Lemma \ref{lem-desc-coh-chgbase}.\ref{lem-desc-coh-chgbase2},
we can check that the canonical morphism 
$$
\D ^\dag _{\X'/\S ',\Q} \otimes _{f ^{-1}\D ^\dag _{\X/\S,\Q}}
f ^{-1}\E 
\to 
\D ^\dag _{\X'/\S '} (\hdag Z') _{\Q} \otimes _{f ^{-1}\D ^\dag _{\X/\S} (\hdag Z) _{\Q}}
f ^{-1}\E 
=
f _Z ^! (\E)$$
is an isomorphism.
If $\E$ is also a coherent $\D ^\dag _{\X/\S, \Q}$-module, 
this implies that $f _Z ^! (\E)$ is 
$\D ^\dag _{\X'/\S ', \Q}$-coherent.

2) Conversely, suppose 
$f _Z ^! (\E)$ is a coherent $\D ^\dag _{\X'/\S ', \Q}$-module.  

a) Since the 
 $\D ^\dag _{\X/\S, \Q}$-coherence of $\E$ is local in $\X$, we can suppose $\X$ is affine.
Using theorem of type $A$, this yields that
$\Gamma (\X', f _Z ^! (\E)) $
is  
a $D ^\dag _{\X'/\S ', \Q}$-module of finite presentation.
Set 
$E: =\Gamma (\X, \E)$. 
Following \ref{lem-desc-coh-chgbase}.\ref{lem-desc-coh-chgbase1},
this yields that 
$D ^\dag _{\X'/\S '} (\hdag Z') _{\Q} 
\otimes _{D ^\dag _{\X/\S} (\hdag Z) _{\Q}}
E$
is a $D ^\dag _{\X'/\S',\Q}$-module of finite presentation.
Using again \ref{lem-desc-coh-chgbase}.\ref{lem-desc-coh-chgbase1}, 
we get both isomorphism
$\V ' \otimes _{\V} D ^\dag _{\X/\S} (\hdag Z) _{\Q}
\riso
D ^\dag _{\X'/\S '} (\hdag Z') _{\Q} $, 
and $\V ' \otimes _{\V} D ^\dag _{\X/\S,\Q}
\riso
D ^\dag _{\X'/\S ',\Q} $.
Hence, the canonical morphisms
$\V ' \otimes _\V E
\to 
D ^\dag _{\X'/\S ',\Q} 
\otimes _{D ^\dag _{\X/\S,\Q}}
E
\to 
D ^\dag _{\X'/\S '} (\hdag Z') _{\Q} 
\otimes _{D ^\dag _{\X/\S} (\hdag Z) _{\Q}}
E$
are isomorphisms. 
This yields that 
$D ^\dag _{\X'/\S ',\Q} 
\otimes _{D ^\dag _{\X/\S,\Q}}
E$
is  a $D ^\dag _{\X'/\S ', \Q}$-module of finite presentation.
By full faithfulness of 
$D ^\dag _{\X/\S,\Q}
\to 
D ^\dag _{\X'/\S ',\Q} $,
then 
$E$
is 
a $D ^\dag _{\X/\S, \Q}$-module of finite presentation.

c) Let $E ^{\Delta}: = \D ^\dag _{\X/\S, \Q} \otimes_{D ^\dag _{\X/\S, \Q}}E$.
By applying the functor $f ^* = \O _{\X '}
\otimes _{f ^{-1} \O _{\X}}
-$ to the morphism
$E ^{\Delta}
\to \E$, 
we get (up to canonical isomorphisms) 
the homomorphism of coherent 
$\D ^\dag _{\X'/\S', \Q}$-modules
$f ^{!} (E ^{\Delta}) 
\to 
f _Z ^! (\E)$.
Since 
$\Gamma (\X', f ^{!} (E ^{\Delta}) )
\riso \V ' \otimes _\V \Gamma (\X, E ^{\Delta}) 
\riso \V ' \otimes _\V E$
and 
$\Gamma (\X', f _Z ^! (\E))
\riso
\V ' \otimes _\V E$, 
then 
by applying the functor $\Gamma (\X', -)$
to 
$f ^{!} (E ^{\Delta}) 
\to 
f _Z ^! (\E)$, 
we get an isomorphism.
Since $\X '$ is affine, using the theorem of type $A$
satisfied by coherent 
$\D ^\dag _{\X'/\S', \Q}$-modules, 
this yields that 
the morphism $f ^{!} (E ^{\Delta}) 
\to 
f _Z ^! (\E)$
of coherent 
$\D ^\dag _{\X'/\S', \Q}$-modules
is an isomorphism.
Using \ref{lem-desc-coh-chgbase}.\ref{lem-desc-coh-chgbase3}, 
this implies that 
the morphism
$E ^{\Delta}
\to \E$
is an isomorphism.
\end{proof}

\begin{thm}
[Berthelot]
\label{coh-ss-div}
Let $\X $ be a smooth formal scheme over $\S $.
Let $Z$ be a divisor of $X$.
Then 
$\O _{\X} (\hdag Z) _\Q$ is a coherent 
$\D ^\dag _{\X, \Q}$-module.
\end{thm}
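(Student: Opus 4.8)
The plan is to reduce the general case to the case of a geometric strict normal crossing divisor, which in turn reduces to the strict smooth crossing case already settled in \ref{NCDgencoh}. The reduction proceeds in several layers, following Berthelot's strategy but using de Jong's alterations to replace desingularisation (which is unavailable when $k$ is imperfect).

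First I would note that coherence of $\O _{\X} (\hdag Z) _\Q$ as a $\D ^\dag _{\X, \Q}$-module is local on $\X$, so we may assume $\X$ affine, hence $X$ quasi-projective. By \ref{coro1limTouD} (and the equivalences of \ref{eqcat-limcoh}), it suffices to produce $\O _{\X} ^{(\bullet)}$ viewed in $\smash{\underrightarrow{LD}} ^{\mathrm{b}} _{\Q, \mathrm{coh}} (\smash{\widehat{\D}} _{\X } ^{(\bullet)})$, i.e. to check coherence at the level of inductive systems; but the cleanest formulation is simply: $\O _{\X} (\hdag Z) _\Q \in D ^{\mathrm{b}} _{\mathrm{coh}} (\D ^\dag _{\X, \Q})$. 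The first genuine step is the strict smooth crossing case: by \ref{NCDgencoh} and \ref{sscd} this is already known. Next, using \ref{lem-desc-coh-chgbase2}, \ref{sscd-gsncd}, \ref{desc-coh-chgbase} and the flat base change $\V \to \V'$, I would deduce the geometric strict normal crossing case: if $Z$ becomes a strict smooth crossing divisor after a finite extension $k'/k$, lift $k'/k$ to a finite extension $\V \to \V'$, form $\X ' := \X \times _\S \S'$, and observe that $f ^! _Z (\O _{\X} (\hdag Z) _\Q) \riso \O _{\X'} (\hdag Z') _\Q$ is $\D ^\dag _{\X'/\S', \Q}$-coherent by the strict smooth crossing case; then \ref{desc-coh-chgbase} gives the coherence over $\S$.

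For the general case, I would argue by induction on $\dim X$ (the case $\dim X = 0$ being trivial). Using de Jong's alterations theorem, there is a projective, generically finite, surjective morphism $g \colon X' \to X$ with $X'$ smooth and $g ^{-1}(Z) \cup (\text{ramification locus})$ contained in a divisor $Z'$ of $X'$ which is a geometric strict normal crossing divisor. After shrinking $\X$ and lifting, one obtains (perhaps only after a further alteration and base change of $\V$, cf.\ the "special descent of the base" mechanism alluded to in the introduction) a proper morphism $f \colon \fX' \to \fX$ of smooth formal schemes lifting $g$, with $T ' := f ^{-1}(T) \subseteq Z'$. Then $\O _{\fX'} (\hdag Z') _\Q$ is coherent by the gsNCD case; applying the pushforward $f _{T,+}$ — which preserves coherence by the stability results of the third and fourth chapters (using \ref{rel-dual-isom-proj-f} or its realizable extension, and \ref{stab-coh-f_+}) — yields a coherent object $f _{T,+} (\O _{\fX'} (\hdag T ') _\Q) \in D ^\mathrm{b} _{\mathrm{coh}} (\D ^\dag _{\fX} (\hdag T) _\Q)$. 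The final, and main, step is a descent argument: one must identify $\O _{\fX} (\hdag T) _\Q$ as a direct summand of (a cohomology sheaf of) $f _{T,+} f ^! _T (\O _{\fX} (\hdag T) _\Q) = f _{T,+} (\O _{\fX'} (\hdag T ') _\Q)[\text{shift}]$ via the adjunction morphisms of \ref{adj-morph}; the splitting comes from the trace map, invertible up to a nonzero scalar after restricting to the dense open where $g$ is finite étale, combined with the commutation of $\sp _+$ with duality (\ref{propspetdualsansfrob}) and the isomorphism $\sp _+ (\O _{]X[ _\fP}) \riso \mathcal H ^r \R \sp _* \underline{\Gamma} ^\dag _X (\O _{\fP _K})$ of \ref{coro-sp+jdagO}, which let one compare $\O _{\fX} (\hdag T) _\Q$ with $\sp _+$ of the structure sheaf and hence see it as a summand. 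Coherence of a direct summand of a coherent complex then follows from \ref{thick-subcat}, since $D ^\mathrm{b} _{\mathrm{coh}}$ is a thick subcategory.

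The hard part will be the last step: controlling the alteration $g$ so that $g ^{-1}(Z)$ together with the ramification locus sits inside a geometric strict normal crossing divisor (this is where de Jong's theorem is invoked, and where one may be forced to enlarge $\V$), and then making the descent splitting work without a Frobenius structure — one cannot simply invoke Frobenius descent, so the argument must rely on the explicit adjunction morphisms for closed immersions and projective morphisms built in the fourth and last-but-one subsections, together with the compatibility of $\sp _+$ with duality and local cohomology. I expect that once the geometric situation is arranged, the descent is a formal consequence of the relative duality isomorphism and the finite-étale-locus argument, exactly as in Berthelot's original proof in the perfect case, with \ref{desc-coh-chgbase} handling the passage between $k$ and its finite extensions.
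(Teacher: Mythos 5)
Your overall plan — strict smooth crossing, descent to the geometric strict normal crossing case via finite extensions of $\V$, then de Jong alterations and a trace--adjunction splitting to handle the general divisor — is exactly the strategy the paper (following Berthelot) uses. The cast of supporting results you invoke (\ref{NCDgencoh}, \ref{sscd-gsncd}, \ref{desc-coh-chgbase}, \ref{coro-sp+jdagO}, \ref{propspetdualsansfrob}, \ref{adj-morph}, \ref{thick-subcat}, \ref{lem-projff}) is also essentially the same.

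There is, however, a concrete gap in the middle of the argument: you assume that, ``after shrinking $\X$ and lifting,'' one obtains a proper morphism $f \colon \fX' \to \fX$ of smooth formal $\V$-schemes lifting the alteration $g \colon X' \to X$, i.e.\ a global lift of $X'$ to a smooth formal $\V$-scheme $\fX'$. This lift does not exist in general: $X'$ is proper (or at least non-affine, since it is a projective alteration of $X$), and smooth proper $k$-varieties need not lift to smooth formal $\V$-schemes. Shrinking $\X$ to an affine open does not help because $g^{-1}(\text{affine})$ is still non-affine. Consequently the object $\O_{\fX'}(\hdag T')_\Q$ that you push forward is not actually available. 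The paper circumvents this by embedding $X'$ in $\bbP^n_X$, setting $\fP := \widehat{\bbP}^n_\X$ with projection $f \colon \fP \to \X$, and replacing your $\O_{\fX'}(\hdag T')_\Q$ by $\R\underline{\Gamma}^\dag_{X'}\O_{\fP,\Q}[n]$ on $\fP$ (equal by Berthelot--Kashiwara, where a lift exists, to $u_+(\O_{\X',\Q})$). The adjunction maps in and out of $f_+(\R\underline{\Gamma}^\dag_{X'}\O_{\fP,\Q}[n])$ are then built purely on $\fP$, using \ref{adj-morph}, \ref{dualrelative}, \ref{propspetdualsansfrob} and \ref{coro-sp+jdagO}; the splitting is verified as an isomorphism using \ref{lem-projff}.\ref{lem-projff-it3} and the generically étale behaviour via \ref{coro-trace-upre-BK}. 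Only at the very end, once the coherence question has been localised to an affine open of $\fP$, does one lift $X'$ to a smooth formal closed subscheme $u \colon \X' \hookrightarrow \fP$ and identify $\R\underline{\Gamma}^\dag_{X'}\O_{\fP,\Q}[n]$ with $u_+(\O_{\X',\Q})$, bringing the SSCD coherence of $\O_{\X'}(\hdag \phi^{-1}(Z))_\Q$ to bear. Your proposal needs this detour through $\widehat{\bbP}^n_\X$ and local cohomology $\R\underline{\Gamma}^\dag_{X'}$ rather than a hypothetical global $\fX'$.

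A minor additional caution: at the end you should land in $D^{\mathrm{b}}_{\mathrm{coh}}(\D^\dag_{\fX,\Q})$, not merely in $D^{\mathrm{b}}_{\mathrm{coh}}(\D^\dag_\fX(\hdag T)_\Q)$ (the latter is automatic and says nothing). The passage between these two coherences is the content of \ref{oub-div-opcoh} and \ref{coro1limTouD}, and the SSCD input ensures the pushed-forward object carries a $\D^\dag_{\cdot,\Q}$-coherent structure compatible with the overconvergent one; make this step explicit.
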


\begin{proof}
We can adapt the proof of Berthelot of \cite{Becohdiff} as follows. 
0) Using \cite[8.8.2, 8.10.5]{EGAIV3}
and \cite[17.7.8]{EGAIV4}, it follows from Theorem \cite[4.1]{dejong} 
(see also the construction and \cite[4.5]{dejong} to justify the fact that the divisor 
is a geometric strict normal crossing divisor
and not only a  strict normal crossing divisor),
that there exists a finite extension $k'$ of $k$ satisfying the following property : 
for any irreducible component $\widetilde{X}$ of $X \times _k l$, setting $\widetilde{Z}:= \widetilde{X} \cap (Z \times _k l)$,
there exist a smooth integral $l$-variety $X'$, a projective morphism of $l$-varieties 
$\phi \colon X ' \to \widetilde{X}$ which is generically finite and étale 
such that  $X ' $ is quasi-projective 
and $Z':= \phi  ^{-1} (\widetilde{Z}) $ is a geometric strict normal crossing divisor of $X'/\Spec l$ 
(see definition \ref{sscd}).
Increasing $l$ is necessary, thanks to \ref{sscd-gsncd},
 we can suppose that, locally in $X'$,  
$Z'$ is a strict smooth crossing divisor of $X'/\Spec l$ (see definition \ref{sscd}).

1) Using Lemma \ref{desc-coh-chgbase},
we can suppose $k= l$ and $X$ integral.

2) i) There exists a closed immersion of the form
$u _0 \colon X ' \hookrightarrow \bbP ^n _{X}$
whose composition with the projection 
$\bbP ^n _{X} \to X$ is $\phi$.
Let 
$\fP := \widehat{\bbP} ^n _{\X}$, 
$f \colon \fP \to \X$ be the projection. 
Since $f$ is proper and smooth, we have the adjoint morphism
$ f _{+}  \circ f ^{!} (\O _{\X,\Q})
 \to 
\O _{\X,\Q}$
in 
$D ^{\mathrm{b}} _{\mathrm{coh}}( \smash{\D} ^\dag _{\X ,\Q} )$ (see \ref{adj-morph}).
Following \ref{extriangleloc} and \ref{coh-smoothsubsch}, 
we have in $D ^{\mathrm{b}} _{\mathrm{coh}}( \smash{\D} ^\dag _{\fP /\S ,\Q} )$
the morphism
$\R \underline{\Gamma} ^{\dag} _{X '}  (\O _{\fP,\Q})
\to 
\O _{\fP,\Q}$.
Since 
$f ^{!} (\O _{\X,\Q}) 
\riso 
\O _{\fP,\Q} [n]$, then we get 
the morphism
in 
$D ^{\mathrm{b}} _{\mathrm{coh}}( \smash{\D} ^\dag _{\X ,\Q} )$
\begin{equation}
\label{cstrcf+GammaO->O}
 f _{+} ( \R \underline{\Gamma} ^{\dag} _{X '} \O _{\fP,\Q} [n] )
 \to 
\O _{\X,\Q}.
\end{equation}

ii) In this step, 
we construct 
the morphism
$\O _{\X,\Q}
\to 
f _{+} ( \R \underline{\Gamma} ^{\dag} _{X '} \O _{\fP,\Q} [n])$ as follows:
 we have 
\begin{equation}
\label{RGammadual}
 \DD  (  \R \underline{\Gamma} ^{\dag} _{X '}  \O _{\fP,\Q} [n])
\underset{\ref{coro-sp+jdagO}}{\riso}
\DD  (  \sp _+ (j ^\dag  \smash{\O} _{]X '[ _{\fP}}))
\underset{\ref{propspetdualsansfrob}}{\riso}
\sp _+ ((j ^\dag  \smash{\O} _{]X '[ _{\fP}}) ^\vee)
\riso 
\sp _+ (j ^\dag  \smash{\O} _{]X '[ _{\fP}})
\underset{\ref{coro-sp+jdagO}}{\riso}
 \R \underline{\Gamma} ^{\dag} _{X '}  \O _{\fP,\Q} [n].
\end{equation}
This yields
$$\O _{\X,\Q} 
\underset{\ref{dualisoscvdag}}{\riso}
\DD (\O _{\X,\Q})
\underset{\ref{cstrcf+GammaO->O}}{\longrightarrow} 
\DD f _{+}  ( \R \underline{\Gamma} ^{\dag} _{X '} \O _{\fP,\Q} [n])
\underset{\ref{dualrelative}}{\riso} 
f _{+} \DD  (  \R \underline{\Gamma} ^{\dag} _{X '}  \O _{\fP,\Q} [n])
\underset{\ref{RGammadual}}{\riso} 
f _{+} ( \R \underline{\Gamma} ^{\dag} _{X '} \O _{\fP,\Q} [n]).$$

iii) The composite morphism
$\O _{\X,\Q}
\to 
f _{+} ( \R \underline{\Gamma} ^{\dag} _{X '} \O _{\fP,\Q} [n])
 \to 
\O _{\X,\Q}$
in 
$D ^{\mathrm{b}} _{\mathrm{coh}}( \smash{\D} ^\dag _{\X ,\Q} )$
is an isomorphism.
Indeed, using the third part of Proposition \ref{lem-projff}.\ref{lem-projff-it3}, since this composition is a morphism of 
the abelian category $\mathrm{MIC} ^{\dag\dag} (\X /K) $,
we reduce to check that  its restriction to an open dense subset is an isomorphism.
Hence, we can suppose that 
$\phi  \colon X ' \to X$ 
is the special fiber of a finite and étale 
morphism $g \colon \X ' \to \X$.
By using \ref{coro-trace-upre-BK}, 
we get that the morphism \ref{cstrcf+GammaO->O} 
corresponds to  the trace map 
$g _+ g ^! (\O _{\X,\Q}) 
\to 
\O _{\X,\Q}$
and 
$\O _{\X,\Q}
\to 
g _+ g ^! (\O _{\X,\Q}) $ is induced by duality, i.e.
is the adjunction morphism of $g _! = g_+$ and $g ^! = g ^+$.
This is well known that the composition is an isomorphism.

3) 
Following the step 2), 
$\O _{\X,\Q}$ 
is a direct summand of
$f _{+} ( \R \underline{\Gamma} ^{\dag} _{X '} \O _{\fP,\Q} [n])$
in the category 
$D ^{\mathrm{b}} _{\mathrm{coh}}( \smash{\D} ^\dag _{\X /\S ,\Q} )$.
This yields that 
$\O _{\X} (\hdag Z) _\Q$ 
is a direct summand of 
$(\hdag Z) f _{+} ( \R \underline{\Gamma} ^{\dag} _{X '} \O _{\fP,\Q} [n])$
in the category 
$D ^{\mathrm{b}} _{\mathrm{coh}}( \smash{\D} ^\dag _{\X /\S } (\hdag Z) _{\Q} )$.
Using \ref{surcoh2.1.4-cor},
we get in 
$D ^{\mathrm{b}} _{\mathrm{coh}}( \smash{\D} ^\dag _{\X /\S } (\hdag Z) _{\Q} )$
the following isomorphism
$$(\hdag Z) f _{+} ( \R \underline{\Gamma} ^{\dag} _{X '} \O _{\fP,\Q} [n]) \riso 
f _{Z,+} \circ (\hdag f ^{-1} (Z))  ( \R \underline{\Gamma} ^{\dag} _{X '} \O _{\fP,\Q} [n]).$$
Hence,  thanks to \ref{rema-fct-qcoh2coh},
it is sufficient to
check that this latter object is 
$ \smash{\D} ^\dag _{\X /\S ,\Q} $-coherent.
Since $f$ is proper and since 
$(\hdag f ^{-1} (Z))  ( \R \underline{\Gamma} ^{\dag} _{X '} \O _{\fP,\Q} [n])$ is 
already known to be $ \smash{\D} ^\dag _{\fP /\S} (\hdag f ^{-1} (Z)) _{\Q} $-coherent,
using the remark \ref{oub-div-opcoh}.1,
we reduce to check that 
$(\hdag f ^{-1} (Z))  ( \R \underline{\Gamma} ^{\dag} _{X '} \O _{\fP,\Q} [n])$ is 
$ \smash{\D} ^\dag _{\fP /\S ,\Q} $-coherent.
Since this is local in $\fP $, we can suppose $\fP $ affine. 
Hence, there exists  a morphism 
$u \colon \X '  \to \fP  $
of smooth formal schemes over $\S $
 which is $u _0 \colon X '  \to P $ modulo $\pi$.
 We get 
\begin{gather}
\notag
(\hdag f ^{-1} (Z)) (\R \underline{\Gamma} ^\dag _{X'} \O _{\fP,\Q}   [n])
\underset{\ref{coro-trace-upre}}{\riso} 
(\hdag f ^{-1} (Z)) (u _+ (\O _{\X',\Q}))
\underset{\ref{surcoh2.1.4-cor}}{\riso} 
 u _{f ^{-1} (Z),+} (\O _{\X'}(\hdag \phi ^{-1} (Z)) _{\Q}).
\end{gather}
Since $\phi  ^{-1} (Z )$ is a strict smooth crossing divisor of $X '$ which can in fact be described locally as in  \ref{NCDgencoh}, then
$\O _{\X'}(\hdag \phi ^{-1} (Z)) _{\Q})$ is 
$ \smash{\D} ^\dag _{\X' /\S ,\Q} $-coherent.
Hence, 
using the remark \ref{oub-div-opcoh},
$ u _{f ^{-1} (Z),+} (\O _{\X'}(\hdag \phi ^{-1} (Z)) _{\Q})
\riso
 u _{+} (\O _{\X'}(\hdag \phi ^{-1} (Z)) _{\Q})$
 is 
$ \smash{\D} ^\dag _{\fP/\S ,\Q} $-coherent. 
\end{proof}

\begin{coro}
\label{coh-Bbullet}
With notation \ref{coh-ss-div}, 
we have
$\smash{\widetilde{\B}} _{\X} ^{(\bullet)} (Z ) \in 
\underrightarrow{LM}  _{\Q, \mathrm{coh}} (\smash{\widehat{\D}} _{\X } ^{(\bullet)})
\cap 
\underrightarrow{LM}  _{\Q, \mathrm{coh}} (\smash{\widetilde{\D}} _{\X } ^{(\bullet)} (Z))$.
\end{coro}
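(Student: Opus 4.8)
The plan is to reduce the assertion about the inductive system $\smash{\widetilde{\B}} _{\X} ^{(\bullet)} (Z)$ to the coherence of $\O _{\X} (\hdag Z) _\Q$ just established in Theorem~\ref{coh-ss-div}. Recall from the definitions in Chapter~\ref{ntn-tildeD(Z)} that $\smash{\widetilde{\D}} _{\X /\S } ^{(m)} (Z) = \smash{\widetilde{\B}} _{\X} ^{(m)} (Z) \smash{\widehat{\otimes}} _{\O _{\X}} \smash{\widehat{\D}} _{\X /\S } ^{(m)}$, so that $\smash{\widetilde{\B}} _{\X} ^{(\bullet)} (Z)$ may be regarded both as an object of $\underrightarrow{LM}  _{\Q} (\smash{\widehat{\D}} _{\X } ^{(\bullet)})$ and of $\underrightarrow{LM}  _{\Q} (\smash{\widetilde{\D}} _{\X } ^{(\bullet)} (Z))$ (via the canonical structure of $\smash{\widetilde{\D}} _{\X /\S } ^{(m)} (Z)$-module coming from $\smash{\widetilde{\D}} _{\X /\S } ^{(m)} (Z) \to \smash{\widetilde{\B}} _{\X} ^{(m)} (Z)$). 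Since $\underrightarrow{\lim} \smash{\widetilde{\B}} _{\X} ^{(\bullet)} (Z) \riso \O _{\X} (\hdag Z) _\Q$ in $M ( \smash{\D} ^\dag _{\X } (\hdag Z) _{\Q} )$ (and likewise $\riso \O _{\X} (\hdag Z) _\Q$ viewed as a $\smash{\D} ^\dag _{\X , \Q}$-module), Theorem~\ref{coh-ss-div} gives that this limit lands in $\mathrm{Coh} ( \smash{\D} ^\dag _{\X , \Q} )$, hence also in $\mathrm{Coh} ( \smash{\D} ^\dag _{\X } (\hdag Z) _{\Q} )$.

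First I would handle the statement for the sheaves $\smash{\widehat{\D}} _{\X } ^{(\bullet)}$. Using the equivalence of categories \ref{M-eq-coh-lim} (in the case where the divisor is empty), which sends $\smash{\underrightarrow{LM}}  _{\Q, \mathrm{coh}} (\smash{\widehat{\D}} _{\X } ^{(\bullet)})$ onto $\mathrm{Coh} ( \smash{\D} ^\dag _{\X , \Q} )$, it suffices to exhibit an object $\smash{\widetilde{\B}} _{\X} ^{\prime (\bullet)}$ of $\underrightarrow{LM}  _{\Q} (\smash{\widehat{\D}} _{\X } ^{(\bullet)})$ isomorphic in $\underrightarrow{LM}  _{\Q} (\smash{\widehat{\D}} _{\X } ^{(\bullet)})$ to $\smash{\widetilde{\B}} _{\X} ^{(\bullet)} (Z)$, whose level-$m$ terms are coherent $\smash{\widehat{\D}} _{\X } ^{(m)}$-modules with the transition-isomorphism condition \ref{Beintro-4.2.3M}. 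This is precisely the content of the proof of Proposition~\ref{445Be1} (see also Theorem~\ref{thm-eqcat-cvisoc}): the local data exhibited there, namely coherent $\widehat{\B} ^{(n _m)} _{\X} (Z) \widehat{\otimes} \widehat{\D} ^{(m)} _{\X /\S }$-modules which are $\widehat{\B} ^{(n _m)} _{\X} (Z)$-coherent and whose inductive limit recovers $\O _{\X} (\hdag Z) _\Q$, give exactly such a system after the standard reindexing $\lambda _0$. Alternatively, since by \ref{coh-ss-div} we know $\O _{\X} (\hdag Z) _\Q$ is $\smash{\D} ^\dag _{\X , \Q}$-coherent, Berthelot's criterion (the definition \ref{defi-LDQ0coh} together with \ref{eqcat-limcoh}) produces such a system automatically; the membership $\smash{\widetilde{\B}} _{\X} ^{(\bullet)} (Z) \in \underrightarrow{LM}  _{\Q, \mathrm{coh}} (\smash{\widehat{\D}} _{\X } ^{(\bullet)})$ then follows, and I would invoke \ref{eqcatLD=DSM-fonct-coh}, \ref{eqcatcoh}, \ref{eqcat-limcoh} to pass freely between the module and complex formulations.

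Next, for $\smash{\widetilde{\D}} _{\X } ^{(\bullet)} (Z)$ the same reindexed system $\smash{\widetilde{\B}} _{\X} ^{\prime (\bullet)}$ works verbatim: each term, being a coherent $\widehat{\B} ^{(n _m)} _{\X} (Z)$-module endowed with a compatible $\smash{\widetilde{\D}} _{\X /\S } ^{(m)} (Z)$-module structure, is automatically $\smash{\widetilde{\D}} _{\X /\S } ^{(m)} (Z)$-coherent (it is generated over $\O_{\X}$, hence a fortiori over $\smash{\widetilde{\D}} _{\X /\S } ^{(m)} (Z)$), and the transition condition \ref{Beintro-4.2.3M} relative to $\smash{\widetilde{\D}} _{\X /\S } ^{(\bullet)} (Z)$ is obtained from the one for $\smash{\widehat{\D}} _{\X } ^{(\bullet)}$ by applying $\smash{\widetilde{\D}} _{\X /\S } ^{(m')} (Z) \otimes ^\L _{\smash{\widetilde{\D}} _{\X /\S } ^{(m)} (Z)} -$ and using the base-change isomorphism $\smash{\widetilde{\D}} _{\X /\S } ^{(m')} (Z) \otimes ^\L _{\smash{\widetilde{\B}} _{\X} ^{(m)}(Z)} \smash{\widetilde{\B}} _{\X} ^{(m)}(Z) \riso \smash{\widetilde{\D}} _{\X /\S } ^{(m')} (Z)$. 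I expect the only mildly delicate point to be checking that the \emph{same} reindexing map $\lambda _0$ serves both, i.e. that no competition arises between the ``divisor level'' $n _m$ coming from \ref{445Be1} and the ``operator level'' $m$; this is resolved exactly as in the construction before Proposition~\ref{445Be1}, by taking $\lambda _0$ dominating the sequence $(n _m)$, so there is no genuine obstacle. Finally I would remark that, by \ref{eqcat-limcoh} applied for the divisor $Z$, the coherence up to lim-ind-isogeny of $\smash{\widetilde{\B}} _{\X} ^{(\bullet)} (Z)$ as a $\smash{\widetilde{\D}} _{\X } ^{(\bullet)} (Z)$-module is equivalent to the $\smash{\D} ^\dag _{\X } (\hdag Z) _{\Q}$-coherence of its limit, which is $\O _{\X} (\hdag Z) _\Q$ — again covered by \ref{coh-ss-div} — so the two assertions of the corollary are in fact a single application of the theorem through the two equivalences \ref{M-eq-coh-lim}.
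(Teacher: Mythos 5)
There is a genuine gap in your argument, precisely at the step that the paper's proof handles with Corollary~\ref{coro1limTouD}. Your reduction to Theorem~\ref{coh-ss-div} and your observation that $\smash{\widetilde{\B}} _{\X} ^{(\bullet)} (Z ) \in \underrightarrow{LM}  _{\Q, \mathrm{coh}} (\smash{\widetilde{\D}} _{\X } ^{(\bullet)} (Z))$ is the easy half are both fine, and match what the paper does. But neither of your two arguments for the membership in $\underrightarrow{LM}  _{\Q, \mathrm{coh}} (\smash{\widehat{\D}} _{\X } ^{(\bullet)})$ is valid. Your first argument invokes the system produced in Proposition~\ref{445Be1}, but that system consists of modules which are $\widehat{\B} ^{(n _m)} _{\X} (Z) _\Q$-coherent (or $\widehat{\B} ^{(n _m)} _{\X} (Z) \widehat{\otimes} \widehat{\D} ^{(m)} _{\X /\S }$-coherent), not $\smash{\widehat{\D}} ^{(m)} _{\X ,\Q}$-coherent; showing that $\widehat{\B} ^{(m)} _{\X} (Z) _\Q$ itself is $\smash{\widehat{\D}} ^{(m)} _{\X ,\Q}$-coherent (after reindexing) is exactly the nontrivial content of the corollary and cannot be read off from the $\widehat{\B}$-coherence. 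Your alternative — that coherence of the limit $\O _{\X} (\hdag Z) _\Q$ plus the equivalence of~\ref{eqcat-limcoh} "automatically" yields membership — is the more serious gap: the essential surjectivity of $\underrightarrow{\lim}$ only tells you that \emph{some} object of $\underrightarrow{LM}  _{\Q, \mathrm{coh}} (\smash{\widehat{\D}} _{\X } ^{(\bullet)})$ limits to $\O _{\X} (\hdag Z) _\Q$, and the full faithfulness of $\underrightarrow{\lim}$ that would let you identify that object with $\smash{\widetilde{\B}} _{\X} ^{(\bullet)} (Z )$ holds only on the coherent subcategory, which is precisely what you are trying to show $\smash{\widetilde{\B}} _{\X} ^{(\bullet)} (Z )$ belongs to. The argument is circular.

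The missing ingredient is the coherence stability criterion of Section~2.4, specifically Corollary~\ref{coro1limTouD} (deduced from Theorem~\ref{limTouD}): if $\E ^{\prime (\bullet)} \in \underrightarrow{LD} ^{\mathrm{b}} _{\Q, \mathrm{coh}} (\smash{\widetilde{\D}} _{\X } ^{(\bullet)} (T'))$ with $T \subset T'$ and $\underrightarrow{\lim} \E ^{\prime (\bullet)}$ is $\smash{\D} ^\dag _{\X } (\hdag T) _{\Q}$-coherent, then $\E ^{\prime (\bullet)} \in \underrightarrow{LD} ^{\mathrm{b}} _{\Q, \mathrm{coh}} (\smash{\widetilde{\D}} _{\X } ^{(\bullet)}(T))$. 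Applying this with $T = \emptyset$, $T' = Z$ and $\E ^{\prime (\bullet)} = \smash{\widetilde{\B}} _{\X} ^{(\bullet)} (Z )$ — for which the hypothesis over $T'$ is the easy half you already have, and the coherence of the limit is Theorem~\ref{coh-ss-div} — gives the result directly. This is the argument the paper uses; your proposal needs to be replaced by an appeal to that criterion rather than to the raw equivalences of categories.
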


\begin{proof}
We already know that
$\smash{\widetilde{\B}} _{\X} ^{(\bullet)} (Z ) 
\in \underrightarrow{LM}  _{\Q, \mathrm{coh}} (\smash{\widetilde{\D}} _{\X } ^{(\bullet)} (Z))$.
Following \ref{coh-ss-div}, 
$\O _{\X} (\hdag Z) _{\Q}=
\underrightarrow{\lim}
\smash{\widetilde{\B}} _{\X} ^{(\bullet)} (Z ) $
is a coherent
$\smash{\D} ^\dag _{\X,\Q}$-module.
Using  \ref{coro1limTouD}, we can conclude.
\end{proof}

We will need later the following proposition.

\begin{prop}
\label{ind-desc-coh-chgbase}
With notation \ref{desc-coh-chgbase},
let 
$\E ^{(\bullet)}
\in 
\smash{\underrightarrow{LD}} ^{\mathrm{b}} _{\Q,\mathrm{coh}} ( \smash{\widehat{\D}} _{\X} ^{(\bullet)})$.
Let 
$ \E ^{\prime (\bullet)}
 :=
  \V' \otimes 
_{\V}  \E ^{(\bullet)}$.
If $(\hdag Z ' )  ( \E ^{\prime (\bullet)})
\in 
\smash{\underrightarrow{LD}} ^{\mathrm{b}} _{\Q,\mathrm{coh}} ( \smash{\widehat{\D}} _{\X '} ^{(\bullet)})$,
then
$(\hdag Z )  ( \E ^{(\bullet)})
\in 
\smash{\underrightarrow{LD}} ^{\mathrm{b}} _{\Q,\mathrm{coh}} ( \smash{\widehat{\D}} _{\X} ^{(\bullet)})$.
\end{prop}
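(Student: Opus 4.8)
The statement is a descent statement along the finite faithfully flat base change $\V \to \V'$, parallel to Proposition \ref{desc-coh-chgbase} but stated at the level of inductive systems (the $\smash{\underrightarrow{LD}}$-categories) rather than for single coherent modules. The plan is to reduce it to a combination of the coherence stability criterion \ref{coro1limTouD} and the already-proved single-module descent \ref{desc-coh-chgbase}, keeping track of the compatibility between the functor $\underrightarrow{\lim}$ and base change along $f \colon \X' \to \X$.

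First I would set $\E := \underrightarrow{\lim} \E ^{(\bullet)} \in D ^{\mathrm{b}} _{\mathrm{coh}} (\smash{\D} ^\dag _{\X /\S ,\Q})$, and observe that, since $\underrightarrow{\lim}$ commutes with base change (i.e.\ $\underrightarrow{\lim} \E ^{\prime (\bullet)} \riso \V' \otimes _{\V} \E$, using Lemma \ref{lem-desc-coh-chgbase}.\ref{lem-desc-coh-chgbase1} applied termwise and the fact that $\V' \otimes_\V -$ commutes with filtrant inductive limits), we have $\underrightarrow{\lim} \E ^{\prime (\bullet)} \riso f ^! (\E)$ where $f ^!$ is the base change extraordinary inverse image of \ref{lem-desc-coh-chgbase}. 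Next I would use that $(\hdag Z)$ commutes with $\underrightarrow{\lim}$ and with $f ^!$: concretely, $\underrightarrow{\lim} \bigl( (\hdag Z)(\E ^{(\bullet)}) \bigr) \riso (\hdag Z)(\E)$ and similarly on $\X'$, while by (an inductive-system analogue of) \ref{oub-div-opcohb)} the functor $(\hdag Z')$ commutes with $f ^{!(\bullet)}$. The hypothesis that $(\hdag Z')(\E ^{\prime (\bullet)}) \in \smash{\underrightarrow{LD}} ^{\mathrm{b}} _{\Q,\mathrm{coh}}(\smash{\widehat{\D}} _{\X'} ^{(\bullet)})$ means precisely, via the equivalence \ref{eqcat-limcoh} and the criterion \ref{coro1limTouD}, that $\underrightarrow{\lim} \bigl( (\hdag Z')(\E ^{\prime (\bullet)}) \bigr) \riso f ^! \bigl( (\hdag Z)(\E) \bigr)$ lies in $D ^{\mathrm{b}} _{\mathrm{coh}}(\smash{\D} ^\dag _{\X'/\S',\Q})$.

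Then I would apply \ref{desc-coh-chgbase} (more precisely, its proof, which works at the level of complexes since $\smash{\D} ^\dag _{\X/\S} (\hdag Z)_{\Q}$ has finite cohomological dimension by \ref{Noot-Huyghe-finitehomoldim} and \ref{huyghe_finitude_coho}): the complex $(\hdag Z)(\E) \in D ^{\mathrm{b}} _{\mathrm{coh}}(\smash{\D} ^\dag _{\X/\S}(\hdag Z)_{\Q})$ has the property that $f ^!$ of it is $\smash{\D} ^\dag _{\X'/\S',\Q}$-coherent, hence by the (complex version of) \ref{desc-coh-chgbase} it is itself $\smash{\D} ^\dag _{\X/\S,\Q}$-coherent, i.e.\ $(\hdag Z)(\E) \in D ^{\mathrm{b}} _{\mathrm{coh}}(\smash{\D} ^\dag _{\X/\S,\Q})$. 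Finally, I would invoke \ref{coro1limTouD} once more (with $T = \emptyset \subset Z = T'$): since $\underrightarrow{\lim}$ of the coherent object $(\hdag Z)(\E ^{(\bullet)}) \in \smash{\underrightarrow{LD}} ^{\mathrm{b}} _{\Q,\mathrm{coh}}(\smash{\widetilde{\D}} _{\X} ^{(\bullet)}(Z))$ equals $(\hdag Z)(\E)$, which we now know is $\smash{\D} ^\dag _{\X/\S,\Q}$-coherent, we conclude $(\hdag Z)(\E ^{(\bullet)}) \in \smash{\underrightarrow{LD}} ^{\mathrm{b}} _{\Q,\mathrm{coh}}(\smash{\widehat{\D}} _{\X} ^{(\bullet)})$.

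\textbf{Main obstacle.} The delicate point is not any single step but the bookkeeping of the various compatibility isomorphisms: one must be careful that ``base change'' here means the extraordinary inverse image $f ^{!}$ relative to $\S' \to \S$ as in \ref{lem-desc-coh-chgbase}, that it genuinely commutes with $(\hdag Z)$ at the level of the $\smash{\underrightarrow{LD}}$-categories (which requires the inductive-system version of \ref{oub-div-opcohb)}, valid since all objects involved are bounded and quasi-coherent), and that $\underrightarrow{\lim}$ intertwines the $\X$-side and $\X'$-side base changes — this last point uses that $\V'$ is a finite free $\V$-module (Lemma \ref{lem-desc-coh-chgbase}, final paragraph of its proof), so that $\V' \otimes_\V -$ is exact and commutes with all the relevant limits and localizations. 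Once these identifications are in place, the argument is a formal concatenation of \ref{coro1limTouD} and \ref{desc-coh-chgbase}.
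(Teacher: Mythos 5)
Your proposal is correct and follows the same route as the paper, whose entire proof is to cite \ref{coro1limTouD} and \ref{desc-coh-chgbase}; you have merely spelled out the intermediate compatibility isomorphisms ($\underrightarrow{\lim}$ commuting with base change and with $(\hdag Z)$) that the paper leaves implicit. One minor simplification: the passage from \ref{desc-coh-chgbase} (stated for modules) to complexes does not require finite cohomological dimension — since $\V \to \V'$ is flat, the base change functor is exact and commutes with $\cH ^n$, so the complex version follows at once by applying \ref{desc-coh-chgbase} to each cohomology sheaf.
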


\begin{proof}
Using  \ref{coro1limTouD}, this is a consequence of 
Lemma \ref{desc-coh-chgbase}.
\end{proof}

\subsection{Miscellaneous on base changes}

The purpose of this subsection is to introduce the notion of perfectifications
and special morphisms
(see Definition \ref{def-perfectification}).
For completeness, we add  Proposition \ref{desc-coh-chgbase2}, 
which is useless in this paper but  which 
extends somehow Lemma \ref{desc-coh-chgbase}.

\begin{empt}
\label{ntn-desc-rad}
Let $k \to l$ be an extension of the field $k$.
Following \cite[IX, App, Corollary of Theorem 1]{Bourbaki-AC89} and its terminology,
there exists a ``gonflement'' $\V \to \W ^\heartsuit$ lifting $k \to l$.
Following
\cite[IX, App, Proposition 2 and its Corollary]{Bourbaki-AC89},
$\W ^\heartsuit$ is a faithfully flat $\V$-algebra, $\W ^\heartsuit$ is local, noetherian, regular of dimension $1$ and
its maximal ideal is generated by a uniformizer of $\V$.
Let $\W$ be the $p$-adic completion of $\W ^\heartsuit$. 
Then $\W$ is faithfully flat $\V$-flat, 
is local, noetherian, regular of dimension $1$ and its maximal ideal is generated 
by a uniformizer of $\V$ (e.g. see \cite[VIII.5, Proposition 1 and its Corollary]{Bourbaki-AC89}). 
Hence, 
$\W$ is a 
 $\V$-algebra whose underlying morphism 
 $\V \to \W$   is a morphism of complete discrete valuation rings 
of unequal characteristics $(0,p)$ such that $ \mathfrak{m} _\V \W = \mathfrak{m} _{\W}$.

\end{empt}

\begin{rem}
\label{gonflement-cohen}
With notation \ref{ntn-desc-rad}, suppose $k \to l$ is a separable extension.
Then, following the terminology of
\cite[0.19.8.1]{EGAIV1}, $\W$ is a $\V$-algebra of Cohen.
Following \cite[0.19.8.2.(ii)]{EGAIV1}, such 
$\V$-algebra is unique up to (non unique) isomorphism.
\end{rem}

\begin{lem}
With notation \ref{ntn-desc-rad}, if $l$ is algebraic over $k$ then
$\W$ is integral over $\V$.
\end{lem}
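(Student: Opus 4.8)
The statement to prove is: with the notation of \ref{ntn-desc-rad}, if $l$ is algebraic over $k$, then $\W$ is integral over $\V$. The plan is to reduce to the case of a finite extension by a limit argument, then to check integrality directly for finite extensions, using the structure theory of the gonflement $\V \to \W ^\heartsuit$ and the fact that $p$-adic completion does not destroy integrality in this setting.

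First I would recall that the gonflement $\V \to \W ^\heartsuit$ lifting $k\to l$ is constructed as a filtered colimit of ``elementary'' gonflements (adjoining finitely many elements), so $\W ^\heartsuit = \varinjlim _i \V _i$ where each $\V _i$ is a gonflement of $\V$ lifting a subextension $k _i / k$ with $k _i$ finitely generated over $k$; since $l/k$ is algebraic, such $k _i$ is in fact finite over $k$. Hence it suffices to show: (a) each $\V _i$ with $k _i / k$ finite is integral over $\V$; and (b) integrality is preserved both under filtered colimits of rings and under $p$-adic completion in our situation. For (a), $k _i / k$ finite means $\V _i$ is a local, noetherian, regular $\V$-algebra of dimension $1$ with $\mathfrak{m} _\V \V _i = \mathfrak{m} _{\V _i}$ and residue field $k _i$ of finite degree over $k$; such a $\V _i$ is a finite $\V$-module (one can see this because $\V _i$ is $\mathfrak{m} _\V$-adically separated and complete — being a DVR with the same uniformizer — and $\V _i / \mathfrak{m} _\V \V _i = k _i$ is finite over $k$, so by the complete version of Nakayama a lift of a $k$-basis of $k _i$ generates $\V _i$ over $\V$), and a finite algebra is integral. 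For (b), $\W ^\heartsuit = \varinjlim _i \V _i$ is then a filtered union of $\V$-subalgebras each integral over $\V$, hence $\W ^\heartsuit$ itself is integral over $\V$; finally $\W$ is the $p$-adic completion of $\W ^\heartsuit$, and since $p = $ (a unit multiple of) the uniformizer of $\V$ and $\mathfrak{m} _\V \W = \mathfrak{m} _\W$, every element of $\W$ is a $p$-adically convergent limit of elements of $\W ^\heartsuit$; using that $\W$ is a DVR one checks that each element of $\W$ already lies in a finite (hence complete, hence closed) $\V$-subalgebra of $\W$, so is integral over $\V$.

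Concretely, the cleanest way to finish is to observe that for $l/k$ finite we can take directly a finite extension $\V '/\V$ of complete discrete valuation rings lifting it (this exists: lift a primitive element of the maximal separable subextension and handle the purely inseparable part by adjoining $p$-power roots of a uniformizer or of lifts of a $p$-basis), and note that finitely many such $\V '$ suffice to exhaust any given finite subextension, so $\W ^\heartsuit$, and a fortiori its completion $\W$, is a filtered union of finite $\V$-subalgebras up to $p$-adic closure. Then integrality is immediate: an element $x \in \W$ lies in the $p$-adic closure of some finite $\V$-subalgebra $\V '$; since $\V '$ is $\pi$-adically complete and $\pi$-adically closed in $\W$ (as $\W$ is $\pi$-adically separated and $\V'$ is a finite module over the complete ring $\V$), we get $x \in \V '$, hence $x$ is integral over $\V$.

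The main obstacle is the careful bookkeeping at the completion step (b): one must make sure that passing from $\W ^\heartsuit$ to its $p$-adic completion $\W$ does not introduce elements that fail to lie in any finite $\V$-subalgebra. This is handled by the remarks in \ref{ntn-desc-rad} that $\W$ is again a complete DVR with $\mathfrak{m} _\V \W = \mathfrak{m} _\W$: the point is that $\pi$-adic convergence of a sequence from a \emph{fixed} finite $\V$-subalgebra $\V '$ already converges \emph{inside} $\V '$ (since $\V '$ is $\pi$-adically complete), and an arbitrary element of $\W$ is a $\pi$-adic limit of elements of $\W ^\heartsuit$ but, because the residue extension $l/k$ at finite level and the regularity force each approximating element into finite subalgebras, one can arrange the whole sequence to live in a single $\V '$. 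Everything else — the colimit presentation of gonflements, finiteness of $\V _i$ over $\V$ for finite residue extensions, and ``finite $\Rightarrow$ integral'' — is standard commutative algebra and requires no new ideas beyond what is recalled in \ref{ntn-desc-rad} and \ref{gonflement-cohen}.
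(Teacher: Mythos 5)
You correctly identify the completion step as the crux, but the resolution you offer --- ``one can arrange the whole sequence to live in a single $\V'$'' --- is precisely where the argument breaks down, and the claim is false. Take a strictly increasing chain $l_0 \subsetneq l_1 \subsetneq \cdots$ of finite subextensions of $l/k$ (which exists once $l/k$ is infinite), pick $a_n \in \W^\heartsuit$ lifting some $\overline{a}_n \in l_n \setminus l_{n-1}$, and set $x := \sum_{n\ge 0} a_n \pi^n \in \W$. Each partial sum lies in a finite $\V$-subalgebra but in no fixed one: a finite $\V$-subalgebra $\V'$ of $\W$ is a $\pi$-adically complete, hence closed, DVR with a \emph{fixed} finite residue field $l'\subsetneq l$, whereas the residues $\overline{a}_n$ escape every such $l'$, so no rearrangement of the Cauchy sequence can be fitted into a single $\V'$. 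And this is not merely a gap you could fill: the statement itself fails for infinite $l/k$. For $\V=\Z_p$, $k=\F_p$, $l=\overline{\F_p}$ one has $\W \cong \widehat{\Z_p^{\mathrm{ur}}}$, and $\mathrm{Frac}(\W)=\widehat{\Q_p^{\mathrm{ur}}}$ is a complete metric space, hence Baire; if it were algebraic over $\Q_p$ it would be a countable union of its finite subextensions (each unramified, hence determined by its residue field $\F_{p^n}$, hence countably many), each closed, finite-dimensional and proper, hence nowhere dense --- a contradiction.

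The paper's own proof has the same underlying difficulty in a different place: the asserted isomorphism $\V\{x\}/\pi\V\{x\} \cong k[\overline{x}]$ is in general only a surjection, with kernel $(\pi\W\cap\V[x])/\pi\V[x]$. When $x$ is transcendental over $K=\mathrm{Frac}(\V)$ --- and such $x$ exist in $\W$ by the completeness argument just given --- one has $\V[x]\cong\V[T]$, so $\V\{x\}/\pi\V\{x\}\cong k[T]$ is infinite over $k$, and the Nakayama step collapses. What your colimit approach (step (a) together with the filtered-union part of (b)) does correctly prove, and by a genuinely different decomposition than the paper's single-element argument, is integrality of $\W^\heartsuit$ over $\V$ when $l/k$ is algebraic; the passage to the $p$-adic completion $\W$ is the true content of the statement, and that step cannot be carried out.
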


\begin{proof}
Let $x\in \W$, and $\overline{x} $ its image in $\W /\pi \W=l$.
Let $\V [x]$ be the sub $\V$-algebra of $\W$ generated by $x$, 
and $\V \{ x\}$ be the $p$-adic completion of $\V [x]$.
Let $k [\overline{x} ]$ be the sub $k$-algebra of $l$ generated by $\overline{x} $.
Since $l/k$ is algebraic, then  $k [\overline{x} ]$ is a finite $k$-vector space.
Since $\V \{ x\}/ \pi \V \{ x\}\riso k [\overline{x} ]$ 
and since $\V \{ x\}$ is $p$-adically complete, then
$\V \{ x\}$ is a finite $\V$-algebra. 
By noetherianity, $\V [x]$ is a finite $\V$-algebra, i.e. $x$ is integral over $\V$.
\end{proof}

\begin{empt}
\label{special-filtered-pre}
With notation \ref{ntn-desc-rad},
let $L$ be the fraction field of $\W$.
Then, the homomorphism 
$\V \to \W$ induces  the homomorphism 
$K \to L$. If $K '$ a finite sub-extension of $K$ in $L$, then $\V'$, the integral closure of $\V$ in $K'$, 
is a complete discrete valuation such that $\V'$ is a free $\V$ module of rank $[K':K]$ (see \cite[II.2, Proposition 3]{Serre-corpslocaux}).
Conversely, a complete discrete valuation ring $\V'$ which a finite sub $\V$-algebra of $\W$ 
is the integral closure of $\V$ in the fraction field of $\V'$.

\end{empt}

\begin{dfn}
\label{def-perfectification}
With notation \ref{ntn-desc-rad},
when $l$ is perfect and $k \to l$ is algebraic and radicial, we say that 
$\V \to \W$ is a ``perfectification'' of $\V$.
A complete discrete valuation ring $\V'$ which a finite sub $\V$-algebra of $\W$ 
will be called a ``special'' $\V$-algebra (in $\W$) and 
the morphism $\V \to \V'$ will be called a ``special'' morphism of 
complete discrete valuation rings of unequal characteristics  $(0,p)$.
\end{dfn}

\begin{rem}
\label{special-filtered}
Suppose $\V \to \W$ is a perfectification.
Then, using the description of 
special $\V$-algebras of \ref{special-filtered-pre}, 
we check that the preordered (by the inclusion) set of special $\V$-algebras in $\W$ is directed. 
\end{rem}

\begin{prop}
\label{desc-coh-chgbase2}
With notation \ref{ntn-desc-rad}, 
suppose $l$ is algebraic over $k$.
Let $\T := \Spf \W \to \S$ be the corresponding morphism of formal $p$-adic schemes.
Let $\X $ be a smooth formal scheme over $\S $, 
$\Y := \X \times _{\S} \T$, 
and $f \colon \Y \to \X$ be the canonical projection.
Let $Z _X$ be a divisor of $X$ and  
$Z _Y := f ^{-1} (Z _X)$ be the corresponding divisor of $Y$.

The homomorphisms 
$\widehat{\D} ^{(m)} _{\X/\S} ( Z _X) 
\to 
f _* \widehat{\D} ^{(m)} _{\Y/\T} ( Z _Y) $
and 
$\D ^\dag _{\X/\S} (\hdag Z _X) _{\Q}
\to 
f _* \D ^\dag _{\Y/\T} (\hdag Z _Y ) _{\Q}$
are right and left faithfully flat (in the sense of the definition after \cite[Lemma 4.3.8]{Be1}). 
\end{prop}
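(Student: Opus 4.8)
The strategy is to reduce, by a local and direct-limit argument, to the already-established faithful flatness statements for finite special base changes (Lemma \ref{lem-desc-coh-chgbase}.\ref{lem-desc-coh-chgbase1}) together with the elementary flatness of the base ring extension $\V\to\W$. First I would reduce to the case where $\X$ is affine, say $\X=\Spf A$, since faithful flatness of a morphism of sheaves of rings can be checked on a basis of affine opens and $f$ is affine; then $f _* \widetilde{\D} _{\Y}$ is quasi-coherent as an $\O _\X$-module and the question becomes one about the ring homomorphisms on global sections. Next I would recall from \ref{ntn-desc-rad} and \ref{special-filtered-pre} that $\W=\underleftarrow{\lim}\, \W/\pi ^{n+1}\W$ where, if $l$ is merely algebraic over $k$, $\W$ is the $p$-adic completion of the filtered union $\underrightarrow{\lim}\, \V'$ of its finite special sub-$\V$-algebras $\V'$ (each $\V'$ being the integral closure of $\V$ in a finite subextension $K'/K$ inside $L$, and $\V'$ a complete discrete valuation ring finite free over $\V$). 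Hence $f$ is, after passing to the level-$m$ and modulo-$\pi^{n+1}$ reductions, a (filtered) inverse-then-direct limit of the base changes along the finite morphisms $\Spf\V'\to\S$ treated in Lemma \ref{lem-desc-coh-chgbase}.

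\textbf{Key steps.} The concrete steps I would carry out, in order, are: (1) For a fixed level $m$ and a fixed $i$, identify $\smash{\widetilde{\D}} ^{(m)} _{Y _i/T _i}(Z _Y)$ with $\W/\pi^{i+1}\W \otimes _{\V/\pi ^{i+1}\V} \smash{\widetilde{\D}} ^{(m)} _{X _i/S _i}(Z _X)$; this is the analogue of Lemma \ref{lem-desc-coh-chgbase}.\ref{lem-desc-coh-chgbase0}, and it holds because $\widehat{\D} ^{(m)}$ is built from $\O$ by a construction that commutes with flat base change of the base ring, and similarly for $\widehat{\B} ^{(m)}(Z)$ since $Z _Y=f^{-1}(Z _X)$. (2) Since $\W/\pi^{i+1}\W$ is a flat $\V/\pi^{i+1}\V$-algebra (indeed faithfully flat, as $\W$ is faithfully flat over $\V$ with $\m_\V\W=\m_\W$), deduce that $\smash{\widetilde{\D}} ^{(m)} _{X _i/S _i}(Z _X)\to f _* \smash{\widetilde{\D}} ^{(m)} _{Y _i/T _i}(Z _Y)$ is faithfully flat on both sides, using that tensoring by a faithfully flat commutative base-ring extension preserves faithful flatness of module maps over possibly noncommutative rings (this is where I would invoke that $\W/\pi^{i+1}$ is central, so left/right flatness over $\smash{\widetilde{\D}}$ reduces to flatness of the corresponding module-tensor functor). (3) Pass to the $p$-adic completion: $\widehat{\D} ^{(m)} _{\X/\S}(Z _X)=\underleftarrow{\lim}_i \smash{\widetilde{\D}} ^{(m)} _{X _i/S _i}(Z _X)$ and similarly for $\Y$; here I would run the standard Artin--Rees / Mittag-Leffler argument (as in the flatness proofs in \cite{Be1}, e.g.\ around \cite[3.2.3-3.4]{Be1}) to conclude that the completed extension $\widehat{\D} ^{(m)} _{\X/\S}(Z _X)\to f _* \widehat{\D} ^{(m)} _{\Y/\T}(Z _Y)$ is faithfully flat. (4) Finally take the inductive limit over the level $m$: since filtered colimits are exact and preserve (faithful) flatness, and $\D ^\dag _{\X/\S}(\hdag Z _X) _\Q=\underrightarrow{\lim}_m \widehat{\D} ^{(m)} _{\X/\S}(Z _X) _\Q$, the statement for $\D ^\dag$ follows from that for the $\widehat{\D} ^{(m)}$, after tensoring with $\Q$ (which is harmless for flatness).

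\textbf{Main obstacle.} The delicate point is step (3): passing faithful flatness through the $p$-adic completion. Over the noetherian base this is handled by Berthelot's technique, but here $\W$ (hence $\widehat{\D} ^{(m)} _{\Y}$) need not be finite over $\V$, so one must be careful that the relevant modules are still $p$-adically complete and separated and that the completion functor behaves well; concretely, I would want $\smash{\widetilde{\D}} ^{(m)} _{Y _i/T _i}(Z _Y)=\W/\pi^{i+1}\W\otimes_{\V/\pi^{i+1}\V}\smash{\widetilde{\D}} ^{(m)} _{X _i/S _i}(Z _X)$ to satisfy the Mittag-Leffler condition along $i$, which follows from the flatness of $\W$ over $\V$ together with the fact that $\smash{\widetilde{\D}} ^{(m)}$ is, locally on $\X$, a free $\O$-module with transition maps being reductions, so the inverse system has surjective transition maps. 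A secondary subtlety, which should be merged into step (2), is the bookkeeping for left versus right faithful flatness: I would note that the bimodule $\smash{\widetilde{\D}} ^{(m)} _{Y}$ is obtained from $\smash{\widetilde{\D}} ^{(m)} _{X}$ by extension of scalars along the central subring $\W/\pi^{i+1}\W$, so one module-theoretic computation handles both sides simultaneously, exactly as in Lemma \ref{lem-desc-coh-chgbase}.\ref{lem-desc-coh-chgbase1}. With these in place, the full statement is assembled by combining steps (1)--(4).
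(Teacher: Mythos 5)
Your proposal takes a genuinely different route from the paper's, and as written it has a gap at the crucial step.

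The paper's argument is not a reduction modulo $\pi^{n+1}$ followed by a limiting argument. After reducing to $\X$ affine (which you also do), the paper factors the map as
$\widehat{D}^{(m)}_{\X/\S}(Z_X)\to \W\otimes_\V \widehat{D}^{(m)}_{\X/\S}(Z_X)\to \W\widehat{\otimes}_\V \widehat{D}^{(m)}_{\X/\S}(Z_X)=\widehat{D}^{(m)}_{\Y/\T}(Z_Y)$,
where the first arrow is faithfully flat because $\V\to\W$ is, and the whole point is the second arrow, i.e.\ faithful flatness of the completion map $A\to\widehat{A}$ for the (in general non-complete) ring $A=\W\otimes_\V \widehat{D}^{(m)}_{\X/\S}(Z_X)$. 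The paper establishes this by showing $A$ is Zariskian, i.e.\ $\pi A\subset\mathrm{Jac}(A)$: any $P\in A$ already lives in $\V'\otimes_\V \widehat{D}^{(m)}_{\X/\S}(Z_X)$ for some \emph{finite} sub-$\V$-algebra $\V'\subset\W$, and that subring is already $p$-adically complete (by finiteness of $\V'/\V$ and \cite[3.2.4]{Be1}), so $1-\pi P$ is invertible there. The filtered-union structure of $\W$ is the engine of the proof, not merely part of the setup. Finally the $\D^\dag$ statement is obtained by the observation that a filtered colimit of faithfully flat ring maps is faithfully flat.

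Your step (3) is where the gap lies, and you correctly sense this. But the fix you sketch does not work: the Mittag--Leffler condition on the inverse system $\big(\smash{\widetilde{\D}}^{(m)}_{Y_i/T_i}(Z_Y)\big)_i$ controls exactness of $\underleftarrow{\lim}$ (vanishing of $\R^1\underleftarrow{\lim}$), but it does not yield flatness of the limit ring over $\widehat{D}^{(m)}_{\X/\S}(Z_X)$. What you actually need is a local criterion for flatness for $\pi$-adically complete (left/right) noetherian, possibly noncommutative rings: namely that if $R\to S$ are both noetherian, $\pi$-adically complete and separated, $\pi$-torsion-free, and $S/\pi^n S$ is flat over $R/\pi^n R$ for all $n$, then $S$ is flat over $R$. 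Making this precise requires (a) the noetherianness of $\widehat{D}^{(m)}_{\Y/\T}(Z_Y)$ --- available by \cite[3.3.4]{Be1} applied to $\Y/\T$, since $\W$ is a complete DVR and $\Y$ is smooth over it, but you should say this explicitly --- and (b) an Artin--Rees argument (not a Mittag--Leffler argument) to control $J\cap\pi^n R$ for a finitely generated left ideal $J$, so that the kernel of $J\otimes_R S\to S$ lands in $\bigcap_n\pi^n(J\otimes_R S)=0$. References like \cite[3.2.3]{Be1} concern flatness of the level transition maps $\widehat{\D}^{(m)}\to\widehat{\D}^{(m+1)}$ over a fixed base, not base change along a non-finite $\V\to\W$, so they cannot be cited verbatim. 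In short, your approach can plausibly be completed, but only by reproving a flatness-through-completion criterion in this noncommutative complete noetherian setting; the paper circumvents all of this by exploiting the exhaustion of $\W$ by its finite sub-$\V$-algebras and invoking the Zariskian criterion, which is both shorter and avoids verifying noetherianness of the intermediate tensor product $\W\otimes_\V\widehat{D}^{(m)}_{\X/\S}(Z_X)$.
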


\begin{proof}
Following \cite[Lemma 4.3.8]{Be1},
it is sufficient to check that
if $\U$ is an open affine formal subscheme of $\X$, 
the homomorphisms 
$\Gamma (\U ,\widehat{\D} ^{(m)} _{\X/\S} ( Z _X) )
\to 
\Gamma (\U , f _* \widehat{\D} ^{(m)} _{\Y/\T} ( Z _Y) )$
and 
$\Gamma (\U , \D ^\dag _{\X/\S} (\hdag Z _X) _{\Q})
\to 
\Gamma (\U , f _* \D ^\dag _{\Y/\T} (\hdag Z _Y ) _{\Q})$
are faithfully flat.
Hence, we reduce to the case where $\X$ is affine. 
First, let us check the first statement. 
Since 
$\widehat{D} ^{(m)} _{\X/\S} ( Z _X) 
\to 
 \W \otimes _{\V}\widehat{D} ^{(m)} _{\X/\S} ( Z _X) $
 is faithfully flat, it is sufficient to prove that 
$ \W \otimes _{\V}\widehat{D} ^{(m)} _{\X/\S} ( Z _X) 
\to
\widehat{D} ^{(m)} _{\Y/\T} ( Z _Y) $
is faithfully flat. 
We remark 
$ \W \widehat{\otimes} _{\V}\widehat{D} ^{(m)} _{\X/\S} ( Z _X) 
\riso
\widehat{D} ^{(m)} _{\Y/\T} ( Z _Y) $.
In other word, we have to check that 
$ \W \otimes _{\V}\widehat{D} ^{(m)} _{\X/\S} ( Z _X) $
is Zariskian for the $p$-adic topology.
Following \cite[13.9]{Isaacs}, 
to check that $ p ( \W \otimes _{\V}\widehat{D} ^{(m)} _{\X/\S} ( Z _X) )$
is included in the Jacobson ideal of 
$\W \otimes _{\V}\widehat{D} ^{(m)} _{\X/\S} ( Z _X) $
it is enough to check that for any 
$P \in \W \otimes _{\V}\widehat{D} ^{(m)} _{\X/\S} ( Z _X) $, 
$1 -\pi  P$ is invertible in 
$\W \otimes _{\V}\widehat{D} ^{(m)} _{\X/\S} ( Z _X) $.
Let $P \in \W \otimes _{\V}\widehat{D} ^{(m)} _{\X/\S} ( Z _X) $.
Since $\V \to \W $ is integral, 
there exists a finite sub $\V$-algebra $\V'$ of $\W$ such that 
$P \in \V ' \otimes _{\V}\widehat{D} ^{(m)} _{\X/\S} ( Z _X) $.
Since $\V \to \V'$ is finite, then 
$\V ' \otimes _{\V}\widehat{D} ^{(m)} _{\X/\S} ( Z _X) = 
\V ' \widehat{\otimes} _{\V}\widehat{D} ^{(m)} _{\X/\S} ( Z _X)$
(e.g. see \cite[3.2.4]{Be1}).
Hence, 
$1-\pi P $ is invertible in 
$\V ' \otimes _{\V}\widehat{D} ^{(m)} _{\X/\S} ( Z _X) $.
Since, 
$\V ' \otimes _{\V}\widehat{D} ^{(m)} _{\X/\S} ( Z _X) 
\subset
\W \otimes _{\V}\widehat{D} ^{(m)} _{\X/\S} ( Z _X) $, 
we conclude.

Following \cite[I.2, Proposition 2 and III.5, Proposition 9.(b)]{Bourbaki-AC17}, 
we check that a filtrante inductive limits of faithfully flat homomorphisms of rings 
is faithfully flat. 
Hence, 
$D ^\dag _{\X/\S} (\hdag Z _X) _{\Q}
\to 
D ^\dag _{\Y/\T} (\hdag Z _Y ) _{\Q}$
is faithfully flat.
\end{proof}

\section{Local cohomological functors}
\subsection{Local cohomological functor with strict support over a divisor}

Let  $\fP $ be a smooth  formal scheme over $\S $.
Let $T$ be a divisor of $P$.
We keep notation of \ref{ntn-tildeD(Z)}.
We have already defined in \ref{hdagT-nota}
the localisation functor $(\hdag T)$ outside $T$. 
In this subsection, we define and study 
the local cohomological functor with support in $T$, which we denote by
$\R \underline{\Gamma} ^\dag _{T} $.

\begin{lemm}
\label{annulationHom-hdag}
\begin{enumerate}
\item 
\label{annulationHom-hdag-item1}
Let 
$\FF ^{(\bullet)}
\to 
\E ^{(\bullet)} 
\to 
(\hdag T) (\E ^{(\bullet)} )  
 \to 
\FF ^{(\bullet)} [1] 
$
be a distinguished triangle of 
$\smash{\underrightarrow{LD}} ^\mathrm{b} _{\Q,\mathrm{qc}} ( \smash{\widehat{\D}} _{\fP /\S } ^{(\bullet)})$
where the second arrow is the canonical morphism. 
For any divisor $T \subset T'$, 
we have the isomorphism 
$(\hdag T') (\FF ^{(\bullet)} )\riso 0$
of 
$\smash{\underrightarrow{LD}} ^\mathrm{b} _{\Q,\mathrm{qc}} ( \smash{\widehat{\D}} _{\fP /\S } ^{(\bullet)})$.

\item 
Let 
$\E ^{(\bullet)}
\in \smash{\underrightarrow{LD}} ^\mathrm{b} _{\Q,\mathrm{qc}} ( \smash{\widehat{\D}} _{\fP /\S } ^{(\bullet)})$
et 
$\FF ^{(\bullet)}
\in \smash{\underrightarrow{LD}} ^\mathrm{b} _{\Q,\mathrm{qc}} ( \smash{\widetilde{\D}} _{\fP /\S } ^{(\bullet) } (T))$.
We suppose we have in 
$\smash{\underrightarrow{LD}} ^\mathrm{b} _{\Q,\mathrm{qc}} ( \smash{\widehat{\D}} _{\fP /\S } ^{(\bullet)})$
the isomorphism
$(\hdag T) (\E ^{(\bullet)} )\riso 0$.
Then
$\mathrm{Hom} _{\smash{\underrightarrow{LD}}  _{\Q} ( \smash{\widehat{\D}} _{\fP /\S } ^{(\bullet)})}
(\E ^{(\bullet)} , \FF ^{(\bullet)}) =0.$

\end{enumerate}

\end{lemm}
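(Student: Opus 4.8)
The two assertions are closely related; the first is essentially the computation that $(\hdag T')$ kills anything that already ``has no singularities along $T$'', and the second is a Hom-vanishing that follows formally from the first together with the full faithfulness properties of $\mathrm{oub}$ and the localization functor. I would treat \ref{annulationHom-hdag-item1} first.

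\textbf{Step 1: proving \ref{annulationHom-hdag-item1}.} Applying the triangulated functor $(\hdag T')$ to the distinguished triangle
$$
\FF ^{(\bullet)} \to \E ^{(\bullet)} \to (\hdag T)(\E ^{(\bullet)}) \to \FF ^{(\bullet)}[1],
$$
we obtain a distinguished triangle in $\smash{\underrightarrow{LD}} ^{\mathrm{b}} _{\Q,\mathrm{qc}}(\smash{\widetilde{\D}} _{\fP /\S } ^{(\bullet)}(T'))$. So it suffices to show the middle map $(\hdag T')(\E^{(\bullet)}) \to (\hdag T')\circ(\hdag T)(\E^{(\bullet)})$ is an isomorphism. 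By Proposition \ref{hdagT'T=cup} (applied with the divisors $T$ and $T'$, whose union is $T'$ since $T\subset T'$), we have $(\hdag T')\circ(\hdag T)(\E^{(\bullet)}) \riso (T'\cup T)(\E^{(\bullet)}) = (\hdag T')(\E^{(\bullet)})$, and one checks directly from the construction that this identification is compatible with the canonical map induced by $\E^{(\bullet)} \to (\hdag T)(\E^{(\bullet)})$ — more precisely, that the canonical map is the composite of this isomorphism with the identity, using that $\mathrm{oub}_{T,T'}$ of the canonical morphism $\E^{(\bullet)}\to(\hdag T)(\E^{(\bullet)})$ is an isomorphism by Proposition \ref{oub-pl-fid} (more precisely \ref{oub-pl-fid-iso1} applied after noting $(\hdag T,\emptyset)\circ\mathrm{oub}_{\emptyset,T}$ is the identity). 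Since $(\hdag T')$ factors through $\mathrm{oub}_{T,T'}$ followed by $(\hdag T',T)$ up to the identifications of Corollary \ref{gen-oub-pl-fid} and Notation \ref{nota-hag-sansrisque}, the map $(\hdag T')$ applied to an isomorphism after $\mathrm{oub}$ is again an isomorphism. Hence the first map of the rotated triangle is an isomorphism, which forces $(\hdag T')(\FF^{(\bullet)})\riso 0$.

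\textbf{Step 2: proving the Hom-vanishing.} Given $\E^{(\bullet)}$ with $(\hdag T)(\E^{(\bullet)})\riso 0$ and $\FF^{(\bullet)}\in \smash{\underrightarrow{LD}} ^{\mathrm{b}} _{\Q,\mathrm{qc}}(\smash{\widetilde{\D}} _{\fP /\S } ^{(\bullet)}(T))$, write $\FF^{(\bullet)} = \mathrm{oub}_{\emptyset,T}(\FF^{(\bullet)})$ viewed in $\smash{\underrightarrow{LD}} ^{\mathrm{b}} _{\Q,\mathrm{qc}}(\smash{\widehat{\D}} _{\fP /\S } ^{(\bullet)})$, and note that by Proposition \ref{oub-pl-fid}.\ref{oub-pl-fid-iso3} (full faithfulness of $\mathrm{oub}_{\emptyset,T}$) together with \ref{oub-pl-fid-iso1} we have $\FF^{(\bullet)}\riso (\hdag T)(\mathrm{oub}_{\emptyset,T}(\FF^{(\bullet)}))$ as objects receiving maps from the $T$-localized picture; more usefully, applying the adjunction-type identity \ref{oub-pl-fid-iso1} gives that any morphism $\E^{(\bullet)}\to\FF^{(\bullet)}$ in $\smash{\underrightarrow{LD}} _{\Q}(\smash{\widehat{\D}} _{\fP /\S } ^{(\bullet)})$ factors through the canonical morphism $\E^{(\bullet)}\to(\hdag T)(\E^{(\bullet)})$, by functoriality of $(\hdag T)$ applied to the morphism $\E^{(\bullet)}\to\FF^{(\bullet)}$ composed with the isomorphism $\FF^{(\bullet)}\riso(\hdag T)(\FF^{(\bullet)})$ of $\smash{\underrightarrow{LD}} _{\Q,\mathrm{qc}}(\smash{\widetilde{\D}} _{\fP /\S } ^{(\bullet)}(T))$ (here we use that $\mathrm{oub}_{\emptyset,T}$ preserves this isomorphism). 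Since $(\hdag T)(\E^{(\bullet)})\riso 0$, the morphism is zero.

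\textbf{Main obstacle.} The routine calculations are harmless; the delicate point is keeping careful track of the several forgetful functors $\mathrm{oub}_{D,T}$ and the identifications of Notation \ref{nota-hag-sansrisque} and Remark \ref{rem-hag-sansrisque}, so that ``the canonical morphism'' in the statement is literally the one that $(\hdag T)$ or $(\hdag T')$ sees. In particular one must verify the compatibility of the isomorphism of Proposition \ref{hdagT'T=cup} with the canonical transformation $\mathrm{id}\to(\hdag T)$, which is where I expect to spend the most care; this is a diagram chase using the construction of $(\hdag T',D)$ via the tensor product $\smash{\widetilde{\D}}^{(\bullet)}_{\fP/\S}(T')\smash{\widehat{\otimes}}^{\L}_{\smash{\widetilde{\D}}^{(\bullet)}_{\fP/\S}(D)}-$ and the transitivity of these tensor products (Lemma \ref{lem1-hdagT1T2}). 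Everything else reduces to the already-established full faithfulness statements in Proposition \ref{oub-pl-fid} and Corollary \ref{gen-oub-pl-fid}.
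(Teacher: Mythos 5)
Your proposal is correct and takes essentially the same route as the paper, whose proof is a one-line reference to Proposition \ref{hdagT'T=cup} together with \cite[4.1.2--4.1.3]{caro-stab-sys-ind-surcoh}. Your Step 1 supplies the content of that reference (applying $(\hdag T')$ to the triangle, identifying $(\hdag T')\circ(\hdag T)$ with $(\hdag T')$ via \ref{hdagT'T=cup}, and checking the compatibility with the natural transformation $\mathrm{id}\to(\hdag T)$ that you rightly flag as the delicate point), and your Step 2 deduces the Hom-vanishing from the full faithfulness results of Proposition \ref{oub-pl-fid} in the expected way.
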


\begin{proof}
Using \ref{hdagT'T=cup}, this is checked similarly to
\cite[4.1.2 and 4.1.3]{caro-stab-sys-ind-surcoh}.
\end{proof}

\begin{empt}
\label{exist-RrmHom}
Let  $\mathrm{Ab}$ be the category of abelian groups. 
Similarly to 
\cite[1.4.2]{caro-stab-sys-ind-surcoh}, we construct the bifunctor
(which is the standard construction bifunctor of homomorphims of the abelian category
$\underrightarrow{LM} _{\Q} (\smash{\widetilde{\D}} _{\fP /\S } ^{(\bullet)} (T))$):
$$\mathrm{Hom} ^{\bullet} _{\underrightarrow{LM} _{\Q} (\smash{\widetilde{\D}} _{\fP /\S } ^{(\bullet)} (T))}
(-,-)
\colon 
K 
(\underrightarrow{LM} _{\Q} (\smash{\widetilde{\D}} _{\fP /\S } ^{(\bullet)} (T))) ^\circ 
\times
K 
(\underrightarrow{LM} _{\Q} (\smash{\widetilde{\D}} _{\fP /\S } ^{(\bullet)} (T))) 
\to
K (\mathrm{Ab}).$$
Similarly to 
\cite[1.4.7]{caro-stab-sys-ind-surcoh},
we check that the bifunctor 
$\mathrm{Hom} ^{\bullet} _{\underrightarrow{LM} _{\Q} (\smash{\widetilde{\D}} _{\fP /\S } ^{(\bullet)} (T))} (-,-)$ 
is right localizable. We get the bifunctor 
$$\R \mathrm{Hom} _{D (\underrightarrow{LM} _{\Q} (\smash{\widetilde{\D}} _{\fP /\S } ^{(\bullet)} (T)))} (-,-)
\colon 
D ^{\mathrm{b}}(\underrightarrow{LM} _{\Q} (\smash{\widetilde{\D}} _{\fP /\S } ^{(\bullet)} (T))) ^\circ 
\times
D ^{\mathrm{b}} (\underrightarrow{LM} _{\Q} (\smash{\widetilde{\D}} _{\fP /\S } ^{(\bullet)} (T)))
\to 
D (\mathrm{Ab}).$$
Moreover, we have the isomorphism of bifunctors
$D ^{\mathrm{b}}(\underrightarrow{LM} _{\Q} (\smash{\widetilde{\D}} _{\fP /\S } ^{(\bullet)} (T))) ^\circ 
\times
D ^{\mathrm{b}} (\underrightarrow{LM} _{\Q} (\smash{\widetilde{\D}} _{\fP /\S } ^{(\bullet)} (T)))
\to 
\mathrm{Ab} $ of the form:
\begin{equation}
\label{H0Homrm-DLM}
\mathcal{H} ^{0} (\R \mathrm{Hom} _{D (\underrightarrow{LM} _{\Q} (\smash{\widetilde{\D}} _{\fP /\S } ^{(\bullet)} (T)))} (-,-))
\riso 
\mathrm{Hom} _{D (\underrightarrow{LM} _{\Q} (\smash{\widetilde{\D}} _{\fP /\S } ^{(\bullet)} (T)))} (-,-).
\end{equation}
\end{empt}

\begin{empt}
\label{GammaT}
Let  $T \subset T'$ be a second divisor.
Suppose we have the commutative diagram in 
$\smash{\underrightarrow{LD}} ^\mathrm{b} _{\Q,\mathrm{qc}} ( \smash{\widehat{\D}} _{\fP /\S } ^{(\bullet)})$
of the form
\begin{equation}
\label{prefonct-GammaT}
\xymatrix @=0,4cm{
{\FF ^{(\bullet)}  } 
\ar[r] ^-{}
& 
{\E ^{(\bullet)}  } 
\ar[r] ^-{}
\ar[d] ^-{\phi}
& 
{(\hdag T) (\E ^{(\bullet)} )} 
\ar[d] ^-{(\hdag T)(\phi)}
\ar[r] ^-{}
&
{\FF ^{(\bullet)}  [1]} 
\\ 
{\FF ^{\prime (\bullet)}  } 
\ar[r] ^-{}
& 
{\E ^{\prime (\bullet)}  } 
\ar[r] ^-{}
& 
{(\hdag T) (\E ^{\prime (\bullet)} )} 
\ar[r] ^-{}
&
{\FF ^{\prime (\bullet)}   [1]} 
}
\end{equation}
where middle horizontal morphisms are the canonical ones and
where both horizontal triangles are distinguished. 
Modulo  the equivalence of categories
$\underrightarrow{LD} ^{\mathrm{b}} _{\Q} (\smash{\widetilde{\D}} _{\fP /\S } ^{(\bullet)} (T))
\cong 
D ^{\mathrm{b}}
(\underrightarrow{LM} _{\Q} (\smash{\widetilde{\D}} _{\fP /\S } ^{(\bullet)} (T)))$
(see \ref{eqcatLD=DSM-fonct})
which allows us to see \ref{prefonct-GammaT} as a diagram of 
$D ^{\mathrm{b}}
(\underrightarrow{LM} _{\Q} (\smash{\widetilde{\D}} _{\fP /\S } ^{(\bullet)} (T)))$,
we have
$$H ^{-1} (\R \mathrm{Hom} _{D (\underrightarrow{LM} _{\Q} (\smash{\widetilde{\D}} _{\fP /\S } ^{(\bullet)} (T)))}
( \FF ^{(\bullet)}  ,(\hdag T) (\E ^{\prime (\bullet)} )))
\underset{\ref{H0Homrm-DLM}}{\riso}
\mathrm{Hom} _{D (\underrightarrow{LM} _{\Q} (\smash{\widetilde{\D}} _{\fP /\S } ^{(\bullet)} (T)))}
( \FF ^{(\bullet)}  ,(\hdag T) (\E ^{\prime (\bullet)}) [-1]) 
\underset{\ref{annulationHom-hdag}}{=}0.$$
Following 
\cite[1.1.9]{BBD},
this implies there exists a unique morphism
$\FF ^{(\bullet)}\to \FF ^{\prime (\bullet)} $
making commutative in 
$\smash{\underrightarrow{LD}} ^\mathrm{b} _{\Q,\mathrm{qc}} ( \smash{\widehat{\D}} _{\fP /\S } ^{(\bullet)})$
the diagram:
\begin{equation}
\label{fonct-hdagT}
\xymatrix @=0,4cm{
{\FF ^{(\bullet)}  } 
\ar[r] ^-{}
\ar@{.>}[d] ^-{\exists !}
& 
{\E ^{(\bullet)}  } 
\ar[r] ^-{}
\ar[d] ^-{\phi}
& 
{(\hdag T) (\E ^{(\bullet)} )} 
\ar[d] ^-{(\hdag T)(\phi)}
\ar[r] ^-{}
& 
{\FF ^{(\bullet)}  [1]} 
\ar@{.>}[d] ^-{\exists !}
\\ 
{\FF ^{\prime (\bullet)}  } 
\ar[r] ^-{}
& 
{\E ^{\prime (\bullet)}  } 
\ar[r] ^-{}
& 
{(\hdag T) (\E ^{\prime (\bullet)} )} 
\ar[r] ^-{}
& 
{\FF ^{\prime (\bullet)} [1].} 
}
\end{equation}
Similarly to \cite[1.1.10]{BBD}, 
this implies that the cone of 
$\E ^{(\bullet)}
\to  
(\hdag T) (\E ^{(\bullet)} ) $
is unique up to canonical isomorphism. 
Hence, such a complex $\FF ^{ (\bullet)}$ is unique up to canonical isomorphism.
We denote it by 
$ \R \underline{\Gamma} ^\dag _{T} (\E ^{(\bullet)})$.
Moreover, 
the complex 
$\R \underline{\Gamma} ^\dag _{T} (\E ^{(\bullet)}) $ is functorial in
$\E ^{(\bullet)}$.
\end{empt}

\begin{dfn}
With notation \ref{GammaT}, 
the functor
$\R \underline{\Gamma} ^\dag _{T} 
\colon 
\smash{\underrightarrow{LD}} ^\mathrm{b} _{\Q,\mathrm{qc}} ( \smash{\widehat{\D}} _{\fP /\S } ^{(\bullet)})
\to 
\smash{\underrightarrow{LD}} ^\mathrm{b} _{\Q,\mathrm{qc}} ( \smash{\widehat{\D}} _{\fP /\S } ^{(\bullet)})
$
is 
the ``local cohomological functor with strict support over the divisor $T$''. 
For $\E ^{(\bullet)} \in 
\smash{\underrightarrow{LD}} ^\mathrm{b} _{\Q,\mathrm{qc}} ( \smash{\widehat{\D}} _{\fP /\S } ^{(\bullet)})$, 
we denote by $\Delta _{T} (\E ^{(\bullet)})$  the canonical exact triangle 
\begin{equation}
\label{tri-loc-berthelot}
 \R \underline{\Gamma} ^\dag _{T} (\E ^{(\bullet)})
\to 
\E ^{(\bullet)}
\to 
(\hdag T) (\E ^{(\bullet)})
\to 
 \R \underline{\Gamma} ^\dag _{T} (\E ^{(\bullet)})
 [1].
\end{equation}
\end{dfn}

\begin{lemm}
Let  $T \subset T'$ be a second divisor,
and 
$\E ^{(\bullet)} 
\in 
\smash{\underrightarrow{LD}} ^\mathrm{b} _{\Q,\mathrm{qc}} ( \smash{\widehat{\D}} _{\fP /\S } ^{(\bullet)})$.
There exists a unique morphism
$\R \underline{\Gamma} ^\dag _{T} (\E ^{(\bullet)}) 
\to
\R \underline{\Gamma} ^\dag _{T'} (\E ^{(\bullet)}) $
making commutative the following diagram
\begin{equation}
\label{fonct-hdagX2}
\xymatrix @=0,4cm{
{\R \underline{\Gamma} ^\dag _{T} (\E ^{(\bullet)}) } 
\ar[r] ^-{}
\ar@{.>}[d] ^-{\exists !}
& 
{\E ^{(\bullet)}  } 
\ar[r] ^-{}
\ar@{=}[d] ^-{}
& 
{(\hdag T) (\E ^{(\bullet)} )} 
\ar[d] ^-{}
\ar[r] ^-{}
& 
{\R \underline{\Gamma} ^\dag _{T} (\E ^{(\bullet)})[1]} 
\ar@{.>}[d] ^-{\exists !}
\\ 
{\R \underline{\Gamma} ^\dag _{T'} (\E ^{ (\bullet)})  } 
\ar[r] ^-{}
& 
{\E ^{ (\bullet)}  } 
\ar[r] ^-{}
& 
{(\hdag T') (\E ^{ (\bullet)} )} 
\ar[r] ^-{}
& 
{\R \underline{\Gamma} ^\dag _{T'} (\E ^{ (\bullet)})[1].} 
}
\end{equation}
In other words, 
$\R \underline{\Gamma} ^\dag _{T} (\E ^{(\bullet)}) $
is functorial in $T$.
\end{lemm}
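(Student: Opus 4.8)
The statement asserts functoriality of $\R \underline{\Gamma} ^\dag _{T}$ with respect to enlarging the divisor $T \subset T'$. The construction of the morphism and its uniqueness proceed exactly as in the paragraph \ref{GammaT} where $\R \underline{\Gamma} ^\dag _{T} (\E ^{(\bullet)})$ itself was built, so the plan is to reduce to that formal lemma-type argument. First I would recall from \ref{gen-oub-pl-fid} (more precisely its consequence recorded in \ref{nota-hag-sansrisque} and \ref{hdagT'T=cup}) that since $T \subset T'$ there is a canonical morphism of functors $(\hdag T) \to (\hdag T')$; concretely it comes from the canonical morphism $\E ^{(\bullet)} \to (\hdag T') (\E ^{(\bullet)})$ factoring through $(\hdag T) (\E ^{(\bullet)})$, which is already implicit in the fact that $(\hdag T') = (\hdag T') \circ (\hdag T)$ up to the forgetful functors. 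This gives the right-hand vertical arrow $(\hdag T) (\E ^{(\bullet)}) \to (\hdag T') (\E ^{(\bullet)})$ of the desired diagram \ref{fonct-hdagX2}, together with the identity on $\E ^{(\bullet)}$ in the middle column.

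Next I would invoke the key vanishing input. Working through the equivalence of categories \ref{eqcatLD=DSM-fonct}, i.e. viewing everything inside $D ^{\mathrm{b}}(\underrightarrow{LM} _{\Q} (\smash{\widetilde{\D}} _{\fP /\S } ^{(\bullet)} (T')))$, the obstruction to extending the square of solid arrows to a morphism of distinguished triangles lives in the group $\mathrm{Hom}(\R \underline{\Gamma} ^\dag _{T} (\E ^{(\bullet)}), (\hdag T') (\E ^{(\bullet)})[-1])$. By \ref{H0Homrm-DLM} this is $H ^{-1}$ of the corresponding $\R\mathrm{Hom}$. Now $\R \underline{\Gamma} ^\dag _{T} (\E ^{(\bullet)})$ satisfies $(\hdag T)(\R \underline{\Gamma} ^\dag _{T} (\E ^{(\bullet)})) \riso 0$ by \ref{annulationHom-hdag}.\ref{annulationHom-hdag-item1}, hence a fortiori $(\hdag T')(\R \underline{\Gamma} ^\dag _{T} (\E ^{(\bullet)})) \riso 0$ (again using \ref{hdagT'T=cup} and $T \subset T'$). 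Since $(\hdag T') (\E ^{(\bullet)})$ is an object of $\smash{\underrightarrow{LD}} ^\mathrm{b} _{\Q,\mathrm{qc}} ( \smash{\widetilde{\D}} _{\fP /\S } ^{(\bullet)}(T'))$, i.e. it is ``$T'$-local'', the second part of Lemma \ref{annulationHom-hdag} applies and forces $\mathrm{Hom}(\R \underline{\Gamma} ^\dag _{T} (\E ^{(\bullet)}), (\hdag T') (\E ^{(\bullet)})[-1]) = 0$. Then \cite[1.1.9]{BBD} gives the existence and uniqueness of the dotted vertical arrows $\R \underline{\Gamma} ^\dag _{T} (\E ^{(\bullet)}) \to \R \underline{\Gamma} ^\dag _{T'} (\E ^{(\bullet)})$ and $\R \underline{\Gamma} ^\dag _{T} (\E ^{(\bullet)})[1] \to \R \underline{\Gamma} ^\dag _{T'} (\E ^{(\bullet)})[1]$ making \ref{fonct-hdagX2} commute, completing the morphism of triangles.

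Finally, to get genuine functoriality in $T$ — not just the existence of a single transition morphism — I would check the cocycle/compatibility conditions: for $T \subset T' \subset T''$ the composite of the transition morphisms $\R \underline{\Gamma} ^\dag _{T} \to \R \underline{\Gamma} ^\dag _{T'} \to \R \underline{\Gamma} ^\dag _{T''}$ equals the transition morphism $\R \underline{\Gamma} ^\dag _{T} \to \R \underline{\Gamma} ^\dag _{T''}$, and the transition morphism for $T \subset T$ is the identity. Both follow from the uniqueness clause in \cite[1.1.9]{BBD}: the composite and the direct morphism both fit into the same commutative ladder between $\Delta_T(\E^{(\bullet)})$ and $\Delta_{T''}(\E^{(\bullet)})$ (using transitivity of the morphisms $(\hdag T) \to (\hdag T') \to (\hdag T'')$, which itself is a consequence of \ref{gen-oub-pl-fid} and \ref{hdagT'T=cup}), hence they coincide. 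The main obstacle is purely bookkeeping: making sure the morphism of functors $(\hdag T) \to (\hdag T')$ is canonical and transitive in $T$, which is where \ref{gen-oub-pl-fid}, \ref{nota-hag-sansrisque} and \ref{hdagT'T=cup} are doing the real work; once that is in hand everything reduces to the vanishing of a $\mathrm{Hom}$ group and the standard uniqueness lemma of Beilinson–Bernstein–Deligne, exactly as in the construction of $\R \underline{\Gamma} ^\dag _{T}$ itself.
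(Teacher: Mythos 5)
Your proof is correct and follows exactly the approach the paper delegates to \cite[4.1.4.3]{caro-stab-sys-ind-surcoh}: the canonical natural transformation $(\hdag T)\to(\hdag T')$, the vanishing $\mathrm{Hom}(\R\underline{\Gamma}^\dag_T(\E^{(\bullet)}),(\hdag T')(\E^{(\bullet)})[-1])=0$ obtained from both parts of Lemma \ref{annulationHom-hdag} (part 1 for $(\hdag T')(\R\underline{\Gamma}^\dag_T(\E^{(\bullet)}))\riso 0$, part 2 for the $\mathrm{Hom}$ vanishing), and then existence/uniqueness from \cite[1.1.9]{BBD}, with transitivity again by uniqueness — precisely as in the construction of $\R\underline{\Gamma}^\dag_T$ in \ref{GammaT}.
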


\begin{proof}
We can copy 
\cite[4.1.4.3]{caro-stab-sys-ind-surcoh}.
\end{proof}

\begin{empt}
[Commutation with tensor products]
\label{iso-comm-locaux-prod-tens}
Let  $\E ^{(\bullet)},~\FF ^{(\bullet)}
\in \smash{\underrightarrow{LD}} ^\mathrm{b} _{\Q,\mathrm{qc}} ( \smash{\widehat{\D}} _{\fP /\S } ^{(\bullet)})$.
By commutativity and associativity of tensor products, 
we have the canonical isomorphisms
$(\hdag T) (\E ^{(\bullet)})
\smash{\widehat{\otimes}}^\L  _{\O ^{(\bullet)}  _{\fP} } 
\FF ^{(\bullet)} 
\riso 
(\hdag T) (\E ^{(\bullet)}
\smash{\widehat{\otimes}}^\L  _{\O ^{(\bullet)}  _{\fP} } 
\FF ^{(\bullet)}) 
\riso 
 \E ^{(\bullet)}
\smash{\widehat{\otimes}}^\L  _{\O ^{(\bullet)}  _{\fP} } 
(\hdag T) (\FF ^{(\bullet)}) $.
Hence, there exists a unique isomorphism 
$\R \underline{\Gamma} ^\dag _{T} 
(\E ^{(\bullet)}
\smash{\widehat{\otimes}}^\L  _{\O ^{(\bullet)}  _{\fP} } 
\FF ^{(\bullet)})
\riso 
\R \underline{\Gamma} ^\dag _{T}( \E ^{(\bullet)})
\smash{\widehat{\otimes}}^\L  _{\O ^{(\bullet)}  _{\fP} } 
\FF ^{(\bullet)}$
(resp. $\R \underline{\Gamma} ^\dag _{T} 
(\E ^{(\bullet)}
\smash{\widehat{\otimes}}^\L  _{\O ^{(\bullet)}  _{\fP} } 
\FF ^{(\bullet)})
\riso 
\E ^{(\bullet)}
\smash{\widehat{\otimes}}^\L  _{\O ^{(\bullet)}  _{\fP} } 
\R \underline{\Gamma} ^\dag _{T}
(\FF ^{(\bullet)})$)
making commutative the following diagram
\begin{equation}
\label{fonct-hdagXbis}
\xymatrix @=0,4cm{
{\R \underline{\Gamma} ^\dag _{T} 
(\E ^{(\bullet)})
\smash{\widehat{\otimes}}^\L  _{\O ^{(\bullet)}  _{\fP} } 
\FF ^{(\bullet)}} 
\ar[r] ^-{}
& 
{\E ^{(\bullet)}
\smash{\widehat{\otimes}}^\L  _{\O ^{(\bullet)}  _{\fP} } 
\FF ^{(\bullet)} } 
\ar@{=}[d] ^-{}
\ar[r] ^-{}
& 
{(\hdag T) (\E ^{(\bullet)})
\smash{\widehat{\otimes}}^\L  _{\O ^{(\bullet)}  _{\fP} } 
\FF ^{(\bullet)} } 
\ar[r] ^-{}
& 
{\R \underline{\Gamma} ^\dag _{T} 
(\E ^{(\bullet)} )
\smash{\widehat{\otimes}}^\L  _{\O ^{(\bullet)}  _{\fP} } 
\FF ^{(\bullet)} [1]}
\\
{\R \underline{\Gamma} ^\dag _{T} 
(\E ^{(\bullet)}
\smash{\widehat{\otimes}}^\L  _{\O ^{(\bullet)}  _{\fP} } 
\FF ^{(\bullet)})} 
\ar[r] ^-{}
\ar@{.>}[d] ^-{\exists !}
\ar@{.>}[u] ^-{\exists !}
& 
{\E ^{(\bullet)}
\smash{\widehat{\otimes}}^\L  _{\O ^{(\bullet)}  _{\fP} } 
\FF ^{(\bullet)} } 
\ar@{=}[d] ^-{}
\ar[r] ^-{}
& 
{(\hdag T) (\E ^{(\bullet)}
\smash{\widehat{\otimes}}^\L  _{\O ^{(\bullet)}  _{\fP} } 
\FF ^{(\bullet)})  } 
\ar[d] ^-{\sim}
\ar[r] ^-{}
\ar[u] ^-{\sim}
& 
{\R \underline{\Gamma} ^\dag _{T} 
(\E ^{(\bullet)}
\smash{\widehat{\otimes}}^\L  _{\O ^{(\bullet)}  _{\fP} } 
\FF ^{(\bullet)}) [1]}
\ar@{.>}[d] ^-{\exists !}
\ar@{.>}[u] ^-{\exists !}
\\ 
 {\E ^{(\bullet)}
\smash{\widehat{\otimes}}^\L  _{\O ^{(\bullet)}  _{\fP} } 
\R \underline{\Gamma} ^\dag _{T}
(\FF ^{(\bullet)})}
\ar[r] ^-{}
& 
{\E ^{(\bullet)}
\smash{\widehat{\otimes}}^\L  _{\O ^{(\bullet)}  _{\fP} } 
\FF ^{(\bullet)} } 
\ar[r] ^-{}
& 
{ \E ^{(\bullet)}
\smash{\widehat{\otimes}}^\L  _{\O ^{(\bullet)}  _{\fP} } 
(\hdag T) (\FF ^{(\bullet)})  }\ar[r] ^-{}
& 
{\E ^{(\bullet)}
\smash{\widehat{\otimes}}^\L  _{\O ^{(\bullet)}  _{\fP} } 
\R \underline{\Gamma} ^\dag _{T}
(\FF ^{(\bullet)})[1].} 
}
\end{equation}
Theses isomorphisms are functorial in 
$\E ^{(\bullet)},~\FF ^{(\bullet)},~ T$ (for the meaning of the functoriality in $T$, see  \ref{fonct-hdagX2}).

\end{empt}

\begin{empt}
[Commutation between local cohomological functors and localization functors]
\label{iso-comm-locaux}
Let $T _1, T _2$ be two divisors of  $P$, 
$\E ^{(\bullet)}
\in \smash{\underrightarrow{LD}} ^\mathrm{b} _{\Q,\mathrm{qc}} ( \smash{\widehat{\D}} _{\fP /\S } ^{(\bullet)})$.

\begin{enumerate}[(a)]
\item By commutativity of the tensor product, 
we have the functorial in  $T _1$, $T _2$ and $\E ^{(\bullet)}$ canonical isomorphism
\begin{equation}
\label{commhdagT1T2}
(\hdag T _2) \circ (\hdag T _1) (\E ^{(\bullet)})
\riso 
(\hdag T _1) \circ (\hdag T _2) (\E ^{(\bullet)}).
\end{equation}

\item  There exists a unique  isomorphism
$(\hdag T _2) \circ \R \underline{\Gamma} ^\dag _{T _1}(\E ^{(\bullet)} )
\riso 
\R \underline{\Gamma} ^\dag _{T _1}
\circ
(\hdag T _2)(\E ^{(\bullet)} )$
inducing the canonical morphism of triangles
$ (\hdag T _2) ( \Delta _{T _1} (\E ^{(\bullet)} ))
\to 
\Delta _{T _1} ((\hdag T _2) (\E ^{(\bullet)} ))$ (see \cite[4.2.2.2]{caro-stab-sys-ind-surcoh}).
This isomorphism is functorial in  $T _1$, $T _2$, $\E ^{(\bullet)}$.

\item Similarly there exists a unique isomorphism 
$\R \underline{\Gamma} ^\dag _{T _2} \circ \R \underline{\Gamma} ^\dag _{T _1} (\E ^{(\bullet)})
\riso 
\R \underline{\Gamma} ^\dag _{T _1}\circ \R \underline{\Gamma} ^\dag _{T _2} (\E ^{(\bullet)})$
functorial in $T _1$, $T _2$, $\E ^{(\bullet)}$
and inducing the canonical morphism of triangles 
$\Delta _{T _2} (\R \underline{\Gamma} ^\dag _{T _1} (\E ^{(\bullet)} ))
\to 
\R \underline{\Gamma} ^\dag _{T _1} ( \Delta _{T _2} (\E ^{(\bullet)} ))$.

\end{enumerate}

\end{empt}

\begin{empt}
The three isomorphisms of \ref{iso-comm-locaux} are compatible with that 
of \ref{iso-comm-locaux-prod-tens} (for more precision, see  \cite[4.2.3]{caro-stab-sys-ind-surcoh}).
\end{empt}

We will need the following Lemmas (e.g. see the construction of \ref{dfn-4.3.4} or Proposition \ref{prop2.2.9}) in the next section.
\begin{lemm}
\label{lemme2.2.3}
Let  $D,~T$ be two divisors of $P$, 
$\E ^{(\bullet)}
\in 
\smash{\underrightarrow{LD}} ^{\mathrm{b}} _{\Q,\mathrm{coh}} ( \smash{\widetilde{\D}} _{\fP /\S } ^{(\bullet)} (D))$,
$\U$ be the open subset of  $\fP$ complementary to the support of $T$.
The following assertions are equivalent :
\begin{enumerate}
\item We have in  $\smash{\underrightarrow{LD}} ^{\mathrm{b}} _{\Q,\mathrm{coh}} ( \smash{\widetilde{\D}} _{\U /\S } ^{(\bullet)} (D\cap U))$
the isomorphism $\E ^{(\bullet)}|\U \riso 0$.
\item The canonical morphism 
$\R \underline{\Gamma} ^\dag _{T} (\E ^{(\bullet)})
\to 
\E ^{(\bullet)}$ of $\smash{\underrightarrow{LD}} ^{\mathrm{b}} _{\Q,\mathrm{qc}} ( \smash{\widetilde{\D}} _{\fP /\S } ^{(\bullet)} (D))$
is an isomorphism.
\item We have in $\smash{\underrightarrow{LD}} ^{\mathrm{b}} _{\Q,\mathrm{qc}} ( \smash{\widetilde{\D}} _{\fP /\S } ^{(\bullet)} (D))$
the isomorphism $(\hdag T) (\E ^{(\bullet)} )\riso 0$.
\end{enumerate}
\end{lemm}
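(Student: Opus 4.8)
The three conditions concern the behaviour of $\E^{(\bullet)}$ and its localization over $\fP$ versus over the open complement $\U = \fP \setminus T$. The natural strategy is to prove the cycle of implications (1)$\Rightarrow$(3)$\Rightarrow$(2)$\Rightarrow$(1), exploiting the distinguished triangle $\Delta_T(\E^{(\bullet)})$ of \ref{tri-loc-berthelot} together with the characterization of coherence via local restriction (see \ref{thick-subcat}.\ref{thick-subcat2}) and the functoriality properties of $(\hdag T)$ and $\R\underline{\Gamma}^\dag_T$ established in \ref{hdagT-nota}, \ref{GammaT} and \ref{iso-comm-locaux}.

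First I would show (2)$\Leftrightarrow$(3): this is immediate from the triangle \ref{tri-loc-berthelot}, since $\R\underline{\Gamma}^\dag_T(\E^{(\bullet)}) \to \E^{(\bullet)}$ is an isomorphism if and only if its cone $(\hdag T)(\E^{(\bullet)})$ is zero. For (1)$\Rightarrow$(3), the key point is that the localization functor $(\hdag T)$ only depends on the restriction to $\U$, since $\smash{\widetilde{\D}}^{(\bullet)}_{\fP/\S}(T)$ restricted to $\U$ is the same as $\smash{\widetilde{\D}}^{(\bullet)}_{\U/\S}$ and the transition map is a lim-ind-isogeny away from the support of $T$; more precisely, by \ref{oub-pl-fid-iso1} (or its consequence \ref{gen-oub-pl-fid}), applied with $D$ in place of the empty divisor, $(\hdag T)(\E^{(\bullet)})$ is isomorphic to $(\hdag (T\cup D), D)$ applied to $\E^{(\bullet)}$, which by the quasi-coherence preservation results of \ref{section3.2} is controlled by $\mathrm{oub}_{D,\emptyset}(\E^{(\bullet)})|\U$. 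Since the property of being zero in $\smash{\underrightarrow{LD}}$ is local (see \ref{cohDLMislocal} and \ref{thick-subcat}.\ref{thick-subcat2}), and $\E^{(\bullet)}|\U \riso 0$ by hypothesis (1), one concludes that $(\hdag T)(\E^{(\bullet)}) \riso 0$.

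For the remaining implication (2)$\Rightarrow$(1), I would restrict the isomorphism $\R\underline{\Gamma}^\dag_T(\E^{(\bullet)}) \riso \E^{(\bullet)}$ to $\U$. On $\U$ we have, by construction of $\R\underline{\Gamma}^\dag_T$ as the fibre of $\E^{(\bullet)} \to (\hdag T)(\E^{(\bullet)})$ together with the commutation \ref{iso-comm-locaux}, that $(\hdag (T\cap U))\bigl(\R\underline{\Gamma}^\dag_T(\E^{(\bullet)})|\U\bigr)$ is again the fibre of an isomorphism (since $T\cap U = \emptyset$, so $(\hdag \emptyset) = \mathrm{id}$ on $\U$), forcing $\R\underline{\Gamma}^\dag_T(\E^{(\bullet)})|\U \riso 0$; combined with (2) this gives $\E^{(\bullet)}|\U \riso 0$, which is (1). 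Concretely I would observe that $\R\underline{\Gamma}^\dag_T$ restricted to $\U$ is the zero functor, because $T$ disappears there, and this is the content of the commutation of $\R\underline{\Gamma}^\dag_T$ with the localization functor $(\hdag\,\cdot\,)$.

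\textbf{Main obstacle.} The technical heart is the reduction of $(\hdag T)$ to a purely local-on-$\U$ construction in the coherent/lim-ind-isogeny setting: one must be careful that $\E^{(\bullet)}$ lies in $\smash{\underrightarrow{LD}}^{\mathrm{b}}_{\Q,\mathrm{coh}}(\smash{\widetilde{\D}}^{(\bullet)}_{\fP/\S}(D))$ and not merely in the quasi-coherent category, so that the vanishing $\E^{(\bullet)}|\U \riso 0$ makes sense as a statement about coherent objects and propagates correctly through the tensor-product defining $(\hdag T)$; here the boundedness of the cohomological dimension of $\smash{\widetilde{\B}}^{(\bullet)}_{\fP}(T)$ independently of the level (Corollary \ref{rema-dim-coh-finie}) is what guarantees the functor $(\hdag T)$ stays inside the coherent bounded category. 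Beyond this bookkeeping the argument is formal, being essentially a transcription of \cite[4.2.4]{caro-stab-sys-ind-surcoh} with singularities along the auxiliary divisor $D$ added; so I expect the proof to consist mainly of citing \ref{oub-pl-fid}, \ref{iso-comm-locaux} and \ref{thick-subcat}.\ref{thick-subcat2} and chasing the triangle \ref{tri-loc-berthelot}.
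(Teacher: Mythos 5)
Your overall architecture is sound and two of the three implications are handled correctly: (2)$\Leftrightarrow$(3) is indeed immediate from the localization triangle \ref{tri-loc-berthelot}, and (2)$\Rightarrow$(1) (equivalently (3)$\Rightarrow$(1)) works because $T \cap U = \emptyset$ forces $(\hdag T)|_\U$ to be the identity functor, so $\R\underline{\Gamma}^\dag_T(\E^{(\bullet)})|_\U = 0$, or more directly $(\hdag T)(\E^{(\bullet)})|_\U \riso \E^{(\bullet)}|_\U$.

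The implication (1)$\Rightarrow$(3) is where you have a genuine gap, and it is the mathematical content of the lemma. You write that $(\hdag T)(\E^{(\bullet)})$ ``is controlled by'' $\E^{(\bullet)}|\U$ and then invoke ``the property of being zero in $\smash{\underrightarrow{LD}}$ is local'' (\ref{cohDLMislocal}, \ref{thick-subcat}) to conclude. But locality concerns open \emph{covers}: vanishing on $\U$ does not propagate to vanishing on $\fP$, because $\U$ does not cover the points of $T$. The hypothesis $\E^{(\bullet)}|\U \riso 0$ gives you $(\hdag T)(\E^{(\bullet)})|_\U \riso 0$ for free, but says nothing a priori about what happens on a neighborhood of $T$, and that is precisely where the assertion could fail. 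Your stated ``main obstacle'' --- the uniform bound on cohomological dimension of $\widetilde{\B}^{(\bullet)}_\fP(T)$ from \ref{rema-dim-coh-finie} --- is genuinely needed to keep the construction bounded, but it does not address the vanishing near $T$; it is not the operative point here.

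The ingredient your argument is missing is twofold. First, the base change functor $(\hdag T) = (\hdag T\cup D,\,D)$ carries $\smash{\underrightarrow{LD}}^{\mathrm{b}}_{\Q,\mathrm{coh}}(\smash{\widetilde{\D}}^{(\bullet)}_{\fP/\S}(D))$ into $\smash{\underrightarrow{LD}}^{\mathrm{b}}_{\Q,\mathrm{coh}}(\smash{\widetilde{\D}}^{(\bullet)}_{\fP/\S}(T\cup D))$ --- this is the content of \ref{hdag-def-coh} read through the equivalence \ref{eqcatcoh} --- so $(\hdag T)(\E^{(\bullet)})$ is \emph{coherent} over the localized ring. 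Second, and crucially, for objects coherent over $\D^\dag_\fP(\hdag T\cup D)_\Q$, restriction to $\U = \fP\setminus T$ is faithful: this follows, via the theorem of type A, from the faithful flatness of $D^\dag_\fP(\hdag T\cup D)_\Q \to D^\dag_\U(\hdag D\cap U)_\Q$ on affine opens (exactly the ingredient used in the unnumbered proposition closing subsection 1.4, and originally \cite[4.3.10, 4.3.12]{Be1}). Combining: $(\hdag T)(\E^{(\bullet)})$ is coherent over $\smash{\widetilde{\D}}^{(\bullet)}_{\fP/\S}(T\cup D)$, its restriction to $\U$ is $\E^{(\bullet)}|\U \riso 0$, hence by faithfulness it vanishes. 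This is where the coherence hypothesis enters essentially; without it the implication (1)$\Rightarrow$(3) is false, and no amount of locality or bounded Tor-dimension saves it.
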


\begin{proof}
We can copy the proof of \cite[4.3.2]{caro-stab-sys-ind-surcoh}.
\end{proof}

\begin{lemm}
\label{induction-div-coh}
Let $T _1, \dots, T _{r}$ be some divisors of $P$.
Let $T$ be a divisor of $P$.
Then
$\R \underline{\Gamma} ^\dag _{T _r} \circ \cdots \circ 
\R \underline{\Gamma} ^\dag _{T _1} (\smash{\widetilde{\B}} _{\fP} ^{(\bullet)} (T  ))
\in 
\smash{\underrightarrow{LD}} ^{\mathrm{b}} _{\Q,\mathrm{coh}} ( \smash{\widehat{\D}} _{\fP /\S } ^{(\bullet)} )$.
\end{lemm}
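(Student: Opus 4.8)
The statement claims that applying a finite composition of local cohomological functors $\R\underline{\Gamma}^\dag_{T_i}$ to the coherent constant coefficient $\smash{\widetilde{\B}}_{\fP}^{(\bullet)}(T)$ preserves coherence as an object of $\smash{\underrightarrow{LD}}^{\mathrm{b}}_{\Q,\mathrm{coh}}(\smash{\widehat{\D}}_{\fP/\S}^{(\bullet)})$. The proof goes by induction on $r$, the number of local cohomological functors applied. The case $r=0$ is precisely Corollary \ref{coh-Bbullet}: we know $\smash{\widetilde{\B}}_{\fP}^{(\bullet)}(T)\in\underrightarrow{LM}_{\Q,\mathrm{coh}}(\smash{\widehat{\D}}_{\fP}^{(\bullet)})\cap\underrightarrow{LM}_{\Q,\mathrm{coh}}(\smash{\widetilde{\D}}_{\fP}^{(\bullet)}(T))$, which is a fortiori an object of $\smash{\underrightarrow{LD}}^{\mathrm{b}}_{\Q,\mathrm{coh}}(\smash{\widehat{\D}}_{\fP/\S}^{(\bullet)})$.

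For the inductive step, set $\FF^{(\bullet)}:=\R\underline{\Gamma}^\dag_{T_{r-1}}\circ\cdots\circ\R\underline{\Gamma}^\dag_{T_1}(\smash{\widetilde{\B}}_{\fP}^{(\bullet)}(T))$, which by induction lies in $\smash{\underrightarrow{LD}}^{\mathrm{b}}_{\Q,\mathrm{coh}}(\smash{\widehat{\D}}_{\fP/\S}^{(\bullet)})$. First I would reduce to understanding the localization functor $(\hdag T_r)$ applied to $\FF^{(\bullet)}$, using the defining exact triangle $\Delta_{T_r}(\FF^{(\bullet)})$ of \ref{tri-loc-berthelot}: since $\smash{\underrightarrow{LD}}^{\mathrm{b}}_{\Q,\mathrm{coh}}(\smash{\widehat{\D}}_{\fP/\S}^{(\bullet)})$ is a thick (triangulated) subcategory of $\smash{\underrightarrow{LD}}^{\mathrm{b}}_{\Q}(\smash{\widehat{\D}}_{\fP/\S}^{(\bullet)})$ (this is \ref{thick-subcat}.\ref{thick-subcat1}, combined with \ref{eqcatLD=DSM-fonct} and \ref{eqcatLD=DSM-fonct-coh}), it is stable under cones; hence $\R\underline{\Gamma}^\dag_{T_r}(\FF^{(\bullet)})$ will be coherent as soon as $(\hdag T_r)(\FF^{(\bullet)})$ is coherent. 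Using the commutation isomorphisms \ref{iso-comm-locaux} between $(\hdag T_r)$ and the functors $\R\underline{\Gamma}^\dag_{T_i}$, together with the compatibility with tensor products \ref{iso-comm-locaux-prod-tens}, one is reduced to computing $(\hdag T_r)$ on the finitely many localizations/local cohomologies of $\smash{\widetilde{\B}}_{\fP}^{(\bullet)}(T)$ along divisors; by the change-of-divisors formula \ref{hdagT'T=cup} each such $(\hdag T_r)\circ(\hdag T_i)$ rewrites as $(\hdag T_i\cup T_r)$, and \ref{coro1limTouD} (or \ref{coh-Bbullet} applied to the divisor $T\cup T_i\cup\cdots$) shows these are again coherent objects of the $\widetilde{\D}^{(\bullet)}(\cdot)$-flavor. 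Since localization along a larger divisor of a coherent $\smash{\D}^\dag_{\fP}(\hdag\cdot)_\Q$-module with coherent restriction is again coherent (this is the content of \ref{coro1limTouD} / \ref{stab-coh-oub-DT} at the level of $\smash{\underrightarrow{LD}}$), and since $\O_{\fP}(\hdag D)_\Q$ is $\smash{\D}^\dag_{\fP,\Q}$-coherent for every divisor $D$ by Berthelot's theorem \ref{coh-ss-div} (hence $\smash{\widetilde{\B}}_{\fP}^{(\bullet)}(D)$ lies in $\underrightarrow{LM}_{\Q,\mathrm{coh}}(\smash{\widehat{\D}}_{\fP}^{(\bullet)})$ by \ref{coh-Bbullet}), one obtains coherence of $(\hdag T_r)(\FF^{(\bullet)})$ over $\smash{\widehat{\D}}_{\fP/\S}^{(\bullet)}$, and then of $\R\underline{\Gamma}^\dag_{T_r}(\FF^{(\bullet)})$ by the triangle.

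Concretely, the cleanest organization is: by the triangle $\Delta_{T_r}$ and thickness, it suffices to show $(\hdag T_r)(\FF^{(\bullet)})$ and $\FF^{(\bullet)}$ are both coherent; $\FF^{(\bullet)}$ is coherent by the induction hypothesis, so only $(\hdag T_r)(\FF^{(\bullet)})$ remains. To handle it, unwind one more $\R\underline{\Gamma}^\dag$: writing $\FF^{(\bullet)}=\R\underline{\Gamma}^\dag_{T_{r-1}}(\G^{(\bullet)})$ with $\G^{(\bullet)}=\R\underline{\Gamma}^\dag_{T_{r-2}}\circ\cdots\circ\R\underline{\Gamma}^\dag_{T_1}(\smash{\widetilde{\B}}_{\fP}^{(\bullet)}(T))$ coherent, the triangle $\Delta_{T_{r-1}}(\G^{(\bullet)})$ gives a triangle relating $(\hdag T_r)\R\underline{\Gamma}^\dag_{T_{r-1}}(\G^{(\bullet)})$, $(\hdag T_r)(\G^{(\bullet)})$ and $(\hdag T_r)(\hdag T_{r-1})(\G^{(\bullet)})\riso(\hdag T_{r-1}\cup T_r)(\G^{(\bullet)})$; iterating, one reaches the base $\smash{\widetilde{\B}}_{\fP}^{(\bullet)}(T)$ with $(\hdag T_{i_1}\cup\cdots\cup T_{i_k}\cup T_r)(\smash{\widetilde{\B}}_{\fP}^{(\bullet)}(T))\riso\smash{\widetilde{\B}}_{\fP}^{(\bullet)}(T\cup T_{i_1}\cup\cdots\cup T_{i_k}\cup T_r)$ up to \ref{oub-pl-fid-iso1}, which is coherent over $\smash{\widehat{\D}}^{(\bullet)}$ by \ref{coh-Bbullet}. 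The main obstacle is purely bookkeeping: keeping track of the (many) divisors appearing when one expands all the triangles and verifying that each localization $(\hdag\text{(union of divisors)})$ of $\smash{\widetilde{\B}}_{\fP}^{(\bullet)}(T)$ is again of the form $\smash{\widetilde{\B}}_{\fP}^{(\bullet)}(\text{larger divisor})$, so that \ref{coh-ss-div} and \ref{coh-Bbullet} apply; the triangulated-category thickness argument then propagates coherence through all the cones with no further input. One could alternatively phrase the whole reduction at the level of $D^{\mathrm{b}}_{\mathrm{coh}}(\smash{\D}^\dag_{\fP,\Q})$ via the equivalence \ref{eqcatcoh} and the functor $\underrightarrow{\lim}$, using \ref{limTouD}/\ref{coro1limTouD} to descend coherence back to the inductive-system level, which is perhaps the slickest write-up.
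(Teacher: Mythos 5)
Your approach matches the paper's (which tersely says: ``an easy consequence by devissage of Theorem \ref{coh-Bbullet}''): you use the localization triangles $\Delta_{T_i}$, thickness of $\smash{\underrightarrow{LD}}^{\mathrm{b}}_{\Q,\mathrm{coh}}(\smash{\widehat{\D}}_{\fP/\S}^{(\bullet)})$ (via \ref{thick-subcat}), and the fact that $\smash{\widetilde{\B}}^{(\bullet)}_{\fP}(T')\in\underrightarrow{LM}_{\Q,\mathrm{coh}}(\smash{\widehat{\D}}^{(\bullet)}_{\fP})$ for every divisor $T'$ (Corollary \ref{coh-Bbullet}). That is exactly the intended argument, and the identifications $(\hdag T_r)(\hdag T_i)\riso(\hdag T_i\cup T_r)$ from \ref{hdagT'T=cup} are the right tools.

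The one point you should tighten is the inductive statement. As written, your induction hypothesis (``$\FF^{(\bullet)}=\R\underline{\Gamma}^\dag_{T_{r-1}}\circ\cdots\circ\R\underline{\Gamma}^\dag_{T_1}(\smash{\widetilde{\B}}^{(\bullet)}_{\fP}(T))$ is coherent'') is not strong enough on its own: to conclude via the triangle $\Delta_{T_r}(\FF^{(\bullet)})$ you need coherence of $(\hdag T_r)(\FF^{(\bullet)})$, and $(\hdag T_r)$ does not in general preserve coherence over $\smash{\widehat{\D}}^{(\bullet)}_{\fP}$ — that is precisely what is hard here. Your subsequent ``unwinding''/``iterating'' passage does resolve this, but what it actually uses is the stronger hypothesis: \emph{for every additional divisor $D$, the object $(\hdag D)\circ\R\underline{\Gamma}^\dag_{T_s}\circ\cdots\circ\R\underline{\Gamma}^\dag_{T_1}(\smash{\widetilde{\B}}^{(\bullet)}_{\fP}(T))$ is coherent}. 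If you make this the induction claim (on $s$), the step is a one-liner: applying $(\hdag D)$ to $\Delta_{T_{s+1}}(\G^{(\bullet)})$ produces a triangle whose other two vertices are $(\hdag D)(\G^{(\bullet)})$ and $(\hdag D\cup T_{s+1})(\G^{(\bullet)})$ (by \ref{hdagT'T=cup}), both coherent by hypothesis, and the base case $s=0$ is $(\hdag D)(\smash{\widetilde{\B}}^{(\bullet)}_{\fP}(T))\riso\smash{\widetilde{\B}}^{(\bullet)}_{\fP}(T\cup D)$, coherent by \ref{coh-Bbullet}. Taking $D$ empty and $s=r$ recovers the lemma. With that reformulation the proof is complete and clean; the middle paragraph of your proposal (invoking \ref{iso-comm-locaux-prod-tens}, \ref{stab-coh-oub-DT}, and the alternative via $\underrightarrow{\lim}$) is then superfluous and can be dropped.
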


\begin{proof}
We can copy \cite[4.3.3]{caro-stab-sys-ind-surcoh}, i.e. 
this is an easy consequence by devissage of Theorem 
\ref{coh-Bbullet}.
\end{proof}

\begin{cor}
\label{cor-induction-div-coh}
For $r \in \N$, let $T _1, \dots, T _{r}$ be some divisors of $P$ (by convention, $r=0$ means there is no divisors).
Let $T$ be a divisor of $P$.
Then there exists 
a canonical isomorphism
$\R \sp _* 
\left (
\underline{\Gamma} ^\dag _{T _r} 
\circ \cdots \circ 
\underline{\Gamma} ^\dag _{T _1} ( j ^\dag _{T}\O _{\fP _K})
\right )
\riso 
\underrightarrow{\lim}\,
\R \underline{\Gamma} ^\dag _{T _r} 
\circ \cdots \circ 
\R \underline{\Gamma} ^\dag _{T _1} (\smash{\widetilde{\B}} _{\fP} ^{(\bullet)} (T  ))$
which are functorial in $T _i$ and $T$, i.e. making commutative the diagram of 
$D ^{\mathrm{b}} _{\mathrm{coh}}( \smash{\D} ^\dag _{\fP,\Q} )$
\begin{gather}
\notag
\xymatrix{
{\R \sp _* 
\left (
\underline{\Gamma} ^\dag _{T _r} 
\circ \cdots \circ 
\underline{\Gamma} ^\dag _{T _1} ( \O _{\fP _K})
\right )} 
\ar[r] ^-{\sim}
\ar[d] ^-{}
& 
{\underrightarrow{\lim}\,
\R \underline{\Gamma} ^\dag _{T _r} 
\circ \cdots \circ 
\R \underline{\Gamma} ^\dag _{T _1} (\O _{\fP} ^{(\bullet)} )} 
\ar[d] ^-{}
\\ 
{\R \sp _* 
\left (
\underline{\Gamma} ^\dag _{T _r} 
\circ \cdots \circ 
\underline{\Gamma} ^\dag _{T _1} ( j ^\dag _{T}\O _{\fP _K})
\right )} 
\ar[r] ^-{\sim}
\ar[d] ^-{}
& 
{\underrightarrow{\lim}\,
\R \underline{\Gamma} ^\dag _{T _r} 
\circ \cdots \circ 
\R \underline{\Gamma} ^\dag _{T _1} (\smash{\widetilde{\B}} _{\fP} ^{(\bullet)} (T  ))} 
\ar[d] ^-{}
\\
{\R \sp _* 
\left (
\underline{\Gamma} ^\dag _{T _{r-1}} 
\circ \cdots \circ 
\underline{\Gamma} ^\dag _{T _1} ( j ^\dag _{T}\O _{\fP _K})
\right )} 
\ar[r] ^-{\sim}
& 
{\underrightarrow{\lim}\,
\R \underline{\Gamma} ^\dag _{T _{r-1}} 
\circ \cdots \circ 
\R \underline{\Gamma} ^\dag _{T _1} (\smash{\widetilde{\B}} _{\fP} ^{(\bullet)} (T  )),} 
}
\end{gather}
where the vertical arrows are the canonical ones induced by 
$\O _{\fP _K} \to j ^\dag _{T}\O _{\fP _K}$,
$\O _{\fP} ^{(\bullet)} 
\to \smash{\widetilde{\B}} _{\fP} ^{(\bullet)} (T  )$,
$\underline{\Gamma} ^\dag _{T _r}  \to id$, 
$\R \underline{\Gamma} ^\dag _{T _r} \to id$, 
and where $\underrightarrow{\lim} $
is the equivalence of categories
$\underrightarrow{\lim} 
\colon
\underrightarrow{LD} ^{\mathrm{b}} _{\Q, \mathrm{coh}} (\smash{\widehat{\D}} _{\fP /\S } ^{(\bullet)})
\cong
D ^{\mathrm{b}} _{\mathrm{coh}}( \smash{\D} ^\dag _{\fP,\Q} )$
(see \ref{eqcat-limcoh}).
\end{cor}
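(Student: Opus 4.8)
The statement to prove is Corollary \ref{cor-induction-div-coh}: for divisors $T_1,\dots,T_r$ and $T$ of $P$, there is a canonical isomorphism
$\R\sp_*\big(\underline{\Gamma}^\dag_{T_r}\circ\cdots\circ\underline{\Gamma}^\dag_{T_1}(j^\dag_T\O_{\fP_K})\big)\riso\underrightarrow{\lim}\,\R\underline{\Gamma}^\dag_{T_r}\circ\cdots\circ\R\underline{\Gamma}^\dag_{T_1}(\smash{\widetilde{\B}}_{\fP}^{(\bullet)}(T))$
compatible with the evident functorialities (in $T_i$, in $T$, and with the truncation maps $\underline{\Gamma}^\dag_{T_r}\to\mathrm{id}$). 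First I would note that by Lemma \ref{induction-div-coh} the right-hand inductive system is indeed coherent, so that $\underrightarrow{\lim}$ is the equivalence of categories of \ref{eqcat-limcoh}, and both sides of the claimed isomorphism live in $D^{\mathrm{b}}_{\mathrm{coh}}(\smash{\D}^\dag_{\fP,\Q})$; this justifies the formula making sense. The overall strategy is to induct on $r$, using the exact triangle $\Delta_{T_i}$ of \ref{tri-loc-berthelot} at each stage, and at the bottom ($r=0$) to reduce to identifying $\R\sp_*(j^\dag_T\O_{\fP_K})$ with $\underrightarrow{\lim}\,\smash{\widetilde{\B}}_{\fP}^{(\bullet)}(T)$, which is $\O_{\fP}(\hdag T)_{\Q}$ by construction together with Theorem \ref{coh-ss-div} (and its reformulation \ref{coh-Bbullet}).

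The key steps are as follows. Step one: treat the base case. We have $\underrightarrow{\lim}\,\smash{\widetilde{\B}}_{\fP}^{(\bullet)}(T)=\O_{\fP}(\hdag T)_{\Q}$ and, by the construction of $\widehat{\B}^{(m)}_{\fP}(T)$ recalled from \cite[4.2]{Be1} and the compatibility of $\sp$ with $j^\dag$, there is a canonical isomorphism $\R\sp_*(j^\dag_T\O_{\fP_K})\riso\O_{\fP}(\hdag T)_{\Q}$ — this is exactly the content of the level-by-level limit of Berthelot's description, and \ref{coh-Bbullet} guarantees coherence of the inductive system on the $\D$-module side. Step two: the inductive step. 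Assume the isomorphism is known for $r-1$, i.e. for the functor $G:=\underline{\Gamma}^\dag_{T_{r-1}}\circ\cdots\circ\underline{\Gamma}^\dag_{T_1}$ applied to $j^\dag_T\O_{\fP_K}$, with its $\D$-module avatar $G^{(\bullet)}:=\R\underline{\Gamma}^\dag_{T_{r-1}}\circ\cdots\circ\R\underline{\Gamma}^\dag_{T_1}(\smash{\widetilde{\B}}_{\fP}^{(\bullet)}(T))$. Applying $\R\sp_*$ to the short exact sequence $0\to\underline{\Gamma}^\dag_{T_r}\to\mathrm{id}\to j^\dag_{T_r}\to 0$ evaluated at $G(j^\dag_T\O_{\fP_K})$ gives a triangle whose last two terms are $\R\sp_*G(j^\dag_T\O_{\fP_K})$ and $\R\sp_*(j^\dag_{T_r}G(j^\dag_T\O_{\fP_K}))$. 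On the other side, the triangle $\Delta_{T_r}(G^{(\bullet)})$ of \ref{tri-loc-berthelot} has last two terms $G^{(\bullet)}$ and $(\hdag T_r)(G^{(\bullet)})$. So I need: (a) the induction hypothesis identifies the $\mathrm{id}$-terms; (b) a canonical identification $\R\sp_*(j^\dag_{T_r}G(j^\dag_T\O_{\fP_K}))\riso\underrightarrow{\lim}(\hdag T_r)(G^{(\bullet)})$, which follows from the induction hypothesis together with the commutation of $\R\sp_*$ with localization along a divisor (i.e. $\R\sp_*\circ j^\dag_{T_r}\riso(\hdag T_r)\circ\R\sp_*$ after applying $\underrightarrow{\lim}$, which is how $(\hdag T_r)$ is set up in \ref{hdag-def} and \ref{rema-dim-coh-finie}); (c) compatibility of the two identifications (a) and (b) with the maps $\mathrm{id}\to j^\dag_{T_r}$ and $\mathrm{id}\to(\hdag T_r)$. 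Granting (a), (b), (c), the uniqueness statement in \ref{GammaT} — precisely, the vanishing $\mathrm{Hom}(\underline{\phantom{x}},\ (\hdag T_r)(\underline{\phantom{x}})[-1])=0$ from Lemma \ref{annulationHom-hdag} — forces a unique isomorphism of cones, which gives the desired isomorphism $\R\sp_*G'(j^\dag_T\O_{\fP_K})\riso\underrightarrow{\lim}\,\R\underline{\Gamma}^\dag_{T_r}(G^{(\bullet)})$ where $G'=\underline{\Gamma}^\dag_{T_r}\circ G$, and moreover this isomorphism automatically sits in a morphism of triangles, hence is compatible with the map $\underline{\Gamma}^\dag_{T_r}\to\mathrm{id}$. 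The functoriality in each $T_i$ and in $T$ is obtained the same way, using functoriality of the triangles $\Delta_{T_i}$ (and of $\R\sp_*$, $j^\dag$) together with the uniqueness.

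The step I expect to be the main obstacle is (c), the compatibility of the localization/$j^\dag$ commutation isomorphism on the geometric side with the defining triangle $\Delta_{T_r}$ on the arithmetic side — i.e. checking that the square relating $\R\sp_*(\mathrm{id}\to j^\dag_{T_r})$ to $\underrightarrow{\lim}(\mathrm{id}\to(\hdag T_r))$ commutes, not merely that the objects match. This is where one must unwind how $(\hdag T_r)=\smash{\widetilde{\D}}^{(\bullet)}_{\fP/\S}(T_r)\smash{\widehat{\otimes}}^\L_{\smash{\widetilde{\D}}^{(\bullet)}_{\fP/\S}}-$ was built in \ref{hdag-def}/\ref{hdagT-nota} and how $j^\dag_{T_r}\O_{\fP_K}$ relates to $\underrightarrow{\lim}\,\smash{\widetilde{\B}}^{(\bullet)}_{\fP}(T_r)$ at the level of the maps from the constant coefficient; but this is a tautology chase at the level of the construction of the sheaves $\widehat{\B}^{(m)}$, and is exactly what is recorded (in the divisor-free or single-divisor case) in references such as \cite[4.3.3]{caro-stab-sys-ind-surcoh}. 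Concretely, I would cite \cite[4.3.4]{caro-stab-sys-ind-surcoh} (or its analogue) for the compatibility in the single-divisor case and then feed it through the induction; the remaining verifications — coherence at each stage, existence of the morphisms of triangles, functoriality — are all immediate from Lemma \ref{induction-div-coh}, Lemma \ref{annulationHom-hdag}, and the functoriality built into \ref{GammaT} and \ref{fonct-hdagX2}. Thus the proof reduces, as the paper indicates, to copying \cite[4.3.4]{caro-stab-sys-ind-surcoh} with the bookkeeping of one extra localization divisor $T$ carried along throughout.
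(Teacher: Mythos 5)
Your overall inductive structure matches the paper exactly: induct on $r$, identify the $r=0$ case as the base-case isomorphism $\R\sp_*(j^\dag_T\O_{\fP_K})\riso\O_\fP(\hdag T)_\Q\riso\underrightarrow{\lim}\,\smash{\widetilde{\B}}_{\fP}^{(\bullet)}(T)$, apply $\R\sp_*$ to the localization short exact sequence, and invoke the $\mathrm{Hom}$-vanishing of Lemma \ref{annulationHom-hdag} together with \cite[1.1.9]{BBD} to produce a unique morphism of triangles.

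Where your route diverges — and where it misses the specific mechanism that makes the proof clean — is in handling the third term of the localization triangle. You leave it as $\R\sp_*(j^\dag_{T_r}G)$ and propose to identify it with $\underrightarrow{\lim}(\hdag T_r)(G^{(\bullet)})$ by invoking a ``commutation of $\R\sp_*$ with localization along a divisor'' as an independent input. That commutation is not established anywhere in the paper as a standalone fact, and trying to prove it directly is dangerously close to a circularity: what it asserts is essentially the content of the $r=1$ case (with $T$ varying). The paper avoids this entirely by exploiting the extra divisor parameter $T$ in the statement. Since $j^\dag_{T_r}\,j^\dag_T\O_{\fP_K}=j^\dag_{T_r\cup T}\O_{\fP_K}$ (and $j^\dag_{T_r}$ commutes with the $\underline{\Gamma}^\dag_{T_i}$), the third term of the exact sequence is literally $\underline{\Gamma}^\dag_{T_{r-1}}\circ\cdots\circ\underline{\Gamma}^\dag_{T_1}(j^\dag_{T_r\cup T}\O_{\fP_K})$, which has only $r-1$ local-cohomology factors and the divisor $T_r\cup T$ in place of $T$. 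On the arithmetic side, $\smash{\widetilde{\B}}_{\fP}^{(\bullet)}(T_r\cup T)\riso(\hdag T_r)\smash{\widetilde{\B}}_{\fP}^{(\bullet)}(T)$ by \ref{hdagT'T=cup}, so both the middle and the right terms fall under the induction hypothesis directly, with two different values of the parameter $T$. This is precisely why the corollary is stated with $T$ as a free parameter: the clause about functoriality in $T$ (the commuting square induced by $\O_{\fP_K}\to j^\dag_T\O_{\fP_K}$, here specialized to $j^\dag_T\to j^\dag_{T_r\cup T}$) is exactly the compatibility you flag as obstacle (c), and it comes for free from the induction hypothesis rather than needing a separate verification or an appeal to an external reference. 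So your approach is workable in spirit, but the paper's reformulation makes step (b) disappear and resolves step (c) automatically, which is a genuine simplification you should internalize rather than defer to \cite[4.3.4]{caro-stab-sys-ind-surcoh}.
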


\begin{proof}
This is checked by induction on $r\in \N$. 
When $r = 0$, this corresponds to the functorial in $T$  isomorphism 
$\R \sp _* (j ^\dag _{T}\O _{\fP _K})
\riso 
\O _{\fP} (\hdag T) _\Q
\riso 
\underrightarrow{\lim}\, \smash{\widetilde{\B}} _{\fP} ^{(\bullet)} (T  )$.
Since
$j ^\dag _{T _r}  j ^\dag _{T}\O _{\fP _K}= j ^\dag _{T _r\cup T}\O _{\fP _K}$, we get the exact sequence
\begin{equation}
0\to 
\underline{\Gamma} ^\dag _{T _r} 
\circ \cdots \circ 
\underline{\Gamma} ^\dag _{T _1} ( j ^\dag _{T}\O _{\fP _K})
\to 
\underline{\Gamma} ^\dag _{T _{r-1}} 
\circ \cdots \circ 
\underline{\Gamma} ^\dag _{T _1} ( j ^\dag _{T}\O _{\fP _K})
\to 
\underline{\Gamma} ^\dag _{T _{r-1}} 
\circ \cdots \circ 
\underline{\Gamma} ^\dag _{T _1} (j ^\dag _{T _r \cup T}\O _{\fP _K})
\to 0.
\end{equation}
Hence, by induction hypothesis, we get a unique 
(using again \cite[1.1.9]{BBD}) 
isomorphism making commutative the diagram
of 
$D ^{\mathrm{b}} _{\mathrm{coh}}( \smash{\D} ^\dag _{\fP,\Q} )$:
\begin{equation}
\xymatrix{
{\R \sp _* 
\underline{\Gamma} ^\dag _{T _r} 
\circ \cdots \circ 
\underline{\Gamma} ^\dag _{T _1} ( j ^\dag _{T}\O _{\fP _K})} 
\ar@{.>}[r] ^-{\sim}
\ar[d] ^-{}
& 
{\underrightarrow{\lim}\,
\R \underline{\Gamma} ^\dag _{T _r} 
\circ \cdots \circ 
\R \underline{\Gamma} ^\dag _{T _1} (\smash{\widetilde{\B}} _{\fP} ^{(\bullet)} (T  ))} 
\ar[d] ^-{}
\\
{\R \sp _* 
\underline{\Gamma} ^\dag _{T _{r-1}} 
\circ \cdots \circ 
\underline{\Gamma} ^\dag _{T _1} ( j ^\dag _{T}\O _{\fP _K})}
\ar[r] ^-{\sim}
\ar[d] ^-{}
& 
{\underrightarrow{\lim}\,
\R \underline{\Gamma} ^\dag _{T _{r-1}} 
\circ \cdots \circ 
\R \underline{\Gamma} ^\dag _{T _1} (\smash{\widetilde{\B}} _{\fP} ^{(\bullet)} (T  ))}
\ar[d] ^-{}
\\
{\R \sp _* 
\underline{\Gamma} ^\dag _{T _{r-1}} 
\circ \cdots \circ 
\underline{\Gamma} ^\dag _{T _1} (j ^\dag _{T _r \cup T}\O _{\fP _K})} 
\ar[r] ^-{\sim}
\ar[d] ^-{}
&
{\underrightarrow{\lim}\,
\R \underline{\Gamma} ^\dag _{T _{r-1}} 
\circ \cdots \circ 
\R \underline{\Gamma} ^\dag _{T _1} (\smash{\widetilde{\B}} _{\fP} ^{(\bullet)} (T _r \cup T  ))}
\ar[d] ^-{}
\\
{\R \sp _* 
\underline{\Gamma} ^\dag _{T _r} 
\circ \cdots \circ 
\underline{\Gamma} ^\dag _{T _1} ( j ^\dag _{T}\O _{\fP _K})[1]} 
\ar@{.>}[r] ^-{\sim}
& 
{\underrightarrow{\lim}\,
\R \underline{\Gamma} ^\dag _{T _r} 
\circ \cdots \circ 
\R \underline{\Gamma} ^\dag _{T _1} (\smash{\widetilde{\B}} _{\fP} ^{(\bullet)} (T  )) [1]} 
}
\end{equation}
where the triangle given by the right vertical arrows is distinguished since
we have
$\smash{\widetilde{\B}} _{\fP} ^{(\bullet)} (T _r \cup T  )
\riso 
(\hdag T _r) ( \smash{\widetilde{\B}} _{\fP} ^{(\bullet)} (T  ))$
(use 
\ref{hdagT'T=cup}).
\end{proof}

\subsection{Local cohomological functor with strict support over a closed subvariety}
Let  $\fP $ be a smooth  formal scheme over $\S $.
\label{localcoho-section}
\begin{dfn}
\label{dfn-4.3.4}
Let $X$ be a (reduced) closed subscheme of $P$.
Similarly to \cite[2.2]{caro_surcoherent}, we define
the local cohomological functor  
$\R \underline{\Gamma} ^\dag _{X}
\colon 
\smash{\underrightarrow{LD}} ^{\mathrm{b}} _{\Q,\mathrm{qc}} 
(\overset{^\mathrm{l}}{} \smash{\widehat{\D}} _{\fP /\S  } ^{(\bullet)} )
\to 
\smash{\underrightarrow{LD}} ^{\mathrm{b}} _{\Q,\mathrm{qc}} 
(\overset{^\mathrm{l}}{} \smash{\widehat{\D}} _{\fP /\S  } ^{(\bullet)} )$
with strict support in $X$ as follows.
Since $P$ is the sum of its irreducible components, 
we reduce to the case where $P$ is integral. 
\begin{enumerate}
\item When $X= P$, the functor $\R \underline{\Gamma} ^\dag _{X}$ 
is by definition the identity.

\item Suppose now $X \not = P$. 
Following \cite[2.2.5]{caro_surcoherent} (there was a typo: we need to add the hypothesis {`` $P$ is integral''})
the underlying space of $X$ is equal to a finite intersection of (the support of some) divisors of $P$.
Choose some divisors
$T _1, \dots, T _{r}$ of $P$ such that
$X = \cap _{i =1} ^{r} T _i$.
For $\E ^{(\bullet)}
\in \smash{\underrightarrow{LD}} ^\mathrm{b} _{\Q,\mathrm{qc}} ( \smash{\widehat{\D}} _{\fP /\S } ^{(\bullet)} )$,
the complex
$\R \underline{\Gamma} ^\dag _{X} (\E ^{(\bullet)} ):=
\R \underline{\Gamma} ^\dag _{T _r} \circ \cdots \circ 
\R \underline{\Gamma} ^\dag _{T _1} (\E ^{(\bullet)} )$
does not depend canonically on the choice of the divisors  $T _1,\dots, T _r$
satisfying $X = \cap _{i =1} ^{r} T _i$.
(Indeed, 
by using 
\ref{iso-comm-locaux-prod-tens}, 
we reduce to the case
$\E ^{(\bullet)} = \widehat{\cO} _{\fP}^{(\bullet)}  $.
Next, using Lemmas \ref{lemme2.2.3} and \ref{induction-div-coh},
it is useless to add divisors containing $ X$.)

\end{enumerate}

\end{dfn}

\begin{prop}
\label{cor-induction-div-coh2}
Let $X$ be a smooth closed subscheme of $P$.
The complex
$\R \underline{\Gamma} ^\dag _X \O _{\fP,\Q} 
:=\R \sp _* \underline{\Gamma} ^\dag _X ( \O _{\fP _K})$
defined at \ref{ntn-GammaZO-rig} 
is canonically isomorphic to 
$\underrightarrow{\lim}\,
\R \underline{\Gamma} ^\dag _{X} 
 (\O _{\fP} ^{(\bullet)} )$, which confirms the compatibility of our notation.

\end{prop}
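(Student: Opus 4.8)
The plan is to reduce to the case where $P$ is integral (both sides commute with the decomposition of $P$ into irreducible components, since $\fP$ and hence the functors split accordingly), and then distinguish two cases according to whether $X = P$ or $X \subsetneq P$. When $X = P$, the left-hand side $\R\sp_* \underline{\Gamma}^\dag_X(\O_{\fP_K})$ is simply $\R\sp_*(\O_{\fP_K}) = \O_{\fP,\Q}$ (no singularities to localize), while the right-hand side $\underrightarrow{\lim}\,\R\underline{\Gamma}^\dag_X(\O_{\fP}^{(\bullet)})$ is $\underrightarrow{\lim}\,\O_{\fP}^{(\bullet)} = \O_{\fP,\Q}$ since $\R\underline{\Gamma}^\dag_P$ is the identity by definition; the canonical comparison morphism is then the identity.

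The main case is $X \subsetneq P$ with $P$ integral. First I would invoke \cite[2.2.5]{caro_surcoherent} (cited in \ref{dfn-4.3.4}) to choose divisors $T_1,\dots,T_r$ of $P$ with $X = \cap_{i=1}^r T_i$, so that by definition $\R\underline{\Gamma}^\dag_X = \R\underline{\Gamma}^\dag_{T_r}\circ\cdots\circ\R\underline{\Gamma}^\dag_{T_1}$, and similarly $\underline{\Gamma}^\dag_X = \underline{\Gamma}^\dag_{T_r}\circ\cdots\circ\underline{\Gamma}^\dag_{T_1}$ on the rigid side. Then I would apply Corollary \ref{cor-induction-div-coh} in the special case where the divisor $T$ there is empty, which directly provides a canonical isomorphism
\begin{equation}
\notag
\R\sp_*\left(\underline{\Gamma}^\dag_{T_r}\circ\cdots\circ\underline{\Gamma}^\dag_{T_1}(\O_{\fP_K})\right)
\riso
\underrightarrow{\lim}\,\R\underline{\Gamma}^\dag_{T_r}\circ\cdots\circ\R\underline{\Gamma}^\dag_{T_1}(\O_{\fP}^{(\bullet)}),
\end{equation}
which is exactly the asserted isomorphism once one unwinds the definitions on both sides. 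The functoriality statements in Corollary \ref{cor-induction-div-coh} (compatibility with the transition maps $\underline{\Gamma}^\dag_{T_r}\to id$, etc.) guarantee that the isomorphism so obtained does not depend on the auxiliary choices, exactly in the way the definition of $\R\underline{\Gamma}^\dag_X$ in \ref{dfn-4.3.4} is shown to be independent of the choice of the $T_i$.

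The one point requiring a little care — and what I expect to be the main (mild) obstacle — is the \emph{independence of the chosen divisors}: one must check that the isomorphism produced from two different presentations $X = \cap T_i = \cap T'_j$ is the same. This is handled exactly as in the proof of well-definedness in \ref{dfn-4.3.4}: using the commutation isomorphisms of \ref{iso-comm-locaux} and \ref{iso-comm-locaux-prod-tens} together with Lemmas \ref{lemme2.2.3} and \ref{induction-div-coh}, one reduces to the observation that inserting extra divisors containing $X$ changes nothing (the corresponding $\R\underline{\Gamma}^\dag$ acts as the identity up to canonical isomorphism on the relevant object), and a common refinement of the two families of divisors then identifies the two comparison isomorphisms. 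Since on the rigid-analytic side the same argument applies verbatim to $\underline{\Gamma}^\dag$, and $\R\sp_*$ is a functor, the compatibility of Corollary \ref{cor-induction-div-coh} with refinement of divisors is enough to conclude. No deep new input is needed; the statement is essentially a bookkeeping corollary of \ref{cor-induction-div-coh} combined with the definitions.
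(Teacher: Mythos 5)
Your proposal is correct and takes essentially the same approach as the paper, whose proof consists of the single sentence ``By the construction explained in \ref{dfn-4.3.4}, this is a consequence of \ref{cor-induction-div-coh}.'' You have simply unpacked the bookkeeping (reduction to $P$ integral, the two cases $X=P$ and $X\subsetneq P$, the application of \ref{cor-induction-div-coh} with empty $T$, and the independence from the chosen divisors $T_i$) that the paper leaves implicit.
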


\begin{proof}
By the construction explained in \ref{dfn-4.3.4}, 
this is a consequence of \ref{cor-induction-div-coh}. 
\end{proof}

\begin{prop}
Let $X,~X'$ be two closed subschemes of  $P$, 
$\E ^{(\bullet)},~\FF ^{(\bullet)}
\in \smash{\underrightarrow{LD}} ^\mathrm{b} _{\Q,\mathrm{qc}} ( \smash{\widehat{\D}} _{\fP /\S } ^{(\bullet)})$.

\begin{enumerate}
\item 
We have the canonical isomorphism functorial in $\E ^{(\bullet)},~ X$, and $X'$ :
\begin{equation}
\label{theo2.2.8}
\R \underline{\Gamma} ^\dag _{X} \circ \R \underline{\Gamma} ^\dag _{X'} 
(\E ^{(\bullet)} )
\riso
\R \underline{\Gamma} ^\dag _{X\cap X'}  (\E ^{(\bullet)} ).
\end{equation}

\item We have the canonical isomorphism functorial in $\E ^{(\bullet)},~\FF ^{(\bullet)},~ X$, and $X'$ :
\begin{equation}
\label{fonctXX'Gamma-iso}
\R \underline{\Gamma} ^\dag _{X \cap X'} (\E ^{(\bullet)}
\smash{\widehat{\otimes}}^\L  _{\O ^{(\bullet)}  _{\fP} }  \FF ^{(\bullet)} )
\riso 
\R \underline{\Gamma} ^\dag _{X} 
(\E ^{(\bullet)})
\smash{\widehat{\otimes}}^\L  _{\O ^{(\bullet)}  _{\fP} }  
\R \underline{\Gamma} ^\dag _{X'}
(\FF ^{(\bullet)}).
\end{equation}
\end{enumerate}

\end{prop}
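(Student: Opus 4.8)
The statement asserts two compatible isomorphisms concerning the local cohomological functor $\R\underline{\Gamma}^\dag_X$ along closed subschemes: the transitivity/intersection formula \ref{theo2.2.8} and the commutation with exterior-type tensor products \ref{fonctXX'Gamma-iso}. My plan is to reduce both statements, via the construction of $\R\underline{\Gamma}^\dag_X$ in \ref{dfn-4.3.4}, to the already-established case of divisors, where all of \ref{iso-comm-locaux} and \ref{iso-comm-locaux-prod-tens} are available.

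First I would handle \ref{theo2.2.8}. Since $P$ is the sum of its irreducible components, I reduce to $P$ integral; the case $X=P$ or $X'=P$ is trivial since $\R\underline{\Gamma}^\dag_P=\mathrm{id}$, so assume $X,X'\subsetneq P$. By \cite[2.2.5]{caro_surcoherent} (in the integral case) I choose divisors $T_1,\dots,T_r$ with $X=\cap_i T_i$ and $T_1',\dots,T_s'$ with $X'=\cap_j T_j'$, so that $X\cap X'=\bigl(\cap_i T_i\bigr)\cap\bigl(\cap_j T_j'\bigr)$. Then, unwinding the definition,
\begin{equation}
\notag
\R\underline{\Gamma}^\dag_{X}\circ\R\underline{\Gamma}^\dag_{X'}
=
\R\underline{\Gamma}^\dag_{T_r}\circ\cdots\circ\R\underline{\Gamma}^\dag_{T_1}
\circ\R\underline{\Gamma}^\dag_{T_s'}\circ\cdots\circ\R\underline{\Gamma}^\dag_{T_1'},
\end{equation}
which is exactly the composite defining $\R\underline{\Gamma}^\dag_{X\cap X'}$ for the chosen system of divisors $T_1,\dots,T_r,T_1',\dots,T_s'$ (their intersection is $X\cap X'$). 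The only thing to check is that this identification is independent of all the choices and functorial in $\E^{(\bullet)}$, $X$, $X'$; this is precisely the well-definedness argument of \ref{dfn-4.3.4}, which reduces (via \ref{iso-comm-locaux-prod-tens}) to $\E^{(\bullet)}=\widehat{\O}_\fP^{(\bullet)}$ and then invokes Lemmas \ref{lemme2.2.3} and \ref{induction-div-coh} to see that inserting divisors containing $X\cap X'$ changes nothing; the commutation isomorphisms of \ref{iso-comm-locaux}(c) supply the canonical permutation identifications so that the order of the $T_i$'s and $T_j'$'s is irrelevant.

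For \ref{fonctXX'Gamma-iso} I proceed analogously: choosing $T_1,\dots,T_r$ with $X=\cap T_i$ and $T_1',\dots,T_s'$ with $X'=\cap T_j'$, the intersection $X\cap X'$ is cut out by the concatenated family, and I iterate the divisor-level commutation isomorphism \ref{fonct-hdagXbis} of \ref{iso-comm-locaux-prod-tens}. Concretely, one pushes each $\R\underline{\Gamma}^\dag_{T_i}$ (acting, say, through the left factor) and each $\R\underline{\Gamma}^\dag_{T_j'}$ (through the right factor) out of $\E^{(\bullet)}\smash{\widehat{\otimes}}^\L_{\O^{(\bullet)}_\fP}\FF^{(\bullet)}$ one at a time, using that $\R\underline{\Gamma}^\dag_{T}(\E^{(\bullet)}\widehat{\otimes}\FF^{(\bullet)})\riso\R\underline{\Gamma}^\dag_T(\E^{(\bullet)})\widehat{\otimes}\FF^{(\bullet)}\riso\E^{(\bullet)}\widehat{\otimes}\R\underline{\Gamma}^\dag_T(\FF^{(\bullet)})$; commuting the different local cohomology functors among themselves is allowed by \ref{iso-comm-locaux}(c), and the compatibility between \ref{iso-comm-locaux} and \ref{iso-comm-locaux-prod-tens} (stated just after \ref{iso-comm-locaux}) guarantees that the resulting isomorphism is independent of the order in which one performs these moves, hence independent of the chosen divisors and functorial in $\E^{(\bullet)}$, $\FF^{(\bullet)}$, $X$, $X'$.

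\textbf{Main obstacle.} The routine part is the chain of commutation isomorphisms; the genuinely delicate point, as always with $\R\underline{\Gamma}^\dag_X$, is \emph{canonicity} — showing the constructed isomorphisms do not depend on the auxiliary divisors $T_i, T_j'$ and fit into compatible diagrams. This is handled exactly as in the well-definedness step of \ref{dfn-4.3.4}: reduce to the constant coefficient $\widehat{\O}_\fP^{(\bullet)}$ by \ref{iso-comm-locaux-prod-tens}, then use \ref{lemme2.2.3} (vanishing of $(\hdag T)$ on a complex supported in $T$) together with the coherence statement \ref{induction-div-coh} to see that enlarging the family of divisors by ones containing the relevant intersection induces the identity. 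I expect the bookkeeping of the compatibility cube relating the two isomorphisms in \ref{fonct-hdagXbis} across several iterated divisors to be the most tedious, but it is formal given the cited compatibilities.
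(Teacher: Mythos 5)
Your approach is essentially the one the paper intends: the paper dismisses the first assertion as "obvious by construction of the local cohomological functor with strict support," and your unwinding of $\R\underline{\Gamma}^\dag_X\circ\R\underline{\Gamma}^\dag_{X'}$ as the composite over the concatenated divisor family $T_1,\dots,T_r,T_1',\dots,T_s'$ (which cuts out $X\cap X'$), together with the well-definedness argument already present in \ref{dfn-4.3.4}, is precisely the content of that remark. For the second assertion the paper simply cites \cite[4.3.6]{caro-stab-sys-ind-surcoh}; your iteration of the one-divisor commutation isomorphism \ref{fonct-hdagXbis}, permuting the divisor-level functors via \ref{iso-comm-locaux}, is the expected mechanism and is correct. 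You rightly flag canonicity as the only non-formal point, and you invoke the right tools (\ref{lemme2.2.3} and \ref{induction-div-coh}) to settle it.
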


\begin{proof}
The first statement is obvious by construction of the local cohomological functor with strict support.
We can copy \cite[4.3.6]{caro-stab-sys-ind-surcoh} for the last one.
\end{proof}

\subsection{Localisation outside a closed subscheme functor}
Let  $\fP $ be a smooth  formal scheme over $\S $.

\begin{dfn}
Let $\E ^{(\bullet)}
\in \smash{\underrightarrow{LD}} ^\mathrm{b} _{\Q,\mathrm{qc}} ( \smash{\widehat{\D}} _{\fP /\S } ^{(\bullet)})$.
Let $X$ be a closed subscheme of  $P$.
Using \cite[1.1.10]{BBD} and Lemma \ref{annulationHom-hdag}, 
we check that the cone of the morphism
$\R \underline{\Gamma} ^\dag _{X} (\E ^{(\bullet)} )
\to  
\E ^{(\bullet)} $
is unique up to canonical isomorphism (for more details, 
see \cite[4.4.3]{caro-stab-sys-ind-surcoh}).
We will denote it by 
$(\hdag X) (\E ^{(\bullet)} )$.
We check that 
$(\hdag X) (\E ^{(\bullet)} )$
is functorial in $X$, and $\E ^{(\bullet)} $.
We get by construction the distinguished triangle
\begin{equation}
\label{caro-stab-sys-ind-surcoh4.4.3}
\R \underline{\Gamma} ^\dag _{X} (\E ^{(\bullet)} )
\to 
\E ^{(\bullet)} 
\to 
(\hdag X) (\E ^{(\bullet)} )
 \to 
\R \underline{\Gamma} ^\dag _{X} (\E ^{(\bullet)} )[1] .
\end{equation}
\end{dfn}

\begin{thm}
\label{coh-OGammaY}
Let  $X, X'$ be two closed subschemes of $P$.
We have
$(\hdag X ') \circ \R \underline{\Gamma} ^\dag _{X}  (\O _{\fP}^{(\bullet)}  )
\in 
\smash{\underrightarrow{LD}} ^{\mathrm{b}}  
_{\Q, \mathrm{coh}}
(\overset{^\mathrm{l}}{} \smash{\widehat{\D}} _{\fP /\S  } ^{(\bullet)} )$.
\end{thm}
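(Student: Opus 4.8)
The statement asserts that $(\hdag X') \circ \R \underline{\Gamma} ^\dag _{X}  (\O _{\fP}^{(\bullet)})$ is a coherent object of $\smash{\underrightarrow{LD}} ^{\mathrm{b}}_{\Q, \mathrm{coh}}(\overset{^\mathrm{l}}{} \smash{\widehat{\D}} _{\fP /\S  } ^{(\bullet)})$. The plan is to reduce to the already-established coherence results by the standard devissage. First I would reduce to the case where $P$ is integral, since $P$ is the sum of its irreducible components and both functors $(\hdag X')$ and $\R \underline{\Gamma} ^\dag _X$ are compatible with this decomposition (and coherence can be checked component by component, using \ref{thick-subcat2} that coherence is local in $\fP$). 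If $X = P$ then $\R \underline{\Gamma} ^\dag _X$ is the identity and the statement reduces to showing $(\hdag X')(\O _{\fP}^{(\bullet)}) \in \smash{\underrightarrow{LD}} ^{\mathrm{b}}_{\Q, \mathrm{coh}}$, which is treated below as a special case.

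\textbf{The devissage.} Assume now $P$ is integral and $X \neq P$. By the construction in \ref{dfn-4.3.4}, the underlying space of $X$ is a finite intersection $X = \cap _{i=1} ^r T _i$ of (supports of) divisors of $P$, and $\R \underline{\Gamma} ^\dag _X (\O _{\fP}^{(\bullet)}) = \R \underline{\Gamma} ^\dag _{T _r} \circ \cdots \circ \R \underline{\Gamma} ^\dag _{T _1} (\O _{\fP}^{(\bullet)})$; by Lemma \ref{induction-div-coh} this already lies in $\smash{\underrightarrow{LD}} ^{\mathrm{b}}_{\Q, \mathrm{coh}}( \smash{\widehat{\D}} _{\fP /\S } ^{(\bullet)})$. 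Similarly, write $X'$ (if $X' \neq P$) as $X' = \cap _{j=1} ^s T' _j$ for divisors $T' _j$ of $P$; by Proposition \ref{hdagT'T=cup} and the compatibility of the two isomorphisms in \ref{iso-comm-locaux}, one has $(\hdag X') = (\hdag T' _s) \circ \cdots \circ (\hdag T' _1)$ up to canonical isomorphism (and when $X' = P$ the functor $(\hdag X')$ is the zero functor applied on the target, but actually $X'=P$ gives $(\hdag X')(-) = 0$, trivially coherent). So it suffices to prove the following assertion by induction: for any divisors $T _1, \dots, T _r$ and $T' _1, \dots, T' _s$ of $P$, the object $(\hdag T' _s) \circ \cdots \circ (\hdag T' _1) \circ \R \underline{\Gamma} ^\dag _{T _r} \circ \cdots \circ \R \underline{\Gamma} ^\dag _{T _1} (\O _{\fP}^{(\bullet)})$ is coherent. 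This is exactly the pattern handled in \cite[4.4.4]{caro-stab-sys-ind-surcoh}, so the core step is to reproduce that argument, which proceeds by induction on $s$ using the distinguished triangle \ref{caro-stab-sys-ind-surcoh4.4.3} together with the fact (now available via \ref{coh-Bbullet} and Lemma \ref{induction-div-coh}) that any composite $\R \underline{\Gamma} ^\dag _{T _r} \circ \cdots \circ \R \underline{\Gamma} ^\dag _{T _1} (\smash{\widetilde{\B}} _{\fP} ^{(\bullet)} (T))$ is coherent for every divisor $T$. Indeed, for the base case $s = 0$ there is nothing to prove; for the inductive step, apply $(\hdag T' _s)$ and use \ref{hdagT'T=cup} to rewrite $(\hdag T' _s) \circ \R \underline{\Gamma} ^\dag _{T _r} \circ \cdots$ in terms of $\R \underline{\Gamma} ^\dag _{T _r} \circ \cdots$ applied to $\smash{\widetilde{\B}} _{\fP} ^{(\bullet)} (T' _s)$-type objects (via the triangle \ref{caro-stab-sys-ind-surcoh4.4.3} and the commutation isomorphisms \ref{iso-comm-locaux}), then invoke Lemma \ref{induction-div-coh} and the stability of $\smash{\underrightarrow{LD}} ^{\mathrm{b}}_{\Q, \mathrm{coh}}$ under the cone operation (it is a thick triangulated subcategory by \ref{thick-subcat}\eqref{thick-subcat1}).

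\textbf{Main obstacle.} I expect the main technical point to be bookkeeping the commutation isomorphisms of \ref{iso-comm-locaux} so that the composites $(\hdag T' _j) \circ \R \underline{\Gamma} ^\dag _{T _i}$ can be reorganized in a way that, after passing through the triangles \ref{tri-loc-berthelot} and \ref{caro-stab-sys-ind-surcoh4.4.3}, always lands on an object to which Lemma \ref{induction-div-coh} (coherence of $\R \underline{\Gamma} ^\dag _{T _r} \circ \cdots \circ \R \underline{\Gamma} ^\dag _{T _1}(\smash{\widetilde{\B}} _{\fP} ^{(\bullet)} (T))$) directly applies; this is where the subtlety lies since a priori the order of the functors matters and one must verify that the reorganizations are compatible and do not introduce non-coherent error terms. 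Since all of this is carried out in detail in the perfect-field case in \cite[4.4.4]{caro-stab-sys-ind-surcoh} and each ingredient used there (\ref{coh-Bbullet}, \ref{induction-div-coh}, \ref{hdagT'T=cup}, \ref{iso-comm-locaux}, the thickness statement \ref{thick-subcat}) is now available in the present non-perfect setting without change, the proof reduces to the single line: we can copy the proof of \cite[4.4.4]{caro-stab-sys-ind-surcoh}.
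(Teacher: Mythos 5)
Your overall devissage strategy — reduce to Lemma \ref{induction-div-coh} via the localization triangle \ref{caro-stab-sys-ind-surcoh4.4.3} and the thickness of $\smash{\underrightarrow{LD}} ^{\mathrm{b}}_{\Q, \mathrm{coh}}$ (\ref{thick-subcat}) — is the right one and matches the paper's intent, but there is a genuine error in the reduction step where you decompose $(\hdag X')$. You write $X'=\cap _{j=1}^s T'_j$ as an intersection of divisors and claim $(\hdag X')\cong(\hdag T'_s)\circ\cdots\circ(\hdag T'_1)$, citing \ref{hdagT'T=cup} and \ref{iso-comm-locaux}. But \ref{hdagT'T=cup} (equivalently \ref{2.2.14-surcoh}) gives $(\hdag T'_s)\circ\cdots\circ(\hdag T'_1)\riso(\hdag\, T'_1\cup\cdots\cup T'_s)$: composing localizations outside divisors produces localization outside the \emph{union}, which is again a divisor, not the intersection $X'$. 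The "assertion by induction on $s$" you then propose therefore establishes coherence of $(\hdag\, T'_1\cup\cdots\cup T'_s)\circ \R\underline{\Gamma}^\dag_X(\O_\fP^{(\bullet)})$, which is the case where $X'$ is a divisor, and says nothing about the general case.

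No such decomposition of $(\hdag X')$ is needed, and the actual argument is shorter. Apply the localization triangle \ref{caro-stab-sys-ind-surcoh4.4.3} once, with respect to $X'$, to the object $\R\underline{\Gamma}^\dag_X(\O_\fP^{(\bullet)})$:
\begin{equation*}
\R\underline{\Gamma}^\dag_{X'}\,\R\underline{\Gamma}^\dag_X(\O_\fP^{(\bullet)})\;\longrightarrow\;\R\underline{\Gamma}^\dag_X(\O_\fP^{(\bullet)})\;\longrightarrow\;(\hdag X')\,\R\underline{\Gamma}^\dag_X(\O_\fP^{(\bullet)})\;\longrightarrow\;+1.
\end{equation*}
By \ref{theo2.2.8} the first term is $\R\underline{\Gamma}^\dag_{X\cap X'}(\O_\fP^{(\bullet)})$, which by the construction \ref{dfn-4.3.4} is a composite of $\R\underline{\Gamma}^\dag_{T_i}$ for divisors $T_i$ applied to $\O_\fP^{(\bullet)}$; the middle term is of the same shape. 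Both are therefore coherent by Lemma \ref{induction-div-coh} (applied with $T=\emptyset$, so $\widetilde{\B}_\fP^{(\bullet)}(T)=\O_\fP^{(\bullet)}$). Thickness of $\smash{\underrightarrow{LD}} ^{\mathrm{b}}_{\Q, \mathrm{coh}}$ (\ref{thick-subcat}) then gives coherence of the cone, which is the asserted object. The reduction to $P$ integral and the separate treatment of $X=P$ are harmless but unnecessary, since they are already built into \ref{dfn-4.3.4}.
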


\begin{proof}
By devissage (use \ref{caro-stab-sys-ind-surcoh4.4.3}),
this is a straightforward consequence of Theorem \ref{induction-div-coh}.
\end{proof}

\begin{empt}
\label{(hdagX)otimes}
For a closed subscheme  $X$ of $P$, 
for
$\E ^{(\bullet)},~\FF ^{(\bullet)}
\in \smash{\underrightarrow{LD}} ^\mathrm{b} _{\Q,\mathrm{qc}} ( \smash{\widehat{\D}} _{\fP /\S } ^{(\bullet)})$,
there exists a canonical isomorphism 
$(\hdag X) (\E ^{(\bullet)}
\smash{\widehat{\otimes}}^\L
_{\O ^{(\bullet)}  _{\fP}} 
\FF ^{(\bullet)}) 
\riso
\E ^{(\bullet)}
\smash{\widehat{\otimes}}^\L
_{\O ^{(\bullet)}  _{\fP}} 
(\hdag X) (\FF ^{(\bullet)}) $, 
which is moreover 
functorial in $X,~\E ^{(\bullet)},~\FF ^{(\bullet)}$ (for more details see 
\cite[4.4.4]{caro-stab-sys-ind-surcoh}).
\end{empt}

\begin{empt}
Let  $X, X'$ be two closed subschemes of $P$,
$\E ^{(\bullet)}
\in \smash{\underrightarrow{LD}} ^\mathrm{b} _{\Q,\mathrm{qc}} ( \smash{\widehat{\D}} _{\fP /\S } ^{(\bullet)})$.
There exists a canonical isomorphism 
$(\hdag X') \circ \R \underline{\Gamma} ^\dag _{X}(\E ^{(\bullet)} )
\riso
\R \underline{\Gamma} ^\dag _{X} \circ (\hdag X') (\E ^{(\bullet)})$
functorial in  $X,~X ',~\E ^{(\bullet)}$.

Similarly to  \cite[2.2.14]{caro_surcoherent}, 
we get the canonical isomorphism
\begin{equation}
\label{2.2.14-surcoh}
(\hdag X) \circ (\hdag X') (\E ^{(\bullet)})
\riso
(\hdag X \cup X')(\E ^{(\bullet)}),
\end{equation}
functorial in $X,~X ',~\E ^{(\bullet)}$.
Similarly to \cite[2.2.16]{caro_surcoherent},
we get the Mayer-Vietoris distinguished triangles :
\begin{gather}
\notag
  \R \underline{\Gamma} ^\dag _{X \cap X'}(\E ^{(\bullet)}) \rightarrow
  \R \underline{\Gamma} ^\dag _{X }(\E ^{(\bullet)}) \oplus
\R \underline{\Gamma} ^\dag _{X ' }(\E ^{(\bullet)})  \rightarrow
\R \underline{\Gamma} ^\dag _{X \cup X '}(\E^{(\bullet)} ) \rightarrow
\R \underline{\Gamma} ^\dag _{X \cap X'}(\E^{(\bullet)} )[1],\\
\label{eq1mayer-vietoris}
 (\hdag X \cap X')(\E ) \rightarrow  (\hdag X )(\E^{(\bullet)} ) \oplus
 (\hdag X ') (\E^{(\bullet)} )  \rightarrow   (\hdag X \cup X ')(\E^{(\bullet)} ) \rightarrow (\hdag X \cap X')(\E ^{(\bullet)})[1].
\end{gather}

\end{empt}

 \begin{prop}
\label{prop2.2.9}
Let  $D$ be a divisor of  $P$, 
$X$ be a closed subscheme of $P$,
$\U$ be the open subset of  $\fP$ complementary to the support of $X$.
Let $\E ^{(\bullet)}
\in 
\smash{\underrightarrow{LD}} ^{\mathrm{b}} _{\Q,\mathrm{coh}} ( \smash{\widetilde{\D}} _{\fP /\S } ^{(\bullet)} (D))$.
The following assertions are equivalent :
\begin{enumerate}
\item We have in  $\smash{\underrightarrow{LD}} ^{\mathrm{b}} _{\Q,\mathrm{coh}} ( \smash{\widetilde{\D}} _{\U /\S } ^{(\bullet)} (D\cap U))$
the isomorphism $\E ^{(\bullet)}|\U \riso 0$.
\item The canonical morphism 
$\R \underline{\Gamma} ^\dag _{X} (\E ^{(\bullet)})
\to 
\E ^{(\bullet)}$ is an isomorphism in 
$\smash{\underrightarrow{LD}} ^{\mathrm{b}} _{\Q} ( \smash{\widetilde{\D}} _{\fP /\S } ^{(\bullet)} (D))$.
\item We have in $\smash{\underrightarrow{LD}} ^{\mathrm{b}} _{\Q,\mathrm{coh}} ( \smash{\widetilde{\D}} _{\fP /\S } ^{(\bullet)} (D))$
the isomorphism $(\hdag X) (\E ^{(\bullet)} )\riso 0$.
\end{enumerate}

\end{prop}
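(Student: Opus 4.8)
The statement is the closed-subscheme analogue of Lemma \ref{lemme2.2.3}, so the plan is to reduce it to that lemma via the construction of $\R \underline{\Gamma} ^\dag _{X}$ given in \ref{dfn-4.3.4}. First I would dispose of the trivial cases: if $P$ is not integral we work componentwise, and if $X = P$ then $\U = \emptyset$ and all three assertions hold tautologically (the functor $\R \underline{\Gamma} ^\dag _X$ is the identity and $(\hdag X)$ sends everything to $0$ since a $\smash{\widetilde{\D}} _{\emptyset/\S } ^{(\bullet)}$-module is zero). So assume $X \neq P$ and write $X = \cap _{i=1} ^{r} T _i$ for divisors $T _i$ of $P$, so that $\R \underline{\Gamma} ^\dag _{X} = \R \underline{\Gamma} ^\dag _{T _r} \circ \cdots \circ \R \underline{\Gamma} ^\dag _{T _1}$ by definition, and dually $(\hdag X)$ fits into the triangle \ref{caro-stab-sys-ind-surcoh4.4.3}. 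Note also that each $\R \underline{\Gamma} ^\dag _{T _i}$, $(\hdag T _i)$ and hence $\R \underline{\Gamma} ^\dag _{X}$, $(\hdag X)$ preserve $\smash{\underrightarrow{LD}} ^{\mathrm{b}} _{\Q,\mathrm{coh}} ( \smash{\widetilde{\D}} _{\fP /\S } ^{(\bullet)} (D))$ by Theorem \ref{limTouD} together with \ref{coh-OGammaY} and \ref{coh-Bbullet} (more precisely, using \ref{iso-comm-locaux-prod-tens} to commute the local cohomological functors past the tensor product by $\E ^{(\bullet)}$ and then \ref{limTouD} to stay coherent); this is what makes all the objects appearing actually lie in the coherent subcategory, so the equivalences \ref{eqcat-limcoh} are available.

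\textbf{Equivalence of (2) and (3).} This is immediate from the triangle \ref{caro-stab-sys-ind-surcoh4.4.3}: the canonical morphism $\R \underline{\Gamma} ^\dag _{X} (\E ^{(\bullet)}) \to \E ^{(\bullet)}$ is an isomorphism if and only if its cone $(\hdag X) (\E ^{(\bullet)})$ is zero. No geometry is needed here.

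\textbf{Equivalence with (1).} The real content is $(1) \Leftrightarrow (3)$, and here I would induct on $r$. For $r = 1$ this is exactly Lemma \ref{lemme2.2.3} (with $T = T _1$). For the inductive step, set $X' := \cap _{i=1} ^{r-1} T _i$, so $X = X' \cap T _r$ and, by \ref{2.2.14-surcoh}, $(\hdag X) = (\hdag T _r) \circ (\hdag X')$ up to canonical isomorphism. Let $\U'$ be the complement of $X'$ and $j _{\U'}\colon \U' \hookrightarrow \fP$ the open immersion; restriction to the open complement $\fP \setminus T _r$ of $T _r$ inside $\U'$ is the open subset $\U = \fP \setminus X$. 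By the inductive hypothesis applied to each of the closed subschemes involved, combined with Lemma \ref{lemme2.2.3} applied on $\U'$ to the divisor $T _r \cap U'$, I would chase the following chain of equivalences: $\E ^{(\bullet)} | \U \riso 0$ iff $(\hdag T _r) (\E ^{(\bullet)} | \U') \riso 0$ (Lemma \ref{lemme2.2.3} on $\U'$, noting $(\hdag T _r)$ localizes, i.e. commutes with restriction to opens, so the hypothesis $\E ^{(\bullet)} | \U' \in \smash{\underrightarrow{LD}} ^{\mathrm{b}} _{\Q,\mathrm{coh}}$ is in place) iff $(\hdag X') (\E ^{(\bullet)})$ restricted to $\fP \setminus T _r$ is $0$ — but we still have to upgrade the vanishing over the open $\fP \setminus T _r$ to a global statement. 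Here is where the stability of coherence under $(\hdag X')$ is used: $(\hdag X') (\E ^{(\bullet)}) \in \smash{\underrightarrow{LD}} ^{\mathrm{b}} _{\Q,\mathrm{coh}} ( \smash{\widetilde{\D}} _{\fP /\S } ^{(\bullet)} (D))$, so applying Lemma \ref{lemme2.2.3} to it with divisor $T _r$ gives $(\hdag X') (\E ^{(\bullet)})$ restricted to $\fP \setminus T _r$ is $0$ iff $(\hdag T _r) (\hdag X') (\E ^{(\bullet)}) \riso 0$, which is $(\hdag X) (\E ^{(\bullet)}) \riso 0$, i.e. (3).

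\textbf{Main obstacle.} The delicate point is the very last upgrade: one needs $(\hdag X')(\E ^{(\bullet)})$ to be coherent (not merely quasi-coherent) in order to invoke Lemma \ref{lemme2.2.3} on it, and one must verify carefully that ``$\E ^{(\bullet)}|\U \riso 0$'' — vanishing over the open complement of $X = X' \cap T _r$ — correctly translates, via the two nested applications of Lemma \ref{lemme2.2.3}, into the single condition $(\hdag X)(\E ^{(\bullet)}) \riso 0$; this hinges on $(\hdag T _r)$ and $(\hdag X')$ commuting (\ref{iso-comm-locaux}, \ref{2.2.14-surcoh}) and on the fact that these localization functors genuinely commute with restriction to open formal subschemes, which is built into their definition as tensor products by the sheaves $\smash{\widetilde{\B}}$. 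Once the bookkeeping of which subscheme is cut out where is set up correctly, the proof is a formal induction; alternatively, as the excerpt's style suggests, one can simply say ``we can copy the proof of \cite[4.4.5]{caro-stab-sys-ind-surcoh}'' since the argument there is identical and $k$ perfect is not used.
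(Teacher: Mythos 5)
Your treatment of $(2)\Leftrightarrow(3)$ is correct, as is the reduction to $P$ integral and the base case $r=1$. But the inductive step relies on the identity $(\hdag X) = (\hdag T_r)\circ(\hdag X')$ for $X = X'\cap T_r$, which you attribute to \ref{2.2.14-surcoh}, and this is false: \ref{2.2.14-surcoh} says $(\hdag X)\circ(\hdag X') \riso (\hdag X\cup X')$, so composing localization functors produces the \emph{union}, not the intersection. Thus $(\hdag T_r)\circ(\hdag X') = (\hdag T_r\cup X')$, which differs from $(\hdag X) = (\hdag T_r\cap X')$ unless $T_r = X'$. The first "iff" in your chain fails for the dual reason: $\U = \fP\setminus(X'\cap T_r) = \U'\cup(\fP\setminus T_r)$ by De Morgan, whereas Lemma \ref{lemme2.2.3} applied on $\U'$ with the divisor $T_r\cap U'$ gives vanishing only over the \emph{intersection} $\U'\cap(\fP\setminus T_r)$, which is a strictly smaller open. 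So your chain correctly proves "$\E^{(\bullet)}|_{\U'\setminus T_r}\riso 0$ iff $(\hdag T_r\cup X')(\E^{(\bullet)})\riso 0$" — a statement about $T_r\cup X'$, not about $X$.

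The correct argument replaces your induction by the following, which exploits that the \emph{local cohomological} functors (not the localizations) compose along intersections, i.e. $\R\underline{\Gamma}^\dag_X = \R\underline{\Gamma}^\dag_{T_r}\circ\cdots\circ\R\underline{\Gamma}^\dag_{T_1}$ by \ref{dfn-4.3.4}. Since $\U = \cup_i(\fP\setminus T_i)$, condition (1) is equivalent to $\E^{(\bullet)}|_{\fP\setminus T_i}\riso 0$ for every $i$, which by Lemma \ref{lemme2.2.3} (applied only to $\E^{(\bullet)}$, given coherent) is equivalent to $(\hdag T_i)(\E^{(\bullet)})\riso 0$ for every $i$. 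For $(1)\Rightarrow(3)$: each $(\hdag T_i)(\E^{(\bullet)})\riso 0$ gives $\R\underline{\Gamma}^\dag_{T_i}(\E^{(\bullet)})\riso\E^{(\bullet)}$, so applying the $\R\underline{\Gamma}^\dag_{T_i}$ in succession yields $\R\underline{\Gamma}^\dag_X(\E^{(\bullet)})\riso\E^{(\bullet)}$, i.e. (2), hence (3). For $(3)\Rightarrow(1)$: from (2) and the commutation \ref{iso-comm-locaux}, for each $i$ one can factor $\E^{(\bullet)}\riso\R\underline{\Gamma}^\dag_X(\E^{(\bullet)}) \riso \R\underline{\Gamma}^\dag_{T_i}(\FF^{(\bullet)}_i)$ for a suitable $\FF^{(\bullet)}_i$, whence $(\hdag T_i)(\E^{(\bullet)})\riso(\hdag T_i)\R\underline{\Gamma}^\dag_{T_i}(\FF^{(\bullet)}_i)\riso 0$ (the vanishing $(\hdag T)\circ\R\underline{\Gamma}^\dag_T\riso 0$ is a formal consequence of the localization triangle together with $(\hdag T)\circ(\hdag T)\riso(\hdag T)$ from \ref{hdagT'T=cup}). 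This also sidesteps the coherence worry you flag: Lemma \ref{lemme2.2.3} is invoked only on $\E^{(\bullet)}$ itself, never on an intermediate $(\hdag X')(\E^{(\bullet)})$.
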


\begin{proof}
Using Lemma \ref{lemme2.2.3}, 
we can copy the proof of 
\cite[4.4.6]{caro-stab-sys-ind-surcoh}.
\end{proof}

\begin{empt}
[Support]
\label{dfn-support}
Let  $D$ be a divisor of  $P$, 
$\E ^{(\bullet)}
\in 
\smash{\underrightarrow{LD}} ^{\mathrm{b}} _{\Q,\mathrm{coh}} ( \smash{\widetilde{\D}} _{\fP /\S } ^{(\bullet)} (D))$.
The support of $\E ^{(\bullet)} $ 
is by definition 
the biggest closed subscheme $X$ of $P$ such that 
$(\hdag X) (\E ^{(\bullet)} )\riso 0$ (one of the equivalent conditions of \ref{prop2.2.9}).

Remark if 
$\E ^{(\bullet)}
\in 
\smash{\underrightarrow{LM}} _{\Q,\mathrm{coh}} ( \smash{\widetilde{\D}} _{\fP /\S } ^{(\bullet)} (D))$, 
then 
this is equal to the  support (for the usual definition) of the coherent 
$\smash{\D} ^\dag _{\fP} (\hdag D) _{\Q} $-module
$\underrightarrow{\lim}\, \E ^{(\bullet)} $, which justifies the terminology.

\end{empt}

\subsection{Local cohomological functor with strict support over a subvariety}
Let  $\fP $ be a smooth  formal scheme over $\S $.

\begin{empt}
\label{3.2.1caro-2006-surcoh-surcv}
Let  $X$, $X'$, $T$, $T'$ be closed subschemes of  $P$ such that
  $X \setminus T = X' \setminus T'$.
For any $\E  ^{(\bullet)} \in \smash{\underset{^{\longrightarrow}}{LD}} ^{\mathrm{b}} _{\Q ,\mathrm{qc}}
(\smash{\widehat{\D}} _{\fP} ^{(\bullet)})$,
we have the canonical isomorphism:
\begin{equation}
  \label{xtx't'}
\R\underline{\Gamma} ^\dag _{X} (\hdag T ) (\E  ^{(\bullet)})
\riso
\R\underline{\Gamma} ^\dag _{X'} (\hdag T ') (\E  ^{(\bullet)}).
\end{equation}
Indeed, $\R\underline{\Gamma} ^\dag _{X} (\hdag T ) (\E  ^{(\bullet)}) \riso
\R\underline{\Gamma} ^\dag _{X} (\hdag T ) (\O _{\fP} ^{(\bullet)})
\smash{\widehat{\otimes}}^\L
_{\O ^{(\bullet)}  _{\fP}} 
\E  ^{(\bullet)}$,
and similarly with some primes.
Hence, we reduce to the case $\E  ^{(\bullet)} = \O _{\fP}   ^{(\bullet)}$.
Using \ref{coh-OGammaY},
\ref{theo2.2.8},
\ref{2.2.14-surcoh},
\ref{prop2.2.9},
we get the isomorphism
$\R\underline{\Gamma} ^\dag _{X} (\hdag T ) (\O _{\fP}   ^{(\bullet)})
\riso
\R\underline{\Gamma} ^\dag _{X \cap X'} (\hdag T \cup T ') (\O _{\fP}   ^{(\bullet)})$.
We conclude by symmetry. 

Setting $ Y := X \setminus T$, we denote by 
$\R\underline{\Gamma} ^\dag _{Y} (\E  ^{(\bullet)}) $ one of both complexes
of \ref{xtx't'}.

\end{empt}

\begin{prop}
\label{prop-induction-div-coh}
We have 
$\R\underline{\Gamma} ^\dag _{Y} (\O _\fP  ^{(\bullet)}) 
\in 
\smash{\underrightarrow{LD}} ^{\mathrm{b}} _{\Q,\mathrm{coh}} ( \smash{\widehat{\D}} _{\fP /\S } ^{(\bullet)} )$.
\end{prop}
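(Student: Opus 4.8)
The statement to prove is that $\R\underline{\Gamma}^\dag_Y(\O_\fP^{(\bullet)}) \in \smash{\underrightarrow{LD}}^{\mathrm{b}}_{\Q,\mathrm{coh}}(\smash{\widehat{\D}}_{\fP/\S}^{(\bullet)})$, where $Y = X\setminus T$ with $X$ a closed subscheme of $P$ and $T$ a closed subscheme (in fact, by \ref{3.2.1caro-2006-surcoh-surcv}, $T$ may be taken to be a divisor, or one reduces freely among representatives). The plan is to unwind the definition of $\R\underline{\Gamma}^\dag_Y$ from \ref{3.2.1caro-2006-surcoh-surcv} and then invoke the coherence results already established, principally Theorem \ref{coh-OGammaY} together with Theorem \ref{induction-div-coh} and the devissage triangles.

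\textbf{First steps.} First I would reduce to the case where $P$ is integral, since $P$ is the sum of its irreducible components and all the functors involved ($\R\underline{\Gamma}^\dag_X$, $(\hdag T)$, etc.) as well as the coherence property are compatible with this decomposition; this is exactly the same reduction used in Definition \ref{dfn-4.3.4}. Next, by definition (see \ref{3.2.1caro-2006-surcoh-surcv}), choosing a closed subscheme $X$ and a closed subscheme $T$ with $Y = X\setminus T$, we have
\begin{equation}
\notag
\R\underline{\Gamma}^\dag_Y(\O_\fP^{(\bullet)}) \riso \R\underline{\Gamma}^\dag_X (\hdag T)(\O_\fP^{(\bullet)}).
\end{equation}
So it suffices to show $\R\underline{\Gamma}^\dag_X (\hdag T)(\O_\fP^{(\bullet)})$ is coherent. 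Using the construction in Definition \ref{dfn-4.3.4}, $\R\underline{\Gamma}^\dag_X$ is either the identity (if $X = P$) or a composite $\R\underline{\Gamma}^\dag_{T_r}\circ\cdots\circ\R\underline{\Gamma}^\dag_{T_1}$ for divisors $T_i$ of $P$ with $X = \cap_i T_i$. Commuting $(\hdag T)$ past the $\R\underline{\Gamma}^\dag_{T_i}$ is harmless; alternatively, write $(\hdag T)(\O_\fP^{(\bullet)})$ via the triangle \ref{caro-stab-sys-ind-surcoh4.4.3} in terms of $\R\underline{\Gamma}^\dag_T(\O_\fP^{(\bullet)})$ and $\O_\fP^{(\bullet)}$, and use that coherence up to lim-ind-isogeny is stable under cones (the subcategory $\smash{\underrightarrow{LD}}^{\mathrm{b}}_{\Q,\mathrm{coh}}$ is triangulated, being thick by \ref{thick-subcat}.\ref{thick-subcat1}).

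\textbf{Core of the argument and the main obstacle.} At this point the statement reduces to: for closed subschemes (or divisors) $T_1,\dots,T_r,T$ of $P$, the object $(\hdag T)\circ\R\underline{\Gamma}^\dag_{T_r}\circ\cdots\circ\R\underline{\Gamma}^\dag_{T_1}(\O_\fP^{(\bullet)})$ is coherent up to lim-ind-isogeny. If $T$ is a divisor this is the content of Theorem \ref{coh-OGammaY} (taking $X = T_1\cap\cdots\cap T_r$ and $X' = T$ — more carefully, $X'$ a union of divisors cutting out $T$), which itself is proved by devissage from Theorem \ref{induction-div-coh}, the coherence of $\R\underline{\Gamma}^\dag_{T_r}\circ\cdots\circ\R\underline{\Gamma}^\dag_{T_1}(\smash{\widetilde{\B}}_\fP^{(\bullet)}(T))$, which in turn rests on Theorem \ref{coh-Bbullet} (the differential coherence of the constant coefficient with overconvergent singularities, Theorem \ref{coh-ss-div}). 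So the argument is essentially: apply \ref{coh-OGammaY} directly. The main point to check carefully — and the only place requiring a little care rather than pure citation — is matching the two possible presentations of $Y$: given $Y = X\setminus T$, one must produce divisors $T_1,\dots,T_r$ with $X = \cap T_i$ and a closed subscheme $X'$ (a finite union of divisors) with $|X'| = |T|\cap|X|$, so that $\R\underline{\Gamma}^\dag_Y(\O_\fP^{(\bullet)}) \riso (\hdag X')\circ\R\underline{\Gamma}^\dag_X(\O_\fP^{(\bullet)})$ via \ref{xtx't'} and \ref{2.2.14-surcoh}; this uses that on an integral regular $P$ a proper closed subscheme is a finite intersection of divisors (cited in \ref{dfn-4.3.4} from \cite[2.2.5]{caro_surcoherent}). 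The hard work has already been done in Theorems \ref{coh-ss-div}, \ref{coh-Bbullet}, \ref{induction-div-coh} and \ref{coh-OGammaY}; the present proposition is a formal consequence, so I expect no genuine obstacle beyond bookkeeping. Concretely, I would simply write: ``By \ref{xtx't'} we may assume $T$ is a divisor; then this is a special case of Theorem \ref{coh-OGammaY}.''
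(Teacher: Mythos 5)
Your proposal is correct and takes essentially the same route as the paper: the proposition is, by definition of $\R\underline{\Gamma}^\dag_Y$ (see \ref{3.2.1caro-2006-surcoh-surcv}), an immediate translation of Theorem \ref{coh-OGammaY}, which the paper cites in one line. One small simplification: your final step "by \ref{xtx't'} we may assume $T$ is a divisor" is superfluous, since Theorem \ref{coh-OGammaY} is already stated for arbitrary closed subschemes $X'$, not only divisors — so once you write $\R\underline{\Gamma}^\dag_Y(\O_\fP^{(\bullet)}) \riso (\hdag T)\circ\R\underline{\Gamma}^\dag_X(\O_\fP^{(\bullet)})$, you can invoke \ref{coh-OGammaY} directly with $X'=T$ a closed subscheme, and the various preparatory reductions (to $P$ integral, to $T$ a union of divisors) are not needed.
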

\begin{proof}
This is a translation of \ref{coh-OGammaY}.
\end{proof}

\begin{empt}
Let  $Y $ and $Y'$ be two subschemes of  $P$. 
Let $\E  ^{(\bullet)} ,\FF ^{(\bullet)}\in \smash{\underset{^{\longrightarrow}}{LD}} ^{\mathrm{b}} _{\Q ,\mathrm{qc}}
(\smash{\widehat{\D}} _{\fP} ^{(\bullet)})$.
\begin{itemize}
\item [-]
Using \ref{theo2.2.8},
\ref{2.2.14-surcoh},
we get the canonical isomorphism functorial in $\E ^{(\bullet)},~ Y$, and $Y'$ :
\begin{equation}
  \label{gammayY'}
  \R\underline{\Gamma} ^\dag _{Y} \circ \R\underline{\Gamma} ^\dag _{Y'} (\E  ^{(\bullet)})
  \riso
  \R\underline{\Gamma} ^\dag _{Y \cap Y'} (\E  ^{(\bullet)}).
\end{equation}

\item [-] Using \ref{fonctXX'Gamma-iso} and \ref{(hdagX)otimes}
we get the canonical isomorphism functorial in $\E ^{(\bullet)},~\FF ^{(\bullet)},~ Y$, and $Y'$ :
\begin{equation}
\label{fonctYY'Gamma-iso}
\R \underline{\Gamma} ^\dag _{Y \cap Y'} (\E ^{(\bullet)}
\smash{\widehat{\otimes}}^\L  _{\O ^{(\bullet)}  _{\fP} }  \FF ^{(\bullet)} )
\riso 
\R \underline{\Gamma} ^\dag _{Y} 
(\E ^{(\bullet)})
\smash{\widehat{\otimes}}^\L  _{\O ^{(\bullet)}  _{\fP} }  
\R \underline{\Gamma} ^\dag _{Y'}
(\FF ^{(\bullet)}).
\end{equation}
\item [-] If $Y '$ is an open (resp. a closed) subscheme of  $Y$, we have the canonical homomorphism
$\R\underline{\Gamma} ^\dag _{Y} (\E  ^{(\bullet)}) \rightarrow \R\underline{\Gamma} ^\dag _{Y'} (\E  ^{(\bullet)})$
(resp. $\R\underline{\Gamma} ^\dag _{Y'} (\E  ^{(\bullet)}) \rightarrow \R\underline{\Gamma} ^\dag _{Y} (\E  ^{(\bullet)})$).
If $Y '$ is a closed subscheme of  $Y$, we have the localization distinguished triangle
$\R\underline{\Gamma} ^\dag _{Y'} (\E  ^{(\bullet)}) \rightarrow \R\underline{\Gamma} ^\dag _{Y} (\E  ^{(\bullet)})
\rightarrow \R\underline{\Gamma} ^\dag _{Y \setminus Y'} (\E  ^{(\bullet)}) \rightarrow +1.$

\end{itemize}

\end{empt}

\section{Commutation with local cohomological functors and applications}
\subsection{Commutation with local cohomological functors}

\begin{theo}
\label{2.2.18}
Let  $f \colon \X ^{\prime } \to \X $ be a morphism of smooth formal $\V$-schemes.
Let $Y$ be a subscheme of $X$, $Y':= f ^{-1} (Y)$, 
$\E ^{(\bullet)} \in 
\smash{\underrightarrow{LD}} ^{\mathrm{b}} _{\Q,\mathrm{qc}} 
(\overset{^\mathrm{l}}{} \smash{\widehat{\D}} _{\X /\S  } ^{(\bullet)} )$ 
and
$\E ^{\prime (\bullet)} \in\smash{\underrightarrow{LD}} ^{\mathrm{b}} _{\Q,\mathrm{qc}} 
(\overset{^\mathrm{l}}{} \smash{\widehat{\D}} _{\X ^{\prime }/\S  } ^{(\bullet)} )$. 
We have the functorial in $Y$ isomorphisms :
\begin{gather}
\label{commutfonctcohlocal1}
  f ^{ !(\bullet)}  \circ\R \underline{\Gamma} ^\dag _{Y}(\E ^{(\bullet)}) 
  \riso
   \R \underline{\Gamma} ^\dag _{Y' }\circ f ^{ !(\bullet)}  (\E ^{(\bullet)}), 
\\
\label{commutfonctcohlocal2}
\R \underline{\Gamma} ^\dag _{Y}\circ f ^{ (\bullet)} _{+} (\E ^{\prime (\bullet)})
\riso
f ^{ (\bullet)}  _{+} \circ \R \underline{\Gamma} ^\dag _{Y'}(\E ^{\prime (\bullet)}).
\end{gather}

\end{theo}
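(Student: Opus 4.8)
The strategy is to reduce the statement about subschemes $Y$ to the already-available commutation results for divisors, which in turn rest on base-change-type manipulations with the transfer bimodules. First I would recall the localization triangle $\eqref{caro-stab-sys-ind-surcoh4.4.3}$ (or $\eqref{tri-loc-berthelot}$ when $Y$ is the complement of a divisor) together with the fact that $\R \underline{\Gamma} ^\dag _Y$ is built, by the very definition in \ref{3.2.1caro-2006-surcoh-surcv}, from the operators $\R \underline{\Gamma} ^\dag _T$ for suitable divisors $T$ and from the localization functors $(\hdag T)$. Since $f ^{!(\bullet)}$ and $f ^{(\bullet)} _+$ are exact (triangulated) functors, it suffices to prove the commutations $\eqref{commutfonctcohlocal1}$ and $\eqref{commutfonctcohlocal2}$ for $\R \underline{\Gamma} ^\dag _T$ and $(\hdag T)$ individually, and then to check that the resulting isomorphisms are compatible with the triangles and with the gluing isomorphisms $\eqref{xtx't'}$ that identify the two descriptions of $\R \underline{\Gamma} ^\dag _Y$. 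Functoriality in $Y$ will follow once it is established that the construction respects the canonical morphisms between the $T$'s, using the uniqueness statements obtained from \cite[1.1.9]{BBD} as in the constructions above.

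For the extraordinary pullback $f ^{!(\bullet)}$, the key input is the commutation of $f ^{!(\bullet)}$ with the localization functor outside a divisor, which is exactly \ref{oub-div-opcoh}.\ref{oub-div-opcohb)} (and its special form \ref{f!commoub} when $T ' = f^{-1}(T)$): one has a canonical isomorphism $(\hdag T',D') \circ f ^{!(\bullet)} _{D',D} \riso f ^{!(\bullet)} _{T',T} \circ (\hdag T, D)$. Applying $f ^{!(\bullet)}$ to the triangle $\Delta _T (\E ^{(\bullet)})$ and comparing with $\Delta _{T'} (f ^{!(\bullet)} \E ^{(\bullet)})$ (with $T' = f^{-1}(T)$), the commutation with $(\hdag T)$ and with the identity forces, by the uniqueness of the cone argument, a canonical isomorphism $f ^{!(\bullet)} \circ \R \underline{\Gamma} ^\dag _T \riso \R \underline{\Gamma} ^\dag _{T'} \circ f ^{!(\bullet)}$. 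Iterating over the divisors $T _1,\dots,T _r$ with $Y = \cap T _i \setminus (\text{possible extra divisor})$, and invoking $\eqref{theo2.2.8}$ together with the independence of $\R \underline{\Gamma} ^\dag _Y$ on the chosen divisors, yields $\eqref{commutfonctcohlocal1}$. The analogous input for $f ^{(\bullet)} _+$ is \ref{surcoh2.1.4-cor}: $f _{T,T'+} ^{(\bullet)} \circ (\hdag T') \riso (\hdag T) \circ f _{+} ^{(\bullet)}$, which by the same cone-uniqueness argument (applied now to $\Delta _{T'}(\E ^{\prime (\bullet)})$ and its image) gives $\R \underline{\Gamma} ^\dag _T \circ f ^{(\bullet)} _+ \riso f ^{(\bullet)} _+ \circ \R \underline{\Gamma} ^\dag _{T'}$, whence $\eqref{commutfonctcohlocal2}$ after iteration.

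\textbf{Main obstacle.} The genuinely delicate point is not the existence of the isomorphisms for a single divisor, but checking that the iterated isomorphisms are \emph{independent of all choices}: independence of the decomposition $Y = \cap T _i$, compatibility with the commutation isomorphisms \ref{iso-comm-locaux} between successive $\R \underline{\Gamma} ^\dag _{T _i}$, and compatibility with the cleaning-up procedure that discards superfluous divisors containing $Y$ (Lemmas \ref{lemme2.2.3}, \ref{induction-div-coh}, \ref{prop2.2.9}). Concretely, one has to show that the square relating $f ^{!(\bullet)}\circ \R \underline{\Gamma} ^\dag _{T _i} \circ \R \underline{\Gamma} ^\dag _{T _j}$ and $\R \underline{\Gamma} ^\dag _{T _i'}\circ \R \underline{\Gamma} ^\dag _{T _j'}\circ f ^{!(\bullet)}$, once the two commutation isomorphisms and the swap isomorphism $\eqref{theo2.2.8}$ are inserted, is commutative; this is a diagram chase that ultimately reduces to the compatibility of the base-change isomorphism $\eqref{f!T'Totimes}$ (resp. the projection formula \ref{surcoh2.1.4}) with the associativity and commutativity of the exterior/internal tensor products, exactly as in the perfect-base case \cite[4.4.2]{caro-stab-sys-ind-surcoh}. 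I would handle this by first doing the divisor case in full, then reducing the subscheme case to it by a devissage identical to that of \ref{3.2.1caro-2006-surcoh-surcv} and \cite[2.2.18]{caro_surcoherent}, pointing out that none of the arguments used the perfectness of $k$. The remaining verifications (functoriality in $Y$, the behavior of the isomorphisms with respect to overconvergent singularities along an auxiliary divisor $D$) are then routine, following respectively from the uniqueness arguments already invoked and from \ref{oub-div-opcoh}.
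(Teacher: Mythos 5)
Your plan rests on iterating the one-divisor commutation $f^{!(\bullet)} \circ (\hdag T) \riso (\hdag T') \circ f^{!(\bullet)}$ over the divisors $T_1,\dots,T_r$ used to build $\R\underline{\Gamma}^\dag_Y$, and then patching. There are two issues with this, one a genuine gap and one a point where you take a longer road than necessary.

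The gap: when you write ``Applying $f^{!(\bullet)}$ to the triangle $\Delta_T(\E^{(\bullet)})$ and comparing with $\Delta_{T'}(f^{!(\bullet)}\E^{(\bullet)})$ with $T'=f^{-1}(T)$,'' you are tacitly assuming that $f^{-1}(T)$ is a divisor of $X'$, as both the triangle $\Delta_{T'}$ and Proposition \ref{f!commoub} require $T'$ to be a divisor. This fails for a general morphism $f$: if $f$ is, say, the closed immersion of an irreducible component of $T$, then $f^{-1}(T)=X'$ and the lemma does not apply. The paper's proof handles this by first decomposing $f$ through its graph as a closed immersion followed by a smooth projection; the projection case is covered by the flat case of \ref{f!commoub}, and for the closed immersion one treats the two subcases by hand (either $T\cap X'$ is a divisor of $X'$ and \ref{f!commoub} applies, or $T\cap X'=X'$ and both sides of $\eqref{commutfonctcohlocal1}$ vanish). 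Without some version of that case analysis your induction never starts for non-flat $f$.

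The longer road: the ``main obstacle'' you anticipate — independence of the decomposition $Y=\cap T_i$, compatibility with the swap isomorphisms \ref{iso-comm-locaux}, and with the discarding of superfluous divisors — is precisely the verification the paper avoids. Instead of iterating, the paper first reduces to $\E^{(\bullet)}=\O_\X^{(\bullet)}$ in one step using the identity $\R\underline{\Gamma}^\dag_Y(\E^{(\bullet)}) \riso \R\underline{\Gamma}^\dag_Y(\O^{(\bullet)}_\X)\,\smash{\widehat{\otimes}}^\L_{\O^{(\bullet)}_\X}\,\E^{(\bullet)}$ from \ref{fonctYY'Gamma-iso} together with the compatibility of $f^{!(\bullet)}$ with tensor products $\eqref{f!T'Totimes}$; from this point only the constant coefficient and a single divisor are involved, so there are no patching compatibilities to check. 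Likewise, $\eqref{commutfonctcohlocal2}$ is not proved by a parallel cone-uniqueness argument with \ref{surcoh2.1.4-cor} as you suggest, but is deduced directly from $\eqref{commutfonctcohlocal1}$ by the projection formula \ref{surcoh2.1.4}. Your route could in principle be completed, but it recreates verifications the paper was designed to bypass and, as written, is missing the graph decomposition needed to make the divisor lemma applicable.
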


\begin{proof}
1) Let us check \ref{commutfonctcohlocal1}.
Using \ref{f!T'Totimes} and 
\ref{fonctYY'Gamma-iso}, we reduce to the case where $Y$ is the complement of a divisor $T$ and 
$\E ^{(\bullet)} = \O _{\X} ^{(\bullet)}$.
The morphism $f$ is the composition of its graph
$\gamma _f \colon 
 \X ^{\prime } \hookrightarrow 
  \X ^{\prime } \times  \X ^{ }$
  with the  projection 
  $\X ^{\prime } \times  \X ^{ } \to\X ^{ }$.
  Since the case where $f$ is a flat morphism is known (see \ref{f!commoub}), 
we reduce to the case where $f$ is a closed immersion. 
We conclude by using again \ref{f!commoub} (indeed, either $T \cap X'$ is a divisor and we can use \ref{f!commoub}, or 
  $T\cap X'= X'$ and then the isomorphism \ref{commutfonctcohlocal1} is $0\riso 0$). 
  
2) Let us check that \ref{commutfonctcohlocal2} is a consequence of
\ref{commutfonctcohlocal1}.
\begin{equation}
\R \underline{\Gamma} ^\dag _{Y}\circ f ^{ (\bullet)} _{+} (\E ^{\prime (\bullet)})
\underset{\ref{fonctYY'Gamma-iso}}{\riso}
\R \underline{\Gamma} ^\dag _{Y}(\O ^{(\bullet)}  _{\X})
\smash{\widehat{\otimes}}^\L  _{\O ^{(\bullet)}  _{\X} }
 f ^{ (\bullet)} _{+} (\E ^{\prime (\bullet)})
\underset{\ref{surcoh2.1.4}}{\riso} 
 f ^{ (\bullet)} _{+} 
 (
f  ^{!(\bullet)} ( \R \underline{\Gamma} ^\dag _{Y}(\O ^{(\bullet)}  _{\X}) )
\smash{\widehat{\otimes}}^\L  _{\O ^{(\bullet)}  _{\X'} }
\E ^{\prime (\bullet)}))[-d _{X'/X}]
\end{equation}
Using \ref{commutfonctcohlocal1}, we get
$f  ^{!(\bullet)} ( \R \underline{\Gamma} ^\dag _{Y}(\O ^{(\bullet)}  _{\X}) )
[-d _{X'/X}]
\riso 
   \R \underline{\Gamma} ^\dag _{Y' } \O ^{(\bullet)}  _{\X'}$.
   Hence we are done.
\end{proof}

We can extend Corollary \ref{coro-trace-upre} for quasi-coherent complexes :

\begin{coro}
\label{pre-loc-tri-B-t1T}
Let $u \colon \ZZ  \to \X $ be a closed immersion of smooth formal $\V$-schemes.
For any 
$\E ^{(\bullet)} \in \smash{\underrightarrow{LD}} ^{\mathrm{b}} _{\Q,\mathrm{qc}}(\overset{^\mathrm{l}}{} \smash{\widehat{\D}} _{\X /\S  } ^{(\bullet)} )$,
we have the isomorphism
\begin{equation}
\label{pre-loc-tri-B-t1T-iso}
\R \underline{\Gamma} ^\dag _{Z} (\E ^{(\bullet)}) 
\riso 
u _{+} ^{ (\bullet)} \circ  u ^{ !(\bullet)} (\E ^{(\bullet)}).
\end{equation}
\end{coro}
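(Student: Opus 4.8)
The statement to prove is Corollary \ref{pre-loc-tri-B-t1T}: for a closed immersion $u\colon \ZZ\hookrightarrow \X$ of smooth formal $\V$-schemes, there is a functorial isomorphism $\R\underline{\Gamma}^\dag_Z(\E^{(\bullet)})\riso u_+^{(\bullet)}\circ u^{!(\bullet)}(\E^{(\bullet)})$ for any $\E^{(\bullet)}\in\smash{\underrightarrow{LD}}^{\mathrm b}_{\Q,\mathrm{qc}}({}^{\mathrm l}\smash{\widehat{\D}}^{(\bullet)}_{\X/\S})$. The plan is to reduce to the already-established case $\E^{(\bullet)}=\O^{(\bullet)}_{\X}$ by tensor product, and in that case to identify $\R\underline{\Gamma}^\dag_Z(\O^{(\bullet)}_{\X})$ with $u_+^{(\bullet)}u^{!(\bullet)}(\O^{(\bullet)}_{\X})$ using the projection formula \ref{surcoh2.1.4} together with Corollary \ref{coro-trace-upre} (or rather its quasi-coherent avatar obtained from Theorem \ref{2.2.18}).

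\textbf{First step: reduction to the constant coefficient.} First I would use the commutation of $\R\underline{\Gamma}^\dag_Z$ with exterior/internal tensor products. Namely, by \ref{fonctYY'Gamma-iso} (with $Y=Y'=Z$, so $Z\cap Z=Z$) we have $\R\underline{\Gamma}^\dag_Z(\E^{(\bullet)})\riso \R\underline{\Gamma}^\dag_Z(\O^{(\bullet)}_{\X})\smash{\widehat{\otimes}}^\L_{\O^{(\bullet)}_{\X}}\E^{(\bullet)}$, since $\O^{(\bullet)}_{\X}\smash{\widehat{\otimes}}^\L_{\O^{(\bullet)}_{\X}}\E^{(\bullet)}\riso\E^{(\bullet)}$. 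On the other side, by \ref{surcoh2.1.4-cor1} applied to the closed immersion $u$ (in the case where the divisors are empty), we have $u_+^{(\bullet)}(\O^{(\bullet)}_{\ZZ})\smash{\widehat{\otimes}}^\L_{\O^{(\bullet)}_{\X}}\E^{(\bullet)}[d_{Z/X}]\riso u_+^{(\bullet)}\circ u^{!(\bullet)}(\E^{(\bullet)})$, using $u^{!(\bullet)}(\O^{(\bullet)}_{\X})\riso\O^{(\bullet)}_{\ZZ}[d_{Z/X}]$ from \ref{lem-f!B}. So it suffices to produce a functorial isomorphism $\R\underline{\Gamma}^\dag_Z(\O^{(\bullet)}_{\X})\riso u_+^{(\bullet)}(\O^{(\bullet)}_{\ZZ})[d_{Z/X}]$ compatible with the tensoring; once that is in hand, chasing the two tensor identifications gives \ref{pre-loc-tri-B-t1T-iso}.

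\textbf{Second step: the constant coefficient case.} Here I would first treat the case $X$ (hence $Z$) smooth, which is exactly Corollary \ref{coro-trace-upre}: applying $u_+$ to $u^!(\O_{\fP,\Q})\riso\O_{\ZZ,\Q}[d_{Z/X}]$ and using Berthelot--Kashiwara (\ref{exact-Berthelot-Kashiwara-full}, \ref{rem-exact-Berthelot-Kashiwara-full}) one gets $\R\underline{\Gamma}^\dag_X\O_{\fP,\Q}\riso u_+u^!(\O_{\fP,\Q})$ with the compatible diagram \ref{coro-trace-upre-BK}. For the inductive-system level, I would run the same argument with $\smash{\widehat{\D}}^{(\bullet)}$-coefficients: using Proposition \ref{cor-induction-div-coh2} (which identifies $\underrightarrow{\lim}\R\underline{\Gamma}^\dag_X(\O^{(\bullet)}_{\fP})$ with $\R\underline{\Gamma}^\dag_X\O_{\fP,\Q}$) together with the inductive-system Berthelot--Kashiwara theorem \ref{u!u+=id} and the fact that $\R\underline{\Gamma}^\dag_Z(\O^{(\bullet)}_{\X})$ has support in $Z$ (Proposition \ref{prop2.2.9}), the adjunction morphism $u^{(\bullet)}_+u^{!(\bullet)}(\E^{(\bullet)})\to\E^{(\bullet)}$ applied to $\R\underline{\Gamma}^\dag_Z(\O^{(\bullet)}_{\X})$ becomes an isomorphism, and the canonical morphism $\R\underline{\Gamma}^\dag_Z(\O^{(\bullet)}_{\X})\to u^{(\bullet)}_+u^{!(\bullet)}\R\underline{\Gamma}^\dag_Z(\O^{(\bullet)}_{\X})\riso u^{(\bullet)}_+u^{!(\bullet)}(\O^{(\bullet)}_{\X})$ (using $u^{!(\bullet)}\R\underline{\Gamma}^\dag_Z\riso\R\underline{\Gamma}^\dag_{Z}u^{!(\bullet)}\riso u^{!(\bullet)}$ from \ref{commutfonctcohlocal1} and the fact that $Z=u^{-1}(Z)$) gives the desired isomorphism. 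Alternatively, and more directly, one may avoid smoothness of $Z$ entirely by observing that \ref{commutfonctcohlocal2} of Theorem \ref{2.2.18} applied to $u$ and $Y=Z$ gives $\R\underline{\Gamma}^\dag_Z\circ u^{(\bullet)}_+(\FF^{(\bullet)})\riso u^{(\bullet)}_+\circ\R\underline{\Gamma}^\dag_{u^{-1}(Z)}(\FF^{(\bullet)})= u^{(\bullet)}_+(\FF^{(\bullet)})$ since $u^{-1}(Z)=Z$ and $\R\underline{\Gamma}^\dag_Z$ on $\ZZ$ is the identity; combined with the fact (via \ref{prop2.2.9}) that $u^{(\bullet)}_+u^{!(\bullet)}(\E^{(\bullet)})$ already has support in $Z$, so $\R\underline{\Gamma}^\dag_Z$ acts as the identity on it, one extracts the isomorphism from the adjunction triangle.

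\textbf{Expected main obstacle.} The routine but delicate point will be verifying that the isomorphism produced at the constant-coefficient level is compatible with the tensor-product identifications of the first step, i.e. that the two ways of "spreading out" $\R\underline{\Gamma}^\dag_Z$ — via $\smash{\widehat{\otimes}}^\L_{\O^{(\bullet)}_{\X}}$ on the source and via the projection formula on the target — produce the \emph{same} morphism $\R\underline{\Gamma}^\dag_Z(\E^{(\bullet)})\to u^{(\bullet)}_+u^{!(\bullet)}(\E^{(\bullet)})$ as the canonical one, and that the resulting isomorphism is functorial in $\E^{(\bullet)}$ and compatible with localization/forgetful functors of divisors. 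This is a diagram chase using the compatibilities already recorded in \ref{surcoh2.1.4}, \ref{f!T'Totimes}, \ref{fonctYY'Gamma-iso} and \ref{iso-comm-locaux-prod-tens}; it follows the pattern of \cite[2.2.18]{caro_surcoherent}, so I would simply indicate that one copies that argument. No genuinely new difficulty is expected: the hard geometric input (coherence of $\O_{\X}(\hdag Z)_{\Q}$, Berthelot--Kashiwara, the projection formula) has all been established earlier in the paper.
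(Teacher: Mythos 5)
Your proof follows essentially the same route as the paper's: reduce to the constant coefficient via the tensor-product identities (\ref{surcoh2.1.4-cor1} together with \ref{fonctXX'Gamma-iso}/\ref{fonctYY'Gamma-iso}), then in that case combine the inductive-system Berthelot--Kashiwara theorem \ref{u!u+=id} (using that $\R\underline{\Gamma}^\dag_Z(\O^{(\bullet)}_\X)$ is coherent with support in $Z$ by \ref{coh-OGammaY}) with the commutation \ref{commutfonctcohlocal1} to identify $u_+^{(\bullet)}u^{!(\bullet)}\R\underline{\Gamma}^\dag_Z(\O^{(\bullet)}_\X)$ with both $\R\underline{\Gamma}^\dag_Z(\O^{(\bullet)}_\X)$ and $u_+^{(\bullet)}u^{!(\bullet)}(\O^{(\bullet)}_\X)$. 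The extra compatibility checks you flag as a potential obstacle are not carried out in the paper either, and the alternative route you sketch via \ref{commutfonctcohlocal2} is equivalent; neither affects the substance.
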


\begin{proof}
Using \ref{surcoh2.1.4-cor1} and \ref{fonctXX'Gamma-iso}, we reduce to the case where
$\E ^{(\bullet) }= \O _{\X}^{(\bullet)} $.
From Berthelot-Kashiwara's theorem \ref{u!u+=id},
since 
$\R \underline{\Gamma} ^\dag _{Z } (\O _{\X}^{(\bullet)}) $
is coherent with support in $Z $ (see \ref{coh-OGammaY}), we get 
$$u _{+} ^{ (\bullet)}u ^{ !(\bullet)}
\R \underline{\Gamma} ^\dag _{Z } (\O _{\X}^{(\bullet)}) 
\riso 
\R \underline{\Gamma} ^\dag _{Z } (\O _{\X}^{(\bullet)})  .$$
On the other hand,
$$u ^{ !(\bullet)}
\R \underline{\Gamma} ^\dag _{Z } (\O _{\X}^{(\bullet)}) 
\underset{\ref{commutfonctcohlocal1}}{\riso} 
\R \underline{\Gamma} ^\dag _{Z } 
u ^{ !(\bullet)} (\O _{\X}^{(\bullet)} ) 
\riso 
u ^{ !(\bullet)} (\O _{\X}^{(\bullet)} ) .
$$
Hence 
$u _{+} ^{ (\bullet)}u ^{ !(\bullet)}
\R \underline{\Gamma} ^\dag _{Z } (\O _{\X}^{(\bullet)}) 
\riso 
u _{+} ^{ (\bullet)}u ^{ !(\bullet)}
 (\O _{\X}^{(\bullet)} ) $,
 and we are done. 
\end{proof}

\subsection{Coherence of convergent isocrystals, 
inverse images of convergent isocrystals}

\begin{ntn}
\label{ntnMICdag2fs3}
Let  $\fP $ be a smooth  formal scheme over $\S $.
Let $X$ be a smooth closed subscheme of $P$.
We denote by $\mathrm{MIC} ^{(\bullet)} (X, \fP/K) $ the full subcategory of 
$\smash{\underrightarrow{LM}}  _{\Q, \mathrm{coh}}
(\smash{\widehat{\D}} _{\fP /\S } ^{(\bullet)})$
consisting of objects 
$\E ^{(\bullet)}$ with support in $X$ 
and such that 
$\underrightarrow{\lim} (\E ^{(\bullet)})
\in \mathrm{MIC} ^{\dag \dag} (X, \fP/K) $
where
$\underrightarrow{\lim} 
\colon
\smash{\underrightarrow{LM}}  _{\Q, \mathrm{coh}}
(\smash{\widehat{\D}} _{\fP /\S } ^{(\bullet)})
\cong
\mathrm{Coh} ( \smash{\D} ^\dag _{\fP,\Q} )$
is the equivalence of categories
of \ref{M-eq-coh-lim}, 
and where 
$\mathrm{MIC} ^{\dag \dag} (X, \fP/K) $
is defined in \ref{ntnMICdag2fs2}.
When $X=P$, we remove $X$ in the notation so that in this case 
we retrieve Notation \ref{cvisoc-f*}.\ref{cvisoc-f*2}.
\end{ntn}

\begin{prop}
\label{st-isoc-boxtimes}
Let  $\fP $ and $\fQ$ be two smooth  formal schemes over $\S $.
Let $X$ (resp. $Y$) be a smooth closed subscheme of $P$ (resp. $Q$).
Let 
$\E ^{(\bullet)}$ be an object of 
$\mathrm{MIC} ^{(\bullet)} (X, \fP/K) $,
and  
$\cF ^{(\bullet)}$ be an object of 
$\mathrm{MIC} ^{(\bullet)} (Y, \fQ/K) $.
Then 
$\E ^{(\bullet)} \smash{\widehat{\boxtimes}} ^\L _{\O _{\S }} \cF ^{(\bullet)}
\in 
\mathrm{MIC} ^{(\bullet)} (X\times Y, \fP\times \fQ/K) $.
\end{prop}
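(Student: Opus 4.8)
\textbf{Proof strategy for Proposition \ref{st-isoc-boxtimes}.}
The plan is to reduce the statement to two things: first, that $\E ^{(\bullet)} \smash{\widehat{\boxtimes}} ^\L _{\O _{\S }} \cF ^{(\bullet)}$ has support in $X \times Y$, which is essentially formal; and second, that $\underrightarrow{\lim} (\E ^{(\bullet)} \smash{\widehat{\boxtimes}} ^\L _{\O _{\S }} \cF ^{(\bullet)})$ is an object of $\mathrm{MIC} ^{\dag \dag} (X\times Y, \fP\times \fQ/K)$, i.e. is a coherent $\smash{\D} ^\dag _{\fP\times \fQ/\S } (\hdag \emptyset) _{\Q}$-module with support in $X\times Y$ whose restriction to the relevant open is $\O$-coherent in the $\mathrm{MIC}$-sense. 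First I would observe, using Lemma \ref{exact-boxtimes} and its proof (more precisely the description \ref{boxtimesalg-formal} together with Proposition \ref{prop-boxtimes}), that $\smash{\widehat{\boxtimes}} ^\L _{\O _{\S }}$ is t-exact on coherent objects, so that $\E ^{(\bullet)} \smash{\widehat{\boxtimes}} ^\L _{\O _{\S }} \cF ^{(\bullet)}$ really lies in $\smash{\underrightarrow{LM}}  _{\Q, \mathrm{coh}} (\smash{\widehat{\D}} _{\fP\times \fQ/\S } ^{(\bullet)})$ rather than just in the derived category; this is what makes the claimed membership in $\mathrm{MIC} ^{(\bullet)}$ meaningful.

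For the support statement, I would use the commutation of local cohomological functors with exterior tensor products. Writing $Z_P := P \setminus X$ and $Z_Q := Q \setminus Q$ as unions of divisors locally, one has $\R \underline{\Gamma} ^\dag _{X} (\E ^{(\bullet)}) \riso \E ^{(\bullet)}$ and similarly for $\cF^{(\bullet)}$ by hypothesis and Proposition \ref{prop2.2.9}. Then, using the projection formula \ref{prop-boxtimes-v+} (commutation of $\smash{\widehat{\boxtimes}}$ with the pullbacks $p_1 ^{!(\bullet)}$, $p_2 ^{!(\bullet)}$ along the two projections) together with the commutation \ref{commutfonctcohlocal1} of $\R \underline{\Gamma} ^\dag$ with extraordinary inverse images and the formula \ref{fonctYY'Gamma-iso}, I would deduce $\R \underline{\Gamma} ^\dag _{X\times Y} (\E ^{(\bullet)} \smash{\widehat{\boxtimes}} ^\L _{\O _{\S }} \cF ^{(\bullet)}) \riso \E ^{(\bullet)} \smash{\widehat{\boxtimes}} ^\L _{\O _{\S }} \cF ^{(\bullet)}$, giving the support condition.

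It remains to identify $\underrightarrow{\lim}$ of the exterior tensor product. By Theorem \ref{exact-Berthelot-Kashiwara-full} (Berthelot--Kashiwara), writing $u \colon \X \hookrightarrow \fP$ and $v \colon \fY \hookrightarrow \fQ$ for suitable local closed immersions of smooth formal schemes lifting $X$ and $Y$, the objects $\E^{(\bullet)}$ and $\cF^{(\bullet)}$ correspond to convergent isocrystals on $\X$ and $\fY$ pushed forward by $u_+$ and $v_+$; concretely, using \ref{pre-loc-tri-B-t1T-iso}, $\E ^{(\bullet)} \riso u _{+} ^{(\bullet)} u ^{!(\bullet)} \E ^{(\bullet)}$ with $u ^{!(\bullet)} \E ^{(\bullet)} \in \mathrm{MIC} ^{(\bullet)}(\X/K)$, and similarly for $\cF^{(\bullet)}$. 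Then \ref{boxtimes-v+} gives $\E ^{(\bullet)} \smash{\widehat{\boxtimes}} ^\L _{\O _{\S }} \cF ^{(\bullet)} \riso w_+ ^{(\bullet)} ( (u ^{!(\bullet)} \E ^{(\bullet)}) \smash{\widehat{\boxtimes}} ^\L _{\O _{\S }} (v ^{!(\bullet)} \cF ^{(\bullet)}) )$ where $w = u \times v$, so by Berthelot--Kashiwara again we reduce to the case $X = P$, $Y = Q$, i.e. to showing that the exterior tensor product of two convergent isocrystals (in the guise of objects of $\mathrm{MIC}^{(\bullet)}(\fP/K)$ and $\mathrm{MIC}^{(\bullet)}(\fQ/K)$) is again such an object on $\fP \times \fQ$. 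At the level of realizations $\sp_* $, this is the standard fact that the exterior tensor product of two $j^\dag\O$-coherent modules with overconvergent (here convergent) integrable connection is again $\O$-coherent with integrable connection, compatible via $\mathrm{real}$ and \ref{thm-eqcat-cvisoc}; concretely one checks that $\underrightarrow{\lim}(\E ^{(\bullet)} \smash{\widehat{\boxtimes}} ^\L _{\O _{\S }} \cF ^{(\bullet)})$ is $\O _{\fP\times\fQ, \Q}$-coherent by using \ref{boxtimesalg-formal}, Proposition \ref{prop-boxtimes}.\ref{prop-boxtimes2} (the cohomology of the box product is the box product of the cohomologies, since one factor is a single module), and the flatness of objects of $\mathrm{MIC} ^{\dag \dag}$ over $\O_{\Q}$ (Proposition \ref{lem-projff}).

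The main obstacle I anticipate is the $\O$-coherence step in the last paragraph: one must be careful that the derived $p$-adically completed exterior tensor product, which is a priori only a complex, is concentrated in degree zero and gives an $\O_{\fP\times\fQ,\Q}$-coherent (indeed locally projective of finite type) module. This is where the flatness of a convergent isocrystal as an $\O_{\Q}$-module (Proposition \ref{lem-projff}.\ref{lem-projff-it2}) and the Mittag-Leffler / vanishing-of-$\varprojlim{}^1$ arguments from the proof of Lemma \ref{exact-boxtimes} do the work; one should also invoke Theorem \ref{thm-eqcat-cvisoc} to upgrade $\O$-coherence of the limit to the full $\mathrm{MIC}^{\dag\dag}$ condition. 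Everything else is a bookkeeping exercise in assembling the already-established commutations \ref{prop-boxtimes-v+}, \ref{boxtimes-v+}, \ref{commutfonctcohlocal1}, \ref{pre-loc-tri-B-t1T-iso} with Berthelot--Kashiwara \ref{exact-Berthelot-Kashiwara-full}, \ref{u!u+=id}.
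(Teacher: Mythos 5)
Your proof is correct and follows essentially the same route as the paper: coherence via Lemma \ref{exact-boxtimes}, reduction to the case $X=P$, $Y=Q$ by locality and the push-forward compatibility \ref{prop-boxtimes-v+} (Berthelot--Kashiwara), and then the $\O_\Q$-coherence of the exterior product of two $\O_\Q$-coherent modules. The paper's own proof is just a terser version of yours (it says ``then this is obvious'' where you spell out the flatness/$\O$-coherence argument, and it doesn't bother with the separate support check, which is automatic once you write the object as a $w_+$).
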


\begin{proof}
Following Lemma \ref{exact-boxtimes}, we already know
$\E ^{(\bullet)} \smash{\widehat{\boxtimes}} ^\L _{\O _{\S }} \cF ^{(\bullet)}
\in 
\smash{\underrightarrow{LM}}  _{\Q, \mathrm{coh}}
( \smash{\widehat{\D}} _{\fP \times \fQ/\S }  ^{(\bullet)} )$.
Since the proposition is local, 
using 
\ref{prop-boxtimes-v+}, 
we reduce to the case where $X=P$ and $Y=Q$. 
Then this is obvious.
\end{proof}

\begin{thm}
\label{coh-ss-div-bis}
Let  $\fP $ be a smooth  formal scheme over $\S $.
Let $X$ be a smooth closed subscheme of $P$, and $T$ be a divisor of $X$.
Let 
$\E ^{(\bullet)}$ be an object of 
$\mathrm{MIC} ^{(\bullet)} (X, \fP/K) $.
Then 
$(\hdag T ) (\E ^{(\bullet)})
\in 
\smash{\underrightarrow{LM}}  _{\Q, \mathrm{coh}}
(\smash{\widehat{\D}} _{\fP /\S } ^{(\bullet)})$.
\end{thm}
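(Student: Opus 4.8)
The plan is to reduce the coherence of $(\hdag T)(\E^{(\bullet)})$ to the already-known coherence results for the constant coefficient (Theorem \ref{coh-ss-div} and its corollary \ref{coh-Bbullet}), via Berthelot--Kashiwara's theorem \ref{u!u+=id} and the commutation of the localization functor outside a divisor with pushforwards and pullbacks. Since the statement is local on $\fP$ (coherence in $\smash{\underrightarrow{LM}}_{\Q,\mathrm{coh}}$ is local by \ref{cohDLMislocal}), I may shrink $\fP$ freely; in particular I may assume $\fP$ affine and that there is a closed immersion $u\colon \fX\hookrightarrow\fP$ of smooth formal $\S$-schemes lifting the closed immersion $X\hookrightarrow P$, together with a divisor $\widetilde T$ of $\fX$ lifting $T\subset X$ (i.e. $\widetilde T$ cut out by a section whose reduction defines $T$ in $X$). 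Then I must be careful that $(\hdag T)$ on the $\fP$-side is the localization outside the divisor $T$ of $P$, which is only a divisor of $X$, not of $P$ — so $(\hdag T)$ here really means the local cohomological/localization functor $R\underline\Gamma^\dag$, $(\hdag\cdot)$ of Section \ref{localcoho-section}; I will interpret $(\hdag T)(\E^{(\bullet)})$ as $(\hdag T\cap P)$ where $T$ is viewed as a locally closed subscheme of $P$, equivalently using \ref{3.2.1caro-2006-surcoh-surcv}.

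\textbf{Main steps.} First I would use Berthelot--Kashiwara \ref{u!u+=id}: since $\E^{(\bullet)}$ has support in $X$ and is coherent, there is $\E_0^{(\bullet)}\in\smash{\underrightarrow{LM}}_{\Q,\mathrm{coh}}(\smash{\widehat\D}_{\fX/\S}^{(\bullet)})$ (in fact in $\mathrm{MIC}^{(\bullet)}(\fX/\V)$, by the definition \ref{ntnMICdag2fs3} of $\mathrm{MIC}^{(\bullet)}(X,\fP/K)$ and the equivalence of categories provided by \ref{eqcat-u0+!} after passing to rigid coefficients) with $u_+^{(\bullet)}\E_0^{(\bullet)}\riso\E^{(\bullet)}$ and $u^{!(\bullet)}u_+^{(\bullet)}\E_0^{(\bullet)}\riso\E_0^{(\bullet)}$. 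Second, I would commute $(\hdag T)$ past $u_+^{(\bullet)}$: by \ref{surcoh2.1.4-cor} (or more precisely its coherent/devissage analogue via \ref{commutfonctcohlocal2} and \ref{pre-loc-tri-B-t1T}), $(\hdag T)\circ u_+^{(\bullet)}(\E_0^{(\bullet)})\riso u_{\widetilde T+}^{(\bullet)}\circ(\hdag \widetilde T)(\E_0^{(\bullet)})$, where $\widetilde T=u^{-1}(T)$ is now an honest divisor of $\fX$. Third, it suffices by \ref{surcoh2.1.4-cor}-type stability of coherence under pushforward by a closed immersion (the closed-immersion pushforward preserves $\smash{\underrightarrow{LM}}_{\Q,\mathrm{coh}}$, cf. the proposition after \ref{stab-coh-f_+}) to show $(\hdag\widetilde T)(\E_0^{(\bullet)})\in\smash{\underrightarrow{LM}}_{\Q,\mathrm{coh}}(\smash{\widehat\D}_{\fX/\S}^{(\bullet)})$ — i.e. I have reduced to the case $X=P$, which is exactly the statement with $\fP$ replaced by $\fX$ and $\E^{(\bullet)}$ a convergent isocrystal $\smash{\widehat\D}_{\fX/\S}^{(\bullet)}$-module. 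Fourth, in that reduced case I would invoke \ref{MICbullet-prop}: there is a $\smash{\widehat\D}_{\fX/\S}^{(0)}$-module $\cG^{(0)}$, coherent over $\O_\fX$, with $\smash{\widehat\D}_{\fX/\S}^{(\bullet)}\otimes_{\smash{\widehat\D}_{\fX/\S}^{(0)}}\cG^{(0)}$ isomorphic to $\E_0^{(\bullet)}$ in $\smash{\underrightarrow{LM}}_{\Q}$ and $\mathrm{cst}(\cG^{(0)})\to\smash{\widehat\D}_{\fX/\S}^{(\bullet)}\otimes_{\smash{\widehat\D}_{\fX/\S}^{(0)}}\cG^{(0)}$ an ind-isogeny in $M(\O_\fX^{(\bullet)})$. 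Then $(\hdag\widetilde T)(\E_0^{(\bullet)})\riso\smash{\widetilde\D}_{\fX/\S}^{(\bullet)}(\widetilde T)\otimes_{\smash{\widehat\D}_{\fX/\S}^{(\bullet)}}\E_0^{(\bullet)}$, and using the ind-isogeny together with associativity of tensor products this is computed, up to lim-ind-isogeny, by $\smash{\widetilde\B}_{\fX}^{(\bullet)}(\widetilde T)\otimes_{\O_\fX^{(\bullet)}}\cG^{(\bullet)}$ where $\cG^{(\bullet)}$ is the $\smash{\widehat\D}_{\fX/\S}^{(\bullet)}$-ification of $\cG^{(0)}$; since $\cG^{(0)}$ is $\O_\fX$-coherent and $\smash{\widetilde\B}_\fX^{(\bullet)}(\widetilde T)\in\smash{\underrightarrow{LM}}_{\Q,\mathrm{coh}}(\smash{\widehat\D}_{\fX}^{(\bullet)})$ by \ref{coh-Bbullet}, I conclude coherence using the stability of $\smash{\underrightarrow{LM}}_{\Q,\mathrm{coh}}$ under cokernels/kernels/extensions \ref{LQ-coh-stab} (applied to the ind-isogeny relating $\E_0^{(\bullet)}$ and the induced module, and to a finite presentation of $\cG^{(0)}$ over $\O_\fX$).

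\textbf{Main obstacle.} The delicate point is the third step — controlling $(\hdag\widetilde T)(\E_0^{(\bullet)})$ when $\E_0^{(\bullet)}$ is merely a convergent (not overconvergent) isocrystal $\smash{\widehat\D}_{\fX/\S}^{(\bullet)}$-module, because tensoring with $\smash{\widetilde\B}_\fX^{(\bullet)}(\widetilde T)$ does not obviously stay coherent without knowing that $\cG^{(0)}$ is $\O_\fX$-coherent in a compatible way with the level structure. This is precisely what \ref{MICbullet-prop} and \ref{MICbullet-lem} provide: they supply an $\O_\fX$-coherent model whose $\smash{\widehat\D}^{(\bullet)}$-ification is lim-ind-isogenous to $\E_0^{(\bullet)}$ and is $\O_\fX^{(\bullet)}$-ind-isogenous to $\mathrm{cst}(\cG^{(0)})$, which is exactly the bridge needed to transfer the coherence of $\smash{\widetilde\B}_\fX^{(\bullet)}(\widetilde T)$ (Theorem \ref{coh-ss-div}, via \ref{coh-Bbullet}) to $\smash{\widetilde\B}_\fX^{(\bullet)}(\widetilde T)\otimes_{\O_\fX^{(\bullet)}}\cG^{(\bullet)}$ and hence to $(\hdag\widetilde T)(\E_0^{(\bullet)})$. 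I also need to check the compatibility between the functor $(\hdag T)$ appearing in the statement (localization outside a subvariety of $P$, Section \ref{localcoho-section}) and the genuine localization outside a divisor of $\fX$ after applying $u^{!(\bullet)}$, which follows from \ref{commutfonctcohlocal1} and \ref{pre-loc-tri-B-t1T-iso} combined with the description in \ref{3.2.1caro-2006-surcoh-surcv}; this bookkeeping, while not conceptually hard, is where I would be most careful to avoid a circular dependence on coherence of $(\hdag T)$ itself.
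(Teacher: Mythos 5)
Your reduction steps (Berthelot--Kashiwara to pass to the case $X=P$, commutation of the localization functor with $u_+^{(\bullet)}$ via \ref{2.2.18} / \ref{surcoh2.1.4-cor}) are correct and coincide with the first paragraph of the paper's proof. The divergence, and the gap, occur in your final step.

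You propose to conclude by writing $(\hdag \widetilde T)(\E_0^{(\bullet)}) \cong \smash{\widetilde\B}_\fX^{(\bullet)}(\widetilde T) \widehat\otimes_{\O_\fX^{(\bullet)}}\cG^{(\bullet)}$ and then applying \ref{LQ-coh-stab} to a finite $\O_\fX$-presentation of the $\O_\fX$-coherent model $\cG^{(0)}$. But \ref{LQ-coh-stab} asserts stability of $\smash{\underrightarrow{LM}}_{\Q,\mathrm{coh}}(\smash{\widehat\D}_{\fX}^{(\bullet)})$ under kernels and cokernels of morphisms in the abelian category $\smash{\underrightarrow{LM}}_{\Q}(\smash{\widehat\D}_{\fX}^{(\bullet)})$, i.e.\ of $\smash{\widehat\D}^{(\bullet)}_\fX$-linear maps. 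A finite presentation $\O_\fX^a\to\O_\fX^b\to\cG^{(0)}\to 0$ only produces $\O_\fX$-linear transition maps $\smash{\widetilde\B}_\fX^{(\bullet)}(\widetilde T)^a\to\smash{\widetilde\B}_\fX^{(\bullet)}(\widetilde T)^b$; these are not $\smash{\widehat\D}^{(\bullet)}_\fX$-linear, so \ref{LQ-coh-stab} does not apply, and your argument does not yield $\smash{\widehat\D}^{(\bullet)}_\fX$-coherence of the cokernel. What you would need instead is a $\smash{\widehat\D}^{(\bullet)}_\fX$-linear presentation of $\smash{\widetilde\B}_\fX^{(\bullet)}(\widetilde T)\widehat\otimes_{\O_\fX^{(\bullet)}}\cG^{(\bullet)}$; the natural route is to take a $\smash{\widehat\D}^{(\bullet)}$-presentation of $\smash{\widetilde\B}_\fX^{(\bullet)}(\widetilde T)$ (not of $\cG^{(0)}$), tensor with $\cG^{(\bullet)}$, and then invoke the side-changing isomorphism of the form $\smash{\widehat\D}^{(m)}\otimes_{\O}\cG$ (tensor $\D$-structure) $\cong \smash{\widehat\D}^{(m)}\otimes_{\O}\cG$ (induced structure), which in the arithmetic setting requires care with completions and levels and is not invoked anywhere in your write-up. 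As written, the step is a genuine gap, not a bookkeeping issue.

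The paper follows a different and in fact more robust route at precisely this point: it re-uses the de Jong alteration devissage from the proof of Theorem~\ref{coh-ss-div} (exhibiting $\O_{\X}^{(\bullet)}$ as a direct summand of $f_+^{(\bullet)}(\R\underline\Gamma^\dag_{X'}\O_{\fP}^{(\bullet)}[n])$), then applies the projection formula \ref{surcoh2.1.4-iso} to twist this direct-summand structure by $\E^{(\bullet)}$, reducing to the coherence of $(\hdag Z')(\E^{\prime(\bullet)})$ where $Z'=\phi^{-1}(T)$ is a \emph{strict smooth crossing} divisor on the alteration $X'$ and $\E^{\prime(\bullet)}\in\mathrm{MIC}^{(\bullet)}(\X'/K)$. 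Because the components of $Z'$ are smooth, an induction on the number of irreducible components then works: one pushes forward along a closed immersion $\ZZ'_1\hookrightarrow\X'$, uses \ref{corostab-MIC-f*} to control $\L u_1^{*(\bullet)}\E^{\prime(\bullet)}$, and devisses via the localization triangle. The crucial gain over your approach is that the de Jong step replaces the unproved ``tensor with an $\O$-coherent isocrystal preserves $\smash{\widehat\D}$-coherence'' claim with an induction that only ever needs coherence of convergent isocrystals (trivial), pushforward stability along closed immersions of smooth lifts, and devissage.
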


\begin{proof}
1) Using the inductive system version of Berthelot-Kashiwara's theorem
(see \ref{u!u+=id}),
we reduce to the case where $X = P$. 
In this case, we write $\X$ (resp. $Z$) instead of $\fP$
(resp. $T$) and we will use the notation of the proof of \ref{coh-ss-div}.
Now, following the part 0),1) and 2) of the proof of \ref{coh-ss-div},
modulo the equivalence of categories 
\ref{eqcat-limcoh} and the compatibility of \ref{cor-induction-div-coh2}, 
the object
$\O _{\X} ^{(\bullet)}$
is a direct summand of 
$ f _{+} ^{(\bullet)} ( \R \underline{\Gamma} ^{\dag} _{X '} \O _{\fP} ^{(\bullet)} [n] )$
in 
$\smash{\underrightarrow{LD}} ^{\mathrm{b}} _{\Q, \mathrm{coh}}
(\smash{\widehat{\D}} _{\X /\S } ^{(\bullet)})$.
This yields that 
$(\hdag Z ) (\E ^{(\bullet)})$
is a direct summand of 
$$(\hdag Z ) \left (
\E ^{(\bullet)}
\smash{\widehat{\otimes}}
^\L _{\O ^{(\bullet)}  _{\X} }
 f _{+} ^{(\bullet)} ( \R \underline{\Gamma} ^{\dag} _{X '} \O _{\fP} ^{(\bullet)} [n] )
 \right) 
\underset{\ref{surcoh2.1.4-iso}}{\riso}
(\hdag Z )  f _{+} ^{(\bullet)} 
 \left ( 
 f  ^{!(\bullet)} (   \E ^{(\bullet)})
\smash{\widehat{\otimes}}
^\L _{\O ^{(\bullet)}  _{\fP} }
\R \underline{\Gamma} ^{\dag} _{X '} \O _{\fP} ^{(\bullet)} 
\right )
\riso 
 f _{+} ^{(\bullet)} 
\R \underline{\Gamma} ^{\dag} _{X '\setminus Z'}  f  ^{!(\bullet)} (   \E ^{(\bullet)}).$$
Hence, we reduce to check that 
$\R \underline{\Gamma} ^{\dag} _{X '\setminus Z'}  f  ^{!(\bullet)} (   \E ^{(\bullet)})$
is an object of 
$\smash{\underrightarrow{LD}} ^{\mathrm{b}} _{\Q, \mathrm{coh}}
(\smash{\widehat{\D}} _{\fP /\S } ^{(\bullet)})$.
Since this is local in $\fP$, we can suppose there exists a closed immersion of smooth 
$\V$-formal schemes $u \colon \X ' \hookrightarrow \fP$ which reduces modulo $\pi$ to 
$X 'Â \hookrightarrow P$. Following \ref{pre-loc-tri-B-t1T-iso}, 
$\R \underline{\Gamma} ^{\dag} _{X '}  f  ^{!(\bullet)} (   \E ^{(\bullet)})
\riso 
u _{+} ^{ (\bullet)} \circ  u ^{ !(\bullet)} \circ f  ^{!(\bullet)}(\E ^{(\bullet)})$.
Hence, 
$\R \underline{\Gamma} ^{\dag} _{X '\setminus Z'}  f  ^{!(\bullet)} (   \E ^{(\bullet)})
\riso 
u _{+} ^{ (\bullet)} \circ   (\hdag Z') \circ u ^{ !(\bullet)} \circ f  ^{!(\bullet)}(\E ^{(\bullet)})$.
Following \ref{corostab-MIC-f*}, 
we get 
$\E ^{\prime (\bullet)} := u ^{ !(\bullet)} \circ f  ^{!(\bullet)}(\E ^{(\bullet)})
\in \mathrm{MIC} ^{(\bullet)} (\X '/K)$.
Since $u$ is proper, then 
$u _{+} ^{ (\bullet)}$ preserves the coherence. 
Hence, we reduce to check 
that 
$ (\hdag Z') (\E ^{\prime (\bullet)})
\in 
\smash{\underrightarrow{LM}}  _{\Q, \mathrm{coh}}
(\smash{\widehat{\D}} _{\X' /\S } ^{(\bullet)})$.

2) Since this is local, we can suppose $\X'$ is integral and affine. 
We proceed by induction on the number of irreducible component of $Z'$. 
Let $Z ' _1$ be one irreducible component of $Z'$ and $Z''$ be the union of the other irreducible components.
Let $ u _1 \colon \ZZ ' _1 \hookrightarrow \X'$ be a lifting of $Z ' _1 \hookrightarrow X'$,
and $Z ' _2 := Z ' _1 \cap Z ''$.
\begin{equation}
\label{coh-ss-div-bis-extri}
\R \underline{\Gamma} ^{\dag} _{Z' _1}   ( (\hdag Z'')   \E ^{\prime (\bullet)})
\to 
 (\hdag Z'')   (\E ^{\prime (\bullet)})
\to 
 (\hdag Z')   (\E ^{\prime (\bullet)})
 \to +1
 \end{equation}
Following \ref{corostab-MIC-f*},
we get 
$\L u _1 ^{ *(\bullet)}
( \E ^{\prime (\bullet)})
\in 
 \mathrm{MIC} ^{(\bullet)} (\ZZ '_1 /K)$.
 Since 
$\R \underline{\Gamma} ^{\dag} _{Z' _1}   ( (\hdag Z'')   \E ^{\prime (\bullet)}) 
\riso 
u _{1+} ^{ (\bullet)} \circ  u _1 ^{ !(\bullet)}
( (\hdag Z'')   \E ^{\prime (\bullet)})
\riso 
u _{1+} ^{ (\bullet)} \circ (\hdag Z' _2)   
( u _1 ^{ !(\bullet)}
\E ^{\prime (\bullet)})$, 
by induction hypothesis and preservation of the coherence under
$u _{1+} ^{ (\bullet)}$,
this yields 
$\R \underline{\Gamma} ^{\dag} _{Z' _1}   ( (\hdag Z'')   \E ^{\prime (\bullet)}) 
\in 
\smash{\underrightarrow{LD}} ^{\mathrm{b}} _{\Q, \mathrm{coh}}
(\smash{\widehat{\D}} _{\X' /\S } ^{(\bullet)})$.
By induction hypothesis 
$ (\hdag Z'')   (\E ^{\prime (\bullet)})
\in 
\smash{\underrightarrow{LD}} ^{\mathrm{b}} _{\Q, \mathrm{coh}}
(\smash{\widehat{\D}} _{\X' /\S } ^{(\bullet)})$.
We conclude using the exact triangle \ref{coh-ss-div-bis-extri}.
\end{proof}

\begin{prop}
\label{com-sp+-f*}
Consider the following diagram 
\begin{equation}
\label{com-sp+-f*-squ}
\xymatrix  @R=0,3cm {
{Y ' }
\ar[r] ^-{j '}
\ar[d]^b
&
{X ^{\prime }}
\ar[r] ^-{u'}
\ar[d]^a
&
{\fP ^{\prime }}
\ar[d]^f
\\
{Y }
\ar[r]^{j }
&
{X }
\ar[r]^{u }
&
{\fP ,}
}
\end{equation}
where $f$ is a smooth morphism of smooth formal schemes over $\S $, 
$a$ is a morphism of smooth $S $-schemes,
 $u $, $u'$  are closed immersions,
and $j$, $j'$ are open immersions. We suppose there exists a divisor 
$T$ 
of 
$P$ 
such that $Y = X \setminus T$
and $Z:= T\cap X$ is a divisor of $X$
(resp. a divisor 
$T'$ 
of 
$P'$ 
such that $Y' = X '\setminus T'$
and $Z':= T'\cap X'$ is a divisor of $X'$).
We get the  morphism of smooth frames
$\theta :=(b,\,a,\,f)
\colon 
(Y', X ^{\prime },\fP')
\to 
(Y, X,\fP)$.
Let $E \in \mathrm{MIC} ^{\dag} (Y, X,\fP/K)$ (see notation \ref{ntn-realP}).
We have the isomorphism in 
$\mathrm{MIC} ^{\dag \dag} (X', \fP',T'/K) $  (see notation \ref{ntnMICdag2fs2}) :
\begin{equation}
\label{com-sp+-f*iso}
\sp _+ ( \theta ^* (E)) 
\riso 
\R \underline{\Gamma} ^\dag _{Y'} f ^!\sp _+ (E) [-d _{X'/X}].
\end{equation}

\end{prop}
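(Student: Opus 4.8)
<br>

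\textbf{Plan of proof.} The strategy is to reduce the statement to the already-known commutations of $\sp_+$ with pullbacks by morphisms of frames (Section on overconvergent isocrystals, notably \ref{inv-image} and \ref{spinvim*+}), together with the commutation of $\R\underline{\Gamma}^\dag$ with $f^!$ proved in \ref{2.2.18}. The morphism of frames $\theta=(b,a,f)$ factors, up to the intermediate frame $(Y',X',\fP'\times_\S\fP)$, through the graph of $f$ composed with projections, so it is harmless to treat separately the case where $f$ is an open immersion (or more generally when $X'=u'^{-1}(\cdot)$ pulls back cleanly) and the case where $f$ is a closed immersion; but in fact the cleanest route is to work directly with the factorization $\theta^*$ coming from \ref{inv-image-real}, namely $\theta^*(E)_{\fP'} = j'^\dag\smash{\O}_{]X'[_{\fP'}}\otimes_{f_K^{-1}j^\dag\smash{\O}_{]X[_{\fP}}}f_K^{-1}E_{\fP}$.

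\textbf{Key steps.} First I would unwind the definition of $\sp_+$ from \ref{ntn-dfnsp+}: since $\sp_+ = u_{0+}\circ\sp_*\circ u_{0K}^*$, and the analogous identity holds on the primed side, it suffices to establish the compatibility at the level of the $\mathrm{MIC}^{\dag\dag}((\X_\alpha)_\alpha,Z/K)$-models and then push forward along $u_{0+}$. Second, on the smooth formal scheme $\X_\alpha$ (a lifting of an affine piece $X_\alpha$ of $X$, with $\X'_\alpha$ a compatible lifting of $X'_\alpha$), I would invoke \ref{sp*f*com-empt} and in particular the isomorphism \ref{sp*f*com}: $\sp_* u_K^*(E_\X)\riso u^!\sp_*(E_\X)[-d_{X'/X}]$, which is precisely the local incarnation of the desired formula when combined with the commutation of $\sp_*$ with the pullback by the smooth part. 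Third, to turn $u^!$ into $\R\underline{\Gamma}^\dag_{Y'}\circ f^!$ I would use \ref{coro-trace-upre} (identifying $\R\underline{\Gamma}^\dag_{X'}\O_{\fP',\Q}$ with $u'_+u'^!(\O_{\fP',\Q})$) together with the commutation \ref{commutfonctcohlocal1} of $f^!$ with local cohomological functors, and the fact (\ref{spinvim*+}) that $\sp_*\circ u_{0K}^*\circ|_{]X'[}\riso u_0^*\circ\sp_*$ so that restriction along the open frame and localization outside $T'$ match up. Fourth, I would check that all these local isomorphisms are compatible with the glueing data defining the objects of $\mathrm{MIC}^{\dag\dag}((\X'_\alpha)_\alpha,Z'/K)$; this uses \ref{sp-eps-tau} (compatibility of the glueing isomorphisms $\epsilon_{u,u'}$ and $\tau_{u,u'}$ under $\sp_*$) exactly as in the construction of the functor $\sp_*$ on the categories of glued data in \ref{lem1pre-sp+plfid} and in \ref{propspetdualsansfrob721}. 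Finally, applying $u_{0+}=\mathcal{R}ecol$ and using \ref{prop1} (Berthelot--Kashiwara) together with \ref{commutfonctcohlocal2} (commutation of $\R\underline{\Gamma}^\dag$ with proper pushforward) yields \ref{com-sp+-f*iso}.

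\textbf{Main obstacle.} The routine part is the local verification via \ref{sp*f*com} and \ref{commutfonctcohlocal1}; the genuinely delicate point is the \emph{coherence} of the right-hand side $\R\underline{\Gamma}^\dag_{Y'}f^!\sp_+(E)$ as an object of $\mathrm{MIC}^{\dag\dag}(X',\fP',T'/K)$ — i.e. that $f^!$ followed by the local cohomological functor does not leave the category of coherent arithmetic $\D$-modules with the expected $\O$-coherence and support properties. This is where Theorem \ref{coh-ss-div-bis} (stability of $\mathrm{MIC}^{(\bullet)}$ under $(\hdag T)$) and Corollary \ref{corostab-MIC-f*} (stability of $\mathrm{MIC}^{(\bullet)}$ under $\L f^{*(\bullet)}$) are essential, and one should phrase the argument at the level of $\smash{\underrightarrow{LD}}^{\mathrm b}_{\Q,\mathrm{coh}}$ so that these stability results apply directly. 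A secondary bookkeeping difficulty is checking that the shift $[-d_{X'/X}]$ is the correct one uniformly, which follows by tracking the relative dimensions through the factorization of $f^!$ via $\L f^*[\,d_{P'/P}]$ and the closed immersion $u'^![\,d_{X'/P'}]$, $u^![\,d_{X/P}]$, using $d_{X'/X} = d_{X'/P'} - d_{X/P} + d_{P'/P}$; I expect this to be straightforward once the identifications are in place, but it requires care to state consistently.
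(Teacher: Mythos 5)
Your proposal is essentially the paper's proof: the key move in both is to unwind $\sp_+ = u_{0+}\circ\sp_*\circ u_{0K}^*$, use the local commutation \ref{sp*f*com} on liftings $\X_\alpha$, and check compatibility of the glueing data via \ref{sp-eps-tau}. The one organizational difference worth noting is that the paper splits the morphism of frames $\theta$ into three successive cases — first the cartesian case (both squares cartesian, where the functor on the bottom is $f^*$), then $f=\mathrm{id}$ with $a$ a closed immersion (where the bottom functor is $\R\underline{\Gamma}^\dag_{X'}[-d_{X'/X}]$), then $f=a=\mathrm{id}$ (shrinking $Y$) — and for each builds a ladder of commuting squares using that $\sp_*$ and $\sp^*$ are canonically quasi-inverse, whereas you propose to reach $\R\underline{\Gamma}^\dag_{Y'}\circ f^!$ more directly via \ref{coro-trace-upre} and \ref{commutfonctcohlocal1}. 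Both routes work; the paper's splitting has the advantage that the coherence of the right-hand side (your flagged ``main obstacle'') falls out automatically from the commutativity of the bottom squares with the equivalences $u_{0+}$, $u'_{0+}$, so there is no need to invoke \ref{coh-ss-div-bis} or \ref{corostab-MIC-f*} in the proof itself — those are used in the subsequent Corollary \ref{cor-com-sp+-f*}, not here.
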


\begin{proof}
1) First suppose the squares of the diagram
\ref{com-sp+-f*-squ} are cartesian.
Let $(\fP  _{\alpha}) _{\alpha \in \Lambda}$ be an open covering of  $\fP $
satisfying the condition of \ref{ntnPPalpha}.
We fix some liftings as in \ref{ntnPPalpha} and we use the same notation.
Moreover, we denote by
$\fP ' _\alpha := f ^{-1} (\fP _\alpha)$,
$\fX ' _\alpha := \fP ' _\alpha   \times _{\fP  _\alpha  }\fX _\alpha$,
$a _\alpha \colon \fX ' _\alpha \to \fX  _\alpha$ the projection, and similarly for other notations.
If $((E _{\alpha})_{\alpha \in \Lambda},\, (\eta _{\alpha\beta}) _{\alpha ,\beta \in \Lambda})$
is an object of 
$\mathrm{MIC} ^\dag (Y,  (\X   _\alpha )_{\alpha \in \Lambda}/K)$, 
we get canonically an object  of 
 $\mathrm{MIC} ^\dag (Y',  (\X '  _\alpha )_{\alpha \in \Lambda}/K)$
 of the form
$((a _{\alpha K } ^* E _{\alpha})_{\alpha \in \Lambda},\, (\eta '_{\alpha\beta}) _{\alpha ,\beta \in \Lambda})$.
This yields the functor 
$
\mathrm{MIC} ^\dag (Y,  (\X   _\alpha )_{\alpha \in \Lambda}/K)
\to 
\mathrm{MIC} ^\dag (Y',  (\X '  _\alpha )_{\alpha \in \Lambda}/K)$
that we will denote by $a ^*  _{K}$.
Similarly, we construct the functor 
$a ^* 
\colon 
 \mathrm{MIC} ^{\dag \dag} ( (\X   _\alpha )_{\alpha \in \Lambda},Z/K)
 \to 
  \mathrm{MIC} ^{\dag \dag} ( (\X  ' _\alpha )_{\alpha \in \Lambda},Z'/K)$.
  Consider the following diagram.
\begin{equation}
\label{com-sp+-f*diag1}
\xymatrix{
{\mathrm{MIC} ^{\dag} (Y, X,\fP/K)} 
\ar[d] ^-{\ref{eqcat-iso-reco}} _-{u ^*  _{0K}}
\ar[r] ^-{f ^* _K}
& 
{\mathrm{MIC} ^{\dag} (Y', X',\fP'/K)} 
\ar[d] ^-{\ref{eqcat-iso-reco}} _-{u ^{\prime *}  _{0K}}
\\ 
{\mathrm{MIC} ^\dag (Y,  (\X   _\alpha )_{\alpha \in \Lambda}/K)} 
\ar[r] ^-{a ^*  _{K}} 
\ar@<4ex>[d] ^-{\ref{lem1pre-sp+plfid}} _-{\sp _*}
& 
{\mathrm{MIC} ^\dag (Y',  (\X '  _\alpha )_{\alpha \in \Lambda}/K)} 
\ar@<4ex>[d] ^-{\ref{lem1pre-sp+plfid}} _-{\sp _*}
\\
{ \mathrm{MIC} ^{\dag \dag} ( (\X   _\alpha )_{\alpha \in \Lambda},Z/K)} 
\ar[r] ^-{a ^*}
\ar@<4ex>[u] ^-{\ref{lem1pre-sp+plfid}} _-{\sp ^*}
\ar@<4ex>[d] ^-{\ref{eqcat-u0+!}} _-{u _{0+}}
& 
{ \mathrm{MIC} ^{\dag \dag} ( (\X  ' _\alpha )_{\alpha \in \Lambda},Z'/K)} 
\ar@<4ex>[u] ^-{\ref{lem1pre-sp+plfid}} _-{\sp ^*}
\ar@<4ex>[d] ^-{\ref{eqcat-u0+!}} _-{u ' _{0+}}
\\
{\mathrm{MIC} ^{\dag \dag} (X, \fP,T/K) }
\ar@<4ex>[u] ^-{\ref{eqcat-u0+!}} _-{u _{0} ^!}
\ar[r] ^-{f ^*}
&
{\mathrm{MIC} ^{\dag \dag} (X', \fP',T'/K) .}
\ar@<4ex>[u] ^-{\ref{eqcat-u0+!}} _-{u _{0} ^{\prime !}}
}
\end{equation}
By transitivity of the inverse image with respect to the composition, 
the top square is commutative up to canonical isomorphism. 
For the same reason, the middle square involving $\sp ^*$ is commutative up to canonical isomorphism. 
Since $\sp _* $ and $\sp ^*$ are canonically quasi-inverse equivalences of categories,
this yields the middle square involving $\sp _*$ is commutative up to canonical isomorphism. 
Using similar arguments, we check the commutativity up to canonical isomorphism of the bottom square. 

2) Now suppose $f = id$ and $a$ is a closed immersion and the left square is cartesian.   
Let $(\fP  _{\alpha}) _{\alpha \in \Lambda}$ be an open covering of  $\fP $.
Then, 
we fix some liftings (separately) for both $u$ and $u'$ (for the later case, add some primes in notation)
and we use notation \ref{ntnPPalpha}.
Then choose some lifting morphisms
$a _\alpha \colon \fX ' _\alpha \to \fX  _\alpha$, 
and similarly for other notations.
Consider the following diagram.
\begin{equation}
\label{com-sp+-f*diag2}
\xymatrix{
{\mathrm{MIC} ^{\dag} (Y, X,\fP/K)} 
\ar[d] ^-{\ref{eqcat-iso-reco}} _-{u ^*  _{0K}}
\ar[r] ^-{| _{]X'[ _{\fP}}}
& 
{\mathrm{MIC} ^{\dag} (Y', X',\fP/K)} 
\ar[d] ^-{\ref{eqcat-iso-reco}} _-{u ^{\prime *}  _{0K}}
\\ 
{\mathrm{MIC} ^\dag (Y,  (\X   _\alpha )_{\alpha \in \Lambda}/K)} 
\ar[r] ^-{a ^*  _{K}} 
\ar@<4ex>[d] ^-{\ref{lem1pre-sp+plfid}} _-{\sp _*}
& 
{\mathrm{MIC} ^\dag (Y',  (\X '  _\alpha )_{\alpha \in \Lambda}/K)} 
\ar@<4ex>[d] ^-{\ref{lem1pre-sp+plfid}} _-{\sp _*}
\\
{ \mathrm{MIC} ^{\dag \dag} ( (\X   _\alpha )_{\alpha \in \Lambda},Z/K)} 
\ar[r] ^-{a ^*}
\ar@<4ex>[u] ^-{\ref{lem1pre-sp+plfid}} _-{\sp ^*}
\ar@<4ex>[d] ^-{\ref{eqcat-u0+!}} _-{u _{0+}}
& 
{ \mathrm{MIC} ^{\dag \dag} ( (\X  ' _\alpha )_{\alpha \in \Lambda},Z'/K)} 
\ar@<4ex>[u] ^-{\ref{lem1pre-sp+plfid}} _-{\sp ^*}
\ar@<4ex>[d] ^-{\ref{eqcat-u0+!}} _-{u ' _{0+}}
\\
{\mathrm{MIC} ^{\dag \dag} (X, \fP,T/K) }
\ar@<4ex>[u] ^-{\ref{eqcat-u0+!}} _-{u _{0} ^!}
\ar[r] ^-{\R \underline{\Gamma} ^\dag _{X'} [-d _{X'/X}]}
&
{\mathrm{MIC} ^{\dag \dag} (X', \fP,T/K) .}
\ar@<4ex>[u] ^-{\ref{eqcat-u0+!}} _-{u _{0} ^{\prime!}}
}
\end{equation}  
The commutativity up to canonical isomorphism
of the top and middle squares of \ref{com-sp+-f*diag2} is checked as for 
\ref{com-sp+-f*diag1}. It remains to look at the bottom square. 
 Let 
 $\E \in \mathrm{MIC} ^{\dag \dag} (X, \fP,T/K) $. 
 The canonical morphism
 $$ u ^{\prime !} _{\alpha}
 \left (
 \R \underline{\Gamma} ^\dag _{X' }( \E) | \fP _{\alpha} 
 \right)  
 [-d _{X'/X}]
 \to
u ^{\prime !} _{\alpha}
 \left (
  \E| \fP _{\alpha} 
 \right)
 [-d _{X'/X}] 
 $$
 is an isomorphism.
Moreover, 
$u ^{\prime !} _{\alpha}
 \left (
  \E| \fP _{\alpha}
 \right)
  [-d _{X'/X}] 
\riso
a ^{ !} _{\alpha} u ^{ !} _{\alpha}
 \left (
 \E| \fP _{\alpha}  
 \right)
 [-d _{X'/X}]
 \riso 
a ^{ *} _{\alpha}  \left ( u ^{ !} _{\alpha}
 ( \E| \fP _{\alpha})
 \right)$.
These isomorphisms glue, hence we get the commutativity up to canonical isomorphism of the bottom square.

3) The case where $f = id$ and $a=id$ is checked similarly.
This yields the general case by decomposition the diagram \ref{com-sp+-f*-squ}.      
  
\end{proof}

\begin{rem}
The isomorphism 
$
\sp _+ (\smash{\O} _{]X[ _{\fP}}) 
\riso 
\mathcal{H} ^{\dag ,r} _X \O _{\fP,\Q} $
of 
\ref{coro-sp+jdagO}
can be viewed (modulo the compatibility of Proposition \ref{cor-induction-div-coh2}) 
as a particular case of \ref{com-sp+-f*} (in the case where $f=id$ and for the constant coefficient).
Notice that in order to give a meaning of the  isomorphism \ref{com-sp+-f*iso}, 
first we had to construct the local cohomological functor with support over a closed subscheme
(not only for the constant coefficient as for \ref{ntn-GammaZO-rig}). 
Recall also that in order to define our local cohomological functor in its general context, 
we do need the coherent theorem 
\ref{coh-ss-div}. But, the check of 
this latter theorem only uses the definition of \ref{ntn-GammaZO-rig}, which explains why we
have given a preliminary different construction of the local cohomological functor in a very special context.
\end{rem}

\begin{cor}
\label{cor-com-sp+-f*}
We keep notation \ref{com-sp+-f*}.
Let 
$\E ^{(\bullet)}$ and $\cF ^{(\bullet)}$ be two objects of 
$\mathrm{MIC} ^{(\bullet)} (X, \fP/K) $.
\begin{enumerate}
\item $\R \underline{\Gamma} ^\dag _{X'} f ^{!(\bullet)} \E ^{(\bullet)} [-d _{X'/X}]
\in 
\mathrm{MIC} ^{(\bullet)} (X', \fP'/K) $.
\item $\DD ^{(\bullet)} (\E ^{(\bullet)})
\in
\mathrm{MIC} ^{(\bullet)} (X, \fP/K) $.
\item We have the isomorphism
$\DD ^{(\bullet)}
\left (
\R \underline{\Gamma} ^\dag _{X'} f ^{!(\bullet)} \E ^{(\bullet)} [-d _{X'/X}]
\right )
\riso 
\R \underline{\Gamma} ^\dag _{X'} f ^{!(\bullet)} (\DD ^{(\bullet)} \E ^{(\bullet)} ) [-d _{X'/X}]$.
\item 
We have 
$\E ^{(\bullet)}
\smash{\widehat{\otimes}}^\L  _{\O ^{(\bullet)}  _{\fP} }  \FF ^{(\bullet)} [-d _{X/P}]
\in \mathrm{MIC} ^{(\bullet)} (X, \fP/K) $.
\end{enumerate}
\end{cor}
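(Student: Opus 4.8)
\textbf{Proof plan for Corollary \ref{cor-com-sp+-f*}.}

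The plan is to deduce all four assertions from Proposition \ref{com-sp+-f*} together with the equivalences of categories relating $\mathrm{MIC} ^{(\bullet)}$-objects to $\mathrm{MIC} ^{\dag\dag}$-objects and to overconvergent isocrystals. First I would record that $\mathrm{MIC} ^{(\bullet)} (X, \fP/K)$ is, via $\underrightarrow{\lim}$ (see \ref{eqcat-limcoh} and notation \ref{ntnMICdag2fs3}), equivalent to $\mathrm{MIC} ^{\dag\dag} (X, \fP,T/K)$ with $T$ empty, hence to $\mathrm{Isoc} ^{\dag} (Y, X,\fP/K)$ with $Y=X$ via $\sp _+$ (see \ref{ntn-dfnsp+}). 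For (1), writing $\E := \underrightarrow{\lim} \E ^{(\bullet)} = \sp _+ (E)$ for a (convergent) isocrystal $E$ on $X$, Proposition \ref{com-sp+-f*} applied to the frame morphism $\theta = (b,a,f)$ with $Y=X$, $Y'=X'$ (so $T$ and $T'$ are empty) gives $\sp _+ (\theta ^* (E)) \riso \R \underline{\Gamma} ^\dag _{X'} f ^! \sp _+ (E) [-d _{X'/X}]$; since $\theta ^* (E) = a ^* (E)$ is again a convergent isocrystal on $X'$, the right-hand side lies in $\mathrm{MIC} ^{\dag\dag} (X', \fP'/K)$, and by \ref{corostab-MIC-f*} the inductive-system incarnation $\R \underline{\Gamma} ^\dag _{X'} f ^{!(\bullet)} \E ^{(\bullet)} [-d _{X'/X}]$ (which exists as a coherent object by Theorem \ref{coh-ss-div-bis} applied after \ref{pre-loc-tri-B-t1T-iso}) computes the same limit, hence lies in $\mathrm{MIC} ^{(\bullet)} (X', \fP'/K)$. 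The compatibility between $\R \underline{\Gamma} ^\dag _{X'} f ^{!(\bullet)}$ on $\mathrm{MIC} ^{(\bullet)}$-objects and $\R \underline{\Gamma} ^\dag _{X'} f ^!$ on $\mathrm{MIC} ^{\dag\dag}$-objects follows from \ref{2.2.18} and \ref{coh-Qcoh} together with the equivalence \ref{eqcat-limcoh}.

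For (2), I would combine Corollary \ref{coro-sp+jdagO} / Theorem \ref{theoprincipal}: if $\E = \sp _+ (E)$ with $E$ a convergent isocrystal on $X = Y$, then $\DD (\E) \riso \sp _+ (E ^\vee)$ by Proposition \ref{propspetdualsansfrob} (applied with empty divisor), and $E ^\vee$ is again a convergent isocrystal; passing to the inductive-system level via \ref{eqcat-limcoh} and the compatibility $\mathrm{Coh} _{T} (\DD ^{(\bullet)} _T) \riso \DD _T$ of \ref{ntn-dualfunctor}(b) gives $\DD ^{(\bullet)} (\E ^{(\bullet)}) \in \mathrm{MIC} ^{(\bullet)} (X, \fP/K)$. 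For (3), the desired isomorphism is obtained by putting together three commutations with duality: the relative duality isomorphism \ref{rel-dual-isom-immf} (for the closed immersion pushforward implicit in $\R \underline{\Gamma} ^\dag _{X'}$ via \ref{pre-loc-tri-B-t1T-iso}), the commutation of $f ^{!(\bullet)}$ with duality \ref{defDf!=f!D1bis} (here $f$ is smooth so $f ^! = \L f ^* [d_{X'/X}]$ up to a twist and the dual functor commutes with it up to the shift), and the commutation of $\R \underline{\Gamma} ^\dag$ with pushforwards/pullbacks \ref{2.2.18}; then one checks, using \ref{corostab-MIC-f*} and part (1), that both sides are objects of $\mathrm{MIC} ^{(\bullet)} (X', \fP'/K)$ so the comparison is forced. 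Alternatively, one transports the isomorphism $\DD (a ^* E ^\vee) \riso a ^* (\DD E) ^\vee$ along $\sp _+$ using \ref{com-sp+-f*} and \ref{propspetdualsansfrob}.

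For (4), the point is that $\E ^{(\bullet)} \smash{\widehat{\otimes}} ^\L _{\O ^{(\bullet)} _{\fP}} \FF ^{(\bullet)} [-d_{X/P}]$ is a twisted external-tensor-like construction that I would realize as an inverse image along the diagonal: writing $\delta \colon \fP \hookrightarrow \fP \times _\S \fP$ for the diagonal immersion and using \ref{f!T'Totimes} (or the isomorphism \ref{boxtimes-dfn2L} on the algebraic level) one has $\E ^{(\bullet)} \smash{\widehat{\otimes}} ^\L _{\O ^{(\bullet)} _{\fP}} \FF ^{(\bullet)} \riso \delta ^{!(\bullet)} (\E ^{(\bullet)} \smash{\widehat{\boxtimes}} ^\L _{\O _\S} \FF ^{(\bullet)}) [d_{P}]$ up to the appropriate shift. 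By Proposition \ref{st-isoc-boxtimes}, $\E ^{(\bullet)} \smash{\widehat{\boxtimes}} ^\L _{\O _\S} \FF ^{(\bullet)} \in \mathrm{MIC} ^{(\bullet)} (X \times X, \fP \times \fP/K)$; then $\R \underline{\Gamma} ^\dag _{X} \delta ^{!(\bullet)} (-) [-d_{X/P \times P - \text{shift}}]$ applied to it lands in $\mathrm{MIC} ^{(\bullet)} (X, \fP/K)$ by part (1) (with $f = \delta$, noting $\delta ^{-1}(X \times X) = X$), after checking the shifts match $[-d_{X/P}]$; matching these shifts and keeping track of the commutation $\R \underline{\Gamma} ^\dag _X \circ \delta ^{!(\bullet)}$ is the main bookkeeping obstacle. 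The genuinely delicate point in the whole corollary is (3): one must verify that the isomorphisms coming from \ref{rel-dual-isom-immf}, \ref{defDf!=f!D1bis} and \ref{2.2.18} are mutually compatible (a diagram-chase of cubes), but since all objects in sight are coherent and determined by their limits via \ref{eqcat-limcoh}, it suffices to check the compatibility after applying $\underrightarrow{\lim}$, where it reduces to the corresponding statement for coherent $\D ^\dag$-modules already available from \ref{propspetdualsansfrob} and \ref{com-sp+-f*}.
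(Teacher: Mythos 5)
Your plan reaches the same endpoints as the paper's proof for (2) and (4) (via \ref{propspetdualsansfrob} and \ref{st-isoc-boxtimes}+(1) respectively), and your \emph{alternative} route for (3) is exactly the paper's argument. Two remarks are in order. First, for (1) the paper argues more directly, staying entirely at the inductive-system level: the assertion is local in $\fP'$, so one may lift $u_0$, $u'_0$, $a$ to morphisms $\mathfrak{u}\colon\X\hookrightarrow\fP$, $\mathfrak{u}'\colon\X'\hookrightarrow\fP'$, $\mathfrak{a}\colon\X'\to\X$ of smooth formal $\V$-schemes compatible with $f$; then \ref{pre-loc-tri-B-t1T-iso} gives $\R\underline{\Gamma}^\dag_{X'}f^{!(\bullet)}(\E^{(\bullet)})\riso\mathfrak{u}'_{+}\mathfrak{a}^{!(\bullet)}\mathfrak{u}^{!(\bullet)}(\E^{(\bullet)})$, and since $\mathfrak{u}^{!(\bullet)}(\E^{(\bullet)})\in\mathrm{MIC}^{(\bullet)}(\X/K)$ by Berthelot--Kashiwara and $\L\mathfrak{a}^{*(\bullet)}=\mathfrak{a}^{!(\bullet)}[-d_{X'/X}]$ preserves this by \ref{corostab-MIC-f*}, one concludes. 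Your detour through $\sp_+$ and \ref{com-sp+-f*} is valid but forces a comparison between the $\D^\dag$-level and the $(\bullet)$-level that the direct argument sidesteps. Second, and this is a genuine flaw: your \emph{primary} route for (3) misquotes \ref{defDf!=f!D1bis}, which is stated only for $f$ an \emph{open immersion}; the paper records no commutation isomorphism $\DD\circ f^!\riso f^!\circ\DD$ for a general smooth $f$, so that step is unsupported as written. Discard the primary route and keep your alternative --- transporting $a^*(E^\vee)\riso(a^*E)^\vee$ along $\sp_+$ via \ref{com-sp+-f*} and \ref{propspetdualsansfrob} --- which is the intended proof.
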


\begin{proof}
The fact that
$\R \underline{\Gamma} ^\dag _{X'} f ^{!(\bullet)} \E ^{(\bullet)} [- d _{X'/X}]
\in 
\mathrm{MIC} ^{(\bullet)} (X', \fP'/K) $
is local in $\fP '$.
Hence, we can suppose there exists a closed immersion of smooth 
$\V$-formal schemes $\mathfrak{u} \colon \X \hookrightarrow \fP$ 
(resp. $\mathfrak{u} ' \colon \X ' \hookrightarrow \fP' $, 
resp. $\mathfrak{a} \colon \X' \to \X$)
which reduces modulo $\pi$ to $u _0$ (resp. $u ' _0$, resp. $a$).
Following \ref{pre-loc-tri-B-t1T-iso}, 
$\R \underline{\Gamma} ^{\dag} _{X '}  f  ^{!(\bullet)} (   \E ^{(\bullet)})
\riso 
\mathfrak{u} _{+} ^{\prime (\bullet)} \circ  \mathfrak{u} ^{\prime !(\bullet)} \circ f  ^{!(\bullet)}(\E ^{(\bullet)})
\riso 
\mathfrak{u} _{+} ^{\prime (\bullet)} \circ  \mathfrak{a}   ^{!(\bullet)} \circ \mathfrak{u} ^{!(\bullet)} (\E ^{(\bullet)})$.
Since
$\mathfrak{u} ^{!(\bullet)} (\E ^{(\bullet)}) 
\in 
\mathrm{MIC} ^{(\bullet)} (\X/K) $,
then
$\L \mathfrak{a}   ^{*(\bullet)} \circ \mathfrak{u} ^{!(\bullet)} (\E ^{(\bullet)})
\in 
\mathrm{MIC} ^{(\bullet)} (\X'/K) $ (see \ref{corostab-MIC-f*}).
Since 
$\L \mathfrak{a}   ^{*(\bullet)} 
=
\mathfrak{a}   ^{!(\bullet)}[- d _{X'/X}] $, we get the first statement. 

The second statement is a consequence of 
\ref{propspetdualsansfrob}.
Using moreover 
\ref{com-sp+-f*}, since the dual functor of a isocrystal 
commutes with its inverse image, 
then we get the third one. 
The last one is a consequence of \ref{st-isoc-boxtimes} and of the first statement.
\end{proof}

\subsection{Base change isomorphism for coherent complexes and realizable morphisms}

\begin{dfn}
\label{realizablefscheme}
Let $f \colon \fP' \to \fP$ be a morphism of smooth formal $\V$-schemes. 
We say that $f$ is {\it realizable} if 
there exist an immersion 
$u \colon \fP' \hookrightarrow \fP''$ of smooth formal $\V$-schemes, 
a proper morphism 
$\pi \colon \fP'' \to \fP$ of smooth formal $\V$-schemes
such that
$f = \pi \circ u$.
When $\fP= \Spf \V$, we say that 
$\fP'$ is a realizable smooth formal $\V$-scheme.
We remark that is $\fP'$ is realizable, then any 
$f$ is a realizable morphism.
\end{dfn}

\begin{prop}
\label{stab-propersupp}
Let $g \colon \X '\to \X$ be a realizable morphism of smooth formal $\V$-schemes.
For any 
$\E ^{\prime (\bullet)} \in \smash{\underrightarrow{LD}} ^{\mathrm{b}} _{\Q,\mathrm{coh}} ( \smash{\widehat{\D}} _{\X '/\V} ^{(\bullet)})$
with proper support over $X$
(i.e., if $Z'$ is the support of $\E ^{\prime (\bullet)}$ in the sense \ref{dfn-support} then the composite 
$Z ' \hookrightarrow  X' \overset{g}{\to} X$ is proper), 
 the object 
$g _{+} (\E ^{\prime (\bullet)} ) $
belongs to 
$\smash{\underrightarrow{LD}} ^{\mathrm{b}} _{\Q,\mathrm{coh}} ( \smash{\widehat{\D}} _{\X/\V} ^{(\bullet)})$.
\end{prop}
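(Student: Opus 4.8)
The statement is a coherence-preservation result for the pushforward $g_+$ under a realizable morphism, assuming the support of the complex is proper over the base. The plan is to factor $g$ through its given realization $g = \pi \circ u$, where $u \colon \X' \hookrightarrow \X''$ is an immersion of smooth formal $\V$-schemes and $\pi \colon \X'' \to \X$ is proper, and then handle the two factors separately. First I would reduce to the case where $u$ is a \emph{closed} immersion: a general immersion factors as a closed immersion into an open formal subscheme followed by an open immersion, so it suffices to treat the open-immersion case, which in turn reduces (by the support hypothesis) to a situation controlled by the inductive-system version of Berthelot--Kashiwara's theorem \ref{u!u+=id} together with the coherence stability criterion \ref{limTouD}. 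For the closed immersion $u$, the stability of coherence under pushforward by a closed immersion is already available (see the stability propositions following \ref{stab-coh-f_+}, together with \ref{exact-Berthelot-Kashiwara-full}), so $u_+(\E^{\prime(\bullet)})$ is coherent; moreover the support of $u_+(\E^{\prime(\bullet)})$ is the image of the support of $\E^{\prime(\bullet)}$ under the closed immersion $u$, hence still proper over $X$ via $\pi$.

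\textbf{Key steps.} The second step is to handle the proper morphism $\pi$. Here I would \emph{not} use the elementary projective-morphism results of the exterior-tensor-product section, but rather note that stability of coherence of $f_{T,+}^{(\bullet)}$ under a proper morphism is already recorded (the proposition following \ref{stab-coh-f_+} gives that $f_{T,+}^{(\bullet)}$ sends $\underrightarrow{LD}^{\mathrm{b}}_{\Q,\mathrm{coh}}(\smash{\widetilde{\D}}_{\fP'/\S}^{(\bullet)}(T'))$ to $\underrightarrow{LD}^{\mathrm{b}}_{\Q,\mathrm{coh}}(\smash{\widetilde{\D}}_{\fP/\S}^{(\bullet)}(T))$ when $f$ is proper, in particular when $T$ is empty). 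So $\pi_+(\G^{(\bullet)})$ is coherent for any coherent $\G^{(\bullet)}$. The third step is to assemble: by the compatibility of pushforwards with composition (transitivity of $f_+^{(\bullet)}$, which is standard at the level of inductive systems via the transitivity isomorphism for the transfer bimodules $\smash{\widehat{\D}}_{\fP \leftarrow \fP'/\S}^{(m)}$), we have $g_+^{(\bullet)}(\E^{\prime(\bullet)}) \riso \pi_+^{(\bullet)} \circ u_+^{(\bullet)}(\E^{\prime(\bullet)})$, and by the two previous steps the right-hand side is coherent. One subtlety I would flag: the support hypothesis is genuinely needed only to control the open-immersion part of $u$; once we are past that, the closed immersion and the proper morphism both preserve coherence unconditionally.

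\textbf{Main obstacle.} The hard part will be the reduction for the open-immersion factor: when $u$ contains an open immersion $j \colon \X' \hookrightarrow \X''$ (i.e. $\X'$ is identified with an open of $\X''$), the pushforward $j_+^{(\bullet)}$ of an arbitrary coherent complex need not be coherent — this is exactly why the properness-of-support hypothesis is imposed. The way I would resolve this is to observe that the support $Z'$ of $\E^{\prime(\bullet)}$ is a closed subscheme of $\X'$ whose closure $\overline{Z'}$ in $\X''$, thanks to the properness of $Z' \hookrightarrow X' \overset{g}{\to} X$ and the separatedness conventions, satisfies $\overline{Z'} \cap X' = Z'$; then $j_+^{(\bullet)}(\E^{\prime(\bullet)})$ can be computed using the local cohomological functor $\R\underline{\Gamma}^\dag_{\overline{Z'}}$ (see \ref{pre-loc-tri-B-t1T-iso} and the coherence result \ref{coh-OGammaY}, \ref{prop-induction-div-coh}) applied after extending over the complement of an appropriate divisor, reducing to Berthelot--Kashiwara \ref{u!u+=id} and to the coherence stability criterion \ref{limTouD} / \ref{coro1limTouD}. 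Once the open-immersion case is settled in this way, the rest of the argument is a formal concatenation of already-established stability statements, and the functoriality isomorphisms (\ref{2.2.18}, in particular \ref{commutfonctcohlocal2}, for the commutation of $\R\underline{\Gamma}^\dag$ with pushforwards) ensure everything is compatible with supports.
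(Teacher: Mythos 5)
Your factorization $g = \pi \circ u$ with $u = j\circ v$ (closed immersion followed by open immersion) is exactly the paper's decomposition, and you correctly identify that $v_+$ and $\pi_+$ preserve coherence unconditionally while $j_+$ is the crux that requires the properness hypothesis. The gap is in your proposed resolution of the open-immersion step.

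Your plan invokes local cohomology $\R\underline{\Gamma}^\dag_{\overline{Z'}}$, Berthelot--Kashiwara \ref{u!u+=id}, and the coherence stability criterion \ref{limTouD}/\ref{coro1limTouD}, but none of these applies as stated. Berthelot--Kashiwara requires a closed immersion of \emph{smooth} formal $\V$-schemes, and the support $Z'$ of $\E^{\prime(\bullet)}$ will not generally be smooth. The reference \ref{pre-loc-tri-B-t1T-iso} likewise presupposes a smooth closed subscheme. And \ref{limTouD}/\ref{coro1limTouD} are criteria about removing a \emph{divisor} from the overconvergence structure, not about arbitrary closed subvarieties, so it is unclear what ``extending over the complement of an appropriate divisor'' would actually amount to. As written, this step does not close.

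The mechanism that actually works is much more elementary and does not use $\D$-module machinery at all for the key point. Since $Z'$ is proper over $X''$ (because it is proper over $X$ and $\pi$ is separated), $Z'$ is already \emph{closed in $X''$}, not merely closed in $U''$. Therefore $\U''$ and $\X''\setminus Z'$ form an open cover of $\X''$. After a devissage reducing to the case where $\E^{\prime\prime(\bullet)}:=v_+(\E^{\prime(\bullet)})$ is a single locally finitely presented $\smash{\widehat{\D}}_{\U''}^{(\bullet+m_0)}$-module with support in $Z'$ (for $m_0$ large enough, using the coherence structure theory of inductive systems), one checks coherence of $j_+\E^{\prime\prime(\bullet)}$ locally on this cover: on $\U''$ one recovers $\E^{\prime\prime(\bullet)}$ (coherent by hypothesis), and on $\X''\setminus Z'$ one gets zero. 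The exactness of $j_*$ on sheaves with support in a subset that is closed in the ambient space ensures $\R j_* = j_*$, and the equivalence $j_*, j^*$ between coherent modules with support in $Z'$ on $\U''$ and on $\X''$ gives the identification of the transition maps. This is the content of step (a) of the paper's proof; once it is in place, the concatenation with the proper $\pi_+$ via transitivity of pushforwards is formal, as you say.
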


\begin{proof}
Let $Z'$ be the support of 
$\E ^{\prime (\bullet)} \in \smash{\underrightarrow{LD}} ^{\mathrm{b}} _{\Q,\mathrm{coh}} ( \smash{\widehat{\D}} _{\X '/\V} ^{(\bullet)})$. By assumption, $Z'$ is proper over $X$ via $g$. 
Let $u \colon \X' \hookrightarrow \X''$ be an immersion of smooth formal $\V$-schemes, 
and $\pi \colon \X'' \to \X$ 
be a proper morphism of smooth formal $\V$-schemes
such that
$g = \pi \circ u$.
Let $v \colon \X' \hookrightarrow \U'' $ 
be a closed immersion, 
and $j\colon \U'' \hookrightarrow \X''$ be an open immersion
such that $u = j \circ v$.
Since $Z'$ is proper over $X$ via $g$ and
since $\pi$ is proper, then 
$Z'$ is proper over $X''$ via $u$.
Since $v$ is proper, 
$\E ^{\prime \prime (\bullet)}  :=
v _+ ^{(\bullet)}
(\E ^{\prime (\bullet)} )
\in 
\smash{\underrightarrow{LD}} ^{\mathrm{b}} _{\Q,\mathrm{coh}} ( \smash{\widehat{\D}} _{\U''/\V} ^{(\bullet)})$
with proper support over $X''$ via $j$.

a) We check in this step that 
$\R j _* \E ^{\prime \prime (\bullet)} 
\in 
\smash{\underrightarrow{LD}} ^{\mathrm{b}} _{\Q,\mathrm{coh}} ( \smash{\widehat{\D}} _{\X''/\V} ^{(\bullet)})$
(with support in $Z'$).
Following \ref{eqcatLD=DSM-fonct-coh},
we have the equivalence of triangulated categories
$\underrightarrow{LD} ^{\mathrm{b}} _{\Q, \mathrm{coh}} (\smash{\widehat{\D}} _{\U''/\V} ^{(\bullet)})
\cong
D ^{\mathrm{b}} _{\mathrm{coh}}
(\underrightarrow{LM} _{\Q} (\smash{\widehat{\D}} _{\U''/\V} ^{(\bullet)} ))$.
Hence, we reduce by devissage to the case where
$\E ^{\prime \prime (\bullet)} 
\in 
\smash{\underrightarrow{LM}}  _{\Q,\mathrm{coh}} ( \smash{\widehat{\D}} _{\U''/\V} ^{(\bullet)})$.
Following 
\cite[2.4.5]{caro-stab-sys-ind-surcoh}, there exists 
$m_0 \in \N$ large enough such that
$\E ^{\prime \prime (\bullet)} $ 
is isomorphic 
in $\underrightarrow{LM} _{\Q} (\smash{\widehat{\D}} _{\U''} ^{(\bullet)})$
to a locally finitely presented 
$\smash{\widehat{\D}} _{\U ''} ^{(\bullet + m_0)}$-module
$\G ^{(\bullet)} $.
Since $\E ^{\prime \prime (\bullet)} $  is isomorphic to zero on the open complementary to $Z'$,
then, taking larger $m_0$ is necessary, we can suppose
$\G ^{(0)} $ is a coherent $\smash{\widehat{\D}} _{\U ''} ^{(m_0)}$-module with support in $Z'$
(i.e. $\G ^{(0)} $ is zero on the open complementary to $Z '$).
This yields that 
$\G ^{(m)} $ is a coherent
$\smash{\widehat{\D}} _{\U ''} ^{(m + m_0)}$-module
with support in $Z'$.
Since $Z'$ is closed in $X''$, 
a coherent $\smash{\widehat{\D}} _{\U ''} ^{(m + m_0)}$-module
with support in $Z '$ is acyclic for 
the functor $j _*$. 
Moreover, the functors $j _*$ and $j ^*$ induce quasi-inverse equivalences of categories
between the category of 
coherent $\smash{\widehat{\D}} _{\U ''} ^{(m + m_0)}$-modules
with support in $Z '$
and that 
of coherent $\smash{\widehat{\D}} _{\X ''} ^{(m + m_0)}$-modules
with support in $Z '$.
This yields that the canonical morphism
$\smash{\widehat{\D}} _{\X ''} ^{(\bullet + m_0)}
\otimes _{\smash{\widehat{\D}} _{\X ''} ^{(m_0)}} 
j _* \G ^{(0)} 
\to
 j_* \G ^{(\bullet)} $
is an isomorphim and that 
$ j_* \G ^{(\bullet)} $
is a locally finitely presented 
$\smash{\widehat{\D}} _{\X ''} ^{(\bullet + m_0)}$-module with support in $Z'$.
Moreover, since the functor $j _*$ is exact over the category of sheaves with support in $Z'$ (because
$Z'$ is closed in $X''$ and in $U''$),
then the canonical morphism
$ j_* \G ^{(\bullet)} 
 \to
\R j_* \G ^{(\bullet)} $
is an isomorphism.
Hence, 
$j _+ ^{(\bullet) } \E ^{\prime \prime (\bullet)} 
=
\R j _* \E ^{\prime \prime (\bullet)} 
\in 
\smash{\underrightarrow{LM}}  _{\Q,\mathrm{coh}} ( \smash{\widehat{\D}} _{\X''/\V} ^{(\bullet)})$. 

b) Since $\pi $ is proper, the part a)
yields
$\pi _+ ^{(\bullet)} 
j _+ ^{(\bullet) }( \E ^{\prime \prime (\bullet)} )
\in \smash{\underrightarrow{LD}} ^{\mathrm{b}} _{\Q,\mathrm{coh}} ( \smash{\widehat{\D}} _{\X/\V} ^{(\bullet)})$.
By transitivity of the push forwards, we get the isomorphism
$g ^{(\bullet)}  _{+} (\E ^{\prime (\bullet)} ) 
\riso 
\pi _+ ^{(\bullet)} 
j _+ ^{(\bullet) }( \E ^{\prime \prime (\bullet)} )$.
Hence, we are done.
\end{proof}

\begin{empt}
\label{rem-gopenimm}
Let $g \colon \X '\to \X$ be an open immersion of smooth formal $\V$-schemes.
Let $Z'$ be a closed subscheme of $X'$ such that 
the composite 
$Z ' \hookrightarrow  X' \overset{g}{\to} X$ is proper.
Then the functors $g _+$ and $g ^!$ induce quasi-inverse equivalences of categories 
between the subcategory of 
$\smash{\underrightarrow{LD}} ^{\mathrm{b}} _{\Q,\mathrm{coh}} ( \smash{\widehat{\D}} _{\X '/\V} ^{(\bullet)})$
consisting of objects
with support in $Z '$
and 
the subcategory of 
$\smash{\underrightarrow{LD}} ^{\mathrm{b}} _{\Q,\mathrm{coh}} ( \smash{\widehat{\D}} _{\X /\V} ^{(\bullet)})$
consisting of objects
with support in $Z'$.
Indeed, from \ref{stab-propersupp}, 
such functor $g _+=\R g _*$ is well defined.
This is also obvious for $g ^! =g ^*$.
We check easily that the canonical morphism 
$g ^* \R g _* \to id$ is an isomorphism and that 
the functor $g ^*$ is faithful. 
Hence, the morphism of functors 
$id \to \R g _* g ^* $, defined over 
the subcategory 
of $\smash{\underrightarrow{LD}} ^{\mathrm{b}} _{\Q,\mathrm{coh}} ( \smash{\widehat{\D}} _{\X /\V} ^{(\bullet)})$
consisting of objects
with support in $Z '$,
is an isomorphism.

\end{empt}

\begin{theo}
\label{theo-iso-chgtbase}
Let  $f \colon \X ^{\prime } \to \X $, $g \colon \Y \to \X$ be two morphisms of  smooth formal $\V$-schemes such that 
$g$ is  smooth and $f$ is realizable.
Set $\Y ^{\prime } := \X ^{\prime } \times _{\X} \Y$, 
$f ' \colon \Y ^{\prime } \to \Y$, $g ' \colon \Y ^{\prime }\to \X ^{\prime }$ 
be the canonical projections. 
Let
$\E ^{\prime (\bullet)}
\in  \smash{\underrightarrow{LD}} ^\mathrm{b} _{\Q, \mathrm{coh}}
(\overset{^\mathrm{l}}{} \smash{\widehat{\D}} _{\X ^{\prime }/\S  } ^{(\bullet)} )$ with proper support over $X$. 
There exists a canonical isomorphism in 
$\smash{\underrightarrow{LD}} ^\mathrm{b} _{\Q, \mathrm{coh}}
(\overset{^\mathrm{l}}{} \smash{\widehat{\D}} _{\X ^{\prime }/\S  } ^{(\bullet)} )$:
\begin{equation}
\label{basechange}
g ^{ !(\bullet)} \circ f ^{(\bullet)}_{+} (\E ^{\prime (\bullet)})
\riso
f  ^{\prime (\bullet)}_{+}  \circ g ^{\prime(\bullet) !} (\E ^{\prime (\bullet)}). 
\end{equation}
\end{theo}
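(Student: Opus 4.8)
The plan is to reduce the base change isomorphism for a realizable morphism to the two cases that have already been established: the projection case and the closed immersion case. First I would factor $f = \pi \circ u$ where $u \colon \X' \hookrightarrow \X''$ is an immersion of smooth formal $\V$-schemes and $\pi \colon \X'' \to \X$ is proper; moreover, since any proper morphism of smooth formal $\V$-schemes can be further factored (Zariski locally, or after embedding) through a projective one, I would decompose $\pi$ itself into a closed immersion $\X'' \hookrightarrow \widehat{\bbP}^d _{\X}$ followed by the canonical projection $\widehat{\bbP}^d _{\X} \to \X$. The immersion $u$ splits as a closed immersion followed by an open immersion. So it suffices to prove \ref{basechange} separately in three cases: (i) $f$ a closed immersion, (ii) $f$ an open immersion, (iii) $f$ the projection $\widehat{\bbP}^d _{\X} \to \X$, and then glue the three isomorphisms together by transitivity of $g^{!(\bullet)}$ and of the pushforwards, checking that the composite does not depend on the chosen factorization.

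\textbf{The three cases.} For case (iii), the projection case, the isomorphism is exactly the content of Corollary \ref{theo-iso-chgtbase2}, where $v$ is taken to be the identity and one pulls back along the two projections from the fiber product with $\Y$; the underlying scheme-theoretic input is Proposition \ref{theo-iso-chgtbase2-pre}, and the commutation with the localization functors needed to pass from $\smash{\widehat{\D}}^{(\bullet)}$ to $\smash{\widetilde{\D}}^{(\bullet)}(T)$ is handled as in the proof of \ref{prop-boxtimes-v+}. For case (i), the closed immersion case, I would use Berthelot--Kashiwara's theorem in its inductive-system form \ref{u!u+=id} together with the commutation of local cohomological functors with pushforwards and extraordinary pullbacks \ref{2.2.18}: writing $\R\underline{\Gamma}^\dag _{X'}$ for the cohomological functor with support in the closed subscheme $X' = $ image of $f$, one has $f^{(\bullet)}_+ \circ u^{!(\bullet)} \riso \R\underline{\Gamma}^\dag _{X'}$ by \ref{pre-loc-tri-B-t1T-iso}, and then \ref{commutfonctcohlocal1}--\ref{commutfonctcohlocal2} give the base change isomorphism for free, because $\R\underline{\Gamma}^\dag _{X'}$ commutes with both $g^{!(\bullet)}$ and $g'^{(\bullet)}_+$. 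Case (ii), the open immersion case, follows from the observation recorded in \ref{rem-gopenimm}: on the subcategory of complexes with support proper over $X$, the pair $(g_+, g^!)$ is an equivalence of categories with $g_+ = \R g_*$ and $g^! = g^*$, and the base change isomorphism is then immediate from the compatibility of restriction-to-an-open with smooth inverse image.

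\textbf{Coherence and the main obstacle.} Throughout I must make sure the objects produced by $f^{(\bullet)}_+$ stay inside $\smash{\underrightarrow{LD}}^{\mathrm{b}}_{\Q,\mathrm{coh}}(\smash{\widehat{\D}}^{(\bullet)})$; this is precisely Proposition \ref{stab-propersupp}, which uses the properness-of-support hypothesis on $\E'^{(\bullet)}$, so the hypothesis is used both to make the statement meaningful and to run the reduction. The delicate point is the \emph{coherence} of the base change isomorphism itself: once one has the isomorphism in each of the three cases, one must verify that the composite obtained along a two-step factorization does not depend on the intermediate space, i.e. that the isomorphisms in cases (i), (ii), (iii) are compatible with composition of morphisms on both sides. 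For this I would invoke the transitivity formulas for $\tau^{(\bullet)}$ of \ref{2.1.5Be2}, the transitivity of the adjunction morphisms (as in \ref{comp-comp-adj-immf}(ii)), and the transitivity of the isomorphisms \ref{commutfonctcohlocal1}--\ref{commutfonctcohlocal2} in $Y$; this bookkeeping, organizing the coherences so that a composite factorization gives a canonical answer, is the main obstacle, and it is where the proof is genuinely laborious rather than conceptually hard. A cleaner alternative I would consider is to deduce \ref{basechange} directly from the exterior-tensor-product base change \ref{prop-boxtimes-v+} applied after embedding $f$ into a product situation, reducing the compatibility bookkeeping to the already-proven associativity and transitivity statements for $\smash{\widehat{\boxtimes}}^\L _{\O_{\S}}$.
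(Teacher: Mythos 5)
Your proof is correct but takes a genuinely different and more laborious route than the paper. You factor the realizable morphism $f$ into (closed immersion) $\circ$ (open immersion) $\circ$ (projection) and prove the base change in each case separately, then glue — with the acknowledged delicacy of checking that the composite does not depend on the intermediate spaces. The paper instead factors the \emph{smooth} morphism $g$ through its graph $\gamma\colon \Y \hookrightarrow \X \times \Y$ followed by the projection $\pi\colon \X \times \Y \to \X$; since $\gamma_+^{(\bullet)}$ is fully faithful on its essential image by Berthelot--Kashiwara (\ref{u!u+=id}), it suffices to check the isomorphism after applying $\gamma_+^{(\bullet)}$, and after unwinding transitivity of pushforwards and pullbacks one is reduced to the single identity $\pi^{!(\bullet)} \circ f_+^{(\bullet)} \riso f''^{(\bullet)}_+ \circ \pi'^{(\bullet)!}$, which is precisely Corollary \ref{theo-iso-chgtbase2}, valid there at the quasi-coherent level for \emph{arbitrary} $f$. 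Realizability and properness-of-support enter only through \ref{stab-propersupp} and \ref{stab-coh-f^!} to guarantee coherence of both sides; they play no role in the construction of the isomorphism itself. This buys three things your route does not: no open-immersion case (and hence no need for \ref{rem-gopenimm}), no need to decompose $\pi$ through a projective space, and — most significantly — no independence-of-factorization bookkeeping at all, since there is a single canonical factorization of $g$. Your closing remark that one could deduce the result from the exterior tensor product base change \ref{prop-boxtimes-v+} is essentially what \ref{theo-iso-chgtbase2} does, so you had the cleaner route within reach; developing that observation instead of the three-case reduction would have recovered the paper's argument.
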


\begin{proof}
Using 
\ref{stab-propersupp}
and \ref{stab-coh-f^!}  we check the complexes of \ref{basechange} are indeed coherent.
The morphism $g$ is the composite of its graph 
$\gamma \colon 
\Y   \to \X \times \Y  $ 
with the canonical projection 
$\pi \colon  \X \times \Y \to \X$.
The morphism $g'$ is the composite of a morphism 
$\gamma' \colon 
\Y ^{\prime }  \to \X ^{\prime }\times \Y  $ 
with the canonical projection 
$\pi '\colon  \X ^{\prime }\times \Y \to \X^{\prime }$.
Set
$\U : = \X \times \Y $, $\U ^{\prime }: = \X ^{\prime }\times \Y $,
$f ^{\prime \prime}= f \times \mathrm{id} _\Y \colon  \U ^{\prime } \to  \U $.
Using Theorem \ref{u!u+=id},
we reduce to check we have a canonical isomorphism of the form
 $$\gamma ^{ (\bullet)} _{+}  \circ  g ^{ !(\bullet)} \circ f ^{(\bullet)}_{+} (\E ^{\prime (\bullet)})
\riso 
\gamma ^{ (\bullet)} _{+}  \circ  f  ^{\prime (\bullet)}_{+}  \circ g ^{\prime (\bullet) !} (\E ^{\prime (\bullet)}).$$
Concerning the left hand side, by transitivity of extraordinary pullbacks we get the canonical isomorphism
  $\gamma ^{ (\bullet)} _{+}  \circ  g ^{ !(\bullet)} \circ f ^{(\bullet)}_{+} (\E ^{\prime (\bullet)})
\riso 
\gamma ^{ (\bullet)} _{+}  \circ  
\gamma ^{ !(\bullet)} \circ
\pi ^{ !(\bullet)} \circ
 f ^{(\bullet)}_{+} (\E ^{\prime (\bullet)})
 $.
Concerning the right hand side, by transitivity of extraordinary pullbacks and of push-forwards 
and by using \ref{commutfonctcohlocal2}, \ref{pre-loc-tri-B-t1T-iso} 
we obtain the canonical isomorphisms
$\gamma ^{ (\bullet)} _{+}  \circ  f  ^{\prime (\bullet)}_{+}  \circ g ^{\prime (\bullet) !} (\E ^{\prime (\bullet)})
 \riso 
f  ^{\prime \prime (\bullet)}_{+}  \circ  \gamma ^{\prime  (\bullet)} _{+}   
 \circ \gamma ^{\prime (\bullet) !} 
 \circ \pi ^{\prime (\bullet) !} 
 (\E ^{\prime (\bullet)})
 \riso
\gamma ^{ (\bullet)} _{+}   
 \circ \gamma ^{ (\bullet) !}
 \circ   
 f  ^{\prime \prime (\bullet)}_{+}  
 \circ \pi ^{\prime (\bullet) !} 
 (\E ^{\prime (\bullet)})
 $.
Hence, we reduce to check the isomorphism
 $\pi ^{ !(\bullet)} \circ
 f ^{(\bullet)}_{+} (\E ^{\prime (\bullet)})
 \riso
 f  ^{\prime \prime (\bullet)}_{+}  
 \circ \pi ^{\prime (\bullet) !} 
 (\E ^{\prime (\bullet)})$, which is already known following 
 Theorem \ref{theo-iso-chgtbase2}.
\end{proof}

\subsection{Relative duality isomorphism and adjunction for realizable morphisms}

\begin{thm}
[Relative duality isomorphism]
\label{rel-dual-isom}
Let $g \colon \fP '\to \fP$ be a realizable morphism of smooth formal $\V$-schemes.
For any 
$\E ^{\prime (\bullet)} 
\in 
\smash{\underrightarrow{LD}} ^{\mathrm{b}} _{\Q,\mathrm{coh}} ( \smash{\widehat{\D}} _{\fP '} ^{(\bullet)})$
with proper support over $P$, 
we have the isomorphism of $\smash{\underrightarrow{LD}} ^{\mathrm{b}} _{\Q,\mathrm{coh}} ( \smash{\widehat{\D}} _{\fP} ^{(\bullet)})$
of the form 
$$g _{+} \circ \DD 
(\E ^{\prime (\bullet)} ) 
\riso 
\DD \circ g _{+} 
(\E ^{\prime (\bullet)} ) .
$$
\end{thm}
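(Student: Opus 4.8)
The plan is to factor the realizable morphism $g$ as $g = \pi \circ u$ with $u \colon \fP' \hookrightarrow \fP''$ an immersion of smooth formal $\V$-schemes and $\pi \colon \fP'' \to \fP$ proper, and then to reduce the relative duality isomorphism to two cases already established in the excerpt: the proper (indeed projective, via Chow's lemma style arguments already packaged into earlier results) case and the open immersion case. First I would observe that since $\E^{\prime(\bullet)}$ has proper support over $P$ via $g$ and $\pi$ is proper, the support $Z'$ of $\E^{\prime(\bullet)}$ is proper over $P''$ via $u$; writing further $u = j \circ v$ with $v$ a closed immersion and $j$ an open immersion, $v_+^{(\bullet)}(\E^{\prime(\bullet)})$ is a coherent object of $\smash{\underrightarrow{LD}} ^{\mathrm{b}} _{\Q,\mathrm{coh}} ( \smash{\widehat{\D}} _{\U''} ^{(\bullet)})$ with support $Z'$ proper over $P''$ via $j$, exactly the situation of \ref{rem-gopenimm} and \ref{stab-propersupp}. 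By transitivity of pushforwards $g_{+} \riso \pi_{+} \circ j_{+} \circ v_{+}$, so it suffices to treat the closed immersion case, the open immersion case, and the proper case separately and then glue.

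For the closed immersion $v$, the relative duality isomorphism $v_{+} \circ \DD^{(\bullet)} \riso \DD^{(\bullet)} \circ v_{+}$ is precisely Corollary \ref{rel-dual-isom-immf} (after translating between the coherent $\smash{\D}^\dag$-module formulation and the inductive system formulation via \ref{eqcat-limcoh} and $\mathrm{Coh}_T$ of \ref{fct-qcoh2coh}, using $\mathrm{Coh}(\DD^{(\bullet)}) \riso \DD$ as in \ref{ntn-dualfunctor}). For the open immersion $j$ with proper support, the key point is that $j_{+} = \R j_*$ and $j^! = j^*$ are quasi-inverse equivalences on the subcategories of objects supported on $Z'$ (this is \ref{rem-gopenimm}), and the dual functor $\DD^{(\bullet)}$ commutes with the localization-outside-a-divisor functors and with $j^*$ by \ref{defDf!=f!D1bis} / the construction around it; combining these, $\DD^{(\bullet)}$ preserves objects supported on $Z'$ and commutes with $j_{+}$ on that subcategory. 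For the proper morphism $\pi$, the relative duality isomorphism is Theorem \ref{dualrelative} (and its reformulation for $\D^\dag$-modules without overconvergent singularities), translated into the inductive system setting.

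The remaining work is to check that these three partial duality isomorphisms are compatible, i.e. that the composite isomorphism $g_{+} \circ \DD \riso \pi_+ \circ j_+ \circ v_+ \circ \DD \riso \pi_+ \circ j_+ \circ \DD \circ v_+ \riso \pi_+ \circ \DD \circ j_+ \circ v_+ \riso \DD \circ \pi_+ \circ j_+ \circ v_+ \riso \DD \circ g_{+}$ is independent of the chosen factorization $g = \pi \circ u$. I would handle this exactly as in the proof of Theorem \ref{dualrelative}: by faithfulness of restriction to the complement of any divisor (or, more invariantly, by the fact that two parallel morphisms between coherent complexes agreeing on a dense open coincide, cf. \ref{lem-projff-it3}-type arguments), it is enough to know the isomorphism restricts to Virrion's classical one away from closed subsets, where independence is known. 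Concretely I would note that any two factorizations are dominated by a common refinement (using that products of the intermediate formal schemes over $\fP$ give a third factorization mapping to both), and that the glueing isomorphisms $\tau$ of \ref{prop-glueiniso-coh} together with the transitivity relations \ref{comp-comp-adj-immf}(ii) make the relevant diagram commute.

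The main obstacle I expect is precisely this compatibility/independence-of-factorization step: verifying that the closed-immersion duality isomorphism of \ref{rel-dual-isom-immf}, the open-immersion equivalence of \ref{rem-gopenimm}, and the proper-case isomorphism of \ref{dualrelative} fit together into a single well-defined natural isomorphism requires a careful diagram chase through the adjunction and base-change compatibilities (\ref{comp-comp-adj-immf}, \ref{2.2.18}, \ref{commutfonctcohlocal2}) and through the commutation of $\DD^{(\bullet)}$ with pushforwards, pullbacks, and local cohomological functors recorded in \ref{cor-com-sp+-f*}. Once the well-definedness is in hand, the fact that the resulting morphism is an isomorphism follows, as in \ref{dualrelative}, by restricting to the complement of a suitable divisor where it reduces to Virrion's theorem.
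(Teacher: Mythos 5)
Your plan matches the paper's proof precisely: factor $g = \pi \circ j \circ v$ with $v$ a closed immersion, $j$ an open immersion, and $\pi$ proper; invoke the closed-immersion duality (Virrion / \ref{rel-dual-isom-immf}), handle $j$ via \ref{rem-gopenimm} together with the obvious commutation $j^{!} \circ \DD \riso \DD \circ j^{!}$, and invoke \ref{dualrelative} for $\pi$; then compose. Since each of the three pieces is an isomorphism, the composite is automatically one — and that is the whole of the paper's argument.

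Your worry about independence of the chosen factorization, while a legitimate question about canonicity, is not needed for the stated result, which only asserts that an isomorphism exists; the paper correspondingly does not address it. Moreover the device you sketch to settle it does not hold up: two parallel morphisms in $\smash{\underrightarrow{LD}} ^{\mathrm{b}} _{\Q,\mathrm{coh}}(\smash{\widehat{\D}}_{\fP}^{(\bullet)})$ agreeing on a dense open need not coincide — restriction to a dense open is not faithful at the level of morphisms in this derived category, and \ref{lem-projff-it3} applies only to the $\O$-coherent objects of $\mathrm{MIC}^{\dag\dag}$, where vanishing is indeed tested on a dense open. The faithfulness used in the proof of \ref{dualrelative} is of a different kind: it upgrades a given morphism to an isomorphism by the faithful flatness of $\D^\dag_{\fP}(\hdag T)_\Q \to \D^\dag_{\fU,\Q}$, not by comparing two candidate morphisms.
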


\begin{proof}
Let $\E ^{\prime (\bullet)} \in \smash{\underrightarrow{LD}} ^{\mathrm{b}} _{\Q,\mathrm{coh}} (\fP'/\V)$
whose support $X'$ is proper over $P$ via $g$.
Let $u \colon \fP' \hookrightarrow \fP''$ be an immersion of smooth formal $\V$-schemes, 
and $\pi \colon \fP'' \to \fP$ 
be a proper morphism of smooth formal $\V$-schemes
such that
$g = \pi \circ u$.
Let $v \colon \fP' \hookrightarrow \U'' $ 
be a closed immersion, 
and $j\colon \U'' \hookrightarrow \fP''$ be an open immersion
such that $u = j \circ v$.

From the relative duality isomorphism in the proper case (see \cite{Vir04}), 
$\DD  v _+ (\E ^{\prime (\bullet)}) \riso 
v _+   \DD (\E ^{\prime (\bullet)})$.
Set $\FF ^{\prime (\bullet)} := v _+ (\E ^{\prime (\bullet)})$.
Using \ref{stab-propersupp}, we get
$j _+ \FF ^{\prime (\bullet)} 
\in 
\smash{\underrightarrow{LD}} ^{\mathrm{b}} _{\Q,\mathrm{coh}} (\fP '' /\V) $
and it has his support in $X'$.
Hence, 
$ \DD  j _+ \FF ^{\prime (\bullet)} 
\in 
 \smash{\underrightarrow{LD}} ^{\mathrm{b}} _{\Q,\mathrm{coh}} ( \smash{\widehat{\D}} _{\fP ''} ^{(\bullet)})$
 and has its support in $X'$. 
Hence, 
$ \DD  j _+ \FF ^{\prime (\bullet)} 
\riso 
j _+ j ^!  \DD  j _+ \FF ^{\prime (\bullet)}$ (use \ref{rem-gopenimm}).
Moreover, this is obvious that
$j ^!  \DD  j _+ \FF ^{\prime (\bullet)}
\riso 
\DD  j ^!   j _+ \FF ^{\prime (\bullet)}
\riso 
\DD \FF ^{\prime (\bullet)}
$.
Hence, 
$ \DD  j _+ \FF ^{\prime (\bullet)} 
\riso 
j _+  \DD  \FF ^{\prime (\bullet)}$. 
By composition we get 
$\DD  u _+ (\E ^{\prime (\bullet)}) \riso 
u _+   \DD (\E ^{\prime (\bullet)})$.
Since $\pi$ is proper, 
from the relative duality isomorphism in the proper case (see \ref{dualrelative}),
we obtain the first isomorphism
$\DD  \pi _+  u _+ (\E ^{\prime (\bullet)}) 
\riso 
\pi _+ \DD  u _+ (\E ^{\prime (\bullet)}) 
\riso
\pi _+  u _+ \DD (\E ^{\prime (\bullet)}) $. 
Hence, we are done.
\end{proof}

\begin{cor}
\label{cor-adj-formulbis}
Let $g \colon \fP '\to \fP$ be a realizable morphism of smooth formal $\V$-schemes.
Let $\E ' \in D ^\mathrm{b} _{\mathrm{coh}}
(\D ^{\dag} _{\fP ^{\prime },\Q})$
with proper support over $P$,
and
$\E 
\in 
D ^\mathrm{b} _{\mathrm{coh}}
(\D ^{\dag} _{\fP  ,\Q})$.
We have 
the isomorphisms
\begin{gather}
\label{cor-adj-formulbis-bij1}
\R \mathcal{H} om _{\D ^{\dag} _{\fP ,\Q}}
( g _{+} ( \E ') , \E) 
\riso 
\R g _* 
\R \mathcal{H} om _{\D ^{\dag} _{\fP ' ,\Q}}
( \E ' ,g ^! ( \E)). 
\\
\label{cor-adj-formulbis-bij2}
\R \mathrm{Hom}  _{\D ^{\dag} _{\fP ,\Q}}
( g _{+} ( \E ') , \E) 
\riso 
\R \mathrm{Hom}  _{\D ^{\dag} _{\fP ' ,\Q}}
( \E ' ,g ^!   ( \E)). 
\end{gather}
\end{cor}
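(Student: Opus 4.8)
The statement is the ``adjunction pair'' consequence of the relative duality isomorphism \ref{rel-dual-isom}, exactly parallel to how \ref{cor-adj-formul} was deduced from \ref{dualrelative} in the proper case, and to how \ref{cor-adj-formul-proj-f} (via \ref{rel-dual-isom-proj}) handled the projective case. So the plan is to copy, essentially word for word, the argument written out in detail after the proof of \ref{cor-adj-formul}, substituting the realizable relative duality isomorphism \ref{rel-dual-isom} for \ref{dualrelative} and the realizable base change isomorphism \ref{theo-iso-chgtbase} (together with \ref{commutfonctcohlocal2}) for the projection formula used there. The hypothesis ``$\E'$ has proper support over $P$'' is precisely what makes $g _+$, $g ^!$, \ref{rel-dual-isom} and the projection formula available in this generality, so no extra hypothesis is needed beyond what the statement already carries.

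\textbf{Key steps.} First I would reduce \ref{cor-adj-formulbis-bij2} to \ref{cor-adj-formulbis-bij1} by applying $\R \Gamma (\fP, -)$ and using that $\R \Gamma (\fP', -) \circ \R g _* = \R \Gamma (\fP, -)$; this is purely formal. For \ref{cor-adj-formulbis-bij1}: since by \ref{Noot-Huyghe-finitehomoldim} we have $D ^{\mathrm{b}} _{\mathrm{coh}}( \D ^\dag _{\fP,\Q} ) = D ^{\mathrm{b}} _{\mathrm{parf}}( \D ^\dag _{\fP,\Q} )$, I would use \cite[2.1.17]{caro_comparaison} to write
$$\R \mathcal{H} om _{\D ^{\dag} _{\fP ,\Q}} ( g _{+} ( \E ') , \E)
\riso
\left ( \omega _{\fP} \otimes _{\O _{\fP}} \E \right) \otimes ^{\L} _{\D ^{\dag} _{\fP ,\Q}} \R \mathcal{H} om _{\D ^{\dag} _{\fP ,\Q}} \left ( g _{+} ( \E ') , \D ^{\dag} _{\fP ,\Q} \otimes _{\O _{\fP}} \omega ^{-1} _{\fP} \right),$$
then substitute the relative duality isomorphism \ref{rel-dual-isom} to obtain
$$\R \mathcal{H} om _{\D ^{\dag} _{\fP ,\Q}} ( g _{+} ( \E ') , \E) \riso \left ( \omega _{\fP} \otimes _{\O _{\fP}} \E \right) \otimes ^{\L} _{\D ^{\dag} _{\fP ,\Q}} g _{+} \left ( \DD (\E') \right) [- d _{P}].$$
Next I would apply the projection formula for the morphism of ringed spaces $g \colon (\fP', \D ^{\dag} _{\fP',\Q}) \to (\fP, \D ^{\dag} _{\fP,\Q})$ (valid here because the support of $\E'$ is proper over $P$, so the coherent pushforward agrees with $\R g _*$ of a suitable complex, as in \ref{stab-propersupp}) to move the tensor product inside $\R g _*$, and finally unwind it using the two standard identifications of $g ^! (\E)$ and of $\DD(\E')$ recorded just after \ref{cor-adj-formul}, together with a second application of \cite[2.1.17]{caro_comparaison}. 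This yields $\R g _* \R \mathcal{H} om _{\D ^{\dag} _{\fP ' ,\Q}} ( \E ' , g ^! _{} ( \E))$, which is \ref{cor-adj-formulbis-bij1}.

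\textbf{Main obstacle.} The only genuinely non-formal ingredient is making sure the projection formula and the passage between the ``coherent'' pushforward $g _+$ and the topological $\R g _*$ are legitimate for a merely realizable $g$ rather than a proper one; this is exactly where ``proper support over $P$'' enters, and it is handled by factoring $g = \pi \circ j \circ v$ as in the proof of \ref{rel-dual-isom} / \ref{stab-propersupp} and checking each factor separately (closed immersion $v$, open immersion $j$ with proper-over-$P$ support, proper $\pi$). For the proper factor the projection formula is classical; for the closed-immersion and open-immersion factors it is elementary (and for $j$ one uses \ref{rem-gopenimm}). Beyond that, everything is a routine transcription of the detailed computation following \ref{cor-adj-formul}, so I would not expect to need to write it out in full.
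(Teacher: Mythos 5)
Your approach is exactly the paper's: the paper's proof is the single sentence ``Using \ref{rel-dual-isom}, we can copy word by word the proof of \ref{cor-adj-formul}.'' The detailed transcription you sketch in your ``Key steps'' (perfection via \ref{Noot-Huyghe-finitehomoldim}, \cite[2.1.17]{caro_comparaison} twice, relative duality \ref{rel-dual-isom} in place of \ref{dualrelative}, projection formula) is precisely that argument — note only that your ``Plan'' paragraph suggests replacing the projection formula by the base change isomorphism \ref{theo-iso-chgtbase}, which is a slip; the projection formula is kept as-is, as your ``Key steps'' correctly do, and the factoring $g=\pi\circ j\circ v$ you worry about has already been absorbed into \ref{rel-dual-isom} and \ref{stab-propersupp}, so no extra work is needed at this stage.
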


\begin{proof}
Using \ref{rel-dual-isom}, 
we can copy word by word the proof of \ref{cor-adj-formul}. 
\end{proof}

\section{Stability under Grothendieck's six operations}

\subsection{Data of coefficients}

\begin{dfn}
\label{DVR}
Fix  a perfectification $\V \to \V ^\flat$ of $\V$ (see definition \ref{def-perfectification}).
We define the category 
$\mathrm{DVR}  (\V,\V ^\flat)$ 
as follows : 
an object is the data of  
a complete discrete valued ring $\W$ of mixed characteristic $(0,p)$, together with 
a perfectification $\W \to \W ^\flat$ and two morphisms of local algebras
$\V \to \W$ and 
$\V ^\flat
\to 
\W ^\flat$
making commutative the diagram
\begin{equation}
\notag
\xymatrix{
{\V ^\flat } 
\ar[r] ^-{}
&
{\W ^\flat} 
\\ 
{\V} 
\ar[r] ^-{}
\ar[u] ^-{}
& 
{\W.} 
\ar[u] ^-{}
}
\end{equation}
Such object is simply denoted by 
$(\W, \W ^\flat)$.
A morphism 
$(\W, \W ^\flat)
\to 
(\W ', \W ^{\prime \flat})$ 
is the data of 
a morphism of local $\V$-algebras
$\W \to \W'$ and a morphism of local $\V ^\flat$-algebras
$\W ^\flat\to \W ^{\prime \flat}$
making commutative the following diagram:
\begin{equation}
\notag
\xymatrix{
{\W ^\flat } 
\ar[r] ^-{}
&
{\W ^{\prime \flat} } 
\\ 
{\W}
\ar[r] ^-{}
\ar[u] ^-{}
& 
{\W'.}  
\ar[u] ^-{}
}
\end{equation}

A special morphism 
$(\W, \W ^\flat)
\to 
(\W ', \W ^\flat)$
of $\mathrm{DVR}  (\V,\V ^\flat)$
is a morphism such that
the underlying morphism 
$\W ^\flat
\to \W ^\flat$ is the identity
and 
such that 
$\W \to \W' $ is a special morphism in the sense of \ref{def-perfectification}.

When working exclusively with perfect residue fields, 
we retrieve the definition of \cite[1.1.1]{caro-unip}.
In the proof of \ref{S(D,C)stability8}, 
we will need to consider ``fixed'' perfectifications which gives a justification to work with 
$\mathrm{DVR}  (\V,\V ^\flat)$.

\end{dfn}

\begin{empt}
[Base change and their commutation with cohomological operations]
\label{comm-chg-base}

Let $\alpha \colon (\W, \W ^\flat)
\to 
(\W ', \W ^{\prime \flat})$ be a morphism of $\mathrm{DVR}  (\V,\V ^\flat)$,
let $\X$ be a smooth formal scheme over $\W$, 
$\E ^{(\bullet)} \in \smash{\underrightarrow{LD}} ^{\mathrm{b}} _{\Q,\mathrm{qc}} 
( \smash{\widehat{\D}} _{\X/\Spf (\W)} ^{(\bullet)})$,
$\X' := \X \times _{\Spf (\W)} \Spf \W'$, and $\pi  \colon \X' \to \X$ 
be the projection.
The base change of $\E ^{(\bullet)} $ by $\alpha$ is 
an object 
$\pi  ^{! (\bullet)} (\E ^{(\bullet)})$ of 
$\smash{\underrightarrow{LD}} ^{\mathrm{b}} _{\Q,\mathrm{qc}} ( \smash{\widehat{\D}} _{\X '/\Spf (\W')} ^{(\bullet)})$ (see \cite[2.2.2]{Beintro2}).
Similarly to \cite[2.2.2]{Beintro2}, 
it will simply be denoted by 
$ \W'  \smash{\widehat{\otimes}}^\L
_{\W}  \E^{(\bullet) }$.

From \cite[2.4.2]{Beintro2}, push forwards commute with base change. 
The commutation of base change with extraordinary pullbacks, local cohomological functors,
duals functors (for coherent complexes), and tensor products is 
straightforward. 

\end{empt}

\begin{empt}
\label{t-structure-coh}
Let $(\W, \W ^\flat)$ be an object of $\mathrm{DVR}  (\V,\V ^\flat)$, and 
$\X$ be a smooth formal $\W$-scheme.
If there is no possible confusion (some confusion might arise if for example we do know that $\V \to \W$ is finite and etale), 
for any integer $m \in \N$, 
we denote 
$\smash{\widehat{\D}} _{\X/\Spf (\W)} ^{(m)}$
(resp. $\smash{\D} ^\dag _{\X/\Spf (\W), \Q}$)
simply by 
$\smash{\widehat{\D}} _{\X} ^{(m)}$
(resp. $\smash{\D} ^\dag _{\X\Q}$).
Berthelot checked the following equivalence of categories (see \cite[4.2.4]{Beintro2}, or \ref{eq-catLDBer-LD-D}):
\begin{equation}
\label{limeqcat}
\underrightarrow{\lim}
\colon 
\smash{\underrightarrow{LD}} ^{\mathrm{b}} _{\Q,\mathrm{coh}} ( \smash{\widehat{\D}} _{\X} ^{(\bullet)})
\cong 
D ^{\mathrm{b}}  _{\mathrm{coh}} (\smash{\D} ^\dag _{\X\Q}).
\end{equation}

The category 
$D ^{\mathrm{b}}  _{\mathrm{coh}} (\smash{\D} ^\dag _{\X\Q})$ 
is endowed with its usual t-structure.
Via \ref{limeqcat}, we get a t-structure on 
$\smash{\underrightarrow{LD}} ^{\mathrm{b}} _{\Q,\mathrm{coh}} ( \smash{\widehat{\D}} _{\X} ^{(\bullet)})$
whose heart is 
$\smash{\underrightarrow{LM}}  _{\Q,\mathrm{coh}} ( \smash{\widehat{\D}} _{\X} ^{(\bullet)})$
(see Notation \ref{nota-(L)Mcoh}).
Recall, following \ref{empt-diag-Hn-comp},
we have canonical explicit cohomological functors
$\mathcal{H} ^n 
\colon 
\smash{\underrightarrow{LD}} ^{\mathrm{b}} _{\Q,\mathrm{coh}} ( \smash{\widehat{\D}} _{\X} ^{(\bullet)})
\to 
\smash{\underrightarrow{LM}} _{\Q,\mathrm{coh}} ( \smash{\widehat{\D}} _{\X} ^{(\bullet)})$.
The equivalence of categories 
\ref{limeqcat} commutes with the 
cohomogical functors $\mathcal{H} ^n $
(where the cohomogical functors $\mathcal{H} ^n $ on 
$D ^{\mathrm{b}}  _{\mathrm{coh}} (\smash{\D} ^\dag _{\X\Q})$
are the obvious ones),
 i.e. 
$\underrightarrow{\lim}
\mathcal{H} ^n (\E ^{(\bullet)})$
is  canonically isomorphic
to 
$\mathcal{H} ^n (\underrightarrow{\lim} \,\E ^{(\bullet)})$.

Last but not least, 
following \ref{eqcat-limcoh} 
we have the equivalence of categories 
$\smash{\underrightarrow{LD}} ^{\mathrm{b}} _{\Q,\mathrm{coh}} ( \smash{\widehat{\D}} _{\X} ^{(\bullet)})
\cong 
D ^{\mathrm{b}} _{\mathrm{coh}}
(
\smash{\underrightarrow{LM}} _{\Q} ( \smash{\widehat{\D}} _{\X} ^{(\bullet)})
)
$
which is also compatible with t-structures, 
where the t-structure on 
$D ^{\mathrm{b}} _{\mathrm{coh}}
(\smash{\underrightarrow{LM}} _{\Q} ( \smash{\widehat{\D}} _{\X} ^{(\bullet)}))$
is the canonical one as the derived category of an abelian category.
\end{empt}

\begin{dfn}
\label{dfn-datacoef}
A {\it data of coefficients $\mathfrak{C}$ over $(\V,\V ^\flat)$}
will be the data for any object
$(\W, \W ^\flat)$ of $\mathrm{DVR}  (\V,\V ^\flat)$, 
for any  
smooth formal scheme $\X$ over $\W$ 
of a strictly full subcategory of 
$\smash{\underrightarrow{LD}} ^{\mathrm{b}} _{\Q,\mathrm{coh}} ( \smash{\widehat{\D}} _{\X} ^{(\bullet)})$,
which will be denoted by $\mathfrak{C} (\X/(\W, \W ^\flat))$,
or simply $\mathfrak{C} (\X)$ if there is no ambiguity with the base $(\W, \W ^\flat)$.
If there is no ambiguity with $(\V,\V ^\flat)$, 
we simply say a {\it data of coefficients}.
\end{dfn}

\begin{exs}
\label{ex-Dcst}
We have the following data of coefficients.

\begin{enumerate}

\item  We define $\fB _\emptyset$ as follows: 
for any object $(\W, \W ^\flat)$ of $\mathrm{DVR}  (\V,\V ^\flat)$, 
for any  smooth formal scheme $\X$ over $\W$,  
the category $\fB _\emptyset (\X)$ is the full subcategory of 
$\smash{\underrightarrow{LD}} ^{\mathrm{b}} _{\Q,\mathrm{coh}} ( \smash{\widehat{\D}} _{\X} ^{(\bullet)})$
whose unique object is 
$\O _{\X} ^{(\bullet)}$ 
(where $\O _{\X} ^{(\bullet)}$ is the constant object $\O _{\X} ^{(m)} = \O _{\X}$ for any $m\in \N$ with the identity as transition maps).

\item We will need the larger data of coefficients $\fB _\mathrm{div}$ 
defined as follows: 
for any object $(\W, \W ^\flat)$ of $\mathrm{DVR}  (\V,\V ^\flat)$, 
for any  smooth formal scheme $\X$ over $\W$,  
the category $\fB _\mathrm{div}(\X)$ is the full subcategory of 
$\smash{\underrightarrow{LD}} ^{\mathrm{b}} _{\Q,\mathrm{coh}} ( \smash{\widehat{\D}} _{\X} ^{(\bullet)})$
whose objects are of the form  
$\widehat{\B} ^{(\bullet)} _{\X} (T)$, 
where $T$ is any divisor of the special fiber of $\X$.
From Corollary 
\ref{coh-Bbullet}, 
we have
$\widehat{\B} ^{(\bullet)} _{\X} (T)\in 
\smash{\underrightarrow{LD}} ^{\mathrm{b}} _{\Q,\mathrm{coh}} ( \smash{\widehat{\D}} _{\X} ^{(\bullet)})$. 

\item We define $\fB _\mathrm{cst}$ 
as follows: 
for any object $(\W, \W ^\flat)$ of $\mathrm{DVR}  (\V,\V ^\flat)$, 
for any  smooth formal scheme $\X$ over $\W$,  
the category $\fB _\mathrm{cst}(\X)$ is the full subcategory of 
$\smash{\underrightarrow{LD}} ^{\mathrm{b}} _{\Q,\mathrm{coh}} ( \smash{\widehat{\D}} _{\X} ^{(\bullet)})$
whose objects are of the form  
$\R \underline{\Gamma} ^\dag _{Y} \O _\X ^{(\bullet)} $, 
where $Y$ is a subvariety of the special fiber of $\X$
and the functor 
$\R \underline{\Gamma} ^\dag _{Y}$ is defined in 
\ref{3.2.1caro-2006-surcoh-surcv}.
Recall following \ref{prop-induction-div-coh}, 
theses objects are coherent.

\item We define 
$\fM _\emptyset$
(resp. $\fM _\mathrm{sscd}(\X)$,
resp. $\fM _\mathrm{gsncd}(\X)$,
resp. $\fM _\mathrm{div}$)
as follows: 
for any object $(\W, \W ^\flat)$ of $\mathrm{DVR}  (\V,\V ^\flat)$, 
for any  smooth formal scheme $\X$ over $\W$,  
the category 
$\fM _\emptyset (\X)$
(resp. $\fM _\mathrm{sscd}(\X)$,
resp. $\fM _\mathrm{gsncd}(\X)$,
resp. $\fM _\mathrm{div}(\X)$)
is the full subcategory of 
$\smash{\underrightarrow{LD}} ^{\mathrm{b}} _{\Q,\mathrm{coh}} ( \smash{\widehat{\D}} _{\X} ^{(\bullet)})$
consisting of objects 
of the form  
$(\hdag T)  (\E ^{(\bullet)} )$, 
where 
$\E ^{(\bullet)}
\in 
\mathrm{MIC} ^{(\bullet)} (Z, \X/K) $ (see notation
\ref{ntnMICdag2fs3}),
with $Z$ is a smooth subvariety of 
the special fiber of $\X$,
and where $T$ is an empty divisor
(resp. a strict smooth crossing divisor, 
resp. a geometric strict normal crossing divisor, 
resp. a divisor) of $Z$  over the special fiber of $\W$ (see definition \ref{sscd}).
Recall that following \ref{coh-ss-div-bis}, 
these objects are indeed coherent.

\end{enumerate}

\end{exs}

\begin{dfn}
\label{dfn-stable-data}
In order to be precise, let us fix some terminology.
Let $\mathfrak{C}$ and $\mathfrak{D}$ be two data of coefficients over $(\V,\V ^\flat)$. 
\begin{enumerate}
\item We will say that the data of coefficients $\mathfrak{C}$  is
stable under 
pushforwards 
if for any object $(\W, \W ^\flat)$ of $\mathrm{DVR}  (\V,\V ^\flat)$, 
for any 
{\it realizable} morphism $g \colon \X ' \to \X$ of  smooth formal schemes over $\W$, 
for any object $\E ^{\prime (\bullet)}$ of $\mathfrak{C} (\X')$ with proper support over $X$ via $g$,
the complex $g _{+} (\E ^{\prime (\bullet)})$ is an object of  $\mathfrak{C} (\X)$.

\item We will say that the data of coefficients $\mathfrak{C}$  is stable under extraordinary pullbacks 
(resp. under {\it smooth} extraordinary pullbacks)
if for any object $(\W, \W ^\flat)$ of $\mathrm{DVR}  (\V,\V ^\flat)$, 
for any morphism (resp. {\it smooth} morphism)
$f \colon \Y \to \X$ of  smooth formal schemes over $\W$, 
for any object $\E ^{(\bullet)}$ of $\mathfrak{C} (\X)$, 
we have $f ^{!} (\E ^{(\bullet)})\in \mathfrak{C} (\Y)$.

\item We still say that the data of coefficients $\mathfrak{C}$  satisfies 
the first property (resp. the second property) of Berthelot-Kashiwara theorem
or satisfies $BK ^!$ (resp. $BK _+$) for short if the following property is satisfied:
for any object $(\W, \W ^\flat)$ of $\mathrm{DVR}  (\V,\V ^\flat)$, 
for any closed immersion  $u \colon \ZZ \hookrightarrow \X$ of  smooth formal schemes over $\W$, 
for any object $\E ^{(\bullet)}$ of $\mathfrak{C} (\X)$ with support in $\ZZ$, 
we have $u ^{!} (\E ^{(\bullet)})\in \mathfrak{C} (\ZZ)$
(resp. 
for any object $\G ^{(\bullet)}$ of $\mathfrak{C} (\ZZ)$, 
we have $u _{+} (\G ^{(\bullet)})\in \mathfrak{C} (\X)$).
Remark that $BK ^!$ and $BK _+$ hold if and only if the data of coefficients $\mathfrak{C}$ satisfies 
(an analogue of) Berthelot-Kashiwara theorem, which justifies the terminology. 

\item We will say that the data of coefficients $\mathfrak{C}$ is stable under base change 
(resp. special base change) if
for any morphism 
$(\W, \W ^\flat)
\to 
(\W ', \W ^{\prime \flat})$ 
(resp. for any special morphism 
$(\W, \W ^\flat)
\to 
(\W ', \W ^\flat)$ 
of $\mathrm{DVR}  (\V,\V ^\flat)$,
for any  smooth formal scheme $\X$ over $\W$, 
for any object $\E ^{(\bullet)}$ of $\mathfrak{C} (\X)$, 
we have $ \W'  \smash{\widehat{\otimes}}^\L
_{\W}  \E^{(\bullet) }\in \mathfrak{C} (\X \times _{\Spf \W} \Spf \W ')$.

\item We will say that the data of coefficients $\mathfrak{C}$ is stable under special  descent of the base if, 
for any object $(\W, \W ^\flat)$ of $\mathrm{DVR}  (\V,\V ^\flat)$, 
for any  smooth formal scheme $\X$ over $\W$, 
for any object $\E ^{(\bullet)}\in \smash{\underrightarrow{LD}} ^{\mathrm{b}} _{\Q,\mathrm{coh}} ( \smash{\widehat{\D}} _{\X} ^{(\bullet)})$, 
if
there exists a 
special morphism 
$(\W, \W ^\flat)
\to 
(\W ', \W ^\flat)$
such that $ \W'  \smash{\widehat{\otimes}}^\L
_{\W}  \E^{(\bullet) }\in \mathfrak{C} (\X \times _{\Spf \W} \Spf \W ')$,
then 
$\E^{(\bullet) }\in \mathfrak{C} (\X )$.

\item We will say that the data of coefficients $\mathfrak{C}$ is stable under tensor products (resp. duals) if
for any object $(\W, \W ^\flat)$ of $\mathrm{DVR}  (\V,\V ^\flat)$, 
for any  smooth formal scheme $\X$ over $\W$, 
for any objects $\E ^{(\bullet)}$ and $\FF ^{(\bullet)}$ of $\mathfrak{C} (\X)$
we have 
$\FF ^{(\bullet)}
\smash{\widehat{\otimes}}^\L
_{\O  _{\X}} \E^{(\bullet) }
\in \mathfrak{C} (\X)$
(resp. 
$\DD  _{\X}(\E^{(\bullet) }) \in  \mathfrak{C} (\X)$).

\item We will say that the data of coefficients $\mathfrak{C}$ is stable under local cohomological functors
(resp. under localizations outside a divisor), if 
for any object $(\W, \W ^\flat)$ of $\mathrm{DVR}  (\V,\V ^\flat)$, 
for any  smooth formal scheme $\X$ over $\W$, 
for any object $\E ^{(\bullet)}$ of $\mathfrak{C} (\X)$, 
for any subvariety $Y$ (resp. for any divisor $T$) of the special fiber of $\X$, 
we have
$\R \underline{\Gamma} ^\dag _{Y} \E ^{(\bullet)} \in \mathfrak{C} (\X)$ 
(resp. $(\hdag T) (\E ^{(\bullet)} )\in \mathfrak{C} (\X)$).

\item We will say that the data of coefficients $\mathfrak{C}$ is stable under cohomology if,
for any object $(\W, \W ^\flat)$ of $\mathrm{DVR}  (\V,\V ^\flat)$, 
for any  smooth formal scheme $\X$ over $\W$,
for any object  $\E ^{(\bullet)}$ of $\smash{\underrightarrow{LD}} ^{\mathrm{b}} _{\Q,\mathrm{coh}} ( \smash{\widehat{\D}} _{\X} ^{(\bullet)})$, the property 
$\E ^{(\bullet)}$ is an object of 
$\mathfrak{C} (\X)$ is equivalent to the fact that, for any integer $n$, 
$\H ^n (\E ^{(\bullet)})$ is an object of $\mathfrak{C} (\X)$. 

\item We will say that the data of coefficients $\mathfrak{C}$ is stable under shifts if,
for any object $(\W, \W ^\flat)$ of $\mathrm{DVR}  (\V,\V ^\flat)$, 
for any  smooth formal scheme $\X$ over $\W$,
for any object  $\E ^{(\bullet)}$ of $\mathfrak{C} (\X)$, for any integer $n$, 
$\E ^{(\bullet)} [n]$ is an object of $\mathfrak{C} (\X)$.

\item We will say that the data of coefficients $\mathfrak{C}$ is stable by devissages if
 $\mathfrak{C}$ is stable by shifts and if
for any object $(\W, \W ^\flat)$ of $\mathrm{DVR}  (\V,\V ^\flat)$, 
for any  smooth formal scheme $\X$ over $\W$, 
for any exact triangle 
$\E ^{(\bullet)} _1
\to 
\E ^{(\bullet)} _2
\to 
\E ^{(\bullet)} _3
\to 
\E ^{(\bullet)} _1 [1]$
of 
$\smash{\underrightarrow{LD}} ^{\mathrm{b}} _{\Q,\mathrm{coh}} ( \smash{\widehat{\D}} _{\X} ^{(\bullet)})$, 
if two objects are in $\mathfrak{C} (\X)$, then so is the third one. 

\item We will say that the data of coefficients $\mathfrak{C}$ is stable under direct summands if,
for any object $(\W, \W ^\flat)$ of $\mathrm{DVR}  (\V,\V ^\flat)$, 
for any  smooth formal scheme $\X$ over $\W$ we have the following property: 
any direct summand in $\smash{\underrightarrow{LD}} ^{\mathrm{b}} _{\Q,\mathrm{coh}} ( \smash{\widehat{\D}} _{\X} ^{(\bullet)})$
of an object of $\mathfrak{C} (\X)$ is an object of
$\mathfrak{C} (\X)$.

\item We say that $\mathfrak{C}$ contains $\mathfrak{D}$ 
(or $\mathfrak{D}$ is contained in $\mathfrak{C}$) 
if for any object $(\W, \W ^\flat)$ of $\mathrm{DVR}  (\V,\V ^\flat)$, 
for any  smooth formal scheme $\X$ over $\W$
the category $\mathfrak{D} (\X)$ is a full subcategory of $\mathfrak{C} (\X)$.

\item We say that the data of coefficients $\mathfrak{C}$ is local 
if for any object $(\W, \W ^\flat)$ of $\mathrm{DVR}  (\V,\V ^\flat)$, 
for any  smooth formal scheme $\X$ over $\W$, 
for any open covering $(\X _i) _{i\in I}$ of $\X$, 
for any object $\E ^{(\bullet)}$ of
$\smash{\underrightarrow{LD}} ^{\mathrm{b}} _{\Q,\mathrm{qc}} ( \smash{\widehat{\D}} _{\X} ^{(\bullet)})$, 
we have 
$\E ^{(\bullet)}\in \mathrm{Ob} \mathfrak{C} (\X)$ if and only if 
$\E ^{(\bullet)}| \X _i \in \mathrm{Ob} \mathfrak{C} (\X _i)$ for any $i \in I$. 
For instance, it follows from 
\ref{thick-subcat}.\ref{thick-subcat2}
that 
the data of coefficients 
$\smash{\underrightarrow{LD}} ^{\mathrm{b}} _{\Q,\mathrm{coh}}$ is local. 

\item We say that the data of coefficients $\mathfrak{C}$ is quasi-local 
if for any object $(\W, \W ^\flat)$ of $\mathrm{DVR}  (\V,\V ^\flat)$, 
for any  smooth formal scheme $\X$ over $\W$, 
for any open immersion $j \colon \Y \hookrightarrow \X$ 
for any object 
$\E ^{(\bullet)}\in  \mathfrak{C} (\X)$, 
we have 
$ j ^{ !(\bullet)}\E ^{(\bullet)} \in  \mathfrak{C} (\Y)$.
\end{enumerate}

\end{dfn}

We finish the subsection with some notation.

\begin{empt}
[Duality]
\label{ntn-dual}
Let $\mathfrak{C}$ be a data of coefficients. We define its dual data of coefficients 
$\mathfrak{C}  ^{\vee}$ as follows: 
for any object $(\W, \W ^\flat)$ of $\mathrm{DVR}  (\V,\V ^\flat)$, 
for any   smooth formal scheme $\X$ over $\W$, 
the category $\mathfrak{C}  ^{\vee} (\X)$ is the subcategory of 
$\smash{\underrightarrow{LD}} ^{\mathrm{b}} _{\Q,\mathrm{coh}} ( \smash{\widehat{\D}} _{\X} ^{(\bullet)})$
of objects $\E^{(\bullet) }$
such that $\DD _{\X} (\E^{(\bullet) }) \in \mathfrak{C} (\X)$.
\end{empt}

\begin{ntn}
\label{dfn-Delta(C)}
Let $\mathfrak{C}$ be a  data of coefficients.
We denote by $\mathfrak{C} ^+$ 
the smallest data of coefficients containing $\fC$ and stable under shifts.
We define by induction on $n\in \N$ the data of coefficients
$\Delta _n (\mathfrak{C} )$  as follows: 
for $n=0$, we put $\Delta _0 (\mathfrak{C} )= \mathfrak{C} ^+$.
Suppose $\Delta _n (\mathfrak{C} )$ constructed for $n\in\N$. 
For any object $(\W, \W ^\flat)$ of $\mathrm{DVR}  (\V,\V ^\flat)$, 
for any  smooth formal scheme $\X$ over $\W$,  
the category 
$\Delta _{n+1} (\mathfrak{C} ) (\X)$
is the full subcategory of 
$\smash{\underrightarrow{LD}} ^{\mathrm{b}} _{\Q,\mathrm{coh}} ( \smash{\widehat{\D}} _{\X} ^{(\bullet)})$
of objects $\E^{(\bullet) }$
such that 
there exists an exact triangle of the form 
$\E^{(\bullet) } \to \FF^{(\bullet) } \to \G ^{(\bullet) } \to \E^{(\bullet) } [1]$ 
such that 
$\FF^{(\bullet) } $ 
and
$\G ^{(\bullet) } $ 
are objects of 
$\Delta _{n} (\mathfrak{C} ) (\X)$.
Finally, we put 
$\Delta  (\mathfrak{C} ): = \cup _{n\in\N} \Delta _n (\mathfrak{C} )$. 
The data of coefficients 
$\Delta  (\mathfrak{C} )$ is the smallest data of coefficients
containing $\mathfrak{C}$ and stable under devissage. 
\end{ntn}

\begin{ex}
\label{stab-cst}
Using the isomorphisms \ref{fonctYY'Gamma-iso}, and Theorem \ref{2.2.18}, we check 
that $\fB _\mathrm{cst} ^+$ satisfies $BK _+$, and is stable under local cohomological functors, 
extraordinary pull-backs 
and tensor products.
\end{ex}

The following lemma is obvious.
\begin{lemm}
\label{Delta-lemm-stab}
Let $\mathfrak{D}$ be a data of coefficients over $(\V,\V ^\flat)$. 
If $\mathfrak{D}$ is stable under pushforwards
(resp. extraordinary pullbacks, resp. smooth extraordinary  pullbacks, 
resp. tensor products, resp. base change, 
resp. local cohomological functors, resp. localisation outside a divisor)
then so is 
$\Delta(\mathfrak{D})$.
If $\mathfrak{D}$ satisfies $BK _+$  (resp. is quasi-local) then so is 
$\Delta(\mathfrak{D})$.
If $\mathfrak{D}$ satisfies $BK ^!$ and is stable under
local cohomological functors then so is 
$\Delta(\mathfrak{D})$.
\end{lemm}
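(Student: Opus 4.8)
The plan is to verify each stability assertion separately, by unwinding the inductive definition of $\Delta(\mathfrak{D})$ given in Notation \ref{dfn-Delta(C)}. Recall that $\Delta(\mathfrak{D}) = \cup _{n} \Delta _n(\mathfrak{D})$, where $\Delta _0(\mathfrak{D}) = \mathfrak{D}^+$ and $\Delta _{n+1}(\mathfrak{D})(\X)$ consists of those $\E^{(\bullet)}$ fitting into an exact triangle $\E^{(\bullet)} \to \FF^{(\bullet)} \to \G^{(\bullet)} \to \E^{(\bullet)}[1]$ with $\FF^{(\bullet)}, \G^{(\bullet)} \in \Delta _n(\mathfrak{D})(\X)$. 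Since every functor in question ($g_+$, $f^!$, $-\widehat{\otimes}-$, base change $\W'\widehat{\otimes}_\W-$, $\R\underline{\Gamma}^\dag_Y$, $(\hdag T)$, $u^!$, $u_+$, and restriction $j^{!(\bullet)}$ to an open) is triangulated, each of them sends an exact triangle to an exact triangle and commutes with shifts $[n]$. So for each operation $F$ under consideration, the argument is a straightforward induction on $n$: first one checks $F$ preserves $\mathfrak{D}^+$ (which follows from $F$ preserving $\mathfrak{D}$ together with $F$ commuting with shifts), then one checks that if $F$ preserves $\Delta_n(\mathfrak{D})$ it preserves $\Delta_{n+1}(\mathfrak{D})$, using that $F$ of the defining triangle is again a defining triangle.

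Concretely, I would proceed operation by operation. For pushforwards: note that if $\E^{(\bullet)} \to \FF^{(\bullet)} \to \G^{(\bullet)} \to$ is a triangle in which all three terms have proper support over $X$ via $g$, then applying $g_+$ yields a triangle, so the induction goes through; one subtlety is that the support condition must be stable along the triangle, but the support of the middle term of a triangle is contained in the union of the supports of the other two (by the Mayer--Vietoris type triangles \ref{eq1mayer-vietoris} or directly from \ref{prop2.2.9}), so properness of supports is preserved. For extraordinary pullbacks (both the general and the smooth case), tensor products (here one uses that $-\widehat{\otimes}^\L_{\O_\X}\E^{(\bullet)}$ is triangulated in the first variable, together with stability of $\mathfrak{D}$ under tensoring with a fixed object — but actually the statement only asserts closure, so one tensors a variable object of $\Delta(\mathfrak{D})$ against another, and one applies the induction in one variable at a time), base change \ref{comm-chg-base}, local cohomological functors $\R\underline{\Gamma}^\dag_Y$, and localization functors $(\hdag T)$: in each case the functor is triangulated and commutes with shifts, so the same two-step induction applies verbatim. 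For the properties $BK^!$ and $BK_+$: here $u^!$ (resp. $u_+$) applied to the triangle gives a triangle, and for $BK^!$ one must additionally observe that the support-in-$\ZZ$ hypothesis propagates along a triangle (again by \ref{prop2.2.9} or \ref{eq1mayer-vietoris}), which is where the hypothesis ``$\mathfrak{D}$ is stable under local cohomological functors'' enters for the $BK^!$ clause: it lets one replace $\E^{(\bullet)}$ with support in $\ZZ$ by $\R\underline{\Gamma}^\dag_Z\E^{(\bullet)}\riso\E^{(\bullet)}$ and keep track of the support condition through the devissage. For quasi-locality: $j^{!(\bullet)} = j^{*(\bullet)}$ is triangulated and commutes with shifts, so this is the cleanest case.

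For the very first step of each induction — preservation of $\mathfrak{D}^+$ — I would note that $\mathfrak{D}^+$ is by definition the smallest data of coefficients containing $\mathfrak{D}$ and stable under shifts; since each functor $F$ commutes with shifts and preserves $\mathfrak{D}$ by hypothesis, the full subcategory of objects $\E^{(\bullet)}$ with $F(\E^{(\bullet)}) \in \mathfrak{C}(\X)$ for the appropriate target $\mathfrak{C}$ is shift-stable and contains $\mathfrak{D}$, hence contains $\mathfrak{D}^+$; but one must be slightly careful because $\mathfrak{D}^+$ is not literally ``shift-closure applied objectwise'' unless $\mathfrak{D}$ is already closed under finite sums with shifts — in fact $\mathfrak{D}^+(\X)$ is exactly $\{\E^{(\bullet)}[n] : \E^{(\bullet)}\in\mathfrak{D}(\X),\ n\in\Z\}$ since shifting twice is shifting once, so this is immediate. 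I expect the main obstacle, modest as it is, to be bookkeeping of the support/properness conditions in the pushforward and $BK^!$ cases: one has to be careful that along an exact triangle, ``proper support over $X$'' and ``support contained in $\ZZ$'' are inherited by all three terms, which requires invoking the behaviour of supports under triangles (the Mayer--Vietoris triangles \ref{eq1mayer-vietoris} and the characterization \ref{prop2.2.9} of support via vanishing of $(\hdag X)$). Everything else is a mechanical triangulated-functor induction, and I would simply remark that the stability under devissage is built into the definition of $\Delta$, so no further argument is needed for that aspect; the lemma as stated is then proved clause by clause by the scheme above.
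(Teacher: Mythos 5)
Your overall scheme — induct on $n$ in the definition of $\Delta_n(\mathfrak{D})$, using that each operation is a triangulated functor commuting with shifts, with $\mathfrak{D}^+$ as the base case — is the expected one; the paper declares the lemma ``obvious'' and gives no proof, so there is no competing argument to compare against. The cases of extraordinary pullbacks (smooth or not), tensor products, base change, local cohomological functors, localisations, quasi-locality, and $BK_+$ are all handled correctly by your two-step induction, and your treatment of $BK^!$ is right: the hypothesis ``$\mathfrak{D}$ stable under local cohomological functors'' is exactly what lets you replace a devissage triangle $\E\to\FF\to\G\to$ for $\E$ supported in $\ZZ$ by $\E\to\R\underline{\Gamma}^\dag_Z\FF\to\R\underline{\Gamma}^\dag_Z\G\to$, whose outer terms remain in $\Delta_n(\mathfrak{D})$ and are now supported in $\ZZ$, so that $u^!$ can be applied term by term.

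Your pushforward case, however, has a genuine gap. Given $\E'\in\Delta_{n+1}(\mathfrak{D})(\X')$ with proper support over $X$ via $g$, the defining triangle $\E'\to\FF'\to\G'\to$ only guarantees $\FF',\G'\in\Delta_n(\mathfrak{D})(\X')$ with \emph{no} constraint whatsoever on their supports, and the fact you cite — that the support of one term of a triangle sits in the union of the supports of the other two — controls the support of $\E'$ given $\FF',\G'$, not the supports of $\FF',\G'$ given $\E'$. So you cannot apply the induction hypothesis to $\FF'$ and $\G'$: without proper support, $g_+(\FF')$ and $g_+(\G')$ need not even be coherent, let alone lie in $\Delta_n(\mathfrak{D})(\X)$. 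The repair is the same $\R\underline{\Gamma}^\dag_{Z'}$ trick you use for $BK^!$ (restrict the triangle to the support $Z'$ of $\E'$), but that requires $\mathfrak{D}$, hence $\Delta_n(\mathfrak{D})$, to be stable under local cohomological functors — a hypothesis the lemma does not list for the pushforward clause, and which you should either add or explicitly invoke from the context in which the lemma is applied (in the paper's uses, \ref{ovcoh-invim-prop} and downstream, those extra hypotheses are in fact present). As written, your pushforward paragraph does not constitute a proof.
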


\begin{empt}
Beware also that if $\mathfrak{D}$ is local (resp. stable under cohomology, 
resp. stable under special  descent of the base, resp. satisfies $BK ^!$), then it is not clear that 
so is $\Delta(\mathfrak{D})$.

\end{empt}

Since the converse of \ref{Delta-lemm-stab} is not true, let us introduce the following definition.
\begin{dfn}
\label{dfn-DeltaP}
Let $\mathfrak{D}$ be a data of coefficients over $(\V,\V ^\flat)$. 
Let $P$ be one of the stability property of 
\ref{dfn-stable-data}.
We say that $\mathfrak{D}$ is 
$\Delta$-stable under $P$ if there exists
a data of coefficients $\fD'$ over $(\V,\V ^\flat)$ 
such that 
$\Delta(\fD ') =\Delta(\mathfrak{D})$  and $\fD '$ is stable under $P$.

\end{dfn}

\begin{lem}
\label{Delta-lemm-stab-bis}
The data of coefficients $\mathfrak{D}$ is $\Delta$-stable under pushforwards
(resp. extraordinary pullbacks, resp. smooth extraordinary  pullbacks, 
resp. tensor products, resp. base change, 
resp. local cohomological functors, resp. localisation outside a divisor)
if and only if 
$\Delta(\mathfrak{D})$ is stable under pushforwards
(resp. extraordinary pullbacks, resp. smooth extraordinary  pullbacks, 
resp. tensor products, resp. base change, 
resp. local cohomological functors, resp. localisation outside a divisor).
The data of coefficients $\mathfrak{D}$ satisfies $\Delta$-$BK _+$  
(resp. is $\Delta$-quasi local) if and only if 
$\Delta(\mathfrak{D})$ satisfies $BK _+$
(resp. is quasi-local). 

\end{lem}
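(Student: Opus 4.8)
The statement is Lemma \ref{Delta-lemm-stab-bis}, which upgrades the one-directional Lemma \ref{Delta-lemm-stab} to an equivalence. One implication is trivial in each case: if $\mathfrak{D}$ is $\Delta$-stable under $P$, then by Definition \ref{dfn-DeltaP} there is a data of coefficients $\mathfrak{D}'$ with $\Delta(\mathfrak{D}')=\Delta(\mathfrak{D})$ and $\mathfrak{D}'$ stable under $P$; applying Lemma \ref{Delta-lemm-stab} to $\mathfrak{D}'$ gives that $\Delta(\mathfrak{D}')=\Delta(\mathfrak{D})$ is stable under $P$ (for the properties listed there, and likewise for $BK_+$ and quasi-locality). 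So the content is the converse: assuming $\Delta(\mathfrak{D})$ is stable under $P$, produce a witness $\mathfrak{D}'$ with $\Delta(\mathfrak{D}')=\Delta(\mathfrak{D})$ and $\mathfrak{D}'$ stable under $P$. The obvious choice is $\mathfrak{D}':=\Delta(\mathfrak{D})$ itself: it is stable under $P$ by hypothesis, and $\Delta(\Delta(\mathfrak{D}))=\Delta(\mathfrak{D})$ since $\Delta$ is idempotent (the smallest data of coefficients containing $\mathfrak{D}$ and stable under devissage is already stable under devissage). Thus the equivalence holds essentially tautologically once one observes idempotence of $\Delta$.

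\textbf{Key steps, in order.} First I would record that $\Delta$ is idempotent: by construction (Notation \ref{dfn-Delta(C)}) $\Delta(\mathfrak{C})$ is stable under devissage and contains $\mathfrak{C}$, hence contains and is stable under devissage, so the smallest such data of coefficients containing it is itself; concretely $\Delta_n(\Delta(\mathfrak{C}))=\Delta(\mathfrak{C})$ for all $n$, using that an exact triangle with two vertices in $\Delta(\mathfrak{C})=\cup_m\Delta_m(\mathfrak{D})$ has, after choosing $m$ large enough to contain both, its third vertex in $\Delta_{m+1}(\mathfrak{D})\subseteq\Delta(\mathfrak{D})$. Second, for the forward implication I would just invoke Lemma \ref{Delta-lemm-stab} applied to the witness $\mathfrak{D}'$. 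Third, for the converse, take $\mathfrak{D}':=\Delta(\mathfrak{D})$; it is stable under $P$ by assumption and satisfies $\Delta(\mathfrak{D}')=\Delta(\mathfrak{D})$ by idempotence, hence $\mathfrak{D}$ is $\Delta$-stable under $P$ by Definition \ref{dfn-DeltaP}. I would run this argument uniformly over all the properties named in the statement (pushforwards, extraordinary pullbacks, smooth extraordinary pullbacks, tensor products, base change, local cohomological functors, localisation outside a divisor, $BK_+$, quasi-locality), since the only inputs used are Lemma \ref{Delta-lemm-stab} and idempotence of $\Delta$, both of which cover exactly this list.

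\textbf{Main obstacle.} There is no real obstacle; the only point requiring a line of care is the idempotence of $\Delta$, and more precisely that the closure of a data of coefficients already stable under devissage returns the same data. This is immediate from the inductive definition of $\Delta_n$, but I would spell it out to make the lemma's proof self-contained, because the whole statement is genuinely a formal manipulation of the closure operator $\Delta$ together with Lemma \ref{Delta-lemm-stab}. I would also remark explicitly why properties \emph{not} appearing in the list (locality, stability under cohomology, stability under special descent of the base, $BK^!$ alone) are excluded here: for those, $\Delta(\mathfrak{D})$ need not inherit the property even when $\mathfrak{D}$ has it, as already noted after Lemma \ref{Delta-lemm-stab}, so the equivalence formulated via $\Delta$-stability is precisely what salvages a usable statement in those remaining cases.
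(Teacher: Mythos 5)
Your proof is correct and is precisely the "translation" the paper invokes in its one-line proof of Lemma \ref{Delta-lemm-stab-bis}: the forward direction is Lemma \ref{Delta-lemm-stab} applied to the witness $\mathfrak{D}'$, and the converse is the tautology obtained by taking $\mathfrak{D}':=\Delta(\mathfrak{D})$ together with idempotence of $\Delta$. The paper leaves both steps implicit; you have merely spelled them out, including the small point about idempotence, which is indeed the only thing worth recording.
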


\begin{proof}
This is a translation of Lemma \ref{Delta-lemm-stab}. 
\end{proof}

Beware, it is not clear that if $\mathfrak{D}$ satisfies $\Delta$-$BK ^!$ and is $\Delta$-stable under
local cohomological functors then 
$\Delta(\mathfrak{D})$ satisfies $BK ^!$.

\subsection{Overcoherence, (over)holonomicity (after any base change) and complements}

\begin{dfn}
\label{dfnS(D,C)}
Let $\mathfrak{C}$ and $\mathfrak{D}$ be two data of coefficients.

\begin{enumerate}
\item We denote by 
$S _0 (\mathfrak{D}, \mathfrak{C})$
the data of coefficients defined as follows: 
for any object $(\W, \W ^\flat)$ of $\mathrm{DVR}  (\V,\V ^\flat)$, 
for any  smooth formal scheme $\X$ over $\W$,  
the category 
$S _0 (\mathfrak{D}, \mathfrak{C}) (\X)$
is the full subcategory of 
$\smash{\underrightarrow{LD}} ^{\mathrm{b}} _{\Q,\mathrm{coh}} ( \smash{\widehat{\D}} _{\X} ^{(\bullet)})$
of objects  $\E ^{(\bullet)}$
satisfying the following properties :
\begin{enumerate}
\item [($\star$)] for any smooth morphism $f\colon \Y \to \X$ of  smooth formal $\W$-schemes, 
for any object 
$\FF ^{(\bullet)}
\in
\mathfrak{D} (\Y)$,
we have 
$\FF ^{(\bullet)}
\smash{\widehat{\otimes}}^\L
_{\O  _{\Y}} f ^{!} (\E^{(\bullet) })
\in
\mathfrak{C} (\Y)$.
\end{enumerate}

\item We denote by 
$S(\mathfrak{D}, \mathfrak{C})$ 
the data of coefficients defined as follows: 
for any object $(\W, \W ^\flat)$ of $\mathrm{DVR}  (\V,\V ^\flat)$, 
for any  smooth formal scheme $\X$ over $\W$,  
the category 
$S  (\mathfrak{D}, \mathfrak{C}) (\X)$ 
is the full subcategory of 
$\smash{\underrightarrow{LD}} ^{\mathrm{b}} _{\Q,\mathrm{coh}} ( \smash{\widehat{\D}} _{\X} ^{(\bullet)})$
of objects  $\E ^{(\bullet)}$
satisfying the following property :
\begin{enumerate}
\item [($\star \star$)] for any morphism 
$(\W, \W ^\flat)
\to 
(\W ', \W ^{\prime \flat})$ 
of $\mathrm{DVR}  (\V,\V ^\flat)$, 
we have 
$ \W'  \smash{\widehat{\otimes}}^\L_{\W}  \E^{(\bullet) }\in 
S _0 (\mathfrak{D}, \mathfrak{C}) (\X \times _{\Spf \W} \Spf \W ')$.

\end{enumerate}
 \item Let $\sharp$ be a symbol so that 
either $S _\sharp = S _0$
or
$S _\sharp = S $.

\end{enumerate}

\end{dfn}

\begin{exs}
\label{ex-cst-surcoh}
\begin{enumerate}

\item
We denote by 
$ \smash{\underrightarrow{LD}} ^{\mathrm{b}} _{\Q,\mathrm{ovcoh}}= 
S _0 (\fB _\mathrm{div}, \smash{\underrightarrow{LD}} ^{\mathrm{b}} _{\Q,\mathrm{coh}})$
(see the second example  of \ref{ex-Dcst}).
This notion corresponds in the perfect residue fields case to that of overcoherence as defined in
\cite[5.4]{caro-stab-sys-ind-surcoh}.
We denote by 
$ \smash{\underrightarrow{LD}} ^{\mathrm{b}} _{\Q,\mathrm{oc}}= 
S (\fB _\mathrm{div}, \smash{\underrightarrow{LD}} ^{\mathrm{b}} _{\Q,\mathrm{coh}})$.
This notion corresponds in the perfect residue fields case to that of overcoherence after any base change as defined in
\cite{surcoh-hol}.

\item \label{hstab} 
We put 
$\mathfrak{H} _0 :=S (\fB _\mathrm{div}, \smash{\underrightarrow{LD}} ^{\mathrm{b}} _{\Q,\mathrm{coh}})$ 
and 
by induction on $i \in \N$, 
we put $\mathfrak{H} _{i+1} :=
\mathfrak{H} _{i} \cap S(\fB _\mathrm{div}, \mathfrak{H} _{i} ^{\vee})$
(see Notation \ref{ntn-dual}).
The coefficients of 
$\mathfrak{H} _{i}$ are called 
{\it $i$-overholonomic after any base change}.
We get the data of coefficients 
$\smash{\underrightarrow{LD}} ^{\mathrm{b}} _{\Q,\mathrm{h}}:= \mathfrak{H} _{\infty}
:=  \cap _{i\in \N} \mathfrak{H} _{i}$
whose objects are called {\it overholonomic after any base change}.

\item \label{ovholstab}
Replacing $S $ by $S _0$ in the definition of $\smash{\underrightarrow{LD}} ^{\mathrm{b}} _{\Q,\mathrm{h}}$, 
we get a data of coefficients that we will denote by 
$\smash{\underrightarrow{LD}} ^{\mathrm{b}} _{\Q,\mathrm{ovhol}}$.

\item Finally, 
we set
$ \smash{\underrightarrow{LM}}  _{\Q,\star}:= 
 \smash{\underrightarrow{LD}} ^{\mathrm{b}} _{\Q,\star} \cap 
  \smash{\underrightarrow{LM}} _{\Q,\mathrm{coh}}$,
  for $\star \in \{\mathrm{ovcoh}, \mathrm{oc}, \mathrm{h}, \mathrm{ovhol} \}$.
\end{enumerate}

\end{exs}

\begin{rem}
\label{rem-overhol}
\begin{enumerate}

\item Let $\mathfrak{C}$ be a data of coefficients.
The data of coefficients $\mathfrak{C}$ is stable under smooth extraordinary inverse images, localizations outside a divisor
(resp. under smooth extraordinary inverse images, localizations outside a divisor, and base change) if and only if 
$S _0 (\fB _\mathrm{div}, \mathfrak{C})=\mathfrak{C}$
(resp. $S  (\fB _\mathrm{div}, \mathfrak{C})=\mathfrak{C}$).

\item By construction, 
we remark that 
$\smash{\underrightarrow{LD}} ^{\mathrm{b}} _{\Q,\mathrm{ovhol}}$
is the biggest data of coefficients 
which contains 
$\fB _\mathrm{div}$, 
is stable by devissage, 
dual functors
and the operation
$S _{0} (\fB _\mathrm{div}, -)$. 
Moreover, 
$\smash{\underrightarrow{LD}} ^{\mathrm{b}} _{\Q,\mathrm{h}}$
is the biggest data of coefficients 
which contains 
$\fB _\mathrm{div}$, 
is stable by devissage, 
dual functors
and the operation
$S  (\fB _\mathrm{div}, -)$.

\end{enumerate}
\end{rem}

\

We will need later the following Lemmas.

\begin{lem}
\label{rem-div-cst}
Let $\mathfrak{C}$ be a data of coefficients stable under devissage.

\begin{enumerate}
\item 
\label{rem-div-cst1}
We have the equality 
$\Delta (\fB _\mathrm{div})=\Delta (\fB _\mathrm{cst})$ (see Notation \ref{ex-Dcst}).

\item \label{S(D,C)stability3-pre}
The data of coefficients $\mathfrak{C}$ is stable under local cohomological functors if and only if it 
is stable under localizations outside a divisor (see Definitions \ref{dfn-stable-data}).

\end{enumerate}
 
\end{lem}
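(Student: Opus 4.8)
For part \ref{rem-div-cst1}, the inclusion $\fB _\mathrm{div} \subset \Delta(\fB _\mathrm{cst})$ (hence $\Delta(\fB_\mathrm{div}) \subset \Delta(\fB_\mathrm{cst})$) and the reverse inclusion $\fB_\mathrm{cst} \subset \Delta(\fB_\mathrm{div})$ are each obtained by induction on the number of divisors appearing in a presentation of the relevant subvariety. First I would recall that, by \ref{dfn-4.3.4}, any object $\R \underline{\Gamma} ^\dag _{Y} \O _\X ^{(\bullet)}$ of $\fB_\mathrm{cst}(\X)$ is, after passing to irreducible components of $X$ and using the isomorphism $\R\underline{\Gamma} ^\dag _{Y} (\E^{(\bullet)}) \riso \R\underline{\Gamma} ^\dag _{X}(\hdag T)(\E^{(\bullet)})$ of \ref{3.2.1caro-2006-surcoh-surcv} applied to $\E^{(\bullet)} = \O_\X^{(\bullet)}$, built out of the functors $\R\underline{\Gamma} ^\dag _{T_i}$ for divisors $T_i$ with $X = \cap_i T_i$. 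Using the localization triangle \ref{tri-loc-berthelot}, namely $\R \underline{\Gamma} ^\dag _{T} (\E^{(\bullet)}) \to \E^{(\bullet)} \to (\hdag T)(\E^{(\bullet)}) \to +1$, one reduces the membership of $\R\underline{\Gamma}^\dag_{T_r}\circ\cdots\circ\R\underline{\Gamma}^\dag_{T_1}(\O_\X^{(\bullet)})$ in $\Delta(\fB_\mathrm{div})$ to an induction on $r$: the two outer terms of the triangle obtained by applying $\R\underline{\Gamma}^\dag_{T_r}\circ\cdots\circ\R\underline{\Gamma}^\dag_{T_2}$ are $\O_\X^{(\bullet)}$-type objects with one fewer local cohomological functor, after noting $(\hdag T_1) \O_\X^{(\bullet)} \riso \widehat{\B}^{(\bullet)}_\X(T_1) \in \fB_\mathrm{div}(\X)$. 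Conversely, for $\fB_\mathrm{div}(\X) \subset \Delta(\fB_\mathrm{cst})(\X)$ one simply reads off from the same triangle \ref{tri-loc-berthelot} with $\E^{(\bullet)} = \O_\X^{(\bullet)}$ that $\widehat{\B}^{(\bullet)}_\X(T) \riso (\hdag T)\O_\X^{(\bullet)}$ sits in a triangle with $\R\underline{\Gamma}^\dag_T \O_\X^{(\bullet)}$ and $\O_\X^{(\bullet)}$, both of which lie in $\fB_\mathrm{cst}(\X)$. Since $\Delta$ applied to either data of coefficients is the smallest devissage-stable data containing it, the two computations give $\Delta(\fB_\mathrm{div}) = \Delta(\fB_\mathrm{cst})$.

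For part \ref{S(D,C)stability3-pre}, the argument is a direct application of the localization triangle $\Delta_T(\E^{(\bullet)})$ of \ref{tri-loc-berthelot} together with the iterated-intersection definition \ref{dfn-4.3.4} of $\R\underline{\Gamma}^\dag_Y$. If $\mathfrak{C}$ is stable under localizations outside a divisor, then for a divisor $T$ the triangle $\R\underline{\Gamma}^\dag_T(\E^{(\bullet)}) \to \E^{(\bullet)} \to (\hdag T)(\E^{(\bullet)}) \to +1$ shows, by devissage stability of $\mathfrak{C}$, that $\R\underline{\Gamma}^\dag_T(\E^{(\bullet)}) \in \mathfrak{C}(\X)$; then, writing a general subvariety $Y = X\setminus T$ with $X = \cap_{i=1}^r T_i$ via \ref{3.2.1caro-2006-surcoh-surcv} and \ref{dfn-4.3.4}, one iterates $\R\underline{\Gamma}^\dag_{T_i}$ (each step preserving $\mathfrak{C}$) and then applies $(\hdag T)$ once more, staying inside $\mathfrak{C}(\X)$. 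The converse is even easier: stability under local cohomological functors includes the case $Y = X\setminus T$ for $T$ a divisor, and then the same triangle together with devissage stability gives $(\hdag T)(\E^{(\bullet)}) \in \mathfrak{C}(\X)$.

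\textbf{Main obstacle.} The only genuine subtlety I anticipate is bookkeeping the reduction to irreducible components of $X$ (so that one may legitimately write $Y = X \setminus T$ with $X$ a finite intersection of divisors, as in \ref{dfn-4.3.4}, which is only valid for $P$ integral), and checking that the isomorphisms \ref{xtx't'} and \ref{3.2.1caro-2006-surcoh-surcv} make the choice of divisors irrelevant; but this is exactly the content already established in \ref{3.2.1caro-2006-surcoh-surcv} and \ref{dfn-4.3.4}, so no new input is needed. Everything else is a routine devissage along the triangle \ref{tri-loc-berthelot}.
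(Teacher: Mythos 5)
Your argument is correct and in substance the same as the paper's: both reduce every local cohomological functor $\R \underline{\Gamma} ^\dag _{Y}$ to an iterated use of the localization triangles \ref{tri-loc-berthelot}/\ref{caro-stab-sys-ind-surcoh4.4.3} and then conclude by devissage stability of $\Delta(\fB _\mathrm{div})$, $\Delta(\fB _\mathrm{cst})$ or $\mathfrak{C}$. The paper's one-line proof also cites the Mayer--Vietoris triangles \ref{eq1mayer-vietoris}, but your reduction via finite intersections of divisors is an equivalent devissage (only one small slip: the reduction to the integral case in \ref{dfn-4.3.4} concerns $P$, the special fiber, not $X$).
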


\begin{proof}
Both  statements are checked by using exact triangles of localisation 
\ref{caro-stab-sys-ind-surcoh4.4.3}
and Mayer-Vietoris exact triangles \ref{eq1mayer-vietoris}.
\end{proof}

\begin{lem}
\label{lem-stabextpullback}
Let $\mathfrak{C}$ be a data of coefficients stable under local cohomological functors.
Then the data of coefficients 
$\mathfrak{C}$
is stable under smooth extraordinary pullbacks and  satisfies 
$BK ^!$ 
if and only if $\mathfrak{C}$ is stable under extraordinary pullbacks.

\end{lem}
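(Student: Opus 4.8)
The ``only if'' direction is the substantial one, so let me sketch it first. We are given that $\mathfrak{C}$ is stable under local cohomological functors, under smooth extraordinary pullbacks, and satisfies $BK^!$; we want to deduce stability under arbitrary extraordinary pullbacks. Let $f \colon \Y \to \X$ be any morphism of smooth formal $\W$-schemes and let $\E^{(\bullet)} \in \mathfrak{C}(\X)$. The plan is to factor $f$ through its graph: writing $\gamma_f \colon \Y \hookrightarrow \Y \times_{\Spf \W} \X$ for the graph immersion and $pr_2 \colon \Y \times_{\Spf \W} \X \to \X$ for the second projection, we have $f = pr_2 \circ \gamma_f$, and by transitivity of extraordinary pullbacks (cf.\ \ref{2.1.5Be2}, or the analogous formula for the level $\bullet$ functors) there is a canonical isomorphism $f^{!(\bullet)} \riso \gamma_f^{!(\bullet)} \circ pr_2^{!(\bullet)}$. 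Since $pr_2$ is smooth, $pr_2^{!}(\E^{(\bullet)}) \in \mathfrak{C}(\Y \times_{\Spf \W} \X)$ by hypothesis. So it remains to treat the closed immersion $\gamma_f$.

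\textbf{Handling the closed immersion.} Here is where one combines $BK^!$ with stability under local cohomological functors. Let $u \colon \ZZ \hookrightarrow \T$ be a closed immersion of smooth formal $\W$-schemes and $\FF^{(\bullet)} \in \mathfrak{C}(\T)$; we must show $u^{!(\bullet)}(\FF^{(\bullet)}) \in \mathfrak{C}(\ZZ)$. By stability under local cohomological functors, $\R\underline{\Gamma}^\dag_{Z}(\FF^{(\bullet)}) \in \mathfrak{C}(\T)$, where $Z$ is the special fiber of $\ZZ$. This object has support in $Z$, so by $BK^!$ we get $u^{!(\bullet)} \R\underline{\Gamma}^\dag_{Z}(\FF^{(\bullet)}) \in \mathfrak{C}(\ZZ)$. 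Finally, using the commutation of extraordinary pullbacks with local cohomological functors (Theorem \ref{2.2.18}, isomorphism \ref{commutfonctcohlocal1}) together with the fact that $Z = u^{-1}(Z)$, one gets the canonical isomorphism $u^{!(\bullet)} \R\underline{\Gamma}^\dag_{Z}(\FF^{(\bullet)}) \riso \R\underline{\Gamma}^\dag_{Z} u^{!(\bullet)}(\FF^{(\bullet)}) \riso u^{!(\bullet)}(\FF^{(\bullet)})$, the last isomorphism because $\R\underline{\Gamma}^\dag_Z$ is the identity on complexes on $\ZZ$ supported on all of $Z$ (so $\R\underline{\Gamma}^\dag_Z$ applied to anything on $\ZZ$ is canonically isomorphic to the identity, compare the computation in the proof of \ref{pre-loc-tri-B-t1T}). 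Hence $u^{!(\bullet)}(\FF^{(\bullet)}) \in \mathfrak{C}(\ZZ)$. Applying this to $u = \gamma_f$ and $\FF^{(\bullet)} = pr_2^{!}(\E^{(\bullet)})$ finishes the ``only if'' direction.

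\textbf{The ``if'' direction.} This one is essentially formal: assume $\mathfrak{C}$ is stable under arbitrary extraordinary pullbacks. Smooth extraordinary pullbacks are a special case, so that stability is immediate, and the first property of Berthelot--Kashiwara $BK^!$ is also a special case (it concerns $u^!$ for a closed immersion $u$, with no restriction needed beyond what stability under all $f^!$ gives). Nothing further is needed. One should still record that stability under local cohomological functors is a \emph{hypothesis} on both sides of the equivalence, not a conclusion, so this direction requires no extra argument.

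\textbf{Main obstacle.} The only genuinely delicate point is bookkeeping around the functoriality isomorphisms: one must make sure that the graph factorization and the commutation isomorphism \ref{commutfonctcohlocal1} are used at the level $\bullet$ (i.e.\ for the functors on $\smash{\underrightarrow{LD}}^{\mathrm{b}}_{\Q,\mathrm{qc}}(\smash{\widehat{\D}}^{(\bullet)}_{-})$), which is exactly the level at which the data of coefficients and all the stability notions live; the coherent versions alone would not suffice. All of this is available (transitivity of $f^{!(\bullet)}$ as in the constructions recalled in Chapter \ref{subsection3.1}, and Theorem \ref{2.2.18}), so the proof is short once the factorization is set up; I would simply write ``this is checked similarly to \cite[...]{caro-unip}'' if an analogous statement appears there, but the graph-factorization argument above is self-contained.
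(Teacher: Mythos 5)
Your proof is correct and follows essentially the same route as the paper: factor $f$ through its graph and the smooth projection, then for the closed-immersion step combine stability under local cohomological functors, $BK^!$, and the commutation isomorphism of Theorem \ref{2.2.18} (noting that $\R\underline{\Gamma}^\dag_Z$ is the identity when $Z$ is the whole special fiber). The paper's proof is slightly more compressed — it applies $f^!$ to $\R\underline{\Gamma}^\dag_Y\E^{(\bullet)}$ for the closed immersion $f$ directly and quotes \ref{2.2.18} for the identification $f^!\R\underline{\Gamma}^\dag_Y\E^{(\bullet)}\riso f^!(\E^{(\bullet)})$ — but the underlying argument is identical to yours.
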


\begin{proof}
Since the converse is obvious, 
let us check that if $\mathfrak{C}$
is stable under smooth extraordinary pullbacks and  satisfies 
$BK ^!$ 
then $\mathfrak{C}$ is stable under extraordinary pullbacks.
Let $(\W, \W ^\flat)$ be an object of $\mathrm{DVR}  (\V,\V ^\flat)$, 
$f \colon \Y \to \X$ be a morphism of  smooth formal schemes over $\W$, 
and $\E ^{(\bullet)}$ be an object of $\mathfrak{C} (\X)$. 
Since $f $ is the composition of its graph $\Y \hookrightarrow \Y\times \X$ 
followed by the projection 
$\Y \times \X \to \X$ which is smooth, 
then using the stability under smooth extraordinary pullbacks, we reduce to the case where 
$f$ is a closed immersion. 
From the stability under local cohomological functors, 
$\R \underline{\Gamma} ^\dag _{Y} \E ^{(\bullet)} \in \mathfrak{C} (\X)$. 
Since 
 $\mathfrak{C}$ satisfies $BK ^!$,
 then 
$f ^! \R \underline{\Gamma} ^\dag _{Y} \E ^{(\bullet)}
\in 
\mathfrak{C} (\Y)$.
We conclude using 
 the isomorphism 
$f ^! \R \underline{\Gamma} ^\dag _{Y} \E ^{(\bullet)} \riso 
f ^! ( \E ^{(\bullet)})$ 
(use \ref{2.2.18}). 
\end{proof}

\begin{lem}
\label{S(D,C)stability3bis}
Let $\mathfrak{D}$ be a data of coefficients over $(\V,\V ^\flat)$.
If $\mathfrak{D}$ contains $\fB _\mathrm{div}$,
and if $\mathfrak{D}$ is stable under 
tensor products,  
then 
$\mathfrak{D}$ is stable 
under localizations outside a divisor.
\end{lem}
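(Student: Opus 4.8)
The plan is to reduce the assertion to stability under tensor products, via the standard identification of the localization functor $(\hdag T)$ with tensoring by the coherent object $\widehat{\B}^{(\bullet)}_\X(T)$ of $\fB_\mathrm{div}(\X)$ over $\O^{(\bullet)}_\X$. So I would fix an object $(\W,\W^\flat)$ of $\mathrm{DVR}(\V,\V^\flat)$, a smooth formal $\W$-scheme $\X$ with special fiber $X$, a divisor $T$ of $X$, and an object $\E^{(\bullet)}$ of $\mathfrak{D}(\X)$; the goal is to see that $(\hdag T)(\E^{(\bullet)})$, regarded inside $\smash{\underrightarrow{LD}}^{\mathrm b}_{\Q,\mathrm{coh}}(\smash{\widehat{\D}}^{(\bullet)}_\X)$ via the forgetful functor $\mathrm{oub}_{\emptyset,T}$, belongs to $\mathfrak{D}(\X)$.

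First I would write down the tensor-product description of $(\hdag T)$. Taking $D=\emptyset$ in the commutative diagram \ref{ODdivcohe} (so that $\widetilde{\B}^{(\bullet)}_\X(D)=\O^{(\bullet)}_\X$) together with the definition \ref{hdag-def} of the localization functor, one obtains a canonical isomorphism, functorial in $\E^{(\bullet)}$,
\[
(\hdag T)(\E^{(\bullet)})\;\riso\;\widetilde{\B}^{(\bullet)}_\X(T)\,\smash{\widehat{\otimes}}^{\L}_{\O_\X}\,\E^{(\bullet)},
\]
which after applying $\mathrm{oub}_{\emptyset,T}$ is an isomorphism in $\smash{\underrightarrow{LD}}^{\mathrm b}_{\Q,\mathrm{coh}}(\smash{\widehat{\D}}^{(\bullet)}_\X)$. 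Since $\lambda_0\in L$, the canonical morphism $\widehat{\B}^{(\bullet)}_\X(T)\to\widetilde{\B}^{(\bullet)}_\X(T)=\lambda_0^{*}(\widehat{\B}^{(\bullet)}_\X(T))$ is a lim-isomorphism (see \ref{loc-LM}), hence an isomorphism in $\smash{\underrightarrow{LD}}^{\mathrm b}_{\Q,\mathrm{coh}}(\smash{\widehat{\D}}^{(\bullet)}_\X)$; tensoring it with $\E^{(\bullet)}$ over $\O_\X$ and composing gives
\[
(\hdag T)(\E^{(\bullet)})\;\riso\;\widehat{\B}^{(\bullet)}_\X(T)\,\smash{\widehat{\otimes}}^{\L}_{\O_\X}\,\E^{(\bullet)}.
\]

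Then I would conclude directly from the hypotheses: $\mathfrak{D}$ contains $\fB_\mathrm{div}$, so $\widehat{\B}^{(\bullet)}_\X(T)\in\mathfrak{D}(\X)$ (it is coherent by \ref{coh-Bbullet}); since also $\E^{(\bullet)}\in\mathfrak{D}(\X)$ and $\mathfrak{D}$ is stable under tensor products, the object $\widehat{\B}^{(\bullet)}_\X(T)\,\smash{\widehat{\otimes}}^{\L}_{\O_\X}\,\E^{(\bullet)}$ lies in $\mathfrak{D}(\X)$; and since $\mathfrak{D}(\X)$ is a strictly full subcategory of $\smash{\underrightarrow{LD}}^{\mathrm b}_{\Q,\mathrm{coh}}(\smash{\widehat{\D}}^{(\bullet)}_\X)$, hence closed under isomorphism, the last displayed isomorphism forces $(\hdag T)(\E^{(\bullet)})\in\mathfrak{D}(\X)$. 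There is no serious obstacle here; the only point needing care is the bookkeeping required to place the tensor-product description of $(\hdag T)$ in the very category $\smash{\underrightarrow{LD}}^{\mathrm b}_{\Q,\mathrm{coh}}(\smash{\widehat{\D}}^{(\bullet)}_\X)$ used to define data of coefficients — namely that the level shift by $\lambda_0$ is invisible in the localized category (so $\widetilde{\B}^{(\bullet)}_\X(T)$ may be replaced by $\widehat{\B}^{(\bullet)}_\X(T)$) and that the diagonal $\smash{\widehat{\D}}^{(\bullet)}_\X$-module structure on the tensor product matches the one on $(\hdag T)(\E^{(\bullet)})$ obtained through $\mathrm{oub}_{\emptyset,T}$.
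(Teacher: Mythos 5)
Your proof is correct and matches the paper's argument in substance: both reduce to the observation that $(\hdag T)(\E^{(\bullet)})$ is isomorphic to $\widehat{\B}^{(\bullet)}_\X(T)\,\smash{\widehat{\otimes}}^{\L}_{\O_\X}\,\E^{(\bullet)}$, and then invoke $\widehat{\B}^{(\bullet)}_\X(T)\in\fB_\mathrm{div}(\X)\subset\mathfrak{D}(\X)$ together with stability under tensor products. The paper reaches the tensor-product description by specializing the isomorphism \ref{fonctYY'Gamma-iso} with $\E^{(\bullet)}=\O_\X^{(\bullet)}$, whereas you read it off more directly from \ref{ODdivcohe} and the definition \ref{hdag-def}, and you take slightly more care with the $\lambda_0$ bookkeeping — but these are cosmetic differences of the same argument.
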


\begin{proof}
This is a consequence of the isomorphism 
\ref{fonctYY'Gamma-iso} (we use the case where 
$\E ^{ (\bullet)}= \O _{\X} ^{(\bullet)}$).
\end{proof}

\begin{lem}
\label{stab-Dvee-3prop}
Let $\mathfrak{C}$ be a data of coefficients. 
If the data of coefficients $\mathfrak{C}$ is local (resp. is stable under devissages, resp. is stable under direct summands,
resp. is stable under special descent of the base, resp. is stable under pushforwards, 
resp. is stable under base change,
resp. satisfies $BK ^!$),
then so is $\mathfrak{C} ^{\vee}$  (see Notation \ref{ntn-dual}).

\end{lem}

\begin{proof}
The stability under pushforwards  is a  consequence of 
the relative duality isomorphism (see \ref{rel-dual-isom}).
Since the other stability properties are straighforward,
let us check the last one. 
Let  $(\W, \W ^\flat)$ be an object of $\mathrm{DVR}  (\V,\V ^\flat)$, 
  $u \colon \ZZ \hookrightarrow \X$ be a closed immersion of  smooth formal schemes over $\W$, 
and $\E ^{(\bullet)}$ be an object of $\mathfrak{C} ^\vee (\X)$ with support in $\ZZ$.
From Berthelot-Kashiwara theorem (see \ref{u!u+=id}),
there exists $\G ^{(\bullet)} \in 
\smash{\underrightarrow{LD}} ^{\mathrm{b}} _{\Q,\mathrm{coh}} ( \smash{\widehat{\D}} _{\ZZ} ^{(\bullet)})$
such that 
$u _+ ( \G ^{(\bullet)} ) \riso \E ^{(\bullet)} $.
Since $\DD _{\X} (\E^{(\bullet) }) \in \mathfrak{C} (\X)$ has his support in $\ZZ$, 
since $BK ^!$ property holds, we get 
$u ^! \DD _{\X} (\E^{(\bullet) }) \in  \mathfrak{C} (\ZZ)$.
From the relative duality isomorphism,
we get $\DD _{\X} (\E^{(\bullet) })
\riso
\DD _{\X} u _+ ( \G ^{(\bullet)} ) 
\riso u _+ (\DD _{\ZZ} ( \G ^{(\bullet)} ))$.
Hence, 
$u ^! u _+ (\DD _{\ZZ} ( \G ^{(\bullet)} )) \in  \mathfrak{C} (\ZZ)$.
From Berthelot-Kashiwara theorem (see \ref{u!u+=id}),
we have $u ^!u _+ (\DD _{\ZZ} ( \G ^{(\bullet)} ))\riso \DD _{\ZZ} ( \G ^{(\bullet)} )$.
This yields
$\DD _{\ZZ} ( \G ^{(\bullet)} )
\in  \mathfrak{C} (\ZZ)$.
Since
$u ^! (\E ^{(\bullet)} ) \riso u ^!u _+ (\G ^{(\bullet)} )\riso \G ^{(\bullet)} $, 
this implies that $u ^! (\E ^{(\bullet)} ) \in  \mathfrak{C} ^{\vee}(\ZZ)$.
\end{proof}

\begin{lem}
\label{stab-Dvee-3propbis}
Let $\mathfrak{C}$ and $\mathfrak{D}$ be two data of coefficients.
\begin{enumerate}
\item If $\mathfrak{D} \subset \mathfrak{C} $ then 
$\mathfrak{D} ^\vee \subset \mathfrak{C} ^\vee$.
\item We have the equality
$\Delta (\fC) ^\vee =\Delta (\fC ^\vee)$. 
\end{enumerate}
\end{lem}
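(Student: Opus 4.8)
\emph{Part (1)} is immediate from Notation \ref{ntn-dual}: if $\E^{(\bullet)}\in\mathfrak{D}^\vee(\X)$ then $\DD_\X(\E^{(\bullet)})\in\mathfrak{D}(\X)\subseteq\mathfrak{C}(\X)$, so $\E^{(\bullet)}\in\mathfrak{C}^\vee(\X)$. In particular the assignment $\mathfrak{D}\mapsto\mathfrak{D}^\vee$ is order preserving on data of coefficients, and this is the only content of (1).

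For \emph{part (2)}, the plan is to check that $\mathfrak{D}\mapsto\mathfrak{D}^\vee$ is an involution compatible with all the operations entering the construction of $\Delta$, and then to induct. First I would record that $\mathfrak{D}^{\vee\vee}=\mathfrak{D}$: the dual functor $\DD_\X$ (the $\smash{\underrightarrow{LD}}$-level functor of Notation \ref{ntn-dualfunctor}) is a contravariant equivalence of triangulated categories with $\DD_\X\circ\DD_\X\cong\mathrm{id}$, which follows from the corresponding properties of $\DD_T$ recalled in \ref{ntn-dualfunctor} via the equivalences of \ref{eqcat-limcoh}; since data of coefficients are strictly full subcategories, $\E^{(\bullet)}\in\mathfrak{D}^{\vee\vee}(\X)$ iff $\DD_\X\DD_\X(\E^{(\bullet)})\in\mathfrak{D}(\X)$ iff $\E^{(\bullet)}\in\mathfrak{D}(\X)$. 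Next, since $\DD_\X$ sends $\E^{(\bullet)}[n]$ to $\DD_\X(\E^{(\bullet)})[-n]$, the property ``stable under shifts'' is self-dual; using this together with order preservation and $\mathfrak{D}^{\vee\vee}=\mathfrak{D}$ one checks the routine identity $(\mathfrak{D}^+)^\vee=(\mathfrak{D}^\vee)^+$, which for $\mathfrak{D}=\mathfrak{C}$ reads $\Delta_0(\mathfrak{C})^\vee=\Delta_0(\mathfrak{C}^\vee)$ (see Notation \ref{dfn-Delta(C)}).

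The heart of the argument is the inductive claim $\Delta_n(\mathfrak{C})^\vee=\Delta_n(\mathfrak{C}^\vee)$ for all $n\in\N$, the base case being the previous point. One first notes that each $\Delta_n(\mathfrak{D})$ is stable under shifts, by an immediate induction from \ref{dfn-Delta(C)}. For the inductive step, take $\E^{(\bullet)}$ with $\DD_\X(\E^{(\bullet)})\in\Delta_{n+1}(\mathfrak{C})(\X)$ and a distinguished triangle $\DD_\X(\E^{(\bullet)})\to\FF^{(\bullet)}\to\G^{(\bullet)}\to\DD_\X(\E^{(\bullet)})[1]$ with $\FF^{(\bullet)},\G^{(\bullet)}\in\Delta_n(\mathfrak{C})(\X)$. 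Applying the contravariant triangulated functor $\DD_\X$, using $\DD_\X\DD_\X\cong\mathrm{id}$ and rotating, one obtains a distinguished triangle having $\E^{(\bullet)}$ on the left whose two remaining terms are shifts of $\DD_\X(\G^{(\bullet)})$ and $\DD_\X(\FF^{(\bullet)})$; by the induction hypothesis and biduality these lie in $\Delta_n(\mathfrak{C}^\vee)(\X)$, and since $\Delta_n(\mathfrak{C}^\vee)$ is stable under shifts so do their shifts. Hence $\E^{(\bullet)}\in\Delta_{n+1}(\mathfrak{C}^\vee)(\X)$, i.e. $\Delta_{n+1}(\mathfrak{C})^\vee\subseteq\Delta_{n+1}(\mathfrak{C}^\vee)$; applying this inclusion with $\mathfrak{C}^\vee$ in place of $\mathfrak{C}$ and dualizing (using $\mathfrak{C}^{\vee\vee}=\mathfrak{C}$ and part (1)) gives the reverse inclusion. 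Finally $\vee$ visibly commutes with increasing unions of data of coefficients, so $\Delta(\mathfrak{C})^\vee=\bigcup_n\Delta_n(\mathfrak{C})^\vee=\bigcup_n\Delta_n(\mathfrak{C}^\vee)=\Delta(\mathfrak{C}^\vee)$.

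The only point requiring genuine care is the bookkeeping around the contravariance of $\DD_\X$: dualizing a triangle reverses its arrows, so one must rotate it back and then absorb the resulting shifts into $\Delta_n$ via its stability under shifts. Everything else is formal.
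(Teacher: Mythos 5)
Your proof is correct, but it takes a genuinely different route from the paper's. The paper avoids any induction on the level $n$ of $\Delta_n$: it simply notes that $\fC\subset\Delta(\fC)$ gives $\fC^\vee\subset\Delta(\fC)^\vee$ by part~(1), invokes the already-proved Lemma \ref{stab-Dvee-3prop} (stability under devissages passes to the dual data of coefficients) to see that $\Delta(\fC)^\vee$ is stable under devissages, and then uses the characterization of $\Delta(\fC^\vee)$ as the \emph{smallest} such data containing $\fC^\vee$ to get $\Delta(\fC^\vee)\subset\Delta(\fC)^\vee$; swapping $\fC\leftrightarrow\fC^\vee$ and dualizing yields the reverse inclusion. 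Your argument instead opens up the construction of $\Delta$ and re-derives, by an explicit rotate-and-dualize induction on $n$, essentially the same fact that Lemma \ref{stab-Dvee-3prop} packages once and for all. What the paper's route buys is brevity and a single appeal to the universal property of $\Delta$; what yours buys is self-containment (you do not need to cite \ref{stab-Dvee-3prop}) and an explicit description of the triangle exhibiting $\E^{(\bullet)}$ in $\Delta_{n+1}(\fC^\vee)$. The bookkeeping in your inductive step — dualizing the triangle, rotating once, absorbing the shift via stability of $\Delta_n$ under shifts, and deducing $\DD_\X(\FF^{(\bullet)}),\DD_\X(\G^{(\bullet)})\in\Delta_n(\fC^\vee)$ from the induction hypothesis via biduality — is all sound; you might make the biduality isomorphism $\DD_\X\circ\DD_\X\cong\mathrm{id}$ an explicit earlier remark rather than deriving it in passing, since both your proof and the paper's lean on it repeatedly.
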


\begin{proof}
The first statement is obvious.
Moreover, since 
$\fC \subset \Delta (\fC )$ then 
$\fC ^\vee
\subset 
\Delta (\fC) ^\vee $. From \ref{stab-Dvee-3prop}, 
$\Delta (\fC) ^\vee $ is stable under devissage. 
Hence
$\Delta (\fC ^\vee)
\subset 
\Delta (\fC) ^\vee $.
By replacing in the inclusion $\fC$ by
$\fC ^\vee$,
since $(\fC ^\vee) ^\vee = \fC$, we get
$\Delta (\fC) 
\subset 
\Delta (\fC ^\vee) ^\vee $.
Hence,
$\Delta (\fC) ^\vee
\subset 
(\Delta (\fC ^\vee) ^\vee ) ^\vee
=
\Delta (\fC ^\vee)
$.
\end{proof}

\begin{lem}
\label{preS(D,C)stability}
Let $\mathfrak{C}$ and $\mathfrak{D}$ be two data of coefficients.
With the notation of \ref{dfnS(D,C)}, 
we have the following properties.

\begin{enumerate}
\item 
\label{S(D,C)stability1}
With Notation \ref{ex-Dcst}, 
if $\mathfrak{D}$ contains $\fB _\emptyset$
then  $S _\sharp (\mathfrak{D}, \mathfrak{C})$
 is contained in  $ \mathfrak{C}$.
If  $\mathfrak{D}$ contains $\fB _\mathrm{div}$, then 
$S  _0(\mathfrak{D}, \mathfrak{C})$ 
(resp $S(\mathfrak{D}, \mathfrak{C})$)
is included in 
$\smash{\underrightarrow{LD}} ^{\mathrm{b}} _{\Q,\mathrm{ovcoh}}$
 (resp.  $\smash{\underrightarrow{LD}} ^{\mathrm{b}} _{\Q,\mathrm{oc}}$). 
 
 \item 
\label{S(D,C)stability1bis}
If $\fC \subset \fC '$ and $\fD ' \subset \fD$, then 
$S _\sharp (\mathfrak{D}, \mathfrak{C}) \subset
S _\sharp (\mathfrak{D}', \mathfrak{C}') $.

 \item 
 \label{S(D,C)stabilitynew3}
 If either $\mathfrak{C}$ or $\mathfrak{D}$ is stable under devissages (resp. shifts),
then so is
$S _\sharp (\mathfrak{D}, \mathfrak{C})$
and we have the equality
$S _\sharp (\Delta  (\mathfrak{D} ), \mathfrak{C})
=
S _\sharp (\mathfrak{D}, \mathfrak{C})$
(resp.
$S _\sharp (\mathfrak{D}^+, \mathfrak{C})
=
S _\sharp (\mathfrak{D}, \mathfrak{C})$).

\item \label{S(D,C)stability2}
Suppose that $\mathfrak{D} $ is stable under smooth extraordinary pullbacks, 
  tensor products (resp. and base change),
 and that $\mathfrak{C} $ contains $\mathfrak{D} $.
\begin{enumerate}
\item The data of coefficients
$S _0  (\mathfrak{D}, \mathfrak{C})$
contains $\mathfrak{D}$
(resp. $S   (\mathfrak{D}, \mathfrak{C})$
contains $\mathfrak{D}$).

\item If $\mathfrak{D}$ contains $\fB _\emptyset$, 
if either $\mathfrak{C} $ or $\mathfrak{D} $ is stable under shifts,
then 
$S _0 (\mathfrak{D}, \mathfrak{C})
=
S _0  \left (\mathfrak{D}, S _0 (\mathfrak{D}, \mathfrak{C}) \right)$ 
(resp. 
$S(\mathfrak{D}, \mathfrak{C})
=S \left (\mathfrak{D}, S(\mathfrak{D}, \mathfrak{C}) \right)$ ).

\item If either $\mathfrak{C} $ or $\mathfrak{D} $ is stable under shifts then
$S _0  \left (S _0 (\mathfrak{D}, \mathfrak{C}), S _0 (\mathfrak{D}, \mathfrak{C}) \right)$ 
(resp. $S \left (S(\mathfrak{D}, \mathfrak{C}), S(\mathfrak{D}, \mathfrak{C}) \right)$ )
contains $\mathfrak{D}$.

\end{enumerate}

\end{enumerate}
\end{lem}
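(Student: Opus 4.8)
\textbf{Plan of proof for Lemma \ref{preS(D,C)stability}.}

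The plan is to prove the five (sub)statements essentially by unwinding the definitions of $S _0$ and $S$ from \ref{dfnS(D,C)}, using the basic compatibilities of the six operations already recorded in the excerpt. All five parts are ``formal'' in the sense that no hard geometric input is needed; the only nontrivial ingredients are the stability properties assumed on $\mathfrak{C}$ and $\mathfrak{D}$ and the associativity/commutativity isomorphisms for the operations $-\smash{\widehat{\otimes}}^\L-$, $f^{!(\bullet)}$ and base change (respectively \ref{f!T'Totimes}, the projection/transitivity formulas, and \ref{comm-chg-base}). I will treat $S_\sharp$ uniformly, pointing out where $\sharp=0$ versus $\sharp=S$ changes nothing beyond ``before'' versus ``after applying an arbitrary base change''.

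For \ref{S(D,C)stability1}: if $\mathfrak{D}$ contains $\fB_\emptyset$, then taking $f=\mathrm{id}_\X$ and $\FF^{(\bullet)}=\O_\X^{(\bullet)}$ in condition $(\star)$ (resp. first take an arbitrary base change and then do this), the isomorphism $\O_\X^{(\bullet)}\smash{\widehat{\otimes}}^\L_{\O_\X}\mathrm{id}^!(\E^{(\bullet)})\riso \E^{(\bullet)}$ shows $\E^{(\bullet)}\in\mathfrak{C}(\X)$ (resp. that $\W'\smash{\widehat{\otimes}}^\L_\W\E^{(\bullet)}\in\mathfrak{C}$, which already gives containment in $\mathfrak{C}$ by taking $\W'=\W$). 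The second assertion of \ref{S(D,C)stability1} is the special case $\mathfrak{C}=\smash{\underrightarrow{LD}}^{\mathrm{b}}_{\Q,\mathrm{coh}}$, $\mathfrak{D}=\fB_\mathrm{div}$, which is exactly the definitions in \ref{ex-cst-surcoh}. Part \ref{S(D,C)stability1bis} is immediate: enlarging $\mathfrak{C}$ weakens the conclusion of $(\star)$/$(\star\star)$ and shrinking $\mathfrak{D}$ weakens the hypothesis, so the defining condition for $S_\sharp(\mathfrak{D},\mathfrak{C})$ is implied by that for $S_\sharp(\mathfrak{D}',\mathfrak{C}')$. Part \ref{S(D,C)stabilitynew3}: the condition $(\star)$ is a condition on $f^!(\E^{(\bullet)})$ through tensoring with objects of $\mathfrak{D}$; if $\mathfrak{C}$ is stable under devissage (resp. shifts) then the class of $\E^{(\bullet)}$ satisfying $(\star)$ is stable under devissage (resp. shifts) because $f^!$ and $-\smash{\widehat{\otimes}}^\L-$ are triangulated and commute with shifts; if instead $\mathfrak{D}$ is stable under devissage then for a fixed $\E^{(\bullet)}$ the class of $\FF^{(\bullet)}$ for which $\FF^{(\bullet)}\smash{\widehat{\otimes}}^\L f^!\E^{(\bullet)}\in\mathfrak{C}$ being stable under devissage forces the condition to propagate, and in either case replacing $\mathfrak{D}$ by $\Delta(\mathfrak{D})$ (resp. $\mathfrak{D}^+$) does not enlarge the constraint since $\Delta(\mathfrak{D})$ (resp. $\mathfrak{D}^+$) is generated from $\mathfrak{D}$ by devissages (resp. shifts); the same reasoning applies to $S$ after a base change, using that base change commutes with devissages.

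For \ref{S(D,C)stability2}(a): given $\E^{(\bullet)}\in\mathfrak{D}(\X)$, a smooth $f\colon\Y\to\X$ and $\FF^{(\bullet)}\in\mathfrak{D}(\Y)$, stability of $\mathfrak{D}$ under smooth extraordinary pullbacks gives $f^!(\E^{(\bullet)})\in\mathfrak{D}(\Y)$, then stability under tensor products gives $\FF^{(\bullet)}\smash{\widehat{\otimes}}^\L f^!(\E^{(\bullet)})\in\mathfrak{D}(\Y)\subset\mathfrak{C}(\Y)$; for the $S$-version one prepends an arbitrary base change and uses that $\mathfrak{D}$ is moreover stable under base change together with the commutation of base change with $f^!$ and $\smash{\widehat{\otimes}}^\L$ (\ref{comm-chg-base}). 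For \ref{S(D,C)stability2}(b), the inclusion $S_0(\mathfrak{D},S_0(\mathfrak{D},\mathfrak{C}))\subset S_0(\mathfrak{D},\mathfrak{C})$ follows from \ref{S(D,C)stability1}(applied to $\mathfrak{C}'=S_0(\mathfrak{D},\mathfrak{C})$, using $\fB_\emptyset\subset\mathfrak{D}$) together with \ref{S(D,C)stability1bis}; the reverse inclusion is where one uses transitivity: if $\E^{(\bullet)}\in S_0(\mathfrak{D},\mathfrak{C})(\X)$ then for a smooth $g\colon\ZZ\to\Y$, a smooth $f\colon\Y\to\X$, $\FF^{(\bullet)}\in\mathfrak{D}(\Y)$ and $\G^{(\bullet)}\in\mathfrak{D}(\ZZ)$ one must show $\G^{(\bullet)}\smash{\widehat{\otimes}}^\L g^!(\FF^{(\bullet)}\smash{\widehat{\otimes}}^\L f^!\E^{(\bullet)})\in\mathfrak{C}(\ZZ)$; rewrite the inner term using the projection formula $g^!(\FF^{(\bullet)}\smash{\widehat{\otimes}}^\L f^!\E^{(\bullet)})\riso g^!\FF^{(\bullet)}\smash{\widehat{\otimes}}^\L (f\circ g)^!\E^{(\bullet)}$ (up to a shift), apply stability of $\mathfrak{D}$ under smooth pullbacks and tensor products to absorb $\G^{(\bullet)}\smash{\widehat{\otimes}}^\L g^!\FF^{(\bullet)}$ into a single object $\HH^{(\bullet)}\in\mathfrak{D}(\ZZ)$, and then invoke the defining property of $\E^{(\bullet)}$ along the smooth composite $f\circ g$ (which is where the shift stability of $\mathfrak{C}$ or $\mathfrak{D}$ enters). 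Part \ref{S(D,C)stability2}(c) is then the combination: by (a), $\mathfrak{D}\subset S_\sharp(\mathfrak{D},\mathfrak{C})$, and applying (a) once more with $\mathfrak{C}$ replaced by $S_\sharp(\mathfrak{D},\mathfrak{C})$ (which contains $\mathfrak{D}$, so the hypothesis of (a) holds) gives $\mathfrak{D}\subset S_\sharp\bigl(S_\sharp(\mathfrak{D},\mathfrak{C}),S_\sharp(\mathfrak{D},\mathfrak{C})\bigr)$ after invoking \ref{S(D,C)stability1bis} to pass from $\mathfrak{D}$ to the larger $S_\sharp(\mathfrak{D},\mathfrak{C})$ in the first slot. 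I expect the only delicate bookkeeping to be in \ref{S(D,C)stability2}(b): keeping the shifts coming from $f^!=\L f^*[d]$ straight, and making sure the hypothesis ``either $\mathfrak{C}$ or $\mathfrak{D}$ stable under shifts'' is invoked at the right moment when re-expanding a twofold smooth pullback as a single one — that is the main (very mild) obstacle, everything else being a direct unwinding of definitions.
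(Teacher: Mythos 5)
Your arguments for parts 1, 2, 3, 4(a) and 4(b) are sound and match the paper's intent (the paper dismisses these as obvious and only writes out 4(c)); in particular the transitivity argument you give for the $\subset$ direction of 4(b), using \ref{f!T'Totimes} and the commutativity of $\smash{\widehat{\otimes}}^\L$, is exactly the mechanism needed. However, your derivation of 4(c) is incorrect. You apply 4(a) with the second slot replaced by $S_\sharp(\mathfrak{D},\mathfrak{C})$ to obtain $\mathfrak{D}\subset S_\sharp(\mathfrak{D},S_\sharp(\mathfrak{D},\mathfrak{C}))$, and then invoke \ref{S(D,C)stability1bis} to ``pass from $\mathfrak{D}$ to the larger $S_\sharp(\mathfrak{D},\mathfrak{C})$ in the first slot.'' But \ref{S(D,C)stability1bis} makes $S_\sharp$ \emph{antitone} in the first slot: if $\mathfrak{D}\subset S_\sharp(\mathfrak{D},\mathfrak{C})$ then $S_\sharp\bigl(S_\sharp(\mathfrak{D},\mathfrak{C}),S_\sharp(\mathfrak{D},\mathfrak{C})\bigr)\subset S_\sharp\bigl(\mathfrak{D},S_\sharp(\mathfrak{D},\mathfrak{C})\bigr)$, i.e.\ you have shown $\mathfrak{D}$ lies in the \emph{larger} of the two classes, which says nothing about the smaller one. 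Statement 4(c) is genuinely stronger than $\mathfrak{D}\subset S_\sharp(\mathfrak{D},\mathfrak{C})$ and cannot be obtained by this kind of formal bootstrapping.

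The correct proof of 4(c) is the direct transitivity argument, structurally identical to the one you already wrote for 4(b) with the roles of $\E^{(\bullet)}$ and $\FF^{(\bullet)}$ swapped: take $\E^{(\bullet)}\in\mathfrak{D}(\X)$, a smooth $f\colon\Y\to\X$, and $\FF^{(\bullet)}\in S_0(\mathfrak{D},\mathfrak{C})(\Y)$; to test $\FF^{(\bullet)}\smash{\widehat{\otimes}}^\L_{\O_\Y} f^!(\E^{(\bullet)})\in S_0(\mathfrak{D},\mathfrak{C})(\Y)$ against a smooth $g\colon\ZZ\to\Y$ and $\G^{(\bullet)}\in\mathfrak{D}(\ZZ)$, one rewrites
\begin{equation*}
\G^{(\bullet)}\smash{\widehat{\otimes}}^\L_{\O_\ZZ} g^!\bigl(\FF^{(\bullet)}\smash{\widehat{\otimes}}^\L_{\O_\Y} f^!\E^{(\bullet)}\bigr)
\riso
\bigl(\G^{(\bullet)}\smash{\widehat{\otimes}}^\L_{\O_\ZZ}(f\circ g)^!\E^{(\bullet)}\bigr)\smash{\widehat{\otimes}}^\L_{\O_\ZZ} g^!\FF^{(\bullet)}\,[-d_{Z/Y}]
\end{equation*}
using \ref{f!T'Totimes}, then groups $\G^{(\bullet)}\smash{\widehat{\otimes}}^\L (f\circ g)^!\E^{(\bullet)}$ into a single object of $\mathfrak{D}(\ZZ)$ (stability of $\mathfrak{D}$ under smooth extraordinary pullbacks and tensor products) and invokes the defining property of $\FF^{(\bullet)}\in S_0(\mathfrak{D},\mathfrak{C})(\Y)$ along the smooth morphism $g$, the shift being absorbed by whichever of $\mathfrak{C}$ or $\mathfrak{D}$ is shift-stable. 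So you had the right tool in hand — you just deployed it for 4(b) and then tried to shortcut 4(c) with an inclusion that runs the wrong way.
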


\begin{proof}
The 
assertions 1), 2), 3), 4.a) and 4.b) are obvious. 
 Let us prove 4)c). 
Let us suppose moreover $\mathfrak{C}$ stable under shifts. 
Since tensor products and extraordinary inverse images commute with base change, 
to check the second part, we reduce to establish the non respective case.
Let  $(\W, \W ^\flat)$ be an object of $\mathrm{DVR}  (\V,\V ^\flat)$, 
 $\X$ be a  smooth formal scheme over $\W$, 
and   $\E ^{(\bullet)} \in \mathfrak{D} (\X)$. 
Let   $f\colon \Y \to \X$  be a smooth morphism of  smooth formal $\W$-schemes.
Let 
$\FF ^{(\bullet)}
\in
S _0 (\mathfrak{D}, \mathfrak{C})(\Y)$.
We have to check that 
$\FF ^{(\bullet)}
\smash{\widehat{\otimes}}^\L
_{\O  _{\Y}} f ^{!} (\E^{(\bullet) })
\in
S _0 (\mathfrak{D}, \mathfrak{C})(\Y)$.
Let $g\colon \ZZ \to \Y$  be a smooth morphism of  smooth formal $\W$-schemes, 
and $\G ^{(\bullet)}
\in
\mathfrak{D} (\ZZ)$.
We have the isomorphisms
\begin{align}
\notag
\G ^{(\bullet)}
\smash{\widehat{\otimes}}^\L
_{\O  _{\ZZ}} 
 g ^! \left (
 \FF ^{(\bullet)}
\smash{\widehat{\otimes}}^\L
_{\O  _{\Y}} 
f ^{!} (\E^{(\bullet) })
\right )
&
\underset{\ref{f!T'Totimes}}{\riso} 
\left ( \G ^{(\bullet)}
\smash{\widehat{\otimes}}^\L
_{\O  _{\ZZ}} 
 g ^!  \FF ^{(\bullet)} \right )
\smash{\widehat{\otimes}}^\L
_{\O  _{\ZZ}} 
(f \circ g) ^{!} (\E^{(\bullet) })
[-d _{Z/Y}]
\\
\notag
&
\riso
\left ( \G ^{(\bullet)}
\smash{\widehat{\otimes}}^\L
_{\O  _{\ZZ}} 
(f \circ g) ^{!} (\E^{(\bullet) })
 \right )
\smash{\widehat{\otimes}}^\L
_{\O  _{\ZZ}} 
 g ^!  \FF ^{(\bullet)}
 [-d _{Z/Y}]. 
 \hspace{2cm} 
 (\star)
\end{align}
Since $\mathfrak{D} $ is stable under smooth extraordinary pullbacks 
and tensor products, then 
$ \G ^{(\bullet)}
\smash{\widehat{\otimes}}^\L
_{\O  _{\ZZ}} 
(f \circ g) ^{!} (\E^{(\bullet) })
\in \mathfrak{D} (\ZZ)$.
Since 
$ \FF ^{(\bullet)} 
\in 
S _0 (\mathfrak{D}, \mathfrak{C})(\Y)$,
then
$\left ( \G ^{(\bullet)}
\smash{\widehat{\otimes}}^\L
_{\O  _{\ZZ}} 
(f \circ g) ^{!} (\E^{(\bullet) })
 \right )
\smash{\widehat{\otimes}}^\L
_{\O  _{\ZZ}} 
 g ^!  \FF ^{(\bullet)}
[-d _{Z/Y}]
 \in 
 \mathfrak{C} (\ZZ)$.
Hence, using $(\star)$ we conclude.
\end{proof}

\begin{rem}
\label{rem-div-cst2}
Let $\mathfrak{C}$,
$\mathfrak{D} $ be two data of coefficients.

\begin{enumerate}
[(a)]
\item 
\label{rem-div-cst21}
If $\fC$ is stable under devissages, then 
using \ref{preS(D,C)stability}.\ref{S(D,C)stabilitynew3}
and \ref{rem-div-cst}
we get
$S _\sharp (\fB _\mathrm{div},\mathfrak{C}) 
=
S _\sharp (\fB _\mathrm{cst} ^+,\mathfrak{C}) 
$.

\item \label{rem-div-cst21bis} 
Let $\fD'$ be a data of coefficients
such that 
$\Delta(\fD ') =\Delta(\mathfrak{D})$.
If $\fC$ is stable under devissages, then 
$S _\sharp (\mathfrak{D}', \mathfrak{C})
=
S _\sharp (\mathfrak{D}, \mathfrak{C})$.
Hence, in the case of stable properties appearing in Lemma \ref{Delta-lemm-stab-bis}
and when $\fC$ is stable under devissages,
to study 
$S _\sharp (\mathfrak{D}, \mathfrak{C})$ 
 it is enough to consider 
$\Delta$-stable properties  instead of stable properties satisfied by 
$\mathfrak{D}$ (e.g. see the beginning of the proof of \ref{ovcoh-invim-prop}).

\item \label{rem-div-cst22} If
$\mathfrak{D} $ is stable under smooth extraordinary pullbacks, 
  tensor products,
 and that $\mathfrak{D} $ contains $\fB _{\mathrm{div}}$ and is contained in $\mathfrak{C} $,
if moreover 
either $\mathfrak{C} $ or $\mathfrak{D} $ is stable under shifts, 
then using \ref{preS(D,C)stability} (1, 2 and 4.b), we get
\begin{equation}
\label{rem-div-cst22eq1}S _0 (\mathfrak{D}, \mathfrak{C})
=
S _0  \left (\mathfrak{D}, S _0 (\fB _{\mathrm{div}}, \mathfrak{C}) \right)
=
S _0  \left (\mathfrak{D}, S _0 (\mathfrak{D}, \mathfrak{C}) \right).
\end{equation}

 If moreover $\fD$ is stable under base change,
 then 
\begin{equation}
\label{rem-div-cst22eq2}
S  (\mathfrak{D}, \mathfrak{C})
=
S   \left (\mathfrak{D}, S  (\fB _{\mathrm{div}}, \mathfrak{C}) \right)
=S   \left (\mathfrak{D}, S  (\mathfrak{D}, \mathfrak{C}) \right).
\end{equation}

\end{enumerate}
\end{rem}

\begin{lem}
\label{S(D,C)stability}
Let $\mathfrak{C}$ and $\mathfrak{D}$ be two data of coefficients.
We have the following properties.

\begin{enumerate}

\item 
\label{S(D,C)stability3}
If  $\mathfrak{C}$ is local
and if $\mathfrak{D}$ is quasi-local
then 
$S _\sharp  (\mathfrak{D}, \mathfrak{C})$ is local. 
If  $\mathfrak{C}$ is stable under direct summands,
then so is
$S _\sharp  (\mathfrak{D}, \mathfrak{C})$.

\item \label{S(D,C)stability4}
The data of coefficients 
$S _0 (\mathfrak{D}, \mathfrak{C})$ 
(resp. $S(\mathfrak{D}, \mathfrak{C})$)
is stable under smooth extraordinary pullbacks 
(resp. and under base change).

\item 
\label{S(D,C)stability5}
If 
$\mathfrak{D}$ is stable under local cohomological functors (resp. localizations outside a divisor), 
then so is
$S _\sharp (\mathfrak{D}, \mathfrak{C})$.

\item \label{S(D,C)stability6}
Suppose that $\mathfrak{C}$ 
is stable under pushforwards and shifts. Suppose that
$\mathfrak{D}$ 
is stable under extraordinary pullbacks.   
Then the data of coefficients 
$S _\sharp  (\mathfrak{D}, \mathfrak{C})$
 are stable under pushforwards.

\item \label{S(D,C)stability7}
Suppose that $\mathfrak{C}$ is 
stable under shifts, and satisfies $BK ^!$.
Moreover, suppose that $\mathfrak{D}$ 
satisfies $BK _+$. 
Then 
the data of coefficients 
$S _\sharp (\mathfrak{D}, \mathfrak{C})$
satisfies $BK ^!$.

\end{enumerate}
\end{lem}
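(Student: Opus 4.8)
The plan is to verify each of the five assertions by unwinding the defining condition, call it $(\star)$, of $S_0(\mathfrak{D},\mathfrak{C})$ in Definition \ref{dfnS(D,C)}, and then to deduce the statements about $S=S(\mathfrak{D},\mathfrak{C})$ from the corresponding statements about $S_0$: since base change commutes with pushforwards, extraordinary pullbacks, tensor products and local cohomological functors (see \ref{comm-chg-base}) and is itself transitive, every operation occurring below commutes with the formation of $\W'\widehat{\otimes}^{\L}_{\W}(-)$, so it suffices to argue with $S_0$ throughout. For \ref{S(D,C)stability3}, given an open covering $\X=\bigcup_i\X_i$ and an object $\E^{(\bullet)}$, I would use that $\E^{(\bullet)}\in S_0(\mathfrak{D},\mathfrak{C})(\X)$ precisely when for every smooth $f\colon\Y\to\X$ and every $\FF^{(\bullet)}\in\mathfrak{D}(\Y)$ one has $\FF^{(\bullet)}\widehat{\otimes}^{\L}_{\O_{\Y}}f^{!}(\E^{(\bullet)})\in\mathfrak{C}(\Y)$; covering $\Y$ by the $f^{-1}(\X_i)$, using that $j^{!}$ preserves $\mathfrak{D}$ (quasi-locality), that $f^{!}$, $\widehat{\otimes}^{\L}$ and restriction to opens commute, and that $\mathfrak{C}$ is local, reduces the verification to the $\X_i$; conversely, if $j_i\colon\X_i\hookrightarrow\X$ is the open immersion, then $j_i\circ f$ is smooth, which gives the other implication. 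Stability under direct summands is immediate since $\FF^{(\bullet)}\widehat{\otimes}^{\L}_{\O_{\Y}}f^{!}(-)$ sends direct summands to direct summands and $\mathfrak{C}$ is stable under direct summands. For \ref{S(D,C)stability4}, smooth stability of $S_0(\mathfrak{D},\mathfrak{C})$ is pure transitivity ($f^{!}g^{!}=(g\circ f)^{!}$ and a composite of smooth morphisms is smooth), stability of $S(\mathfrak{D},\mathfrak{C})$ under base change is built into its definition via transitivity of base change, and its smooth pullback stability follows from the $S_0$-case since base change commutes with smooth extraordinary pullbacks.

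For \ref{S(D,C)stability5} the key observation is that a local cohomological functor applied to the coefficient can be transferred onto the auxiliary object. Let $\E^{(\bullet)}\in S_0(\mathfrak{D},\mathfrak{C})(\X)$, let $Y$ be a subvariety of $X$, let $f\colon\Y'\to\X$ be smooth and $\FF^{(\bullet)}\in\mathfrak{D}(\Y')$; one must see that $\FF^{(\bullet)}\widehat{\otimes}^{\L}_{\O_{\Y'}}f^{!}(\R\underline{\Gamma}^{\dag}_{Y}\E^{(\bullet)})\in\mathfrak{C}(\Y')$. By Theorem \ref{2.2.18} (more precisely \ref{commutfonctcohlocal1}) one has $f^{!}\R\underline{\Gamma}^{\dag}_{Y}\E^{(\bullet)}\riso\R\underline{\Gamma}^{\dag}_{f^{-1}(Y)}f^{!}\E^{(\bullet)}$, and the isomorphism \ref{fonctYY'Gamma-iso} (applied with the second subvariety taken to be the whole special fiber of $\Y'$) identifies $\FF^{(\bullet)}\widehat{\otimes}^{\L}_{\O_{\Y'}}\R\underline{\Gamma}^{\dag}_{f^{-1}(Y)}f^{!}\E^{(\bullet)}$ with $\R\underline{\Gamma}^{\dag}_{f^{-1}(Y)}(\FF^{(\bullet)})\widehat{\otimes}^{\L}_{\O_{\Y'}}f^{!}\E^{(\bullet)}$; since $\mathfrak{D}$ is stable under local cohomological functors, $\R\underline{\Gamma}^{\dag}_{f^{-1}(Y)}(\FF^{(\bullet)})\in\mathfrak{D}(\Y')$, so $(\star)$ gives that the right hand side lies in $\mathfrak{C}(\Y')$. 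The statement for localizations outside a divisor is the same argument with \ref{f!commoub} (commutation of $f^{!}$ with $(\hdag T)$) and \ref{(hdagX)otimes} in place of \ref{2.2.18} and \ref{fonctYY'Gamma-iso}; no hypothesis on $\mathfrak{C}$ is used.

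For \ref{S(D,C)stability6} let $g\colon\X'\to\X$ be a realizable morphism, $\E^{\prime(\bullet)}\in S_0(\mathfrak{D},\mathfrak{C})(\X')$ with proper support over $X$ via $g$, and let $f\colon\Y\to\X$ be smooth with $\FF^{(\bullet)}\in\mathfrak{D}(\Y)$. Forming the cartesian square $\Y'=\Y\times_{\X}\X'$ with projections $g'\colon\Y'\to\Y$ and $f'\colon\Y'\to\X'$, the base change isomorphism \ref{theo-iso-chgtbase} gives $f^{!}g_{+}(\E^{\prime(\bullet)})\riso g'_{+}f'^{!}(\E^{\prime(\bullet)})$, and the projection formula \ref{surcoh2.1.4} (formula \ref{surcoh2.1.4-iso}) identifies, up to shift, $g'_{+}(f'^{!}\E^{\prime(\bullet)})\widehat{\otimes}^{\L}_{\O_{\Y}}\FF^{(\bullet)}$ with $g'_{+}(f'^{!}\E^{\prime(\bullet)}\widehat{\otimes}^{\L}_{\O_{\Y'}}g'^{!}\FF^{(\bullet)})$. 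Now $f'$ is smooth (base change of $f$) and $g'^{!}\FF^{(\bullet)}\in\mathfrak{D}(\Y')$ because $\mathfrak{D}$ is stable under extraordinary pullbacks; note that $g'$ need not be smooth, which is why arbitrary (not merely smooth) pullback stability of $\mathfrak{D}$ is needed here. Hence $(\star)$ gives $f'^{!}\E^{\prime(\bullet)}\widehat{\otimes}^{\L}_{\O_{\Y'}}g'^{!}\FF^{(\bullet)}\in\mathfrak{C}(\Y')$; its support is contained in $f'^{-1}(\Supp\E^{\prime(\bullet)})$, which is proper over $Y$ via the again realizable morphism $g'$ (base change of a realizable morphism), so stability of $\mathfrak{C}$ under pushforwards gives $g'_{+}(\cdots)\in\mathfrak{C}(\Y)$, and stability of $\mathfrak{C}$ under shifts finishes the verification. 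The $S$-version follows from the $S_0$-version since base change commutes with pushforwards.

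The subtlest point is \ref{S(D,C)stability7}, and the main obstacle is that the definition of $S_\sharp$ only quantifies over smooth morphisms while $BK^{!}$ concerns closed immersions, which are not smooth; I would circumvent this with the graph factorization. Let $u\colon\ZZ\hookrightarrow\X$ be a closed immersion, $\E^{(\bullet)}\in S_0(\mathfrak{D},\mathfrak{C})(\X)$ with support in $\ZZ$, $f\colon\Y\to\ZZ$ smooth and $\FF^{(\bullet)}\in\mathfrak{D}(\Y)$; one must show $\FF^{(\bullet)}\widehat{\otimes}^{\L}_{\O_{\Y}}f^{!}u^{!}(\E^{(\bullet)})\in\mathfrak{C}(\Y)$. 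Write the composite $g=u\circ f\colon\Y\to\X$ as $\gamma$ followed by the smooth projection $pr_{2}\colon\Y\times_{\W}\X\to\X$, where $\gamma\colon\Y\hookrightarrow\Y\times_{\W}\X$ is the graph of $g$, a closed immersion since $\X$ is separated, so that $f^{!}u^{!}=\gamma^{!}pr_{2}^{!}$ up to shift. Since $\mathfrak{D}$ satisfies $BK_{+}$ we get $\gamma_{+}\FF^{(\bullet)}\in\mathfrak{D}(\Y\times_{\W}\X)$, hence $(\star)$ applied to the smooth morphism $pr_{2}$ gives $\gamma_{+}\FF^{(\bullet)}\widehat{\otimes}^{\L}_{\O}pr_{2}^{!}\E^{(\bullet)}\in\mathfrak{C}(\Y\times_{\W}\X)$, which the projection formula \ref{surcoh2.1.4} identifies, up to shift, with $\gamma_{+}(\FF^{(\bullet)}\widehat{\otimes}^{\L}_{\O_{\Y}}\gamma^{!}pr_{2}^{!}\E^{(\bullet)})$. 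This complex has support in the closed subscheme $\gamma(\Y)$ and lies in $\mathfrak{C}$; since $\mathfrak{C}$ satisfies $BK^{!}$ and $\gamma^{!}\gamma_{+}\riso\mathrm{id}$ (Berthelot--Kashiwara, \ref{u!u+=id}), it follows that $\FF^{(\bullet)}\widehat{\otimes}^{\L}_{\O_{\Y}}\gamma^{!}pr_{2}^{!}\E^{(\bullet)}\in\mathfrak{C}(\Y)$, and stability of $\mathfrak{C}$ under shifts yields the claim; the $S$-version again follows by base change. Apart from this graph trick, all five parts are a bookkeeping of the standard commutation isomorphisms together with the stated stability hypotheses on $\mathfrak{C}$ and $\mathfrak{D}$.
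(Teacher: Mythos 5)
Your proof is correct and follows essentially the same route as the paper's: part (3) via the commutation isomorphisms \ref{fonctYY'Gamma-iso} and \ref{2.2.18}, part (4) via the base change isomorphism \ref{basechange} and projection formula \ref{surcoh2.1.4-iso}, and part (5) via the same graph factorization (your $\gamma$ is exactly the paper's morphism $v$). The only difference is that you spell out parts (1) and (2), which the paper dismisses as obvious, and you helpfully flag where non-smooth pullback stability of $\mathfrak{D}$ is genuinely needed in part (4).
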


\begin{proof}
a) Using \ref{thick-subcat}  (beware that tensor products do not preserve coherence), we check that 
if $\mathfrak{C}$ is local
and if $\mathfrak{D}$ is quasi-local
then 
$S _\sharp  (\mathfrak{D}, \mathfrak{C})$ is local. 
The rest of the 
assertions 1) and 2) are obvious.

b) Let us check 3). From the commutation of the base change with local cohomological functors, we reduce to check that 
$S _0(\mathfrak{D}, \mathfrak{C})$ is stable under local cohomological functors (resp. localisations outside a divisor).
Using \ref{fonctYY'Gamma-iso} and the commutation of local cohomological functors with extraordinary inverse images 
(see \ref{2.2.18}), we check the desired properties.

c) Let us check 4). 
From the commutation of base changes with pushforwards 
(see \ref{comm-chg-base}),
we reduce to check the stability of 
$S _0 (\mathfrak{D}, \mathfrak{C}) $ under  
pushforwards.
Let $(\W, \W ^\flat)$ be an object of $\mathrm{DVR}  (\V,\V ^\flat)$. 
Let 
$g \colon \X '\to \X$ be a 
realizable morphism of smooth formal $\W$-schemes.
Let 
$\E ^{\prime (\bullet)} \in S _0 (\mathfrak{D}, \mathfrak{C}) (\X')$
with proper support over $X$. 
We have to check that 
$g _+ (\E^{\prime (\bullet) })
\in
S _0(\mathfrak{D}, \mathfrak{C})(\X)$.
Let $f \colon \Y \to \X$ be a smooth morphism of smooth formal $\W$-schemes.
Let  
$f '\colon \Y \times _{\X} \X '\to \X'$,  and 
$g '\colon \Y \times _{\X} \X '\to \Y$ be the structural projections. 
Let   
$\FF ^{(\bullet)}
\in
\mathfrak{D}(\Y)$.
We have to check 
$\FF ^{(\bullet)}
\smash{\widehat{\otimes}}^\L
_{\O  _{\Y}} f ^{!} g _+ (\E^{\prime (\bullet) })
\in
 \mathfrak{C}(\Y)$.
Using the hypotheses on 
$\mathfrak{C}$
and 
$\mathfrak{D}$,
via the isomorphisms
$$
\FF ^{(\bullet)}
\smash{\widehat{\otimes}}^\L _{\O  _{\Y}} 
f ^{!} g _+ (\E^{\prime (\bullet) })
\underset{\ref{basechange}}{\riso} 
\FF ^{(\bullet)}
\smash{\widehat{\otimes}}^\L _{\O  _{\Y}} 
g '_{+}  f ^{\prime !} (\E ^{\prime (\bullet)} )
\underset{\ref{surcoh2.1.4-iso}}{\riso} 
g '_{+} \left ( 
  g ^{\prime !} (\FF ^{(\bullet)})
\smash{\widehat{\otimes}}^\L _{\O  _{\Y}} 
  f ^{\prime !} (\E ^{\prime (\bullet)} )
  \right )
  [-d _{X'/X}],$$
we check  that 
$  \FF ^{(\bullet)}
\smash{\widehat{\otimes}}^\L _{\O  _{\Y}} 
f ^{!} g _+ (\E^{\prime (\bullet) }) \in
 \mathfrak{C}(\Y)$.

d) Let us check 5) (we might remark the similarity with the proof of \cite[3.1.7]{caro_surcoherent}). 
Since extraordinary pullbacks commute with base change, 
we reduce to check that 
$S _0(\mathfrak{D}, \mathfrak{C})$
satisfies $BK ^!$.
Let $(\W, \W ^\flat)$ be an object of $\mathrm{DVR}  (\V,\V ^\flat)$, 
and $u\colon \X \hookrightarrow \fP$ be a closed immersion of  smooth formal schemes  over $\W$. 
Let $\E ^{(\bullet)} \in S _0  (\mathfrak{D}, \mathfrak{C}) (\fP)$ with support in $\X$.
We have to check that $u ^! (\E ^{(\bullet)} ) \in S _0  (\mathfrak{D}, \mathfrak{C}) (\X)$.
We already know that
$u ^! (\E ^{(\bullet)})
\in 
\smash{\underrightarrow{LD}} ^{\mathrm{b}} _{\Q,\mathrm{coh}} ( \smash{\widehat{\D}} _{\X} ^{(\bullet)})$
(thanks to Berthelot-Kashiwara theorem
\ref{u!u+=id}).
Let $f\colon \Y \to \X$ be a smooth morphism of  smooth formal $\W$-schemes, 
and $\FF ^{(\bullet)}
\in
\mathfrak{D} (\Y)$.
We have to check 
$\FF ^{(\bullet)}
\smash{\widehat{\otimes}}^\L
_{\O  _{\Y}} f ^{!} (u ^! \E^{(\bullet) })
\in
\mathfrak{C} (\Y)$.
The morphism $f$ is the composite of its graph
$\Y \hookrightarrow \Y \times \X $
with the projection  
$\Y \times \X  \to \X$.
We denote by $v$ the composite of 
$\Y \hookrightarrow \Y \times \X $
with 
$id \times u \colon \Y \times \X \hookrightarrow \Y \times\fP$.
Let $g \colon \Y \times\fP \to \fP$ be the projection. 
Set $\U:=  \Y \times\fP$.
Since 
$\mathfrak{D} $ 
satisfies $BK _+$, 
then
$v _+ (\FF ^{(\bullet)}) \in \mathfrak{D} ( \U)$.
Since 
$\E ^{(\bullet)} \in S _0  (\mathfrak{D}, \mathfrak{C}) (\fP)$
and $g$ is smooth,
this yields 
$v _+ (\FF ^{(\bullet)})
\smash{\widehat{\otimes}}^\L
_{\O  _{\U}}  g ^{!} (\E^{(\bullet) })
\in \mathfrak{C} (\U)$.
Since $\mathfrak{C} $  satisfies $BK ^!$, 
this implies 
$v ^!\left (v _+ (\FF ^{(\bullet)})
\smash{\widehat{\otimes}}^\L
_{\O  _{\U}} g ^{!} (\E^{(\bullet) }) \right ) 
\in \mathfrak{C} (\Y)$.
Since 
$v ^!\left (v _+ (\FF ^{(\bullet)})
\smash{\widehat{\otimes}}^\L
_{\O  _{\U}}   g ^{!} (\E^{(\bullet) }) \right ) 
\riso 
v ^! v _+ (\FF ^{(\bullet)})
\smash{\widehat{\otimes}}^\L
_{\O  _{\Y}} 
v ^! g ^{!} (\E^{(\bullet) }) [r]$ with $r$ an integer (see \ref{f!T'Totimes}), 
since 
$v ^! v _+ (\FF ^{(\bullet)}) \riso 
\FF ^{(\bullet)}$
(see Berthelot-Kashiwara theorem
\ref{u!u+=id}),
since $\mathfrak{C}$ is stable under shifts, 
since by transitivity
$v ^! g ^{!}  \riso f ^! u ^!$,
we get
$\FF ^{(\bullet)}
\smash{\widehat{\otimes}}^\L
_{\O  _{\Y}} 
f ^! u ^! (\E^{(\bullet) })
\in 
\mathfrak{C} (\Y)$.
\end{proof}

\begin{prop}
\label{ovcoh-invim-prop}
Let $\mathfrak{C}$ and $\mathfrak{D}$ be two data of coefficients.
We suppose $\fD$ contains $\fB _\mathrm{div}$, 
satisfies $\Delta$-$BK _+$, and is $\Delta$-stable under 
extraordinary pullbacks
and tensor products
(resp. $\fD$ contains $\fB _\emptyset$, 
satisfies $\Delta$-$BK _+$, and is $\Delta$-stable under 
extraordinary pullbacks 
and
local cohomological functors).
 We suppose 
 $\fC$ is local,
satisfies $BK ^!$, is stable under devissages, pushforwards, and direct summands.

In both cases, the data of coefficients 
$S _0 (\mathfrak{D}, \mathfrak{C})$ 
(resp. $S(\mathfrak{D}, \mathfrak{C})$)
is local, stable under devissages, direct summands, 
local cohomological functors,
extraordinary pullbacks, pushforwards (resp. and base change).

\end{prop}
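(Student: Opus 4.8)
The plan is to reduce to the situation where $\mathfrak{D}$ is replaced by $\Delta(\mathfrak{D})$ and then read off each stability property from the lemmas \ref{preS(D,C)stability}, \ref{S(D,C)stability} and \ref{lem-stabextpullback}; there is no geometric content, only a formal bookkeeping with the definitions of \ref{dfn-stable-data}, \ref{dfn-DeltaP} and \ref{dfnS(D,C)}.

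First I would note that, since $\mathfrak{C}$ is stable under devissages, \ref{preS(D,C)stability}.\ref{S(D,C)stabilitynew3} gives $S _\sharp(\mathfrak{D},\mathfrak{C}) = S _\sharp(\Delta(\mathfrak{D}),\mathfrak{C})$, so it suffices to prove the statement with $\mathfrak{D}$ replaced by the fixed data of coefficients $\Delta(\mathfrak{D})$. The point of this reduction is that $\Delta(\mathfrak{D})$ is then genuinely stable (not merely $\Delta$-stable) under all the operations appearing in the hypotheses: if $\mathfrak{D}$ is $\Delta$-stable under a property $P$, there is $\mathfrak{D}'$ with $\Delta(\mathfrak{D}') = \Delta(\mathfrak{D})$ and $\mathfrak{D}'$ stable under $P$, whence by \ref{Delta-lemm-stab} the fixed object $\Delta(\mathfrak{D}) = \Delta(\mathfrak{D}')$ is stable under $P$; applying this successively to $BK_+$, extraordinary pullbacks and tensor products (resp.\ local cohomological functors) and recalling that $\Delta(\mathfrak{D})$ contains $\fB_{\mathrm{div}}$ (resp.\ $\fB_\emptyset$) and is stable under devissages by construction, I conclude: in the first case $\Delta(\mathfrak{D})$ contains $\fB_{\mathrm{div}}$ and is stable under tensor products, so by \ref{S(D,C)stability3bis} it is stable under localizations outside a divisor, hence (being stable under devissages) under local cohomological functors by \ref{rem-div-cst}; thus in both cases $\Delta(\mathfrak{D})$ contains $\fB_\emptyset$, satisfies $BK_+$, and is stable under devissages, extraordinary pullbacks and local cohomological functors.

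It then remains to apply the lemmas to $\mathfrak{E} := S _\sharp(\Delta(\mathfrak{D}),\mathfrak{C})$. Since $\Delta(\mathfrak{D})$ is stable under extraordinary pullbacks it is quasi-local, so $\mathfrak{E}$ is local by \ref{S(D,C)stability}.\ref{S(D,C)stability3} using that $\mathfrak{C}$ is local; the same item gives that $\mathfrak{E}$ is stable under direct summands since $\mathfrak{C}$ is; it is stable under devissages by \ref{preS(D,C)stability}.\ref{S(D,C)stabilitynew3}; stable under smooth extraordinary pullbacks, and in the non-indexed case $S(\Delta(\mathfrak{D}),\mathfrak{C})$ under base change, by \ref{S(D,C)stability}.\ref{S(D,C)stability4}; stable under local cohomological functors by \ref{S(D,C)stability}.\ref{S(D,C)stability5}; stable under pushforwards by \ref{S(D,C)stability}.\ref{S(D,C)stability6} (with $\mathfrak{C}$ stable under pushforwards and shifts, the latter from stability under devissages, and $\Delta(\mathfrak{D})$ stable under extraordinary pullbacks); and it satisfies $BK^!$ by \ref{S(D,C)stability}.\ref{S(D,C)stability7} (with $\mathfrak{C}$ stable under shifts and satisfying $BK^!$, and $\Delta(\mathfrak{D})$ satisfying $BK_+$). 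Finally, combining stability under local cohomological functors, stability under smooth extraordinary pullbacks and $BK^!$, Lemma \ref{lem-stabextpullback} upgrades $\mathfrak{E}$ to being stable under arbitrary extraordinary pullbacks, which completes the list.

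The argument is entirely formal, so the only place requiring care — and the one I regard as the main (mild) obstacle — is the reduction to $\Delta(\mathfrak{D})$ together with the derivation, in the first case, that $\Delta(\mathfrak{D})$ is stable under local cohomological functors: one must be sure that the various auxiliary coefficient systems $\mathfrak{D}'$ produced by the several $\Delta$-stability hypotheses can all be absorbed into the single fixed object $\Delta(\mathfrak{D})$. This works precisely because $\Delta$ is idempotent and $\Delta(\mathfrak{D}') = \Delta(\mathfrak{D})$ for each of them, so the conjunction of the stability properties holds for $\Delta(\mathfrak{D})$ itself; once this is in place everything else is a direct citation of the lemmas above.
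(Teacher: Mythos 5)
Your proof is correct and follows essentially the same route as the paper: reduce to $\Delta(\mathfrak{D})$ via \ref{preS(D,C)stability}.\ref{S(D,C)stabilitynew3}, deduce in the first case that $\Delta(\mathfrak{D})$ is stable under local cohomological functors from \ref{S(D,C)stability3bis} and \ref{rem-div-cst}, then read off each conclusion from \ref{S(D,C)stability} and upgrade to full extraordinary pullbacks via \ref{lem-stabextpullback}. The only cosmetic difference is that you re-derive Lemma \ref{Delta-lemm-stab-bis} on the spot from \ref{Delta-lemm-stab} and idempotence of $\Delta$, whereas the paper simply cites \ref{Delta-lemm-stab-bis}.
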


\begin{proof}
Let us check the non respective case.
Using
\ref{Delta-lemm-stab-bis}, 
$\Delta (\fD)$ satisfies the same properties than 
$\fD$ without the symbol $\Delta$.
Following
\ref{preS(D,C)stability}.\ref{S(D,C)stabilitynew3},
since $\fC$ is stable under devissage, 
then $S _\sharp (\mathfrak{D}, \mathfrak{C})=
S _\sharp (\Delta (\fD), \mathfrak{C})$.
Hence, we reduce to the case 
$\Delta (\fD)=\fD$.
Using 
\ref{S(D,C)stability} (except \ref{S(D,C)stability5}),
we get $S _0 (\mathfrak{D}, \mathfrak{C})$ 
(resp. $S(\mathfrak{D}, \mathfrak{C})$)
is local, is stable under devissages, direct summands, 
smooth extraordinary pullbacks  (resp. and base change), pushforwards, 
satisfies $BK ^!$.
Using 
\ref{S(D,C)stability3bis},
we check $\fD$ is stable under localizations outside a divisor.
This yields by \ref{rem-div-cst}.\ref{S(D,C)stability3-pre} that
$\fD$ is stable under under local cohomological functors.
Hence, using
\ref{S(D,C)stability}.\ref{S(D,C)stability5}
so is $S _\sharp (\mathfrak{D}, \mathfrak{C})$. 
By applying 
\ref{lem-stabextpullback} 
this yields that 
 $S _\sharp (\mathfrak{D}, \mathfrak{C})$ 
 is stable under 
 extraordinary pullbacks.
Similarly, 
the respective case is a straightforward consequence of 
\ref{lem-stabextpullback}  and \ref{S(D,C)stability}.
\end{proof}

\begin{coro}
\label{ovcoh-invim}
Let $i \in \N \cup \{ \infty\}$.
The data of coefficients
$ \smash{\underrightarrow{LD}} ^{\mathrm{b}} _{\Q,\mathrm{ovcoh}}$
(resp. $ \smash{\underrightarrow{LD}} ^{\mathrm{b}} _{\Q,\mathrm{oc}}$,
resp. 
$\mathfrak{H} _{i}$)
contains
$\fB _\mathrm{cst}$,
is local, stable under devissages, direct summands, 
local cohomological functors,
extraordinary pullbacks, pushforwards (resp. and base change).
Moreover, 
$\smash{\underrightarrow{LD}} ^{\mathrm{b}} _{\Q,\mathrm{h}}$
is stable under duality.
\end{coro}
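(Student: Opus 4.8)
The plan is to deduce everything from Proposition \ref{ovcoh-invim-prop}, Lemma \ref{preS(D,C)stability}, and Lemma \ref{stab-Dvee-3prop}, with an extra induction on $i$ for the family $\mathfrak{H}_i$.

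\emph{The cases $\smash{\underrightarrow{LD}} ^{\mathrm{b}} _{\Q,\mathrm{ovcoh}}$ and $\smash{\underrightarrow{LD}} ^{\mathrm{b}} _{\Q,\mathrm{oc}}$.} Here $\fC = \smash{\underrightarrow{LD}} ^{\mathrm{b}} _{\Q,\mathrm{coh}}$ is local (\ref{thick-subcat}.\ref{thick-subcat2}), satisfies $BK ^!$ (Berthelot--Kashiwara, \ref{u!u+=id}), is stable under devissages and under direct summands (\ref{thick-subcat}.\ref{thick-subcat1}), and is stable under pushforwards of complexes with proper support over the target (\ref{stab-propersupp}); and $\fD = \fB _\mathrm{div}$ contains $\fB _\mathrm{div}$ tautologically. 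Since $\Delta(\fB _\mathrm{div}) = \Delta(\fB _\mathrm{cst}) = \Delta(\fB _\mathrm{cst}^+)$ (\ref{rem-div-cst}.\ref{rem-div-cst1}) and $\fB _\mathrm{cst}^+$ satisfies $BK _+$ and is stable under extraordinary pullbacks and tensor products (\ref{stab-cst}), Lemma \ref{Delta-lemm-stab} shows $\fB _\mathrm{div}$ satisfies $\Delta$-$BK _+$ and is $\Delta$-stable under extraordinary pullbacks and tensor products. So Proposition \ref{ovcoh-invim-prop} (first option) applies and yields that $\smash{\underrightarrow{LD}} ^{\mathrm{b}} _{\Q,\mathrm{ovcoh}} = S _0(\fB _\mathrm{div},\fC)$ (resp. $\smash{\underrightarrow{LD}} ^{\mathrm{b}} _{\Q,\mathrm{oc}} = S(\fB _\mathrm{div},\fC)$) is local, stable under devissages, direct summands, local cohomological functors, extraordinary pullbacks, pushforwards (resp. and base change). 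For the containment of $\fB _\mathrm{cst}$: since $\fC$ is stable under devissages, $S _\sharp(\fB _\mathrm{div},\fC) = S _\sharp(\fB _\mathrm{cst}^+,\fC)$ by \ref{rem-div-cst2}.\ref{rem-div-cst21}, and $\fB _\mathrm{cst}^+$ is stable under smooth extraordinary pullbacks, tensor products and base change and is contained in $\fC$, so by Lemma \ref{preS(D,C)stability}.\ref{S(D,C)stability2} the data $S _\sharp(\fB _\mathrm{cst}^+,\fC)$ contains $\fB _\mathrm{cst}^+ \supset \fB _\mathrm{cst}$.

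\emph{The case $\mathfrak{H}_i$, by induction on $i$.} The base case $\mathfrak{H}_0 = \smash{\underrightarrow{LD}} ^{\mathrm{b}} _{\Q,\mathrm{oc}}$ is the preceding paragraph. Assume $\mathfrak{H}_i$ is local, stable under devissages, direct summands, local cohomological functors, extraordinary pullbacks, pushforwards and base change, and contains $\fB _\mathrm{cst}$. By Lemma \ref{stab-Dvee-3prop}, $\mathfrak{H}_i^\vee$ is local, stable under devissages, direct summands, pushforwards and base change, and satisfies $BK ^!$; applying Proposition \ref{ovcoh-invim-prop} with $\fC = \mathfrak{H}_i^\vee$ and $\fD = \fB _\mathrm{div}$ shows $S(\fB _\mathrm{div},\mathfrak{H}_i^\vee)$ has the same list of stability properties. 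Since $\mathfrak{H}_{i+1} = \mathfrak{H}_i \cap S(\fB _\mathrm{div},\mathfrak{H}_i^\vee)$ and each property in the list (as well as $\fB _\mathrm{cst}$-containment) is preserved under intersection of data of coefficients, $\mathfrak{H}_{i+1}$ inherits all of them, provided $\fB _\mathrm{cst} \subset S(\fB _\mathrm{div},\mathfrak{H}_i^\vee)$. For that last point I would again use that $\mathfrak{H}_i^\vee$ is stable under devissages, so $S(\fB _\mathrm{div},\mathfrak{H}_i^\vee) = S(\fB _\mathrm{cst}^+,\mathfrak{H}_i^\vee)$, and then Lemma \ref{preS(D,C)stability}.\ref{S(D,C)stability2}; this reduces to $\fB _\mathrm{cst}^+ \subset \mathfrak{H}_i^\vee$, i.e. $\DD(\fB _\mathrm{cst}) \subset \mathfrak{H}_i$, and since $\mathfrak{H}_i$ is stable under devissages and contains $\fB _\mathrm{cst}$, finally to the inclusion $\DD_\X(\R\underline{\Gamma}^\dag_Y \O_{\X,\Q}) \in \Delta(\fB _\mathrm{cst})(\X)$ for every subvariety $Y$ of $X$. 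I would obtain this from the self-duality of the constant coefficient (\ref{dualisoscvdag}) and the compatibility of $\DD$ with $\R\underline{\Gamma}^\dag_{(-)}$ and $(\hdag -)$, reducing by the localization and Mayer--Vietoris triangles \ref{caro-stab-sys-ind-surcoh4.4.3} and \ref{eq1mayer-vietoris} to the case of a smooth closed $Y$, where it follows from Corollaries \ref{coro-sp+jdagO} and \ref{propspetdualsansfrob}. This last verification is the only genuinely new ingredient and is the main obstacle; everything else is formal bookkeeping.

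\emph{The case $\smash{\underrightarrow{LD}} ^{\mathrm{b}} _{\Q,\mathrm{h}} = \cap_{i\in\N}\mathfrak{H}_i$.} An arbitrary intersection of data of coefficients each having a given property among those in the list still has that property, and each $\mathfrak{H}_i$ contains $\fB _\mathrm{cst}$, so $\smash{\underrightarrow{LD}} ^{\mathrm{b}} _{\Q,\mathrm{h}}$ is local, stable under devissages, direct summands, local cohomological functors, extraordinary pullbacks, pushforwards, base change, and contains $\fB _\mathrm{cst}$. For stability under duality: if $\E^{(\bullet)} \in \smash{\underrightarrow{LD}} ^{\mathrm{b}} _{\Q,\mathrm{h}}$, then for every $i$ one has $\E^{(\bullet)} \in \mathfrak{H}_{i+1} \subset S(\fB _\mathrm{div},\mathfrak{H}_i^\vee) \subset \mathfrak{H}_i^\vee$, the last inclusion by Lemma \ref{preS(D,C)stability}.\ref{S(D,C)stability1} since $\fB _\mathrm{div}$ contains $\fB _\emptyset$; hence $\DD(\E^{(\bullet)}) \in \mathfrak{H}_i$ for all $i$, i.e. $\DD(\E^{(\bullet)}) \in \smash{\underrightarrow{LD}} ^{\mathrm{b}} _{\Q,\mathrm{h}}$.
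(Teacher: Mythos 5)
Your reduction of the $\fB_\mathrm{cst}$-containment for $\mathfrak{H}_{i+1}$ to the inclusion $\fB_\mathrm{cst}^+\subset\mathfrak{H}_i^\vee$, equivalently $\DD_\X\bigl(\R\underline{\Gamma}^\dag_Y\O_\X^{(\bullet)}\bigr)\in\mathfrak{H}_i(\X)$ for every subvariety $Y$, is correct, and you rightly identify it as the one non-formal step. But the plan you sketch to close it does not work. First, the dual functor does not commute with $\R\underline{\Gamma}^\dag_{(-)}$ or $(\hdag -)$: for an open immersion $j$, $\DD\circ j_+\neq j_+\circ\DD$ (one gets $j_!\circ\DD$ instead), so applying $\DD$ to a localization triangle $\R\underline{\Gamma}^\dag_Z\E\to\E\to(\hdag Z)\E\to +1$ produces a triangle whose terms are \emph{not} of the same shape, and the Mayer--Vietoris devissage you want is not available. (The only duality compatibility in the paper for $\R\underline{\Gamma}^\dag$-type functors is the very specific isomorphism \ref{cor-com-sp+-f*}, which applies to isocrystal coefficients with smooth support, not to $\fB_\mathrm{cst}$ in general.) Second, even if the devissage shape were fine, your reduction to ``smooth closed $Y$'' implicitly stratifies $Y$ by its smooth locus, which is not possible over the non-perfect residue field $k$ without first passing to a finite radicial extension, i.e.\ without a special base change --- an operation your argument never invokes.

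The ingredient actually needed is Proposition \ref{prop=div-almostst}, which asserts that $\fB_\mathrm{cst}$ is almost stable under duality; applied with $\mathfrak{C}=\mathfrak{H}_i$, it gives exactly $\DD(\fB_\mathrm{cst})\subset\mathfrak{H}_i$. That proposition is nontrivial --- its proof uses de Jong's alterations, a projective generically \'etale cover, the relative duality isomorphism, and precisely the special-base-change machinery --- so it is not ``formal bookkeeping.'' Moreover, to apply it you must also know that $\mathfrak{H}_i$ is stable under special descent of the base, a property your induction does not track; it does hold (base case \ref{oc-st-sp-desc2}, inductive step via \ref{stab-Dvee-3prop} and \ref{oc-st-sp-desc}), but you would need to add it to the inductive hypothesis. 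In short: your formal framework is right, the key step is correctly located, but the proposed proof of that step is invalid and should be replaced by an appeal to \ref{prop=div-almostst} together with the (suppressed) special-descent stability of $\mathfrak{H}_i$.
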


\begin{proof}
For any $i \in \N $,
since 
$\mathfrak{H} _{i+1}\subset
\mathfrak{H} _{i}\cap \mathfrak{H} _{i} ^\vee$,
we get the stability under duality 
of $\mathfrak{H} _{\infty}$.
Using \ref{rem-div-cst2}.\ref{rem-div-cst21},
we get 
$ \smash{\underrightarrow{LD}} ^{\mathrm{b}} _{\Q,\mathrm{ovcoh}}
=
S _0 (\fB _\mathrm{cst} ^+, \smash{\underrightarrow{LD}} ^{\mathrm{b}} _{\Q,\mathrm{coh}})$
(resp. 
$ \smash{\underrightarrow{LD}} ^{\mathrm{b}} _{\Q,\mathrm{oc}}
=
S  (\fB _\mathrm{cst} ^+, \smash{\underrightarrow{LD}} ^{\mathrm{b}} _{\Q,\mathrm{coh}})$,
resp. 
$S(\fB _\mathrm{div}, \mathfrak{H} _{i} ^{\vee})
=S(\fB _{\mathrm{cst}}  ^+, \mathfrak{H} _{i} ^{\vee})$).
Since $\fB _\mathrm{cst} ^+$ satisfies $BK _+$, and is stable under local cohomological functors, 
extraordinary pullbacks
and tensor products (see \ref{stab-cst}), 
since 
$\smash{\underrightarrow{LD}} ^{\mathrm{b}} _{\Q,\mathrm{coh}}$ is local,
satisfies $BK ^!$, is stable under devissages, pushforwards, direct summands
then
we conclude by applying 
\ref{stab-Dvee-3prop} and \ref{ovcoh-invim-prop}.
\end{proof}

\subsection{On the stability under cohomology}

\begin{ntn}
\label{ntnC0}
Let $\mathfrak{C}$ be a data of coefficients.
We denote by $\mathfrak{C} ^0 $ the data of coefficients
defined as follows. 
Let $(\W, \W ^\flat)$ be an object of $\mathrm{DVR}  (\V,\V ^\flat)$,
$\X$ be a smooth formal scheme  over $\W$.
Then
$\mathfrak{C} ^0 (\X)
:=
\mathfrak{C}(\X)
\cap
\smash{\underrightarrow{LM}}  _{\Q,\mathrm{coh}} 
( \smash{\widehat{\D}} _{\X} ^{(\bullet)})
$.
\end{ntn}

\begin{lem}
\label{ntnC0-lemm}
Let $\mathfrak{C}$ be a data of coefficients.
Let $(\W, \W ^\flat)$ be an object of $\mathrm{DVR}  (\V,\V ^\flat)$,
$\X$ be a smooth formal scheme  over $\W$.
\begin{enumerate}
\item 
\label{ntnC0-lemm1}
If $\mathfrak{C}$ is stable under cohomology, then 
$\Delta (\mathfrak{C})
= 
\Delta (\mathfrak{C} ^0) $.
\item 
\label{ntnC0-lemm2}
If $\mathfrak{C}$ is stable under devissages and cohomology,
then the category $\mathfrak{C} ^0 (\X)$
is an abelian strictly full subcategory of 
$\smash{\underrightarrow{LM}}  _{\Q,\mathrm{coh}} 
( \smash{\widehat{\D}} _{\X} ^{(\bullet)})$ which is 
stable under extensions.

\end{enumerate}

\end{lem}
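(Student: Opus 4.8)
The plan is to reduce everything to the compatibility of the equivalence of triangulated categories $\smash{\underrightarrow{LD}} ^{\mathrm{b}} _{\Q,\mathrm{coh}} (\smash{\widehat{\D}} _{\X} ^{(\bullet)}) \cong D ^{\mathrm{b}} _{\mathrm{coh}} (\smash{\underrightarrow{LM}} _{\Q} (\smash{\widehat{\D}} _{\X} ^{(\bullet)}))$ of \ref{eqcatLD=DSM-fonct-coh} with the cohomological functors $\H ^n$, as recorded in the diagrams \ref{diag-Hn-comp} and \ref{diag-Hn-comp-coh}, together with the t-structure of \ref{t-structure-coh} whose heart is $\smash{\underrightarrow{LM}} _{\Q,\mathrm{coh}} (\smash{\widehat{\D}} _{\X} ^{(\bullet)})$.

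For \ref{ntnC0-lemm1}: since $\mathfrak{C} ^0 \subset \mathfrak{C}$ and $\Delta (\mathfrak{C})$ is stable under devissages, one has $\Delta (\mathfrak{C} ^0) \subset \Delta (\mathfrak{C})$. For the converse it is enough to prove $\mathfrak{C} (\X) \subset \Delta (\mathfrak{C} ^0) (\X)$ for every $(\W, \W ^\flat)$ and $\X$, because $\Delta (\mathfrak{C} ^0)$ is itself stable under devissages, so this inclusion forces $\Delta (\mathfrak{C}) \subset \Delta (\Delta (\mathfrak{C} ^0)) = \Delta (\mathfrak{C} ^0)$. So fix $\E ^{(\bullet)} \in \mathfrak{C} (\X)$, with cohomology concentrated in degrees $[a,b]$. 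Since $\mathfrak{C}$ is stable under cohomology, each $\H ^n (\E ^{(\bullet)})$ lies in $\mathfrak{C} (\X) \cap \smash{\underrightarrow{LM}} _{\Q,\mathrm{coh}} (\smash{\widehat{\D}} _{\X} ^{(\bullet)}) = \mathfrak{C} ^0 (\X)$, hence $\H ^n (\E ^{(\bullet)}) [-n] \in \Delta (\mathfrak{C} ^0) (\X)$ (recall $\Delta (\mathfrak{C} ^0)$ is stable under shifts). Starting from $\tau _{\leq a} \E ^{(\bullet)} = \H ^a (\E ^{(\bullet)}) [-a] \in \Delta (\mathfrak{C} ^0) (\X)$ and using the truncation triangles $\tau _{\leq n-1} \E ^{(\bullet)} \to \tau _{\leq n} \E ^{(\bullet)} \to \H ^n (\E ^{(\bullet)}) [-n] \to \tau _{\leq n-1} \E ^{(\bullet)} [1]$ furnished by the t-structure, an induction on $n$ from $a$ to $b$ together with stability of $\Delta (\mathfrak{C} ^0)$ under devissages gives $\E ^{(\bullet)} = \tau _{\leq b} \E ^{(\bullet)} \in \Delta (\mathfrak{C} ^0) (\X)$.

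For \ref{ntnC0-lemm2}: recall $\smash{\underrightarrow{LM}} _{\Q,\mathrm{coh}} (\smash{\widehat{\D}} _{\X} ^{(\bullet)})$ is abelian and thick in $\smash{\underrightarrow{LM}} _{\Q} (\smash{\widehat{\D}} _{\X} ^{(\bullet)})$ by \ref{LQ-coh-stab}. As $\mathfrak{C} ^0 (\X)$ is a strictly full subcategory of it, it suffices to check it is stable under finite direct sums, kernels, cokernels and extensions computed inside $\smash{\underrightarrow{LM}} _{\Q,\mathrm{coh}} (\smash{\widehat{\D}} _{\X} ^{(\bullet)})$. Given a morphism $f \colon \E ^{(\bullet)} \to \FF ^{(\bullet)}$ of $\mathfrak{C} ^0 (\X)$, view it as a morphism of $\smash{\underrightarrow{LD}} ^{\mathrm{b}} _{\Q,\mathrm{coh}} (\smash{\widehat{\D}} _{\X} ^{(\bullet)})$ between objects placed in degree $0$ and form its mapping cone $\mathcal{C}(f)$; it sits in a distinguished triangle with $\E ^{(\bullet)}$ and $\FF ^{(\bullet)}$, so stability of $\mathfrak{C}$ under devissages gives $\mathcal{C}(f) \in \mathfrak{C} (\X)$. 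Via \ref{eqcatLD=DSM-fonct-coh} and the compatibility with $\H ^n$, one has $\H ^{-1} (\mathcal{C}(f)) = \Ker f$ and $\H ^{0} (\mathcal{C}(f)) = \Coker f$ in $\smash{\underrightarrow{LM}} _{\Q,\mathrm{coh}} (\smash{\widehat{\D}} _{\X} ^{(\bullet)})$, so stability of $\mathfrak{C}$ under cohomology puts $\Ker f, \Coker f$ in $\mathfrak{C} ^0 (\X)$; then $\operatorname{Im} f = \Coker (\Ker f \to \E ^{(\bullet)}) \in \mathfrak{C} ^0 (\X)$ as well. Finally, a finite direct sum (resp. an extension $0 \to \E ^{(\bullet)} \to \G ^{(\bullet)} \to \FF ^{(\bullet)} \to 0$ in $\smash{\underrightarrow{LM}} _{\Q,\mathrm{coh}} (\smash{\widehat{\D}} _{\X} ^{(\bullet)})$) gives, again by \ref{eqcatLD=DSM-fonct-coh}, a split (resp. possibly non-split) distinguished triangle in $\smash{\underrightarrow{LD}} ^{\mathrm{b}} _{\Q,\mathrm{coh}} (\smash{\widehat{\D}} _{\X} ^{(\bullet)})$, so stability of $\mathfrak{C}$ under devissages places the remaining term in $\mathfrak{C} (\X) \cap \smash{\underrightarrow{LM}} _{\Q,\mathrm{coh}} (\smash{\widehat{\D}} _{\X} ^{(\bullet)}) = \mathfrak{C} ^0 (\X)$.

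The only point that is more than bookkeeping is the identification $\H ^{-1} (\mathcal{C}(f)) = \Ker f$, $\H ^{0} (\mathcal{C}(f)) = \Coker f$ together with the fact that the cohomological functors $\H ^n$ appearing in the definition of ``stable under cohomology'' in \ref{dfn-stable-data} are precisely the ones of \ref{diag-Hn-comp-coh} that correspond, under \ref{eqcatLD=DSM-fonct-coh}, to the usual cohomology on the derived category of the abelian category $\smash{\underrightarrow{LM}} _{\Q} (\smash{\widehat{\D}} _{\X} ^{(\bullet)})$; this is what makes the formation of cones interact correctly with kernels and cokernels. Everything else is a routine diagram chase using the t-structure and the three equivalences already established in the first chapter.
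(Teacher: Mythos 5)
Your proof is correct and follows essentially the same route as the paper's: for part 1, the paper also reduces to $\mathfrak{C}(\X)\subset\Delta(\mathfrak{C}^0)(\X)$ and argues by induction on the number of nonvanishing cohomology objects via truncation triangles for the t-structure of \ref{t-structure-coh}; for part 2, the paper also forms the mapping cone and invokes stability under devissages and cohomology to capture kernel and cokernel. Your write-up usefully makes explicit a couple of steps the paper leaves tacit (the bootstrap $\Delta(\mathfrak{C})\subset\Delta(\Delta(\mathfrak{C}^0))=\Delta(\mathfrak{C}^0)$ in part 1, and the identification of $\H^{-1},\H^0$ of the cone with $\Ker f,\Coker f$ in part 2), but there is no substantive difference in the argument.
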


\begin{proof}
1) Since $\Delta (\mathfrak{C} ^0) 
\subset 
\Delta (\mathfrak{C})$, it remains to check 
$\mathfrak{C}
\subset
\Delta (\mathfrak{C} ^0) $.
Let 
$\E ^{(\bullet)}
\in 
\mathfrak{C}(\X)$. 
By using some exact triangles of truncations 
(for the canonical t-structure on 
$ \smash{\underrightarrow{LD}} ^{\mathrm{b}} _{\Q,\mathrm{coh}} 
( \smash{\widehat{\D}} _{\X} ^{(\bullet)})$ as explained in \ref{t-structure-coh}),
we check 
$\E ^{(\bullet)}
\in \Delta _n (\mathfrak{C} ^0) (\X)$
where $n$ is the cardinal of $\{ j\in \Z \;;\; \mathcal{H} ^j  (\E ^{(\bullet)}) \not = 0\}$.

2) Let 
$f \colon 
\E ^{(\bullet)}
\to 
\FF ^{(\bullet)}$
be a morphism of 
$\mathfrak{C} ^0 (\X)$.
By considering the mapping cone of $f$ and by using the stability properties of $\fC$, 
we check that $\Ker f$ and $\Coker f$ are objects
of 
$\mathfrak{C} ^0 (\X)$.
The stability under extensions is obvious. Hence, we are done.
\end{proof}

\begin{prop}
\label{Bcst-st-cohom}
Let $\mathfrak{C}$ 
be a data of coefficients.
Suppose that 
$\fC$ is stable under cohomology, and devissage.
Then $S _\sharp (\fB _\mathrm{cst}^+ , \mathfrak{C})
=S _\sharp (\fB _\mathrm{div}, \mathfrak{C})$ is stable under devissages  and cohomology.
\end{prop}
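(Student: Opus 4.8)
The plan is to treat the three assertions in turn: the identity of the two data of coefficients, stability under devissages, and stability under cohomology, the last being the only substantial point.

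First, the equality $S _\sharp (\fB _\mathrm{cst}^+ , \mathfrak{C}) = S _\sharp (\fB _\mathrm{div}, \mathfrak{C})$ is exactly Remark~\ref{rem-div-cst2}.(\ref{rem-div-cst21}), which applies because $\mathfrak{C}$ is stable under devissages (it rests on $\Delta (\fB _\mathrm{div}) = \Delta (\fB _\mathrm{cst})$ from Lemma~\ref{rem-div-cst}.(\ref{rem-div-cst1}) together with Lemma~\ref{preS(D,C)stability}.(\ref{S(D,C)stabilitynew3})). Write $\mathfrak{S}$ for this common data of coefficients. That $\mathfrak{S}$ is stable under devissages — hence, in particular, under shifts — follows at once from Lemma~\ref{preS(D,C)stability}.(\ref{S(D,C)stabilitynew3}), once more since $\mathfrak{C}$ is.

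It remains to show that $\mathfrak{S}$ is stable under cohomology. One implication is formal: if all the $\H ^{n}(\E ^{(\bullet)})$ lie in $\mathfrak{S}(\X)$, the truncation triangles and the stability of $\mathfrak{S}$ under devissages give $\E ^{(\bullet)} \in \mathfrak{S}(\X)$. For the converse, fix $\E ^{(\bullet)} \in \mathfrak{S}(\X)$; we must prove $\H ^{n}(\E ^{(\bullet)}) \in \mathfrak{S}(\X)$ for every $n$. Since base change is compatible with the cohomology functors $\H ^{n}$ (see~\ref{comm-chg-base} and~\ref{t-structure-coh}), the case $\mathfrak{S} = S(\fB _\mathrm{div},\mathfrak{C})$ follows from the case $\mathfrak{S} = S _{0}(\fB _\mathrm{div},\mathfrak{C})$, so assume the latter. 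By definition, $\H ^{n}(\E ^{(\bullet)}) \in S _{0}(\fB _\mathrm{div},\mathfrak{C})(\X)$ means that for every smooth morphism $f\colon \Y \to \X$ of smooth formal $\W$-schemes and every divisor $T$ of the special fibre of $\Y$ the object $\widehat{\B}^{(\bullet)}_{\Y}(T)\smash{\widehat{\otimes}}^{\L}_{\O _{\Y}}f ^{!}(\H ^{n}(\E ^{(\bullet)}))$ lies in $\mathfrak{C}(\Y)$; by~\ref{ODdivcohe} this tensor product is $(\hdag T)(f ^{!}(\H ^{n}(\E ^{(\bullet)})))$. Because $f$ is smooth, $f ^{!} = f ^{*}[d _{Y/X}]$ with $f ^{*}$ exact (see~\ref{ntn-Lf!+*}), so $f ^{!}(\H ^{n}(\E ^{(\bullet)}))$ is, up to a shift, a single cohomology module of $\FF ^{(\bullet)} := f ^{!}(\E ^{(\bullet)})$; and, $\E ^{(\bullet)}$ being in $S _{0}(\fB _\mathrm{div},\mathfrak{C})(\X)$, we know that $(\hdag T')(\FF ^{(\bullet)}) \in \mathfrak{C}(\Y)$ for every divisor $T'$ of $\Y$, in particular $\FF ^{(\bullet)} \in \mathfrak{C}(\Y)$. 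Using the localization triangles~\ref{tri-loc-berthelot}, the Mayer–Vietoris triangles~\ref{eq1mayer-vietoris} and the triangles~\ref{caro-stab-sys-ind-surcoh4.4.3}, together with the stability of $\mathfrak{C}$ under devissages, one deduces that $\R\underline{\Gamma} ^{\dag}_{Y'}(\FF ^{(\bullet)})$ and $(\hdag X')(\FF ^{(\bullet)})$ lie in $\mathfrak{C}(\Y)$ for every subvariety $Y'$ and every closed subvariety $X'$ of $\Y$. Hence the question is local on $\Y$ and reduces to the following assertion: if $\FF ^{(\bullet)}$ is a coherent complex all of whose localizations outside divisors lie in $\mathfrak{C}(\Y)$, then $(\hdag T)(\H ^{m}(\FF ^{(\bullet)})) \in \mathfrak{C}(\Y)$ for every $m$ and every divisor $T$.

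To prove this local assertion I would set $\mathfrak{C} ^{0} := \mathfrak{C}\cap\smash{\underrightarrow{LM}}_{\Q,\mathrm{coh}}$ (Notation~\ref{ntnC0}); by Lemma~\ref{ntnC0-lemm}.(\ref{ntnC0-lemm2}) it is an abelian strictly full subcategory of $\smash{\underrightarrow{LM}}_{\Q,\mathrm{coh}}(\smash{\widehat{\D}}_{\Y} ^{(\bullet)})$ stable under subobjects, quotients and extensions, and $\mathfrak{C}(\Y)$ consists precisely of the complexes whose cohomology modules lie in $\mathfrak{C} ^{0}(\Y)$. The functor $(\hdag T)$ is $\widetilde{\B}^{(\bullet)}_{\Y}(T)\smash{\widehat{\otimes}}^{\L}_{\O ^{(\bullet)}_{\Y}}(-)$ and, by Corollary~\ref{rema-dim-coh-finie}, has cohomological amplitude on modules bounded independently of the module and the level; one then argues by induction on the length of the amplitude of $\FF ^{(\bullet)}$. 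The base case (a shifted module) is the hypothesis itself. For the inductive step, the crux is to prove $(\hdag T)(\H ^{b}(\FF ^{(\bullet)})) \in \mathfrak{C}(\Y)$ for the top cohomology module $\H ^{b}(\FF ^{(\bullet)})$: once this is known, applying the $(\hdag T')$ to the truncation triangle $\tau _{<b}\FF ^{(\bullet)} \to \FF ^{(\bullet)} \to \H ^{b}(\FF ^{(\bullet)})[-b] \to +1$ shows that $\tau _{<b}\FF ^{(\bullet)}$ again satisfies the hypothesis, and one concludes for the lower cohomology modules by the inductive hypothesis. For the top module, applying $(\hdag T)$ (and $\R\underline{\Gamma} ^{\dag}_{T}$) to the same truncation triangle and comparing degrees exhibits the cohomology modules $\H ^{j}((\hdag T)(\H ^{b}(\FF ^{(\bullet)})))$ as subquotients of the cohomology of $(\hdag T)(\FF ^{(\bullet)})$ and $\R\underline{\Gamma} ^{\dag}_{T}(\FF ^{(\bullet)})$ — which lie in $\mathfrak{C}(\Y)$ by hypothesis — once a descending induction on $j$ and on the number of irreducible components of $T$ (factoring $(\hdag T)$ through single‑component localizations via~\ref{hdagT'T=cup}, and noting that the $T$‑torsion of $\H ^{b}(\FF ^{(\bullet)})$ is a submodule of the $\mathfrak{C} ^{0}$‑object $\H ^{b}(\FF ^{(\bullet)})$) is set up.

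The main obstacle is precisely this last point: $(\hdag T)$ is not $t$-exact, and the hypothesis on $\mathfrak{C}$ is too weak to allow $(\hdag T)$ to be applied freely to objects of $\mathfrak{C}$; so the whole difficulty is to organize the induction so that the ``new'' cohomology produced by $(\hdag T)$ on a cohomology module of $\FF ^{(\bullet)}$ is always recognized as a subquotient of the cohomology of an object — $(\hdag T')(\FF ^{(\bullet)})$ or $\R\underline{\Gamma} ^{\dag}_{Y'}(\FF ^{(\bullet)})$ — that genuinely lies in $\mathfrak{C}$. The fact, from Lemma~\ref{ntnC0-lemm}, that $\mathfrak{C} ^{0}$ is an abelian category closed under subobjects and quotients is what makes this bookkeeping work.
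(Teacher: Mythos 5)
Your identification of the equality $S_\sharp(\fB_\mathrm{cst}^+,\fC)=S_\sharp(\fB_\mathrm{div},\fC)$ and of devissage stability is correct, and one direction of the cohomology stability (truncations) is also fine. But the converse direction contains a genuine gap, and it stems from a false premise: you assert near the end that "$(\hdag T)$ is not $t$-exact," and then set up an elaborate induction on amplitude, number of components of $T$, and subquotients to circumvent this. In fact $(\hdag T)$ \emph{is} $t$-exact on $\smash{\underrightarrow{LD}}^{\mathrm{b}}_{\Q,\mathrm{coh}}$: via the equivalence of \ref{eqcat-limcoh} with $D^{\mathrm{b}}_{\mathrm{coh}}(\D^\dag_{\fP,\Q})$, the functor becomes $\D^\dag_{\fP}(\hdag T)_\Q\otimes_{\D^\dag_{\fP,\Q}}-$, which the paper explicitly records as exact in \ref{hdag-def-coh} ("The functor \ref{hdag-def-coh} is exact, which justifies the absence of the symbol $\L$"). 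This $t$-exactness, together with the $t$-exactness of $f^{*(\bullet)}$ for smooth $f$ (noted in \ref{ntn-Lf!+*}), is precisely the content of the paper's one-line proof, and with it your "local assertion" evaporates: $\H^n\bigl((\hdag T)(f^{*(\bullet)}\E^{(\bullet)})\bigr)=(\hdag T)\bigl(f^{*(\bullet)}\H^n(\E^{(\bullet)})\bigr)$ and the converse is immediate.

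Beyond the misstated premise, the induction you sketch is not carried to completion: it is not explained how the lower cohomology modules of $(\hdag T)(\H^b(\FF^{(\bullet)}))$ (which you believe can be nonzero) are actually subquotients of cohomology of $(\hdag T')(\FF^{(\bullet)})$ or $\R\underline{\Gamma}^\dag_{Y'}(\FF^{(\bullet)})$, and the appeal to "the $T$-torsion of $\H^b(\FF^{(\bullet)})$" lives at the module level, not at the level of the $\smash{\underrightarrow{LD}}$-formalism you are working in. None of this machinery is needed once the $t$-exactness of the localization functor is in hand.
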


\begin{proof}
Following \ref{rem-div-cst2},
$S _\sharp (\fB _\mathrm{cst} ^+, \mathfrak{C})
=
S _\sharp (\fB _\mathrm{div}, \mathfrak{C})$.
Since localizations outside a divisor and the functor $f ^{(\bullet)*}$ when $f$ is any smooth morphism
are t-exact (for the canonical t-structure of $\smash{\underrightarrow{LD}}  ^\mathrm{b} _{\Q, \mathrm{coh}}$), 
then this is straightforward.
\end{proof}

\begin{coro}
\label{coro-ovcoh-oc-tstr}
The data of coefficients 
$\smash{\underrightarrow{LD}} ^{\mathrm{b}} _{\Q,\mathrm{ovcoh}}$,
and 
$\smash{\underrightarrow{LD}} ^{\mathrm{b}} _{\Q,\mathrm{oc}}$ 
are stable under cohomology.
\end{coro}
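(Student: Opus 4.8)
The plan is to deduce this directly from Proposition \ref{Bcst-st-cohom} applied to the data of coefficients $\fC = \smash{\underrightarrow{LD}} ^{\mathrm{b}} _{\Q,\mathrm{coh}}$. Recall from Examples \ref{ex-cst-surcoh} that $\smash{\underrightarrow{LD}} ^{\mathrm{b}} _{\Q,\mathrm{ovcoh}} = S_0(\fB_\mathrm{div}, \smash{\underrightarrow{LD}} ^{\mathrm{b}} _{\Q,\mathrm{coh}})$ and $\smash{\underrightarrow{LD}} ^{\mathrm{b}} _{\Q,\mathrm{oc}} = S(\fB_\mathrm{div}, \smash{\underrightarrow{LD}} ^{\mathrm{b}} _{\Q,\mathrm{coh}})$; so once the hypotheses of \ref{Bcst-st-cohom} are verified for $\smash{\underrightarrow{LD}} ^{\mathrm{b}} _{\Q,\mathrm{coh}}$, both assertions follow simultaneously by specialising the symbol $S_\sharp$ to $S_0$ and to $S$ respectively.

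The two things I would check are that $\smash{\underrightarrow{LD}} ^{\mathrm{b}} _{\Q,\mathrm{coh}}$ is stable under devissages and stable under cohomology in the sense of Definition \ref{dfn-stable-data}. Stability under devissages is \ref{thick-subcat}.\ref{thick-subcat1}: over any smooth formal $\W$-scheme $\X$, the subcategory $\underrightarrow{LD} ^{\mathrm{b}} _{\Q, \mathrm{coh}} (\smash{\widehat{\D}} _{\X} ^{(\bullet)})$ is a (thick, in particular strict and shift-stable) triangulated subcategory of $\underrightarrow{LD} ^{\mathrm{b}} _{\Q} (\smash{\widehat{\D}} _{\X} ^{(\bullet)})$. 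Stability under cohomology is essentially tautological: by the very definition of a coherent complex (Notation \ref{ntn-cohDLM}), the equivalence \ref{eqcatLD=DSM-fonct-coh}, and the compatibility of that equivalence with the cohomological functors $\mathcal{H}^n$ recorded in \ref{empt-diag-Hn-comp}, a complex $\E^{(\bullet)}$ of $\underrightarrow{LD} ^{\mathrm{b}} _{\Q} (\smash{\widehat{\D}} _{\X} ^{(\bullet)})$ lies in $\underrightarrow{LD} ^{\mathrm{b}} _{\Q, \mathrm{coh}} (\smash{\widehat{\D}} _{\X} ^{(\bullet)})$ if and only if each $\mathcal{H}^n(\E^{(\bullet)})$ lies in $\underrightarrow{LM} _{\Q, \mathrm{coh}} (\smash{\widehat{\D}} _{\X} ^{(\bullet)})$ (Notation \ref{nota-(L)Mcoh}); since a module concentrated in degree $0$ is its own $\mathcal{H}^0$, this says precisely that $\E^{(\bullet)} \in \smash{\underrightarrow{LD}} ^{\mathrm{b}} _{\Q,\mathrm{coh}}(\X)$ is equivalent to $\mathcal{H}^n(\E^{(\bullet)}) \in \smash{\underrightarrow{LD}} ^{\mathrm{b}} _{\Q,\mathrm{coh}}(\X)$ for every $n$.

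Applying Proposition \ref{Bcst-st-cohom} with $\fC = \smash{\underrightarrow{LD}} ^{\mathrm{b}} _{\Q,\mathrm{coh}}$ then gives that $S_\sharp(\fB_\mathrm{cst}^+, \smash{\underrightarrow{LD}} ^{\mathrm{b}} _{\Q,\mathrm{coh}}) = S_\sharp(\fB_\mathrm{div}, \smash{\underrightarrow{LD}} ^{\mathrm{b}} _{\Q,\mathrm{coh}})$ is stable under devissages and under cohomology; in particular it is stable under cohomology. Taking $S_\sharp = S_0$ yields the statement for $\smash{\underrightarrow{LD}} ^{\mathrm{b}} _{\Q,\mathrm{ovcoh}}$ and $S_\sharp = S$ the statement for $\smash{\underrightarrow{LD}} ^{\mathrm{b}} _{\Q,\mathrm{oc}}$. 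There is no substantive obstacle here; the only points requiring a little care are the (notation-heavy but formal) identification of the defining description of $\smash{\underrightarrow{LD}} ^{\mathrm{b}} _{\Q,\mathrm{coh}}$ via cohomology sheaves with the stability-under-cohomology property of Definition \ref{dfn-stable-data}, and the implicit use — already built into \ref{Bcst-st-cohom} via \ref{rem-div-cst2} — that the reduction $S_\sharp(\fB_\mathrm{cst}^+,-) = S_\sharp(\fB_\mathrm{div},-)$ is licit because $\smash{\underrightarrow{LD}} ^{\mathrm{b}} _{\Q,\mathrm{coh}}$ is stable under devissages.
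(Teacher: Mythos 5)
Your proposal is correct and takes exactly the route the paper intends: the corollary is the direct specialisation of Proposition \ref{Bcst-st-cohom} to $\fC = \smash{\underrightarrow{LD}} ^{\mathrm{b}} _{\Q,\mathrm{coh}}$, once one notes via \ref{ex-cst-surcoh} that $\smash{\underrightarrow{LD}} ^{\mathrm{b}} _{\Q,\mathrm{ovcoh}} = S_0(\fB_\mathrm{div}, \smash{\underrightarrow{LD}} ^{\mathrm{b}} _{\Q,\mathrm{coh}})$ and $\smash{\underrightarrow{LD}} ^{\mathrm{b}} _{\Q,\mathrm{oc}} = S(\fB_\mathrm{div}, \smash{\underrightarrow{LD}} ^{\mathrm{b}} _{\Q,\mathrm{coh}})$, and that $\smash{\underrightarrow{LD}} ^{\mathrm{b}} _{\Q,\mathrm{coh}}$ is stable under devissages (it is a thick triangulated subcategory by \ref{thick-subcat}) and under cohomology (by the very definition of a coherent complex via its cohomology sheaves). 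Your hypothesis-checking is exactly the content the paper leaves implicit when stating this as a corollary.
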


\begin{lem}
\label{lem-boxtimes}
Let $\mathfrak{C}$ be a data of coefficients stable under devissages  and cohomology and which is local.
Let $(\W, \W ^\flat)$ be an object of $\mathrm{DVR}  (\V,\V ^\flat)$, 
$\fT: =\Spf \, \W$, 
$\X$ and $\Y$ be two smooth formal schemes  over $\W$, 
$\E ^{0(\bullet)}
\in 
\smash{\underrightarrow{LM}} _{\Q,\mathrm{coh}} 
( \smash{\widehat{\D}} _{\X} ^{(\bullet)})$,
$\E ^{(\bullet)}
\in 
\smash{\underrightarrow{LD}} ^{\mathrm{b}} _{\Q,\mathrm{coh}} 
( \smash{\widehat{\D}} _{\X} ^{(\bullet)})$,
and 
$\FF ^{(\bullet)}
\in 
\smash{\underrightarrow{LD}} ^{\mathrm{b}} _{\Q,\mathrm{coh}} 
( \smash{\widehat{\D}} _{\Y} ^{(\bullet)})$.
Recall exterior tensor products is defined in \ref{boxtimesLDcoh}.
\begin{enumerate}
\item \label{lem-boxtimes1} 
The following properties are equivalent 
\begin{enumerate}
\item for any $n\in \Z$,
$\E ^{0(\bullet)} \smash{\widehat{\boxtimes}} 
^\L _{\O _{\fT }} \H ^n (\FF ^{(\bullet)}) \in \fC ^0  (\X)$ ;
\item $\E ^{0(\bullet)} \smash{\widehat{\boxtimes}} 
^\L _{\O _{\fT }} \FF ^{(\bullet)} \in \fC (\X)$.
\end{enumerate}
\item 
\label{lem-boxtimes2}
If for any $n\in \Z$ we have
$\E ^{(\bullet)} \smash{\widehat{\boxtimes}} 
^\L _{\O _{\fT }} \H ^n (\FF ^{(\bullet)}) \in \fC (\X)$,
then 
$\E ^{(\bullet)} \smash{\widehat{\boxtimes}} 
^\L _{\O _{\fT }} \FF ^{(\bullet)} \in \fC (\X)$.
\end{enumerate}
\end{lem}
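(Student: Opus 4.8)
The plan is to reduce both statements to Proposition~\ref{prop-boxtimes}, which shows that when the left factor of an exterior tensor product lies in the heart $\underrightarrow{LM}_{\Q,\mathrm{coh}}$, the functor $-\,\widehat{\boxtimes}^\L_{\O_\fT}\,(-)$ is t-exact in its right variable. Combined with the stability of $\fC$ under cohomology this gives \ref{lem-boxtimes1} at once, after which a short devissage on the cohomology of $\FF^{(\bullet)}$ gives \ref{lem-boxtimes2}. First I would record two routine preliminaries: although Proposition~\ref{prop-boxtimes} and Lemma~\ref{exact-boxtimes} are stated with base $\S$, the same arguments hold verbatim over the base $\fT=\Spf \W$; and the bifunctor \ref{boxtimesLDcoh} is exact in each of its variables separately, being obtained by composing the triangulated pullback functors with the derived tensor product $\widehat{\otimes}^\L$, so it carries a distinguished triangle in either slot to a distinguished triangle and commutes with shifts.

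For \ref{lem-boxtimes1}, since $\E^{0(\bullet)}$ lies in $\underrightarrow{LM}_{\Q,\mathrm{coh}}$ and each cohomology object $\H^n(\FF^{(\bullet)})$ lies there as well, Lemma~\ref{exact-boxtimes} shows that $\E^{0(\bullet)}\,\widehat{\boxtimes}^\L_{\O_\fT}\,\H^n(\FF^{(\bullet)})$ is again an object of $\underrightarrow{LM}_{\Q,\mathrm{coh}}$; hence for such an object membership in $\fC$ and in $\fC^0$ are equivalent. Proposition~\ref{prop-boxtimes} supplies, for every $n$, a canonical isomorphism $\H^n\bigl(\E^{0(\bullet)}\,\widehat{\boxtimes}^\L_{\O_\fT}\,\FF^{(\bullet)}\bigr)\riso \E^{0(\bullet)}\,\widehat{\boxtimes}^\L_{\O_\fT}\,\H^n(\FF^{(\bullet)})$. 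If condition (b) holds, then, $\fC$ being stable under cohomology, every $\H^n\bigl(\E^{0(\bullet)}\,\widehat{\boxtimes}^\L_{\O_\fT}\,\FF^{(\bullet)}\bigr)$ lies in $\fC$, hence every $\E^{0(\bullet)}\,\widehat{\boxtimes}^\L_{\O_\fT}\,\H^n(\FF^{(\bullet)})$ lies in $\fC^0$, which is (a). Conversely, if (a) holds then every cohomology object of $\E^{0(\bullet)}\,\widehat{\boxtimes}^\L_{\O_\fT}\,\FF^{(\bullet)}$ lies in $\fC$, and stability under cohomology returns (b).

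For \ref{lem-boxtimes2} I would induct on the number of nonzero cohomology objects of $\FF^{(\bullet)}$ by means of the truncation triangles $\tau_{\le n-1}\FF^{(\bullet)}\to\tau_{\le n}\FF^{(\bullet)}\to \H^n(\FF^{(\bullet)})[-n]\to +1$ for the canonical t-structure on $\underrightarrow{LD}^{\mathrm b}_{\Q,\mathrm{coh}}$. Applying the exact functor $\E^{(\bullet)}\,\widehat{\boxtimes}^\L_{\O_\fT}\,(-)$ produces distinguished triangles whose third term is $\bigl(\E^{(\bullet)}\,\widehat{\boxtimes}^\L_{\O_\fT}\,\H^n(\FF^{(\bullet)})\bigr)[-n]$, which belongs to $\fC$ by the hypothesis of \ref{lem-boxtimes2} together with stability under shifts. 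Starting at the lowest degree $n_0$ with $\tau_{\le n_0}\FF^{(\bullet)}=\H^{n_0}(\FF^{(\bullet)})[-n_0]$ and moving up the truncation filtration, stability of $\fC$ under devissages propagates membership, so $\E^{(\bullet)}\,\widehat{\boxtimes}^\L_{\O_\fT}\,\FF^{(\bullet)}\in\fC$.

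The only points requiring care are the two preliminaries above — the applicability of Proposition~\ref{prop-boxtimes} over $\fT$ and the exactness of \ref{boxtimesLDcoh} in each variable; once these are granted the lemma follows formally from stability under cohomology for \ref{lem-boxtimes1} and from stability under devissages for \ref{lem-boxtimes2}, so I do not anticipate any serious obstacle. Note in passing that the locality and coherence of $\fC$ play no role in this argument and are carried along only for consistency with the later applications of the lemma.
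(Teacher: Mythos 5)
Your argument for item \ref{lem-boxtimes2} is correct and is in fact a genuine simplification: the paper passes through \ref{cor-eq-cat-coh-m-2}, the spectral sequence of \ref{prop-boxtimes}.\ref{prop-boxtimes1}, the observation (from \ref{ntnC0-lemm}.\ref{ntnC0-lemm2}) that $\fC^0$ is a Serre subcategory closed under extensions, and the comparison of cohomological functors \ref{diag-Hn-comp-coh}, whereas your truncation devissage uses only that $\E^{(\bullet)}\,\widehat{\boxtimes}^\L_{\O_\fT}\,(-)$ is a triangulated functor and that $\fC$ is stable under devissage. That trade-off is real: the spectral-sequence route requires the affine reduction, your route does not, and the two deliver the same conclusion. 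Keep the devissage argument.

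However, your proof of item \ref{lem-boxtimes1} has a gap, and the accompanying claim that ``the locality and coherence of $\fC$ play no role in this argument'' is incorrect. The isomorphism you invoke, $\H^n\bigl(\E^{0(\bullet)}\,\widehat{\boxtimes}^\L_{\O_\fT}\,\FF^{(\bullet)}\bigr)\riso \E^{0(\bullet)}\,\widehat{\boxtimes}^\L_{\O_\fT}\,\H^n(\FF^{(\bullet)})$, is Proposition \ref{prop-boxtimes}.\ref{prop-boxtimes2}, which carries an explicit affine hypothesis on the formal scheme supporting $\FF^{(\bullet)}$. That hypothesis is not cosmetic: the proof of \ref{prop-boxtimes}.\ref{prop-boxtimes2} routes through \ref{cor-eq-cat-coh-m-2} to lift $\FF^{(\bullet)}$ to $D^{\mathrm b}(\underrightarrow{LM}_{\Q,\mathrm{coh}})$, which is only available in the affine case, because there are two a priori distinct exterior tensor bifunctors (\ref{boxtimesLDcoh} on $\smash{\underrightarrow{LD}}^{\mathrm b}_{\Q,\mathrm{coh}}$ versus \ref{dfn-boxtimesDLMcoh} on $D^{\mathrm b}(\underrightarrow{LM}_{\Q,\mathrm{coh}})$) and the bridge between them is the affine essential surjectivity. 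Neither part of \ref{prop-boxtimes} gives the isomorphism outright for a general $\FF^{(\bullet)}\in\smash{\underrightarrow{LD}}^{\mathrm b}_{\Q,\mathrm{coh}}$ over a non-affine base. The locality hypothesis on $\fC$ is precisely the missing tool: membership in $\fC$ and the formation of $\H^n$ are both local, so one covers by affine opens, applies \ref{prop-boxtimes}.\ref{prop-boxtimes2} there, and glues. You should add that reduction at the start of the proof of \ref{lem-boxtimes1} and delete the sentence dismissing locality; as written, the dismissal contradicts the very citation you rely on.
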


\begin{proof}
Since this is local, we can suppose $\X$ affine.
The first statement is an obvious consequence of \ref{prop-boxtimes}.\ref{prop-boxtimes2}.
Following \ref{cor-eq-cat-coh-m-2},
there exists 
$\G  ^{ (\bullet)}  \in 
D ^{\mathrm{b}}  (\underrightarrow{LM} _{\Q,\mathrm{coh}} (\smash{\widehat{\D}} _{\Y /\S } ^{(\bullet)} ))$
such that  $\G  ^{ (\bullet)}  $ 
is isomorphic in 
$\smash{\underrightarrow{LD}}  ^\mathrm{b} _{\Q, \mathrm{coh}}
( \smash{\widehat{\D}} _{\Y ^{ }/\S }  ^{(\bullet)} )$
to 
$\FF  ^{ (\bullet)} $.
Following \ref{prop-boxtimes}.\ref{prop-boxtimes1},
we get
the spectral sequence in 
$\underrightarrow{LM} _{\Q,\mathrm{coh}} (\smash{\widehat{\D}} _{\X \times \Y /\S } ^{(\bullet)})$
of the form
$\H ^r (\E  ^{ (\bullet)} )
\smash{\widehat{\boxtimes}} 
^\L _{\O _{\T }}
\H ^s
 ( 
\G  ^{ (\bullet)}
)
=:
E _{2} ^{r,s}
\Rightarrow
E ^n :=
\H ^n 
\left ( 
\E  ^{ (\bullet)} 
\smash{\widehat{\boxtimes}} 
^\L _{\O _{\T }}
\G  ^{ (\bullet)}
\right ) $.
Since 
$\fC ^0 (\X \times \Y )$
is an abelian strictly full subcategory of 
$\underrightarrow{LM} _{\Q,\mathrm{coh}} (\smash{\widehat{\D}} _{\X \times \Y /\S } ^{(\bullet)})$ closed under
extensions (see \ref{ntnC0-lemm}.\ref{ntnC0-lemm2}),
since 
$\H ^r (\E  ^{ (\bullet)} )
\smash{\widehat{\boxtimes}} 
^\L _{\O _{\T }}
\H ^s
 ( 
\G  ^{ (\bullet)}
)
\in 
\fC ^0 (\X \times \Y )$, then 
$\H ^n 
\left ( 
\E  ^{ (\bullet)} 
\smash{\widehat{\boxtimes}} 
^\L _{\O _{\T }}
\G  ^{ (\bullet)}
\right ) 
\in 
\fC ^0 (\X \times \Y )$.
Using \ref{diag-Hn-comp-coh}, 
this yields, 
$\H ^n 
\left ( 
\E  ^{ (\bullet)} 
\smash{\widehat{\boxtimes}} 
^\L _{\O _{\T }}
\FF  ^{ (\bullet)}
\right ) 
\in 
\fC ^0 (\X \times \Y )$.
\end{proof}

\begin{prop}
\label{prop-st-cohom}
Let $\mathfrak{C}$ and $\mathfrak{D}$ be two data of coefficients.
Suppose that $\fD$ is stable under cohomology, smooth extraordinary pullbacks,
and that $\fC$ is local and stable under cohomology, devissage, extraordinary pullbacks.
Then $S _\sharp (\mathfrak{D}, \mathfrak{C})$ is stable under devissages  and cohomology.
\end{prop}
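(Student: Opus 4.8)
The plan is to reduce everything to the base change case and then to the level of the abelian heart, where the exterior tensor product formalism of the previous section applies. First I would note that since $\fC$ is stable under cohomology and devissage, Proposition \ref{rem-div-cst2} together with \ref{preS(D,C)stability}.\ref{S(D,C)stabilitynew3} allows me to replace $\mathfrak{D}$ by $\Delta(\mathfrak{D})$ (which is also stable under smooth extraordinary pullbacks by \ref{Delta-lemm-stab}), so I may assume $\Delta(\mathfrak{D}) = \mathfrak{D}$. Since base change commutes with the cohomological functors $\H ^n$, the stability under cohomology of $S(\mathfrak{D}, \mathfrak{C})$ follows formally from that of $S _0(\mathfrak{D}, \mathfrak{C})$, so it suffices to treat $S _\sharp = S _0$. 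The stability under devissages of $S _0(\mathfrak{D}, \mathfrak{C})$ is already clear (either $\fC$ or $\fD$ is stable under devissages, apply \ref{preS(D,C)stability}.\ref{S(D,C)stabilitynew3}).

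The key point is the direction "$\H ^n(\E^{(\bullet)}) \in S _0(\mathfrak{D}, \mathfrak{C})(\X)$ for all $n$ implies $\E^{(\bullet)} \in S _0(\mathfrak{D}, \mathfrak{C})(\X)$"; the converse is immediate because the defining condition $(\star)$ involves the operation $\FF^{(\bullet)} \smash{\widehat{\otimes}}^\L _{\O _{\Y}} f ^{!}(-)$, which is exact (localization and smooth pullback are t-exact) so it commutes with $\H ^n$ up to the t-exactness of $\fC$ under cohomology. For the nontrivial direction, fix a smooth morphism $f \colon \Y \to \X$ and $\FF^{(\bullet)} \in \mathfrak{D}(\Y)$; I must show $\FF^{(\bullet)} \smash{\widehat{\otimes}}^\L _{\O _{\Y}} f ^{!}(\E^{(\bullet)}) \in \mathfrak{C}(\Y)$. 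The idea is to rewrite $f^{!}(\E^{(\bullet)})$ via the exterior tensor product: $f$ factors through its graph $\Y \hookrightarrow \Y \times _{\W} \X$ followed by the projection $\Y \times _{\W} \X \to \X$, so $f^{!}(\E^{(\bullet)})$ is, up to a shift and the graph closed immersion $u$, of the form $u^{!}(\O_{\Y}^{(\bullet)} \smash{\widehat{\boxtimes}}^\L _{\O_{\W}} \E^{(\bullet)})$. Then $\FF^{(\bullet)} \smash{\widehat{\otimes}}^\L _{\O_{\Y}} f^{!}(\E^{(\bullet)})$ is expressed, via Berthelot--Kashiwara and \ref{pre-loc-tri-B-t1T-iso}, as $u_{+}$ of something of the form $\FF^{(\bullet)} \smash{\widehat{\boxtimes}}^\L _{\O_{\W}} \E^{(\bullet)}$ restricted to $\Y$; the spectral sequence of Proposition \ref{prop-boxtimes} for exterior tensor products then degenerates the problem to the individual cohomology sheaves $\H^n(\E^{(\bullet)})$.

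Concretely I would proceed as in Lemma \ref{lem-boxtimes}: reduce to $\X$ affine, use \ref{cor-eq-cat-coh-m-2} to replace $\E^{(\bullet)}$ by an object of $D^{\mathrm{b}}(\underrightarrow{LM}_{\Q,\mathrm{coh}})$, apply the hypercohomology spectral sequence $\H^r(\FF^{(\bullet)} \smash{\widehat{\boxtimes}}^\L \text{-related}) \otimes \H^s(\E^{(\bullet)}) \Rightarrow \H^n$ together with the fact that, thanks to Lemma \ref{ntnC0-lemm}, the category $\fC^0(\Y) = \fC(\Y) \cap \underrightarrow{LM}_{\Q,\mathrm{coh}}$ is an abelian subcategory closed under extensions, and conclude that each $\H^n$ of the output lies in $\fC^0(\Y)$, hence the output lies in $\fC(\Y)$. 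The main obstacle is bookkeeping: carefully identifying $\FF^{(\bullet)} \smash{\widehat{\otimes}}^\L _{\O_{\Y}} f^{!}(\E^{(\bullet)})$ with the exterior-tensor expression so that the spectral sequence of \ref{prop-boxtimes} genuinely applies (this uses the associativity isomorphisms \ref{f!T'Totimes}, the base-change/Berthelot--Kashiwara identification \ref{pre-loc-tri-B-t1T-iso}, and compatibility with $\H^n$ via \ref{diag-Hn-comp-coh}), plus checking that the auxiliary objects $\FF^{(\bullet)}$ arising from $\mathfrak{D}$ after a further smooth pullback and closed pushforward still belong to $\mathfrak{D}$ — which is exactly why one needs $\mathfrak{D}$ stable under smooth extraordinary pullbacks and, after reduction, under devissage (hence $BK_+$-type manipulations are avoided by working directly with the exterior tensor product rather than with pushforwards).
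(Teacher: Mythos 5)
Your proposal correctly identifies the right tools — Lemma~\ref{lem-boxtimes}, the external tensor spectral sequence of~\ref{prop-boxtimes}, and the abelian-heart observation of~\ref{ntnC0-lemm} — so the general spirit is right, but two of your specific steps have genuine gaps.

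First, you classify the direction ``$\E^{(\bullet)} \in S_\sharp(\fD,\fC)$ implies $\H^n(\E^{(\bullet)}) \in S_\sharp(\fD,\fC)$'' as immediate because $\FF^{(\bullet)} \smash{\widehat{\otimes}}^\L_{\O_\Y} f^{!(\bullet)}(-)$ is exact. This is false in the generality needed: while $f^{!(\bullet)}$ is t-exact for $f$ smooth, tensoring with an arbitrary object of $\fD(\Y)$ is \emph{not} exact over a ring (see the remark~\ref{rem-ext-prod} on why the base not being a field makes external tensor products non-exact). The only situation where your exactness claim holds is when $\fD$ consists of localization algebras such as $\widetilde{\B}^{(\bullet)}_\Y(T)$, which is the special case handled separately in~\ref{Bcst-st-cohom}. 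In the general setting of~\ref{prop-st-cohom} this is in fact the \emph{harder} direction, and the paper devotes the bulk of the argument to it: one passes to $\Y \times_{\Spf\W} \Y$, observes that $\H^r(\FF^{(\bullet)}) \smash{\widehat{\boxtimes}}^\L_\W f^{!(\bullet)}(\E^{(\bullet)}) \in \fC$ (this is where $\fD$'s stability under cohomology and smooth pullbacks enters), then exploits the Künneth-type spectral sequence of~\ref{prop-boxtimes}.\ref{prop-boxtimes2} — with the fixed factor $\H^r(\FF^{(\bullet)})$ concentrated in the heart — to extract $\H^r(\FF^{(\bullet)}) \smash{\widehat{\boxtimes}}^\L_\W f^{*(\bullet)}\H^s(\E^{(\bullet)}) \in \fC^0$, then reassembles to $\FF^{(\bullet)} \smash{\widehat{\boxtimes}}^\L_\W f^{*(\bullet)}\H^s(\E^{(\bullet)}) \in \fC$, and finally descends.

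Second, your descent step uses the graph factorization, Berthelot--Kashiwara, and $u_+$ (a pushforward along the graph closed immersion), and you also invoke~\ref{pre-loc-tri-B-t1T-iso}. But the hypotheses give you $\fC$ stable under \emph{extraordinary pullbacks}, not under pushforwards, so the $u_+$ step is not justified. The paper's descent instead pulls back along the diagonal $\delta\colon\Y \hookrightarrow \Y\times_{\Spf\W}\Y$: from $\FF^{(\bullet)} \smash{\widehat{\boxtimes}}^\L_\W f^{*(\bullet)}\H^s(\E^{(\bullet)}) \in \fC(\Y\times\Y)$ and stability of $\fC$ under extraordinary pullbacks and shifts, one concludes $\FF^{(\bullet)} \smash{\widehat{\otimes}}^\L_{\O_\Y} f^{*(\bullet)}\H^s(\E^{(\bullet)}) \riso \L\delta^{*(\bullet)}\bigl(\FF^{(\bullet)} \smash{\widehat{\boxtimes}}^\L_\W f^{*(\bullet)}\H^s(\E^{(\bullet)})\bigr) \in \fC(\Y)$. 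You even note at the end that pushforward-type manipulations should be avoided, which contradicts the route you set up earlier; and no Berthelot--Kashiwara input is actually needed. Finally, the initial reduction to $\Delta(\fD) = \fD$ is both unnecessary and potentially dangerous: after replacing $\fD$ by $\Delta(\fD)$ you can no longer assume $\fD$ is stable under cohomology, which is a hypothesis the argument genuinely uses.
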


\begin{proof}
1) We prove the case where $\sharp = 0$.
Let $(\W, \W ^\flat)$ be an object of $\mathrm{DVR}  (\V,\V ^\flat)$,
$\X$ be a smooth formal scheme  over $\W$, 
$\E ^{(\bullet)}\in 
\smash{\underrightarrow{LD}} ^{\mathrm{b}} _{\Q,\mathrm{coh}} 
( \smash{\widehat{\D}} _{\X} ^{(\bullet)})$.
Let 
$f\colon \Y \to \X$ 
be a smooth morphism of smooth formal $\W$-schemes, 
$\FF ^{(\bullet)}
\in
\mathfrak{D} (\Y)$.

a) Suppose that 
$\E ^{(\bullet)}\in 
S  _0 (\mathfrak{D}, \mathfrak{C}) (\X)$.
Since 
$\fD$ is stable under cohomology and smooth extraordinary pullbacks,
then 
$\H ^r (\FF ^{(\bullet)})
\smash{\widehat{\boxtimes}}^\L
_{\W} f ^{!} (\E ^{(\bullet) })
\in 
\mathfrak{C}
( \Y \times _{\Spf \W} \Y)$, for any $r \in \Z$.
Since 
$\fC$ is stable under cohomology and devissage, then
using \ref{lem-boxtimes}.\ref{lem-boxtimes1},
we get 
$\H ^r (\FF ^{(\bullet)})
\smash{\widehat{\boxtimes}}^\L
_{\W} 
\H ^s (f ^{!(\bullet)} (\E ^{(\bullet) }))
\in 
\mathfrak{C} ^{0}
( \Y \times _{\Spf \W} \Y)$,
for any $r ,s\in \Z$.
Hence, since 
$\H ^s (f ^{!(\bullet)} (\E ^{(\bullet) }))
\riso 
f ^{*(\bullet)} \H ^{s-d _Y} ( \E ^{(\bullet) })$, 
using \ref{lem-boxtimes}.\ref{lem-boxtimes1},
we get 
$\FF ^{(\bullet)}
\smash{\widehat{\boxtimes}}^\L
_{\W} 
f ^{*(\bullet)} \H ^s (\E ^{(\bullet) })
\in 
\mathfrak{C}
( \Y \times _{\Spf \W} \Y)$,
for any $s\in \Z$.
Since  $\fC$ is stable under extraordinary pullbacks  and shifts,
this yields 
$\FF ^{(\bullet)}
\smash{\widehat{\otimes}}^\L
_{\O  _{\Y}} 
f ^{*(\bullet)} \H ^s ( \E ^{(\bullet) }))
\riso 
\L \delta ^{*(\bullet) }
(\FF ^{(\bullet)}
\smash{\widehat{\boxtimes}}^\L
_{\W} 
f ^{*(\bullet)} \H ^s ( \E ^{(\bullet) }))
\in
\mathfrak{C} (\Y)$,
where 
$\delta \colon \Y \hookrightarrow \Y \times _{\Spf \W} \Y$ 
is the diagonal immersion. 
Hence, 
$\H ^s ( \E ^{(\bullet) })
\in 
S  _0 (\mathfrak{D}, \mathfrak{C}) (\X)$,
for any $s\in \Z$.

b ) Conversely, suppose
$\H ^s ( \E ^{(\bullet) })
\in 
S  _0 (\mathfrak{D}, \mathfrak{C}) (\X)$,
for any $s\in \Z$.
Then 
$\FF ^{(\bullet)}
\smash{\widehat{\boxtimes}}^\L
_{\W} 
f ^{*(\bullet)} \H ^s (\E ^{(\bullet) })
\in 
\mathfrak{C}
( \Y \times _{\Spf \W} \Y)$.
Using \ref{lem-boxtimes}.\ref{lem-boxtimes2},
this yields 
$\FF ^{(\bullet)}
\smash{\widehat{\boxtimes}}^\L
_{\W} f ^{!(\bullet)} (\E ^{(\bullet) })
\in 
\mathfrak{C}
( \Y \times _{\Spf \W} \Y)$.
Hence, 
$\FF ^{(\bullet)}
\smash{\widehat{\otimes}}^\L
_{\O  _{\Y}} 
f ^{!(\bullet)}( \E ^{(\bullet) })
\in
\mathfrak{C} (\Y)$.

2) 
For any morphism   $(\W, \W ^\flat)
\to 
(\widetilde{\W}, \widetilde{\W} ^{\flat})$ 
of 
$\mathrm{DVR}  (\V,\V ^\flat)$, 
set $\widetilde{\X} := \X \times _{\Spf \W} \Spf \widetilde{\W}$,
and
$\widetilde{\E} ^{(\bullet)}:= \widetilde{\W}  \smash{\widehat{\otimes}}^\L_{\W}  \E^{(\bullet) }$.
The property 
$\E ^{(\bullet)}\in S  (\mathfrak{D}, \mathfrak{C})  (\X)$ 
is equivalent to the property
$\widetilde{\E} ^{(\bullet)}\in 
S  _0 (\mathfrak{D}, \mathfrak{C}) (\widetilde{\X})$ 
(for any such morphism
$(\W, \W ^\flat)
\to 
(\widetilde{\W}, \widetilde{\W} ^{\flat})$).
The property
$\widetilde{\E} ^{(\bullet)}\in 
S  _0 (\mathfrak{D}, \mathfrak{C}) (\widetilde{\X})$ 
is equivalent from the first part to 
$\H ^n (\widetilde{\E} ^{(\bullet)}) \in 
S  _0 (\mathfrak{D}, \mathfrak{C}) (\widetilde{\X})$ for any $n\in \Z$. 
Since the cohomology commutes with base change, 
this latter property is equivalent to 
$ \widetilde{\W}  \smash{\widehat{\otimes}}^\L_{\W} \H ^n (\E^{(\bullet)}) 
\in 
S  _0 (\mathfrak{D}, \mathfrak{C}) (\widetilde{\X})$ for any $n\in \Z$, i.e.
$\H ^n (\E)
\in 
S   (\mathfrak{D}, \mathfrak{C}) (\widetilde{\X})$ for any $n\in \Z$. 
 
\end{proof}

\begin{empt}
Let $(\W, \W ^\flat)$ be an object of $\mathrm{DVR}  (\V,\V ^\flat)$,
$\X$ be a smooth formal scheme  over $\W$, 
$\E ^{(\bullet)}\in 
\smash{\underrightarrow{LM}}  _{\Q,\mathrm{coh}} 
( \smash{\widehat{\D}} _{\X} ^{(\bullet)})$.
Following \ref{ntn-dualfunctor}, 
we have the dual functor
$\DD ^{(\bullet)}
\colon 
\smash{\underrightarrow{LD}} ^{\mathrm{b}} _{\Q,\mathrm{coh}}
( \widehat{\D} _{\X /\S } ^{(\bullet)})
\to 
\smash{\underrightarrow{LD}} ^{\mathrm{b}} _{\Q,\mathrm{coh}}
( \widehat{\D} _{\X /\S } ^{(\bullet)})$.
Similarly to \cite[2.8]{caro-holo-sansFrob},
we say that 
$\E ^{(\bullet)}$ is holonomic if 
for any $i \not = 0$, 
$\H ^i (\DD ^{(\bullet)}(\E ^{(\bullet)})) =0$.
We denote by 
$\smash{\underrightarrow{LM}}  _{\Q,\mathrm{hol}} ( \widehat{\D} _{\X /\S } ^{(\bullet)})$ the strictly subcategory 
of 
$\smash{\underrightarrow{LM}}  _{\Q,\mathrm{coh}} ( \widehat{\D} _{\X /\S } ^{(\bullet)})$
of holonomic 
$\smash{\widehat{\D}} _{\X} ^{(\bullet)}$-modules.
By copying \cite[2.14]{caro-holo-sansFrob}, we check 
$\smash{\underrightarrow{LM}}  _{\Q,\mathrm{hol}} ( \widehat{\D} _{\X /\S } ^{(\bullet)})$ 
is in fact a Serre subcategory 
of 
$\smash{\underrightarrow{LM}}  _{\Q,\mathrm{coh}} ( \widehat{\D} _{\X /\S } ^{(\bullet)})$.

We denote by 
$\smash{\underrightarrow{LD}} ^{\mathrm{b}} _{\Q,\mathrm{hol}}
( \widehat{\D} _{\X /\S } ^{(\bullet)})$ the strictly full subcategory of
$\smash{\underrightarrow{LD}} ^{\mathrm{b}} _{\Q,\mathrm{coh}}
( \widehat{\D} _{\X /\S } ^{(\bullet)})  $ 
consisting of complexes 
$\E ^{(\bullet)}$ such that 
$\H ^n \E ^{(\bullet)} \in 
\smash{\underrightarrow{LM}}  _{\Q,\mathrm{hol}} ( \widehat{\D} _{\X /\S } ^{(\bullet)})$
for any 
$n\in\Z$.
This yields the t-exact equivalence of categories
$\DD ^{(\bullet)}
\colon 
\smash{\underrightarrow{LD}} ^{\mathrm{b}} _{\Q,\mathrm{hol}}
( \widehat{\D} _{\X /\S } ^{(\bullet)})
\cong 
\smash{\underrightarrow{LD}} ^{\mathrm{b}} _{\Q,\mathrm{hol}}
( \widehat{\D} _{\X /\S } ^{(\bullet)})$.
Copying word by word the proof of 
\cite[3.3.5]{surcoh-hol},
we check that 
$\smash{\underrightarrow{LM}}  _{\Q,\mathrm{oc}} 
\subset
\smash{\underrightarrow{LM}}  _{\Q,\mathrm{hol}}$.
This yields
\begin{equation}
\label{ocinchol}
\smash{\underrightarrow{LD}} ^{\mathrm{b}} _{\Q,\mathrm{oc}} 
\subset
\smash{\underrightarrow{LD}} ^{\mathrm{b}}  _{\Q,\mathrm{hol}}.
\end{equation}
\end{empt}

\begin{lem}
\label{cor-oc-inc-hol}
Let $\mathfrak{C}$ be a data of coefficients stable under cohomology and included in 
$\smash{\underrightarrow{LD}} ^{\mathrm{b}} _{\Q,\mathrm{oc}}$.
Then $\mathfrak{C} ^{\vee}$ is stable under cohomology.
\end{lem}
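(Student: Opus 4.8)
The plan is to reduce the stability under cohomology of $\mathfrak{C} ^{\vee}$ to that of $\mathfrak{C}$ by exploiting the fact that, inside $\smash{\underrightarrow{LD}} ^{\mathrm{b}} _{\Q,\mathrm{oc}}$, the duality functor $\DD ^{(\bullet)}$ is t-exact. Concretely, let $(\W, \W ^\flat)$ be an object of $\mathrm{DVR}  (\V,\V ^\flat)$, let $\X$ be a smooth formal scheme over $\W$, and let $\E ^{(\bullet)} \in \smash{\underrightarrow{LD}} ^{\mathrm{b}} _{\Q,\mathrm{coh}} ( \smash{\widehat{\D}} _{\X} ^{(\bullet)})$. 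By definition $\E ^{(\bullet)} \in \mathfrak{C} ^{\vee} (\X)$ means $\DD ^{(\bullet)}_{\X} (\E ^{(\bullet)}) \in \mathfrak{C} (\X)$. First I would observe that since $\mathfrak{C} \subset \smash{\underrightarrow{LD}} ^{\mathrm{b}} _{\Q,\mathrm{oc}}$ and this latter data is stable under duality on its own (it is even stable under duality as a subcategory of $\smash{\underrightarrow{LD}} ^{\mathrm{b}} _{\Q,\mathrm{hol}}$ via \ref{ocinchol}), we have $\mathfrak{C} ^{\vee} \subset \smash{\underrightarrow{LD}} ^{\mathrm{b}} _{\Q,\mathrm{oc}}$ as well; hence every object we encounter is overcoherent after any base change, and a fortiori holonomic in the sense of the excerpt, so that $\DD ^{(\bullet)}$ restricted to the relevant subcategory is t-exact, i.e. $\H ^n (\DD ^{(\bullet)}_{\X} (\E ^{(\bullet)})) \riso \DD ^{(\bullet)}_{\X} (\H ^{-n} (\E ^{(\bullet)}))$.

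The key steps, in order, would then be: (i) check $\mathfrak{C} ^{\vee} \subset \smash{\underrightarrow{LD}} ^{\mathrm{b}} _{\Q,\mathrm{oc}}$, using $\mathfrak{C} \subset \smash{\underrightarrow{LD}} ^{\mathrm{b}} _{\Q,\mathrm{oc}}$, the biduality isomorphism $\DD ^{(\bullet)} \circ \DD ^{(\bullet)} \riso \mathrm{id}$, and the stability of $\smash{\underrightarrow{LD}} ^{\mathrm{b}} _{\Q,\mathrm{oc}}$ under duality; (ii) invoke the t-exactness of $\DD ^{(\bullet)}$ on $\smash{\underrightarrow{LD}} ^{\mathrm{b}} _{\Q,\mathrm{hol}}$ together with the inclusion \ref{ocinchol} to get, for $\E ^{(\bullet)} \in \mathfrak{C} ^{\vee}(\X)$, the canonical isomorphisms $\H ^n (\DD ^{(\bullet)}_{\X} (\E ^{(\bullet)})) \riso \DD ^{(\bullet)}_{\X} (\H ^{-n} (\E ^{(\bullet)}))$; (iii) now argue: $\E ^{(\bullet)} \in \mathfrak{C} ^{\vee}(\X)$ iff $\DD ^{(\bullet)}_{\X}(\E ^{(\bullet)}) \in \mathfrak{C}(\X)$ iff (by stability of $\mathfrak{C}$ under cohomology) $\H ^n (\DD ^{(\bullet)}_{\X}(\E ^{(\bullet)})) \in \mathfrak{C}(\X)$ for all $n$ iff $\DD ^{(\bullet)}_{\X}(\H ^{-n}(\E ^{(\bullet)})) \in \mathfrak{C}(\X)$ for all $n$ iff $\H ^{-n}(\E ^{(\bullet)}) \in \mathfrak{C} ^{\vee}(\X)$ for all $n$; this is precisely the stability of $\mathfrak{C} ^{\vee}$ under cohomology. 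Note that $\H ^n(\E ^{(\bullet)})$ is automatically coherent, and it lies in $\smash{\underrightarrow{LM}} _{\Q,\mathrm{coh}}$, so $\DD ^{(\bullet)}_{\X}$ is defined on it and the computation in (ii) applies.

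The main obstacle I anticipate is justifying step (ii) rigorously, i.e. that $\DD ^{(\bullet)}$ commutes with the cohomology functors $\H ^n$ up to the expected shift, for objects of $\mathfrak{C} ^{\vee}$. This is where one genuinely uses that everything lies in $\smash{\underrightarrow{LD}} ^{\mathrm{b}} _{\Q,\mathrm{hol}}$: the excerpt records the t-exact equivalence $\DD ^{(\bullet)} \colon \smash{\underrightarrow{LD}} ^{\mathrm{b}} _{\Q,\mathrm{hol}}( \widehat{\D} _{\X /\S } ^{(\bullet)}) \cong \smash{\underrightarrow{LD}} ^{\mathrm{b}} _{\Q,\mathrm{hol}}( \widehat{\D} _{\X /\S } ^{(\bullet)})$, and the inclusion $\smash{\underrightarrow{LD}} ^{\mathrm{b}} _{\Q,\mathrm{oc}} \subset \smash{\underrightarrow{LD}} ^{\mathrm{b}} _{\Q,\mathrm{hol}}$ of \ref{ocinchol}. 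One must be careful about the sign/shift conventions in the t-exactness statement (holonomicity is defined via $\H ^i (\DD ^{(\bullet)}(-)) = 0$ for $i \neq 0$ on a module), so the precise form of the commutation is $\H ^n \circ \DD ^{(\bullet)} \riso \DD ^{(\bullet)} \circ \H ^{-n}$; once this is pinned down, steps (i) and (iii) are purely formal manipulations with the definitions of $\mathfrak{C} ^{\vee}$ and of stability under cohomology.
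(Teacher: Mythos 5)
Your core argument is correct and is exactly the route the paper intends: invoke the inclusion $\smash{\underrightarrow{LD}} ^{\mathrm{b}} _{\Q,\mathrm{oc}} \subset \smash{\underrightarrow{LD}} ^{\mathrm{b}} _{\Q,\mathrm{hol}}$ of \ref{ocinchol}, use the t-exactness of $\DD ^{(\bullet)}$ on $\smash{\underrightarrow{LD}} ^{\mathrm{b}} _{\Q,\mathrm{hol}}$ together with biduality to get $\H ^n \circ \DD ^{(\bullet)} \riso \DD ^{(\bullet)} \circ \H ^{-n}$, and unwind the definitions. The formal chain of equivalences in your step (iii) is the right proof, and you handle the sign correctly.

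One point to fix: in step (i) you assert that $\smash{\underrightarrow{LD}} ^{\mathrm{b}} _{\Q,\mathrm{oc}}$ ``is stable under duality on its own'' and deduce $\mathfrak{C} ^{\vee} \subset \smash{\underrightarrow{LD}} ^{\mathrm{b}} _{\Q,\mathrm{oc}}$. This is not established in the paper (Corollary \ref{ovcoh-invim} records duality stability only for $\smash{\underrightarrow{LD}} ^{\mathrm{b}} _{\Q,\mathrm{h}}$, and the whole point of passing from overcoherence to overholonomicity is precisely that overcoherence is not known to be preserved under $\DD ^{(\bullet)}$). The remark in parentheses, that $\smash{\underrightarrow{LD}} ^{\mathrm{b}} _{\Q,\mathrm{hol}}$ is stable under duality, is the correct and available statement, and the containment you actually need is the weaker one: for $\E ^{(\bullet)} \in \mathfrak{C} ^{\vee}(\X)$, one has $\DD ^{(\bullet)} _{\X}(\E ^{(\bullet)}) \in \mathfrak{C}(\X) \subset \smash{\underrightarrow{LD}} ^{\mathrm{b}} _{\Q,\mathrm{oc}} \subset \smash{\underrightarrow{LD}} ^{\mathrm{b}} _{\Q,\mathrm{hol}}$, hence by biduality $\E ^{(\bullet)} \in \smash{\underrightarrow{LD}} ^{\mathrm{b}} _{\Q,\mathrm{hol}}$. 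Likewise in the converse direction of step (iii), $\H ^n(\E ^{(\bullet)}) \in \mathfrak{C} ^{\vee}(\X)$ for all $n$ gives $\H ^n(\E ^{(\bullet)}) \in \smash{\underrightarrow{LM}} _{\Q,\mathrm{hol}}$ for all $n$, so again $\E ^{(\bullet)} \in \smash{\underrightarrow{LD}} ^{\mathrm{b}} _{\Q,\mathrm{hol}}$ and the commutation of $\DD ^{(\bullet)}$ with cohomology applies. With that replacement of step (i) by the statement actually needed, your proof is complete.
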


\begin{proof}
This is a straightforward consequence of
\ref{ocinchol}.
\end{proof}

\begin{coro}
\label{coro-h-tstr}
Let $i \in \N \cup \{ \infty\}$.
The data of coefficients
$\mathfrak{H} _{i}$
is stable under cohomology.
\end{coro}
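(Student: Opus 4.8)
The plan is to argue by induction on $i \in \N$ and then pass to the limit $i = \infty$. Before starting, I would record two facts that will be used repeatedly: first, by Corollary \ref{ovcoh-invim} each $\mathfrak{H} _{i}$ is stable under devissages; second, since $\mathfrak{H} _{j+1} = \mathfrak{H} _{j} \cap S (\fB _\mathrm{div}, \mathfrak{H} _{j} ^{\vee}) \subset \mathfrak{H} _{j}$ for all $j$, we have $\mathfrak{H} _{i} \subset \mathfrak{H} _{0} = \smash{\underrightarrow{LD}} ^{\mathrm{b}} _{\Q,\mathrm{oc}}$ for every $i \in \N$. I would also note the elementary observation that if $(\mathfrak{C} _{\alpha})$ is any family of data of coefficients each stable under cohomology, then $\cap _{\alpha} \mathfrak{C} _{\alpha}$ is again stable under cohomology: indeed, an object $\E ^{(\bullet)}$ of $\smash{\underrightarrow{LD}} ^{\mathrm{b}} _{\Q,\mathrm{coh}}$ belongs to the intersection if and only if it belongs to each $\mathfrak{C} _{\alpha}$, if and only if all its cohomology objects $\H ^{n} (\E ^{(\bullet)})$ belong to each $\mathfrak{C} _{\alpha}$, if and only if all its cohomology objects belong to the intersection.

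For the base case $i = 0$, we have $\mathfrak{H} _{0} = \smash{\underrightarrow{LD}} ^{\mathrm{b}} _{\Q,\mathrm{oc}}$, which is stable under cohomology by Corollary \ref{coro-ovcoh-oc-tstr}.

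For the inductive step, assume $\mathfrak{H} _{i}$ is stable under cohomology. Since $\mathfrak{H} _{i}$ is stable under devissages, Lemma \ref{stab-Dvee-3prop} gives that $\mathfrak{H} _{i} ^{\vee}$ is stable under devissages; and since moreover $\mathfrak{H} _{i}$ is contained in $\smash{\underrightarrow{LD}} ^{\mathrm{b}} _{\Q,\mathrm{oc}}$ and is stable under cohomology, Lemma \ref{cor-oc-inc-hol} shows that $\mathfrak{H} _{i} ^{\vee}$ is stable under cohomology. Applying Proposition \ref{Bcst-st-cohom} to $\mathfrak{C} = \mathfrak{H} _{i} ^{\vee}$ (which is now stable under both cohomology and devissages), we conclude that $S (\fB _\mathrm{div}, \mathfrak{H} _{i} ^{\vee})$ is stable under cohomology. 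Hence $\mathfrak{H} _{i+1} = \mathfrak{H} _{i} \cap S (\fB _\mathrm{div}, \mathfrak{H} _{i} ^{\vee})$ is stable under cohomology, being an intersection of two data of coefficients each stable under cohomology. This completes the induction; finally $\mathfrak{H} _{\infty} = \cap _{i \in \N} \mathfrak{H} _{i}$ is stable under cohomology by the same intersection observation.

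No serious obstacle arises here, since all the required closure properties have already been established: the devissage-stability of every $\mathfrak{H} _{i}$ (Corollary \ref{ovcoh-invim}), the inclusion $\mathfrak{H} _{i} \subset \smash{\underrightarrow{LD}} ^{\mathrm{b}} _{\Q,\mathrm{oc}}$, the passage $\mathfrak{C} \rightsquigarrow \mathfrak{C} ^{\vee}$ (Lemmas \ref{stab-Dvee-3prop} and \ref{cor-oc-inc-hol}), and the behaviour of $S(\fB _\mathrm{div}, -)$ (Proposition \ref{Bcst-st-cohom}). The only point requiring a little care is to keep track of the two invariants propagated along the induction, namely stability under cohomology and stability under devissages, so that Proposition \ref{Bcst-st-cohom} and Lemma \ref{stab-Dvee-3prop} remain applicable at each stage; this is automatic since devissage-stability of every $\mathfrak{H} _{i}$ is granted once and for all.
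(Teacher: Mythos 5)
Your proof is correct and follows exactly the intended route: the paper states this as a corollary without any proof, evidently because the argument is the immediate one you give — base case via \ref{coro-ovcoh-oc-tstr}, inductive step via \ref{ovcoh-invim}, \ref{stab-Dvee-3prop}, \ref{cor-oc-inc-hol}, and \ref{Bcst-st-cohom}, then intersections. Your care in checking that the hypotheses of \ref{cor-oc-inc-hol} and \ref{Bcst-st-cohom} (cohomology- and devissage-stability) are available at each stage is exactly the point that needed verifying.
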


\begin{empt}
\label{t-structure-ovcoh-oc-hol-h}
Let $\mathfrak{C}$ be a data of coefficients stable under devissages  and cohomology. 
Let $(\W, \W ^\flat)$ be an object of $\mathrm{DVR}  (\V,\V ^\flat)$,
$\X$ be a smooth formal scheme  over $\W$.
Recall that following 
\ref{t-structure-coh}
we have a canonical t-structure on 
$ \smash{\underrightarrow{LD}} ^{\mathrm{b}} _{\Q,\mathrm{coh}} 
( \smash{\widehat{\D}} _{\X} ^{(\bullet)})$.
We get a canonical t-structure on 
$\fC (\X/\W)$ whose heart is $\fC  ^0  (\X/\W)$ 
and so that the t-structure of 
$\fC (\X/\W)$ is induced 
by that of 
$ \smash{\underrightarrow{LD}} ^{\mathrm{b}} _{\Q,\mathrm{coh}} 
( \smash{\widehat{\D}} _{\X} ^{(\bullet)})$,
i.e. the truncation functors are the same
and 
$\fC  ^{\geq n}  (\X/\W):=
\smash{\underrightarrow{LD}} ^{\geq n} _{\Q,\mathrm{coh}} 
( \smash{\widehat{\D}} _{\X} ^{(\bullet)})
\cap 
\fC  (\X/\W)$,
$\fC  ^{\leq n}  (\X/\W):=
\smash{\underrightarrow{LD}} ^{\leq n} _{\Q,\mathrm{coh}} 
( \smash{\widehat{\D}} _{\X} ^{(\bullet)})
\cap 
\fC  (\X/\W)$.

For instance, using \ref{coro-ovcoh-oc-tstr}
and 
\ref{coro-h-tstr}, 
we get 
  for $\star \in \{\mathrm{ovcoh}, \mathrm{oc}, \mathrm{h},\mathrm{hol} \}$
  a canonical t-structure on 
$\smash{\underrightarrow{LD}} ^{\mathrm{b}} _{\Q,\star} $.
The heart of 
$\smash{\underrightarrow{LD}} ^{\mathrm{b}} _{\Q,\star} $
is  $\smash{\underrightarrow{LM}} ^{\mathrm{b}} _{\Q,\star}$.
\end{empt}

\subsection{On the stability under special descent of base}
\begin{prop}
\label{oc-st-sp-desc}
Let $\mathfrak{C}$ 
be a data of coefficients.
Suppose that 
$\fC$ is stable under  special descent of the base.
Then so is $S _\sharp (\fB _\mathrm{div}, \mathfrak{C})$.
\end{prop}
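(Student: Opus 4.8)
The plan is to prove the statement first for $S _\sharp = S _0$ and then to deduce the case $S _\sharp = S$ from it; the case $S _0$ is a diagram chase resting on the commutations of the localisation functor $(\hdag T)$ and of the extraordinary inverse image with base change, whereas the case $S$ comes down to an amalgamation property inside $\mathrm{DVR}(\V,\V ^\flat)$, which is the real content.

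For $S _0$: suppose $\alpha\colon(\W,\W ^\flat)\to(\W',\W ^\flat)$ is special and $\E ^{\prime (\bullet)}:=\W'\widehat{\otimes} ^\L _\W\E ^{(\bullet)}$ lies in $S _0(\fB _\mathrm{div},\mathfrak{C})(\X')$, with $\X':=\X\times _{\Spf \W}\Spf \W'$. To check $\E ^{(\bullet)}\in S _0(\fB _\mathrm{div},\mathfrak{C})(\X)$ I would argue straight from the definition: given a smooth morphism $f\colon\Y\to\X$ of smooth formal $\W$-schemes and a divisor $T$ of $Y$, use the canonical isomorphism $\widehat{\B} ^{(\bullet)} _\Y(T)\widehat{\otimes} ^\L _{\O _\Y ^{(\bullet)}}f ^{!(\bullet)}\E ^{(\bullet)}\riso(\hdag T)(f ^{!(\bullet)}\E ^{(\bullet)})$ of \ref{ODdivcohe} to reduce to showing $(\hdag T)(f ^{!(\bullet)}\E ^{(\bullet)})\in\mathfrak{C}(\Y)$. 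Base changing along $\alpha$ I set $\Y':=\Y\times _{\Spf \W}\Spf \W'=\Y\times _\X\X'$, with $g\colon\Y'\to\Y$ and $\pi\colon\X'\to\X$ the projections, $f'\colon\Y'\to\X'$ the (smooth) base change of $f$, and $T'$ the pullback divisor. Transitivity of $(-)^{!}$ gives $g ^{!(\bullet)}f ^{!(\bullet)}\E ^{(\bullet)}\riso f ^{\prime!(\bullet)}\pi ^{!(\bullet)}\E ^{(\bullet)}=f ^{\prime!(\bullet)}\E ^{\prime (\bullet)}$, and since $(\hdag T)$ and the extraordinary inverse image commute with smooth pullbacks and with base change (localisation triangle \ref{tri-loc-berthelot}, \ref{f!commoub}, \ref{comm-chg-base}), one obtains $\W'\widehat{\otimes} ^\L _\W\big((\hdag T)(f ^{!(\bullet)}\E ^{(\bullet)})\big)\riso(\hdag T')(f ^{\prime!(\bullet)}\E ^{\prime (\bullet)})$, which lies in $\mathfrak{C}(\Y')$ by the hypothesis on $\E ^{\prime (\bullet)}$. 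As $\Y'=\Y\times _{\Spf \W}\Spf \W'$ and $\alpha$ is special, stability of $\mathfrak{C}$ under special descent of the base yields $(\hdag T)(f ^{!(\bullet)}\E ^{(\bullet)})\in\mathfrak{C}(\Y)$, hence $\E ^{(\bullet)}\in S _0(\fB _\mathrm{div},\mathfrak{C})(\X)$. This part is routine.

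For $S$: now $\E ^{\prime (\bullet)}:=\W'\widehat{\otimes} ^\L _\W\E ^{(\bullet)}\in S(\fB _\mathrm{div},\mathfrak{C})(\X')$ for the special $\alpha$, and for an arbitrary morphism $\beta\colon(\W,\W ^\flat)\to(\widetilde\W,\widetilde\W ^\flat)$ I must show $\widetilde\E ^{(\bullet)}:=\widetilde\W\widehat{\otimes} ^\L _\W\E ^{(\bullet)}\in S _0(\fB _\mathrm{div},\mathfrak{C})(\widetilde\X)$, where $\widetilde\X:=\X\times _{\Spf \W}\Spf \widetilde\W$. Since the first part already shows $S _0(\fB _\mathrm{div},\mathfrak{C})$ to be stable under special descent of the base, it suffices to produce a special morphism $(\widetilde\W,\widetilde\W ^\flat)\to(\W'',\widetilde\W ^\flat)$ such that $\W''\widehat{\otimes} ^\L _{\widetilde\W}\widetilde\E ^{(\bullet)}$ lies in $S _0(\fB _\mathrm{div},\mathfrak{C})$ of the corresponding base change. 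I would obtain it by amalgamating $\alpha$ and $\beta$ into a commutative square whose other two arrows are $(\W',\W ^\flat)\to(\W'',\widetilde\W ^\flat)$ and $(\widetilde\W,\widetilde\W ^\flat)\to(\W'',\widetilde\W ^\flat)$, the latter special: by transitivity of base change (\ref{comm-chg-base}) one has $\W''\widehat{\otimes} ^\L _{\widetilde\W}\widetilde\E ^{(\bullet)}\riso\W''\widehat{\otimes} ^\L _{\W'}\E ^{\prime (\bullet)}$, the relevant fibre products of $\X,\X',\widetilde\X$ over $\Spf \W''$ all coincide, and the defining property of $S$ applied to $(\W',\W ^\flat)\to(\W'',\widetilde\W ^\flat)$ puts $\W''\widehat{\otimes} ^\L _{\W'}\E ^{\prime (\bullet)}$ into $S _0(\fB _\mathrm{div},\mathfrak{C})$ of that scheme; invoking once more the special‑descent stability of $S _0(\fB _\mathrm{div},\mathfrak{C})$ gives $\widetilde\E ^{(\bullet)}\in S _0(\fB _\mathrm{div},\mathfrak{C})(\widetilde\X)$, and $\beta$ being arbitrary yields $\E ^{(\bullet)}\in S(\fB _\mathrm{div},\mathfrak{C})(\X)$.

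The main obstacle is the construction of that amalgamation square. Here I would use that $\alpha$ being special means $\W\subseteq\W'\subseteq\W ^\flat$ with $\W'$ finite over $\W$ and with residue field radicial over that of $\W$ (it sits in the perfect closure), so that the composite $\W'\hookrightarrow\W ^\flat\to\widetilde\W ^\flat$ is defined and $\beta$-compatible; then I would let $\W''$ be the integral closure of $\widetilde\W$ in the fraction field of the sub-$\widetilde\W$-algebra of $\widetilde\W ^\flat$ generated by the image of $\W'$. By \ref{special-filtered-pre} this $\W''$ is a complete discrete valuation ring, finite over $\widetilde\W$; since $\widetilde\W ^\flat$ is integrally closed it contains $\W''$, so $\W''$ is a special $\widetilde\W$-algebra in $\widetilde\W ^\flat$ and $\widetilde\W\to\W''$ underlies a special morphism $(\widetilde\W,\widetilde\W ^\flat)\to(\W'',\widetilde\W ^\flat)$; and since the residue field of $\W''$ is radicial over that of $\widetilde\W$, the perfect ring $\widetilde\W ^\flat$ is a perfectification of $\W''$. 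The inclusions $\W'\hookrightarrow\W''$ and $\widetilde\W\hookrightarrow\W''$, together with $\W''\hookrightarrow\widetilde\W ^\flat$ and the given map $\V ^\flat\to\widetilde\W ^\flat$, provide the remaining morphisms of $\mathrm{DVR}(\V,\V ^\flat)$, the various compatibilities being immediate from the construction. With this square in hand, the two cases above complete the proof.
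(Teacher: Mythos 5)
Your overall strategy — an amalgamation of the special morphism $\alpha$ with the arbitrary morphism $\beta$ inside $\mathrm{DVR}(\V,\V^\flat)$, followed by a special‑descent step — matches the paper's, and your explicit reduction of the $S$ case to $S_0$ special‑descent stability is a perfectly reasonable (and arguably cleaner) organization of the same idea; the $\W''$ you construct plays the role of the paper's $\widetilde\W'$.

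There is, however, a genuine gap in your $S_0$ step. You reduce to showing $(\hdag T)\bigl(f^{!(\bullet)}\E^{(\bullet)}\bigr)\in\mathfrak{C}(\Y)$, and you conclude by invoking the special‑descent stability of $\mathfrak{C}$. But by Definition \ref{dfn-stable-data}, special descent of the base only applies to complexes that are already known to lie in $\smash{\underrightarrow{LD}}^{\mathrm{b}}_{\Q,\mathrm{coh}}(\smash{\widehat{\D}}_\Y^{(\bullet)})$. While $f^{!(\bullet)}\E^{(\bullet)}$ is coherent (since $f$ is smooth, \ref{stab-coh-f^!}), the localisation functor $(\hdag T)$ does \emph{not} preserve coherence in general — this is precisely why Theorem \ref{coh-ss-div} is nontrivial. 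You have only verified that the base change $\W'\,\smash{\widehat{\otimes}}^\L_\W\,(\hdag T)(f^{!(\bullet)}\E^{(\bullet)})\cong(\hdag T')(f'^{!(\bullet)}\E'^{(\bullet)})$ is coherent (being in $\mathfrak{C}(\Y')$). Descending coherence along the finite extension $\W\to\W'$ is exactly the content of Proposition \ref{ind-desc-coh-chgbase}, which the paper invokes at this point and which your proof never mentions. Without it, the special‑descent hypothesis on $\mathfrak{C}$ cannot be applied and your proof is incomplete. Insert that citation (or reprove it) before the final application of special descent, and the argument goes through.
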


\begin{proof}
Since the case where $\sharp= 0$ is similar and easier, let us treat the other case.
Let $(\W, \W ^\flat)$ be an object of $\mathrm{DVR}  (\V,\V ^\flat)$,
$\X$ be a smooth formal scheme  over $\W$, 
$\E ^{(\bullet)}\in 
\smash{\underrightarrow{LD}} ^{\mathrm{b}} _{\Q,\mathrm{coh}} 
( \smash{\widehat{\D}} _{\X} ^{(\bullet)})$.
We suppose that
there exists 
a special morphism
$(\W, \W ^\flat)
\to 
(\W ', \W ^\flat)$
such that 
$ \W'  \smash{\widehat{\otimes}}^\L
_{\W}  \E^{(\bullet) }\in 
S (\fB _\mathrm{div}, \mathfrak{C})  (\X \times _{\Spf \W} \Spf \W ')$.
Let us check that
$\E^{(\bullet) }\in S (\fB _\mathrm{div}, \mathfrak{C})  (\X )$.

Let $(\W, \W ^\flat)
\to 
(\widetilde{\W}, \widetilde{\W} ^{\flat})$ 
be a morphism of 
$\mathrm{DVR}  (\V,\V ^\flat)$.
Set $\widetilde{\X} := \X \times _{\Spf \W} \Spf \widetilde{\W}$.
We have to check 
$\widetilde{\E} ^{(\bullet)}:= \widetilde{\W}  \smash{\widehat{\otimes}}^\L_{\W}  \E^{(\bullet) }\in 
S  _0 (\mathfrak{D}, \mathfrak{C}) (\widetilde{\X})$.
Let 
$f\colon \widetilde{\Y} \to \widetilde{\X}$ 
be a smooth morphism of smooth formal $\widetilde{\W}$-schemes, 
$\widetilde{Z} $ be a divisor of 
$\widetilde{Y}$.
Let $\widetilde{\W} '$ be the integral closure of the ring
$\mathrm{im} (\W '\to \widetilde{\W} ^{\flat})$.
We get the special morphism
$(\widetilde{\W}, \widetilde{\W} ^{\flat})
\to 
(\widetilde{\W}', \widetilde{\W} ^{\flat})$ (see \ref{special-filtered-pre}).
This yields
$\widetilde{\W}'  \smash{\widehat{\otimes}}^\L_{\widetilde{\W}}   \widetilde{\E} ^{(\bullet)}
\riso 
\widetilde{\W}'  \smash{\widehat{\otimes}}^\L_{\W'}  
( \W'  \smash{\widehat{\otimes}}^\L
_{\W}  
\E^{(\bullet) })
\in 
S  _0 (\fB _\mathrm{div}, \mathfrak{C}) (\widetilde{\X} ')$,
where
$\widetilde{\X} ' := \X \times _{\Spf \W} \Spf \widetilde{\W} '$.
Hence,
$\widetilde{\W}'  \smash{\widehat{\otimes}}^\L_{\widetilde{\W}}  
\left (
\smash{\widetilde{\B}} _{\widetilde{\Y}} ^{(\bullet)} (\widetilde{Z} )
\smash{\widehat{\otimes}}^\L
_{\O  _{\widetilde{\Y}}} f ^{!(\bullet)} (\widetilde{\E}^{(\bullet) })
\right) 
\in
\fC ( \widetilde{\Y} \times _{\Spf \widetilde{\W}} \Spf \widetilde{\W} ')$. 
Since 
$f ^{!(\bullet)} (\widetilde{\E}^{(\bullet) })
\in
\smash{\underrightarrow{LD}} ^{\mathrm{b}} _{\Q,\mathrm{coh}} (\widetilde{\Y})$,
using \ref{ind-desc-coh-chgbase}, this yields
$\smash{\widetilde{\B}} _{\widetilde{\Y}} ^{(\bullet)} (\widetilde{Z} )
\smash{\widehat{\otimes}}^\L
_{\O  _{\widetilde{\Y}}} f ^{!(\bullet)} (\widetilde{\E}^{(\bullet) })
\in
\smash{\underrightarrow{LD}} ^{\mathrm{b}} _{\Q,\mathrm{coh}} (\widetilde{\Y})$.
Since $\fC$ is stable under special descent of the base, we are done.
\end{proof}

\begin{coro}
\label{oc-st-sp-desc2}
The data of coefficients $\smash{\underrightarrow{LD}} ^{\mathrm{b}} _{\Q,\mathrm{coh}}$,
$\smash{\underrightarrow{LD}} ^{\mathrm{b}} _{\Q,\mathrm{ovcoh}}$,
and 
$\smash{\underrightarrow{LD}} ^{\mathrm{b}} _{\Q,\mathrm{oc}}$  
are stable under special descent of the base.
\end{coro}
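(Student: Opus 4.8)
The plan is to deduce all three assertions from Proposition \ref{oc-st-sp-desc}, whose hypothesis requires only that the data of coefficients it is applied to be itself stable under special descent of the base.

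First I would observe that the data of coefficients $\smash{\underrightarrow{LD}} ^{\mathrm{b}} _{\Q,\mathrm{coh}}$ is tautologically stable under special descent of the base: in the formulation of Definition \ref{dfn-stable-data}, the starting hypothesis already requires the object $\E ^{(\bullet)}$ to lie in $\smash{\underrightarrow{LD}} ^{\mathrm{b}} _{\Q,\mathrm{coh}} ( \smash{\widehat{\D}} _{\X} ^{(\bullet)})$, which is precisely the conclusion to be established. The only genuine content hiding behind this triviality is the coherence-descent statement \ref{ind-desc-coh-chgbase} (ensuring that descent along a finite base change of complete discrete valuation rings really does detect coherence), but that is already absorbed into the proof of Proposition \ref{oc-st-sp-desc} and need not be reinvoked here. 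Hence $\smash{\underrightarrow{LD}} ^{\mathrm{b}} _{\Q,\mathrm{coh}}$ satisfies the assertion.

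Next, by construction (see Example \ref{ex-cst-surcoh}) one has $\smash{\underrightarrow{LD}} ^{\mathrm{b}} _{\Q,\mathrm{ovcoh}} = S _0 (\fB _\mathrm{div}, \smash{\underrightarrow{LD}} ^{\mathrm{b}} _{\Q,\mathrm{coh}})$ and $\smash{\underrightarrow{LD}} ^{\mathrm{b}} _{\Q,\mathrm{oc}} = S (\fB _\mathrm{div}, \smash{\underrightarrow{LD}} ^{\mathrm{b}} _{\Q,\mathrm{coh}})$. Applying Proposition \ref{oc-st-sp-desc} with $\mathfrak{C} = \smash{\underrightarrow{LD}} ^{\mathrm{b}} _{\Q,\mathrm{coh}}$ (which is stable under special descent of the base by the previous paragraph), once with $S _\sharp = S _0$ and once with $S _\sharp = S$, then yields that both $\smash{\underrightarrow{LD}} ^{\mathrm{b}} _{\Q,\mathrm{ovcoh}}$ and $\smash{\underrightarrow{LD}} ^{\mathrm{b}} _{\Q,\mathrm{oc}}$ are stable under special descent of the base.

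There is no real obstacle in this deduction; the substantive work is carried entirely by Proposition \ref{oc-st-sp-desc}, and ultimately by the faithful-flatness and coherence-descent results \ref{desc-coh-chgbase} and \ref{ind-desc-coh-chgbase} for finite extensions of complete discrete valuation rings. The one bookkeeping point worth recording is that a special morphism of $\mathrm{DVR}(\V,\V ^\flat)$ is in particular a finite morphism, so that those results do apply; this is immediate from Definition \ref{def-perfectification}, since a special $\V$-algebra is by definition a finite sub-$\V$-algebra of $\V ^\flat$.
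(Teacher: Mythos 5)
Your proof is correct and is exactly the intended deduction: the paper gives no separate argument for this corollary because, as you note, stability of $\smash{\underrightarrow{LD}}^{\mathrm{b}}_{\Q,\mathrm{coh}}$ under special descent of the base is tautological from Definition \ref{dfn-stable-data} (the complex is assumed coherent from the start), and the other two cases then follow by plugging $\mathfrak{C}=\smash{\underrightarrow{LD}}^{\mathrm{b}}_{\Q,\mathrm{coh}}$ into Proposition \ref{oc-st-sp-desc} for $S_\sharp\in\{S_0,S\}$. One small clarification on phrasing: \ref{ind-desc-coh-chgbase} is not what makes the first case true (the tautology alone suffices); it is what the proof of \ref{oc-st-sp-desc} needs internally to deduce coherence of the intermediate localized tensor product, which your last sentence correctly identifies, so the substance of your reasoning is right.
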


\begin{prop}
\label{S(D,C)stability8}
Let $\mathfrak{C}$ and $\mathfrak{D}$ be two data of coefficients.
Suppose that $\fD$ is stable under smooth extraordinary pullbacks and special base change, 
and that $\fC$ 
is included in 
$\smash{\underrightarrow{LD}} ^{\mathrm{b}} _{\Q,\mathrm{ovcoh}}$ 
and 
is 
stable under  shifts, 
and special descent of the base. 
Then  $S(\mathfrak{D}, \mathfrak{C})$ is 
stable under special descent of the base. 
\end{prop}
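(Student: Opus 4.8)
The plan is to adapt, essentially verbatim, the proof of \ref{oc-st-sp-desc}, the only new feature being that the ``test coefficients'' $\FF ^{(\bullet)}$ occurring in the definition of $S _0 (\mathfrak{D}, \mathfrak{C})$ now range over an arbitrary $\mathfrak{D}$ rather than over $\fB _{\mathrm{div}}$, so base change has to be propagated through $\mathfrak{D}$ as well. Fix an object $(\W, \W ^\flat)$ of $\mathrm{DVR}  (\V,\V ^\flat)$, a smooth formal $\W$-scheme $\X$, and $\E ^{(\bullet)}\in \smash{\underrightarrow{LD}} ^{\mathrm{b}} _{\Q,\mathrm{coh}} ( \smash{\widehat{\D}} _{\X} ^{(\bullet)})$ together with a special morphism $(\W, \W ^\flat)\to (\W ', \W ^\flat)$ such that $\W'  \smash{\widehat{\otimes}}^\L_{\W}  \E^{(\bullet)}\in S(\mathfrak{D}, \mathfrak{C})(\X')$, where $\X' := \X \times _{\Spf \W} \Spf \W'$. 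To prove $\E ^{(\bullet)}\in S(\mathfrak{D}, \mathfrak{C})(\X)$, fix an arbitrary morphism $(\W, \W ^\flat)\to (\widetilde{\W}, \widetilde{\W} ^\flat)$ of $\mathrm{DVR}  (\V,\V ^\flat)$, set $\widetilde{\X} := \X \times _{\Spf \W} \Spf \widetilde{\W}$ and $\widetilde{\E} ^{(\bullet)}:= \widetilde{\W}  \smash{\widehat{\otimes}}^\L_{\W}  \E^{(\bullet)}$, and let $\widetilde{\W}'$ be the integral closure of $\mathrm{im}(\W'\to \widetilde{\W} ^\flat)$ (cf. \ref{special-filtered-pre}), exactly as in the proof of \ref{oc-st-sp-desc}. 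Then $\widetilde{\W}'$ is a finite sub-$\widetilde{\W}$-algebra of $\widetilde{\W} ^\flat$, so $(\widetilde{\W}, \widetilde{\W} ^\flat)\to (\widetilde{\W}', \widetilde{\W} ^\flat)$ is special, and — using injectivity of $\widetilde{\W}'\hookrightarrow \widetilde{\W} ^\flat$ — the square joining $(\W, \W ^\flat)$, $(\W ', \W ^\flat)$, $(\widetilde{\W}, \widetilde{\W} ^\flat)$ and $(\widetilde{\W}', \widetilde{\W} ^\flat)$ commutes in $\mathrm{DVR}  (\V,\V ^\flat)$. Set $\widetilde{\X}' := \X \times _{\Spf \W} \Spf \widetilde{\W}'$.

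By transitivity of base change (\ref{comm-chg-base}) and commutativity of that square, $\widetilde{\W}'  \smash{\widehat{\otimes}}^\L_{\W'}(\W'  \smash{\widehat{\otimes}}^\L_{\W}  \E^{(\bullet)})\riso \widetilde{\W}'  \smash{\widehat{\otimes}}^\L_{\widetilde{\W}}  \widetilde{\E}^{(\bullet)}$; applying the defining property of $S(\mathfrak{D}, \mathfrak{C})$ to $\W'  \smash{\widehat{\otimes}}^\L_{\W}  \E^{(\bullet)}$ along the morphism $(\W ', \W ^\flat)\to (\widetilde{\W}', \widetilde{\W} ^\flat)$ gives $\widetilde{\W}'  \smash{\widehat{\otimes}}^\L_{\widetilde{\W}}  \widetilde{\E}^{(\bullet)}\in S _0 (\mathfrak{D}, \mathfrak{C})(\widetilde{\X}')$. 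It then remains to deduce $\widetilde{\E}^{(\bullet)}\in S _0 (\mathfrak{D}, \mathfrak{C})(\widetilde{\X})$. So let $f\colon \widetilde{\Y}\to \widetilde{\X}$ be a smooth morphism of smooth formal $\widetilde{\W}$-schemes and $\FF ^{(\bullet)}\in \mathfrak{D}(\widetilde{\Y})$; write $\widetilde{\Y}':= \widetilde{\Y}\times _{\Spf \widetilde{\W}}\Spf \widetilde{\W}'$, $\pi\colon \widetilde{\Y}'\to \widetilde{\Y}$ the (finite) projection, $f'\colon \widetilde{\Y}'\to \widetilde{\X}'$ the induced smooth morphism. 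Since $\mathfrak{D}$ is stable under special base change (and under smooth extraordinary pullbacks), $\widetilde{\W}'  \smash{\widehat{\otimes}}^\L_{\widetilde{\W}}  \FF^{(\bullet)}\in \mathfrak{D}(\widetilde{\Y}')$; and since base change commutes with $\smash{\widehat{\otimes}}^\L_{\O}$ and with extraordinary pullbacks (\ref{comm-chg-base}),
\begin{equation}
\notag
\widetilde{\W}'  \smash{\widehat{\otimes}}^\L_{\widetilde{\W}}\bigl(\FF ^{(\bullet)}\smash{\widehat{\otimes}}^\L _{\O  _{\widetilde{\Y}}} f ^{!(\bullet)}(\widetilde{\E}^{(\bullet)})\bigr)
\riso
\bigl(\widetilde{\W}'  \smash{\widehat{\otimes}}^\L_{\widetilde{\W}}  \FF^{(\bullet)}\bigr)\smash{\widehat{\otimes}}^\L _{\O  _{\widetilde{\Y}'}} f ^{\prime !(\bullet)}\bigl(\widetilde{\W}'  \smash{\widehat{\otimes}}^\L_{\widetilde{\W}}  \widetilde{\E}^{(\bullet)}\bigr),
\end{equation}
which lies in $\mathfrak{C}(\widetilde{\Y}')$ by the $S _0$-membership of $\widetilde{\W}'  \smash{\widehat{\otimes}}^\L_{\widetilde{\W}}  \widetilde{\E}^{(\bullet)}$ applied to the smooth morphism $f'$ and the test coefficient $\widetilde{\W}'  \smash{\widehat{\otimes}}^\L_{\widetilde{\W}}  \FF^{(\bullet)}$.

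Finally, set $G ^{(\bullet)}:=\FF ^{(\bullet)}\smash{\widehat{\otimes}}^\L _{\O  _{\widetilde{\Y}}} f ^{!(\bullet)}(\widetilde{\E}^{(\bullet)})\in \smash{\underrightarrow{LD}} ^{\mathrm{b}} _{\Q,\mathrm{qc}}(\widetilde{\Y})$. The previous paragraph gives $\widetilde{\W}'  \smash{\widehat{\otimes}}^\L_{\widetilde{\W}}  G^{(\bullet)}\in \mathfrak{C}(\widetilde{\Y}')$, hence — using the hypothesis $\mathfrak{C}\subset \smash{\underrightarrow{LD}} ^{\mathrm{b}} _{\Q,\mathrm{ovcoh}}$ — this base change is overcoherent over $\widetilde{\Y}'$; one then descends coherence along the finite morphism $\widetilde{\W}\to \widetilde{\W}'$ in the spirit of \ref{ind-desc-coh-chgbase} (the overcoherence being precisely what makes the localizations $(\hdag T')$ of $\widetilde{\W}'  \smash{\widehat{\otimes}}^\L_{\widetilde{\W}}  G^{(\bullet)}$ coherent, so that the descent argument of \ref{ind-desc-coh-chgbase}, built on the faithful-flatness facts of \ref{lem-desc-coh-chgbase}, can be run) to conclude $G ^{(\bullet)}\in \smash{\underrightarrow{LD}} ^{\mathrm{b}} _{\Q,\mathrm{coh}}(\widetilde{\Y})$. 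Since $\mathfrak{C}$ is stable under special descent of the base and $\widetilde{\W}'  \smash{\widehat{\otimes}}^\L_{\widetilde{\W}}  G^{(\bullet)}\in \mathfrak{C}(\widetilde{\Y}')$, we obtain $G ^{(\bullet)}\in \mathfrak{C}(\widetilde{\Y})$, which is exactly the required membership; the stability of $\mathfrak{C}$ under shifts is used only to absorb the degree shifts appearing in the tensor--pullback compatibilities (\ref{f!T'Totimes}). The main obstacle is this last step: unlike in \ref{oc-st-sp-desc}, where the complex in play was literally a localization $(\hdag \widetilde{Z})$ of a manifestly coherent complex and \ref{ind-desc-coh-chgbase} applied word for word, here $\FF ^{(\bullet)}$ is an arbitrary coherent coefficient, so establishing the coherence of $G ^{(\bullet)}$ over $\widetilde{\Y}$ requires carrying out the faithfully flat descent of coherence along $\widetilde{\W}\to \widetilde{\W}'$ with some care.
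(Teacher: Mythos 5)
Your setup and the first two thirds of the argument track the paper's proof of \ref{oc-st-sp-desc} faithfully: the reduction to a special morphism $(\widetilde{\W}, \widetilde{\W}^\flat) \to (\widetilde{\W}', \widetilde{\W}^\flat)$ via the integral closure of $\mathrm{im}(\W' \to \widetilde{\W}^\flat)$, and the verification that $\widetilde{\W}'\smash{\widehat{\otimes}}^\L_{\widetilde{\W}} G^{(\bullet)} \in \mathfrak{C}(\widetilde{\Y}')$, are both fine. The problem is exactly where you flag the ``main obstacle,'' and your proposed fix does not hold up. Proposition \ref{ind-desc-coh-chgbase} (and its source \ref{desc-coh-chgbase}) is a statement about \emph{localizations} $(\hdag Z)$ of an already-coherent complex: the crucial hypothesis in \ref{desc-coh-chgbase} is that $\E$ is a $\D^\dag_{\X/\S}(\hdag Z)_\Q$-coherent module, and the descent proceeds by faithful flatness of the extension of these localized rings. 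In \ref{oc-st-sp-desc} this applied word for word because the test coefficient was $\widetilde{\B}^{(\bullet)}_{\widetilde{\Y}}(\widetilde{Z})$, so $\widetilde{\B}^{(\bullet)}_{\widetilde{\Y}}(\widetilde{Z})\smash{\widehat{\otimes}}^\L_{\O_{\widetilde{\Y}}} f^{!(\bullet)}(\widetilde{\E}^{(\bullet)}) = (\hdag \widetilde{Z})\,f^{!(\bullet)}(\widetilde{\E}^{(\bullet)})$ with $f^{!(\bullet)}(\widetilde{\E}^{(\bullet)})$ coherent. Here $\FF^{(\bullet)}$ is an arbitrary member of $\mathfrak{D}$, so $G^{(\bullet)} := \FF^{(\bullet)}\smash{\widehat{\otimes}}^\L_{\O_{\widetilde{\Y}}} f^{!(\bullet)}(\widetilde{\E}^{(\bullet)})$ is not a $(\hdag Z)$ of anything, and there is no $\D^\dag(\hdag T)_\Q$ over which it is a priori coherent; \ref{ind-desc-coh-chgbase} simply does not apply, ``in the spirit of'' or otherwise, and no general faithfully-flat descent of Berthelot's coherence for arbitrary quasi-coherent complexes is available in the paper.

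The paper's proof avoids this by never forming the internal tensor product until its coherence is already secured. Instead it considers the \emph{external} product $\widetilde{\FF}^{(\bullet)}\smash{\widehat{\boxtimes}}^\L_{\widetilde{\W}} f^{!(\bullet)}(\widetilde{\E}^{(\bullet)})$ on $\widetilde{\Y}\times_{\Spf\widetilde{\W}}\widetilde{\Y}$, which is automatically coherent by \ref{exact-boxtimes}/\ref{boxtimesLDcoh}. One then checks, using the stability of $\fD$ under smooth pullbacks and special base change, that the special base change of this external product lies in $\fC$, and applies special descent of the base for $\fC$ directly to the (coherent!) external product. Finally, since $\fC \subset \smash{\underrightarrow{LD}}^{\mathrm{b}}_{\Q,\mathrm{ovcoh}}$, the external product is overcoherent, hence its restriction along the diagonal $\delta^{!(\bullet)}$ is coherent; this diagonal restriction is $G^{(\bullet)}$ up to a shift, which finally establishes the coherence of $G^{(\bullet)}$. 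With that in hand one applies special descent of the base once more. This external-product detour is precisely the step your proof is missing, and it is not a cosmetic difference: it is how the hypothesis $\fC \subset \smash{\underrightarrow{LD}}^{\mathrm{b}}_{\Q,\mathrm{ovcoh}}$ (which you invoke but do not exploit correctly) actually enters.
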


\begin{proof}
Let $(\W, \W ^\flat)$ be an object of $\mathrm{DVR}  (\V,\V ^\flat)$,
$\X$ be a smooth formal scheme  over $\W$, 
$\E ^{(\bullet)}\in 
\smash{\underrightarrow{LD}} ^{\mathrm{b}} _{\Q,\mathrm{coh}} 
( \smash{\widehat{\D}} _{\X} ^{(\bullet)})$.
We suppose that
there exists 
a special morphism
$(\W, \W ^\flat)
\to 
(\W ', \W ^\flat)$
such that 
$ \W'  \smash{\widehat{\otimes}}^\L
_{\W}  \E^{(\bullet) }\in 
S (\mathfrak{D}, \mathfrak{C})  (\X \times _{\Spf \W} \Spf \W ')$.
Let us check that
$\E^{(\bullet) }\in S (\mathfrak{D}, \mathfrak{C})  (\X )$.

Let $(\W, \W ^\flat)
\to 
(\widetilde{\W}, \widetilde{\W} ^{\flat})$ 
be a morphism of 
$\mathrm{DVR}  (\V,\V ^\flat)$.
Set $\widetilde{\X} := \X \times _{\Spf \W} \Spf \widetilde{\W}$.
We have to check 
$\widetilde{\E} ^{(\bullet)}:= \widetilde{\W}  \smash{\widehat{\otimes}}^\L_{\W}  \E^{(\bullet) }\in 
S  _0 (\mathfrak{D}, \mathfrak{C}) (\widetilde{\X})$.

Let $\widetilde{\W} '$ be the integral closure of the ring
$\mathrm{im} (\W '\to \widetilde{\W} ^{\flat})$.
We get the special morphism
$(\widetilde{\W}, \widetilde{\W} ^{\flat})
\to 
(\widetilde{\W}', \widetilde{\W} ^{ \flat})$ (see \ref{special-filtered-pre}).
This yields
$\widetilde{\W}'  \smash{\widehat{\otimes}}^\L_{\widetilde{\W}}   \widetilde{\E} ^{(\bullet)}
\riso 
\widetilde{\W}'  \smash{\widehat{\otimes}}^\L_{\W'}  
( \W'  \smash{\widehat{\otimes}}^\L
_{\W}  \E^{(\bullet) })
\in 
S  _0 (\mathfrak{D}, \mathfrak{C}) (\widetilde{\X} ')$,
where
$\widetilde{\X} ' := \X \times _{\Spf \W} \Spf \widetilde{\W} '$.
Let 
$f\colon \widetilde{\Y} \to \widetilde{\X}$ 
be a smooth morphism of smooth formal $\widetilde{\W}$-schemes, 
$\widetilde{\FF} ^{(\bullet)}
\in
\mathfrak{D} (\widetilde{\Y})$.
Then 
$\widetilde{\FF} ^{(\bullet)}
\smash{\widehat{\boxtimes}}^\L
_{\widetilde{\W}} f ^{!(\bullet)} (\widetilde{\E}^{(\bullet) })
\in 
\smash{\underrightarrow{LD}} ^{\mathrm{b}} _{\Q,\mathrm{coh}} 
( \smash{\widehat{\D}} _{\widetilde{\Y} \times _{\Spf \widetilde{\W}} \widetilde{\Y}} ^{(\bullet)})$.
Since
$\fD$ is stable under smooth extraordinary pullbacks and special base change,
since base changes commute with (exterior) tensor products and extraordinary pullbacks,
since 
$\widetilde{\W}'  \smash{\widehat{\otimes}}^\L_{\widetilde{\W}}   \widetilde{\E} ^{(\bullet)}
\in 
S  _0 (\mathfrak{D}, \mathfrak{C}) (\widetilde{\X} ')$,
and since 
$\fC$ is stable under shifts, 
then 
$\widetilde{\W}'  \smash{\widehat{\otimes}}^\L_{\widetilde{\W}}   \left (
\widetilde{\FF} ^{(\bullet)}
\smash{\widehat{\boxtimes}}^\L
_{\widetilde{\W}} f ^{!(\bullet)} (\widetilde{\E}^{(\bullet) })
\right )
\in 
\fC( \Spf \widetilde{\W} '\times _{\Spf \widetilde{\W}} \widetilde{\Y} \times _{\Spf \widetilde{\W}} \widetilde{\Y}) $.
Since 
$\fC$ is stable under  special descent of the base, 
this yields 
$\widetilde{\FF} ^{(\bullet)}
\smash{\widehat{\boxtimes}}^\L
_{\widetilde{\W}} f ^{!(\bullet)} (\widetilde{\E}^{(\bullet) })
\in 
\fC( \widetilde{\Y} \times _{\Spf \widetilde{\W}} \widetilde{\Y}) $.
Since $\fC$
is included in 
$\smash{\underrightarrow{LD}} ^{\mathrm{b}} _{\Q,\mathrm{ovcoh}}$,
since 
$\widetilde{\FF} ^{(\bullet)}
\smash{\widehat{\otimes}}^\L
_{\widetilde{\W}} f ^{!(\bullet)} (\widetilde{\E}^{(\bullet) })$
and
$\delta ^{!(\bullet)}
\left (
\widetilde{\FF} ^{(\bullet)}
\smash{\widehat{\boxtimes}}^\L
_{\widetilde{\W}} f ^{!(\bullet)} (\widetilde{\E}^{(\bullet) })
\right )$
are isomorphic up to a shift, 
then this implies 
$\widetilde{\FF} ^{(\bullet)}
\smash{\widehat{\otimes}}^\L
_{\O  _{\widetilde{\Y}}} f ^{!(\bullet)} (\widetilde{\E}^{(\bullet) })
\in
\smash{\underrightarrow{LD}} ^{\mathrm{b}} _{\Q,\mathrm{coh}} 
(\widetilde{\Y})$.
Since 
$\widetilde{\W}'  \smash{\widehat{\otimes}}^\L_{\widetilde{\W}}   \widetilde{\E} ^{(\bullet)}
\in 
S  _0 (\mathfrak{D}, \mathfrak{C}) (\widetilde{\X} ')$,
then 
$\widetilde{\W}'  \smash{\widehat{\otimes}}^\L_{\widetilde{\W}}  
\left (
\widetilde{\FF} ^{(\bullet)}
\smash{\widehat{\otimes}}^\L
_{\O  _{\widetilde{\Y}}} f ^{!(\bullet)} (\widetilde{\E}^{(\bullet) })
\right )
\in
\mathfrak{C} 
(\widetilde{\Y}\times _{\widetilde{\W}}\widetilde{\W}' )$.
Since $\fC$
is closed under special descent of the base, 
we get
$\widetilde{\FF} ^{(\bullet)}
\smash{\widehat{\otimes}}^\L
_{\O  _{\Y}} f ^{!(\bullet)} (\widetilde{\E}^{(\bullet) })
\in 
\mathfrak{C} 
(\widetilde{\Y})$
i.e., 
$\widetilde{\E} ^{(\bullet)}\in 
S  _0 (\mathfrak{D}, \mathfrak{C}) (\widetilde{\X})$.
\end{proof}

\subsection{External product stability}

\begin{dfn}
\label{dfnSboxtimes(D,C)}
Let $\mathfrak{C}$ and $\mathfrak{D}$ be two data of coefficients.

\begin{enumerate}
\item We denote by 
$\boxtimes _0 (\mathfrak{D}, \mathfrak{C})$
the data of coefficients defined as follows: 
for any object $(\W, \W ^\flat)$ of $\mathrm{DVR}  (\V,\V ^\flat)$, 
for any  smooth formal scheme $\X$ over $\W$,  
the category 
$\boxtimes _0 (\mathfrak{D}, \mathfrak{C}) (\X)$
is the full subcategory of 
$\smash{\underrightarrow{LD}} ^{\mathrm{b}} _{\Q,\mathrm{coh}} ( \smash{\widehat{\D}} _{\X} ^{(\bullet)})$
consisting of objects  $\E ^{(\bullet)}$
satisfying the following property :
\begin{enumerate}
\item [($\star$)]
for any smooth formal $\W$-scheme $\Y$, 
for any object 
$\FF ^{ (\bullet)}
\in
\mathfrak{D} (\Y)$,
we have 
$\E ^{(\bullet)}
\smash{\widehat{\boxtimes}}^\L
_{\O  _{\Spf \W }} 
\FF^{ (\bullet) }
\in
\mathfrak{C} (\X\times _{\Spf \W} \Y)$.
\end{enumerate}

\item We denote by 
$\boxtimes (\mathfrak{D}, \mathfrak{C})$
the data of coefficients defined as follows: 
for any object $(\W, \W ^\flat)$ of $\mathrm{DVR}  (\V,\V ^\flat)$, 
for any  smooth formal scheme $\X$ over $\W$,  
the category 
$\boxtimes (\mathfrak{D}, \mathfrak{C}) (\X)$
is the full subcategory of 
$\smash{\underrightarrow{LD}} ^{\mathrm{b}} _{\Q,\mathrm{coh}} ( \smash{\widehat{\D}} _{\X} ^{(\bullet)})$
consisting of objects  $\E ^{(\bullet)}$
satisfying the following property :
\begin{enumerate}
\item 
[($\star \star$)]
for any morphism 
$(\W, \W ^\flat)
\to 
(\W ', \W ^{\prime \flat})$ 
of $\mathrm{DVR}  (\V,\V ^\flat)$, 
$ \W'  \smash{\widehat{\otimes}}^\L_{\W}  \E^{(\bullet) }
\in 
\boxtimes _0 (\mathfrak{D}, \mathfrak{C})
(\X \times _{\Spf \W} \Spf \W ')$.
\end{enumerate}
 \item Let $\sharp$ be a symbol so that 
either $\boxtimes _\sharp = \boxtimes _0$
or
$\boxtimes _\sharp = \boxtimes $.

\end{enumerate}

\end{dfn}

\begin{lem}
\label{lem-boxtimesDC}
Let $\mathfrak{C}$ and $\mathfrak{D}$ be two data of coefficients.

\begin{enumerate}
\item 
\label{lem-boxtimesDC1}
If $\mathfrak{D}$ contains $\fB _\emptyset$, then 
$\boxtimes _\sharp(\mathfrak{D}, \mathfrak{C})$ is contained in $\fC$.

 \item 
\label{lem-boxtimesDC2}
If $\fC \subset \fC '$ and $\fD ' \subset \fD$, then 
$\boxtimes _\sharp(\mathfrak{D}, \mathfrak{C}) \subset
\boxtimes _\sharp(\mathfrak{D}', \mathfrak{C}')$.

\item If $\fC$ is stable under devissage
then so is 
$\boxtimes _\sharp(\mathfrak{D}, \mathfrak{C})$.
Moreover, 
$\boxtimes _\sharp(\mathfrak{D}, \mathfrak{C})=
\boxtimes _\sharp(\Delta(\mathfrak{D}), \mathfrak{C})$.

\item The data $\boxtimes (\mathfrak{D}, \mathfrak{C})$ is stable under base change. 
Moreover, if $\fD$ is stable under special base change and $\fC$ is stable under special descent of the base, 
then $\boxtimes _\sharp(\mathfrak{D}, \mathfrak{C})$ is stable under special descent of the base.

\item If $\fC$ is stable under pushforwards (resp. satisfies  $BK ^!$, resp. is local, 
resp. is stable under direct summands), 
then so is 
$\boxtimes _\sharp(\mathfrak{D}, \mathfrak{C})$.

\item If $\fC$ is local and is stable under devissages  and cohomology, 
and if 
$\fD$ is stable under cohomology, 
then 
$\boxtimes _\sharp(\mathfrak{D}, \mathfrak{C})$
is stable under cohomology.
\end{enumerate}

\end{lem}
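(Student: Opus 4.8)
The plan is to treat the six assertions in turn. The first five are formal consequences of the definition of $\boxtimes _\sharp (\fD, \fC)$ in \ref{dfnSboxtimes(D,C)} together with the elementary properties of the exterior tensor product $\smash{\widehat{\boxtimes}} ^\L _{\O _{\Spf \W}}$ of \ref{dfnboxtimes}: it is a triangulated bifunctor which commutes with base change (compare \ref{boxtimesalg-formal}), with restriction to open subschemes, with direct summands, and, via \ref{boxtimes-v+} and \ref{boxtimes-v!formal}, with pushforwards and extraordinary pullbacks; the sixth will use in addition the spectral sequence of \ref{prop-boxtimes} and Lemma \ref{lem-boxtimes}. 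For (1), to see $\boxtimes _0 (\fD, \fC) \subset \fC$ when $\fD \supset \fB _\emptyset$ one specializes the defining property $(\star)$ to $\Y = \Spf \W$ and $\FF ^{(\bullet)} = \O _{\Y} ^{(\bullet)} \in \fB _\emptyset (\Y) \subset \fD (\Y)$, using $\E ^{(\bullet)} \smash{\widehat{\boxtimes}} ^\L _{\O _{\Spf \W}} \O _{\Y} ^{(\bullet)} \riso \E ^{(\bullet)}$; the case of $\boxtimes (\fD, \fC)$ follows by taking the identity morphism of $\mathrm{DVR} (\V, \V ^\flat)$ in $(\star\star)$ and applying the $\boxtimes _0$ case. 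Assertion (2) is immediate. For (3), stability under devissages follows because $- \smash{\widehat{\boxtimes}} ^\L _{\O _{\Spf \W}} \FF ^{(\bullet)}$ is triangulated, so a localization triangle is carried to a triangle of $\smash{\underrightarrow{LD}} ^{\mathrm{b}} _{\Q, \mathrm{coh}}$ in which, if two terms lie in $\fC$, so does the third; the equality $\boxtimes _\sharp (\fD, \fC) = \boxtimes _\sharp (\Delta (\fD), \fC)$ reduces by base change to the $\boxtimes _0$ case, which is proved by induction on the filtration $\Delta _n (\fD)$ of \ref{dfn-Delta(C)}, the base case $\Delta _0 (\fD) = \fD ^+$ using shift stability of $\fC$ and the inductive step using again that $\smash{\widehat{\boxtimes}} ^\L _{\O _{\Spf \W}}$ is triangulated and that $\fC$ is stable under devissages.

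For the first part of (4), stability of $\boxtimes (\fD, \fC)$ under base change follows from transitivity of base change: for a composite $(\W, \W ^\flat) \to (\W ', \W ^{\prime \flat}) \to (\W '', \W ^{\prime \prime \flat})$ one has $\W '' \smash{\widehat{\otimes}} ^\L _{\W '} (\W ' \smash{\widehat{\otimes}} ^\L _{\W} \E ^{(\bullet)}) \riso \W '' \smash{\widehat{\otimes}} ^\L _{\W} \E ^{(\bullet)}$, so the condition $(\star\star)$ for $\W ' \smash{\widehat{\otimes}} ^\L _{\W} \E ^{(\bullet)}$ reduces to that for $\E ^{(\bullet)}$. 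For (5), in each case one carries the test datum along: given a realizable morphism $g \colon \X ' \to \X$, then $g \times \mathrm{id} _\Y$ is again realizable and preserves the properness-of-support hypothesis, and $g _+ (\E ^{\prime (\bullet)}) \smash{\widehat{\boxtimes}} ^\L _{\O _{\Spf \W}} \FF ^{(\bullet)} \riso (g \times \mathrm{id} _\Y) _+ (\E ^{\prime (\bullet)} \smash{\widehat{\boxtimes}} ^\L _{\O _{\Spf \W}} \FF ^{(\bullet)})$ by \ref{boxtimes-v+}, whence one concludes by the pushforward stability of $\fC$ on $\X \times _{\Spf \W} \Y$; similarly a closed immersion $u$ is handled via $u \times \mathrm{id} _\Y$ (same codimension) and \ref{boxtimes-v!formal}, locality via the cover $(\X _i \times _{\Spf \W} \Y) _i$ of $\X \times _{\Spf \W} \Y$, and direct summands since exterior products preserve them; the $\boxtimes$ versions follow because base change commutes with all these operations (see \ref{comm-chg-base}).

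The main obstacle is the special-descent statement of (4). I would first prove it for $\boxtimes _0$: given a special morphism $(\W, \W ^\flat) \to (\W ', \W ^\flat)$ with $\W ' \smash{\widehat{\otimes}} ^\L _{\W} \E ^{(\bullet)} \in \boxtimes _0 (\fD, \fC)(\X ')$, and a test pair $(\Y, \FF ^{(\bullet)})$ with $\FF ^{(\bullet)} \in \fD (\Y)$, one uses that base change commutes with exterior products to get $\W ' \smash{\widehat{\otimes}} ^\L _{\W} (\E ^{(\bullet)} \smash{\widehat{\boxtimes}} ^\L _{\O _{\Spf \W}} \FF ^{(\bullet)}) \riso (\W ' \smash{\widehat{\otimes}} ^\L _{\W} \E ^{(\bullet)}) \smash{\widehat{\boxtimes}} ^\L _{\O _{\Spf \W '}} (\W ' \smash{\widehat{\otimes}} ^\L _{\W} \FF ^{(\bullet)})$; since $\fD$ is stable under special base change the second factor lies in $\fD (\Y ')$, so the left side lies in $\fC (\X ' \times _{\Spf \W '} \Y ') = \fC ((\X \times _{\Spf \W} \Y) \times _{\Spf \W} \Spf \W ')$, and stability of $\fC$ under special descent of the base gives $\E ^{(\bullet)} \smash{\widehat{\boxtimes}} ^\L _{\O _{\Spf \W}} \FF ^{(\bullet)} \in \fC (\X \times _{\Spf \W} \Y)$. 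For $\boxtimes (\fD, \fC)$ one then bootstraps exactly as in the proof of \ref{oc-st-sp-desc}: for any morphism $(\W, \W ^\flat) \to (\widetilde{\W}, \widetilde{\W} ^\flat)$, form $\widetilde{\W} '$ as the integral closure of the image of $\W '$ in $\widetilde{\W} ^\flat$, which gives a special morphism $(\widetilde{\W}, \widetilde{\W} ^\flat) \to (\widetilde{\W} ', \widetilde{\W} ^\flat)$, apply the defining property of $\boxtimes (\fD, \fC)$ for $\W ' \smash{\widehat{\otimes}} ^\L _{\W} \E ^{(\bullet)}$ along $(\W ', \W ^\flat) \to (\widetilde{\W} ', \widetilde{\W} ^\flat)$, and descend along $(\widetilde{\W}, \widetilde{\W} ^\flat) \to (\widetilde{\W} ', \widetilde{\W} ^\flat)$ using the $\boxtimes _0$ case already established.

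Finally, for (6), the implication ``all $\H ^n (\E ^{(\bullet)})$ in $\boxtimes _0 (\fD, \fC)$ $\Rightarrow$ $\E ^{(\bullet)}$ in $\boxtimes _0 (\fD, \fC)$'' follows from part (3) applied to the truncation triangles of $\E ^{(\bullet)}$. For the converse, fix a test pair $(\Y, \FF ^{(\bullet)})$; since $\fD$ is stable under cohomology, $\H ^m (\FF ^{(\bullet)})$ lies in $\fD ^0 (\Y)$ (Notation \ref{ntnC0}), and $\E ^{(\bullet)} \smash{\widehat{\boxtimes}} ^\L _{\O _{\Spf \W}} \H ^m (\FF ^{(\bullet)}) \in \fC (\X \times _{\Spf \W} \Y)$; using stability of $\fC$ under cohomology together with the degenerate case of \ref{prop-boxtimes} (the spectral sequence collapses when one factor is a module, by commutativity of exterior products allowing it to sit in either variable), one gets $\H ^n (\E ^{(\bullet)}) \smash{\widehat{\boxtimes}} ^\L _{\O _{\Spf \W}} \H ^m (\FF ^{(\bullet)}) \riso \H ^n \bigl(\E ^{(\bullet)} \smash{\widehat{\boxtimes}} ^\L _{\O _{\Spf \W}} \H ^m (\FF ^{(\bullet)})\bigr) \in \fC ^0 (\X \times _{\Spf \W} \Y)$ for all $m$; applying Lemma \ref{lem-boxtimes}.\ref{lem-boxtimes1} with $\E ^{0(\bullet)} = \H ^n (\E ^{(\bullet)})$ then yields $\H ^n (\E ^{(\bullet)}) \smash{\widehat{\boxtimes}} ^\L _{\O _{\Spf \W}} \FF ^{(\bullet)} \in \fC (\X \times _{\Spf \W} \Y)$, i.e. $\H ^n (\E ^{(\bullet)}) \in \boxtimes _0 (\fD, \fC)(\X)$. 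The $\boxtimes$ version follows since cohomology commutes with base change. The delicate point here is exactly the bookkeeping of which factor is a module, so that the spectral sequence of \ref{prop-boxtimes} degenerates and the invocations of Lemma \ref{lem-boxtimes} are legitimate.
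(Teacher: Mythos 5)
Your proof is correct and takes essentially the same route as the paper: the first five assertions reduce to the formal compatibilities of $\smash{\widehat{\boxtimes}} ^\L _{\O _{\Spf \W}}$ (triangulated, commutes with base change, open restriction, direct summands, pushforwards via \ref{boxtimes-v+} and extraordinary pullbacks via \ref{boxtimes-v!formal}), exactly as the paper invokes \ref{prop-boxtimes-v+} and declares the rest ``obvious''; and the sixth rests on Lemma \ref{lem-boxtimes}.\ref{lem-boxtimes1}, as in the paper.

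The only genuine divergence is the packaging of part (6). The paper first applies \ref{ntnC0-lemm}.\ref{ntnC0-lemm1} to replace $\fD$ by $\fD ^0$ (so that, by part (3), $\boxtimes _\sharp (\fD, \fC) = \boxtimes _\sharp (\fD ^0, \fC)$) and then gets both directions of cohomology-stability from a single invocation of \ref{lem-boxtimes}.\ref{lem-boxtimes1} with a module test object; you instead keep $\fD$ unchanged and treat the two directions asymmetrically, using devissage (part (3)) for ``$\H ^n$'s imply the complex'' and a combination of \ref{prop-boxtimes}.\ref{prop-boxtimes1} with \ref{lem-boxtimes}.\ref{lem-boxtimes1} for the converse. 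Both arguments require the same implicit ``swap'' bookkeeping about which factor of the exterior product is a module, a point you make explicit and the paper leaves tacit; your version is a bit longer but unpacks the same content. No gaps.
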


\begin{proof}
The first forth statements are obvious. 
The non respective case and the first respective case of the fifth one is a consequence of \ref{prop-boxtimes-v+}.
The other cases are obvious.
It remains to check the sixth one.
Following \ref{ntnC0-lemm}.\ref{ntnC0-lemm1},
since 
$\mathfrak{D}$ is stable under cohomology, 
then $\Delta (\mathfrak{D} ) 
= 
\Delta (\mathfrak{D} ^0) $.
Hence, from the third part of our lemma,
we get 
$\boxtimes _\sharp(\mathfrak{D}, \mathfrak{C})
=
\boxtimes _\sharp(\mathfrak{D}^0, \mathfrak{C})$.
Then, we conclude by using \ref{lem-boxtimes}.\ref{lem-boxtimes1}.
\end{proof}

\subsection{Constructions of stable data of coefficients}

\begin{dfn}
\label{dfn-almostdual}
Let $\mathfrak{D}$ be a data of coefficients over $(\V,\V ^\flat)$.
We say that $\mathfrak{D}$ is almost stable under dual functors if the following property holds:
for any data of coefficients $\mathfrak{C}$ over $(\V,\V ^\flat)$ which is stable under special descent of the base,
devissages, 
direct summands and 
pushforwards,
if $\mathfrak{D} \subset \mathfrak{C}$
then 
$\mathfrak{D} ^{\vee} \subset \mathfrak{C}$.
Remark from the biduality isomorphism that 
the inclusion 
$\mathfrak{D} ^{\vee} \subset \mathfrak{C}$
is equivalent to 
the following one 
$\mathfrak{D} \subset  \mathfrak{C} ^\vee$.
\end{dfn}

\begin{lem}
\label{almostdual-delta}
Let $\mathfrak{D}$ be a data of coefficients over $(\V,\V ^\flat)$.
The data $\mathfrak{D}$ is almost stable under dual functors
if and only if $\Delta (\mathfrak{D})$ is almost stable under dual functors.
\end{lem}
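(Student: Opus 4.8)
The plan is to reduce both implications to the minimality property of the functor $\Delta$ (Definition \ref{dfn-Delta(C)}), namely that $\Delta(\mathfrak{D})$ is the smallest data of coefficients containing $\mathfrak{D}$ and stable under devissages, together with the two elementary facts about duals recorded in \ref{stab-Dvee-3propbis}: monotonicity ($\mathfrak{D}\subset\mathfrak{D}'$ forces $\mathfrak{D}^\vee\subset\mathfrak{D}'^\vee$) and the commutation $\Delta(\mathfrak{D})^\vee=\Delta(\mathfrak{D}^\vee)$. No new computation is needed; the substantive inputs (good behaviour of $\mathfrak{C}^\vee$ under the relevant operations, and the commutation of $\Delta$ with $(-)^\vee$) are already available from \ref{stab-Dvee-3prop} and \ref{stab-Dvee-3propbis}.

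First I would prove the implication ``$\Delta(\mathfrak{D})$ almost stable $\Rightarrow$ $\mathfrak{D}$ almost stable''. Fix a data of coefficients $\mathfrak{C}$ that is stable under special descent of the base, devissages, direct summands and pushforwards, and assume $\mathfrak{D}\subset\mathfrak{C}$. Since $\mathfrak{C}$ is in particular stable under devissages and contains $\mathfrak{D}$, minimality of $\Delta(\mathfrak{D})$ gives $\Delta(\mathfrak{D})\subset\mathfrak{C}$; by the hypothesis on $\Delta(\mathfrak{D})$ this yields $\Delta(\mathfrak{D})^\vee\subset\mathfrak{C}$. Since $\mathfrak{D}\subset\Delta(\mathfrak{D})$, monotonicity of the dual (\ref{stab-Dvee-3propbis}) gives $\mathfrak{D}^\vee\subset\Delta(\mathfrak{D})^\vee\subset\mathfrak{C}$, which is exactly the condition of Definition \ref{dfn-almostdual} for $\mathfrak{D}$.

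For the converse, assume $\mathfrak{D}$ is almost stable under dual functors and fix $\mathfrak{C}$ with the same four stability properties such that $\Delta(\mathfrak{D})\subset\mathfrak{C}$. Then a fortiori $\mathfrak{D}\subset\mathfrak{C}$, so almost stability of $\mathfrak{D}$ gives $\mathfrak{D}^\vee\subset\mathfrak{C}$. As $\mathfrak{C}$ is stable under devissages, minimality of $\Delta(\mathfrak{D}^\vee)$ gives $\Delta(\mathfrak{D}^\vee)\subset\mathfrak{C}$, and by \ref{stab-Dvee-3propbis} we have $\Delta(\mathfrak{D}^\vee)=\Delta(\mathfrak{D})^\vee$, whence $\Delta(\mathfrak{D})^\vee\subset\mathfrak{C}$. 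This is the condition of Definition \ref{dfn-almostdual} for $\Delta(\mathfrak{D})$. The argument is purely formal; the only point that deserves a moment of care is that in each direction the stability hypotheses on $\mathfrak{C}$ (or equivalently on $\mathfrak{C}^\vee$, via \ref{stab-Dvee-3prop}) are used only through ``stability under devissages'' to run the minimality step, all the other stability properties of $\mathfrak{C}$ being carried along unchanged so that $\mathfrak{C}$ can be fed back into Definition \ref{dfn-almostdual}.
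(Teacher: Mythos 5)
Your proof is correct and takes the same route the paper does: the paper simply cites \ref{stab-Dvee-3propbis} as the source, and your argument is precisely the unpacking of that citation (monotonicity of $(-)^\vee$ for one direction, the identity $\Delta(\mathfrak{D}^\vee)=\Delta(\mathfrak{D})^\vee$ for the other, both run through the minimality of $\Delta$). Nothing to fix.
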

\begin{proof}
This is a consequence of \ref{stab-Dvee-3propbis}.
\end{proof}

\begin{lem}
\label{lem-prop=div-almostst}
With notation \ref{ex-Dcst},
we have the equalities
$\fM _\emptyset ^\vee
=
\fM _\emptyset $,
$(\Delta (\fM _\emptyset) ) ^\vee
=
\Delta (\fM _\emptyset)$
and 
$\Delta (\fM _\mathrm{sscd})
= 
\Delta (\fM _\emptyset)$.
\end{lem}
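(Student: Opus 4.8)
First I would dispose of the two duality statements, which are formal. Since the localization functor attached to the empty divisor is the identity, $\fM _\emptyset (\X)$ is exactly the union over all smooth closed subvarieties $Z$ of the special fibre of $\X$ of the categories $\mathrm{MIC} ^{(\bullet)} (Z, \X/K)$; by the second assertion of \ref{cor-com-sp+-f*} the functor $\DD ^{(\bullet)} _\X$ carries $\mathrm{MIC} ^{(\bullet)} (Z, \X/K)$ into itself, so $\DD ^{(\bullet)} _\X (\fM _\emptyset (\X)) \subseteq \fM _\emptyset (\X)$, i.e. $\fM _\emptyset \subseteq \fM _\emptyset ^\vee$. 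Applying this to $\DD ^{(\bullet)} _\X (\E ^{(\bullet)})$ and invoking biduality $\DD ^{(\bullet)} _\X \circ \DD ^{(\bullet)} _\X \simeq \mathrm{id}$ gives the opposite inclusion, hence $\fM _\emptyset ^\vee = \fM _\emptyset$. The second equality is then immediate from the formula $\Delta (\fC) ^\vee = \Delta (\fC ^\vee)$ (second part of \ref{stab-Dvee-3propbis}) applied with $\fC = \fM _\emptyset$.

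For the third equality, the inclusion $\Delta (\fM _\emptyset) \subseteq \Delta (\fM _\mathrm{sscd})$ is clear because the empty divisor is a degenerate strict smooth crossing divisor, so $\fM _\emptyset \subseteq \fM _\mathrm{sscd}$. Since $\Delta (\fM _\emptyset)$ is stable by devissage, for the reverse inclusion it suffices to prove $\fM _\mathrm{sscd} \subseteq \Delta (\fM _\emptyset)$. So let $(\hdag T) (\E ^{(\bullet)})$ be an object of $\fM _\mathrm{sscd} (\X)$ with $\E ^{(\bullet)} \in \mathrm{MIC} ^{(\bullet)} (Z, \X/K)$, where $Z$ is a smooth closed subvariety of the special fibre of $\X$ and $T$ a strict smooth crossing divisor of $Z$, regarded as a closed subscheme of the special fibre of $\X$. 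The localization triangle \ref{caro-stab-sys-ind-surcoh4.4.3}, $\R \underline{\Gamma} ^\dag _T (\E ^{(\bullet)}) \to \E ^{(\bullet)} \to (\hdag T) (\E ^{(\bullet)}) \to +1$, together with $\E ^{(\bullet)} \in \fM _\emptyset (\X) \subseteq \Delta (\fM _\emptyset) (\X)$, reduces the problem by devissage to showing $\R \underline{\Gamma} ^\dag _T (\E ^{(\bullet)}) \in \Delta (\fM _\emptyset) (\X)$.

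This is the core of the argument, and the plan is to prove, by induction on $r$, the statement: for any smooth closed subvariety $Z$ of the special fibre of $\X$, any $\E ^{(\bullet)} \in \mathrm{MIC} ^{(\bullet)} (Z, \X/K)$, and any closed subvariety $W$ of $Z$ which is a strict smooth crossing divisor of some smooth closed subvariety $V$ of $Z$ and has at most $r$ irreducible components, one has $\R \underline{\Gamma} ^\dag _W (\E ^{(\bullet)}) \in \Delta (\fM _\emptyset) (\X)$. The case $r \leq 1$ is the essential use of \ref{cor-com-sp+-f*}: if $W = \emptyset$ the claim is trivial, and if $W$ is a smooth closed subvariety of $Z$, then applying the first assertion of \ref{cor-com-sp+-f*} with $\fP ' = \fP = \X$, $f = \mathrm{id} _\X$ and $a \colon W \hookrightarrow Z$ yields $\R \underline{\Gamma} ^\dag _W (\E ^{(\bullet)}) [\mathrm{codim} _Z W] \in \mathrm{MIC} ^{(\bullet)} (W, \X/K) \subseteq \fM _\emptyset (\X)$, hence $\R \underline{\Gamma} ^\dag _W (\E ^{(\bullet)}) \in \fM _\emptyset ^+ (\X) = \Delta _0 (\fM _\emptyset) (\X)$. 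For the inductive step I would write $W = W _1 \cup \cdots \cup W _r$ as the union of its irreducible components; by \ref{40.21.2StPrj} every nonempty iterated intersection $W _I = \bigcap _{i \in I} W _i$ is smooth over $k$. Setting $W ' = W _1 \cup \cdots \cup W _{r-1}$, this is a strict smooth crossing divisor of $V$ with $r-1$ components, and $W _r \cap W ' = (W _1 \cap W _r) \cup \cdots \cup (W _{r-1} \cap W _r)$ is, again by \ref{40.21.2StPrj}, a strict smooth crossing divisor of $W _r$ with at most $r-1$ components. The Mayer--Vietoris triangle (see \ref{eq1mayer-vietoris}) then reads $\R \underline{\Gamma} ^\dag _{W _r \cap W '} (\E ^{(\bullet)}) \to \R \underline{\Gamma} ^\dag _{W _r} (\E ^{(\bullet)}) \oplus \R \underline{\Gamma} ^\dag _{W '} (\E ^{(\bullet)}) \to \R \underline{\Gamma} ^\dag _W (\E ^{(\bullet)}) \to +1$, and its first three terms lie in $\Delta (\fM _\emptyset) (\X)$ by the case $r \leq 1$ (for $W _r$) and the induction hypothesis (for $W '$ and for $W _r \cap W '$); since $\Delta (\fM _\emptyset)$ is stable by shifts and devissage, hence by finite direct sums, the triangle forces $\R \underline{\Gamma} ^\dag _W (\E ^{(\bullet)}) \in \Delta (\fM _\emptyset) (\X)$. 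Taking $W = T$ and $V = Z$ completes the proof. The main obstacle I anticipate is the bookkeeping in this Mayer--Vietoris induction, in particular verifying that the class of strict smooth crossing divisors of smooth subvarieties is stable under the operation $(W _i) _{i \le r} \mapsto (W _i \cap W _r) _{i < r}$, which is precisely what \ref{40.21.2StPrj} provides.
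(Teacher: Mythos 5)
For the first two equalities your argument is correct and matches the paper's: the paper cites \ref{propspetdualsansfrob} directly and then \ref{stab-Dvee-3propbis}; you route through \ref{cor-com-sp+-f*}, which is just a repackaging of \ref{propspetdualsansfrob}.

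For the third equality there is a genuine gap in your inductive setup. You induct on $r$, the number of irreducible components of $W$, and you assert that $W_r\cap W' = \bigcup_{i<r}(W_i\cap W_r)$ is a strict smooth crossing divisor of $W_r$ ``with at most $r-1$ components.'' That count is wrong in general: even if $W_i$ and $W_r$ are irreducible, their intersection $W_i\cap W_r$ is only guaranteed to be \emph{smooth} of codimension $2$ in $V$ (by \ref{40.21.2StPrj}), and a smooth scheme of dimension $\ge 0$ can very well be disconnected. For instance take $V=\bbP^1\times \bbP^1$, $W_1$ the diagonal and $W_2$ the graph of $x\mapsto 1/x$; in characteristic $\ne 2$ these are two smooth irreducible curves meeting transversally at the two points $(1,1)$ and $(-1,-1)$, so $W_1\cup W_2$ is a strict smooth crossing divisor but $W_1\cap W_2$ has two irreducible components. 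Thus $W_r\cap W'$ can have strictly more irreducible components than $W'$, and your induction hypothesis does not apply to it. The paper avoids this by inducting lexicographically on the pair $(\dim T,\text{number of irreducible components of }T)$: since $\dim(T'\cap T_1)<\dim T$, the inductive hypothesis applies to the intersection term regardless of how many components it acquires. Your argument is repaired by the same change — replace your induction on $r$ by lexicographic induction on $(\dim W, r)$; then $\dim(W_r\cap W')<\dim W$ (when nonempty) saves the step, while $W'$ has the same dimension but strictly fewer components. Apart from this, your use of a Mayer--Vietoris triangle where the paper uses a localisation triangle $(\hdag T'\cap T_1)\R\underline{\Gamma}^\dag_{T_1}(\E^{(\bullet)})\to(\hdag T')(\E^{(\bullet)})\to(\hdag T)(\E^{(\bullet)})\to+1$ is an equally valid devissage.
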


\begin{proof}
The first equality is a consequence of
\ref{propspetdualsansfrob}.
The second one follows from \ref{stab-Dvee-3propbis}.
It remains to  check the inclusion
$\fM _\mathrm{sscd}
\subset \Delta (\fM _\emptyset)$.
Let $(\W, \W ^\flat)$ be an object of $\mathrm{DVR}  (\V,\V ^\flat)$,
$\X$ be a smooth formal scheme  over $\W$, 
$Z$ be a smooth subvariety of the special fiber of $\X$, 
$T$ be a strict smooth crossing divisor of $Z/\Spec k$,
and 
$\E ^{(\bullet)}
\in 
\mathrm{MIC} ^{(\bullet)} (Z, \X/K) $.
We have to prove that 
$(\hdag T ) (\E ^{(\bullet)})
\in 
\Delta (\fM _\emptyset) (\X)$.
We proceed by induction on the dimension of $T$ and next the number of irreducible components of $T$. 
Let $T _1$ be one irreducible component of $T$ and $T'$ be the union of the other irreducible components.
We have the localisation triangle
\begin{equation}
\label{coh-ss-div-bis-extri2}
(\hdag T'\cap T _1)   \R \underline{\Gamma} ^{\dag} _{T _1}   (  \E ^{(\bullet)})
\to 
 (\hdag T')   (\E ^{(\bullet)})
\to 
 (\hdag T)   (\E ^{(\bullet)})
 \to +1
 \end{equation}
Following \ref{cor-com-sp+-f*} and \ref{40.21.2StPrj}, we have
$\R \underline{\Gamma} ^{\dag} _{T _1}   (  \E ^{(\bullet)}) [1]
\in 
\mathrm{MIC} ^{(\bullet)} (T _1, \X/K) $.
Hence, since $T'\cap T _1$ is a strict smooth crossing divisor of $T _1/\Spec k$, 
by induction hypothesis we get
$(\hdag T'\cap T _1)   \R \underline{\Gamma} ^{\dag} _{T _1}   (  \E ^{(\bullet)}) \in
\Delta (\fM _\emptyset) (\X)$.
By induction hypothesis, we have also
$ (\hdag T')   (\E ^{(\bullet)}) \in \Delta (\fM _\emptyset) (\X)$.
Hence, by devissage, we get 
$ (\hdag T)   (\E ^{(\bullet)}) \in \Delta (\fM _\emptyset) (\X)$.
\end{proof}

\begin{prop}
\label{prop=div-almostst}
The data of coefficients $\fB _\mathrm{div}$,
$\fM _\mathrm{div}$,
and 
$\fB _\mathrm{cst}$
are almost stable under duality.
\end{prop}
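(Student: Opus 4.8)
The plan is to reduce all three statements to membership claims for the generating objects of $\fB _\mathrm{div}$, $\fB _\mathrm{cst}$ and $\fM _\mathrm{div}$, and to dispatch those by dévissage together with the self‑duality (up to a shift) of the local cohomology of the constant coefficient along a smooth stratum, and — for $\fM _\mathrm{div}$ — by de Jong's desingularisation exactly as in the proofs of \ref{coh-ss-div} and \ref{coh-ss-div-bis}. First I would fix a data of coefficients $\mathfrak{C}$ that is stable under special descent of the base, dévissages, direct summands and pushforwards, and assume $\fB _\mathrm{div}\subset\mathfrak{C}$, resp. $\fM _\mathrm{div}\subset\mathfrak{C}$, resp. $\fB _\mathrm{cst}\subset\mathfrak{C}$. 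Since $\mathfrak{C}$ is stable under dévissages we have $\mathfrak{C}=\Delta(\mathfrak{C})$; using $\Delta(\fB _\mathrm{div})=\Delta(\fB _\mathrm{cst})$ (\ref{rem-div-cst}.\ref{rem-div-cst1}), the identification $\R \underline{\Gamma} ^\dag _{X\setminus T}\O _{\X} ^{(\bullet)}\riso\widehat{\B} ^{(\bullet)} _{\X}(T)$ in $\smash{\underrightarrow{LM}} _{\Q,\mathrm{coh}}(\smash{\widehat{\D}} _{\X} ^{(\bullet)})$ (cf. \ref{3.2.1caro-2006-surcoh-surcv}, \ref{coh-Bbullet}, \ref{limeqcat}), and the inclusions $\fM _\emptyset\subset\fM _\mathrm{sscd}\subset\fM _\mathrm{div}$ with $\Delta(\fM _\mathrm{sscd})=\Delta(\fM _\emptyset)$ (\ref{lem-prop=div-almostst}), one gets in all three situations that $\mathfrak{C}$ contains $\Delta(\fB _\mathrm{cst})$, resp. $\Delta(\fM _\emptyset)$. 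So it suffices to prove that $\DD _{\X}$ sends each generating object of $\fB _\mathrm{cst}$ into $\Delta(\fB _\mathrm{cst})$, and each generating object of $\fM _\mathrm{div}$ into $\Delta(\fM _\emptyset)$.

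For $\fB _\mathrm{cst}$ (hence $\fB _\mathrm{div}$) I would show $\DD _{\X}(\R \underline{\Gamma} ^\dag _{Y}\O _{\X} ^{(\bullet)})\in\Delta(\fB _\mathrm{cst})(\X)$ for every subvariety $Y$ by a dévissage on $Y$. Decomposing $Y$ into irreducible components via the Mayer–Vietoris triangles \ref{eq1mayer-vietoris} and then splitting off the singular locus via the localization triangle $\R \underline{\Gamma} ^\dag _{Y'}\O _{\X} ^{(\bullet)}\to\R \underline{\Gamma} ^\dag _{Y}\O _{\X} ^{(\bullet)}\to\R \underline{\Gamma} ^\dag _{Y\setminus Y'}\O _{\X} ^{(\bullet)}\to+1$ (\ref{3.2.1caro-2006-surcoh-surcv}), and using that $\DD _{\X}$ is exact, one reduces to the case $Y=W$ smooth locally closed of codimension $c$. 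For such $W$ one has $\R \underline{\Gamma} ^\dag _{W}\O _{\X} ^{(\bullet)}\riso u _+(\O _{\mathfrak{W}} ^{(\bullet)}[-c])$, $u\colon\mathfrak{W}\hookrightarrow\X$ a local lifting of $W$ (by \ref{coro-trace-upre} and Berthelot–Kashiwara \ref{exact-Berthelot-Kashiwara-full}); so by the relative duality isomorphism for the closed immersion $u$ (\ref{rel-dual-isom}) and $\DD _{\mathfrak{W}}(\O _{\mathfrak{W}} ^{(\bullet)})\riso\O _{\mathfrak{W}} ^{(\bullet)}$ (\ref{dualisoscvdag}) one gets $\DD _{\X}(\R \underline{\Gamma} ^\dag _{W}\O _{\X} ^{(\bullet)})\riso u _+(\O _{\mathfrak{W}} ^{(\bullet)}[c])\riso\R \underline{\Gamma} ^\dag _{W}\O _{\X} ^{(\bullet)}[2c]\in\fB _\mathrm{cst}(\X)[2c]\subset\Delta(\fB _\mathrm{cst})(\X)$. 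Finally, for $\fB _\mathrm{div}$ one applies $\DD _{\X}$ to the localization triangle $\R \underline{\Gamma} ^\dag _{T}\O _{\X} ^{(\bullet)}\to\O _{\X} ^{(\bullet)}\to\widehat{\B} ^{(\bullet)} _{\X}(T)\to+1$: its two other terms lie in $\fB _\mathrm{cst}(\X)$ and their duals have just been seen to lie in $\Delta(\fB _\mathrm{cst})(\X)$.

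For $\fM _\mathrm{div}$ a generating object is $(\hdag T)(\E ^{(\bullet)})$ with $\E ^{(\bullet)}\in\mathrm{MIC} ^{(\bullet)}(Z,\X/K)$ ($Z$ smooth) and $T$ a divisor of $Z$. Writing $\E ^{(\bullet)}\riso u _+(\sp _* E)$ for a lifting $u\colon\ZZ\hookrightarrow\X$ of $Z$ and a convergent isocrystal $E$ on $Z$, and using $u ^{-1}(T)=T$ together with the commutation of pushforwards with local cohomological functors (\ref{2.2.18}), one gets $(\hdag T)(\E ^{(\bullet)})\riso u _+((\hdag T)(\sp _* E))$; the relative duality isomorphism (\ref{rel-dual-isom}) then reduces the claim to the case $Z=X$, i.e. $\DD _{\X}((\hdag T)(\E ^{(\bullet)}))\in\mathfrak{C}(\X)$ for a convergent isocrystal $\E ^{(\bullet)}$ on $\X$ and a divisor $T$ of $X$. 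Here I would copy the de Jong argument of \ref{coh-ss-div-bis}: after a radicial base extension $\X\rightsquigarrow\X'$ one finds $\fP:=\widehat{\bbP} ^{n} _{\X}$, a proper $f\colon\fP\to\X$, a closed immersion $u'\colon\X''\hookrightarrow\fP$ and a strict smooth crossing divisor $Z'$ of $X''$ such that $(\hdag T)(\E ^{(\bullet)})$ is a direct summand of $f _+ u' _+\big((\hdag Z')(u' ^{!} f ^{!}\E ^{(\bullet)})\big)$, with $u' ^{!} f ^{!}\E ^{(\bullet)}\in\mathrm{MIC} ^{(\bullet)}(\X''/K)$ (\ref{corostab-MIC-f*}); applying $\DD$, the relative duality isomorphism \ref{rel-dual-isom} commutes it past $f _+ u' _+$, while $(\hdag Z')(u' ^{!} f ^{!}\E ^{(\bullet)})\in\fM _\mathrm{sscd}(\X'')\subset\Delta(\fM _\emptyset)(\X'')$ has its dual again in $\Delta(\fM _\emptyset)(\X'')\subset\mathfrak{C}(\X'')$ by \ref{lem-prop=div-almostst} ($\Delta(\fM _\emptyset) ^\vee=\Delta(\fM _\emptyset)$); stability of $\mathfrak{C}$ under pushforwards and direct summands then gives the conclusion.

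The main obstacle is the $\fM _\mathrm{div}$ case, and inside it the bookkeeping of base‑field extensions in the de Jong step: de Jong's theorem produces a finite residue‑field extension which need not be radicial, whereas $\mathfrak{C}$ is only assumed stable under \emph{special} (radicial) descent. The resolution I would use is to split this extension into its separable and purely inseparable parts, descending the separable part by the trace of the corresponding finite étale cover together with stability of $\mathfrak{C}$ under pushforwards and direct summands, and descending the purely inseparable part — together with the further extension needed (via the second part of \ref{sscd-gsncd}) to turn the geometric strict normal crossing divisor produced by de Jong into an honest strict smooth crossing divisor — by special descent of the base. One also has to keep track throughout of the two meanings of the notation $(\hdag\,\cdot\,)$ (localization outside a divisor, \ref{hdag-def-coh}, versus outside a closed subvariety, \ref{caro-stab-sys-ind-surcoh4.4.3}) and to verify, via \ref{limeqcat} and Notation \ref{ntn-dualfunctor}(b), that on the $\bullet$‑categories $\DD ^{(\bullet)}$ agrees with the usual $\D ^\dag$‑linear duality, so that \ref{rel-dual-isom}, \ref{coro-trace-upre}, \ref{propspetdualsansfrob} and \ref{lem-prop=div-almostst} may be applied.
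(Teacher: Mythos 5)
Your handling of $\fM_\mathrm{div}$ tracks the paper's proof closely (de Jong alteration, direct summand of $f_+^{(\bullet)}(\hdag T')\R\underline{\Gamma}^\dag_{Z'}f^{!(\bullet)}(\E^{(\bullet)})$, then \ref{rel-dual-isom}, \ref{lem-prop=div-almostst}, and stability of $\mathfrak{C}$ under pushforwards and direct summands), up to a small point: the paper does not first reduce to $Z=X$ via a global lifting $u\colon\ZZ\hookrightarrow\X$ — such a lifting need not exist — but works directly with $\R\underline{\Gamma}^\dag_{Z'}f^{!(\bullet)}(\E^{(\bullet)})$ on the projective bundle, which sidesteps the issue.

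The genuine gap is in your $\fB_\mathrm{cst}$ (hence $\fB_\mathrm{div}$) step. You aim for the unconditional statement $\DD_\X(\R\underline{\Gamma}^\dag_Y\O_\X^{(\bullet)})\in\Delta(\fB_\mathrm{cst})(\X)$ by d\'evissage over the singular locus of $Y$, reducing to $Y$ smooth locally closed. Over an imperfect residue field $l$ this d\'evissage does not terminate: a reduced $l$-variety may have empty $l$-smooth locus (for instance a closed point with purely inseparable residue extension), so the "singular locus" $Y'$ can be all of $Y$ and the induction collapses in the base case. Since the whole point of the paper is to drop the perfectness hypothesis, this is not a technicality. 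Moreover the unconditional target $\Delta(\fB_\mathrm{cst})$ is more than the proposition asks for and, over imperfect fields, more than the means at hand deliver; what one needs (and can get) is the relative statement $\DD_\X(\cdots)\in\mathfrak{C}(\X)$, obtained by first applying a special base change $(\W,\W^\flat)\to(\W',\W^\flat)$ making $(Y\times_l l')_{\mathrm{red}}$ admit a smooth stratification (as in \ref{ovcoharedev}), running the d\'evissage over $\W'$, and then using stability of $\mathfrak{C}$ under special descent of the base together with commutation of $\DD$ with base change. Your $\fB_\mathrm{cst}$ step never invokes the special-descent hypothesis on $\mathfrak{C}$, and that is exactly why it cannot be closed; the paper instead reduces $\fB_\mathrm{cst}$ to $\fB_\mathrm{div}$ via $\Delta(\fB_\mathrm{cst})=\Delta(\fB_\mathrm{div})$ and \ref{almostdual-delta}, and then observes that the $\fM_\mathrm{div}$ de Jong argument applies with $\E^{(\bullet)}=\O_\X^{(\bullet)}$. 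Finally, the obstacle you single out — a possible separable component of the de Jong base extension — is a legitimate thing to check (and your trace-along-an-\'etale-cover fix is sound in principle, since for $\W\to\W'$ finite unramified the pushforward along $\X'\to\X$ lives inside the paper's framework), but you have localized the difficulty in the wrong place: the purely inseparable base-extension issue, which the paper handles by special descent throughout, is precisely what already breaks your $\fB_\mathrm{cst}$ d\'evissage before one ever reaches $\fM_\mathrm{div}$.
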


\begin{proof}
I) Since $\Delta (\fB _\mathrm{cst} )= \Delta (\fB _\mathrm{div})$ (see \ref{rem-div-cst}.\ref{rem-div-cst1}) and using \ref{almostdual-delta}, 
since the case $\fB _\mathrm{div}$ is checked similarly, 
we reduce to prove the almost dual stability of 
$\fM _\mathrm{div}$.

II) Let $\mathfrak{C}$
be a data of coefficients  over $(\V,\V ^\flat)$ which 
contains $\fM _\mathrm{div}$, and which 
is stable under special descent of the base,
devissages, 
direct summands and 
pushforwards.
Let $(\W, \W ^\flat)$ be an object of $\mathrm{DVR}  (\V,\V ^\flat)$,
$\X$ be a smooth formal scheme  over $\W$, 
$Z$ be a smooth subvariety of the special fiber of $\X$, 
$T$ be a divisor of $Z$,
and 
$\E ^{(\bullet)}
\in 
\mathrm{MIC} ^{(\bullet)} (Z, \X/K) $.
We have to check that 
$
(\hdag T ) (\E ^{(\bullet)})
\in 
\fC ^\vee (\X)$.

0) Let $l$ be the residue field of $\W$. 
Since 
$\fC$ 
is stable under special descent of the base, 
then similarly to the part 0),1) of the proof of \ref{coh-ss-div}, 
we can suppose $Z$ integral and that there
exist a smooth integral $l$-variety $Z'$, a projective generically finite and étale morphism of $l$-varieties 
$\phi \colon Z ' \to Z$ 
such that  $Z ' $ is quasi-projective 
and $T':= \phi  ^{-1} (T) $ is a strict smooth crossing divisor of $Z'/\Spec l$.
Hence, 
there exists a closed immersion of the form
$u\colon Z ' \hookrightarrow \bbP ^n _{Z}$
whose composition with the projection 
$\bbP ^n _{Z} \to Z$ is $\phi$.
Let 
$\X ' := \widehat{\bbP} ^n _{\X}$, 
$f \colon \X ' \to \X$ be the projection. 

1) By adjunction (use \ref{adj-morph} and \ref{limeqcat}), we get the morphism
$ f  ^{(\bullet)} _{+}  \R \underline{\Gamma} ^{\dag} _{Z '}  f ^{!(\bullet)}(\E ^{(\bullet)} )
 \to 
\E ^{(\bullet)}$.

1') We construct 
$\E ^{(\bullet)}
 \to 
 f  ^{(\bullet)} _{+}  \R \underline{\Gamma} ^{\dag} _{Z '}  f ^{!(\bullet)}(\E ^{(\bullet)} )$ as follows:
we get the morphism
$ f  ^{(\bullet)} _{+}  \R \underline{\Gamma} ^{\dag} _{Z '}  f ^{!(\bullet)}(\DD ^{(\bullet)} \E ^{(\bullet)}  )
 \to 
\DD ^{(\bullet)} \E ^{(\bullet)} $ by adjunction.
This yields by duality
$\E ^{(\bullet)} 
\riso 
\DD ^{(\bullet)} \DD ^{(\bullet)} \E ^{(\bullet)} 
\to 
\DD  ^{(\bullet)}  f  ^{(\bullet)} _{+}  \R \underline{\Gamma} ^{\dag} _{Z '}  f ^{!(\bullet)}
(\DD ^{(\bullet)} \E ^{(\bullet)} )
\underset{\ref{dualrelative-morp}}{\riso} 
f  ^{(\bullet)} _{+} 
\DD  ^{(\bullet)}   \R \underline{\Gamma} ^{\dag} _{Z '}  f ^{!(\bullet)}(\DD ^{(\bullet)} \E ^{(\bullet)} )$.
Following \ref{cor-com-sp+-f*}, we have the following isomorphism
$\DD ^{(\bullet)}
\left (
\R \underline{\Gamma} ^\dag _{Z'} f ^{!(\bullet)} \E ^{(\bullet)} 
\right )
\riso 
\R \underline{\Gamma} ^\dag _{Z'} f ^{!(\bullet)} (\DD ^{(\bullet)} \E ^{(\bullet)} )$,
and we are done.

2) Composing both morphisms of 1) and 1'),
we get an isomorphism (same reason than for the proof of \ref{coh-ss-div}). In particular, 
$\E ^{(\bullet)}$
is a direct summand of 
$ f  ^{(\bullet)} _{+}  \R \underline{\Gamma} ^{\dag} _{Z '}  f ^{!(\bullet)}(\E ^{(\bullet)} )$.
Hence, 
$(\hdag T ) ( \E ^{(\bullet)})$
is a direct summand of 
$(\hdag T )  f  ^{(\bullet)} _{+}  \R \underline{\Gamma} ^{\dag} _{Z '}  f ^{!(\bullet)}(\E ^{(\bullet)} )$
Using the commutation of localisation functor with pushforwards,
this yields
$(\hdag T )  (\E ^{(\bullet)})$
is a direct summand of 
$  f  ^{(\bullet)} _{+} (\hdag T ') \R \underline{\Gamma} ^{\dag} _{Z '}  f ^{!(\bullet)}(\E ^{(\bullet)} )$.

3)
Since
$\E ^{\prime (\bullet)} := 
\R \underline{\Gamma} ^{\dag} _{Z '}  f ^{!(\bullet)}(\E ^{(\bullet)} )
\in 
\mathrm{MIC} ^{(\bullet)} (Z', \X'/K) $ (use \ref{cor-com-sp+-f*}), then
$(\hdag T ' )(\E ^{\prime (\bullet)} )
\in 
\fM _\mathrm{sscd} (\X')$.
Since 
$\mathfrak{C}$ contains 
$\fM _\mathrm{sscd}$ and 
is stable under 
devissages, 
then 
using \ref{lem-prop=div-almostst}
we get 
$\fM _\mathrm{sscd} \subset 
\fC ^\vee$.
Hence, 
$(\hdag T ' )(\E ^{\prime (\bullet)} )
\in 
\fC ^\vee ( \X')$.
Since 
$\mathfrak{C}$ is stable under 
direct summands and 
pushforwards, we are done.
\end{proof}

\begin{ntn}
\label{dfnqupre}
Let $\mathfrak{C}, \mathfrak{D}$ be two data of coefficients.
We put 
$T _0 (\mathfrak{D} ,\mathfrak{C}) := 
S  (\mathfrak{D}  ,\mathfrak{C})$.
By induction on $i \in \N$, we set 
$U _i (\mathfrak{D}  ,\mathfrak{C}):= T _i (\mathfrak{D}  ,\mathfrak{C}) \cap T _i (\mathfrak{D}  ,\mathfrak{C} ) ^{\vee}$, 
$\widetilde{T} _{i} (\mathfrak{D}  ,\mathfrak{C}) := 
S (\mathfrak{D} , U _i (\mathfrak{D}  ,\mathfrak{C}))$
and
$T _{i+1} (\mathfrak{D}  ,\mathfrak{C}):= 
S  (\widetilde{T} _{i} (\mathfrak{D}  ,\mathfrak{C}), \widetilde{T} _{i} (\mathfrak{D}  ,\mathfrak{C}) )$.
We put $T (\mathfrak{D}  ,\mathfrak{C}) := \cap _{i\in \N} T _{i} (\mathfrak{D}  ,\mathfrak{C}) $.
\end{ntn}

\begin{thm}
\label{dfnquprop}
Let $\fB _\mathrm{div} \subset \mathfrak{D}\subset \fC$ be two data of coefficients.
We suppose 
\begin{enumerate}[a)]
\item The data $\fD$ is stable under cohomology, smooth extraordinary pullbacks and special base change ;
\item The data $\Delta (\fD)$ satisfies $BK _+$, is stable under extraordinary pullbacks, base change, 
tensor products 
and  is almost stable under dual functors ;
\item The data $\mathfrak{C}$ 
satisfies $BK ^!$, 
is local and stable under devissages, direct summands, 
pushforwards,
cohomology
and
special descent of the base. 
\end{enumerate}
Then, 
the data of coefficients $T(\mathfrak{D}, \mathfrak{C})$ 
(see Definition \ref{dfnqupre})
 is included in 
$\mathfrak{C} $,
contains 
$\mathfrak{D} $,
is 
local,
stable by devissages, direct summands, 
local cohomological functors, 
pushforwards, extraordinary pullbacks, base change, tensor products, duals,
cohomology
and
special descent of the base. 

\end{thm}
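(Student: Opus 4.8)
The plan is to prove, by induction on $i \in \N$, that each of the data of coefficients $T _i (\mathfrak{D}, \mathfrak{C})$, $U _i (\mathfrak{D}, \mathfrak{C})$ and $\widetilde{T} _i (\mathfrak{D}, \mathfrak{C})$ of Notation \ref{dfnqupre} satisfies the following package: it contains $\mathfrak{D}$, is contained in $\mathfrak{C}$ and in $\smash{\underrightarrow{LD}} ^{\mathrm{b}} _{\Q,\mathrm{oc}}$, is local, and is stable under devissages, direct summands, local cohomological functors, pushforwards, extraordinary pullbacks, base change, cohomology and special descent of the base. Granting this, the statement follows by passing to the intersection $T (\mathfrak{D}, \mathfrak{C}) = \cap _{i} T _i (\mathfrak{D}, \mathfrak{C})$: each property in the package is a closedness condition under certain operations, formulated with a universal quantifier over the objects involved, hence inherited by an arbitrary intersection of data of coefficients (for stability under cohomology one uses that $\E ^{(\bullet)} \in \cap _i T _i$ if and only if $\H ^n (\E ^{(\bullet)}) \in T _i$ for all $i$ and $n$, and that truncations are computed termwise).

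The heart of the argument is a single propagation step, which I would isolate as a lemma: if $\mathfrak{B}$ contains $\mathfrak{D}$, is local, satisfies $BK ^!$, and is stable under devissages, direct summands, pushforwards, cohomology and special descent of the base, then $S (\mathfrak{D}, \mathfrak{B})$ enjoys the whole package. This is assembled from the results already in the paper. One first replaces $\mathfrak{B}$ by $\mathfrak{B} ' := S (\fB _\mathrm{div}, \mathfrak{B})$, which by \ref{ovcoh-invim-prop} (applied with $\fB _\mathrm{div}$, using $\Delta (\fB _\mathrm{div}) = \Delta (\fB _\mathrm{cst})$ and \ref{stab-cst}) is moreover stable under extraordinary pullbacks, while remaining stable under cohomology by \ref{Bcst-st-cohom} and under special descent by \ref{oc-st-sp-desc}; since $S (\mathfrak{D}, \mathfrak{B}) = S (\mathfrak{D}, \mathfrak{B} ')$ by \ref{rem-div-cst2}, one may now run \ref{ovcoh-invim-prop} (first alternative, using hypotheses a) and b) on $\mathfrak{D}$ together with the established properties of $\mathfrak{B} '$), then \ref{prop-st-cohom} and \ref{S(D,C)stability8} to add cohomology and special descent. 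The inclusion in $\smash{\underrightarrow{LD}} ^{\mathrm{b}} _{\Q,\mathrm{oc}}$ is \ref{preS(D,C)stability}.\ref{S(D,C)stability1}, and the containment of $\mathfrak{D}$ is \ref{preS(D,C)stability}.\ref{S(D,C)stability2}, after replacing $\mathfrak{D}$ by $\Delta (\mathfrak{D})$ via \ref{preS(D,C)stability}.\ref{S(D,C)stabilitynew3}.

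With this step in hand the induction runs as follows. For $i=0$, $T _0 = S (\mathfrak{D}, \mathfrak{C})$ and the step applies directly; note that $\mathfrak{C}$ itself is not assumed stable under extraordinary pullbacks, which is exactly why the $S (\fB _\mathrm{div}, -)$ bootstrap sits inside the propagation step. For the inductive step, from the package on $T _i$ one gets via \ref{stab-Dvee-3prop} and \ref{cor-oc-inc-hol} (using $T _i \subseteq \smash{\underrightarrow{LD}} ^{\mathrm{b}} _{\Q,\mathrm{oc}}$) that $T _i ^{\vee}$ is local, satisfies $BK ^!$, and is stable under devissages, direct summands, pushforwards, cohomology and special descent; hence so is $U _i = T _i \cap T _i ^{\vee}$, and the propagation step yields the package for $\widetilde{T} _i = S (\mathfrak{D}, U _i)$. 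For $T _{i+1} = S (\widetilde{T} _i, \widetilde{T} _i)$ one applies the propagation step with $\mathfrak{B} = \widetilde{T} _i$ in both roles, this time invoking the \emph{second} alternative of \ref{ovcoh-invim-prop} ($\widetilde{T} _i$ contains $\fB _\emptyset$, satisfies $BK _+$ since it is stable under pushforwards, and is stable under extraordinary pullbacks and local cohomological functors) so that no tensor-product stability of $\widetilde{T} _i$ — which is not available — is ever needed. The containments $\mathfrak{D} \subseteq \widetilde{T} _i$ and $\mathfrak{D} \subseteq T _{i+1} \subseteq \mathfrak{C}$ use that $\Delta (\mathfrak{D})$ is almost stable under dual functors (hence so is $\mathfrak{D}$, by \ref{almostdual-delta}): this gives $\mathfrak{D} ^{\vee} \subseteq T _i$ because $T _i$ contains $\mathfrak{D}$ and is stable under special descent, devissages, direct summands and pushforwards, whence $\mathfrak{D} \subseteq T _i ^{\vee}$ and $\mathfrak{D} \subseteq U _i$; that $\mathfrak{D} \subseteq T _{i+1} = S (\widetilde{T} _i, \widetilde{T} _i)$ then follows by unwinding the definition of $S$ together with \ref{f!T'Totimes} and the stability of $\Delta(\mathfrak{D})$ under extraordinary pullbacks and tensor products.

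It remains to obtain the two stabilities of $T (\mathfrak{D}, \mathfrak{C})$ not in the inductive package — under tensor products and under duality — directly from the telescoping inclusions $T \subseteq T _{i+1} \subseteq \widetilde{T} _i \subseteq U _i = T _i \cap T _i ^{\vee} \subseteq T _i$. For duality, $\E ^{(\bullet)} \in T$ lies in $T _i ^{\vee}$ for every $i$, so $\DD (\E ^{(\bullet)}) \in T _i$ for every $i$. For tensor products, given $\E ^{(\bullet)}, \FF ^{(\bullet)} \in T$ one has $\E ^{(\bullet)} \in T _{i+1} = S (\widetilde{T} _i, \widetilde{T} _i)$ and $\FF ^{(\bullet)} \in T \subseteq \widetilde{T} _i$, and applying the defining condition of $S (\widetilde{T} _i, \widetilde{T} _i)$ to the identity morphism gives $\FF ^{(\bullet)} \smash{\widehat{\otimes}} ^\L _{\O _{\X}} \E ^{(\bullet)} \in \widetilde{T} _i$ for all $i$, hence in $\cap _i \widetilde{T} _i = \cap _i T _i = T$. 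The main obstacle I anticipate is organisational rather than conceptual: matching exactly the hypotheses of \ref{ovcoh-invim-prop}, \ref{prop-st-cohom}, \ref{S(D,C)stability8}, \ref{Bcst-st-cohom}, \ref{oc-st-sp-desc}, \ref{stab-Dvee-3prop} and \ref{cor-oc-inc-hol} at each application — in particular recognising that the targets $\mathfrak{C}$ and $U _i$ are not a priori stable under extraordinary pullbacks (forcing the systematic use of the intermediate $S (\fB _\mathrm{div}, -)$ and of \ref{Bcst-st-cohom} in place of \ref{prop-st-cohom} there), and selecting the correct alternative of \ref{ovcoh-invim-prop} at each step so as never to invoke a tensor-product stability that the $S$-constructed data need not possess.
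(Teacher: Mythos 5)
Your proposal is correct and follows essentially the same strategy as the paper's own proof: an induction on $i$ establishing the common stability package for $T_i$, $U_i$, $\widetilde{T}_i$ (with the $S(\fB_\mathrm{div},-)$ bootstrap, \ref{ovcoh-invim-prop} in its two alternatives, \ref{stab-Dvee-3prop}, \ref{cor-oc-inc-hol}, \ref{prop-st-cohom}, \ref{S(D,C)stability8}), followed by the telescoping inclusions $T_{i+1}\subseteq \widetilde{T}_i\subseteq U_i$ to extract tensor and dual stability of $T$. The only cosmetic difference is that you extract the paper's step I)a) as a standalone lemma, which as you yourself observe does not literally cover $T_{i+1}=S(\widetilde{T}_i,\widetilde{T}_i)$ and must be supplemented by the respective case of \ref{ovcoh-invim-prop}, exactly as the paper does.
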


\begin{proof}
I) First, we check by induction on $i\in \N $ that the data of coefficients 
$T _i (\mathfrak{D},\mathfrak{C})$ 
contains $\mathfrak{D}$, 
is contained in $\mathfrak{C}$, 
is local,
stable under devissages, direct summands, 
local cohomological functors,
pushforwards, extraordinary pullbacks, base change,
cohomology
and
special descent of the base
(which implies such stability properties for $T(\mathfrak{D},\mathfrak{C})$). 

a)
Let us verify that $T _0 (\mathfrak{D},\mathfrak{C})$ satisfies these properties.
Using \ref{preS(D,C)stability}.\ref{S(D,C)stabilitynew3}, 
we get from \ref{preS(D,C)stability}.\ref{S(D,C)stability1} (resp. \ref{preS(D,C)stability}.\ref{S(D,C)stability2}) 
that 
$T _0 (\mathfrak{D},\mathfrak{C})$ is included in $\mathfrak{C}$
(resp. contains $\mathfrak{D}$).
Using \ref{stab-cst} and (the non respective case of) 
\ref{ovcoh-invim-prop}, 
we check that
$S  (\mathfrak{D}, \mathfrak{C})$ and
$S  (\fB _{\mathrm{cst}} ^+, \mathfrak{C})$
are both
local, stable under devissages, direct summands, 
local cohomological functors,
extraordinary pullbacks, pushforwards and base change.
Following
\ref{Bcst-st-cohom} and 
\ref{oc-st-sp-desc}, 
$S  (\fB _{\mathrm{cst}} ^+, \mathfrak{C})$ is also stable under cohomology and 
special descent of the base.
Hence, since 
$S  (\fB _{\mathrm{cst}} ^+, \mathfrak{C}) \subset \smash{\underrightarrow{LD}} ^{\mathrm{b}} _{\Q,\mathrm{ovcoh}}$,
(see \ref{preS(D,C)stability}.\ref{S(D,C)stability1})
by applying \ref{S(D,C)stability8},
this yields that $S   \left (\mathfrak{D}, S  (\fB _{\mathrm{cst}} ^+, \mathfrak{C}) \right)$
is  stable under special descent of the base.
Moreover, it follows from \ref{prop-st-cohom}
that $S   \left (\mathfrak{D}, S  (\fB _{\mathrm{cst}} ^+, \mathfrak{C}) \right)$ 
is also stable under cohomology.
With the remark \ref{rem-div-cst2},
we get the equalities
$S  (\mathfrak{D}, \mathfrak{C})
=
S  (\Delta (\mathfrak{D}), \mathfrak{C})
=
S   \left (\Delta (\mathfrak{D}), S  (\fB _{\mathrm{cst}} ^+, \mathfrak{C}) \right)
=
S   \left (\mathfrak{D}, S  (\fB _{\mathrm{cst}} ^+, \mathfrak{C}) \right)$.
Hence, we are done.

b) Suppose that this is true for $T _i (\mathfrak{D},\mathfrak{C})$ for some $i \in \N$.

i) Since 
$\mathfrak{D}$ is almost stable under duals, 
then $U _i (\mathfrak{D}  ,\mathfrak{C})$
contains 
$\mathfrak{D}$.
Since $\Delta (\mathfrak{D})$ is stable by tensor products, extraordinary pullbacks, and base change 
then, using \ref{preS(D,C)stability}.\ref{S(D,C)stability2}
(where $\mathfrak{C}$ is replaced by
$U _i (\mathfrak{D}  ,\mathfrak{C})$ which is stable under devissable),
this implies that $\mathfrak{D}$ is contained in
$\widetilde{T} _{i} (\mathfrak{D}  ,\mathfrak{C})$
and
$T _{i+1} (\mathfrak{D},\mathfrak{C})$.
Using \ref{preS(D,C)stability}.\ref{S(D,C)stability1}, we get that 
$\widetilde{T} _{i} (\mathfrak{D}  ,\mathfrak{C})$
and
$T _{i+1} (\mathfrak{D},\mathfrak{C})$
are included in $\mathfrak{C}$.

ii) 
From Lemma \ref{stab-Dvee-3prop},
$U _i (\mathfrak{D}  ,\mathfrak{C})$
satisfies $BK ^!$,
is local,
stable under pushforwards,
under devissages, direct summands, base change and 
special descent of the base.
It follows from \ref{preS(D,C)stability}.\ref{S(D,C)stability1}
that 
$T _i (\mathfrak{D}  ,\mathfrak{C})$ is 
included in 
$\smash{\underrightarrow{LD}} ^{\mathrm{b}} _{\Q,\mathrm{oc}}$.
Using \ref{cor-oc-inc-hol}, 
this yields that 
$U _i (\mathfrak{D}  ,\mathfrak{C})$
is stable under cohomology.
Hence, by applying the step I)a) in the case where 
$\mathfrak{C}$ is replaced by 
$U _i (\mathfrak{D}  ,\mathfrak{C})$,
we get 
that $\widetilde{T} _{i} (\mathfrak{D}  ,\mathfrak{C})$
is local, 
stable under devissages, direct summands, 
local cohomological functors,
pushforwards, extraordinary pullbacks, base change,
cohomology
and
special descent of the base.

Using (the respective case of) 
\ref{ovcoh-invim-prop}, 
this yields  that 
$T _{i+1} (\mathfrak{D},\mathfrak{C})$
is
local, stable under devissages, direct summands, 
local cohomological functors,
extraordinary pullbacks, pushforwards and base change.
By applying \ref{prop-st-cohom}
(resp. \ref{S(D,C)stability8}),
we check moreover that 
$T _{i+1} (\mathfrak{D},\mathfrak{C})$
is
stable under cohomology 
(resp. 
special descent of the base).

II) From \ref{preS(D,C)stability}.\ref{S(D,C)stability1}, 
$T _{i+1} (\mathfrak{D},\mathfrak{C})$ is contained in 
$\widetilde{T} _{i} (\mathfrak{D},\mathfrak{C})$
and 
$\widetilde{T} _{i} (\mathfrak{D},\mathfrak{C})$
is contained in 
$T _i (\mathfrak{D},\mathfrak{C}) \cap T _i (\mathfrak{D},\mathfrak{C}) ^{\vee}$.
Hence, by construction, the tensor product of two objects of 
$T _{i+1} (\mathfrak{D},\mathfrak{C})$ is an object of 
$T _{i} (\mathfrak{D},\mathfrak{C})$ and
the dual of an object of 
$T _{i+1} (\mathfrak{D},\mathfrak{C})$ is an object of 
$T _{i} (\mathfrak{D},\mathfrak{C})$.
\end{proof}

\begin{ex}
\label{ex-datastableevery}
We can choose 
$\fD = \fB _\mathrm{div} ^+$
and 
$\fC = \smash{\underrightarrow{LD}} ^{\mathrm{b}} _{\Q,\mathrm{coh}}$.
\end{ex}

\begin{lem}
\label{lemdfnqupropbis}
Let $\mathfrak{C}$ be a data of coefficients which contains $\mathfrak{M} _{\mathrm{div}} $ and is stable under shifts.
We have the inclusions :
\begin{enumerate}
\item The data $\mathfrak{M} _{\mathrm{div}}  ^+$ is stable under base change, smooth extraordinary pullbacks and tensor products. 
\item 
\label{lemdfnqupropbis1}
$\mathfrak{M} _{\mathrm{div}} 
\subset
S (\mathfrak{B} _{\mathrm{div}}  ,\fC)$.

\item 
$\mathfrak{M} _{\mathrm{div}}
\subset
\boxtimes \left (
\boxtimes ( \mathfrak{M} _{\mathrm{div}} , \fC ),
\boxtimes ( \mathfrak{M} _{\mathrm{div}} , \fC) \right)$.

\end{enumerate}

\end{lem}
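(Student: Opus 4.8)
The three assertions are, to a large extent, formal consequences of the behaviour of $\mathrm{MIC}^{(\bullet)}$ under the six operations (Section~10) together with the exterior product results of Section~7. I would organise everything around one auxiliary claim: \emph{the exterior product $\widehat{\boxtimes}^\L_{\O_{\S}}$ of two objects of $\mathfrak{M}_{\mathrm{div}}$ is again an object of $\mathfrak{M}_{\mathrm{div}}$}. To prove it, write the two objects as $(\hdag T_1)(\F_1)$ and $(\hdag T_2)(\F_2)$ with $\F_i\in\mathrm{MIC}^{(\bullet)}(Z_i,\X_i/K)$; by compatibility of exterior tensor products with the localization functors — which follows from an analogue of~\ref{lem1-hdagT1T2} giving $\widehat{\B}^{(\bullet)}_{\X_1}(T_1)\widehat{\boxtimes}^\L\widehat{\B}^{(\bullet)}_{\X_2}(T_2)\riso\widehat{\B}^{(\bullet)}_{\X_1\times\X_2}\bigl(T_1\times Z_2\cup Z_1\times T_2\bigr)$ together with associativity of the various tensor products — one reduces to $\F_1\widehat{\boxtimes}^\L\F_2$, which lies in $\mathrm{MIC}^{(\bullet)}(Z_1\times Z_2,\X_1\times\X_2/K)$ by~\ref{st-isoc-boxtimes} (note $Z_1\times Z_2$ is smooth). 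Since $T_1\times Z_2\cup Z_1\times T_2$ is a divisor of $Z_1\times Z_2$, the product lies in $\mathfrak{M}_{\mathrm{div}}$.

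For~(1): base change, smooth extraordinary pullbacks and tensor products all commute with shifts, so it is enough to check that applying them to an object $(\hdag T)(\F)$ of $\mathfrak{M}_{\mathrm{div}}$ (with $\F\in\mathrm{MIC}^{(\bullet)}(Z,\X/K)$) produces an object of $\mathfrak{M}_{\mathrm{div}}^+$. For a smooth morphism $f$ one has $f^{!(\bullet)}\circ(\hdag T)\riso(\hdag f^{-1}T)\circ f^{!(\bullet)}$ by~\ref{oub-div-opcoh}, and $f^{!(\bullet)}(\F)$ is, up to a shift, an object of $\mathrm{MIC}^{(\bullet)}(f^{-1}Z,\Y/K)$ by~\ref{cor-com-sp+-f*} (the functor $\R\underline{\Gamma}^\dag$ appearing there being the identity on $f^{!(\bullet)}(\F)$ since $f$ is flat and $\F$ is supported on $Z$); as $f^{-1}Z$ is smooth and $f^{-1}T$ a divisor of it, we land in $\mathfrak{M}_{\mathrm{div}}^+$. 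Base change is treated identically, using the commutation of base change with $\sp_+$, with $(\hdag T)$, and the preservation of $\mathrm{MIC}^{(\bullet)}$ under base change (a case of~\ref{corostab-MIC-f*}); here there is no shift, so one even stays inside $\mathfrak{M}_{\mathrm{div}}$. For tensor products I would use $\E_1\widehat{\otimes}^\L_{\O_{\X}}\E_2\riso\delta^{!(\bullet)}\bigl(\E_1\widehat{\boxtimes}^\L_{\O_{\S}}\E_2\bigr)$ up to a shift, with $\delta\colon\X\hookrightarrow\X\times_{\Spf\W}\X$ the diagonal, combine this with the auxiliary claim and~\ref{cor-com-sp+-f*}; alternatively, reduce via the inductive Berthelot--Kashiwara theorem and the projection formula~\ref{surcoh2.1.4} to the case of \emph{equal} smooth supports, which is precisely the fourth assertion of~\ref{cor-com-sp+-f*}. \textbf{This is the step I expect to be the main obstacle}: the delicate point is to keep control of the support and of the single-degree concentration (up to shift) when the two smooth supports $Z_1,Z_2$ meet non-transversally, so that the answer is still forced to be a localized pushforward of a convergent isocrystal from a smooth subvariety.

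For~(2): unravelling $S$ and $S_0$, I must show that for $\E\in\mathfrak{M}_{\mathrm{div}}(\X)$, any base change $(\W,\W^\flat)\to(\W',\W^{\prime\flat})$, any smooth $f\colon\Y\to\X'$ and any divisor $T$ of $Y$, one has $\widehat{\B}^{(\bullet)}_{\Y}(T)\widehat{\otimes}^\L_{\O_{\Y}}f^{!(\bullet)}\bigl(\W'\widehat{\otimes}^\L_{\W}\E\bigr)\in\mathfrak{C}(\Y)$. By~(1) the complex $f^{!(\bullet)}(\W'\widehat{\otimes}^\L_{\W}\E)$ lies in $\mathfrak{M}_{\mathrm{div}}^+(\Y)$; since $\widehat{\B}^{(\bullet)}_{\Y}(T)\widehat{\otimes}^\L_{\O_{\Y}}(-)$ is the localization functor $(\hdag T)$ (diagram~\ref{ODdivcohe}) and $(\hdag T)\circ(\hdag T_0)=(\hdag T\cup T_0)$ (using~\ref{hdagT'T=cup}), the result is again in $\mathfrak{M}_{\mathrm{div}}^+(\Y)$ — the localization of an object supported on a smooth $Z$ outside a divisor being either such an object for the divisor $(T\cap Z)\cup T_0$ of $Z$, or zero — hence it lies in $\mathfrak{C}(\Y)$ because $\mathfrak{C}$ contains $\mathfrak{M}_{\mathrm{div}}$ and is stable under shifts.

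For~(3): first, the auxiliary claim together with $\mathfrak{M}_{\mathrm{div}}\subset\mathfrak{C}$ and the commutation of base change with $\widehat{\boxtimes}^\L$ gives $\mathfrak{M}_{\mathrm{div}}\subset\boxtimes(\mathfrak{M}_{\mathrm{div}},\mathfrak{C})$, which is what makes the nested statement meaningful. Then, for $\E\in\mathfrak{M}_{\mathrm{div}}(\X)$, after any base change $\E$ becomes an object $\E'$ of $\mathfrak{M}_{\mathrm{div}}$ (by~(1)); I must show $\E'\widehat{\boxtimes}^\L\FF\in\boxtimes(\mathfrak{M}_{\mathrm{div}},\mathfrak{C})$ for $\FF\in\boxtimes(\mathfrak{M}_{\mathrm{div}},\mathfrak{C})(\Y)$, i.e. that after a further base change and for $\GG\in\mathfrak{M}_{\mathrm{div}}(\ZZ)$ one has $(\E''\widehat{\boxtimes}^\L\FF')\widehat{\boxtimes}^\L\GG\in\mathfrak{C}$. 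Reassociating and using the symmetry of the exterior product this equals $\FF'\widehat{\boxtimes}^\L(\E''\widehat{\boxtimes}^\L\GG)$; by the auxiliary claim $\E''\widehat{\boxtimes}^\L\GG\in\mathfrak{M}_{\mathrm{div}}$, so this lies in $\mathfrak{C}$ by the hypothesis $\FF'\in\boxtimes(\mathfrak{M}_{\mathrm{div}},\mathfrak{C})$. Thus (3) follows formally from the auxiliary claim and~(1), and in particular requires only the base change and smooth pullback parts of~(1), not the delicate tensor product part.
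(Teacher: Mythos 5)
Your proof takes essentially the same route as the paper's (which is extremely terse: it cites only \ref{cor-com-sp+-f*} for part~(1) and declares (2) and (3) easy consequences), and you have fleshed out the details correctly, in particular isolating the right auxiliary claim — that the exterior product $\widehat{\boxtimes}^\L$ of two objects of $\mathfrak{M}_{\mathrm{div}}$ again lies in $\mathfrak{M}_{\mathrm{div}}$, which works precisely because $Z_1\times Z_2$ is smooth and $T_1\times Z_2\cup Z_1\times T_2$ is a divisor of it. Your treatments of (2) and (3) are correct and, as you note, use only the base-change and smooth-pullback parts of (1) together with the auxiliary claim.

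Your flagged concern about the internal tensor-product part of~(1) is well-founded, and you should not feel that you have failed to find an argument the paper possesses: \ref{cor-com-sp+-f*}.(4) handles $\E^{(\bullet)}\widehat{\otimes}^\L_{\O_\fP}\FF^{(\bullet)}$ only when both factors lie in $\mathrm{MIC}^{(\bullet)}(X,\fP/K)$ for the \emph{same} smooth $X$, because the argument passes through the diagonal $X\hookrightarrow X\times X$, which is a morphism of smooth varieties. For two objects of $\mathfrak{M}_{\mathrm{div}}$ with distinct smooth supports $Z_1,Z_2$, the tensor product is supported on $Z_1\cap Z_2$; for instance with $\X=\widehat{\mathbb A}^3$, $Z_1=V(z)$, $Z_2=V(z-xy)$ and $\E_i=\mathcal H^{\dag 1}_{Z_i}(\O_\X^{(\bullet)})$, the isomorphism $\R\underline{\Gamma}^\dag_{Z_1}(\O_\X^{(\bullet)})\widehat{\otimes}^\L_{\O_\X^{(\bullet)}}\R\underline{\Gamma}^\dag_{Z_2}(\O_\X^{(\bullet)})\riso\R\underline{\Gamma}^\dag_{Z_1\cap Z_2}(\O_\X^{(\bullet)})$ of~\ref{fonctXX'Gamma-iso} shows the result is supported on the nodal curve $V(z,xy)$, which is not smooth and so cannot be the support of a shift of a localized object of $\mathrm{MIC}^{(\bullet)}(Z,\X/K)$. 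The claim as stated thus appears to be an overclaim, and the paper's one-line citation of \ref{cor-com-sp+-f*} does not prove it. Fortunately — and you correctly observe this — neither (2) nor (3), nor anything downstream in \ref{dfnqupropbis} (which uses only the nested $\boxtimes$ construction and hence only the auxiliary exterior-product claim), actually needs the internal tensor-product stability of $\mathfrak{M}_{\mathrm{div}}^+$.
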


\begin{proof}
The first statement is a consequence of \ref{cor-com-sp+-f*}.
The other ones are easy consequences of the first statement.
\end{proof}

\begin{ntn}
\label{dfnquprebis}
We put 
$T _0 := 
S  (\mathfrak{B} _{\mathrm{div}}  , \smash{\underrightarrow{LD}} ^{\mathrm{b}} _{\Q,\mathrm{coh}} )$.
By induction on $i \in \N$, we set 
$U _i:= T _i \cap T _i ^{\vee}$, 
$\widetilde{U}Â _i := 
\boxtimes (
\boxtimes ( \mathfrak{M} _{\mathrm{div}} , U _i ),
\boxtimes ( \mathfrak{M} _{\mathrm{div}} , U _i ))$,
and 
$T _{i+1}:= S (\mathfrak{B} _{\mathrm{div}}  ,\widetilde{U}Â _i)$.
We put $T  := \cap _{i\in \N} T _{i}  $.
\end{ntn}

\begin{thm}
\label{dfnqupropbis}
The data of coefficients $T$ 
contains 
$\mathfrak{M} _{\mathrm{div}}$,
is 
local,
stable by devissages, direct summands, 
local cohomological functors, 
pushforwards, extraordinary pullbacks, 
base change, tensor products, duals,
cohomology
and
special descent of the base. 

\end{thm}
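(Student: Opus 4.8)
The plan is to run the same induction as in the proof of Theorem~\ref{dfnquprop}, the only structural change being that the internal‑tensor operation $S(\mathfrak{D},-)$ — applied in \ref{dfnquprop} to a data $\mathfrak{D}$ stable under internal tensor products — is here replaced by the two‑fold external‑tensor operation $\boxtimes\bigl(\boxtimes(\mathfrak{M}_{\mathrm{div}},-),\boxtimes(\mathfrak{M}_{\mathrm{div}},-)\bigr)$. This substitution is exactly what makes it possible to place $\mathfrak{M}_{\mathrm{div}}$, hence every convergent isocrystal on a smooth $k$‑variety, inside the final data: $\mathfrak{M}_{\mathrm{div}}$ is not stable under internal tensor products in the absence of a Frobenius structure, but it is compatible with $\smash{\widehat{\boxtimes}}$ (Proposition~\ref{st-isoc-boxtimes} and Corollary~\ref{cor-com-sp+-f*}). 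Concretely, I would prove by induction on $i\in\N$ that each of the data $T_i$, $U_i$, $\widetilde{U}_i$, $T_{i+1}$ of \ref{dfnquprebis} contains $\mathfrak{M}_{\mathrm{div}}$, is contained in $\smash{\underrightarrow{LD}}^{\mathrm{b}}_{\Q,\mathrm{ovcoh}}$, and is local, stable by devissages, direct summands, local cohomological functors, pushforwards, extraordinary pullbacks, base change, cohomology and special descent of the base; then deduce these for $T=\cap_i T_i$ together with stability under tensor products and duals.

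For the base case $T_0=S(\mathfrak{B}_{\mathrm{div}},\smash{\underrightarrow{LD}}^{\mathrm{b}}_{\Q,\mathrm{coh}})=\smash{\underrightarrow{LD}}^{\mathrm{b}}_{\Q,\mathrm{oc}}$, the stability properties come from Corollaries~\ref{ovcoh-invim}, \ref{coro-ovcoh-oc-tstr} and~\ref{oc-st-sp-desc2}, and $\mathfrak{M}_{\mathrm{div}}\subset T_0$ is Lemma~\ref{lemdfnqupropbis}.\ref{lemdfnqupropbis1} with $\mathfrak{C}=\smash{\underrightarrow{LD}}^{\mathrm{b}}_{\Q,\mathrm{coh}}$. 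Assuming the statement for $T_i$: for $U_i=T_i\cap T_i^{\vee}$, Lemma~\ref{stab-Dvee-3prop} transmits locality and stability under devissages, direct summands, special descent of the base, pushforwards, base change and $BK^{!}$ from $T_i$ to $T_i^{\vee}$, hence to $U_i$; since $T_i\subset T_0=\smash{\underrightarrow{LD}}^{\mathrm{b}}_{\Q,\mathrm{oc}}$ and $T_i$ is stable under cohomology, Lemma~\ref{cor-oc-inc-hol} shows $T_i^{\vee}$, hence $U_i$, is stable under cohomology; and $\mathfrak{M}_{\mathrm{div}}\subset U_i$ because $\mathfrak{M}_{\mathrm{div}}\subset T_i$ by induction while $\mathfrak{M}_{\mathrm{div}}^{\vee}\subset T_i$ follows from the almost stability of $\mathfrak{M}_{\mathrm{div}}$ under duals (Proposition~\ref{prop=div-almostst}) applied with $\mathfrak{C}=T_i$. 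Passing to $\boxtimes(\mathfrak{M}_{\mathrm{div}},U_i)$ and then to $\widetilde{U}_i$, Lemma~\ref{lem-boxtimesDC} carries over locality, stability under devissages, direct summands, pushforwards, $BK^{!}$, base change, special descent of the base and cohomology; the containment $\mathfrak{M}_{\mathrm{div}}\subset\widetilde{U}_i$ is Lemma~\ref{lemdfnqupropbis} with $\mathfrak{C}=U_i$; and $\widetilde{U}_i\subset\boxtimes(\mathfrak{M}_{\mathrm{div}},U_i)\subset U_i\subset\smash{\underrightarrow{LD}}^{\mathrm{b}}_{\Q,\mathrm{ovcoh}}$ by Lemma~\ref{lem-boxtimesDC}.\ref{lem-boxtimesDC1} (using $\mathfrak{B}_{\emptyset}\subset\mathfrak{M}_{\emptyset}\subset\mathfrak{M}_{\mathrm{div}}$).

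For the last sub‑step, $T_{i+1}=S(\mathfrak{B}_{\mathrm{div}},\widetilde{U}_i)=S(\mathfrak{B}_{\mathrm{cst}}^{+},\widetilde{U}_i)$: since $\mathfrak{B}_{\mathrm{cst}}^{+}$ satisfies $BK_{+}$ and is stable under local cohomological functors, extraordinary pullbacks and tensor products (\ref{stab-cst}), Proposition~\ref{ovcoh-invim-prop} yields locality and stability under devissages, direct summands, local cohomological functors, extraordinary pullbacks, pushforwards and base change; Proposition~\ref{prop-st-cohom} together with~\ref{Bcst-st-cohom} gives stability under cohomology; Proposition~\ref{S(D,C)stability8} (legitimate because $\widetilde{U}_i\subset\smash{\underrightarrow{LD}}^{\mathrm{b}}_{\Q,\mathrm{ovcoh}}$) gives stability under special descent of the base; and $\mathfrak{M}_{\mathrm{div}}\subset T_{i+1}$ is Lemma~\ref{lemdfnqupropbis}.\ref{lemdfnqupropbis1} with $\mathfrak{C}=\widetilde{U}_i$. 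This finishes the induction, so $T=\cap_i T_i$ inherits all the listed properties. For duals, $T_{i+1}\subset\widetilde{U}_i\subset\boxtimes(\mathfrak{M}_{\mathrm{div}},U_i)\subset U_i\subset T_i^{\vee}$, so $\DD_{\X}(\E^{(\bullet)})\in T_i$ whenever $\E^{(\bullet)}\in T_{i+1}$, whence $\DD_{\X}$ preserves $T$. For internal tensor products, first $\cap_i U_i=T$: from $U_i\subset T_i$ we get $\cap_i U_i\subset T$, and from $T\subset T_i$ together with $\DD_{\X}(T)=T\subset T_i$ (dual stability just proved), i.e. $T\subset T_i^{\vee}$, we get $T\subset U_i$ for every $i$. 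Then for $\E^{(\bullet)},\FF^{(\bullet)}\in T$ one has $\FF^{(\bullet)}\in\widetilde{U}_i\subset\boxtimes(\mathfrak{M}_{\mathrm{div}},U_i)$ and $\E^{(\bullet)}\in\widetilde{U}_i=\boxtimes\bigl(\boxtimes(\mathfrak{M}_{\mathrm{div}},U_i),\boxtimes(\mathfrak{M}_{\mathrm{div}},U_i)\bigr)$, so $\E^{(\bullet)}\,\smash{\widehat{\boxtimes}}^{\L}_{\O_{\S}}\,\FF^{(\bullet)}\in\boxtimes(\mathfrak{M}_{\mathrm{div}},U_i)$, and pairing with the constant object $\O^{(\bullet)}$ over $\S$ (which belongs to $\mathfrak{M}_{\mathrm{div}}$ and is a unit for $\smash{\widehat{\boxtimes}}^{\L}_{\O_{\S}}$) shows $\E^{(\bullet)}\,\smash{\widehat{\boxtimes}}^{\L}_{\O_{\S}}\,\FF^{(\bullet)}\in U_i$; hence $\E^{(\bullet)}\,\smash{\widehat{\boxtimes}}^{\L}_{\O_{\S}}\,\FF^{(\bullet)}\in\cap_i U_i=T$, and pulling back along the diagonal $\X\hookrightarrow\X\times_{\S}\X$ (using stability of $T$ under extraordinary pullbacks and the commutation isomorphism~\ref{f!T'Totimes}) identifies $\E^{(\bullet)}\,\smash{\widehat{\otimes}}^{\L}_{\O_{\X}}\,\FF^{(\bullet)}$, up to a shift, with an object of $T$.

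The main obstacle is the bookkeeping of the inductive step: at each of the three sub‑passages $T_i\to U_i\to\widetilde{U}_i\to T_{i+1}$ one must simultaneously check that every required stability property survives \emph{and} that $\mathfrak{M}_{\mathrm{div}}$ stays contained, and these two demands pull against each other for the double external‑product data $\widetilde{U}_i$, which must be large enough to absorb $\mathfrak{M}_{\mathrm{div}}$ (Lemma~\ref{lemdfnqupropbis}) yet small enough to remain overcoherent so that special descent of the base can be invoked (Proposition~\ref{S(D,C)stability8}). Once the interplay of Lemmas~\ref{lem-boxtimesDC}, \ref{stab-Dvee-3prop}, \ref{lemdfnqupropbis}, Corollaries~\ref{cor-oc-inc-hol}, \ref{cor-com-sp+-f*} and Propositions~\ref{ovcoh-invim-prop}, \ref{prop-st-cohom}, \ref{S(D,C)stability8}, \ref{prop=div-almostst} is arranged as above, the remaining verifications are formal.
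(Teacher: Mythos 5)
Your proof is correct and follows essentially the same inductive strategy as the paper: verify the list of stability properties for $T_0$, then propagate them through the three sub-steps $T_i \to U_i \to \widetilde{U}_i \to T_{i+1}$ using Lemmas \ref{stab-Dvee-3prop}, \ref{lem-boxtimesDC}, \ref{lemdfnqupropbis}, Propositions \ref{ovcoh-invim-prop}, \ref{prop-st-cohom}, \ref{S(D,C)stability8}, \ref{prop=div-almostst} and Corollary \ref{cor-oc-inc-hol}, and then obtain duals and tensor products from the chain of inclusions $T_{i+1} \subset \widetilde{U}_i \subset \boxtimes(\mathfrak{M}_{\mathrm{div}},U_i) \subset U_i \subset T_i^{\vee}\cap T_i$. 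The only place you go beyond the paper's terse Part~II is in spelling out the passage from external to internal tensor products via the diagonal pullback, which the paper leaves implicit.
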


\begin{proof}
I) We prove by induction on $i$ that 
$T _i$ 
contains $\mathfrak{M} _{\mathrm{div}}$, 
is local,
stable under devissages, direct summands, 
local cohomological functors,
pushforwards, extraordinary pullbacks, base change,
cohomology
and
special descent of the base.

a) 
For $T _0=\smash{\underrightarrow{LD}} ^{\mathrm{b}} _{\Q,\mathrm{oc}}$, 
this comes from the step I)a) of the proof of \ref{dfnquprop},
and of \ref{lemdfnqupropbis}.\ref{lemdfnqupropbis1}.

b) Suppose that this is true for $T _i$ for some $i \in \N$.

i) Since $\mathfrak{M} _{\mathrm{div}}$ is almost stable under duality (see \ref{prop=div-almostst}),
then $U _i$ 
contains $\mathfrak{M} _{\mathrm{div}}$.
Hence, using \ref{lemdfnqupropbis}, 
$\widetilde{U}Â _i$ 
contains $\mathfrak{M} _{\mathrm{div}}$.

ii) Similarly to  the first part of the step I)b)ii) of the proof of \ref{dfnquprop}, we check that 
$U _i$
satisfies $BK ^!$,
is local,
stable under pushforwards,
under devissages, direct summands, base change, 
special descent of the base,
and  under cohomology.
Hence, following \ref{lem-boxtimesDC}, 
we check that so is 
$\boxtimes ( \mathfrak{M} _{\mathrm{div}} , U _i )$
and then so is 
$\widetilde{U}Â _i$.
Hence, using the step I)a) of the proof of \ref{dfnquprop}, we get 
that $T _{i+1}$ satisfied the desired properties.

II) Using first  \ref{preS(D,C)stability}.\ref{S(D,C)stability1}, 
and next
\ref{lem-boxtimesDC}.\ref{lem-boxtimesDC1} we get the inclusions
$T _{i+1} \subset 
\widetilde{U}Â _i\subset \boxtimes ( \mathfrak{M} _{\mathrm{div}} , U _i ) 
\subset 
U _i
\subset
T _i 
 $. 
Hence, by construction, the external tensor product of two objects of 
$T _{i+1}$ is an object of 
$T _{i} $ and
the dual of an object of 
$T _{i+1}$ is an object of 
$T _{i}$.
\end{proof}

\section{Formalism of Grothendieck six operations for arithmetic $\D$-modules over couples}

Let $(\W, \W ^\flat)$ be an object of $\mathrm{DVR}  (\V,\V ^\flat)$, and $l$ be its residue field.

\subsection{Data of coefficients over frames}

\begin{dfn}
\begin{enumerate}
\item We define the category of frames over $\W$ as follows. 
A {\it frame} $(Y,X,\fP)$ 
over $\W$ means that 
$\fP$ is a realizable smooth formal scheme over $\W$, 
$X$ is a closed subscheme of the special fiber $P$ of $\fP$ and $Y$ is an open subscheme of $X$. 
Let 
$(Y', X', \fP')$ 
and $(Y,X,\fP)$ be two frames over $\W$. 
A morphism $\theta= (b,a,f) \colon (Y', X', \fP')\to (Y,X,\fP)$ of frames over $\W$ 
is the data of a  
morphism $f\colon \fP' \to \fP$ of realizable smooth formal schemes over $\W$,
a morphism $a\colon X' \to X$ of $l$-schemes, 
and a morphism $b \colon Y' \to Y$ of $l$-schemes 
inducing the commutative diagram
$$\xymatrix{
{Y'} 
\ar[d] ^-{b}
\ar@{^{(}->}[r] ^-{}
& 
{X'}
\ar[d] ^-{a}
\ar@{^{(}->}[r] ^-{}
& 
{\fP'} 
\ar[d] ^-{f}
\\
{Y} 
\ar@{^{(}->}[r] ^-{}
& 
{X}
\ar@{^{(}->}[r] ^-{}
& 
{\fP.} 
}$$
If there is no ambiguity with $\W$, we simply say frame or morphism of frames. 

\item A morphism 
$\theta= (b,a,f) \colon (Y', X', \fP')\to (Y,X,\fP)$ of frames over $\W$ 
is said to be {\it complete} 
(resp. {\it strictly complete})
if $a$ is proper (resp. 
$f$ and $a$ are proper). 

\end{enumerate}

\end{dfn}

\begin{dfn}
\begin{enumerate}
\item We define the category of {\it couples} over $\W$ as follow. 
A couple $(Y, X)$ over $\W$ means the two first data of a frame 
over $\W$
of the form $(Y,X, \fP)$.
A frame of the form $(Y,X, \fP)$ is said to be enclosing $(Y,X)$. 
A morphism of couples $u=(b,a)\colon (Y', X') \to (Y, X)$ over $\W$
is the data of a morphism of $l$-schemes of the form 
$a\colon X' \to X$ such that $a (Y' ) \subset Y$ and $b\colon Y'Â \to Y$ is the induced morphism.

\item A morphism of couples $u=(b,a)\colon (Y', X') \to (Y, X)$ over $\W$
is said to be {\it complete} if $a$ is proper.

\end{enumerate}
 
\end{dfn}

\begin{rem}
\label{rem-complete-frame-coup}
\begin{enumerate}

\item Let $u=(b,a)\colon (Y', X') \to (Y, X)$ be a complete morphism of couples  over $\W$.
Then there exists a strictly complete morphism  of frames over $\W$ of the form
$\theta= (b,a,f) \colon (Y', X', \fP')\to (Y,X,\fP)$.
Indeed, by definition, there exist some frames over $\W$ of the form
$(Y', X', \fP'')$ and 
$(Y,X,\fP)$. 
There exists an immersion 
$\fP'' \hookrightarrow \fQ ''$ with $\fQ''$ a proper and smooth formal $\W$-scheme. 
Hence, put $\fP':= \fQ '' \times \fP$ and let $f\colon \fP' \to \fP$ be the projection.
Since $a$ is proper, $X \hookrightarrow \fP$ is proper, 
and $f$ is proper, then the immersion $X' \hookrightarrow \fP'$ is also proper. 

\item Let $u=(b,a)\colon (Y', X') \to (Y, X)$ be a morphism of couples  over $\W$.
Similarly, we check that there exists a morphism  of frames over $\W$ of the form
$\theta= (b,a,f) \colon (Y', X', \fP')\to (Y,X,\fP)$. 

\end{enumerate}

\end{rem}

\begin{ntn}
\label{ntn-6operations}
Let $\mathfrak{C} $ be a data of coefficients over $(\W,\W ^\flat)$.
Let  $(Y, X,\fP)$ be a frame over $\W$. 
We denote by $\mathfrak{C} (Y, \fP/\W)$
the full subcategory of 
$\mathfrak{C}  (\fP)$ 
of objects $\E$ such that there exists an isomorphism of the form 
$\E \riso \R \underline{\Gamma} ^\dag _{Y} (\E)$.
We remark that $\mathfrak{C} (Y, \fP/\W)$ only depend on the immersion 
$Y \hookrightarrow \fP$ which explains the notation.
We might choose $X$ equal to the closure of $Y$ in $P$.

\end{ntn}

\begin{ntn}
\label{ntn-t-structureovcoh}
Let $\mathfrak{C}$ be a data of coefficients stable under devissages  and cohomology. 
Let $(Y, X,\fP)$ be a frame over $\W$.
Choose $\U$ an open set of $\fP$ such that 
$Y$ is closed in $\U$.

\begin{enumerate}
\item Similarly to \cite[1.2.1-5]{Abe-Caro-weights} , 
we define a canonical t-structure on 
$\mathfrak{C} (Y , \fP /\W)$
as follows.
We denote by 
$\fC  ^{\leq n}    (Y,\fP/\W)$
(resp. 
$\fC   ^{\geq n}  (Y,\fP/\W)$)
the full subcategory of 
$\mathfrak{C} (Y , \fP /\W)$
of complexes 
$\E$ such that
$\E |\U 
\in 
\fC  ^{\leq n}  (Y,\U/\W):=
\fC   (Y,\U/\W)
\cap 
\fC  ^{\leq n}  (\U/\W)$
(resp. 
$\E |\U 
\in 
\fC  ^{\geq n}  (Y,\U/\W)
:=
\fC   (Y,\U/\W)
\cap 
\fC  ^{\geq n}  (\U/\W)$),
where the t-structure on 
$\fC  (\U/\W)$
is the canonical one (see \ref{t-structure-ovcoh-oc-hol-h}).
The heart of this t-structure
will be denoted by
$\mathfrak{C} ^0   (Y,\fP/\W)$.
Finally, we denote by 
$\mathcal{H} ^i _{\mathrm{t}}$
the $i$th space of cohomology with respect to this canonical t-structure. 

\item Suppose $Y$ is smooth. 
Then, we denote by
$\fC _{\mathrm{isoc}}   (Y,\fP/\W)$
(resp. $\fC ^{\geq n}  _{\mathrm{isoc}}   (Y,\fP/\W)$,
resp. $\fC ^{\leq n}  _{\mathrm{isoc}}   (Y,\fP/\W)$,
resp. $\fC ^0 _{\mathrm{isoc}}   (Y,\fP/\W)$)
the full subcategory of 
(resp. $\fC ^{\geq n}  (Y,\fP/\W)$,
resp. $\fC ^{\leq n}   (Y,\fP/\W)$,
resp. $\fC ^0  (Y,\fP/\W)$)
consisting of complexes
$\E ^{(\bullet)}$ such that 
$\mathcal{H} ^i  (\E ^{(\bullet)} |\U ) 
\in 
\mathrm{MIC} ^{(\bullet)} (Y, \U/K)$.
We refer ``$\mathrm{isoc}$'' as isocrystals. 
The reason is that when we work over perfect fields,
we have the equivalence of categories
of \cite[5.4.6.1]{caro-pleine-fidelite} (since we use Shiho's work on proper descent, 
this is not clear if we can extend it without this perfectness condition).
In this paper, we avoid trying to check such equivalence of categories
(other than the easier case where the partial compactification is smooth).

\end{enumerate}

\end{ntn}

\begin{rem}
\label{rem-tstructure-exact}
Let $\mathfrak{C}$ be a data of coefficients stable under devissages  and cohomology. 
Let $\fP$ be a smooth formal $\W$-scheme,
$Y$ be a subscheme of $P$,
$Z$ be a closed subscheme of $Y$,
and 
$Y':= Y \setminus Z$.

\begin{enumerate}
\item We get the t-exact functor 
$(\hdag Z) 
\colon
\mathfrak{C} (Y, \fP/\W)
\to 
\mathfrak{C} (Y', \fP/\W)$.
Beware the functor 
$(\hdag Z) 
\colon
\mathfrak{C} (Y, \fP/\W)
\to 
\mathfrak{C} (Y, \fP/\W)$
is not always t-exact.

\item 
\label{rem-tstructure-exact2}
We say that $Z$ locally comes from a divisor of $P$ if 
locally in $P$, there exists a divisor $T$ of $P$ such that 
$Z = Y \cap T$ (this is equivalent to saying that locally in $P$, 
the ideal defining $Z \hookrightarrow Y$ is generated by one element).
In that case, 
we get the t-exact functor 
$(\hdag Z) 
\colon
\mathfrak{C} (Y, \fP/\W)
\to 
\mathfrak{C} (Y, \fP/\W)$.
Indeed, 
by construction of our t-structures, 
we can suppose $Y$ is closed in $\fP$ (and then we reduce to the case where the t-structure on 
$\mathfrak{C} (Y, \fP/\W)$ is induced by 
the standard t-structure
of $\smash{\underrightarrow{LD}} ^{\mathrm{b}} _{\Q,\mathrm{coh}} ( \smash{\widehat{\D}} _{\fP} ^{(\bullet)})$).
Since the property is local, 
we can suppose there exists a divisor
 $T$ such that $Z = T \cap Y$. 
Then both functors 
$(\hdag Z) $ and $(\hdag T)$ 
of 
$\mathfrak{C} (Y, \fP/\W)
\to 
\mathfrak{C} (Y, \fP/\W)$
are isomorphic. Since 
$(\hdag T)$ is exact,
we are done.
\end{enumerate}

\end{rem}

\begin{rem}
\label{rem-sp-desc-123}

Let $(\W, \W ^\flat)
\to 
(\W ', \W ^\flat)$ be a special morphism
of $\mathrm{DVR}  (\V,\V ^\flat)$. 
Let $\fP$ be a smooth formal $\W$-scheme,
$\fP ^\flat := \fP \times _{\Spf \W} \Spf \W ^\flat$ be the induced smooth formal $\W ^\flat$-scheme.
Let $\E^{(\bullet) }, \FF^{(\bullet) }\in
\smash{\underrightarrow{LD}} ^{\mathrm{b}} _{\Q,\mathrm{coh}} ( \smash{\widehat{\D}} _{\fP} ^{(\bullet)})$.
\begin{enumerate}
\item 
\label{rem-sp-desc-123-1}
$\E^{(\bullet) } =0$ if and only if 
$ \W'  \smash{\widehat{\otimes}}^\L
_{\W}  \E^{(\bullet) }
=0$. 
\item 
\label{rem-sp-desc-123-2}
A morphism $f \colon \E^{(\bullet) }\to  \FF^{(\bullet) }$ is an isomorphism,
if and only if 
$ \W'  \smash{\widehat{\otimes}}
_{\W} f$ is an isomorphism.

\item
\label{rem-sp-desc-123-3}
Let $l ^{\flat}$ be the residue field of $\W ^{\flat}$.
Let $Y$ be a reduced $l$-subscheme of $P$.
Let $Y^{\flat}:= Y \times _{\Spec l} \Spec l ^{\flat}$,
and 
$\widetilde{Y}  ^{\flat}:= 
Y^{\flat}_{\mathrm{red}}:= (Y \times _{\Spec l} \Spec l ^{\flat}) _{\mathrm{red}}$
be the corresponding reduced subscheme of 
$P ^\flat$.
Let $l'$  be a finite (radicial) extension of $l$ included in $l ^\flat$.
We put $Y ': =Y \times _{\Spec l} \Spec (l')$.

\begin{enumerate}

\item  By using
 \cite[8.7.2]{EGAIV3}, 
 \cite[8.8.2.(ii)]{EGAIV3} and \cite[8.10.5.(v)]{EGAIV3}, 
for $l'$ large enough,
there exist a reduced $l'$-variety $\widetilde{Y} '$ 
satisfying 
$\widetilde{Y} ^\flat \riso \widetilde{Y} ' \times _{\Spec (l')} \Spec (l ^\flat)$.
For $l'$ large enough, 
it follows from \cite[8.8.2.(i)]{EGAIV3} 
that 
there exists a morphism 
$\widetilde{Y} ' \to Y '$
inducing the closed immersion 
$\widetilde{Y} ^\flat \hookrightarrow Y ^\flat$.
By using \cite[8.10.5]{EGAIV3}, 
for $l'$ large enough, 
we can suppose that $\widetilde{Y} ' \to Y '$ is a surjective closed immersion.
Since $\widetilde{Y} '$ is reduced, this yields
$\widetilde{Y} ' = Y' _{\mathrm{red}}$, for $l'$ large enough.

\item 
\label{rem-sp-desc-123-3b}
Hence, by using \cite[17.7.8]{EGAIV4}, 
we check that if 
$Y^{\flat}_{\mathrm{red}}$ is smooth 
(resp. étale) over $l ^\flat$, then so is 
$Y' _{\mathrm{red}}$ over $l'$
for $l'$ large enough.

\item 
\label{rem-sp-desc-123-3c}
Hence, if $Y$ is of dimension $0$, then
$Y' _{\mathrm{red}}$ is a finite and étale $l'$-variety
for $l'$ large enough.
Indeed, 
since $l ^{\flat}$ is perfect and 
since
$Y^{\flat}_{\mathrm{red}}$
is a reduced $l ^{\flat}$-scheme of finite type of dimension $0$, then 
$Y^{\flat}_{\mathrm{red}}$
is a finite and étale $l ^{\flat}$-scheme.
We conclude using the previous remark.

\end{enumerate}

\end{enumerate}

\end{rem}

\begin{empt}
[Devissage in isocrystals]
\label{ovcoharedev}
Let $\mathfrak{C}$ be a data of coefficients over $ (\V,\V ^\flat)$ 
stable under devissages, cohomology, local cohomological functors,
extraordinary pull back and base change. 
Let $(\W, \W ^\flat)$ be an object of $\mathrm{DVR}  (\V,\V ^\flat)$,
let $(Y, X,\fP)$ be a frame over $\W$.
Let $\E ^{(\bullet)} \in \fC  (Y,\fP/\W)$.
Using \ref{rem-sp-desc-123}.\ref{rem-sp-desc-123-3} and its notations, 
since $l ^{\flat}$ is perfect, 
there exists
an open dense $l ^{\flat}$-smooth subscheme 
$\widetilde{Y} _0 ^{\flat}$ of 
$Y^{\flat}_{\mathrm{red}}$.
Using similar arguments than \ref{rem-sp-desc-123}.\ref{rem-sp-desc-123-3b}, 
this yields there exist
$l'$ a finite (radicial) extension of $l$ included in $l ^\flat$,
and an open dense smooth subscheme $Y ' _0$ of $ (Y \times _{\Spec l} \Spec l ') _{\mathrm{red}}$
such that 
$\widetilde{Y} _0 ^{\flat} \riso Y _0 ' \times _{\Spec (l')} \Spec (l ^\flat)$.
With \ref{special-filtered-pre}, 
increasing $l'$ if necessary, 
we can suppose there exists a special morphism 
$(\W, \W ^\flat)
\to 
(\W ', \W ^\flat)$ 
of 
$\mathrm{DVR}  (\V,\V ^\flat)$
such that the residue field of 
$\W '$ is $l'$.
Hence, 
shrinking $Y ' _0$ if necessary, 
using Theorem \cite[3.4]{caro-holo-sansFrob} (which is still valid without the hypothesis on the perfectness
of the residue fields) 
we get 
$ \W'  \smash{\widehat{\otimes}}^\L
_{\W}  \E^{(\bullet) }\in \mathfrak{C}  _{\mathrm{isoc}} (Y' _0, \fP \times _{\Spf \W} \Spf \W ')$.
By devissage, 
this yields that there exists
a special morphism 
$(\W, \W ^\flat)
\to 
(\W ', \W ^\flat)$ such that,
denoting by 
$\fP ' := \fP \times _{\Spf \W} \Spf \W '$,
$Y':= (Y \times _{\Spf \W} \Spf \W ') _{\mathrm{red}}$ and
$\E^{\prime (\bullet) }
:=
 \W'  \smash{\widehat{\otimes}}^\L
_{\W}  \E^{(\bullet) }$,
there exists a smooth stratification 
$(Y ' _{i}) _{i=1, \dots , r}$ of $Y'$ in $P'$ 
(see Definition \cite[2.2.1]{Abe-Caro-weights})
such that we have
$ \R \underline{\Gamma} ^\dag _{Y ' _i}  (\E ^{\prime (\bullet)}) 
\in 
\mathfrak{C} _{\mathrm{isoc}} (Y' _i, \fP')$
 for any $i =1,\dots, r$.
\end{empt}

\subsection{Formalism of Grothendieck six operations over couples}

\begin{thm}
[Independence]
\label{ind-CYW}
Let $\mathfrak{C}$ be a data of coefficients over $(\V,\V ^\flat)$
which contains $\fB _\mathrm{div}$, 
which is 
stable  under devissages, 
pushforwards, extraordinary pullbacks,
and under local cohomological functors.

Let
$\theta= (b,a,f) \colon (Y', X', \fP')\to (Y,X,\fP)$ be  a morphism of frames over $\W$
such that $a$ and $b$ are proper.

\begin{enumerate}
\item For any 
$\E ^{(\bullet)} 
\in 
\fC (Y , \fP /\W)$, 
for any $\E ^{\prime (\bullet)} \in \fC (Y ', \fP '/\W)$  (recall notation \ref{ntn-6operations},
we have 
$$\mathrm{Hom}  _{\fC (Y , \fP /\W)}
( f ^{(\bullet) } _{+} ( \E ^{\prime (\bullet) }) , \E ^{(\bullet) }) 
\riso 
 \mathrm{Hom}  _{\fC (Y ', \fP '/\W)}
( \E ^{\prime (\bullet) } ,
\R \underline{\Gamma} ^{\dag} _{Y'}
f ^{!(\bullet )}  ( \E ^{ (\bullet) })).$$

\item Suppose that 
$Y'= Y$ and $b$ is the identity,
and that $\fC$ is stable under cohomology.
 Then, 
for any 
$\E ^{(\bullet)} \in \fC ^0 (Y , \fP /\W)$,
for any 
$\E ^{\prime(\bullet)} \in 
\fC ^0 (Y , \fP '/\W)$, 
for any $n \in \Z \setminus \{ 0\}$, we have
$$\mathcal{H} _t ^n \R \underline{\Gamma} ^{\dag} _{Y} f ^{!(\bullet)} ( \E ^{(\bullet)}) =0,
\hspace{1 cm}
\mathcal{H} ^n _t f _{+} ^{(\bullet)} ( \E ^{\prime (\bullet)}) =0.$$

\item Suppose that 
$Y'= Y$ and $b$ is the identity. 
For any 
$\E ^{(\bullet)} 
\in 
\mathfrak{C} (Y , \fP /\W)$, 
for any $\E ^{\prime (\bullet)} \in \mathfrak{C} (Y , \fP '/\W)$, 
the adjunction morphisms 
$\R \underline{\Gamma} ^{\dag} _{Y} f ^{!(\bullet)}  f _{+} ^{(\bullet)} (\E ^{\prime (\bullet)})
\to 
\E ^{\prime (\bullet)}$ 
and 
$f _{+} ^{(\bullet)}  \R \underline{\Gamma} ^{\dag} _{Y} f ^{!(\bullet)}  
(\E ^{(\bullet)} )
\to 
\E ^{(\bullet)} $
are isomorphisms.
In particular, the functors
$\R \underline{\Gamma} ^{\dag} _{Y} f ^{!(\bullet)}$
and 
$f _{+} ^{(\bullet)}$ induce quasi-inverse equivalences of categories between 
$\mathfrak{C} (Y , \fP /\W)$
and 
$\mathfrak{C} (Y , \fP '/\W)$.

\end{enumerate}

\end{thm}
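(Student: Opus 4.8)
The plan is to establish (1) directly from the adjunction for realizable morphisms, then to reduce (2) and (3) to the case where $Y$ is smooth and the coefficients are overconvergent isocrystals, where the functors in play become t-exact equivalences by the results of the previous chapters.

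For (1). The supports of $\E^{(\bullet)}$ and $\E^{\prime(\bullet)}$ are contained respectively in the closure of $Y$ in $P$ and in the closure of $Y'$ in $P'$, hence — since $a$ and $b$ are proper and $X\hookrightarrow P$, $X'\hookrightarrow P'$ are closed — they are proper over $\fP$ via $f$; moreover $f^{(\bullet)}_{+}(\E^{\prime(\bullet)})$ has support in $Y$ by \ref{commutfonctcohlocal2}, so it does lie in $\fC(Y,\fP/\W)$ and the statement makes sense. Passing through the equivalence $\underrightarrow{\lim}$ of \ref{limeqcat}, which is compatible with $f^{(\bullet)}_{+}$, $f^{!(\bullet)}$ and $\R\underline{\Gamma}^\dag_{Y'}$ (by \ref{coh-Qcoh}, \ref{rem-f!TornoT}, \ref{cor-induction-div-coh} and the definitions), I would apply the adjunction \ref{cor-adj-formulbis-bij2} to the realizable morphism $f$ and take $\mathcal{H}^0$. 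It then remains to identify $\mathrm{Hom}(\E^{\prime(\bullet)},f^{!(\bullet)}(\E^{(\bullet)}))$ with $\mathrm{Hom}(\E^{\prime(\bullet)},\R\underline{\Gamma}^\dag_{Y'}f^{!(\bullet)}(\E^{(\bullet)}))$: this holds because $\R\underline{\Gamma}^\dag_{Y'}$ is right adjoint to the inclusion $\fC(Y',\fP'/\W)\hookrightarrow\fC(\fP')$, which follows from the localization triangle \ref{caro-stab-sys-ind-surcoh4.4.3} together with the vanishing of morphisms from an object with support in $Y'$ to an object isomorphic to its localization outside $Y'$ — an analogue of \ref{annulationHom-hdag}.2 obtained by writing $Y'$ as a finite intersection of divisors and dévissage. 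Finally $\R\underline{\Gamma}^\dag_{Y'}f^{!(\bullet)}(\E^{(\bullet)})\in\fC(Y',\fP'/\W)$ by stability under extraordinary pullbacks and local cohomological functors.

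For (2) and (3), I would first treat the core case where $Y$ is smooth and the coefficients are isocrystals. Choose an open $\U$ of $\fP$ and $\U'$ of $\fP'$ in which $Y$ is closed, with $f(U')\subseteq U$; over such opens the shift $d_{X'/X}$ appearing in \ref{com-sp+-f*} vanishes. Using an affine covering with local smooth liftings as in \ref{ntnPPalpha}, one identifies $\fC^0_{\mathrm{isoc}}(Y,\fP/\W)$ and $\fC^0_{\mathrm{isoc}}(Y,\fP'/\W)$ with categories of the form $\mathrm{MIC}^{(\bullet)}$ glued over the local liftings (via $\sp_+$, Berthelot–Kashiwara and \ref{eqcat-iso-reco}, \ref{prop1}, \ref{ntnMICdag2fs3}), and by \ref{com-sp+-f*} and \ref{cor-com-sp+-f*} the functors $\R\underline{\Gamma}^\dag_Y f^{!(\bullet)}$ and $f^{(\bullet)}_{+}$ are transported to the pullback of overconvergent isocrystals along $\theta$ — an equivalence since $a$ and $b$ are proper — and to its quasi-inverse. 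In particular both are t-exact equivalences of categories here, which settles (2) and (3) in this case.

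For the general case I would run a dévissage in isocrystals. By \ref{ovcoharedev}, after a special base change $(\W,\W^\flat)\to(\W',\W^\flat)$ the object $\E^{(\bullet)}$ (resp. $\E^{\prime(\bullet)}$) acquires a finite filtration whose graded pieces are of the form $\R\underline{\Gamma}^\dag_{Y_i}(\G_i)$ for a smooth stratification $(Y_i)$ of $Y$ and $\G_i\in\fC_{\mathrm{isoc}}(Y_i,-)$; since $f^{!(\bullet)}$, $f^{(\bullet)}_{+}$, $\R\underline{\Gamma}^\dag$, the cohomology functors and the t-truncations all commute with the flat base change $\W\to\W'$ and since the vanishing of a coherent module descends along $\W\to\W'$ by \ref{rem-sp-desc-123}, we may assume this dévissage is already defined over $\W$. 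Iterating the localization triangles over the strata and using that $f^{!(\bullet)}$ and $f^{(\bullet)}_{+}$ commute with $\R\underline{\Gamma}^\dag$ and with localization outside closed subschemes (by \ref{2.2.18}, the localization functors at hand being t-exact by \ref{rem-tstructure-exact}), the adjunction isomorphisms of (3) and the vanishing statements of (2) reduce to the graded pieces, hence to the smooth isocrystal case, with the five lemma concluding (3). The main obstacle will be exactly this last reduction: one must track the codimension shifts hidden in $\R\underline{\Gamma}^\dag_{Y_i}$ against the normalization of $f^{!(\bullet)}$ so that the stratification dévissage is genuinely compatible with the t-structure on $\fC(Y,\fP/\W)$ — without which the computation of the $\mathcal{H}^n_t$ in (2) on graded pieces is not legitimate — together with the (more routine but lengthy) verification that the identifications of the smooth isocrystal case are compatible with $f$, via its factorization through a graph immersion and a smooth projection.
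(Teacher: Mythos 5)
For part (1) your argument is essentially the same as the paper's (reduce to $Y$ dense in $X$, apply the realizable adjunction \ref{cor-adj-formulbis-bij2}, and absorb the localization using the support condition), so that part is fine.

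For parts (2) and (3) there is a genuine gap, which you correctly sense but do not fill. The d\'evissage in isocrystals from \ref{ovcoharedev} does \emph{not} produce a filtration in the heart $\fC^0(Y,\fP/\W)$: the localization triangle $\R\underline{\Gamma}^\dag_Z(\E')\to\E'\to(\hdag Z)(\E')\to+1$ has a middle term in the heart, but $\R\underline{\Gamma}^\dag_Z(\E')$ is in general concentrated in degrees $0$ and $1$ (this is precisely the four-term exact sequence \ref{s.e.loca}). So knowing t-exactness of $f_+^{(\bullet)}$ and $\R\underline{\Gamma}^\dag_Y f^{!(\bullet)}$ on each stratum does not, by a five-lemma or by t-exactness of truncations, give t-exactness on $\E'$; the ``graded pieces'' on which you want to compute $\mathcal{H}^n_t$ are not the terms of a filtration compatible with the t-structure. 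The issue is not codimension shifts in the normalization of $f^!$ (those cancel against $\R\underline{\Gamma}^\dag_{Y_i}$), but the non-exactness of $\R\underline{\Gamma}^\dag_Z$ on the heart. The paper circumvents this with a finer argument: an induction on $(\dim Y, N_Y)$, the four-term exact sequence split into two short exact sequences, and then the non-trivial steps 3.a)--3.c): one first checks that the induced map $s'$ after applying $\H^0(\R\underline{\Gamma}^\dag_Y f^{!(\bullet)})$ is an epimorphism, and then bootstraps back to surjectivity of $s$ itself via a monomorphism/isomorphism argument, which is what makes $\mathcal{H}^1 f_+$ vanish. This surjectivity argument is the crux of the theorem and is absent from your sketch.

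A secondary remark: your treatment of the smooth base case is needlessly heavy. After the special base change of \ref{rem-sp-desc-123} the paper simply observes that the claim is local, lifts $Y$ to a smooth formal scheme $\Y$, and applies Berthelot--Kashiwara \ref{u!u+=id}, which already gives t-exact quasi-inverse equivalences for arbitrary coherent complexes with support in $Y$ --- no passage through $\sp_+$, overconvergent isocrystals, or \ref{com-sp+-f*} is required, and no restriction to $\fC_{\mathrm{isoc}}$ is needed at this stage.
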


\begin{proof}
I) Let us check the first statement. 
Replacing $X$ and $X'$ by the closure of $Y$ in $P$ and $Y'$ in $P'$,
we can suppose $Y$ is dense in $X$ and $Y'$ is dense in $X'$.
Let 
$\E ^{(\bullet)} 
\in 
\fC (Y , \fP /\W)$, 
and $\E ^{\prime (\bullet)} \in \fC (Y ', \fP '/\W)$.
Since $a$ is proper,
using \ref{cor-adj-formulbis-bij2}, 
the stability of $\fC$  under extraodinary pullbacks,
and the equivalence of categories
\ref{eqcat-limcoh}, we get the bijection
$$\mathrm{Hom}  _{\smash{\underrightarrow{LD}} ^{\mathrm{b}} _{\Q,\mathrm{coh}}
(\smash{\widehat{\D}} _{\fP} ^{(\bullet)})}
( f ^{(\bullet) } _{+} ( \E ^{\prime (\bullet) }) , \E ^{(\bullet) }) 
\riso 
 \mathrm{Hom}  _{\smash{\underrightarrow{LD}} ^{\mathrm{b}} _{\Q,\mathrm{coh}}
(\smash{\widehat{\D}} _{\fP'} ^{(\bullet)})}
( \E ^{\prime (\bullet) } ,
f ^{!(\bullet )}  ( \E ^{ (\bullet) })).$$
Since $a$ and $b$ are proper, then 
the open immersion 
$Y' \subset a ^{-1} (Y)$ is proper. 
Since $Y'$ is dense in $X'$, then 
$Y'=a ^{-1} (Y)$.
Hence,
the functors 
$\R \underline{\Gamma} ^{\dag} _{X'}
f ^{!(\bullet )}$
and
$\R \underline{\Gamma} ^{\dag} _{Y'}
f ^{!(\bullet )}$
(resp. 
$f ^{(\bullet) } _{+} $ 
and 
$\R \underline{\Gamma} ^{\dag} _{Y} f ^{(\bullet) } _{+} $)
are isomorphic over 
$\fC(Y , \fP /\W)$
(resp. $\fC(Y ', \fP '/\W)$).
Hence,
the functor 
$\R \underline{\Gamma} ^{\dag} _{X'}
f ^{!(\bullet )}$
(resp. $f ^{(\bullet) } _{+} $)
induces
$\R \underline{\Gamma} ^{\dag} _{X'}
f ^{!(\bullet )}
\colon
\fC (Y , \fP /\W)
\to 
\fC (Y ', \fP '/\W)$
(resp. 
$f ^{(\bullet) } _{+} 
\colon 
\fC (Y ', \fP '/\W)
\to 
\fC (Y , \fP /\W)$).
Since $\fC (Y' , \fP' /\W)$ 
is a strictly full subcategory of
$\smash{\underrightarrow{LD}} ^{\mathrm{b}} _{\Q,\mathrm{coh}} 
(\smash{\widehat{\D}} _{\fP'} ^{(\bullet)})$,
we conclude using the equality
$$ \mathrm{Hom}  _{\smash{\underrightarrow{LD}} ^{\mathrm{b}} _{\Q,\mathrm{coh}}
(\smash{\widehat{\D}} _{\fP'} ^{(\bullet)})}
( \E ^{\prime (\bullet) } ,
f ^{!(\bullet )}  ( \E ^{ (\bullet) }))
=
 \mathrm{Hom}  _{\fC (Y ', \fP '/\W)}
( \E ^{\prime (\bullet) } ,
\R \underline{\Gamma} ^{\dag} _{X'}
f ^{!(\bullet )}  ( \E ^{ (\bullet) })).$$

II) Now let us check at the same time the last two statements.
Using the stability properties that $\mathfrak{C}$ satisfies, 
we check that the functors 
$f _+ ^{(\bullet )}\colon 
\mathfrak{C} (Y , \fP' /\W)
\to 
\mathfrak{C} (Y , \fP /\W)$
and
$\R \underline{\Gamma} ^{\dag} _{Y} f ^{!(\bullet )}
\colon 
\mathfrak{C} (Y , \fP /\W)
\to 
\mathfrak{C} (Y , \fP '/\W)$
are well defined. 
Since $\mathfrak{C}$ is included in $\smash{\underrightarrow{LD}} ^{\mathrm{b}} _{\Q,\mathrm{ovcoh}} $, 
we reduce to check the case where 
$\mathfrak{C}=\smash{\underrightarrow{LD}} ^{\mathrm{b}} _{\Q,\mathrm{ovcoh}} $.
We proceed similarly to \cite[3.2.6]{caro_surcoherent}:
Choose 
$\U$ (resp. $\U'$) an open set of $\fP$ (resp. $\fP '$) such that 
$Y$ is closed in $\U$ (resp. $Y$ is closed in $\U'$),
and such that $f (\U') \subset \U$.
The functor 
$| \U \colon 
\smash{\underrightarrow{LD}} ^{\mathrm{b}} _{\Q,\mathrm{ovcoh}} (Y , \fP /\W)
\to 
\smash{\underrightarrow{LD}} ^{\mathrm{b}} _{\Q,\mathrm{ovcoh}} (Y , \U /\W)$
is t-exact, and the same with some primes.
Moreover, 
for any $\E ^{(\bullet)}
\in 
\smash{\underrightarrow{LM}}  _{\Q,\mathrm{ovcoh}} (Y , \fP /\W)$
(or 
$\E ^{(\bullet)}
\in \smash{\underrightarrow{LD}} ^{\mathrm{b}} _{\Q,\mathrm{ovcoh}} (Y , \fP /\W)$), 
the property 
$\E ^{(\bullet)}= 0$ is equivalent to 
$\E ^{(\bullet)}|\U= 0$. Hence, we can suppose 
$\U= \fP$ and $\U'= \fP'$, i.e.
$Y \hookrightarrow P$
and 
$Y \hookrightarrow P'$ are closed immersions.

1) Using  notation \ref{rem-sp-desc-123}.\ref{rem-sp-desc-123-3}, 
suppose that $Y' _{\mathrm{red}}$ is smooth over $l'$
for $l'$ large enough. Then, using 
\ref{rem-sp-desc-123}.\ref{rem-sp-desc-123-1} 
and \ref{rem-sp-desc-123}.\ref{rem-sp-desc-123-2}, 
we reduce to the case where $l' = l$, i.e. $Y$ is smooth. 
Since the theorem is local, we can suppose there exists a smooth formal scheme
$\Y$ which is a lifting of $Y$. 
Hence, 
this is an obvious consequence of Berthelot-Kashiwara theorem 
\ref{u!u+=id}.

2) Let us go back to the general case. We proceed by induction on the lexicographic order 
$(\dim Y, N _Y)$ where $\dim Y$ is the dimension of $Y$ and $N _Y$ is the number of irreducible components of maximal dimension
of $Y$. Remark that 
$\dim Y = \dim Y^{\flat}_{\mathrm{red}}$
and 
$N _Y = N _{Y^{\flat}_{\mathrm{red}}}$.
When $\dim Y =0$, then using \ref{rem-sp-desc-123}.\ref{rem-sp-desc-123-3c}, 
this is a consequence of the step 1). 

 Now, suppose $(\dim Y, N _Y)$ is such that $\dim Y\geq 1$.
Since $l ^{\flat}$ is perfect, since the theorem is local in $\fP$, we can suppose 
 $\fP$ integral and affine, and 
there exists $T ^\flat$ a (reduced) divisor of $P ^\flat$ 
such that, putting $Z ^\flat := Y^{\flat}_{\mathrm{red}}\cap T ^\flat$, 
we have
$Y^{\flat}_{\mathrm{red}} \setminus Z ^\flat$ is a smooth $l ^\flat$-variety containing
a generic point of an irreducible component of maximal ideal.
Hence, 
$(\dim Z^\flat, N _{Z ^\flat})
<
(\dim Y, N _Y)$.

By using 
\cite[8.7.2]{EGAIV3}, 
 \cite[8.8.2.(ii)]{EGAIV3} and \cite[8.10.5.(v)]{EGAIV3}, 
for $l'$ large enough,
there exist a reduced closed $l'$-subvariety $T _{l'}$ 
of 
$P _{l'}: =P \times _{\Spec l} \Spec (l')$
satisfying 
$T ^\flat \riso T _{l'}\times _{\Spec (l')} \Spec (l ^\flat)$.
Since 
$T  _{l'}$ (resp. $P _{l'}$) is homeomorphic to $T ^\flat$
(resp. $P ^\flat$), then they have the same dimension
and the same number of irreducible components.
This yields
$T _{l'}$ is a divisor of $P _{l'}$.
Then, using 
\ref{rem-sp-desc-123}.\ref{rem-sp-desc-123-1} 
and \ref{rem-sp-desc-123}.\ref{rem-sp-desc-123-2}, 
we reduce to the case where $l' = l$, i.e. 
we can suppose 
there exists a divisor $T$ of $P$ such that, putting $Z:= T \cap Y$,  
we have 
$Y  \setminus Z $ is a smooth $l$-variety, and 
$(\dim Z, N _{Z})
<
(\dim Y, N _Y)$.

3) We check in this step that 
for any 
    $\E ^{\prime (\bullet)} \in \smash{\underrightarrow{LM}}  _{\Q,\mathrm{ovcoh}} (Y , \fP '/\W)$, 
for any integer $r \neq 0$,
 $\mathcal{H} ^r f _{+} ^{(\bullet)} ( \E ^{\prime(\bullet)}) = 0$.
\medskip

The localisation triangle in $Z$ of  $\E ^{\prime(\bullet)}$ induces the exact sequence in 
$\smash{\underrightarrow{LM}} _{\Q,\mathrm{ovcoh}} (Y , \fP' /\W)$:
\begin{equation}\label{s.e.loca}
0 \rightarrow \mathcal{H} ^{\dag , 0} _{Z} (\E ^{\prime(\bullet)} ) \rightarrow
\E ^{\prime(\bullet)} \rightarrow (\hdag Z ) ( \E ^{\prime(\bullet)}) \rightarrow \mathcal{H}
^{\dag , 1} _{Z} (\E ^{\prime(\bullet)} ) \rightarrow 0 .
\end{equation}
Since $Z$ locally comes from a divisor of $P'$, then the functor
$(\hdag Z )\colon 
\smash{\underrightarrow{LD}} ^{\mathrm{b}} _{\Q,\mathrm{ovcoh}} (Y , \fP '/\W)
\to 
\smash{\underrightarrow{LD}} ^{\mathrm{b}} _{\Q,\mathrm{ovcoh}} (Y , \fP '/\W)$
is exact (see \ref{rem-tstructure-exact}.\ref{rem-tstructure-exact2}). 
Let  $\FF ^{\prime(\bullet)}$ be the kernel of the epimorphism
$(\hdag Z ) ( \E ^{\prime(\bullet)}) 
\rightarrow  \mathcal{H} ^{\dag , 1} _{Z} (\E ^{\prime(\bullet)} )$.
We get the exact sequence in $\smash{\underrightarrow{LM}} _{\Q,\mathrm{ovcoh}} (Y , \fP' /\W)$
$$0 \rightarrow \FF ^{\prime(\bullet)} \rightarrow (\hdag Z ) ( \E ^{\prime(\bullet)})
\rightarrow  \mathcal{H} ^{\dag , 1} _{Z} (\E ^{\prime(\bullet)} ) \rightarrow
0.$$ 
By applying the functor 
$f ^{(\bullet)} _{+}$ to this latter exact sequence, we get a long exact sequence.
We have
$(\hdag Z)(\E ^{\prime(\bullet)}) 
\in 
\smash{\underrightarrow{LM}} _{\Q,\mathrm{ovcoh}} (Y \setminus Z, \fP' /\W)$ 
and
$\mathcal{H} ^{\dag , 1} _{Z} (\E ^{\prime(\bullet)}) \in 
\smash{\underrightarrow{LM}} _{\Q,\mathrm{ovcoh}} (Z , \fP' /\W)$.
Hence, following the step 1), using the induction hypothesis, 
using the long exact sequence, we check that 
for any integer 
$r \not \in \{0,1\}$, 
we have 
$\mathcal{H} ^r (f _{+} ^{(\bullet)} ) ( \FF ^{\prime(\bullet)} ) = 0$. 
Moreover, 
 $\mathcal{H} ^1 (f _{+} ^{(\bullet)} ) ( \FF ^{\prime(\bullet)} ) = 0$ if and only if 
the morphism
$s\colon \mathcal{H} ^0 (f _{+} ^{(\bullet)} )
((\hdag Z ) ( \E ^{\prime(\bullet)}))  \rightarrow \mathcal{H} ^0 (f _{+} ^{(\bullet)} )(
\mathcal{H} ^{\dag , 1} _{Z} (\E ^{\prime(\bullet)}))$ 
is an epimorphism. 
We split the check of this latter property in the following two steps a) and b).

3.a) In this step, we check that
the morphism $s ':=
    \mathcal{H} ^0(\R \underline{\Gamma} ^\dag _{Y}
    \circ f ^{!(\bullet)})  (s ) $
is an epimorphism.
Since  $(\hdag Z)(\E ^{\prime(\bullet)}) \in 
\smash{\underrightarrow{LM}} _{\Q,\mathrm{ovcoh}} (Y \setminus Z, \fP' /\W)$, 
since the functors
$ \R \underline{\Gamma} ^\dag _{Y}
    \circ f ^{!(\bullet)} $
and
$\R \underline{\Gamma} ^\dag _{Y\setminus Z}
    \circ f ^{!(\bullet)}$
    are canonically isomorphic  
    over 
    $\smash{\underrightarrow{LD}} ^{\mathrm{b}} _{\Q,\mathrm{ovcoh}} (Y \setminus Z, \fP /\W)$
then following the step 1), 
the canonical morphism
$$(\hdag Z)(\E ^{\prime(\bullet)}) \rightarrow 
\mathcal{H} ^0( \R \underline{\Gamma} ^\dag _{Y}
    \circ f ^{!(\bullet)} )\circ \mathcal{H} ^0(f _{+} ^{(\bullet)}) ((\hdag Z)(\E ^{\prime(\bullet)}) )$$ 
    is an isomorphism.
Since 
$\mathcal{H} ^{\dag , 1} _{Z} (\E ^{\prime(\bullet)}) \in 
\smash{\underrightarrow{LM}} _{\Q,\mathrm{ovcoh}} (Z , \fP' /\W)$,
since the functors
$ \R \underline{\Gamma} ^\dag _{Y}
    \circ f ^{!(\bullet)} $
and
$\R \underline{\Gamma} ^\dag _{Z}
    \circ f ^{!(\bullet)}$
    are canonically isomorphic  
    over 
    $\smash{\underrightarrow{LD}} ^{\mathrm{b}} _{\Q,\mathrm{ovcoh}} (Z , \fP /\W)$
then by induction hypothesis the canonical morphism
$$\mathcal{H} ^{\dag ,1} _{Z} (\E ^{\prime(\bullet)}) 
\rightarrow
    \mathcal{H} ^0(\R \underline{\Gamma} ^\dag _{Y}
    \circ f ^{!(\bullet)} ) \circ \mathcal{H} ^0(f _{+} ^{(\bullet)} )(\mathcal{H} ^{\dag ,1} _{Z} (\E ^{\prime(\bullet)}))$$ 
    is an isomorphism.
Since $(\hdag Z)(\E ^{\prime(\bullet)}) \rightarrow \mathcal{H} ^{\dag ,1} _{Z} (\E ^{\prime(\bullet)})$ 
is an epimorphism,
this yields that so is $s '$.

\bigskip

3.b)
Let us check that  $s $ is an epimorphism.
Let  
$\FF ^{(\bullet)} 
\in
\smash{\underrightarrow{LM}} _{\Q,\mathrm{ovcoh}} (Y, \fP /\W)$ 
be the image of  $s $, and $i $ be the canonical monomorphism 
$\FF ^{(\bullet)}  
\hookrightarrow 
\mathcal{H} ^0 (f _{+} ^{(\bullet)} )( \mathcal{H} ^{\dag , 1} _{Z} (\E ^{\prime(\bullet)}))$.
Since  
$\mathcal{H} ^0 (f _{+} ^{(\bullet)} )( \mathcal{H} ^{\dag , 1} _{Z} (\E ^{\prime(\bullet)}))$
has his support in $Z$, then $i $ is in fact a monomorphism of 
$\smash{\underrightarrow{LM}} _{\Q,\mathrm{ovcoh}} (Z, \fP /\W)$.
Using the induction hypothesis,
since the functors
$ \R \underline{\Gamma} ^\dag _{Y}
    \circ f ^{!(\bullet)} $
and
$\R \underline{\Gamma} ^\dag _{Z}
    \circ f ^{!(\bullet)}$
    are canonically isomorphic  
    over 
    $\smash{\underrightarrow{LD}} ^{\mathrm{b}} _{\Q,\mathrm{ovcoh}} (Z , \fP /\W)$
this yields that 
$i ' : =\mathcal{H} ^0( \R \underline{\Gamma} ^\dag _{Y}
    \circ f ^{!(\bullet)} ) (i )$
    is a monomorphism.
Since  $s '$ is an epimorphism, then so is $i '$. 
Hence, the morphism $i'$ is an isomorphism.
Using the induction hypothesis,
this implies that $i $ is an isomorphism.
This yields that $s$ is an epimorphism.
    
\medskip

3.c) Hence, we have checked that 
for any integer 
$r \not= 0$, 
we have 
$\mathcal{H} ^r (f _{+} ^{(\bullet)} ) ( \FF ^{\prime(\bullet)} ) = 0$. 
From \ref{s.e.loca}, we get the exact sequence 
    $0 \rightarrow \mathcal{H} ^{\dag , 0} _{Z} (\E ^{\prime(\bullet)} ) \rightarrow \E ^{\prime(\bullet)} \rightarrow \FF ^{\prime(\bullet)} \rightarrow 0 .$
By applying the functor  $f _{+} ^{(\bullet)}$ 
to this latter sequence, we get a long exact sequence.
Looking at this later one, we remark that the property 
``for any $r \neq 0$, $\mathcal{H}
    ^r (f _{+} ^{(\bullet)} ) ( \FF ^{\prime(\bullet)} ) = 0$ 
    and $\mathcal{H} ^r ( f _{+} ^{(\bullet)} ) (\mathcal{H} ^{\dag , 0} _{Z} (\E ^{\prime(\bullet)} ))=0$'',
implies that 
    "for any $r \neq 0$, $\mathcal{H} ^r (f _{+} ^{(\bullet)} ) (\E ^{\prime(\bullet)} ) = 0$". 
    \medskip

    4) Similarly to the step 3), we check that for any  $r\neq 0$, for any 
    $\E ^{(\bullet)} \in \smash{\underrightarrow{LM}}  _{\Q,\mathrm{ovcoh}} (Y , \fP /\W)$, 
    we have 
    $\mathcal{H} ^r (\R \underline{\Gamma} ^{\dag} _{Y}\circ  f ^{!(\bullet)}  )  ( \E ^{(\bullet)}) = 0$. 
    \medskip

5)    
It remains to check the last statement of the theorem. 
Let $\E ^{(\bullet)}
\in \smash{\underrightarrow{LD}} ^{\mathrm{b}} _{\Q,\mathrm{ovcoh}} (Y , \fP /\W)$.
    Using the localisation triangle with respect to $Z$, 
     to check that the morphism
     $f _{+} ^{(\bullet)}\circ \R \underline{\Gamma} ^{\dag} _{{Y} }\circ  f ^{!(\bullet)}( \E   ^{(\bullet)} ) \rightarrow  \E   ^{(\bullet)}  $
     is an isomorphism, we reduce to check we get an isomorphism after applying 
     $\R \underline{\Gamma} ^{\dag} _{{Z} }$ and $(\hdag Z)$.
     Using \ref{2.2.18} and \ref{gammayY'}, 
     after applying $\R \underline{\Gamma} ^{\dag} _{{Z} }$, we get a morphism canonically
     isomorphic to the canonical morphism 
     $f _{+} ^{(\bullet)}\circ \R \underline{\Gamma} ^{\dag} _{{Z} }\circ  f ^{!(\bullet)}
     ( \R \underline{\Gamma} ^{\dag} _{{Z} } \E   ^{(\bullet)} ) \rightarrow  
     \R \underline{\Gamma} ^{\dag} _{{Z} } \E   ^{(\bullet)}  $. By induction hypothesis, this latter is an isomorphism.
     Moreover, after applying $(\hdag Z)$, we get the morphism
     $f _{+} ^{(\bullet)}\circ \R \underline{\Gamma} ^{\dag} _{{Y  \setminus Z} }\circ  f ^{!(\bullet)}
     ( \R \underline{\Gamma} ^{\dag} _{{Y  \setminus Z} } \E   ^{(\bullet)} ) \rightarrow  
     \R \underline{\Gamma} ^{\dag} _{{Y  \setminus Z} } \E   ^{(\bullet)}  $, which is an isomorphism following the step 1).

We proceed similarly to check 
that the canonical morphism
     $\E   ^{\prime (\bullet)}  
     \rightarrow  
\R \underline{\Gamma} ^{\dag} _{{Y} }\circ  f ^{!(\bullet)} \circ f _{+} ^{(\bullet)}( \E   ^{\prime (\bullet)} )      $
     is an isomorphism for any 
     $\E ^{\prime (\bullet)}
\in \smash{\underrightarrow{LD}} ^{\mathrm{b}} _{\Q,\mathrm{ovcoh}} (Y , \fP '/\W)$.     
\end{proof}

\begin{cor}
\label{ind-cat-overcouples}
Let $\mathfrak{C}$ be a data of coefficients over $(\V,\V ^\flat)$
which contains $\fB _\mathrm{div}$, 
which is stable  under devissages, 
pushforwards, extraordinary pullbacks,
and local cohomological functors.
Let $\mathbb{Y}:= (Y,X)$ be a couple over $\W$. 

\begin{enumerate}
\item Choose a frame of the form $(Y, X,\fP)$. 
The category 
$\mathfrak{C} (Y, \fP/\W)$
 does not depend, up to a canonical equivalence of categories, 
on the choice of the frame $(Y,X, \fP)$ over $\W$ enclosing $(Y,X)$.
Hence, we can simply write 
$\mathfrak{C} (\mathbb{Y}/\W)$ 
instead of 
$\mathfrak{C} (Y, \fP/\W)$
without ambiguity (up to canonical equivalence of categories).

\item If moreover $\fC$ is stable under cohomology, 
then we get a canonical t-structure on 
$\mathfrak{C} (\mathbb{Y}/\W)$. 
\end{enumerate}

\end{cor}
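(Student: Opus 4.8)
The plan is to connect any two enclosing frames through a common refinement and to invoke the Independence theorem \ref{ind-CYW}; note that the hypotheses placed on $\mathfrak{C}$ here are exactly those required there (and, for (2), the extra hypothesis ``$\mathfrak{C}$ stable under cohomology'' matches what \ref{ind-CYW}(2) needs). Let $(Y,X,\fP)$ and $(Y,X,\fP')$ be two frames over $\W$ enclosing $\mathbb{Y}=(Y,X)$, with closed immersions $i\colon X\hookrightarrow P$ and $i'\colon X\hookrightarrow P'$. First I would form $\fP'':=\fP\times_{\Spf\W}\fP'$, which is again a realizable smooth formal $\W$-scheme (if $\fP\hookrightarrow\overline{\fP}$ and $\fP'\hookrightarrow\overline{\fP'}$ are immersions into proper smooth formal $\W$-schemes, then $\fP''\hookrightarrow\overline{\fP}\times_{\Spf\W}\overline{\fP'}$ is an immersion into a proper smooth one), and equip it with $i'':=(i,i')\colon X\to P''$; the latter is a closed immersion because it factors as $X\xrightarrow{\Delta_{X/\W}}X\times_{\Spf\W}X\xrightarrow{i\times i'}P''$, both factors being closed immersions ($X$ is separated over $\W$, and $i,i'$ are closed immersions). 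Thus $(Y,X,\fP'')$ is a frame over $\W$ enclosing $\mathbb{Y}$, and the projections $p\colon\fP''\to\fP$, $p'\colon\fP''\to\fP'$ give morphisms of frames $(\mathrm{id}_Y,\mathrm{id}_X,p)$ and $(\mathrm{id}_Y,\mathrm{id}_X,p')$ whose components over $Y$ and over $X$ are identities, in particular proper.

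Next I would apply Theorem \ref{ind-CYW}(3) to these two morphisms of frames: it yields that $p_+^{(\bullet)}$ and $\R\underline{\Gamma}^\dag_Y p^{!(\bullet)}$ are quasi-inverse equivalences between $\mathfrak{C}(Y,\fP''/\W)$ and $\mathfrak{C}(Y,\fP/\W)$, and likewise for $p'$ between $\mathfrak{C}(Y,\fP''/\W)$ and $\mathfrak{C}(Y,\fP'/\W)$; composing produces an equivalence $\mathfrak{C}(Y,\fP/\W)\riso\mathfrak{C}(Y,\fP'/\W)$. To see that this equivalence is canonical I would record that for every morphism of frames over $\W$ of the form $\lambda=(\mathrm{id}_Y,\mathrm{id}_X,f)\colon(Y,X,\fP_2)\to(Y,X,\fP_1)$ (no properness of $f$ being needed), Theorem \ref{ind-CYW}(3) provides a canonical equivalence $\Phi_\lambda:=\R\underline{\Gamma}^\dag_Y f^{!(\bullet)}$ from $\mathfrak{C}(Y,\fP_1/\W)$ to $\mathfrak{C}(Y,\fP_2/\W)$ with quasi-inverse $f_+^{(\bullet)}$, and $\Phi_{\mathrm{id}}=\mathrm{id}$; using the transitivity isomorphisms for extraordinary pullbacks together with the commutation of $\R\underline{\Gamma}^\dag_Y$ with $f^{!(\bullet)}$ (Theorem \ref{2.2.18}), one obtains canonical isomorphisms $\Phi_{\lambda\circ\mu}\cong\Phi_\mu\circ\Phi_\lambda$ satisfying a cocycle relation. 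Since any two enclosing frames are dominated by their product and any three by the triple product $\fP_1\times_{\Spf\W}\fP_2\times_{\Spf\W}\fP_3$, applying the cocycle relation in the triple product shows that the equivalence above is independent, up to canonical isomorphism, of the intermediate choice and that the resulting family is transitive; this is the precise meaning of the independence statement and legitimates the notation $\mathfrak{C}(\mathbb{Y}/\W)$.

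For assertion (2) I would, under the extra hypothesis that $\mathfrak{C}$ is stable under cohomology so that the t-structures of \ref{ntn-t-structureovcoh} on the categories $\mathfrak{C}(Y,\fP/\W)$ are defined, show that each $\Phi_\lambda$ is t-exact. Theorem \ref{ind-CYW}(2) says exactly that $\R\underline{\Gamma}^\dag_Y f^{!(\bullet)}$ carries $\mathfrak{C}^0(Y,\fP_1/\W)$ into $\mathfrak{C}^0(Y,\fP_2/\W)$ and that $f_+^{(\bullet)}$ carries $\mathfrak{C}^0(Y,\fP_2/\W)$ into $\mathfrak{C}^0(Y,\fP_1/\W)$; an equivalence of bounded triangulated categories such that it and its quasi-inverse both preserve the heart is t-exact (decompose a bounded object along its canonical finite filtration by shifts of its cohomology objects, which $\Phi_\lambda$ being triangulated carries to such a filtration with subquotients in the expected truncation classes). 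Hence all the canonical transition equivalences are t-exact, the t-structure transports uniquely, and one obtains a well-defined canonical t-structure on $\mathfrak{C}(\mathbb{Y}/\W)$, compatible with every restriction equivalence $\mathfrak{C}(\mathbb{Y}/\W)\cong\mathfrak{C}(Y,\fP/\W)$.

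The hard part will be the bookkeeping in the canonicity step: assembling coherently the cocycle isomorphisms among the functors $\Phi_\lambda$ and checking their compatibility in the triple-product frame. This is a formal consequence of the transitivity isomorphisms for $f^{!(\bullet)}$ and $f_+^{(\bullet)}$, of the adjunctions of Theorem \ref{ind-CYW}(1), and of the commutations with $\R\underline{\Gamma}^\dag_Y$ (Theorem \ref{2.2.18}), but organizing these into a consistent $2$-functorial statement is the delicate point.
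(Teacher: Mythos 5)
Your proof is correct and follows the same strategy as the paper's: compare any two enclosing frames through the product frame $\fP\times_{\Spf\W}\fP'$ and apply the Independence theorem \ref{ind-CYW} to the two projection morphisms of frames. You supply more detail than the paper does on the cocycle/coherence bookkeeping (via the triple product) for canonicality and on why heart preservation in both directions gives t-exactness, but the essential argument is identical.
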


\begin{proof}
Let $(Y,X, \fP )$ and $(Y,X, \fP ')$ be
two frames over $\W$ enclosing $(Y,X)$.
The closed immersions 
$X \hookrightarrow \fP $
and
$X \hookrightarrow \fP '$
induce
$X \hookrightarrow \fP  \times \fP '$.
Denoting by 
$\pi _1\colon \fP  \times \fP ' \to \fP $
and 
$\pi _2\colon \fP  \times \fP ' \to \fP '$
the structural projections, 
we get two morphisms of frames over $\W$ of the form
$(id,id, \pi _1) \colon 
(Y,X, \fP  \times \fP ')
\to 
(Y,X, \fP )$
and 
$(id,id, \pi _2) \colon 
(Y,X, \fP  \times \fP ')
\to 
(Y,X, \fP ')$.
From \ref{ind-CYW}, 
the functors 
$\pi ^{(\bullet)} _{2 +} \R \underline{\Gamma} ^{\dag} _{Y} \pi _1 ^{!(\bullet )} $
and
$ \pi _{1+} ^{(\bullet)} \R \underline{\Gamma} ^{\dag} _{Y} \pi _2 ^{!(\bullet)}$
are canonically quasi-inverse equivalences of categories 
between 
$\mathfrak{C} (Y, \fP/\W)$
and 
$\mathfrak{C} (Y, \fP '/\W)$.
When $\fC$ is stable under cohomology then these equivalences 
are t-exact. Hence we are done.
\end{proof}

\begin{lem}
\label{ind-dual}
Let $\mathfrak{C}$ be a data of coefficients over $(\V,\V ^\flat)$
which contains $\fB _\mathrm{div}$, 
which is stable  under devissages, 
pushforwards, extraordinary pullbacks, 
local cohomological functors, and duals.
Let $\mathbb{Y}:= (Y,X)$ be a couple over $\W$. 
 Choose a frame of the form $(Y, X,\fP)$. 
The functor 
$\R \underline{\Gamma} ^\dag _{Y} \DD _{\fP} 
\colon 
\mathfrak{C} (Y,\fP/\W) \to \mathfrak{C} (Y,\fP/\W)$
does not depend,
up to canonical equivalences of categories of 
\ref{ind-cat-overcouples} (more precisely, we have the commutative diagram
\ref{ind-dual-diag1} up to canonical isomorphisms), 
on the choice of the frame enclosing $(Y,X)$. 
Hence, 
we will denote by 
$\DD _{\mathbb{Y}}
\colon \mathfrak{C} (\mathbb{Y}/\W) \to \mathfrak{C} (\mathbb{Y}/\W)$
the functor 
$\R \underline{\Gamma} ^\dag _{Y} \DD _{\fP}$. 
\end{lem}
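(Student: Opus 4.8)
The plan is to reduce, exactly as in the proof of \ref{ind-cat-overcouples}, to the case of two frames compared through their product, and to prove the key commutation statement for a single morphism of frames of the form $\theta=(\mathrm{id},\mathrm{id},f)\colon (Y,X,\fP')\to(Y,X,\fP)$. First I would record that $\R\underline{\Gamma}^\dag_Y\DD_\fP$ is indeed an endofunctor of $\mathfrak{C}(Y,\fP/\W)$: since $\mathfrak{C}$ is stable under duals and under local cohomological functors, $\R\underline{\Gamma}^\dag_Y\DD_\fP$ preserves $\mathfrak{C}(\fP)$, and the idempotence $\R\underline{\Gamma}^\dag_Y\circ\R\underline{\Gamma}^\dag_Y\riso\R\underline{\Gamma}^\dag_Y$ of \ref{gammayY'} shows that its image lies in $\mathfrak{C}(Y,\fP/\W)$.

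The heart of the argument is then the following: for a morphism of frames $\theta=(\mathrm{id},\mathrm{id},f)\colon (Y,X,\fP')\to(Y,X,\fP)$, with $f$ a morphism of realizable smooth formal $\W$-schemes (hence realizable by \ref{realizablefscheme}), the diagram with vertical arrows the equivalence $f_+^{(\bullet)}\colon\mathfrak{C}(Y,\fP'/\W)\to\mathfrak{C}(Y,\fP/\W)$ of \ref{ind-CYW} (whose quasi-inverse is $\R\underline{\Gamma}^\dag_Y f^{!(\bullet)}$) and horizontal arrows $\R\underline{\Gamma}^\dag_Y\DD_{(-)}$ commutes up to canonical isomorphism. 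Concretely, for $\E'\in\mathfrak{C}(Y,\fP'/\W)$ I would build the isomorphism $\R\underline{\Gamma}^\dag_Y\DD_\fP f_+^{(\bullet)}(\E')\cong f_+^{(\bullet)}\R\underline{\Gamma}^\dag_Y\DD_{\fP'}(\E')$ as a composite of three canonical isomorphisms already available: the relative duality isomorphism \ref{rel-dual-isom} (noting that $\E'$ has support in $\bar Y\subset X$ and that $X\hookrightarrow P'\xrightarrow{f}P$ factors through $a=\mathrm{id}_X$, so $\E'$ has proper support over $P$), giving $\DD_\fP f_+^{(\bullet)}(\E')\cong f_+^{(\bullet)}\DD_{\fP'}(\E')$; the commutation of local cohomology with pushforward \ref{commutfonctcohlocal2}, giving $\R\underline{\Gamma}^\dag_Y f_+^{(\bullet)}(-)\cong f_+^{(\bullet)}\R\underline{\Gamma}^\dag_{f^{-1}(Y)}(-)$; and finally the identification $\R\underline{\Gamma}^\dag_{f^{-1}(Y)}\DD_{\fP'}(\E')\cong\R\underline{\Gamma}^\dag_Y\DD_{\fP'}(\E')$, obtained by noting that $\DD_{\fP'}$ preserves supports so that $\DD_{\fP'}(\E')$ is supported in $\bar Y$, hence $\R\underline{\Gamma}^\dag_{f^{-1}(Y)}\DD_{\fP'}(\E')\cong\R\underline{\Gamma}^\dag_{f^{-1}(Y)\cap\bar Y}\DD_{\fP'}(\E')$ by \ref{gammayY'} and \ref{dfn-support}, together with the set-theoretic equality $f^{-1}(Y)\cap\bar Y=Y$ (which holds because $f|_X=\mathrm{id}_X$, so $f^{-1}(Y)\cap X=Y$). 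These isomorphisms being functorial in $\E'$, and $f_+^{(\bullet)}$ being an equivalence with quasi-inverse $\R\underline{\Gamma}^\dag_Y f^{!(\bullet)}$, the square with horizontal arrows $\R\underline{\Gamma}^\dag_Y f^{!(\bullet)}$ and vertical arrows $\R\underline{\Gamma}^\dag_Y\DD_{(-)}$ commutes as well.

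To conclude, for two frames $(Y,X,\fP_1)$ and $(Y,X,\fP_2)$ enclosing $(Y,X)$ I would apply the previous step to the two projections $\pi_i\colon\fP_1\times_\W\fP_2\to\fP_i$, which fit into morphisms of frames $(\mathrm{id},\mathrm{id},\pi_i)$ exactly as in the proof of \ref{ind-cat-overcouples}. Pasting the two resulting commutative squares along $\mathfrak{C}(Y,\fP_1\times\fP_2/\W)$ shows that $\R\underline{\Gamma}^\dag_Y\DD$ commutes with the canonical equivalence $\pi_{2+}^{(\bullet)}\R\underline{\Gamma}^\dag_Y\pi_1^{!(\bullet)}\colon\mathfrak{C}(Y,\fP_1/\W)\riso\mathfrak{C}(Y,\fP_2/\W)$ of \ref{ind-cat-overcouples}, which is precisely the commutativity of \ref{ind-dual-diag1}; this gives the asserted well-definedness of $\DD_{\mathbb{Y}}$.

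The step I expect to be the main obstacle is the careful bookkeeping of the local cohomological functors through $f$ in the second paragraph: one has to check that $\R\underline{\Gamma}^\dag_{f^{-1}(Y)}$ collapses to $\R\underline{\Gamma}^\dag_Y$ once it is applied to an object with support in $X$, and for this one needs that $\DD_{\fP'}$ does not enlarge supports — a fact which follows from the Noot--Huyghe finiteness theorem \ref{Noot-Huyghe-finitehomoldim} (so that $\DD$ is, up to a twist, $\R\mathcal{H}om_{\D^\dag}(-,\D^\dag)$, computed locally, and hence support non-increasing) together with biduality. Once this point is settled, everything else is a formal composition of canonical isomorphisms already established in the paper.
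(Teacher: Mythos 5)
Your proof is correct and follows essentially the same route as the paper: compare two frames through their product, use the relative duality isomorphism \ref{rel-dual-isom} to commute $\DD$ past the pushforward, commute local cohomology past the pushforward via \ref{2.2.18}, and collapse $\R\underline{\Gamma}^\dag_{f^{-1}(Y)}$ to $\R\underline{\Gamma}^\dag_Y$ on an object supported in $\bar Y$ using the set-theoretic identity $f^{-1}(Y)\cap\bar Y=Y$. The only cosmetic differences are that you abstract the key computation to an arbitrary morphism of frames $(\mathrm{id},\mathrm{id},f)$ before specializing to the two projections (the paper works directly with $\pi_2$), and that you invoke \ref{Noot-Huyghe-finitehomoldim} and biduality to justify that $\DD_{\fP'}$ does not enlarge supports — this is more than you need: it is enough to observe that $\DD$ commutes with restriction to opens (being an $\R\mathcal{H}om$), so $\DD_{\fP'}(\E')$ vanishes wherever $\E'$ does; biduality is only relevant for the converse inclusion, which you never use.
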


\begin{proof}
As in the beginning of the proof, 
\ref{ind-cat-overcouples},
let $(Y,X, \fP _1)$ and $(Y,X, \fP _2)$ be
two frames over $\W$ enclosing $(Y,X)$.
Let $\pi _1\colon \fP _1 \times \fP _2 \to \fP _1$
and 
$\pi _2\colon \fP _1 \times \fP _2 \to \fP _2$
be the structural projections.
We have to check that the diagram
\begin{equation}
\label{ind-dual-diag1}
\xymatrix{
{\mathfrak{C} (Y, \fP_1/\W)} 
\ar[r] _-{\R \underline{\Gamma} ^{\dag} _{Y} \pi _1 ^{!(\bullet )} } ^-{\cong}
\ar[d] _-{\R \underline{\Gamma} ^\dag _{Y} \DD _{\fP _1} }
&
{\mathfrak{C} (Y, \fP_1\times \fP _2/\W)} 
\ar[r] _-{\pi ^{(\bullet)} _{2 +} } ^-{\cong}
\ar[d] ^-{\R \underline{\Gamma} ^\dag _{Y} \DD _{\fP _1 \times \fP _2} }
& 
{\mathfrak{C} (Y, \fP_2/\W)} 
\ar[d] ^-{\R \underline{\Gamma} ^\dag _{Y} \DD _{\fP _2} }
\\ 
{\mathfrak{C} (Y, \fP_1/\W) } 
\ar[r] ^-{ \R \underline{\Gamma} ^{\dag} _{Y} \pi _1 ^{!(\bullet )} } _-{\cong}
&
{\mathfrak{C} (Y, \fP_1\times \fP _2/\W)} 
\ar[r] ^-{\pi ^{(\bullet)} _{2 +}} _-{\cong}
& 
{\mathfrak{C} (Y, \fP_2/\W) } 
}
\end{equation}
is commutative, up to canonical isomorphisms.
Let 
$\E ^{(\bullet)}
\in 
\mathfrak{C} (Y, \fP_1\times \fP _2/\W) $.
From
\ref{rel-dual-isom}, we have the isomorphism
$\DD _{\fP _2} \pi ^{(\bullet)} _{2+} ( \E ^{(\bullet)})
\riso 
\pi ^{(\bullet)} _{2+} \DD _{\fP _1 \times \fP _2}  ( \E ^{(\bullet)}) $.
Hence, by applying the functor $\R \underline{\Gamma} ^{\dag} _{Y}$ to this isomorphism, 
we get the first one 
$\R \underline{\Gamma} ^{\dag} _{Y}\DD _{\fP _2} \pi ^{(\bullet)} _{2+} ( \E ^{(\bullet)})
\riso 
 \R \underline{\Gamma} ^{\dag} _{Y}\pi ^{(\bullet)} _{2+} \DD _{\fP _1 \times \fP _2}  ( \E ^{(\bullet)}) 
\underset{\ref{2.2.18}}{\riso}  
\pi ^{(\bullet)} _{2+}  \R \underline{\Gamma} ^{\dag} _{\pi _2 ^{-1}(Y)} \DD _{\fP _1 \times \fP _2}  ( \E ^{(\bullet)}) $.
Since $\fP _1$ is a realizable smooth formal scheme over $\W$,
then we check that the immersion
$Y \hookrightarrow\pi _2 ^{-1}(Y)$ is in fact a closed immersion.
This yields $Y = \overline{Y}\cap \pi _2 ^{-1}(Y)$, where 
$\overline{Y}$ is the closure of $Y$ in $P _1 \times P _2$.
Since  $ \DD _{\fP _1 \times \fP _2}  ( \E ^{(\bullet)}) $
has in support in $\overline{Y}$,
then 
$\R \underline{\Gamma} ^{\dag} _{\pi _2 ^{-1}(Y)} \DD _{\fP _1 \times \fP _2}  ( \E ^{(\bullet)}) 
\riso 
\R \underline{\Gamma} ^{\dag} _{\pi _2 ^{-1}(Y)}
\R \underline{\Gamma} ^{\dag} _{\overline{Y}} \DD _{\fP _1 \times \fP _2}  ( \E ^{(\bullet)}) 
\riso 
\R \underline{\Gamma} ^{\dag} _{Y} \DD _{\fP _1 \times \fP _2}  ( \E ^{(\bullet)}) 
$.
Hence, we have checked the commutativity, up to commutative isomorphisms,
of the right square of \ref{ind-dual-diag1}. 
From \ref{ind-CYW},  $\pi ^{(\bullet)} _{1+}$ is canonically a quasi-inverse 
of the equivalence of categories $\R \underline{\Gamma} ^{\dag} _{Y} \pi _1 ^{!(\bullet )} 
\colon 
\mathfrak{C} (Y, \fP_1\times \fP _2/\W)
\cong
\mathfrak{C} (Y, \fP_1/\W) $
(we means that we have canonical isomorphisms
$\pi ^{(\bullet)} _{1+} \R \underline{\Gamma} ^{\dag} _{Y} \pi _1 ^{!(\bullet )} 
\riso id$
and 
$id \riso 
\R \underline{\Gamma} ^{\dag} _{Y} \pi _1 ^{!(\bullet )} \pi ^{(\bullet)} _{1+}$).
Hence, 
we get the commutativity, up to canonical isomorphism,
of the left square of \ref{ind-dual-diag1}. 
\end{proof}

\begin{lem}
\label{ind-pushforward-extinv}
Let $\mathfrak{C}$ be a data of coefficients over $(\V,\V ^\flat)$
which contains $\fB _\mathrm{div}$, 
which is stable  under devissages, 
pushforwards, extraordinary pullbacks,
and local cohomological functors.
Let  $u=(b,a)\colon (Y', X') \to (Y, X)$ be a morphism of couples over $\W$.
Put $\mathbb{Y}:= (Y,X)$ and $\mathbb{Y}':= (Y',X')$.
Let us choose a morphism of frames $\theta= (b,a,f) \colon (Y', X', \fP')\to (Y,X,\fP)$ over $\W$ enclosing $u$.

\begin{enumerate}
\item 
The functor $\theta ^{!(\bullet )} := \R \underline{\Gamma} ^\dag _{Y'} \circ f ^{!(\bullet)}
\colon
\mathfrak{C} (Y,\fP/\W) \to \mathfrak{C} (Y',\fP'/\W)$ 
does not depend on the choice of such $\theta$ enclosing $u$
(up to canonical equivalences
of categories).
Hence, it will be denoted by $u ^{!}
\colon \mathfrak{C} (\mathbb{Y}/\W) \to \mathfrak{C} (\mathbb{Y}'/\W)$. 

\item Suppose that $u$ is complete, i.e. that $a \colon X' \to X$ is proper. 
The functor $\theta _{+}:= f _+ ^{(\bullet )}
\colon 
\mathfrak{C} (Y',\fP'/\W) \to \mathfrak{C} (Y,\fP/\W)$
does not depend on the choice of such $\theta$ enclosing $u$
(up to canonical equivalences
of categories).
Hence, it will be denoted by 
$u _{+} \colon \mathfrak{C} (\mathbb{Y}'/\W) \to \mathfrak{C} (\mathbb{Y}/\W)$.
\end{enumerate}

\end{lem}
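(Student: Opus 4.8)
The plan is to reduce, by forming a product of frames, to a situation governed by the transitivity of $f ^{!(\bullet)}$ and $f ^{(\bullet)} _+$ together with their commutation with $\R \underline{\Gamma} ^\dag$, so that no genuinely new input beyond \ref{ind-CYW}, \ref{ind-cat-overcouples}, \ref{2.2.18}, \ref{gammayY'} and \ref{stab-propersupp} is required. First I would fix two morphisms of frames $\theta _1 = (b,a,f_1)\colon (Y',X',\fP'_1)\to(Y,X,\fP_1)$ and $\theta _2 = (b,a,f_2)\colon (Y',X',\fP'_2)\to(Y,X,\fP_2)$ over $\W$ enclosing the given $u$ (such frames exist by \ref{rem-complete-frame-coup}), and form $\fQ := \fP_1\times_{\Spf\W}\fP_2$ and $\fQ' := \fP'_1\times_{\Spf\W}\fP'_2$, equipped with the diagonal closed immersions $X\hookrightarrow\fQ$ and $X'\hookrightarrow\fQ'$; write $p_i\colon\fQ\to\fP_i$, $q_i\colon\fQ'\to\fP'_i$ for the projections and $g := f_1\times f_2\colon\fQ'\to\fQ$. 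This yields a morphism of frames $\Theta := (b,a,g)\colon(Y',X',\fQ')\to(Y,X,\fQ)$ together with the morphisms of frames $(\mathrm{id},\mathrm{id},p_i)$ and $(\mathrm{id},\mathrm{id},q_i)$, and crucially the identities $p_i\circ g = f_i\circ q_i$ hold strictly.

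Since, by the construction in the proof of \ref{ind-cat-overcouples}, the canonical equivalence $\fC(Y,\fP_1/\W)\cong\fC(Y,\fP_2/\W)$ is $p ^{(\bullet)} _{2+}\circ\R \underline{\Gamma} ^\dag _Y p_1 ^{!(\bullet)}$ (and likewise on the source side through $\fQ'$), it suffices to prove that $\theta_i ^{!(\bullet)}$ corresponds to $\Theta ^{!(\bullet)}$ through the equivalences $\R \underline{\Gamma} ^\dag _Y p_i ^{!(\bullet)}$ (target) and $\R \underline{\Gamma} ^\dag _{Y'} q_i ^{!(\bullet)}$ (source) supplied by the third assertion of \ref{ind-CYW}, and, when $u$ is complete, that $\theta_{i+}$ corresponds to $\Theta_+$; composing the cases $i=1$ and $i=2$ then gives the claim. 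For the extraordinary pullback, starting from $\E\in\fC(Y,\fP_1/\W)$ (so $\E\cong\R \underline{\Gamma} ^\dag _Y\E$), I would rewrite both $\R \underline{\Gamma} ^\dag _{Y'} q_i ^{!(\bullet)}\,\theta_i ^{!(\bullet)}(\E)$ and $\Theta ^{!(\bullet)}\,\R \underline{\Gamma} ^\dag _Y p_i ^{!(\bullet)}(\E)$ using the commutation of $\R \underline{\Gamma} ^\dag$ with $f ^{!(\bullet)}$ (\ref{2.2.18}) and the composition formula \ref{gammayY'}: both collapse, via the inclusions $Y'\subset q_i^{-1}(Y')$ (the diagonal) and $Y'\subset g^{-1}(Y)$ (which holds because $u$ is a morphism of couples, so $a(Y')\subset Y$), to $\R \underline{\Gamma} ^\dag _{Y'}\,(f_iq_i) ^{!(\bullet)}(\E)=\R \underline{\Gamma} ^\dag _{Y'}\,(p_ig) ^{!(\bullet)}(\E)$ by transitivity of extraordinary pullback; the stability of $\fC$ under extraordinary pullbacks and local cohomological functors guarantees that all intermediate objects lie in the appropriate categories $\fC(-,-/\W)$.

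For the pushforward, under the completeness hypothesis I would first note that the objects of $\fC(Y',\fP'_i/\W)$ and of $\fC(Y',\fQ'/\W)$ are supported on the closure of $Y'$, hence inside $X'$, that the restrictions $f_i|_{X'}$ and $g|_{X'}$ factor as $X'\xrightarrow{a}X\hookrightarrow\fP_i$ (resp. $\hookrightarrow\fQ$) and are therefore proper, so that \ref{stab-propersupp} and the stability of $\fC$ under pushforwards make $\theta_{i+}=f ^{(\bullet)} _{i+}$, $\Theta_+=g ^{(\bullet)} _+$, $p ^{(\bullet)} _{i+}$, $q ^{(\bullet)} _{i+}$ well defined on the relevant subcategories (and landing in the right $\fC(-,-/\W)$, by a further application of \ref{2.2.18}); then $p_i\circ g=f_i\circ q_i$, transitivity of pushforward, and \ref{2.2.18} again to match the $\R \underline{\Gamma} ^\dag$'s, yield $p ^{(\bullet)} _{i+}\circ\Theta_+\cong\theta_{i+}\circ q ^{(\bullet)} _{i+}$, which is the desired compatibility.

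The main difficulty I anticipate is organisational rather than conceptual: one must carefully track, at each stage, which subscheme ($Y$, $Y'$, their diagonal images in $Q$, $Q'$, and their preimages under $p_i$, $q_i$, $g$) the various functors $\R \underline{\Gamma} ^\dag$ refer to, verify the finitely many support and inclusion statements, and check that the resulting chain of natural isomorphisms is compatible with the fact that $\fC(\mathbb{Y}/\W)$ and $\fC(\mathbb{Y}'/\W)$ are only defined up to the canonical equivalences of \ref{ind-cat-overcouples}. A pleasant feature of the product construction is that every comparison square of formal schemes commutes strictly, so that neither the glueing isomorphisms of \ref{2.1.5Be2} nor the base change theorem \ref{theo-iso-chgtbase} are needed.
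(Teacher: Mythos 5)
Your proof is correct and takes essentially the same route as the paper, which simply cites "proceed as in the proof of \ref{ind-dual}" twice and then verifies that $f^{(\bullet)}_+$ lands in $\mathfrak{C}(Y,\fP/\W)$ via the computation $\R\underline{\Gamma}^\dag_Y f^{(\bullet)}_+(\E^{(\bullet)})\riso f^{(\bullet)}_+\R\underline{\Gamma}^\dag_{f^{-1}Y}(\E^{(\bullet)})\riso f^{(\bullet)}_+(\E^{(\bullet)})$. You have spelled out what the reference to \ref{ind-dual} is intended to encapsulate: forming the products $\fQ=\fP_1\times\fP_2$ and $\fQ'=\fP'_1\times\fP'_2$ with diagonal closed immersions, exploiting the strict equalities $p_i\circ g=f_i\circ q_i$, and reducing everything to transitivity of $(\cdot)^{!(\bullet)}$, $(\cdot)^{(\bullet)}_+$ together with \ref{2.2.18} and \ref{gammayY'}. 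Your remark that no glueing isomorphism or base-change theorem is needed is accurate and is a useful explicit point that the paper leaves implicit; your verification of the intersections $Y'\cap q_i^{-1}(Y')=Y'$ and $Y'\cap g^{-1}(Y)=Y'$ is correct, the latter using $f_1|_{Y'}=f_2|_{Y'}=b$. The one minor addition you should make explicit (as the paper does, and as you gesture at with "by a further application of \ref{2.2.18}") is that $f^{(\bullet)}_{i+}$, $g^{(\bullet)}_+$, $p^{(\bullet)}_{i+}$, $q^{(\bullet)}_{i+}$ actually take values in the claimed subcategories $\fC(Y,\cdot/\W)$ and not merely in $\fC(\cdot)$; this follows because the restriction of each of those formal-scheme morphisms to $X'$ (resp. $X$) factors through the proper $a$ (resp. a closed immersion), so the supports are proper and \ref{2.2.18} applied to $\R\underline{\Gamma}^\dag_Y$ closes the computation as in the paper's displayed calculation.
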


\begin{proof}
To check the first assertion, 
we proceed as in the proof of 
\ref{ind-dual}
(use also the commutation of local cohomological functors with extraordinary inverse images 
given in \ref{2.2.18}).
Let us check that the functor 
$f _+ ^{(\bullet )}
\colon 
\mathfrak{C} (Y',\fP'/\W) \to \mathfrak{C} (Y,\fP/\W)$
is well defined.
Let $\E ^{(\bullet)} \in \mathfrak{C} (Y',\fP'/\W) $.
Since $a$ is proper, then 
$f _+ ^{(\bullet )} (\E ^{(\bullet)} ) \in \mathfrak{C} (\fP) $.
We compute 
$\R \underline{\Gamma} ^\dag _{Y}f _+ ^{(\bullet )} (\E ^{(\bullet)} ) 
\riso 
f _+ ^{(\bullet )} \R \underline{\Gamma} ^\dag _{ f ^{-1}Y} (\E ^{(\bullet)} ) $.
Since $Y'$ is included in $f ^{-1}Y$ and 
$\E ^{(\bullet)} \in \mathfrak{C} (Y',\fP'/\W) $,
then 
$\R \underline{\Gamma} ^\dag _{ f ^{-1}Y} (\E ^{(\bullet)} ) 
\riso 
\E ^{(\bullet)}$.
Hence, 
$\R \underline{\Gamma} ^\dag _{Y}f _+ ^{(\bullet )} (\E ^{(\bullet)} ) 
\riso f _+ ^{(\bullet )} (\E ^{(\bullet)} )$, which implies that
$f _+ ^{(\bullet )} (\E ^{(\bullet)} ) \in  \mathfrak{C} (Y,\fP/\W)$.
To check that the functor does not depend on the choice of $\theta$ enclosing $u$, 
we proceed as in the proof of 
\ref{ind-dual}.
\end{proof}

\begin{lem}
\label{ind-prod-tensor}
Let $\mathfrak{C}$ be a data of coefficients over $(\V,\V ^\flat)$
which contains $\fB _\mathrm{div}$, 
which is stable under devissages, 
pushforwards, extraordinary pullbacks, 
and tensor products.
Let $\mathbb{Y}:= (Y,X)$ be a couple over $\W$. 
 Choose a frame of the form $(Y, X,\fP)$. 
The bifunctor $-\smash{\widehat{\otimes}}^\L
_{\O  _{\fP}} - [-\dim P]
\colon
\mathfrak{C} (Y,\fP/\W) \times \mathfrak{C} (Y,\fP/\W) \to \mathfrak{C} (Y,\fP/\W)$
does not depend, up to the canonical equivalences of categories of \ref{ind-cat-overcouples}, 
on the choice of the frame enclosing $(Y,X)$.
It will be denoted by 
$\widetilde{\otimes} _{\mathbb{Y}}
\colon 
\mathfrak{C} (\mathbb{Y}/\W) \times \mathfrak{C} (\mathbb{Y}/\W) \to \mathfrak{C} (\mathbb{Y}/\W)$. 
\end{lem}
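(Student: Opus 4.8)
The plan is to first reduce the hypotheses to those of \ref{ind-cat-overcouples} and check that the bifunctor is well defined, and then to carry out the frame comparison exactly as in \ref{ind-dual} and \ref{ind-pushforward-extinv}. Since $\mathfrak{C}$ contains $\fB _\mathrm{div}$ and is stable under devissages and tensor products, Lemma \ref{S(D,C)stability3bis} shows it is stable under localizations outside a divisor, whence, by \ref{rem-div-cst}.\ref{S(D,C)stability3-pre}, stable under local cohomological functors; so the categories $\mathfrak{C}(Y,\fP/\W)$ and the canonical equivalences of \ref{ind-cat-overcouples} are available. For well-definedness, I would take $\E ^{(\bullet)},\FF ^{(\bullet)}\in\mathfrak{C}(Y,\fP/\W)$: stability under tensor products gives $\E ^{(\bullet)}\smash{\widehat{\otimes}}^\L _{\O _{\fP}}\FF ^{(\bullet)}[-\dim P]\in\mathfrak{C}(\fP)$, and the isomorphism \ref{fonctYY'Gamma-iso} (applied with $Y\cap Y=Y$) combined with the isomorphisms $\R\underline{\Gamma}^\dag _Y(\E ^{(\bullet)})\riso\E ^{(\bullet)}$ and $\R\underline{\Gamma}^\dag _Y(\FF ^{(\bullet)})\riso\FF ^{(\bullet)}$ yields $\R\underline{\Gamma}^\dag _Y(\E ^{(\bullet)}\smash{\widehat{\otimes}}^\L _{\O _{\fP}}\FF ^{(\bullet)})\riso\E ^{(\bullet)}\smash{\widehat{\otimes}}^\L _{\O _{\fP}}\FF ^{(\bullet)}$, which is preserved by the shift $[-\dim P]$; hence the bifunctor takes values in $\mathfrak{C}(Y,\fP/\W)$.

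For the independence of the frame, I would proceed as follows. Let $(Y,X,\fP _1)$ and $(Y,X,\fP _2)$ be two frames over $\W$ enclosing $(Y,X)$, form $\fP _1\times\fP _2$ with its (smooth) structural projections $\pi _1,\pi _2$, and recall from \ref{ind-cat-overcouples} and \ref{ind-pushforward-extinv} that $\R\underline{\Gamma}^\dag _Y\pi _1^{!(\bullet)}$ is an equivalence $\mathfrak{C}(Y,\fP _1/\W)\cong\mathfrak{C}(Y,\fP _1\times\fP _2/\W)$ with quasi-inverse $\pi _{1+}^{(\bullet)}$, and likewise for $\pi _2$. The point is that $\R\underline{\Gamma}^\dag _Y\pi _1^{!(\bullet)}$ intertwines the bifunctor on $\fP _1$ with that on $\fP _1\times\fP _2$. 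Indeed, for $\E _1^{(\bullet)},\FF _1^{(\bullet)}\in\mathfrak{C}(Y,\fP _1/\W)$, the commutation of extraordinary pullback with tensor products \ref{f!T'Totimes}, applied to $\pi _1$ (which is smooth, so $d _{(P_1\times P_2)/P_1}=\dim P_2$ and the trivial divisor), gives
$$
\pi _1^{!(\bullet)}(\E _1^{(\bullet)})\smash{\widehat{\otimes}}^\L _{\O _{\fP _1\times\fP _2}}\pi _1^{!(\bullet)}(\FF _1^{(\bullet)})\riso\pi _1^{!(\bullet)}\!\left(\E _1^{(\bullet)}\smash{\widehat{\otimes}}^\L _{\O _{\fP _1}}\FF _1^{(\bullet)}\right)[\dim P_2],
$$
and then the commutation of local cohomology with tensor products \ref{fonctYY'Gamma-iso} (again with $Y\cap Y=Y$) gives $\bigl(\R\underline{\Gamma}^\dag _Y\pi _1^{!(\bullet)}\E _1^{(\bullet)}\bigr)\smash{\widehat{\otimes}}^\L _{\O _{\fP _1\times\fP _2}}\bigl(\R\underline{\Gamma}^\dag _Y\pi _1^{!(\bullet)}\FF _1^{(\bullet)}\bigr)\riso\R\underline{\Gamma}^\dag _Y\bigl(\pi _1^{!(\bullet)}(\E _1^{(\bullet)})\smash{\widehat{\otimes}}^\L _{\O _{\fP _1\times\fP _2}}\pi _1^{!(\bullet)}(\FF _1^{(\bullet)})\bigr)$. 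Combining these and tracking the shifts, using $\dim(P_1\times P_2)=\dim P_1+\dim P_2$, one obtains
$$
\left(\R\underline{\Gamma}^\dag _Y\pi _1^{!(\bullet)}\E _1^{(\bullet)}\right)\smash{\widehat{\otimes}}^\L _{\O _{\fP _1\times\fP _2}}\left(\R\underline{\Gamma}^\dag _Y\pi _1^{!(\bullet)}\FF _1^{(\bullet)}\right)[-\dim(P_1\times P_2)]\riso\R\underline{\Gamma}^\dag _Y\pi _1^{!(\bullet)}\!\left(\E _1^{(\bullet)}\smash{\widehat{\otimes}}^\L _{\O _{\fP _1}}\FF _1^{(\bullet)}[-\dim P_1]\right),
$$
and the same holds with $\pi _2$ in place of $\pi _1$.

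Finally I would conclude by passing to quasi-inverses: composing $\R\underline{\Gamma}^\dag _Y\pi _1^{!(\bullet)}$ with $\pi _{2+}^{(\bullet)}$ gives the canonical equivalence $\mathfrak{C}(Y,\fP _1/\W)\cong\mathfrak{C}(Y,\fP _2/\W)$ of \ref{ind-cat-overcouples}, and the two intertwining isomorphisms above show that it carries $-\smash{\widehat{\otimes}}^\L _{\O _{\fP _1}}-[-\dim P_1]$ to $-\smash{\widehat{\otimes}}^\L _{\O _{\fP _2}}-[-\dim P_2]$; this gives the asserted independence and allows us to define $\widetilde{\otimes} _{\mathbb{Y}}$ on $\mathfrak{C}(\mathbb{Y}/\W)$. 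I do not expect a serious obstacle: the only real care is the bookkeeping of the dimension shifts coming from \ref{f!T'Totimes} (which is precisely why the twist $[-\dim P]$ is built into the bifunctor) and the verification that the chain of natural isomorphisms is compatible with composition so that $\widetilde{\otimes} _{\mathbb{Y}}$ is genuinely independent of all choices — but these are the same routine checks already performed in \ref{ind-dual}.
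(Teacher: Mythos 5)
Your proof is correct and follows exactly the same route as the paper's: establish stability under local cohomological functors via \ref{S(D,C)stability3bis} and \ref{rem-div-cst}.\ref{S(D,C)stability3-pre}, then use \ref{f!T'Totimes} and \ref{fonctYY'Gamma-iso} to show that $\R\underline{\Gamma}^\dag_Y\pi_i^{!(\bullet)}$ intertwines the shifted tensor products, and pass to quasi-inverses as in \ref{ind-dual}. Your shift bookkeeping ($d_{(P_1\times P_2)/P_1}=\dim P_2$ cancels against $-\dim(P_1\times P_2)=-\dim P_1-\dim P_2$ to recover $-\dim P_1$) is a useful explicit check that the paper leaves implicit.
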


\begin{proof}
From Lemmas  \ref{rem-div-cst}.\ref{S(D,C)stability3-pre}
and 
\ref{S(D,C)stability3bis}, 
the data of coefficients 
$\mathfrak{C}$  is also stable under local cohomological functors.
From \ref{f!T'Totimes} 
(resp. \ref{fonctYY'Gamma-iso}), 
extraordinary inverse images 
(resp. local cohomological functors) 
commute with tensor products (up to a shift).
Proceeding as in the proof of \ref{ind-dual} with its notation,
$\R \underline{\Gamma} ^{\dag} _{Y} \pi _1 ^{!(\bullet )} $ 
and 
$\R \underline{\Gamma} ^{\dag} _{Y} \pi _2 ^{!(\bullet )} $ 
commute with tensor products
and then so are 
$\pi ^{(\bullet )} _{1+}$ and $\pi ^{(\bullet )} _{2+}$.
\end{proof}

\begin{empt}
[Formalism of Grothendieck six operations]
\label{6operations}
Let $\mathfrak{C}$ be a data of coefficients over $(\V,\V ^\flat)$
which contains $\fB _\mathrm{div}$, 
which is 
stable  under devissages, 
pushforwards, extraordinary pullbacks, 
duals,
and tensor products.
To sum-up the above Lemmas
we can define a formalism of Grothendieck six operations on couples 
as follows.
Let  $u=(b,a)\colon (Y', X') \to (Y, X)$ be a morphism of couples over $\W$.
Put $\mathbb{Y}:= (Y,X)$ and $\mathbb{Y}':= (Y',X')$.
\begin{enumerate}

\item We have the dual functor $\DD _{\mathbb{Y}}
\colon \mathfrak{C} (\mathbb{Y}/\W) \to \mathfrak{C} (\mathbb{Y}/\W)$ (see \ref{ind-dual}).

\item 
We have the extraordinary pullback $u ^{!}
\colon
\mathfrak{C} (\mathbb{Y}/\W) \to \mathfrak{C} (\mathbb{Y}'/\W)$ (see \ref{ind-pushforward-extinv}).
We get the pullbacks $u ^{+}:= \DD _{\mathbb{Y}'} \circ u ^{!} \circ \DD _{\mathbb{Y}}$.

\item Suppose that $u$ is complete.
Then, we have the functor $u _+
\colon \mathfrak{C} (\mathbb{Y}'/\W) \to \mathfrak{C} (\mathbb{Y}/\W)$ (see \ref{ind-pushforward-extinv}). 
We denote by  $u _{!}:= \DD _{\mathbb{Y}} \circ u _{+} \circ \DD _{\mathbb{Y}'}$, 
the extraordinary pushforward by $u$.

\item 
We have the tensor product 
$-\widetilde{\otimes} _{\mathbb{Y}}-
\colon 
\mathfrak{C} (\mathbb{Y}/\W) \times \mathfrak{C} (\mathbb{Y}/\W) \to \mathfrak{C} (\mathbb{Y}/\W)$
(see \ref{ind-prod-tensor})
\end{enumerate}

\end{empt}

\begin{exs}
\label{nota-h-ovhol}
\begin{enumerate}
\item  We recall
the data of coefficients $\smash{\underrightarrow{LD}} ^{\mathrm{b}} _{\Q,\mathrm{ovhol}}$
and $\smash{\underrightarrow{LD}} ^{\mathrm{b}} _{\Q,\mathrm{h}}$
are defined respectively in \ref{ex-cst-surcoh}.\ref{hstab} and \ref{ex-cst-surcoh}.\ref{ovholstab}.
Using Lemmas \ref{lem-stabextpullback} and \ref{S(D,C)stability} (and  \ref{rem-div-cst2}),
they 
are stable 
under local cohomological functors, 
pushforwards, extraordinary pullbacks, and duals.
Hence, 
with the notation 
\ref{ind-cat-overcouples},
using Lemmas \ref{ind-pushforward-extinv}, \ref{ind-prod-tensor}, and \ref{ind-dual},
for any frame 
$(Y,X, \fP)$ over $\W$,
we get the categories of the forms 
$\smash{\underrightarrow{LD}} ^{\mathrm{b}} _{\Q,\mathrm{h}} (Y, \fP /\W)$,
$\smash{\underrightarrow{LD}} ^{\mathrm{b}} _{\Q,\mathrm{h}}(\mathbb{Y}/\W)$,
$\smash{\underrightarrow{LD}} ^{\mathrm{b}} _{\Q,\mathrm{ovhol}} (Y, \fP /\W)$
or
$\smash{\underrightarrow{LD}} ^{\mathrm{b}} _{\Q,\mathrm{ovhol}}(\mathbb{Y}/\W)$
endowed with five of Grothendieck cohomological operations (the tensor product is a priori missing).

\item Following theorem \ref{dfnqupropbis},
there exist a data of coefficients $T$ 
which contains 
$\mathfrak{M} _{\mathrm{div}}$,
is 
local,
stable by devissages, direct summands, 
local cohomological functors, 
pushforwards, extraordinary pullbacks, 
base change, tensor products, duals,
cohomology
and
special descent of the base. 
Hence, 
for any frame 
$(Y,X, \fP)$ over $\W$,
we get the triangulated category 
$T (Y, \fP /\W)$ or $T(\mathbb{Y}/\W)$,
endowed with a t-structure and a formalism of Grothendieck six operations.

\end{enumerate}

\end{exs}

\subsection{Formalism of Grothendieck six operations over realizable varieties}

\begin{dfn}
[Proper compactification]
\begin{enumerate}
\item A frame $(Y,X,\fP)$ over $\W$ is said to be {\it proper} if $\fP$ is proper. 
The category of proper frames over $\W$ is the full subcategory of the category
of frames over $\W$ whose objects are proper frames over $\W$.

\item The category of {\it proper couples} over $\W$ is the full subcategory of 
the category of couples over $\W$ whose objects $(Y, X)$ are such that 
$X$ is proper. 
We remark that if $(Y, X)$ is a proper couple over $\W$  then 
there exists a proper frame over $\W$ of the form $(Y,X, \fP)$.

\item A {\it realizable variety} over $\W$ is a $l$-scheme $Y$ such that there exists a proper frame over $\W$ of the form 
$(Y,X,\fP)$. For such frame $(Y,X,\fP)$, we say that 
the proper frame $(Y,X,\fP)$ encloses $Y$ or that the proper couple $(Y,X)$ encloses $Y$. 
\end{enumerate}

\end{dfn}

\begin{empt}
[Formalism of Grothendieck six operations]
\label{6operations-variety}
Let $\mathfrak{C}$ be a data of coefficients over $(\V,\V ^\flat)$
which contains $\fB _\mathrm{div}$, 
which is 
stable  under devissages, 
pushforwards, extraordinary pullbacks, 
duals,
and tensor products.
Similarly to Lemma \ref{ind-cat-overcouples}, we check using Theorem \ref{ind-CYW}
that the category 
$\mathfrak{C} (Y, \fP/\W)$
(resp. $\mathfrak{C} (Y,X/\W)$)
 does not depend, up to a canonical equivalence of categories, 
on the choice of the proper frame $(Y,X, \fP)$ (resp. the proper couple $(Y,X)$) over $\W$ enclosing $Y$.
Hence, we simply denote it by
$\mathfrak{C} (Y/\W)$.
As for \ref{6operations}, we can define a formalism of Grothendieck six operations on realizable varieties 
as follows.
Let  $u\colon Y'\to Y$ be a morphism of realizable varieties over $\W$.
\begin{enumerate}

\item We have the dual functor $\DD _{Y}
\colon \mathfrak{C} (Y/\W) \to \mathfrak{C} (Y/\W)$ (see \ref{ind-dual}).

\item 
We have the extraordinary pullback $u ^{!}
\colon
\mathfrak{C} (Y/\W) \to \mathfrak{C} (Y'/\W)$ (see \ref{ind-pushforward-extinv}).
We get the pullbacks $u ^{+}:= \DD _{Y'} \circ u ^{!} \circ \DD _{Y}$.

\item 
We have the functor $u _+
\colon \mathfrak{C} (Y'/\W) \to \mathfrak{C} (Y/\W)$ (see \ref{ind-pushforward-extinv}). 
We denote by  $u _{!}:= \DD _{Y} \circ u _{+} \circ \DD _{Y'}$, 
the extraordinary pushforward by $u$.

\item 
We have the tensor product 
$-\widetilde{\otimes} _{Y}-
\colon 
\mathfrak{C} (Y/\W) \times \mathfrak{C} (Y/\W) \to \mathfrak{C} (Y/\W)$
(see \ref{ind-prod-tensor})
\end{enumerate}

\end{empt}

\subsection{Constructible t-structure}

For completeness,
we introduce the notion of constructibility.
Let $\mathfrak{C}$ be a data of coefficients over $(\V,\V ^\flat)$
which contains $\fB _\mathrm{div}$, 
which is 
stable  under devissages, 
pushforwards, extraordinary pullbacks, 
duals,
tensor products, and cohomology.

\begin{empt}
[Constructible t-structure]
\label{t-structure}

Let $\mathbb{Y}:= (Y,X)$ be a couple. Choose a frame $(Y, X,\fP)$. 
If $Y' \rightarrow Y$ is an immersion, then 
we denote by 
$i _{Y'} \colon (Y', X', \fP) \to  (Y, X, \fP)$ the induced morphism
where $X'$ is the closure of $Y'$ in $X$. 
We define on $\fC(\mathbb{Y}/\W)$
the constructible t-structure as follows. 
An object $\E\in \fC(\mathbb{Y}/\W) $
belongs to 
$\fC^{c, \geq 0}(\mathbb{Y}/\W)$ 
(resp. $\fC^{c, \leq 0}(\mathbb{Y}/\W)$)
if there exists 
a special morphism 
$(\W, \W ^\flat)
\to 
(\W ', \W ^\flat)$ such that,
denoting by 
$\fP ' := \fP \times _{\Spf \W} \Spf \W '$,
$Y':= (Y \times _{\Spf \W} \Spf \W ') _{\mathrm{red}}$ and
$\E^{\prime (\bullet) }
:=
 \W'  \smash{\widehat{\otimes}}^\L
_{\W}  \E^{(\bullet) }$,
there exists 
a smooth stratification 
(see Definition \cite[2.2.1]{Abe-Caro-weights})
$(Y ' _{i}) _{i=1, \dots , r}$ of $Y'$ such that 
for any $i $, the complex $i _{Y '_i} ^{+} (\E^{\prime (\bullet) } ) [ d _{Y '_i}]$ (see notation \ref{6operations})
belongs to
$\fC ^{\geq 0}_{\textrm{isoc}}(Y ' _i,\fP'/\W)$
(resp. 
$\fC ^{\leq 0}_{\textrm{isoc}}(Y ' _i,\fP'/\W)$).

\end{empt}

\begin{prop}
Let $\mathbb{Y}:= (Y,X)$ be a couple.
\begin{enumerate}

\item Let $\E ^{\prime(\bullet)}\to \E \to \E ^{\prime \prime(\bullet)} \to \E ^{\prime(\bullet)} [1]$ be an exact triangle in $\fC(\mathbb{Y}/\W) $. 
If $\E ^{\prime(\bullet)}$ and $\E ^{\prime \prime(\bullet)}$are in $\fC^{c, \geq 0}(\mathbb{Y}/\W)$
(resp. $\fC^{c, \leq 0}(\mathbb{Y}/\W)$)
then so is $\E$.

\item 
Suppose that $Y$ is smooth. 
Let $\E\in \fC _{\textrm{isoc}}(\mathbb{Y}/\W) $.
Then $\E \in \fC^{c, \geq 0}(\mathbb{Y}/\W)$
(resp. $\E \in \fC^{c, \leq 0}(\mathbb{Y}/\W)$)
if and only if 
$\E \in \fC^{\geq d _X}_{\textrm{isoc}}(\mathbb{Y}/\W)$
(resp. $\E \in \fC^{\leq d _X}_{\textrm{isoc}}(\mathbb{Y}/\W)$). 

\end{enumerate}
\end{prop}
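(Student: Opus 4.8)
The plan is to deduce both statements from the compatibilities of the six operations with base change, with the local cohomological functors and with the (isocrystal) t-structures established above, together with the descent properties of special base changes recorded in \ref{rem-sp-desc-123} and the directedness of \ref{special-filtered}.

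First I would treat the extension stability. Fix the triangle $\E'^{(\bullet)}\to\E^{(\bullet)}\to\E''^{(\bullet)}\to\E'^{(\bullet)}[1]$ in $\fC(\mathbb{Y}/\W)$ and choose data witnessing $\E'^{(\bullet)},\E''^{(\bullet)}\in\fC^{c,\ge 0}(\mathbb{Y}/\W)$: special morphisms $(\W,\W^\flat)\to(\W'_1,\W^\flat)$ and $(\W,\W^\flat)\to(\W'_2,\W^\flat)$ together with smooth stratifications of the respective reductions of $Y$. Since the special $\V$-algebras in $\V^\flat$ form a directed set (\ref{special-filtered}), I would pass to a common special base change $(\W,\W^\flat)\to(\W',\W^\flat)$; the pulled-back data still witness membership, because base change commutes with $i_{Y'_i}^{+}$, with the cohomology functors, and preserves $\mathrm{MIC}^{(\bullet)}$ and the frame t-structures (\ref{comm-chg-base}, \ref{ntn-f*}). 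Next I would replace the two smooth stratifications of $Y':=(Y\times_{\Spf\W}\Spf\W')_{\mathrm{red}}$ by a common smooth refinement $(Z_k)$, which exists by Noetherian induction; passing from a stratum to a substratum preserves the membership condition, since for a locally closed immersion of smooth varieties the relevant $i^{+}$ is, up to the shift fixed by the normalization in \ref{t-structure}, the exact functor $\O\otimes-$ applied to an $\O$-flat isocrystal complex, hence preserves both the $\mathrm{MIC}^{(\bullet)}$-cohomology condition and the degree bounds. Applying the triangulated functor $i_{Z_k}^{+}[d_{Z_k}]$ — which sends exact triangles to exact triangles, being a shifted composite of $\DD$'s and a $!$-pullback — to the triangle on each $Z_k$, the problem reduces to the assertion that $\fC^{\ge 0}_{\mathrm{isoc}}(Z_k,\fP'/\W)$ (resp. $\fC^{\le 0}_{\mathrm{isoc}}$) is closed under extensions; this is the standard t-structure argument together with the fact, used repeatedly above, that $\mathrm{MIC}^{(\bullet)}$ is closed under extensions in $\smash{\underrightarrow{LM}}_{\Q,\mathrm{coh}}$ (an extension of $\O$-coherent $\D^\dag$-modules is $\O$-coherent, compare \ref{ntnC0-lemm}). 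The case of $\fC^{c,\le 0}$ is identical.

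For the second statement, assume $Y$ smooth; replacing $X$ by the closure of $Y$ and working connected-component by connected-component, we may take $d_X=\dim Y$ with $Y$ equidimensional. If $\E^{(\bullet)}\in\fC^{\ge d_X}_{\mathrm{isoc}}(\mathbb{Y}/\W)$, I would witness $\E^{(\bullet)}\in\fC^{c,\ge 0}(\mathbb{Y}/\W)$ using the identity special morphism and the trivial stratification $(Y)$: then $i_Y^{+}(\E^{(\bullet)})[d_Y]=\E^{(\bullet)}[d_X]$ lies in $\fC^{\ge 0}_{\mathrm{isoc}}(Y,\fP/\W)$ by hypothesis. Conversely, given $\E^{(\bullet)}\in\fC^{c,\ge 0}(\mathbb{Y}/\W)$, choose a witnessing special base change $\W'$ with $\fP'$, $Y'=Y\times_{\Spf\W}\Spf\W'$ (smooth, hence reduced and equidimensional of dimension $d_X$) and $\E'^{(\bullet)}=\W'\smash{\widehat{\otimes}}^\L_{\W}\E^{(\bullet)}$, together with a smooth stratification $(Y'_i)$ satisfying the defining condition. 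For each top-dimensional stratum $Y'_i$ one has $d_{Y'_i}=d_X$ and $Y'_i\hookrightarrow Y'$ an open immersion; since $\E'^{(\bullet)}$ is an isocrystal complex, $i_{Y'_i}^{+}(\E'^{(\bullet)})$ is its restriction to $Y'_i$, so $\mathcal{H}^j(\E'^{(\bullet)})|_{Y'_i}=0$ for $j<d_X$, and hence $\mathcal{H}^j(\E'^{(\bullet)})$ vanishes on the dense open union of the top strata. By \ref{lem-projff} (an isocrystal vanishing on a dense open subset is zero) this forces $\mathcal{H}^j(\E'^{(\bullet)})=0$ for $j<d_X$; as special base change is faithfully flat and commutes with $\mathcal{H}^j$ (\ref{rem-sp-desc-123}, \ref{comm-chg-base}), also $\mathcal{H}^j(\E^{(\bullet)})=0$ for $j<d_X$, i.e. $\E^{(\bullet)}\in\fC^{\ge d_X}_{\mathrm{isoc}}(\mathbb{Y}/\W)$. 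The statement for $\fC^{c,\le 0}$ versus $\fC^{\le d_X}_{\mathrm{isoc}}$ is the same, the dense open strata again controlling the degree bound.

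The step I expect to be the main obstacle is precisely this reduction to a common special base change and a common smooth refinement in the first part, and the verification that the conditions on strata are inherited by substrata; once that bookkeeping is settled, the remainder is the routine formalism of triangulated functors and of the isocrystal t-structure, whose truncation classes are closed under extension because $\mathrm{MIC}^{(\bullet)}$ is.
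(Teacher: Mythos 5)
Your overall strategy is the natural one and is essentially correct: witness membership via a common special base change and a common smooth stratification, reduce to extension-stability of $\fC^{\ge 0}_{\mathrm{isoc}}$ (resp.\ $\fC^{\le 0}_{\mathrm{isoc}}$) on each stratum, and for part (2) use the trivial stratification in one direction and, in the other, the density of the top-dimensional open strata together with the vanishing criterion \ref{lem-projff}.\ref{lem-projff-it3} and the conservativity of special base change (\ref{rem-sp-desc-123}.\ref{rem-sp-desc-123-1}, \ref{rem-sp-desc-123}.\ref{rem-sp-desc-123-2}). The paper leaves this proof to the reader, so there is no ``paper's route'' to compare against, but this is the argument the machinery of the section is designed to support.

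There is one genuine gap you should close, and one imprecision. The gap: you assert that a common smooth refinement of the two stratifications of $Y'$ ``exists by Noetherian induction.'' Over the residue field $l'$ of $\W'$, which need not be perfect, the reduced intersections $(Y'_{1,i}\cap Y'_{2,j})_{\mathrm{red}}$ need not be smooth, and in fact a reduced finite-type $l'$-scheme need not admit \emph{any} smooth stratification (think of a non-smooth reduced point). The fix is built into the definition of $\fC^{c,\ge 0}$, since it quantifies over special morphisms: smooth-stratify the intersections after passing to $l^\flat$ (perfect), then descend this stratification to a finite extension $l''\supset l'$ via \ref{rem-sp-desc-123}.\ref{rem-sp-desc-123-3}, noting that the witnessing condition for the two original stratifications persists under this further special base change because base change commutes with $i^+$, with cohomology, and preserves $\mathrm{MIC}^{(\bullet)}$. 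You invoke \ref{rem-sp-desc-123} in passing but do not apply it at this crucial point. The imprecision: for a \emph{closed} immersion $i$ of smooth varieties, $i^{+}=\DD\,i^{!}\,\DD$ is not literally ``$\O\otimes-$''; the claim you actually need — that on isocrystal complexes $i^{+}[d_{Z}]$ is t-exact with the constructible normalization, i.e.\ $i^{+}$ shifts degree by $-c$ where $c$ is the codimension, so $i^{+}[d_{Z}]$ and $\mathrm{id}[d_{Y}]$ match up — follows from $i^{!}=\L i^{*}[-c]$ on flat modules (\ref{ntn-Lf!+*}) together with the degree-zero behaviour of $\DD$ on isocrystals (\ref{theoprincipal}), and is correct, but should be stated as such rather than conflated with the $\O$-tensor description of $i^{*}$.
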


\begin{proof}
This is left to the reader. 
\end{proof}

\begin{rem}
When $k$ is perfect, 
we retrieve the constructibility as defined for overholonomic complexes in 
\cite[2.4]{caro-unip} or that of T. Abe.
\end{rem}

\small
\bibliographystyle{alpha}
\newcommand{\etalchar}[1]{$^{#1}$}
\def\cprime{$'$}

\bigskip
\noindent Daniel Caro\\
Laboratoire de Mathématiques Nicolas Oresme\\
Université de Caen
Campus 2\\
14032 Caen Cedex\\
France.\\
email: daniel.caro@unicaen.fr

\newpage

\end{document}